\documentclass[a4paper,10pt]{book}

\usepackage[english]{babel}

\usepackage{amssymb,amsmath,amsthm}
\usepackage{amsfonts}
\usepackage{color}


\newtheorem{theorem}{Theorem}[chapter]
\newtheorem{lemma}[theorem]{Lemma}

\theoremstyle{definition}
\newtheorem{definition}[theorem]{Definition}

\newtheorem{proposition}[theorem]{Proposition}
\newtheorem{corollary}[theorem]{Corollary}
\theoremstyle{remark}
\newtheorem{remark}[theorem]{Remark}

\numberwithin{section}{chapter}
\numberwithin{equation}{chapter}


%




\newcommand{\mB}{\mathcal{B}}

\newcommand{\mE}{\mathcal{E}}
\newcommand{\mF}{\mathcal{F}}
\newcommand{\mG}{\mathcal{G}}
\newcommand{\mH}{\mathcal{H}}

\newcommand{\mL}{\mathcal{L}}

\newcommand{\mN}{\mathcal{N}}

\newcommand{\mR}{\mathcal{R}}
\newcommand{\mS}{\mathcal{S}}

\newcommand{\mZ}{\mathcal{Z}}
\newcommand{\Dom}{ \om \! \cdot \! \pa_\vphi}
\newcommand{\CS}{| {\mathbb S}^+|}

\renewcommand{\a}{\alpha}
\renewcommand{\b}{\beta}
\newcommand{\g}{\gamma}
\renewcommand{\d}{\delta}
\newcommand{\D}{\Delta}
\newcommand{\e}{\varepsilon}
\newcommand{\ph}{\varphi}
\newcommand{\vphi}{\varphi}

\newcommand{\Om}{\Omega}
\newcommand{\om}{\omega}
\newcommand{\p}{\pi}
\newcommand{\s}{\sigma}
\renewcommand{\t}{\tau}

\renewcommand{\t}{\tau }

\newcommand{\teta}{\theta}

\newcommand{\be}{\begin{equation}}
\newcommand{\ee}{\end{equation}}
\newcommand{\norma}{|\!\!|}

\newcommand{\ii}{{\mathrm i} }

\newcommand{\DC}{{\mathtt D \mathtt C}}

\newcommand{\tOm}{{\mathtt \Omega} }
\newcommand{\tLm}{{\mathtt \Lambda} }

\newcommand{\gr}{\nabla}

\newcommand{\intp}{\int_{0}^{2\p}}



\newcommand{\ov}{\overline}
\newcommand{\wtilde}{\widetilde}

\newcommand{\R}{\mathbb R}
\newcommand{\C}{\mathbb C}
\newcommand{\Z}{\mathbb Z}
\newcommand{\N}{\mathbb N}
\newcommand{\T}{\mathbb T}



\newcommand{\pa}{\partial}

\newcommand{\fracchi}{{\mathfrak{I}}}

\DeclareMathOperator{\sign}{sign}

\renewcommand{\a}{\alpha}
\renewcommand{\d}{\delta}
\renewcommand{\t}{\tau}


\newcommand{\odd}{\text{odd}}
\newcommand{\even}{\text{even}}


\def\ba{\begin{aligned}}
\def\ea{\end{aligned}}
\def\beginm{\begin{multline}}
\def\endm{\end{multline}}

\def\B{B}

\def\eps{\varepsilon}


\def\xR{\R} 


\begin{document}

\title{\textbf{Quasi-periodic  standing wave solutions of \\ 
gravity-capillary water waves}}

\date{}

\author{Massimiliano Berti
\footnote{SISSA,  Via Bonomea 265, 34136, Trieste, Italy, e.mail: berti@sissa.it}, 
Riccardo Montalto 
\footnote{University of Z\"urich, Winterthurerstrasse 190, 
CH-8057, Z\"urich, Switzerland,
e-mail: riccardo.montalto@math.uzh.ch}}

\maketitle

\newpage

\noindent
{\bf Abstract} We prove the existence and the linear stability of small amplitude time {\it quasi-periodic}  standing wave
solutions (i.e. periodic and even in the space variable $ x $) 
of a $ 2 $-dimensional ocean with infinite depth 
under the action of gravity and surface tension. Such an existence result is obtained for all the values of the surface tension belonging to a Borel set of asymptotically full Lebesgue measure.

\medskip

\noindent
{\bf Acknowledgements} This research was supported by PRIN 2012 ``Variational and perturbative aspects of nonlinear differential problems" and partially by the Swiss National Science Foundation. 

\noindent
MSC 2010: 76B15,  37K55, 76D45  (37K50, 35S05).

\noindent
Key words: KAM for PDEs, Water waves, quasi-periodic solutions, standing waves

\tableofcontents




\chapter{Introduction and main result}\label{sec:main result}

In this paper we prove the existence of non trivial, small amplitude, {\it quasi-periodic} in time, {\it linearly stable}
gravity-capillary standing water waves of a $2$-d perfect, incompressible, irrotational  fluid with infinite depth, 
under periodic boundary conditions, and which occupies the free boundary region
$$
{\mathcal D}_\eta  := \big\{(x,y)\in \T \times \xR\,:\, y<\eta(t,x) \, , \quad \T := \R \slash (2 \pi \Z) 
 \big\} \, .
$$ 
 More precisely we find quasi-periodic in time solutions of the system
\begin{equation}\label{water}
\begin{cases}
\partial_t \Phi + \frac12 | \nabla \Phi |^2 + g \eta = \kappa \frac{\eta_{xx}}{(1 + \eta_x^2)^{3/2}} \ \ 
\qquad   {\rm at} \ y = \eta (x)  \cr
\Delta \Phi =0 \qquad \qquad \qquad \  \  \    \qquad  \qquad \ \qquad {\rm in} \ {\mathcal D}_{\eta}  \cr
\nabla \Phi \to 0  \qquad  \qquad \qquad \quad \qquad \qquad \qquad \,  {\rm as} \  
y \to - \infty  \cr
\partial_t \eta = \partial_y \Phi - \partial_x \eta \cdot
\partial_x \Phi \qquad \qquad \qquad \quad  {\rm at} \  y = \eta (x) 
\end{cases}
\end{equation}
where $ g $ is the acceleration of gravity, $ \kappa \in [\kappa_1, \kappa_2 ] $, $ \kappa_1 >  0 $,  is the surface tension coefficient  and 
\begin{equation*}
\frac{\eta_{xx}}{(1 + \eta_x^2)^{3/2}} 
= \partial_x \bigg( \frac{\eta_x}{\sqrt{1+ \eta_x^2}} \bigg) 
\end{equation*}
is the mean curvature of the free surface. 
The unknowns of the problem 
are the free surface $ y = \eta (x) $
and the  velocity potential $ \Phi : {\mathcal D}_\eta \to \R $, 
i.e. the irrotational  velocity field   
$ v =\nabla_{x,y} \Phi   $ 
of the fluid.
The first equation in \eqref{water}  is the Bernoulli condition according to which
 the jump of pressure across the free surface is proportional to the mean curvature. 
 The last equation in \eqref{water} 
expresses that the velocity of the free surface coincides with the one of
the fluid particles.

In the sequel we shall assume (with no  loss of generality) that the gravity constant $ g = 1 $.

\smallskip

Following Zakharov \cite{Zakharov1968} and Craig-Sulem \cite{CrSu}, the evolution problem \eqref{water} 
may be written as an infinite dimensional Hamiltonian system. 
At each time $ t \in \R $ the profile $ \eta ( t,  x ) $ of the fluid and the value 
$$
\psi(t,x)=\Phi(t,x,\eta(t,x))
$$
of the velocity potential $\Phi$ restricted to the free boundary
uniquely determine the velocity potential  $ \Phi $ in
the whole $ {\mathcal D}_\eta $, solving (at each $ t $) the elliptic problem (see e.g.  \cite{ABZ1}, \cite{LannesLivre})
\be
\begin{aligned} \label{BVP}
\Delta \Phi =0 \quad \text{in } {\mathcal D}_\eta , \quad  \Phi(x + 2 \pi, y) = \Phi (x, y) \, , \\
 \quad \Phi \arrowvert_{y=\eta}=\psi,\quad 
\nabla \Phi (x, y) \rightarrow 0 \text{ as } y\rightarrow -\infty \, . 
\end{aligned}
\ee
As proved in  \cite{Zakharov1968}, \cite{CrSu}, 
system \eqref{water} is then equivalent to the system
\begin{equation}\label{WW}
\left\{
\begin{aligned}
&\partial_t \eta = G(\eta)\psi,\\
&\partial_t\psi + \eta+  \frac{1}{2} \psi_x^2  
-\frac{1}{2} \frac{\bigl(G(\eta)\psi + \eta_x \psi_x \bigr)^2}{1+ \eta_x^2}
= \kappa \frac{\eta_{xx}}{(1 + \eta_x^2)^{3/2}} 
\end{aligned}
\right.
\end{equation}
where $ G(\eta) $ is the so-called Dirichlet--Neumann operator  defined by
\be\label{D-N}
G(\eta)\psi (x) :=
\sqrt{1+\eta_x^2}\,
\partial _n \Phi\arrowvert_{y=\eta(x)}
=(\partial_y \Phi)(x,\eta(x))-\eta_x (x) \, (\partial_x \Phi)(x,\eta(x))
\ee
(we denote by $ \eta_x $ the space derivative $ \partial_x \eta $.) The operator $ G(\eta) $ is linear in $ \psi $, 
self-adjoint with respect to the $ L^2 $ scalar product and semi positive definite, actually its Kernel 
are only the constants. It is well known since 
Calderon that the Dirichlet-Neumann operator is a {\it pseudo-differential} operator with principal symbol 
$ |D|$, actually $ G(\eta) - |D| \in OPS^{-\infty} $, see section \ref{subDN}. 

\smallskip

Furthermore  the equations \eqref{WW} are the Hamiltonian system (see \cite{Zakharov1968}, \cite{CrSu})
$$
\pa_t \eta = \nabla_\psi H (\eta, \psi) \, , \quad  \pa_t \psi = - \nabla_\eta H (\eta, \psi) 
$$
\be\label{HS}
\pa_t u  = J \nabla_u H (u)  \, , \quad u := \begin{pmatrix} 
\eta \\
\psi \\
\end{pmatrix} \, , \quad J := 
\begin{pmatrix} 
0 & {\rm Id} \\
- {\rm Id}  & 0 \\
\end{pmatrix} \, , 
\ee
where $ \nabla $ denotes the $ L^2 $-gradient,  
and  the Hamiltonian 
\be\label{Hamiltonian}
H(\eta, \psi ) := \frac12 (\psi, G(\eta) \psi)_{L^2(\T_x)}  +
\int_{\T} \frac{\eta^2}{2}  \, dx  +  \kappa \int_{\T}  \sqrt{1 + \eta_x^2} \, dx 
\ee
is the sum of the kinetic energy 
$$ 
K := \frac12 (\psi, G(\eta) \psi)_{L^2(\T_x)} = \frac12 \int_{{\mathcal D}_\eta} | \nabla \Phi |^2 (x,y) dx dy \, , 
$$ 
the potential energy  and the energy of the capillary forces (area surface integral)  expressed in terms of the variables 
$ (\eta, \psi ) $. 

The symplectic structure induced by  \eqref{HS} is the standard Darboux $ 2 $-form 
\be\label{2form tutto}
{\mathcal W}(u_1, u_2):= ( u_1, J u_2 )_{L^2(\T_x)} = ( \eta_1, \psi_2 )_{L^2 (\T_x)} - ( \psi_1, \eta_2 )_{L^2 (\T_x)}   
\ee
for all $ u_1 = (\eta_1, \psi_1) $, $ u_2 = (\eta_2, \psi_2) $.

\smallskip

The water-waves system \eqref{WW}-\eqref{HS} exhibits several symmetries. 
First of all, the mass $ \int_\T \eta \, dx $ is a prime integral of \eqref{WW}. 
Moreover 
$$
\pa_t \int_\T \psi \, dx  =  -  \int_{\T} \eta \, dx - \int_{\T} \nabla_\eta K \, dx =  -  \int_{\T} \eta \, dx   
$$
because $  \int_{\T} \nabla_\eta K \, dx  = 0 $.  
This follows because $ \R \ni c \mapsto K (c+ \eta, \psi ) $ is constant (the bottom of the ocean is at $ - \infty $)
and so $  0 = d_\eta K (\eta, \psi) [1] = (\nabla_\eta K, 1 )_{L^2(\T)} $.  
As a  consequence the subspace   
\be\label{eta0psi0}
\int_{\T} \eta \, dx = \int_{\T} \psi \, dx = 0   
\ee
is invariant under the evolution of \eqref{WW} and we shall restrict to solutions satisfying \eqref{eta0psi0}.
 
In addition, the subspace of functions  which are even in $ x $, 
\be\label{even in x}
\eta (x) = \eta (-x) \, , \quad \psi (x) = \psi (- x )  \, , 
\ee
is invariant under \eqref{WW}. 
Thanks to this property and  \eqref{eta0psi0},  we shall restrict  $ (\eta, \psi )$ to 
the phase space of $ 2 \pi $-periodic functions  which admit the Fourier expansion
\be\label{phase space}
\eta ( x)  = {\mathop \sum}_{j \geq 1} \eta_j  \cos (j x ) \, , \quad \psi ( x)  = {\mathop \sum}_{j \geq 1} \psi_j  \cos (j x )  \, . 
\ee 
In this case also the velocity potential $ \Phi(x,y) $ is even  and $ 2 \pi $-periodic in $ x $ and so the 
$ x$-component  of the velocity field $ v = (\Phi_x, \Phi_y) $ vanishes  at $ x = k \pi $, $ \forall k \in \Z $. 
Hence   there is no flux of fluid through the lines $ x = k \pi $, $ k \in \Z $, and 
a solution of \eqref{WW} satisfying \eqref{phase space} describes 
the motion of a liquid confined  between two walls. 

\smallskip

Another important symmetry 
of the capillary water waves system  is reversibility, namely the equations
 \eqref{WW}-\eqref{HS} are reversible with respect to the involution  $ \rho : (\eta, \psi) \mapsto (\eta, - \psi) $,
 or, equivalently, the Hamiltonian  is even in $ \psi $: 
\be\label{defS}
H \circ \rho = H \, , \quad H( \eta, \psi) = H ( \eta, - \psi ) \, , \quad \rho : (\eta, \psi) \mapsto (\eta, - \psi) \, .
\ee
As a consequence it is   natural to look  for solutions of \eqref{WW} satisfying 
\be\label{odd-even}
u(-t ) = \rho u(t) \, , \quad i.e. \quad \eta (-t, x) = \eta(t, x) \, , \ \psi(-t,x ) = - \psi (t,x ) \, , \ \forall t, x \in \R \, , 
\ee
namely $ \eta  $ is even  in time and $ \psi $ is odd in time. 
Solutions of the water waves equations \eqref{WW} satisfying 
\eqref{phase space} and \eqref{odd-even} are called gravity-capillary {\it standing water waves}. 

This is a small divisors problem. 
Existence of small amplitude time periodic pure gravity (without surface tension) standing wave solutions  has been proved 
by Iooss, Plotnikov, Toland in \cite{IPT},  see also \cite{IP-SW2}, \cite{IP-SW1}, and in \cite{PlTo}
in finite depth. Existence of time periodic  gravity-capillary standing wave solutions 
has been recently proved by Alazard-Baldi \cite{AB}. 
The above results are proved via a Lyapunov Schmidt decomposition combined with a Nash-Moser iterative scheme.

In this paper we  extend the latter result proving the existence of time {\it quasi-periodic}  gravity-capillary
standing wave solutions of \eqref{WW}, see Theorem \ref{thm:main0}, as well as 
their linear stability. 
The reducibility of the linearized equations at the quasi-periodic solutions is not only an interesting dynamical information 
but it is also the key for the existence proof in  Theorem \ref{thm:main0}. 

We also mention that existence of small amplitude  $ 2 $-d traveling 
gravity water wave solutions dates back to Levi-Civita \cite{LC} 
(standing waves are not traveling because they are even in space, see \eqref{even in x}). 
Existence of small amplitude $ 3 $-d traveling gravity-capillary water wave solutions 
with space periodic boundary conditions  has been proved by Craig-Nicholls 
\cite{CN} (it is not a small divisor problem) 
and by Iooss-Plotinikov \cite{IP-Mem-2009}-\cite{IP2} in the case of zero surface tension
 (in such a case it is a small divisor problem).

\smallskip

Existence of quasi-periodic solutions of PDEs (that we shall call in a broad sense KAM theory) with 
unbounded  perturbations (i.e. the nonlinearity contains derivatives) has been developed 
by Kuksin \cite{Kuksin-Oxford} for KdV, see also Kappeler-P\"oschel \cite{KaP}, by 
Liu-Yuan \cite{Liu-Yuan}, Zhang-Gao-Yuan \cite{Zhang-Gao-Yuan} for
derivative NLS, by Berti-Biasco-Procesi 
\cite{Berti-Biasco-Procesi-Ham-DNLW}-\cite{Berti-Biasco-Procesi-rev-DNLW} for derivative NLW.
All these previous results still refer 
 to semilinear perturbations, i.e. the order of the derivatives
in the nonlinearity is strictly lower than the order of the constant coefficient (integrable) linear differential 
operator. 

For quasi-linear (either fully nonlinear) nonlinearities 
the first KAM results have been recently proved by Baldi-Berti-Montalto  in \cite{BBM-Airy}, 
\cite{BBM-auto}, \cite{BBM-mKdV} (see also \cite{BBMLincei}, \cite{BBMCRAS}) for perturbations of 
Airy, KdV and mKdV equations. 
These techniques  have been extended by Feola-Procesi \cite{FP} 
 for  quasi-linear perturbations of Schr\"odinger equations and by Montalto \cite{riccardo-kirchhoff} for the Kirchhoff equation.

The gravity-capillary water waves system \eqref{WW} is indeed a quasi-linear PDE. 
In suitable complex coordinates it can
be written in the symmetric form
$ {\mathtt u}_t = \ii T(D) {\mathtt u} + N({\mathtt u}, \bar {\mathtt u} ) $, $ {\mathtt u} \in \C $,
 where $ T(D) :=  |D|^{1/2} (1 - \kappa \pa_{xx})^{1/2}  $
is the Fourier multiplier which describes the linear dispersion relation of the water waves equations linearized at $ (\eta, \psi) = 0 $ 
(see \eqref{Lom}-\eqref{linear frequencies}), and the nonlinearity 
$ N ({\mathtt u}, \bar {\mathtt u} ) $ depends on the highest order term $ | D |^{3/2} {\mathtt u} $ as well, see 
sections \eqref{sec:linearized operator}-\eqref{sec:changes} for the complex form 
 of the linearized system. 
 
We have not the space to 
report the huge literature concerning KAM theory for semilinear PDEs  in one and also higher space dimension, 
for which we refer to \cite{Kuksin-Oxford}, 
\cite{B5},  \cite{EK}, \cite{BB14}, \cite{BCP}.
 
\smallskip

Let us present rigorously our main result.
As already said we look for small amplitude quasi-periodic 
solutions of \eqref{WW}. It is therefore  of main importance the dynamics of the  
system obtained linearizing \eqref{WW} at the equilibrium $(\eta, \psi) = (0,0)$ (flat ocean and fluid at rest), namely 
\begin{equation} \label{Lom}
\left\{
\begin{aligned}
&\partial_t \eta = G(0)\psi,\\
&\partial_t\psi + \eta 
= \kappa  \eta_{xx} 
\end{aligned}
\right.
\end{equation}
where $ G(0) = |D_x|  $ is the Dirichlet-Neumann operator for the flat surface $ \eta = 0  $, namely  
$$
|D_x| \cos(jx) = |j| \cos(jx), \quad 
|D_x| \sin(jx) = |j| \sin(jx) \, ,  \  
\forall j \in \Z \, .
$$  
In compact Hamiltonian form,  the system \eqref{Lom} reads 
\begin{equation}\label{definizione Omega}
\pa_t u  = J \Omega u  \, , \quad 
\Om  :=  \begin{pmatrix} 
1  -  \kappa  \pa_{xx}  & 0 \\
0 &  G(0) \\
\end{pmatrix},  
\end{equation}
which is  the Hamiltonian system generated by the quadratic Hamiltonian (see \eqref{Hamiltonian})
\be\label{Hamiltonian linear}
H_{L}  := \frac12 ( u, \Omega u )_{L^2 (\T_x)} = 
\frac12 (\psi, G(0) \psi)_{L^2(\T_x)}    +
 \frac12 \int_{\T} \big( \eta^2  + \kappa \eta_x^2 \big) \, dx \, . 
\ee
The standing wave solutions of  the linear system \eqref{Lom}, i.e. \eqref{definizione Omega}, 
are 
\begin{equation}  \label{eta psi sin cos series}
\begin{aligned} 
\eta(t,x) & = {\mathop \sum}_{ j \geq 1}  a_j  \cos (\om_j t)  \cos(jx), \\
\psi(t,x) & = - {\mathop \sum}_{ j \geq 1}  a_j j^{-1} \om_j    \sin (\omega_j t)  \cos(jx) \, , 
\end{aligned}
\end{equation} 
$ a_j \in \R $, 
with linear frequencies of oscillations
\be\label{linear frequencies}
\om_j := \om_j (\kappa) := \sqrt{ j (1+ \kappa j^2) } \, , \quad j \geq 1 \, . 
\ee
The main result of the paper proves that most of the standing wave solutions \eqref{eta psi sin cos series} 
of the linear system \eqref{Lom}
can be continued to standing wave solutions of the nonlinear water-waves Hamiltonian system \eqref{WW} for most 
values of the surface tension parameter $ \kappa  \in [\kappa_1, \kappa_2] $. 
More precisely, fix an arbitrary finite subset $ {\mathbb S}^+ \subset \N^+ := \{1,2, \ldots \} $ (called ``tangential sites") and consider the 
linear standing wave solutions
(of \eqref{Lom})
\begin{equation}  \label{eta psi sin cos series-QP}
\begin{aligned} 
\eta(t,x) & = \sum_{ j \in {\mathbb S}^+}  \sqrt{\xi_j} \cos (\om_j t)  \cos(jx), \\
\psi(t,x) & = - \sum_{ j \in {\mathbb S}^+} \sqrt{\xi_j} j^{-1}  \om_j    \sin (\omega_j t)  \cos(jx) \, , \  \xi_j > 0 \, , 
\end{aligned}
\end{equation}
which are Fourier supported in $ {\mathbb S}^+ $. 
In Theorem \ref{thm:main0} below we prove the existence of quasi-periodic solutions
$ u (\tilde \om t, x) = (\eta, \psi)( \tilde \om t, x) $ of  \eqref{WW}, 
with frequency $ \tilde \om := ( \tilde \om_j )_{j \in {\mathbb S}^+} $ (to be determined),  close to  the 
solutions \eqref{eta psi sin cos series-QP} of \eqref{Lom}, 
for most values of the surface tension parameter $ \kappa \in [ \kappa_1, \kappa_2 ] $.

Let $ \nu := | {\mathbb S}^+| $ denote the cardinality of $ {\mathbb S}^+ $. 
The function $ u (\vphi, x) = (\eta, \psi)(\vphi, x)  $, $ \vphi \in \T^\nu $,  belongs 
to the Sobolev spaces of $ (2\pi)^{\nu+1}$-periodic real functions  
$$
H^s(\T^{\nu+1}, \R^2) := \big\{ u = (\eta, \psi) : \eta, \psi \in H^s \big\}
$$
\begin{equation} \label{unified norm}
\begin{aligned}
H^s := H^s(\T^{\nu+1}, \R)
= \Big\{ & f  = \sum_{(\ell,j) \in \Z^{\nu+1}} \widehat f_{\ell, j} \, e^{\ii(\ell \cdot \ph + jx)} : \ \\
& \| f \|_s^2 := \sum_{(\ell,j) \in \Z^{\nu+1}} | \widehat f_{\ell, j}|^2 \langle \ell,j \rangle^{2s} < + \infty \Big\}
\end{aligned}
\end{equation}
where  $\langle \ell,j \rangle := \max \{ 1, |\ell|, |j| \} $ with $ | \ell | := \max_{i = 1, \ldots , \nu} |\ell_i|$. For 
\be\label{def:s0}
s \geq s_0 := \Big[ \frac{\nu +1}{2} \Big] +1 \in \N  
\ee 
the Sobolev spaces  $ H^s \subset L^\infty ( \T^{\nu+1})$ are an algebra with respect to the product of functions.

\begin{theorem} \label{thm:main0}  {\bf (KAM for capillary-gravity water waves)}
For every choice of finitely many tangential sites $ {\mathbb S}^+ \subset \N^+   $,
there exists $ \bar s >  s_0 $,  $ \e_0 \in (0,1) $ such that for every $ |\xi |   \leq \e_0^2  $, $  \xi := (\xi_j)_{j \in {\mathbb S}^+ } $, $\xi_j > 0$ for any $j \in \mathbb S^+$,  there exists 
a Borel set  $ \mG \subset [\kappa_1,\kappa_2] $ 
with asymptotically full measure as $ \xi \to 0 $, i.e. 
$$
\lim_{\xi \to 0} | \mG |  = \kappa_2- \kappa_1 \, ,  
$$
such that, for any
surface tension coefficient $ \kappa \in \mG $,  the capillary-gravity system \eqref{WW}
has a time quasi-periodic  standing wave solution 
$$ 
u( \tilde \om t, x ) = (\eta ( \tilde \om t, x), \psi ( \tilde \om t, x) ) \, , 
$$
with Sobolev regularity $ (\eta, \psi)   \in H^{\bar s} ( \T^\nu \times \T, \R^2) $,   
of the form
\begin{equation} \label{QP:soluz}
\begin{aligned}
\eta( \tilde \om t,x) & = {\mathop \sum}_{ j \in {\mathbb S}^+}  \sqrt{\xi_j} \cos ({\tilde \om}_j t)  \cos(jx) + 
r_1 ( \tilde \om t,x ) , \\  
\psi(\tilde \om t ,x)   & = - {\mathop \sum}_{ j \in {\mathbb S}^+} \sqrt{\xi_j} j^{-1} \om_j   \sin ({\tilde \omega}_j t)  \cos(jx)+ 
r_2 ( \tilde \om t,x ) 
\end{aligned}
\end{equation}
with a diophantine frequency vector $ \tilde \omega := \tilde \omega (\kappa, \xi ) \in \R^\nu $ satisfying 
$ {\tilde \omega}_j - \omega_j (\kappa) \to 0 $, $ j \in {\mathbb S}^+ $,  as $ \xi \to 0 $, and 
the functions $ r_1 ( \vphi ,x ), r_2(\vphi, x)$ are   $o( \sqrt{|\xi |} )$-small in $ H^{\bar s} ( \T^\nu \times \T, \R) $, that is $\| r_j \|_{\bar s}/ \sqrt{|\xi|}$ tends to $0$ as $|\xi| \to 0$ for $j = 1, 2$. In addition these quasi-periodic solutions are 
 linearly stable. 
\end{theorem}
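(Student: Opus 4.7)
The plan is to prove Theorem \ref{thm:main0} by a Nash--Moser iteration coupled with a KAM reducibility scheme for the linearized operator, in the spirit of \cite{BBM-Airy,BBM-auto,BBM-mKdV} but adapted to the quasi-linear gravity-capillary system \eqref{WW}. I would search for torus embeddings $i : \T^\nu \to H^s(\T,\R^2)$, $\vphi \mapsto (\eta(\vphi,\cdot),\psi(\vphi,\cdot))$, such that $u(t,x) = i(\tilde \om t)(x)$ solves \eqref{WW}, after introducing action-angle variables $(\theta,y) \in \T^\nu \times \R^\nu$ on the finite-dimensional tangential subspace spanned by $\{\cos(jx)\}_{j \in \mathbb{S}^+}$ and keeping Sobolev-valued normal variables $z$ in its $L^2$-orthogonal complement.

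The first step is a weak partial Birkhoff normal form on the tangential directions: symplectic changes of variables eliminating the non-resonant monomials of low degree in $y$ and $z$, producing a Hamiltonian $N + P$ with integrable part $N = \tilde\om(\kappa,\xi)\cdot y + \tfrac12 (z,\Omega z)_{L^2} + \ldots$ and amplitude-modulated frequencies $\tilde\om(\kappa,\xi) = \om(\kappa) + \mathsf{A}(\kappa)\xi + \ldots$. This yields the twist (non-degeneracy) of $\xi \mapsto \tilde\om(\kappa,\xi)$ for most $\kappa$, which is indispensable for the final measure estimates. Next I would set up a Nash--Moser iteration for the nonlinear functional equation $\mathcal{F}(i,\zeta) = 0$ encoding invariance of $i(\T^\nu)$ with prescribed frequency. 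Convergence requires, at every approximate solution, the inversion with tame Sobolev estimates of the linearized operator $\mathcal{L}_\om = \tilde\om\cdot\pa_\vphi - J\,\pa_u\nabla H(u)$ for $\kappa$ in a shrinking chain of Cantor subsets of $[\kappa_1,\kappa_2]$ defined by Diophantine and second Melnikov conditions.

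The analytic heart of the proof is the inversion of $\mathcal{L}_\om$, which is a quasi-periodic in time \emph{pseudo-differential} operator of order $3/2$ with variable coefficients in $x$. I would first rewrite it, following the complexification alluded to above \eqref{Lom}, as a scalar equation of the form $\pa_t u = \ii T(D) u + \mathcal{R}(\vphi,x,D)u$ with $\mathcal{R}$ of order $3/2$; then perform a finite sequence of Hamiltonian- and reversibility-preserving conjugations --- torus diffeomorphisms, multiplications, Fourier multipliers and flows of hyperbolic pseudo-differential symbols --- designed to strip the $\vphi,x$-dependence at orders $3/2,\,1/2,\,0,\,-1,\ldots$, reducing $\mathcal{L}_\om$ to $\tilde\om\cdot\pa_\vphi + D_\infty + R_\infty$ with $D_\infty$ diagonal on the Fourier basis and $R_\infty$ arbitrarily smoothing. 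I would then complete the diagonalization by a quadratic KAM reducibility scheme, under second Melnikov conditions on the limit eigenvalues $\mu_j^\infty(\kappa,\xi) \sim \sqrt{\kappa}\, j^{3/2}$.

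Finally I would verify that the parameters $\kappa \in [\kappa_1,\kappa_2]$ satisfying all the imposed non-resonance conditions form a set of asymptotically full measure as $\xi \to 0$, using the Birkhoff twist and the $j^{3/2}$ growth of $\om_j$. Linear stability is an immediate byproduct of the reducibility, since in the final coordinates the linearized flow is a direct sum of rotations. The main obstacle throughout is the pseudo-differential reduction: because the nonlinearity in \eqref{WW} carries derivatives of the same order $3/2$ as the principal linear part, a naive KAM iteration would lose unboundedly many derivatives, so one must exploit the Hamiltonian and reversible structures \eqref{HS}, \eqref{defS} at every step to keep the conjugated operator in a class where tame estimates, spectral symmetries and good measure bounds on the small divisors all survive simultaneously.
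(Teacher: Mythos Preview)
Your overall architecture --- Nash--Moser iteration plus pseudo-differential reduction and KAM reducibility of the linearized operator, with linear stability as a byproduct --- matches the paper's. The substantive divergence is in how you propose to obtain the frequency modulation needed for the measure estimates.

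You invoke a weak Birkhoff normal form to produce amplitude--frequency modulation $\tilde\omega(\kappa,\xi) = \omega(\kappa) + \mathsf{A}(\kappa)\xi + \ldots$, calling the resulting twist in $\xi$ ``indispensable''. The paper does \emph{not} perform any Birkhoff normal form. Instead it uses the surface tension $\kappa$ itself as the modulating parameter, via \emph{degenerate KAM theory}: the analytic curve $\kappa \mapsto (\omega_j(\kappa))_{j}$ is shown (Lemma~\ref{non degenerazione frequenze imperturbate}, a Vandermonde argument) to be non-degenerate in the sense of Definition~\ref{def:non-deg}, and this qualitative property is upgraded in Proposition~\ref{Lemma: degenerate KAM} to quantitative R\"ussmann-type transversality bounds which are stable under $\mathcal{C}^{k_0}$-small perturbations of the frequencies. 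The Nash--Moser scheme is then set up as a Herman-style th\'eor\`eme de conjugaison hypoth\'etique (Theorem~\ref{main theorem}): one introduces a counterterm $\alpha \in \R^\nu$ into the Hamiltonian, treats $(\omega,\kappa)$ as free parameters, and only afterwards inverts $\alpha_\infty$ and intersects with the unperturbed frequency curve to recover solutions of the original system (Theorem~\ref{Teorema stima in misura}). The payoff is twofold: the small-divisor constant can be taken $\gamma = \varepsilon^a$ with $a>0$ small, so no amplitude-dependent twist is needed; and one avoids the Birkhoff step altogether, which matters because the third-order normal form of gravity-capillary waves has resonant triads (Wilton ripples) at countably many $\kappa$, so your matrix $\mathsf{A}(\kappa)$ may degenerate and would require its own exclusion argument. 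Your route is the one used in \cite{BBM-auto} for KdV where no external parameter is available; here the presence of $\kappa$ makes the degenerate-KAM/counterterm approach cleaner. Aside from this, your outline of the linearized reduction is broadly right, though the paper inserts a block-decoupling step (Section~\ref{sec:decoupling}) pushing the off-diagonal part to arbitrarily negative order \emph{before} the Egorov conjugation, and the conjugating maps are flows of the dispersive pseudo-PDE $\partial_t u = \ii a(\vphi,x)|D|^{1/2}u$ rather than hyperbolic symbols.
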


Theorem \ref{thm:main0}  follows by Theorems \ref{MAINTHEOREM} and \ref{Teorema stima in misura}. 
This result has been announced in \cite{BM16}. 
Let us make some comments.

\begin{enumerate}
\item 
No global in time existence results concerning the initial value problem of the water waves equations 
\eqref{WW} under {\it periodic} boundary conditions are known so far. 
The present Nash-Moser-KAM iterative procedure selects many values of the  surface tension parameter
$ \kappa \in [\kappa_1, \kappa_2] $  which give rise
to the quasi-periodic solutions  \eqref{QP:soluz}, which are defined for all times.
Clearly, by a Fubini-type argument it also results that, for most values of $ \kappa \in [\kappa_1, \kappa_2] $,
there exist quasi-periodic solutions of \eqref{WW} for most values of the amplitudes $ | \xi | \leq \e_0^2 $. 
The fact that we find quasi-periodic solutions restricting to a proper subset of 
parameters is not a technical issue.
The gravity-capillary water-waves equations \eqref{WW} are not expected to be 
integrable  (albeit a rigorous proof is still lacking): yet  the third order Birkhoff normal form possesses 
multiple resonant triads (Wilton ripples),  
see Craig-Sulem \cite{CS15}. 
\item
In the proof of Theorem \ref{thm:main0} all the estimates depend on the surface tension coefficient 
$ \kappa >  0 $ and the result 
does not hold at the limit of 
zero surface tension  $ \kappa \to 0 $. 
Because of  capillarity the  
linear frequencies \eqref{linear frequencies}
grow asymptotically  $ \sim \sqrt{\kappa} j^{3/2} $ as $ j \to + \infty $. 
Without surface tension  the linear frequencies grow asymptotically as $ \sim  j^{1/2} $
and a different proof is required. 
\item
The quasi-periodic solutions  \eqref {QP:soluz} are mainly supported in Fourier space on the tangential sites 
$ {\mathbb  S}^+ $.
The dynamics of the water waves equations \eqref{WW} restricted 
to the symplectic  subspaces 
\begin{equation}\label{splitting S-S-bot}
\begin{aligned}
& H_{{\mathbb S}^+} := \Big\{ v = \sum_{j \in {\mathbb S}^+} \begin{pmatrix} 
 \eta_j  \\
\psi_j \\
\end{pmatrix} \cos (jx)   \Big\} \, , \\
& H_{{\mathbb S}^+}^\bot := \Big\{ 
z = \sum_{j \in \N \setminus {\mathbb S}^+} \begin{pmatrix} 
 \eta_j  \\
\psi_j \\
\end{pmatrix} \cos (jx) \in H^1_0(\T_x)  \Big\} ,
\end{aligned}
\end{equation}
is quite different. We call $ v \in H_{{\mathbb S}^+} $  the {\it tangential} variable and $ z \in H_{{\mathbb S}^+}^\bot $ the {\it normal} one. 
On the finite dimensional subspace $ H_{{\mathbb S}^+} $
we describe the dynamics  by introducing  the action-angle variables $ (\theta, I) \in  \T^\nu \times \R^\nu $, see \eqref{Ac-An}. 

This is a difference with respect to the previous papers \cite{PlTo}, \cite{IP-SW2}, \cite{IP-SW1}, \cite{IP-Mem-2009}, 
\cite{IP2}, \cite{IPT}, \cite{AB},
that follow the Lyapunov-Schmidt decomposition.  
The present formulation 
enables, among other advantages,  to prove the linear stability of the quasi-periodic solutions.
\item
{\bf Linear stability.}
The quasi-periodic solutions $ u( \tilde \om t) = (\eta ( \tilde \om t), \psi ( \tilde \om t) ) $ found in Theorem \ref{thm:main0}  are linearly stable.
This  is not only a dynamically relevant information 
but also an essential ingredient of the existence proof 
(it is not necessary 
for time periodic  solutions as in \cite{AB},  \cite{IP-SW2}, \cite{IP-SW1}, \cite{IPT}).
Let us state precisely the result. Around each invariant torus 
there exist symplectic coordinates 
$$ 
(\phi, y, w) = (\phi, y, \eta, \psi) \in \T^\nu \times \R^\nu \times  H_{{\mathbb S}^+}^\bot   
$$ 
(see \eqref{trasformazione modificata simplettica} and \cite{BB13}) 
in which the water waves Hamiltonian  reads
\begin{equation}\label{weak-KAM-normal-form}
\begin{aligned} 
 \om \cdot y  & + \frac12 K_{2 0}(\phi) y \cdot y +  \big( K_{11}(\phi) y , w \big)_{L^2(\T_x)}  \\
&  + \frac12 \big(K_{02}(\phi) w , w \big)_{L^2(\T_x)} + K_{\geq 3}(\phi, y, w)  
\end{aligned}
\end{equation}
where $ K_{\geq 3} $ collects the terms at least cubic in the variables $ (y, w )$
(see  \eqref{KHG} and note that at a solution $ \partial_\phi K_{00} = 0 $, $ K_{10} = \omega $, 
$ K_{01} = 0 $ by Lemma \ref{coefficienti nuovi}). 
In these coordinates the quasi-periodic solution reads 
$ t \mapsto (\om t , 0, 0 ) $ (for simplicity we denote the frequency $ \tilde \om $ of the quasi-periodic solution by $ \om $) and  the corresponding  linearized water waves equations are
\begin{equation}\label{linear-torus-new coordinates}
\begin{cases}
\dot{ \phi} = K_{20}(\omega t)[ y] + K_{11}^T(\omega t)[ w] \\
\dot{ y} = 0 \\
\dot{ w} = J K_{02}(\omega t)[ w] + J K_{11}(\omega t)[ y]\, .
\end{cases}
\end{equation}
Thus the actions $ y (t) = y(0) $ do not evolve in time and 
the third equation reduces to  the PDE
\begin{equation}\label{San pietroburgo modi normali}
\dot{w} = J K_{02}(\omega t)[ w] + J K_{11}(\omega t)[ y (0)] \, .
\end{equation}
The self-adjoint operator $  K_{02} (\om t) $ (defined in \eqref{KHG}) 
turns out to be the restriction to $ H_{{\mathbb S}^+}^\bot $ of the linearized water-waves  operator 
$ \pa_u \nabla H (u (\om t ))  $, explicitly computed in \eqref{linearized vero}, 
 up to a finite dimensional remainder, see Lemma \ref{thm:Lin+FBR}.

In
sections \ref{linearizzato siti normali} and  \ref{sec: reducibility} we prove the existence of 
bounded and invertible ``symmetrizer" maps, see \eqref{defW1W2}, such that $ \forall \vphi \in \T^\nu $, $m = 1, 2$
\begin{align}\label{W12-ben-poste}
& {\bf W}_{m, \infty}(\vphi)  \! : \!  
 H^s(\T_x, \C^2)  \cap H_{{\mathbb S}^+}^\bot
\! \to \! \Big( H^s(\T_x, \R) \times H^{s - \frac12}(\T_x, \R) \Big) \cap H_{{\mathbb S}^+}^\bot , \\
& \label{W12-ben-poste-inverse}
{\bf W}_{m, \infty}^{-1}(\vphi) \! : \!
\Big( H^s(\T_x, \R) \times H^{s - \frac12}(\T_x, \R) \Big) \cap H_{{\mathbb S}^+}^\bot \! \to \!
 H^s(\T_x, \C^2)  \cap H_{{\mathbb S}^+}^\bot ,  
\end{align}
and, under the change of variables 
$$
 w = (\eta, \psi) = {\bf W}_{1, \infty} (\omega t) w_\infty \, ,  \quad
 w_\infty = ( {\mathtt w}_\infty, \overline {\mathtt w}_\infty ) \, ,
 $$
the equation \eqref{San pietroburgo modi normali} transforms into the diagonal system
\begin{equation}\label{san pietroburgo siti normali ridotta}
\begin{aligned}
& \partial_t{ w_\infty} = - \ii {\bf D}_\infty  w_\infty + f_\infty(\omega t)\,, \\ 
& f_\infty(\omega t) := 
{\bf W}_{2, \infty}(\vphi) (\omega t)^{- 1} J K_{11}(\omega t)[ y(0)] =
\begin{pmatrix}
{\mathtt f}_\infty (\omega t) \\ 
\overline {\mathtt f}_\infty (\omega t) 
\end{pmatrix}
\end{aligned}
\end{equation}
where, denoting ${\mathbb S}_0 := {\mathbb S}^+ \cup (- {\mathbb S}^+) \cup \{ 0 \} \subseteq \Z$,
\be\label{def: D-infty}
{\bf D}_\infty := \begin{pmatrix}
D_\infty & 0 \\
0 & - D_\infty
\end{pmatrix}\,, \quad D_\infty := {\rm diag}_{j \in {\mathbb S}_0^c} \{ \mu_j^\infty\}\,, \quad \mu_j^\infty \in \R \, , 
\ee
is a Fourier multiplier operator of the form (see \eqref{autovalori finali riccardo}) 
\be\label{Floquet-exp}
  \mu_j^{\infty} := \mathtt m_3^{\infty} \sqrt{|j|(1 + \kappa j^2)} + {\mathtt m}_1^{\infty} |j|^{\frac12} + r_j^\infty \, , \ j  \in {\mathbb S}_0^c \, , 
  \quad r_j^\infty = r_{- j}^\infty\,, 
 \ee
 where, for some $ {\mathtt a} >  0 $, 
$$
\mathtt m_3^{\infty} = 1 + O(\e^{\mathtt a}) \, , \quad {\mathtt m}_1^{\infty} = O(\e^{\mathtt a} ) \, , \quad
\sup_{j \in {\mathbb S}_0^c} |  r_j^{\infty}| = O(\e^{\mathtt a} ) \, .
$$
Actually by \eqref{autovalori infiniti}-\eqref{stime autovalori infiniti} and \eqref{relazione tau k0} we also have a 
control of the derivatives of $ \mathtt m_3^{\infty}  $, $ {\mathtt m}_1^{\infty} $ and 
$ r_j^{\infty} $ with respect to $ (\omega, \kappa)$.
The $  \ii \mu_j^{\infty} $ are the {\it Floquet exponents} of the quasi-periodic solution. 
The second equation of system  \eqref{san pietroburgo siti normali ridotta} is actually the complex conjugated of the first one,
and  \eqref{san pietroburgo siti normali ridotta} reduces to the infinitely many decoupled  scalar equations
$$
\partial_t {\mathtt w}_{\infty, j} = - \ii \mu_j^\infty {\mathtt w}_{\infty, j} + {\mathtt f}_{\infty, j} (\omega t)\,, \quad \forall j \in {\mathbb S}_0^c \, . 
$$
By variation of constants the solutions are  
\be\label{soluz-lin-forc}
\begin{aligned}
& {\mathtt w}_{\infty, j} (t) = c_j e^{ - \ii \mu_j^\infty t} + {\mathtt v}_{\infty, j} (t)  \qquad {\rm where} \\ 
& {\mathtt v}_{\infty, j} (t)  := \sum_{\ell \in \Z^\nu} \frac{ {\mathtt f}_{\infty, j, \ell } \, e^{\ii \om \cdot \ell t } }{ \ii (\om \cdot \ell + \mu_j^\infty) } \,, \quad \forall j \in {\mathbb S}^c_0 \, .
\end{aligned}
\ee
Note that the first Melnikov conditions  \eqref{Cantor set infinito riccardo} hold at a solution so that 
$ {\mathtt v}_{\infty, j} (t) $ in  \eqref{soluz-lin-forc} is well defined.
Moreover \eqref{W12-ben-poste} implies 
$$ 
\| f_\infty(\omega t)\|_{H^s_x \times H^s_x} \leq C |y (0)| \, . 
$$ 
As a consequence  the Sobolev norm of the solution  of \eqref{san pietroburgo siti normali ridotta} 
 with initial condition $ w_{\infty} (0)  \in H^{{\mathfrak s}_0} (\T_x) $,
  for some $ s_0 < {\mathfrak s}_0 < s $ (in a suitable range of values),  
 satisfies 
$$
\|  w_\infty(t) \|_{H^{{\mathfrak s}_0}_x \times H^{{\mathfrak s}_0}_x}   
\leq C(s) (| y (0) | + \| w_\infty (0) \|_{H^{{\mathfrak s}_0}_x \times H^{{\mathfrak s}_0}_x})\, ,  
$$
and, for all $ t \in \R $, using \eqref{W12-ben-poste}, \eqref{W12-ben-poste-inverse}, we get 
$$
\| (\eta, \psi)(t) \|_{H^{{\mathfrak s}_0}_x \times H^{ {\mathfrak s}_0 -\frac12}_x} \leq C
 \| (\eta(0),\psi(0))  \|_{H^{{\mathfrak s}_0}_x \times H^{ {\mathfrak s}_0 - \frac12}_x} 
$$
which proves the linear stability of the torus. Note that  the profile $\eta \in  $ $ H^{{\mathfrak s}_0} (\T_x) $ is more regular than 
the velocity potential $ \psi \in H^{ {\mathfrak s}_0 - \frac12}(\T_x)  $, as it is expected in presence of  surface tension, see \cite{ABZ1}. 

Clearly a crucial  point is 
the diagonalization of \eqref{San pietroburgo modi normali} into \eqref{def: D-infty}.  
With respect to \cite{AB} this requires to analyze more in detail the pseudo-differential  nature of the operators obtained after each conjugation
and to implement a KAM scheme 
with second order Melnikov non-resonance conditions, as we shall explain in detail below. 
\item
{\it Hamiltonian and reversible structure.} 
It is well known that the existence of quasi-periodic motions is possible just for 
systems with some algebraic structure
which excludes ``secular motions" and friction phenomena. 
The most common ones are the Hamiltonian and the reversible structure.
The water-waves system  \eqref{WW} exhibits  both of them and 
we shall use both. 
The Hamiltonian structure is used in particular in section \ref{costruzione dell'inverso approssimato}
to introduce the symplectic coordinates $ (\phi, y, w) $ in \eqref{trasformazione modificata simplettica} 
adapted to an approximately-invariant torus. 
On the other hand, for solving the
second equation of the linear system \eqref{operatore inverso approssimato proiettato} we use reversibility
(we could exploit just the Hamiltonian structure as done in \cite{BBM-auto}-\cite{BBM-mKdV}, \cite{BB13}-\cite{BB14}). 
Moreover the transformations $ {\bf W}_{1, \infty} $, $ {\bf W}_{2, \infty} $ 
which reduce the linearized operator to constant coefficients preserve
the reversible structure
(it is  slightly simpler 
than to preserve 
the Hamiltonian one).  Reversibility  implies  that several averaged vector fields
are zero, for example a constant coefficient 
operator of the form $ h \mapsto a \pa_x h $, $ a \in \R $, is not compatible with the 
reversible structure of the water waves,  and therefore it is zero. This leads to the 
asymptotic expansion of the Floquet exponents $ \ii \mu_j^\infty $ with $ \mu_j^\infty  $ as in \eqref{Floquet-exp}, 
in particular to the fact that they are purely imaginary. The linear stability of the  quasi-periodic standing wave
solutions of Theorem \ref{thm:main0} is a consequence of the  
reversible structure of the water waves equations. 
\end{enumerate}

We  prove the existence of quasi-periodic solutions by a Nash-Moser iterative scheme in Sobolev spaces
formulated as a `Th\'eor\'eme de conjugaison hypoth\'etique" \'a la Herman (section \ref{sec:conjugation-hyp}). 
In order to perform 
effective measure estimates in the surface tension parameter $ \kappa \in [\kappa_1, \kappa_2 ] $ (section \ref{sec:measure})
we use  degenerate KAM theory for PDEs (section \ref{sec:degenerate KAM}).
For the convergence of the Nash-Moser scheme (section \ref{sec:NM}) 
it is sufficient to have an ``almost approximate" inverse of the linearized operators at each step of the iteration. 
 We use the adjectives ``almost" and ``approximate" in the following sense.  
An  ``approximate" inverse  is an operator that is an exact inverse at an  exact invariant torus, 
following the terminology of Zehnder \cite{Z1}.
The adjective ``almost" refers to the fact that at the $ n $-th step of the Nash-Moser iteration we 
shall require only finitely many non-resonance conditions of diophantine type (ultraviolet cut-off)
and therefore remain terms which are Fourier supported on high frequencies of magnitude larger than $ c N_n $ 
and thus can be estimated as $ O( N_n^{-a} ) $ for some $a > 0$ (in suitable norms).
We follow (section \ref{costruzione dell'inverso approssimato}) the scheme  
proposed in \cite{BB13}-\cite{BB14}, and implemented in \cite{BBM-auto}-\cite{BBM-mKdV}, 
 which reduces the problem to ``almost approximately" invert the linearized operator restricted to  the normal directions. 
The  crucial PDE analysis is the 
reduction in sections \ref{linearizzato siti normali}-\ref{sec: reducibility}  of the 
linearized operator to constant coefficients. 

\section{Ideas of proof}

Let us present more in details some key ideas of the paper. 

 \begin{enumerate}
\item {\it Bifurcation analysis and Degenerate KAM theory. } 
A first key observation 
 is that, for most values of the surface tension parameter $ \kappa \in [ \kappa_1, \kappa_2 ] $, 
the unperturbed  linear frequencies \eqref{linear frequencies} are diophantine 
and satisfy also first and second order Melnikov non-resonance conditions. More precisely 
the unperturbed tangential frequency vector $ \vec \om (\kappa) := (\om_j (\kappa))_{j \in {\mathbb S}^+} $ 
satisfies 
$$
| \vec \om (\kappa) \cdot \ell  | \geq \g \langle \ell \rangle^{-\tau}, \ \   \forall \ell \in \Z^{\nu} \setminus \{ 0 \}, \ \ 
\langle \ell \rangle := {\rm max}\{ 1, |\ell| \} \, ,  
$$
and it is non-resonant with
the  normal  frequencies 
$$ 
\vec \Om (\kappa)  := ( \Om_j (\kappa ) )_{j \in \N^+ \setminus {\mathbb S}^+} = 
 ( \om_j (\kappa ) )_{j \in \N^+ \setminus {\mathbb S}^+},  
 $$ 
 i.e.   
\begin{align*}
&  
\qquad \qquad  | \vec \omega (\kappa)  \cdot \ell  +  \Om_j (\kappa )  |  \geq  \gamma j^{\frac32} \langle \ell  \rangle^{- \tau}, \, 
 \forall \ell   \in \Z^\nu, \, j \in \N^+ \setminus {\mathbb S}^+ \, ,   \\
& \qquad \qquad  | \vec \omega (\kappa) \cdot \ell  + 
 \Omega_j (\kappa ) \pm  \Omega_{j'} (\kappa ) |  \geq 
\gamma | j^{\frac32} \pm  j'^{\frac32}| \langle \ell  \rangle^{- \tau}, 
 \forall \ell   \in \Z^\nu,j, j' \in \N^+ \setminus {\mathbb S}^+  \, .   
\end{align*}
This is a problem of diophantine approximation on submanifolds as in \cite{Py}. It can be solved 
 by degenerate KAM theory (explained below) exploiting that 
the linear frequencies $  \kappa \mapsto \om_j (\kappa) $ 
are {\it analytic}, 
simple,  grow asymptotically as  $ j^{3/2} $ and are {\it non-degenerate} in the sense of 
Bambusi-Berti-Magistrelli \cite{BaBM}
(another proof can be given by  the tools  of subanalytic geometry in Delort-Szeftel \cite{DS}). 
For such values of $  \kappa \in $ $ [\kappa_1, \kappa_2 ]  $, the  solutions \eqref{eta psi sin cos series-QP}  of the linear equation \eqref{Lom}
are already sufficiently good approximate quasi-periodic solutions of  the nonlinear water waves system \eqref{WW}. 
Since the parameter space $ [\kappa_1, \kappa_2 ] $ is fixed, 
the  small divisor constant $ \g $ can be taken $ \gamma  = o( \e^a) $ with $ a > 0 $ small as needed,
see \eqref{relazione tau k0}. As a consequence for proving the continuation of \eqref{eta psi sin cos series-QP} 
to solutions of the nonlinear water waves system \eqref{WW},  
all the terms  which are at least quadratic in  \eqref{WW}  are yet perturbative
(in \eqref{WW-riscalato} it is sufficient to regard the vector field $\e X_{P_{\e}} $  
as a perturbation of the linear vector field  $ J \Omega $).

Actually along the Nash-Moser-KAM iteration we need to verify that the perturbed frequencies 
are diophantine and satisfy first and second order Melnikov 
non-resonance conditions.  It is actually for that
we find convenient to develop degenerate KAM theory as in \cite{BaBM} 
and we formulate the problem as a 
Th\'eor\'eme de conjugaison hypoth\'etique {\`a} la  Nash-Moser as we explain below. 
\item{\it A Nash-Moser Th\'eor\'eme de conjugaison hypoth\'etique.}
The  expected quasi-periodic solutions of the autonomous Hamiltonian system \eqref{WW}
will have  shifted frequencies $ \tilde \om_j $ -to be found- close to the linear frequencies $  \om_j (\kappa) $ 
in \eqref{linear frequencies}, which depend on the nonlinearity and the amplitudes $ \xi_j $.
Since the Melnikov non-resonance conditions are naturally imposed on $ \om $, 
it is convenient to use the functional setting formulation 
of Theorem \ref{MAINTHEOREM} where the {\it parameters} are the {\it frequencies}  $ \om \in \R^\nu $ and the {\it surface tension} $\kappa \in [\kappa_1, \kappa_2]$   
and we introduce a counter term $ \a \in \R^\nu $ in the family of Hamiltonians $ H_\a $  defined in \eqref{H alpha}.

Then the goal is 
to prove that,  for $ \e $ small enough, for ``most" parameters $ (\om, \kappa) \in {\mathcal C}^\gamma_\infty $, there exists 
a value of the constants $ \a := \a_\infty (\om, \kappa, \e) = \om + O(\e \g^{-k} ) $ 
and a $ \nu $-dimensional embedded  torus  $ {\mathcal T} = i(\T^\nu )$
close to $ \T^\nu \times \{0\} \times \{ 0 \} $,   
 invariant for the Hamiltonian vector field $ X_{H(\a_\infty (\omega, \kappa, \e), \cdot )}$ and 
 supporting quasi-periodic solutions with frequency $ \om $. 
This is equivalent  to look for a zero of the nonlinear operator
$ {\mathcal F} (i, \a, \omega, \kappa, \varepsilon ) = 0 $ defined in \eqref{operatorF}. This equation is solved in Theorem \ref{MAINTHEOREM} by
a Nash-Moser iterative scheme.
The value of $ \a := \a_\infty (\om, \kappa, \e) $ is adjusted along the iteration 
in order to
control  the average of the first component of the Hamilton equation \eqref{operatorF}, 
in particular for solving the linearized equation \eqref{operatore inverso approssimato}, \eqref{equazione psi hat}. 

The set of parameters $ (\om, \kappa) \in {\mathcal C}^\gamma_\infty $ for which the invariant 
torus exists is the explicit set \eqref{Cantor set infinito riccardo}.
We require that $ \omega $ satisfies the diophantine property  
\be\label{dioph}
| \om \cdot \ell | \geq \gamma \langle \ell \rangle^{- \tau} \, , \quad \forall \ell \in \Z^\nu \setminus \{0\} \, ,
\ee
and, in addition, the  first and second  Melnikov 
non-resonance conditions. 

Note that the set $ {\mathcal C}^\gamma_\infty $ is defined
  in terms of the ``final torus" $ i_\infty $ (see \eqref{stima toro finale}) and the 
``final eigenvalues" in \eqref{autovalori infiniti} which are defined for {\it all} the values of the frequency $ \om \in \R^\nu $ and $ \kappa \in [\kappa_1, \kappa_2]$ by a Whitney-type extension
argument, see the sentences after \eqref{omega diofanteo troncato}.
This formulation completely decouples the Nash-Moser 
iteration (which provides the torus $ i_\infty (\omega, \kappa, \e) $ and the constant $ \a_\infty (\om, \kappa,  \e) \in \R^\nu $) 
from the discussion about the measure of the set of parameters 
where all the non-resonance conditions are indeed verified. This simplifies 
the  measure estimates which are no longer imposed at each step but only once, see section \ref{sec:measure}.
This formulation follows  that of \cite{BB06} (in a Lyapunov-Schmidt context) and \cite{BBi10}
(in a KAM theorem) and \cite{BCP} (in a Nash-Moser context).   
The measure estimates are done in section \ref{sec:measure}. 

In order to prove the existence of quasi-periodic solutions of the  water waves equations \eqref{WW}, and not only
of the system with modified Hamiltonian $ H_\a $ with $ \a := \a_\infty (\om, \kappa, \e) $, 
we have then to prove that the curve  of the unperturbed linear frequencies 
$$ 
[\kappa_1, \kappa_2] \ni \kappa \mapsto \vec \om (\kappa ) := ( \sqrt{j(1+ \kappa j^2) } )_{j \in {\mathbb S}^+} \in \R^\nu 
$$
intersects the image $ \a_\infty ({\mathcal C}^\gamma_\infty ) $, under the map $ \a_\infty $
 of the   set $ {\mathcal C}^\gamma_\infty  $, for ``most" values of $ \kappa \in [\kappa_1, \kappa_2] $.  
 This is proved in Theorem \ref{Teorema stima in misura} by degenerate KAM theory. 
For such values of $ \kappa $ we have found a quasi-periodic solution of \eqref{WW} with 
diophantine frequency $ \om_\e (\kappa) := \a_\infty^{-1} ( \vec \om (\kappa ), \kappa ) $, where $\alpha_\infty^{- 1}(\cdot, \kappa)$ is the inverse of the function $\alpha_\infty(\cdot, \kappa)$ at a fixed $\kappa \in [\kappa_1, \kappa_2]$. 

The above functional setting  perspective 
is in the spirit of the so called ``Th\'eor\'eme de conjugaison hypoth\'etique" of Herman proved by 
Fejoz \cite{HF} for finite dimensional Hamiltonian systems, see also the discussion in \cite{BB13}.
A relevant  difference is that in \cite{HF}, in addition to $ \a $,   also the normal frequencies  are introduced as 
independent parameters, 
unlike in Theorem \ref{MAINTHEOREM}. Actually
for PDEs  it seems more convenient the present formulation:  it is 
a major point of the work to know the asymptotic expansion \eqref{Floquet-exp} 
 of  the 
Floquet exponents.
 \item {\it Degenerate KAM theory and measure estimates.} 
In Theorem \ref{Teorema stima in misura}  we prove that 
for all the  values of 
$ \kappa  \in [\kappa_1, \kappa_2] $ except a set of small measure 
$ O(\g^{1/k_0}) $ 
(the value of $ k_0 \in \N $ is fixed once for all in section \ref{sec:degenerate KAM}) 
the vector 
$ (\alpha_\infty^{- 1}(\vec \om (\kappa), \kappa), \kappa) $ belongs 
to  the set ${\mathcal C}^\gamma_\infty$, see the set $ {\mathcal G}_\e $ in \eqref{defG-ep}.
As already said, we use in an essential way that the unperturbed frequencies 
$ \kappa \mapsto \om_j (\kappa) $  
are {\it analytic}, are simple (on the subspace of the even functions), 
 grow asymptotically as 
$ j^{3/2} $ and are {\it non-degenerate} in the sense of \cite{BaBM}. 
This is  verified in Lemma \ref{non degenerazione frequenze imperturbate} as in \cite{BaBM} by a Van der Monde determinant. Then we 
develop degenerate KAM theory which reduces this qualitative non-degeneracy condition into a quantitative one,
which is sufficient to estimate effectively the measure of the  set 
$ {\mathcal G}_\e $ by the classical R\"ussmann lemma. 
We deduce in Proposition \ref{Lemma: degenerate KAM} that $ \exists k_0 > 0 $, $ \rho_0 > 0 $  such that,
for all $ \kappa \in [\kappa_1, \kappa_2] $,
\be\label{unperturbed measure}
\begin{aligned}
\max_{0 \leq k \leq k_0} \big| \pa_\kappa^k  \big( {\vec \om} (\kappa) \cdot \ell + { \Om}_j (\kappa) 
- { \Om}_{j'} (\kappa)  \big) \big| 
\geq \rho_0 \langle \ell \rangle  \, ,   \\ 
\forall (\ell, j, j') \neq (0,j,j), \  j, j' \in \N^+ \setminus {\mathbb S}^+ \, , 
\end{aligned}
\ee
and similarly for the $ 0 $-th,  
$ 1$-th and the $ 2$-th order Melnikov non-resonance condition with the sign $ +$. Note that 
the restriction to the subspace \eqref{eta0psi0}, see also \eqref{phase space}, of functions with zero average 
in $ x $  eliminates the zero frequency $ \om_0 = 0 $, which is trivially resonant
(this is used also in  \cite{Craig-Worfolk}).
Property \eqref{unperturbed measure} 
implies 
that for ``most" parameters $ \kappa \in [\kappa_1, \kappa_2] $ the unperturbed linear frequencies
$ ( \vec \om (\kappa), \vec \Omega (\kappa) ) $ satisfy the Melnikov conditions of $ 0,1,2 $ order (but we do not use it explicitly). 
Actually, the condition  \eqref{unperturbed measure} is stable under
perturbations which are small in $ {\mathcal C}^{k_0}$-norm, see Lemma \ref{Lemma: degenerate KAM perturbato}.
Since the perturbed Floquet exponents in \eqref{mu j infty kappa} 
are  small perturbations of the unperturbed linear frequencies
$ \sqrt{j(1+ \kappa j^2)} $ in $ {\mathcal C}^{k_0}$-norm 
(see \eqref{stima omega epsilon kappa} and \eqref{stime coefficienti autovalori in kappa})
the `transversality" property \eqref{unperturbed measure} still holds   
for the  perturbed frequencies $ \om_\e (\kappa) $ 
defined in \eqref{omega epsilon kappa}. 
As a consequence, 
by applying the classical R\"ussmann lemma (Theorem 17.1 in \cite{Ru1}) 
 we prove that the set of non-resonant parameters $ {\mathcal G}_\e $
has a large measure, see Lemma \ref{stima risonanti Russman} and the end of the 
proof of Theorem \ref{Teorema stima in misura}.   
\end{enumerate}

\noindent
{\it Analysis of the linearized operators.} 
The other crucial analysis  for the Nash-Moser iterative scheme is to prove that the 
{\it  linearized operator} obtained at any approximate solution 
is, for most values of the parameters, invertible, and that its inverse satisfies {\it tame} estimates in Sobolev spaces.
We implement in section \ref{costruzione dell'inverso approssimato} 
the  procedure developed in Berti-Bolle \cite{BB13} and \cite{BBM-auto}-\cite{BBM-mKdV} for autonomous PDEs.
It consists in introducing a convenient set of symplectic variables (see \eqref{trasformazione modificata simplettica}) near the 
approximate torus such that
the linearized equations 
become  (approximately)  decoupled in the action-angle components and the normal ones, see  
\eqref{operatore inverso approssimato}.
As a consequence,  
 the problem is reduced
to ``almost-approximately" invert  the linearized operator 
$ {\mathcal L}_\om $ defined in   \eqref{Lomega def}.  
Actually, since the 
symplectic change of variables \eqref{trasformazione modificata simplettica} 
 modifies, up to a translation, only the finite dimensional action component, 
the linear operator $ {\mathcal L}_\om $  is nothing but the linearized water-waves 
operator $ {\mathcal L}$ computed in \eqref{linearized vero} -in the original coordinates- up to a finite dimensional remainder
and restricted to the normal directions. 
Thus the key part of the analysis consists in
(almost) reducing  the quasi-periodic linear operator $ {\mathcal L}$  to constant coefficients,
via linear changes of variables close to the identity, which map Sobolev spaces into itself and satisfy  tame estimates, see
Theorem \ref{inversione parziale cal L omega}. We refer to this result as ``almost invertibility" of ${\mathcal L}_\omega$, because we get an inverse of this operator up to the small remainders ${\bf R}_\omega$ (which is of order $O(\e \gamma^{- 1} N_{n - 1}^{- \mathtt a})$, $\mathtt a > 0$) and ${\bf R}_\omega^\bot$ (which is of order $O(K_{n }^{-  b})$, $ b > 0$), see \eqref{splitting cal L omega}-\eqref{stima R omega bot corsivo bassa}. 

\noindent
This is achieved in sections \ref{linearizzato siti normali} and \ref{sec: reducibility}
by making full use of pseudo-differential operator theory 
that we present  in section \ref{sec:pseudo} in a formulation convenient to our purposes. 
\\[1mm]
{\it Pseudo-differential operators. }
We underline that all the coefficients 
of  the linearized operator 
$ {\mathcal L}$ in \eqref{linearized vero} are $ {\mathcal C}^\infty $ in $ (\vphi, x) $ because 
each approximate solution $  (\eta (\vphi, x), \psi (\vphi, x)) $ 
at which we linearize along the Nash-Moser iteration 
is a  trigonometric  polynomial in $ (\vphi, x) $ (at each step we apply the projector
$ \Pi_n $ defined in \eqref{truncation NM}) and the water waves vector field is analytic. 
This allows to work in the usual framework of  $ {\mathcal C}^\infty $ pseudo-differential symbols.
 
In this paper we only use the class $ S^m $ of  (classical) symbols introduced in Definition
\ref{def:Ps2}. We do not explicitly make use  of  
pseudo-differential operators in the  class $ OPS^m_{\frac12, \frac12} $  
used by  Alazard-Baldi in \cite{AB} (called semi-Fourier integral operators). 
Actually we shall produce similar transformations 
as {\it flow}s of   pseudo-PDEs (see \eqref{pseudo-PDE}).
The advantage is that the invertibility of such transformations, 
  as well as the fact that they satisfy tame estimates  in Sobolev spaces
  together with its inverses, 
follows easily 
by proving energy estimates for the flow, see Appendix \ref{AppendiceA}.  

For the Nash-Moser convergence we clearly need to perform quantitative 
estimates in Sobolev spaces. Then, given a pseudo-differential operator 
$$ 
A =  {\rm Op} (a (\vphi,x, \xi) ) \in  OPS^m \, , 
$$
we introduce the norm  $ \norma A \norma_{m,s,\a} $  defined in
\eqref{norm1} (more generally $ \norma A \norma_{m,s,\a}^{k_0, \gamma} $ in Definition \ref{def:pseudo-norm}),  
which is inspired to the para-differential norm in Metivier \cite{Met}, chapter 5.
Note that $ \norma A \norma_{m,s,\a} $  controls the regularity in $ (\vphi, x)$ of the symbol $ a (\vphi, x, \xi) \in S^m $ 
only up to a limited smoothness.  
\\[1mm]
We now explain the main steps for the reduction of the quasi-periodic linear operator $ {\mathcal L} $ in \eqref{linearized vero}. 
 \begin{enumerate}
\item 
{\it Reduction of $ {\mathcal L} $ to constant coefficients in decreasing symbols}.  
The goal of section \ref{linearizzato siti normali} (Proposition \ref{prop: sintesi linearized}) is to reduce  $ {\mathcal L} $ to 
a quasi-periodic linear operator of the form  
\be\label{forma-lineare-preliminary}
(h, \bar h) \mapsto  \big( \Dom   + \ii \mathtt m_3  T(D) + \ii {\mathtt m}_1 |D|^{\frac12} \big) h  + {\mathcal R}  h + {\mathcal Q}  \bar h  \, ,
\quad h \in \C \, , 
\ee
where $ \mathtt m_3, {\mathtt m}_1 \in \R $ are constants satisfying  $ \mathtt m_3 \approx 1 $, $ {\mathtt m}_1 \approx 0 $, the 
principal symbol operator is 
$$ 
T(D) := |D|^{1/2} (1 - \kappa \pa_{xx})^{1/2}  \, , 
$$  
and the remainders 
 $ {\mathcal R} :=  {\mathcal R} (\vphi) $,  $ {\mathcal Q} :=  {\mathcal Q} (\vphi) $
 are small bounded operators acting in the Sobolev spaces $ H^s $, which  
satisfy  tame estimates. More precisely, in view of 
the KAM reducibility scheme of section \ref{sec: reducibility}, 
we need that all the operators in \eqref{operatori-tames},  
together 
 with its derivatives $ \pa_{\om, \kappa}^k {\mathcal R}  $, $ \pa_{\om, \kappa}^k {\mathcal Q}  $, $ |k| \leq k_0 $,  
satisfy tame estimates, see \eqref{stima finale resti prima del KAM}. 
We  neglect in \eqref{forma-lineare-preliminary} smoothing operators 
which are supported on high  Fourier frequencies (ultra-violet cut-off) and therefore satisfy  
\eqref{stima bf R N (3) bot bassa}-\eqref{stima bf R N (3) bot alta}. 
Note that \eqref{forma-lineare-preliminary} is an operator which acts 
on $ (h, \bar h )$. We shall deal in a quite different way the   operators
$$ 
h \mapsto ( \Dom   + \ii \mathtt m_3  T(D) + \ii {\mathtt m}_1 |D|^{\frac12}) h  + {\mathcal R}  h \quad
{\rm and} \quad  
\bar h \mapsto {\mathcal Q}  \bar h \, . 
$$
  We shall call the first operator
``diagonal",   and the latter  ``off-diagonal", with respect to the variables $ (h, \bar h ) $.  
\item
{\it Symmetrization and space-time reduction  of ${\mathcal L} $ at the highest order}. 
The first part of the analysis (sections \ref{sec:linearized operator}-\ref{sec:changes}) 
is similar to Alazard-Baldi \cite{AB}. A  difference is that 
we  reduce the linear operator $ {\mathcal L} $
in \eqref{linearized vero} 
to constant coefficients up to  $ OPS^0 $ remainders (Lemma \ref{lemma:S}),  
while  in \cite{AB} the remainders are $ O(\pa_x^{-3/2}) $. 
The reason of this difference is  that we will not 
invert the linearized operator in \eqref{forma-lineare-preliminary} simply by a Neumann-argument, as done 
for the periodic solutions in \cite{AB}, \cite{IPT},  \cite{IP-SW2}, \cite{IP-SW1},  \cite{PlTo}.
This approach  does not  work in the quasi-periodic case. 
The key difference is that,
in the periodic problem, 
a sufficiently regularizing operator in the space variable is also regularizing in the  time  variable,  
on the characteristic Fourier indices which correspond to 
the small divisors. This is  clearly not true for quasi-periodic solutions.

Our strategy will be to
diagonalize, actually it is sufficient to ``almost diagonalize", 
the linearized operator 
 in \eqref{forma-lineare-preliminary}
by  the KAM scheme of section \ref{sec: reducibility}.  
The expression ``almost diagonalize" refers to the fact that in Theorem \ref{Teorema di riducibilita}
the remainders $ {\bf R}_n $ and $ {\bf Q}_n $ that are left in \eqref{cal L infinito} 
are not zero, but small as $ O( \e \gamma^{-1} N_{n-1}^{- \mathtt a})$ (and this is because we require
just the finitely many diophantine conditions \eqref{Cantor set}).
This requires to analyze more in detail the pseudo-differential  nature of the remainders after all the 
conjugation steps -a key difference concerns the nature of 
 the block-off diagonal operators in $ (h, \bar h )$ with respect to the diagonal ones-
and to be able to impose the second Melnikov non-resonance conditions.

In  section \ref{complex-coordinates} 
we introduce complex coordinates $ (h, \bar h ) $, which  are  convenient 
to reduce the off-diagonal blocks of the linear system  to a very negative order (section \ref{sec:decoupling}). 
We could have introduced the complex variables $ (h, \bar h )$ right after section 
\ref{sec:linearized operator} performing the symmetrization procedure and the space reduction 
of the highest order (section \ref{sec:changes}) in the variables $ (h, \bar h) $. This way, however, 
would require to use an Egorov type argument to estimate the remainders unlike in section \ref{sec:changes}
we use (as in \cite{AB}) only the simple change of variables \eqref{change-of-variable}.

Then in section \ref{sec: time-reduction highest order}, using a time-reparametrization as in \cite{AB},  
we  obtain a quasi-periodic linear operator of the form (see \eqref{cal L 4})
\be\label{forma-lineare-preliminary-prima Egorov}
\begin{aligned}
 (h, \bar h ) \mapsto  \big( \Dom   & + \ii \mathtt m_3  T(D) + a_{11} (\vphi, x) \pa_x 
+ \ii a_{12} (\vphi, x) {\mathcal H} |D|^{\frac12} \big) h  \\ 
& + 
\ii b (\vphi, x) {\mathcal H} |D|^{\frac12}  \bar h + \ldots   \, .
\end{aligned}
\ee 
From this point we have to proceed quite differently with respect to   \cite{AB}.
\item {\it Block-decoupling}.  In view of the transformations used in the next Egorov-step 
and the KAM reducibility scheme of section \ref{sec: reducibility},
we  first  reduce the order of  the off-diagonal term $ \ii b (\vphi, x) {\mathcal H} |D|^{\frac12}  \bar h $
to a very negative order $ OPS^{-M} $. 
In section \ref{sec:decoupling} we  
 conjugate \eqref{forma-lineare-preliminary-prima Egorov}  to a quasi-periodic linear operator of the form  
(Proposition \ref{Lemma finale decoupling}) 
$$
\begin{aligned}
( h, \bar h)  \mapsto  \Dom h & + \ii \mathtt m_3 T(D) h + 
a_{11}(\vphi, x) \partial_x h + \ii a_{12}(\vphi, x)  {\mathcal H} |D|^{\frac12} h \\ 
& + {\mathcal R}_M h + {\mathcal Q}_M \bar h
\end{aligned}
$$
where $  {\mathcal R}_M \in OPS^0 $ and $  {\mathcal Q}_M \in OPS^{- M} $, for some $ M $ large enough
which is fixed by the KAM reducibility scheme, see  \eqref{relazione mathtt b N}.

\item {\it 
Egorov analysis. Space reduction of the order $ \pa_x $.}
The goal of section  \ref{egorov} is to eliminate the first order 
vector field $ a_{11}(\vphi, x) \partial_x   $. For that Alazard-Baldi \cite{AB} used a 
 semi-Fourier integral operator like
$ {\rm Op} ( e^{\ii a(\vphi, x) \sqrt{|\xi|}} ) \in OPS_{\frac12, \frac12}^{0} $. 
We shall  use instead  
the flow $ \Phi (\vphi) :=  \Phi (\vphi, \omega, \kappa ) $ of the pseudo-PDE 
\be\label{flow:PSD0}
u_t = \ii a(\vphi, x, \omega, \kappa) |D|^{1/2} u \, . 
\ee
The proof  that $ \Phi $, as well as 
its inverse $ \Phi^{-1} $, is well posed in Sobolev spaces $ H^s $
and satisfies tame estimates, 
follow by the energy estimates of  Appendix  \ref{AppendiceA}
(the vector field $ \ii a(\vphi, x, \om, \kappa) |D|^{1/2} $ 
is skew-adjoint at the highest order). 
We think that this is conceptually simpler than proving directly the invertibility and the tame estimates of $ {\rm Op} ( e^{\ii a(\vphi, x) \sqrt{|\xi|}} ) $
as in \cite{AB}.

However the main advantage in order to use the present 
flow approach consists  in the Egorov analysis of the pseudo-differential nature of the conjugated
operator.  
The flow
has a very different effect on the operator
$ h \mapsto ( \ii a_{12}(\vphi, x)  {\mathcal H} |D|^{\frac12}  + {\mathcal R}_M) h $ 
and  the off-diagonal one 
$ \bar h \mapsto {\mathcal Q}_M \bar h $:  
the first 
remains a classical pseudo-differential operator in $ OPS^0 $ (Egorov analysis),   
but the off-diagonal one becomes a pseudo-differential operator 
in the class $ OPS_{\frac12, \frac12}^{-M} $.

Let us roughly explain why this is a  relevant information.
The flow 
$  \Phi (\vphi) \sim {\rm Op} ( e^{\ii a(\vphi, x) \sqrt{|\xi|}} ) $
maps Sobolev spaces in itself. However each derivative 
$$
\pa_\vphi  \Phi (\vphi) \sim {\rm Op} \big( e^{\ii a(\vphi, x) \sqrt{|\xi|}} \, \ii \pa_\vphi  a(\vphi, x) \sqrt{|\xi|} \big)
$$
is an unbounded operator which loses $ |D|^{1/2} $ derivatives. In the Appendix we actually prove that 
$ \pa_{\om,\kappa}^k \pa_\vphi^\beta  \Phi (\vphi ) $ satisfies tame estimates with a loss of $| D|^{\frac{|\b|+|k|}{2}} $ derivatives.
 
The main idea of the Egorov analysis in section \ref{egorov} is that, given a scalar classical 
pseudo-differential operator $ P_0 \in OPS^{m} $,
the conjugated operator 
\be\label{coniugazione-Egorov}
P_+ (\vphi) := \Phi (\vphi) P_0 \Phi (\vphi)^{-1} = {\rm Op} ( c(\vphi, x, \xi )) \, , \quad c(\vphi, x, \xi ) \in S^m \, , 
\ee
remains as well a classical pseudo-differential operator. 
Therefore, the differentiated operator $ \pa_\vphi P_+ (\vphi) =  {\rm Op} ( \pa_\vphi c(\vphi, x, \xi )) \in OPS^{m} $ 
is a pseudo-differential operator of the same order of $ P_0 $ with a symbol 
$ \pa_\vphi c $ which is just less regular in $ \vphi $.
Then the loss of regularity for $ \pa_\vphi c $ is compensated by the  
usual Nash-Moser smoothing procedure in $  \vphi $. 
The property \eqref{coniugazione-Egorov} is due to the fact that  $ P_+ $ is ``transported" 
 under the flow of \eqref{flow:PSD0} 
according to  the Heisenberg equation \eqref{equazione Egorov}. 

This is the reason 
why we  require that the diagonal remainder $ {\mathcal R} \in OPS^0 $ is just  of order zero.

On the other hand, the off-diagonal term $ {\mathcal Q}_M \in OPS^{-M} $ evolves, under the flow of \eqref{flow:PSD0},  according to the 
``skew-Heisenberg" equation obtained 
replacing in \eqref{equazione Egorov} the commutator with  the skew-commutator. 
As a consequence  the symbol  of $ {\mathcal Q}_M^+ :=  \Phi (\vphi) {\mathcal Q}_M \Phi (\vphi)^{-1} $ 
  assumes the form $ e^{\ii a(\vphi, x) \sqrt{|\xi|}} q (\vphi, x, \xi  ) $
where $ q (\vphi, x, \xi  ) \in S^{-M} $ is a classical symbol 
(actually we do not prove it explicitly because it is not needed). 
Thus the action of each $  \pa_\vphi   $ on $ {\mathcal Q}_M^+ $ produces an operator 
which loses $ | D |^{\frac12} $ derivatives  in space more than $ {\mathcal Q}_M $.
 This is why we  perform in section \ref{sec:decoupling} a large number $ M $ 
 of regularizing steps for the off-diagonal components  $ {\mathcal Q} $. The constant $ M $ is fixed later in \eqref{relazione mathtt b N}. 
 The precise tame estimates of  $ \pa_\vphi^\beta {\mathcal Q}_M^+$ are given in Proposition \ref{Prop:Egorov} 
 for $M \geq  \beta + k_0 + 4 $.
In section \ref{sec: reducibility} we take $\beta \sim \mathtt b$, see \eqref{relazione mathtt b N}. 
 
\item {\it Space reduction of the order $ |D|^{1/2} $.}
In section \ref{sec:lineare} we reduce to constant coefficients also the  diagonal operator term of order $ |D|^{1/2} $. This  concludes
(section \ref{coniugio cal L omega}) 
the conjugation of $ {\mathcal L}_\om $ to a quasi-periodic linear 
operator like  \eqref{forma-lineare-preliminary}.

 \item {\it KAM-reducibility scheme.} 
We apply the KAM diagonalization  scheme of section \ref{sec: reducibility} to a linear 
operator  as in \eqref{forma-lineare-preliminary} 
where 
\be\label{operatori-tames}
\begin{aligned}
&  {\mathcal R} \, ,  \ [ {\mathcal R}, \pa_x ] \, , \ \partial_{\vphi_m}^{s_0} {\mathcal R} \, , \
\partial_{\vphi_m}^{s_0}  [ {\mathcal R}, \pa_x ] \, , \\
& \partial_{\vphi_m}^{s_0 +  {\mathtt b}} {\mathcal R} \, , 
 \partial_{\vphi_m}^{s_0 +  {\mathtt b}}  [ {\mathcal R}, \pa_x ] \, , \
m =1, \ldots, \nu \, , 
\end{aligned}
 \ee  
 and similarly $ {\mathcal Q} $,  satisfy tame estimates for some $ \mathtt b := \mathtt b (\tau, k_0) \in \N $ large enough, fixed in \eqref{alpha beta}, see 
\eqref{tame cal R0 cal Q0}, \eqref{tame norma alta cal R0 cal Q0}, \eqref{def:costanti iniziali tame}. 
Such condition is proved in  Lemma \ref{lem:tame iniziale}, having assumed that $ M $ 
(= number of regularizing steps for the off-diagonal operators performed in section \ref{sec:decoupling})
is taken large as in \eqref{relazione mathtt b N} (essentially $ M = O({\mathtt b})$).  It is   the property 
which compensates, along the KAM iteration, the loss of derivatives in $ \vphi $ produced by the small divisors 
(this condition  is strictly weaker than assuming a polynomial off-diagonal decay  of  $ {\mathcal R} $, $ {\mathcal Q} $, 
 as in \cite{BBM-Airy}-\cite{BBM-auto}). 

The core of the KAM reducibility scheme
of section \ref{sec: reducibility} is to prove that the class of operators which 
are $ {\mathcal D}^{k_0} $-modulo-tame (Definition \ref{def:op-tame}) 
is closed under the operations involved by a KAM iteration, namely
\begin{enumerate}
\item 
composition (Lemma \ref{interpolazione moduli parametri}), 
\item
solution of the homological equation 
(Lemma \ref{Homological equations tame}), 
\item projections (Lemma \ref{lemma:smoothing-tame}). 
\end{enumerate}
We recall that we have to control that 
the KAM transformations (and all the operators) 
are $ k_0 $-times differentiable  with respect to the parameters $ (\om, \kappa) \in \R^\nu \times [\kappa_1, \kappa_2]  $
to prove that the Floquet exponents 
$ (\om, \kappa) \mapsto \mu_j^\infty (\om, \kappa) $ in \eqref{autovalori infiniti} 
are  small perturbations of the linear frequencies 

\noindent
$  \sqrt{j(1 + \kappa j^2)} $ in $ {\mathcal C}^{k_0} $-norm. 

The reason why 
we implement the KAM reducibility scheme for
$ {\mathcal D}^{k_0} $-modulo-tame operators and not only for $ {\mathcal D}^{k_0} $-tame operators is that 
for  a  $ {\mathcal D}^{k_0} $-tame operator the second estimate in Lemma \ref{lemma:smoothing-tame} for 
the projector $ \Pi_N^\bot $  does not hold
(majorant like norms have been used also in 
\cite{Berti-Biasco-Procesi-Ham-DNLW}-\cite{Berti-Biasco-Procesi-rev-DNLW}). 
The fact that the initial majorant  operators $ | {\mathcal R} | $,
$ | {\mathcal Q} | $ (see Definition \ref{def:maj})  fulfill tame estimates (which is stronger that requiring tame estimates  just for  
$ {\mathcal R} $ and $  {\mathcal Q} $) is verified  
in  Lemma \ref{lem: Initialization} 
thanks to the assumption that $ [\pa_x, {\mathcal R}] $ and $ \pa_{\vphi_m}^{s_0} {\mathcal R} $, as  
well as all the operators in \eqref{operatori-tames},  satisfy tame estimates, 
see Lemma \ref{lem:tame iniziale}. 
Note that the commutator $ [\pa_x, r(x, D)] = r_x (x, D ) $ is a pseudo-differential operator 
with the same order of $ r(x, D)$
(this is used in particular in Proposition \ref{Prop:Egorov}). 
This  is another  reason for which it is sufficient that 
the pseudo-differential remainder which acts  on the diagonal (i.e. on $ h $) is just in $ OPS^0 $.

The key (quadratic + super-exponentially small) inductive estimates required for the convergence of the iteration 
are provided by Lemma \ref{estimate in low norm}. 
More precisely 
\eqref{schema quadratico tame} and \eqref{M+Ms} 
allow to prove the convergence of the scheme up to the Sobolev index $ s $, by choosing 
$ {\mathtt b}  := {\mathtt b}(\tau)  $ large enough as fixed  
in \eqref{alpha beta}.
The inductive relation \eqref{M+Ms} provides an a priori bound for the divergence of the modulo-tame 
constants  $ {\mathfrak M}_\nu^\sharp (s,{\mathtt b}) $ of the operators 
$ \langle \partial_\vphi \rangle^{ {\mathtt b}} {\mathcal R}_{\nu + 1} $ and 
$ \langle \partial_\vphi \rangle^{ {\mathtt b}} {\mathcal Q}_{\nu + 1}  $
along the iteration.  
Then \eqref{schema quadratico tame} shows that $ {\mathfrak M}_\nu^\sharp (s) $
converges very rapidly to $ 0$ as $ \nu \to + \infty $, see  \eqref{stima cal R nu}.

Note that the iterative KAM Theorem \ref{ITERAZIONERIDUCIBILITA} requires only the smallness condition
 \eqref{KAM smallness condition1} which involves just  the low norm 
$ \| \ \|_{s_0 + {\mathtt b}} $ but implies also tame estimates up to the Sobolev scale $ s $, see \eqref{stima cal R nu}. 
The important consequence is that, in Theorem \ref{Teorema di riducibilita}, 
only the  condition \eqref{ansatz riducibilita} in low norm, implies
the  tame estimates \eqref{stima Phi infinito} for the transformations up to any $ s \in [s_0, S ] $. 
The smallness condition  \eqref{ansatz riducibilita} 
will be verified  inductively along the nonlinear Nash-Moser scheme of section \ref{sec:NM}. The 
tame property \eqref{stima Phi infinito} (at any scale) is used in the convergence of the Nash-Moser iteration
of section \ref{sec:NM}. 
\end{enumerate}

After the above analysis of the linearized operator, in section \ref{sec:NM}, 
we implement a differentiable Nash-Moser iterative scheme 
to find better and better approximate quasi-periodic solutions
up to the scales
\begin{equation}\label{definizione Kn}
K_n := K_0^{\chi^{n}} \,,\quad \chi := 3/ 2\, ,
\end{equation}
which lead, at the limit,  to an embedded  torus invariant under the flow of the Hamiltonian PDE, see Theorem
\ref{iterazione-non-lineare} and section \ref{proof theorem 4.1}.   

\smallskip

We conclude the introduction with some other comment.

\begin{enumerate}
\item {\it Whitney extension.}
At each iterative step of the Nash-Moser iteration
-and correspondingly for the
reduction of the linearized operator in sections \ref{costruzione dell'inverso approssimato}, \ref{linearizzato siti normali}, 
\ref{sec: reducibility}- 
we only require that the frequency vector $ \om \in \R^\nu $ satisfies  finitely many non-resonance diophantine conditions.
More precisely we assume at the $ n $-th step that 
$  \om $ belongs to  
\begin{equation}\label{omega diofanteo troncato}
{\mathtt D \mathtt C}_{K_n}^{\g} := \big\{ \om \in \tOm \subset \R^\nu \, : \,  | \omega \cdot \ell | 
\geq \gamma \langle \ell \rangle^{- \tau}\,,\  \forall | \ell | \leq K_n \big\}
\end{equation}
and similarly we require finitely many first and second order Melnikov non-resonance conditions, see 
\eqref{prime di melnikov} and \eqref{Omega nu + 1 gamma}
(the set $ \tOm $ is the neighborhood \eqref{unperturbed-frequencies}
of the curve $ \vec \om ([\kappa_1, \kappa_2]) $ described by the 
unperturbed linear frequencies $ {\vec \omega} $). 
This allows to perform 
a constructive Whitney extension  of the solution, 
with respect to the parameters $ (\om, \kappa ) $ in a way similar to 
\cite{BB06}.
We find this construction convenient in order to estimate the $ k$-derivatives 
$ \pa_{\omega, \kappa}^k $   of the approximate solutions (and of the eigenvalues) 
 which, on a subset with a not empty interior (like $ {\mathtt D \mathtt C}_{K_n}^{\g} $) are well defined in the usual sense
(instead of  
introducing the notion of Whitney derivatives on closed subsets, possibly with an empty interior). 
The  quantitative estimates that we shall obtain (see for example
 \eqref{stima toro finale} and  \eqref{stime coefficienti autovalori in kappa}) 
are similar to those which are satisfied  by the solution 
\be\label{basic-KAM0}
h := (\om \cdot \pa_\vphi)^{-1} g = 
\sum_{\ell \in \Z^\nu \setminus \{ 0 \}} \frac{g_\ell}{ \ii \om \cdot \ell} \, e^{\ii \ell \cdot \vphi } \, ,   \qquad 
g := \sum_{\ell \in \Z^\nu \setminus \{0\} } g_\ell e^{\ii \ell \cdot \vphi } \, , 
\ee
of the basic linear equation of KAM theory $ \om \cdot \pa_\vphi h = g $, namely  
\be\label{basic-KAM}
\| \pa_\om^k h \|_s \leq C \g^{-|k|} \| g  \|_{s+ \tau + |k| \tau} \, . 
\ee
We note that each  derivative $ \pa_\om $ produces a factor $ \g^{-1}$  and 
a loss of $ \t $-derivatives in the Sobolev index. 
This is the phenomenon described by 
P\"oschel in \cite{Po82} as ``anisotropic differentiability" of the families of KAM tori
with respect to $ \om $. Actually 
when solving the homological equations, see \eqref{shomo1}-\eqref{shomo2},   
we also have denominators which depend on both $ (\omega, \kappa) $ 
and we have to estimate  the regularity of the solution also with respect to 
$ \kappa $, see Lemma \ref{Homological equations tame}. 
\item
{\it Dirichlet-Neumann operator.} In section \ref{subDN} we use  a self-contained 
proof of the representation  of the Dirichlet-Neumann operator  $ G(\eta) $ as a pseudo-differential operator, due to Baldi
\cite{Baldi}. 
The conformal change of variables \eqref{conf-diffeo}-\eqref{defUV} transforms
the elliptic problem \eqref{BVP},  which is defined in the variable fluid domain 
 $  \{ y \leq \eta (x) \} $,  
into the elliptic problem \eqref{BVP-new} which is defined 
on the straight strip $ \{ Y \leq 0 \} $ and  can be solved by an explicit integration. 
By  conjugating back such solution,  it turns out that (Lemma \ref{G=mH}) 
the principal symbol of $ G(\eta) $ is just 
$ |D| $ (see  \eqref{sviluppo Geta})  
up to  
a small remainder  $ {\mathcal R}_G (\eta)  \in  OPS^{-\infty} $ (recall that the profile $ \eta  \in {\mathcal C}^\infty $). 
Actually  $ \psi \mapsto {\mathcal R}_G (\eta) [\psi ] $ is a regularizing linear operator which satisfies 
tame estimates (with loss of derivatives) in $ \eta $, see e.g. \eqref{stima tame dirichlet neumann}. 
For obtaining such quantitative estimates  it is convenient to 
represent  $ {\mathcal R}_G $  as an integral operator (see \eqref{Geta intermedia} and Lemma \ref{coniugio Hilbert})
and to use the fact an integral operator
transforms into another integral operator under changes of variable, see Lemma \ref{lemma cio}. 
\end{enumerate}

\noindent
{\it Acknowledgements}. We  thank P. Baldi, L. Biasco, W. Craig and J. M. Delort,  for many useful discussions. 

\section{Notation}

We organize in this subsection the most important notation used in the paper.

\smallskip

We denote by $ \N := \{0,1, 2,  \ldots \} $ the natural numbers including $ \{0\} $ and 
$ \N^+ := \{ 1, 2,  \ldots \} $. 
We denote the ``tangential" sites by 
\be\label{def:S0}
{\mathbb S}^+ \subset \N^+ \quad {\rm and \ we \ set } \quad 
{\mathbb S} := {\mathbb S}^+ \cup (-{\mathbb S}^+) \, , \quad 
{\mathbb S}_0 := {\mathbb S}_+ \cup (- {\mathbb S}_+) \cup \{ 0 \} \subseteq \Z \, . 
\ee
The cardinality of $ {\mathbb S}^+ $ is $ |{\mathbb S}^+| = \nu $, and we look for quasi-periodic solutions
with frequency $ \omega \in \R^\nu $. The surface tension parameter $ \kappa $ is in the interval 
$ [\kappa_1, \kappa_2 ] $ with $ \kappa_1 >  0 $.
In the paper all the functions, operators, transformations, etc \ldots, depend on  
the parameter 
$$ 
\lambda = (\om, \kappa ) \in  \tLm_0 \subset \R^\nu \times [\kappa_1, \kappa_2 ] \, ,
$$
 in a $ k_0$-differentiable way. We will often not specify the domain $ \tLm_0  $ which is understood from the context. 
We use the multi-index notation $ k = ( k_1, \ldots , k_{\nu+1}) \in \N^{\nu+1} $ with $ | k  | := k_1 + \ldots +  k_{\nu+1} $ and  
we denote  the derivative $ \pa_\lambda^k := \pa_{\lambda_1}^{k_1} \ldots \pa_{\lambda_{\nu+1}}^{k_{\nu+1}} $.

For a scalar valued function $ \mu : \tLm_0 \subset \R^{\nu+1}  \to  \R $ (for example the Floquet exponents), or valued in $\R^d$, $d \in \N$, which is $ k_0 $-times differentiable with respect to $ \lambda $, we define 
$$
|\mu|^{k_0, \gamma} := |\mu|^{k_0, \gamma}_{\tLm_0} := {\mathop \sum}_{|k| \leq k_0} \g^{|k|} \sup_{\lambda 
\in \tLm_0} | \pa_\lambda^k \mu  (\lambda) | \, . 
$$
This norm extends the Lipschitz-weighted norm introduced in \cite{K1}, \cite{Po2} and used 
in \cite{BBi10},  \cite{BBM-Airy}, \cite{BBM-auto}. 

Given a set $ B $ we denote by $ {\mathcal N}(B, \eta) $ 
the open neighborhood of $ B  $ 
of width $ \eta $ (which is empty if $ B $ is empty) in $\R^\nu \times [\kappa_1, \kappa_2]$, namely 
\begin{equation}\label{definizione incicciottamento insieme}
{\mathcal N}(B, \eta) := \big\{ \lambda \in  \R^\nu \times [\kappa_1, \kappa_2] : {\rm dist}(B, \lambda) \leq \eta \big\}\,.
\end{equation}
Given $j \in \Z$, we set $\langle j \rangle := {\rm max}\{1, |j| \}$ and for any vector  
$\ell = (\ell_1, \ldots, \ell_\nu) \in \Z^\nu $, 
$$
\langle \ell \rangle := {\rm max}\{ 1, |\ell| \} \, , \quad  |\ell| = {\rm max}_{i = 1, \ldots, \nu} |\ell_i| \, .
$$ 
With a slight abuse of notation, 
given $\ell \in \Z^\nu, j \in \Z$, we write $\langle \ell, j \rangle \! := \! {\rm max}\{ 1, |\ell|, |j| \}$. 
\\[1mm]
{\bf Sobolev spaces.} We denote by $H^s(\T^{\nu + 1})$ the Sobolev space of both real and complex valued functions defined by 
$$
\begin{aligned}
H^s := H^s(\T^{\nu + 1}) 
:= \Big\{ & u(\vphi, x)  = \sum_{
\ell \in \Z^\nu, j \in \Z} u_{\ell, j} e^{\ii (\ell \cdot \vphi + j x)} : \\
& \| u \|_s^2 := \sum_{
\ell \in \Z^\nu,
j \in \Z} \langle \ell, j \rangle^{2 s} |u_{\ell, j}|^2 < + \infty \Big\}\,,
\end{aligned}
$$
see \eqref{unified norm}. In the paper we shall use $ H^s $ Sobolev spaces with index $ s $ in 
a finite range of values 
$$ 
s \in [s_0, S ] \, , \quad {\rm where} \quad s_0  := \Big[ \frac{\nu +1}{2} \Big] +1 \in \N \, ,  
$$ 
see \eqref{def:s0}, and 
the largest possible value of $ S $ is fixed 
in the Nash-Moser iteration in section \ref{sec:NM}, see \eqref{valore finalissimo S}.  

In section \ref{sezione operatori tame} we state some abstract lemmata  (for instance Lemmata \ref{lemma:utile}, \ref{Moser norme pesate}) for a Sobolev space $H^s(\T^d)$ of generic dimension $d \in \N $, that we define as 
$$
H^s(\T^d) := \Big\{ u(y) = \sum_{k \in \Z^d} u_k e^{\ii k \cdot y} : \| u \|_s^2 := \sum_{k \in \Z^d} \langle k \rangle^{2 s} |u_k|^2 <  + \infty \Big\}
$$
where  $k = (k_1, \ldots, k_d) \in \Z^d$, $\langle k \rangle := {\rm max}\{ 1 , |k| \}$, $|k| := {\rm max}_{i = 1, \ldots, d} |k_i|$. 
We shall also use the notation $H^s_x : = H^s(\T_x)$ for Sobolev spaces of functions of 
the space-variable $ x \in \T $,  and $H^s_\vphi = H^s(\T^\nu_\vphi)$ for Sobolev spaces of the periodic variable
$ \vphi \in \T^\nu $. 
Moreover we also define the subspace $H_0^1(\T_x)$ of $H^1(\T_x)$ of  functions depending only on the
space variable $ x $ with zero average, i.e. 
\begin{equation}\label{H 01 Tx}
H_0^1(\T_x) := \Big\{ u \in H^1(\T) : \int_\T u(x)\, d x = 0 \Big\}\,. 
\end{equation}
Along the paper we  consider  families of functions $ u(\lambda ) $ in $ H^s $ that are
$ k_0 $-times differentiable with respect to the parameter 
$ \lambda = (\om, \kappa)  \in \mathtt \Lambda_0 \subset  \R^{\nu + 1}  $, and for which we introduce the following 
weighted Sobolev norm (see \eqref{norma pesata derivate funzioni}): 
for $ \g \in (0,1) $,
\be\label{simplex-norm}
\| u \|_s^{k_0, \gamma} :=
 {\mathop \sum}_{| k | \leq k_0} \gamma^{|k |} {\rm sup}_{\lambda \in \mathtt \Lambda_0}  
\| \partial_{\lambda}^k u (\lambda) \|_s \, .
\ee
The meaning of the indices $ k_0, \gamma, s $ is the following:
\begin{enumerate}
\item
The index $ k_0 \in \N $ denotes that $ u(\lambda )$ 
is  $ {k_0} $-times differentiable 
with respect to the parameter $ \lambda  $. The index $ k_0 $
is {\it fixed} in section \ref{sec:degenerate KAM}.
It depends only on properties  of the linear frequencies $ \omega_j (\kappa ) $   
in \eqref{linear frequencies}, and the choice of the tangential sites $ {\mathbb S}^+ $,  
and it does {\it not} vary along the whole paper.
When used in other contexts the index $ k_0 $ always indicates  that the operators, functions, frequencies, eigenvalues, 
etc.,  are $ {k_0} $-times differentiable  with respect to the parameter $ \lambda  $. 
\item 
The parameter $ \gamma \in (0,1) $ is the diophantine constant of the frequencies
$ |\om \cdot \ell | \geq \g \langle \ell \rangle^{-\tau} $, $ \forall \ell \in \Z^\nu \setminus \{0\} $, and similarly for the
first and second order Melnikov non-resonance conditions. Such quantities enter at the denominators 
in the  solutions of homological equations like \eqref{basic-KAM0}, and therefore 
any derivative $ \pa_\omega $ produces the appearance of a factor $ \gamma^{-1} $, as 
explained for \eqref{basic-KAM}. This motivates the use of the weights $ \gamma^{|k|} $
in  \eqref{simplex-norm}, and  similarly, in other contexts,  before a $ \pa_\lambda^k  $ 
derivative of  operators, functions, frequencies, eigenvalues, etc.... 
Along the paper $ \gamma = O( \e^a) $ with $  a > 0 $ as small as wanted (actually we could  take just $ \g = o(1) $ as $ \e \to 0 $).  
\item  The index $ s $  denotes the Sobolev index  of the norm $ \| \ \|_{s} $.
\end{enumerate}
{\bf Pseudo-differential operators and norms.} 
A pseudo-differential operator  with symbol $ a(x, \xi ) $ is denoted by $ {\rm Op} (a) $ or $ a(x, D )$, see 
 Definitions \ref{def:Ps1}, \ref{def:Ps2}. The set of symbols $ a(x, \xi) $ of order $ m $ is denoted by $ S^m $
and the class of the corresponding pseudo differential operators  by  $OPS^m $. We also set
$$ 
OPS^{-\infty} =  \cap_{m \in \R} OPS^m \, .
$$ 
Along the paper we have to consider  symbols $ a(\lambda, \vphi, x, \xi ) $ that depend on 
$ \vphi \in \T^\nu $ and on a parameter  $ \lambda \in {\mathtt \Lambda}_0 \subset \R^{\nu+1} $. 
The symbol $ a $ is $ k_0 $-times differentiable with respect to $ \lambda $ and $ {\mathcal C}^\infty $ with respect to 
$ (\vphi, x, \xi) $.  
For the corresponding family of pseudo differential operators $ A(\lambda) = a(\lambda, \vphi, x, D) $
we introduce in Definition  \ref{def:pseudo-norm} the norms 
\be\label{pseudo-simple}
\norma A \norma_{m, s, \alpha}^{k_0, \gamma} := \sum_{|k| \leq k_0} \gamma^{|k|} 
{\rm sup}_{\lambda \in {\mathtt \Lambda}_0} \norma\partial_\lambda^k A(\lambda) \norma_{m, s, \alpha}  
\ee
 indexed by $ k_0 \in \N $, 
$ \gamma \in (0,1) $, $ m \in \R $, $ s \geq s_0  $, $ \a \in \N $,
where
$$
\norma A (\lambda) \norma_{m, s, \a} := {\rm max}_{0 \leq \beta  \leq \a} \sup_{\xi \in \R} \|  \partial_\xi^\beta 
a(\lambda, \cdot, \cdot, \xi )  \|_{s} \langle \xi \rangle^{-m + \beta} \, . 
$$
The meaning of the indices $ k_0 $, $ \gamma $, $m$, $s $, $ \alpha $ is the following: 
\begin{enumerate}
\item
The index $ k_0 \in \N $ denotes that the operators $ A(\lambda) $ (i.e. the symbols $ a(\lambda, \cdot )$) are $ {k_0} $-times differentiable 
with respect to the parameters $ \lambda = (\om, \kappa) $. 
\item 
The parameter $ \gamma \in (0,1) $ is the diophantine constant of the frequencies
$ |\om \cdot \ell | \geq \g \langle \ell \rangle^{-\tau} $, $ \forall \ell \in \Z^\nu \setminus \{0\} $, and similarly for the
first and second order Melnikov non-resonance conditions. 
\item The parameter $ m \in \R $ denotes the order of the pseudo-differential operator $ A \in OPS^m $. 
\item  The constant $ s $ 
denotes the Sobolev index  of the norm $ \|  \partial_\xi^\beta a(\lambda, \cdot, \cdot, \xi )  \|_{s} $
which measures the regularity of the function
$ (\vphi, x ) \mapsto  \partial_\xi^\beta a(\lambda, \vphi, x, \xi )  $. 
It varies  in a finite range $ s \in [s_0, S ] $ where $ s_0 $ 
is fixed in \eqref{def:s0} and  the largest $ S $ is fixed 
in  section \ref{sec:NM}, see \eqref{valore finalissimo S}.  
\item The constant $ \a \in \N $  is the number of $ \pa_\xi $ derivatives 
that we estimate of a symbol $ a(x, \xi) $. 
 In section \ref{linearizzato siti normali} we take   $ \a \approx M $ where $ M $ is the number of
decoupling steps performed in section \ref{sec:decoupling}. The constant $ M $ is fixed in \eqref{relazione mathtt b N}.
The important point is that the largest values of $ \a, M $ used along the paper do not depend on the Sobolev index $ s $. 
\end{enumerate}
{\bf $ {\mathcal D}^{k_0}$-tame and $ {\mathcal D}^{k_0}$-modulo-tame operators.} In  Definition \ref{def:Ck0} we introduce the class of linear operators $A = A(\lambda)$ satisfying tame estimates of the form
$$
\sup_{|k| \leq k_0} \sup_{\lambda \in \mathtt \Lambda_0} \gamma^{ |k|}
\| (\partial_\lambda^k A(\lambda)) u \|_s \leq  {\mathfrak M}_A(s_0) \| u \|_{s+\s} + 
{\mathfrak M}_A (s) \| u  \|_{s_0+\s} \,,
$$
that we call ${\mathcal D}^{k_0}$-$\sigma$-tame operators. The constant ${\mathfrak M}_A (s)$ is called the tame constant of the operator $A$. When the ``loss of derivatives"  $ \sigma = 0 $  we simply call a $ {\mathcal D}^{k_0} $-$ 0 $-tame operator to  be $ {\mathcal D}^{k_0} $-tame. 

In Definition \ref{def:op-tame}  we introduce the subclass of ${\mathcal D}^{k_0}$-modulo tame operators $A = A(\lambda)$ such that for any $k \in \N^{\nu+1}$, $|k| \leq k_0$, the majorant operator $|\partial_\lambda^k A|$ satisfies  the tame estimates  
$$
\sup_{|k| \leq k_0} \sup_{\lambda \in \mathtt \Lambda_0} \gamma^{ |k|}
\| | \partial_\lambda^k A | u\|_s \leq  
{\mathfrak M}_{A}^\sharp (s_0) \| u \|_{s} +
{\mathfrak M}_{A}^\sharp (s) \| u \|_{s_0}\,. 
$$
The majorant operator $ |A| $ is introduced in Definition \ref{def:maj}-1, by 
taking the modulus of the matrix entries of the matrix which represents the operator 
$ A $ with respect to the exponential basis. 
We refer to ${\mathfrak M}_{A}^\sharp (s)$ as the modulo tame constant of the operator $A$. 

\smallskip

Finally we use the following notation:  
\begin{enumerate}
\item 
$ a \leq_{s, \a, M} b $ means 
that $a \leq C(s, \a, M ) b$ for some constant $C(s, \a, M ) > 0$ depending on 
the Sobolev index $ s $, and the constants $ \a, M $.  Sometimes, along the paper, we omit to write the dependence $ \leq_{s_0, k_0} $ with respect to $ s_0, k_0 $, 
 because $ s_0 $ (defined in \eqref{def:s0}) and $ k_0 $
 (determined  in section \ref{sec:degenerate KAM})  are considered as fixed constants. 
\item 
$ a \lessdot  b $ means that 
 $a \leq C b $ for some absolute constant which depends only on the data of the problem.
\end{enumerate}

\chapter{Functional setting} \label{sec:prelim}

We regard a function  $ u(\varphi , x) \in L^2 (\T^\nu \times \T, \C) $ of space-time also as a 
 $ \vphi $-dependent family of  functions $ u(\vphi, \cdot ) \in L^2 (\T_x, \C) $ that we expand in Fourier series as
\be\label{function-Fourier}
u(\vphi, x ) =   \sum_{j' \in \Z} u_{j'} (\vphi) e^{\ii j' x } =
\sum_{\ell' \in \Z^\nu, j' \in \Z}  u_{\ell',j'}  e^{\ii (\ell' \cdot \vphi + j' x)}   \, . 
\ee 
Along the paper we  denote the Fourier coefficients  $ u_{\ell,j} $, $ u_{j}(\vphi) $ 
of  the function 
$ u(\vphi, x) $ (with respect to the space variables $ (\vphi, x) $ or $ x $, respectively) also 
as   $ \widehat u_{\ell,j} $,  $  \widehat u_{j} (\vphi)  $.
We also consider real valued functions $ u(\vphi, x)  \in \R $.  
When no confusion appears we will denote simply by 
$ L^2 $, $ L^2 (\T^\nu \times \T ) $, $ L^2_x := L^2 (\T_x) $  either the spaces of real or complex valued 
$ L^2 $-functions. 

The Sobolev norm $ \| \ \|_s $ defined in \eqref{unified norm} 
is equivalent to 
\be\label{Sobolev norm}
\| u \|_s \simeq \| u \|_{H^s_\ph L^2_x} + \| u \|_{L^2_\ph H^s_x} \, . 
\ee
\begin{definition}
Given a function  $ u \in L^2 (\T^\nu \times \T ) $  as in \eqref{function-Fourier}, we define the majorant function
\begin{equation}\label{funzioni modulo fourier}
\norma u \norma (\vphi, x) :=  \sum_{\ell \in \Z^\nu, j \in \Z} |u_{\ell, j}| e^{\ii (\ell \cdot \vphi + j x)} \, .
\end{equation}
\end{definition}
Note that  the Sobolev norms of $ u $ and $ \norma u \norma $ are the same, 
i.e. 
\be\label{Soboequals}
\| u \|_s = \| \norma u \norma \|_s\,.
\ee
We consider also family of Sobolev functions  $ \lambda  \mapsto u(\lambda) \in H^s $ which are 
$ {k_0} $-times differentiable with respect to a parameter  
$$
 \lambda := (\om, \kappa)  \in \mathtt \Lambda_0 \subset  \R^{\nu + 1}  \,.
 $$ 
For $ \g \in (0,1) $ we define the weighted Sobolev norm 
\begin{equation}\label{norma pesata derivate funzioni}
\| u \|_s^{k_0, \gamma} :=
 {\mathop \sum}_{| k | \leq k_0} \gamma^{|k |} {\rm sup}_{\lambda \in \mathtt \Lambda_0}  
\| \partial_{\lambda}^k u (\lambda) \|_s 
\end{equation}
and we use the same notation $ \| u \|_s^{k_0, \gamma} $ for a Sobolev 
function $ u \in H^s_{\vphi} $ of the $ \vphi $ variable only.  

For a family of functions $u(\lambda, \cdot ) : \T^d \to \C $,  which is $ k_0 $-times differentiable with respect
to $ \lambda $,  we define the $ {\mathcal C}^s $-weighted norm
\begin{equation}\label{norma pesata C^s}
\| u \|_{{\mathcal C}^s}^{k_0, \gamma} :=
 {\mathop \sum}_{| k | \leq k_0} \gamma^{|k |} {\rm sup}_{\lambda \in \mathtt \Lambda_0}  
\| \partial_\lambda^k u (\lambda) \|_{{\mathcal C}^s} 
\end{equation}
(we use it in section \ref{sub:integral-op} to  functions $ K (\lambda, \cdot ) $ with $ d = \nu + 1 $). 

We have  the following interpolation lemma. 
\begin{lemma}\label{interpolazione fine}
Let $ a_0, b_0 \geq 0$ and $ p,q >  0 $. For all $\epsilon > 0 $ there exists a constant 
$ C(\epsilon) := C(\epsilon, p, q ) > 0 $, which satisfies $ C(1) < 1 $, such that 
\begin{align} 
& \| u \|_{a_0 + p} \| v \|_{b_0 + q} \leq  \epsilon \| u \|_{a_0 + p + q} \| v \|_{b_0} + C(\epsilon)\| u \|_{a_0} \| v \|_{b_0 + p + q} \label{interpolation estremi fine} \\
& \| u \|_{a_0 + p}^{k_0,\g} \| v \|_{b_0 + q} \leq \epsilon \| u \|_{a_0 + p + q}^{k_0,\g} \| v \|_{b_0} + C(\epsilon)
\| u \|_{a_0}^{k_0, \g} \| v \|_{b_0 + p + q} \, . \label{interpolation estremi fine Ck0}
\end{align}
\end{lemma}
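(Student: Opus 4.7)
The proof is a standard interpolation argument followed by Young's inequality, with careful bookkeeping to ensure $C(1) < 1$.

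First, I would recall the classical Sobolev interpolation inequality (which follows immediately from H\"older applied to the Fourier coefficients against $\langle \ell, j\rangle^{2s}$): for any $\theta \in [0,1]$ and $s_0 \leq s_1$,
\[
\|u\|_{(1-\theta)s_0 + \theta s_1} \leq \|u\|_{s_0}^{1-\theta} \|u\|_{s_1}^{\theta}\,.
\]
Applying this with $(s_0, s_1, \theta) = (a_0, a_0+p+q, p/(p+q))$ for $u$ and $(s_0, s_1, \theta) = (b_0, b_0+p+q, q/(p+q))$ for $v$, then multiplying, yields, with $\theta := p/(p+q) \in (0,1)$,
\[
\|u\|_{a_0 + p}\, \|v\|_{b_0 + q} \;\leq\; A^{\theta}\, B^{1-\theta}\,,
\qquad A := \|u\|_{a_0+p+q}\|v\|_{b_0}\,,\quad B := \|u\|_{a_0}\|v\|_{b_0+p+q}\,.
\]

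Next I would apply Young's inequality in the sharp form $A^\theta B^{1-\theta} \leq \theta \lambda A + (1-\theta) \lambda^{-\theta/(1-\theta)} B$, valid for every $\lambda > 0$. Setting $\epsilon := \theta \lambda$ gives inequality \eqref{interpolation estremi fine} with
\[
C(\epsilon) \;=\; (1-\theta)\, \bigl(\epsilon/\theta\bigr)^{-\theta/(1-\theta)}
\;=\; \frac{q}{p+q}\, \Bigl(\frac{p+q}{p}\,\epsilon\Bigr)^{-p/q}\,.
\]
The condition $C(1) < 1$ is then the elementary inequality $\tfrac{q}{p+q}\bigl(\tfrac{p}{p+q}\bigr)^{p/q} < 1$, which holds since both factors are strictly less than $1$ (as $p,q > 0$). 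This is the only quantitative point that needs verification and is the mildest obstacle in the proof.

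For the weighted version \eqref{interpolation estremi fine Ck0}, viewing $u = u(\lambda)$ as a $\lambda$-dependent family and $v$ as a fixed function, I would apply \eqref{interpolation estremi fine} pointwise in $\lambda$ to the functions $\partial_\lambda^k u(\lambda)$ and $v$ for each multi-index $k$ with $|k| \leq k_0$, multiply both sides by $\gamma^{|k|}$, take the supremum over $\lambda \in \Lambda_0$, and finally sum over $|k| \leq k_0$. Since the constant $C(\epsilon)$ from the first inequality depends only on $\epsilon, p, q$ (and not on $k$ or $\lambda$), recalling the definition \eqref{norma pesata derivate funzioni} of $\|\cdot\|_s^{k_0,\gamma}$ yields \eqref{interpolation estremi fine Ck0} with the same constant $C(\epsilon)$.
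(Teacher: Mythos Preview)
Your proof is correct and follows essentially the same route as the paper: interpolation for each norm, multiply, then apply the weighted Young (AM--GM) inequality with a free parameter to produce the $\epsilon$-dependence, yielding the identical constant $C(\epsilon)=\frac{q}{p+q}\big(\frac{p}{\epsilon(p+q)}\big)^{p/q}$. Your treatment of the weighted estimate \eqref{interpolation estremi fine Ck0} is slightly more explicit than the paper's, but the argument is the same.
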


\begin{proof}
By interpolation
$$
\| u \|_{a_0+p} \leq \| u \|_{a_0}^\mu  \| u \|_{a_0+p+q}^{1-\mu} \, , \  \mu := \frac{q}{p+q} \, , \quad 
\| v \|_{b_0+q} \leq \| v \|_{b_0}^\eta  \| v \|_{b_0+p+q}^{1-\eta} \, , \  \eta := \frac{p}{p+q} \, . 
$$
Hence, noting that $ \eta + \mu = 1  $, we have  
$$
\| u \|_{a_0+p} \| v \|_{b_0+q}  \leq 
( \| u \|_{a_0+p+q}  \| v \|_{b_0} )^{\eta}  ( \| u \|_{a_0}  \| v \|_{b_0+p+q} )^\mu  \, .  
$$
By the asymmetric Young inequality we get, for any $ \epsilon > 0  $, 
$$
\| u \|_{a_0+p} \| v \|_{b_0+q}  \leq \epsilon \| u \|_{a_0+p+q}  \| v \|_{b_0} + C(\epsilon, p, q)   \| u \|_{a_0}  \| v \|_{b_0+p+q}
$$
where $ C(\epsilon, p, q ) := \mu ( \eta \slash \epsilon )^{\frac{\eta}{\mu}} = \frac{q}{p + q}  
\big( \frac{p}{\epsilon (p + q)} \big)^{ p / q }  $. 
Note that for $ \epsilon = 1 $ the constant $ C(1, p, q) < 1 $. 

The estimate  \eqref{interpolation estremi fine Ck0} follows by \eqref{interpolation estremi fine} recalling 
\eqref{norma pesata derivate funzioni}.
\end{proof}

For any $ K \in \N^+  $,   we  introduce the smoothing operators, 
\begin{equation}\label{definizione smoothing operators}
(\Pi_K u)(\vphi, x) := \sum_{|(\ell, j)| \leq K} u_{\ell j} e^{\ii (\ell \cdot \vphi + j x)}\,, \qquad \Pi_K^\bot := {\rm Id }- \Pi_K\, , 
\end{equation}
which satisfy the usual smoothing properties 
\begin{align}\label{smoothing-u1}
\|\Pi_{K} u \|_{s + b}^{k_0, \gamma} 
\leq K^{b} \| u \|_{s}^{k_0, \gamma} \, , 
\qquad \ \,   \|\Pi_K^\bot u\|_{s}^{k_0, \gamma} 
& \leq K^{- b} \| u \|_{s + b}^{k_0, \gamma} \,   , \qquad \forall s, b \geq 0\,.
\end{align}

\noindent
{\bf Linear operators.} Let $ A : \T^\nu \mapsto {\mathcal L}( L^2(\T_x))  $, 
$ \vphi \mapsto A(\vphi) $,   be a $ \vphi $-dependent family of linear 
operators acting on $ L^2 (\T_x) $. We regard $ A $  also 
 as an operator (that for simplicity we denote by $A $ as well)
 which acts on functions $ u(\varphi , x) $ of space-time, i.e. 
 we consider the operator 
$ A \in {\mathcal L}(L^2(\T^\nu \times \T ) )$ 
defined by
$$
( A u) (\varphi , x) := (A(\varphi) u(\varphi, \cdot ))(x) \, .  
$$
We say that an operator $ A $ is {\it real} if it maps real valued functions into real valued functions. 

We represent 
a real  operator acting on $ (\eta, \psi) \in L^2(\T^{\nu + 1}, \R^2) $ by a matrix 
\begin{equation}\label{cal R eta psi}
{\mathcal R} \begin{pmatrix}
\eta \\
\psi
\end{pmatrix} = \begin{pmatrix}
A & B \\
C & D
\end{pmatrix}
 \begin{pmatrix}
\eta \\
\psi
\end{pmatrix} 
\end{equation}
where $A, B, C, D$ are real operators acting on  the scalar valued components 
$ \eta, \psi \in L^2(\T^{\nu + 1}, \R)$.

The action of an operator $ A \in {\mathcal L} (L^2(\T^\nu  \times \T )) $ on a function $ u $ as in \eqref{function-Fourier} 
is 
\begin{equation}\label{matrice operatori Toplitz}
\begin{aligned}
A u (\vphi, x) & 
= {\mathop \sum}_{j , j' \in \Z} A_j^{j'}(\vphi) u_{j'}(\vphi) e^{\ii j x} \\ 
& = 
 \sum_{\ell \in \Z^\nu, j \in \Z} \sum_{\ell' \in \Z^\nu, j' \in \Z} A_j^{j'}(\ell - \ell') u_{\ell', j'} e^{\ii (\ell \cdot \vphi + j x)} \, . 
 \end{aligned}
\ee
We shall identify an operator $ A $ with the matrix $ \big( A^{j'}_j (\ell- \ell') \big)_{j, j' \in \Z, \ell, \ell' \in \Z^\nu } $. 

Note that the differentiated 
operator $ \pa_{\vphi_m} A (\vphi)  $, $ m = 1, \ldots, \nu $,  is represented by the 
matrix elements $ \ii (\ell_m- \ell_{m}') A^{j'}_j (\ell- \ell') $,
and the commutator $ [\pa_x, A ] := \pa_x \circ A - A \circ \pa_x $ is represented by the matrix 
with entries $ \ii (j - j') A^{j'}_j (\ell- \ell') $.

\begin{definition}\label{def:maj} 
Given a linear operator $ A $ as in \eqref{matrice operatori Toplitz} we define the operator
\begin{enumerate}
\item $ | A | $  {\bf (majorant operator)}   
whose matrix elements are $  | A_j^{j'}(\ell - \ell')| $,  
\item $ \Pi_N A $, $ N \in \N  $ {\bf (smoothed operator)} 
whose matrix elements are
\be\label{proiettore-oper}
 (\Pi_N A)^{j'}_j (\ell- \ell') := 
 \begin{cases}
 A^{j'}_j (\ell- \ell') \quad {\rm if} \quad   |\ell - \ell' | \leq N \\
 0  \qquad \qquad \quad {\rm  otherwise} \, .
 \end{cases} 
\ee
We also denote $ \Pi_N^\bot := {\rm Id} - \Pi_N $, 
\item $ \langle \pa_\vphi \rangle^{b}  A $, $ b \in \R $,  
whose matrix elements are  $  \langle \ell - \ell' \rangle^b A_j^{j'}(\ell - \ell') $.
\end{enumerate}
\end{definition}

\begin{lemma}\label{compositionAB} 
Given linear operators $ A $, $ B$ we have 
\begin{align}\label{disuguaglianza importante moduli} 
\| | A + B| u \|_s \leq \||  A | \, \norma u \norma  \|_s + \|| B| \, \norma u \norma \|_s  \, , \quad
\|| A B| u \|_s \leq \|| A| | B| \, \norma u \norma \|_s \,.
\end{align}
\end{lemma}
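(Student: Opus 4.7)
The plan is a direct pointwise comparison on Fourier coefficients, followed by the definition of the Sobolev norm and Minkowski's inequality on $\ell^2_s$.

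First, I would unfold the relevant objects in Fourier. Using \eqref{matrice operatori Toplitz}, the $(\ell,j)$-Fourier coefficient of $|A+B|u$ is
\[
(|A+B|u)_{\ell,j} = \sum_{\ell',j'} \bigl|A^{j'}_j(\ell-\ell') + B^{j'}_j(\ell-\ell')\bigr|\, u_{\ell',j'},
\]
while the one of $|A|\norma u\norma$ is $\sum_{\ell',j'} |A^{j'}_j(\ell-\ell')|\,|u_{\ell',j'}|$ (and similarly for $B$). By the triangle inequality on the matrix entries, followed by the triangle inequality on the sum,
\[
\bigl|(|A+B|u)_{\ell,j}\bigr| \leq (|A|\norma u\norma)_{\ell,j} + (|B|\norma u\norma)_{\ell,j}.
\]
Since both terms on the right are non-negative, Minkowski's inequality in the weighted $\ell^2$ space with weights $\langle \ell,j\rangle^{2s}$, together with the fact that the Sobolev norm depends only on the moduli of the Fourier coefficients (see \eqref{Soboequals}), yields
\[
\||A+B|u\|_s \leq \||A|\norma u\norma\|_s + \||B|\norma u\norma\|_s,
\]
which is the first inequality.

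For the second inequality, the matrix elements of the composition $AB$ satisfy
\[
(AB)^{j'}_j(\ell-\ell') = \sum_{\ell'',j''} A^{j''}_j(\ell-\ell'')\,B^{j'}_{j''}(\ell''-\ell'),
\]
so by the triangle inequality
\[
|(AB)^{j'}_j(\ell-\ell')| \leq \sum_{\ell'',j''} |A^{j''}_j(\ell-\ell'')|\,|B^{j'}_{j''}(\ell''-\ell')| = (|A|\,|B|)^{j'}_j(\ell-\ell').
\]
Applying $|AB|$ to $u$ and using once more the triangle inequality in the sum defining the Fourier coefficients, one obtains
\[
\bigl|(|AB|u)_{\ell,j}\bigr| \leq \sum_{\ell',j'}(|A|\,|B|)^{j'}_j(\ell-\ell')\,|u_{\ell',j'}| = (|A|\,|B|\,\norma u\norma)_{\ell,j}.
\]
Since the right-hand side is non-negative, monotonicity of the Sobolev norm under pointwise comparison of Fourier moduli (again via \eqref{Soboequals}) gives $\||AB|u\|_s \leq \||A|\,|B|\,\norma u\norma\|_s$.

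There is no real obstacle here: both statements are entry-wise triangle inequalities followed by the observation, built into Definition~\ref{def:maj} and \eqref{Soboequals}, that the Sobolev norm is monotone with respect to majorization of Fourier coefficients. The only point to be a bit careful about is distinguishing between the modulus of a (possibly complex) Fourier coefficient of $|A+B|u$ and the Fourier coefficient itself of $|A|\norma u\norma$, which is non-negative by construction; this is precisely what makes the coefficient-wise bound legitimate and the Minkowski step straightforward.
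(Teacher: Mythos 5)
Your proof is correct and follows essentially the same route as the paper: compare Fourier coefficients via the entrywise triangle inequality (and, for the product, majorize the matrix elements of $AB$ by those of $|A|\,|B|$ with nonnegative terms justifying the interchange of sums), then use that the Sobolev norm depends only on the moduli of the coefficients, as in \eqref{Soboequals}, together with Minkowski's inequality. No changes needed.
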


\begin{proof} 
The first inequality in \eqref{disuguaglianza importante moduli} follows by
$$
\begin{aligned}
\| |A + B| u \|_s^2 
& \leq  \sum_{\ell, j} \langle \ell, j \rangle^{2 s} \Big( \sum_{\ell', j'} 
| A_j^{j'} (\ell - \ell' )|  |u_{\ell', j'}| + |B_j^{j'}(\ell - \ell')| |u_{\ell', j'}|  \Big)^2 \\
& = \big\| |A | [ \norma u \norma ] + |B | [ \norma u \norma ] \big\|_s^2 \, . 
 \end{aligned}
 $$
The second inequality in \eqref{disuguaglianza importante moduli} follows by
\begin{align}
\| |A B| u \|_s^2 & \leq  \sum_{\ell, j} \langle \ell, j \rangle^{2 s} \Big( \sum_{\ell', j'} | (A B)_j^{j'}(\ell - \ell')| |u_{\ell', j'}|  \Big)^2 \nonumber\\
& = \sum_{\ell, j} \langle \ell, j \rangle^{2 s} \Big( \sum_{\ell', j'} \Big| \sum_{\ell_1, j_1} A_j^{j_1}(\ell - \ell_1) B_{j_1}^{j'}(\ell_1 - \ell') \Big| |u_{\ell', j'}|  \Big)^2 \nonumber\\
& \leq \sum_{\ell, j} \langle \ell, j \rangle^{2 s} \Big(  \sum_{\ell_1, j_1} | A_j^{j_1}(\ell - \ell_1)| \sum_{\ell', j'}  | B_{j_1}^{j'}(\ell_1 - \ell')|  
|u_{\ell', j'}|  \Big)^2 \nonumber\\
& = \sum_{\ell, j} \langle \ell, j \rangle^{2 s} \Big(  \sum_{\ell_1, j_1} | A_j^{j_1}(\ell - \ell_1)| 
\widehat{\big( | B|[\norma u \norma] \big) }_{\ell_1, j_1}  \Big)^2  = \| |A| \big(| B| [\norma u \norma] \big) \|_s^2 \, . \nonumber
\end{align}
The lemma is proved. 
\end{proof}

\begin{definition}\label{def:even}
{\bf (Even operator)}  A linear operator $ A $ as in \eqref{matrice operatori Toplitz} 
is  {\sc even} if each $ A(\vphi)  $,  $ \vphi \in \T^\nu $,  leaves invariant  the space of  functions even in $  x $.
\end{definition}

Since the Fourier coefficients of an even function satisfy $ u_{- j}  = u_j  $, $ \forall j \in \Z $, 
we have that 
\be\label{even operators Fourier}
A \ \text{is \ even} \quad \Longleftrightarrow \quad   
 \forall \vphi \in \T^\nu \, , \ A_j^{j'}(\vphi) + A_j^{- j'}(\vphi) = A_{- j}^{j'}(\vphi) + A_{- j}^{- j'}(\vphi),  \ \forall j, j' \in \Z \, . 
\ee

\begin{definition} {\bf (Reversibility) }  
An operator ${\mathcal R}$ as in \eqref{cal R eta psi} is
\begin{enumerate}
\item {\sc reversible} if 
$ {\mathcal R}(- \vphi ) \circ \rho = - \rho \circ {\mathcal R}(\vphi ) $,  $ \forall \vphi \in \T^\nu $,   where the involution $ \rho $ is defined in \eqref{defS}, 
\item
{\sc reversibility preserving} if  $ {\mathcal R}(- \vphi ) \circ \rho =  \rho \circ {\mathcal R}(\vphi ) $,  $ \forall \vphi \in \T^\nu $.
\end{enumerate}
\end{definition}
Conjugating the linear operator $ {\mathcal L} := {\om \cdot \pa_\vphi} + A(\vphi) $ by a family of invertible linear maps $ \Phi(\vphi) $ we get
the transformed operator 
$$
\begin{aligned}
& {\mathcal L}_+ :=  \Phi^{-1}(\vphi) {\mathcal L} \Phi (\vphi)  = {\om \cdot \pa_\vphi} + A_+ (\vphi) \, , \\ 
& A_+ (\vphi) := 
 \Phi^{-1}(\vphi) (\om \cdot \pa_\vphi \Phi (\vphi) )  + \Phi^{-1}(\vphi) A (\vphi) \Phi (\vphi) \, .  
 \end{aligned}
$$
It results that the conjugation of an even and reversible operator with an operator 
$ \Phi (\vphi) $ which is even and reversibility preserving is even and  reversible.
An operator $ {\mathcal R} $ as in \eqref{cal R eta psi} is
\begin{enumerate}
\item
 reversible if and only if $ \vphi \mapsto A (\vphi), D (\vphi) $ are odd and $  \vphi \mapsto B(\vphi), C(\vphi) $ are even.
\item 
reversibility preserving if and only if
$  \vphi \mapsto A (\vphi), D (\vphi) $ are even and $  \vphi \mapsto B(\vphi), C(\vphi) $ are odd. 
\end{enumerate}
From section \ref{complex-coordinates} on, it is convenient to consider a
real operator $ {\mathcal R} $ as in \eqref{cal R eta psi}, which acts on the real variables
$ (\eta, \psi) \in \R^2 $, as a 
linear  operator which acts on the complex variables
\be\label{complex-coor}
u := \eta + \ii \psi \, , \quad   \bar u := \eta - \ii \psi \, , \qquad i.e. \ 
\ \eta = (u + \bar u) \slash 2 \, , \quad \psi = (u - \bar u) \slash (2 \ii ) \, . 
\ee
We get that a  {\it real} operator acting in the complex coordinates $ (u, \bar u) $ has the form  
\be
\begin{aligned}\label{operatori in coordinate complesse}
& {\bf R} := \begin{pmatrix}
{\mathcal R}_1 & {\mathcal R}_2 \\
\overline{\mathcal R}_2 & \overline{\mathcal R}_1
\end{pmatrix} \, , \\
& 
{\mathcal R}_1 := \frac12 \big\{(A + D) - \ii (B - C) \big\} \, , \quad   {\mathcal R}_2 := \frac12 \big\{ (A - D) + \ii(B + C) \big\} 
\end{aligned}
\ee 
where the operator $ \overline{A} $ is defined by 
\be\label{def:barA} 
\overline{A}(u) :=  \overline{A( \bar u)} \, .
\ee
It holds $ \overline{AB}  =  \overline{A} \ \overline{B} $. 

The composition of real operators is another real operator. 

A real  operator ${\bf R}$ as in  \eqref{operatori in coordinate complesse} is even 
if the operators ${\mathcal R}_1$, ${\mathcal R}_2$ are even. 

In the complex coordinates \eqref{complex-coor}
the involution $ \rho $ defined in \eqref{defS} is the map $ u \mapsto \bar u $. Thus 

\begin{lemma}\label{R-op:rev}
The real operator $ {\bf R} $ in \eqref{operatori in coordinate complesse}  is
\begin{enumerate}
 \item reversible if and only if $ {\mathcal R}_1 (- \vphi) = - \overline{{\mathcal R}_1} ( \vphi) $, 
 $ {\mathcal R}_2(- \vphi) = - \overline{{\mathcal R}_2} ( \vphi) $,  $ \forall \vphi \in \T^\nu $,  
 \item reversibility preserving if and only if 
 $ {\mathcal R}_1 (- \vphi) = \overline{{\mathcal R}_1} ( \vphi) $, 
 $ {\mathcal R}_2(- \vphi) =  \overline{{\mathcal R}_2} ( \vphi) $, $ \forall \vphi \in \T^\nu $.
\end{enumerate}
\end{lemma}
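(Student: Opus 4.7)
The proof is a direct verification, working in the complex coordinates $(u,\bar u)$ defined in \eqref{complex-coor}. The key observation is that the involution $\rho : (\eta,\psi) \mapsto (\eta,-\psi)$ becomes, under the change of coordinates $u = \eta + \ii\psi$, the map $\rho : u \mapsto \bar u$. Viewed as acting on column vectors $(u,\bar u)^T$, this $\rho$ is precisely the swap of the two components.

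My plan is to compute both sides of the conjugation identities explicitly. For $\mathbf{R}$ as in \eqref{operatori in coordinate complesse}, one has
\[
  \mathbf{R}(\vphi)\begin{pmatrix} u \\ \bar u \end{pmatrix}
  = \begin{pmatrix} \mathcal{R}_1(\vphi)[u] + \mathcal{R}_2(\vphi)[\bar u] \\ \overline{\mathcal{R}_2}(\vphi)[u] + \overline{\mathcal{R}_1}(\vphi)[\bar u] \end{pmatrix},
\]
so $\rho\circ\mathbf{R}(\vphi)$ applied to $(u,\bar u)^T$ swaps the two rows, and $\mathbf{R}(-\vphi)\circ\rho$ applied to $(u,\bar u)^T$ first swaps to $(\bar u, u)^T$ and then applies $\mathbf{R}(-\vphi)$. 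Writing out both sides row by row and matching the coefficients of $u$ and of $\bar u$ (which one may treat as independent since both relations are linear identities of operators on $L^2(\T_x,\C)$) yields, for the reversibility condition $\mathbf{R}(-\vphi)\rho = -\rho\,\mathbf{R}(\vphi)$, the four scalar conditions
\[
  \mathcal{R}_1(-\vphi) = -\overline{\mathcal{R}_1}(\vphi),\qquad \mathcal{R}_2(-\vphi) = -\overline{\mathcal{R}_2}(\vphi),
\]
together with the two relations obtained by applying $\,\overline{\phantom{X}}\,$ on both sides, which are equivalent thanks to the involutive property $\overline{\overline{A}} = A$ of the conjugation defined in \eqref{def:barA}. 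This proves item (i); item (ii) is identical except for the sign, starting from $\mathbf{R}(-\vphi)\rho = \rho\,\mathbf{R}(\vphi)$.

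As a sanity check I would verify the lemma through the alternative route using the previously established characterization: $\mathbf{R}$ is reversible iff $A,D$ are odd and $B,C$ are even in $\vphi$, and reversibility preserving in the opposite case. Since $A,B,C,D$ are real operators, $\overline{A}=A$ etc., and from the formulas for $\mathcal{R}_1,\mathcal{R}_2$ one computes
\[
  \overline{\mathcal{R}_1}(\vphi) = \tfrac12\{(A+D) + \ii(B-C)\}(\vphi),\qquad
  \overline{\mathcal{R}_2}(\vphi) = \tfrac12\{(A-D) - \ii(B+C)\}(\vphi),
\]
and substituting the parities of $A,B,C,D$ in $\mathcal{R}_1(-\vphi),\mathcal{R}_2(-\vphi)$ reproduces the claimed identities with the correct signs. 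This cross-check costs nothing and pins down the bookkeeping.

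The only delicate point — not a genuine obstacle but the place where one must be careful — is the bookkeeping of the antilinear operation $\overline{A}[u]=\overline{A[\bar u]}$ under composition with $\rho$, in particular the fact that matching coefficients of $u$ and $\bar u$ is legitimate because $\mathcal{R}_1(\vphi),\mathcal{R}_2(\vphi)$ are $\C$-linear operators on $L^2(\T_x,\C)$ and the relations in question must hold as operator identities for arbitrary complex arguments, not merely on the diagonal $\{(v,\bar v)\}$.
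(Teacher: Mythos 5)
Your verification is correct and is exactly the computation the paper leaves implicit: the lemma is stated without proof as an immediate consequence of the fact that in the coordinates \eqref{complex-coor} the involution $\rho$ becomes $u\mapsto\bar u$, and your row-by-row matching (legitimate by $\C$-linearity, testing with $u$ and $\ii u$) together with $\overline{\overline{A}}=A$ is precisely that argument. The cross-check against the real-coordinate parity characterization of $A,B,C,D$ also comes out with the right signs, so nothing is missing.
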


\section{Pseudo-differential operators and norms}\label{sec:pseudo}

Pseudo-differential operators on the torus may be seen as a particular case (see Definition \ref{def:Ps2})
of pseudo-differential operators on $ \R^n $, as developed for example in \cite{Ho1}. 
It is also convenient to define them also  through Fourier series, see Definition \ref{def:Ps1},  
for which we refer  to \cite{SV}.  

Given a function $ a : \Z \to \C $ 
we denote the discrete derivative by $ 
(\Delta_j a) (j) := a(j + 1 ) - a(j) $.
For $ \b \in \N $ we denote by $ \D_j^\b := \D_j \circ \ldots \circ \D_j $ 
 the composition of $  \b $-discrete derivatives. 

\begin{definition}\label{def:Ps1} {\bf ($\Psi {\rm DO}$$1 $)}
Let $ u = \sum_{j \in \Z} u_j e^{\ii j x } $. 
A linear operator $ A $ defined by
\be\label{action-A}
(Au) (x) 
:= {\mathop \sum}_{j \in \Z} a(x,j) u_j e^{\ii j x }  
\ee
is  called {\sl pseudo-differential} of order $ \leq m $ if its symbol $ a (x,j) $ is 
$ 2 \pi $-periodic and $ {\mathcal C}^\infty $-smooth in $ x $, and satisfies the inequalities 
\be\label{symbol-pseudo1}
\big| \pa_x^\a \Delta_j^\b a (x,j) \big| \leq C_{\a,\b} \langle j \rangle^{m-\b}  \, , 
\quad \forall \a, \b \in \N \, . 
\ee
\end{definition}


We also remark that, given an operator $ A $, we recover its symbol  by 
\be\label{definizione simbolo}
a (x,j) = e^{- \ii j x} (A [e^{\ii jx}]) .
\ee
When the symbol $ a (x) $ is independent of $ j $, the operator $ A = {\rm Op} (a) $ is 
the  multiplication operator for the function $ a(x)$, i.e $ A : u (x) \mapsto a ( x) u(x )$. 
In such a case we shall also denote   $ A = {\rm Op} (a)  = a (x)  $.

\begin{definition} \label{def:Ps2} {\bf ($\Psi {\rm DO}$$2 $)}
A linear operator $ A $ is called {\sl pseudo-differential} of order $ \leq m $ if its symbol $ a (x,j) $ is 
the restriction to $ \R \times \Z $ of a function $ a (x, \xi ) $ which is $ {\mathcal C}^\infty $-smooth on $ \R \times \R $,
 $ 2 \pi $-periodic in $ x $, and satisfies the inequalities
\be\label{symbol-pseudo2}
\big| \pa_x^\a \pa_\xi^\b a (x,\xi ) \big| \leq C_{\a,\b} \langle \xi \rangle^{m - \b} \, , \quad \forall \a, \b \in \N \, .
\ee
We call $ a(x, \xi ) $ the symbol of the operator $ A $, that we denote 
$$ 
A = {\rm Op} (a) = a(x, D) \, , \quad D := D_x := \frac{1}{\ii} \pa_x \, . 
$$ 
We denote by $ S^m $ 
the class of all the symbols $ a(x, \xi ) $ satisfying \eqref{symbol-pseudo2}, and by $ OPS^m $ 
the set of pseudo-differential  operators of order $ m $.  We set $ OPS^{-\infty} := \cap_{m  \in \R} OPS^{m} $.
\end{definition}

Definitions \ref{def:Ps1} and \ref{def:Ps2} are equivalent because  
any discrete symbol $ a : \R \times \Z \to \C $ satisfying \eqref{symbol-pseudo1} 
can be extended to a $ {\mathcal C}^\infty $-symbol $ \wtilde a : \R \times \R  \to \C  $ satisfying  
\eqref{symbol-pseudo2}, see section 7.2 in \cite{SV}. 
It is sufficient to proceed as follows.
Given a function $ \s : \Z \to \C $  we define the  $ {\mathcal C}^\infty $-extension 
\begin{equation}\label{simbolo esteso discreto continuo}
\wtilde \s : \R \to \C \, , \quad \wtilde \s (\xi) := {\mathop \sum}_{j \in \Z} \s ( j) \zeta (\xi - j) \, , \ \ \forall \xi \in \R \, \, , 
\end{equation}
where $ \zeta := \widehat \theta \in {\mathcal S}(\R) $ (Schwartz class) is the Fourier transform of a function $ \teta \in {\mathcal D}(\R) $
(test functions) such that
$ {\rm supp} (\teta) \subset [-2/3, 2/3] $, $ \theta (x) + \theta (x-1) = 1 $, $ \forall x \in [0,1] $, and $ \sum_{j\in \Z } \theta (x + j) = 1 $. 
It results that $ \zeta (k ) = \delta_{0k} $, $ \forall k \in \Z $, namely $ \zeta (0 ) = 1 $ and $ \zeta (k ) = 0 $, $ \forall k \neq 0 $,
so that $ \wtilde \s (k) = \s (k)  $, $ \forall k \in \Z $. 
Moreover 
 there are positive constants $ c_\b' > 0 $, independent of $ \sigma $, such that (see Lemma 7.1.1 in \cite{SV})
\begin{equation}\label{simbolo esteso discreto continuo stima}
|\D^\b_j \s (j)| \leq c_\b \langle j \rangle^{m- \b} \quad \Longleftrightarrow \quad 
| \pa_\xi^\b {\wtilde \s} (\xi)| \leq c_\b' c_\b \langle \xi \rangle^{m- \b} \, .  
\end{equation}
Definition \ref{def:Ps2} 
 is more convenient to get  basic results 
concerning composition, asymptotic expansions, 
\ldots of pseudo-differential operators, that we recall below. 
We underline that, in the sequel, also when we use of the continuous symbol $  a (x, \xi )$, 
we think $ {\rm Op}(a)  $ to act only on $ 2 \pi $-periodic functions $ u(x) $ as in \eqref{action-A}. 

\smallskip

We shall use 
the following notation, used also in \cite{AB}. 
For any $m \in \R \setminus \{ 0\}$, we set
\begin{equation}\label{definizione |D| m}
|D|^m := {\rm Op}(\chi(\xi) |\xi|^m)\,,
\end{equation}
where $\chi \in {\mathcal C}^\infty(\R, \R)$ is an even and positive cut-off function such that 
\begin{equation}\label{cut off simboli 1}
\chi(\xi) = \begin{cases}
0 & \quad \text{if } \quad |\xi| \leq \frac13 \\
1 & \quad \text{if} \quad  \ |\xi| \geq \frac23\,,
\end{cases} \qquad \partial_\xi \chi(\xi) > 0 \quad \forall  \xi \in \Big(\frac13, \frac23 \Big) \, . 
\end{equation}

\begin{lemma}\label{even:pseudo}
Let $ A := {\rm Op} (a) $ be a pseudo-differential operator. Then the following holds:
\begin{enumerate}
\item 
If the symbol $a$ satisfies $ a(-x, - \xi) = a(x,  \xi) $, then $A$ is even. 
\item
Let $g(\xi)$ be a Fourier multiplier satisfying $g(\xi) = g(- \xi)$. Then if $A = {\rm Op}(a)$ is even, the operator ${\rm Op}(a(x, \xi) g(\xi)) = {\rm Op}(a) \circ {\rm Op}(g)$ is an even operator. 
\item 
$A$ is real if and only if  the symbol  $ \overline{a(x, - \xi)} = a(x,  \xi) $. 
\item 
The operator $ \overline{A} $ defined in \eqref{def:barA} is pseudo-differential and its   symbol
is $ \overline{ a (x, -\xi)} $. 
\end{enumerate}
\end{lemma}
We first recall some fundamental properties of  pseudo-differential operators. 

\smallskip

\noindent
{\bf Composition of pseudo-differential operators.} 
If $ A = a( x, D) \in OPS^{m} $, $ B = b(x, D) \in  OPS^{m'}  $, $ m , m' \in \R $,  are pseudo-differential operators 
with symbols $ a \in S^m $, $ b \in S^{m'} $ then the composition operator 
$ A B := A \circ B = \sigma_{AB} ( x, D) $ is a pseudo-differential operator
with  symbol 
\be\label{lemma composition}
\sigma_{AB}( x, \xi) = \sum_{j \in \Z} a(x, \xi + j ) \widehat b(j, \xi) e^{\ii j x  } = 
\sum_{j, j' \in \Z} \widehat a(j' - j, \xi + j) \widehat b(j, \xi) e^{\ii j' x}
\ee
where $ \widehat \cdot $ denotes the Fourier coefficients of the symbols $ a(x, \xi) $ and $  b(x, \xi)  $ with respect to $  x $.  
The symbol $\sigma_{AB} $ has the following asymptotic expansion
\be\label{composition pseudo}
\sigma_{AB}(x, \xi) \sim {\mathop \sum}_{\b \geq 0} \frac{1}{\ii^\b \b !} \partial_\xi^\b a(x, \xi)  \partial_x^\b b(x, \xi) \, , 
\ee
that is, $ \forall N \geq 1  $,  
\be\label{expansion symbol}
\begin{aligned}
\s_{AB} (x, \xi) & = \sum_{\b =0}^{N-1} \frac{1}{  \b ! \ii^\b }  \pa_\xi^\b a (x, \xi) \, \pa_x^\b b (x, \xi) + r_N (x, \xi)
\qquad {\rm where} \\ 
 r_N & := r_{N, AB} \in S^{m + m' - N }  \, .
\end{aligned}
\ee
The remainder $ r_N $ has the explicit formula 
\be\label{rNTaylor}
r_N (x, \xi) := \frac{1}{(N-1)! \, \ii^N} \int_0^1 (1- \t )^{N-1}  
\sum_{j \in \Z} (\pa_\xi^N a)(x, \xi + \t j ) \widehat {\pa_x^N b} (j , \xi) e^{\ii j x } \, d \tau  \, .
\ee

\smallskip

\noindent
{\bf Adjoint of a pseudo-differential operator.}
If  $ A =  a(x, D)  \in OPS^m $ is a pseudo-differential operator with symbol $ a \in S^m $, then its $L^2$-adjoint is the
pseudo-differential operator 
\begin{equation}\label{simbolo aggiunto senza tempo}
A^* = {\rm Op}(a^*) \qquad {\rm with \ symbol}  
\qquad a^*(x, \xi) := \overline{{\mathop \sum}_{j \in \Z} \widehat a(j, \xi - j) e^{\ii j x }}\,.
\end{equation}
\noindent
{\bf Families of pseudo-differential operators.} 
We consider $ \vphi $-dependent families of pseudo-differential operators
\be\label{come-agisce-A}
(A u)(\vphi, x) = {\mathop \sum}_{j \in \Z} a(\vphi, x, j) u_j(\vphi) e^{\ii j x } 
\ee
where  the symbol  $ a(\vphi, x, \xi ) $ is $ {\mathcal C}^\infty $-smooth also in $ \vphi $. 
We still denote $ A :=  $ $ A(\vphi) = $ $ {\rm Op} (a(\vphi, \cdot) ) = {\rm Op} (a ) $. 

By \eqref{lemma composition} and  a Fourier expansion also in $ \vphi \in \T^\nu $,
the symbol of the composition operator  $ A  B $ is 
\be\label{composition with phi}
\begin{aligned}
\sigma_{AB}(\vphi, x, \xi) & = 
\sum_{j \in \Z} a(\vphi, x, \xi + j) \widehat b(\vphi, j, \xi) e^{\ii j x} \\
& = \sum_{\begin{subarray}{c}
j', j \in \Z\\
\ell, \ell_1 \in \Z^\nu
\end{subarray}} \widehat a(\ell - \ell_1, j' - j, \xi + j) \widehat b(\ell_1, j , \xi) e^{\ii(\ell \cdot \vphi +   j' x)} \, . 
\end{aligned}
\ee
By  \eqref{simbolo aggiunto senza tempo}
the symbol of the adjoint operator $ A(\vphi)^* = {\rm Op}(a^*(\vphi, \cdot))$ is 
\begin{equation}\label{simbolo aggiunto}
a^*(\vphi, x, \xi) = \overline{\sum_{j \in \Z} \widehat a(\vphi, j, \xi - j) e^{\ii j x}} 
= \overline{ \sum_{\ell \in \Z^\nu, j \in \Z} \widehat a(\ell, j, \xi - j) e^{\ii (\ell \cdot \vphi + j x)}} \, . 
\end{equation}
Along the paper we also consider families of pseudo-differential operators  
$$ 
A(\lambda) := {\rm Op}(a(\lambda, \vphi, x, \xi)) 
$$ 
 which  are ${k_0}$-times differentiable with respect to 
a parameter  
$$ 
\lambda := (\omega, \kappa) \in \mathtt \Lambda_0 = \tOm_0 \times [\kappa_1, \kappa_2] 
\subset \R^\nu \times [\kappa_1, \kappa_2] \, , 
$$ 
where  the regularity constant $ k_0 \in \N $ is fixed once for all in section \ref{sec:degenerate KAM}. Note that 
$$
\partial_{\lambda}^k A = {\rm Op}(\partial_{\lambda}^k a)\, , \quad \forall k \in \N^{\nu + 1} \, , \ |k| \leq k_0 \,  .
$$
We now introduce a norm (inspired to 
Metivier \cite{Met}, chapter 5)
which controls the regularity in $ (\vphi, x)$,  and the decay in $ \xi $,  of the symbol $ a (\vphi, x, \xi) \in S^m $, 
together with its derivatives 
$ \pa_\xi^\b a \in S^{m - \b}$, $ 0 \leq \b \leq \a $, in the Sobolev norm $ \| \ \|_s $.  

\begin{definition}\label{def:pseudo-norm} {\bf (Weighted $\Psi DO$ norm)}
Let $ A(\lambda) := $ $ a(\lambda, \vphi, x, D) \in $ $ OPS^m $ 
be a family of pseudo-differential operators with symbol $ a(\lambda, \vphi, x, \xi) \in S^m $, $ m \in \R $, which are 
$k_0$-times differentiable with respect to $ \lambda \in \mathtt \Lambda_0 \subset \R^{\nu + 1} $. 
For $ \g \in (0,1) $, $ \a \in \N $, $ s \geq 0 $, we define  the weighted norm 
\begin{equation}\label{norm1 parameter}
\norma A \norma_{m, s, \alpha}^{k_0, \gamma} := \sum_{|k| \leq k_0} \gamma^{|k|} 
{\rm sup}_{\lambda \in {\mathtt \Lambda}_0}\norma\partial_\lambda^k A(\lambda) \norma_{m, s, \alpha}  
\end{equation}
where we use the multi-index notation $ k = ( k_1, \ldots , k_{\nu + 1}) \in \N^{\nu + 1} $ with 
$ | k  | := $ $ | k_1| + \ldots + | k_{\nu + 1} | $, and  
\be\label{norm1}
\norma A( \lambda )  \norma_{m, s, \a} := {\rm max}_{0 \leq \beta  \leq \a} \sup_{\xi \in \R} \|  \partial_\xi^\beta 
a(\lambda, \cdot, \cdot, \xi )  \|_{s} \langle \xi \rangle^{-m + \beta} \, . 
\ee
\end{definition}

For each $ k_0, \gamma, m  $ fixed, the norm \eqref{norm1 parameter} is 
non-decreasing both in $ s $ and $ \a $, 
namely  
\be\label{norm-increa}
\forall s \leq s', \, \a \leq  \a' \, ,  \qquad 
\norma \  \norma_{m, s, \a}^{k_0, \gamma} \leq \norma \ \norma_{m, s', \a}^{k_0, \gamma} \, ,  \ \ 
\norma \  \norma_{m, s, \a}^{k_0, \gamma}  \leq 
\norma \ \norma_{m, s, \a'}^{k_0, \gamma} \, .
\ee
Note also that the norm \eqref{norm1 parameter}  is non-increasing in  $ m $, i.e. 
\be\label{crescente-m-neg}
m \leq m'  \qquad  \Longrightarrow \qquad \norma \  \norma_{m', s, \a}^{k_0, \gamma}  \leq 
\norma \ \norma_{m, s, \a}^{k_0, \gamma} \, . 
\ee
Given a function $ a(\lambda, \vphi, x ) \in {\mathcal C}^\infty $ which is ${k_0}$-times differentiable with respect
to $ \lambda  $,  the weighted norm of the corresponding multiplication operator is 
\be\label{norma a moltiplicazione}
\norma {\rm Op} (a)  \norma_{0, s, \a}^{k_0, \gamma} = \| a \|_s^{k_0,\gamma} \, , \quad  \forall \a \in \N  \,  , 
\ee
where the weighted Sobolev norm $ \| a \|_s^{k_0, \gamma}$ is defined  in \eqref{norma pesata derivate funzioni}.

For a Fourier multiplier $   g(D)  $ with symbol $ g \in S^m $, we simply have 
\be\label{Norm Fourier multiplier}
\norma  g(D)  \norma_{m, s, \a} \leq C(m, \a, g ) \, , \quad \forall s \geq 0 \, . 
\ee
The norm $ \norma \ \norma_{0,s,0}$  controls the action of a pseudo-differential 
operator on the Sobolev spaces $ H^s $ as we shall prove in 
 Lemma \ref{lemma: action Sobolev}.

\begin{remark} 
The  norm of Definition \ref{def:pseudo-norm} is introduced in view of section \ref{egorov} 
 where we have to estimate
the norm $ \norma R_M \norma_{ 1- \frac{M}{2}, s, 0}^{k_0, \g} $ in \eqref{stima RN diagonale Egorov}. The remainder
$ R_M $ depends on $ \norma {\rm Op}(q_M) \norma_{1- \frac{M}{2},s,0}^{k_0, \g} $. 
The terms $ q_1, \ldots, q_M $ are 
obtained iteratively, and each $ q_{k+1} $ depends on $ \pa_\xi q_k $. Thus we need to control the Sobolev norm
in $ (\vphi, x) $ of $ \pa_\xi^M q_0 $. 
This is made precise by estimating the norm $ \norma {\rm Op}(q_0) \norma_{- \frac{3}{2}, s,M}^{k_0, \g}  $.
\end{remark}

The norm $  \norma \  \norma_{m, s, \alpha}^{k_0, \gamma}  $ 
 is closed under composition and  satisfies tame estimates. 
 
 \begin{lemma}\label{lemma stime Ck parametri}
{\bf (Composition)} 
Let $ A = a(\lambda, \vphi, x, D) $, $ B = b(\lambda, \vphi, x, D) $ be pseudo-differential operators
with symbols $ a (\lambda, \vphi, x, \xi) \in S^m $, $ b (\lambda, \vphi, x, \xi ) \in S^{m'} $, $ m , m' \in \R $. Then $ A(\lambda) \circ B(\lambda) \in OPS^{m + m'} $
satisfies,   for all $ \a \in \N $, $ s \geq s_0 $, 
\begin{equation}\label{estimate composition parameters}
\begin{aligned}
\norma A B \norma_{m + m', s, \alpha}^{k_0, \gamma} 
& \leq_{m,  \alpha, k_0} C(s) \norma A \norma_{m, s, \alpha}^{k_0, \gamma} 
\norma B \norma_{m', s_0 + \alpha + |m|, \alpha}^{k_0, \gamma}  \\
& \qquad \ \  + C(s_0) \norma A  \norma_{m, s_0, \alpha}^{k_0, \gamma}  
\norma B \norma_{m', s + \alpha + |m|, \alpha}^{k_0, \gamma} \, . 
\end{aligned}
\end{equation}
Moreover, for any integer $ N \geq 1  $,  
the remainder $ R_N := {\rm Op}(r_N) $ in \eqref{expansion symbol} satisfies
\begin{align}
\norma R_N  \norma_{m+ m'- N, s, \alpha}^{k_0, \gamma} & \leq_{m, N,  \alpha, k_0} \!\!
\frac{1}{N!} \big( C(s) 
\norma A \norma_{m, s, N + \alpha}^{k_0, \gamma} 
\norma B  \norma_{m', s_0 + 2 N + |m| + \alpha, \alpha }^{k_0, \gamma} \nonumber \\
& \qquad \qquad  \ \ \, + C(s_0)\norma A \norma_{m, s_0 , N + \alpha}^{k_0, \gamma}
\norma B  \norma_{m', s + 2 N + |m| + \alpha, \alpha }^{k_0, \gamma} \big) \, .
\label{stima RN derivate xi parametri} 
\end{align}
Both  \eqref{estimate composition parameters}-\eqref{stima RN derivate xi parametri} hold  
with the constant $ C(s_0) $ interchanged with $ C(s) $. 
\end{lemma}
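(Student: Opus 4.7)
The plan is to start from the explicit Fourier representation of the composition symbol given in equation \eqref{composition with phi}:
$$\sigma_{AB}(\vphi,x,\xi)=\sum_{j\in\Z}a(\vphi,x,\xi+j)\,\widehat b(\vphi,j,\xi)\,e^{\ii jx},$$
where $\widehat b(\vphi,j,\xi)$ is the Fourier coefficient of $b$ with respect to $x$. Differentiating $\beta\leq\alpha$ times in $\xi$ via Leibniz yields a finite sum of terms $(\partial_\xi^{\beta_1}a)(\vphi,x,\xi+j)\,\widehat{\partial_\xi^{\beta_2}b}(\vphi,j,\xi)\,e^{\ii jx}$. To bound $\norma AB\norma_{m+m',s,\alpha}$, I need to control the $H^s_{\vphi,x}$-norm of this sum, multiplied by $\langle\xi\rangle^{-(m+m')+\beta}$, uniformly in $\xi\in\R$.

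For each fixed $j$, I would apply the standard tame product estimate in $H^s(\T^{\nu+1})$, namely $\|fg\|_s\lesssim_s\|f\|_s\|g\|_{s_0}+\|f\|_{s_0}\|g\|_s$. Definition \ref{def:pseudo-norm} gives $\|(\partial_\xi^{\beta_1}a)(\cdot,\cdot,\xi+j)\|_s\leq\norma A\norma_{m,s,\alpha}\langle\xi+j\rangle^{m-\beta_1}$, and Peetre's inequality $\langle\xi+j\rangle^{m-\beta_1}\lessdot\langle\xi\rangle^{m-\beta_1}\langle j\rangle^{|m|}$ separates $\xi$ from $j$; the factor $\langle\xi\rangle^{m-\beta_1}$ combines with $\langle\xi\rangle^{m'-\beta_2}$ (from the bound on $\widehat b(\cdot,j,\xi)$) to produce the desired weight $\langle\xi\rangle^{m+m'-\beta}$. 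The multiplication by $e^{\ii jx}$ costs a factor $\langle j\rangle^{s_0}$ at Sobolev index $s_0$ and $\langle j\rangle^s$ at index $s$, while $\langle j\rangle^{|m|}$ from Peetre is an additional loss. All these $j$-factors are compensated by the fast decay of $\widehat b(\cdot,j,\xi)$: controlling $b$ at Sobolev index $s_0+\alpha+|m|$ in $(\vphi,x)$ gives $\|\widehat b(\cdot,j,\xi)\|_{s_0}\lesssim\norma B\norma_{m',s_0+\alpha+|m|,\alpha}\langle j\rangle^{-s_0-|m|-1}\langle\xi\rangle^{m'-\beta_2}$ (and similarly at index $s$), so summation over $j$ converges as a geometric series. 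This yields \eqref{estimate composition parameters} for $k=0$; the weighted version follows from Leibniz in $\lambda$ on the product $a\cdot\widehat b$, since $\lambda$ enters only through the smooth symbols and $\partial_\lambda^k A={\rm Op}(\partial_\lambda^k a)$.

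For the remainder $R_N$, I would use the explicit integral formula \eqref{rNTaylor}:
$$r_N(\vphi,x,\xi)=\frac{1}{(N-1)!\,\ii^N}\int_0^1(1-\tau)^{N-1}\sum_{j\in\Z}(\partial_\xi^N a)(\vphi,x,\xi+\tau j)\,\widehat{\partial_x^N b}(\vphi,j,\xi)\,d\tau,$$
and apply exactly the same scheme: the bound on $\partial_\xi^N a$ now lives in $S^{m-N}$, producing the improved weight $\langle\xi\rangle^{m+m'-N-\beta}$, while the $N$ extra $x$-derivatives on $b$ contribute an extra $\langle j\rangle^N$ factor that must be absorbed by controlling $b$ in Sobolev norm $s_0+2N+|m|+\alpha$, matching the statement of \eqref{stima RN derivate xi parametri}. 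The $1/N!$ prefactor combines $1/(N-1)!$ from the formula with $\int_0^1(1-\tau)^{N-1}d\tau=1/N$.

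The main obstacle is the careful bookkeeping that produces the \emph{asymmetric} tame structure: one must verify that every instance of Peetre's inequality, Sobolev product estimate, and Fourier decay distributes the indices so that exactly one of $a,b$ appears at the high Sobolev index $s$ in each summand while the other is at $s_0$, and that the extra $\alpha+|m|$ (resp.\ $2N+|m|+\alpha$) derivatives on $b$ are exactly what is needed to gain summability in $j$. The estimates are routine once the strategy is fixed, but the balance between the shifted argument $\xi+j$ in $a$ and the Fourier decay of $b$ is the conceptual core of the lemma.
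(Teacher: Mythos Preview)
Your overall architecture --- the Fourier formula \eqref{composition with phi}, Peetre's inequality to separate $\xi$ from $j$, Leibniz in $\xi$ to handle $\alpha\geq 1$, the Taylor remainder formula \eqref{rNTaylor} for $R_N$, and Leibniz in $\lambda$ for the weighted norm --- is exactly the paper's strategy. The gap is in the core step: bounding $\|\sigma_{AB}(\cdot,\cdot,\xi)\|_s$ by applying the triangle inequality over $j$ and then the tame product estimate term by term. This is too lossy to hit the sharp Sobolev index $s_0+\alpha+|m|$ on $B$ stated in \eqref{estimate composition parameters}, in particular for $\alpha=0$. Concretely, your decay claim $\|\widehat b(\cdot,j,\xi)\|_{s_0}\lesssim \norma B\norma_{m',s_0+\alpha+|m|,\alpha}\langle j\rangle^{-s_0-|m|-1}$ is inconsistent: controlling $b$ at Sobolev index $s_0+\alpha+|m|$ yields at best $\langle j\rangle^{-(\alpha+|m|)}$ decay relative to the $H^{s_0}_\vphi$-norm, and after absorbing the $\langle j\rangle^{|m|}$ from Peetre you are left with $\sum_j\langle j\rangle^{-\alpha}$, which diverges for $\alpha\leq 1$. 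The passage from the $\ell^2$-type $H^s$-norm to an $\ell^1$-sum over $j$ via triangle inequality is precisely a half-derivative loss that the lemma does not allow.

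The paper avoids this by \emph{not} summing term by term. It expands $\|\sigma_{AB}(\cdot,\cdot,\xi)\|_s^2$ directly as a sum over output frequencies $(\ell,j')$ of the square of a convolution $\sum_{j,\ell_1}\widehat a(\ell-\ell_1,j'-j,\xi+j)\,\widehat b(\ell_1,j,\xi)$, splits each inner sum according to whether $\langle\ell,j'\rangle\leq 2^{1/s}\langle\ell_1,j\rangle$ or not, and applies Cauchy--Schwarz inside the convolution with weight $\langle\ell-\ell_1,j'-j\rangle^{-s_0}$. This keeps everything in $\ell^2$ and produces exactly the claimed index $s_0+|m|$ for $\alpha=0$; the case $\alpha\geq 1$ then follows by observing that ${\rm Op}(\partial_\xi^\beta\sigma_{AB})=\sum_{\beta_1+\beta_2=\beta}C_{\beta_1\beta_2}\,{\rm Op}(\partial_\xi^{\beta_1}a)\circ{\rm Op}(\partial_\xi^{\beta_2}b)$ and applying the $\alpha=0$ bound to each composition. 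Your treatment of $R_N$ and of the $\lambda$-derivatives is correct once this base case is fixed.
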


\begin{proof}
As a first step we  prove the estimates with no dependence on $ \lambda $:
\begin{align}
  \norma A B \norma_{m + m', s, \alpha}  & \leq_{m, \a} C(s) \norma A  \norma_{m, s, \alpha} 
\norma B  \norma_{m',s_0 + \alpha + |m|, \alpha}    \nonumber\\
& \ \  \quad + C(s_0) 
\norma A \norma_{m, s_0, \alpha} \norma B \norma_{m',s + \alpha + |m|, \alpha} \, ,  \label{tame norm derivate xi} \\
 \norma R_N  \norma_{m + m'- N, s, \alpha}   & \leq_{m, N, s, \a} 
\frac{1}{N!} \big(
\norma A  \norma_{m, s, N + \alpha} \, \norma B  \norma_{m', s_0 + 2 N + |m| + \alpha, \alpha } 
  \nonumber\\
& \qquad \qquad  \quad + \norma A  \norma_{m, s_0 , N + \alpha} \norma B  \norma_{m', s + 2 N + |m| + \alpha, \alpha }\big) \,. \label{stima RN derivate xi} 
\end{align}
We first prove  \eqref{tame norm derivate xi} for $ \a = 0 $. 
Denote by $ \s := \s_{AB} $ the symbol in  \eqref{composition with phi}. For all $ \xi \in \R $ we have
\be\label{simbolo-pseudo-composto}
\begin{aligned}
& \| \sigma(\cdot , \xi) \|_{s}^2  \langle \xi \rangle^{- 2(m + m')}  \\
&  = \sum_{j', \ell} \langle \ell, j' \rangle^{2s} \Big| \sum_{j, \ell_1} 
 \widehat a(\ell - \ell_1, j' - j, \xi + j) \widehat b(\ell_1, j, \xi) \Big|^2 \langle \xi \rangle^{- 2(m + m')}  \\
 & \leq S_1 + S_2 
 \end{aligned}
\ee  
where
\begin{align}
& S_1 :=  \nonumber\\ 
& \sum_{j', \ell}  \Big( \!\!  \! \sum_{\langle \ell, j' \rangle \leq 2^{1/s} \langle \ell_1, j  \rangle } \!\!\!\!\!\!\!\!\!\!\!\!
 \frac{\langle \ell_1, j \rangle^{s} \langle \ell, j' \rangle^{s} | \widehat a(\ell - \ell_1, j' - j, \xi + j )| \langle \ell - \ell_1, j' - j \rangle^{s_0} |\widehat b(\ell_1, j, \xi)|}{\langle \ell_1, j \rangle^{s} \langle \ell - \ell_1, j' - j \rangle^{s_0} \langle \xi \rangle^{m + m'}}  \Big)^2 
 \nonumber \\
& S_2 :=  \nonumber\\  
&  \sum_{j', \ell}  \Big( \!\! \!  \sum_{\langle \ell, j' \rangle > 2^{1/s} \langle \ell_1, j  \rangle } \!\!\!\!\!\!\!\!\!\!\!\!
 \frac{ \langle \ell_1, j \rangle^{s_0} \langle \ell, j' \rangle^{s}  | \widehat a(\ell - \ell_1, j' - j, \xi + j )| \langle \ell - \ell_1, j' - j \rangle^{s} |\widehat b(\ell_1, j, \xi)| }{\langle \ell_1, j \rangle^{s_0} \langle \ell - \ell_1, j' - j \rangle^{s} \langle \xi\rangle^{m + m'}} \Big)^2 
\, . \nonumber 
\end{align}
Now,  by Cauchy-Schwartz inequality and denoting 
$ \zeta (s_0) := \sum_{\ell \in \Z^\nu, j \in \Z} \frac{1}{\langle \ell, j \rangle^{2s_0} } $, we get 
 \begin{align}
S_1 & \leq
 \sum_{j', \ell}  \Big( \sum_{\langle \ell, j' \rangle \leq 2^{1/s} \langle \ell_1, j  \rangle } \!\!\!\!\!\!\!\!\!\!\!\! 
 \frac{2 \langle \ell_1, j \rangle^{s} | \widehat a(\ell - \ell_1, j' - j, \xi + j )| \langle \ell - \ell_1, j' - j \rangle^{s_0} |\widehat b(\ell_1, j, \xi)|}{ \langle \ell - \ell_1, j' - j \rangle^{s_0} \langle \xi \rangle^{m + m'}} \Big)^2 
 \nonumber \\
& \leq 4 \zeta (s_0) 
 \sum_{j', \ell}  \sum_{ \ell_1, j} 
\frac{ | \widehat a(\ell - \ell_1, j' - j, \xi + j )|^2 \langle \ell - \ell_1, j' - j \rangle^{2s_0} 
 |\widehat b(\ell_1, j, \xi)|^2  \langle \ell_1, j \rangle^{2s}}{\langle \xi \rangle^{2 (m + m')}} \nonumber \\ 
& \leq 4 \zeta (s_0) 
 \sum_{ \ell_1, j}  \frac{ |\widehat b(\ell_1, j, \xi)|^2  \langle \ell_1, j \rangle^{2s}}{\langle \xi \rangle^{ 2m'}}   \sum_{j', \ell} 
 \frac{| \widehat a(\ell - \ell_1, j' - j, \xi + j )|^2 \langle \ell - \ell_1, j' - j \rangle^{2s_0}}{ \langle \xi \rangle^{2 m}}
  \, . \label{Stima:S1}
\end{align}
For each $ j $, $ \ell_1 $ fixed, we apply  Peetre's inequality 
\be\label{Peetre's inequality}
\langle \xi + \eta \rangle^{m} \leq  C_m \langle \xi \rangle^m \langle \eta \rangle^{|m|}, \quad 
\forall m \in \R, \, \eta \in \R \, , \xi \in \R 
\ee
(where $ C_m = 4^{|m|} $) with $ \eta = j $, and we estimate, for any $ s \geq s_0 $,    
\begin{align}
& \sup_\xi 
\sum_{j' , \ell} \frac{|\widehat a(\ell- \ell_1, j' - j, \xi + j )|^2  \langle \ell - \ell_1, j' - j \rangle^{2 s}}{\langle \xi \rangle^{ 2 m}}  
 = \sup_\xi  \frac{\| a(\cdot, \xi + j ) \|^2_s}{\langle \xi \rangle^{ 2 m}}  	\nonumber \\ 
& = \Big( \sup_\xi  \frac{\| a(\cdot, \xi + j ) \|^2_s}{ \langle \xi + j  \rangle^{ 2 m}}   \Big) 
\frac{\langle \xi + j \rangle^{2 m}}{\langle \xi \rangle^{ 2 m}} 
 \leq C_m^2  \norma A \norma_{m,s,0}^2  \langle j \rangle^{ 2 |m|} \label{stima-tameAs0}
\end{align}
and therefore we get, by  \eqref{Stima:S1} and \eqref{stima-tameAs0} for $ s = s_0 $, 
\be
\begin{aligned}
S_1 & \leq 
4 \zeta (s_0)  C_m^2   \norma A \norma_{m,s_0,0}^2 \sum_{ \ell_1, j}  |\widehat b(\ell_1, j, \xi)|^2  \langle \ell_1, j \rangle^{2s} \langle j \rangle^{ 2 |m|} \ 
\langle \xi \rangle^{- 2m'}   \\ 
& \leq
4 \zeta (s_0)  C_m^2  \norma A \norma_{m,s_0,0}^2  \norma B \norma_{m', s+ |m|,0}^2  \, . \label{stima-simbolo-sigma} 
\end{aligned}
\ee
For the estimate of $ S_2 $ note that, since the indices satisfy 
$ \langle \ell, j' \rangle  > 2^{1/s} \langle \ell_1, j \rangle $ we have 
$ \langle \ell, j' \rangle \leq $ $ \langle \ell_1, j \rangle + 
\langle \ell - \ell_1, j' -  j \rangle \leq $ $ 2^{-1/s} \langle \ell, j' \rangle + 
\langle \ell - \ell_1, j' - j \rangle $ 
and therefore
$$
\langle \ell, j' \rangle \leq \big( 1 - 2^{- 1/s} \big)^{-1} \langle  \ell - \ell_1, j' - j \rangle \, .
$$
As a consequence, arguing as above, we deduce that, for some constant $ C(s) > 0 $, we have  
\be\label{stima:S2-primo}
S_2 \leq_m C(s)  \norma A \norma_{m,s,0}^2 \norma B \norma_{m', s_0+ |m|,0}^2 \, . 
\ee
By \eqref{simbolo-pseudo-composto} and \eqref{stima-simbolo-sigma}, \eqref{stima:S2-primo} we deduce 
the estimate \eqref{tame norm derivate xi} for $ \a = 0 $, i.e. 
\begin{equation}\label{tame norm}
\norma A B \norma_{m+m', s,0} \leq_{m} 
C(s) \norma A  \norma_{m, s,0} \norma B \norma_{m', s_0  + |m|,0} 
+ C(s_0)  \norma A \norma_{m, s_0,0} \norma B \norma_{m', s + |m|,0} \, . 
\end{equation}
Now we prove \eqref{tame norm derivate xi} for $ \a \geq 1 $. 
By differentiating  \eqref{composition with phi}  we get,  for all $1 \leq \beta \leq \a $, 
$$
\partial_\xi^\b \sigma_{AB} (\vphi, x, \xi)  = \sum_{\b_1 + \b_2 = \b} C(\b_1, \b_2) 
\sum_{j \in \Z} \partial_\xi^{\b_1} a(\vphi, x, \xi + j ) \partial_\xi^{\b_2} \widehat b(\vphi, j, \xi) e^{\ii j x} \, .
$$
Therefore,  since $\partial_\xi^{\b_2} \widehat b(\vphi, j, \xi) = 
\widehat{\partial_\xi^{\b_2}} b(\vphi, j, \xi)  $ and, again by \eqref{composition with phi}, we get 
\be \label{scomp1}
{\rm Op}(\partial_\xi^\b \sigma_{AB}) = {\mathop \sum}_{\b_1 + \b_2 =\b} 
C(\b_1, \b_2) {\rm Op}(\partial_\xi^{\b_1} a) \circ {\rm Op}(\partial_\xi^{\b_2} b)\,. 
\ee
Since $\partial_\xi^{\b_1} a \in S^{m - \b_1}$, 
$\partial_\xi^{\b_2} b \in S^{m' - \b_2}$,  $\b_1 + \b_2 = \b $,   the estimate \eqref{tame norm} implies 
\begin{equation} \label{scomp2}
\begin{aligned}
& \norma {\rm Op}(\partial_\xi^{\b_1} a) {\rm Op}(\partial_\xi^{\b_2} b) 
 \norma_{m + m' - \beta, s, 0}  \\
 &  \leq_{m,  \beta} C(s) \norma {\rm Op}(\partial_\xi^{\b_1} a) 
 \norma_{m - \beta_1, s, 0} \norma 
{\rm Op}(\partial_\xi^{\b_2} b)  \norma_{m' - \beta_2, s_0 + \b_1 + |m|, 0}   \\
& \quad \  +  C(s_0) \norma {\rm Op}(\partial_\xi^{\b_1} a)  \norma_{m - \beta_1, s_0, 0} 
\norma {\rm Op}(\partial_\xi^{\b_2} b)  \norma_{m' - \beta_2 , s + \b_1 + |m|, 0}\, .
\end{aligned}
\end{equation}
Therefore,  for all $ 1 \leq \b \leq \a $, 
by \eqref{scomp1}, \eqref{scomp2} and the definition \eqref{norm1} we get
\begin{align*}
\norma {\rm Op}(\partial_\xi^{\b} \sigma_{AB}  )\norma_{m +m' - \beta, s, 0} & \leq_{m, \beta} 
C(s) \norma A  \norma_{m, s, \alpha} \norma B  \norma_{m', s_0 + \alpha + |m|, \alpha}  \\
& \quad \ \  + 
C(s_0) \norma A  \norma_{m, s_0, \alpha} \norma B  \norma_{m', s + \alpha + |m|, \alpha} 
\end{align*}
which proves \eqref{tame norm derivate xi}. 

Now we prove  \eqref{stima RN derivate xi}.  Recalling \eqref{rNTaylor} it is sufficient to estimate each 
\begin{equation}\label{r N tau}
r_{N, \t}(\vphi, x, \xi):= {\mathop \sum}_{j \in \Z}  (\pa_\xi^N a)(\vphi, x, \xi + \t j ) \widehat {\pa_x^N b} (\vphi, 
j, \xi) e^{\ii j x } \, , \    \t \in [0,1] \, . 
\end{equation}
Arguing as above (to prove \eqref{tame norm}) we get
\begin{align*}
& \| r_{N, \t}(\cdot, \xi)  \|_s \langle \xi \rangle^{N - (m + m')}  \\
&  \leq_{m, N} 
C(s) \norma {\rm Op}(\partial_\xi^N a) \norma_{m - N, s, 0} \norma {\rm Op}(\partial_x^N b)  \norma_{m' , s_0 + N + |m|, 0} \\
& \quad \ \  + C(s_0) 
\norma {\rm Op}(\partial_\xi^N a) \norma_{m - N, s_0, 0} \norma {\rm Op}(\partial_x^N b)  \norma_{m' , s + N + |m|, 0} \\
& \leq_{m, N} C(s) \norma {\rm Op}(\partial_\xi^N a)  \norma_{m - N, s, 0} \norma {\rm Op}( b) \norma_{m' , s_0 + 2N + |m|, 0} \\
& \qquad  + C(s_0) \norma {\rm Op}(\partial_\xi^N a) 
\norma_{m - N, s_0, 0} \norma {\rm Op}(  b)  \norma_{m', s + 2N + |m|, 0}
\end{align*}
which gives (recall \eqref{rNTaylor} and \eqref{norm1}) 
\be\label{stima RN0} 
\begin{aligned}
\norma R_N  \norma_{m + m' - N, s, 0}  & \leq_{m, N} 
\frac{1}{N!} \big(
C(s) \norma A  \norma_{m, s, N} \norma B  \norma_{m', s_0 + 2 N + |m|, 0}  \\
& \qquad \quad \ + C(s_0) \norma A  \norma_{m, s_0 , N} \norma B  \norma_{m', s + 2 N + |m|, 0}\big) 
\end{aligned}
\ee
namely \eqref{stima RN derivate xi} for $ \a = 0 $.  We now prove \eqref{stima RN derivate xi} for
$ \alpha \geq 1 $. By differentiating \eqref{r N tau} we get, $ \forall 1 \leq \beta \leq \a $,  
 \begin{align*}
\partial_\xi^\b r_{N, \t}(\vphi, x, \xi) & = \sum_{\b_1 + \b_2 = \b} 
C(\b_1, \b_2) \sum_{j \in \Z}  (\pa_\xi^{N + \b_1} a)(\vphi, x, \xi + \t j ) \widehat {\pa_x^N \partial_\xi^{\b_2} b} 
(\vphi, j, \xi) e^{\ii j x }
\end{align*}
and so, arguing as for \eqref{scomp2}, 
\begin{align*}
& \| \partial_\xi^{\b}r_{N, \t}(\cdot, \xi)  \|_s   \langle  \xi \rangle^{N + \b - (m + m')}  \\
& \leq_{m, N,\alpha} 
\! \! \! \! \! \! \! \! 
\sum_{\b_1 + \b_2 = \b } \! \! \! \! \! \! 
 \Big( C(s) \norma {\rm Op}(\partial_\xi^{N + \b_1} a)  \norma_{m - N- \b_1, s, 0} \norma {\rm Op}(\partial_\xi^{\b_2}\partial_x^N b)  \norma_{m' - \b_2, s_0 + N + |m| + \b_1, 0 }  \\
&  \quad \quad \quad \quad  + C(s_0) \norma {\rm Op}(\partial_\xi^{N + \b_1} a)  \norma_{m - N- \b_1, s_0, 0} \norma {\rm Op}(\partial_\xi^{\b_2}\partial_x^N b)   \norma_{m' - \b_2, s + N + |m| + \b_1, 0 } \Big) \\
&  \stackrel{ \eqref{norm1} }{\leq_{m, N, \alpha}} 
C(s) \norma A  \norma_{m, s, N + \alpha} \norma B  \norma_{m', s_0 + 2 N + |m| + \alpha, \alpha } \\
& \qquad \ + C(s_0) \norma A  \norma_{m, s_0 , N + \alpha} \norma B \norma_{m', s + 2 N + |m| + \alpha, \alpha } 
\end{align*}
and  \eqref{stima RN derivate xi} is proved. 

Finally we prove \eqref{estimate composition parameters}, \eqref{stima RN derivate xi parametri}
including the dependence on $ \lambda $. 
For all $ k \in \N^{\nu + 1} $, $|k| \leq k_0$, the derivative
$$
\partial_\lambda^{k} \{ A(\lambda) \circ B(\lambda) \} = {\mathop \sum}_{k_1, k_2 \in \N^{\nu + 1},  k_1 + k_2 = k} C(k_1, k_2) \partial_\lambda^{k_1} A(\lambda) \circ \partial_\lambda^{k_2} B(\lambda)\, .
$$
Then (we have $ |k| = |k_1| + | k_2 |$)
\begin{align*}
& \gamma^{|k|} \norma \partial_\lambda^{k} \{ A(\lambda) \circ B(\lambda) \}   \norma_{m + m', s, \alpha}  \leq_{k_0}  
\sum_{k_1 + k_2 = k} \gamma^{|k_1|} \gamma^{|k_2|}\norma \partial_\lambda^{k_1} A(\lambda) \circ \partial_\lambda^{k_2} B(\lambda)\norma_{m + m', s, \alpha} \\
& \stackrel{\eqref{tame norm derivate xi}}{\leq_{k_0, m, \alpha}} \!\! \sum_{k_1 + k_2 = k} \big( C(s)\gamma^{|k_1|}
\norma \partial_\lambda^{k_1}A  \norma_{m, s, \alpha}  \gamma^{|k_2|}\norma 
\partial_\lambda^{k_2} B  \norma_{m', s_0 + \alpha + |m|, \alpha}  
 \\
 & \qquad \qquad \qquad  + C(s_0)\gamma^{|k_1|}\norma \partial_\lambda^{k_1} A  \norma_{m, s_0, \alpha} \gamma^{|k_2|} \norma \partial_\lambda^{k_2} B   \norma_{m', s + \alpha + |m|, \alpha}  \big)
\end{align*}
and  \eqref{estimate composition parameters} follows by the definition \eqref{norm1 parameter}. 
The estimate \eqref{stima RN derivate xi parametri} follows since for all $|k| \leq k_0$
\begin{align*}
& \gamma^{|k|} \norma \partial_\lambda^k {\rm Op}(r_{N, \tau})  \norma_{m + m' - N, s, \alpha}  \\
& \leq_{k_0, m, N, \alpha} 
\!\!\!\! \sum_{k_1 + k_2 = k} \big( C(s)\gamma^{|k_1|} \norma \partial_\lambda^{k_1}  A  \norma_{m, s, N + \alpha} \gamma^{|k_2|} \norma \partial_\lambda^{k_2} B \norma_{m', s_0 + 2 N + |m| + \alpha, \alpha } \\
& \qquad \qquad \  \qquad + C(s_0)\gamma^{|k_1|} \norma \partial_\lambda^{k_1} A  \norma_{m, s_0 , N + \alpha} \gamma^{|k_2|} \norma \partial_\lambda^{k_2} B  \norma_{m', s + 2 N + |m| + \alpha, \alpha }  \big)\,.
\end{align*}
The proof is complete. 
\end{proof}

When $ B = g(D) $ is a  Fourier multiplier, then $ {\rm Op}(a ) \circ g(D) = {\rm Op} ( a(x, \xi) g(\xi)) $ and 
we have a simpler estimate. 

\begin{lemma}\label{lemma composizione multiplier}
Let $ A = a(\lambda, \vphi, x, D) \in OPS^m $, $ m \in \R $, and let $ g(D) \in OPS^{m'} $ be a Fourier multiplier (independent of $ \lambda $). 
Then  $ \norma A \circ g(D) \norma_{m+m', s, \a}^{k_0, \gamma} 
\leq_{m,  \alpha} \norma A  \norma_{m, s,\a}^{k_0, \gamma} $. 
\end{lemma}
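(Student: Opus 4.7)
The plan is to exploit the fact that right-composition with a Fourier multiplier produces no commutator contributions, so the symbol of $A \circ g(D)$ is exactly the product $a(\lambda,\vphi,x,\xi)\,g(\xi)$ and there is no remainder to control. Indeed, in the composition formula \eqref{composition pseudo} every term with $\beta \geq 1$ contains the factor $\partial_x^\beta g(\xi) = 0$, so the asymptotic expansion collapses to the single term
\begin{equation*}
\sigma_{A \circ g(D)}(\lambda,\vphi,x,\xi) \;=\; a(\lambda,\vphi,x,\xi)\, g(\xi).
\end{equation*}
One could alternatively check this directly from \eqref{composition with phi}, using that $\widehat{g}(\ell_1, j) = g(j)\,\delta_{\ell_1,0}\,\delta_{j,\xi\text{-free}}$ is supported only at $\ell_1 = 0$ and is independent of $x$.

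Next I would apply Leibniz in $\xi$: for every $0 \leq \beta \leq \alpha$,
\begin{equation*}
\partial_\xi^\beta \bigl( a(\lambda,\vphi,x,\xi)\, g(\xi) \bigr) \;=\; \sum_{\beta_1+\beta_2=\beta} \binom{\beta}{\beta_1}\, \partial_\xi^{\beta_1} a(\lambda,\vphi,x,\xi)\, \partial_\xi^{\beta_2} g(\xi).
\end{equation*}
Since $g \in S^{m'}$ satisfies $|\partial_\xi^{\beta_2} g(\xi)| \leq C(m',\beta_2,g)\,\langle \xi \rangle^{m'-\beta_2}$ (a pointwise version of \eqref{Norm Fourier multiplier}), taking the $H^s_{\vphi,x}$-norm and multiplying by $\langle\xi\rangle^{-(m+m')+\beta}$ yields
\begin{equation*}
\bigl\| \partial_\xi^\beta (a\, g)(\lambda,\cdot,\cdot,\xi) \bigr\|_s\, \langle\xi\rangle^{-(m+m')+\beta}
\;\leq_{m,\alpha}\; \sum_{\beta_1+\beta_2=\beta} \bigl\| \partial_\xi^{\beta_1} a(\lambda,\cdot,\cdot,\xi) \bigr\|_s \langle\xi\rangle^{-m+\beta_1}
\end{equation*}
after the powers $\langle\xi\rangle^{m'-\beta_2}$ and $\langle\xi\rangle^{-m'+\beta_2}$ cancel (using $\beta_1+\beta_2=\beta$). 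Taking the supremum in $\xi$ and the maximum over $0 \leq \beta \leq \alpha$ gives, by the definition \eqref{norm1},
\begin{equation*}
\norma A \circ g(D) \norma_{m+m', s, \alpha} \;\leq_{m,\alpha}\; \norma A \norma_{m, s, \alpha}.
\end{equation*}

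Finally, to include the parameter dependence, I observe that $g(D)$ is independent of $\lambda = (\om,\kappa)$, so $\partial_\lambda^k (A \circ g(D)) = (\partial_\lambda^k A) \circ g(D)$ for every multi-index $k$ with $|k| \leq k_0$. Applying the previous bound to each $\partial_\lambda^k A$, multiplying by $\gamma^{|k|}$, and summing over $|k| \leq k_0$ gives the claim with the weighted norm \eqref{norm1 parameter}. There is no real obstacle here: the only thing to notice is the collapse of the asymptotic expansion, which eliminates the need to invoke the more delicate Lemma \ref{lemma stime Ck parametri} and its tame inequalities. The simplification is what avoids the presence of $s_0$-Sobolev indices and constants $C(s)$ on the right-hand side.
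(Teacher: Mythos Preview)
Your proof is correct and follows exactly the approach the paper indicates: immediately before the lemma the paper observes that when $B = g(D)$ is a Fourier multiplier one has $\mathrm{Op}(a)\circ g(D) = \mathrm{Op}(a(x,\xi)g(\xi))$, and then states the lemma without further proof. Your Leibniz-in-$\xi$ computation and the observation that $\partial_\lambda^k(A\circ g(D)) = (\partial_\lambda^k A)\circ g(D)$ are precisely the details that make this explicit; there is nothing to add.
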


By \eqref{expansion symbol} the  commutator between two pseudo-differential operators 
$$ 
A = a(x, D)  \in OPS^{m} \quad {\rm and} \quad 
 B = b (x, D)  \in OPS^{m'} 
 $$ 
is a pseudo-differential operator $ [A, B] \in OPS^{m + m' - 1} $ 
with symbol  $ a \star b $ (sometimes called the Moyal parenthesis of $ a $ and $ b $), namely 
\be\label{symbol commutator}
[A, B] = {\rm Op} ( a \star b ) \, .  
\ee
By \eqref{expansion symbol} the symbol  $ a \star b  \in S^{m + m' - 1} $ admits the expansion 
\be\label{Expansion Moyal bracket}
a \star b = - \ii \{ a, b \} + {\mathtt r_{\mathtt 2}} (a, b )  \qquad {\rm where}
\quad \{a, b\} := \pa_\xi a \, \pa_x b - \pa_x a \, \pa_\xi b  
\ee
is the Poisson bracket between $ a(x, \xi) $ and $ b(x, \xi)  $, and  
$$ 
{\mathtt r_{\mathtt 2}} (a, b ) := r_{2, AB} - r_{2, BA} \in S^{m + m' - 2} \, . 
$$

\begin{lemma}{\bf (Commutators)} \label{lemma tame norma commutatore}
Let $ A = a(\lambda, \vphi, x, D) $, $ B = b(\lambda, \vphi, x, D) $ be pseudo-differential operators
with symbols $ a (\lambda, \vphi, x, \xi) \in S^m $, $ b (\lambda, \vphi, x, \xi ) \in S^{m'} $, $ m , m' \in \R $. Then  
the commutator $ [A, B] := AB - B A \in OPS^{m + m' - 1} $   satisfies 
\begin{align}
\norma [A,B]  \norma_{m+m' -1, s, \alpha}^{k_0, \gamma} & 
\leq_{m,m',   \alpha, k_0} 
\big( C(s)  \norma A \norma_{m, s + 2 + |m' | + \alpha, \alpha + 1}^{k_0, \gamma} 
\norma B  \norma_{m', s_0 + 2 + |m| + \alpha , \alpha + 1}^{k_0, \gamma}  \nonumber\\
& \qquad \qquad  \, 
+ C(s_0) \norma A  
\norma_{m, s_0 + 2 + |m' | + \alpha, \alpha + 1}^{k_0, \gamma} 
\norma B \norma_{m', s + 2 + |m| + \alpha , \alpha + 1}^{k_0, \gamma} 
\big) \, .  \label{stima commutator parte astratta}
\end{align}
Moreover  the Poisson bracket $ \{ a, b \}  \in S^{m + m' - 1} $ satisfies 
\be\label{stima Poisson}
\begin{aligned}
\norma {\rm Op}( \{ a, b \} )  \norma_{m+m' -1, s, \alpha}^{k_0, \gamma} 
&  \leq_{ \alpha, k_0} C(s)  \norma A \norma_{m, s + 1, \alpha + 1}^{k_0, \gamma} 
\norma B  \norma_{m', s_0 + 1, \alpha + 1}^{k_0, \gamma} \\
& \ \ \quad + C(s_0) \norma A  \norma_{m, s_0 + 1, \alpha + 1}^{k_0, \gamma} \norma B \norma_{m', s + 1, \alpha + 1}^{k_0, \gamma}  \, .  
\end{aligned}
\ee
\end{lemma}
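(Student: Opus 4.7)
\medskip

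\textbf{Proof plan for Lemma \ref{lemma tame norma commutatore}.}
The plan is to exploit the symbolic expansion of compositions given in \eqref{expansion symbol} applied at order $N=2$, separating out the principal (Poisson bracket) term from the genuinely smoothing remainder. Writing $\sigma_{AB} = ab + \frac{1}{\ii}\pa_\xi a\,\pa_x b + r_{2,AB}$ and analogously for $\sigma_{BA}$ with $a,b$ swapped, and subtracting, the leading terms $ab$ and $ba$ cancel (symbols are scalar valued functions so they commute pointwise), leaving
\begin{equation*}
\sigma_{[A,B]}(\lambda,\vphi,x,\xi) = -\ii\bigl(\pa_\xi a\,\pa_x b - \pa_\xi b\,\pa_x a\bigr) + r_{2,AB} - r_{2,BA} = -\ii\{a,b\} + \mathtt r_{\mathtt 2}(a,b),
\end{equation*}
which is precisely \eqref{symbol commutator}--\eqref{Expansion Moyal bracket}. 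Thus the estimate of $\norma[A,B]\norma_{m+m'-1,s,\alpha}^{k_0,\gamma}$ splits into a bound on the Poisson bracket symbol $\{a,b\}\in S^{m+m'-1}$ and a bound on $\mathtt r_{\mathtt 2}\in S^{m+m'-2}$, which we then embed into $OPS^{m+m'-1}$ using the monotonicity \eqref{crescente-m-neg}.

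First I would prove the Poisson bracket estimate \eqref{stima Poisson}. For every $\beta\leq\alpha$ the Leibniz rule gives $\pa_\xi^\beta\{a,b\}$ as a sum of products of $\pa_\xi^{\beta_1+1}a\in S^{m-\beta_1-1}$ with $\pa_x\pa_\xi^{\beta_2}b\in S^{m'-\beta_2}$ (and the symmetric term). For fixed $\xi$ I would apply the standard tame product estimate in the Sobolev norm $\|\cdot\|_s$ on $\T^{\nu+1}$, together with the pointwise symbolic bounds \eqref{norm1}, to get
\begin{equation*}
\|\pa_\xi^\beta\{a,b\}(\lambda,\cdot,\cdot,\xi)\|_s \langle\xi\rangle^{-(m+m'-1)+\beta} \lesssim_{\alpha} C(s)\,\norma A\norma_{m,s,\alpha+1}\,\norma B\norma_{m',s_0+1,\alpha+1} + (s\leftrightarrow s_0),
\end{equation*}
and then sum over $|k|\leq k_0$ derivatives in $\lambda$ as in the final step of Lemma \ref{lemma stime Ck parametri} to include the weighted $\gamma^{|k|}$--factors.

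Next I would bound the remainder $\mathtt r_{\mathtt 2}(a,b) = r_{2,AB}-r_{2,BA}$ by invoking \eqref{stima RN derivate xi parametri} with $N=2$ once with roles $(A,B)$ and once with roles $(B,A)$. This yields, for each contribution, a bound of the form
\begin{equation*}
\norma R_{2,AB}\norma_{m+m'-2,s,\alpha}^{k_0,\gamma} \lesssim_{m,\alpha,k_0} C(s)\,\norma A\norma_{m,s,\alpha+2}^{k_0,\gamma}\,\norma B\norma_{m',s_0+4+|m|+\alpha,\alpha}^{k_0,\gamma} + (s\leftrightarrow s_0),
\end{equation*}
and symmetrically for $R_{2,BA}$ with $|m|$ replaced by $|m'|$ and $A,B$ interchanged. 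By monotonicity \eqref{crescente-m-neg} these are also bounds for the $\norma\cdot\norma_{m+m'-1,s,\alpha}^{k_0,\gamma}$ norm. Combining with the Poisson bracket estimate, absorbing constants and using monotonicity in $s$ and $\alpha$ from \eqref{norm-increa} to bring the Sobolev and $\xi$--regularity indices up to the levels $s+2+|m'|+\alpha$ and $\alpha+1$ stated in \eqref{stima commutator parte astratta}, produces the desired inequality.

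The only delicate point, rather than a real obstacle, is bookkeeping of the indices so that both contributions fit under the single bound \eqref{stima commutator parte astratta}: the remainder $\mathtt r_{\mathtt 2}$ requires the largest Sobolev losses ($\sim s+|m'|+\alpha$ on one factor, $\sim s_0+|m|+\alpha$ on the other), so the Poisson bracket contribution, which only needs $s+1$ and $\alpha+1$, is automatically dominated. Once the indices are aligned the two estimates combine additively to give \eqref{stima commutator parte astratta}, and \eqref{stima Poisson} has already been obtained in the first step.
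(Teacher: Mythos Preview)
Your treatment of the Poisson bracket estimate \eqref{stima Poisson} is essentially correct and matches the paper's approach (product tame estimates on the symbols at fixed $\xi$, then sum in $\lambda$).

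For \eqref{stima commutator parte astratta}, however, there is a concrete index problem. By taking the expansion at order $N=2$ and invoking \eqref{stima RN derivate xi parametri} with $N=2$, the remainder bound you obtain carries the indices $N+\alpha=\alpha+2$ on one factor and $s_0+2N+|m|+\alpha=s_0+4+|m|+\alpha$ on the other. These are \emph{strictly larger} than the indices $\alpha+1$ and $s_0+2+|m|+\alpha$ appearing in \eqref{stima commutator parte astratta}. Monotonicity \eqref{norm-increa} only lets you increase indices on the right-hand side, not decrease them, so your final step (``bring the $\xi$-regularity indices up to $\ldots\alpha+1$'') goes the wrong way. Your argument would prove a version of \eqref{stima commutator parte astratta} with $\alpha+2$ and an extra $+2$ in the Sobolev losses, but not the lemma as stated.

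The paper's proof avoids this by expanding only to order $N=1$. Since $ab=ba$ pointwise, the zeroth-order terms already cancel in the commutator and one has exactly
\[
\sigma_{[A,B]} \;=\; r_{1,AB}-r_{1,BA}\,,
\]
with no Poisson bracket term to treat separately. Then \eqref{stima RN derivate xi parametri} with $N=1$ applied once to $r_{1,AB}$ and once to $r_{1,BA}$ (with the roles of $A,B$ and $m,m'$ swapped) yields precisely the indices $\alpha+1$ and $s_0+2+|m|+\alpha$, $s_0+2+|m'|+\alpha$; monotonicity \eqref{norm-increa} in $s$ and $\alpha$ then aligns the four terms into the symmetric form \eqref{stima commutator parte astratta}. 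This is both simpler (one step instead of two) and sharp enough to give the stated bound.
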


\begin{proof}
The estimate \eqref{stima commutator parte astratta} follows by \eqref{expansion symbol},  
\eqref{stima RN derivate xi parametri}  for $ N = 1$, and \eqref{norm-increa}. 
The estimate \eqref{stima Poisson} follows 
by \eqref{Expansion Moyal bracket}, Definition \ref{def:pseudo-norm},
the tame estimates for the product of two functions \eqref{interpolazione C k0} and \eqref{norm-increa}. 
\end{proof}

Note that in \eqref{stima Poisson} the loss of regularity in $ s $ is smaller than in \eqref{stima commutator parte astratta}.

The adjoint $ A^* $ of a pseudo-differential operator $ A = {\rm Op}(a) \in OPS^m $ is a pseudo-differential
operator of the same order $ A^*  = {\rm Op}(a^*) \in OPS^m $ and the symbol 
$ a^* $ is defined  in \eqref{simbolo aggiunto senza tempo}.  
 
\begin{lemma}{\bf (Adjoint)} \label{stima pseudo diff aggiunto}
Let $A= a(\lambda, \vphi, x, D)$ be a pseudo-differential operator with symbol $a(\lambda, \vphi, x, \xi) \in S^m,  m \in \R $. Then the adjoint 
$A^*  \in OPS^m $ 
satisfies 
$$
\norma A^* \norma_{m, s, 0}^{k_0, \gamma} \leq_{m} \norma A \norma_{m, s + s_0 + |m|, 0}^{k_0, \gamma} \, . 
$$
\end{lemma}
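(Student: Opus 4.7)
The plan is to work directly from the explicit formula \eqref{simbolo aggiunto} for the symbol of the adjoint,
\[
a^*(\vphi,x,\xi) = \overline{\sum_{\ell \in \Z^\nu, j \in \Z} \widehat a(\ell, j, \xi-j)\, e^{\ii(\ell \cdot \vphi + j x)}} \, ,
\]
and to reduce everything to a pointwise (in $\xi$) Sobolev estimate on $\T^{\nu+1}$. First, by reading off the Fourier coefficients of $a^*(\cdot,\cdot,\xi)$ from the above formula (the conjugation and sign-flip amount to a change of summation index), I would obtain
\[
\| a^*(\cdot,\xi)\|_s^2 \;=\; \sum_{\ell \in \Z^\nu, j \in \Z} |\widehat a(\ell, j, \xi-j)|^2 \langle \ell, j \rangle^{2 s} \, .
\]
The essential feature is that the ``$\xi$'' appearing in $\widehat a$ is shifted by the spatial Fourier index $j$, and this shift is what forces the loss of $s_0 + |m|$ regularity.

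Next I would absorb the shift by Peetre's inequality \eqref{Peetre's inequality}, in the form $\langle \xi \rangle^{-2m} \leq C_m \langle \xi - j \rangle^{-2m} \langle j \rangle^{2|m|}$, so that
\[
\langle \xi \rangle^{-2m} \| a^*(\cdot,\xi)\|_s^2 \;\leq\; C_m \sum_{\ell, j} \langle j \rangle^{2|m|} \langle \xi - j \rangle^{-2m} |\widehat a(\ell, j, \xi-j)|^2 \langle \ell, j \rangle^{2s} \, .
\]
For each fixed $\eta \in \R$ the definition \eqref{norm1} of $\norma A \norma_{m,s',0}$ gives
\[
|\widehat a(\ell, j, \eta)|^2 \langle \ell, j \rangle^{2 s'} \;\leq\; \norma A \norma_{m,s',0}^2\, \langle \eta \rangle^{2m} \, ,
\]
so applying this with $\eta = \xi - j$ and $s' = s + s_0 + |m|$ and using $\langle j \rangle \leq \langle \ell,j\rangle$ yields
\[
\langle \xi \rangle^{-2m} \| a^*(\cdot,\xi)\|_s^2 \;\leq\; C_m\, \norma A \norma_{m, s+s_0+|m|,0}^2 \sum_{\ell, j} \frac{1}{\langle \ell, j \rangle^{2 s_0}} \, .
\]
The last sum converges by the choice \eqref{def:s0} of $s_0$, and taking $\sup_\xi$ gives the bound $\norma A^* \norma_{m,s,0} \leq_m \norma A \norma_{m, s+s_0+|m|,0}$.

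Finally, to handle the parameter dependence, I would note that the map $A \mapsto A^*$ on \eqref{simbolo aggiunto} is linear and does not involve $\lambda = (\om,\kappa)$ at all, so $\partial_\lambda^k (A^*) = (\partial_\lambda^k A)^*$ for every $|k|\leq k_0$. Applying the scalar estimate to each $\partial_\lambda^k A$, multiplying by $\gamma^{|k|}$, and summing gives the weighted inequality with the norm $\norma\cdot\norma_{m,s,0}^{k_0,\gamma}$ defined in \eqref{norm1 parameter}. I do not anticipate any real obstacle: the only non-routine point is the Peetre step, which is what determines the explicit regularity loss $s_0+|m|$; every other step is a direct manipulation of Fourier series.
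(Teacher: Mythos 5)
Your proposal is correct and follows essentially the same route as the paper's proof: the same Fourier-coefficient identity for $\| a^*(\cdot,\xi)\|_s$ coming from \eqref{simbolo aggiunto}, the same pointwise coefficient bound extracted from the definition of $\norma A \norma_{m,s+s_0+|m|,0}$, Peetre's inequality \eqref{Peetre's inequality} to absorb the shift $\xi \mapsto \xi - j$, convergence of $\sum_{\ell,j}\langle \ell,j\rangle^{-2s_0}$, and the observation $\partial_\lambda^k A^* = {\rm Op}(\partial_\lambda^k a^*)$ for the weighted norm. The only (immaterial) difference is the order in which you apply Peetre and the coefficient bound.
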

\begin{proof}
Recalling Definition \ref{def:pseudo-norm} and \eqref{simbolo aggiunto}
we have to estimate 
\begin{align}
\norma A^* \norma_{m,s,0}^2 = \sup_{\xi \in \R} \| a^* (\cdot, \cdot, \xi)\|_s^2 \langle \xi \rangle^{- 2 m} & 
= \sum_{\ell, j}\langle \ell, j \rangle^{2 s} |\widehat a(\ell, j, \xi - j)|^2 \langle \xi \rangle^{- 2 m}  \label{pippone 0 aggiunto}\,.
\end{align}
Since
$$
\begin{aligned}
\norma A \norma_{m,s + s_0 + |m|,0}^2 & 
:= \sup_{\xi \in \R} \| a( \cdot, \cdot, \xi ) \|_{s+s_0+|m|}^2 \langle \xi \rangle^{-2m} \\ 
& =  
\sup_{\xi \in \R} 
{\mathop \sum}_{\ell, j} | \widehat a( \ell, j , \xi )|^2 \langle \ell, j \rangle^{2(s+s_0+|m|)} \langle \xi \rangle^{-2m}  
\end{aligned}
$$
we derive the  bound, for all $ \xi \in \R $, $ \ell \in \Z^\nu $,  $ j \in \Z $,  
\begin{equation}\label{claim per l'aggiunto}
|\widehat a(\ell, j, \xi - j)| \leq \frac{\norma A \norma_{m , s + s_0 + |m|, 0}}{\langle \ell, j \rangle^{s + s_0 + |m|}}  \langle \xi - j \rangle^{m} \, . 
\end{equation}
Then by \eqref{pippone 0 aggiunto}, \eqref{claim per l'aggiunto} and Peetre's inequality \eqref{Peetre's inequality} we get
\begin{align}
\norma A^* \norma_{m,s,0}^2 & \leq \sum_{\ell , j} \frac{1}{\langle \ell, j \rangle^{2(s_0 + |m|)}} \frac{\langle \xi - j \rangle^{2 m}}{\langle \xi \rangle^{2 m}} \norma A \norma_{m, s + s_0 + |m|, 0}^2 \nonumber\\
& \leq_m \sum_{\ell, j} \frac{\langle j \rangle^{2 |m|}}{ \langle \ell, j \rangle^{2 (s_0 + |m|)}} \norma A \norma_{m, s + s_0 + |m|, 0}^2  \leq_m \norma A \norma_{m, s + s_0 + |m|, 0}^2 \, . 
\end{align}
The estimate for the derivatives with respect to $\lambda$ follows analogously, since 
$ \partial_\lambda^k A^* = {\rm Op} ( \partial_\lambda^k a^* ) $. 
\end{proof}

\begin{lemma} {\bf (Invertibility)} \label{Neumann pseudo diff}
Let  $ \Phi :=  {\rm Id} + A $ where $ A := {\rm Op}(a(\lambda, \vphi, x, j)) \in OPS^0 $.  
There exist constants  $ C(s_0, \a, k_0) $, $ C(s, \a, k_0) \geq 1 $, $ s \geq s_0 $,  such that,  if 
\begin{equation}\label{condizione di piccolezza Neumann norma pseudo}
C(s_0,\a, k_0)\norma A \norma_{0, s_0 + \alpha, \alpha}^{k_0, \gamma}   \leq 1/2  \, , 
\end{equation}
then, for all $ \lambda  $,  the operator 
$ \Phi $ is invertible, $ \Phi^{- 1}  \in OPS^0 $ and, for all $ s \geq s_0 $,    
$$ 
\norma \Phi^{- 1} - {\rm Id}  \norma_{0,s,\alpha}^{k_0, \gamma} \leq C(s,\a, k_0)  \norma A \norma_{0,s + \alpha, \alpha}^{k_0, \gamma} \, . 
$$  
\end{lemma}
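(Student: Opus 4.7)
The natural approach is to construct $\Phi^{-1}$ via the Neumann series $\Phi^{-1} = \sum_{n \geq 0} (-A)^n$, so that $\Phi^{-1} - {\rm Id} = \sum_{n \geq 1}(-A)^n$. The fact that each power $A^n$ belongs to $OPS^0$ is immediate by induction from the composition Lemma \ref{lemma stime Ck parametri} applied with $m = m' = 0$ (since $OPS^0$ is closed under composition), so the only task is to show that the series converges with respect to $\norma \cdot \norma_{0,s,\alpha}^{k_0,\gamma}$ under the smallness hypothesis \eqref{condizione di piccolezza Neumann norma pseudo}, and to establish the quantitative bound.

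The main tool is the tame composition estimate of Lemma \ref{lemma stime Ck parametri}, which for $m = m' = 0$ reads
$$\norma A B \norma_{0, s, \alpha}^{k_0, \gamma} \leq_{\alpha, k_0} C(s) \norma A \norma_{0, s, \alpha}^{k_0, \gamma} \norma B \norma_{0, s_0 + \alpha, \alpha}^{k_0, \gamma} + C(s_0) \norma A \norma_{0, s_0, \alpha}^{k_0, \gamma} \norma B \norma_{0, s + \alpha, \alpha}^{k_0, \gamma}.$$
I would then prove by induction on $n \geq 1$ a geometric bound of the form
$$\norma A^n \norma_{0, s, \alpha}^{k_0, \gamma} \leq 2^{-(n-1)} \, C(s, \alpha, k_0) \, \norma A \norma_{0, s+\alpha, \alpha}^{k_0, \gamma},$$
where the constant $C(s_0, \alpha, k_0)$ appearing in the smallness condition \eqref{condizione di piccolezza Neumann norma pseudo} is calibrated so that the two summands produced when applying the composition estimate to $A \cdot A^{n-1}$, combined with $C(s_0,\alpha,k_0)\norma A \norma_{0,s_0+\alpha,\alpha}^{k_0,\gamma}\leq 1/2$, each contribute at most a factor of $1/4$, for a total of $1/2$ per step. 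Summing the geometric series then gives $\norma \Phi^{-1} - {\rm Id} \norma_{0, s, \alpha}^{k_0, \gamma} \leq 2\, C(s, \alpha, k_0) \norma A \norma_{0, s + \alpha, \alpha}^{k_0, \gamma}$, and since the partial sums of the Neumann series are Cauchy in $OPS^0$ equipped with $\norma \cdot \norma_{0,s,\alpha}^{k_0,\gamma}$, the limit $\Phi^{-1} - {\rm Id}$ belongs to $OPS^0$.

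The delicate point, which I expect to be the main obstacle, is the asymmetric loss of $\alpha$ derivatives in the second term of the composition estimate: the factor $\norma B \norma_{0, s+\alpha, \alpha}^{k_0, \gamma}$ in place of $\norma B \norma_{0, s, \alpha}^{k_0, \gamma}$, if iterated naively, would force the Sobolev index on the right-hand side to drift upward by $\alpha$ at each step, destroying any uniform bound. The resolution is to arrange the induction so that the $\alpha$-shift occurs only once: when estimating $A^{n+1} = A \cdot A^n$, one routes the high-regularity burden onto the single factor $A$ (at $s + \alpha$), while $A^n$ is only required at the ``low'' indices $s_0 + \alpha$ or $s + \alpha$ that the inductive hypothesis already controls; the smallness of $\norma A \norma_{0, s_0 + \alpha, \alpha}^{k_0, \gamma}$ is repeatedly used to absorb the composition constants and produce the geometric factor. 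This is precisely why the statement of the lemma carries a single $\alpha$-shift $\norma A \norma_{0, s+\alpha, \alpha}^{k_0, \gamma}$ on the right-hand side, rather than the naive $\norma A \norma_{0, s, \alpha}^{k_0, \gamma}$.
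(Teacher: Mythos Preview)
Your overall Neumann-series strategy matches the paper's, but the execution has two concrete gaps.

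First, the induction you describe does not close. In the composition estimate \eqref{estimate composition parameters} (with $m=m'=0$) the $+\alpha$ shift falls on the \emph{second} factor. Writing $A^{n+1}=A\cdot A^n$ as you do therefore requires $\norma A^n\norma$ at $s_0+\alpha$ and $s+\alpha$; plugging in your inductive hypothesis at those indices then yields $\norma A\norma_{0,s_0+2\alpha,\alpha}$ and $\norma A\norma_{0,s+2\alpha,\alpha}$ on the right, so the drift you wanted to contain still propagates. The paper instead writes $A^{n+1}=A^n\cdot A$, so the $\alpha$-shift lands on the single factor $A$:
\[
\norma A^{n+1}\norma_{0,s,\alpha}^{k_0,\gamma} \leq C(s)\,\norma A^n\norma_{0,s,\alpha}^{k_0,\gamma}\,\norma A\norma_{0,s_0+\alpha,\alpha}^{k_0,\gamma} + C(s_0)\,\norma A^n\norma_{0,s_0,\alpha}^{k_0,\gamma}\,\norma A\norma_{0,s+\alpha,\alpha}^{k_0,\gamma},
\]
and couples this with a \emph{separate} purely low-norm estimate, $\norma A^n\norma_{0,s_0,\alpha}^{k_0,\gamma}\leq (C(s_0,\alpha,k_0))^{n-1}(\norma A\norma_{0,s_0+\alpha,\alpha}^{k_0,\gamma})^n$, proved by its own induction at $s=s_0$. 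The resulting high-norm bound carries an extra factor $n$ (one sums $\sum n\,2^{-(n-1)}$ rather than a pure geometric series), but this is harmless.

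Second, your justification that the limit lies in $OPS^0$ is insufficient. The norm $\norma\,\cdot\,\norma_{0,s,\alpha}^{k_0,\gamma}$ controls only $\alpha$ many $\xi$-derivatives of the symbol, whereas membership in $OPS^0$ (Definition~\ref{def:Ps2}) requires bounds on all of them, and the smallness hypothesis \eqref{condizione di piccolezza Neumann norma pseudo} is stated only for this fixed $\alpha$. The paper handles this separately: from $\|a(\cdot,j)\|_{L^\infty}\leq C(s_0)\norma A\norma_{0,s_0,0}$ small one gets $1+a(\lambda,\vphi,x,j)\geq 1/2$ pointwise, so $\Phi$ is elliptic of order $0$, and the parametrix theorem (H\"ormander~\cite{Ho1}, Theorem~18.1.9) gives $\Phi^{-1}\in OPS^0$ directly. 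The Neumann-series argument is then used only for the quantitative norm bound.
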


\begin{proof} 
Iterating  \eqref{estimate composition parameters} (for $m = 0$) we deduce that 
there exist constants $ C(s_0, \a, k_0), C(s, \a, k_0) \geq 1 $ such that, $ \forall n \in \N^+ $,  
\begin{align}
& \norma A^n \norma_{0, s_0, \alpha}^{k_0, \g} 
\leq ( C(s_0, \a, k_0) )^{n-1}  \big( \norma A \norma_{0, s_0 + \alpha, \alpha}^{k_0, \g} \big)^n\,, \nonumber \\
& \norma A^n \norma_{0, s, \alpha}^{k_0, \g} \leq 
n C(s, \a, k_0) \big( C(s_0,\a, k_0) \norma A \norma_{0, s_0 + \alpha, \alpha}^{k_0, \g} \big)^{n - 1}  
\norma A \norma_{0, s + \alpha, \alpha}^{k_0, \g} \,.  \label{stima A n}
\end{align}
By \eqref{condizione di piccolezza Neumann norma pseudo} 
 the operator $ \Phi $ is invertible and the inverse $ \Phi^{-1} $ may be expressed by the Neumann series
$ \Phi^{- 1} = {\rm Id} + B $ with $ B:= \sum_{n \geq 1} (- 1)^n A^n $. 
Moreover, since
$$
\| a(\cdot, j )\|_{L^\infty} \leq C(s_0) \| a(\cdot, j )\|_{s_0} \leq C(s_0) \norma A \norma_{0,s_0, 0} \, , \quad \forall j \in \Z  \, ,
$$
the symbol  of $ \Phi $ 
satisfies $ 1 + a(\lambda, \vphi, x, j ) \geq 1/ 2 $, $ \forall j \in \Z $, $ \forall \lambda $,  i.e it is elliptic.  
Hence  the inverse operator 
 $ B $ is pseudo-differential  by the parametrix theorem (see \cite{Ho1}-Theorem 18.1.9). 
Moreover by \eqref{stima A n}
\begin{align*}
\norma B \norma_{0, s, \alpha}^{k_0, \gamma} & \leq {\mathop \sum}_{n \geq 1} 
\norma A^n \norma_{0, s, \alpha}^{k_0, \gamma} \\
& \leq 
\Big( {\mathop \sum}_{n \geq 1} n ( C(s_0,\a, k_0) \norma A \norma_{0, s_0 + \alpha, \alpha}^{k_0, \gamma} )^{n-1} \Big) 
C(s,\a, k_0) \norma A \norma_{0, s + \alpha, \alpha}^{k_0, \gamma} \\
& \leq C'(s,\a, k_0) \norma A \norma_{0, s + \alpha, \alpha}^{k_0, \gamma}
\end{align*}
by the smallness condition \eqref{condizione di piccolezza Neumann norma pseudo}. 
\end{proof}

\section{ $ {\mathcal D}^{k_0}$-tame and $ {\mathcal D}^{k_0}$-modulo-tame operators}\label{sezione operatori tame}

Let $ A := A(\lambda) $ be a linear operator $k_0$-times differentiable with respect to the parameter 
$ \lambda  \in \mathtt \Lambda_0  \subset  \R^{\nu + 1} $.

\begin{definition} { \bf ($ {\mathcal D}^{k_0} $-$ \s $-tame)}\label{def:Ck0}
A linear operator  $ A := A(\lambda)  $ is  
$ {\mathcal D}^{k_0}$-$ \s $-tame  if the following weighted tame estimates hold: there exists $ \s \geq 0 $ such that,
for all $ s_0 \leq s \leq S $, with possibly $S = + \infty $, $ \forall u \in H^{s+\s} $,  
\be\label{CK0-sigma-tame}
\sup_{|k| \leq k_0} \sup_{\lambda \in \mathtt \Lambda_0} \gamma^{ |k|}
\| (\partial_\lambda^k A(\lambda)) u \|_s \leq  {\mathfrak M}_A(s_0) \| u \|_{s+\s} + 
{\mathfrak M}_A (s) \| u  \|_{s_0+\s} 
\ee
where  the functions $ s \mapsto  {\mathfrak M}_A(s) \geq 0  $ are non-decreasing in $ s $. 
We call $ {\mathfrak M}_{A}(s) $ the {\sc tame constant} of the operator $ A $.
The constant $ {\mathfrak M}_A(s) := {\mathfrak M}_A (k_0, \s, s) $
depends also on $ k_0, \s $ but,
 since $ k_0, \s $ are considered in this paper absolute constants, 
we shall often omit to write them. 

When the ``loss of derivatives"  $ \sigma = 0 $  we simply call a $ {\mathcal D}^{k_0} $-$ 0 $-tame operator to  be $ {\mathcal D}^{k_0} $-tame. 
\end{definition}
\begin{remark}
In sections \ref{linearizzato siti normali}, \ref{sec: reducibility} we work with 
$ {\mathcal D}^{k_0}$-$\sigma$-tame operators with  a finite $ S < + \infty$,  whose tame constants ${\mathfrak M}_A(s)$ may depend also on $C(S)$, for instance  
$ {\mathfrak M}_A(s) \leq C(S) (1 + \| \fracchi_0 \|_{s + \mu}^{k_0, \gamma}) $, $ \forall s_0 \leq s \leq S $.
\end{remark}
An immediate consequence of \eqref{CK0-sigma-tame} (with $ k = 0 $, $ s = s_0 $) is that 
\begin{equation}\label{norma operatoriale costante tame}
\| A \|_{{\mathcal L}(H^{s_0 + \sigma}, H^{s_0})} \leq 2 {\mathfrak M}_A(s_0)\,.
\end{equation}
Note also that representing the operator $ A $ by its matrix elements 
$$ \big(A_j^{j'} (\ell - \ell') \big)_{\ell, \ell' \in \Z^\nu, j, j' \in \Z} $$ 
as in \eqref{matrice operatori Toplitz} we have, for all
$ |k| \leq k_0 $, $ j' \in \Z $, $ \ell' \in \Z^\nu $,  
\be\label{tame-coeff}
\begin{aligned}
& \gamma^{2 |k|} {\mathop \sum}_{\ell , j} \langle \ell, j \rangle^{2 s} |\partial_\lambda^k A_j^{j'}(\ell - \ell')|^2 \\
& \leq 2 \big({\mathfrak M}_A(s_0) \big)^2 \langle \ell', j' \rangle^{2 (s+\s)} + 2 \big({\mathfrak M}_A(s) \big)^2 \langle \ell', j' \rangle^{2 (s_0+\s)} 	\, . 
\end{aligned}
\ee
The class of $ {\mathcal D}^{k_0} $-$ \s $-tame operators is closed under composition. 

\begin{lemma}\label{composizione operatori tame AB} {\bf (Composition)}
 Let $ A, B $ be respectively $ {\mathcal D}^{k_0} $-$\sigma_A$-tame and $ {\mathcal D}^{k_0} $-$\sigma_B$-tame operators with tame 
 constants respectively 
$ {\mathfrak M}_A (s) $ and $ {\mathfrak M}_B (s) $. 
Then 
the composed operator 
$  A  \circ B $ is $ {\mathcal D}^{k_0} $-$(\sigma_A + \sigma_B)$-tame with tame constant
$$
 {\mathfrak M}_{A B} (s) \leq  C(k_0) \big( {\mathfrak M}_{A}(s) 
 {\mathfrak M}_{B} (s_0 + \sigma_A) + {\mathfrak M}_{A} (s_0) 
{\mathfrak M}_{B} (s + \sigma_A) \big)\,.
$$
\end{lemma}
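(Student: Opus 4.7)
The plan is to apply the Leibniz rule to $ \partial_\lambda^k (AB) $ and then to use the tame estimates for $ A $ and $ B $ twice in cascade. More precisely, for $ |k| \leq k_0 $ we have
$$
\partial_\lambda^k (AB) = \sum_{k_1 + k_2 = k} \binom{k}{k_1}\, \partial_\lambda^{k_1} A \circ \partial_\lambda^{k_2} B\,,
$$
so, setting $ v := (\partial_\lambda^{k_2} B) u \in H^{s+\sigma_A} $, I would first apply the tame estimate \eqref{CK0-sigma-tame} for $ A $ (at level $ s $, with derivative index $ k_1 $) to get
$$
\gamma^{|k_1|}\| (\partial_\lambda^{k_1} A)\, v \|_s \leq \mathfrak{M}_A(s_0)\,\| v \|_{s+\sigma_A} + \mathfrak{M}_A(s)\,\| v \|_{s_0+\sigma_A}\,.
$$
Here I use that $ \gamma^{|k_1|}\sup_\lambda \|\partial_\lambda^{k_1} A(\lambda) v\|_s $ is bounded by the tame constant $ \mathfrak{M}_A $ restricted to the $ k_1 $-component of the supremum in $ |k| \leq k_0 $, which is in turn $ \leq \mathfrak{M}_A $ by monotonicity of the sup.

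Next, for the two resulting norms of $ v = (\partial_\lambda^{k_2} B) u $, I would apply the tame estimate \eqref{CK0-sigma-tame} for $ B $: at level $ s + \sigma_A $ this yields
$$
\gamma^{|k_2|}\| (\partial_\lambda^{k_2} B) u \|_{s+\sigma_A} \leq \mathfrak{M}_B(s_0)\,\| u \|_{s+\sigma_A+\sigma_B} + \mathfrak{M}_B(s+\sigma_A)\,\| u \|_{s_0+\sigma_A+\sigma_B}\,,
$$
and at level $ s_0 + \sigma_A $
$$
\gamma^{|k_2|}\| (\partial_\lambda^{k_2} B) u \|_{s_0+\sigma_A} \leq 2\,\mathfrak{M}_B(s_0+\sigma_A)\,\| u \|_{s_0+\sigma_A+\sigma_B}\,.
$$
Multiplying by $ \gamma^{|k_1|} $ and inserting in the previous bound, since $ |k_1|+|k_2|=|k| $, the factor $ \gamma^{|k_1|}\gamma^{|k_2|} = \gamma^{|k|} $ combines correctly for the $ \mathcal{D}^{k_0} $-tame definition.

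Finally, collecting the four products of tame constants and using that $ \mathfrak{M}_A(\cdot)$, $ \mathfrak{M}_B(\cdot) $ are non-decreasing, the leading contributions are
$$
\mathfrak{M}_A(s_0)\mathfrak{M}_B(s+\sigma_A)\,\| u \|_{s+\sigma_A+\sigma_B} \quad\text{and}\quad \mathfrak{M}_A(s)\mathfrak{M}_B(s_0+\sigma_A)\,\| u \|_{s_0+\sigma_A+\sigma_B}\,,
$$
together with the cross term $ \mathfrak{M}_A(s_0)\mathfrak{M}_B(s_0+\sigma_A)\|u\|_{s_0+\sigma_A+\sigma_B} $ which is absorbed in the second one (by monotonicity) and the term $ \mathfrak{M}_A(s)\mathfrak{M}_B(s_0)\|u\|_{s_0+\sigma_A+\sigma_B} $ absorbed using $ \mathfrak{M}_B(s_0) \leq \mathfrak{M}_B(s_0+\sigma_A) $. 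Summing over the binomial coefficients with $ |k|\leq k_0 $ produces the combinatorial constant $ C(k_0) $, yielding the claimed tame bound for $ AB $ with loss $ \sigma_A + \sigma_B $. There is no real obstacle here: the argument is a cascade of the two tame hypotheses combined with monotonicity, and the only bookkeeping to watch is that the $ \gamma $-weights distribute correctly across the Leibniz decomposition.
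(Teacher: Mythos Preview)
Your approach is correct and is exactly the argument the paper has in mind (the paper's own proof just says ``as for the analogous inequality \eqref{modulo tame constant for composition} below'', and that proof is precisely the Leibniz rule followed by a cascaded application of the two tame hypotheses). One small bookkeeping slip: in your final display the term $\mathfrak{M}_A(s_0)\mathfrak{M}_B(s+\sigma_A)$ should be paired with $\|u\|_{s_0+\sigma_A+\sigma_B}$, not $\|u\|_{s+\sigma_A+\sigma_B}$; the coefficient of the high norm $\|u\|_{s+\sigma_A+\sigma_B}$ is $\mathfrak{M}_A(s_0)\mathfrak{M}_B(s_0)$, which is then absorbed into $\mathfrak{M}_{AB}(s_0)$ by monotonicity of $\mathfrak{M}_B$.
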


\begin{proof}
As for the analogous inequality \eqref{modulo tame constant for composition} below. 
\end{proof}

Pseudo-differential operators are tame operators. We shall use in particular the following lemma.

\begin{lemma}\label{lemma: action Sobolev}
Let $ A = a(\lambda, \vphi, x, D) \in OPS^0 $  be a family of pseudo-differential operators
which are ${k_0}$-times differentiable with respect to  $\lambda $. 
If $ \norma A \norma_{0, s, 0}^{k_0, \gamma} < + \infty $, $ s \geq s_0 $, then  $ A $ is ${\mathcal D}^{k_0}$-tame with tame constant 
\begin{equation}\label{interpolazione parametri operatore funzioni}
{\mathfrak M}_A(s) \leq C(s)  \norma A \norma_{0, s, 0}^{k_0, \gamma}\,.
\end{equation}
\end{lemma}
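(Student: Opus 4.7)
The plan is to reduce the weighted tame estimate to a single bilinear inequality. Since $\partial_\lambda^k A = \Op(\partial_\lambda^k a)$ for every multi-index $|k| \leq k_0$, and since the norm \eqref{norm1 parameter} decomposes as a sum over such $k$ weighted by $\gamma^{|k|}$, it is enough to prove the un-parametrized statement: for any $B = \Op(b) \in OPS^0$ and any $s \geq s_0$,
\begin{equation}\label{plan:bilinear}
\| B u \|_s \leq C(s_0) \norma B \norma_{0, s_0, 0} \| u \|_s + C(s) \norma B \norma_{0, s, 0} \| u \|_{s_0} \, .
\end{equation}
Applying \eqref{plan:bilinear} to $B = \partial_\lambda^k A$ for each $|k| \leq k_0$, multiplying by $\gamma^{|k|}$, taking the supremum over $\lambda \in \Lambda_0$, and summing in $k$, produces exactly the $\mathcal{D}^{k_0}$-tame estimate \eqref{CK0-sigma-tame} with $\sigma = 0$ and tame constant ${\mathfrak M}_A(s) \leq C(s) \norma A \norma_{0, s, 0}^{k_0, \gamma}$; monotonicity in $s$ is guaranteed by \eqref{norm-increa}.

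To prove \eqref{plan:bilinear} I would compute the Fourier coefficients of $Bu$ directly: expanding $b(\vphi, x, \xi) = \sum_{\ell, k} \widehat b(\ell, k, \xi) e^{\ii(\ell \cdot \vphi + k x)}$ and $u(\vphi, x) = \sum_{\ell', j'} \widehat u_{\ell', j'} e^{\ii (\ell' \cdot \vphi + j' x)}$ and substituting into \eqref{come-agisce-A}, one obtains
$$
\widehat{Bu}_{L, J} \, = \, \sum_{\ell', j'} \widehat b(L - \ell', J - j', j') \, \widehat u_{\ell', j'} \, .
$$
Then I would use the elementary inequality $\langle L, J \rangle^s \leq 2^s (\langle L - \ell', J - j' \rangle^s + \langle \ell', j' \rangle^s)$, valid for $s \geq 0$, to split $\| B u \|_s^2 = \sum_{L, J} \langle L, J\rangle^{2s} |\widehat{Bu}_{L, J}|^2 \leq T_1 + T_2$ into a ``symbol-high, $u$-low'' piece $T_1$ and a ``symbol-low, $u$-high'' piece $T_2$. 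In each piece I apply Cauchy--Schwarz inserting the weight $\langle \ell', j'\rangle^{-2 s_0}$ (respectively $\langle L - \ell', J - j'\rangle^{-2 s_0}$) and exploit that, by \eqref{def:s0}, $\zeta(s_0) := \sum_{(\ell, k) \in \Z^{\nu+1}} \langle \ell, k\rangle^{-2 s_0} < +\infty$. Together with the pointwise-in-$\xi$ bound $\sup_\xi \| b(\cdot, \cdot, \xi)\|_s^2 = \norma B \norma_{0,s,0}^2$, this gives $T_1 \leq_s \norma B \norma_{0,s,0}^2 \| u \|_{s_0}^2$ and $T_2 \leq_{s_0} \norma B \norma_{0,s_0,0}^2 \| u \|_s^2$, from which \eqref{plan:bilinear} follows. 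This is the same high-low splitting already used in the proof of \eqref{tame norm} in Lemma \ref{lemma stime Ck parametri}, but simplified: here there is no composition and no $\partial_\xi$ loss, so no shift in the Sobolev index is produced.

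There is no substantial obstacle: the only bookkeeping subtlety is that the symbol is evaluated at $\xi = j'$, the very frequency of $u$ being acted on, which is why the bound $\sup_\xi \| b(\cdot, \cdot, \xi)\|_s$ enters \eqref{plan:bilinear} rather than any Sobolev norm in $\xi$. Once \eqref{plan:bilinear} is established, the parametric version is a routine Leibniz-type summation in $|k| \leq k_0$, so the lemma follows immediately.
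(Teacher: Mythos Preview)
Your overall strategy—reduce to the unparametrized bilinear estimate \eqref{plan:bilinear}, expand in Fourier, split high--low, Cauchy--Schwarz with the $\langle \cdot \rangle^{-2s_0}$ weight—is exactly the paper's approach. But there is a genuine gap in the splitting step.

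When you use $\langle L,J\rangle^s \leq 2^s\bigl(\langle L-\ell',J-j'\rangle^s + \langle \ell',j'\rangle^s\bigr)$, the factor $2^s$ multiplies \emph{both} pieces $T_1$ and $T_2$. Carrying out the Cauchy--Schwarz step you describe for the ``$u$-high'' piece actually yields
\[
T_2 \;\leq\; 4^s\,\zeta(s_0)\,\norma B\norma_{0,s_0,0}^2\,\|u\|_s^2,
\]
so the constant depends on $s$, not only on $s_0$; your claim $T_2 \leq_{s_0} \cdots$ is false. Consequently you only obtain
\[
\|Bu\|_s \;\leq\; C(s)\,\norma B\norma_{0,s_0,0}\,\|u\|_s \;+\; C(s)\,\norma B\norma_{0,s,0}\,\|u\|_{s_0},
\]
which is \emph{not} a tame estimate in the sense of Definition~\ref{def:Ck0}: there the coefficient of $\|u\|_s$ must be the fixed number $\mathfrak{M}_A(s_0)$, independent of $s$. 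In particular your target inequality \eqref{plan:bilinear}, with $C(s_0)$ in front of the first term, does not follow from this splitting.

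The paper avoids this by choosing an $s$-dependent threshold: it splits the inner sum according to whether $\langle \ell,j\rangle \leq 2^{1/s}\langle \ell',j'\rangle$ or not. In the ``$u$-high'' region this gives $\bigl(\langle \ell,j\rangle/\langle \ell',j'\rangle\bigr)^s \leq 2$ \emph{uniformly in $s$}, and hence $S_1 \leq 4\,\zeta(s_0)\,\norma A\norma_{0,s_0,0}^2\,\|u\|_s^2$ with a constant depending only on $s_0$. In the complementary region one has $\langle \ell,j\rangle \leq (1-2^{-1/s})^{-1}\langle \ell-\ell',j-j'\rangle$, which produces the $C(s)$ constant only in front of the $\|u\|_{s_0}$ term. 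Replacing your additive splitting by this threshold splitting repairs the argument.
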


\begin{proof} 
By expanding \eqref{come-agisce-A} in Fourier, we have 
$$
A u ( \vphi, x)  = 
\sum_{\ell \in \Z^\nu, j \in \Z} \Big(\sum_{\ell', j' } 
\widehat a (\ell - \ell' , j - j', j') u_{\ell', j'} \Big)  e^{\ii (\ell \cdot \vphi + j x)} \, .  
$$
Hence
\begin{align*}
\| A u \|_s^2 & = \sum_{\ell \in \Z^\nu, j \in \Z} \Big(\sum_{\ell' \in \Z^\nu, j' \in \Z} 
\widehat a (\ell - \ell' , j - j', j')  u_{\ell', j'} \Big)^2 \langle  \ell, j \rangle^{2s} \\
& \leq \sum_{\ell \in \Z^\nu, j \in \Z} \Big(\sum_{\ell' \in \Z^\nu, j' \in \Z} 
|\widehat a (\ell - \ell' , j - j', j')|  |u_{\ell', j'}| \langle  \ell, j \rangle^{s} \Big)^2  = S_1 + S_2
\end{align*}
where 
\begin{align*}
& S_1  :=  \\
&  \sum_{\ell \in \Z^\nu, j \in \Z} \! \Big( \! \sum_{\langle  \ell, j \rangle \langle \ell', j' \rangle^{-1} \leq 2^{1/s}} 
\!  \frac{\langle  \ell, j \rangle^{s} \langle \ell - \ell', j - j' \rangle^{s_0} |\widehat a (\ell - \ell' , j - j', j')|  \langle \ell', j' \rangle^s  |u_{\ell', j'}| }{ \langle \ell - \ell', j - j' \rangle^{s_0}  \langle \ell', j' \rangle^s} \Big)^2 \\
& S_2  := \\
&  \sum_{\ell \in \Z^\nu, j \in \Z} \! \Big( \! \sum_{\langle  \ell, j \rangle \langle \ell', j' \rangle^{-1} > 2^{1/s}} 
\!   \frac{\langle  \ell, j \rangle^{s} \langle \ell - \ell', j - j' \rangle^{s} |\widehat a (\ell - \ell' , j - j', j')|  \langle \ell', j' \rangle^{s_0}  |u_{\ell', j'}| }{ \langle \ell - \ell', j - j' \rangle^{s}  \langle \ell', j' \rangle^{s_0}} \Big)^2 \, .
\end{align*}
By  Cauchy Schwartz inequality, and denoting  
$ \zeta (s_0) := \sum_{\ell \in \Z^\nu, j \in \Z} \frac{1}{\langle \ell, j \rangle^{2s_0} } $ (which is $ < + \infty $), we have
\begin{align}
S_1  \leq \!  \sum_{\ell \in \Z^\nu, j \in \Z} \! \Big( \!  
\sum_{\langle  \ell, j \rangle \langle \ell', j' \rangle^{-1} \leq 2^{1/s}} 
\!  \frac{2 \langle \ell - \ell', j - j' \rangle^{s_0} |\widehat a (\ell - \ell' , j - j', j')|  \langle \ell', j' \rangle^s  |u_{\ell', j'}| }{ \langle \ell - \ell', j - j' \rangle^{s_0}  } \Big)^2 \nonumber  & \\
\leq 4 \zeta (s_0)  \sum_{\ell \in \Z^\nu, j \in \Z} \sum_{\ell' \in \Z^\nu, j' \in \Z } 
|\widehat a (\ell - \ell' , j - j', j')|^2 \langle \ell - \ell', j - j' \rangle^{2s_0}  |u_{\ell', j'}|^2 \langle \ell', j' \rangle^{2s} 
  \nonumber & \\
   \leq 4 \zeta (s_0)   \sum_{\ell' \in \Z^\nu, j' \in \Z }   |u_{\ell', j'}|^2 \langle \ell', j' \rangle^{2s}
 \!     \sum_{\ell \in \Z^\nu, j \in \Z}  |\widehat a (\ell - \ell' , j - j', j')|^2 \langle \ell - \ell', j - j' \rangle^{2s_0}  
  \nonumber & \\
   = 4 \zeta (s_0)   \sum_{\ell' \in \Z^\nu, j' \in \Z }   |u_{\ell', j'}|^2 \langle \ell', j' \rangle^{2s}
     \sum_{\ell \in \Z^\nu, j \in \Z}  |\widehat a (\ell , j , j')|^2 \langle \ell , j  \rangle^{2s_0}  
  \nonumber & \\
   = 4 \zeta (s_0)   \sum_{\ell' \in \Z^\nu, j' \in \Z }   |u_{\ell', j'}|^2 \langle \ell', j' \rangle^{2s}
     \sum_{\ell \in \Z^\nu, j \in \Z}  \|  a (\cdot , \cdot , j') \|_{2 s_0}^2  \nonumber & \\
  \leq 4 \zeta (s_0)   \| u \|_s^2  \norma A \norma_{0,s_0,0}^2  \, . \label{stimaS1} & 
 \end{align}
For the estimate of $ S_2 $ note that, since the indices satisfy 
$ \langle \ell, j \rangle  > 2^{1/s} \langle \ell', j' \rangle $ we have 
$ \langle \ell, j \rangle \leq $ $ \langle \ell', j' \rangle + \langle \ell' - \ell, j' -  j \rangle \leq $ $ 2^{-1/s} \langle \ell, j \rangle + 
\langle \ell - \ell', j - j' \rangle $ 
and therefore
$$
\langle \ell, j \rangle \leq \big( 1 - 2^{- 1/s} \big)^{-1} \langle  \ell - \ell', j - j' \rangle \, .
$$
As a consequence, repeating the same argument used for estimating $ S_1 $, we get
\be \label{stimaS2}
S_2 \leq  C(s) \norma A \norma_{0,s,0}^2  \| u \|_{s_0}^2 \, .
\ee
By \eqref{stimaS1}, \eqref{stimaS2}, we deduce 
that 
$$
\| A u \|_{s} \leq 2 (\zeta (s_0))^{1/2}   \norma A \norma_{0,s_0,0} \, \| u \|_{s} + (C(s))^{1/2} \norma A \norma_{0,s,0}  \| u \|_{s_0} 
$$
and therefore  $ A $ is a tame operator with tame constant $ {\mathfrak M}_A(s) \leq C(s) \norma A \norma_{0,s,0} $ 
(for a different $ C(s)$).

\smallskip

Since $ \partial_\lambda^k A = {\rm Op}(\partial_\lambda^k a ) $ for any $k \in \N^{\nu + 1}$, $|k| \leq k_0$, the general case of 
\eqref{interpolazione parametri operatore funzioni} follows. . 
\end{proof}

We now discuss the action of a $ {\mathcal D}^{k_0} $-$ \s $-tame operator  $ A(\om) $ on 
Sobolev functions  $ u (\lambda) \in H^s $ which are 
${k_0}$-times differentiable with respect to  $ \lambda  \in  \Lambda_0 \subset \R^{\nu + 1} $. 
Recall the weighted norm $ \| \ \|_s^{k_0, \gamma}$ in \eqref{norma pesata derivate funzioni}.


\begin{lemma}\label{lemma operatore e funzioni dipendenti da parametro}
Let $ A := A(\lambda) $ be a $ {\mathcal D}^{k_0} $-$ \s $-tame operator. 
Then, $ \forall s \geq s_0 $, for any family of Sobolev functions $ u := u(\lambda) \in H^{s+\s} $ 
which is $k_0$-times differentiable with respect to $ \lambda $,  the following tame estimate holds
$$
\| A u \|_s^{k_0, \gamma} \leq_{k_0}  {\mathfrak M}_A(s_0) \| u \|_{s + \sigma}^{k_0, \gamma} 
+ {\mathfrak M}_A(s) \| u \|_{s_0 + \sigma}^{k_0, \gamma}  \,.
$$
\end{lemma}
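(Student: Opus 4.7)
The idea is the standard one: differentiate $Au$ by the Leibniz rule with respect to $\lambda$, apply the $\mathcal{D}^{k_0}$--$\sigma$--tame hypothesis \eqref{CK0-sigma-tame} to each factor $\partial_\lambda^{k_1}A(\lambda)$ acting on $\partial_\lambda^{k_2}u(\lambda)$, and then reassemble the weighted norms. There is no geometric or analytic obstacle: the only care needed is in tracking the powers of $\gamma$ correctly.

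First, by the definition \eqref{norma pesata derivate funzioni},
\[
\|Au\|_s^{k_0,\gamma} \;=\; \sum_{|k|\le k_0}\gamma^{|k|}\sup_{\lambda\in\Lambda_0}\|\partial_\lambda^k (A(\lambda)u(\lambda))\|_s.
\]
By the Leibniz rule,
\[
\partial_\lambda^k(A(\lambda)u(\lambda)) \;=\; \sum_{k_1+k_2=k}\binom{k}{k_1}\,(\partial_\lambda^{k_1}A(\lambda))\,(\partial_\lambda^{k_2}u(\lambda)),
\]
so splitting $\gamma^{|k|}=\gamma^{|k_1|}\gamma^{|k_2|}$ gives, for each fixed $\lambda$,
\[
\gamma^{|k|}\|\partial_\lambda^k(Au)\|_s \;\leq_{k_0}\; \sum_{k_1+k_2=k}\gamma^{|k_2|}\Bigl(\gamma^{|k_1|}\|(\partial_\lambda^{k_1}A(\lambda))(\partial_\lambda^{k_2}u(\lambda))\|_s\Bigr).
\]

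Next, for each pair $(k_1,k_2)$ with $|k_1|\le k_0$, I apply the tame estimate \eqref{CK0-sigma-tame} with the test function $v:=\partial_\lambda^{k_2}u(\lambda)\in H^{s+\sigma}$; the hypothesis yields
\[
\gamma^{|k_1|}\|(\partial_\lambda^{k_1}A(\lambda))\,\partial_\lambda^{k_2}u(\lambda)\|_s \;\le\; \mathfrak{M}_A(s_0)\,\|\partial_\lambda^{k_2}u(\lambda)\|_{s+\sigma} + \mathfrak{M}_A(s)\,\|\partial_\lambda^{k_2}u(\lambda)\|_{s_0+\sigma}.
\]
Multiplying by $\gamma^{|k_2|}$, taking the supremum over $\lambda\in\Lambda_0$, and summing over $k_2$ with $|k_2|\le k_0-|k_1|\le k_0$, the right-hand side is bounded by
\[
\mathfrak{M}_A(s_0)\,\|u\|_{s+\sigma}^{k_0,\gamma} + \mathfrak{M}_A(s)\,\|u\|_{s_0+\sigma}^{k_0,\gamma}
\]
by definition \eqref{norma pesata derivate funzioni}.

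Finally, summing the bound over all $k$ with $|k|\le k_0$ and the $k_0$-dependent number of Leibniz multi-indices $(k_1,k_2)$ absorbs into the implicit constant $\leq_{k_0}$, giving
\[
\|Au\|_s^{k_0,\gamma} \;\leq_{k_0}\; \mathfrak{M}_A(s_0)\,\|u\|_{s+\sigma}^{k_0,\gamma} + \mathfrak{M}_A(s)\,\|u\|_{s_0+\sigma}^{k_0,\gamma},
\]
which is the claim. The only ``obstacle'' is bookkeeping: one must be sure that each factor $\gamma^{|k_1|}$ is absorbed against the $A$-factor (where \eqref{CK0-sigma-tame} consumes it) while the remaining $\gamma^{|k_2|}$ is used to build up $\|u\|^{k_0,\gamma}_{\cdot}$ on the $u$-factor.
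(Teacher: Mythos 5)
Your proof is correct and follows essentially the same route as the paper: Leibniz rule in $\lambda$, the tame bound \eqref{CK0-sigma-tame} applied to each $(\partial_\lambda^{k_1}A(\lambda))[\partial_\lambda^{k_2}u(\lambda)]$, and reassembly of the weighted norm \eqref{norma pesata derivate funzioni}, with the $\gamma$-powers split exactly as in the paper's argument. No gaps.
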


\begin{proof}
For all  $|k| \leq k_0 $, $ \lambda \in \mathtt \Lambda_0 $, we have, by \eqref{CK0-sigma-tame}, \eqref{norma pesata derivate funzioni}
\begin{align*}
\| \partial_\lambda^k \big(A(\lambda) u(\lambda) \big) \|_s & \leq_{k_0} 
{\mathop \sum}_{k_1 + k_2 = k} \| (\partial_\lambda^{k_1} A (\lambda) )[\partial_\lambda^{k_2} u (\lambda) ] \|_s \\
& \leq_{ k_0} {\mathop \sum}_{k_1 + k_2 = k} \gamma^{- |k_1|} \big( {\mathfrak M}_A(s_0) 
\| \partial_\lambda^{k_2} u \|_{s + \sigma} + {\mathfrak M}_A(s) \| \partial_\lambda^{k_2} u \|_{s_0 + \sigma} \big) \\
& \leq_{ k_0} \gamma^{- |k|} \big( {\mathfrak M}_A(s_0) \| u \|_{s + \sigma}^{k_0, \gamma} 
+ {\mathfrak M}_A(s) \| u \|_{s_0 + \sigma}^{k_0, \gamma} \big) 
\end{align*}
and the lemma
follows by the definition of the norm $ \| \ \|_{s}^{k_0, \gamma} $ in \eqref{norma pesata derivate funzioni}. 
\end{proof}

Lemma \ref{lemma operatore e funzioni dipendenti da parametro}, 
 \eqref{norma a moltiplicazione} and \eqref{interpolazione parametri operatore funzioni} imply  
  tame estimates for the product of two functions in weighted Sobolev norm:   
for all $s \geq s_0$, 
\begin{equation}\label{interpolazione senza C k0}
\| u v \|_s \leq  C(s) \| u \|_s\| v \|_{s_0} + 
C(s_0) \| u \|_{s_0} \| v \|_s
\end{equation}
\begin{equation}\label{interpolazione C k0}
\| u v \|_s^{k_0, \gamma} \leq_{k_0} C(s) \| u \|_s^{k_0, \gamma} \| v \|_{s_0}^{k_0, \gamma} + 
C(s_0) \| u \|_{s_0}^{k_0, \gamma} \| v \|_s^{k_0, \gamma}\,,
\end{equation}
as well as 
the algebra estimate $\| u v \|_s^{k_0, \gamma} \leq_{k_0} C(s) \| u \|_s^{k_0, \gamma} \| v \|_{s}^{k_0, \gamma}$.
In view of the KAM reducibility scheme of section \ref{sec: reducibility} we also consider
the stronger notion of $ {\mathcal D}^{k_0} $-modulo-tame operator, that we  need  only for 
operators with loss of derivatives $ \s = 0 $.

\begin{definition}\label{def:op-tame} 
{ \bf ($ {\mathcal D}^{k_0} $-modulo-tame)} A  linear operator $ A := A(\lambda) $, $\lambda \in \mathtt \Lambda_0$ is  
$ {\mathcal D}^{k_0}$-modulo-tame  if, for all $ k \in \N^{\nu + 1} $, $ |k| \leq k_0 $,  the majorant operators $  | \partial_\lambda^k A | $ 
(Definition \ref{def:maj})
satisfy the following weighted tame estimates:   for all $ s \geq s_0 $, $ u \in H^{s} $,  
\be\label{CK0-tame}
\sup_{|k| \leq k_0} \sup_{\lambda \in \mathtt \Lambda_0}\gamma^{ |k|}
\| | \partial_\lambda^k A | u\|_s \leq  
{\mathfrak M}_{A}^\sharp (s_0) \| u \|_{s} +
{\mathfrak M}_{A}^\sharp (s) \| u \|_{s_0} 
\ee
where  the functions $ s \mapsto  {\mathfrak M}_{A}^\sharp (s)  \geq 0  $ are non-decreasing in $ s $. 
The constant $ {\mathfrak M}_A^\sharp (s) $ is called the {\sc modulo-tame constant} of the operator $ A $. 
\end{definition}

\begin{lemma}\label{A versus |A|}
An operator  $ A $ which is $ {\mathcal D}^{k_0}$-modulo-tame is also $ {\mathcal D}^{k_0}$-tame and
$ {\mathfrak M}_A (s) \leq  {\mathfrak M}_A^\sharp (s) $. 
\end{lemma}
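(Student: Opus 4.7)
The plan is to bound $\|(\partial_\lambda^k A) u\|_s$ by the action of the majorant operator $|\partial_\lambda^k A|$ on the majorant function $\norma u \norma$, and then invoke the modulo-tame hypothesis together with the fact (see \eqref{Soboequals}) that Sobolev norms of a function and of its majorant coincide.

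First I would fix $|k| \leq k_0$, $\lambda \in \Lambda_0$ and $u \in H^s$, and use the matrix representation \eqref{matrice operatori Toplitz}. Since $\partial_\lambda^k$ acts coefficientwise on the entries $A_j^{j'}(\ell - \ell')$, the operator $|\partial_\lambda^k A|$ is unambiguously defined by Definition \ref{def:maj}. The triangle inequality applied to the inner sum in \eqref{matrice operatori Toplitz} gives, for every $(\ell,j)$,
\begin{equation*}
\big| \widehat{(\partial_\lambda^k A) u}_{\ell,j} \big| \leq \sum_{\ell',j'} \big|(\partial_\lambda^k A)_j^{j'}(\ell-\ell')\big| \, |u_{\ell',j'}|,
\end{equation*}
which is precisely the $(\ell,j)$-th Fourier coefficient of $|\partial_\lambda^k A|\, \norma u \norma$ (recall \eqref{funzioni modulo fourier}). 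Multiplying by $\langle \ell,j \rangle^{2s}$ and summing yields the pointwise (in $\lambda$) Sobolev bound
\begin{equation*}
\|(\partial_\lambda^k A) u\|_s \leq \big\| |\partial_\lambda^k A|\, \norma u \norma \big\|_s.
\end{equation*}

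Next I would apply the modulo-tame hypothesis \eqref{CK0-tame} to the right-hand side with $\norma u \norma$ in place of the test function, obtaining
\begin{equation*}
\gamma^{|k|}\big\| |\partial_\lambda^k A|\, \norma u \norma \big\|_s \leq {\mathfrak M}_A^\sharp(s_0)\, \|\norma u \norma\|_s + {\mathfrak M}_A^\sharp(s)\, \|\norma u \norma\|_{s_0}.
\end{equation*}
Finally, the identity \eqref{Soboequals} replaces $\|\norma u \norma\|_\sigma$ by $\|u\|_\sigma$ for $\sigma \in \{s_0, s\}$, and taking the supremum over $|k| \leq k_0$ and $\lambda \in \Lambda_0$ delivers exactly \eqref{CK0-sigma-tame} with loss $\sigma = 0$ and tame constant ${\mathfrak M}_A(s) \leq {\mathfrak M}_A^\sharp(s)$, which is the claim.

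There is essentially no obstacle: the lemma is a one-line consequence of the elementary fact that a Fourier series is dominated in modulus by its majorant. The only bookkeeping point is that differentiation in $\lambda$ commutes with taking absolute values of matrix coefficients, so the modulo-tame estimate for $|\partial_\lambda^k A|$ is available for each multi-index $k$ separately and can be fed directly into the argument above.
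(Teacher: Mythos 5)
Your proof is correct and follows essentially the same route as the paper's: bound the Fourier coefficients of $(\partial_\lambda^k A)u$ by those of $|\partial_\lambda^k A|\,\norma u \norma$ via the triangle inequality, then invoke \eqref{CK0-tame} and \eqref{Soboequals}. No gaps.
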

\begin{proof}
For all $|k| \leq k_0$ one has 
\begin{align}
\| (\partial_\lambda^k A) u \|_s^2 & = {\mathop \sum}_{\ell, j} \langle \ell, j \rangle^{2 s} \big| 
{\mathop \sum}_{\ell', j'} \partial_\lambda^k A_j^{j'}(\ell - \ell') u_{\ell', j'} \big|^2 \nonumber\\
& \leq {\mathop \sum}_{\ell, j} \langle \ell, j \rangle^{2 s} \big( {\mathop \sum}_{\ell', j'} 
|\partial_\lambda^k A_j^{j'}(\ell - \ell')| |u_{\ell', j'} |\big)^2 
= \| |\partial_\lambda^k A| [\norma u \norma]\|_s^2 \nonumber
\end{align}
where $ \norma u \norma $ is the function defined in \eqref{funzioni modulo fourier}.
Then the lemma follows by \eqref{CK0-tame}, \eqref{Soboequals} and  Definition \ref{def:Ck0}. 
\end{proof}

The class of operators which are $ {\mathcal D}^{k_0} $-modulo-tame is closed under
sum and  composition.

\begin{lemma} \label{interpolazione moduli parametri} {\bf (Sum and composition)}
Let $ A, B $ be $ {\mathcal D}^{k_0} $-modulo-tame operators with modulo-tame constants respectively 
$ {\mathfrak M}_A^\sharp(s) $ and $ {\mathfrak M}_B^\sharp(s) $. Then 
$ A+ B $ is $ {\mathcal D}^{k_0} $-modulo-tame with modulo-tame constant
\be\label{modulo-tame-A+B}
{\mathfrak M}_{A + B}^\sharp (s) \leq {\mathfrak M}_A^\sharp (s)  + {\mathfrak M}_B^\sharp (s)  \,.
\ee
The composed operator 
$  A  \circ B $ is $ {\mathcal D}^{k_0} $-modulo-tame with modulo-tame constant
\begin{equation}\label{modulo tame constant for composition}
 {\mathfrak M}_{A B}^\sharp (s) \leq  C(k_0) \big( {\mathfrak M}_{A}^\sharp(s) 
 {\mathfrak M}_{B}^\sharp (s_0) + {\mathfrak M}_{A}^\sharp (s_0) 
{\mathfrak M}_{B}^\sharp (s) \big)\,.
\end{equation}
Assume in addition that  $ \langle \partial_\vphi \rangle^{\mathtt b} A $, 
$ \langle \partial_\vphi \rangle^{\mathtt b}  B $ are $ {\mathcal D}^{k_0}$-modulo-tame with modulo-tame constant 
respectively $ {\mathfrak M}_{\langle \partial_\vphi \rangle^{\mathtt b} A}^\sharp (s) $ and  
 $ {\mathfrak M}_{\langle \partial_\vphi \rangle^{\mathtt b} B}^\sharp (s) $, then 
$ \langle \partial_\vphi \rangle^{\mathtt b} (A  B) $ is $ {\mathcal D}^{k_0}$-modulo-tame with 
modulo-tame constant satisfsying
\begin{align}\label{K cal A cal B}
{\mathfrak M}_{\langle \partial_\vphi \rangle^{\mathtt b} (A  B)}^\sharp (s) & \leq
 C({\mathtt b}) C(k_0)\Big( 
{\mathfrak M}_{\langle \partial_\vphi \rangle^{\mathtt b} A}^\sharp (s) 
{\mathfrak M}_{B}^\sharp (s_0) + 
{\mathfrak M}_{\langle \partial_\vphi \rangle^{\mathtt b} A }^\sharp (s_0) 
{\mathfrak M}_{B}^\sharp (s) \nonumber \\ 
& \qquad \qquad \qquad \quad + {\mathfrak M}_{A}^\sharp (s) {\mathfrak M}_{ \langle \pa_\vphi \rangle^{\mathtt b} B}^\sharp (s_0) 
+ {\mathfrak M}_{A}^\sharp (s_0) {\mathfrak M}_{ \langle \pa_\vphi \rangle^{\mathtt b} B}^\sharp (s)\Big) \,.
\end{align}
The constants $ C(k_0) , C( {\mathtt b} ) \geq 1 $. 
\end{lemma}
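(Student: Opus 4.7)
\medskip

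\noindent\textbf{Proof proposal.}
The plan is to treat the three estimates in order, reducing everything to the majorant inequalities in Lemma \ref{compositionAB} together with Definition \ref{def:op-tame} applied to each $\partial_\lambda^{k}$-derivative.

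\emph{Sum.} Since $\partial_\lambda^k(A+B)=\partial_\lambda^k A+\partial_\lambda^k B$, the entrywise bound
$|\partial_\lambda^k(A+B)_j^{j'}(\ell-\ell')|\leq|\partial_\lambda^k A_j^{j'}(\ell-\ell')|+|\partial_\lambda^k B_j^{j'}(\ell-\ell')|$
combined with the first inequality in \eqref{disuguaglianza importante moduli} yields
$\||\partial_\lambda^k(A+B)|u\|_s\leq \||\partial_\lambda^k A|\,\norma u\norma\|_s+\||\partial_\lambda^k B|\,\norma u\norma\|_s$.
Using $\|\norma u\norma\|_s=\|u\|_s$ (see \eqref{Soboequals}) and applying Definition \ref{def:op-tame} to $A$ and $B$ separately gives \eqref{modulo-tame-A+B}.

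\emph{Composition.} By Leibniz,
$\partial_\lambda^k(AB)=\sum_{k_1+k_2=k}C(k_1,k_2)\,\partial_\lambda^{k_1}A\,\partial_\lambda^{k_2}B$,
so by the second inequality of \eqref{disuguaglianza importante moduli},
$\||\partial_\lambda^k(AB)|u\|_s\leq_{k_0}\sum_{k_1+k_2=k}\big\||\partial_\lambda^{k_1}A|\,|\partial_\lambda^{k_2}B|\,\norma u\norma\big\|_s$.
I would then apply the modulo-tame bound for $\partial_\lambda^{k_1}A$ to the auxiliary function $v:=|\partial_\lambda^{k_2}B|\,\norma u\norma$, obtaining
$\||\partial_\lambda^{k_1}A|v\|_s\leq\gamma^{-|k_1|}\bigl({\mathfrak M}_A^\sharp(s_0)\|v\|_s+{\mathfrak M}_A^\sharp(s)\|v\|_{s_0}\bigr)$,
and then apply the modulo-tame bound for $\partial_\lambda^{k_2}B$ to $\norma u\norma$, using $\|\norma u\norma\|_\sigma=\|u\|_\sigma$ for $\sigma\in\{s_0,s\}$. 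Gathering the factors of $\gamma^{-|k_1|-|k_2|}=\gamma^{-|k|}$, multiplying both sides by $\gamma^{|k|}$, and taking the supremum over $|k|\leq k_0$ produces a bound of the form $({\mathfrak M}_A^\sharp(s_0){\mathfrak M}_B^\sharp(s_0))\|u\|_s+({\mathfrak M}_A^\sharp(s){\mathfrak M}_B^\sharp(s_0)+{\mathfrak M}_A^\sharp(s_0){\mathfrak M}_B^\sharp(s))\|u\|_{s_0}$, which is \eqref{modulo tame constant for composition} after using the monotonicity of $s\mapsto{\mathfrak M}^\sharp(s)$.

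\emph{Weighted composition.} The key observation is the elementary inequality
$\langle \ell-\ell'\rangle^{\mathtt b}\leq C({\mathtt b})\bigl(\langle\ell-\ell_1\rangle^{\mathtt b}+\langle\ell_1-\ell'\rangle^{\mathtt b}\bigr)$,
valid for any $\ell_1$. Applied to the convolution representation
$(AB)_j^{j'}(\ell-\ell')=\sum_{\ell_1,j_1}A_j^{j_1}(\ell-\ell_1)B_{j_1}^{j'}(\ell_1-\ell')$
and recalling Definition \ref{def:maj}(3), this translates entrywise into the majorant bound
$\bigl|\langle\partial_\vphi\rangle^{\mathtt b}(AB)\bigr|\leq C({\mathtt b})\Bigl(\bigl|\langle\partial_\vphi\rangle^{\mathtt b}A\bigr|\,|B|+|A|\,\bigl|\langle\partial_\vphi\rangle^{\mathtt b}B\bigr|\Bigr)$,
and the same relation holds after applying any $\partial_\lambda^k$ by Leibniz. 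Then I would apply the composition bound \eqref{modulo tame constant for composition} already proved to each of the two products $(\langle\partial_\vphi\rangle^{\mathtt b}A)\,B$ and $A\,(\langle\partial_\vphi\rangle^{\mathtt b}B)$, which immediately yields \eqref{K cal A cal B}.

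The only place where one must be slightly careful is keeping track of the $\gamma^{-|k|}$ factors and the constants $C(k_0), C({\mathtt b})$ when passing between $\norma\cdot\norma$ and $\|\cdot\|_s$; no genuine obstacle arises, since Lemma \ref{compositionAB} and the identity \eqref{Soboequals} reduce everything to applying the definition of modulo-tameness twice.
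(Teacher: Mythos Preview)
Your proposal is correct and follows essentially the same route as the paper: Leibniz for the $\partial_\lambda^k$ derivatives, the majorant inequalities \eqref{disuguaglianza importante moduli}, the identity \eqref{Soboequals}, and for the weighted part the splitting $\langle\ell-\ell'\rangle^{\mathtt b}\leq C(\mathtt b)\bigl(\langle\ell-\ell_1\rangle^{\mathtt b}+\langle\ell_1-\ell'\rangle^{\mathtt b}\bigr)$, followed by two successive applications of the modulo-tame bounds. The paper simply writes the last computation out explicitly (equations \eqref{udine - 1}--\eqref{udine 1}) rather than packaging it as an entrywise majorant inequality.

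One small imprecision worth noting in your third part: the entrywise bound $|\langle\partial_\vphi\rangle^{\mathtt b}(AB)|\leq C(\mathtt b)\bigl(|\langle\partial_\vphi\rangle^{\mathtt b}A|\,|B|+|A|\,|\langle\partial_\vphi\rangle^{\mathtt b}B|\bigr)$ is a pointwise inequality at fixed $\lambda$, not an operator identity one can differentiate. So you cannot quite invoke the \emph{statement} of \eqref{modulo tame constant for composition} on the products $(\langle\partial_\vphi\rangle^{\mathtt b}A)B$ and $A(\langle\partial_\vphi\rangle^{\mathtt b}B)$ as a black box; what is really happening is that after Leibniz on $\partial_\lambda^k(AB)$ and commuting $\partial_\lambda$ past $\langle\partial_\vphi\rangle^{\mathtt b}$, you apply the entrywise bound to each product $\partial_\lambda^{k_1}A\cdot\partial_\lambda^{k_2}B$ and then repeat the \emph{argument} of \eqref{modulo tame constant for composition} term by term. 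This is exactly what the paper does, and the distinction is purely presentational.
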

\begin{proof}
The bound \eqref{modulo-tame-A+B} follows by \eqref{disuguaglianza importante moduli} and \eqref{Soboequals}. 

\noindent
{\sc Proof of \eqref{modulo tame constant for composition}.}
For all $|k| \leq k_0$ we have 
\begin{align}
\gamma^{|k|}\| |\partial_\lambda^k (A B)| u \|_s & \leq C(k_0) 
 \gamma^{|k|} {\mathop \sum}_{k_1 + k_2 = k} \| |(\partial_\lambda^{k_1}  A)(\partial_\lambda^{k_2} B)| u\|_s \nonumber\\
& \stackrel{\eqref{disuguaglianza importante moduli}} \leq C(k_0)   {\mathop \sum}_{k_1 + k_2 = k} 
\gamma^{|k_1|} \gamma^{|k_2|} \| |\partial_\lambda^{k_1} A| |\partial_\lambda^{k_2} 
B| [\norma u \norma] \|_s \nonumber\\
& \stackrel{\eqref{CK0-tame}} \leq
C(k_0)   {\mathop \sum}_{|k_2| \leq |k|}  {\mathfrak M}_A^\sharp (s_0) \gamma^{|k_2|} \| 
 |\partial_\lambda^{k_2} B|[\norma u \norma] \|_s 
 \nonumber\\
 & \quad \quad + C(k_0)   {\mathop \sum}_{|k_2| \leq |k|} {\mathfrak M}_A^\sharp (s) \gamma^{|k_2|}
 \| |\partial_\lambda^{k_2} B| [\norma u \norma] \|_{s_0}  \nonumber\\
&  \stackrel{\eqref{CK0-tame}, \eqref{Soboequals}} \leq
C(k_0)   {\mathfrak M}_A^\sharp (s_0) {\mathfrak M}_B^\sharp (s_0) \|  u \|_s 
 \nonumber \\
 & \quad \quad \quad   + C(k_0)  \big( {\mathfrak M}_A^\sharp (s) {\mathfrak M}_B^\sharp (s_0) + 
{\mathfrak M}_A^\sharp (s_0) {\mathfrak M}_B^\sharp (s) \big) \|  u  \|_{s_0}   \nonumber
\end{align}
and  \eqref{modulo tame constant for composition}  follows by recalling Definition \ref{def:op-tame}.

\noindent
{\sc Proof of \eqref{K cal A cal B}.} For all $|k| \leq k_0$ we have (use 
the first inequality in \eqref{disuguaglianza importante moduli})
\begin{equation}\label{udine - 1}
\big\| \big| \langle \partial_\vphi \rangle^{\mathtt b}
\big[ \partial_\lambda^k ( A B) \big] \big| u \big\|_s \leq C(k_0) \sum_{k_1 + k_2 = k} 
\big\| \big| \langle \partial_\vphi \rangle^{\mathtt b} 
\big[ (\partial_\lambda^{k_1} A)(\partial_\lambda^{k_2}  B)\big] \big| \norma u \norma \big\|_s\,.
\end{equation}
Next, recalling  the Definition \ref{def:maj} of the operator $ \langle \partial_\vphi \rangle^{\mathtt b} $ and 
\eqref{funzioni modulo fourier}, we have 
\begin{align}
 \Big\| \big|\langle \partial_\vphi \rangle^{\mathtt b} \big[(\partial_\lambda^{k_1} 
A)(\partial_\lambda^{k_2} B)\big] \big| \norma u \norma \Big\|_s^2  = &  \label{udine 0} \\
  \sum_{\ell, j} \langle \ell, j \rangle^{2 s} \Big(\sum_{\ell', j'} 
| \langle \ell - \ell' \rangle^{\mathtt b}[( \partial_\lambda^{k_1} A) 
( \partial_\lambda^{k_2} B)]_j^{j'}(\ell - \ell') | |u_{\ell', j'}| \Big)^2  & \nonumber  \\
\leq \sum_{\ell, j} \langle \ell, j \rangle^{2 s} \Big(\sum_{\ell', j', \ell_1, j_1}  \langle \ell - \ell' \rangle^{\mathtt b} |(\partial_\lambda^{k_1} A)_j^{j_1}(\ell - \ell_1)| 
|(\partial_\lambda^{k_2} B)_{j_1}^{j'}(\ell_1 - \ell')| |u_{\ell', j'}| \Big)^2\,. \nonumber & 
\end{align}
Since 
$ \langle \ell - \ell' \rangle^{\mathtt b} \leq C(\mathtt b) (\langle \ell - \ell_1 \rangle^{\mathtt b} + \langle \ell_1 - \ell' \rangle^{\mathtt b}) $, 
we deduce that
\begin{align}
\eqref{udine 0} & \leq C(\mathtt b)^2 
\sum_{\ell, j} \langle \ell, j \rangle^{2 s} \Big(\sum_{\ell', j', \ell_1, j_1} 
| \langle \ell - \ell_1 \rangle^{\mathtt b} (\partial_\lambda^{k_1} A)_j^{j_1}(\ell - \ell_1)| \times \nonumber \\
& \qquad \qquad \qquad  \qquad \qquad \qquad 
\times |(\partial_\lambda^{k_2} B)_{j_1}^{j'}(\ell_1 - \ell')| |u_{\ell', j'}| \Big)^2 \nonumber\\
& \quad + C(\mathtt b)^2 \sum_{\ell, j} \langle \ell, j \rangle^{2 s} \Big(\sum_{\ell', j', \ell_1, j_1}  |(\partial_\lambda^{k_1} 
A)_j^{j_1}(\ell - \ell_1)| \times \nonumber \\ 
&  \qquad \qquad \qquad  \qquad \qquad \qquad \times | \langle \ell_1 - \ell' \rangle^{\mathtt b}(\partial_\lambda^{k_2} 
B)_{j_1}^{j'}(\ell_1 - \ell')| |u_{\ell', j'}| \Big)^2 \nonumber\\
& \leq C(\mathtt b)^2 \Big(
 \Big\| \big| \langle \partial_\vphi \rangle^{\mathtt b} (\partial_\lambda^{k_1} A) 
 \big| \big[| \partial_\lambda^{k_2} B| \norma u \norma \big] \Big\|_s^2   + 
 \Big\| \big| \partial_\lambda^{k_1} A \big| 
 \big[ | \langle \partial_\vphi \rangle^{\mathtt b} (\partial_\lambda^{k_2} B)
 \big| \norma u \norma 	\big] \Big\|_s^2 \Big) \, . \label{udine 1}
 \end{align}
Hence \eqref{udine - 1}-\eqref{udine 1}, \eqref{CK0-tame} and \eqref{Soboequals} imply 
\begin{align*}
& \big\| \big| \langle \partial_\vphi \rangle^{\mathtt b}
 \big[ \partial_\lambda^k ( A B) \big] \big| u \big\|_s  \nonumber\\
&  \leq C(\mathtt b) C(k_0) \gamma^{- |k|} \big( {\mathfrak M}_{\langle \partial_\vphi \rangle^{\mathtt b} A}^\sharp(s_0 ) 
{\mathfrak M}_{B}^\sharp(s_0) + {\mathfrak M}_{A}^\sharp(s_0) {\mathfrak M}_{\langle \partial_\vphi \rangle^{\mathtt b} B}^\sharp(s_0) \big) \| u\|_s \nonumber\\
&  \quad + C(\mathtt b) C(k_0)
\gamma^{- |k|} \Big( {\mathfrak M}_{ \langle \partial_\vphi \rangle^{\mathtt b} A}^\sharp(s) {\mathfrak M}_{B}^\sharp(s_0) + 
{\mathfrak M}_{ \langle \partial_\vphi \rangle^{\mathtt b}A}^\sharp(s_0) {\mathfrak M}_{B}^\sharp(s)  \nonumber\\
& \quad  + {\mathfrak M}_{A}^\sharp(s) {\mathfrak M}^\sharp_{\langle \partial_\vphi \rangle^{\mathtt b} B}(s_0) + {\mathfrak M}_{ A}^\sharp(s_0) {\mathfrak M}_{\langle \partial_\vphi \rangle^{\mathtt b} B}^\sharp(s) \Big) \| u \|_{s_0} 
\end{align*}
which proves \eqref{K cal A cal B}.  
\end{proof}

As a consequence of \eqref{modulo tame constant for composition}, if $ A $ is $ {\mathcal D}^{k_0}$-modulo-tame, then, 
for all $n \geq 1$, each  $ A^n  $ is $ {\mathcal D}^{k_0}$-modulo-tame and 
\begin{equation}\label{M Psi n}
{\mathfrak M}_{A^n}^\sharp (s) \leq \big( 2 C (k_0) {\mathfrak M}_{A}^\sharp (s_0) \big)^{n - 1} {\mathfrak M}_{A}^\sharp(s)\,.
\end{equation}
Moreover, by \eqref{K cal A cal B},  if $ \langle \pa_\vphi  \rangle^{\mathtt b} A $ is $ {\mathcal D}^{k_0} $-modulo-tame, then,  
for all $ n \geq 2 $, each $ \langle \partial_\vphi \rangle^{\mathtt b}  A^n $ 
is $ {\mathcal D}^{k_0}$-modulo-tame with
\begin{equation}\label{K Psi n}
\begin{aligned}
{\mathfrak M}_{\langle \pa_\vphi \rangle^{ \mathtt b} A^n}^\sharp (s) & \leq  (4 C(\mathtt b) C(k_0))^{n - 1} \Big( 
{\mathfrak M}^\sharp_{\langle \partial_\vphi \rangle^{\mathtt b} A}(s) \big[ {\mathfrak M}^\sharp_A(s_0) \big]^{n - 1} \\  
 & \quad + {\mathfrak M}^\sharp_{\langle \partial_\vphi \rangle^{\mathtt b} A}(s_0) {\mathfrak M}_A^\sharp(s) 
 \big[ {\mathfrak M}_A^\sharp(s_0) \big]^{n - 2} \Big)\,.
 \end{aligned}
\end{equation}

\begin{lemma}[{\bf Invertibility}]\label{serie di neumann per maggioranti}
Let $\Phi := {\rm Id} + A $ where $ A  := A (\lambda)$ is 
${\mathcal D}^{k_0}$-modulo-tame with modulo-tame constant $ {\mathfrak M}_{A}^\sharp (s)  $. Assume 
the smallness condition 
\begin{equation}\label{piccolezza neumann tame}
4 C(\mathtt b) C(k_0)  {\mathfrak M}_{A}^\sharp (s_0)  \leq 1/ 2\,.
\end{equation}
Then the operator $ \Phi $ is invertible,  
$\check A :=   \Phi^{- 1} - {\rm Id}  $  is 
${\mathcal D}^{k_0}$-modulo-tame with modulo-tame constant
\begin{equation}\label{disuguaglianza constante tame A tilde A}
 {\mathfrak M}_{\check A}^\sharp (s) \leq  2 {\mathfrak M}_A^\sharp (s) \, . 
 \end{equation}
Moreover 
$ \langle \partial_\vphi \rangle^{\mathtt b}  \check A $ is $ {\mathcal D}^{k_0}$-modulo-tame with 
tame-constant 
\begin{equation}\label{Psi tilde alta Neumann moduli}
{\mathfrak M}_{\langle \partial_\vphi \rangle^{\mathtt b}  \check A}^\sharp (s)  \leq 
2 {\mathfrak M}_{ \langle \partial_\vphi \rangle^{\mathtt b}A}^\sharp (s)  + 
8  C(\mathtt b) C(k_0)  {\mathfrak M}_{ \langle \partial_\vphi \rangle^{\mathtt b}A}^\sharp (s_0)\, {\mathfrak M}_A^\sharp(s) \, .  
\end{equation}
\end{lemma}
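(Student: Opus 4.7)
The plan is to invert $\Phi$ via its Neumann series $\check A = \sum_{n \geq 1} (-1)^n A^n$, and then to obtain the two modulo-tame bounds by summing the already established estimates \eqref{M Psi n} and \eqref{K Psi n} as geometric series. Since the smallness hypothesis \eqref{piccolezza neumann tame} combined with $C(\mathtt b) \geq 1$ yields $2 C(k_0) {\mathfrak M}_A^\sharp(s_0) \leq 1/2$, Lemma \ref{A versus |A|} together with \eqref{norma operatoriale costante tame} shows that $\|A\|_{{\cal L}(H^{s_0})} \leq 2 {\mathfrak M}_A^\sharp(s_0) \leq 1/2$, so the Neumann series converges in ${\cal L}(H^{s_0})$ to a bounded inverse; the same uniform bound holds for each $\partial_\lambda^k A$, $|k|\leq k_0$, so $\Phi^{-1}$ is ${k_0}$-times differentiable in $\lambda$ with $\partial_\lambda^k \Phi^{-1}$ given by differentiating the series.

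First I would verify the subadditivity of the modulo-tame constant under infinite sums: if $B = \sum_n B_n$ converges appropriately, then the majorant satisfies $|B| \leq \sum_n |B_n|$ entrywise, and applying the first inequality in \eqref{disuguaglianza importante moduli} together with monotone convergence and \eqref{Soboequals} gives ${\mathfrak M}_B^\sharp(s) \leq \sum_n {\mathfrak M}_{B_n}^\sharp(s)$ (and similarly for the $\partial_\lambda^k$ derivatives, which is why one also needs \eqref{piccolezza neumann tame} to control the differentiated series).

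Applying this to $\check A$ and using \eqref{M Psi n},
\[
{\mathfrak M}_{\check A}^\sharp(s) \leq \sum_{n \geq 1} {\mathfrak M}_{A^n}^\sharp(s) \leq {\mathfrak M}_A^\sharp(s) \sum_{n \geq 1} \bigl( 2 C(k_0) {\mathfrak M}_A^\sharp(s_0) \bigr)^{n-1} \leq {\mathfrak M}_A^\sharp(s) \sum_{n \geq 1} (1/2)^{n-1} = 2 {\mathfrak M}_A^\sharp(s),
\]
which proves \eqref{disuguaglianza constante tame A tilde A}. For \eqref{Psi tilde alta Neumann moduli}, I would split the sum for $\langle \partial_\vphi\rangle^{\mathtt b} \check A = \sum_{n\geq 1}(-1)^n \langle \partial_\vphi\rangle^{\mathtt b} A^n$ into the $n=1$ term, which contributes ${\mathfrak M}^\sharp_{\langle \partial_\vphi\rangle^{\mathtt b} A}(s)$, and the tail $n \geq 2$, to which \eqref{K Psi n} applies. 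Using $4 C(\mathtt b) C(k_0) {\mathfrak M}_A^\sharp(s_0) \leq 1/2$ by \eqref{piccolezza neumann tame}, the first piece of \eqref{K Psi n} sums to
\[
{\mathfrak M}^\sharp_{\langle \partial_\vphi\rangle^{\mathtt b} A}(s) \sum_{n \geq 2} \bigl(4 C(\mathtt b) C(k_0) {\mathfrak M}_A^\sharp(s_0)\bigr)^{n-1} \leq {\mathfrak M}^\sharp_{\langle \partial_\vphi\rangle^{\mathtt b} A}(s),
\]
while the second piece, after pulling out one factor of $4 C(\mathtt b)C(k_0)$, sums to at most $8 C(\mathtt b)C(k_0) {\mathfrak M}^\sharp_{\langle \partial_\vphi\rangle^{\mathtt b} A}(s_0) \, {\mathfrak M}_A^\sharp(s)$ via the same geometric series. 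Adding these three contributions gives exactly \eqref{Psi tilde alta Neumann moduli}.

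There is no genuine obstacle here beyond bookkeeping — the substantive work has already been done in establishing \eqref{M Psi n} and \eqref{K Psi n}. The only delicate point is the passage from finite partial sums of the Neumann series to the infinite series at the level of modulo-tame constants (together with the corresponding $\partial_\lambda^k$-derivatives for $|k|\leq k_0$), which must be justified by the entrywise majorant inequality rather than directly from Definition \ref{def:op-tame}; this is immediate once one observes that $|\sum_n B_n| \leq \sum_n |B_n|$ matrix-element by matrix-element and invokes \eqref{disuguaglianza importante moduli} and \eqref{Soboequals}.
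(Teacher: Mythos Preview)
Your proposal is correct and follows essentially the same approach as the paper: invert $\Phi$ by its Neumann series, then sum the iterated composition bounds \eqref{M Psi n} and \eqref{K Psi n} as geometric series using the smallness condition \eqref{piccolezza neumann tame}. The paper's proof is a three-line citation of \eqref{modulo-tame-A+B}, \eqref{M Psi n}, \eqref{K Psi n}; you have simply written out the arithmetic and been slightly more careful about the passage to infinite sums, but there is no difference in strategy.
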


\begin{proof}
By \eqref{norma operatoriale costante tame} and  \eqref{piccolezza neumann tame} 
the operatorial norm $ \| A \|_{{\mathcal L}(H^{s_0})} \leq 2 {\mathfrak M}_A^{\sharp} (s_0) \leq 1 / 2 $. 
Then $\Phi$ is invertible and the inverse operator 
$ \Phi^{- 1} =  {\rm Id} + \check A  $ with $ \check A  := \sum_{n \geq 1} (- 1)^n A^n $ satisfy  
the estimate  \eqref{disuguaglianza constante tame A tilde A}  
 by \eqref{modulo-tame-A+B}, \eqref{M Psi n}, \eqref{piccolezza neumann tame}. 
Similarly \eqref{Psi tilde alta Neumann moduli} follows by \eqref{modulo-tame-A+B}, \eqref{K Psi n}
and \eqref{piccolezza neumann tame}. 
\end{proof}

\begin{lemma} \label{lemma:smoothing-tame} {\bf (Smoothing)}
Suppose that $ \langle \pa_\vphi \rangle^{\mathtt b} A $, $ {\mathtt b} \geq  0 $, is $ {\mathcal D}^{k_0} $-modulo-tame. Then 
the operator $ \Pi_N^\bot A $ is $ {\mathcal D}^{k_0} $-modulo-tame with tame constant
\be\label{proprieta tame proiettori moduli}
{\mathfrak M}_{\Pi_N^\bot A}^\sharp (s) \leq N^{- {\mathtt b} }{\mathfrak M}_{ \langle \pa_\vphi \rangle^{\mathtt b} A}^\sharp (s) \, ,
\quad
{\mathfrak M}_{\Pi_N^\bot A}^\sharp (s) \leq  {\mathfrak M}_{ A}^\sharp (s) \, . 
\ee
\end{lemma}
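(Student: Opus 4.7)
The plan is to establish both bounds by entry-wise comparison of the majorant matrix elements, reducing at the end to the modulo-tame hypotheses on $A$ and $\langle \pa_\vphi \rangle^{\mathtt b} A$. Two trivial commutation facts from Definition \ref{def:maj} will be used repeatedly: since $\Pi_N^\bot$ only truncates matrix entries according to $|\ell - \ell'|$, it commutes with $\pa_\lambda^k$ and with taking majorants, so $\pa_\lambda^k(\Pi_N^\bot A) = \Pi_N^\bot (\pa_\lambda^k A)$ and $|\Pi_N^\bot B| = \Pi_N^\bot |B|$ for any operator $B$.

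For the second inequality, the trivial bound $|\Pi_N^\bot \pa_\lambda^k A|_j^{j'}(\ell-\ell') \leq |\pa_\lambda^k A|_j^{j'}(\ell-\ell')$ holds entry-wise. Since majorant operators have non-negative entries, for any $u$ one has $|(|\Pi_N^\bot \pa_\lambda^k A|\, u)_{\ell,j}| \leq (|\pa_\lambda^k A|\,\norma u\norma)_{\ell,j}$, so that $\||\Pi_N^\bot \pa_\lambda^k A|\,u\|_s \leq \||\pa_\lambda^k A|\,\norma u\norma\|_s$. Applying the modulo-tame estimate \eqref{CK0-tame} for $A$ to $\norma u \norma$ and using $\|\,\norma u\norma\,\|_s = \|u\|_s$ from \eqref{Soboequals}, I obtain, after multiplying by $\gamma^{|k|}$ and taking the supremum over $|k|\leq k_0$, the second bound ${\mathfrak M}_{\Pi_N^\bot A}^\sharp(s) \leq {\mathfrak M}_A^\sharp(s)$.

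For the first inequality, the key elementary observation is that on the support of $\Pi_N^\bot$ one has $|\ell-\ell'|>N$, hence $1 \leq N^{-\mathtt b}\langle \ell-\ell'\rangle^{\mathtt b}$, while outside that support the entries on the left vanish. This yields the entry-wise majorant bound
$$
|\Pi_N^\bot \pa_\lambda^k A|_j^{j'}(\ell-\ell') \leq N^{-\mathtt b}\langle \ell-\ell'\rangle^{\mathtt b}\,|\pa_\lambda^k A|_j^{j'}(\ell-\ell') = N^{-\mathtt b}\,|\langle \pa_\vphi \rangle^{\mathtt b}\pa_\lambda^k A|_j^{j'}(\ell-\ell')\,.
$$
Running the same non-negative-majorant argument as above gives $\||\Pi_N^\bot \pa_\lambda^k A|\,u\|_s \leq N^{-\mathtt b}\||\langle \pa_\vphi \rangle^{\mathtt b}\pa_\lambda^k A|\,\norma u\norma\|_s$, and invoking the modulo-tame property of $\langle \pa_\vphi \rangle^{\mathtt b} A$ closes the estimate. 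No substantial obstacle is expected: the proof is purely bookkeeping on matrix entries combined with the elementary smoothing inequality $|\ell-\ell'|>N \Rightarrow N^{-\mathtt b}\langle \ell-\ell'\rangle^{\mathtt b}\geq 1$ on the support of $\Pi_N^\bot$.
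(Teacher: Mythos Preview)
Your proposal is correct and follows essentially the same approach as the paper: both arguments proceed by entry-wise majorant comparison, exploiting that on the support of $\Pi_N^\bot$ one has $|\ell-\ell'|>N$ so that $1 \leq N^{-\mathtt b}\langle \ell-\ell'\rangle^{\mathtt b}$, and then invoking the modulo-tame hypotheses on $A$ and $\langle\pa_\vphi\rangle^{\mathtt b}A$ together with \eqref{Soboequals}. The only cosmetic difference is that you spell out the commutation of $\Pi_N^\bot$ with $\pa_\lambda^k$ and with taking majorants, and you prove the second inequality first, while the paper writes out the Fourier sums explicitly and proves the first inequality first.
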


\begin{proof}
For all $ |k| \leq k_0$ one has, recalling \eqref{proiettore-oper},  
\begin{align}
 \| |\Pi_{N}^\bot \partial_\lambda^k A| u \|_s^2 & 
= \sum_{\ell, j} \langle \ell, j \rangle^{2 s} \Big( \sum_{j' ,|\ell - \ell'| > N} |\partial_\lambda^k A_{j}^{j'}(\ell - \ell')| |u_{\ell' j'}| \Big)^2 \nonumber\\
& \leq N^{- 2 { {\mathtt b} }}  {\mathop \sum}_{\ell, j} \langle \ell, j \rangle^{2 s}
\big( {\mathop \sum}_{j', \ell'}  
| \langle \ell - \ell' \rangle^{ {\mathtt b}} \partial_\lambda^k A_{j}^{j'}(\ell - \ell')| |u_{\ell' j'}| \big)^2 \nonumber\\
& =  N^{- 2 {\mathtt b}} \| |  \langle \partial_\vphi \rangle^{ {\mathtt b}}(\partial_\lambda^k A) | \, [\norma u \norma] \|_s^2 \nonumber
\end{align}
and, using \eqref{CK0-tame}, \eqref{Soboequals}, we deduce the first inequality in \eqref{proprieta tame proiettori moduli}. 
Similarly  we get
$ \| |\Pi_{N}^\bot \partial_\lambda^k A| u \|_s^2 
\leq \| | \partial_\lambda^k A | \, \norma u \norma \, \|_s^2 $ which implies the second inequality in \eqref{proprieta tame proiettori moduli}.  
\end{proof}

The next two lemmata 
will be used in the proof of Theorem \ref{ITERAZIONERIDUCIBILITA}-$({\bf S3})_\nu$. 

\begin{lemma}\label{proprieta norme operatoriali}
Let $A$ and $B$ be linear operators such that 
$ |A|, |\langle \partial_\vphi\rangle^{\mathtt b} A|, $ $|B|, $ $ |\langle \partial_\vphi\rangle^{\mathtt b} B| \in {\mathcal L}(H^{s_0})$. 
Then 
\begin{enumerate}
\item \begin{align}
& \| |A + B| \|_{{\mathcal L}(H^{s_0})} \leq \| |A| \|_{{\mathcal L}(H^{s_0})} + \| |B| \|_{{\mathcal L}(H^{s_0})}\,,  \nonumber\\
& \| |A B| \|_{{\mathcal L}(H^{s_0})} \leq \| |A| \|_{{\mathcal L}(H^{s_0})} \| |B| \|_{{\mathcal L}(H^{s_0})}\,, \nonumber
\end{align}
\item 
 \begin{align*}
 \| | \langle \partial_\vphi \rangle^{\mathtt b} (A B) | \|_{{\mathcal L}(H^{s_0})}& \leq_{\mathtt b} \| | \langle \partial_\vphi\rangle^{\mathtt b} A | \|_{{\mathcal L}(H^{s_0})} \| |B| \|_{{\mathcal L}(H^{s_0})} \\
 & \quad \quad + \| |A| \|_{{\mathcal L}(H^{s_0})} \| |\langle \partial_\vphi\rangle^{\mathtt b} B| \|_{{\mathcal L}(H^{s_0})}, 
 \end{align*}
\item
\begin{align*}
 \| |\Pi_N^\bot A| \|_{{\mathcal L}(H^{s_0})} & \leq N^{- \mathtt b} \| |\langle \partial_\vphi \rangle^{\mathtt b} A|\|_{{\mathcal L}(H^{s_0})}\,,  \\
   \quad \| |\Pi_N^\bot A|\|_{{\mathcal L}(H^{s_0})} & \leq \||A| \|_{{\mathcal L}(H^{s_0})}. 
\end{align*}
\end{enumerate}
\end{lemma}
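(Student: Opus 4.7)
The three statements are direct consequences of the majorant manipulations already developed in Lemma \ref{compositionAB}, Lemma \ref{interpolazione moduli parametri} and Lemma \ref{lemma:smoothing-tame}, specialized to $s=s_0$ and without parameter derivatives. The plan is therefore to reuse those arguments in a stripped-down form.

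For item (1), I would start from the two majorant inequalities \eqref{disuguaglianza importante moduli}, namely $\||A+B|u\|_{s_0}\leq\||A|\,\norma u\norma\|_{s_0}+\||B|\,\norma u\norma\|_{s_0}$ and $\||AB|u\|_{s_0}\leq\||A|\,|B|\,\norma u\norma\|_{s_0}$. Since $\|\norma u\norma\|_{s_0}=\|u\|_{s_0}$ by \eqref{Soboequals}, and $\norma u\norma\in H^{s_0}$ has non-negative Fourier coefficients so that $|B|\norma u\norma$ is itself a non-negative-coefficient function of norm $\leq\||B|\|_{\mathcal{L}(H^{s_0})}\|u\|_{s_0}$, applying $|A|$ (regarded as an operator on $H^{s_0}$) to it yields the composition bound. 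The sum bound is immediate from the triangle inequality.

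For item (2), I would replicate the splitting used in the proof of \eqref{K cal A cal B}: the matrix entries of $\langle\partial_\varphi\rangle^{\mathtt b}(AB)$ involve a factor $\langle\ell-\ell'\rangle^{\mathtt b}$, which by the elementary inequality $\langle\ell-\ell'\rangle^{\mathtt b}\leq C(\mathtt b)\bigl(\langle\ell-\ell_1\rangle^{\mathtt b}+\langle\ell_1-\ell'\rangle^{\mathtt b}\bigr)$ can be redistributed between the two factors. This gives, entrywise,
\[
\bigl|\langle\partial_\varphi\rangle^{\mathtt b}(AB)\bigr|\leq C(\mathtt b)\Bigl(\bigl|\langle\partial_\varphi\rangle^{\mathtt b}A\bigr|\,|B|+|A|\,\bigl|\langle\partial_\varphi\rangle^{\mathtt b}B\bigr|\Bigr),
\]
and the desired operator-norm estimate on $H^{s_0}$ then follows by applying the composition part of item (1) to each of the two summands.

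For item (3), I would argue as in Lemma \ref{lemma:smoothing-tame}: on the Fourier indices $(\ell,\ell')$ with $|\ell-\ell'|>N$ one has $1\leq N^{-\mathtt b}\langle\ell-\ell'\rangle^{\mathtt b}$, so entrywise $|(\Pi_N^\bot A)_j^{j'}(\ell-\ell')|\leq N^{-\mathtt b}\langle\ell-\ell'\rangle^{\mathtt b}|A_j^{j'}(\ell-\ell')|$, which proves $\||\Pi_N^\bot A|u\|_{s_0}\leq N^{-\mathtt b}\||\langle\partial_\varphi\rangle^{\mathtt b}A|\,\norma u\norma\|_{s_0}$ and hence the first bound after taking the sup over $\|u\|_{s_0}\leq 1$. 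The second bound is even simpler, since clearly $|(\Pi_N^\bot A)_j^{j'}(\ell-\ell')|\leq|A_j^{j'}(\ell-\ell')|$ entrywise. No real obstacle is expected: all three items are bookkeeping built on top of the majorant inequalities of Lemma \ref{compositionAB}, the identity \eqref{Soboequals}, and the $\langle\ell-\ell'\rangle$ splitting already used in Lemmas \ref{interpolazione moduli parametri} and \ref{lemma:smoothing-tame}.
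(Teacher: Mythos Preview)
Your proposal is correct and follows exactly the approach the paper takes: the paper's proof simply says that item 1 follows from \eqref{disuguaglianza importante moduli} and \eqref{Soboequals}, while items 2--3 are proved arguing as in Lemmata \ref{interpolazione moduli parametri} and \ref{lemma:smoothing-tame}. You have essentially written out those references in detail, and the details are right.
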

\begin{proof}
Item 1  is a direct consequence of \eqref{disuguaglianza importante moduli} and \eqref{Soboequals}. 
Items 2-3 are proved  arguing as in Lemmata \ref{interpolazione moduli parametri} and \ref{lemma:smoothing-tame}.
\end{proof}

\begin{lemma}\label{Delta 12 Phi inverso}
Let $ \Phi_i := {\rm Id} + \Psi_i$, $i = 1,2$, satisfy,   
\begin{equation}\label{piccolezza Psi i1 i2}
\| | \Psi_i |  \|_{{\mathcal L}(H^{s_0})} \leq 1/ 2 \,, \quad i = 1,2 \,.
\end{equation}
Then $\Phi_i^{- 1} = {\rm Id} + \check \Psi_i$, $i = 1, 2$, satisfy
$ \| |\check \Psi_1 - \check \Psi_2| \|_{{\mathcal L}(H^{s_0})} \leq 4 \| |\Psi_1 - \Psi_2| \|_{{\mathcal L}(H^{s_0})} $ and 
\begin{align*}
& \|  \langle \partial_\vphi \rangle^{\mathtt b} |\check \Psi_1 - \check \Psi_2 | 
\|_{{\mathcal L}(H^{s_0})}  
\leq_{\mathtt b} \| \langle \partial_\vphi \rangle^{\mathtt b} | \Psi_1 - \Psi_2 | \|_{{\mathcal L}(H^{s_0})} \\
& \quad + 
\big( 1 + \| |\langle \partial_\vphi \rangle^{\mathtt b} \check \Psi_1|\|_{{\mathcal L}(H^{s_0})}  +
\| |\langle \partial_\vphi \rangle^{\mathtt b} \check \Psi_2| \|_{{\mathcal L}(H^{s_0})}\big) \| |\Psi_1 - \Psi_2| \|_{{\mathcal L}(H^{s_0})} \, . 
\end{align*}
\end{lemma}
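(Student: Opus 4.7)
The plan is to reduce both estimates to the standard resolvent identity
\[
\check\Psi_1 - \check\Psi_2 = \Phi_1^{-1}-\Phi_2^{-1}= -\Phi_1^{-1}(\Psi_1-\Psi_2)\Phi_2^{-1} = -(\mathrm{Id}+\check\Psi_1)(\Psi_1-\Psi_2)(\mathrm{Id}+\check\Psi_2),
\]
which by expansion splits $\check\Psi_1-\check\Psi_2$ into four pieces: $-D$, $-\check\Psi_1 D$, $-D\check\Psi_2$, $-\check\Psi_1 D \check\Psi_2$, where $D:=\Psi_1-\Psi_2$. Everything then follows from applying the majorant inequalities of Lemma \ref{proprieta norme operatoriali} to this decomposition, so the only preparatory step is to control $\||\check\Psi_i|\|_{\mathcal L(H^{s_0})}$.

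First I would observe that the Neumann series $\check\Psi_i=\sum_{n\geq 1}(-1)^n\Psi_i^n$ converges in majorant operator norm: by item~1 of Lemma \ref{proprieta norme operatoriali} and the smallness assumption \eqref{piccolezza Psi i1 i2},
\[
\||\check\Psi_i|\|_{\mathcal L(H^{s_0})} \leq \sum_{n\geq 1}\||\Psi_i|\|^n_{\mathcal L(H^{s_0})}\leq 2\||\Psi_i|\|_{\mathcal L(H^{s_0})}\leq 1,
\]
hence $\||\mathrm{Id}+\check\Psi_i|\|_{\mathcal L(H^{s_0})}\leq 2$ for $i=1,2$. Combining this with the sub-multiplicativity $\||AB|\|\leq\||A|\|\,\||B|\|$ of item~1 applied to the identity displayed above immediately gives the first claim
\[
\||\check\Psi_1-\check\Psi_2|\|_{\mathcal L(H^{s_0})}\leq 4\||D|\|_{\mathcal L(H^{s_0})}.
\]

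For the second claim I would apply $\langle\partial_\varphi\rangle^{\mathtt b}$ to each of the four pieces and use the Leibniz-type estimate in item~2 of Lemma \ref{proprieta norme operatoriali}. For the pure $D$-term one gets $\||\langle\partial_\varphi\rangle^{\mathtt b}D|\|_{\mathcal L(H^{s_0})}$ directly. For the single products,
\[
\||\langle\partial_\varphi\rangle^{\mathtt b}(\check\Psi_1 D)|\|_{\mathcal L(H^{s_0})}\leq_{\mathtt b}\||\langle\partial_\varphi\rangle^{\mathtt b}\check\Psi_1|\|\,\||D|\|+\||\check\Psi_1|\|\,\||\langle\partial_\varphi\rangle^{\mathtt b}D|\|,
\]
and symmetrically for $D\check\Psi_2$. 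The triple product $\check\Psi_1 D \check\Psi_2$ is handled by using item~2 twice (first splitting $\check\Psi_1\cdot(D\check\Psi_2)$, then splitting $D\check\Psi_2$). Using the bound $\||\check\Psi_i|\|\leq 1$ from the first paragraph, every term that does not carry $\langle\partial_\varphi\rangle^{\mathtt b}$ on $D$ gets absorbed into the factor $(1+\||\langle\partial_\varphi\rangle^{\mathtt b}\check\Psi_1|\|+\||\langle\partial_\varphi\rangle^{\mathtt b}\check\Psi_2|\|)\,\||D|\|$, while the terms carrying $\langle\partial_\varphi\rangle^{\mathtt b}D$ yield the first summand on the right-hand side.

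There is no genuine obstacle: the only care needed is bookkeeping to ensure that the triple-product estimate does not spawn a factor $\||\langle\partial_\varphi\rangle^{\mathtt b}\check\Psi_1|\|\,\||\langle\partial_\varphi\rangle^{\mathtt b}\check\Psi_2|\|$, which is avoided precisely because $\langle\partial_\varphi\rangle^{\mathtt b}$ lands on at most one factor at a time in the Leibniz estimate, and the leftover $\check\Psi_i$-factor contributes only $\||\check\Psi_i|\|\leq 1$.
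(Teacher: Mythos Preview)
Your proposal is correct and follows essentially the same approach as the paper: the paper's proof is the one-line hint ``use $\check\Psi_1-\check\Psi_2=\Phi_1^{-1}(\Psi_2-\Psi_1)\Phi_2^{-1}$ and apply Lemma~\ref{proprieta norme operatoriali}-1-2, using \eqref{piccolezza Psi i1 i2}'', and you have simply unpacked this by expanding $\Phi_i^{-1}={\rm Id}+\check\Psi_i$ into the four explicit pieces and tracking the Leibniz terms. The preparatory Neumann-series bound $\||\check\Psi_i|\|_{\mathcal L(H^{s_0})}\le 1$ is exactly what the paper means by ``using \eqref{piccolezza Psi i1 i2}''.
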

\begin{proof}
Use  $ \check \Psi_1 - \check \Psi_2 = \Phi_1^{- 1} - \Phi_2^{- 2} = \Phi_1^{- 1} (\Psi_2 - \Psi_1) \Phi_2^{- 1} $
and apply Lemma \ref{proprieta norme operatoriali}-1-2,  using \eqref{piccolezza Psi i1 i2}. 
\end{proof}

The composition operator $ u(y) \mapsto  u(y+p(y)) $
induced by a diffeomorphism of the torus $ \T^{d} $ is tame.

\begin{lemma} {\bf (Change of variable)}  \label{lemma:utile} 
Let $p:= p( \lambda, \cdot ):\R^d \to \R^d$ be a family of $2\p$-periodic functions   
which is ${k_0}$-times differentiable with respect to $\lambda \in \mathtt \Lambda_0 \subset \R^{\nu + 1} $,  satisfying   
\begin{equation}\label{mille condizioni p}
  \| p \|_{{\mathcal C}^{s_0 + 1}} \leq 1/2\,,\quad  \| p \|_{s_0}^{k_0, \gamma} \leq 1\,. 
\end{equation}
Let $ g(y) := y + p(y) $,
$ y \in \T^d $. 
Then the composition operator 
$$
A : u(y) \mapsto (u\circ g)(y) = u(y+p(y))
$$ 
satisfies the tame estimates
\begin{equation}\label{stima cambio di variabile dentro la dim}
\| A u\|_{s_0} \leq_{s_0} \| u \|_{s_0}\,, \quad \| A u\|_s \leq C(s)  \| u \|_s +  C(s_0)\| p \|_s \| u \|_{s_0 + 1}\,, \  \forall  s \geq s_0 + 1\, , 
\end{equation}
and for any $ |k| \leq k_0 $, 
\begin{align}
& \| (\partial_\lambda^k A) u \|_{s_0} \leq_{s_0, k} \gamma^{- |k|} \| u \|_{s_0 + |k|}\,, \label{stima tame cambio di variabile pietro s0}\\
\label{stima tame cambio di variabile pietro}
    & \| (\partial_\lambda^k A)u \|_s \leq_{s, k} \gamma^{- |k|} \big( \| u \|_{s + |k|}  + 
\| p \|_{s}^{|k|, \gamma} \| u \|_{{s_0 + |k| + 1}} \big)\,, \quad \forall  s \geq s_0 + 1 \,.
\end{align}
The map $ g $ is invertible with inverse  $ g^{- 1}(z) = z + q(z) $. Suppose $\partial_\lambda^k p(\lambda, \cdot) \in {\mathcal C}^\infty(\T^{d})$ for all $|k| \leq k_0$. There exists a constant $\delta := \delta(s_0, k_0) \in (0,1) $ such that, if  $ \| p \|_{2 s_0 + k_0 + 1}^{k_0, \gamma} \leq \d$, then  
\begin{equation}\label{stime-lipschitz-q}
\| q \|_{s}^{k_0, \gamma}  \leq_{s,k_0}  \| p \|_{s + k_0}^{k_0, \gamma} \,, \quad \forall s \geq s_0\,. 
\end{equation}
The composition operators $A$ and $A^{- 1}$ are ${\mathcal D}^{k_0}$-$(k_0 + 1)$-tame with tame constants satisfying for any $S > s_0$,  
\begin{equation}\label{tame-lipschitz-cambio-di-variabile}
{\mathfrak M}_A(s) \leq_{S, k_0} 1 + \|p \|_{s }^{k_0, \gamma}\,,\quad {\mathfrak M}_{A^{- 1}}(s) \leq_{S,  k_0} 1 + \| p \|_{{s + k_0 }}^{k_0, \gamma} \, , \quad \forall   s_0 \leq s \leq S\, . 
\end{equation}
\end{lemma}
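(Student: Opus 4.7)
The plan is to prove the four groups of estimates in the order they appear, since each relies on the previous ones. Throughout, I will use the Faà di Bruno formula to expand derivatives of $u\circ g$ and keep control by splitting every product via the tame estimate \eqref{interpolazione senza C k0}; the condition $\|p\|_{\mathcal{C}^{s_0+1}}\leq 1/2$ guarantees that $g=\mathrm{Id}+p$ is a $\mathcal{C}^{1}$-diffeomorphism with Jacobian bounded below by $1/2$ and above by $3/2$.

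\textbf{Step 1: the estimates \eqref{stima cambio di variabile dentro la dim}.} For the low-norm bound, change variables $z=g(y)$ and use that the Jacobian $|\det(I+dp(y))|$ is bounded between $1/2$ and $3/2$ to get $\|Au\|_{L^2}\lesssim\|u\|_{L^2}$; for the $H^{s_0}$ bound, differentiate up to $s_0$ times and use the algebra property together with $\|p\|_{\mathcal{C}^{s_0+1}}\leq 1/2$. For the high-norm estimate, Faà di Bruno writes $\partial_y^{\alpha}(u\circ g)$ as a sum over multi-indices of products $((\partial_y^{\beta}u)\circ g)\prod_i(\partial_y^{\gamma_i}p)$ with $|\beta|\leq|\alpha|\leq s$ and $\sum|\gamma_i|\leq s-|\beta|$. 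Taking $L^2$-norms and invoking \eqref{interpolazione senza C k0} repeatedly, one obtains $\|Au\|_s\leq C(s)\|u\circ g\|_{H^s}$-type bounds where at each product either all derivatives go on $u$ (giving $C(s)\|u\|_s$, using Step 1 at level $s$ of the chain $z=g(y)$) or all the extra derivatives go on $p$ (giving $C(s_0)\|p\|_s\|u\|_{s_0+1}$ from the $L^\infty$-control of $(\nabla u)\circ g$ via $\|u\|_{s_0+1}$).

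\textbf{Step 2: the $\lambda$-derivative estimates \eqref{stima tame cambio di variabile pietro s0} and \eqref{stima tame cambio di variabile pietro}.} A direct induction on $|k|$ gives
\[
\partial_\lambda^k(u\circ g)=\sum((\partial_y^{\beta}u)\circ g)\cdot \prod_{j}\partial_\lambda^{k_j}\partial_y^{\gamma_j}p,
\]
where $|\beta|\leq|k|$, $\sum|\gamma_j|\leq|\beta|$ and $\sum|k_j|=|k|$. Each factor $\partial_\lambda^{k_j}\partial_y^{\gamma_j}p$ contributes a factor $\gamma^{-|k_j|}$ when weighted by \eqref{norma pesata derivate funzioni}, which combines to the overall $\gamma^{-|k|}$ in \eqref{stima tame cambio di variabile pietro s0}--\eqref{stima tame cambio di variabile pietro}. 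One then applies Step 1 to control $(\partial_y^{\beta}u)\circ g$ and the tame product estimate \eqref{interpolazione senza C k0} to contract the remaining factors, distributing high Sobolev regularity either on $u$ (which costs $|k|$ extra derivatives) or on $p$ (which feeds into $\|p\|_s^{|k|,\gamma}$).

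\textbf{Step 3: the inverse estimate \eqref{stime-lipschitz-q}.} The defining identity $g^{-1}\circ g=\mathrm{Id}$ rewrites as the fixed-point equation $q(z)=-p(z+q(z))$. Low regularity of $q$ follows from a Banach contraction in $\mathcal{C}^{s_0+1}$ under $\|p\|_{\mathcal{C}^{s_0+1}}\leq 1/2$. For higher Sobolev norms, apply the composition estimate \eqref{stima cambio di variabile dentro la dim} of Step 1 to the right-hand side with the diffeomorphism $\mathrm{Id}+q$ (whose perturbation is $q$ itself), giving
\[
\|q\|_s\leq C(s)\|p\|_s+C(s_0)\|q\|_s\|p\|_{s_0+1};
\]
the smallness hypothesis on $\|p\|_{2s_0+k_0+1}^{k_0,\gamma}$ absorbs the second term. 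To handle $\partial_\lambda^k q$, differentiate the fixed-point equation: each $\partial_\lambda$ on $q(z)=-p(z+q(z))$ produces a term involving $\partial_\lambda p$ and a term with $\partial_\lambda q$ multiplied by $(\nabla p)\circ(\mathrm{Id}+q)$. Iterating $|k|\leq k_0$ times yields an affine relation in $\partial_\lambda^k q$ whose inhomogeneous part is controlled by $\|p\|_{s+k_0}^{k_0,\gamma}$ (the loss of $k_0$ derivatives in space is the price for the $k_0$ compositions of $\nabla$ acting on $p$); inverting the contraction once more and weighting by $\gamma^{|k|}$ as in \eqref{norma pesata derivate funzioni} gives \eqref{stime-lipschitz-q}.

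\textbf{Step 4: the $\mathcal{D}^{k_0}$-$(k_0+1)$-tame constants \eqref{tame-lipschitz-cambio-di-variabile}.} Summing \eqref{stima tame cambio di variabile pietro} over $|k|\leq k_0$ after multiplication by $\gamma^{|k|}$ gives the bound for $\mathfrak{M}_A(s)$ with loss of $k_0+1$ derivatives (the $+1$ coming from the $s_0+|k|+1$ on the right in \eqref{stima tame cambio di variabile pietro}). For $A^{-1}$, apply the identical argument to the diffeomorphism $\mathrm{Id}+q$, replacing $\|p\|$ by $\|q\|$ and invoking \eqref{stime-lipschitz-q} to trade $\|q\|_{s+k_0}^{k_0,\gamma}$ for $\|p\|_{s+k_0}^{k_0,\gamma}$.

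The main obstacle is the bookkeeping in Step 3: one must show that the implicit inversion does not destroy the $\gamma$-weighted regularity, and that the apparently doubly-recursive structure of Faà di Bruno applied to $q=-p\circ(\mathrm{Id}+q)$ collapses to a clean loss of exactly $k_0$ derivatives in the space variable. Everything else is a careful, but essentially standard, application of the tame product estimate \eqref{interpolazione senza C k0} combined with the smallness assumption on $\|p\|_{\mathcal{C}^{s_0+1}}$.
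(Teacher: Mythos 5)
Your proposal is correct, and Steps 2--4 follow essentially the same route as the paper: the $\lambda$-derivative recursion $(\partial_\lambda^{k+\alpha}A)u=\sum_{k_1+k_2=k}C(k_1,k_2)(\partial_\lambda^{k_1}A)(\nabla u)\cdot\partial_\lambda^{k_2+\alpha}p$ proved by induction on $|k|$, the fixed-point identity $q(z)=-p(z+q(z))$ differentiated in $\lambda$ to get an affine relation in $\partial_\lambda^k q$ whose leading coefficient is absorbed by smallness of $\|p\|_{s_0+1}$ (giving exactly the loss $\|p\|_{s+|k|}^{|k|,\gamma}$, $|k|\le k_0$), and then summation over $|k|\le k_0$ to produce the $\mathcal{D}^{k_0}$-$(k_0+1)$-tame constants, with \eqref{stime-lipschitz-q} used to convert the bound for $A^{-1}$ from $q$ back to $p$. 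The one genuine methodological divergence is the basic estimate \eqref{stima cambio di variabile dentro la dim}: the paper proves it by induction on the Sobolev index $s$, peeling off a single $x$-derivative via $\|Au\|_{s+1}\le\|Au\|_{L^2}+\max_{|\alpha|=1}\|A(\nabla u)\cdot(1+p_x)\|_s$ and absorbing the cross term $C_1(s)\|p\|_s\|u\|_{s_0+2}$ with the asymmetric interpolation Lemma \ref{interpolazione fine} (choosing $\epsilon=1/C_1(s)$), whereas you propose a direct Fa\`a di Bruno expansion plus repeated use of \eqref{interpolazione senza C k0}. Your route works, but be aware it needs more bookkeeping than your sketch suggests: the expansion produces mixed terms with several factors $\nabla^{\gamma_i}p$ at intermediate Sobolev indices, so the reduction to the two-term form $C(s)\|u\|_s+C(s_0)\|p\|_s\|u\|_{s_0+1}$ is not a dichotomy ``all derivatives on $u$ or all on $p$'' but requires interpolation between the factors (and, strictly, covers integer $s$ only); the paper's one-derivative-at-a-time induction is designed precisely to avoid that combinatorics. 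Also, in your Step 2 formula the factors $\partial_\lambda^{k_j}\partial_y^{\gamma_j}p$ should carry no $y$-derivatives: pure $\lambda$-differentiation of $u\circ g$ only ever produces $(\partial_y^\beta u)\circ g$ times products of $\partial_\lambda^{k_j}p$ -- harmless, but worth fixing.
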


\begin{proof}

\noindent
{\sc Proof of \eqref{stima cambio di variabile dentro la dim}.}
  By Lemma B.4-$(ii)$ in \cite{Baldi-Benj-Ono} and \eqref{mille condizioni p}, we have 
\begin{equation}\label{ppooll}
\| A u\|_{s_0} \leq_{s_0} \| u \|_{s_0} + \| p\|_{{\mathcal C}^{s_0}} \| u\|_1 \leq_{s_0} \| u \|_{s_0}  \quad {\rm and}
\quad 
\| A u\|_{s_0+1} \leq_{s_0} \| u \|_{s_0+1} \, . 
\end{equation}
Thus the the first inequality in \eqref{stima cambio di variabile dentro la dim}, and the second one for 
$ s = s_0 + 1 $, are proved. 
Now we prove the second inequality in \eqref{stima cambio di variabile dentro la dim}, arguing by induction on $ s $.
We assume that 
it holds for $ s \geq s_0 + 1 $ and we prove it for $s + 1$.
As a notation we denote by $ \nabla u := ( u_{x_1}, \ldots, u_{x_d} ) $ the gradient of the function $ u $
and $ A(\nabla u) := ( A u_{x_1}, \ldots, A u_{x_d} ) $. 
By the definition of the $ \| \ \|_{s+1}$  norm and  \eqref{interpolazione senza C k0} we have 
\begin{align}
\| A u\|_{s + 1} & \leq \| A  u\|_{L^2} + \max_{|\alpha| = 1}\| \partial_x^\alpha (A u) \|_s  \nonumber\\
& \leq \| A u\|_{L^2} + 
C(s) \| A (\nabla u)\|_s +  C(s) \| A(\nabla u)\|_s \| p\|_{s_0 + 1}  \nonumber\\
& \quad + C(s_0) \| A(\nabla u)\|_{s_0} \| p\|_{s + 1}\, . \nonumber 
\end{align}
Hence, by the inductive hyphothesis and using \eqref{mille condizioni p}, \eqref{ppooll}, we get
\begin{align}
\| A u\|_{s + 1} & \leq C_1(s) \| u \|_{s + 1} + C_1(s)\| p\|_s \| u\|_{s_0 + 2} +  C_0 (s_0) \| p \|_{s + 1} \| u \|_{s_0 + 1} \label{http}
\end{align}
for some constants $ C_1 (s), C_0 (s_0) > 0 $.  
Applying \eqref{interpolation estremi fine} with $a_0 = b_0 = s_0 + 1$, $q = 1$, $p = s - s_0 - 1 $, 
 $ \epsilon = 1 / C_1(s) $, we estimate
$$
C_1 (s) \| p\|_s \| u \|_{s_0 + 2} \leq  \| p\|_{s + 1} \| u \|_{s_0 + 1} + C_2(s) \| p\|_{s_0 + 1} \| u \|_{s + 1}\,,
$$
and, by \eqref{http}, using again that $\| p\|_{s_0 + 1} \leq 1$, we get 
$$
\| A u\|_{s + 1} \leq C(s + 1)\| u \|_{s + 1} + C(s_0)\| p\|_{s + 1} \| u \|_{s_0 + 1}\,,
$$
with $ C(s+1) = C_1 (s) + C_2 (s) $ and $ C(s_0) = 1 + C_0 (s_0) $.
This is \eqref{stima cambio di variabile dentro la dim} for the Sobolev index $s + 1$. 

  \noindent
 {\sc Proof of \eqref{stima tame cambio di variabile pietro s0}-\eqref{stima tame cambio di variabile pietro}.} We prove the estimate \eqref{stima tame cambio di variabile pietro}. 
 We argue by induction on $|k| \leq k_0$. For 
 $ k = 0$, the estimate \eqref{stima tame cambio di variabile pietro} follows by 
 \eqref{stima cambio di variabile dentro la dim}. 
Now we assume that  \eqref{stima tame cambio di variabile pietro} holds for any $|k| \leq n < k_0$ and 
we prove it for $n + 1$. Let $\alpha  \in \N^{\nu + 1}$ such that $|\alpha| = 1$. One has 
\begin{equation}\label{formula derivata in omega cambio di variabile}
(\partial_\lambda^{k + \alpha}A) u = \partial_\lambda^k ( A ( \nabla u) \cdot \pa_\lambda^\alpha p ) = 
{\mathop \sum}_{k_1 + k_2 = k} C(k_1, k_2) (\partial_\lambda^{k_1} A)(\nabla u) \cdot \partial_\lambda^{k_2 + \alpha} p\,.
\end{equation}
For any $k_1, k_2 \in \N^{\nu + 1}$, with $k_1 + k_2 = k$, we have, using \eqref{interpolazione senza C k0},  
\begin{align}
& \| (\partial_\lambda^{k_1} A)(\nabla u) \cdot \partial_\lambda^{k_2 + \alpha} p \|_s   \nonumber \\ 
&  \leq_s \| (\partial_\lambda^{k_1} A)(\nabla u) \|_s \| \partial_\lambda^{k_2 + \alpha} p \|_{s_0} + \| (\partial_\lambda^{k_1} A)(\nabla u) \|_{s_0} \| \partial_\lambda^{k_2 + \alpha} p \|_{s} \nonumber\\
& \stackrel{\eqref{stima tame cambio di variabile pietro s0}, \eqref{stima tame cambio di variabile pietro}}{\leq_{s, k_1}}  \gamma^{- |k_1|}\big( \| u \|_{s + |k_1| + 1} + \| p \|_s^{|k_1|, \gamma} \| u \|_{s_0 + |k_1| + 2} \big) \gamma^{- (|k_2| + 1)}\| p \|_{s_0}^{|k_2| + 1, \gamma} \nonumber\\
& \quad \quad \quad \quad + \gamma^{- |k_1|}\|u  \|_{s_0 + |k_1| + 2} \gamma^{- (|k_2+ 1)} \|p \|_s^{|k_2|+1, \gamma} \nonumber\\
& \stackrel{\eqref{mille condizioni p}}{\leq_{s, k_1}} \gamma^{- (|k| + 1)} \big( \| u \|_{s + |k| + 1} + \| p \|_s^{|k| + 1, \gamma} \|u \|_{s_0 + |k| + 2} \big) \nonumber
\end{align}
and recalling \eqref{formula derivata in omega cambio di variabile} we get 
the estimate \eqref{stima tame cambio di variabile pietro} for $ |k| + 1 $. 

\noindent
{\sc Proof of \eqref{stime-lipschitz-q}.} Since     
$ y + p(\lambda, y) = z \iff z + q(\lambda, z) = y $ the function $q(\lambda, z)$ satisfies 
\begin{equation}\label{identita p q}
 q(\lambda, z) + p(\lambda, z + q(\lambda, z)) = 0 .
 \end{equation}
If $ p \in {\mathcal C}^1$ with respect to $ (\lambda, y)$, then, by the standard implicit function theorem,  $ q $ is ${\mathcal C}^1$ with respect to $ (\lambda, z)$ and  by differentiating the identity \eqref{identita p q} one gets, denoting by $D_\lambda$, $D_y$, $D_z$ the Fr\'echet derivatives with respect to the variables $\lambda$, $y$, $z$, 
\begin{align}
D_\lambda q (\lambda, z) & = -  \big( {\rm Id} + D_y p(\lambda, z + q (\lambda, z))  \big)^{- 1} D_\lambda p(\lambda, z + q (\lambda, z))\,,  \nonumber\\
D_z q (\lambda, z) & = -  \big( {\rm Id} + D_y p(\lambda, z + q (\lambda, z))  \big)^{- 1} D_x p(\lambda, z + q (\lambda, z))\,. \nonumber
\end{align}
It then follows by usual bootstrap arguments that if $p$ is $k_0$-times differentiable with respect to $\lambda$ and $\partial_\lambda^k p(\lambda, \cdot) \in {\mathcal C}^\infty$ for any $|k| \leq k_0$, then $q$ is $k_0$-times differentiable with respect to $\lambda$ and $\partial_\lambda^k q(\lambda, \cdot) \in {\mathcal C}^\infty$ for any $|k| \leq k_0$.  We now prove
\begin{equation}\label{claim p q}
\| \partial_\lambda^k q\|_s \leq_s \gamma^{- |k|} \| p \|_{s + |k|}^{|k|, \gamma}\,, \qquad \forall k \in \N^{\nu + 1}\,, \   |k| \leq k_0\,,
\end{equation}
which,  recalling \eqref{norma pesata derivate funzioni},  implies \eqref{stime-lipschitz-q}. 
Denote by $ A_q $ the composition operator 
$$
A_q : h (x) \mapsto h(x + q(x))
$$
so that $  q = - A_q [p] $. 
By differentiating the equation $q(\lambda, z) + p(\lambda, z + q(\lambda, z)) = 0 $, $(s_0 + 1)$-times, one gets that $\| q \|_{{\mathcal C}^{s_0 + 1}} \leq C(s_0) \| p \|_{{\mathcal C}^{s_0 + 1}} \leq 1/2 $, provided $\| p \|_{{\mathcal C}^{s_0 + 1}}$ is small enough and $\| q \|_{s_0}^{k_0, \gamma} \leq C(s_0) \| p \|_{s_0 + k_0}^{k_0, \gamma} \leq 1/2$, provided $\| p \|_{s_0 + k_0}^{k_0, \gamma}$ small enough. Therefore, we can apply the estimates \eqref{stima cambio di variabile dentro la dim}-\eqref{stima tame cambio di variabile pietro} to the operator $A_q$.
By \eqref{stima cambio di variabile dentro la dim}, one has  
$$
\| q \|_s = \| A_q ( p ) \|_s \leq C(s) \| p \|_s + C(s_0) \| q \|_s \| p \|_{s_0 + 1}\,,
$$
which, for $C(s_0)\| p \|_{s_0 + 1} \leq 1/2 $,  implies  \eqref{claim p q} for $k = 0$. Now we 
assume that \eqref{claim p q} holds up to $|k| = n$ and 
we prove it for $n + 1$. Let $\alpha \in \N^{\nu + 1}$ such that $|\alpha| = 1$. We have  
\begin{align}
\partial_\lambda^{k + \alpha} q & = - \partial_\lambda^{k + \alpha} (A_q( p )) = 
- \partial_\lambda^k \big(A_q(\nabla p) \cdot \partial_\lambda^\alpha q  + A_q(\partial_\lambda^\alpha p) \big) = - A_q(\nabla p)\cdot \partial_\lambda^{k + \alpha } q\nonumber\\
&   -  \sum_{
k_1 + k_2 = k, |k_2 | < |k|} C_{k_1, k_2} \partial_\lambda^{k_1} \big( A_q(\nabla p)\big) \cdot \partial_\lambda^{k_2 + \alpha} q 
-  \partial_\lambda^k \big(A_q(\partial_\lambda^\alpha p)\big) \,. \nonumber
\end{align}
Using \eqref{interpolazione senza C k0} we get 
\begin{align}
 \| \partial_\lambda^{k + \alpha} q \|_s  
& \leq C(s_0) \| A_q(\nabla p) \|_{s_0} \| \partial_\lambda^{k + \alpha} q\|_s + C(s) \| A_q(\nabla p)\|_s \| \partial_\lambda^{k + \alpha} q\|_{s_0} \nonumber\\
& + \| \partial_\lambda^k (A_q(\partial_\lambda^\alpha p)) \|_s + C(k, s) \!\!\!\!  \sum_{
k_1 + k_2 = k, |k_2| < |k|} \!\! \| \partial_\lambda^{k_1} ( A_q(\nabla p) ) \|_s \| \partial_\lambda^{k_2 + \alpha} q \|_{s_0} \nonumber\\
& \  + C(k, s) \!\!\!\!  \sum_{
k_1 + k_2 = k, |k_2| < |k|}  \| \partial_\lambda^{k_1} ( A_q(\nabla p) ) \|_{s_0} \| \partial_\lambda^{k_2 + \alpha} q \|_{s} \nonumber\\
& \stackrel{\eqref{stima cambio di variabile dentro la dim}, \eqref{claim p q}, \| p\|_{s_0 + 2} \leq 1}{\leq} C_1(s_0) \| p \|_{s_0 + 1} \| \partial_\lambda^{k + \alpha} q\|_s + C_1(s)  \| p \|_{s + 1} \| \partial_\lambda^{k + \alpha} q \|_{s_0}  \nonumber\\
& \qquad \qquad \qquad \qquad + \gamma^{- |k|} \| A_q(\partial_\lambda^\alpha p)\|_s^{|k|, \gamma} \nonumber\\
& \qquad \qquad \qquad \qquad  + \gamma^{- (|k| + 1)} C_1(k, s) \sum_{
\begin{subarray}{c}
k_1 + k_2 = k\\
|k_2| < |k|
\end{subarray}} \| A_q(\nabla p )\|_{s}^{|k_1|, \gamma} \| p \|_{s_0 + |k_2| + 1}^{|k_2| + 1, \gamma}  \nonumber\\
& \qquad \qquad \qquad \qquad + \| A_q(\nabla p )\|_{s_0}^{|k_1|, \gamma} \| p \|_{s + |k_2| + 1}^{|k_2| + 1 , \gamma} \nonumber\\
& \leq
 C_1(s_0) \| p \|_{s_0 + 1} \| \partial_\lambda^{k + \alpha} q\|_s + C_1(s)  \| p \|_{s + 1} \| \partial_\lambda^{k + \alpha} q \|_{s_0}  \nonumber\\
 & \quad + C_2(s, k) \gamma^{- (|k| + 1)}\| p \|_{s + |k| + 1}^{|k| + 1, \gamma}   \label{stima-a-sinistra}
\end{align} 
using \eqref{stima tame cambio di variabile pietro s0}, \eqref{stima tame cambio di variabile pietro}, \eqref{claim p q}, 
Lemma \eqref{lemma operatore e funzioni dipendenti da parametro} and $\| p\|_{s_0 + k_0 + 1}^{k_0, \gamma} \leq 1$. Then, for $s = s_0$, one has 
\begin{equation}\label{partial omega k alpha q s0}
\| \partial_\lambda^{k + \alpha} q \|_{s_0}  \leq 2 C_1(s_0) \| p \|_{s_0 + 1} \| \partial_\lambda^{k + \alpha} q \|_{s_0} + C_2(s_0, k) \gamma^{- (|k| + 1)}\| p \|_{s_0 + |k| + 1}^{|k| + 1, \gamma}\,,
\end{equation}
implying  \eqref{claim p q} for $k + \alpha $ and $s = s_0$, by taking $2 C_1(s_0) \| p \|_{s_0 + 1} \leq 1/ 2$. 
Then the estimate for $s > s_0$, follows by \eqref{stima-a-sinistra}, \eqref{partial omega k alpha q s0}, 
\eqref{mille condizioni p}.
Finally  \eqref{tame-lipschitz-cambio-di-variabile} follows by \eqref{stima cambio di variabile dentro la dim}-\eqref{stima tame cambio di variabile pietro}, \eqref{stime-lipschitz-q}. 
\end{proof}

We finally state the following generalized Moser tame estimates for the 
composition operator
$$
u(\vphi, x) \mapsto {\mathtt f}(u)(\vphi, x) := f(\vphi, x, u(\vphi, x)) 
$$ 
which can be proved arguing as in the previous lemma. 
Since the variables $ (\vphi, x) := y $ have the same role, we present it for a  generic Sobolev space  $ H^s (\T^d ) $.  

\begin{lemma}\label{Moser norme pesate} {\bf (Composition operator)}
Let $ f \in {\mathcal C}^{\infty}(\T^d \times \R, \R )$.  
If   $u(\lambda) \in H^s(\T^d)$  is a family of Sobolev functions
satisfying $\| u \|_{s_0}^{k_0, \gamma} \leq 1$, then, $ \forall  s > s_0 := (d + 1)/2 $,  
$$
  \| {\mathtt f}(u) \|_s \leq C(s,  f ) ( 1 + \| u \|_{s}) \, , \quad 
  \| {\mathtt f}(u) \|_s^{k_0, \gamma} \leq C(s, k_0, f ) ( 1 + \| u \|_{s}^{k_0, \gamma}) \, . 
$$
\end{lemma}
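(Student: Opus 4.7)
\smallskip

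\noindent\textbf{Proof plan.} The idea is to combine the classical tame Moser estimate with a Fa\`a di Bruno expansion for the $\lambda$-derivatives, mimicking the inductive scheme used in Lemma \ref{lemma:utile}.

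First I would establish the unweighted bound $\| {\mathtt f}(u)\|_s \leq C(s,f)(1+\|u\|_s)$ by induction on $s$. The base case $s=s_0$ uses the embedding $H^{s_0}(\T^d)\hookrightarrow L^\infty(\T^d)$, which together with $\|u\|_{s_0}\leq 1$ gives a uniform bound $\|u\|_{L^\infty}\leq C(s_0)$; hence $f(y,u(y))$ stays in a compact subset of $\T^d\times\R$ where $f$ is smooth, and a direct computation (writing $f(y,u)=f(y,0)+u\int_0^1(\partial_u f)(y,tu)\,dt$ and using \eqref{interpolazione senza C k0}) controls $\|{\mathtt f}(u)\|_{s_0}$ by $C(s_0,f)(1+\|u\|_{s_0})$. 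For the inductive step, for $|\alpha|=1$ one has
\[
\pa_y^\alpha[{\mathtt f}(u)](y) \;=\; (\pa_y^\alpha f)(y,u(y)) \;+\; (\pa_u f)(y,u(y))\,\pa_y^\alpha u(y),
\]
so applying the inductive hypothesis to the smooth functions $\pa_y^\alpha f$ and $\pa_u f$, together with the tame product estimate \eqref{interpolazione senza C k0} and the interpolation \eqref{interpolation estremi fine} to keep the high-regularity factor $\|u\|_{s+1}$ linear, yields the estimate at level $s+1$.

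Next I would prove the weighted estimate by a second induction on $|k|\leq k_0$, with the case $k=0$ given by the first step. For the inductive step, by Fa\`a di Bruno
\[
\pa_\lambda^{k+\alpha}\bigl[{\mathtt f}(u(\lambda))\bigr]
\;=\; \sum_{q\geq 1}\sum_{k_1+\cdots+k_q=k+\alpha} c_{k_1,\ldots,k_q}\,
(\pa_u^q f)(y,u)\,\pa_\lambda^{k_1}u\cdots \pa_\lambda^{k_q}u ,
\]
and since each $\pa_u^q f \in {\cal C}^\infty$, the first step gives $\|(\pa_u^q f)(y,u)\|_s \leq C(s,q,f)(1+\|u\|_s)$, while by the inductive hypothesis $\|\pa_\lambda^{k_i} u\|_s\leq \gamma^{-|k_i|}\|u\|_s^{|k_i|,\gamma}$. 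Iterating the tame product estimate \eqref{interpolazione senza C k0} on the factors, and using that $\|u\|_{s_0}^{k_0,\gamma}\leq 1$ to absorb all low-regularity factors in $H^{s_0}$, produces a bound with the high-regularity factor appearing only once, giving the required linearity in $\|u\|_s^{k_0,\gamma}$. Summing over $|k|\leq k_0$ with the appropriate powers of $\gamma$ and taking the supremum over $\lambda$ yields the weighted estimate according to Definition \eqref{norma pesata derivate funzioni}.

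The main obstacle is purely combinatorial-analytic: in each Fa\`a di Bruno term one must ensure that \emph{at most one} of the factors $(\pa_u^q f)(y,u)$ or $\pa_\lambda^{k_i} u$ is measured in the high Sobolev norm $\|\cdot\|_s$, while all others are absorbed in the $H^{s_0}$ norm using $\|u\|_{s_0}^{k_0,\gamma}\leq 1$. This is done by applying \eqref{interpolation estremi fine} to redistribute regularity, exactly as in the passage from \eqref{http} to the final bound in the proof of Lemma \ref{lemma:utile}; the smoothness of $f$ guarantees that the constants $C(s,k_0,f)$ generated by each $\pa_u^q f$ are finite and depend only on $s$, $k_0$ and $f$, not on $u$. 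No additional small-divisor or pseudo-differential input is required.
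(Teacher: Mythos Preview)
Your proposal is correct and follows essentially the same approach the paper indicates: the paper does not give a detailed proof but simply states that Lemma \ref{Moser norme pesate} ``can be proved arguing as in the previous lemma'' (i.e.\ Lemma \ref{lemma:utile}), which is precisely the inductive scheme on $s$ combined with a Fa\`a di Bruno / chain-rule expansion in the $\lambda$-derivatives that you outline. Your use of the tame product estimate \eqref{interpolazione senza C k0} and the interpolation \eqref{interpolation estremi fine} to keep a single high-norm factor is exactly the mechanism of Lemma \ref{lemma:utile}.
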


\section{Integral operators and Hilbert transform}\label{sub:integral-op}

We now consider integral operators with 
a $ {\mathcal C}^\infty $ Kernel. 

\begin{lemma}  {\bf (Integral operators)} \label{lem:Int}
Let $K := K( \lambda, \cdot ) \in {\mathcal C}^\infty(\T^\nu \times \T \times \T)$. Then the integral operator 
\be\label{integral operator}
({\mathcal R} u ) ( \vphi, x) := \int_\T K(\lambda, \vphi, x, y) u(\vphi, y)\,d y 
\ee
is in $ OPS^{- \infty}$ and, for all $ m, s,  \a \in \N $,  
\begin{equation}\label{stima pseudo diff in forma di nucleo derivate xi}
\norma {\mathcal R}  \norma_{-m, s, \a}^{k_0, \gamma} \leq C(m, s, \alpha, k_0)  \| K \|_{{\mathcal C}^{s + m + \alpha}}^{k_0, \gamma}\,.
\end{equation}
\end{lemma}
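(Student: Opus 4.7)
The plan is to compute the discrete symbol of $ {\cal R} $ directly from \eqref{definizione simbolo} and then invoke the extension \eqref{simbolo esteso discreto continuo} to produce a continuous symbol in $ S^{-m} $ for every $ m \in \R $. Setting $ u(y) = e^{\ii j y} $ in \eqref{integral operator} yields
\begin{equation*}
a(\lambda, \vphi, x, j) = e^{-\ii j x} ({\cal R}[e^{\ii j \cdot}])(\vphi, x) = \int_\T K(\lambda, \vphi, x, y)\, e^{\ii j(y - x)}\, dy = \int_\T \widetilde K(\lambda, \vphi, x, z)\, e^{\ii j z}\, dz, \quad j \in \Z,
\end{equation*}
after the change of variable $ z = y - x $, where $ \widetilde K(\lambda, \vphi, x, z) := K(\lambda, \vphi, x, x + z) $. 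Since $ K $ is $ 2\pi $-periodic in $ y $, the function $ \widetilde K $ is well defined and smooth on $ \T^\nu \times \T \times \T $, and the chain rule gives $ \| \widetilde K(\lambda, \cdot)\|_{{\cal C}^r} \leq C(r)\, \| K(\lambda, \cdot)\|_{{\cal C}^r} $ for every $ r \in \N $. The gain of this substitution is that the oscillatory factor becomes $ e^{\ii j z} $, independent of $ (\vphi, x) $, so that $ a(\lambda, \vphi, x, j) = 2\pi\, \widetilde K_{-j}(\lambda, \vphi, x) $ is (up to $ 2\pi $) the $ (-j) $-th Fourier coefficient of $ \widetilde K $ with respect to $ z $ only.

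By Plancherel applied to the $ z $-variable, this yields the clean rapid-decay estimate
\begin{equation*}
\| a(\lambda, \cdot, \cdot, j)\|_s \leq C_N\, \langle j \rangle^{-N}\, \| \widetilde K(\lambda, \cdot)\|_{H^{s + N}_{\vphi, x, z}} \leq C'_N\, \langle j \rangle^{-N}\, \| K(\lambda, \cdot)\|_{{\cal C}^{s + N}}, \qquad \forall N \in \N,
\end{equation*}
with no spurious $ \langle j \rangle^s $ loss (which would instead arise if one kept the factor $ e^{-\ii j x} $ explicit and differentiated it in $ x $). Defining the continuous extension $ \widetilde a(\lambda, \vphi, x, \xi) := \sum_{j \in \Z} a(\lambda, \vphi, x, j)\, \zeta(\xi - j) $ with $ \zeta \in {\cal S}(\R) $ as in \eqref{simbolo esteso discreto continuo}, one has $ \widetilde a(\lambda, \vphi, x, j) = a(\lambda, \vphi, x, j) $ for $ j \in \Z $, so $ {\cal R} = {\rm Op}(\widetilde a) $; moreover, for every $ \beta \in \N $ the Schwartz decay $ |\zeta^{(\beta)}(\xi - j)| \leq C_{P, \beta}\, \langle \xi - j \rangle^{-P} $ combined with the above bound gives, by splitting the sum over $ |j| \leq |\xi|/2 $ and $ |j| > |\xi|/2 $,
\begin{equation*}
\| \partial_\xi^\beta \widetilde a(\lambda, \cdot, \cdot, \xi)\|_s \leq C(s, \beta, L)\, \langle \xi \rangle^{-L}\, \| K(\lambda, \cdot)\|_{{\cal C}^{s + L}}, \qquad \forall L \in \N.
\end{equation*}
Choosing $ L = m + \alpha $ produces both $ \widetilde a \in S^{-m} $ for every $ m \in \R $ (hence $ {\cal R} \in OPS^{-\infty} $) and the unweighted bound $ \norma {\cal R} \norma_{-m, s, \alpha} \leq C(m, s, \alpha)\, \| K\|_{{\cal C}^{s + m + \alpha}} $, since $ \langle \xi \rangle^{\beta - \alpha} \leq 1 $ for $ \beta \leq \alpha $.

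The dependence on the parameter $ \lambda $ is handled immediately by noting that $ \partial_\lambda^k {\cal R} $ is the integral operator with kernel $ \partial_\lambda^k K $; repeating the above argument with $ \partial_\lambda^k K $ in place of $ K $, then multiplying by $ \gamma^{|k|} $, summing over $ |k| \leq k_0 $ and taking the supremum over $ \lambda \in \Lambda_0 $, produces the weighted estimate \eqref{stima pseudo diff in forma di nucleo derivate xi}. The only delicate point is the tight counting of kernel derivatives, i.e.\ obtaining the exponent $ s + m + \alpha $ and not a larger one: the substitution $ z = y - x $ is the simple but essential step that makes this count sharp, because it removes the oscillatory factor $ e^{-\ii j x} $ in the $ (\vphi, x) $-variables before estimating the Sobolev norm in $ (\vphi, x) $.
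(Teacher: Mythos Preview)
Your proof is correct and follows essentially the same route as the paper's: both compute the discrete symbol $a(\lambda,\vphi,x,j)=\int_\T K(\lambda,\vphi,x,y)e^{\ii j(y-x)}\,dy$, gain arbitrary decay in $j$ by passing $y$-derivatives onto $K$, and then use the Schwartz extension \eqref{simbolo esteso discreto continuo}. The only organizational difference is that you make the substitution $z=y-x$ explicit (so that the oscillatory factor $e^{\ii jz}$ no longer depends on $x$ and the symbol becomes the Fourier coefficient in $z$ of $\widetilde K$), whereas the paper leaves the integral as is and handles the $x$-dependence of $e^{\ii j(y-x)}$ by the identity $\partial_x e^{\ii j(y-x)}=-\partial_y e^{\ii j(y-x)}$, which after integration by parts lands the extra derivative on $K$; both tricks yield the same derivative count $s+m+\alpha$ on $K$.
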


\begin{proof}
By \eqref{definizione simbolo} the symbol associated 
to the integral operator $ {\mathcal R} $ is 
\be\label{simbolo-discreto}
a(\lambda, \vphi, x, j) = \int_\T K(\lambda, \vphi, x, y) e^{\ii (y - x) j}\,dy \, , \quad \forall j \in \Z \, . 
\ee
The function $ a $ is $ {\mathcal C}^{\infty} $ in $(\vphi, x )$  and $ k_0 $-times differentiable with respect to $ \lambda $. 
For all $ m, \beta, p \in \N$, $ n \in \N^\nu\,,\, k \in \N^{\nu + 1} $,  one has 
\begin{align}
& \partial_\lambda^k \partial_\vphi^n\partial_x^p \Delta_j^\b 
a(\lambda, \vphi, x, \xi) (\ii j)^{m + \beta} \nonumber\\
&  =  {\mathop \sum}_{p_1 + p_2 = p} C_{p_1,p_2} 
\Delta_j^\beta \int_\T  (\partial_\lambda^k \partial_\vphi^n \partial_x^{s_1}  K)(\lambda, \vphi, x, y) \partial_y^{p_2 + m + \beta}(e^{\ii (y - x)j})\,dy \nonumber\\
& = {\mathop \sum}_{p_1 + p_2 = p} C_{p_1,p_2, m, \beta} 
 \int_\T  (\partial_\lambda^k \partial_\vphi^n \partial_x^{p_1} \partial_y^{p_2 + m + \beta} K)(\lambda, \vphi, x, y) 
 \Delta_j^\beta(e^{\ii (y - x)j})\,dy
     \nonumber
\end{align}
integrating by parts. Using that  $|\Delta_j^\beta (e^{\ii x j})| = | e^{\ii x \b} (e^{\ii x} -1)^\b) | \leq 2^\beta $, $\forall \beta \in \N$, 
$ x \in \R $,  
and recalling \eqref{norma pesata C^s},  we deduce that, for all $|k| \leq k_0 $,  
\begin{equation}\label{disuguaglianza simbolo discreto nucleo integrale}
|\partial_\lambda^k \partial_\vphi^n\partial_x^p \Delta_j^\b 
a(\lambda, \vphi, x, j) |  \leq C(p, m, \beta)  \gamma^{- |k|}\|  K\|_{{\mathcal C}^{p + m + \beta + |n|}}^{k_0, \gamma} 
\langle j \rangle^{-m - \beta} \,.
\end{equation} 
Now we construct an extension $\widetilde a(\lambda, \vphi, x, \xi )$ of the symbol $a(\lambda, \vphi,x, j)$ as in \eqref{simbolo esteso discreto continuo}, namely we define 
\be\label{extension-sy}
\widetilde a(\lambda, \vphi, x, \xi) :={\mathop \sum}_{j \in \Z} a(\lambda, \vphi,x, j) \zeta (\xi - j) \, ,  \quad \forall \xi \in \R \, . 
\ee
Since $\widetilde a(\cdot, j) = a(\cdot, j)$ for all $j \in \Z$ one has that $ {\rm Op}(\widetilde a) = {\rm Op}(a) = {\mathcal R} $. 
By \eqref{simbolo esteso discreto continuo stima} and \eqref{disuguaglianza simbolo discreto nucleo integrale} 
 it results that for all $ m, \b, p \in \N $, $ n \in \N^\nu$, $k \in \N^{\nu + 1}$ with $|k| \leq k_0 $, there exist constants $ C'(p, m, \beta) > 0 $ such that 
\begin{equation}\label{simbolo esteso nucleo integrale stima}
|\partial_\lambda^k \partial_\vphi^n\partial_x^p \partial_\xi^\b 
\widetilde a(\lambda, \vphi, x, \xi) |  \leq C'(p, m, \beta)  \gamma^{- |k|}\|  K\|_{{\mathcal C}^{p + m + \beta + |n|}}^{k_0, \gamma}
 \langle \xi \rangle^{-m - \beta} \,.
\end{equation}
By \eqref{Sobolev norm} and \eqref{simbolo esteso nucleo integrale stima} we get: for all $ m, s,  \b \in \N $,  $|k| \leq k_0 $,  
\begin{align}
\|   \partial_\xi^\beta  \partial_\lambda^k \widetilde a (\lambda, \cdot, \xi)\|_s \langle \xi \rangle^{m + \beta} & 
\simeq \Big(\|  \partial_\xi^\beta  \partial_\lambda^k \widetilde a (\lambda, \cdot, \xi)\|_{L^2_\vphi L^2_x} + 
\| \partial_x^s  \partial_\xi^\beta  \partial_\lambda^k \widetilde a (\lambda, \cdot, \xi)\|_{L^2_\vphi L^2_x}  
\nonumber\\
& \quad + \sup_{n \in \Z^\nu, |n| = s}\| \partial_\vphi^n  \partial_\xi^\beta  \partial_\lambda^k
\widetilde a (\lambda, \cdot, \xi)\|_{L^2_\vphi L^2_x} \Big) \langle \xi \rangle^{m + \beta} \nonumber  \\
& \leq_{m, s, \beta} \gamma^{- |k|} \| K \|_{{\mathcal C}^{s + m + \beta}}^{k_0, \gamma} \nonumber 
\end{align}
that,  recalling \eqref{norm1} and \eqref{norm1 parameter}, proves  \eqref{stima pseudo diff in forma di nucleo derivate xi}. 
\end{proof}

\begin{remark}
The extended symbol $ \widetilde a $ in \eqref{extension-sy}
can be  explicitly written, using  \eqref{simbolo-discreto} and the Poisson summation formula, 
 as 
 $$ \widetilde a(\lambda, \vphi, x, \xi)  = \int_{\R} K(\lambda, \vphi, x, y) \theta (y) e^{\ii \xi y} dy 
 $$  
where the test function $ \theta \in {\mathcal D}(\R) $ is defined after \eqref{simbolo esteso discreto continuo}. 
 This expression can be used as well to prove the estimate \eqref{stima pseudo diff in forma di nucleo derivate xi}. 
\end{remark}

An integral operator transforms into another integral operator under a changes of variables
\be\label{change of variable A}
P u(\vphi, x) := u(\vphi, x + p(\vphi, x)) \, . 
\ee
 
\begin{lemma}\label{lemma cio}
Let $K(\lambda, \cdot ) \in {\mathcal C}^\infty(\T^\nu \times \T \times \T)$ 
and $p(\lambda, \cdot ) \in {\mathcal C}^\infty(\T^\nu \times \T, \R) $. 
There exists $\delta := \delta(s_0, k_0) > 0$ such that if $\| p\|_{2 s_0 + k_0 + 1}^{k_0, \gamma} \leq \delta$, then 
the integral operator $ {\mathcal R}$ as in \eqref{integral operator} transforms into the integral operator
\be\label{Kernel trasformato}
\big( P^{- 1} {\mathcal R} P \big) u(\vphi,  x) = \int_\T \tilde K(\lambda, \vphi, x, y) u(\vphi, y)\,dy 
\ee
with a  $ {\mathcal C}^\infty $ 
Kernel $\widetilde K(\lambda, \cdot, \cdot, \cdot)$ which satisfies 
\be\label{tildeK}
\| \tilde K\|_{s}^{k_0, \gamma} \leq C(s, k_0)  \big( \| K \|_{s + k_0}^{k_0, \gamma} 
+ \| p \|_{{s + k_0 + 1}}^{k_0, \gamma} \| K \|_{s_0 + k_0 + 1}^{k_0, \gamma}\big) \qquad \forall s \geq s_0\,.
\ee
\end{lemma}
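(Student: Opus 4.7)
The plan is first to derive an explicit formula for $\tilde K$ by unfolding the composition $P^{-1}\mathcal{R}P$, and then to estimate it via the change-of-variable lemma applied in two spatial variables.

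First I would compute: since $(Pu)(\vphi,z) = u(\vphi, z+p(\vphi,z))$, we have
$$(\mathcal{R} P u)(\vphi,x) = \int_\T K(\lambda,\vphi,x,z)\, u(\vphi, z+p(\vphi,z))\, dz.$$
Substituting $y = z+p(\vphi,z)$, whose inverse is $z = y + q(\vphi,y)$ with Jacobian $1+q_y(\vphi,y)$, turns this into
$$(\mathcal{R} P u)(\vphi,x) = \int_\T K(\lambda,\vphi,x, y+q(\vphi,y))\, u(\vphi,y)\,(1+q_y(\vphi,y))\, dy.$$
Applying $P^{-1}$, which substitutes $x \mapsto x+q(\vphi,x)$, yields
$$\tilde K(\lambda,\vphi,x,y) = K\bigl(\lambda,\vphi,\, x+q(\vphi,x),\, y+q(\vphi,y)\bigr)\,\bigl(1+q_y(\vphi,y)\bigr).$$

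Next I would estimate $\|\tilde K\|_s^{k_0,\gamma}$. The inverse diffeomorphism bound $\|q\|_s^{k_0,\gamma} \leq_{s,k_0} \|p\|_{s+k_0}^{k_0,\gamma}$ is furnished by \eqref{stime-lipschitz-q}, which applies under the smallness hypothesis $\|p\|_{2s_0+k_0+1}^{k_0,\gamma} \leq \delta$. I would view $(\vphi,x,y) \mapsto (\vphi, x+q(\vphi,x), y+q(\vphi,y))$ as a near-identity diffeomorphism of $\T^{\nu+2}$ with perturbation $(0, q(\vphi,x), q(\vphi,y))$. Its $H^s(\T^{\nu+2})$ norm is dominated, up to a harmless constant from the trivial extension in the dummy variable, by $\|q\|_s^{k_0,\gamma}$; likewise the smallness hypothesis \eqref{mille condizioni p} transfers from $p$ to this two-variable perturbation after shrinking $\delta$. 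Then Lemma \ref{lemma:utile} applied in dimension $d=\nu+2$ gives the tame estimate
$$\bigl\| K(\lambda,\vphi, \cdot+q, \cdot+q)\bigr\|_s^{k_0,\gamma} \leq_{s,k_0} \|K\|_{s+k_0}^{k_0,\gamma} + \|q\|_s^{k_0,\gamma}\,\|K\|_{s_0+k_0+1}^{k_0,\gamma}.$$

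Finally I would multiply by $(1+q_y(\vphi,y))$ using the tame product estimate \eqref{interpolazione C k0}, together with the bounds $\|q_y\|_{s_0}^{k_0,\gamma} \leq \|q\|_{s_0+1}^{k_0,\gamma} \ll 1$ and $\|q_y\|_{s}^{k_0,\gamma} \leq \|q\|_{s+1}^{k_0,\gamma} \leq_{s,k_0} \|p\|_{s+k_0+1}^{k_0,\gamma}$. Combining these, the terms involving $\|q\|_s^{k_0,\gamma}$ get absorbed into $\|p\|_{s+k_0+1}^{k_0,\gamma}$, yielding precisely \eqref{tildeK}. The only delicate point is the bookkeeping needed to apply Lemma \ref{lemma:utile} in dimension $\nu+2$ with a perturbation coming from a function of one fewer variable, but this is purely combinatorial and the tame structure of the estimates guarantees that the loss of derivatives stays at $k_0+1$ as stated.
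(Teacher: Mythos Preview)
Your proof is correct and follows essentially the same approach as the paper: you derive the identical explicit formula for $\tilde K$ via the substitution $y = z + p(\vphi,z)$, then estimate it by combining the change-of-variable estimates from Lemma~\ref{lemma:utile} (applied to the composition in the two spatial slots), the bound \eqref{stime-lipschitz-q} on $q$, and the tame product estimate \eqref{interpolazione C k0}. The paper's proof is simply a terser version that cites \eqref{stima tame cambio di variabile pietro s0}--\eqref{stime-lipschitz-q} and Lemma~\ref{lemma operatore e funzioni dipendenti da parametro} directly rather than spelling out the $(\nu+2)$-dimensional change-of-variable viewpoint.
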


\begin{proof}
We denote by $ z \mapsto z + q(\lambda, \vphi, z) $ the inverse diffeomorphism of 
$  x \mapsto x + p(\lambda, \vphi, x) $, for all $ \vphi \in \T^\nu $, $ \lambda \in \mathtt \Lambda_0 $.
We have 
$ ( {\mathcal R} P ) u(\vphi, x) = \int_\T K(\lambda, \vphi, x, y) u(\vphi, y + p(\lambda, \vphi, y))\, d y $
and making the change of variable $z = y + p(\lambda, \vphi, y)$ we get \eqref{Kernel trasformato} with  Kernel
$$ 
 \tilde K(\lambda, \vphi, x, z) := \big( 1 + \partial_z q (\lambda, \vphi, z) \big) K(\lambda, \vphi, x + q(\lambda, \vphi, x), 
 z + q(\lambda, \vphi, z)) \, . 
 $$ 
 Since $p \in {\mathcal C}^\infty$, by Lemma \ref{lemma:utile} also $q \in {\mathcal C}^\infty$, therefore $ \tilde K$ is ${\mathcal C}^\infty$.
The estimate \eqref{tildeK} for $ \tilde K $ then follows by \eqref{interpolazione C k0}, \eqref{stima tame cambio di variabile pietro s0}, \eqref{stima tame cambio di variabile pietro}, \eqref{stime-lipschitz-q}
and by Lemma \ref{lemma operatore e funzioni dipendenti da parametro}.
\end{proof}

We now study the properties of the Hilbert transform
$ \mH $. It can be defined through Fourier series by
\be\label{Hilbert T}
\begin{aligned}
\mH \cos(jx)  & := \sign(j) \sin(jx), \quad \forall j \in \Z \setminus \{ 0 \}\,,  \\
\mH \sin(jx) & := - \sign(j) \cos(jx) \, , \ \  
\forall j \in \Z \setminus \{0\} ,  \\
& \quad \mH (1) := 0  \, , 
\end{aligned}
\ee
or in exponential basis 
\be\label{Hilbert transform: complex}
\mH e^{\ii jx} := - \ii \sign(j) e^{\ii jx} \, , \    \forall j \neq 0 \, ,  \quad \mH (1) := 0  \, .
\ee
The Hilbert transform admits also an  integral representation. Given a   $ 2\pi $-periodic function $u$ its Hilbert transform is  
\be\label{HT}
\begin{aligned}
\mH u(x) 
& := \frac{1}{2\pi} \, {\rm  p. v. }  \int \frac{u(y)}{\tan (\frac12 \,(x-y)) }\,dy \\
& := \lim_{\e \to 0} \frac{1}{2\pi} \Big\{ \int_{x-\pi}^{x-\e} + \int_{x+\e}^{x+\pi} \Big\} \frac{u(y)}{\tan (\frac12 \,(x-y)) }\,dy .
\end{aligned}
\ee
The commutator between the Hilbert transform $ \mH $ and the multiplication operator for a smooth function $ a $ is a 
regularizing operator in $ OPS^{-\infty } $.  

\begin{lemma}\label{lem: commutator aH}
Let $ a( \lambda, \cdot, \cdot ) \in {\mathcal C}^{\infty} (\T^\nu \times \T, \R)$. Then the commutator $[a, \mH ] \in OPS^{-\infty }$ and, for all $ m,  s,  \a \in \N $, 
\be\label{norma commutator}
\norma  [a, \mH  ] \norma_{-m, s, \a}^{k_0, \gamma} \leq C(m, s, \alpha, k_0) \| a \|_{{s + s_0 + 1+ m + \alpha}}^{k_0, \gamma} \, . 
\ee
\end{lemma}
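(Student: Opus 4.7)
The strategy is to realize the commutator $[a, \mH]$ as an integral operator with a smooth kernel, and then invoke Lemma \ref{lem:Int}. Starting from the principal value representation \eqref{HT} of $\mH$, a direct computation gives
\[
[a, \mH]u(x) \;=\; \frac{1}{2\pi}\,\text{p.v.}\!\int_{\T} \frac{a(\lambda,\vphi,x)-a(\lambda,\vphi,y)}{\tan\!\big(\tfrac{1}{2}(x-y)\big)}\,u(y)\,dy,
\]
so that $[a,\mH] = \int_\T K(\lambda,\vphi,x,y)\,u(y)\,dy$ with kernel
\[
K(\lambda,\vphi,x,y) \;:=\; \frac{1}{2\pi}\cdot\frac{a(\lambda,\vphi,x)-a(\lambda,\vphi,y)}{\tan\!\big(\tfrac{1}{2}(x-y)\big)}.
\]

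The central point of the plan is to show that $K(\lambda,\vphi,\cdot,\cdot)\in {\cal C}^\infty(\T\times\T)$ and to quantify its regularity. On $\T\times\T$ the function $\tan(\tfrac12(x-y))$ has a simple zero exactly on the diagonal $\{x=y\}$, and the numerator $a(x)-a(y)$ vanishes there as well. To make this quantitative I would use Hadamard's lemma: writing $a(x)-a(y)=(x-y)\int_0^1 (\partial_x a)(\lambda,\vphi, y+t(x-y))\,dt$ and using that the function $(x-y)/\tan(\tfrac12(x-y))$ extends smoothly across the diagonal (its Taylor series at $0$ being $2-\tfrac16(x-y)^2+\cdots$), one finds near the diagonal
\[
K(\lambda,\vphi,x,y) \;=\; \frac{1}{2\pi}\Big[\tfrac{x-y}{\tan(\tfrac12(x-y))}\Big]\int_0^1(\partial_x a)(\lambda,\vphi,y+t(x-y))\,dt,
\]
which is manifestly smooth. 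Away from the diagonal there is nothing to check since $1/\tan(\tfrac12(x-y))$ is smooth there (on $\T\times\T$ the poles of $\tan$ at $x-y=\pm\pi$ give zeros of the kernel). Gluing both regions with a partition of unity, $K$ is globally smooth on $\T^\nu\times\T\times\T$ and $k_0$-times differentiable in $\lambda$.

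The Hadamard representation and the tame estimates \eqref{interpolazione C k0} for products/compositions of smooth functions yield, for every $s\geq 0$,
\[
\| K \|_{{\cal C}^{s}}^{k_0,\gamma} \;\leq\; C(s,k_0)\,\| a \|_{s+s_0+1}^{k_0,\gamma},
\]
the loss of one derivative accounting for the $\partial_x a$ produced by Hadamard, and the shift by $s_0$ arising from the Sobolev embedding $H^{s_0}\hookrightarrow L^\infty$ needed to pass from the weighted Sobolev norm of $a$ to the $ {\cal C}^s$-type norm of $K$ (cf.\ the proof of Lemma \ref{lem:Int}). Finally, applying Lemma \ref{lem:Int} with Sobolev index $s+m+\alpha$ delivers
\[
\norma [a,\mH]\norma_{-m,s,\alpha}^{k_0,\gamma} \;\leq\; C(m,s,\alpha,k_0)\,\| K\|_{{\cal C}^{s+m+\alpha}}^{k_0,\gamma} \;\leq\; C(m,s,\alpha,k_0)\,\| a\|_{s+s_0+1+m+\alpha}^{k_0,\gamma},
\]
which is exactly \eqref{norma commutator}.

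The only genuinely delicate step is the smoothness and quantitative regularity of $K$ across the diagonal: the principal value must be interpreted correctly so that the cancellation of the singularity of $1/\tan(\tfrac12(x-y))$ against the zero of $a(x)-a(y)$ produces an honest smooth kernel, not merely a distributional one. Hadamard's lemma combined with the explicit smooth extension of $(x-y)\cot(\tfrac12(x-y))$ across the diagonal handles this obstacle cleanly, after which the estimate is a routine application of Lemma \ref{lem:Int} together with the composition/product tame bounds already established.
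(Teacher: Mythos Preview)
Your proposal is correct and follows essentially the same approach as the paper: realize $[a,\mH]$ as an integral operator via the principal-value formula, use the Hadamard factorization $a(x)-a(y)=(x-y)\int_0^1 a_x(y+t(x-y))\,dt$ together with the smoothness of $(x-y)\cot\!\big(\tfrac12(x-y)\big)$ to see that the kernel is $\mathcal C^\infty$, and then apply Lemma~\ref{lem:Int}. The paper does exactly this, only more tersely; the partition of unity you introduce is unnecessary since the Hadamard representation is valid globally (the product of the two factors descends to a smooth $2\pi$-periodic function on $\T\times\T$ even though neither factor alone is periodic).
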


\begin{proof}
By \eqref{HT} the commutator 
$$
(\mH a - a \mH) u =    \frac{1}{2\pi} {\rm  p. v. } \int \frac{ (a(y) - a(x))u(y) }{\tan (\frac12 \,(x-y)) }\,dy =
\frac{1}{2\pi}  \int_{\T} K(x,y) u(y) \, dy 
$$
is an integral operator with $ {\mathcal C}^\infty $ Kernel (note that the integral is no longer a principal value)
$$
\begin{aligned}
K(\lambda, \vphi, x,y) & := \frac{ a(\lambda, \vphi, y) - a(\lambda, \vphi, x) }{\tan ( (x-y) / 2) } \\
& = 
\Big( \int_0^1 a_x ( \lambda, \vphi, x + t (y-x)) dt \Big) \frac{y-x}{\tan ( (x-y) / 2)}\, .
\end{aligned} 
$$
Then  \eqref{norma commutator} follows by Lemma \ref{lem:Int} and the bound 
$ \| K \|_{{\mathcal C}^s}^{k_0, \g} \leq_s \| K\|_{s + s_0}^{k_0, \gamma} \leq_s  \| a \|_{s + s_0 + 1}^{k_0, \gamma}$ for all $s \geq 0 $. 
\end{proof}

We now conjugate the Hilbert transform 
by a family of changes of variables  as in \eqref{change of variable A}, see also
the Appendices H and I in \cite{IPT} and 
\cite{Baldi-Benj-Ono}-Lemma B.5. 

\begin{lemma} \label{coniugio Hilbert}
Let $p = p(\lambda, \cdot) \in {\mathcal C}^\infty(\T^{\nu + 1})$. There exists $\delta(s_0, k_0) > 0$ such that, if $\|p \|_{2 s_0 + k_0 + 1}^{k_0, \gamma} \leq \delta(s_0, k_0)$, then 
the operator $P^{- 1} \mH P - \mH$ is an integral operator of the form
\be \label{int repres}
( P^{-1} \mH P - \mH) u (\vphi, x) 
= \int_\T  \,K(\lambda, \vphi, x,z) u(\ph, z)\,dz
\ee
where $ K = K(\lambda, \cdot) \in {\mathcal C}^\infty (\T^\nu \times \T \times \T ) $ satisfies 
\be\label{stima kernel}
\| K \|_{s}^{k_0, \gamma} \leq C(s, k_0) \| p \|_{s +  k_0 + 2}^{k_0, \gamma} \,, \quad \forall s \geq s_0\,.
\ee
\end{lemma}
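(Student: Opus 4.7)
The strategy is to use the integral representation \eqref{HT} of the Hilbert transform together with the change of variable induced by $P$, and then to verify that the principal value singularity is exactly annihilated by the Hilbert transform $\mathcal H$ that we subtract, producing a genuine kernel which is $\mathcal C^\infty$ in $(\varphi,x,z)$.

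Concretely, let $y\mapsto y+q(\varphi,y)$ be the inverse of the diffeomorphism $x\mapsto x+p(\varphi,x)$, which exists and is smooth by Lemma \ref{lemma:utile} (here is where the smallness $\|p\|_{2s_0+k_0+1}^{k_0,\gamma}\le\delta$ is used). Writing $(Pu)(\varphi,y)=u(\varphi,y+p(\varphi,y))$ and applying $\mathcal H$ in the $y$-variable,
\[
(P^{-1}\mathcal H P u)(\varphi,x)=(\mathcal H Pu)(\varphi,x+q(\varphi,x))=\frac{1}{2\pi}\,\mathrm{p.v.}\!\int_{\T}\frac{u(\varphi,y+p(\varphi,y))}{\tan\bigl((x+q(\varphi,x)-y)/2\bigr)}\,dy.
\]
I would then substitute $z=y+p(\varphi,y)$, so that $y=z+q(\varphi,z)$ and $dy=(1+q_z(\varphi,z))\,dz$. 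Subtracting the analogous formula for $\mathcal H u$, this yields the integral representation \eqref{int repres} with the formal kernel
\[
K(\varphi,x,z)=\frac{1}{2\pi}\biggl[\frac{1+q_z(\varphi,z)}{\tan\bigl((x+q(\varphi,x)-z-q(\varphi,z))/2\bigr)}-\frac{1}{\tan((x-z)/2)}\biggr].
\]

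The core of the proof is to show that $K$ is actually smooth, i.e.\ that the principal value singularity cancels. I would write $x+q(\varphi,x)-z-q(\varphi,z)=(x-z)\bigl(1+\widetilde q(\varphi,x,z)\bigr)$ with $\widetilde q(\varphi,x,z):=\int_0^1 q_y(\varphi,z+t(x-z))\,dt$, so that $\widetilde q$ is smooth in all its arguments and $\widetilde q(\varphi,z,z)=q_y(\varphi,z)$. Using the splitting $\frac{1}{\tan(u/2)}=\frac{2}{u}+r(u)$ where $r$ is smooth in a neighbourhood of $0$, I would rewrite
\[
2\pi K=\frac{2}{x-z}\cdot\frac{q_z(\varphi,z)-\widetilde q(\varphi,x,z)}{1+\widetilde q(\varphi,x,z)}+(1+q_z(\varphi,z))\,r\bigl((x-z)(1+\widetilde q)\bigr)-r(x-z).
\]
The second and third terms are already manifestly smooth. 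For the first one, the identity $q_z(\varphi,z)-\widetilde q(\varphi,x,z)=\int_0^1\bigl(q_y(\varphi,z)-q_y(\varphi,z+t(x-z))\bigr)dt=(x-z)\int_0^1\!\!\int_0^1 t\,q_{yy}(\varphi,z+ts(x-z))\,ds\,dt$ cancels the factor $(x-z)^{-1}$, so $K$ is smooth after all.

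The final step is the quantitative bound \eqref{stima kernel}. From the explicit formula above, $K$ is built, by composition, products, and quotients, from $q$ and its first two derivatives, evaluated along the smooth maps $z\mapsto(\varphi,z)$, $(x,z)\mapsto(\varphi,z+t(x-z))$, etc., and from the smooth functions $r$ and $u\mapsto(1+u)^{-1}$ applied to quantities bounded by the small constant $\delta$. Invoking the composition estimate Lemma \ref{Moser norme pesate}, the product rule \eqref{interpolazione C k0}, the change-of-variable bounds \eqref{stima tame cambio di variabile pietro s0}--\eqref{stima tame cambio di variabile pietro}, and the estimate \eqref{stime-lipschitz-q} on $q$ in terms of $p$, one obtains $\|K\|_s^{k_0,\gamma}\leq C(s,k_0)\|p\|_{s+k_0+2}^{k_0,\gamma}$, which is \eqref{stima kernel}. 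The main (minor) obstacle is the bookkeeping for these tame estimates in the weighted $\|\cdot\|_s^{k_0,\gamma}$ norm, and in particular keeping track of the loss of $k_0+2$ derivatives: the $+k_0$ comes from \eqref{stime-lipschitz-q}, and the extra $+2$ from the two $y$-derivatives of $q$ that appear in $\widetilde q$ and $K$.
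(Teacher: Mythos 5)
Your proposal is correct and starts exactly as the paper does: the same change of variable $z=y+p(\varphi,y)$ in the integral representation \eqref{HT} produces the same kernel $\frac{1}{2\pi}\bigl[(1+q_z)\cot\frac12(x-z+q(\varphi,x)-q(\varphi,z))-\cot\frac12(x-z)\bigr]$. Where you diverge is in how the principal-value singularity is shown to cancel: the paper observes that this difference equals $-\frac{1}{\pi}\partial_z\log\bigl(1+g(\lambda,\varphi,x,z)\bigr)$ with $g$ written explicitly through the ratio $\sin\frac12[x-z+q(x)-q(z)]/\sin\frac12[x-z]$, so $g$ is manifestly $2\pi$-periodic, globally smooth and of size $O(\|q\|_{s+1})$, and \eqref{stima kernel} follows from one application of the composition Lemma \ref{Moser norme pesate} together with \eqref{stime-lipschitz-q} (loss $s+k_0+1$ for $g$, plus one more derivative from $\partial_z$, giving $s+k_0+2$). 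Your route — expanding $\cot(u/2)=2/u+r(u)$ and cancelling the pole via the Taylor identity for $q_z(z)-\widetilde q(x,z)$ — is more elementary and reaches the same loss count, but it is a local argument near the diagonal: to make it global you must work on the fundamental domain $|x-z|\le\pi$ (so the argument $(x-z)(1+\widetilde q)$ of $r$ stays inside $(-2\pi,2\pi)$) or split near/away from the diagonal, since $\widetilde q$ and the factorization $x-z$ are not periodic objects, whereas the paper's $g$ is. Two small further points: in your double-integral identity the right-hand side should carry a minus sign, $q_z(z)-\widetilde q(x,z)=-(x-z)\int_0^1\!\int_0^1 t\,q_{yy}(z+ts(x-z))\,ds\,dt$ (harmless for the argument), and to get a bound proportional to $\|p\|_{s+k_0+2}$ rather than merely $O(1)$ you should group the remainder as $q_z\,r\bigl((x-z)(1+\widetilde q)\bigr)+\bigl[r\bigl((x-z)(1+\widetilde q)\bigr)-r(x-z)\bigr]$ and write the bracket as $(x-z)\widetilde q\int_0^1 r'\,d\theta$, so that every term vanishes at $q=0$ before the tame product/composition estimates are applied; the paper's $\partial_z\log(1+g)$ form gives this linearity in $q$ for free.
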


\begin{proof}
The inverse diffeomorphism of 
$ x \mapsto x + p(\vphi, x) $ has the form 
$ z \mapsto  z + q(\vphi, z) $.
Changing the variable $z =y + p(\vphi, y)$ in the integral \eqref{HT} 
gives 
$$
P^{-1} \mH P u(\vphi, x) = 
 \frac{1}{2\p} \, {\rm p. v.} \int \frac{u(\vphi, z) (1 + \partial_z q(\lambda, \vphi, z))}
{\tan(\frac12\,[ x - z + q(\lambda, \vphi, x) - q(\lambda, \vphi, z)]) } \,dz \, . 
$$
 As a consequence we get \eqref{int repres}  (which is no longer a principal value) with  Kernel 
\begin{align} \label{kernel DN}
K(\lambda, \vphi, x, z) & := \frac{1}{2\p} \Big( 
\frac{1 + \partial_z q(\lambda,  \vphi, z)}{\tan(\frac12[x - z + q(\lambda,  \vphi, x) - q (\lambda,  \vphi, z)])} \, 
- \frac{1}{\tan(\frac12[x - z])} \Big) \nonumber \\ 
& = -\frac{1}{\p}\, \partial_z \log \Big( 
\frac{\sin(\frac12[x - z + q(\lambda,  \vphi, x) - q(\lambda,  \vphi, z)])}
{\sin(\frac12[x-z])} \Big)  \nonumber \\
& =  - \frac{1}{\pi} \partial_z \log \big( 1 + g(\lambda,  \vphi, x, z) \big) 
\end{align}
(note that $ q $ is small) 
where the family of $ {\mathcal C}^\infty$  functions  
$$ 
\begin{aligned}
g(\lambda, \vphi, x, z)  & := \cos \Big( \frac{q(\lambda, \vphi, x) - q(\lambda, \vphi, z)}{ 2}  \Big) - 1 \\ 
& \ \, + 
\cos \Big( \frac{x-z}{2}  \Big) 
\frac{\sin(\frac12[ q(\lambda,  \vphi, x) - q(\lambda,  \vphi, z)])}
{\sin(\frac12[x-z])}
\end{aligned}
$$ 
satisfies 
the estimate 
$ \| g \|_{s}^{k_0, \gamma} \leq_{s, k_0} \|  q \|_{s + 1}^{k_0, \gamma} \leq_{s, k_0} \| p \|_{s +  k_0 + 1}^{k_0, \gamma} $
 using \eqref{stime-lipschitz-q}.
Lemma \ref{Moser norme pesate} implies \eqref{stima kernel}.
\end{proof}

\section{Dirichlet-Neumann operator}\label{subDN}

We now present some fundamental properties of the Dirichlet-Neumann operator $ G $ defined in \eqref{D-N} that are used in the paper.
There is a huge literature about it for which we refer to the recent work of Alazard-Delort \cite{AlDe}-\cite{AlDe1}
and the book of Lannes \cite{LannesLivre}, and references therein.
We remark that for our purposes it is sufficient to work in the class of smooth $ {\mathcal C}^\infty $ profiles $ \eta (x) $ 
because at each step of the  Nash-Moser iteration we perform a 
$ {\mathcal C}^\infty $-regularization. 

\smallskip

The mapping $(\eta,\psi)\rightarrow G(\eta)\psi$ is linear  with respect to $\psi$ and nonlinear with respect to $\eta$. 
The derivative with respect to $\eta$ (``shape derivative'') is given by 
(see e.g. \cite{LannesLivre})
\begin{equation} \label{formula shape der}
G'(\eta) [\hat {\eta} ] \psi 
= \lim_{\eps \rightarrow 0} \frac{1}{\e} \{ G (\eta+\eps\hat \eta ) \psi - G(\eta) \psi \}
= - G(\eta) (B \hat \eta ) -\partial_x (V \hat \eta )
\end{equation}
where 
\begin{equation} \label{def B V}
\B := \B(\eta,\psi) := \frac{\eta_x \psi_x + G(\eta)\psi }{ 1 + \eta_x^2 }\,, 
\qquad 
V := V(\eta,\psi) := \psi_x - B \eta_x.
\end{equation}
The vector $ (V,B) = \nabla_{x,y} \Phi $ is the velocity field  evaluated at the free surface  $ (x,  \eta (x)) $. 

Note also  that $G(\eta)$ is an even operator according to Definition \ref{def:even}.  

\smallskip

The Dirichlet-Neumann operator is a {\it pseudo-differential} operator of the form 
\be\label{sviluppo Geta}
G(\eta) = |D| + \mR_{G}(\eta) 
\ee
where $ G(0) = |D| $ and the  remainder $ \mR_{G}(\eta) \in OPS^{-\infty} $. 
The explicit representation of  
the integral Kernel of $ \mR_{G}(\eta) $ given by \eqref{Geta intermedia}, \eqref{int repres}, \eqref{kernel DN}, 
has been taught to us by Baldi \cite{Baldi}.
We use it  to estimate the pseudo-differential norm $ \norma \mR_{G}(\eta) \norma^{k_0, \gamma}_{-m, s,\a} $. 
Note that the free profile $ \eta (x) := \eta (\omega, \kappa, \vphi, x) $ as well as the potential
$ \psi (\omega, \kappa, \vphi, x ) $
may depend also on the angles $ \vphi \in \T^\nu$ and the parameters 
$ \lambda := ( \om, \kappa ) \in  \R^\nu \times [\kappa_1, \kappa_2 ] $. 
For simplicity of notation we sometimes omit to write the dependence on $\vphi$, $\omega$, $\kappa$.

\begin{proposition}\label{Prop DN}
Assume that $\partial_\lambda^k \eta(\lambda, \cdot, \cdot)$ is ${\mathcal C}^\infty$ for all $|k| \leq k_0$. There exists $\delta := \delta(s_0, k_0) > 0$ such that, if 
\begin{equation}\label{condizione di piccolezza lemma dirichlet neumann}
\| \eta\|_{2 s_0 + 2 k_0 + 1}^{k_0, \gamma} \leq \delta \, , 
\end{equation} 
then the Dirichlet-Neumann operator $ G(\eta) $ may be written as in \eqref{sviluppo Geta} 
where $ {\mathcal R}_G (\eta ) $ is an integral operator with  $ {\mathcal C}^\infty $ 
Kernel  $ K_G $ 
(see \eqref{integral operator}) 
which satisfies, for all $m, s, \alpha \in  \N$, the estimate 
\be\label{estimate DN}
\begin{aligned}
\norma {\mathcal R}_G (\eta) 
\norma_{-m, s, \a}^{k_0, \gamma} & 
\leq C(s,m,\a, k_0) \| K_G \|_{{\mathcal C}^{s+m + \alpha}}^{k_0,\gamma} \\ 
& \leq
 C(s, m, \a, k_0 ) \| \eta \|_{s + s_0 + 2 k_0 + m + \alpha + 3 }^{k_0, \gamma} \, .
 \end{aligned}
\ee
Let $s_1 \geq 2 s_0 + 1$. There exists $\delta(s_1) > 0$  such that,  
the map $ \{ \| \eta \|_{s_1 + 6} < \delta(s_1)  \}  \to H^{s_1}(\T^\nu \times \T \times \T) $, $\eta \mapsto K_G ( \eta ) $,  is ${\mathcal C}^1$.  
\end{proposition}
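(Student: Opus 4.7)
The argument follows the conformal-mapping approach attributed to Baldi and sketched at the end of the introduction. First I would introduce a conformal diffeomorphism $(x,y) \mapsto (X,Y)$ that straightens the free boundary, sending $\mathcal{D}_\eta = \{y<\eta(x)\}$ onto the half-strip $\{Y<0\}$ while preserving $2\pi$-periodicity in the horizontal variable. This map has the form $x = X + p(X,Y)$, $y = Y + q(X,Y)$ with $p,q$ harmonic conjugates, where the boundary trace $p(X) := p(X,0)$ solves the implicit equation $q(X,0) = \eta(X+p(X))$, i.e.\ $p = -\mathcal{H}[\eta\circ(\mathrm{Id}+p)]$ modulo constants. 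Under the smallness hypothesis \eqref{condizione di piccolezza lemma dirichlet neumann} an implicit-function argument yields a unique smooth $p$ with the tame bound $\|p\|_s^{k_0,\gamma} \lesssim_{s,k_0} \|\eta\|_{s+k_0}^{k_0,\gamma}$, the loss coming from Lemma \ref{lemma:utile} on the inverse diffeomorphism. After this change of variables the elliptic problem \eqref{BVP} becomes the Laplace problem on the straight strip with Dirichlet datum $\widetilde\psi(X) := \psi(X+p(X)) = (P\psi)(X)$, which is solved explicitly by Fourier series: the outgoing DN operator on the strip is exactly $|D_X|$.

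Next I would pull back the normal derivative and rewrite the identity obtained in the form $G(\eta) = P^{-1}\,|D|\,P$ times a Jacobian factor arising from the conformal distortion; then, using $|D| = -\ii\mathcal{H}\partial_x$ on mean-zero functions, I would reorganize the expression as
\begin{equation*}
G(\eta) - |D| \;=\; (P^{-1}\mathcal{H}P - \mathcal{H})\,\partial_x + \mathcal{H}\,[P^{-1}\partial_x P - \partial_x] + (\text{commutator and Jacobian terms}),
\end{equation*}
and identify each piece as an integral operator with a smooth kernel. The first summand is covered directly by Lemma \ref{coniugio Hilbert}, which produces the integral representation \eqref{int repres} with kernel controlled by \eqref{stima kernel}. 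The remaining summands involve commutators $[a,\mathcal{H}]$ (handled by Lemma \ref{lem: commutator aH}) and products of multiplication operators with already-identified integral operators; closure under composition with changes of variables is furnished by Lemma \ref{lemma cio}. Assembling the pieces, one obtains $\mathcal{R}_G(\eta) = G(\eta) - |D|$ as a single integral operator with $\mathcal{C}^\infty$ kernel $K_G(\lambda,\vphi,x,y)$ satisfying
\begin{equation*}
\|K_G\|_{\mathcal{C}^s}^{k_0,\gamma} \;\leq\; C(s,k_0)\,\|\eta\|_{s+s_0+2k_0+3}^{k_0,\gamma},
\end{equation*}
where the derivative count $s+s_0+2k_0+3$ comes from combining the losses in Lemmas \ref{lemma:utile}, \ref{lemma cio}, \ref{coniugio Hilbert} and \ref{Moser norme pesate}. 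The estimate \eqref{estimate DN} on $\norma\mathcal{R}_G(\eta)\norma_{-m,s,\alpha}^{k_0,\gamma}$ then follows immediately from Lemma \ref{lem:Int} applied to this kernel.

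For the $\mathcal{C}^1$-dependence claim I would rely on the shape-derivative formula \eqref{formula shape der}: since $G'(\eta)[\hat\eta]\psi = -G(\eta)(B\hat\eta) - \partial_x(V\hat\eta)$ with $B,V$ as in \eqref{def B V}, and since $p = p(\eta)$ is $\mathcal{C}^1$ in $\eta$ at the level of $H^{s_1+6}$ by the implicit function theorem applied to $p + \mathcal{H}[\eta\circ(\mathrm{Id}+p)] = 0$, the kernel $K_G(\eta)$ inherits $\mathcal{C}^1$ regularity $\{\|\eta\|_{s_1+6}<\delta\} \to H^{s_1}(\T^{\nu+2})$; the six-derivative loss matches the budget required to differentiate the conformal map once and still land in $H^{s_1}$.

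I expect the main obstacle to be not the tame estimates themselves, which are mechanical consequences of the lemmata of sections \ref{sec:pseudo}--\ref{sub:integral-op}, but rather the careful bookkeeping in the second step: verifying the exact algebraic identity that splits $G(\eta)$ into $|D|$ plus an integral operator with the advertised kernel, and checking that every intermediate operator appearing in the decomposition is again of integral type with a $\mathcal{C}^\infty$ kernel (as opposed to a merely $OPS^{-\infty}$ residual), so that Lemma \ref{lem:Int} is applicable at the end to obtain the pseudo-differential norm bound.
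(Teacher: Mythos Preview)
Your overall strategy---conformal flattening via the boundary profile $p$ solving $p = \mathcal{H}[\eta(\cdot + p)]$ (note the sign: the paper has $+\mathcal{H}$, not $-\mathcal{H}$), then identifying $\mathcal{R}_G(\eta)$ as an integral operator through Lemma~\ref{coniugio Hilbert}---matches the paper's. But you are working harder than necessary in the algebraic step, and you have correctly anticipated this as the sticking point.

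The paper avoids your multi-term decomposition entirely via the clean identity of Lemma~\ref{G=mH}:
\[
G(\eta) \;=\; \partial_x\, P^{-1}\mathcal{H}\, P \, .
\]
The point is that the Jacobian factor you expect to survive actually cancels: writing $G(\eta) = P^{-1}\frac{1}{1+p_X}\partial_X \mathcal{H} P$ and using the change-of-variable rule $P^{-1}\partial_X P = (1+P^{-1}p_X)\partial_x$, the prefactor $\frac{1}{1+P^{-1}p_X}$ is absorbed exactly. Consequently \eqref{Geta intermedia} gives
\[
\mathcal{R}_G(\eta) \;=\; \partial_x\bigl(P^{-1}\mathcal{H}P - \mathcal{H}\bigr),
\]
a \emph{single} term, handled in one stroke by Lemma~\ref{coniugio Hilbert} (which already produces an explicit $\mathcal{C}^\infty$ kernel~\eqref{kernel DN}), and then Lemma~\ref{lem:Int}. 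Your decomposition $(P^{-1}\mathcal{H}P - \mathcal{H})\partial_x + \mathcal{H}[P^{-1}\partial_x P - \partial_x] + \cdots$ has the derivative on the wrong side and introduces extra commutator and Jacobian pieces; each of these is individually smoothing, so your route could be pushed through, but it multiplies the bookkeeping and obscures the explicit kernel formula \eqref{palla di lardo 0}--\eqref{palla di lardo 1} that the paper relies on.

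For the $\mathcal{C}^1$ claim, invoking the shape-derivative formula~\eqref{formula shape der} is not the right tool: that formula differentiates $\psi \mapsto G(\eta)\psi$, not the kernel map $\eta \mapsto K_G(\eta)$. The paper instead uses the explicit formula for $K_G$ in terms of the inverse diffeomorphism $q$, and chains the implicit function theorem through $\eta \mapsto p \mapsto q \mapsto K_G$ (Lemma~\ref{DN stima p eta} gives $\eta \mapsto p$ in $\mathcal{C}^1$; the argument for $p \mapsto q$ is as in Lemma~\ref{lemma:utile}; the last step is composition in \eqref{palla di lardo 0}--\eqref{palla di lardo 1}).
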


\begin{remark}
Note that the assumption \eqref{condizione di piccolezza lemma dirichlet neumann} in low norm 
$ \| \ \|_{2 s_0 + 2 k_0 + 1}^{k_0, \gamma} $ implies
the estimate \eqref{estimate DN} for any $ s \in \N $. 
The estimate
$\|\partial_\eta K_G [\widehat \eta] \|_{s_1} \leq_{s_1} \| \widehat \eta\|_{s_1 + 6}$
is used in section \ref{linearizzato siti normali} 
(in particular in section \ref{sec:changes}) with a Sobolev index $ s_1 $ which 
has to be considered fixed, see \eqref{vincolo s1 derivate i}. 
A sharper tame version of this estimate could be proved, but it is not needed.
Note also that it does not  involve the $ \| \ \|_{s_1}^{k_0, \gamma} $ norm. 
\end{remark}

The rest of this section is devoted to the proof of Proposition \ref{Prop DN}.

\noindent
In order to analyze the Dirichlet-Neumann  operator it is convenient to  transform the boundary value problem   
\eqref{BVP} defined in the free domain $ \{(x,y):y<\eta(x)\}$
into an elliptic problem in the lower half-plane $ \Sigma_0 := \{(X,Y) : Y<0\} $ via
a conformal diffeomorphism  
\be\label{conf-diffeo}
x = U(X,Y), \quad y = V(X,Y) \, . 
\ee
The following conformal transformation \eqref{defUV}, the  formulation of the  problem as the fixed point 
 equation \eqref{eq for p}, 
Lemma \ref{G=mH} and  \eqref{Geta intermedia} is due to Baldi \cite{Baldi}.
\\[1mm]
{\sc The conformal transformation.} Let $ p : \R \to \R $ be a smooth $ 2 \pi $-periodic  function with zero average
and  $ \| \pa_X^2 p \|_{L^2(\T)} \leq c_0 := 1/ (2 \sqrt{2\pi}) $. We  define  the functions   
\be\label{defUV}
\begin{aligned}
U( X, Y ) & := X + \sum_{k\neq0} p_k \, e^{|k|Y}\,e^{\ii k X},  \\
V( X, Y ) & := Y + \sum_{k\neq0} \ii \, \sign(k)\, p_k  \, e^{|k|Y}\,e^{\ii k X} + c 
\end{aligned}
\ee
with $ c  \in \R $.  The functions $ U $ and $ V $ are both harmonic on  $ \Sigma_0 $
and satisfy the Cauchy-Riemann equations
$ U_X = V_Y $, $ U_Y = - V_X $ 
so that  $ U + \ii V $ is holomorphic on $ \Sigma_0 $.
The gradient  $(U_X,U_Y) \to (1,0) $  as $ Y  \to -\infty$.  

Since, $  \forall Y \leq 0 $, $ \| U_{XX}(X,Y) \|_{L^2(\T)} \leq \|p_{XX} \|_{L^2(\T)} \leq c_0 $, 
it results  $ U_X \geq 1/2 $  on $ \Sigma_0 $, and, 
by $ V_Y = U_X \geq 1/2 $, we also get $ V(X,Y) < V(X,0) $ for $ Y < 0 $. 
The Jacobian 
$$
{\rm det} 
\left(
\begin{array}{cc}
U_X  & U_Y   \\
 V_X & V_Y  
\end{array}
\right) = {\rm det} 
\left(
\begin{array}{cc}
U_X  & U_Y   \\
- U_Y & U_X  
\end{array}
\right) = U_X^2 + U_Y^2 \geq \frac14 \, , \quad \forall (X,Y) \in \Sigma_0 \, ,
$$
so that $ U + \ii V $ is a global diffeomorphism from $ \Sigma_0 $ onto its image. Since $ U(X,Y)- X $ is 
$ 2 \pi $-periodic in $ X $ (see \eqref{defUV}) the map $ U + \ii V $
is the lift of a diffeomorphism from $ \T \times (-\infty,0 ] $ onto its image.  
The image of the map $ U + \ii V $ is the subset of $ \C \simeq \R^2 $ that is below the profile described parametrically by
\begin{equation}  \label{profile}
( U( X,0),V(X,0)) = (X+p(X), - \mH p(X) + c ) 
\end{equation}
where $ \mH $ is the Hilbert transform  in \eqref{Hilbert transform: complex}.
The profile \eqref{profile} coincides with the graph $ Y = \eta (X) $ if
\begin{equation}  \label{eq for p 0}
- \mH p(X) + c = \eta(X + p(X)) \, , \quad \forall X \in \R \, . 
\end{equation}
Since, by \eqref{Hilbert transform: complex},  the range of the Hilbert transform $ \mH $ is the 
space of functions with zero average and $ \mH^2=- \Pi $ 
where $ \Pi [f] := f - f_0 $,  
the equation \eqref{eq for p 0} is equivalent to 
$$
c  = \frac{1}{2\p} \intp \eta( X+p( X))\,d X 
$$
and 
\begin{equation}  \label{eq for p}
p( X ) =  \mH [\eta( X + p( X ))] \, . 
\end{equation}

\begin{lemma}\label{DN stima p eta}
Let $ \eta$ satisfy $\partial_\lambda^k \eta(\lambda, \cdot) \in {\mathcal C}^\infty(\T^{\nu + 1})$, for all $|k| \leq k_0$. 
There exists $\delta := \delta(s_0, k_0) > 0$, such that, if $\| \eta\|_{2 s_0 + k_0 + 1}^{k_0, \gamma} \leq \delta$,  then there exists a unique solution 
$ p = p (\lambda, \cdot ) $ of  \eqref{eq for p} satisfying the estimates
\begin{align}\label{stima p da eta}
& \|  p\|_s \leq_s \| \eta\|_s\,, \qquad \| p\|_{s}^{k_0, \gamma} \leq_s \| \eta \|_{s + k_0 }^{k_0, \gamma} \, , \qquad \forall s \geq s_0\,.
\end{align}
Let $ s_1 \geq 2 s_0 + 1 $. 
There exists $\delta(s_1) > 0$  such that 
the map $ \{ \| \eta \|_{s_1 + 2} < \delta(s_1)  \}  \to H^{s_1}$, $\eta \mapsto p(\eta)$,   is ${\mathcal C}^1$.
\end{lemma}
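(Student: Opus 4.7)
The plan is to solve \eqref{eq for p} by a Banach fixed-point argument in a small ball of $H^{s_0}$ and then to propagate the estimates to higher Sobolev norms and to the weighted norm $\|\,\cdot\,\|_s^{k_0,\g}$ by bootstrap. Set
$$
F(\eta, p)(X) := \mH\bigl[\eta(X + p(X))\bigr],
$$
so that \eqref{eq for p} reads $p = F(\eta,p)$. Since $\mH$ is bounded on every $H^s$ (it is a Fourier multiplier of order $0$), the composition estimate \eqref{stima cambio di variabile dentro la dim} of Lemma \ref{lemma:utile} combined with the Moser estimate of Lemma \ref{Moser norme pesate} yields, for $\|p\|_{s_0+1}\leq 1/2$,
$$
\|F(\eta,p)\|_{s_0} \leq C\|\eta\|_{s_0}, \qquad \|F(\eta,p_1)-F(\eta,p_2)\|_{s_0}\leq C\|\eta\|_{s_0+1}\,\|p_1-p_2\|_{s_0}.
$$
For $\|\eta\|_{2s_0+1}$ small enough (which is implied by the hypothesis on $\|\eta\|_{2s_0+k_0+1}^{k_0,\g}$), $F(\eta,\cdot)$ contracts a ball $\{\|p\|_{s_0}\leq 2C\|\eta\|_{s_0}\}$ and produces a unique fixed point $p$ with $\|p\|_{s_0}\lesssim \|\eta\|_{s_0}$.

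First I would boot-strap to all $s\geq s_0$ by evaluating the fixed-point identity in $H^s$: Lemma \ref{lemma:utile} and Lemma \ref{Moser norme pesate} give
$$
\|p\|_s = \|F(\eta,p)\|_s \leq_s \|\eta\|_s + \|p\|_s\,\|\eta\|_{s_0+1},
$$
and absorbing the last term into the left for $\|\eta\|_{s_0+1}$ small proves the first bound in \eqref{stima p da eta}. For the weighted estimate, I would differentiate the fixed-point equation $\partial_\lambda^k p = \partial_\lambda^k F(\eta,p)$ inductively on $|k|\leq k_0$. At order $k$ the right-hand side contains one copy of $\partial_\lambda^k p$ hit by $\mH\circ(\partial_x\eta)\circ(I+p)$ (of operator norm $\lesssim \|\eta\|_{s_0+1}$, hence absorbed), plus terms involving $\partial_\lambda^{k'}p$ for $|k'|<|k|$ and $\partial_\lambda^{k'}\eta$ with $|k'|\leq|k|$. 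Using the tame estimates \eqref{stima tame cambio di variabile pietro s0}--\eqref{stima tame cambio di variabile pietro} for the composition operator (which lose exactly $|k|$ derivatives when expressed in the weighted norm) together with \eqref{interpolazione C k0}, the induction hypothesis closes to give $\|p\|_s^{k_0,\g}\leq_s \|\eta\|_{s+k_0}^{k_0,\g}$ for all $s\geq s_0$, under the smallness assumption on $\|\eta\|_{2s_0+k_0+1}^{k_0,\g}$. Smoothness of $\partial_\lambda^k p$ in $(\vphi,x)$ for each fixed $\lambda$ follows because the fixed point is obtained as a contraction that preserves $\mathcal C^\infty$ (all norms $H^s$ are finite).

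For the $\mathcal C^1$ dependence on $\eta$ at a fixed $\lambda$, I would apply the classical implicit function theorem in Banach spaces to $\Psi:H^{s_1+2}\times H^{s_1}\to H^{s_1}$, $\Psi(\eta,p):=p-\mH[\eta\circ(I+p)]$, with $s_1\geq 2s_0+1$. The partial Fréchet derivative
$$
D_p\Psi(\eta,p)[\widehat p] = \widehat p - \mH\bigl[(\eta_x\circ(I+p))\,\widehat p\bigr]
$$
is a bounded operator on $H^{s_1}$, and the Neumann series converges once $\|\eta\|_{s_1+2}<\delta(s_1)$ is taken small enough (using that $\mH$ is bounded on $H^{s_1}$ and the multiplication operator by $\eta_x\circ(I+p)$ has $H^{s_1}$-norm controlled by Moser and Lemma \ref{lemma:utile}, hence by $\|\eta\|_{s_1+2}$). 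This yields $\eta\mapsto p(\eta)\in\mathcal C^1$ with values in $H^{s_1}$.

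The only delicate step is checking that the composition operator and the Moser estimates combine in the weighted norm with loss exactly $k_0$ (and not worse) at each inductive level; this is exactly the content of \eqref{stima tame cambio di variabile pietro} and \eqref{stime-lipschitz-q} of Lemma \ref{lemma:utile}, so the argument is mechanical once the induction is set up correctly.
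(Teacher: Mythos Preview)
Your overall strategy---fixed point for existence, bootstrap for higher norms, implicit function theorem for $\mathcal C^1$ dependence---matches the paper. The gap is in where you set up the contraction. You contract in $H^{s_0}$ under the side assumption ``$\|p\|_{s_0+1}\leq 1/2$'', but the composition estimate \eqref{stima cambio di variabile dentro la dim} of Lemma~\ref{lemma:utile} that you invoke requires the condition $\|p\|_{\mathcal C^{s_0+1}}\leq 1/2$ from \eqref{mille condizioni p}, which by Sobolev embedding needs control of $\|p\|_{2s_0+1}$, not $\|p\|_{s_0+1}$. More critically, a contraction run in $H^{s_0}$ does not propagate any bound on $\|p\|_{s_0+1}$ (let alone $\|p\|_{2s_0+1}$), so you have no mechanism ensuring the iterates stay in the domain where the composition estimates are valid. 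The Lipschitz constant is likewise $C\|\eta\|_{2s_0+2}$ rather than $C\|\eta\|_{s_0+1}$.

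The paper resolves this by running the contraction in $H^{2s_0+1}$, on a ball of radius $r$ chosen so that $\|p\|_{\mathcal C^{s_0+1}}\leq C(s_0)\|p\|_{2s_0+1}\leq 1/2$. It further inserts a Galerkin truncation $\Phi_n=\Pi_n\Phi:E_n\to E_n$ so that each approximate fixed point $p_n$ is a trigonometric polynomial (hence $\mathcal C^\infty$), obtains the uniform bounds $\|p_n\|_s\leq_s\|\eta\|_s$ for every $s$, and passes to a $\mathcal C^\infty$ limit by compactness. Your approach is salvageable by moving the contraction to $H^{2s_0+1}$; the $\mathcal C^\infty$ regularity of $p$ then follows either via Galerkin (as in the paper) or by observing that $F(\eta,\cdot)$ contracts in every $H^s$ with $s\geq 2s_0+1$ and the fixed points coincide by uniqueness. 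The weighted-norm induction and the $\mathcal C^1$ part of your plan agree with the paper.
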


\begin{proof}
We find a solution of  \eqref{eq for p} as a  fixed point of the map
$$
p (\vphi, X) \mapsto \Phi ( p )(\vphi, X):= \mH [\eta(\vphi, X + p(\vphi, X))]\, . 
$$
For any $ n \in \N $, we consider the finite dimensional subspace 
$ E_n := {\rm span }\{ e^{\ii (\ell \cdot \vphi + j x)}: |(\ell, j)| \leq n \} $ 
and 
the regularized map 
$ \Phi_n := \Pi_n \Phi : E_n \to E_n  $ 
where $ \Pi_n $ denotes the $ L^2 $-orthogonal projector on $ E_n $.
We show that there is $ r >  0 $ small, such that, for any $n \in \N$, the map
$$
\Phi_n : {\mathcal B}_{2 s_0 + 1}( r ) \cap E_n \to {\mathcal B}_{2 s_0 + 1}( r) \cap E_n \,, 
\  {\mathcal B}_{2 s_0 + 1}( r ) := \big\{ p \in H^{2s_0+1} : \| p \|_{2 s_0 + 1} \leq r \big\}\,,
$$
is a contraction. We fix $ r >  0 $ such that 
$ \| p \|_{{\mathcal C}^{s_0 + 1}} \leq C(s_0)\|p \|_{2 s_0 + 1} \leq 1/2$, for all $  p \in {\mathcal B}_{2 s_0 + 1}( r ) $, i.e 
$ r := 1/ (2 C(s_0))$, so that  the hyphothesis \eqref{mille condizioni p} of Lemma \ref{lemma:utile} is fulfilled.
Then, using that $ {\mathcal H}$ is an isometry on the Sobolev spaces $ H^s $  (see \eqref{Hilbert transform: complex}), 
that $\| \Pi_n h\|_s \leq \| h \|_s$,  
and applying  \eqref{stima cambio di variabile dentro la dim}, we get  
$$
\| \Phi_n ( p )\|_{2 s_0 + 1} \leq \| \eta(\cdot + p(\cdot)) \|_{2 s_0 + 1} \leq C_1(s_0)  \| \eta\|_{2 s_0 + 1} \leq r 
$$
taking $\| \eta\|_{2 s_0 + 1} \leq r / C_1(s_0) $. Moreover for any $p_1\,,\,p_2 \in {\mathcal B}_{2 s_0 + 1}( r ) \cap E_n $, we have  
$$
\| \Phi_n(p_1) - \Phi_n(p_2) \|_{2 s_0 + 1} \leq C(s_0) \| \eta\|_{2 s_0 + 2} \| p_1 - p_2\|_{2 s_0 + 1} \leq 
\| p_1 - p_2\|_{2 s_0 + 1} / 2 \,,
$$
by taking $ C(s_0) \| \eta\|_{2 s_0 + 2} \leq 1/2$. 
Then, by the contraction mapping theorem there exists a unique fixed point solution $ p_n \in {\mathcal B}_{2 s_0 + 1}( r ) \cap E_n $ 
solving $ \Phi_n( p_n) = p_n $. Note that $p_n \in E_n \subset {\mathcal C}^\infty(\T^{\nu + 1})$.  
Using again that the Hilbert transform is a unitary operator, and 
the estimate \eqref{stima cambio di variabile dentro la dim}, we get, for all $s \geq s_0$ 
\be
\begin{aligned}
\| p_n \|_s & = \| \Phi_n( p_n )\|_s \\ 
& =  \|\Pi_n \mH \eta(\cdot + p_n(\cdot))  \| \leq C(s ) \| \eta\|_s + C(s_0 ) \| p_n \|_s \| \eta\|_{s_0 + 1} 
\end{aligned}
\ee
which implies $\| p_n \|_s \leq 2 C(s) \| \eta\|_s$ taking $C(s_0 ) \| \eta\|_{s_0 + 1} \leq 1/2 $. 
Since $H^s \hookrightarrow H^{s - 1}$ compactly, for any $s \geq s_0$, the sequence $ p_n $ converges
 strongly in $ H^s $ (up to subsequence) 
 to a function $p \in {\mathcal C}^\infty(\T^{\nu + 1})$ which satisfies $\| p \|_s \leq 2 C(s) \| \eta\|_s $ for any $s \geq s_0$. 
 The function $p$ solves the equation \eqref{eq for p} because 
\begin{align}
\| \Phi( p ) - \Phi_n ( p_n )\|_{s_0} & \leq \|\Pi_{n} \mH \eta(\cdot + p (\cdot)) - \Pi_{n} \mH \eta(\cdot + p_{n}(\cdot)) \|_{s_0} 
\nonumber
\\
& \ + \| ({\rm Id} - \Pi_{n})\mH \eta(\cdot + p(\cdot))\|_{s_0} \nonumber\\
& \leq_{s_0} \| \eta\|_{s_0 + 1} \| p - p_{n} \|_{s_0} + \frac{1}{n} \| \eta\|_{s_0 + 1}(1 + \| p \|_{s_0 + 1}) \nonumber \\
& \leq_{s_0} \| p - p_{n} \|_{s_0} + \frac{1}{n} \to 0 \nonumber
\end{align} 
as $ n  \to + \infty$. This implies that $\Phi( p ) = p$. Arguing as in Lemma \ref{lemma:utile} one can prove that if $\partial_\lambda^k \eta(\lambda, \cdot) \in {\mathcal C}^\infty$ for all $|k| \leq k_0$, then also $\partial_\lambda^k p(\lambda, \cdot) \in {\mathcal C}^\infty(\T^{\nu + 1})$, for all $|k| \leq k_0$. The second 
estimate in \eqref{stima p da eta} can be proved as the estimate \eqref{stime-lipschitz-q} in Lemma \ref{lemma:utile}, using the condition $\| \eta\|_{s_0 + k_0 + 1}^{k_0, \gamma} \leq \delta(s_0, k_0)$ for some $\delta(s_0, k_0) > 0$ small enough. 

The differentiability of $ \eta \mapsto p(\eta ) $ 
follows by the implicit function theorem using the $ {\mathcal C}^1$  map  
$$
F :  H^{s_1 + 2}  \times  H^{s_1} \to H^{s_1}\,, \quad F(\eta, p)(\vphi, X) := p(\vphi, X) - {\mathcal H}\big[ \eta(\vphi, X + p(\vphi, X)) \big] \, .
$$
Since $ F(0, 0 ) = 0 $ and $\partial_p F(0, 0) = {\rm Id} $,  
by the implicit function theorem there exists 
$ \delta(s_1) >  0  $ and 
a ${\mathcal C}^1 $ map  $ \{  \| \eta\|_{s_1 + 2} \leq \delta(s_1)  \} \ni 
\eta \mapsto p(\eta) \in H^{s_1} $, such that $ F(\eta, p(\eta)) = 0 $. 
\end{proof}

We transform \eqref{BVP} via the conformal diffeomorphism \eqref{defUV}.  Denote  
$$ 
P u (X) := u ( X + p(X)) \, . 
$$ 
The potential $ \phi( X,Y) := \Phi(U( X,Y),V( X,Y)) $  satisfies, using also \eqref{profile}-\eqref{eq for p 0}, 
\be\label{BVP-new}
\D \phi = 0   \ \text{in} \ \{Y<0\}\, ,  \quad  
\phi( X,0) = (P \psi)(X) \, , \quad 
\gr \phi \to (0,0)  \  \text{as}\ Y \to -\infty \, .
\ee
Recall that the Dirichlet-Neumann operator at the flat surface $ Y = 0 $ is $   \pa_X \mH $. 

\begin{lemma}\label{G=mH}
$ G(\eta) = \partial_x P^{-1} \mH P  $. 
\end{lemma}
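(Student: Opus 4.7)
The strategy is a direct computation: pull back the Dirichlet--Neumann operator through the conformal change of variables \eqref{defUV}, exploit the Cauchy--Riemann equations satisfied by $(U,V)$ to identify the flat--side normal derivative $\partial_Y \phi|_{Y=0}$, and then match it against the known formula $\partial_X \mathcal{H}$ for the flat half--plane Dirichlet--Neumann operator.

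First I would use \eqref{profile}--\eqref{eq for p 0} together with $\Phi|_{y=\eta}=\psi$ to write the boundary datum on the flat side as
\[
\phi(X,0)=\Phi(U(X,0),V(X,0))=\Phi(X+p(X),\eta(X+p(X)))=(P\psi)(X).
\]
Since $\phi$ solves \eqref{BVP-new}, the standard computation for the flat half--plane gives
\[
\partial_Y \phi(X,0)= \partial_X \mathcal{H}(P\psi)(X),
\]
exactly as recalled before the statement of the lemma (on Fourier side this is $|k|$ versus $ik\cdot(-i\,\mathrm{sign}(k))$).

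Next I would compute the same quantity $\partial_Y \phi(X,0)$ by the chain rule applied to $\phi(X,Y)=\Phi(U(X,Y),V(X,Y))$, using the Cauchy--Riemann identities $U_Y=-V_X$, $V_Y=U_X$:
\[
\partial_Y \phi = \Phi_x U_Y+\Phi_y V_Y = -\Phi_x V_X+\Phi_y U_X.
\]
Evaluated at $Y=0$, one has $U_X(X,0)=1+p_x(X)$ and $V_X(X,0)=\eta_x(X+p(X))(1+p_x(X))$, so
\[
\partial_Y \phi(X,0)=(1+p_x(X))\bigl(-\eta_x\, \Phi_x+\Phi_y\bigr)\big|_{(x,y)=(X+p(X),\eta(X+p(X)))}.
\]
By the second definition of $G(\eta)\psi$ in \eqref{D-N}, the bracket evaluated on the free surface is precisely $G(\eta)\psi(X+p(X))=P[G(\eta)\psi](X)$, hence
\[
\partial_Y \phi(X,0)=(1+p_x(X))\,P[G(\eta)\psi](X).
\]

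Finally I would combine the two expressions for $\partial_Y \phi(X,0)$ and rewrite the right--hand side as a pullback. Writing $\mathcal{H}P\psi = P\circ (P^{-1}\mathcal{H}P\psi)$ and differentiating in $X$ gives
\[
\partial_X \mathcal{H}(P\psi)(X)=(1+p_x(X))\,P\bigl[\partial_x(P^{-1}\mathcal{H}P\psi)\bigr](X).
\]
The smallness of $p$ guaranteed by Lemma \ref{DN stima p eta} and the choice of $c_0$ in section \ref{subDN} yield $1+p_x\ge 1/2>0$, so the factor $(1+p_x)$ cancels; applying $P^{-1}$ to both sides then gives
\[
G(\eta)\psi(x)=\partial_x(P^{-1}\mathcal{H}P\psi)(x),
\]
which is the claim. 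There is no real obstacle here beyond the bookkeeping of the conformal change of variables; the only point that requires care is recognising that $-\eta_x \Phi_x+\Phi_y$ at $y=\eta(x)$ is exactly the alternative representation of $G(\eta)\psi$ in \eqref{D-N}, and that the Jacobian factor $(1+p_x)$ produced by the chain rule is the very same one produced by differentiating the pullback $P$, so that it cancels cleanly.
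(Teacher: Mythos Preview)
Your proof is correct and follows essentially the same approach as the paper: use the Cauchy--Riemann relations for the conformal map, identify the flat half-plane Dirichlet--Neumann operator as $\partial_X\mathcal H$, and track the Jacobian factor $1+p_x$ through the change of variables. The only organizational difference is that you apply the chain rule in the forward direction (computing $\partial_Y\phi$ in terms of $\Phi_x,\Phi_y$) and recognise the expression $-\eta_x\Phi_x+\Phi_y$ as $G(\eta)\psi$ directly, whereas the paper first inverts the Jacobian to express $\Phi_x,\Phi_y$ in terms of $\phi_X,\phi_Y$ and then uses the operator identity $P^{-1}\partial_X P=(1+P^{-1}p_X)\partial_x$; your version is slightly cleaner since it avoids that inversion, but the substance is identical.
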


\begin{proof}
Since $\eta( U( X,0))=V( X,0)$ (see \eqref{eq for p 0}) we derive $ - U_Y = V_X = \eta_x U_X $ on $ Y = 0 $. 
Moreover, by 
$$
\Phi_x = \frac{\phi_X U_X + \phi_Y U_Y}{U_X^2+U_X^2}\,,
\quad 
\Phi_y = \frac{\phi_Y U_X - \phi_X U_Y}{U_X^2+U_X^2}\,,
$$
and the definition  \eqref{D-N} of the Dirichlet-Neumann operator we get
\begin{align*}
G(\eta)\psi( x) & = \frac{1}{U_X^2+U_Y^2}\, 
\Big( \phi_X (-U_Y -\eta_x U_X) + \phi_Y (U_X - \eta_x U_Y ) \Big) \\ 
& = \frac{1}{U_X( X,0)}\,\phi_Y(X,0)   \\
& 
\stackrel{\eqref{defUV}, \eqref{BVP-new}} = \frac{1}{1+p_X( X)}\,\partial_X \mH (P \psi) (X)  
 = \Big\{ \frac{1}{1+p_X}\,\partial_X \mH P\psi \Big\}(x + \tilde p( x)) 
\end{align*}
where $ X = x + \tilde p( x) $ is the inverse diffeomorphism of $ x = X + p( X )$. 
In operatorial notation we have 
\begin{align} 
G(\eta) 
& = P^{-1} \frac{1}{1+p_X}\,\partial_X \mH P \notag  = \frac{1}{1+P^{-1} p_X}\, P^{-1} \partial_X P \, 
P^{-1} \mH P  \notag \\
& = \frac{1}{1+P^{-1} p_X}\, (1+P^{-1} p_X) \, \partial_x P^{-1} \mH P   = \partial_x P^{-1} \mH P 
\notag
\end{align}
by the  rule $P^{-1} \partial_X P = (1+P^{-1} p_X) \, \partial_x $ for the changes of coordinates.
\end{proof}

Lemma \ref{G=mH} provides the representation \eqref{sviluppo Geta} of the Dirichlet-Neumann operator with 
\be\label{Geta intermedia}
{\mathcal R}_G (\eta) := \partial_x \big( P^{-1} \mH P - \mH \big) \, .
\ee
By Lemma \ref{coniugio Hilbert}, in particular  by formula \eqref{kernel DN}, 
the operator $ {\mathcal R}_G (\eta) $ is an integral operator with kernel 
\begin{equation}\label{palla di lardo 0}
K_G  := K_G(\eta ) := - \frac{1}{\pi}\partial_{x z} \log \big( 1 + g ( \vphi, x, z) \big) 
\end{equation}
where 
\begin{equation}\label{palla di lardo 1}
\begin{aligned}
g( \vphi, x, z)  & := \cos \Big( \frac{q(\lambda, \vphi, x) - q( \lambda, \vphi, z)}{ 2}  \Big) - 1 \\ 
& \  + \cos \Big( \frac{x-z}{2}  \Big) 
\frac{\sin(\frac12[ q(  \lambda, \vphi, x) - q( \lambda, \vphi, z)])}
{\sin(\frac12[x-z])}
\end{aligned}
\end{equation}
and  $x \mapsto x + q(\vphi, x)$ is the inverse diffeomorphism of $X \mapsto X + p(\vphi, X) $ (the functions  
$ p, q $  depend on $ \eta $).
\\[1mm]
{\sc Proof of Proposition \ref{Prop DN} concluded.}
By \eqref{condizione di piccolezza lemma dirichlet neumann} we  apply Lemma \ref{DN stima p eta} and then 
\eqref{stima p da eta} implies 
$ \| p\|_{2 s_0 + k_0 + 1}^{k_0, \gamma} \leq_{s_0} \| \eta\|_{2 s_0 + 2 k_0 + 1}^{k_0, \gamma} $. 
Hence, by \eqref{condizione di piccolezza lemma dirichlet neumann},  the smallness assumption of Lemma 
\ref{coniugio Hilbert} is verified. 
Hence the estimate \eqref{estimate DN} follows by \eqref{stima pseudo diff in forma di nucleo derivate xi}, \eqref{stima kernel}, \eqref{stima p da eta}. 

We now prove that the function $\{ \| \eta \|_{s_1 + 6} \leq \delta(s_1)\} \mapsto H^{s_1}(\T^\nu \times \T \times \T)$, $\eta \mapsto K_G(\eta)$ is ${\mathcal C}^1$. Indeed, by applying Lemma \ref{DN stima p eta} (with $s_1 + 4$ instead of $s_1$), 
the map $\{ \| \eta \|_{s_1 + 6} < \delta(s_1)\} \mapsto H^{s_1 + 4}$, 
$\eta \mapsto p(\eta)$ is ${\mathcal C}^1$. Then, since $ q(\vphi, x) = - p (\vphi, x + q(\vphi, x) ) $, 
by the implicit function theorem, 
for $ p $ small  in $\| \cdot \|_{s_1 + 4}$-norm, also the map $ p \mapsto q ( p ) \in H^{s_1 + 2} $ is $ {\mathcal C}^1$. By composition, the claim follows by recalling \eqref{palla di lardo 0}, \eqref{palla di lardo 1}.
\hfill $\Box$

\smallskip

To conclude we provide the following tame estimates for the Dirichlet Neumann operator: 
\begin{lemma}\label{stime tame dirichlet neumann}
There is $\delta(s_0, k_0) > 0$ such that, if 
 $\| \eta\|_{2s_0 + 2 k_0 + 5}^{k_0, \gamma} \leq \delta(s_0, k_0)$,  then, for all $ s \geq s_0 $ 
 \begin{equation}\label{stima tame dirichlet neumann}
 \begin{aligned}
  \| \big(G(\eta) - |D| \big) \psi \|_s^{k_0, \gamma} & \leq_{s, k_0}  \| \eta \|^{k_0, \gamma}_{s + s_0 + 2 k_0 + 3} \| \psi\|_{s_0}^{k_0, \gamma}   \\
  & \quad \quad + \| \eta \|^{k_0, \gamma}_{2 s_0 + 2 k_0 + 3} \| \psi\|_{s}^{k_0, \gamma}\,, 
 \end{aligned}
 \end{equation}
 \begin{equation}\label{stima tame derivata dirichlet neumann}
 \begin{aligned}
  \|  G'(\eta)[\widehat \eta] \psi \|_s^{k_0, \gamma}  & \leq_{s, k_0} \| \psi\|_{s + 2}^{k_0, \gamma} \| \widehat \eta\|_{s_0 + 1}^{k_0, \gamma} + \| \psi\|_{s_0 + 2}^{k_0, \gamma} \| \widehat \eta\|_{s + 1}^{k_0, \gamma} \\
  & \quad  \quad + \| \eta\|_{s + s_0 + 2 k_0 + 4}^{k_0, \gamma} \| \widehat \eta\|_{s_0 + 1}^{k_0, \gamma} \| \psi\|_{s_0 + 2}^{k_0, \gamma}\,,
 \end{aligned}
 \end{equation}
 \begin{equation}\label{stima tame derivata seconda dirichlet neumann}
 \begin{aligned}
  \| G''(\eta)[\widehat \eta, \widehat \eta] \psi \|_s^{k_0, \gamma}  & \leq_{s, k_0} \| \psi\|_{s + 3}^{k_0, \gamma} \big( \| \widehat \eta\|_{s_0 + 2}^{k_0, \gamma} \big)^2  \\
  & \quad \quad + \| \psi\|_{s_0 + 3}^{k_0, \gamma} \| \widehat \eta\|_{s + 2}^{k_0, \gamma} \| \widehat \eta\|_{s_0 + 2}^{k_0, \gamma}  \\
& \quad \quad  + \| \eta\|_{s + s_0 + 2 k_0 + 5}^{k_0, \gamma} \| \psi\|_{s_0 + 3}^{k_0, \gamma} \big(\| \widehat \eta\|_{s_0 + 2}^{k_0, \gamma} \big)^2\,.  
 \end{aligned}
 \end{equation}
   \end{lemma}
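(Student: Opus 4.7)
The estimate \eqref{stima tame dirichlet neumann} follows directly from the pseudo-differential decomposition \eqref{sviluppo Geta}. The plan is to write $G(\eta)\psi - |D|\psi = {\cal R}_G(\eta)\psi$, apply Lemma \ref{lemma: action Sobolev} to convert the pseudo-differential norm estimate \eqref{estimate DN} (used with $m=\alpha=0$) into a ${\cal D}^{k_0}$-tame constant for ${\cal R}_G(\eta)$, namely ${\mathfrak M}_{{\cal R}_G(\eta)}(s) \leq_{s,k_0} \|\eta\|_{s+s_0+2k_0+3}^{k_0,\gamma}$, and then apply Lemma \ref{lemma operatore e funzioni dipendenti da parametro} with $\sigma=0$ to obtain the bound on $\|{\cal R}_G(\eta)\psi\|_s^{k_0,\gamma}$. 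The smallness condition $\|\eta\|_{2s_0+2k_0+1}^{k_0,\gamma} \leq \delta$ required by Proposition \ref{Prop DN} is implied by the hypothesis $\|\eta\|_{2s_0+2k_0+5}^{k_0,\gamma} \leq \delta(s_0,k_0)$.

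For \eqref{stima tame derivata dirichlet neumann} I would start from the shape derivative formula \eqref{formula shape der},
\[
G'(\eta)[\widehat\eta]\psi = -G(\eta)(B\widehat\eta) - \partial_x(V\widehat\eta),
\]
with $B$, $V$ as in \eqref{def B V}. First I would estimate $B$ and $V$ in weighted Sobolev norms: using the algebra/interpolation \eqref{interpolazione C k0}, the Moser composition Lemma \ref{Moser norme pesate} applied to $z \mapsto (1+z^2)^{-1}$ (so as to handle the factor $(1+\eta_x^2)^{-1}$), and \eqref{stima tame dirichlet neumann} to estimate $G(\eta)\psi$, yielding
\[
\|B\|_s^{k_0,\gamma},\ \|V\|_s^{k_0,\gamma} \leq_{s,k_0} \|\psi\|_{s+1}^{k_0,\gamma} + \|\eta\|_{s+s_0+2k_0+3}^{k_0,\gamma}\|\psi\|_{s_0+1}^{k_0,\gamma}.
\]
Then the product $B\widehat\eta$ (resp.\ $V\widehat\eta$) is controlled again by \eqref{interpolazione C k0}, and I apply \eqref{stima tame dirichlet neumann} to $G(\eta)(B\widehat\eta)$ and the trivial bound $\|\partial_x(V\widehat\eta)\|_s^{k_0,\gamma} \leq \|V\widehat\eta\|_{s+1}^{k_0,\gamma}$. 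Tracking the Sobolev indices and using interpolation to trade high norms on $\widehat\eta$ against low norms on $\psi$ (and vice versa) produces the stated estimate, the extra $\|\eta\|_{s+s_0+2k_0+4}^{k_0,\gamma}\|\widehat\eta\|_{s_0+1}^{k_0,\gamma}\|\psi\|_{s_0+2}^{k_0,\gamma}$ term arising from the high-Sobolev contribution of $\eta$ hidden inside $B$, $V$ and the remainder ${\cal R}_G(\eta)$.

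For \eqref{stima tame derivata seconda dirichlet neumann} the strategy is to differentiate \eqref{formula shape der} once more with respect to $\eta$ in the direction $\widehat\eta$, producing an expression
\[
G''(\eta)[\widehat\eta,\widehat\eta]\psi = -G'(\eta)[\widehat\eta](B\widehat\eta) - G(\eta)(B'[\widehat\eta]\,\widehat\eta) - \partial_x\bigl(V'[\widehat\eta]\,\widehat\eta\bigr),
\]
where $B'[\widehat\eta]$, $V'[\widehat\eta]$ are computed from \eqref{def B V} and involve $G'(\eta)[\widehat\eta]\psi$ itself. The plan is to estimate $B'[\widehat\eta]$ and $V'[\widehat\eta]$ in weighted Sobolev norm using \eqref{stima tame derivata dirichlet neumann}, \eqref{interpolazione C k0}, and Lemma \ref{Moser norme pesate}, and then apply \eqref{stima tame dirichlet neumann} and \eqref{stima tame derivata dirichlet neumann} to the outer occurrences of $G(\eta)$ and $G'(\eta)[\widehat\eta]$; interpolation \eqref{interpolation estremi fine Ck0} handles the trading of Sobolev indices, gaining the extra derivative in $\psi$ and $\widehat\eta$ that appears on the right hand side of the claim.

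The main bookkeeping obstacle is making sure that, in each estimate, the \emph{highest} Sobolev index $s$ appears on exactly one factor while all others are evaluated at the low index $s_0+O(1)$, which is necessary for the tame structure. This is achieved by systematic use of the asymmetric interpolation \eqref{interpolation estremi fine Ck0}, and by respecting at each step the smallness $\|\eta\|_{2s_0+2k_0+5}^{k_0,\gamma} \leq \delta$ so that Proposition \ref{Prop DN} and Lemma \ref{Moser norme pesate} can be invoked to absorb the $O(1)$ factors coming from composition with $(1+\eta_x^2)^{-1}$. No new ideas are needed beyond the pseudo-differential bound \eqref{estimate DN} combined with the tame calculus of section \ref{sec:pseudo}.
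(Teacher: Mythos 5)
Your proposal is correct and follows essentially the same route as the paper's proof: \eqref{stima tame dirichlet neumann} via the decomposition \eqref{sviluppo Geta}, the bound \eqref{estimate DN} with $m=\alpha=0$ and Lemmata \ref{lemma: action Sobolev}, \ref{lemma operatore e funzioni dipendenti da parametro}; \eqref{stima tame derivata dirichlet neumann} via the shape derivative formula \eqref{formula shape der} together with the very same tame bound on $B,V$ that you state; and \eqref{stima tame derivata seconda dirichlet neumann} by differentiating \eqref{formula shape der} once more and repeating the argument. No discrepancies to report.
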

\begin{proof}
The estimate \eqref{stima tame dirichlet neumann} follows by the formula \eqref{sviluppo Geta}, the bound
\eqref{estimate DN} (for $ m = \alpha = 0 $) and  Lemmata \ref{lemma: action Sobolev}, \ref{lemma operatore e funzioni dipendenti da parametro}.
The estimate \eqref{stima tame derivata dirichlet neumann} follows by the shape derivative formula \eqref{formula shape der}, applying  \eqref{stima tame dirichlet neumann}, \eqref{interpolazione C k0} and the fact that the functions
$ B , V $ defined in \eqref{def B V} satisfy  
$$
\| B\|_s^{k_0, \gamma},\, \| V\|_{s}^{k_0, \gamma} \leq_s \| \psi\|_{s + 1}^{k_0, \gamma} + \| \eta\|_{s + s_0 + 2 k_0 + 3}^{k_0, \gamma} \| \psi\|_{s_0 + 1}^{k_0, \gamma}\,.
$$
The estimate \eqref{stima tame derivata seconda dirichlet neumann} follows by differentiating the shape derivative formula \eqref{formula shape der} and by applying the same kind of arguments. 
\end{proof}

\chapter{Transversality properties of 
degenerate KAM theory}
\label{sec:degenerate KAM}

In this section we verify the weak {\it transversality} properties required by degenerate KAM theory that we shall use for proving the measure estimates. To this aim we follow the approach developed in \cite{BaBM}.
The main result of this section is Proposition \ref{Lemma: degenerate KAM}, which is derived by the non-degeneracy 
Lemma \ref{non degenerazione frequenze imperturbate}.

\begin{definition}\label{def:non-deg}
A function $ f := (f_1, \ldots, f_N ) :  [ \kappa_1, \kappa_2] \to \R^N $ is called non-degenerate 
if,  for any vector $ c := (c_1, \ldots, c_N ) \in \R^N \setminus \{0\}$ 
the function $ f \cdot c = f_1 c_1 + \ldots + f_N c_N $ is not identically zero on the whole interval $ [ \kappa_1, \kappa_2]  $. 
\end{definition}

From a geometric point of view, $ f $ non-degenerate means that the image of the curve $ f([\kappa_1,\kappa_2]) \subset \R^N $ is not contained in any  hyperplane of $ \R^N $. For such reason a curve $ f $ which satisfies 
the non-degeneracy property of Definition \ref{def:non-deg} is also referred as
an {\it essentially non-planar} curve, or a curve with {\it full torsion}.  
For a smooth degenerate function $ f  $, 
differentiating $ (N - 1)$ times  the identity $ f(\kappa) \cdot c = 0 $, we see that 
\be\label{function-degenerate}
\begin{aligned}
& f (\kappa)  \ {\rm degenerate} \quad \Longrightarrow \quad  \\
& f (\kappa), (\pa_\kappa f)(\kappa),  \ldots , (\pa_\kappa^{N-1} f)(\kappa)
{\rm \ are \ linearly \ dependent } \ \forall 
 \kappa \in [\kappa_1, \kappa_2] \, . 
 \end{aligned}
\ee
Given $ {\mathbb S}^+ \subset \N^+ $ we denote the unperturbed tangential and normal frequency vectors by 
\be\label{tangential-normal-frequencies}
{\vec \om} (\kappa) := ( \om_j (\kappa) )_{j \in {\mathbb S}^+} \, , \quad 
{\vec \Omega} (\kappa) := ( \Om_j (\kappa) )_{j \in \N^+ \setminus {\mathbb S}^+} := ( \om_j (\kappa) )_{j \in \N^+ \setminus {\mathbb S}^+} \,  .  
\ee
\begin{lemma}\label{non degenerazione frequenze imperturbate}
The frequency vectors $ {\vec \om} (\kappa) \in \R^\nu $, $ (\sqrt{\kappa}, {\vec \om} (\kappa)) \in \R^{\nu+1} $ and 
$$
\begin{aligned}
& ( {\vec \om} (\kappa), \Omega_j(\kappa)) \in \R^{\nu+1}, j \in \N^+ \setminus {\mathbb S}^+, \\
& ( {\vec \om} (\kappa), { \Omega}_j(\kappa), { \Omega}_{j'}(\kappa)) \in \R^{\nu+2} \,, 	\ \forall j, j' \in 
\N^+ \setminus {\mathbb S}^+ \,, \ j \neq j'\,,
\end{aligned}
$$
are non-degenerate.
\end{lemma}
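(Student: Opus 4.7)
The key reduction is to observe that $\Omega_j(\kappa) = \omega_j(\kappa)$ for $j \in \N^+\setminus\mathbb{S}^+$, so the third and fourth vectors are, up to relabeling, of the form $(\omega_k)_{k \in S}$ with $S$ the finite set $\mathbb{S}^+\cup\{j\}$ or $\mathbb{S}^+\cup\{j,j'\}$ (here we use $j\neq j'$ and $j,j'\notin\mathbb{S}^+$, so the enlarged sets have the required distinctness). Thus it suffices to prove non-degeneracy for (i) an arbitrary finite tuple $(\omega_k)_{k \in S}$, and (ii) the augmented tuple $(\sqrt{\kappa},(\omega_k)_{k\in S})$.

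For (i), I plan to compute the $\kappa$-derivatives of the real-analytic function $\omega_k(\kappa) = \sqrt{k(1+\kappa k^2)}$ and evaluate at $\kappa=0$. One finds
\[
\omega_k^{(n)}(\kappa) = a_n\, k^{3n}\bigl(k(1+\kappa k^2)\bigr)^{\frac12-n}, \qquad
a_n := \tfrac{1}{2}\bigl(\tfrac{1}{2}-1\bigr)\cdots\bigl(\tfrac{1}{2}-n+1\bigr)\neq 0,\ a_0=1,
\]
so $\omega_k^{(n)}(0) = a_n\, k^{2n+1/2}$. If a non-trivial relation $\sum_{k\in S} c_k\omega_k(\kappa)\equiv 0$ held on $[\kappa_1,\kappa_2]$, analytic continuation and $n$-fold differentiation at $\kappa=0$ would give $\sum_{k\in S} c_k k^{2n+1/2}=0$ for $n=0,1,\dots,|S|-1$. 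Setting $d_k := c_k\sqrt{k}$, this is a Vandermonde system $\sum_{k\in S} d_k (k^2)^n = 0$ in the pairwise distinct nodes $\{k^2\}_{k\in S}$, hence $d_k = 0$ and all $c_k=0$.

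For (ii), evaluation at $\kappa=0$ is spoiled by the singularity of $\sqrt{\kappa}$, so I plan to reparametrize $\kappa=\mu^2$ with $\mu > 0$. A putative identity $c_0\sqrt{\kappa}+\sum_{k\in\mathbb{S}^+}c_k\omega_k(\kappa)\equiv 0$ becomes
\[
c_0\,\mu + \sum_{k\in\mathbb{S}^+}c_k\,\omega_k(\mu^2)\equiv 0,\qquad \mu\in[\sqrt{\kappa_1},\sqrt{\kappa_2}].
\]
Since $\omega_k(\mu^2)=\sqrt{k}\,(1+\mu^2k^2)^{1/2}$ has convergent expansion in the even variable $\mu^2$ near $0$ (radius $1/k$), each $\omega_k(\mu^2)$ is an \emph{even} real-analytic function of $\mu$ in a neighborhood of $[\sqrt{\kappa_1},\sqrt{\kappa_2}]$, and the identity extends analytically to a neighborhood of the origin; projecting onto the odd part in $\mu$ forces $c_0=0$, after which the remaining identity $\sum_k c_k \omega_k(\kappa)\equiv 0$ on $\kappa\geq 0$ reduces to case (i).

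The only mildly delicate point is justifying the passage in (ii) from the identity on $[\sqrt{\kappa_1},\sqrt{\kappa_2}]$ to one on a symmetric neighborhood of $0$ in $\mu$; this is immediate by real-analytic continuation of each summand, since $\omega_k(\mu^2)$ is entire-looking in $\mu$ for $|\mu|<1/k$ and the identity propagates as an identity of real-analytic functions. Everything else is elementary once the Vandermonde structure in the nodes $\{k^2\}$ is exposed, so no serious obstacle is expected.
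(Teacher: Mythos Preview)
Your argument is correct, and it differs from the paper's in an interesting way. The paper treats $\sqrt{\kappa}$ and the $\omega_j(\kappa)$ uniformly: setting $\lambda_0(\kappa):=\sqrt{\kappa}$ and $\lambda_j(\kappa):=\omega_j(\kappa)$, it observes the closed formula
\[
\partial_\kappa^r \lambda_j(\kappa) = (-1)^{r+1}(2r-3)!!\,\lambda_j(\kappa)\,x_j^r,
\qquad x_j:=\frac{j^2}{2(1+\kappa j^2)}\ (j\ge1),\quad x_0:=\frac{1}{2\kappa},
\]
so the Wronskian matrix $\big(\partial_\kappa^r\lambda_{j_i}(\kappa)\big)$ factors into nonzero scalars times a Vandermonde in the nodes $x_{j_i}$, which are pairwise distinct for every $\kappa\in[\kappa_1,\kappa_2]$. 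Non-degeneracy then follows from the contrapositive of \eqref{function-degenerate}. By contrast, you continue analytically to $\kappa=0$ and read off a Vandermonde in the simpler nodes $k^2$, then handle $\sqrt{\kappa}$ separately by the substitution $\kappa=\mu^2$ and a parity argument. The paper's route is more unified and stays inside $[\kappa_1,\kappa_2]$; yours has simpler arithmetic at the Vandermonde step but pays for it with the continuation and the reparametrization. One minor wording point: in your case (ii) you mention a radius $1/k$, but in fact $\mu\mapsto\omega_k(\mu^2)=\sqrt{k}\,(1+\mu^2k^2)^{1/2}$ is real-analytic on all of $\R$ (since $1+\mu^2k^2>0$ everywhere), so the identity principle immediately propagates the relation from $[\sqrt{\kappa_1},\sqrt{\kappa_2}]$ to all of $\R$; the radius $1/k$ is only the convergence radius of the Taylor series at $\mu=0$ and is not needed for the argument.
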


\begin{proof}
Set $ \lambda_0(\kappa):= \sqrt{\kappa} $ and $   \lambda_j(\kappa) := \sqrt{j (1 + \kappa j^2)}$, $ j \geq 1 $.
The lemma follows by proving that, for any $ N $, for any 
$ \lambda_{j_1}(\kappa), \ldots, \lambda_{j_N}(\kappa) $, with 
$ j_1, \ldots, j_N \geq 1 $, $ j_i \neq j_k$ for all $ i \neq k$,
the function $  [\kappa_1, \kappa_2] \ni \kappa  \mapsto (\lambda_{j_1}(\kappa), \ldots, \lambda_{j_N}(\kappa)) \in \R^N $
is non-degenerate according to Definition \ref{def:non-deg}.
By \eqref{function-degenerate} it is sufficient to prove that the $ N \times N$-matrix 
$$
{\mathcal A}(\kappa) := \begin{pmatrix}
\lambda_{j_1}(\kappa) & \lambda_{j_2}(\kappa) & \dots & \lambda_{j_N}(\kappa) \\
\partial_\kappa \lambda_{j_1}(\kappa) & \partial_\kappa \lambda_{j_2}(\kappa) & \dots & \partial_\kappa \lambda_{j_N}(\kappa) \\
\vdots & \vdots & \ddots & \vdots \\
\partial_\kappa^{N - 1} \lambda_{j_1}(\kappa) & \partial_\kappa^{N - 1} \lambda_{j_2}(\kappa) & \dots & \partial_\kappa^{N - 1} \lambda_{j_N}(\kappa)
\end{pmatrix}
$$
is non-singular at some value of $ \kappa \in [\kappa_1, \kappa_2] $. Actually, it turns out to
be non-singular for all $ \kappa \in [\kappa_1, \kappa_2] $. 

Arguing by induction we get the following formula for the derivatives of $\lambda_j(\kappa)$: for all $ r \geq 1$
\be\label{formula utile lambda 0}
\partial_\kappa^r \lambda_0(\kappa)  = \frac{(- 1)^{r + 1}}{2^r}(2 r - 3)!! \kappa^{- \frac{2 r - 1}{2}}
= (- 1)^{r + 1} (2 r - 3)!! \lambda_0(\kappa) x_0^r \,,\  x_0 := \frac{1}{2 \kappa}\, , 
\ee
where $(- 1)!! := 1$, $ 1!! := 1$ and if $n > 1 $ is odd $n !! := \prod_{k = 0}^{\frac{n - 1}{2}} (n - 2 k) $.  For all $ j, r \geq 1$ 
\be\label{formula utile lambda j}
\begin{aligned}
\partial_{\kappa}^r \lambda_j(\kappa)  & = \frac{\sqrt{j} j^{2 r}}{2^r}(- 1)^{r + 1} (2 r - 3)!! (1 + \kappa j^2)^{- \frac{2 r - 1}{2}} \\
& = (- 1)^{r + 1} (2 r - 3)!! \lambda_j(\kappa) x_j^r\,, \  \ x_j := \frac{j^2}{2(1 + \kappa j^2)} \, . 
\end{aligned}
\ee
Using the previous formulas \eqref{formula utile lambda 0}-\eqref{formula utile lambda j} 
and  the multi-linearity of the determinant we get 
$$
{\rm det}({\mathcal A}(\kappa)) = \prod_{k = 1}^{N} \lambda_{j_k}(\kappa) \prod_{r = 1}^{N - 1}(- 1)^{r + 1} (2 r - 3)!! \, {\rm det}({\mathcal B}(\kappa))
$$
where the $ N \times N $ matrix 
$$
{\mathcal B}(\kappa) := \begin{pmatrix}
1 & 1 & \dots & 1 \\
x_{j_1} & x_{j_2} & \dots & x_{j_N} \\
\vdots & \vdots & \ddots & \vdots \\
x_{j_1}^{N - 1} & x_{j_2}^{N - 1}  & \dots & x_{j_N}^{N - 1}
\end{pmatrix}
$$
is the Vandermonde matrix. Its determinant is 
\be\label{determinante Van der Monde}
{\rm det}({\mathcal B}(\kappa)) = \prod_{1 \leq i < k \leq N} (x_{j_i} - x_{j_k} ) \, . 
\ee
By the definition of $x_j$ in 
\eqref{formula utile lambda 0}-\eqref{formula utile lambda j}, we have that, for all $ \kappa \in [ \kappa_1, \kappa_2] $,  
\begin{align*}
x_j - x_{j'}  & = \frac12 \frac{j^2 - j'^2}{ (1 + \kappa j^2)(1 + \kappa j'^2)} \neq 0\,,\,  \forall j \neq j' ,  j, j' \geq 1\, , \\
  x_j - x_0  & = - \frac{1}{2 \kappa (1 + \kappa j^2)} \neq 0\, , \  \forall j \geq 1 \, . 
\end{align*}
Thus, by \eqref{determinante Van der Monde} the determinant 
${\rm det}({\mathcal B}(\kappa)) \neq 0 $  and so  ${\rm det}({\mathcal A}(\kappa)) \neq 0 $,
$ \forall \kappa \in [\kappa_1, \kappa_2 ] $,  proving the lemma.
\end{proof}

In the next Proposition \ref{Lemma: degenerate KAM} we deduce, 
by the qualitative non-degeneracy condition proved in Lemma 
\ref{non degenerazione frequenze imperturbate}, the analyticity and the asymptotics of 
the linear frequencies $ \kappa \mapsto 
 \om_j (\kappa) = \sqrt{j (1+ \kappa j^2)} $, the quantitative bounds \eqref{0 Melnikov}-\eqref{2 Melnikov+}. 
The proof is similar to \cite{BaBM}. It does not follow immediately by \cite{BaBM} because the linear frequencies 
$  \om_j (\kappa) $
depend on the parameter $ \kappa $ also at the highest order $ O( \sqrt{\kappa} j^{3/2} )$.

\begin{proposition}\label{Lemma: degenerate KAM}
{\bf (Transversality)}
There exist $ k_0 \in \N $, $ \rho_0 > 0$ such that, for any $\kappa \in [ \kappa_1, \kappa_2] $,
\begin{equation}\label{0 Melnikov}
\begin{aligned}
{\rm max}_{k \leq k_0} |\partial_\kappa^{k}  \{{\vec \om} (\kappa) \cdot \ell   \} | & 
 \geq \rho_0 \langle \ell \rangle\,, \quad \forall \ell  \in \Z^\nu \setminus \{ 0 \}, 
\end{aligned}
\end{equation}
\begin{equation}\label{1 Melnikov}
\begin{aligned}
{\rm max}_{k \leq k_0} |\partial_\kappa^{k}  \{{\vec \om} (\kappa) \cdot \ell  + { \Omega}_j (\kappa) \} | 
 & \geq \rho_0 \langle \ell  \rangle\,, \quad \forall \ell  \in \Z^\nu, \, j \in \N^+ \setminus {\mathbb S}^+,
\end{aligned}
\end{equation}
\begin{equation}\label{2 Melnikov-}
\begin{aligned}
& {\rm max}_{k \leq k_0} |\partial_\kappa^{k}  \{{\vec \om} (\kappa) \cdot \ell  
 + { \Omega}_j (\kappa) - { \Omega}_{j'}(\kappa) \} | 
  \geq \rho_0 \langle \ell  \rangle\,,  \\
  & \quad \forall (\ell , j, j') \neq (0, j, j), \, \quad \ell \in \Z^\nu\,, \quad  j, j' \in \N^+ \setminus {\mathbb S}^+,
\end{aligned}
\end{equation}
\begin{equation} \label{2 Melnikov+}
\begin{aligned}
& {\rm max}_{k \leq k_0} |\partial_\kappa^{k}  \{{\vec \om} (\kappa) \cdot \ell 
 + { \Omega}_j (\kappa) + { \Omega}_{j'}(\kappa) \} |  
 \geq \rho_0 \langle \ell  \rangle\,,  \\
 & \quad  \forall \ell  \in \Z^{\nu}, \quad   j, j' \in \N^+ \setminus {\mathbb S}^+  \, .
\end{aligned}
\end{equation}
We call  (following  \cite{Ru1}) 
$ \rho_0 $  the ``{\it amount of non-degeneracy}" and $ k_0 $ the ``{\it index of nondegeneracy}".
\end{proposition}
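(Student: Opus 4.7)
\smallskip

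\noindent
\textbf{Proof plan.}
The idea is to upgrade the qualitative non-degeneracy established in Lemma \ref{non degenerazione frequenze imperturbate} into the quantitative Melnikov bounds \eqref{0 Melnikov}--\eqref{2 Melnikov+} by exploiting real-analyticity of the maps $\kappa \mapsto \om_j(\kappa)$ together with compactness. The basic mechanism is that, for any nontrivial real-analytic function $f$ on the compact interval $[\kappa_1,\kappa_2]$, the order of vanishing at each point is finite and, by continuity, uniformly bounded; hence there exist an integer $k_f$ and $\rho_f>0$ with $\max_{k\leq k_f}|\partial_\kappa^k f(\kappa)|\geq \rho_f$ for all $\kappa\in[\kappa_1,\kappa_2]$. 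To obtain a uniform $(k_0,\rho_0)$ as in the statement we apply this principle to a compact family of nontrivial analytic functions parametrized by the normalized coefficient vectors appearing in the Melnikov combinations, following \cite{BaBM}.

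\smallskip

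To prove \eqref{0 Melnikov} I apply the principle to the family $\{\vec\om(\kappa)\cdot c : c\in\mathbb{S}^{\nu-1}\}$: by Lemma \ref{non degenerazione frequenze imperturbate} each $\vec\om\cdot c$ is a nontrivial analytic function of $\kappa$, and the compactness of $\mathbb{S}^{\nu-1}\times[\kappa_1,\kappa_2]$ together with joint continuity in $(c,\kappa)$ yields $k_0=k_0(\nu)$ and $c_0>0$ such that $\max_{k\leq k_0}|\partial_\kappa^k(\vec\om(\kappa)\cdot c)|\geq c_0$ uniformly. Setting $c=\ell/|\ell|$ and recalling $\langle\ell\rangle\leq \sqrt{\nu}\,|\ell|$ gives \eqref{0 Melnikov} with $\rho_0:=c_0/\sqrt{\nu}$.

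\smallskip

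For \eqref{1 Melnikov}--\eqref{2 Melnikov+} I argue by contradiction: if no pair $(k_0,\rho_0)$ worked, one could extract sequences $(\ell_n,j_n,j_n')$ and $\kappa_n\in[\kappa_1,\kappa_2]$ with $\max_{k\leq n}|\partial_\kappa^k F_n(\kappa_n)|<\langle\ell_n\rangle/n$, where $F_n$ is the relevant Melnikov combination. The key observation is the identity
$$
\Omega_j(\kappa)=j^{3/2}\sqrt{\kappa+j^{-2}}\,,
$$
which ensures that $j^{-3/2}\Omega_j(\kappa)$ and all its $\kappa$-derivatives converge uniformly on $[\kappa_1,\kappa_2]$ to $\sqrt{\kappa}$ as $j\to\infty$. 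I normalize by an appropriate factor $\mathtt{N}_n\geq\langle\ell_n\rangle$ (depending on the regime of the indices) and extract, via Arzel\`a--Ascoli, a $C^\infty$ limit $F_\infty$ which in each case is a nontrivial linear combination among $\om_1(\kappa),\dots,\om_\nu(\kappa)$, $\sqrt{\kappa}$, and finitely many $\Omega_{j_\infty}(\kappa)$ with distinct indices. The non-degeneracy of the vectors $(\vec\om,\Omega_j)$, $(\vec\om,\Omega_j,\Omega_{j'})$ and $(\sqrt{\kappa},\vec\om)$ supplied by Lemma \ref{non degenerazione frequenze imperturbate} then guarantees $F_\infty\not\equiv 0$, and $F_\infty$ has a derivative of finite order uniformly bounded below on $[\kappa_1,\kappa_2]$. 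This contradicts $\partial_\kappa^k F_n(\kappa_n)\to 0$ for every $k$ and proves existence of uniform $k_0,\rho_0$.

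\smallskip

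\emph{Main obstacle.} The most delicate regime lies in \eqref{2 Melnikov-} when $j_n, j_n'\to\infty$ with $m_n:=j_n-j_n'\neq 0$ bounded, since the leading orders of $\Omega_{j_n}$ and $\Omega_{j_n'}$ cancel. Here I expand
$$
\Omega_{j_n}(\kappa)-\Omega_{j_n'}(\kappa)=\tfrac{3m_n}{2}\sqrt{\kappa}\,(j_n')^{1/2}+O\big((j_n')^{-1/2}\big)\,,
$$
with analogous expansions for higher $\kappa$-derivatives, and normalize by $\mathtt{N}_n:=\max(\langle\ell_n\rangle,(j_n')^{1/2})$. Up to subsequences, $\ell_n/\mathtt{N}_n\to c\in\R^\nu$ and $m_n=m\in\Z\setminus\{0\}$, so that the limit function reads $\vec\om(\kappa)\cdot c+\tfrac{3m}{2}\sqrt{\kappa}$. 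Since $m\neq 0$ and the vector $(\sqrt{\kappa},\vec\om(\kappa))$ is non-degenerate by Lemma \ref{non degenerazione frequenze imperturbate}, this is a nontrivial analytic function, yielding the required contradiction. The remaining sub-regimes (one of $j_n,j_n'$ bounded, or $m_n\to\infty$) are handled by similar but easier scaling arguments, the limit being either $\vec\om\cdot c+\alpha\sqrt{\kappa}$ with $\alpha\in\{0,\pm 1\}$ or a finite combination $\vec\om\cdot c+\Omega_{j_\infty}\pm\Omega_{j_\infty'}$, again nontrivial by Lemma \ref{non degenerazione frequenze imperturbate}. The resulting $k_0$ and $\rho_0$ depend only on $\nu$, $[\kappa_1,\kappa_2]$ and the tangential set $\mathbb{S}^+$, and are henceforth fixed once and for all.
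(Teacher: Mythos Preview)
Your proposal is correct and follows essentially the same strategy as the paper: argue by contradiction, extract sequences, normalize, pass to the limit using the asymptotic $\Omega_j(\kappa)=\sqrt{\kappa}\,j^{3/2}+O(j^{-1/2})$, and conclude via the non-degeneracy of Lemma~\ref{non degenerazione frequenze imperturbate}.

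The only difference is organizational. The paper first observes that the bound in \eqref{1 Melnikov}--\eqref{2 Melnikov+} holds trivially (already for $k=0$) whenever $j^{3/2}\geq C_0|\ell|$, respectively $|j^{3/2}\pm j'^{3/2}|\geq C_1\langle\ell\rangle$, and restricts to the complementary indices before extracting the contradiction sequence. This restriction, combined with the elementary inequality $|j^{3/2}-j'^{3/2}|\geq \sqrt{j}+\sqrt{j'}$ for $j\neq j'$, forces $j_n,j_n'$ to be bounded whenever $\ell_n$ is bounded, so only two cases remain: $\ell_n$ bounded (finite limit indices $\bar\jmath,\bar\jmath'$) or $\ell_n$ unbounded (limit of the form $\vec\om\cdot\bar c+\bar d\sqrt{\kappa}$, always normalizing by $\langle\ell_n\rangle$). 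In particular the paper never needs to isolate your ``delicate regime'' $j_n,j_n'\to\infty$ with $j_n-j_n'$ bounded: that sub-case is absorbed into the $\ell_n$-unbounded case after the index restriction, and your variable normalization $\mathtt N_n=\max(\langle\ell_n\rangle,(j_n')^{1/2})$ reduces to $\langle\ell_n\rangle$ anyway on the contradiction sequence. Both approaches are valid; the paper's trims the case analysis.
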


\begin{proof} All the inequalities  \eqref{0 Melnikov}-\eqref{2 Melnikov+} are proved by contradiction.
\\[1mm]
{\sc Proof of \eqref{0 Melnikov}}.
Suppose 
that $\forall k_0 \in \N$, $\forall \rho_0 > 0$ there exist 
 $ \ell  \in \Z^\nu \setminus \{0\} $, $\kappa \in [ \kappa_1, \kappa_2]$  such that 
$ {\rm max}_{k \leq k_0} |\partial_\kappa^k\{ {\vec \om}(\kappa) \cdot \ell  \} | < \rho_0 \langle \ell  \rangle $. 
This implies 
that for all $ m \in \N $, taking $ \rho_0 = \frac{1}{1+m } $,
there exist $ \ell_m \in \Z^\nu \setminus \{0\} $, $\kappa^{(m)} \in [ \kappa_1, \kappa_2]$ such that 
$$
 {\rm max}_{k \leq m} |  \partial_\kappa^k \{ {\vec \om} (\kappa^{(m)}) \cdot \ell _m \}| <  \frac{1}{1 + m} 
 \langle \ell_m  \rangle
$$
and therefore 
\begin{equation}\label{rick 1}
\forall k \in \N, \quad m \geq k \, , \quad 
\Big| \partial_\kappa^k {\vec \om} (\kappa^{(m)}) \cdot \frac{\ell_m}{\langle \ell_m \rangle} \Big| < \frac{1}{1 + m}\,.
\end{equation}
The sequences $(\kappa^{(m)})_{m \in \N} \subset [ \kappa_1, \kappa_2] $
and $( \ell_m / \langle \ell_m \rangle)_{m \in \N} \subset \R^\nu \setminus \{0\} $ are bounded.
By compactness  there exists a sequence $ m_h \to + \infty $ such that 
$ \kappa^{(m_h)} \to \bar \kappa \in [ \kappa_1, \kappa_2], $ $ \ell_{m_h} / \langle \ell_{m_h} \rangle \to \bar c \neq 0 $. 
Passing to the limit in \eqref{rick 1} for $m_h \to + \infty$ we deduce that 
$ \partial_\kappa^k {\vec \om}(\bar \kappa) \cdot \bar c = 0 $, $  \forall k \in \N $. 
We conclude that the analytic function $ \kappa \mapsto {\vec \om} (\kappa)\cdot \bar c $ is identically zero.  
Since $ \bar c \neq 0 $, 
this is in contradiction with Lemma \ref{non degenerazione frequenze imperturbate}.
\\[1mm]
{\sc Proof of \eqref{1 Melnikov}}. 
Recalling that $ { \Omega}_j(\kappa) = \sqrt{j(1 + \kappa j^2)}$, we have the expansion 
\begin{equation}\label{espansione asintotica degli autovalori}
{ \Omega}_j(\kappa) = \sqrt{\kappa} j^{\frac32} +  \frac{c_j(\kappa)}{\sqrt{\kappa j}}\,,
\qquad c_j(\kappa) := \frac12 \int_0^1  \big( 1 + \frac{t}{\kappa j^2} \big)^{-1/2} d t
\end{equation}
where 
\begin{equation}\label{derivata cj}
\forall k \in \N \, , \quad \big| \partial_\kappa^k \frac{c_j(\kappa)}{\sqrt{\kappa }} \big| \leq C(k) 
\end{equation}
uniformly in $  j \in {\mathbb S}^c $, $ \kappa \in [\kappa_1, \kappa_2]  $. 

First of all note that $ \forall \kappa \in [\kappa_1, \kappa_2] $, we have 
$ |{\vec \om} (\kappa ) \cdot \ell  + { \Omega}_j (\kappa )  | \geq $ $ 
{ \Omega}_j (\kappa )   - |{\vec \om} (\kappa )  \cdot \ell  | \geq $ $ \sqrt{\kappa_1} j^{3/2} - C | \ell  | \geq  |\ell  |  $
if $ j^{3/2} \geq C_0 |\ell  | $ for some $ C_0 > 0 $. 
Therefore in \eqref{1 Melnikov} we can restrict to the indices 
$ (\ell  , j ) \in \Z^\nu \times (\N^+ \setminus {\mathbb S}^+) $ satisfying  
\be\label{prima restrizione}
j^{\frac32} < C_0 | \ell  | \, . 
\ee
Arguing by contradiction (as for proving \eqref{0 Melnikov}), we suppose that 
for all $m \in \N$ there exist  $ \ell_m \in \Z^\nu$, $ j_m \in \N^+ \setminus {\mathbb S}^+  $ and 
$ \kappa^{(m)} \in [ \kappa_1, \kappa_2]$, such that 
$$
\max_{k \leq m}\Big| 
\partial_\kappa^k \Big\{ {\vec \om}(\kappa^{(m)}) \cdot \frac{\ell_m}{\langle \ell_m \rangle} + 
\frac{{ \Omega}_{j_m}(\kappa^{(m)})}{\langle \ell_m \rangle} \Big\} \Big| 
< \frac{1}{1 + m} 
$$
and therefore
\begin{equation}\label{rick 2}
\forall k \in \N, \quad m \geq k \, , \quad 
\Big| 
\partial_\kappa^k \Big\{ {\vec \om}(\kappa^{(m)}) \cdot \frac{\ell_m}{\langle \ell_m \rangle} + 
\frac{{ \Omega}_{j_m}(\kappa^{(m)})}{\langle \ell_m \rangle} \Big\} \Big| 
< \frac{1}{1 + m} \, . 
\end{equation}
Since the sequences $(\kappa^{(m)})_{m \in \N} \subset [\kappa_1, \kappa_2] $ 
and $(\ell_{m} / \langle \ell_m \rangle)_{m \in \N} \in \R^\nu $ are bounded, there exist $m_h \to + \infty$ such that 
\be\label{up to subsequence}
\kappa^{(m_h)} \to \bar \kappa \in [ \kappa_1, \kappa_2]\,,\quad \frac{\ell_{m_h}}{\langle \ell_{m_h} \rangle} \to \bar c \in \R^\nu \,.
\ee
We now distinguish two cases:
\\[1mm]
{\sc Case 1: $ (\ell_{m_h}) \subset \Z^\nu $ is bounded}. In this case, 
up to subsequence,  $\ell_{m_h} \to \bar \ell \in \Z^\nu $, and since $|j_m| \leq C 
| \ell_m|^{\frac23}$ for all $m$ (see \eqref{prima restrizione}), 
we have $j_{m_h} \to \bar \jmath $. Passing to the limit for $m_h \to + \infty$ in \eqref{rick 2} we deduce, by \eqref{up to subsequence}, that 
$$
\partial_\kappa^k \big\{ {\vec \om}(\bar \kappa) \cdot \bar c + 
{ \Omega}_{\bar \jmath} (\bar \kappa)  \langle \bar \ell \rangle^{-1} \big\} = 0\,,\quad \forall k \in \N \, .
$$
Therefore the analytic function
$ \kappa \mapsto {\vec \om}(  \kappa) \cdot \bar c + {\langle \bar \ell \rangle}^{-1} { \Omega}_{\bar \jmath}( \kappa)  $
is identically zero. Since
$ ( \bar c, \langle \ell \rangle^{-1}) \neq 0 $  this is in contradiction with Lemma \ref{non degenerazione frequenze imperturbate}.
\\[1mm]
{\sc Case 2: $ (\ell_{m_h})$ is unbounded}. Up to subsequence $ | \ell_{m_h}| \to + \infty $.
In this case the constant $ \bar c \neq 0 $ in \eqref{up to subsequence}.  Moreover, 
by \eqref{prima restrizione}, we also have that, up to subsequences, 
\be\label{rapporto tende}
j_{m_h}^{\frac32}  \langle \ell_{m_h} \rangle^{-1} \to \bar d \in \R \, .
\ee
By \eqref{espansione asintotica degli autovalori}, \eqref{derivata cj},  \eqref{up to subsequence}, \eqref{rapporto tende}, 
we get
\be\label{convergenza Meln1}
\begin{aligned}
& \frac{{ \Omega}_{j_{m_h}}( \kappa^{(m_h)})}{\langle \ell_{m_h} \rangle} = 
\sqrt{\kappa^{(m_h)}} \frac{ j_{m_h}^{\frac32}}{\langle \ell_{m_h} \rangle} + 
\frac{c_{j_{m_h}}(\kappa^{(m_h)})}{\sqrt{\kappa^{(m_h)} j_{m_h}  }\langle \ell_{m_h}\rangle} 
\to \bar d \sqrt{\bar \kappa} \, , \\
& \partial_\kappa^k \frac{{ \Omega}_{j_{m_h}} (\kappa^{(m_h)})}{\langle \ell_{m_h} \rangle} 
\to  \bar d \, \partial_\kappa^k \sqrt{\bar \kappa} 
\end{aligned}
\ee
as $ m_h \to + \infty $. 
Passing to the limit in \eqref{rick 2}, by \eqref{convergenza Meln1}, \eqref{up to subsequence}
we deduce that 
$ \partial_\kappa^k \big\{ {\vec \om}(\bar \kappa) \cdot \bar c + \bar d \sqrt{\bar \kappa} \big\} = 0 $, $ \forall k \in \N $. 
Therefore the analytic function 
$ \kappa \mapsto {\vec \om}( \kappa) \cdot \bar c + \bar d \sqrt{ \kappa} = 0 $ is identically zero. 
Since $ (\bar c, \bar d ) \neq 0 $
this is in contradiction with  Lemma \ref{non degenerazione frequenze imperturbate}.
\\[1mm]
{\sc Proof of \eqref{2 Melnikov-}}. Notice that, for all $ \kappa \in [\kappa_1, \kappa_2] $, 
\begin{align*}
| {\vec \om} (\kappa) \cdot \ell  + { \Omega}_j (\kappa) - { \Omega}_{j'} (\kappa) | 
& \geq |{ \Omega}_j (\kappa) - { \Omega}_{j'} (\kappa) | - | {\vec \om} (\kappa) | |\ell  | \\
& \stackrel{\eqref{espansione asintotica degli autovalori}, \eqref{derivata cj}}
\geq  \sqrt{\kappa_1} |j^{\frac32} - j'^{\frac32}| -  
C - C |\ell| \geq \langle \ell \rangle 
\end{align*}
provided $ | j^{\frac32} - j'^{\frac32}|  \geq C_1  \langle \ell  \rangle $, for some $ C_1 > 0 $.
Therefore in  \eqref{2 Melnikov-} we can restrict to the indices such that 
\begin{equation}\label{rick 4}
|j^{\frac32} - j'^{\frac32}| < C_1 \langle \ell \rangle\,.
\end{equation}
Moreover in \eqref{2 Melnikov-} we can also assume that $ j \neq j' $ otherwise 
\eqref{2 Melnikov-} reduces to \eqref{0 Melnikov}, which is already proved. 

Now if, by contradiction, \eqref{2 Melnikov-} is false, 
we deduce, arguing as in the previous cases, 
that for all $m \in \N$, there exist $ \ell_m \in \Z^\nu$, $j_m, j'_m \in \N^+ \setminus {\mathbb S}^+ $, $ j_m \neq j'_m $, 
$ \kappa^{(m)} \in [ \kappa_1, \kappa_2]$, such that for all 
\begin{equation}\label{rick 5}
k \in \N \, , \  \forall m \geq k \, , \  
\Big| \partial_\kappa^k \Big\{ {\vec \om}(\kappa^{(m)}) \cdot \frac{\ell_m}{\langle \ell_m \rangle} + \frac{{ \Omega}_{j_m}(\kappa^{(m)})}{\langle \ell_m \rangle} - \frac{{ \Omega}_{j'_m}(\kappa^{(m)})}{\langle \ell_m \rangle} \Big\} \Big| 
<  \frac{1}{1 + m}\,.
\end{equation}
As in the previous cases, since the sequences 
$ (\kappa^{(m)})_{m \in \N}$, $(\ell_{m} / \langle \ell_{m}\rangle)_{m \in \N} $ 
are bounded, there exists $ m_h \to + \infty $ such that 
\be\label{conver2}
\kappa^{(m_h)} \to \bar \kappa \in [ \kappa_1, \kappa_2]\,,\quad \ell_{m_h} / \langle \ell_{m_h} \rangle \to \bar c 
\in \R^\nu \,.
\ee
We distinguish again two cases:
\\[1mm]
{\sc Case 1 : $ (\ell_{m_h} )$ is bounded}.
In this case, up to subsequence, $ \ell_{m_h} \to \bar \ell \in \Z^\nu $. 
Using that 
$$
|j^{\frac32} - j'^{\frac32}| \geq |j - j'|(\sqrt{j} + \sqrt{j'}) \geq \sqrt{j} + \sqrt{j'} \,, \quad \forall j \neq j' \, , 
$$ 
by \eqref{rick 4} we deduce that also $ j_{m_h} $, $ j'_{m_h}$ are 
bounded sequences and therefore, up to subsequence,  
\be\label{convjj'}
j_{m_h} \to \bar \jmath \,, \qquad j'_{m_h} \to \bar \jmath' \,, \quad \bar \jmath \neq \bar \jmath' \, . 
\ee
Hence passing to the limit in \eqref{rick 5} for $ m_h \to + \infty $, we deduce by \eqref{conver2}, \eqref{convjj'}  that 
$$
\partial_\kappa^k \big\{ {\vec \om}(\bar \kappa) \cdot \bar c + 
{ \Omega}_{\bar \jmath}(\bar \kappa)  \langle \bar \ell \rangle^{-1} - 
{ \Omega}_{\bar \jmath'}(\bar \kappa) \langle \bar \ell \rangle^{-1} \big\} = 0 \,, \quad \forall k \in \N \, . 
$$
Therefore the analytic function 
$ \kappa \mapsto  {\vec \om}( \kappa) \cdot \bar c + 
{ \Omega}_{\bar \jmath}(\kappa) \langle \bar \ell \rangle^{-1} - 
{ \Omega}_{\bar \jmath'}( \kappa) \langle \bar \ell \rangle^{-1} $
is identically zero.  This in contradiction with  Lemma \ref{non degenerazione frequenze imperturbate}.
\\[1mm]
{\sc Case 2 : $ (\ell_{m_h}) $ is unbounded}. Up to subsequence $ | \ell_{m_h}| \to + \infty$.  
In this case the constant $ \bar c \neq 0 $ in \eqref{conver2}. 
Using \eqref{espansione asintotica degli autovalori}-\eqref{derivata cj},  for all $k \in \N$, 
\begin{align*}
\partial_\kappa^k \frac{{ \Omega}_{j_{m_h}}(\kappa^{(m_h)}) - { \Omega}_{j'_{m_h}}( \kappa^{(m_h)} )}{\langle \ell_{m_h} \rangle} & = \partial_\kappa^k \sqrt{\kappa_{m_h}} \frac{j^{\frac32} -
j'^{\frac32}}{\langle \ell_{m_h} \rangle} + \frac{1}{\sqrt{j_{m_h}}\langle \ell_{m_h}\rangle} \partial_\kappa^k \frac{c_{j_{m_h}}(\kappa^{(m_h)})}{\sqrt{\kappa^{(m_h)}}}  \\
& \quad - \frac{1}{\sqrt{j'_{m_h}}\langle 
\ell_{m_h}\rangle} \partial_\kappa^k \frac{c_{j'_{m_h}}(\kappa^{(m_h)})}{\sqrt{\kappa^{(m_h)}}}
\end{align*}
and
\begin{align*}
& \Big|\frac{1}{\sqrt{j_{m_h}}\langle \ell_{m_h}\rangle} \partial_\kappa^k \frac{c_{j_{m_h}}(\kappa^{(m_h)})}{\sqrt{\kappa^{(m_h)}}} - \frac{1}{\sqrt{j'_{m_h}}\langle \ell_{m_h}\rangle} \partial_\kappa^k \frac{c_{j'_{m_h}}(\kappa^{(m_h)})}{\sqrt{\kappa^{(m_h)}}} \Big|  \\
&  \leq \frac{C}{\langle \ell_{m_h} \rangle} 
{\rm sup}_{j \in \N^+ \setminus {\mathbb S}^+, \kappa \in [\kappa_1, \kappa_2]} \Big| \partial_\kappa^k \frac{c_j(\kappa)}{\sqrt{\kappa}} \Big|   \leq \frac{C' (k)}{\langle \ell_{m_h} \rangle} \to 0
\end{align*}
as $ m_h \to + \infty $. Moreover, by \eqref{rick 4}, 
up to subsequences,  
$ |j_{m_h}^{\frac32} - {j'}_{m_h}^{\frac32}| \langle \ell_{m_h} \rangle^{-1} \to \bar d \in \R $. 
Therefore, for all $k \in \N$, 
$$
\partial_\kappa^k \frac{{ \Omega}_{j_{m_h}}(\kappa^{(m_h)}) -
 { \Omega}_{j'_{m_h}}(\kappa^{(m_h)}) }{\langle \ell_{m_h} \rangle} \to \bar d \,  \partial_\kappa^k \sqrt{\bar \kappa}\,.
$$
Passing to the limit in \eqref{rick 5} for $m_h \to + \infty$ we deduce that 
$ \partial_\kappa^k \big\{{\vec \om}(\bar \kappa) \cdot \bar c + \bar d \sqrt{\bar \kappa} \big\}  = 0 $, $  \forall k  \in \N $. 
In conclusion the analytic function 
$ \kappa \mapsto {\vec \om}( \kappa) \cdot \bar c + \bar d \sqrt{\kappa} $
is identically zero. Since $(\bar c, d ) \neq 0 $,
this is a contradiction  with Lemma \ref{non degenerazione frequenze imperturbate}.
\\[1mm]
{\sc Proof of \eqref{2 Melnikov+}.} The proof is similar to the previous ones and we omit it. 
\end{proof}

\chapter{Nash-Moser theorem and measure estimates}\label{sec:functional}

Instead of working in a shrinking neighborhood of the origin, it is a convenient devise to rescale
the  variable $ u \mapsto \e u  $ with $ u = O (1) $, writing  
\eqref{WW}-\eqref{HS} as
\be\label{WW-riscalato}
\pa_t u = J \Omega u + \e X_{P_\e} (u)  
\ee
where $ J \Omega $ is the linearized Hamiltonian vector  field in \eqref{definizione Omega} and
\begin{equation}\label{campo hamiltoniano X P epsilon}
\begin{aligned}
& X_{P_\e} (u) 
 := X_{P_\e}(\kappa, u) \\
& :=   \begin{pmatrix} 
 \e^{- 1}(G(\e \eta)- G(0))\psi \\
- \frac{1}{2} \psi_x^2  
+\frac{1}{2} \frac{ \big(G(\e \eta)\psi + \e \eta_x \psi_x \bigr)^2}{1+ (\e \eta_x)^2}
+ \e^{-1} \kappa \, \eta_{xx} \Big(  \big(1 + (\e \eta_x)^2 \big)^{-3/2} -  1 \Big)  \\
\end{pmatrix} \, .  
\end{aligned}
\end{equation}
Note that the dependence of the vector field $X_{P_\e}$ with respect to $\kappa$ is linear. 
System \eqref{WW-riscalato} is the Hamiltonian system generated by the Hamiltonian 
\be\label{Hamiltonian:rescaled}
{\mathcal H}_\e (u) := \e^{-2} H(\e u ) = H_L (u) + \e P_\e (u) 
\ee
where $ H $ is the water-waves Hamiltonian \eqref{Hamiltonian}, 
$ H_L $ is defined in \eqref{Hamiltonian linear} and
\begin{equation}\label{definizione P epsilon}
\begin{aligned}
P_\e (u) & := P_\e(\kappa, u) \\  
& := \frac{\e^{-1}}{2} (\psi, \big( G(\e \eta) - G(0) \big) \psi)_{L^2(\T_x)}  \\
& \quad +
\e^{-3} \kappa \int_{\T}\Big(  \sqrt{1 + (\e \eta_x)^2} - 1 - \frac{(\e \eta_x)^2}{2}   \Big) \, dx \, .
\end{aligned}
\end{equation}
We decompose the phase space
\begin{equation}
H_{0, {\rm even}}^1 := \Big\{ u := (\eta, \psi) \in H_0^1(\T_x) \times H_0^1(\T_x)\,, \quad u(x) = u(- x)\Big\}
\end{equation} 
as the direct sum of the symplectic subspaces
\begin{equation}\label{H S + H S + bot}
H_{0, {\rm even}}^1 = H_{{\mathbb S}^+} \oplus  H_{{\mathbb S}^+}^\bot  \qquad {\rm where} \qquad
H_{{\mathbb S}^+} := \Big\{ v := \sum_{j \in {\mathbb S}^+} \begin{pmatrix} 
 \eta_j  \\
\psi_j \\
\end{pmatrix} \cos (jx)  \Big\}
\end{equation}
and $ H_{{\mathbb S}^+}^\bot $ denotes the $ L^2$-orthogonal.

We now introduce action-angle variables on the tangential sites by setting
\be\label{Ac-An}
\begin{aligned}
\eta_j & := \sqrt{\frac{2}{\pi}} \, \Lambda_j^{1/2} \sqrt{\xi_j + I_j}  \cos (\theta_j ) , \\  
\psi_j & := - \sqrt{\frac{2}{\pi}} \, \Lambda_j^{- 1/2} \sqrt{\xi_j + I_j} \, \sin (\theta_j ) \, ,  \   
\Lambda_j := \sqrt{j (1 + \kappa j^2)^{-1}} \, , \,  j \in {\mathbb S}^+ \, ,
\end{aligned}
\ee
where $ \xi_j > 0  $, $ j=1, \ldots, \nu $, are positive constants, the variables $ | I_j  | \leq \xi_j $, 
and we  leave unchanged  the  normal component $ z $. 
The symplectic $ 2 $-form in \eqref{2form tutto} then reads (for simplicity of notation we denote it in the same way)
\begin{equation}\label{2form}
{\mathcal W} := \Big( {\mathop \sum}_{j \in {\mathbb S}^+}  d \theta_j \wedge d I_j \Big)  \oplus 
{\mathcal W}_{|H_{{\mathbb S}^+}^\bot} = d \Lambda 
\end{equation}
where $ \Lambda $ is the  Liouville  $1$-form
\begin{equation}\label{Lambda 1 form}
\Lambda_{(\theta, I, z)}[\widehat \theta, \widehat I, \widehat z] := 
- \sum_{j \in {\mathbb S}^+} I_j  \widehat \theta_j - \frac12 \big( J z\,,\,\widehat z \big)_{L^2_x}  \, . 
\end{equation}
 Hence the Hamiltonian system \eqref{WW-riscalato} 
transforms into the new Hamiltonian system 
\be\label{HS-theta-I-z}
\dot \theta = \partial_I H_\e ( \theta, I, z ) \, , \ 
\dot I = - \partial_\theta H_\e ( \theta, I, z ) \, , \quad 
z_t = J \nabla_z H_\e ( \theta, I, z ) 
\ee
generated by the Hamiltonian
\be\label{new Hamiltonian} 
H_\e :=  {\mathcal H}_\e \circ A = \e^{-2} H \circ \e A 
\ee
where
\begin{equation}\label{definizione A}
A (\theta, I, z) := v (\theta, I) +  z := 
\sum_{j \in {\mathbb S}^+} 
\sqrt{\frac{2}{\pi}}
\begin{pmatrix} 
\ \Lambda_j^{1/2} \sqrt{\xi_j + I_j} \, \cos (\theta_j ) \\
- \Lambda_j^{- 1/2} \sqrt{\xi_j + I_j} \, \sin (\theta_j )  \\
\end{pmatrix} \cos (jx) + z \, . 
\end{equation}
We denote by 
$$ 
X_{H_\e} := (\partial_I H_\e,  - \partial_\theta H_\e, J \nabla_z H_\e) $$  
the Hamiltonian vector field
in the variables $(\theta, I, z ) \in \T^\nu \times \R^\nu \times H_{{\mathbb S}^+}^\bot $. 
The involution $ \rho $ in \eqref{defS} becomes 
\be\label{involuzione tilde rho}
\tilde \rho : (\theta, I, z ) \mapsto (- \theta, I, \rho z ) \, .
\ee
By \eqref{Hamiltonian} and \eqref{new Hamiltonian}  the Hamiltonian $ H_\e  $ reads (up to a constant) 
\begin{equation}\label{definizione cal N P}
H_\e = {\mathcal N} + \e P \, , \quad 
{\mathcal N} := H_L \circ A  =    {\vec \om} (\kappa) \cdot  I  + 
\frac12 (z, \Om z)_{L^2_x} \, , \quad P :=  P_\e \circ A\,, 
\end{equation}
where $ {\vec \om} (\kappa) $ is defined in \eqref{tangential-normal-frequencies}
 and  $ \Om $  in \eqref{definizione Omega}. We look for an embedded invariant torus 
\be\label{def:embed}
i : \T^\nu \to \T^\nu \times \R^\nu \times H_{{\mathbb S}^+}^\bot, \quad  
\vphi \mapsto i (\vphi) := ( \theta (\vphi), I (\vphi), z (\vphi)) 
\ee
of the Hamiltonian vector field $ X_{H_\e}  $ 
filled by quasi-periodic solutions with diophantine frequency $ \om \in \R^\nu $
(and which satisfies also  first and second order Melnikov-non-resonance conditions
as in \eqref{Cantor set infinito riccardo}).

\section{Nash-Moser Th\'eor\'eme de conjugaison hypoth\'etique}\label{sec:conjugation-hyp}

The Hamiltonian  $ H_\e $ in 
\eqref{definizione cal N P} is a perturbation of the isochronous  
Hamiltonian $ {\mathcal N} $. The expected quasi-periodic solutions of the Hamiltonian system  \eqref{HS-theta-I-z} 
will have a shifted frequency
which depends on the nonlinear term $ P $. In view of that 
we 
introduce the family of Hamiltonians 
\begin{equation}\label{H alpha}
H_\a := {\mathcal N}_\a + \e P \, , \quad {\mathcal N}_\a :=  \a \cdot I + \frac12 (z, \Om z)_{L^2_x} \, , \quad \a \in \R^\nu \, ,  
\end{equation}
which depend on the constant vector $ \a  \in \R^\nu $. For the value $ \a = \vec \om (\kappa ) $ we have $ H_{\a} = H_\e $.  
Then we look for a zero $ (i, \a) $
of the nonlinear operator
\begin{equation}\label{operatorF}
\begin{aligned}
{\mathcal F} (i, \a ) 
& :=   {\mathcal F} (i, \a, \om, \kappa,  \e )  := \Dom i (\vphi) - X_{H_\a}  (i (\vphi)  )
 \\
 & =  \Dom i (\vphi) -  (X_{{\mathcal N}_\a}  +  \e X_{P})  (i(\vphi) )    \\
&   :=  \left(
\begin{array}{c}
\Dom \theta (\vphi) -  \a - \e \partial_I P ( i(\vphi)   )   \\
\Dom I (\vphi)  +  \e \partial_\teta P( i(\vphi)  )  \\
\Dom z (\vphi) -  J (\Om z(\vphi) + \e \nabla_z P ( i(\vphi) ))  
\end{array}
\right) 
\end{aligned}
\end{equation}
for some  diophantine vector $ \om \in \R^\nu $.
Thus 
$ \vphi \mapsto i (\vphi) $ is an embedded torus, invariant for the Hamiltonian vector field $ X_{H_\a } $,
filled by quasi-periodic solutions with  frequency $ \om  $. 

Each Hamiltonian $ H_\a $ in \eqref{H alpha} is reversible, i.e. $  H_\a \circ \tilde \rho = H_\a $
where the involution $ \tilde \rho $ is defined in \eqref{involuzione tilde rho}. 
We look for reversible solutions of $ {\mathcal F}(i, \a) = 0 $,  namely satisfying $ {\tilde \rho} i (\vphi ) = i (- \vphi) $ 
(see \eqref{involuzione tilde rho}), i.e.  
\begin{equation}\label{parity solution}
\theta(-\vphi) = - \theta (\vphi) \, , \quad 
I(-\vphi) = I(\vphi) \, , \quad 
z (- \vphi ) = ( \rho z)(\vphi) \, . 
\end{equation}
The weighted Sobolev norm of the periodic component of the embedded torus 
\begin{equation}\label{componente periodica}
{\mathfrak I}(\vphi)  := i (\vphi) - (\vphi,0,0) := ( {\Theta} (\ph), I(\ph), z(\ph))\,, \quad \Theta(\ph) := \teta (\vphi) - \vphi \, , 
\end{equation}
is 
\be\label{weighted-Sobo-three}
\|  {\mathfrak I}  \|_s^{k_0, \gamma} := \| \Theta \|_{H^s_\vphi}^{k_0, \gamma} +  \| I  \|_{H^s_\vphi}^{k_0, \gamma} 
+  \| z \|_s^{k_0, \gamma} 
\ee
where  $  \| z \|_s^{k_0, \gamma} :=  \| \eta \|_s^{k_0, \gamma} + \| \psi \|_s^{k_0, \gamma} $ and 
$\| \ \|_s^{k_0, \gamma} $ is the weghted Sobolev norm defined
 in \eqref{norma pesata derivate funzioni}.

For the next theorem, we recall that $ k_0 $ 
is the index of non-degeneracy provided by Proposition \ref{Lemma: degenerate KAM} and   
it depends only on the linear unperturbed frequencies. Therefore it is considered as 
an absolute constant and 
we will often omit to write explicitly the dependence of the constants with respect to  $ k_0 $.
We look for quasi periodic solutions with frequency $ \om $  
belonging to a $  \d $-neighborhood (independent of $ \e $)
\be\label{unperturbed-frequencies} 
\tOm := \Big\{  \om \in \R^\nu \, : \, {\rm dist} \big(\om,  {\vec \omega}[\kappa_1, \kappa_2]\big) < \d \, , \
\d > 0 \Big\} 
\ee
of the 
unperturbed linear frequencies $ {\vec \omega}[\kappa_1, \kappa_2] $ defined  in \eqref{tangential-normal-frequencies}.

\begin{theorem}\label{MAINTHEOREM}
{\bf (Nash-Moser)}
Fix finitely many tangential sites $ {\mathbb S}^+ \subset \N^+ $ and let $ \nu := |{\mathbb S}^+ | $. 
Let $ \tau \geq 1 $.  There exist constants $ \e_0 > 0 $, 
$ a_0 := a_0 (\nu, \tau, k_0) > 0 $ and $k_1 := k_1(\nu, k_0, \tau) > 0$ such that, for all 
$ \g = \e^a $, $ 0 < a < a_0 $, $ \e \in (0, \e_0) $,
there exist a ${k_0}$-times differentiable function
\be\label{mappa aep}
\begin{aligned}
& \a_\infty : \tOm \times [\kappa_1, \kappa_2] \mapsto \R^\nu \, , \\
&   \a_\infty (\om, \kappa ) = \om + r_\e (\om, \kappa ) \, , \quad {\text with} \quad  |r_\e|^{k_0, \gamma}  \leq C \e \g^{-(1 + k_1)} \, , 
\end{aligned}
\ee
a family of embedded tori  $ i_\infty  $ defined for all $ \om \in \tOm $ and $\kappa \in [\kappa_1, \kappa_2]$ satisfying the reversibility property \eqref{parity solution} and 
\be\label{stima toro finale}
\|  i_\infty (\vphi) -  (\vphi,0,0) \|_{s_0}^{k_0, \g} \leq C \e \g^{-(1 + k_1)}  \, , 
\ee
 a sequence of ${k_0}$-times differentiable functions $ \mu_j^\infty : \tOm \times [\kappa_1, \kappa_2] \to \R $, $  j \in \N^+ \setminus {\mathbb S}^+ $, of the form
\begin{equation}\label{autovalori infiniti}
\mu_j^\infty (\omega, \kappa) =
\mathtt m_3^\infty (\om, \kappa) j^\frac12 (1 + \kappa j^2)^{\frac12} +  \mathtt m_1^\infty (\om, \kappa)  j^{\frac12} 
+ r_j^\infty (\omega, \kappa)
\end{equation}
(defined in \eqref{autovalori finali riccardo}) 
satisfying 
\begin{equation}\label{stime autovalori infiniti}
| \mathtt m_3^\infty  - 1|^{k_0, \gamma} +  | \mathtt m_1^\infty|^{k_0, \gamma} \leq C \e\,, \qquad    \sup_{j \in {\mathbb S}^c} |r_j^\infty |^{k_0, \gamma}   \leq C  \e \gamma^{- k_1}\, ,
\end{equation}
such that  for all $(\omega, \kappa)$ in the  Borel set
\begin{equation}\label{Cantor set infinito riccardo}
\begin{aligned}
{\mathcal C}_\infty^{\gamma} & := \Big\{ (\omega, \kappa) \in \tOm \times [\kappa_1, \kappa_2] \, : 
 \, |\om \cdot \ell  | \geq \g \langle \ell \rangle^{-\tau}, \, \forall \ell \in \Z^{\nu} \setminus \{ 0 \},  \\
& \qquad   |\omega \cdot \ell  + \mu_j^\infty (\om, \kappa)  | \geq 4 \gamma j^{\frac32} \langle \ell  \rangle^{- \tau}, \,   \forall \ell   \in \Z^\nu, \, j \in \N^+ \setminus {\mathbb S}^+    \\
& \qquad \qquad  \qquad \qquad \text{$(1$-Melnikov conditions$)$}, \\
& \qquad |\omega \cdot \ell  + 
 \mu_j^\infty (\om, \kappa) - \varsigma \mu_{j'}^\infty (\om, \kappa) | \geq 
 \frac{4 \gamma | j^{\frac32} - \varsigma j'^{\frac32}|}{ \langle \ell  \rangle^{\tau}},      \\
 &  \qquad     \forall \ell   \in \Z^\nu,j, j' \in \N^+ \setminus {\mathbb S}^+,\varsigma = \pm 1 \quad \text{$(2$-Melnikov conditions$)$}
 \Big\} 
\end{aligned}
\end{equation}
the function $ i_\infty (\vphi) := i_\infty (\omega, \kappa,  \e)(\vphi) $ is a solution of 
$ {\mathcal F}( i_\infty, \a_\infty (\om, \kappa) , \om, \kappa, \e)  = 0 $. As a consequence the embedded torus 
$ \vphi \mapsto i_\infty (\vphi) $ is invariant for the Hamiltonian vector field $ X_{H_{\a_\infty (\om, \kappa)}  } $  
and it is filled by quasi-periodic solutions with frequency $ \om $.
\end{theorem}

Note that  the Borel set $  {\mathcal C}_\infty^\gamma $ in \eqref{Cantor set infinito riccardo} 
 for which a solution exists is  defined only in terms of the ``final" solution $ i_\infty $ and the 
 ``final" normal perturbed frequencies $  \mu_j^\infty $, $ j \in \N^+ \setminus {\mathbb S}^+ $. In Theorem 
 \ref{MAINTHEOREM} 
  we are not concerned about the measure of  $ {\mathcal C}_\infty^\gamma $, in particular in investigating if it is not empty
 (note that $ \a_\infty $, $ i_\infty $ and each $\mu_j^\infty $ are anyway defined for all $ (\om, \kappa) \in \tOm \times [\kappa_1, \kappa_2] $).

\section{Measure estimates}\label{sec:measure}

By  \eqref{mappa aep}, for any $\kappa \in [\kappa_1, \kappa_2]$,  the function $ \a_\infty(\cdot, \kappa) $ from $ \tOm $ into the image $\a_\infty( \tOm \times \{\kappa \})  $ is invertible:
\be\label{a-b}
\begin{aligned}
& \beta = \a_\infty (\om, \kappa) = \om + r_\e (\om, \kappa)  \quad \Longleftrightarrow \quad \\ 
& \om  = \a_\infty^{-1}(\b , \kappa ) = \beta + {\tilde r}_\e (\beta, \kappa) \quad {\rm with }  \quad
 |{\tilde r}_\e |^{k_0, \gamma} \leq C \e \g^{-(1 + k_1)} \, . 
 \end{aligned}
\ee
We underline that the function $\alpha_\infty^{- 1}(\cdot, \kappa)$ is the inverse of $\alpha_\infty(\cdot, \kappa)$, at any fixed value of $\kappa$ in $[\kappa_1, \kappa_2]$. 
{\sc Proof of \eqref{a-b}.}The inverse map $\beta \mapsto \alpha_\infty^{- 1}(\beta, \kappa) = \beta + {\tilde r}_\e (\beta, \kappa)$ satisfies the identities  $\tilde r_\e(\beta, \kappa ) + r_\e(\beta + \tilde r_\e(\beta, \kappa), \kappa) = 0$. By the implicit function theorem $\tilde r_\e$ is ${\mathcal C}^1$ with respect to $(\beta, \kappa)$ and it satisfies the identities
\begin{align*}
D_\beta \tilde r_\e(\beta, \kappa) & = 
- \big({\rm Id} + D_\omega r_\e(\beta + \tilde r_\e(\beta, \kappa), \kappa) \big)^{- 1} 
D_\omega r_\e(\beta + \tilde r_\e(\beta, \kappa), \kappa)\,, \\
\partial_\kappa \tilde r_\e(\beta, \kappa) & = - \big({\rm Id} + D_\omega r_\e(\beta + \tilde r_\e(\beta, \kappa), \kappa) \big)^{- 1} \partial_\kappa r_\e (\beta + \tilde r_\e(\beta, \kappa) , \kappa)
\end{align*}
where $D_\omega$, $D_\beta$ denote the Fr\'echet derivatives with respect to the variables $\omega$ and $\beta$.
Arguing by induction on $|k| \leq k_0$, $\tilde r_\e$ is $k_0$-times differentiable and the estimate \eqref{a-b} follows as the estimate \eqref{claim p q}.

\noindent
Then, for any $ \b \in \a_\infty ({\mathcal C}_\infty^\gamma) $, Theorem \ref{MAINTHEOREM} proves the existence of
 an embedded invariant torus  filled by quasi-periodic solutions with diophantine frequency 
$ \om =  \a_\infty^{-1}(\b , \kappa )  $ for the  Hamiltonian 
$$
H_\b =  \beta \cdot I + \frac12 ( z, \Om z)_{L^2_x} + \e P \, . 
$$
Consider the curve of the unperturbed linear frequencies 
$$ 
[\kappa_1, \kappa_2] \ni \kappa \mapsto \vec \om (\kappa ) := ( \sqrt{j(1+ \kappa j^2) } )_{j \in {\mathbb S}^+} \in \R^\nu \, . 
$$
In Theorem \ref{Teorema stima in misura} below, we prove that for ``most"  values of $ \kappa \in [\kappa_1, \kappa_2] $ 
the vector $(\alpha_\infty^{- 1}(\vec \om (\kappa ), \kappa), \kappa)$ is in ${\mathcal C}^\gamma_\infty$. Hence, for such 
values of $ \kappa $ we have found 
 an embedded invariant torus for the Hamiltonian 
$ H_\e $ in \eqref{definizione cal N P}, 
filled by quasi-periodic solutions with diophantine frequency $ \om =  \a_\infty^{-1}( \vec \om (\kappa ), \kappa ) $.
This implies Theorem \ref{thm:main0}.

\begin{theorem}\label{Teorema stima in misura} {\bf (Measure estimates)} 
Let
\begin{equation}\label{relazione tau k0}
 \gamma = \e^a \, , \ \ 0 < a < \min\{a_0, 1 / (1 + k_0 + k_1) \} \, , \quad \tau > k_0 ( \nu + 4) \,.
\end{equation} 
Then the measure of the set 
\be\label{defG-ep}
{\mathcal G}_\e := 
\big\{ \kappa \in [\kappa_1, \kappa_2] \, :  \big( \a_\infty^{-1} ( {\vec \om} (\kappa ), \kappa), \kappa \big) \in  {\mathcal C}^\gamma_\infty   \big\} 
\ee
satisfies  $ |{\mathcal G}_\e| \geq (\kappa_2 - \kappa_1) - C \e^{a/k_0} $ as $ \e \to 0 $.
\end{theorem}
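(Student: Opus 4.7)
The plan is to write the complement $[\kappa_1,\kappa_2]\setminus{\cal G}_\e$ as a union of resonant sets, estimate the measure of each via the classical R\"ussmann lemma, and sum the contributions. Setting $\om_\e(\kappa):=\a_\infty^{-1}(\vec\om(\kappa),\kappa)$, by \eqref{defG-ep} and \eqref{Cantor set infinito riccardo} the set ${\cal G}_\e$ is characterized by the diophantine condition on $\om_\e(\kappa)$ and the first/second order Melnikov conditions on $\mu_j^\infty(\om_\e(\kappa),\kappa)$, so that
\begin{equation*}
[\kappa_1,\kappa_2]\setminus{\cal G}_\e \;\subseteq\;
\bigcup_{\ell\neq 0} R_{\ell}^{(0)} \;\cup\; \bigcup_{\ell,\,j} R_{\ell,j}^{(1)} \;\cup\; \bigcup_{\ell,\,j,\,j',\,\varsigma}R^{(2)}_{\ell,j,j',\varsigma},
\end{equation*}
each being a sublevel set of the resonance function
$f_{\ell,j,j',\varsigma}(\kappa):=\om_\e(\kappa)\cdot\ell+\mu_j^\infty(\om_\e(\kappa),\kappa)-\varsigma\mu_{j'}^\infty(\om_\e(\kappa),\kappa)$, and analogously for the $0$- and $1$-Melnikov functions.

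The first key step is to control the perturbed resonance functions in ${\cal C}^{k_0}([\kappa_1,\kappa_2])$. From \eqref{a-b} one gets $|\om_\e-\vec\om|_{{\cal C}^{k_0}}\leq C\e\g^{-(1+k_0+k_1)}$; combining \eqref{autovalori infiniti}, \eqref{stime autovalori infiniti} with the chain rule, the composed frequencies $\kappa\mapsto\mu_j^\infty(\om_\e(\kappa),\kappa)$ satisfy $|\mu_j^\infty(\om_\e(\cdot),\cdot)-\Om_j(\cdot)|_{{\cal C}^{k_0}}\leq C\e\g^{-(1+k_0+k_1)}$ uniformly in $j\in\N^+\setminus{\mathbb S}^+$, the crucial point being the $j$-uniformity of the error term in \eqref{autovalori infiniti}. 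Under the choice \eqref{relazione tau k0}, $\e\g^{-(1+k_0+k_1)}=\e^{1-a(1+k_0+k_1)}=o(1)$; combining with the unperturbed lower bounds \eqref{0 Melnikov}--\eqref{2 Melnikov+} of Proposition \ref{Lemma: degenerate KAM}, one concludes that for every $(\ell,j,j',\varsigma)$ and $\e$ small enough
\begin{equation*}
\max_{k\leq k_0}|\partial_\kappa^k f_{\ell,j,j',\varsigma}(\kappa)| \;\geq\; \tfrac{\rho_0}{2}\,\langle \ell\rangle,\qquad \forall\kappa\in[\kappa_1,\kappa_2],
\end{equation*}
and analogously for the $0$- and $1$-Melnikov resonance functions.

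Next I apply R\"ussmann's lemma (Theorem 17.1 in \cite{Ru1}): for any ${\cal C}^{k_0}$ function $f$ on $[\kappa_1,\kappa_2]$ with $\max_{k\leq k_0}|\partial_\kappa^k f|\geq\rho$, one has $|\{|f|<\delta\}|\leq C_{k_0}(\delta/\rho)^{1/k_0}$. Applied to $R^{(2)}_{\ell,j,j',\varsigma}$ with $\delta=4\g|j^{3/2}-\varsigma j'^{3/2}|\langle\ell\rangle^{-\tau}$, this gives
\begin{equation*}
|R^{(2)}_{\ell,j,j',\varsigma}| \;\leq\; C \left(\frac{\g\, |j^{3/2}-\varsigma j'^{3/2}|}{\langle\ell\rangle^{\tau+1}}\right)^{1/k_0},
\end{equation*}
while the requirement that $R^{(2)}_{\ell,j,j',\varsigma}$ be nonempty forces $|j^{3/2}-\varsigma j'^{3/2}|\lesssim\langle\ell\rangle$, via the asymptotic \eqref{autovalori infiniti} and the boundedness of $\om_\e$ on $\tOm$. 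The bounds for $R^{(0)}_\ell$ and $R^{(1)}_{\ell,j}$ are similar and simpler.

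Finally, one sums the three contributions: the standard counting over pairs $(j,j')$ subject to $|j^{3/2}-\varsigma j'^{3/2}|\lesssim\langle\ell\rangle$ yields a polynomial-in-$\langle\ell\rangle$ number of relevant indices, and the condition $\tau>k_0(\nu+4)$ in \eqref{relazione tau k0} provides precisely the decay needed to sum absolutely over $\ell\in\Z^\nu$; the total measure is $\leq C\g^{1/k_0}=C\e^{a/k_0}$. The main obstacle is the second step: transferring the qualitative non-degeneracy of Proposition \ref{Lemma: degenerate KAM} into a uniform quantitative lower bound on the \emph{perturbed} resonance functions, uniformly in the infinitely many indices $(\ell,j,j',\varsigma)$. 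This relies simultaneously on the $j$-uniform ${\cal C}^{k_0}$ smallness of $\mu_j^\infty-\Om_j$ guaranteed by \eqref{autovalori infiniti}--\eqref{stime autovalori infiniti} and on the preservation of the $\langle\ell\rangle$ scale (rather than $\langle\ell,j,j'\rangle$) in the lower bound, the latter being automatic once the perturbation is small in the appropriate $\g$-weighted ${\cal C}^{k_0}$ norm.
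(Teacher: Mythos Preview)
Your overall architecture matches the paper's: decompose the complement into resonant sets, establish perturbed transversality, apply R\"ussmann, then sum. However, there is a genuine gap in your transversality step.

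You claim that $|\mu_j^\infty(\om_\e(\cdot),\cdot)-\Om_j(\cdot)|_{{\cal C}^{k_0}}\leq C\e\g^{-(1+k_0+k_1)}$ \emph{uniformly in $j$}, attributing this to the $j$-uniformity of the error in \eqref{autovalori infiniti}. This is false. By \eqref{autovalori infiniti},
\[
\mu_j^\infty-\Om_j = (\mathtt m_3^\infty-1)\,\Om_j + \mathtt m_1^\infty\, j^{1/2} + r_j^\infty,
\]
and while $r_j^\infty$ is indeed $j$-uniform, the first term is of size $\sim \e\, j^{3/2}$ and its $\kappa$-derivatives are likewise $\sim \e\g^{-k}\, j^{3/2}$. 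There is no $j$-uniform bound on $\mu_j^\infty-\Om_j$, and consequently your deduction of the lower bound $\max_{k\leq k_0}|\partial_\kappa^k f_{\ell,j,j',\varsigma}|\geq (\rho_0/2)\langle\ell\rangle$ does not go through as written.

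The paper repairs this by reversing the order of two of your steps. One first proves the index restriction (Lemma \ref{restrizione indici risonanti}): nonemptiness of $R^{(2)}_{\ell,j,j',\varsigma}$ forces $|j^{3/2}-\varsigma j'^{3/2}|\lesssim\langle\ell\rangle$ (and $j^{3/2}\lesssim\langle\ell\rangle$ for the first Melnikov), by a direct size argument that does not use transversality. Only then does one prove transversality (Lemma \ref{Lemma: degenerate KAM perturbato}), estimating the \emph{difference}
\[
\big|\partial_\kappa^k\big\{(\mu_j^\infty-\mu_{j'}^\infty)-(\Om_j-\Om_{j'})\big\}\big|
\leq C\e\g^{-(k+k_1)}\,|j^{3/2}-j'^{3/2}|,
\]
which, combined with the index restriction, yields an error $\leq C\e\g^{-(1+k_0+k_1)}\langle\ell\rangle$ that can be absorbed into the unperturbed bound $\rho_0\langle\ell\rangle$. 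The same index restriction is also needed to verify the \emph{upper} bound on $\max_{k\leq k_0}|\partial_\kappa^k g_{\ell j j'}|$ required by R\"ussmann's Theorem 17.1, which you omit. Once you reorder these two ingredients, the remainder of your argument (R\"ussmann plus the summation using $\tau>k_0(\nu+4)$) is correct and coincides with the paper's.
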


Theorems \ref{MAINTHEOREM}-\ref{Teorema stima in misura} prove
Theorem \ref{thm:main0}  with the Borel set $ {\mathcal G} := {\mathcal G}_\e $ defined in \eqref{defG-ep} 
and frequency vector $ \tilde \om = \om_\e (\kappa ) $
defined in \eqref{omega epsilon kappa} below. 

\smallskip

The rest of this section is devoted to the proof of Theorem \ref{Teorema stima in misura}. 
By \eqref{a-b}
the vector
\begin{equation}\label{omega epsilon kappa}
\om_\e (\kappa) :=    \a_\infty^{-1}( {\vec \om} (\kappa ), \kappa  ) = {\vec \om} (\kappa) + {\mathtt r}_\e ( \kappa ) \, , \quad 
{\mathtt r}_\e ( \kappa ) := {\tilde r}_\e ({\vec \om } (\kappa), \kappa ) \, , 
\end{equation}
satisfies 
\begin{equation}\label{stima omega epsilon kappa}
| \pa_\kappa^k {\mathtt r}_\e  (\kappa) | \leq C \e \g^{-(1 + k_1 + k)} \, , \ \forall 0 \leq k \leq k_0 \, .
\end{equation}
We also denote, with a small abuse of notation,
\begin{equation}\label{mu j infty kappa}
\begin{aligned}
\mu_j^\infty (\kappa) & := \mu_j^\infty ( \om_\e(\kappa), \kappa) := \mathtt m_3^\infty (\kappa) j^\frac12 (1 + \kappa j^2)^{\frac12} +  \mathtt m_1^\infty (\kappa)  j^{\frac12} + r_j^\infty (\kappa) \, ,  \\
& \qquad  \forall  j \in \N^+ \setminus {\mathbb S}^+ \, , 
\end{aligned}
\end{equation}
where 
\begin{equation}\label{autovalori in kappa}
 \mathtt m_3^\infty (\kappa)  := \mathtt m_3^\infty(\omega_\e(\kappa), \kappa)\,,  \  \mathtt m_1^\infty(\kappa) := \mathtt m_1^\infty(\omega_\e(\kappa), \kappa)\,,  \
  r_j^\infty(\kappa) := r_j^\infty(\omega_\e(\kappa), \kappa)\,.
\end{equation}
By \eqref{stime autovalori infiniti}, \eqref{autovalori in kappa} and 
\eqref{omega epsilon kappa}, using that $ \e \gamma^{- ( 1 + k_1 + k_0)} \leq 1 $ 
(that by  \eqref{relazione tau k0} is satisfied for $ \e $ small), we get
\begin{equation}\label{stime coefficienti autovalori in kappa}
\begin{aligned}
& | \partial_\kappa^{k}[\mathtt m_3^\infty(\kappa) - 1]|\,,  
|\pa_\kappa^k \mathtt m_1^\infty(\kappa)| \leq C \e \gamma^{- k}\,, \quad {\rm sup}_{j \in {\mathbb S}^c} 
|\pa_\kappa^k r_j^\infty (\kappa)| \leq C \e \gamma^{- (k + k_1)}\,, \\
&  \  \forall 0 \leq k \leq k_0\,.
\end{aligned}
\end{equation}
 By \eqref{Cantor set infinito riccardo}, \eqref{omega epsilon kappa}, \eqref{mu j infty kappa} the set
$ {\mathcal G}_\e $  in \eqref{defG-ep} writes 
\begin{align*}
{\mathcal G}_\e  & = \Big\{  \kappa \in [\kappa_1, \kappa_2]  :  
|\om_\e(\kappa) \cdot \ell  | \geq \g \langle \ell \rangle^{-\tau} ,  \forall \ell \in \Z^{\nu}  \setminus  \{ 0 \}, \\
& \qquad \quad  |\omega_\e(\kappa) \cdot \ell  + \mu_j^\infty (\kappa)  | \geq 4 \gamma j^{\frac32} \langle \ell  \rangle^{- \tau}\!, 
 \forall \ell  \in \Z^\nu \!, j \in \N^+  \! \setminus \! {\mathbb S}^+,  \\
& \qquad \ \  \ |\omega_\e(\kappa) \cdot \ell  + 
 \mu_j^\infty (\kappa) - \varsigma \mu_{j'}^\infty (\kappa) | \geq 4 \gamma 
 | j^{\frac32} - \varsigma j'^{\frac32}| \langle \ell  \rangle^{-\tau}, \\ 
 & \qquad \quad \forall \ell   \in \Z^\nu, j, j' \in \N^+ \setminus {\mathbb S}^+, \varsigma \in \{ +, -\}   \Big\}\,. 
\end{align*}
We estimate the measure of the complementary set  
\begin{equation}\label{complementare insieme di cantor}
\begin{aligned}
{\mathcal G}_\e^c &  := [\kappa_1, \kappa_2] \setminus {\mathcal G}_\e \\
& := \Big(\bigcup_{\ell} R_{\ell}^{(0)} \Big) \bigcup \Big(\bigcup_{\ell , j} R_{\ell, j}^{(I)}  \Big)\bigcup  \Big(\bigcup_{\ell, j, j'} R_{\ell j j'}^{(II)} \Big) \bigcup  \Big(\bigcup_{\ell, j, j'} Q_{\ell j j'}^{(II)} \Big)
\end{aligned}
\end{equation}
where the ``resonant sets" are 
\begin{align}
& R_\ell^{(0)} := \big\{ \kappa \in [\kappa_1, \kappa_2] : |\omega_\e (\kappa) \cdot \ell| <  4 \gamma \langle \ell \rangle^{- \tau} \big\} 
\label{reso1} \\
& R_{\ell j}^{(I)} := \big\{ \kappa \in [\kappa_1, \kappa_2] : |\omega_\e (\kappa) \cdot \ell + \mu_j^\infty(\kappa)| < 4 \gamma j^{\frac32}\langle \ell \rangle^{- \tau} \big\} \label{reso2} \\
& 
R_{\ell j j'}^{(II)} := \big\{ \kappa \in [\kappa_1, \kappa_2] : |\omega_\e (\kappa) \cdot \ell + \mu_j^\infty(\kappa) - \mu_{j'}^\infty(\kappa)| < 4 \gamma |j^{\frac32} - j'^{\frac32}| \langle \ell \rangle^{- \tau} \big\}  \label{reso3} \\
& Q_{\ell j j'}^{(II)} := \big\{ \kappa \in [\kappa_1, \kappa_2] : |\omega_\e (\kappa) \cdot \ell + \mu_j^\infty(\kappa) + \mu_{j'}^\infty(\kappa)| < 4 \gamma |j^{\frac32} + j'^{\frac32}| \langle \ell \rangle^{- \tau} \big\}\,. \label{reso4}
\end{align}

\begin{lemma}\label{restrizione indici risonanti}
If $R_{\ell j}^{(I)} \neq \emptyset$ then $j^{\frac32} \leq C \langle \ell \rangle$. 
If $ R_{\ell j j'}^{(II)} \neq \emptyset$ then $|j^{\frac32} - j'^{\frac32}| \leq C \langle \ell \rangle$.
 If $Q_{\ell j j'}^{(II)} \neq \emptyset$ then $j^{\frac32} + j'^{\frac32} \leq C \langle \ell \rangle$.
\end{lemma}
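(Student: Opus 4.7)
\medskip

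\noindent
\emph{Plan of proof.} The three statements will follow from a common principle: on the parameter interval $[\kappa_1,\kappa_2]$ one has quantitative lower bounds for $\mu_j^\infty(\kappa)$, $\mu_j^\infty(\kappa)-\mu_{j'}^\infty(\kappa)$ and $\mu_j^\infty(\kappa)+\mu_{j'}^\infty(\kappa)$ in terms of $j^{3/2}$, $|j^{3/2}-j'^{3/2}|$ and $j^{3/2}+j'^{3/2}$, which must be compatible with the smallness conditions defining the resonant sets \eqref{reso2}-\eqref{reso4}. First I would establish the following comparison: using $\kappa\in[\kappa_1,\kappa_2]$, and writing
$$
\sqrt{j(1+\kappa j^2)} = \sqrt{\kappa}\,j^{3/2} + r(j,\kappa)\,, \qquad |r(j,\kappa)| = \frac{\sqrt{j}}{\sqrt{j(1+\kappa j^2)}+\sqrt{\kappa}\,j^{3/2}} \leq \frac{C}{\sqrt{j}}\,,
$$
one gets $\mathtt m_3^\infty(\kappa)\sqrt{j(1+\kappa j^2)}=\mathtt m_3^\infty(\kappa)\sqrt{\kappa}\,j^{3/2}+O(1)$, uniformly in $j\in{\mathbb S}^c$ and $\kappa\in[\kappa_1,\kappa_2]$. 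By the smallness estimates \eqref{stime coefficienti autovalori in kappa} (and the hypothesis \eqref{relazione tau k0} which makes $\e\g^{-k_1}$ small), $|\mathtt m_3^\infty-1|,|\mathtt m_1^\infty|\leq C\e$ and $\sup_j|r_j^\infty|\leq C\e\g^{-k_1}\ll 1$, so that from \eqref{mu j infty kappa} we obtain
$$
\mu_j^\infty(\kappa) \geq \frac{\sqrt{\kappa_1}}{2}\,j^{3/2} - C\,, \qquad \forall\,j\in\N^+\setminus{\mathbb S}^+\,,\ \kappa\in[\kappa_1,\kappa_2]\,,
$$
for $\e$ sufficiently small.

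For item (1), if $\kappa\in R_{\ell j}^{(I)}$ then by \eqref{reso2} and the bound $|\om_\e(\kappa)|\leq C$ (which follows from \eqref{omega epsilon kappa} and boundedness of $\vec\om(\kappa)$ on $[\kappa_1,\kappa_2]$),
$$
\tfrac{\sqrt{\kappa_1}}{2}\,j^{3/2}-C \ \leq\ \mu_j^\infty(\kappa) \ \leq\ |\om_\e(\kappa)\cdot\ell| + 4\g j^{3/2}\langle\ell\rangle^{-\tau} \ \leq\ C|\ell| + 4\g j^{3/2}\,,
$$
and absorbing the $4\g j^{3/2}$ into the left-hand side (possible because $\g=\e^a$ is small) yields $j^{3/2}\leq C'\langle\ell\rangle$. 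Item (3) is treated identically starting from $\mu_j^\infty(\kappa)+\mu_{j'}^\infty(\kappa)\geq \tfrac{\sqrt{\kappa_1}}{2}(j^{3/2}+j'^{3/2})-C$.

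For item (2), which is the main point, the key algebraic inequality is
$$
|j^{1/2}-j'^{1/2}| \ = \ \frac{|j^{3/2}-j'^{3/2}|}{j+\sqrt{jj'}+j'} \ \leq\ \tfrac{1}{3}|j^{3/2}-j'^{3/2}|\qquad\text{for }j,j'\in\N^+\,,
$$
which allows one to control the $\mathtt m_1^\infty(j^{1/2}-j'^{1/2})$ correction in the difference $\mu_j^\infty-\mu_{j'}^\infty$. Combining with the pointwise expansion of $\sqrt{j(1+\kappa j^2)}$ above, one obtains
$$
|\mu_j^\infty(\kappa)-\mu_{j'}^\infty(\kappa)| \ \geq\ \bigl(|\mathtt m_3^\infty|\sqrt{\kappa_1}-\tfrac{|\mathtt m_1^\infty|}{3}\bigr)|j^{3/2}-j'^{3/2}| - C \ \geq\ \tfrac{\sqrt{\kappa_1}}{2}|j^{3/2}-j'^{3/2}| - C
$$
for $\e$ small. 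Then $\kappa\in R_{\ell j j'}^{(II)}$, together with \eqref{reso3}, gives
$$
\bigl(\tfrac{\sqrt{\kappa_1}}{2}-4\g\bigr)|j^{3/2}-j'^{3/2}| \ \leq\ C|\ell| + C\,,
$$
and again absorbing the $\g$-term yields $|j^{3/2}-j'^{3/2}|\leq C'\langle\ell\rangle$. The only delicate point throughout is keeping track of the perturbative coefficients $\mathtt m_3^\infty-1$, $\mathtt m_1^\infty$ and $r_j^\infty$ so that \eqref{relazione tau k0} guarantees the smallness needed to extract the leading behavior $\mu_j^\infty\approx\sqrt{\kappa}\,j^{3/2}$; once this is done the argument is linear in the frequency combinations and the conclusions follow at once.
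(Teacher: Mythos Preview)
Your argument is correct and follows essentially the same line as the paper's proof: both establish a lower bound $|\mu_j^\infty-\mu_{j'}^\infty|\geq c\,|j^{3/2}-j'^{3/2}|-C$ and confront it with the upper bound coming from the definition of the resonant set. The paper is slightly more terse, bounding $|\mathtt m_3^\infty|\,|j^{1/2}(1+\kappa j^2)^{1/2}-j'^{1/2}(1+\kappa j'^2)^{1/2}|\geq C_1|j^{3/2}-j'^{3/2}|$ directly rather than via the explicit expansion $\sqrt{j(1+\kappa j^2)}=\sqrt{\kappa}\,j^{3/2}+O(j^{-1/2})$, but your version with the expansion and the factorization $|j^{1/2}-j'^{1/2}|=|j^{3/2}-j'^{3/2}|/(j+\sqrt{jj'}+j')$ makes the mechanism more transparent.
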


\begin{proof}
We prove the lemma for $ R_{\ell j j'}^{(II)} $ . The other cases follow similarly. 
If $\kappa \in R^{(II)}_{\ell j j'}$ then 
\begin{equation}\label{santiago 0}
|\mu_j^\infty(\kappa) - \mu_{j'}^\infty(\kappa)| <
4 \gamma |j^{\frac32} - j'^{\frac32}| \langle \ell \rangle^{- \tau} + |\omega_\e(\kappa)| |\ell|  
\leq 4 \gamma |j^{\frac32} - j'^{\frac32}| + C |\ell|\,.
\end{equation}
Moreover \eqref{mu j infty kappa}  and \eqref{stime coefficienti autovalori in kappa} imply 
\begin{align}
|\mu_j^\infty - \mu_{j'}^\infty| & \geq |\mathtt m_3^\infty(\kappa)| |j^{\frac12} (1 + \kappa j^2)^{\frac12} - j'^{\frac12} (1 + \kappa j'^2)^{\frac12}|  \nonumber\\
& \quad - |\mathtt m_1^\infty(\kappa)| |j^{\frac12} - j'^{\frac12}| - 2 {\rm sup}_{j \in {\mathbb S}^c} |r_j^\infty(\kappa)| \nonumber\\
& \geq C_1 |j^{\frac32} - j'^{\frac32}| - 
C \e |j^{\frac12} - j'^{\frac12}| - C \e \gamma^{- k_1} \geq C_1 |j^{\frac32} - j'^{\frac32}| / 2 \, \label{santiago 1}
\end{align}
for $2 C \e \gamma^{- k_1} \leq C_1/2 $, which is fulfilled taking $\e$ small enough by \eqref{relazione tau k0}.   
The lemma follows by \eqref{santiago 0}, \eqref{santiago 1}, 
for $ C_1 / 4 \geq  4 \gamma  $. 
\end{proof}

The perturbed frequencies satisfy estimates  similar to 
\eqref{0 Melnikov}-\eqref{2 Melnikov+} 
 in Proposition \ref{Lemma: degenerate KAM}. 

\begin{lemma}\label{Lemma: degenerate KAM perturbato}
For $ \e $ small enough, for all $ \kappa \in [\kappa_1, \kappa_2] $, 
\begin{equation}\label{0 Melnikov perturbate}
\begin{aligned}
& {\rm max}_{k \leq k_0} |\partial_\kappa^{k}  \{\omega_\e (\kappa) \cdot \ell   \} |  
 \geq \rho_0 \langle \ell \rangle / 2 \,, \quad   \forall \ell  \in \Z^\nu \setminus \{ 0 \}, \,  \\
\end{aligned}
\end{equation}
\begin{equation}\label{1 Melnikov perturbate}
\begin{aligned}
& {\rm max}_{k \leq k_0} |\partial_\kappa^{k}  \{\omega_\e (\kappa) \cdot \ell  + \mu_j^\infty (\kappa) \} | 
  \geq \rho_0 \langle \ell  \rangle / 2 \,,   \quad \forall \ell  \in \Z^\nu, \, j \in  \N^+ \setminus {\mathbb S}^+,
\end{aligned}
\end{equation}
\begin{equation}\label{2 Melnikov- perturbate}
\begin{aligned}
& {\rm max}_{k \leq k_0} |\partial_\kappa^{k}  \{\omega_\e (\kappa) \cdot \ell  + \mu_j^\infty (\kappa) - \mu_{j'}^\infty(\kappa) \} |  \geq \rho_0 \langle \ell  \rangle / 2 \,,  \\
&  \quad \forall (\ell , j, j') \neq (0, j, j), \quad \ell \in \Z^\nu, \quad  \,  j, j' \in  \N^+ \setminus {\mathbb S}^+, 
\end{aligned}
\end{equation}
\begin{equation}\label{2 Melnikov+ perturbate}
\begin{aligned}
& {\rm max}_{k \leq k_0} |\partial_\kappa^{k}  \{\omega_\e (\kappa) \cdot \ell 
 + \mu_j^\infty (\kappa) + \mu_{j'}^\infty(\kappa) \} |  
 \geq \rho_0 \langle \ell  \rangle / 2 \,,  \\
 & \quad   \forall \ell  \in \Z^{\nu}, \quad  j, j' \in \N^+ \setminus {\mathbb S}^+ \, . 
\end{aligned}
\end{equation}
\end{lemma}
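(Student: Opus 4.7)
The strategy is a perturbative comparison with Proposition \ref{Lemma: degenerate KAM}: I write each perturbed frequency combination as the sum of the unperturbed one plus an error $R$, use Proposition \ref{Lemma: degenerate KAM} to get a lower bound $\rho_0\langle\ell\rangle$ on $\max_{k\leq k_0}|\partial_\kappa^k(\cdot)|$ of the unperturbed combination, and then show that the $\mathcal{C}^{k_0}$-smallness of $R$ in $\kappa$ (quantified by \eqref{stima omega epsilon kappa} and \eqref{stime coefficienti autovalori in kappa}) absorbs at most $\rho_0\langle\ell\rangle/2$. Concretely, from Proposition \ref{Lemma: degenerate KAM} there is some $k^*\leq k_0$ (depending on the indices $(\ell,j,j')$) for which the unperturbed derivative exceeds $\rho_0\langle\ell\rangle$; evaluating the perturbed expression at this same $k^*$ and applying the reverse triangle inequality then yields the claim.

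For \eqref{0 Melnikov perturbate} the decomposition is $\omega_\e(\kappa)\cdot\ell=\vec\omega(\kappa)\cdot\ell+\mathtt r_\e(\kappa)\cdot\ell$, and by \eqref{stima omega epsilon kappa},
\begin{equation*}
\max_{k\leq k_0}|\partial_\kappa^{k}(\mathtt r_\e(\kappa)\cdot\ell)|\leq C\e\gamma^{-(1+k_0+k_1)}|\ell|=C\e^{1-a(1+k_0+k_1)}|\ell|,
\end{equation*}
which, by the choice \eqref{relazione tau k0} of $a<1/(1+k_0+k_1)$, is smaller than $(\rho_0/2)\langle\ell\rangle$ for $\e$ small. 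Combined with \eqref{0 Melnikov}, this proves \eqref{0 Melnikov perturbate}. The inequalities \eqref{1 Melnikov perturbate}, \eqref{2 Melnikov- perturbate}, \eqref{2 Melnikov+ perturbate} follow analogously, with the perturbations
\begin{equation*}
R_{\ell,j}:=\mathtt r_\e\cdot\ell+(\mathtt m_3^\infty-1)\sqrt{j(1+\kappa j^2)}+\mathtt m_1^\infty j^{1/2}+r_j^\infty,
\end{equation*}
and similarly $R_{\ell,j,j'}^{(\pm)}$ where $\sqrt{j(1+\kappa j^2)}$ is replaced by $\sqrt{j(1+\kappa j^2)}\pm\sqrt{j'(1+\kappa j'^2)}$, $j^{1/2}$ by $j^{1/2}\pm j'^{1/2}$, and $r_j^\infty$ by $r_j^\infty\pm r_{j'}^\infty$.

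The one point that is not purely linear is that $R_{\ell,j}$ contains the term $(\mathtt m_3^\infty-1)\sqrt{j(1+\kappa j^2)}$ which grows like $\e j^{3/2}$, not like $\e\langle\ell\rangle$. I handle this by a two-regime split. Fix a threshold $T>0$. If $j^{3/2}\geq T\langle\ell\rangle$, I use the direct $k=0$ bound
\begin{equation*}
|\omega_\e\cdot\ell+\mu_j^\infty|\geq(1-C\e)\sqrt{j(1+\kappa j^2)}-C|\ell|-C\e\gamma^{-k_1}\geq c\sqrt{\kappa_1}\,j^{3/2}/2\geq(\rho_0/2)\langle\ell\rangle,
\end{equation*}
by choosing $T$ large enough (independent of $\e$). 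If instead $j^{3/2}<T\langle\ell\rangle$, then $|\partial_\kappa^{k}((\mathtt m_3^\infty-1)\sqrt{j(1+\kappa j^2)})|\leq C\e\gamma^{-k}j^{3/2}\leq C_TT\e^{1-a k_0}\langle\ell\rangle$, which is again $o(\langle\ell\rangle)$ by \eqref{relazione tau k0}, and the full $R_{\ell,j}$ satisfies $\max_{k\leq k_0}|\partial_\kappa^{k} R_{\ell,j}|\leq(\rho_0/2)\langle\ell\rangle$ for $\e$ small; then the comparison with \eqref{1 Melnikov} finishes the case. The same dichotomy, based on the size of $|j^{3/2}\mp j'^{3/2}|$ vs.\ $\langle\ell\rangle$, together with \eqref{2 Melnikov-}--\eqref{2 Melnikov+}, handles \eqref{2 Melnikov- perturbate}--\eqref{2 Melnikov+ perturbate}; note that in the comparison regime $|j^{3/2}\pm j'^{3/2}|\leq T\langle\ell\rangle$ the term $(\mathtt m_3^\infty-1)(\sqrt{j(1+\kappa j^2)}\pm\sqrt{j'(1+\kappa j'^2)})$ is bounded by $C\e|j^{3/2}\pm j'^{3/2}|\leq C\e T\langle\ell\rangle$, which is harmless.

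The main (and essentially only) obstacle is the bookkeeping of the loss $\gamma^{-1}$ per $\kappa$-derivative against the smallness $\e$: the hypothesis \eqref{relazione tau k0} giving $a<1/(1+k_0+k_1)$ is exactly what guarantees $\e\gamma^{-(1+k_0+k_1)}\to 0$, so that simultaneously all $k_0$ derivatives of the perturbations (including the loss $\gamma^{-k_1}$ coming from $r_j^\infty$) are $o(\langle\ell\rangle)$. No further subtleties are expected.
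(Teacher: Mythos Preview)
Your proposal is correct and follows essentially the same perturbative comparison with Proposition~\ref{Lemma: degenerate KAM} as the paper: write the perturbed combination as the unperturbed one plus an error, use \eqref{stima omega epsilon kappa} and \eqref{stime coefficienti autovalori in kappa} together with $\e\gamma^{-(1+k_0+k_1)}\to 0$ (from \eqref{relazione tau k0}) to make the error $\leq(\rho_0/2)\langle\ell\rangle$ in $\mathcal{C}^{k_0}$. The only cosmetic difference is in how the large-index regime is disposed of: the paper invokes Lemma~\ref{restrizione indici risonanti} to restrict to $|j^{3/2}-j'^{3/2}|\leq C\langle\ell\rangle$, whereas you argue directly that when $|j^{3/2}\mp j'^{3/2}|\geq T\langle\ell\rangle$ the undifferentiated expression ($k=0$) already exceeds $(\rho_0/2)\langle\ell\rangle$; both routes express the same fact, and your formulation is arguably cleaner since it proves the stated inequality for \emph{all} indices without passing through nonemptiness of the resonant sets. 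One small point you left implicit but that the paper spells out as \eqref{derivate Omega j - Omega j'}: in the comparison regime you need not only $|(\mathtt m_3^\infty-1)(\Omega_j-\Omega_{j'})|\leq C\e|j^{3/2}-j'^{3/2}|$ but also the analogous bound on all $\kappa$-derivatives up to order $k_0$, i.e.\ $|\partial_\kappa^k(\Omega_j-\Omega_{j'})|\leq C_k|j^{3/2}-j'^{3/2}|$, which is a direct computation from the formulas \eqref{formula utile lambda j}.
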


\begin{proof}
We prove  \eqref{2 Melnikov- perturbate}. The other estimates follow analogously. First of all, by Lemma \ref{restrizione indici risonanti} we may restrict to the set of indices satisfying
\begin{equation}\label{cardiff 0}
|j^{\frac32} - j'^{\frac32}| \leq C \langle \ell \rangle\,.
\end{equation}
Split  $ \mu_j^\infty (\kappa) = { \Omega}_j(\kappa) + (\mu_j^\infty - { \Omega}_j)(\kappa)$
where $ { \Omega}_j(\kappa) := j^\frac12 (1 + \kappa j^2)^{\frac12} $. 
A direct calculation shows that 
\begin{equation}\label{derivate Omega j - Omega j'}
| \pa_\kappa^k \{ { \Omega}_j(\kappa) - { \Omega}_{j'}(\kappa) \}| \leq C_k |j^{\frac32} - j'^{\frac32}|\,, \quad \forall \, k \geq 0\, .
\end{equation}
Then, for all $0 \leq k \leq k_0 $,  one has 
\begin{align}
|\pa_\kappa^k \big\{(\mu_j^\infty - \mu_{j'}^\infty)(\kappa) - ( { \Omega}_j - { \Omega}_{j'})(\kappa) \big\}| & \leq 
|\pa_\kappa^k \big\{ (\mathtt m_3^\infty(\kappa) - 1) ({ \Omega}_j(\kappa) - { \Omega}_{j'}(\kappa) \big)| \nonumber\\
& \quad + | \pa_\kappa^k \mathtt m_1^\infty(\kappa)| |j^{\frac12} - j'^{\frac12}| \nonumber\\
& \quad + 2 {\rm sup}_{j \in  \N^+ \setminus {\mathbb S}^+}| \pa_\kappa^k r_j^\infty(\kappa)| \nonumber\\
& \stackrel{\eqref{derivate Omega j - Omega j'}, \eqref{stime coefficienti autovalori in kappa}}{\leq} C \e \gamma^{- (k + k_1)} |j^{\frac32} - j'^{\frac32}|\,. \label{mu j - mu j' infty}
\end{align}
By \eqref{omega epsilon kappa}, \eqref{stima omega epsilon kappa} and \eqref{mu j - mu j' infty} we get 
\begin{align}
& {\rm max}_{k \leq k_0} | \pa_\kappa^k \{\omega_\e (\kappa) \cdot \ell + \mu_j^\infty (\kappa) - \mu_{j'}^\infty(\kappa)\}|   \nonumber\\
& \geq  {\rm max}_{k \leq k_0} | \pa_\kappa^k \{{\vec \om}(\kappa) \cdot \ell + { \Omega}_j(\kappa) 
-  { \Omega}_{j'}(\kappa) \}| \nonumber\\
& - C \e \gamma^{- (1 +k_0 + k_1)} |\ell|   - C \e \gamma^{- (k_0 + k_1)} |j^{\frac32} - j'^{\frac32}| \nonumber\\
& \stackrel{\eqref{cardiff 0}}{\geq} {\rm max}_{k \leq k_0} | \pa_\kappa^k \{{\vec \om}(\kappa) \cdot \ell + 
{ \Omega}_j(\kappa) - { \Omega}_{j'}(\kappa) \}| \nonumber\\
& \quad - C \e \gamma^{- (1 + k_0 + k_1)} \langle \ell \rangle \nonumber\\
& \stackrel{\eqref{2 Melnikov-}}{\geq} 
\rho_0 \langle \ell \rangle - C \e \gamma^{- (1 + k_0 + k_1)} \langle \ell \rangle \geq \rho_0 \langle \ell \rangle / 2 \nonumber 
\end{align}
provided $ \e \gamma^{- (1 + k_0 + k_1)} \leq \rho_0 \slash (2 C) $, that, by  \eqref{relazione tau k0}, is satisfied for $ \e $ small. 
\end{proof}

\begin{lemma}[{\bf Estimates of the resonant sets}]\label{stima risonanti Russman}
The measures of the sets in \eqref{reso1}-\eqref{reso4} satisfy 
\begin{align*}
& |R_{\ell}^{(0)}| \lessdot  \big(  \gamma \langle \ell \rangle^{- (\tau + 1)}\big)^{\frac{1}{k_0}} \,, 
|R_{\ell j}^{(I)}| \lessdot   \big(  \gamma j^{\frac32} \langle \ell \rangle^{- (\tau + 1)}\big)^{\frac{1}{k_0}} \, ,\\
&  
|R_{\ell j j'}^{(II)}| \lessdot   \big(  \gamma |j^{\frac32} - j'^{\frac32}| \langle \ell \rangle^{- (\tau + 1)}\big)^{\frac{1}{k_0}}, \
|Q_{\ell j j'}^{(II)}| \lessdot   \big(  \gamma |j^{\frac32} + j'^{\frac32}| \langle \ell \rangle^{- (\tau + 1)}\big)^{\frac{1}{k_0}} \, . 
\end{align*}
\end{lemma}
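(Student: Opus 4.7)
The strategy is to recognize each resonant set as a sublevel set of a small-divisor function of the single parameter $\kappa$, and to apply the classical R\"ussmann lemma (Theorem~17.1 in \cite{Ru1}): if $f \in {\cal C}^{k_0}([\kappa_1,\kappa_2],\R)$ satisfies $\max_{0\leq k \leq k_0}|\partial_\kappa^k f(\kappa)| \geq \rho$ uniformly in $\kappa$ and has uniformly bounded $ {\cal C}^{k_0} $-norm, then $|\{|f|<h\}| \lessdot (h/\rho)^{1/k_0}$. The non-degeneracy lower bound of order $\rho_0\langle\ell\rangle$ required to apply R\"ussmann is precisely what Lemma~\ref{Lemma: degenerate KAM perturbato} provides for each of the four types of resonant combinations. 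The remaining task is just to check a uniform ${\cal C}^{k_0}$-bound on each small-divisor function, after rescaling by a factor $\langle \ell\rangle $.

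First I would handle $R_\ell^{(0)}$ by setting $f(\kappa):=\omega_\e(\kappa)\cdot\ell$. From \eqref{omega epsilon kappa}, \eqref{stima omega epsilon kappa} and the smoothness of $\vec\omega(\kappa)$ one immediately gets $\max_{k\leq k_0}|\partial_\kappa^k f(\kappa)|\lessdot \langle\ell\rangle$. Combined with the lower bound \eqref{0 Melnikov perturbate} and R\"ussmann applied to $f/\langle\ell\rangle$ (whose ${\cal C}^{k_0}$-norm and derivative-max lower bound are now both absolute), this yields $|R_\ell^{(0)}|\lessdot (\gamma\langle\ell\rangle^{-(\tau+1)})^{1/k_0}$. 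For $R_{\ell j}^{(I)}$ the natural function is $f(\kappa):=\omega_\e(\kappa)\cdot\ell+\mu_j^\infty(\kappa)$; by Lemma~\ref{restrizione indici risonanti} we may assume $j^{3/2}\lessdot\langle\ell\rangle$ (otherwise the set is empty), and the bound $\|\mu_j^\infty\|_{{\cal C}^{k_0}}\lessdot j^{3/2}$, which follows from \eqref{mu j infty kappa} and \eqref{stime coefficienti autovalori in kappa}, gives $\|f\|_{{\cal C}^{k_0}}\lessdot\langle\ell\rangle$. Then R\"ussmann applied to $f/\langle \ell\rangle$ with threshold $h = 4\gamma j^{3/2}\langle \ell\rangle^{-\tau-1}$ and derivative lower bound $\rho_0/2$ from \eqref{1 Melnikov perturbate} gives the claimed estimate.

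The two remaining cases $R_{\ell j j'}^{(II)}$ and $Q_{\ell j j'}^{(II)}$ proceed identically with $f(\kappa):=\omega_\e(\kappa)\cdot\ell+\mu_j^\infty(\kappa)\mp\mu_{j'}^\infty(\kappa)$. The slightly delicate point is obtaining a ${\cal C}^{k_0}$-bound proportional to $\langle\ell\rangle$ for the $-$ case: using the decomposition \eqref{mu j infty kappa}, the pointwise estimate \eqref{derivate Omega j - Omega j'} for $\partial_\kappa^k(\Omega_j-\Omega_{j'})$, and \eqref{stime coefficienti autovalori in kappa}, together with Lemma~\ref{restrizione indici risonanti} which gives $|j^{3/2}\mp j'^{3/2}|\lessdot\langle\ell\rangle$ on the nonempty set, we obtain $\|f\|_{{\cal C}^{k_0}}\lessdot\langle\ell\rangle$ in both cases. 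The constraint $(\ell,j,j')\neq(0,j,j)$ excludes the trivial situation where both sides of the inequality defining $R_{\ell j j}^{(II)}$ vanish identically (and the set is empty). Applying R\"ussmann with the lower bound \eqref{2 Melnikov- perturbate}, respectively \eqref{2 Melnikov+ perturbate}, and threshold $h = 4\gamma|j^{3/2}\mp j'^{3/2}|\langle \ell\rangle^{-\tau-1}$ yields the two remaining estimates.

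The main (essentially only) obstacle is the uniformity of the ${\cal C}^{k_0}$-bound of the rescaled small-divisor function $f/\langle\ell\rangle$: this uniformity is what lets R\"ussmann's lemma be applied with constants independent of $(\ell,j,j')$. Such uniformity is delivered by Lemma~\ref{restrizione indici risonanti}, which restricts attention to indices in the regime where the size and derivatives of $\mu_j^\infty\pm\mu_{j'}^\infty$ are controlled by $\langle\ell\rangle$ on the non-empty resonant set; once this is verified, the four estimates are simultaneous applications of R\"ussmann combined with Lemma~\ref{Lemma: degenerate KAM perturbato}.
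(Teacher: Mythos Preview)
Your proposal is correct and follows essentially the same approach as the paper: rescale the small-divisor function by $\langle\ell\rangle^{-1}$, use Lemma~\ref{Lemma: degenerate KAM perturbato} for the uniform lower bound $\rho_0/2$ on the maximal derivative, invoke Lemma~\ref{restrizione indici risonanti} together with \eqref{omega epsilon kappa}--\eqref{stime coefficienti autovalori in kappa} and \eqref{derivate Omega j - Omega j'} for the uniform ${\cal C}^{k_0}$-upper bound, and then apply R\"ussmann's Theorem~17.1. The paper additionally remarks explicitly that the threshold $4\gamma|j^{3/2}\mp j'^{3/2}|\langle\ell\rangle^{-(\tau+1)}$ is automatically small (so that R\"ussmann applies) whenever the resonant set is nonempty, but you cover this implicitly via Lemma~\ref{restrizione indici risonanti}.
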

\begin{proof}
We prove the estimate of $ R_{\ell j j'}^{(II)} $. 
The other cases are simpler. 
We write 
$$
R_{\ell j j'}^{(II)} = \big\{ \kappa \in [\kappa_1, \kappa_2] : |g_{\ell j j'}(\kappa)| < 4 \gamma |j^{\frac32} - j'^{\frac32}| \langle \ell \rangle^{- (\tau + 1)}\big\}
$$
where
$ g_{\ell j j'}(\kappa) := ( \omega_\e(\kappa) \cdot \ell + \mu_j^\infty(\kappa) - \mu_{j'}^\infty(\kappa) ) 
\langle \ell\rangle^{-1} $. We apply Theorem 17.1 in \cite{Ru1}.  
We estimate the measure of  
$ R_{\ell j j'}^{(II)} $ only if 
$ 4 \gamma |j^{\frac32} - j'^{\frac32}| \langle \ell \rangle^{- (\tau + 1)}  \leq \frac{\rho_0}{4(1 + k_0)} $.
Otherwise, for  $\gamma$ small enough,  the set $ R_{\ell j j'}^{(II)} = \emptyset  $ is empty.  
By \eqref{2 Melnikov- perturbate} we derive that  
$$
{\rm max}_{k \leq k_0} | \pa_\kappa^k 
g_{\ell j j'}(\kappa)| \geq \rho_0 / 2 \,, \quad \forall \kappa \in [\kappa_1, \kappa_2]\,.
$$
In addition,  \eqref{omega epsilon kappa}-\eqref{autovalori in kappa} and Lemma \ref{restrizione indici risonanti} imply that
$ \max_{k \leq k_0} | \pa_\kappa^k g_{\ell j j'}(\kappa) | \leq C_1 $, 
$  \forall  \kappa \in [\kappa_1, \kappa_2] $, provided $\e \gamma^{- (1 + k_0 + k_1)}$ is small enough.
By Theorem 17.1 in \cite{Ru1}  the Lemma follows. 
\end{proof}

\noindent 
{\sc Proof of Theorem \ref{Teorema stima in misura} completed.}
The measure of the set ${\mathcal G}_\e^c$ in \eqref{complementare insieme di cantor} is estimated by
\begin{align}
|{\mathcal G}_\e^c| & \leq 
{\mathop \sum}_{\ell} |R_\ell^{(0)}| + {\mathop \sum}_{\ell, j} |R_{\ell j}^{(I)}| 
+ {\mathop \sum}_{\ell , j, j'} |R_{\ell j j'}^{(II)}| + 
{\mathop \sum}_{\ell , j, j'} |Q_{\ell j j'}^{(II)}| \nonumber\\
& \stackrel{{\text Lemma} \,\ref{restrizione indici risonanti}}\leq 
{\mathop \sum}_{\ell} |R_\ell^{(0)}| + {\mathop \sum}_{j \leq C \langle \ell \rangle^{2/3}} |R_{\ell j}^{(I)}| \nonumber \\
& \qquad \quad \ + {\mathop \sum}_{j, j' \leq C \langle \ell \rangle^2} |R_{\ell j j'}^{(II)}| + 
{\mathop \sum}_{ j, j' \leq C \langle \ell \rangle^{2/3}} |Q_{\ell j j'}^{(II)}| \nonumber\\
& \stackrel{{\text Lemma}\, \ref{stima risonanti Russman}}{\lessdot} 
\sum_{\ell} \big( \gamma \langle \ell \rangle^{- (\tau + 1)}\big)^{\frac{1}{k_0}} + \sum_{j \leq C \langle \ell \rangle^{2/3}} \big(  \gamma j^{\frac32} \langle \ell \rangle^{- (\tau + 1)}\big)^{\frac{1}{k_0}}  \nonumber \\ 
& \quad + \! \!\! \sum_{j, j' \leq C \langle \ell \rangle^2} \!\! \! \big(  \gamma |j^{\frac32} - j'^{\frac32}| \langle \ell \rangle^{- (\tau + 1)}\big)^{\frac{1}{k_0}} + \!\!  \sum_{ j, j' \leq C \langle \ell \rangle^{2/3}} \!\! \big( \gamma |j^{\frac32} + j'^{\frac32}| \langle \ell \rangle^{- (\tau + 1)}\big)^{\frac{1}{k_0}} \nonumber \\
& \stackrel{{\text Lemma} \,\ref{restrizione indici risonanti}} \leq C \gamma^{\frac{1}{k_0}} \sum_{\ell} \langle \ell \rangle^{4 - \frac{\tau}{k_0}} \stackrel{\eqref{relazione tau k0}}{\leq} C' \e^{\frac{a}{k_0}} \,. \nonumber
\end{align}
Hence $ |{\mathcal G}_\e| \geq \kappa_2 - \kappa_1 - C' \e^{a / k_0 } $ 
and the proof of Theorem \ref{Teorema stima in misura} is concluded. 

\chapter{Approximate inverse}  \label{costruzione dell'inverso approssimato}

\section{Estimates on the perturbation $P$}

We prove tame estimates for the composition operator induced
 by the Hamiltonian vector field $ X_P = (  \pa_I P, - \pa_\theta P, J \nabla_z P) $ 
 in \eqref{operatorF}. 

We first estimate the composition operator induced by $ v(\teta, y) $  defined in \eqref{definizione A}.
Since the functions $ I_j  \mapsto \sqrt{\xi_j + I_j}  $, $\theta \mapsto {\rm cos}(\theta)$, $\theta \mapsto {\rm sin}(\theta)$ 
are analytic  for $|I| \leq r $ small, 
the composition Lemma \ref{Moser norme pesate} implies that, for all  $ \Theta, y \in H^s(\T^\nu, \R^\nu )$,  
$   \| \Theta \|_{s_0},  \| y \|_{s_0} \leq r $, setting $\theta(\ph) := \ph + \Theta (\ph)$, 
\begin{equation}\label{stima k0 gamma v per il campo}
\| \partial_\theta^\alpha \partial_I^\beta v(\theta(\cdot), I(\cdot)) \|_s^{k_0, \gamma} \leq_s 1 + \| \fracchi\|_{s}^{k_0, \gamma}\,, \quad \forall \alpha, \beta \in \N^\nu\,, \ \  |\alpha| + |\beta| \leq 3\,.
\end{equation}

\begin{lemma}\label{lemma quantitativo forma normale}
Let $ \fracchi(\ph) $ in \eqref{componente periodica} satisfy
$ \| {\mathfrak I} \|_{2s_0 + 2 k_0 + 5}^{k_0, \gamma}  \leq 1$.
Then 
the following estimates hold:
\begin{equation}\label{stime XP}
\| X_P(i)\|_s^{k_0, \gamma}  \leq_s 1 + \| {\mathfrak I}\|_{s +s_0 + 2 k_0 + 3}^{k_0, \gamma}\,, 
\end{equation}
and for all $\widehat \imath := (\widehat \theta, \widehat I, \widehat z)$ 
\begin{align}\label{stima derivata XP}
 \| d_i X_P(i)[\widehat \imath]\|_s^{k_0, \gamma} & \leq_s \| \widehat \imath \|_{s + 2}^{k_0, \gamma} + \| \mathfrak I\|_{s + s_0 + 2 k_0 + 4}^{k_0, \gamma} \| \widehat \imath \|_{s_0 + 2}^{k_0, \gamma}\,, \\
 \label{stima derivata seconda XP}
\| d^2_i X_P(i)[\widehat \imath, \widehat \imath]\|_s^{k_0, \gamma} & \leq_s \| \widehat \imath\|_{s + 2}^{k_0, \gamma} \| \widehat \imath \|_{s_0 + 2}^{k_0, \gamma} + \| \mathfrak I\|_{s + s_0 + 2 k_0 + 5}^{k_0, \gamma} (\| \widehat \imath \|_{s_0 + 2}^{k_0, \gamma})^2\,.
\end{align}
\end{lemma}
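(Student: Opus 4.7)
The plan is to write $X_P$ explicitly via the chain rule applied to $P = P_\e \circ A$ with $A(\theta,I,z) = v(\theta,I) + z$, and then reduce the three estimates \eqref{stime XP}--\eqref{stima derivata seconda XP} to the tame bounds for the Dirichlet--Neumann operator (Lemma \ref{stime tame dirichlet neumann}), the composition bounds \eqref{stima k0 gamma v per il campo} for $v$, Moser's composition Lemma \ref{Moser norme pesate}, and the product rule \eqref{interpolazione C k0}. All intermediate smallness hypotheses (in particular the $\|\eta\|_{2s_0+2k_0+5}^{k_0,\gamma} \leq \delta$ required by Lemma \ref{stime tame dirichlet neumann} with $\e\eta$) are ensured by the assumption $\|\mathfrak I\|_{2s_0+2k_0+5}^{k_0,\gamma}\leq 1$ and $\e$ small.

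First I would expand $\nabla_u P_\e(u) = (\nabla_\eta P_\e, \nabla_\psi P_\e)$ using \eqref{definizione P epsilon} and \eqref{formula shape der}, obtaining
\[
\nabla_\psi P_\e = \e^{-1}\bigl(G(\e\eta)-G(0)\bigr)\psi,\qquad
\nabla_\eta P_\e = -B\,G(\e\eta)\psi - V\psi_x + \kappa\,\e^{-2}\partial_x\!\Big(\tfrac{\e\eta_x}{\sqrt{1+(\e\eta_x)^2}}-\e\eta_x\Big),
\]
with $B,V$ as in \eqref{def B V}. By the Taylor formula $G(\e\eta)-G(0) = \e\int_0^1 G'(t\e\eta)[\eta]\,dt$,
Lemma \ref{stime tame dirichlet neumann} applied to $G(\e\eta)-G(0)$ and $G'$, together with the Moser Lemma \ref{Moser norme pesate} applied to the smooth function $s\mapsto s/\sqrt{1+s^2}-s=O(s^3)$, gives
\[
\|\nabla_u P_\e(u)\|_s^{k_0,\gamma} \leq_s 1 + \|u\|_{s+s_0+2k_0+3}^{k_0,\gamma},
\]
under the standing smallness condition. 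Now $X_P$ has components $\partial_I P = \nabla_u P_\e(A)\cdot \partial_I v$, $-\partial_\theta P = -\nabla_u P_\e(A)\cdot \partial_\theta v$ and $J\nabla_z P = J\,\Pi_{\mathbb S^+}^\perp \nabla_u P_\e(A)$. Applying Lemma \ref{Moser norme pesate} to the composition $\vphi\mapsto \nabla_u P_\e(A(i(\vphi)))$ (here the outer smooth function is the polynomial/DN-dependent vector field, treated component-wise via the representation above), combined with \eqref{stima k0 gamma v per il campo} and the product estimate \eqref{interpolazione C k0}, yields
\[
\|X_P(i)\|_s^{k_0,\gamma} \leq_s 1 + \|A(i)\|_{s+s_0+2k_0+3}^{k_0,\gamma} + \|v(\theta,I)\|_{s}^{k_0,\gamma} \leq_s 1+\|\mathfrak I\|_{s+s_0+2k_0+3}^{k_0,\gamma},
\]
which is \eqref{stime XP}. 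The key point is that the loss of $s_0+2k_0+3$ derivatives is exactly the loss in Lemma \ref{stime tame dirichlet neumann}, and that $\|z\|_{s}\leq \|\mathfrak I\|_s$.

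For \eqref{stima derivata XP} I would differentiate $X_P$ with respect to $i=(\theta,I,z)$. The derivative involves (i) first derivatives of $\nabla_u P_\e$ evaluated at $A(i)$ applied to $DA[\widehat\imath] = \partial_\theta v[\widehat\theta]+\partial_I v[\widehat I]+\widehat z$, and (ii) second derivatives of $v$ paired with $\nabla_u P_\e(A(i))$. The shape derivative formula \eqref{formula shape der} gives that $D(\nabla_u P_\e)$ is a combination of $G(\e\eta), G'(\e\eta)[\widehat\eta], G''(\e\eta)$-type terms and multiplications by the smooth functions $B,V$ and $\eta_x/(1+(\e\eta_x)^2)^{3/2}$. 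Estimates \eqref{stima tame dirichlet neumann}--\eqref{stima tame derivata dirichlet neumann}, \eqref{stima k0 gamma v per il campo} (with $|\alpha|+|\beta|\leq 2$), \eqref{interpolazione C k0} and again Moser's Lemma \ref{Moser norme pesate} then combine to give the asymmetric tame bound \eqref{stima derivata XP}, where the high norm $\|\widehat\imath\|_{s+2}$ arises from the two space derivatives implicit in $G'(\e\eta)[\widehat\eta]$ and $\partial_x$. The second variation \eqref{stima derivata seconda XP} is obtained in exactly the same way but using \eqref{stima tame derivata seconda dirichlet neumann} together with \eqref{stima k0 gamma v per il campo} for $|\alpha|+|\beta|\leq 3$; this explains the extra derivative, hence the loss $s_0+2k_0+5$.

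The main obstacle is purely bookkeeping: one must verify that the derivative count matches in every term. The delicate terms are those in which $\widehat\eta$ (resp.\ $\widehat\eta$ twice) enters the Dirichlet--Neumann operator, since there $G'$ and $G''$ already cost one and two extra derivatives on $\eta$ by \eqref{stima tame derivata dirichlet neumann}--\eqref{stima tame derivata seconda dirichlet neumann}. All other terms, being multiplications by smooth functions of $\eta,\psi,B,V$, are controlled by the algebra/product rule \eqref{interpolazione C k0} and the Moser composition Lemma \ref{Moser norme pesate}, and produce strictly lower regularity losses. Since the bootstrap hypothesis $\|\mathfrak I\|_{2s_0+2k_0+5}^{k_0,\gamma}\leq 1$ is exactly what is needed for Lemma \ref{stime tame dirichlet neumann} to apply to $\e\eta$, the full chain goes through and delivers \eqref{stime XP}--\eqref{stima derivata seconda XP}.
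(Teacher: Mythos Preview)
Your proposal is correct and follows essentially the same route as the paper: write $X_P$ via the chain rule for $P=P_\e\circ A$, verify the smallness hypothesis of Lemma~\ref{stime tame dirichlet neumann} using $\|\mathfrak I\|_{2s_0+2k_0+5}^{k_0,\gamma}\leq 1$, and then combine the Dirichlet--Neumann tame estimates \eqref{stima tame dirichlet neumann}--\eqref{stima tame derivata seconda dirichlet neumann} with \eqref{stima k0 gamma v per il campo} and \eqref{interpolazione C k0}. The only cosmetic difference is that the paper uses $\nabla P_\e=-J X_{P_\e}$ and estimates $\e^{-1}(G(\e\eta)-G(0))\psi$ directly via \eqref{stima tame dirichlet neumann} rather than through the Taylor expansion of $G$; both yield the same bounds.
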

\begin{proof}
By the definition \eqref{definizione cal N P}, $P = P_\e \circ A$, where $A$ is defined in \eqref{definizione A} and $P_\e$ is defined in \eqref{definizione P epsilon}. Hence 
\begin{equation}\label{campo hamiltoniano perturbazione P}
\begin{aligned}
& X_P =   \begin{pmatrix} [\partial_I v(\theta, I)]^T \nabla P_\e(A(\theta, I, z)) \\
 - [\partial_\theta v(\theta, I)]^T \nabla P_\e(A(\theta, I, z)) \\
  \Pi_{{\mathbb S}^+}^\bot J \nabla P_\e(A(\theta, I, z))  
  \end{pmatrix}\,
\end{aligned}
\end{equation}
where $\Pi_{{\mathbb S}_+^\bot}$ is the $L^2$-projector on the space $H_{{\mathbb S}_+}^\bot$ defined in \eqref{H S + H S + bot}.
Now  $ \nabla P_\e = - J X_{P_\e} $ (see \eqref{WW-riscalato}) where 
$  X_{P_\e} $ is the explicit Hamiltonian vector field in  \eqref{campo hamiltoniano X P epsilon}. 
The  smallness condition of Lemma \ref{stime tame dirichlet neumann} is fulfilled because
$$
\begin{aligned}
\| \eta\|_{2 s_0 + 2 k_0 + 5}^{k_0, \gamma} \leq \e \| A(\theta(\cdot), I(\cdot), z(\cdot, \cdot)) \|_{2 s_0 + 2 k_0 + 5}^{k_0, \gamma} 
& \leq C(s_0) \e (1 + \| \fracchi \|_{2 s_0 + 2 k_0 + 5}^{k_0, \gamma}) \\ 
& \leq C_1(s_0 ) \e  \leq \delta(s_0, k_0) 
\end{aligned}
$$
for $ \e $ small.
Thus by the tame estimate
 \eqref{stima tame dirichlet neumann} for the Dirichlet Neumann operator, 
 the interpolation inequality \eqref{interpolazione C k0},  and \eqref{stima k0 gamma v per il campo},  we get 
$$
\begin{aligned}
\| \nabla P_\e(A(\theta(\cdot), I(\cdot), z(\cdot, \cdot))) \|_s^{k_0, \gamma} 
& \leq_s  \| A(\theta(\cdot), I(\cdot), z(\cdot, \cdot))\|_{s + s_0 + 2 k_0 + 3}^{k_0, \gamma} \\
& \leq_s 1 + \| \fracchi\|_{s + s_0 + 2 k_0 + 3}^{k_0, \gamma}\, .
\end{aligned}
$$ 
Hence  \eqref{stime XP} 
follows by \eqref{campo hamiltoniano perturbazione P},
 interpolation and  \eqref{stima k0 gamma v per il campo}. 

 The estimates \eqref{stima derivata XP}, \eqref{stima derivata seconda XP} for $d_i X_P$ and $d_i^2 X_P$ follow
 by differentiating the expression of $X_P$ in \eqref{campo hamiltoniano perturbazione P} and applying the estimates \eqref{stima tame derivata dirichlet neumann}, \eqref{stima tame derivata seconda dirichlet neumann} on the Dirichlet Neumann operator, the estimate \eqref{stima k0 gamma v per il campo} on $v(\theta, y)$ and using the interpolation inequality \eqref{interpolazione C k0}. 
\end{proof}

\section{Almost approximate inverse}\label{sezione almost approximate inverse}

In order to implement a convergent Nash-Moser scheme that leads to a solution of 
$ \mF(i, \alpha) = 0 $ (the operator $ \mF(i, \alpha) $ is defined in \eqref{operatorF}) 
we 
linearize the nonlinear operator $ \mF(i, \alpha) $   at an arbitrary  torus 
$$
 i_0 (\vphi) = (\theta_0 (\vphi) , I_0 (\vphi), z_0 (\vphi) ) \, , 
$$
at a given value of $ \alpha_0 $,   obtaining 
$$
d_{i, \alpha} {\mathcal F}(i_0, \alpha_0 )[\widehat \imath \,, \widehat \alpha ] =
\Dom \widehat \imath - d_i X_{H_\alpha} ( i_0 (\vphi) ) [\widehat \imath ] - (\widehat \alpha,0, 0 ) \, .
$$
Note that 
$ d_{i, \alpha} {\mathcal F}(i_0, \alpha_0 ) = d_{i, \alpha} {\mathcal F}(i_0 ) $ is independent of $ \alpha_0 $, see \eqref{operatorF} and recall 
that the perturbation  $ P $ in \eqref{definizione cal N P} does not depend on $ \alpha $ (it depends on $ \kappa $).
In accordance with the notation introduced in \eqref{componente periodica} we denote by 
$$
{\mathfrak I}_0 (\vphi)  := i_0 (\vphi) - (\vphi,0,0) := 
( {\Theta}_0 (\ph), I_0(\ph), z_0(\ph))\,, \quad \Theta_0 (\ph) := \teta_0 (\vphi) - \vphi \, ,  
$$
the periodic component of the torus $ \vphi \mapsto i_0 (\vphi) $. 
In sections \ref{costruzione dell'inverso approssimato}-\ref{sec: reducibility} the torus $ i_0  $ and $ {\fracchi }_0 $ are fixed, satisfying the properties \eqref{ansatz 0} 
of the ansatz below. The main result of these sections is Theorem \ref{thm:stima inverso approssimato}
where we construct an \emph{almost-approximate right inverse} of $ d_{i, \alpha} {\mathcal F}(i_0, \alpha_0 ) $. 

In section  \ref{sec:NM} we shall apply Theorem \ref{thm:stima inverso approssimato} for obtaining the
invertibility of the linearized operators 
when $ i_0 $ is replaced by an arbitrary approximate torus
obtained  by the Nash-Moser iteration scheme. In section  \ref{sec:NM} we  shall also verify inductively that the
property \eqref{ansatz 0} is satisfied by the approximate solutions  defined by the Nash-Moser iteration. 

Let us make some comments about Theorem \ref{thm:stima inverso approssimato}. The main 
inversion assumption  \eqref{inversion assumption}-\eqref{tame inverse} 
required for the applicability of such a theorem 
(which concerns the linearized operator in the normal directions) 
is proved in sections \ref{linearizzato siti normali} and 
\ref{sec: reducibility}, see in particular Theorem \ref{inversione parziale cal L omega}.  
The reason why we call  $ {\bf T}_0 $ an ``almost-approximate" inverse of $  {\mathcal L}_\om $  is the following: the 
adjective ``approximate" refers to the presence of a remainder which is zero 
at an exact solution, i.e. when  $ {\mathcal F} (i_0, \a_0 ) = 0 $, like for example for the term \eqref{stima inverso approssimato 2}. This terminology is inspired by the notion of approximate inverse introduced by Zehnder \cite{Z1}.
The adjective ``almost" refers to the presence of  terms which are small as $ O( N_n^{-a} )$ or $ O( K_n^{-a} )$ for some $a > 0$, like \eqref{stima cal G omega} and which arise by requiring only finitely many non-resonance conditions
(of diophantine type) at each step.
  We find these words helpful to distinguish the different origin of the remainders. 

\smallskip
We  implement the general strategy proposed in \cite{BB13} and  \cite{BBM-auto}.
An invariant torus $ i_0 $ for the Hamiltonian vector field $X_{H_\alpha}$  with diophantine flow (i.e. $ \om $
satisfies \eqref{dioph})
is isotropic (see e.g. Lemma 1 in \cite{BB13}), 
namely the pull-back $ 1$-form $ i_0^* \Lambda $ is closed, 
where $ \Lambda $ is the Liouville 1-form defined in \eqref{Lambda 1 form}. 
This is tantamount to say that the 2-form 
$$ 
i_0^* {\mathcal W} =  i_0^* d \Lambda  = d i_0^* \Lambda = 0 
$$ 
where $ {\mathcal W} = d \Lambda $ is defined in \eqref{2form}. 
For an ``approximately invariant" torus $ i_0 $, which supports a linear flow which is 
only approximately diophantine, i.e. $ \om \in {\mathtt D \mathtt C}_{K_n}^{\g}  $ defined in \eqref{omega diofanteo troncato},  
the 1-form $ i_0^* \Lambda $ is only  ``approximately closed".
In order to make this statement quantitative we consider
\begin{equation}\label{coefficienti pull back di Lambda}
\begin{aligned}
& i_0^* \Lambda = {\mathop \sum}_{k = 1}^\nu a_k (\vphi) d \vphi_k \,,  \\
& a_k(\vphi) := - \big( [\pa_\ph \teta_0 (\vphi)]^T I_0 (\vphi)  \big)_k 
- \frac12 ( \partial_{\vphi_k} z_0(\ph), J z_0(\ph) )_{L^2(\T_x)}
\end{aligned}
\end{equation}
and we quantify how small is 
\begin{equation} \label{def Akj} 
\begin{aligned}
& i_0^* {\mathcal W} = d \, i_0^* \Lambda = {\mathop\sum}_{1 \leq k < j \leq \nu} A_{k j}(\vphi) d \vphi_k \wedge d \vphi_j\,, \\
& \qquad  A_{k j} (\vphi) := \partial_{\vphi_k} a_j(\ph) - \partial_{\vphi_j} a_k(\ph) \, , 
\end{aligned}
\end{equation} 
in terms of the ``error function"
\begin{equation} \label{def Zetone}
Z(\vphi) :=  (Z_1, Z_2, Z_3) (\vphi) := {\mathcal F}(i_0, \alpha_0) (\vphi) =
\om \cdot \pa_\vphi i_0(\vphi) - X_{H_{\alpha}}(i_0(\vphi), \alpha_0) \, , 
\end{equation}
and the ``ultra-violet" cut-off $ K_n = K_0^{\chi^n} $, $ \chi = 3/2 $, in \eqref{definizione Kn}, used in the definition 
\eqref{omega diofanteo troncato} of $  {\mathtt D \mathtt C}_{K_n}^{\g} $.
The main difference with respect to \cite{BB13} and  \cite{BBM-auto} is that we do not assume $ \om $ to be diophantine 
(i.e. \eqref{dioph}) but 
only  $ \om \in  {\mathtt D \mathtt C}_{K_n}^{\g} $. 

Along this section we will always assume the following hypothesis, 
which will be verified at each step of the Nash-Moser iteration of section \ref{sec:NM}:

\begin{itemize}
\item {\sc Ansatz.} 
The map $(\omega, \kappa) \mapsto \fracchi_0(\omega, \kappa) :=  i_0(\ph; \om, \kappa) - (\ph,0,0) $ is ${k_0} $-times differentiable with respect 
to the parameters $(\omega, \kappa) \in \R^\nu \times [\kappa_1, \kappa_2] $,  and for some $ \mu := \mu (\t, \nu) >  0 $,  $\gamma \in (0, 1)$,   
\begin{equation}\label{ansatz 0}
\| {\mathfrak I}_0  \|_{s_0+\mu}^{k_0, \gamma} +  |\alpha_0 - \omega|^{k_0, \gamma} \leq C\e \gamma^{-(1 + k_1)} \, ,
\end{equation}
where the constant $ k_1 = k_1(\nu, k_0) > 0$ is given in Theorem \ref{MAINTHEOREM}.
We shall always assume $ \e \g^{-(1 + k_1)} $ small enough
(in section \ref{sec:measure}  we have even required  the stronger condition $ \e \g^{- (1 + k_0 + k_1)} \ll 1 $).
\end{itemize}

We suppose that the torus $ i_0 (\omega, \kappa) $ is defined for all the values of  $ (\om, \kappa) \in \R^\nu \times [\kappa_1, \kappa_2]$ because, in the Nash-Moser
iteration of section \ref{sec:NM}, we construct a $ k_0 $-times differentiable extension of each approximate solution
on the whole $ \R^\nu \times [\kappa_1, \kappa_2] $, see Lemma \ref{lemma:extension torus}. 

\begin{lemma}\label{Lemma Z piccolezza}
$\| Z\|_{s}^{k_0, \gamma} \leq_s \e \gamma^{- (1 + k_1)} + \| \fracchi_0\|_{s + 2}^{k_0, \gamma} $.
\end{lemma}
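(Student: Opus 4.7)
The plan is a direct computation, exploiting the decomposition $H_{\alpha_0} = {\cal N}_{\alpha_0} + \e P$ from \eqref{H alpha} and the especially simple form of the isochronous Hamiltonian ${\cal N}_{\alpha_0} = \alpha_0 \cdot I + \tfrac12 (z, \Om z)_{L^2_x}$, whose Hamiltonian vector field is $X_{{\cal N}_{\alpha_0}}(\theta, I, z) = (\alpha_0, 0, J\Om z)$. Writing the embedded torus as $i_0(\vphi) = (\vphi + \Theta_0(\vphi), I_0(\vphi), z_0(\vphi))$, the three components of $Z = {\cal F}(i_0, \alpha_0)$ read
\begin{align*}
Z_1(\vphi) & = (\om - \alpha_0) + \om \cdot \pa_\vphi \Theta_0 - \e (\pa_I P)(i_0(\vphi)), \\
Z_2(\vphi) & = \om \cdot \pa_\vphi I_0 + \e (\pa_\theta P)(i_0(\vphi)), \\
Z_3(\vphi) & = \om \cdot \pa_\vphi z_0 - J \Om z_0 - \e J \nabla_z P(i_0(\vphi)).
\end{align*}

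First I would estimate the purely linear, $\e$-independent part. The constant difference $|\om - \alpha_0|^{k_0, \gamma}$ is bounded by $C\e \gamma^{-(1+k_1)}$ directly from the ansatz \eqref{ansatz 0}. For each transport term $\om \cdot \pa_\vphi f$ with $f \in \{\Theta_0, I_0, z_0\}$, the key observation is that $\pa_{\om_j}(\om \cdot \pa_\vphi f) = \pa_{\vphi_j} f + \om \cdot \pa_\vphi (\pa_{\om_j} f)$, so iterating the Leibniz rule yields
$$
\|\om \cdot \pa_\vphi f\|_s^{k_0, \gamma} \lessdot_{k_0} (1 + |\om|_{\tOm}) \|f\|_{s+1}^{k_0, \gamma},
$$
with no extra loss of Sobolev regularity beyond the single $\pa_\vphi$. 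Finally $J\Om$ is the Fourier multiplier $\mathrm{diag}(|D_x|, -(1 - \kappa \pa_{xx}))$, depending linearly on the parameter $\kappa$, so $\|J\Om z_0\|_s^{k_0,\gamma} \lessdot \|z_0\|_{s+2}^{k_0,\gamma}$, with the loss of two derivatives supplied by the capillary term $\kappa \pa_{xx}$. These estimates give $\|Z_L\|_s^{k_0,\gamma} \lessdot \e \gamma^{-(1+k_1)} + \|\fracchi_0\|_{s+2}^{k_0,\gamma}$ for the linear part $Z_L$.

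Next I would treat the nonlinear contribution $\e X_P(i_0)$. Since the ansatz \eqref{ansatz 0} and the smallness of $\e \gamma^{-(1+k_1)}$ imply $\|\fracchi_0\|_{2s_0 + 2k_0 + 5}^{k_0, \gamma} \leq 1$, the hypothesis of Lemma \ref{lemma quantitativo forma normale} is satisfied and \eqref{stime XP} gives
$$
\e \|X_P(i_0)\|_s^{k_0, \gamma} \leq_s \e + \e \|\fracchi_0\|_{s + s_0 + 2 k_0 + 3}^{k_0, \gamma}.
$$
The first summand is absorbed by $\e \gamma^{-(1+k_1)}$, and the second by $\|\fracchi_0\|_{s+2}^{k_0,\gamma}$ using that $\e$ is small (with the implicit constant depending on $s$, in accordance with the notation $\lessdot_s$). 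Collecting the four ingredients gives the claimed bound.

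No serious obstacle arises: the statement is essentially a bookkeeping lemma. The only point requiring care is the handling of the $\om$- and $\kappa$-derivatives through the weighted norm $\|\cdot\|_s^{k_0, \gamma}$, but since $\om$ enters linearly in the transport operator $\om \cdot \pa_\vphi$ and $\kappa$ enters linearly both in $\Om$ (via $1 - \kappa \pa_{xx}$) and in the vector field $X_{P_\e}$, all parameter derivatives are controlled without extra loss, and the tame estimates in Lemma \ref{lemma quantitativo forma normale} already incorporate the weighted norm structure.
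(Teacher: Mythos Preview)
Your approach is exactly the paper's: decompose $Z = {\cal F}(i_0,\alpha_0)$ via \eqref{operatorF} into the linear piece $\Dom i_0 - X_{{\cal N}_{\alpha_0}}(i_0)$ and the perturbative piece $-\e X_P(i_0)$, then invoke \eqref{ansatz 0} for the constant $\omega-\alpha_0$ and \eqref{stime XP} for the nonlinear term. The paper's proof is the single line ``By \eqref{operatorF}, \eqref{stime XP}, \eqref{ansatz 0}'', citing precisely these three ingredients.

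There is, however, one flawed step in your write-up. You claim that $\e\|\fracchi_0\|_{s+s_0+2k_0+3}^{k_0,\gamma}$ can be absorbed into $\|\fracchi_0\|_{s+2}^{k_0,\gamma}$ ``using that $\e$ is small (with the implicit constant depending on $s$)''. This is false: the constant hidden in $\leq_s$ is not allowed to depend on $\fracchi_0$, and the ratio $\|\fracchi_0\|_{s+s_0+2k_0+3}/\|\fracchi_0\|_{s+2}$ is unbounded over admissible $\fracchi_0$ no matter how small $\e$ is. Since $s_0+2k_0+3>2$, the estimate \eqref{stime XP} genuinely forces a larger loss than $+2$ in the nonlinear contribution. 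What you have actually proved is
\[
\|Z\|_s^{k_0,\gamma}\leq_s \e\gamma^{-(1+k_1)}+\|\fracchi_0\|_{s+2}^{k_0,\gamma}+\e\,\|\fracchi_0\|_{s+s_0+2k_0+3}^{k_0,\gamma},
\]
and hence $\|Z\|_s^{k_0,\gamma}\leq_s \e\gamma^{-(1+k_1)}+\|\fracchi_0\|_{s+s_0+2k_0+3}^{k_0,\gamma}$. The $+2$ in the lemma's statement reflects only the capillary term $\kappa\pa_{xx}$ in $J\Om$ and is a minor imprecision of the paper itself; the loss coming from the Dirichlet--Neumann part of $X_P$ is larger. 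This is harmless for all later arguments, where every fixed loss is absorbed into a generic $\sigma=\sigma(\tau,\nu,k_0)$, but you should record the honest loss rather than invent an absorption that does not hold.
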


\begin{proof}
By \eqref{operatorF}, \eqref{stime XP}, \eqref{ansatz 0}.
\end{proof}
In the following, we will assume that $ \om \in {\mathtt D \mathtt C}_{K_n}^{\g}$ (defined in \eqref{omega diofanteo troncato}) and 
we split the coefficients $ A_{k j} = A_{k j} (\vphi) $ in \eqref{def Akj} as 
\begin{equation}\label{splitting A kj}
A_{k j} = A_{k j}^{(n)} + A_{k j}^{(n), \bot}\,, \qquad A_{k j} := \Pi_{K_n} A_{k j}\,, \qquad  A_{k j}^{(n), \bot} := \Pi_{K_n}^\bot A_{k j}\,
\end{equation}
where $ K_n := K_0^{\chi^{n}} $, $ \chi := 3/ 2 $,  is defined in \eqref{definizione Kn}, the operator 
$\Pi_{K_n}$ is the orthogonal projection on the Fourier modes 
$|(\ell, j)| \leq K_n$ and $\Pi_{K_n}^\bot := {\rm Id} - \Pi_{K_n}$, see  \eqref{definizione smoothing operators}.
The ``ultra-violet" cut-off functions $ K_n $ are  introduced in view of the 
nonlinear Nash-Moser iteration of section \ref{sec:NM}.  

\begin{lemma} Assume that $ \om \in {\mathtt D \mathtt C}_{K_n}^{\g}$ defined in \eqref{omega diofanteo troncato}. 
Then the coefficients $ A_{kj}^{(n)} $ and $ A^{(n), \bot}_{k j}$ in \eqref{splitting A kj} satisfy the following tame estimates
\begin{align}
\| A_{k j}^{(n)} \|_s^{k_0, \gamma} & \leq_s \gamma^{-1}
\big(\| Z \|_{s+\t(k_0 + 1) + k_0+1}^{k_0, \gamma} + \| Z \|_{s_0+1}^{k_0, \gamma} \|  {\mathfrak I}_0 \|_{s+\t(k_0 + 1) + k_0+1}^{k_0, \gamma} \big)\,, \label{stima A ij} \\
\| A_{k j}^{(n), \bot}\|_s^{k_0, \gamma} &
 \leq_s  \| \fracchi_0\|_{s + 2}^{k_0, \gamma}\,,  \ \ \| A_{k j}^{(n), \bot}\|_{s_0 + \mathtt c}^{k_0, \gamma}  \leq_{s_0, b} K_n^{- b} \| \fracchi_0\|_{s_0 + b + \mathtt c}^{k_0, \gamma}\,, \ \  \forall b > 0 \, \, ,   \label{stima A kj bot}
\end{align}
and for any $\mathtt c > 0$ such that \eqref{ansatz 0} holds with $\mu \geq \mathtt \t(k_0 + 1) + k_0+1 + {\mathtt c} $. 
\end{lemma}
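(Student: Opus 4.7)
My plan is to follow the isotropic-defect strategy of \cite{BB13}, expressing the low-frequency part $A_{kj}^{(n)} = \Pi_{K_n} A_{kj}$ as $(\Dom)^{-1}\Pi_{K_n}$ applied to a source term which is \emph{linear} in the error $Z$. The starting point is a direct computation of $\Dom a_k$ from \eqref{coefficienti pull back di Lambda}, substituting the components $\Dom \theta_0 = \a_0 + \e \pa_I P + Z_1$, $\Dom I_0 = -\e\pa_\theta P + Z_2$, $\Dom z_0 = J(\Omega z_0 + \e \nabla_z P) + Z_3$ coming from \eqref{operatorF} and \eqref{def Zetone}. Antisymmetrizing in $(k,j)$ yields
\begin{equation*}
\Dom A_{kj}(\vphi) = G_{kj}(\vphi)\,,
\end{equation*}
where the contributions coming purely from $X_{H_\a}(i_0)$ cancel because the Hamiltonian flow preserves $\mathcal{W}$, so that $G_{kj}$ depends linearly on $(Z,\pa_\vphi Z)$ and multilinearly on $(\fracchi_0, \pa_\vphi \fracchi_0)$. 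The tame product estimate \eqref{interpolazione C k0}, together with the bound \eqref{stime XP}--\eqref{stima derivata XP} for $X_P$, then gives
\begin{equation*}
\| G_{kj} \|_s^{k_0,\g} \leq_s \| Z \|_{s+1}^{k_0,\g} + \| Z \|_{s_0+1}^{k_0,\g} \| \fracchi_0\|_{s+1}^{k_0,\g}\,.
\end{equation*}

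Next I invert $\Dom$ on the truncation $\Pi_{K_n}$. By periodicity one has $\int_{\T^\nu} A_{kj}\,d\vphi = 0$, so the equation is solvable mode by mode on every $\ell \neq 0$. For $\om \in {\mathtt D \mathtt C}_{K_n}^{\g}$ and $|\ell|\leq K_n$ the bound $|\om\cdot\ell|^{-1} \leq \g^{-1}\langle\ell\rangle^\tau$ holds, and differentiating $k$ times in $\om$ the eigenvalue $(i\om\cdot\ell)^{-1}$ produces a factor of order $\langle\ell\rangle^{|k|}\g^{-(|k|+1)}\langle\ell\rangle^{\tau(|k|+1)}$. Hence for $|k|\leq k_0$,
\begin{equation*}
\| (\Dom)^{-1}\Pi_{K_n} g \|_s^{k_0,\g} \leq_s \g^{-1} \| g \|_{s + \tau(k_0+1) + k_0}^{k_0,\g}\,.
\end{equation*}
Applying this to $g = G_{kj}$ and combining with the estimate above yields exactly \eqref{stima A ij} after absorbing the extra unit of $\vphi$-regularity in $G_{kj}$ into the exponent.

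The high-frequency estimate \eqref{stima A kj bot} is elementary. From \eqref{coefficienti pull back di Lambda}--\eqref{def Akj}, the coefficient $A_{kj}$ is a bilinear expression in two $\vphi$-derivatives of $\fracchi_0$, so the tame product estimate \eqref{interpolazione C k0} gives $\| A_{kj}\|_s^{k_0,\g} \leq_s \| \fracchi_0\|_{s+2}^{k_0,\g}$. The first bound in \eqref{stima A kj bot} follows by applying $\Pi_{K_n}^\bot$ (which has operator norm $\leq 1$ on Sobolev spaces); the second is obtained from the standard smoothing inequality \eqref{smoothing-u1} of $\Pi_{K_n}^\bot$ at level $s_0 + \mathtt c$, which produces the factor $K_n^{-b}$ and costs $b$ additional derivatives, giving $\| A_{kj}^{(n),\bot}\|_{s_0+\mathtt c}^{k_0,\g} \leq_{s_0,b} K_n^{-b}\| \fracchi_0\|_{s_0+\mathtt c + b + 2}^{k_0,\g}$, whence the stated bound after relabeling $b$ (the assumption $\mu \geq \tau(k_0+1)+k_0+1+\mathtt c$ on the ansatz \eqref{ansatz 0} guarantees that the required norms of $\fracchi_0$ are finite).

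The main obstacle is the precise bookkeeping of the regularity loss in the inversion of $\Dom$: differentiating $(\Dom)^{-1}\Pi_{K_n}$ $k_0$ times in $(\om,\kappa)$ produces, on each Fourier coefficient $\ell$, terms of order $\g^{-(j+1)}\langle\ell\rangle^{\tau(j+1)+k_\vphi}$ with $j+k_\vphi\leq k_0$, the worst case being $j=k_0$, $k_\vphi=0$, which already accounts for an exponent $\tau(k_0+1)+k_0$; together with the additional $+1$ coming from the presence of $\pa_\vphi Z$ in $G_{kj}$ this gives exactly the loss $\tau(k_0+1)+k_0+1$ appearing in \eqref{stima A ij}. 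All remaining steps are routine applications of the tame calculus of Section \ref{sec:pseudo}.
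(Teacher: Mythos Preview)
Your approach is correct and essentially the same as the paper's: the paper quotes the identity $\Dom A_{kj} = \mathcal{W}(\pa_\vphi Z\,\underline{e}_k, \pa_\vphi i_0\,\underline{e}_j) + \mathcal{W}(\pa_\vphi i_0\,\underline{e}_k, \pa_\vphi Z\,\underline{e}_j)$ from \cite{BB13} (Lemma~5), whereas you rederive it via the direct computation of $\Dom a_k$ plus the Hamiltonian cancellation, and from that point on the arguments coincide verbatim (tame product estimate for $G_{kj}$, then inversion of $(\Dom)^{-1}\Pi_{K_n}$ on $\mathtt{DC}_{K_n}^\gamma$ with loss $\tau(k_0+1)+k_0$). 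One small remark: once the $X_{H_\alpha}$-contributions cancel, the source $G_{kj}$ involves only $\pa_\vphi Z$ and $\pa_\vphi i_0$, so the bounds \eqref{stime XP}--\eqref{stima derivata XP} on $X_P$ are not actually needed there --- only \eqref{interpolazione C k0} and \eqref{ansatz 0}.
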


\begin{proof}
{\sc Proof of \eqref{stima A ij}.}
The coefficients $ A_{kj}  $  satisfy the identity (see \cite{BB13}, Lemma 5) 
$$  \omega \cdot \partial_\vphi A_{k j} 
=  {\mathcal W}\big( \pa_\ph Z(\vphi) \underline{e}_k ,  \pa_\ph i_0(\vphi)  \underline{e}_j \big) 
+  {\mathcal W} \big(\pa_\ph i_0(\vphi) \underline{e}_k , \pa_\ph Z(\vphi) \underline{e}_j \big)  
$$
where  $ \underline{e}_k  $ denote the $ k $-th versor of $ \R^\nu $. Therefore applying the projector $ \Pi_{K_n} $ we have 
$$
\omega \cdot \partial_\vphi A_{k j}^{(n)} = \Pi_{K_n} \big[ {\mathcal W}\big( \pa_\ph Z(\vphi) \underline{e}_k ,  \pa_\ph i_0(\vphi)  \underline{e}_j \big) + {\mathcal W} \big(\pa_\ph i_0(\vphi) \underline{e}_k , \pa_\ph Z(\vphi) \underline{e}_j \big) \big] \, . 
$$ 
Then by \eqref{interpolazione C k0} and \eqref{ansatz 0} we get 
\begin{equation} \label{bella trovata}
\| \omega \cdot \partial_\vphi A_{k j}^{(n)} \|_s^{k_0, \gamma} 
\leq_s \| Z \|_{s+1}^{k_0, \gamma} + \| Z \|_{s_0 + 1}^{k_0, \gamma} \| {\mathfrak I}_0 \|_{s + 1}^{k_0, \gamma}   \, 
\end{equation}
and  \eqref{stima A ij} follows  
applying $ (\om \cdot \pa_\vphi )^{-1}$, and using that, for all 
$\omega \in \mathtt{DC}_{K_n}^\gamma$ defined in \eqref{omega diofanteo troncato}, it results 
$ \| (\om \cdot \pa_\vphi)^{- 1} \Pi_{K_n} g \|_s^{k_0, \gamma} \leq_s \gamma^{- 1} \| g\|_{s + \tau(k_0 + 1) + k_0}^{k_0, \gamma} $.

\noindent
{\sc Proof of \eqref{stima A kj bot}.} Recalling \eqref{def Akj} and \eqref{splitting A kj}, 
the function 
$$
A_{kj}^{(n), \bot} (\vphi)  = \Pi_{K_n}^\bot \big( \partial_{\vphi_k} a_j(\ph) - \partial_{\vphi_j} a_k(\ph) \big)
$$ 
where $a_k (\vphi) $, $k = 1, \ldots, \nu$, are defined in \eqref{coefficienti pull back di Lambda}. Then  \eqref{stima A kj bot}  follows by the smoothing properties \eqref{smoothing-u1} and by \eqref{interpolazione C k0}, \eqref{ansatz 0}. 
\end{proof}
\begin{remark}
If the frequency $\omega$ is diophantine, i.e. $ \om $ satisfies \eqref{dioph}, then \eqref{stima A ij} holds 
with $A_{k j}$ instead of $A_{k j}^{(n)}$ (i.e. $A_{k j}^{(n), \bot} = 0$). Furthermore if $Z = {\mathcal F}(i_0, \alpha_0) = 0$, then $A_{k j} = 0$.  
\end{remark}

As in \cite{BB13}, \cite{BBM-auto} we first modify the approximate torus $ i_0 $ to obtain an isotropic torus $ i_\d $ which is 
still approximately invariant. We denote the Laplacian  $ \Delta_\vphi := \sum_{k=1}^\nu \partial_{\vphi_k}^2 $. 

\begin{lemma}\label{toro isotropico modificato} {\bf (Isotropic torus)} 
The torus $ i_\delta(\vphi) := (\theta_0(\vphi), I_\delta(\vphi), z_0(\vphi) ) $ defined by 
\begin{equation}\label{y 0 - y delta}
\begin{aligned}
& I_\d := I_0 +  [\pa_\ph \theta_0(\vphi)]^{- T}  \rho(\vphi) \, ,  \\
& \rho_j(\vphi) := \Delta_\vphi^{-1} {\mathop\sum}_{ k = 1}^\nu \partial_{\vphi_j} A_{k j}(\vphi) \, , \quad j = 1, \ldots, \nu \, , 
\end{aligned}
\end{equation}
 is {\it isotropic}. Moreover $ I_\delta $ admits the splitting $ I_\delta = I_\delta^{(n)} + I_\delta^{(n), \bot}$ where 
 \begin{align} \label{toro I delta (n)}
I_\delta^{(n)} 
& := 
I_0 + [\pa_\ph \theta_0(\vphi)]^{- T}  \rho^{(n)}(\vphi) \,, \quad \rho^{(n)}_j (\vphi) := \Delta_\vphi^{-1} {\mathop\sum}_{ k = 1}^\nu \partial_{\vphi_j} A_{k j}^{(n)}(\vphi) \,, \\
I_\delta^{(n), \bot} & :=  [\pa_\ph \theta_0(\vphi)]^{- T}  \rho^{(n), \bot}(\vphi) \,, \quad \ 
\rho^{(n), \bot}_j (\vphi) 
:= \Delta_\vphi^{-1} {\mathop\sum}_{ k = 1}^\nu \partial_{\vphi_j} A_{k j}^{(n), \bot}(\vphi) \, . \label{toro I delta (n) bot}
 \end{align}
There is $ \s := \s(\nu,\t, k_0) $ and $\mathtt c > 0$ such that if \eqref{ansatz 0} holds with $ \sigma + \mathtt c \leq \mu $, then 
\begin{align} \label{2015-2}
\| I_\delta - I_0\|_s^{k_0, \gamma} & \leq \| I_\delta^{(n)} - I_0 \|_s^{k_0, \gamma} + \| I_\delta^{(n), \bot} \|_s^{k_0, \gamma}  
\leq_s \| \fracchi_0 \|_{s + 1}^{k_0,\gamma} \\ \label{stima y - y delta}
\| I_\delta^{(n)} - I_0 \|_s^{k_0, \gamma} 
& \leq_s  \gamma^{-1} \big(\| Z \|_{s + \s}^{k_0, \gamma} + 
\| Z \|_{s_0 + \s}^{k_0, \gamma} \|  {\mathfrak I}_0 \|_{s + \s}^{k_0, \gamma} \big) \,,
\\
\label{stima y - y delta bot}
\| I_\delta^{(n), \bot} \|_{s_0 + \mathtt c}^{k_0, \gamma} 
& \leq_{s_0, b}  K_n^{- b} \| \fracchi_0\|_{s_0 + \mathtt c + b}^{k_0, \gamma}\,, \qquad \forall b > 0\,,
\\
\label{derivata i delta}
\| \pa_i [ i_\d][ \widehat \imath ] \|_s^{k_0, \gamma} & \leq_s \| \widehat \imath \|_s^{k_0, \gamma} +  
\| {\mathfrak I}_0\|_{s + \s}^{k_0, \gamma} \| \widehat \imath  \|_{s_0}^{k_0, \gamma} \, .
\end{align}
Moreover the ``error"  function $Z_\delta := {\mathcal F}(i_\delta, \alpha_0)$  of the isotropic torus $ i_\d $ 
(defined analogously to \eqref{def Zetone})
may be splitted as $ Z_\delta = Z_\delta^{(n)} + Z_\delta^{(n), \bot}$
 with
\begin{align}
\label{stima toro modificato}
\| Z_\delta^{(n)} \|_s^{k_0, \gamma}
& \leq_s  \| Z \|_{s + \s}^{k_0, \gamma}  +  \| Z \|_{s_0 + \s}^{k_0, \gamma} \|  {\mathfrak I}_0 \|_{s + \s}^{k_0, \gamma} \\
 \label{stima toro modificato bot}
\| Z_\delta^{(n), \bot}\|_s^{k_0, \gamma} & \leq_s \| \fracchi_0\|_{s + \sigma}^{k_0, \gamma}\,, \  \| Z_\delta^{(n), \bot}\|_{s_0 + \mathtt c}^{k_0, \gamma} \leq_{s_0, b} K_n^{- b} \| \fracchi_0\|_{s_0 +  \sigma + \mathtt c + b}^{k_0, \gamma}\,, \   \forall b > 0\,.
\end{align}
 \end{lemma}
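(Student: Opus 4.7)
\medskip
\noindent
\textbf{Proof proposal for Lemma \ref{toro isotropico modificato}.}

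The plan is to adapt the classical isotropic-correction construction of \cite{BB13,BBM-auto} to the present setting where $ \om $ belongs only to the truncated diophantine set $\mathtt{DC}_{K_n}^\g$ rather than to $\mathtt{DC}^\g$. First I would verify that $i_\d$ is isotropic. Since $I_\d=I_0+[\pa_\vphi\theta_0]^{-T}\rho$, the new Liouville pull-back coefficients become $\tilde a_k=a_k-\rho_k$ (the quadratic term in $z_0$ in \eqref{coefficienti pull back di Lambda} is unchanged). Hence $\tilde A_{kj}=A_{kj}-(\pa_{\vphi_k}\rho_j-\pa_{\vphi_j}\rho_k)$, and isotropy $\tilde A_{kj}\equiv 0$ reduces to the identity $\pa_{\vphi_k}\rho_j-\pa_{\vphi_j}\rho_k=A_{kj}$. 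This follows from the closedness relation $d(i_0^\ast\mathcal W)=0$, which, since $i_0^\ast\mathcal W=\sum A_{kj}\,d\vphi_k\wedge d\vphi_j$, is equivalent to the cocycle identity $\pa_{\vphi_m}A_{kj}+\pa_{\vphi_k}A_{jm}+\pa_{\vphi_j}A_{mk}=0$. A direct computation then gives
\[
\pa_{\vphi_k}\rho_j-\pa_{\vphi_j}\rho_k=\D_\vphi^{-1}{\mathop\sum}_m\pa_{\vphi_m}\bigl(\pa_{\vphi_k}A_{mj}-\pa_{\vphi_j}A_{mk}\bigr)=\D_\vphi^{-1}{\mathop\sum}_m\pa_{\vphi_m}^2 A_{kj}=A_{kj},
\]
where zero-average of $A_{kj}$ (being the coefficient of an exact form on $\T^\nu$) makes the last step well defined.

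Next I would derive the splittings \eqref{toro I delta (n)}, \eqref{toro I delta (n) bot} by plugging the decomposition $A_{kj}=A_{kj}^{(n)}+A_{kj}^{(n),\bot}$ into the definition of $\rho$, and then prove the estimates \eqref{2015-2}--\eqref{stima y - y delta bot} by combining: the elliptic bound $\|\rho_j^{(n)}\|_s^{k_0,\g}\leq_s \|A_{kj}^{(n)}\|_s^{k_0,\g}$ (since $\D_\vphi^{-1}\pa_{\vphi_j}$ is an order-$0$ Fourier multiplier on the zero-average sector), the bounds \eqref{stima A ij}--\eqref{stima A kj bot} for the two components of $A_{kj}$, the smoothing property \eqref{smoothing-u1} of $\Pi_{K_n}^\bot$, the tame product estimate \eqref{interpolazione C k0}, and Lemma \ref{lemma:utile} for the inverse matrix $[\pa_\vphi\theta_0]^{-T}$ (written as $(\Id+\pa_\vphi\Theta_0)^{-T}$ and expanded via Neumann thanks to \eqref{ansatz 0}). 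The estimate \eqref{derivata i delta} for $\pa_i[i_\d][\widehat\imath]$ follows by differentiating the formula \eqref{y 0 - y delta} with respect to $i_0$ and using again \eqref{interpolazione C k0}, \eqref{ansatz 0}, together with the already established tame control of $\rho$.

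For the error estimate on $Z_\d:=\mathcal F(i_\d,\a_0)$ I would write the Taylor expansion
\[
Z_\d=Z+\int_0^1 d_i\mathcal F\bigl(i_0+t(i_\d-i_0)\bigr)[i_\d-i_0]\,dt,
\]
and split $i_\d-i_0=(0,I_\d^{(n)}-I_0,0)+(0,I_\d^{(n),\bot},0)$ to define $Z_\d^{(n)}$ and $Z_\d^{(n),\bot}$ as the contributions of the two pieces (absorbing $Z$ itself into $Z_\d^{(n)}$, since $Z$ is already controlled by \eqref{stima A ij} via Lemma \ref{Lemma Z piccolezza}). The tame estimates \eqref{stima toro modificato}, \eqref{stima toro modificato bot} then follow from the bounds \eqref{stima derivata XP} on $d_i X_P$, the structure of $d_i\mathcal F$ (which loses two $\vphi$-derivatives, absorbed into $\s$), the estimates \eqref{stima y - y delta}, \eqref{stima y - y delta bot} for $I_\d^{(n)}-I_0$ and $I_\d^{(n),\bot}$, Lemma \ref{Lemma Z piccolezza} for $Z$, and interpolation in \eqref{interpolazione C k0} together with the smallness \eqref{ansatz 0}.

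The main obstacle I anticipate is bookkeeping: each splitting must be consistently propagated through the nonlinear map $\mathcal F$, and the low-norm bound \eqref{stima toro modificato bot} with the $K_n^{-b}$ gain requires that the $\Pi_{K_n}^\bot$ factor be isolated on the high-frequency piece before any nonlinear operation, so that the smoothing estimate for $\Pi_{K_n}^\bot$ applies in the low Sobolev index $s_0+\mathtt c$; the linearization-plus-Taylor decomposition above is designed precisely to accommodate this.
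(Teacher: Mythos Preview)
Your treatment of isotropy and of the estimates \eqref{2015-2}--\eqref{derivata i delta} is essentially the paper's argument and is fine. The splitting of $Z_\d$ via the Taylor (Duhamel) formula and the separation into the $I_\d^{(n)}-I_0$ and $I_\d^{(n),\bot}$ pieces is also exactly how the paper defines $Z_\d^{(n)}$ and $Z_\d^{(n),\bot}$.

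There is, however, a genuine gap in your proof of \eqref{stima toro modificato}. Writing $Z_\d^{(n)}=Z+\int_0^1 d_i{\cal F}(\ldots)[\,0,\,I_\d^{(n)}-I_0,\,0\,]\,dt$, the linearized operator contributes the term $\Dom(I_\d^{(n)}-I_0)$ (with no $\e$ in front), and if you bound this by $\|I_\d^{(n)}-I_0\|_{s+1}^{k_0,\g}$ via \eqref{stima y - y delta} you pick up a spurious factor $\g^{-1}$, giving $\g^{-1}\|Z\|_{s+\s}^{k_0,\g}$ instead of the claimed $\|Z\|_{s+\s}^{k_0,\g}$. The $\e$-perturbative part $\e\int_0^1\pa_I X_P\cdot(I_\d^{(n)}-I_0)\,dt$ is harmless because the $\e$ absorbs the $\g^{-1}$, but the linear part is not.

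The paper avoids this loss by computing $\Dom(I_\d^{(n)}-I_0)$ explicitly: differentiating \eqref{toro I delta (n)} gives
\[
\Dom(I_\d^{(n)}-I_0)=[\pa_\vphi\theta_0]^{-T}\,\Dom\rho^{(n)}-\big([\pa_\vphi\theta_0]^{-T}(\Dom[\pa_\vphi\theta_0]^T)[\pa_\vphi\theta_0]^{-T}\big)\rho^{(n)}\,,
\]
and then uses two structural identities. For the first term, $\Dom\rho_j^{(n)}=\Delta_\vphi^{-1}\sum_k\pa_{\vphi_j}(\om\!\cdot\!\pa_\vphi A_{kj}^{(n)})$, and the key point is the identity (proved just before \eqref{bella trovata})
\[
\om\!\cdot\!\pa_\vphi A_{kj}={\cal W}(\pa_\vphi Z\,\underline e_k,\pa_\vphi i_0\,\underline e_j)+{\cal W}(\pa_\vphi i_0\,\underline e_k,\pa_\vphi Z\,\underline e_j)\,,
\]
which bounds $\|\om\!\cdot\!\pa_\vphi A_{kj}^{(n)}\|_s^{k_0,\g}$ directly in terms of $\|Z\|_{s+1}^{k_0,\g}$ \emph{without} inverting $\om\!\cdot\!\pa_\vphi$ and hence without any $\g^{-1}$. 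For the second term one uses $\Dom[\pa_\vphi\theta_0]=\e\,\pa_\vphi(\pa_I P)(i_0)+\pa_\vphi Z_1$, which again gives control by $\|Z\|_{s+\s}$ (plus an $\e$-small piece). You need to incorporate these two identities; merely quoting \eqref{stima y - y delta} for $I_\d^{(n)}-I_0$ is not sharp enough to yield \eqref{stima toro modificato} as stated.
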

In the paper we denote equivalently the differential by $ \partial_i $ or  $ d_i $. Moreover we denote 
by $ \s := \s(\nu, \tau, k_0 ) $ possibly different (larger) ``loss of derivatives"  constants. 

\begin{proof}
The isotropy of the torus $i_\delta$, defined by \eqref{y 0 - y delta}, is proved in Lemma 6 of \cite{BB13}. 
The estimate \eqref{2015-2} follows by \eqref{y 0 - y delta}, \eqref{coefficienti pull back di Lambda}, \eqref{def Akj}, 
\eqref{interpolazione C k0} and \eqref{ansatz 0}. The estimate \eqref{stima y - y delta} follows by 
\eqref{toro I delta (n)} and   \eqref{stima A ij}. The estimate \eqref{stima y - y delta bot} follows by 
\eqref{toro I delta (n) bot} and  \eqref{stima A kj bot}.
 The bound  \eqref{derivata i delta} follows by 
\eqref{y 0 - y delta}, \eqref{def Akj}, \eqref{coefficienti pull back di Lambda}, \eqref{ansatz 0}.
 We now prove  \eqref{stima toro modificato}, \eqref{stima toro modificato bot}. One has  
\begin{align}
{\mathcal F}(i_\delta, \alpha_0) & =  {\mathcal F}(i_0, \alpha_0) +  
\begin{pmatrix} 0 \\ \Dom (I_\delta  - I_0 )  \\ 0 \end{pmatrix}\, 
  \! \!  +  \!  \e \big( X_P(i_\delta ) - X_P(i_0) \big) \nonumber\\
& = {\mathcal F}(i_0, \alpha_0)  \! + \!   \begin{pmatrix} 0 \\ \Dom (I_\delta  - I_0 )  \\ 0 \end{pmatrix}\, 
 \!  \!  + \! \e  \!  \int_0^1 \! \! \partial_I X_P(t \, i_\delta + (1 - t) i_0) \cdot (I_\delta - I_0)\, d t \nonumber \\ 
& = Z_\delta^{(n)} + Z_\delta^{(n), \bot} \nonumber
\end{align}
where 
\begin{equation}\label{Z delta (n)}
\begin{aligned}
 Z_\delta^{(n)}  & := {\mathcal F}(i_0, \alpha_0) + \begin{pmatrix} 0 \\ \Dom (I_\delta^{(n)}  - I_0 )  \\ 0 \end{pmatrix} \\\, 
& \quad + \e \int_0^1 \partial_I X_P(t \, i_\delta + (1 - t) i_0) \cdot (I_\delta^{(n)} - I_0)\, d t\,, \\
\end{aligned}
\end{equation}
\begin{equation}\label{Z delta (n) bot}
\begin{aligned}
Z_\delta^{(n), \bot} := \begin{pmatrix} 0 \\ \Dom I_\delta^{(n), \bot}     \\ 0 \end{pmatrix}\, 
+ \e \int_0^1 \partial_I X_P(t \, i_\delta + (1 - t) i_0) \cdot I_\delta^{(n), \bot} \, d t\,.
\end{aligned}
\end{equation}
By differentiating \eqref{toro I delta (n)} and, arguing as in \cite{BB13}, \cite{BBM-auto}, we get 
\begin{align}
    \Dom (I_\delta^{(n)} - I_0 ) & =  [ \pa_\ph \theta_0(\vphi)]^{-T} \Dom \rho^{(n)}(\vphi)   \nonumber\\
& \quad - \big(  [ \pa_\ph \theta_0(\vphi)]^{-T} 
\big( \Dom [\pa_\ph \theta_0(\vphi)]^T \big) [\pa_\ph \theta_0(\vphi)]^{-T}  \big) \rho^{(n)}(\vphi)  \label{come1}  \\
&  \!   \Dom [\pa_\ph \theta_0(\vphi)]  
 = \e \pa_\vphi (\pa_I P)(i_0(\vphi)) + \partial_\vphi Z_1(\vphi) \, . \label{come2}
\end{align}
Then  \eqref{stima toro modificato} follows by  \eqref{Z delta (n)}, \eqref{come1}-\eqref{come2}, 
\eqref{stima derivata XP}, \eqref{interpolazione C k0}, \eqref{stima y - y delta}, 
\eqref{ansatz 0}, Lemma \ref{Lemma Z piccolezza}, \eqref{toro I delta (n)}, \eqref{bella trovata},  \eqref{stima A ij}. 
The estimates \eqref{stima toro modificato bot} follow by \eqref{Z delta (n) bot}, \eqref{toro I delta (n) bot}, \eqref{interpolazione C k0}, 
\eqref{stima A kj bot}, 
\eqref{stima derivata XP}, \eqref{2015-2}, \eqref{ansatz 0} and \eqref{stima y - y delta bot}. 
\end{proof}

In order to find an approximate inverse of the linearized operator $d_{i, \alpha} {\mathcal F}(i_\delta )$ 
we introduce  the symplectic diffeomorpshim 
$ G_\delta : (\phi, y, w) \to (\theta, I, z)$ of the phase space $\T^\nu \times \R^\nu \times H_{{\mathbb S}^+}^\bot$ defined by
\begin{equation}\label{trasformazione modificata simplettica}
\begin{pmatrix}
\theta \\
I \\
z
\end{pmatrix} := G_\delta \begin{pmatrix}
\phi \\
y \\
w
\end{pmatrix} := 
\begin{pmatrix}
\!\!\!\!\!\!\!\!\!\!\!\!\!\!\!\!\!\!\!\!\!\!\!\!\!\!\!\!\!\!\!\!\!
\!\!\!\!\!\!\!\!\!\!\!\!\!\!\!\!\!\!\!\!\!\!\!\!\!\!\!\!\!\!\!\!\!
\!\!\!\!\!\!\!\!\!\!\!\!\!\!\!\!\!\!\!\!\!\!\!\!\!\!\!\!\!\!\!\! \theta_0(\phi) \\
I_\delta (\phi) + [\pa_\phi \theta_0(\phi)]^{-T} y - \big[ (\pa_\teta \tilde{z}_0) (\theta_0(\phi)) \big]^T J w \\
\!\!\!\!\!\!\!\!\!\!\!\!\!\!\!\!\!\!\!\!\!\!\!\!\!\!\!\!\!
\!\!\!\!\!\!\!\!\!\!\!\!\!\!\!\!\!\!\!\!\!\!\!\!\!\!\!\!\!
\!\!\!\!\!\!\!\!\!\!\!\!\!\!\!\!\!\!\!\!\!\!\!\!\!\!\!\!\!  z_0(\phi) + w
\end{pmatrix} 
\end{equation}
where $ \tilde{z}_0 (\theta) := z_0 (\theta_0^{-1} (\theta))$. 
It is proved in \cite{BB13} that $ G_\delta $ is symplectic, because  the torus $ i_\d $ is isotropic 
(Lemma \ref{toro isotropico modificato}).
In the new coordinates,  $ i_\delta $ is the trivial embedded torus
$ (\phi , y , w ) = (\phi , 0, 0 ) $.  Under the symplectic change of variables $ G_\d $ the Hamiltonian 
vector field $ X_{H_\a} $ (the Hamiltonian $  H_\a $ is defined in \eqref{H alpha}) changes into 
\be\label{new-Hamilt-K}
X_{K_\a} = (D G_\d)^{-1} X_{H_\a} \circ G_\d \qquad {\rm where} \qquad K_\alpha := H_{\alpha} \circ G_\d  \, .
\ee
By \eqref{parity solution} the transformation $ G_\d $ is also reversibility preserving and so $ K_\alpha $ is reversible, 
$ K_\a \circ \tilde \rho = K_\a $.  

The Taylor expansion of $ K_\a $ at the trivial torus $ (\phi , 0, 0 ) $ is 
\begin{align} 
K_\alpha (\phi, y , w)
& =  K_{00}(\phi, \alpha) + K_{10}(\phi, \alpha) \cdot y + (K_{0 1}(\phi, \alpha), w)_{L^2(\T_x)} + 
\frac12 K_{2 0}(\phi) y \cdot y 
\nonumber \\ & 
\quad +  \big( K_{11}(\phi) y , w \big)_{L^2(\T_x)} 
+ \frac12 \big(K_{02}(\phi) w , w \big)_{L^2(\T_x)} + K_{\geq 3}(\phi, y, w)  
\label{KHG}
\end{align}
where $ K_{\geq 3} $ collects the terms at least cubic in the variables $ (y, w )$.
The Taylor coefficient $K_{00}(\phi, \alpha) \in \R $,  
$K_{10}(\phi, \alpha) \in \R^\nu $,  
$K_{01}(\phi, \alpha) \in H_{{\mathbb S}^+}^\bot$, 
$K_{20}(\phi) $ is a $\nu \times \nu$ real matrix, 
$K_{02}(\phi)$ is a linear self-adjoint operator of $ H_{{\mathbb S}^+}^\bot $ and 
$K_{11}(\phi) \in {\mathcal L}(\R^\nu, H_{{\mathbb S}^+}^\bot )$. 

Note that, by \eqref{H alpha} and \eqref{trasformazione modificata simplettica}, 
the only Taylor coefficients which depend on $ \a $ are
$ K_{00} $, $ K_{10} $, $ K_{01} $.

\smallskip
The Hamilton equations associated to \eqref{KHG}  are 
\begin{equation}\label{sistema dopo trasformazione inverso approssimato}
\begin{cases}
\dot \phi \hspace{-30pt} & = K_{10}(\phi, \alpha) +  K_{20}(\phi) y + 
K_{11}^T (\phi) w + \partial_{y} K_{\geq 3}(\phi, y, w)
\\
\dot y \hspace{-30pt} & = 
\partial_\phi K_{00}(\phi, \alpha) - [\partial_{\phi}K_{10}(\phi, \alpha)]^T  y - 
[\partial_{\phi} K_{01}(\phi, \alpha)]^T  w  - \partial_\phi \big(\frac12 K_{2 0}(\phi) y \cdot y \big)
\\
& \quad -
\partial_\phi \big(  ( K_{11}(\phi) y , w )_{L^2(\T_x)} + 
\frac12 ( K_{02}(\phi) w , w )_{L^2(\T_x)} + K_{\geq 3}(\phi, y, w) \big)
\\
\dot w \hspace{-30pt} & = J \big( K_{01}(\phi, \alpha) + 
K_{11}(\phi) y +  K_{0 2}(\phi) w + \nabla_w K_{\geq 3}(\phi, y, w) \big) 
\end{cases} 
\end{equation}
where $ \partial_{\phi}K_{10}^T $ is the $ \nu \times \nu $ transposed matrix and 
$ \partial_{\phi}K_{01}^T $,  $ K_{11}^T  : {H_{{\mathbb S}^+}^\bot \to \R^\nu} $ are defined by the 
duality relation $ ( \partial_{\phi} K_{01} [\hat \phi ],  w)_{L^2_x}  = \hat \phi \cdot [\partial_{\phi}K_{01}]^T w  $,
$ \forall \hat \phi \in \R^\nu, w \in H_{{\mathbb S}^+}^\bot $, 
and similarly for $ K_{11} $. 
Explicitly, for all  $ w \in H_{{\mathbb S}^+}^\bot $, 
and denoting $\underline{e}_k$ the $k$-th versor of $\R^\nu$, 
\begin{equation} \label{K11 tras}
K_{11}^T(\phi) w =  {\mathop \sum}_{k=1}^\nu \big(K_{11}^T(\phi) w \cdot \underline{e}_k\big) \underline{e}_k   =
{\mathop \sum}_{k=1}^\nu  
\big( w, K_{11}(\phi) \underline{e}_k  \big)_{L^2(\T_x)}  \underline{e}_k  \, \in \R^\nu \, .  
\end{equation}
In the next lemma we provide estimates of the coefficients $ K_{00} $, $ K_{10} $, $K_{01} $ in the Taylor expansion \eqref{KHG}. 
\begin{lemma} \label{coefficienti nuovi} 
There is $\sigma := \sigma (\tau, \nu, k_0)>0 $ and a decomposition 
\begin{equation}\label{splitting K 00 01 10}
\partial_\phi K_{00} =  \partial_\phi K_{00}^{(n)} + \partial_\phi K_{00}^{(n), \bot} \,,\ \  K_{10} = K_{10}^{(n)} + K_{10}^{(n), \bot}\,, \ \  K_{0 1} = K_{0 1}^{(n)} + K_{0 1}^{(n), \bot}\,,
\end{equation}
such that, 
if \eqref{ansatz 0} holds with $ \mu \geq \sigma + \mathtt c $, $\mathtt c > 0 $,  then  
\begin{equation}\label{K 00 10 01} 
\begin{aligned}
& \|  \partial_\phi K_{00}^{(n)}(\cdot, \alpha_0) \|_s^{k_0, \gamma} 
+ \| K_{10}^{(n)}(\cdot, \alpha_0) - \om  \|_s^{k_0, \gamma} +  \| K_{0 1}^{(n)}(\cdot, \alpha_0) \|_s^{k_0, \gamma}  \\
& \leq_s  \| Z \|_{s + \s}^{k_0, \gamma} +  \| Z \|_{s_0 + \s}^{k_0, \gamma} \| {\mathfrak I}_0 \|_{s + \s}^{k_0, \gamma} \, ,
\end{aligned}
\end{equation}
\begin{equation}\label{K 00 10 01 bot alta}
\begin{aligned}
& \|  \partial_\phi K_{00}^{(n), \bot}(\cdot, \alpha_0) \|_s^{k_0, \gamma} 
+ \| K_{10}^{(n), \bot}(\cdot, \alpha_0)   \|_s^{k_0, \gamma} +  \| K_{0 1}^{(n), \bot}(\cdot, \alpha_0) \|_s^{k_0, \gamma}   \\
& \leq_s \| \fracchi_0\|_{s + \sigma}^{k_0, \gamma}\,,
\end{aligned}
\end{equation}
\begin{equation}\label{K 00 10 01 bot bassa}
\begin{aligned}
& \|  \partial_\phi K_{00}^{(n), \bot}(\cdot, \alpha_0) \|_{s_0 + \mathtt c}^{k_0, \gamma} 
+ \| K_{10}^{(n), \bot}(\cdot, \alpha_0)   \|_{s_0 + \mathtt c}^{k_0, \gamma} +  \| K_{0 1}^{(n), \bot}(\cdot, \alpha_0) \|_{s_0 + \mathtt c}^{k_0, \gamma}  \\ & \leq_{s_0, b} K_n^{- b} \| \fracchi_0\|_{s_0 + \sigma + \mathtt c + b}^{k_0, \gamma}
\end{aligned}
\end{equation}
for all $ b > 0 $.
\end{lemma}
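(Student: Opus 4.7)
The plan is to exploit the conjugation identity \eqref{new-Hamilt-K}, which at the trivial torus $(\phi,0,0)$ reads
$$ X_{K_\alpha}(\phi,0,0) \,=\, \big(DG_\delta(\phi,0,0)\big)^{-1}\, X_{H_\alpha}(i_\delta(\phi)),$$
since $G_\delta(\phi,0,0) = i_\delta(\phi)$. On one side, reading off \eqref{sistema dopo trasformazione inverso approssimato} at $(y,w)=(0,0)$ identifies the three components of $X_{K_\alpha}(\phi,0,0)$ with $K_{10}(\phi,\alpha_0)$, $\partial_\phi K_{00}(\phi,\alpha_0)$ and $J K_{01}(\phi,\alpha_0)$ respectively. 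On the other side, by the definition \eqref{def Zetone} applied to $i_\delta$,
$$ X_{H_\alpha}(i_\delta(\phi)) \,=\, \omega\cdot\partial_\phi i_\delta(\phi) - Z_\delta(\phi),$$
and since $G_\delta(\phi,0,0)=i_\delta(\phi)$, differentiating along $\omega$ yields $\omega\cdot\partial_\phi i_\delta(\phi) = DG_\delta(\phi,0,0)[(\omega,0,0)]$. Hence
$$\big(K_{10}(\phi,\alpha_0)-\omega,\ \partial_\phi K_{00}(\phi,\alpha_0),\ JK_{01}(\phi,\alpha_0)\big) \,=\, -\big(DG_\delta(\phi,0,0)\big)^{-1} Z_\delta(\phi),$$
displaying each of the three coefficients as an explicit linear functional of $Z_\delta$, with coefficient matrix built from $i_\delta$ alone.

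To produce the decomposition \eqref{splitting K 00 01 10}, I would then use the splitting $Z_\delta = Z_\delta^{(n)} + Z_\delta^{(n),\bot}$ supplied by Lemma \ref{toro isotropico modificato}, and \emph{define} $K_{10}^{(n)}$, $\partial_\phi K_{00}^{(n)}$, $K_{01}^{(n)}$ by applying $-(DG_\delta(\phi,0,0))^{-1}$ componentwise to $Z_\delta^{(n)}$ (the $\omega$ being absorbed entirely into $K_{10}^{(n)}$), and analogously $K_{**}^{(n),\bot}$ from $Z_\delta^{(n),\bot}$. In this way \eqref{K 00 10 01}--\eqref{K 00 10 01 bot bassa} reduce to the bounds \eqref{stima toro modificato}, \eqref{stima toro modificato bot} on the two pieces of $Z_\delta$, combined with tame estimates for the linear map $(DG_\delta(\phi,0,0))^{-1}$.

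Those tame bounds on $(DG_\delta)^{-1}$ reduce, via the explicit form \eqref{trasformazione modificata simplettica} of $G_\delta$, to tame estimates on $\theta_0(\phi)$, $I_\delta(\phi)$ and $z_0(\phi)$, all of which are controlled by $\|\fracchi_0\|_s^{k_0,\gamma}$ through \eqref{ansatz 0} and \eqref{2015-2}; the inverse $[\pa_\phi \theta_0]^{-1}$ and the composition $\tilde z_0\circ\theta_0^{-1}$ are handled by the change-of-variable estimates of Lemma \ref{lemma:utile} together with the composition Lemma \ref{Moser norme pesate}. The interpolation/product estimate \eqref{interpolazione C k0} then yields, for any vector field $h$, a bound of the shape
$$ \big\|(DG_\delta(\phi,0,0))^{-1}[h]\big\|_s^{k_0,\gamma} \,\leq_s\, \|h\|_s^{k_0,\gamma} + \|\fracchi_0\|_{s+\sigma}^{k_0,\gamma}\,\|h\|_{s_0}^{k_0,\gamma},$$
which, plugged into \eqref{stima toro modificato}, produces the mixed term $\|Z\|_{s_0+\sigma}^{k_0,\gamma}\|\fracchi_0\|_{s+\sigma}^{k_0,\gamma}$ in \eqref{K 00 10 01}, and plugged into \eqref{stima toro modificato bot} yields \eqref{K 00 10 01 bot alta}--\eqref{K 00 10 01 bot bassa} with the smoothing factor $K_n^{-b}$, after suitably enlarging $\sigma$ and $\mathtt c$.

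The main technical burden will be keeping careful track of signs and of the coupling in the matrix $(DG_\delta(\phi,0,0))^{-1}$, in particular the contribution of the term $[(\partial_\theta \tilde z_0)(\theta_0(\phi))]^T J$ in \eqref{trasformazione modificata simplettica} that mixes the $w$- and $\phi$-components, and verifying that the loss of derivatives $\sigma$ can be chosen independently of $s$ (and of $b$, $\mathtt c$) once $k_0$, $\nu$, $\tau$ are fixed. All remaining steps are routine applications of the interpolation/composition results of Section \ref{sec:prelim} together with the bounds already established in Lemma \ref{toro isotropico modificato}.
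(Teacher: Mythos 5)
Your proposal is correct and follows essentially the same route as the paper: the identities you derive from the conjugation \eqref{new-Hamilt-K} are exactly the explicit formulas the paper quotes from \cite{BB13}, \cite{BBM-auto} (which are nothing but $-(DG_\delta(\vphi,0,0))^{-1}Z_\delta$ computed componentwise from the block-triangular matrix \eqref{DGdelta}), and the paper defines the splitting \eqref{splitting K 00 01 10} by inserting $Z_\delta^{(n)}$, $Z_\delta^{(n),\bot}$ into those formulas just as you do, concluding with \eqref{2015-2}, \eqref{stima toro modificato}, \eqref{stima toro modificato bot}, \eqref{interpolazione C k0} and \eqref{ansatz 0}, i.e. the same tame bounds you invoke via \eqref{DG delta}. (The sign of the $\partial_\phi K_{00}$ component in your identity is immaterial for the norm estimates.)
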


\begin{proof}
In Lemma 8 of \cite{BB13} or Lemma 6.4 of \cite{BBM-auto} 
the following identities are proved 
\begin{align*}
\partial_\phi K_{00}(\phi, \alpha_0) & =  
- [ \pa_\phi \teta_0 (\phi) ]^T \big( - Z_{2, \d} - 
[ \pa_\phi I_\d] [ \pa_\phi \teta_0]^{-1} Z_{1, \d}   
- [ (\pa_\theta {\tilde z}_0)( \teta_0 (\phi)) ]^T J Z_{3,\d} 
\\ 
&  \quad - [ (\pa_\theta {\tilde z}_0)(\teta_0 (\phi)) ]^T J \partial_\phi z_0 (\phi) [ \pa_\phi \teta_0 (\phi)]^{-1} Z_{1,\d} \big) \, ,  
\\
K_{10}(\phi, \alpha_0) & 
=  \omega -   [ \pa_\phi \theta_0(\phi)]^{-1} Z_{1,\d}(\phi) \,, 
\\
K_{01}(\phi, \alpha_0) 
& = J Z_{3,\d} - J \pa_\phi z_0(\phi) [\pa_\phi \theta_0(\phi)]^{-1} Z_{1,\d}(\phi)   
\end{align*}
where $  Z_\d = (Z_{1,\d}, Z_{2,\d}, Z_{3,\d}) := {\mathcal F}(i_\d, \alpha_0) $. According to the splitting  
$ Z_\delta = Z_\delta^{(n)} + Z_\delta^{(n), \bot}  $ 
given in Lemma \ref{toro isotropico modificato}, setting 
$$ Z_\delta^{(n)} = (Z_{1, \delta}^{(n)}, Z_{2, \delta}^{(n)}, Z_{3, \delta}^{(n)}), 
Z_\delta^{(n), \bot} = (Z_{1, \delta}^{(n), \bot}, Z_{2, \delta}^{(n), \bot}, Z_{3, \delta}^{(n), \bot}) \, , 
$$ we get the decomposition \eqref{splitting K 00 01 10}  with 
\begin{align*}
\partial_\phi K_{00}^{(n)}(\phi, \alpha_0) & =  
- [ \pa_\phi \teta_0 (\phi) ]^T \big( - Z_{2, \d}^{(n)} - 
[ \pa_\phi I_\d] [ \pa_\phi \teta_0]^{-1} Z_{1, \d}^{(n)}   \\
& \quad - [ (\pa_\theta {\tilde z}_0)( \teta_0 (\phi)) ]^T J Z_{3,\d}^{(n)} 
\\ 
&  \quad - [ (\pa_\theta {\tilde z}_0)(\teta_0 (\phi)) ]^T J \partial_\phi z_0 (\phi) [ \pa_\phi \teta_0 (\phi)]^{-1} Z_{1,\d}^{(n)} \big) \, ,  
\\
\partial_\phi K_{00}^{(n), \bot}(\phi, \alpha_0) & =  
- [ \pa_\phi \teta_0 (\phi) ]^T \big( - Z_{2, \d}^{(n), \bot} - 
[ \pa_\phi I_\d] [ \pa_\phi \teta_0]^{-1} Z_{1, \d}^{(n), \bot}    
\\ 
&  \quad  - [ (\pa_\theta {\tilde z}_0)( \teta_0 (\phi)) ]^T J Z_{3,\d}^{(n), \bot} \nonumber\\
& \quad  - [ (\pa_\theta {\tilde z}_0)(\teta_0 (\phi)) ]^T J \partial_\phi z_0 (\phi) [ \pa_\phi \teta_0 (\phi)]^{-1} Z_{1,\d}^{(n), \bot} \big) \, ,  
\\
K_{10}^{(n)}(\phi, \alpha_0) & 
=  \omega -   [ \pa_\phi \theta_0(\phi)]^{-1} Z_{1,\d}^{(n)}(\phi) \,, 
\\
K_{10}^{(n), \bot}(\phi, \alpha_0) & 
= -   [ \pa_\phi \theta_0(\phi)]^{-1} Z_{1,\d}^{(n), \bot}(\phi) \,, 
\\
K_{01}^{(n)}(\phi, \alpha_0) 
& = J Z_{3,\d}^{(n)} - J \pa_\phi z_0(\phi) [\pa_\phi \theta_0(\phi)]^{-1} Z_{1,\d}^{(n)}(\phi) \\
K_{01}^{(n), \bot}(\phi, \alpha_0) 
& = J Z_{3,\d}^{(n), \bot} - J \pa_\phi z_0(\phi) [\pa_\phi \theta_0(\phi)]^{-1} Z_{1,\d}^{(n), \bot}(\phi)\,.  
\end{align*}

Then the estimates \eqref{K 00 10 01}
-\eqref{K 00 10 01 bot bassa} follow by \eqref{2015-2}, \eqref{stima toro modificato}, \eqref{stima toro modificato bot}, using \eqref{interpolazione C k0} and \eqref{ansatz 0}. 
\end{proof}

We now estimate  the variation of the coefficients
$ K_{00} $, $ K_{10} $, $K_{01} $ with respect to $ \a $.
Note, in particular, that $ \partial_\alpha K_{10} \approx {\rm Id} $ says that the tangential frequencies vary with 
$ \a \in \R^\nu $. We also estimate  $ K_{20}$ and $ K_{11} $. 

\begin{lemma} \label{lemma:Kapponi vari} We have 
\begin{align*}
& \| \partial_\alpha K_{00}\|_s^{k_0, \gamma} + \| \partial_\alpha K_{10} - {\rm Id} \|_s^{k_0, \gamma} + \| \partial_\alpha K_{0 1}\|_s^{k_0, \gamma} \leq_s   \| \fracchi_0\|_{s + \sigma}^{k_0, \gamma} \, , \\
&   \|K_{20}  \|_s \leq_s \e \big( 1 + \| \fracchi_0\|_{s + \s}^{k_0, \gamma} \big) \, , \\ 
& \| K_{11} y \|_s^{k_0, \gamma} 
\leq_s \e \big(\| y \|_{s + 2}^{k_0, \gamma}
+ \| \fracchi_0 \|_{s + \sigma}^{k_0, \gamma}  
\| y \|_{s_0 + 2}^{k_0, \gamma} \big) \, , \\
&  \| K_{11}^T w \|_s^{k_0, \gamma}
\leq_s \e \big(\| w \|_{s + 2}^{k_0, \gamma}
+  \| \fracchi_0 \|_{s + \sigma}^{k_0, \gamma}
\| w \|_{s_0 + 2}^{k_0, \gamma} \big)\, . 
\end{align*}
\end{lemma}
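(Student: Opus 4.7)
The plan is to exploit that $H_\alpha = {\cal N}_\alpha + \e P$ depends on $\alpha$ only through the linear term $\alpha \cdot I$, so $\partial_\alpha H_\alpha = I$ and hence $\partial_\alpha K_\alpha = I \circ G_\delta$, which by reading off the $I$-component of \eqref{trasformazione modificata simplettica} equals
\begin{equation*}
\partial_\alpha K_\alpha(\phi, y, w) \;=\; I_\delta(\phi) + [\pa_\phi \theta_0(\phi)]^{-T} y - \bigl[(\pa_\theta \tilde z_0)(\theta_0(\phi))\bigr]^T J w.
\end{equation*}
Matching this against $\partial_\alpha$ of the Taylor expansion \eqref{KHG}, and using that $K_{20}, K_{11}, K_{02}, K_{\geq 3}$ are $\alpha$-independent, yields the closed-form identifications
\begin{equation*}
\partial_\alpha K_{00}(\phi, \alpha) = I_\delta(\phi), \qquad \partial_\alpha K_{10}(\phi, \alpha) = [\pa_\phi \theta_0(\phi)]^{-1}, \qquad \partial_{\alpha_k} K_{01}(\phi, \alpha) = J\,\pa_{\theta_k} \tilde z_0(\theta_0(\phi)),
\end{equation*}
all three independent of $\alpha$. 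The required bounds then follow routinely: writing $\theta_0(\phi) = \phi + \Theta_0(\phi)$ with $\Theta_0$ small by the ansatz \eqref{ansatz 0}, a Neumann inversion of $\mathrm{Id} + \pa_\phi \Theta_0$ combined with \eqref{interpolazione C k0} controls $[\pa_\phi \theta_0]^{-1} - \mathrm{Id}$; the bound on $\partial_\alpha K_{00} = I_\delta$ splits as $I_0 + (I_\delta - I_0)$ and uses \eqref{2015-2}; and for $\partial_\alpha K_{01}$ the identity $\tilde z_0(\theta_0(\phi)) = z_0(\phi)$ and the chain rule reduce the bound to tame estimates on $\pa_\phi z_0 \circ [\pa_\phi \theta_0]^{-1}$, again controlled by $\|\fracchi_0\|_{s+\sigma}^{k_0,\gamma}$.

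For $K_{20}$ and $K_{11}$, the key observation is that $G_\delta$ is affine in $(y,w)$: writing $G_\delta(\phi, y, w) = i_\delta(\phi) + L_\phi(y, w)$ with $L_\phi$ linear in $(y,w)$, the quadratic-in-$(y,w)$ part of $K_\alpha = H_\alpha \circ G_\delta$ equals
\begin{equation*}
\tfrac12 \, d^2 H_\alpha(i_\delta(\phi))\bigl[L_\phi(y,w),\, L_\phi(y,w)\bigr] \;=\; \tfrac{\e}{2}\, d^2 P(i_\delta(\phi))\bigl[L_\phi(y,w),\, L_\phi(y,w)\bigr] + \tfrac12 (\Omega w, w)_{L^2_x},
\end{equation*}
since the only $\alpha$-independent quadratic term of $H_\alpha$ besides $\e P$ is $\tfrac12 (z,\Omega z)_{L^2_x}$, whose composition with the $z$-component $z_0(\phi) + w$ of $G_\delta$ contributes solely to $K_{02}$. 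Hence $K_{20}$ and $K_{11}$ pick out the $y\cdot y$ and $y\cdot w$ parts of the first summand, and their estimates follow from Lemma \ref{lemma quantitativo forma normale} (applied at $i_\delta$, close to $i_0$ by \eqref{2015-2}, which yields tame bounds on $d^2 P$ through $d^2 X_P$), combined with \eqref{interpolazione C k0} and the bound by $1 + \|\fracchi_0\|_{s+\sigma}^{k_0,\gamma}$ of the three components of $L_\phi$, namely $[\pa_\phi \theta_0]^{-T}$, $\mathrm{Id}$, and $[(\pa_\theta \tilde z_0)(\theta_0)]^T J$. The $\e$ prefactor is automatic from $\e P$. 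Finally, the estimate on $K_{11}^T$ follows from the one on $K_{11}$ via the duality formula \eqref{K11 tras} and Cauchy--Schwarz.

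The main bookkeeping challenge will be tracking the precise value of $\sigma$ through the chain of composition, Neumann inversion, and tame-product estimates, and checking that the two-derivative loss appearing in $\|y\|_{s+2}^{k_0,\gamma}$ and $\|w\|_{s+2}^{k_0,\gamma}$ (inherited from the two derivatives in $d^2 X_P$ of Lemma \ref{lemma quantitativo forma normale}) exactly matches the prescribed statement rather than producing a larger loss.
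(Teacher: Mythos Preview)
Your approach is correct and essentially the same as the paper's. The paper simply cites \cite{BB13}, \cite{BBM-auto} for the explicit formulas
\[
\partial_\alpha K_{00}(\phi) = I_\delta(\phi),\quad \partial_\alpha K_{10}(\phi) = [\partial_\phi \theta_0(\phi)]^{-1},\quad \partial_\alpha K_{01}(\phi) = J\,\partial_\theta \tilde z_0(\theta_0(\phi)),
\]
\[
K_{20} = \e\,[\partial_\phi \theta_0]^{-1}\,\partial_{II}P(i_\delta)\,[\partial_\phi \theta_0]^{-T},\qquad
K_{11} = \e\bigl(\partial_I\nabla_z P(i_\delta)[\partial_\phi \theta_0]^{-T} + J(\partial_\theta\tilde z_0)(\theta_0)\,\partial_{II}P(i_\delta)\,[\partial_\phi \theta_0]^{-T}\bigr),
\]
and then invokes \eqref{stime XP}, \eqref{ansatz 0}, \eqref{2015-2} (and \eqref{K11 tras} for $K_{11}^T$) to conclude; you have rederived precisely these formulas by the same mechanism --- reading off the $I$-component of $G_\delta$ for the $\partial_\alpha$-identities, and Taylor-expanding $\e P \circ G_\delta$ using the affine structure of $G_\delta$ in $(y,w)$ for $K_{20}, K_{11}$ --- and then appealed to the same estimates.
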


\begin{proof}
By \cite{BB13}, \cite{BBM-auto} we have
\begin{align*}
\partial_\alpha K_{00}(\phi)  & = I_{\delta}(\phi)\,,\quad \partial_\alpha K_{10}(\phi) = [\partial_\phi \theta_0(\phi)]^{- 1}\,,\quad \partial_\alpha K_{01}(\phi) = J \partial_\theta \tilde z_0(\theta_0(\phi)) \, , \\
K_{2 0}(\vphi)  & = \e  [\pa_\ph \theta_0(\vphi)]^{-1} \partial_{II} P(i_\delta(\vphi)) 
[\pa_\ph \theta_0(\vphi)]^{-T} \, , \\
K_{11}(\vphi)  & = 
 \e \big(\partial_{I} \nabla_z P(i_\delta(\vphi)) [\pa_\ph \theta_0 (\vphi)]^{-T} \\
& \quad + \ J (\pa_\theta {\tilde z}_0) (\teta_0(\vphi)) (\partial_{II} P) (i_\delta(\vphi)) [\pa_\ph \theta_0 (\vphi)]^{-T} \big) \, . 
\end{align*}
Then \eqref{stime XP}, \eqref{ansatz 0}, \eqref{2015-2} imply the lemma
(the bound  for $K_{11}^T$ follows by \eqref{K11 tras}).
\end{proof}

Under the linear change of variables 
\begin{equation}\label{DGdelta}
D G_\delta(\vphi, 0, 0) 
\begin{pmatrix}
\widehat \phi \, \\
\widehat y \\
\widehat w
\end{pmatrix} 
:= 
\begin{pmatrix}
\pa_\phi \theta_0(\vphi) & 0 & 0 \\
\pa_\phi I_\delta(\vphi) & [\pa_\phi \theta_0(\vphi)]^{-T} & 
- [(\pa_\theta \tilde{z}_0)(\theta_0(\vphi))]^T J \\
\pa_\phi z_0(\vphi) & 0 & I
\end{pmatrix}
\begin{pmatrix}
\widehat \phi \, \\
\widehat y \\
\widehat w
\end{pmatrix} 
\end{equation}
the linearized operator  $d_{i, \alpha}{\mathcal F}(i_\delta )$ is transformed
 (approximately, see \eqref{verona 2} for the precise expression of the error) into the one  obtained when we linearize 
the Hamiltonian system \eqref{sistema dopo trasformazione inverso approssimato} at $(\phi, y , w ) = (\vphi, 0, 0 )$,
differentiating also in $ \a $ at $ \a_0 $, and changing $ \partial_t \rightsquigarrow \Dom $, 
namely 
\begin{equation}\label{lin idelta}
\begin{aligned}
& (\widehat \phi ,
\widehat y,
\widehat w,
\widehat \a ) \mapsto \\
& \begin{pmatrix}
\Dom \widehat \phi - \partial_\phi K_{10}(\vphi)[\widehat \phi \, ] - \partial_\alpha K_{10}(\vphi)[\widehat \alpha] - 
K_{2 0}(\vphi)\widehat y - K_{11}^T (\vphi) \widehat w \\
 \Dom  \widehat y + \partial_{\phi\phi} K_{00}(\vphi)[\widehat \phi] + 
 \partial_\phi \partial_\alpha  K_{00}(\vphi)[\widehat \alpha] + 
[\partial_\phi K_{10}(\vphi)]^T \widehat y + 
[\partial_\phi  K_{01}(\vphi)]^T \widehat w   \\ 
\Dom  \widehat w - J 
\{ \partial_\phi K_{01}(\vphi)[\widehat \phi] + \partial_\alpha K_{01}(\vphi)[\widehat \alpha] + K_{11}(\vphi) \widehat y + K_{02}(\vphi) \widehat w \}
\end{pmatrix} \! .  \hspace{-5pt}
\end{aligned}
\end{equation}
 As in \cite{BBM-auto},  
 by \eqref{DGdelta}, \eqref{ansatz 0}, \eqref{2015-2}, 
 the induced composition operator satisfies:   for all $ \widehat \imath := (\widehat \phi, \widehat y, \widehat w) $
 \begin{equation}\label{DG delta}
 \begin{aligned}
 \|DG_\delta(\vphi,0,0) [\widehat \imath] \|_s^{k_0, \gamma} + \|DG_\delta(\vphi,0,0)^{-1} [\widehat \imath] \|_s^{k_0, \gamma}&  \leq_s \| \widehat \imath \|_{s}^{k_0, \gamma}  \\
 & \quad +  \| {\mathfrak I}_0 \|_{s + \s}^{k_0, \gamma}  \| \widehat \imath \|_{s_0}^{k_0, \gamma}\,,
 \end{aligned}
 \end{equation}
 \begin{equation}\label{DG2 delta}
 \begin{aligned}
 \| D^2 G_\delta(\vphi,0,0)[\widehat \imath_1, \widehat \imath_2] \|_s^{k_0, \gamma} 
& \leq_s  \| \widehat \imath_1\|_s^{k_0, \gamma}  \| \widehat \imath_2 \|_{s_0}^{k_0, \gamma} 
+ \| \widehat \imath_1\|_{s_0}^{k_0, \gamma}  \| \widehat \imath_2 \|_{s}^{k_0, \gamma}  \\
& \quad +  \| {\mathfrak I}_0  \|_{s + \s}^{k_0, \gamma} \|\widehat \imath_1 \|_{s_0}^{k_0, \gamma}  \| \widehat \imath_2\|_{s_0}^{k_0, \gamma} \, .
 \end{aligned}
 \end{equation}
In order to construct an ``almost-approximate" inverse of \eqref{lin idelta} we need  that 
\be\label{Lomega def}
{\mathcal L}_\omega := \Pi_{{\mathbb S}^+}^\bot \big(\Dom   - J K_{02}(\vphi) \big)_{|{H_{{\mathbb S}^+}^\bot}} 
\ee
is ``almost invertible" up to the scales $ K_n := K_0^{\chi^{n}} $, $ \chi := 3/ 2 $, defined in \eqref{definizione Kn}, and 
used  for the nonlinear Nash-Moser iteration of section \ref{sec:NM}. 
Let $H^s_\bot(\T^{\nu + 1}) := H^s(\T^{\nu + 1}) \cap H_{{\mathbb S}^+}^\bot$.
\begin{itemize}
\item {\sc Almost-invertibility  assumption.} 
There exists a subset $ \tLm_o  \subset \tOm \times [\kappa_1, \kappa_2] $ such that, 
for all $ (\omega, \kappa) \in  \tLm_o  $ the operator $ {\mathcal L}_\omega $ in \eqref{Lomega def} may be decomposed as
\be\label{inversion assumption}
{\mathcal L}_\omega  
= {\mathbf L}_\omega + {\mathbf R}_\omega + {\mathbf R}_\omega^\bot 
\ee
where $ {\mathbf L}_\omega $ is invertible and  ${\mathbf R}_\omega$, ${\mathbf R}_\omega^\bot $ satisfy the estimates  
\eqref{stima R omega corsivo}-\eqref{stima R omega bot corsivo alta}. More precisely for every function
$ g \in H^{s+\sigma}_{\bot} (\T^{\nu+1}) $ and such that $ g(-\vphi) = - \rho g( \vphi)$,  there 
is a solution $ h :=  {\mathbf L}_\om^{- 1} g  \in H^{s}_{\bot} (\T^{\nu+1}) $ such that
$ h (-\vphi) =  \rho h ( \vphi) $,  
of the linear equation $ {\mathbf L}_\om h = g $
which satisfies for all $s_0 \leq s \leq S$ the tame estimate 
\begin{equation}\label{tame inverse}
\| {\mathbf L}_\om^{- 1} g \|_s^{k_0, \gamma} \leq_S  \g^{-1} 
\big(  \| g \|_{s + \sigma}^{k_0, \gamma} + 
 \| {\mathfrak I}_0 \|_{s + \mu({\mathtt b}) + \sigma}^{k_0, \gamma}  \|g \|_{s_0 + \sigma}^{k_0, \gamma}  \big) 
\end{equation}
for some $ \sigma := \sigma (\tau, \nu, k_0) >  0 $, and the constant $\mu(\mathtt b) > 0$ is defined in \eqref{definizione bf c (beta)}.
\end{itemize}

This assumption shall be verified 
by Theorem \ref{inversione parziale cal L omega} at each $ n$-th step of the Nash-Moser nonlinear iteration.
It is  obtained, in sections \ref{linearizzato siti normali} and \ref{sec: reducibility}, 
by the process of almost-diagonalization  of $ {\mathcal L}_\om $ 
up to remainders of size $ O(\e N_{n-1}^{{\mathtt a} -1}) $ where the larger scales $ N_n $ are 
 \be\label{NnKn}
 N_n := K_n^p \, , \quad i.e. \quad   N_0 = K_0^p  \, , 
 \ee
and the constant $ p > 1$  is large enough, i.e. it satisfies \eqref{cond-su-p}. 
The set of ``good" parameters $ \tLm_o$ is contained in particular in the set 
$ {\mathtt D \mathtt C}_{K_n}^{\g} \times [\kappa_1, \kappa_2] $ defined in \eqref{omega diofanteo troncato}.
Actually the parameters in $ (\omega, \kappa) \in \tLm_o$ have to 
satisfy also first and second order Melnikov non-resonance conditions, see \eqref{Melnikov-invert}.

\smallskip
In order to find an almost-approximate inverse of the linear operator in \eqref{lin idelta} 
(and so of $ d_{i, \a} {\mathcal F}(i_\d) $)
 it is sufficient to almost invert the operator
\begin{equation}\label{operatore inverso approssimato} 
{\mathbb D} [\widehat \phi, \widehat y, \widehat w, \widehat \alpha ] := 
  \begin{pmatrix}
\Dom \widehat \phi - \partial_\alpha K_{10}(\vphi)[\widehat \alpha] - 
K_{20}(\vphi) \widehat y  - K_{11}^T(\vphi) \widehat w\\
\Dom  \widehat y + \partial_\phi \partial_\alpha  K_{00}(\vphi)[\widehat \alpha] \\
{\mathbf L}_\omega \widehat w  - J \partial_\alpha K_{01}(\vphi)[\widehat \alpha] -J K_{11}(\vphi)\widehat y  
\end{pmatrix}
\end{equation}
which is obtained by neglecting in \eqref{lin idelta} 
the terms $ \partial_\phi K_{10} $, $ \partial_{\phi \phi} K_{00} $, $ \partial_\phi K_{00} $, $ \partial_\phi K_{01} $
(which vanish at an exact solution by Lemma \ref{coefficienti nuovi}), and the small  remainders
${\mathbf R}_\omega $, ${\mathbf R}_\omega^\bot $ which appear in \eqref{inversion assumption}. 
In addition, since we require only the finitely many non-resonance conditions \eqref{omega diofanteo troncato}, we 
also decompose 
$ \Dom $ as 
\begin{equation}\label{splitting cal D omega}
\begin{aligned}
& \qquad \qquad \qquad \quad \Dom = {\mathcal D}_\omega^{(n)} + {\mathcal D}_\omega^{(n), \bot}\,, \\
&   {\mathcal D}_\omega^{(n)} := \Pi_{K_n} \Dom \Pi_{K_n} + \Pi_{K_n}^\bot \,, \quad  {\mathcal D}_\omega^{(n), \bot} :=
 \Pi_{K_n}^\bot \Dom \Pi_{K_n}^\bot - \Pi_{K_n}^\bot 
 \end{aligned}
\end{equation}
and we further split the operator $ {\mathbb D} $ in \eqref{operatore inverso approssimato} as 
\begin{equation}\label{splitting mathbb D}
{\mathbb D} = {\mathbb D}_n + {\mathbb D}_n^\bot  \qquad {\rm where} \qquad 
{\mathbb D}_n^\bot [\widehat \phi, \widehat y, \widehat w, \widehat \alpha ]  := \begin{pmatrix}
{\mathcal D}_\omega^{(n), \bot} \widehat \phi \\
{\mathcal D}_\omega^{(n), \bot} \widehat y \\
0 
\end{pmatrix} 
\end{equation}
and 
\begin{equation}\label{definizione mathbb Dn}
{\mathbb D}_n [\widehat \phi, \widehat y, \widehat w, \widehat \alpha ] := 
  \begin{pmatrix}
{\mathcal D}_\om^{(n)} \widehat \phi - \partial_\alpha K_{10}(\vphi)[\widehat \alpha] - 
K_{20}(\vphi) \widehat y  - K_{11}^T(\vphi) \widehat w\\
{\mathcal D}_\om^{(n)}  \widehat y + \partial_\alpha \partial_\phi K_{00}(\vphi)[\widehat \alpha] \\
{\mathbf L}_\omega \widehat w  - J \partial_\alpha K_{01}(\vphi)[\widehat \alpha] -J K_{11}(\vphi)\widehat y  
\end{pmatrix}\, . 
\end{equation}
By the smoothing properties \eqref{smoothing-u1}, 
 the operator ${\mathcal D}_\omega^{(n), \bot}$ satisfies
\be\label{stime cal D omega (n) bot}
\| {\mathcal D}_\omega^{(n), \bot} h \|_{s_0}^{k_0, \gamma} \leq K_n^{- b} \| h \|_{s_0 + b + 1}^{k_0, \gamma}\,, \ \ \forall 
b  > 0\,, 
\qquad \|{\mathcal D}_\omega^{(n), \bot} h \|_s^{k_0, \gamma} \leq \| h \|_{s + 1}^{k_0, \gamma}\,. 
\ee
\begin{lemma}\label{lemma splitting cal D omega}
Assume that $\omega \in {\mathtt D \mathtt C}_{K_n}^\gamma$, see \eqref{omega diofanteo troncato}. Then, for all
$ g \in H^s $ with zero average, the linear equation $ {\mathcal D}_\omega^{(n)} h = g $ has a unique solution 
$ h := [{\mathcal D}_\omega^{(n)}]^{- 1} g $ with zero average, which satisfies  
\begin{equation}\label{stima D omega (n)}
\| [{\mathcal D}_\omega^{(n)}]^{- 1} g \|_s^{k_0, \gamma} \leq 
\gamma^{- 1} \| g \|_{s+\tau_1}^{k_0, \gamma}\,, \qquad \tau_1  :=  \tau + k_0 (\tau + 1)\,.
\end{equation}
\end{lemma}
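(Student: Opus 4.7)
The plan is to invert ${\cal D}_\omega^{(n)}$ directly in Fourier series. Since $\Dom$ acts on the basis $e^{\ii(\ell\cdot\vphi+jx)}$ as multiplication by $\ii\om\cdot\ell$ and the projectors $\Pi_{K_n}$, $\Pi_{K_n}^\bot$ are themselves diagonal on this basis, the operator ${\cal D}_\omega^{(n)} = \Pi_{K_n}\Dom\Pi_{K_n} + \Pi_{K_n}^\bot$ is Fourier-diagonal: it multiplies $e^{\ii(\ell\cdot\vphi+jx)}$ by $\ii\om\cdot\ell$ when $|(\ell,j)| \leq K_n$, and by $1$ when $|(\ell,j)| > K_n$. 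Its $L^2$-kernel is therefore the finite-dimensional space $\{e^{\ii j x}: |j|\leq K_n\}$ spanned by $\vphi$-independent low modes, so uniqueness within the zero-average class is automatic.

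Given $g = \sum \widehat g_{\ell,j} e^{\ii(\ell\cdot\vphi+jx)}$ with zero average in $\vphi$, i.e.\ $\widehat g_{0,j}=0$ for all $j$, I would define the candidate inverse componentwise by
\begin{equation*}
\widehat h_{\ell,j} := \begin{cases} \widehat g_{\ell,j}/(\ii\,\om\cdot\ell) & \text{if } 0<|\ell|,\ |(\ell,j)|\leq K_n,\\ 0 & \text{if } \ell=0,\\ \widehat g_{\ell,j} & \text{if } |(\ell,j)|>K_n.\end{cases}
\end{equation*}
The zero-average hypothesis guarantees that the vanishing modes of $g$ exactly match those where the operator is zero, so ${\cal D}_\omega^{(n)} h = g$ is satisfied identically, and by construction $h$ has zero average in $\vphi$.

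For the Sobolev estimate, the hypothesis $\omega\in\DC_{K_n}^\gamma$ gives $|\om\cdot\ell|\geq \gamma\langle\ell\rangle^{-\tau}$ for $1\leq|\ell|\leq K_n$, immediately yielding $\|h\|_s \leq \gamma^{-1}\|g\|_{s+\tau}$ on the low-frequency block, while $\|\Pi_{K_n}^\bot h\|_s = \|\Pi_{K_n}^\bot g\|_s$ on the high-frequency block. For the parameter-weighted norm, note that ${\cal D}_\omega^{(n)}$ does not depend on $\kappa$, so only $\partial_\om$-derivatives produce new factors; by the Fa\`a di Bruno formula $|\partial_\om^k(1/(\om\cdot\ell))| \leq C_k \langle\ell\rangle^{|k|}/|\om\cdot\ell|^{|k|+1} \leq C_k\,\gamma^{-|k|-1}\langle\ell\rangle^{|k|+\tau(|k|+1)}$. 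Applying Leibniz to $\partial_\lambda^k(\widehat g_{\ell,j}/(\ii\,\om\cdot\ell))$, the weight $\gamma^{|k|}$ in the definition \eqref{norma pesata derivate funzioni} of $\|\cdot\|_s^{k_0,\gamma}$ absorbs all but one power of $\gamma^{-1}$, and the maximal Sobolev loss is $k_0+\tau(k_0+1)=\tau_1$, giving \eqref{stima D omega (n)}. No substantive obstacle arises: the ultraviolet cutoff and the diophantine condition on $\DC_{K_n}^\gamma$ are engineered precisely so that this standard small-divisor inversion closes with a polynomial loss $\tau_1$ in the Sobolev index.
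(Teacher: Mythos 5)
Your proof is correct and is essentially the argument the paper has in mind (the lemma is stated there without proof, and the same bound $\| (\om \cdot \pa_\vphi)^{- 1} \Pi_{K_n} g \|_s^{k_0, \gamma} \leq_s \gamma^{- 1} \| g\|_{s + \tau_1}^{k_0, \gamma}$ is invoked just before): Fourier diagonalization of $ {\cal D}_\omega^{(n)} $, the truncated diophantine condition on the low modes, the identity on the high modes, and Leibniz in $ \partial_\lambda^k $ producing the loss $ \tau_1 = \tau + k_0(\tau+1) $ and one factor $ \gamma^{-1} $. Two cosmetic points only: your case $ \ell = 0 $ overlaps with $ |(\ell,j)| > K_n $, where the operator acts as the identity rather than zero — harmless, since the zero-average hypothesis makes both prescriptions vanish there — and, as implicitly in the paper, \eqref{stima D omega (n)} holds up to a constant $ C(k_0) $ coming from the Leibniz sum.
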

We look for an exact inverse of ${\mathbb D}_n $ defined in \eqref{definizione mathbb Dn} by solving the system 
\begin{equation}\label{operatore inverso approssimato proiettato}
{\mathbb D}_n [\widehat \phi, \widehat y, \widehat w, \widehat \alpha] : = \begin{pmatrix}
{\mathcal D}_\om^{(n)} \widehat \phi - \partial_\alpha K_{10}(\vphi)[\widehat \alpha] - 
K_{20}(\vphi) \widehat y  - K_{11}^T(\vphi) \widehat w\\
{\mathcal D}_\om^{(n)}  \widehat y + \partial_\alpha \partial_\phi K_{00}(\vphi)[\widehat \alpha] \\
{\mathbf L}_\omega \widehat w  - J \partial_\alpha K_{01}(\vphi)[\widehat \alpha] -J K_{11}(\vphi)\widehat y  
\end{pmatrix} = \begin{pmatrix}
g_1  \\
g_2  \\
g_3 
\end{pmatrix}
\end{equation}
where $(g_1, g_2, g_3)$ satisfy the reversibility property 
\begin{equation}\label{parita g1 g2 g3}
g_1(\vphi) = g_1(- \vphi)\,,\quad g_2(\vphi) = - g_2(- \vphi)\,,\quad g_3(\vphi) = - (\rho g_3)(- \vphi)\,.
\end{equation}
We first consider the second equation in \eqref{operatore inverso approssimato proiettato}, namely 
$ {\mathcal D}_\om^{(n)}  \widehat y  = g_2  -  \partial_\alpha \partial_\phi K_{00}(\vphi)[\widehat \alpha] $. 
By reversibility, the $\vphi$-average of the right hand side of this equation is zero, and so, 
by Lemma \ref{lemma splitting cal D omega}, its solution is  
\begin{equation}\label{soleta}
\widehat y := [{\mathcal D}_\om^{(n)}]^{-1} \big(
g_2 - \partial_\alpha \partial_\phi K_{00}(\vphi)[\widehat \alpha] \big)\,.  
\end{equation}
Then we consider the third equation
$ {\mathbf L}_\om \widehat w = g_3 + J K_{11}(\vphi) \widehat y + J \partial_\alpha K_{0 1}(\vphi)[\widehat \alpha]  $
that, by the inversion assumption \eqref{tame inverse}, has a solution 
\begin{equation}\label{normalw}
\widehat w := {\mathbf L}_\om^{-1} \big( g_3 +J K_{11}(\vphi) \widehat y + J \partial_\alpha K_{0 1}(\vphi)[\widehat \alpha] \big) \, .  
\end{equation}
Finally, we solve the first equation in \eqref{operatore inverso approssimato proiettato}, 
which, substituting \eqref{soleta}, \eqref{normalw}, becomes
\begin{equation}\label{equazione psi hat}
{\mathcal D}_\om^{(n)} \widehat \phi  = 
g_1 +  M_1(\vphi)[\widehat \alpha] + M_2(\vphi) g_2 + M_3(\vphi) g_3\,,
\end{equation}
where
\begin{equation}\label{M1}
\begin{aligned}
M_1(\vphi) & :=  \partial_\alpha K_{10}(\vphi) - M_2(\vphi)\partial_\alpha \partial_\phi K_{00}(\vphi)  + M_3(\vphi) J \partial_\alpha K_{01}(\vphi)\,, 
\end{aligned}
\end{equation}
\begin{equation}\label{cal M2}
\begin{aligned}
& M_2(\vphi) :=  K_{20}(\vphi) [{\mathcal D}_\om^{(n)}]^{-1} + K_{11}^T(\vphi){\mathbf L}_\om^{- 1} J K_{11}(\vphi)[{\mathcal D}_\om^{(n)}]^{-1} \, , \\ 
& M_3(\vphi) :=  K_{11}^T (\vphi) {\mathbf L}_\om^{-1} \, .  
\end{aligned}
\end{equation}
In order to solve the equation \eqref{equazione psi hat} we have 
to choose $ \widehat \alpha $ such that the right hand side  has zero average.  
By Lemma \ref{lemma:Kapponi vari}, \eqref{ansatz 0}, \eqref{stima D omega (n)} the $\ph$-averaged matrix 
$ \langle M_1 \rangle = {\rm Id} + O( \e \gamma^{-(1 + k_1)}) $.  
Therefore, for $ \e \gamma^{- (1 + k_1)} $ small enough,  $\langle M_1 \rangle$ is invertible and $\langle M_1 \rangle^{-1} = {\rm Id} 
+ O(\e \gamma^{- (1 + k_1)})$. Thus we define 
\begin{equation}\label{sol alpha}
\widehat \alpha  := - \langle M_1 \rangle^{-1} 
( \langle g_1 \rangle + \langle M_2 g_2 \rangle + \langle M_3 g_3 \rangle ) \, .
\end{equation}
With this choice of $ \widehat \alpha$, by Lemma \ref{lemma splitting cal D omega},
 the equation \eqref{equazione psi hat} has the solution
\begin{equation}\label{sol psi}
\widehat \phi :=
[{\mathcal D}_\om^{(n)}]^{-1} \big( g_1 + M_1(\vphi)[\widehat \alpha] + M_2(\vphi) g_2 + M_3(\vphi) g_3 \big) \, . 
\end{equation}
In conclusion, we have obtained
a solution  $(\widehat \phi, \widehat y, \widehat w, \widehat \alpha)$ of the linear system \eqref{operatore inverso approssimato proiettato}. 

\begin{proposition}\label{prop: ai}
Assume \eqref{ansatz 0} (with $\mu = \mu(\mathtt b) + \sigma$) and \eqref{tame inverse}. 
Then, $\forall (\om, \kappa) \in \tLm_o $, $ \forall g := (g_1, g_2, g_3) $ satisfying \eqref{parita g1 g2 g3},
 the system \eqref{operatore inverso approssimato proiettato} has a solution 
$ {\mathbb D}_n^{-1} g := (\widehat \phi, \widehat y, \widehat w, \widehat \alpha ) $
where $(\widehat \phi, \widehat y, \widehat w, \widehat \alpha)$ are defined in 
\eqref{sol psi}, \eqref{soleta},  \eqref{normalw}, \eqref{sol alpha}, which satisfies \eqref{parity solution} and for any $s_0 \leq s \leq S$
\begin{equation} \label{stima T 0 b}
\| {\mathbb D}_n^{-1} g \|_s^{k_0, \gamma}
\leq_S \gamma^{-1} \big( \| g \|_{s + \sigma }^{k_0, \gamma} 
+  \| {\mathfrak I}_0  \|_{s + \mu(\mathtt b) + \sigma}^{k_0, \gamma}
 \| g \|_{s_0 + \sigma}^{k_0, \gamma}  \big).
\end{equation}
\end{proposition}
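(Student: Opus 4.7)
The plan is to verify that the triple $(\widehat\phi,\widehat y,\widehat w,\widehat\alpha)$ constructed in \eqref{soleta}, \eqref{normalw}, \eqref{sol alpha}, \eqref{sol psi} is well-defined, satisfies the reversibility property \eqref{parity solution}, and obeys the tame bound \eqref{stima T 0 b}. The construction is essentially dictated by the block-triangular structure of $\mathbb D_n$ in \eqref{definizione mathbb Dn}: after imposing the reversibility parities \eqref{parita g1 g2 g3} on $(g_1,g_2,g_3)$, one solves successively the $\widehat y$-equation, then the $\widehat w$-equation, then selects $\widehat\alpha$ to enforce the zero-average compatibility needed to invert ${\cal D}_\omega^{(n)}$ in the $\widehat\phi$-equation.

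First, I would verify that each step makes sense. For the $\widehat y$-equation, the reversibility \eqref{parita g1 g2 g3} combined with Lemma \ref{coefficienti nuovi} (which implies that $\partial_\alpha\partial_\phi K_{00}$ is odd in $\vphi$) gives that $g_2-\partial_\alpha\partial_\phi K_{00}\widehat\alpha$ has zero $\vphi$-average and odd parity, so Lemma \ref{lemma splitting cal D omega} applies and returns an even $\widehat y$ on the set $\omega\in\mathtt{DC}_{K_n}^\gamma$. For $\widehat w$, the inversion assumption \eqref{tame inverse} applies because the right-hand side $g_3+JK_{11}\widehat y+J\partial_\alpha K_{01}\widehat\alpha$ has the required parity (using that $K_{11}$, $K_{01}$ inherit the reversibility of $H_\alpha$). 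For $\widehat\alpha$, the matrix $\langle M_1\rangle$ is invertible because $\partial_\alpha K_{10}=[\partial_\phi\theta_0]^{-1}={\rm Id}+O(\e\gamma^{-(1+k_1)})$ by Lemma \ref{lemma:Kapponi vari} and \eqref{ansatz 0}, while the other terms in \eqref{M1} carry a factor of $\e$ via $K_{20}$, $K_{11}^T$, $K_{11}$. Finally, with $\widehat\alpha$ so chosen the right-hand side of \eqref{equazione psi hat} has zero $\vphi$-average by construction, so Lemma \ref{lemma splitting cal D omega} yields $\widehat\phi$ with the correct parity.

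Second, I would establish the estimate \eqref{stima T 0 b} by chaining together the available tame bounds. Applying \eqref{stima D omega (n)} to \eqref{soleta} and using Lemma \ref{lemma:Kapponi vari} plus the product rule \eqref{interpolazione C k0} gives
\[
\|\widehat y\|_s^{k_0,\gamma}\leq_s \gamma^{-1}\big(\|g_2\|_{s+\tau_1}^{k_0,\gamma}+\|\mathfrak I_0\|_{s+\sigma}^{k_0,\gamma}(|\widehat\alpha|^{k_0,\gamma}+\|g_2\|_{s_0+\tau_1}^{k_0,\gamma})\big)+|\widehat\alpha|^{k_0,\gamma}.
\]
Feeding this into \eqref{normalw}, invoking \eqref{tame inverse} and Lemma \ref{lemma:Kapponi vari}, and using the smallness $\e\gamma^{-(1+k_1)}\ll 1$ from \eqref{ansatz 0} to absorb the extra $\gamma^{-1}$ factor produced by the nested inverse (the coefficient $K_{11}=O(\e)$ supplies the absorbing smallness), one obtains a tame bound of the form \eqref{stima T 0 b} for $\widehat w$. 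An entirely analogous analysis for \eqref{sol alpha} (using that $\langle M_1\rangle^{-1}$ is a bounded constant matrix with $k_0$-norm $\leq C$) and for \eqref{sol psi} (applying \eqref{stima D omega (n)} once more and estimating $M_1, M_2, M_3$ via Lemma \ref{lemma:Kapponi vari}, \eqref{stima D omega (n)}, \eqref{tame inverse} and \eqref{interpolazione C k0}) completes the estimate for $\widehat\phi$. Summing the four estimates and raising $\sigma$ to absorb $\tau_1$ and the losses in $\mu(\mathtt b)$ gives \eqref{stima T 0 b}.

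The main technical obstacle is bookkeeping the loss of derivatives and the $\gamma^{-1}$ powers so that only a single $\gamma^{-1}$ survives in the final bound. Each of the two available inverses $[{\cal D}_\omega^{(n)}]^{-1}$ and ${\mathbf L}_\omega^{-1}$ naively contributes a factor $\gamma^{-1}$, and $M_2$ even involves a nested composition $[{\cal D}_\omega^{(n)}]^{-1}\circ{\mathbf L}_\omega^{-1}\circ[{\cal D}_\omega^{(n)}]^{-1}$. The key observation to keep the result linear in $\gamma^{-1}$ is that the ``intermediate'' inversions are multiplied by the $O(\e)$ coefficients $K_{20}$ or $K_{11}$ (Lemma \ref{lemma:Kapponi vari}), so the smallness condition $\e\gamma^{-(1+k_1)}\ll 1$ built into \eqref{ansatz 0} exactly compensates the parasitic $\gamma^{-1}$. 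A parallel but careful application of the interpolation inequality \eqref{interpolazione C k0} is needed to route the high-Sobolev-norm factor onto $\|\mathfrak I_0\|_{s+\mu(\mathtt b)+\sigma}^{k_0,\gamma}$ rather than on $\|g\|_{s+\sigma}^{k_0,\gamma}$, matching the shape of the desired estimate \eqref{stima T 0 b}.
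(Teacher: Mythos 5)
Your proposal is correct and follows essentially the same route as the paper's proof: solve the triangular system in the order $\widehat y$, $\widehat w$, $\widehat\alpha$, $\widehat\phi$, use the reversibility parities and Lemma \ref{coefficienti nuovi} to justify the zero averages, bound $M_2g$, $M_3g$ and $\langle M_1\rangle^{-1}$ via Lemma \ref{lemma:Kapponi vari}, \eqref{stima D omega (n)} and \eqref{tame inverse}, and exploit the $O(\e)$ factors in $K_{20}$, $K_{11}$ together with $\e\gamma^{-(1+k_1)}\ll 1$ so that only a single $\gamma^{-1}$ survives in \eqref{stima T 0 b}. The only difference is that you spell out the parity checks and the derivative bookkeeping that the paper leaves implicit, which is fine.
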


\begin{proof}
To shorten notation we write $\| \ \|_s$ instead of $\| \ \|_s^{k_0, \gamma}$. 
Recalling \eqref{cal M2}, by  Lemma \ref{lemma:Kapponi vari}, \eqref{tame inverse}, \eqref{ansatz 0}, \eqref{stima D omega (n)}, we get $ \| M_2 g \|_{s_0} + \| M_3 g \|_{s_0}  \leq C   \| g \|_{s_0 + \s} $. 
Then, by \eqref{sol alpha} and $\langle M_1 \rangle^{-1} = 1 + O(\e \gamma^{-(1 + k_1)}) = O(1)$, 
we deduce 
$ | \widehat \alpha|\leq C \| g \|_{s_0+ \s} $
and  \eqref{soleta}, \eqref{stima D omega (n)} imply
$ \| \widehat y \|_s \leq_s \gamma^{-1} \big( \| g \|_{s + \s}+ \| \fracchi_0 \|_{s + \mu ({\mathtt b}) + \s } \| g \|_{s_0}  \big)$.
The bound \eqref{stima T 0 b} is sharp for $ \widehat w $ because $ {\mathbf L}_\om^{-1} g_3 $ in  \eqref{normalw}
is estimated using \eqref{tame inverse}. Finally also $  \widehat \phi $ 
satisfies \eqref{stima T 0 b} using
\eqref{sol psi},  \eqref{cal M2}, \eqref{tame inverse}, \eqref{stima D omega (n)} and Lemma \ref{lemma:Kapponi vari}.
\end{proof}
Finally we prove that the operator 
\begin{equation}\label{definizione T} 
{\bf T}_0 := {\bf T}_0(i_0) := (D { \widetilde G}_\delta)(\vphi,0,0) \circ {\mathbb D}_n^{-1} \circ (D G_\delta) (\vphi,0,0)^{-1}
\end{equation}
is an almost-approximate right  inverse for $d_{i,\alpha} {\mathcal F}(i_0 )$ where
$$
 \widetilde{G}_\delta (\phi, y, w, \alpha) :=  \big( G_\delta (\phi, y, w), \alpha \big) 
 $$ 
 is the identity on the $ \alpha $-component. 
We denote the norm 
$$ 
\| (\phi, y, w, \alpha) \|_s^{k_0, \gamma} := 
  \max \{  \| (\phi, y, w) \|_s^{k_0, \gamma}, 
 | \alpha |^{k_0, \gamma}  \} \, .
$$

\begin{theorem}  \label{thm:stima inverso approssimato}
{\bf (Almost-approximate inverse)}
Assume the inversion assumption  \eqref{inversion assumption}-\eqref{tame inverse}. 
Then, there exists $ \bar \sigma := \bar \sigma(\tau, \nu, k_0) > 0 $ such that, 
if \eqref{ansatz 0} holds with $\mu = \mu(\mathtt b) + \bar \sigma $, then for all $ (\om, \kappa) \in \tLm_o $, 
for all $ g := (g_1, g_2, g_3) $ satisfying \eqref{parita g1 g2 g3},  
the operator $ {\bf T}_0 $ defined in \eqref{definizione T} satisfies,  for all $s_0 \leq s \leq S $, 
\begin{equation}\label{stima inverso approssimato 1}
\| {\bf T}_0 g \|_{s}^{k_0, \gamma} 
\leq_S  \gamma^{-1}  \big(\| g \|_{s + \bar \sigma}^{k_0, \gamma}  
+  \| {\mathfrak I}_0 \|_{s + \mu({\mathtt b}) +  \bar \sigma }^{k_0, \gamma}
\| g \|_{s_0 + \bar \sigma}^{k_0, \gamma}  \big)\, .
\end{equation}
 Moreover  ${\bf T}_0 $ is an almost-approximate inverse of $d_{i, \alpha} {\mathcal F}(i_0 )$, namely 
\begin{equation}\label{splitting per approximate inverse}
d_{i, \alpha} {\mathcal F}(i_0) \circ {\bf T}_0 - {\rm Id} = {\mathcal P} (i_0) + {\mathcal P}_\omega (i_0) + {\mathcal P}_\omega^\bot  (i_0)
\end{equation}
where, for all $ s_0 \leq s \leq S $, 
\begin{align}
 \| {\mathcal P} g \|_s^{k_0, \gamma} &
 \leq_S  \g^{-1} \Big( \| {\mathcal F}(i_0, \alpha_0) \|_{s_0 + \bar \sigma}^{k_0, \gamma} \| g \|_{s + \bar \sigma}^{k_0, \gamma}  
\nonumber\\
& \quad + \big\{ \| {\mathcal F}(i_0, \alpha_0) \|_{s + \bar \sigma}^{k_0, \gamma}
+\| {\mathcal F}(i_0, \alpha_0) \|_{s_0  + \bar \sigma}^{k_0, \gamma} \| {\mathfrak I}_0 \|_{s + \mu({\mathtt b}) + \bar \sigma}^{k_0, \gamma} \big\} \| g \|_{s_0 + \bar \sigma}^{k_0, \gamma} \Big) ,
\label{stima inverso approssimato 2} \\
\| {\mathcal P}_\omega g \|_s^{k_0, \gamma} & 
\leq_S  \e \g^{-2} N_{n - 1}^{- {\mathtt a}} \big( \| g \|_{s + \bar \sigma}^{k_0, \gamma} +\| \fracchi_0 \|_{s  + \mu({\mathtt b}) + \bar \sigma}^{k_0, \gamma}  \|  g \|_{s_0 + \bar \sigma}^{k_0, \gamma} \big)\,, \label{stima cal G omega} \\
\| {\mathcal P}_\omega^\bot g\|_{s_0}^{k_0, \gamma} & \leq_{S, b} 
\gamma^{- 1} K_n^{- b } \big( \| g \|_{s_0 + \bar \sigma + b }^{k_0, \gamma} +
\| \fracchi_0 \|_{s_0 + \mu({\mathtt b}) + b + \bar \sigma }^{k_0, \gamma} \big \| g \|_{s_0 + \bar \sigma}^{k_0, \gamma} \big)\,,\,\, \forall b > 0 \, ,   \label{stima cal G omega bot bassa} \\
\| {\mathcal P}_\omega^\bot g\|_s^{k_0, \gamma} &  \leq_S \gamma^{- 1}  
\big(\| g \|_{s + \bar \sigma}^{k_0, \gamma} + \| \fracchi_0 \|_{s + \mu({\mathtt b}) + \bar \sigma}^{k_0, \gamma}  \| g \|_{s_0 + \bar \sigma}^{k_0, \gamma} \big)\, . \label{stima cal G omega bot alta} 
\end{align}
\end{theorem}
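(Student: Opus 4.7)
The strategy follows the scheme of \cite{BB13, BBM-auto}, adapted to the ``almost'' setting: the torus $i_0$ is not exactly invariant (only $\mathcal{F}(i_0,\alpha_0) = Z$ is small), its isotropic correction $i_\delta$ is defined only up to the scale $K_n$ (cf.\ Lemma \ref{toro isotropico modificato}), and the linearized normal operator $\mathcal{L}_\omega$ is invertible only up to the remainders $\mathbf R_\omega + \mathbf R_\omega^\bot$ of \eqref{inversion assumption}. The tame bound \eqref{stima inverso approssimato 1} on $\mathbf T_0$ is immediate: it is the composition $(D\widetilde G_\delta)\circ \mathbb D_n^{-1}\circ (DG_\delta)^{-1}$, and each factor satisfies tame estimates by \eqref{DG delta} and Proposition \ref{prop: ai}, so \eqref{interpolazione C k0} combined with the ansatz \eqref{ansatz 0} closes the bound for a suitable $\bar\sigma$ absorbing all losses of derivatives.

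For the error decomposition \eqref{splitting per approximate inverse}, I would first write, by Taylor expansion,
\[
d_{i,\alpha}\mathcal{F}(i_0) = d_{i,\alpha}\mathcal{F}(i_\delta) + \mathcal{E}_1, \qquad \mathcal{E}_1 := \int_0^1\!\! d_i^{\,2}\mathcal{F}(i_\delta + t(i_0-i_\delta))[i_0-i_\delta,\,\cdot\,]\,dt,
\]
and control $\mathcal E_1$ by $\|i_0-i_\delta\|_s \leq_s \|\fracchi_0\|_{s+1}^{k_0,\gamma}$ from \eqref{2015-2} together with \eqref{stima derivata seconda XP}. Next, in the symplectic coordinates $(\phi,y,w)$ induced by $G_\delta$, the linearization $d_{i,\alpha}\mathcal{F}(i_\delta)$ becomes exactly the system \eqref{lin idelta} (up to the standard second-derivative correction involving $D^2 G_\delta$, bounded by \eqref{DG2 delta}), so modulo a change of frame by $DG_\delta$ the comparison reduces to a comparison between \eqref{lin idelta} and $\mathbb{D}_n$ defined in \eqref{definizione mathbb Dn}. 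Their difference is precisely:
\emph{(a)} the ``solution terms'' $\partial_\phi K_{00}$, $K_{10}-\omega$, $K_{01}$, $\partial_\phi K_{10}$, $\partial_{\phi\phi}K_{00}$, $\partial_\phi K_{01}$, which are split via \eqref{splitting K 00 01 10} into low-frequency pieces controlled by $Z$ (estimates \eqref{K 00 10 01}) and high-frequency pieces controlled by $K_n^{-b}$ (estimates \eqref{K 00 10 01 bot alta}, \eqref{K 00 10 01 bot bassa});
\emph{(b)} the time-smoothing remainder $\mathbb{D}_n^\bot$ from \eqref{splitting mathbb D}, controlled by \eqref{stime cal D omega (n) bot};
\emph{(c)} the remainders $\mathbf R_\omega + \mathbf R_\omega^\bot$ appearing in $\mathcal{L}_\omega - \mathbf L_\omega$ when reading the third component.

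Regrouping these contributions yields the splitting \eqref{splitting per approximate inverse} with
\[
\mathcal{P} \;=\; \text{(pieces proportional to $Z$)}, \qquad \mathcal{P}_\omega \;=\; \text{($\mathbf R_\omega$-piece)}, \qquad \mathcal{P}_\omega^\bot \;=\; \text{(high-frequency + $\mathbf R_\omega^\bot$ + $\mathbb{D}_n^\bot$ pieces)}.
\]
The tame bounds \eqref{stima inverso approssimato 2}--\eqref{stima cal G omega bot alta} then follow by composing each of these operators (of decreasing size $\|Z\|$, $\varepsilon\gamma^{-1}N_{n-1}^{-\mathtt a}$, $K_n^{-b}$, respectively) with $\mathbf T_0$, applying Proposition \ref{prop: ai} which costs another $\gamma^{-1}$ (this is where the characteristic $\gamma^{-2}$ in \eqref{stima cal G omega} arises), and using the interpolation inequality \eqref{interpolazione C k0} together with the ansatz \eqref{ansatz 0}.

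The main obstacle is \emph{bookkeeping}, not conceptual: one must track, along each of the three error branches, the precise loss of derivatives $\bar\sigma$, the $\mu(\mathtt b)$-shift on the low-norm factor of $\fracchi_0$, and the small quantitative factors ($\gamma^{-1}$, $N_{n-1}^{-\mathtt a}$, $K_n^{-b}$), taking care that the high-frequency piece in \eqref{K 00 10 01 bot bassa} is estimated only in low norm $\|\cdot\|_{s_0+\mathtt c}$, which forces the dichotomy between \eqref{stima cal G omega bot bassa} and \eqref{stima cal G omega bot alta}. The choice of $\bar\sigma$ is then dictated by taking the maximum of the losses coming from Proposition \ref{prop: ai}, from Lemmata \ref{toro isotropico modificato} and \ref{coefficienti nuovi}, from the bounds \eqref{DG delta}--\eqref{DG2 delta} on the symplectic corrector, and from $\tau_1$ in \eqref{stima D omega (n)}.
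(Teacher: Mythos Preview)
Your overall architecture is correct and matches the paper: compose $D\widetilde G_\delta$, $\mathbb D_n^{-1}$, $(DG_\delta)^{-1}$ for \eqref{stima inverso approssimato 1}; pass from $i_0$ to $i_\delta$, then to symplectic coordinates, then compare with $\mathbb D_n$; and sort the remainders into the three bins $\mathcal P$, $\mathcal P_\omega$, $\mathcal P_\omega^\bot$. The gap is in where you apply the $(n)/(n,\bot)$ splitting.

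You correctly split the ``solution terms'' $\partial_\phi K_{00}$, $K_{10}-\omega$, $K_{01}$ via Lemma~\ref{coefficienti nuovi} into a piece proportional to $Z$ and a piece with the $K_n^{-b}$ factor. But you do \emph{not} do this for the other two error sources, and the coarse bounds you quote are not enough. Concretely: your term $\mathcal E_1 = d_{i,\alpha}\mathcal F(i_0)-d_{i,\alpha}\mathcal F(i_\delta)$, controlled only through \eqref{2015-2}, produces a contribution of size $\varepsilon\,\|\fracchi_0\|$, which carries \emph{neither} a factor $\|Z\|$ nor $N_{n-1}^{-\mathtt a}$ nor $K_n^{-b}$; after composition with $\mathbf T_0$ it therefore fails the low-norm bound \eqref{stima cal G omega bot bassa} and cannot sit in \eqref{stima inverso approssimato 2} or \eqref{stima cal G omega} either. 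The paper fixes this by writing $I_0-I_\delta = (I_0-I_\delta^{(n)}) - I_\delta^{(n),\bot}$ from Lemma~\ref{toro isotropico modificato} and splitting this Taylor remainder accordingly into $\mathcal E_0^{(n)}$ (controlled by $\|Z\|$ via \eqref{stima y - y delta}, hence into $\mathcal P$) and $\mathcal E_0^{(n),\bot}$ (carrying $K_n^{-b}$ via \eqref{stima y - y delta bot}, hence into $\mathcal P_\omega^\bot$). The same issue arises for the $D^2G_\delta$ correction: it is not merely ``bounded by \eqref{DG2 delta}'' --- one of its two entries is $DG_\delta(\mathtt u_\delta)^{-1}\mathcal F(i_\delta,\alpha_0)=DG_\delta(\mathtt u_\delta)^{-1}Z_\delta$, and you must split $Z_\delta = Z_\delta^{(n)}+Z_\delta^{(n),\bot}$ using \eqref{stima toro modificato}--\eqref{stima toro modificato bot} to route the two pieces into $\mathcal P$ and $\mathcal P_\omega^\bot$ respectively. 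Once you add these two splittings, your bookkeeping closes exactly as in the paper.
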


\begin{proof}
The bound \eqref{stima inverso approssimato 1} 
follows from \eqref{definizione T}, \eqref{stima T 0 b},  \eqref{DG delta}.
By \eqref{operatorF}, since $ X_\mN $ does not depend on $ I $,  
and $ i_\d $ differs by $ i_0 $ only in the $ I $ component (see \eqref{y 0 - y delta}),  
we have \,
\be\label{verona 0}
\begin{aligned}
& d_{i, \alpha} {\mathcal F}(i_0 )   - d_{i, \alpha} {\mathcal F}(i_\delta )  \\
& = \e \int_0^1 \partial_I d_i X_P (\theta_0, I_\d + s (I_0 - I_\d), z_0) [I_0-I_\d,   \Pi [ \, \cdot \, ] \,  ] ds \\
& =: {\mathcal E}_0 =  {\mathcal E}_0^{(n)}   + {\mathcal E}_0^{(n), \bot}
\end{aligned}
\ee
where $ \Pi $ is the projection $ (\widehat \imath, \widehat \a ) \mapsto \widehat \imath $ and,
recalling \eqref{toro I delta (n)}, \eqref{toro I delta (n) bot},  
\begin{align}\label{verona 00}
{\mathcal E}_0^{(n)} &  := \e \int_0^1 \partial_I d_i X_P (\theta_0, I_\d + s (I_0 - I_\d), z_0) [I_0-I_\d^{(n)},   \Pi [ \, \cdot \, ] \,  ] ds\,, \\
{\mathcal E}_0^{(n), \bot} & := - \e \int_0^1 \partial_I d_i X_P (\theta_0, I_\d + s (I_0 - I_\d), z_0) [I_\d^{(n), \bot},   \Pi [ \, \cdot \, ] \,  ] ds\, .\label{verona 01}
\end{align}
Denote by $  {\mathtt u} := (\phi, y, w) $  the symplectic coordinates induced by $ G_\d $ in \eqref{trasformazione modificata simplettica}. 
Under the symplectic map $G_\delta $, the nonlinear operator ${\mathcal F}$ in \eqref{operatorF} is transformed into 
\be \label{trasfo imp}
{\mathcal F}(G_\delta(  {\mathtt u} (\vphi) ), \alpha ) 
= D G_\delta( {\mathtt u}  (\vphi) ) \big(  {\mathcal D}_\om {\mathtt u} (\vphi) - X_{K_\alpha} ( {\mathtt u} (\vphi), \alpha)  \big) 
\ee
where $ K_{\alpha} = H_{\alpha} \circ G_\delta $, see \eqref{new-Hamilt-K} and 
\eqref{sistema dopo trasformazione inverso approssimato}. 
Differentiating  \eqref{trasfo imp} at the trivial torus 
$ {\mathtt u}_\delta (\vphi) = G_\delta^{-1}(i_\delta) (\vphi) = (\ph, 0 , 0 ) $, 
at  $ \alpha = \alpha_0 $, 
we get
\begin{align} \label{verona 2}
& d_{i , \alpha} {\mathcal F}(i_\delta ) 
=   D G_\delta( {\mathtt u}_\delta) 
\big( \Dom 
- d_{\mathtt u, \alpha} X_{K_\alpha}( {\mathtt u}_\delta, \alpha_0) 
\big) D \widetilde G_\d ( {\mathtt u}_\d)^{-1}
+ {\mathcal E}_1 \,,
\\
& {\mathcal E}_1 
:=  
D^2 G_\delta( {\mathtt u}_\delta) \big[ D G_\delta( {\mathtt u}_\delta)^{-1} {\mathcal F}(i_\delta, \alpha_0), \,  D G_\d({\mathtt u}_\d)^{-1} 
 \Pi [ \, \cdot \, ] \,  \big] = {\mathcal E}_1^{(n)} + {\mathcal E}_1^{(n), \bot} \,   \label{verona 2 0} 
\end{align}
where, recalling the splitting ${\mathcal F}(i_\delta, \alpha_0) = Z_\delta = Z_\delta^{(n)} + Z_\delta^{(n), \bot}$ in 
Lemma 
\ref{toro isotropico modificato}, we have 
\begin{align}\label{verona 20}
{\mathcal E}_1^{(n)} & := D^2 G_\delta( {\mathtt u}_\delta) \big[ D G_\delta( {\mathtt u}_\delta)^{-1} Z_\delta^{(n)}, \,  D G_\d({\mathtt u}_\d)^{-1} \Pi [ \, \cdot \, ] \,  \big] \\
{\mathcal E}_1^{(n), \bot} & := D^2 G_\delta( {\mathtt u}_\delta) \big[ D G_\delta( {\mathtt u}_\delta)^{-1} Z_\delta^{(n), \bot}, \,  D G_\d({\mathtt u}_\d)^{-1} \Pi [ \, \cdot \, ] \,  \big] \label{verona 21} \, . 
\end{align}
In expanded form $ \Dom  - d_{\mathtt u, \alpha} X_{K_\a}( {\mathtt u}_\delta, \alpha_0) $ is provided in \eqref{lin idelta}.
By \eqref{operatore inverso approssimato}, \eqref{splitting mathbb D}, \eqref{definizione mathbb Dn}, \eqref{Lomega def}, 
\eqref{inversion assumption} and Lemma \ref{coefficienti nuovi} we split  
\begin{equation}\label{splitting linearizzato nuove coordinate}
\om \! \cdot \! \pa_\vphi 
- d_{\mathtt u, \alpha} X_K( {\mathtt u}_\delta, \alpha_0) 
= \mathbb{D}_n  + {\mathbb D}_n^\bot 
+ R_Z^{(n)} + R_Z^{(n), \bot}   + {\mathbb R}_\omega+ 
{\mathbb R}_\omega^\bot  
\end{equation}
where  
$$
\begin{aligned}
& R_Z^{(n)} [  \widehat \phi, \widehat y, \widehat w, \widehat \alpha]  := \\
&
\begin{pmatrix}
 - \partial_\phi K_{10}^{(n)}(\vphi, \alpha_0) [\widehat \phi ] \\
 \partial_{\phi \phi} K_{00}^{(n)} (\vphi, \alpha_0) [ \widehat \phi ] + 
 [\partial_\phi K_{10}^{(n)}(\vphi, \alpha_0)]^T \widehat y + 
 [\partial_\phi K_{01}^{(n)}(\vphi, \alpha_0)]^T \widehat w  \\
 - J \{ \partial_{\phi} K_{01}^{(n)}(\vphi, \alpha_0)[ \widehat \phi ] \}
 \end{pmatrix}\,,
 \end{aligned}
$$
$$
\begin{aligned}
& R_Z^{(n), \bot} [  \widehat \phi, \widehat y, \widehat w, \widehat \alpha]
:= \\ 
& \begin{pmatrix}
 - \partial_\phi K_{10}^{(n), \bot}(\vphi, \alpha_0) [\widehat \phi ] \\
 \partial_{\phi \phi} K_{00}^{(n), \bot} (\vphi, \alpha_0) [ \widehat \phi ] + 
 [\partial_\phi K_{10}^{(n), \bot}(\vphi, \alpha_0)]^T \widehat y + 
 [\partial_\phi K_{01}^{(n), \bot}(\vphi, \alpha_0)]^T \widehat w  \\
 - J \{ \partial_{\phi} K_{01}^{(n), \bot}(\vphi, \alpha_0)[ \widehat \phi ] \}
 \end{pmatrix}
 \end{aligned}
$$
and 
$$
{\mathbb R}_\omega[\widehat \phi, \widehat y, \widehat w, \widehat \alpha] := \begin{pmatrix}
0 \\
0 \\
{\mathbf R}_\omega [\widehat w]
\end{pmatrix}\,,\qquad {\mathbb R}_\omega^\bot[\widehat \phi, \widehat y , \widehat w, \widehat \alpha] := \begin{pmatrix}
0 \\
0 \\
{\mathbf R}_\omega^\bot[\widehat w]
\end{pmatrix}\,.
$$
By \eqref{verona 0}, \eqref{verona 2}, \eqref{verona 2 0}, \eqref{splitting linearizzato nuove coordinate} we get the decomposition
\begin{align} 
 d_{i, \alpha} {\mathcal F}(i_0 ) 
& = D G_\delta({\mathtt u}_\delta) \circ {\mathbb D}_n \circ D {\widetilde G}_\delta ({\mathtt u}_\delta)^{-1} + {\mathcal E}^{(n)} + {\mathcal E}_\omega + {\mathcal E}_\omega^\bot  \label{E2}
 \end{align}
where 
\begin{equation}\label{cal E (n) omega}
\begin{aligned}
{\mathcal E}^{(n)} & := {\mathcal E}_0^{(n)} + {\mathcal E}_1^{(n)} + D G_\delta ( {\mathtt u}_\delta)R_Z^{(n)} 
D {\widetilde G}_\delta ({\mathtt u}_\delta)^{-1}\,, \\
  {\mathcal E}_\omega & := D G_\delta ( {\mathtt u}_\delta)   {\mathbb R}_\omega    D {\widetilde G}_\delta ({\mathtt u}_\delta)^{-1}\,,
\end{aligned}
\end{equation}
\begin{equation}\label{cal E omega bot}
   {\mathcal E}_\omega^\bot := {\mathcal E}_0^{(n), \bot}  + {\mathcal E}_1^{(n), \bot} +  D G_\delta( {\mathtt u}_\delta)  [{\mathbb R}_\omega^\bot + {\mathbb D}_n^\bot + R_Z^{(n), \bot} ] 
D {\widetilde G}_\delta ({\mathtt u}_\delta)^{-1} \, . 
\end{equation}
Applying $ {\bf T}_0 $ defined in \eqref{definizione T} to the right in \eqref{E2} (recall that $ {\mathtt u}_\delta (\vphi) := (\vphi, 0, 0 ) $), 
since $ {\mathbb D}_n \circ  {\mathbb D}_n^{-1} = {\rm Id} $ (Proposition \ref{prop: ai}), 
we get 
\begin{align*}
& \qquad \qquad \quad d_{i, \alpha} {\mathcal F}(i_0 ) \circ {\bf T}_0  - {\rm Id} 
={\mathcal P} + {\mathcal P}_\omega + {\mathcal P}_\omega^\bot\,, \\
& 
 {\mathcal P} := {\mathcal E}^{(n)} \circ {\bf T}_0, \quad
 {\mathcal P}_\omega := \mE_\omega \circ {\bf T}_0 \, , \quad {\mathcal P}_\omega^\bot :=  \mE_\omega^\bot \circ {\bf T}_0 \, . 
\end{align*}
Lemma \ref{lemma quantitativo forma normale} and \eqref{ansatz 0}, \eqref{K 00 10 01}, \eqref{2015-2},  \eqref{stima y - y delta}, \eqref{stima toro modificato}, 
 \eqref{DG delta}-\eqref{DG2 delta},  imply the estimate 
\begin{equation}\label{stima parte trascurata 2}
\begin{aligned}
\| {\mathcal E}^{(n)} [\, \widehat \imath, \widehat \alpha \, ] \|_s^{k_0, \gamma} & 
\leq_s  \| Z \|_{s_0 + \s}^{k_0, \gamma} \| \widehat \imath \|_{s + \s}^{k_0, \gamma} +  
\| Z \|_{s + \s}^{k_0, \gamma} \| \widehat \imath \|_{s_0 + \s}^{k_0, \gamma} \\ 
 & \ \  + 
\| Z \|_{s_0 + \s}^{k_0, \gamma} \| \widehat \imath \|_{s_0 + \s}^{k_0, \gamma} \| \fracchi_0 \|_{s+\s}^{k_0, \gamma} 
\end{aligned}
\end{equation}
where $ Z := \mF(i_0, \alpha_0)$, recall \eqref{def Zetone}. 
Then \eqref{stima inverso approssimato 2} follows from 
\eqref{stima inverso approssimato 1}, \eqref{stima parte trascurata 2}, 
\eqref{ansatz 0}. The estimates \eqref{stima cal G omega}, 
\eqref{stima cal G omega bot bassa}, \eqref{stima cal G omega bot alta}
 follow by \eqref{stima R omega corsivo}-\eqref{stima R omega bot corsivo alta}, 
\eqref{stima inverso approssimato 1},  \eqref{DG delta}, \eqref{2015-2}, \eqref{stima y - y delta bot}, \eqref{stima toro modificato bot}, \eqref{K 00 10 01 bot alta}, \eqref{K 00 10 01 bot bassa}, \eqref{ansatz 0}, \eqref{stime cal D omega (n) bot}.  
 \end{proof}

\chapter{The linearized operator in the normal directions}\label{linearizzato siti normali}

In order to write an explicit  expression of the linear operator 
$\mL_\om$ defined in \eqref{Lomega def}
we compute the quadratic term $ \frac12 ( K_{02}(\phi) w, w )_{L^2(\T_x)} $ 
in the Taylor expansion 
of  the Hamiltonian $ K_\a (\phi, 0, w)$ in  \eqref{KHG}. 

\begin{lemma}  \label{thm:Lin+FBR}
The operator  $ K_{02}(\phi) $ reads 
\begin{equation}\label{K 02}
K_{02}(\phi) = \Pi_{{\mathbb S}^+}^\bot \partial_u \nabla_u H(T_\delta(\phi)) + \e R(\phi) 
\end{equation}
where $ H $ is the water-waves Hamiltonian defined in \eqref{Hamiltonian}, evaluated at the torus 
\begin{equation}\label{T delta}
T_\delta(\phi) := \e A(i_\delta(\phi)) = \e A(\theta_0(\phi), I_\delta(\phi), z_0(\phi) ) =
\e  v (\theta_0 (\phi), I_\d(\phi)) +  \e z_0 (\phi) 
\end{equation}
with $ A (\theta, I, z ) $, $ v (\teta, I )$ defined in \eqref{definizione A}.
The operator $ K_{02}(\phi) $ is even and reversible. 
The remainder $ R(\phi) $ has the ``finite dimensional" form 
\begin{equation}\label{forma buona resto}
R(\phi)[h] = {\mathop \sum}_{j = 1}^\nu \big(h\,,\,g_j \big)_{L^2_x} \chi_j\,, \quad \forall h \in H_{{\mathbb S}^+}^\bot \, ,  
\end{equation}
for functions $ g_j, \chi_j \in H_{{\mathbb S}^+}^\bot  $ which satisfy the tame estimates: 
for some $ \sigma:= \sigma(\tau, \nu) > 0 $, 
$ \forall s \geq s_0 $, 
\begin{equation}\label{stime gj chij}
\begin{aligned}
 \| g_j\|_s^{k_0, \gamma} +\| \chi_j\|_s^{k_0, \gamma} & \leq_s 1 + \| \fracchi_\delta\|_{s + \s}^{k_0, \gamma}\,, \\
   \| \partial_i g_j[\widehat \imath]\|_s +\| \partial_i \chi_j
[\widehat \imath]\|_s & \leq_s \| \widehat \imath \|_{s + \s}+ \| \fracchi_\delta\|_{s + \s} \| \widehat \imath\|_{s_0 + \s}\, . 
\end{aligned}
\end{equation}
\end{lemma}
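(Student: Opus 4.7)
The approach is a direct Taylor expansion of $K_\alpha = H_\alpha \circ G_\delta$ at the origin $(y,w) = (0,0)$, after which the quadratic-in-$w$ part is reorganized into the main block $\Pi_{{\mathbb S}^+}^\bot \partial_u \nabla_u H(T_\delta)$ plus a finite-rank remainder. Substituting the symplectic change of variables \eqref{trasformazione modificata simplettica} into $H_\alpha = \alpha \cdot I + \tfrac12 (z, \Omega z)_{L^2_x} + \e P_\e \circ A$ and setting $M(\phi) := [(\partial_\theta \tilde z_0)(\theta_0(\phi))]^T J$, the slice $y = 0$ gives $\theta = \theta_0(\phi)$, $I = I_\delta(\phi) - M(\phi)w$, $z = z_0(\phi) + w$, which are all affine in $w$. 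Hence only the Hessian of $H_\alpha$ in $(I,z)$, evaluated at $i_\delta(\phi)$ and contracted twice with $(-M\hat w, \hat w)$, contributes to $K_{02}(\phi)\hat w$: the term $\alpha \cdot I$ drops, $\tfrac12(z, \Omega z)$ produces $\Omega$ on the $z$-component, and the chain rule for $\e P_\e \circ A$ (with $A(\theta, I, z) = v(\theta, I) + z$ nonlinear in $I$) yields both the pull-back Hessian of $P_\e$ and the curvature contribution $\bigl(\nabla_u P_\e(u_\delta),\, \partial_{II} v[M\hat w, M\hat w]\bigr)_{L^2_x}$, where $u_\delta := A(i_\delta(\phi))$ so that $T_\delta(\phi) = \e\, u_\delta(\phi)$.

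Expanding the resulting bilinear form $\bigl(\partial_u \nabla_u P_\e(u_\delta)[\hat w - \partial_I v \cdot M\hat w],\, \hat w - \partial_I v \cdot M\hat w\bigr)_{L^2_x}$ and collecting terms produces
\[
K_{02}(\phi)\hat w \;=\; \Pi_{{\mathbb S}^+}^\bot\bigl(\Omega + \e\, \partial_u \nabla_u P_\e(u_\delta)\bigr)\hat w \;+\; \e R(\phi)\hat w,
\]
with $R(\phi)$ collecting every residual term carrying at least one factor of $\partial_I v \cdot M\hat w$ or $\partial_{II} v[M\hat w,\,\cdot\,]$. The scaling identity $\partial_u \nabla_u H(\e u) = \partial_u \nabla_u \mathcal{H}_\e(u) = \Omega + \e\, \partial_u \nabla_u P_\e(u)$, read off from \eqref{Hamiltonian:rescaled}, identifies the leading block with $\Pi_{{\mathbb S}^+}^\bot \partial_u \nabla_u H(T_\delta(\phi))$, giving \eqref{K 02}. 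Because $M(\phi)\hat w \in \R^\nu$ has components $(M\hat w)_k = -(\hat w,\, J\partial_{\theta_k}\tilde z_0(\theta_0))_{L^2_x}$, every summand of $R(\phi)\hat w$ factors through a pairing $(\hat w, g_j)_{L^2_x}$ against an element $g_j \in H_{{\mathbb S}^+}^\bot$ (a linear combination of $-J\partial_{\theta_k}\tilde z_0(\theta_0)$), with $\chi_j$ the $\Pi_{{\mathbb S}^+}^\bot$-projection of $\partial_u \nabla_u P_\e(u_\delta)[\partial_{I_k} v(\theta_0, I_\delta)]$ or of $\nabla_u P_\e(u_\delta) \cdot \partial_{I_k I_l} v(\theta_0, I_\delta)$, yielding the representation \eqref{forma buona resto} after symmetrization and relabeling of the finitely many pairs.

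The parity statements then reduce to checking that the isotropic modification $i_\delta$ inherits the reversibility \eqref{parity solution} from $i_0$: the formulas \eqref{y 0 - y delta}--\eqref{toro I delta (n) bot} commute with $\phi \mapsto -\phi$, so $I_\delta(-\phi) = I_\delta(\phi)$, whence $T_\delta(-\phi) = \rho T_\delta(\phi)$; combined with $H \circ \rho = H$ this gives the reversibility-preserving character of $\Pi_{{\mathbb S}^+}^\bot \partial_u \nabla_u H(T_\delta)$, and evenness in $x$ follows since the components of $T_\delta(\phi)$ are even in $x$. The analogous parity of each $g_j,\chi_j$ is verified by inspection from the parity of $\theta_0, I_\delta, z_0$. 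Finally, the tame bounds \eqref{stime gj chij} follow by estimating the building blocks of $g_j,\chi_j$: one combines the analyticity estimate \eqref{stima k0 gamma v per il campo} for $v$, the bound \eqref{2015-2} for $I_\delta$, the tame estimates for $\nabla_u P_\e$ and $\partial_u \nabla_u P_\e$ at the composed argument $u_\delta$ (via Lemma \ref{stime tame dirichlet neumann} combined as in the proof of Lemma \ref{lemma quantitativo forma normale}), and the product rule \eqref{interpolazione C k0}. The main obstacle is the algebraic bookkeeping required to expand the $w$-Hessian, symmetrize the resulting bilinear form, and recognize that every term not already in $\Pi_{{\mathbb S}^+}^\bot \partial_u \nabla_u H(T_\delta)$ factors through the rank-$\nu$ map $M(\phi)$; once this is done, the parity and the tame estimates are routine consequences of the explicit expressions.
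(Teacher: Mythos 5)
Your proof is correct and takes essentially the same route as the paper's: differentiate $K_\alpha = H_\alpha \circ G_\delta$ twice in $w$ at $(y,w)=(0,0)$, identify the leading block with $\Pi_{{\mathbb S}^+}^\bot \partial_u \nabla_u H(T_\delta)$ via the rescaling $\mathcal{H}_\e(u)=\e^{-2}H(\e u)$, and note that every residual term factors through the rank-$\nu$ map $w \mapsto [(\partial_\theta \tilde z_0)(\theta_0)]^T J w$, which gives \eqref{forma buona resto} and the estimates via Lemma \ref{lemma quantitativo forma normale}. The only cosmetic difference is that you expand one level further through $A$ (making $\partial_I v$, $\partial_{II} v$ explicit), whereas the paper keeps $P = P_\e \circ A$ and writes the remainder as $L_2^T \partial_{II} P\, L_2 + L_2^T \partial_z \partial_I P + \partial_I \nabla_z P\, L_2$.
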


\begin{proof}
The operator $ K_{02}(\phi)$ is
\begin{equation}\label{trasformata forma normale}
\begin{aligned}
K_{02}(\phi) = \partial_w \nabla_w K_\alpha(\phi, 0, 0) & 
= \partial_w \nabla_w (H_\alpha \circ G_\delta)(\phi, 0, 0) \\
& = \Omega_{| H_{{\mathbb S}^+}^\bot}  + \e
\partial_w \nabla_w (P \circ G_\delta)(\phi, 0, 0)  
\end{aligned}
\ee
where  $H_\alpha = {\mathcal N}_\alpha + \e P $ is defined in \eqref{H alpha} 
and  $\Omega$ in \eqref{definizione Omega}. 
Differentiating with respect to $w$ the Hamiltonian 
$$ 
(P \circ G_\delta)(\phi, y, w) =  P(\theta_0(\phi), I_\delta(\phi) + L_1(\phi) y + L_2(\phi) w, z_0(\phi) + w) 
$$
where (see \eqref{trasformazione modificata simplettica}) 
$ L_1(\phi) := [\partial_\phi \theta_0(\phi)]^{- T} $, $ L_2(\phi) := - [\partial_\theta \tilde z_0(\theta_0(\phi))]^{T} J $, 
we get 
$$ 
\nabla_w (P \circ G_\delta)(\phi, y, w) =
L_2(\phi)^T \partial_I P(G_\delta(\phi, y, w)) + \nabla_z P(G_\delta(\phi, y, w)) \, ,
$$ 
and therefore 
\begin{equation}\label{P G delta}
\begin{aligned}
& \partial_w \nabla_w (P \circ G_\delta)(\phi, 0, 0) = \partial_z \nabla_z P(i_\delta(\phi)) + R(\phi)  \qquad {\rm with} \qquad  \\
& R(\phi) := R_1(\phi) + R_2(\phi) + R_3(\phi)\,,  \\
& R_1(\phi) := L_2(\phi)^T \partial_{II} P(i_\delta(\phi)) L_2(\phi)\,,\quad R_2(\phi):=  L_2(\phi)^T 
\pa_z  \partial_{I} P(i_\delta(\phi)) \,, \\
& R_3(\phi) := \partial_I \nabla_z P(i_\delta(\phi))L_2(\phi) \, . 
\end{aligned}
\end{equation}
Each operator $ R_1, R_2, R_3 $ has the finite dimensional form \eqref{forma buona resto} because
it is the composition of at least one operator with finite rank $ \R^\nu $. For example,
writing the operator $L_2(\phi) : H_{{\mathbb S}^+}^\bot \to \R^\nu$ as  
$ L_2(\phi)[h] = \sum_{j = 1}^\nu \big(h\,,\, L_2(\phi)^T[ {\underline e}_j] \big)_{L^2_x} {\underline e}_j $, $ \forall h \in H_{{\mathbb S}^+}^\bot $, we get 
$$
R_1(\phi)[h] = {\mathop \sum}_{j = 1}^\nu \big(h\,,\, L_2(\phi)^T[{\underline e}_j] \big)_{L^2_x} 
A_1[{\underline e}_j]\,,\quad A_1 := L_2(\phi)^T \partial_{II} P(i_\delta(\phi))\,.
$$
Similarly $  R_3(\phi)[h] = 
{\mathop \sum}_{j = 1}^\nu \big(h, L_2(\phi)^T [{\underline e}_j] \big)_{L^2_x} A_3[{\underline e}_j] $ with 
$ A_3 := \partial_y \nabla_z P(i_\delta(\phi))$, and
since $ A_2 := \pa_z \pa_{I} P(i_\delta(\phi)) : H_{{\mathbb S}^+}^\bot \to \R^\nu $, we get 
$$
R_2(\phi)[h]  = {\mathop \sum}_{j = 1}^\nu \big(h, A_2^T[{\underline e}_j] \big)_{L^2_x} L_2(\phi)^T[{\underline e}_j] \, .
$$
 The estimate \eqref{stime gj chij} follows by Lemma \ref{lemma quantitativo forma normale}.

By \eqref{trasformata forma normale}, \eqref{P G delta}, and \eqref{definizione cal N P}, 
\eqref{definizione A},  
 \eqref{Hamiltonian:rescaled}, \eqref{Hamiltonian linear},   we get 
\begin{align*}
K_{02}(\phi) & = \Omega_{| H_{{\mathbb S}^+}^\bot}  + \e \partial_z \nabla_z P(i_\delta(\phi)) + \e R(\phi)  \\
& = \Omega_{| H_{{\mathbb S}^+}^\bot}  + \e \Pi_{{\mathbb S}^+}^\bot \partial_u \nabla_u P_\e(A(i_\delta(\phi))) + \e R(\phi) \\
& =  \Pi_{{\mathbb S}^+}^\bot \partial_u \nabla_u {\mathcal H}_\e(A(i_\delta(\phi))) + \e R(\phi) 
\end{align*}
which proves \eqref{K 02} because $ A(i_\delta(\phi)) = T_\delta(\phi) $, see \eqref{T delta}.
\end{proof}

By Lemma \ref{thm:Lin+FBR} the linear operator $ {\mathcal L}_\om $ defined in \eqref{Lomega def} has the form 
\be\label{representation Lom}
{\mathcal L}_\om =  \Pi_{{\mathbb S}^+}^\bot ( {\mathcal L} + \e R)_{| H_{{\mathbb S}^+}^\bot}  \qquad {\rm where} \qquad {\mathcal L} := 
\Dom {\mathbb I}_2  - J \partial_u  \nabla_u 
H (T_\delta(\vphi))
\ee
is obtained linearizing the original water waves system \eqref{WW}, \eqref{HS} at the torus
$ u = (\eta, \psi) = T_\d(\vphi) $ 
defined in \eqref{T delta}, changing $ \pa_t \rightsquigarrow \Dom $, and 
denoting the $ 2 \times 2 $-identity matrix  by 
$$
 {\mathbb I}_2 := \begin{pmatrix} 
{\rm Id} & 0 \\
0  &  {\rm Id} \\
\end{pmatrix} \,. 
$$
Using formula \eqref{formula shape der} the linearized operator $ {\mathcal L} $ is 
\begin{equation}  \label{linearized vero}
{\mathcal L} = \Dom {\mathbb I}_2 + 
\begin{pmatrix} 
\partial_x V + G(\eta) B & - G(\eta) \\
(1 + B V_x) + B G(\eta) B -  \kappa \pa_x c \pa_x \  &  V \partial_x - B G(\eta) \\
\end{pmatrix} 
\end{equation}
where the functions $ B:=  B(\vphi, x ) $, $ V := V(\vphi, x ) $ are defined by \eqref{def B V} 
with $ (\eta, \psi)  = (\eta (\vphi, x), \psi (\vphi, x) ) = T_\d(\vphi) $  defined in \eqref{T delta},  and 
\begin{equation} \label{def c}
c := c(\vphi, x) := (1 + \eta_x^2)^{-3/2}.
\end{equation}
By \eqref{T delta}, \eqref{definizione A}, \eqref{parity solution}
the function $ u = (\eta, \psi) = T_\d(\vphi) $ satisfies the parities 
($ \even(\vphi)$-$\even(x), \odd(\vphi)$-$\even(x) $),  
and  $c $ is $ \even(\vphi)$-$\even(x) $,
$B \in \odd(\vphi)$-$\even(x)$, $V = \odd(\ph), \odd(x)$. 
The operators $ {\mathcal L}_\om $ and $ {\mathcal L} $ are real, even and reversible. 
\\[1mm]
{\bf Notation.} In \eqref{linearized vero} and hereafter 
any function $a$ is identified with the corresponding multiplication operators $h \mapsto ah$, and, where there is no parenthesis, composition of operators is understood. For example, 
$\pa_x c \pa_x$ means: $h \mapsto \pa_x (c \pa_x h)$.

\smallskip

In the next sections we focus on reducing the linear operator $ {\mathcal L} $  in \eqref{linearized vero} 
to constant coefficients up to a pseudo-differential operator of order $ 0 $ (and up to 
a small remainder supported on the high modes). The finite dimensional remainder $ \e  R $
transforms under conjugation into an operator of the same form (Lemma \ref{lemma forma buona resto}) and therefore it will be dealt 
only once at the end of the section. 

\smallskip

For the sequel we will always assume the following ansatz in ``low norm" (that will be satisfied by the approximate solutions along the Nash-Moser iteration): 
for some $ \mu := \mu(\tau, \nu) > 0$, $\gamma \in (0, 1) $,  
\begin{equation}\label{ansatz I delta}
\| \fracchi_0 \|_{s_0 + \mu}^{k_0, \gamma} \leq 1   \ \, , 
\qquad \text{and  so, by } \eqref{2015-2}, \ \| \fracchi_\d \|_{s_0 + \mu}^{k_0, \gamma} \leq 2 \, . 
\end{equation}
Actually $\mu := \mu(\mathtt b) + \sigma_1$, where $\mu(\mathtt b)$ is defined in \eqref{definizione bf c (beta)} and $\sigma_1$ in \eqref{costanti nash moser 2}, is fixed in the Nash Moser iteration of section \ref{sec:NM} (see also \eqref{ansatz induttivi nell'iterazione}).
In order to estimate the  variation of the eigenvalues with respect to the approximate invariant torus, we need also to estimate the derivatives with respect to the torus $i(\vphi)$ in another low norm $\| \ \|_{s_1}$, for all the Sobolev indices $s_1$ such that  
\begin{equation}\label{vincolo s1 derivate i}
s_1 + \sigma \leq s_0 + \mu\,, \quad \text{for \,\,some} \quad \sigma := \sigma(\tau, \nu)> 0\,.
\end{equation}
Thus by \eqref{ansatz I delta} we have 
\be\label{estimate:low norm for der}
\|\fracchi_0 \|_{s_1 + \sigma}^{k_0, \gamma} \leq 1 
\qquad \text{and  so, by } \eqref{2015-2}, \ 
\|\fracchi_\d \|_{s_1 + \sigma}^{k_0, \gamma} \leq 2 \,.
\ee
 The constants $\mu$ and $\sigma$ represent the {\it loss of derivatives} at any step of the reduction procedure of this section and it  possibly
increases along the (finitely many)  steps of this reduction procedure. In  Lemma \ref{lem:tame iniziale} we fix the largest loss of derivatives $\sigma := \sigma(\mathtt b)$. 
 
\begin{remark}
Let us shortly motivate the role of the intermediate Sobolev index $ s_1 $. 
In the  reducibility scheme in section \ref{sec: reducibility} 
we require that  the remainders ${\bf R}_0$, ${\bf Q}_0$ satisfy the estimates \eqref{derivate i resti prima della riducibilita-s0}.
In Lemma \ref{lem:tame iniziale}  we take
$ {\bf R}_0 := {\bf R}_M^{(3)} $, $ {\bf Q}_0 := {\bf Q}_M^{(3)} $ defined 
in Proposition \ref{prop: sintesi linearized} and so we want that
\eqref{derivate i resti prima della riducibilita} holds with $ s_1 =  s_0 $. 
For that we need to estimate,  along  section \ref{linearizzato siti normali}, 
the derivatives $ \pa_i $ of functions, operators, etc, in  intermediate $ \| \ \|_{s_1} $ norms, i.e. 
for $ s_1 $ which satisfies \eqref{vincolo s1 derivate i}. 
\end{remark}


As a consequence of Moser composition Lemma \ref{Moser norme pesate}, 
the Sobolev norm of $ u = T_\d $ (see \eqref{T delta}) satisfies
\be\label{tame Tdelta}
\| u \|_s^{k_0,\gamma} = \| \eta \|_s^{k_0,\gamma}  + 
\| \psi \|_s^{k_0,\gamma} \leq \e C(s)  \big(  1 + \| \fracchi_0 \|_{s}^{k_0, \gamma})  \, , \quad  \forall s \geq s_0   
\ee
(the funtion $ A $  defined in \eqref{definizione A} is smooth). 
Similarly 
\begin{equation}\label{derivata i T delta}
\| \partial_i u [\hat \imath] \|_{s_1} \leq_{s_1} \e \| \hat \imath\|_{s_1} \, .
\end{equation}
We remark that it would be sufficient to give Lipschitz estimates of $ u $ (and of operators, transformations, eigenvalues) 
with respect to the variable $ i $, namely to estimate the finite difference $ \Delta_{12} u := u(i_1) - u (i_2)$
in terms of the difference $ \| i_1 - i_2 \|_{s_1 + \s } $, 
but for convenience we compute the derivative $ \partial_i $. 
We repeat that  it is sufficient to estimate the derivatives (or the finite difference) 
with respect to  $ i $ only in low norm $ s_1 $ is because this information 
 is only needed  to control the variation of the eigenvalues with respect to $ i $, see remark \ref{remark:dipendenza da omega non richiesta}.

\smallskip

Finally we recall that $ \fracchi_0 := \fracchi_0 (\om, \kappa)  $ 
is defined for all  $ \om \in \R^\nu $ and $ \kappa \in [\kappa_1, \kappa_2]$ 
by the extension procedure of section \ref{sec:NM}. 
Moreover all the functions appearing in 
$ {\mathcal L} $ in \eqref{linearized vero} are $ {\mathcal C}^\infty $ in $ (\vphi, x) $ as 
the approximate torus 
$ u = (\eta, \psi) =  T_\d (\vphi)  $. 
This enables to  use directly pseudo-differential operator theory as reminded in section \ref{sec:prelim}.

\section{Linearized good unknown of Alinhac} \label{sec:linearized operator}

We first conjugate the linearized operator $ {\mathcal L} $ in \eqref{linearized vero} by the change of variable
$$ 
\mZ := \begin{pmatrix}  1 & 0 \\ B & 1  \end{pmatrix}\, , \qquad \mZ^{-1} = \begin{pmatrix} 1 & 0 \\ - B & 1  \end{pmatrix} 
$$
obtaining
\begin{equation} \label{Con1}
{\mathcal L}_0 := \mZ^{-1} \mL \mZ = \Dom  {\mathbb I}_2 + \begin{pmatrix}
\partial_x V \  & \ - G(\eta) \\ 
a -  \kappa \pa_x c \pa_x \ &  V \partial_x 
\end{pmatrix}
\end{equation} 
where $ a $ is the function  
\begin{equation}  \label{a}
a := a(\vphi, x) =  1 + \Dom B + V B_x \, .
\end{equation} 
The matrix $ \mZ $ amounts to introduce  (a linearized version of)  the ``good unknown of Alinhac''.

\begin{lemma}  \label{lemma:remainder mR0}
The maps $\mZ^{\pm 1} -  {\rm Id} $ are even, reversibility preserving and ${\mathcal D}^{k_0}$-tame with tame constant satisfying, for all $s_0 \leq s \leq S $,  
\begin{equation}  \label{est Z-Id}
{\mathfrak M}_{\mZ^{\pm 1} - {\rm Id}} (s)\,,\,  {\mathfrak M}_{(\mZ^{\pm 1} - {\rm Id})^*} (s)
\leq_s \e \big( 1+ \| \fracchi_0 \|_{s + \sigma}^{k_0, \gamma} \big) \, . 
\end{equation}
The operator $ {\mathcal L}_0 $ is even and reversible. 
There is $\sigma := \sigma(\tau, \nu) > 0 $ such that    the functions 
\begin{equation}\label{stima V B a c}
\begin{aligned}
& \|a - 1 \|_s^{k_0, \gamma} + \| V \|_s^{k_0, \gamma} + \| B\|_s^{k_0, \gamma} \leq_s \e \big(1 + \| \fracchi_0 \|_{s + \sigma}^{k_0, \gamma} \big)\,, \\
&  \| c - 1 \|_s^{k_0, \gamma} \leq_s \e^2 \big(1 + \| \fracchi_0 \|_{s + \sigma}^{k_0, \gamma} \big)\, .
\end{aligned}
\end{equation}
Moreover 
\begin{align}\label{stima derivate i primo step}
& \| \partial_i a  [\hat \imath] \|_{s_1} + \| \partial_i V [\hat \imath] \|_{s_1} + \|\partial_i B [\hat \imath] \|_{s_1} \leq_{s_1}  \e \| \hat \imath \|_{s_1 + \sigma} \,, \  \| \partial_i c [\hat \imath] \|_{s_1} \leq_{s_1}\e^2  \| \hat \imath \|_{s_1 + \sigma} \\
& \label{derivate in i cal Z}
\| \partial_i ({\mathcal Z}^{\pm 1} [\hat \imath]) h \|_{s_1}\,,\,\| \partial_i (({\mathcal Z}^{\pm 1})^* [\hat \imath]) h \|_{s_1} \leq_{s_1}  \e \| \hat \imath\|_{s_1 + \sigma} \|  h \|_{s_1 }\,.
\end{align}
\end{lemma}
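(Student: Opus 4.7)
The plan is to proceed in four steps: (i) verify the algebraic properties (even, reversibility-preserving, evenness/reversibility of $\mathcal{L}_0$); (ii) derive the tame bounds \eqref{est Z-Id} for $\mathcal{Z}^{\pm 1}-\mathrm{Id}$; (iii) prove the coefficient estimates \eqref{stima V B a c}; (iv) prove the $\partial_i$-estimates \eqref{stima derivate i primo step}, \eqref{derivate in i cal Z}. Nothing here is delicate — the whole lemma reduces to combining the parity information carried by $T_\delta$, the Dirichlet–Neumann estimates of Lemma \ref{stime tame dirichlet neumann}, the Moser composition Lemma \ref{Moser norme pesate}, and the algebra bound \eqref{interpolazione C k0}.

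\textbf{Step (i): algebra.} The matrix $\mathcal{Z}$ has the block structure \eqref{cal R eta psi} with $A=D=\mathrm{Id}$, upper-right block zero, and lower-left block equal to multiplication by $B(\varphi,x)$; likewise for $\mathcal{Z}^{-1}$ with $-B$ in place of $B$. By \eqref{T delta}, \eqref{parity solution} and \eqref{def B V} the function $B$ is $\mathrm{odd}(\varphi)$-$\mathrm{even}(x)$, so multiplication by $B$ maps even functions of $x$ to even functions (hence $\mathcal{Z}^{\pm 1}$ are even per Definition \ref{def:even}) and its $\varphi$-parity fits the characterization of reversibility preserving stated just before the complex-coordinate paragraph of section \ref{sec:prelim}. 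Since $\mathcal{L}$ in \eqref{linearized vero} is even and reversible (each coefficient in \eqref{linearized vero} has the parities listed just after it, and $V$, $B$ are odd in $\varphi$ while $a$, $c$ are even), conjugation by $\mathcal{Z}$ preserves both properties, so $\mathcal{L}_0 = \mathcal{Z}^{-1}\mathcal{L}\mathcal{Z}$ is even and reversible.

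\textbf{Step (ii): tame bounds on $\mathcal{Z}^{\pm 1}-\mathrm{Id}$.} Since $\mathcal{Z}^{\pm 1}-\mathrm{Id}$ is simply multiplication by $\pm B$ in the lower-left block, its $\mathcal{D}^{k_0}$-tame constant — and that of its $L^2$-adjoint, which is multiplication by $\pm B$ in the upper-right block — is controlled, via \eqref{interpolazione C k0} and the discussion after Lemma \ref{lemma operatore e funzioni dipendenti da parametro}, by $\|B\|_{s}^{k_0,\gamma}$. Thus \eqref{est Z-Id} follows from the bound on $B$ in \eqref{stima V B a c}.

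\textbf{Step (iii): coefficient estimates.} Write $u=(\eta,\psi)=T_\delta(\varphi)$. By \eqref{tame Tdelta} we have $\|\eta\|_s^{k_0,\gamma}+\|\psi\|_s^{k_0,\gamma}\lesssim_s \e(1+\|\mathfrak{I}_0\|_s^{k_0,\gamma})$, and the smallness hypothesis \eqref{ansatz I delta} ensures that the hypothesis of Lemma \ref{stime tame dirichlet neumann} is satisfied. Using \eqref{def B V}, the estimate \eqref{stima tame dirichlet neumann} on $G(\eta)\psi$, the algebra property \eqref{interpolazione C k0}, and Lemma \ref{Moser norme pesate} to handle the factor $(1+\eta_x^2)^{-1}$, one gets
\[
\|B\|_s^{k_0,\gamma}+\|V\|_s^{k_0,\gamma}\lesssim_s \e\big(1+\|\mathfrak{I}_0\|_{s+\sigma}^{k_0,\gamma}\big)
\]
for some $\sigma=\sigma(\tau,\nu)$. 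The function $a-1 = \omega\cdot\partial_\varphi B + V B_x$ is then controlled by the same bound using once more \eqref{interpolazione C k0}; note that the factor $\omega$ only contributes a $k_0,\gamma$-weighted constant since $|\omega|\lesssim 1$ on $\tilde\Omega$. Finally, $c-1=(1+\eta_x^2)^{-3/2}-1$ vanishes quadratically in $\eta_x$, so Lemma \ref{Moser norme pesate} applied to $f(t)=(1+t^2)^{-3/2}-1=O(t^2)$ together with the algebra estimate yields the $\e^2$-factor in \eqref{stima V B a c}.

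\textbf{Step (iv): derivatives with respect to $i$.} From \eqref{derivata i T delta} and the explicit formulas \eqref{def B V}, \eqref{a}, \eqref{def c}, the derivatives $\partial_i B[\hat\imath]$, $\partial_i V[\hat\imath]$, $\partial_i a[\hat\imath]$, $\partial_i c[\hat\imath]$ are obtained by differentiating products, quotients, and compositions in $(\eta,\psi)$ with increments $\partial_i\eta[\hat\imath], \partial_i\psi[\hat\imath]$ of size $\e\|\hat\imath\|_{s_1}$. Applying \eqref{stima tame derivata dirichlet neumann} (whose smallness condition holds thanks to \eqref{estimate:low norm for der}), \eqref{interpolazione senza C k0} and \eqref{estimate:low norm for der} produces \eqref{stima derivate i primo step}. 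The bounds \eqref{derivate in i cal Z} then follow because $\partial_i(\mathcal{Z}^{\pm 1}[\hat\imath])$ is multiplication by $\pm \partial_i B[\hat\imath]$, and its adjoint transposes the block but has the same multiplier, so both are controlled by the bound just obtained for $\partial_i B[\hat\imath]$ together with one application of \eqref{interpolazione senza C k0}.

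The only mild point of care is keeping track of the loss $\sigma$: each invocation of \eqref{stima tame dirichlet neumann} costs $s_0+2k_0+3$ derivatives, and the algebra/Moser step costs a few more, but all these losses are absorbed into the single constant $\sigma=\sigma(\tau,\nu)$ since they do not depend on $s$. There is no real obstacle; the lemma is essentially bookkeeping built on already-established tools.
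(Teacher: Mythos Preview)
Your proposal is correct and follows essentially the same route as the paper: you reduce everything to the block-multiplication structure of $\mathcal{Z}^{\pm 1}-\mathrm{Id}$ by $\pm B$, verify the parities of $B$ to get the even/reversibility-preserving properties, and then estimate $B,V,a,c$ from their explicit formulas via the Dirichlet--Neumann tame bounds, the Moser composition lemma, and the algebra inequality, exactly as the paper does. The only cosmetic difference is that the paper cites \eqref{estimate DN} and the shape-derivative formula \eqref{formula shape der} directly, whereas you package these through Lemma \ref{stime tame dirichlet neumann}; the content is the same.
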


\begin{proof}
The estimate \eqref{stima V B a c}, follows by the explicit expressions of $a, V, B, c$ in \eqref{a}, \eqref{def B V}, \eqref{def c}, by applying Lemma \ref{Moser norme pesate} and the estimates \eqref{interpolazione C k0}, \eqref{estimate DN},  \eqref{interpolazione parametri operatore funzioni} and Lemma \ref{lemma operatore e funzioni dipendenti da parametro}.
The operators $ \mZ^{\pm 1}  $ are reversibility preserving because $ B $ is odd$ \vphi $. 
The estimate \eqref{est Z-Id} holds by \eqref{norma a moltiplicazione}, \eqref{interpolazione parametri operatore funzioni}, \eqref{stima V B a c} and since the adjoint  
$
{\mathcal Z}^* = \begin{pmatrix}
1 & B \\
0 & 1
\end{pmatrix} $. 
The estimates involving $\mZ^{- 1}$ follow similarly. The estimate \eqref{stima derivate i primo step} follows by differentiating the explicit expressions of $a$, $B$, $V$, $c$ in \eqref{a}, \eqref{def B V}, \eqref{def c}, by applying Lemma \ref{Moser norme pesate}, \eqref{formula shape der}, \eqref{estimate DN}, \eqref{interpolazione C k0} and \eqref{derivata i T delta}.
The estimates \eqref{derivate in i cal Z} follow
by the estimate of $\partial_i B$ in \eqref{stima derivate i primo step} and  \eqref{interpolazione C k0}. 
\end{proof}

\section{Symmetrization and space reduction of the highest order} 
\label{sec:changes}

The aim of this section is to conjugate the linear operator  $ \mL_0  $ 
in \eqref{Con1} to the operator $ {\mathcal L}_3 $
in \eqref{def mL4} whose  
coefficient $ m_3 (\vphi) $  of the  highest order
 is independent of the space variable.  By 
 \eqref{sviluppo Geta}  we first rewrite  
\be\label{forma L0}
\mL_0 = \Dom  {\mathbb I}_2 + \begin{pmatrix}
 V \partial_x + V_x \  & \ - |D_x| - \mR_G \\ 
a -  \kappa c \pa_{xx} -  \kappa c_x \pa_x \ &   V \partial_x 
\end{pmatrix} \, . 
\ee
{\bf Step 1}. 
We  first conjugate  $ \mL_0 $ 
with a change of  variable 
\be\label{change-of-variable}
(\mB h)(\vphi,x) := h(\vphi, x+\b(\vphi,x))  
\ee
induced by  a $ \vphi $-dependent family of diffeomorphisms of the torus 
\be\label{diffeo-torus} 
y = x + \b(\vphi,x)
\qquad   \Leftrightarrow  \qquad 
x = y + \tilde\b(\vphi,y) 
\ee
where $\b(\vphi,x)$ is a small periodic function to be determined.  
Under the change of variable \eqref{change-of-variable} the differential operators $ \pa_x $, $ \pa_{xx} $, 
$ \Dom $, and the multiplication operator by $ a $, transform into
\begin{equation}\label{conjB1}
\begin{aligned}
& \mB^{-1} \pa_x \mB  = \{ \mB^{-1}(1 + \b_x) \} \pa_y, \\
 & \mB^{-1} \pa_{xx} \mB  = \{ \mB^{-1}(1 + \b_x) \}^2 \pa_{yy} + (\mB^{-1} \b_{xx}) \pa_y, 
\end{aligned}
\end{equation}
\begin{equation}\label{conjB2}
\begin{aligned}
 \mB^{-1} \Dom \mB  = \Dom + (\mB^{-1} \Dom \b) \pa_y \, , \quad  \mB^{-1} a \mB = (\mB^{-1} a) 
\end{aligned}
\end{equation}
Moreover, using \eqref{conjB1}, 
\begin{align}
\mB^{-1} |D_x| \mB  & 
= \mB^{-1} \pa_x \mH \mB  = (\mB^{-1} \pa_x \mB) (\mB^{-1} \mH \mB) \nonumber \\
&   = \{ \mB^{-1}(1 + \b_x) \} \pa_y [ \mH + (\mB^{-1} \mH \mB - \mH) ] \nonumber \\ 
& = \{ \mB^{-1}(1 + \b_x) \} |D_y| + \mR_{\mB} \label{BmodDB}
\end{align}
where, by Lemma \ref{coniugio Hilbert},  
\be\label{Resto:RB}
\mR_{\mB} := \{ \mB^{-1}(1 + \b_x) \} \pa_y (\mB^{-1} \mH \mB - \mH) \in OPS^{-\infty} \,.
\ee
Thus, by \eqref{conjB1}-\eqref{BmodDB}, 
the operator $ {\mathcal L}_0 $ in \eqref{forma L0} transforms into
\be\label{Op L1}
\mL_1 := \mB^{-1} \mL_0 \mB  =  \Dom  {\mathbb I}_2 + 
\begin{pmatrix}
 a_1 \pa_y + a_2   &  - a_3 |D_y| + \mR_1 \\ 
- \kappa a_4 \pa_{yy} - \kappa a_5 \pa_y + a_6  &   a_1 \pa_y
\end{pmatrix}
\ee
where $ a_i = a_i(\vphi,y)$ are 
\begin{equation}\label{a1 a2 a3}
\begin{aligned}
& a_1  := \mB^{-1} [ \Dom \b + V(1 + \b_x)], \quad 
a_2  := \mB^{-1}(V_x),  \\
& a_3  := \mB^{-1}(1 + \b_x),
\end{aligned}
\end{equation}
\begin{equation}\label{a4 a5 a6}
\begin{aligned}
& a_4  := \mB^{-1} [c(1 + \b_x)^2], \quad 
a_5  := \mB^{-1} [c \b_{xx} + c_x (1 + \b_x)],  \\
& a_6  := \mB^{-1} a, 
\end{aligned}
\end{equation}
and 
\be\label{defR1}
\mR_1 := - \mR_\mB - \mB^{-1} \mR_G \mB \in OPS^{-\infty} \, . 
\ee
We look for  $\b(\vphi, x) $ such that 
\begin{equation}  \label{proportional}
(a_3 a_4) (\vphi, y)  = m(\vphi) 
\end{equation} 
for some function $m(\vphi)$, independent of the space variable $y$.
By \eqref{a1 a2 a3}-\eqref{a4 a5 a6},  the equation \eqref{proportional} is
$$
c(\vphi, x) ( 1 + \beta_x(\vphi, x) )^3 = m(\vphi) 
$$
which is solved by 
\begin{equation}\label{beta lambda3}
m(\vphi) := \Big( \frac{1}{2 \pi} \int_\T c(\vphi, x)^{- \frac13}\,dx \Big)^{- 3}\,,\ \  \beta(\vphi, x) := \partial_x^{- 1} \big( m(\vphi)^{\frac13} c(\vphi, x)^{- \frac13} - 1 \big)\,,
\end{equation}
where $\pa_x^{-1}$ is the Fourier multiplier 
$$
\partial_x^{-1 } e^{\ii j x} := \frac{e^{\ii j x}}{ \ii j}\,,\,\, \forall j \neq 0\,,\qquad \partial_x^{- 1} 1 := 0 \,.
$$

\begin{remark}\label{parities a1 - a6}
Since $c $ is $ \even(\vphi)$-$\even(x)$,  
it follows that $\b = \text{even}(\vphi), \text{odd}(x)$.
As a consequence, $\mB, \mB^{-1} $ are even and reversibility preserving.
Therefore 
$$ a_1 = \text{odd}(\vphi), \odd(x) \, ,  \ a_2 = \odd(\vphi),\even(x) \, , \
 a_3, a_4, a_6  = \even(\vphi),\even(x) ,
 $$ 
and $ a_5= \even(\vphi), \odd(x) $. 
\end{remark}

\noindent
{\bf Step 2}. 
We conjugate $ {\mathcal L}_1 $ in \eqref{Op L1} by the linear map 
$$
{\mathcal Q} := \begin{pmatrix}
1  & 0 \\
0 &  q \end{pmatrix},
\quad 
{\mathcal Q} ^{- 1} = \begin{pmatrix}
1 & 0 \\
0 & q^{- 1}
\end{pmatrix}\,,
$$
where $ q (\vphi, x)$ is a real valued function  close to $ 1 $ to be determined. We compute 
\begin{equation}  \label{mL3}
\begin{aligned}
\mL_2 & := {\mathcal Q} ^{-1} \mL_1 {\mathcal Q}  = \Dom  {\mathbb I}_2 +   \\
& 
\begin{pmatrix} 
 a_{1} \pa_y + a_{2} 
& \!\! \!\! \!\!
-a_3 q |D_y| - a_3 q_y  \mH + \mR_2
\vspace{4pt} \\
- \kappa q^{- 1}a_4  \pa_{yy} - \kappa q^{- 1} a_5 \pa_y + q^{- 1} a_6 \quad
& \!\! \!\! \!\!
 a_{1} \pa_y + q^{- 1}( \Dom q) + q^{- 1} a_1 q_y 
\end{pmatrix}
\end{aligned}
\end{equation}
where, by  Lemma \ref{lem: commutator aH} and \eqref{defR1}, the remainder
\begin{equation}\label{def mR3}
\mR_2 := {\mathcal R}_1 q - a_3 [{\mathcal H}, q] \partial_y - a_3 [\mH, q_y] \in OPS^{-\infty} \, . 
\end{equation}
We choose the function $ q $ so that 
the coefficients of the off diagonal highest order terms satisfy 
\begin{equation}\label{scelta q cambio di variabile}
a_3 q =  q^{- 1} a_4\,,\quad i.e.   \quad q := \sqrt{ a_4 / a_3 } 
\end{equation}
(note that $a_3$, $a_4$ are close to $1$). 
Thus by \eqref{scelta q cambio di variabile}, \eqref{proportional}, \eqref{beta lambda3}, \eqref{def c} we get  
\begin{equation} \label{formula m3}
a_3 q = q^{- 1} a_4 =m_3(\vphi)\,,\quad m_3(\vphi ) := \sqrt{m(\vphi)}  = 
\Big( \frac{1}{2 \pi}  \int_{\T} \sqrt{ 1 + \eta_x^2 } \, dx \Big)^{-3/2} \, , 
\end{equation}
and, by \eqref{mL3}, 
\begin{equation}\label{cal L3}
{\mathcal L}_2 = \Dom  {\mathbb I}_2 +  
\begin{pmatrix}
a_1 \partial_y + a_2 & -m_3(\vphi) |D_y| + a_7 {\mathcal H} + {\mathcal R}_2 \\
m_3(\vphi)(1 - \kappa \partial_{yy}) + a_8 \partial_y + b_9 & a_1 \partial_y + b_{10}
\end{pmatrix} 
\end{equation}
where 
\begin{equation}\label{a7 a8 a9 a10}
\begin{aligned}
& a_7 := - a_3 q_y\,,\quad a_8 := - \kappa q^{- 1} a_5\,, \\ 
&  b_9 := q^{- 1} a_6 - m_3(\vphi)\,,
\quad b_{10} := q^{- 1}\big( \Dom q + a_1 q_y   \big)\,.
\end{aligned}
\end{equation}

\begin{remark}\label{parities q a7 a8 a9 a10}
Since $a_4, a_3 $ is $ \even(\vphi), \even(x) $, the function $q $ is $ \even(\vphi), \even(x) $,
 hence the operator $ {\mathcal Q} $ is even and reversibility preserving. Moreover 
$ a_7, a_8= {\rm even}(\vphi) {\rm odd}(x) $, $ b_9 \in \even(\vphi), \even(x) $, $  b_{10} = {\rm odd}(\vphi) {\rm even}(x) $. 
\end{remark}

\begin{lemma} \label{lemma:BAPQ}
The operators ${\mathcal B}^{\pm 1}$ are ${\mathcal D}^{k_0}$-$(k_0 + 1)$-tame, ${\mathcal Q}^{\pm 1}$ are ${\mathcal D}^{k_0}$-tame with tame constants satisfying 
\begin{equation}  \label{est BAPQ}
{\mathfrak M}_{{\mathcal B}}(s)\,,\, {\mathfrak M}_{ {\mathcal Q}}(s) \leq_S 1 + \| \fracchi_0 \|_{s + \sigma}^{k_0, \gamma} \, , 
\quad \forall s_0 \leq s \leq S \,.
\end{equation}
The operators ${\mathcal B}^{\pm 1} - {\rm Id}$, $({\mathcal B}^{\pm 1} - {\rm Id})^*$ is ${\mathcal D}^{k_0}$-$(k_0 + 2)$-tame and ${\mathcal Q}^{\pm 1} - {\rm Id}$, $({\mathcal Q}^{\pm 1} - {\rm Id})^*$ are ${\mathcal D}^{k_0}$-tame and, for all $ s_0 \leq s \leq S $, 
\begin{equation}\label{est BAPQ (1)}
\begin{aligned}
& {\mathfrak M}_{{\mathcal B}^{\pm 1} - {\rm Id}}(s)\,,\,{\mathfrak M}_{({\mathcal B}^{\pm 1} - {\rm Id})^*}(s)\,,\, {\mathfrak M}_{ {\mathcal Q}^{\pm 1} - {\rm Id}}(s)\,,\,{\mathfrak M}_{ ({\mathcal Q}^{\pm 1} - {\rm Id})^*}(s)  \\
& \leq_S \e (1 + \| \fracchi_0 \|_{s + \sigma}^{k_0, \gamma})\,.
\end{aligned}
\end{equation}
The functions $ m_3 $ satisfies
\begin{equation}\label{stima m3 - 1}
\| m_3 - 1 \|_s^{k_0, \gamma}  \leq_s \e \big(1  + \| \fracchi_0 \|_{s + \s}^{k_0, \gamma} \big)\,, \quad \|\partial_i m_3 [\widehat \imath] \|_{s_1} \leq_{s_1} \e \| \widehat \imath\|_{s_1 + \sigma}
\end{equation}
and the functions $ a_i $ satisfy
\begin{equation}\label{stima m3(vphi)}
\begin{aligned}
& \max\{ \| a_{1} \|_s^{k_0, \gamma},  \| a_{2} \|_s^{k_0, \gamma}, \| a_{7} \|_s^{k_0, \gamma}, \| a_{8} \|_s^{k_0, \gamma},
\| b_9 \|_s^{k_0, \gamma}, \| b_{10} \|_s^{k_0, \gamma}  \}  \\
& \leq_S \e \big(1  + \| \fracchi_0 \|_{s + \s}^{k_0, \gamma} \big).
\end{aligned}
\end{equation}
The remainder ${\mathcal R}_2 $  in \eqref{def mR3} is in $ OPS^{-\infty } $
and, for some $\sigma := \sigma(\tau, \nu) > 0 $, for all $ m \geq 0 $, $ s \geq 0 $, $ \a \in \N $,  
\begin{equation}\label{stima cal R3}
\norma {\mathcal R}_2  \norma_{-m, s, \alpha}^{k_0, \gamma} 
\leq_{m, S, \a} \e (1 + \| \fracchi_0 \|_{s + \sigma + m + \alpha}^{k_0, \gamma})\, .
\end{equation}
Moreover 
\begin{equation}\label{stime derivate i cal B e Q}
\| (\partial_iA [\hat \imath] ) h \|_{s_1} \leq_{S}  \e \| \hat \imath \|_{s_1 + \sigma} \| h \|_{s_1 + \sigma}\,, \quad A \in \{ \mB^{\pm 1},  {\mathcal Q}^{\pm 1}, (\mB^{\pm 1})^*,  ({\mathcal Q}^{\pm 1})^*\}\,,
\end{equation} 
\begin{equation}\label{derivate i coefficienti secondo step}
\begin{aligned}
&   \| \partial_i  a_{1} [\hat \imath] \|_{s_1},  \| \partial_i  a_{2} [\hat \imath] \|_{s_1}, \| \partial_i  a_{7}[\hat \imath]  \|_{s_1}, \| \partial_i  a_{8} [\hat \imath] \|_{s_1},
\| \partial_i  b_9 [\hat \imath] \|_{s_1}, \| \partial_i  b_{10} [\hat \imath] \|_{s_1}    \\
& \leq_{S} \e \| \hat \imath \|_{s_1 + \sigma}
\end{aligned}
\end{equation}
and for all $m \geq 0$, $\alpha \in \N$
\begin{equation}\label{derivate in i cal R2}
\norma \partial_i {\mathcal R}_2 [\hat \imath]  \norma_{-m, s_1, \alpha} \leq_{m, S, \alpha} \e \| \hat \imath \|_{s_1 + \sigma + m + \alpha}\,.
\end{equation}
\end{lemma}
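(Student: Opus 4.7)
\smallskip

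\textbf{Proof plan for Lemma \ref{lemma:BAPQ}.}

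The first step is to control the building blocks $m$, $\beta$, $q$ and $m_3$. By \eqref{beta lambda3} and \eqref{formula m3} all four functions are obtained from the single function $c(\varphi,x) = (1+\eta_x^2)^{-3/2}$ via the Moser composition with the smooth nonlinearity $t \mapsto (1+t)^{-1/3}$, the $L^\infty$--small integration in $x$, and one application of $\partial_x^{-1}$ on zero-average functions. Thus, starting from the estimate $\|c-1\|_s^{k_0,\gamma} \leq_s \varepsilon^2(1+\|\fracchi_0\|_{s+\sigma}^{k_0,\gamma})$ in \eqref{stima V B a c}, Lemma \ref{Moser norme pesate}, the interpolation inequality \eqref{interpolazione C k0} and the trivial bound $\|\partial_x^{-1}f\|_s \leq \|f\|_s$ yield
\[
\|\beta\|_s^{k_0,\gamma} \leq_s \varepsilon^2(1+\|\fracchi_0\|_{s+\sigma}^{k_0,\gamma}) \, , \qquad
\|m_3-1\|_s^{k_0,\gamma}, \|q-1\|_s^{k_0,\gamma} \leq_s \varepsilon(1+\|\fracchi_0\|_{s+\sigma}^{k_0,\gamma}) \, .
\]
The sharper bound for $\beta$ (quadratic in $\varepsilon$) follows because $c-1 = O(\varepsilon^2)$; we will use it to gain smallness in the change of variable. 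This also proves the first part of \eqref{stima m3 - 1}; the second part, and in fact all the estimates of the type \eqref{derivate in i cal Z}, \eqref{stime derivate i cal B e Q}, \eqref{derivate i coefficienti secondo step}, \eqref{derivate in i cal R2}, follow by differentiating the explicit formulas and invoking \eqref{stima derivate i primo step}, \eqref{derivata i T delta}, once the $\|\cdot\|_s^{k_0,\gamma}$ estimates are in place.

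Next I would pass from the scalar functions to the operators $\mathcal{B}^{\pm 1}$ and $\mathcal{Q}^{\pm 1}$. The estimates \eqref{est BAPQ} and \eqref{est BAPQ (1)} for $\mathcal{B}^{\pm 1}$ follow directly from Lemma \ref{lemma:utile}: the smallness condition \eqref{mille condizioni p} on $p=\beta$ is fulfilled thanks to the ansatz \eqref{ansatz I delta} and the above bounds on $\beta$, so \eqref{tame-lipschitz-cambio-di-variabile} gives the tame constants. For $\mathcal{B}^{\pm 1}-\mathrm{Id}$ I write $(\mathcal{B}h)(x)-h(x) = \beta(\varphi,x)\int_0^1 h_x(x+\tau\beta(\varphi,x))\,d\tau$; interpolation plus Lemma \ref{lemma:utile} applied to the composition operator $h\mapsto h_x(\cdot+\tau\beta(\cdot))$ then yield the gain $\varepsilon$ in \eqref{est BAPQ (1)}, and the analogous computation on $\mathcal{B}^{-1}$ (with $\tilde\beta$ in place of $\beta$) and on the adjoints (see \eqref{Kernel trasformato} and Lemma \ref{lemma cio}) completes the bound. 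For $\mathcal{Q}^{\pm 1}$ (and its adjoint, which equals $\mathcal{Q}^{\pm 1}$ itself) the estimates reduce to the tame product bound \eqref{interpolazione C k0} applied to multiplication by $q^{\pm 1}-1$, together with the ansatz.

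The coefficients $a_1,\dots,a_6$ from \eqref{a1 a2 a3}--\eqref{a4 a5 a6} and then $a_7,a_8,b_9,b_{10}$ from \eqref{a7 a8 a9 a10} are obtained by composing the estimates for $\mathcal{B}^{-1}$ with the bounds \eqref{stima V B a c} on $V,B,a,c$, \eqref{stima m3 - 1} on $m_3$ and the analogous ones on $q$ and $\beta$, always through the tame product \eqref{interpolazione C k0}. The cancellation $q^{-1}a_6-m_3$ in the definition of $b_9$, and the fact that $\partial_t B, \partial_t V = O(\varepsilon)$ in weighted Sobolev norms, produce the announced size $O(\varepsilon)$ in \eqref{stima m3(vphi)}.

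The main obstacle will be the pseudo-differential estimate \eqref{stima cal R3} for $\mathcal{R}_2$. By \eqref{def mR3} and \eqref{defR1} we have
\[
\mathcal{R}_2 = -\{\mathcal{B}^{-1}(1+\beta_x)\}\partial_y(\mathcal{B}^{-1}\mathcal{H}\mathcal{B}-\mathcal{H})q - \mathcal{B}^{-1}\mathcal{R}_G\mathcal{B}\, q - a_3[\mathcal{H},q]\partial_y - a_3[\mathcal{H},q_y]\, .
\]
The last two terms are controlled by Lemma \ref{lem: commutator aH} applied to the functions $q$ and $q_y$, together with the bounds on $q$ already obtained and Lemma \ref{lemma composizione multiplier}. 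For the first term I invoke Lemma \ref{coniugio Hilbert}, which expresses $\mathcal{B}^{-1}\mathcal{H}\mathcal{B}-\mathcal{H}$ as an integral operator with a $\mathcal{C}^\infty$ kernel controlled by $\|\beta\|_{s+k_0+2}^{k_0,\gamma}$; Lemma \ref{lem:Int} then converts this into the pseudo-differential norm $\norma\cdot\norma_{-m,s,\alpha}^{k_0,\gamma}$ with the loss $s+m+\alpha$ in the Sobolev index. The term $\mathcal{B}^{-1}\mathcal{R}_G\mathcal{B}$ is handled similarly: by Proposition \ref{Prop DN}, $\mathcal{R}_G$ is integral with a tame kernel $K_G$, and by Lemma \ref{lemma cio} conjugation by the diffeomorphism $\mathcal{B}$ produces another integral operator whose kernel satisfies \eqref{tildeK}. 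Combining Lemma \ref{lem:Int} with Lemma \ref{lemma composizione multiplier} (to absorb the multiplications by $\mathcal{B}^{-1}(1+\beta_x)$ and $q$) and Lemma \ref{lemma stime Ck parametri} (for the composition with $\partial_y$), and using the ansatz \eqref{ansatz I delta} to reabsorb lower-order quantities, yields \eqref{stima cal R3}. Finally, the $i$-derivative bound \eqref{derivate in i cal R2} is obtained by differentiating each of the four summands above and applying the same chain of lemmas with $\|\cdot\|_{s_1}$ in place of $\|\cdot\|_s^{k_0,\gamma}$, using \eqref{stima derivate i primo step} and \eqref{derivata i T delta} to control the derivatives of the coefficients and of $\beta$, $q$, $m_3$.
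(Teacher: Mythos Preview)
Your plan is correct and matches the paper's proof closely: same integral formula for $\mathcal{B}-\mathrm{Id}$, same decomposition of $\mathcal{R}_2$ into the four pieces coming from \eqref{defR1}--\eqref{def mR3}, and the same chain of auxiliary results (Lemmata \ref{lemma:utile}, \ref{lem:Int}, \ref{lemma cio}, \ref{lem: commutator aH}, \ref{coniugio Hilbert}, Proposition \ref{Prop DN}, together with \eqref{interpolazione C k0} and \eqref{stima V B a c}). One small correction: for $(\mathcal{B}-\mathrm{Id})^*$ the paper does not go through Lemma \ref{lemma cio} but rather uses the explicit change-of-variables formula $\mathcal{B}^* h(\varphi,y) = (1+\tilde\beta_y(\varphi,y))\,h(\varphi, y+\tilde\beta(\varphi,y))$, which exhibits $\mathcal{B}^*$ as a composition operator times a near-$1$ multiplier and hence yields the tame bound directly from Lemma \ref{lemma:utile}.
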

\begin{proof}
The estimates \eqref{est BAPQ}, \eqref{stima m3(vphi)} follows by \eqref{formula m3}, \eqref{a1 a2 a3}, \eqref{a4 a5 a6}, \eqref{a7 a8 a9 a10}, using \eqref{interpolazione C k0} and Lemmata \ref{lemma:remainder mR0}, \ref{Moser norme pesate}, \ref{lemma:utile}, \ref{lemma: action Sobolev}.
The estimate \eqref{stima cal R3} follows by 
Lemmas \ref{lemma:utile}, \ref{lemma cio}, \ref{lem: commutator aH}, \ref{coniugio Hilbert}, 
Proposition \ref{Prop DN}, 
 \eqref{tame Tdelta},  and 
\eqref{interpolazione C k0}. The estimate \eqref{est BAPQ (1)} for ${\mathcal Q} = {\mathcal Q}^*$ follows since the function $q(\vphi, x)$ is close to $1$, and it satisfies 
$
\| q - 1 \|_s^{k_0, \gamma} \leq_s \e (1 + \| \fracchi_0\|_{s + \sigma}^{k_0, \gamma})\,,
$
for some $\sigma := \sigma(k_0, \tau, \nu) > 0$. The estimate for  ${\mathcal B} - {\rm Id}$ follows by 
$$
({\mathcal B} - {\rm Id}) h = \beta\, {\mathcal B}_\tau [h_x]\,, \qquad {\mathcal B}_\tau [h](\vphi, x) : = \int_0^1 h_x(\vphi, x + \tau \beta(\vphi, x))
\,d \tau 
$$
and the estimate for the adjoint $({\mathcal B} - {\rm Id})^*$ follows by the representation 
\begin{equation}\label{cambio di variabile aggiunto}
{\mathcal B}^* h(\vphi, y) = (1 + \tilde \beta(\vphi, y)) h(\vphi, y + \tilde \beta(\vphi, y)) 
\end{equation}
where $y \mapsto y + \tilde \beta(\vphi, y)$ is the inverse diffeomorphism of $x \mapsto x + \beta(\vphi, x)$. The expressions of ${\mathcal B}^{- 1} - {\rm Id}$ and $({\mathcal B}^{- 1})^*$ are similar.

Let us prove the estimate \eqref{stime derivate i cal B e Q} for ${\mathcal B}$ and ${\mathcal B}^{- 1}$. The other estimates follow analogously. By \eqref{beta lambda3} and using the estimates \eqref{stima V B a c}, \eqref{stima derivate i primo step} on $c$ we get 
\begin{equation}\label{stima derivate i beta}
\| \partial_i \beta [\hat \imath] \|_{s_1} \leq_{s_1} \e \| \hat \imath\|_{s_1 + \sigma}
\end{equation}
then the estimate \eqref{stime derivate i cal B e Q} for ${\mathcal B}$ follows since 
$ (\partial_i {\mathcal B} [\hat \imath]) h = \partial_i \beta[\hat \imath] {\mathcal B}[h_x] $. 
Since $y = x + \beta(x)$ if and only if $x = y + \tilde \beta(y)$, differentiating with respect to $i$ we get 
$\partial_i \tilde \beta[\hat \imath] = (1 + \beta_x)^{- 1} {\mathcal B}^{- 1} [\partial_i \beta[\hat \imath]]$, hence $\partial_i \tilde \beta$ satisfies \eqref{stima derivate i beta} (for a possibly larger $\sigma := \sigma(\tau, \nu) > 0$), and hence ${\mathcal B}^{- 1}$ satisfies \eqref{stime derivate i cal B e Q}. The estimates \eqref{derivate i coefficienti secondo step} follows by differentiating the explicit expressions of the coefficients and applying \eqref{interpolazione C k0}, the estimates of Lemma \ref{lemma:remainder mR0}, 
\eqref{stime derivate i cal B e Q} for ${\mathcal B}^{\pm 1}$ and Lemma \ref{Moser norme pesate}. By \eqref{scelta q cambio di variabile}, $\partial_i q$ satisfies \eqref{derivate i coefficienti secondo step}, therefore $ {\mathcal Q}  $ and $ {\mathcal Q}^{- 1}$ satisfy \eqref{stime derivate i cal B e Q}. For proving  
\eqref{derivate in i cal R2} for $\partial_i {\mathcal R}_2 [\hat \imath] $ we show that the derivative $ \pa_i $ 
of each term  in \eqref{def mR3} 
 satisfies the estimate \eqref{derivate in i cal R2}. 
For instance the term $\partial_i [{\mathcal H}, q][\widehat \imath] = [{\mathcal H}, \partial_i q[\widehat \imath]]$ can be estimated by applying Lemma \ref{lem: commutator aH} and using that 
$\partial_i q [\widehat \imath]$ (the function $ q $ is defined in  \eqref{scelta q cambio di variabile}) 
satisfies the same bound  \eqref{derivate i coefficienti secondo step}. 
For estimating $\partial_i{\mathcal R}_1 [\widehat \imath]$ we estimate separately the derivatives of the two terms ${\mathcal B}^{- 1}{\mathcal R}_G {\mathcal B}$ and ${\mathcal R}_{\mathcal B}$ in \eqref{defR1}.  
The operator $\partial_i ({\mathcal B}^{- 1}{\mathcal R}_G {\mathcal B})[\widehat \imath]$ satisfies the estimate \eqref{derivate in i cal R2} by \eqref{Geta intermedia}-\eqref{palla di lardo 0} 
by Lemmata  \ref{lem:Int}, \ref{lemma cio}, \ref{coniugio Hilbert}, Proposition \ref{Prop DN} and \eqref{est BAPQ}, \eqref{est BAPQ (1)}, \eqref{stime derivate i cal B e Q}, \eqref{derivata i T delta}. 
The estimate of the operator $\partial_i {\mathcal R}_{\mathcal B}[\widehat \imath]$ in \eqref{Resto:RB}, 
follows similarly.
\end{proof}

\noindent
{\bf Step 3}.  We ``symmetrize" the order of derivatives in the off-diagonal
terms of the operator $ \mL_2 $  in \eqref{cal L3}. 
We  conjugate $  \mL_2 $ by the vector valued Fourier multiplier
\be\label{definition Lambda} 
\mS = \begin{pmatrix} 1 & 0 \\ 
0 & G \end{pmatrix}, 
\quad
\mS^{-1} = \begin{pmatrix} 1 & 0 \\ 
0 & G^{-1} \end{pmatrix}, \quad G := {\rm Op}(g(\xi)) \in OPS^{1/2}
\ee
where $ g $ is a $ {\mathcal C}^\infty $ even function satisfying 
\begin{equation}\label{proprieta g(xi) simmetrizzazione}
g(0) = 1\,, \quad g > 0\,, \quad g(\xi) = |\xi|^{- \frac12} (1 + \kappa \xi^2)^{\frac12}\,, \quad \forall |\xi | \geq 1/ 3\,.
\end{equation}
Note that $ {\mathcal S} $ is a real and even operator, see Lemma \ref{even:pseudo}. 
Recalling the definition of the cut off function $\chi$ in \eqref{cut off simboli 1}, the symbols $g \in S^{\frac12}$ and $1/g \in S^{- \frac12}$ admit the expansions 
\be\label{espansione simbolo g(xi) 1}
\begin{aligned}
g(\xi) & = \chi(\xi) g(\xi) + (1 - \chi(\xi)) g(\xi)  \\  
& =  \chi(\xi) \frac{(1 + \kappa \xi^2)^{\frac12}}{ |\xi|^{ 1/2}} + (1 - \chi(\xi)) g(\xi)  = \sqrt{\kappa} \chi(\xi) |\xi|^{\frac12} + g_{- \frac32}(\xi)
\end{aligned}
\ee
where $ g_{- \frac32} \in S^{- \frac32}  $  and 
\be
\begin{aligned}
\frac{1}{g(\xi)} & = 
\frac{\chi(\xi)}{g(\xi)} + \frac{1 - \chi(\xi)}{g(\xi)} \\ 
& = 
\chi(\xi ) \frac{|\xi|^{\frac12}}{ (1 + \kappa \xi^2)^{ \frac12}} +  \frac{1 - \chi(\xi)}{g(\xi)} 
=  \frac{\chi(\xi)}{\sqrt{\kappa} |\xi|^{\frac12}} + g_{- \frac52}(\xi)\,, \  \ g_{- \frac52} \in S^{- \frac52}\,.\label{espansione simbolo g(xi) inverso 2}
\end{aligned}
\ee
Since  $\frac{1 - \chi(\xi)}{g(\xi)} = 0$, for $|\xi| \geq 1$, and $\frac{1 - \chi(0)}{g(0)} = 1 $, 
the operator   $ {\rm Op}\Big( \frac{1 - \chi(\xi)}{g(\xi)} \Big) = \pi_0  $ on the periodic functions,  where  $ \pi_0 $ is the projector  
\be\label{def:pi0}
\pi_0 (f) := \frac{1}{2 \pi} \int_{\T} f(x) \, dx \, .
\ee
By \eqref{espansione simbolo g(xi) 1}-\eqref{espansione simbolo g(xi) inverso 2} we get 
the expansions
\be\label{basic decomp}
 G = \sqrt{\kappa} |D|^{\frac12}  + G_{-3/2}   \, , 
\ \   G^{-1} =  |D|^{\frac12} ( 1 - \kappa \pa_{xx})^{- \frac12} + \pi_0 = 
\frac{1}{\sqrt{\kappa}} |D|^{- \frac12} +  G_{-5/2} \, , 
\ee
where $ G_{-3/2} = {\rm Op}(g_{- \frac32}) \in OPS^{-3/2} $ and $  G_{-5/2} = {\rm Op}(g_{- \frac52}) \in OPS^{-5/2} $. 
Using \eqref{definition Lambda}, \eqref{proprieta g(xi) simmetrizzazione}, \eqref{definizione |D| m}, \eqref{basic decomp} 
we get 
\begin{equation}\label{parte principale 2 simmetrizzazione}
|D| G = {\rm Op}(\chi(\xi) |\xi| g(\xi)) = T(D) \, , \quad  
G^{- 1} (1 - \kappa \partial_{xx}) =  T(D) + \pi_0
\end{equation}
where $  T(D)$ is the Fourier multiplier
\begin{equation}\label{def T}
T  := T(D) := |D|^{1/2} (1 - \kappa \pa_{xx})^{1/2}  = {\rm Op} \big(  \chi(\xi) |\xi |^{\frac12} (1+ \kappa \xi^2)^{\frac12} \big)
\in OPS^{3/2} \,.
\end{equation}
Hence using \eqref{basic decomp}-\eqref{parte principale 2 simmetrizzazione} (and renaming $ \pa_y $ as $ \pa_x $)
we get 
\begin{align} \label{def mL4}
& \mL_3  \stackrel{\eqref{cal L3}} {:=} \mS^{-1} \mL_2 \mS \stackrel{ \eqref{mL3}, \eqref{Op L1}} = 
\mS^{-1} {\mathcal Q}^{-1} \mB^{-1} \mL_0  \mB {\mathcal Q} \mS  
=  \Dom  {\mathbb I}_2 + \\ 
& + \begin{pmatrix} 
 a_{1} \pa_x + a_{2}  &  \!\! \!\!
 -  m_3 T(D) + \sqrt{\kappa} \, a_{7} \mH |D|^{\frac12}  + {\mathcal R}_{3,B} \\
m_3   T(D) 
	- \frac{a_{8}}{\sqrt{\kappa}} \,  |D|^{\frac12} \mH + m_3 \pi_0
	+ {\mathcal R}_{3,C}   &  \!\! \!\! a_{1} \pa_x + {\mathcal R}_{3,D} 
\end{pmatrix} \nonumber
\end{align} 
where the remainders are the pseudo-differential operators in $ OPS^{0} $
\begin{align}\label{defR3CR3D}
{\mathcal R}_{3,B} & :=  a_7 \mH G_{-3/2} + {\mathcal R}_2 \Lambda \, , 
\quad {\mathcal R}_{3,D}   := [G^{-1}, a_1] \pa_x G + G^{-1} b_{10} G \, ,  
\\
{\mathcal R}_{3,C}  & := a_8 G_{-5/2} \pa_x +  [G^{-1}, a_8] \pa_x +  G^{-1} b_9 \, . \label{defC3} 
\end{align}

\begin{lemma} \label{lemma:S}
Each  $ {\mathcal R} = {\mathcal R}_{3,B}, {\mathcal R}_{3,C}, {\mathcal R}_{3,D}  $
is in $  OPS^0 $ and satisfy, for all $s_0 \leq s \leq S $, 
\begin{equation}\label{stima cal R 3 B C D}
\norma {\mathcal R} \norma_{0, s, \alpha}^{k_0, \gamma} \leq_{S, \alpha} 
\e (1 + \| \fracchi_0 \|_{s + \sigma + \alpha}^{k_0, \gamma})\,, \quad \norma \partial_i {\mathcal R}[\hat \imath] \norma_{0, s_1, \alpha} \leq_{S, \alpha} \e \| \hat \imath \|_{s_1 + \sigma + \alpha} 
\end{equation}
 for some $\sigma := \sigma(\tau, \nu) > 0$.  
The real operator $ {\mathcal L}_3 $ is even and reversible. 
\end{lemma}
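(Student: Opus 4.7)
The plan is to verify the claim by purely pseudo-differential calculus: each summand of $\mathcal{R}_{3,B}, \mathcal{R}_{3,C}, \mathcal{R}_{3,D}$ has non-positive order, and the quantitative bounds follow by applying the composition and commutator estimates of Lemmas \ref{lemma stime Ck parametri}, \ref{lemma tame norma commutatore}, \ref{lemma composizione multiplier} to the factors whose pseudo-differential norms are already controlled by Lemma \ref{lemma:BAPQ} (for the scalar multiplications $a_1, a_7, a_8, b_9, b_{10}$ and for $\mathcal{R}_2$) and by \eqref{Norm Fourier multiplier} (for the Fourier multipliers $G, G^{-1}, G_{-3/2}, G_{-5/2}$ coming from \eqref{basic decomp}). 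Evenness/reversibility of $\mathcal{L}_3$ will follow by tracking the algebraic structure through the conjugations.

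First I would check the orders: in $\mathcal{R}_{3,B} = a_7 \mathcal{H} G_{-3/2} + \mathcal{R}_2 G$ (reading the second summand with the multiplier on the right coming from \eqref{definition Lambda}, \eqref{basic decomp}), the first term is in $OPS^{-3/2}$ since $a_7\in OPS^0$, $\mathcal{H}\in OPS^0$, $G_{-3/2}\in OPS^{-3/2}$, and the second is in $OPS^{-\infty}$ by Lemma \ref{lemma:BAPQ}; in $\mathcal{R}_{3,C} = a_8 G_{-5/2}\partial_x + [G^{-1},a_8]\partial_x + G^{-1} b_9$, the first summand is of order $-3/2$, the commutator is of order $-3/2$ (orders $-1/2$ and $0$) hence the second is of order $-1/2$, and the third is of order $-1/2$; and in $\mathcal{R}_{3,D} = [G^{-1},a_1]\partial_x G + G^{-1} b_{10} G$, the commutator is of order $-3/2$ so the first summand is of order $0$, and the second is of order $0$. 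In particular all three remainders are in $OPS^0$.

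For the quantitative bounds I would estimate each summand separately. The $\norma\ \norma_{0,s,\alpha}^{k_0,\gamma}$-norm of, say, $a_7\mathcal{H}G_{-3/2}$ is controlled by applying \eqref{estimate composition parameters} (twice), together with \eqref{norma a moltiplicazione} to translate $\norma a_7\norma$ into $\|a_7\|_s^{k_0,\gamma}$, \eqref{norma commutator} to estimate $\mathcal{H}$, and \eqref{Norm Fourier multiplier} for $G_{-3/2}$; the bound on $\|a_7\|_{s+\sigma}^{k_0,\gamma}$ in \eqref{stima m3(vphi)} of Lemma \ref{lemma:BAPQ} then gives exactly the first inequality in \eqref{stima cal R 3 B C D} for this summand. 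The summand $\mathcal{R}_2 G$ is handled by \eqref{estimate composition parameters} together with \eqref{stima cal R3} and \eqref{crescente-m-neg}. The commutator-type summands in $\mathcal{R}_{3,C}$ and $\mathcal{R}_{3,D}$ are estimated by \eqref{stima commutator parte astratta} applied to $G^{-1}\in OPS^{-1/2}$ and the multiplication operators $a_1, a_8$ (whose norms are bounded by Lemma \ref{lemma:BAPQ}), followed by the composition estimate with $\partial_x$ or $\partial_x G$; the summands $G^{-1} b_9$ and $G^{-1} b_{10} G$ are direct applications of Lemma \ref{lemma composizione multiplier}. For the derivative estimate  $\norma \partial_i \mathcal{R}[\hat\imath]\norma_{0,s_1,\alpha}$, the same calculation goes through verbatim with $a_j$ replaced by $\partial_i a_j[\hat\imath]$ and $\mathcal{R}_2$ by $\partial_i\mathcal{R}_2[\hat\imath]$, using now \eqref{derivate i coefficienti secondo step}, \eqref{stima m3 - 1}, \eqref{derivate in i cal R2}.

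Finally, for the algebraic part, I would argue that $\mathcal{L}$ in \eqref{linearized vero} is even and reversible thanks to the parities listed after \eqref{def c} (coming from \eqref{parity solution} and the structure of $T_\delta$); the transformations $\mathcal{Z}$ (Lemma \ref{lemma:remainder mR0}), $\mathcal{B}$ (Remark \ref{parities a1 - a6}) and $\mathcal{Q}$ (Remark \ref{parities q a7 a8 a9 a10}) are even and reversibility preserving; and $\mathcal{S}$ is even and reversibility preserving because its only non-trivial entry is the Fourier multiplier $G$ whose symbol $g(\xi)$ is real and even, hence $\mathcal{S}$ commutes with both the involution $\varrho$ and the parity operator by Lemma \ref{even:pseudo}. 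Since evenness and reversibility are stable under conjugation by even and reversibility preserving maps, $\mathcal{L}_3 = \mathcal{S}^{-1}\mathcal{Q}^{-1}\mathcal{B}^{-1}\mathcal{Z}^{-1}\mathcal{L}\,\mathcal{Z}\mathcal{B}\mathcal{Q}\mathcal{S}$ is even and reversible. I do not expect any serious obstacle here; the only mild bookkeeping issue is to keep track of the precise $\alpha$-dependent loss $\sigma+\alpha$ in the Sobolev indices when applying \eqref{estimate composition parameters} and \eqref{stima commutator parte astratta} to $G^{\pm 1}, G_{-3/2}, G_{-5/2}$, but this is automatic from \eqref{Norm Fourier multiplier} and the monotonicity \eqref{norm-increa}.
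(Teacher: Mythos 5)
Your proposal is correct and follows essentially the same route as the paper, whose proof is simply the instruction to estimate the commutators in \eqref{defR3CR3D}-\eqref{defC3} via Lemma \ref{lemma tame norma commutatore}, with the remaining compositions handled by Lemmas \ref{lemma stime Ck parametri}, \ref{lemma composizione multiplier} and the bounds of Lemma \ref{lemma:BAPQ}, exactly as you do. Your order count, your reading of ${\cal R}_2\Lambda$ as ${\cal R}_2 G$, and your parity argument for the evenness and reversibility of ${\cal L}_3$ all match the intended argument.
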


\begin{proof}
Use Lemma \ref{lemma tame norma commutatore} to estimate the commutators in \eqref{defR3CR3D}-\eqref{defC3}.
\end{proof}

\section{Complex variables}\label{complex-coordinates}

We now write the real operator ${\mathcal L}_3 $ in \eqref{def mL4}, which acts on the real variables
$ (\eta, \psi) \in \R^2 $, as an operator acting on the complex variables (see \eqref{complex-coor})
$$
h := \eta + \ii \psi \, , \quad   \bar h := \eta - \ii \psi \, , \qquad i.e. \  	\
\eta = (h + \bar h)/ 2 \, , \quad \psi = (h - \bar h) \slash (2 \ii ) \, .
$$
By  \eqref{operatori in coordinate complesse} 
we get the real, even and reversible operator (for simplicity of notation we still denote it by $ {\mathcal L}_3 $)
\begin{equation}\label{cal L3 coordinate complesse}
\begin{aligned}
{\mathcal L}_3 & = \Dom {\mathbb I}_2 + \ii m_3(\vphi) {\bf T}(D) + 
 {\bf A}_1(\vphi, x) \partial_x + \ii ({\bf A}_0^{(I)}(\vphi, x)  \\
 & \quad + {\bf A}_0^{(II)}(\vphi, x)) \mH |D|^{\frac12} + \ii m_3 (\vphi) \Pi_0 + {\bf R}_{3}^{(I)} + {\bf R}_3^{(II)} 
 \end{aligned}
\end{equation}
where 
\begin{equation}\label{definizione T3 A1}
{\bf T} := {\bf T}(D) := \begin{pmatrix}
T(D) & 0 \\
0 & - T(D)
\end{pmatrix}\,, \quad {\bf A}_1(\vphi, x) := \begin{pmatrix}
a_1(\vphi, x) & 0 \\
0 & a_1(\vphi, x)
\end{pmatrix}\,,
\end{equation}
\begin{equation}\label{defA0}
{\bf A}_0^{(I)} (\vphi, x) := \begin{pmatrix}
a_9 & 0 \\
0 & - a_9
\end{pmatrix}\,, 
\quad  a_9 := - \frac12 \Big( \sqrt{\kappa} a_7 + \frac{a_8}{\sqrt{\kappa}} \Big)\,,
\end{equation}
\be\label{defB0}
{\bf A}_0^{(II)}(\vphi, x) := \begin{pmatrix}
0 & a_{10} \\
- a_{10} & 0
\end{pmatrix} \, , 
 \quad a_{10} := \frac{1}{2} \Big( \sqrt{\kappa} a_7 - \frac{a_8}{ \sqrt{\kappa}} \Big)\,,  
\ee
\begin{equation}\label{definizione Pi 0}
 \Pi_0 := \frac12  \begin{pmatrix}
1 & 1 \\
- 1 & - 1
\end{pmatrix}
\pi_0\,, \quad 
\end{equation}
$$
\begin{aligned}
& {\bf R}_{3}^{(I)} := \begin{pmatrix}
r_3^{(I)}(x, D) & 0 \\
0 & \overline{r_3^{(I)} (x,D)}
\end{pmatrix} \in OPS^0 \,, \\
& r_3^{(I)}(x, D) := \frac12 \big( a_2 + {\mathcal R}_{3, D} - \ii {\mathcal R}_{3, B} + \ii {\mathcal R}_{3, C}  \big)\,, 
\end{aligned}
$$
$$
\begin{aligned}
& {\bf R}_3^{(II)} := \begin{pmatrix}
0 & r_3^{(II)}(x, D)  \\
\overline {r_3^{(II)} (x, D)} & 0
\end{pmatrix} \in OPS^0 \,,  \\
&  r_3^{(II)}(x, D) := \frac12 \big( a_2 - {\mathcal R}_{3, D} + \ii {\mathcal R}_{3, B} + \ii {\mathcal R}_{3, C}  \big)\,.
\end{aligned}
$$
 Lemma \ref{lemma:BAPQ} and  \eqref{stima cal R 3 B C D} imply for all $s_0 \leq s \leq S$, the estimates  
\begin{align}\label{stima r 3 (I) (II)}
& \norma r_3^{(I)}(x, D) \norma_{0, s, \alpha}^{k_0, \gamma}, \
\norma r_3^{(II)}(x, D) \norma_{0, s, \alpha}^{k_0, \gamma} 
\leq_{S, \alpha} \e \big( 1 + \| \fracchi_0 \|_{s + \alpha + \sigma }^{k_0, \gamma} \big) \, , \\
& \label{stima derivate i r 3 (I) (II)}
\norma \partial_i r_3^{(I)}(x, D) [\hat \imath] \norma_{0, s_1, \alpha}, \
\norma \partial_i r_3^{(II)}(x, D)[\hat \imath] \norma_{0, s_1, \alpha} \leq_{S, \alpha} \e \| \hat \imath \|_{s_1 + \alpha + \sigma}\,.
\end{align}
Note that $ {\mathcal L}_3 $ in \eqref{cal L3 coordinate complesse} is block-diagonal (in ($u, \bar u$)) up to order $ |D|^{1/2} $. 
The introduction of the complex formulation is convenient in section \ref{sec:decoupling} where  
we eliminate iteratively   the  off-diagonal terms of $ {\mathcal L}_3 $ up to very smoothing remainders,
see Proposition \ref{Lemma finale decoupling}.

In the next sections we reduce the real, even and reversible operator $ {\mathcal L}_3 $ neglecting   the term 
$ \ii m_3 (\vphi) \Pi_0$ in \eqref{definizione Pi 0}.
For simplicity of notation we denote it as  $ {\mathcal L}_3 $ as well. The projector $ m_3 (\vphi) \ii \Pi_0 $
transforms under conjugation  into a finite dimensional operator 
and we will conjugate it only once in section \ref{coniugio cal L omega}.

\section{Time-reduction of the highest order}\label{sec: time-reduction highest order}

The purpose of this section is to remove the dependence on $ \vphi $ from the highest order term 
$ \ii m_3(\vphi){\bf T}(D)$ in the operator $ \mL_3 $ defined in \eqref{cal L3 coordinate complesse} (without $\Pi_0$). 
Actually, since 
we only assume 
that 
the frequency $  \om $ belongs to   ${\mathtt D \mathtt C}_{K_n}^{\g}$ defined in \eqref{omega diofanteo troncato},  
we shall only transform $ \ii \Pi_{K_n} m_3(\vphi){\bf T}(D)$ (where $K_n $ is defined in \eqref{definizione Kn})
into a constant coefficient operator, and we keep  the term \eqref{cal RN bot (3)} which is 
 Fourier supported on the high harmonics, 
and thus contributes to \eqref{stima R omega bot corsivo bassa}-\eqref{stima R omega bot corsivo alta}. 

To this aim we perform a quasi periodic reparametrization of time
\be\label{QP-repa}
\vartheta := \vphi + \omega p(\vphi) \quad  \Leftrightarrow \quad 
\vphi = \vartheta + \omega \tilde p(\vartheta)
\ee
where $ p(\vphi) $ is a small periodic function to be determined. 
We conjugate  $\mL_3 $ by  the real operator
$$
\begin{aligned}
& {\mathcal P} \, \mathbb I_2 = \begin{pmatrix}
{\mathcal P} & 0 \\
0 & {\mathcal P}
\end{pmatrix} 
\quad {\rm where} \\
& ({\mathcal P}h)(\vphi, x) := h(\vphi + \omega p(\vphi), x) \, , \quad  
({\mathcal P}^{- 1} h)(\vartheta, x) := h(\vartheta + \omega \tilde p(\vartheta), x ) \, . 
\end{aligned}
$$
The differential operator $ \Dom $ and  
the multiplication operator by $ a $ transform into
\be\label{transformed Pcal}
\begin{aligned}
&  {\mathcal P}^{- 1} \Dom {\mathcal P} = \rho(\vartheta)  \Dom\,,  \quad  
 \rho (\vartheta) := \big({\mathcal P}^{- 1}[1 + \omega \cdot \partial_\vphi p] \big) \, , \\
& \qquad  \qquad \qquad   {\mathcal P}^{- 1} a {\mathcal P} = ({\mathcal P}^{- 1} a) \, ,
\end{aligned}
\ee
while a space Fourier multiplier $\phi(D)$ remains clearly unchanged $ {\mathcal P}^{- 1} \phi(D) {\mathcal P} = \phi(D) $. 
Thus 
\begin{align*}
{({\mathcal P}^{- 1} \, \mathbb I_2}) \mL_3 \, {({\mathcal P} \, \mathbb I_2)} & = ({\mathcal P}^{- 1}[1 + \omega \cdot \partial_\vphi p ])
\Dom {\mathbb I}_2 +  
( {\mathcal P}^{- 1} m_3) \ii  {\bf T}(D) + ({\mathcal P}^{- 1}{\mathbb I}_2 {\bf A}_1 ) \partial_x \\
&  + \ii  ({\mathcal P}^{- 1} {\mathbb I}_2)({\bf A}_0^{(I)} +{\bf A}_0^{(II)}) \mH |D|^{\frac12}  
+   ({\mathcal P}^{- 1} {\mathbb I}_2) \big(  {\bf R}_3^{(I)} + {\bf R}_3^{(II)}   \big)({\mathcal P} {\mathbb I}_2) \, .  \nonumber 
\end{align*}
Splitting  
$ m_3(\vphi) = \Pi_{K_n}m_3(\vphi) + \Pi_{K_n}^\bot m_3(\vphi) $ 
we  solve, for all $ \om \in {\mathtt D \mathtt C}_{K_n}^{\g} $ (see \eqref{omega diofanteo troncato}), the equation 
\be\label{solution p m3}
1 + \omega \cdot \partial_\vphi p = \mathtt m_3^{-1} \Pi_{K_n} m_3(\vphi)\,,
\ee
by defining (the function $ m_3 (\vphi) $ is even)
\be\label{lambda3 formula} 
\begin{aligned}
 \mathtt m_3 & := (2 \pi)^{-\nu} \int_{\T^\nu} \Pi_{K_n} m_3(\vphi)\,d \vphi \\
 & \stackrel{\eqref{formula m3}} = 
 (2\p)^{-\nu} \int_{\T^\nu} \Big( \frac{1}{2\p} \int_\T \sqrt{1 + \eta_x^2}\, dx \Big)^{-3/2} \, d\ph ,  
\end{aligned}
\ee
and
\be
 p := (\omega \cdot \partial_\vphi)^{- 1} \big( \mathtt m_3^{-1}  \Pi_{K_n} m_3(\vphi) - 1 \big) 
\text{\ which is odd in } \vphi  \,. \label{definizione p riparametrizzazione}
\ee
Dividing $ {({\mathcal P}^{- 1} \, \mathbb I_2}) \mL_3 \, {({\mathcal P} \, \mathbb I_2)}  $ 
by the even function $\rho := {\mathcal P}^{- 1}[1 + \omega \cdot \partial_\vphi p]$ we get the real, even and reversible operator
\be
\begin{aligned}
\mL_4  & := \rho^{-1} {({\mathcal P}^{- 1} \, \mathbb I_2}) \mL_3 \, {({\mathcal P} \, \mathbb I_2)}  \\
& =  \Dom {\mathbb I}_2 +\ii \mathtt m_3 {\bf T}(D) + {\bf B}_1(\vphi, x) \partial_x +  \ii 
\big({\bf B}_0^{(I)}(\vphi, x) + {\bf B}_0^{(II)}(\vphi, x) \big) \mH |D|^{\frac12} \\
& \quad +  {\bf R}_4^{(I)} + {\bf R}_4^{(II)} + {\bf R}^\bot_{4}  \label{cal L 4}
\end{aligned}
\ee
where 
\begin{equation} \label{bf B1}
 {\bf B}_1 := \rho^{- 1}{\mathcal P}^{- 1} {\mathbb I}_2 {\bf A}_1 = \begin{pmatrix}
 a_{11} & 0 \\
0 &  a_{11} 
\end{pmatrix}\,, \quad a_{11} := \rho^{- 1} {\mathcal P}^{- 1}(a_1) 
\end{equation}
\begin{equation}\label{bf B0 (I)}
{\bf B}_0^{(I)} := \rho^{- 1} {\mathcal P}^{- 1} {\mathbb I}_2 {\bf A}_0^{(I)} = \begin{pmatrix}
a_{12}  & 0 \\
0 & - a_{12} 
\end{pmatrix}\,, \quad a_{12} := \rho^{- 1} {\mathcal P}^{- 1}(a_9)
\end{equation} 
\begin{equation}\label{bf B0 (II)}
{\bf B}_0^{(II)} := \rho^{- 1} {\mathcal P}^{- 1} {\mathbb I}_2 {\bf A}_0^{(II)} = \begin{pmatrix}
0 & \rho^{- 1} {\mathcal P}^{- 1}(a_{10})  \\
 - \rho^{- 1} {\mathcal P}^{- 1}(a_{10}) & 0
\end{pmatrix} 
\end{equation}
  \begin{equation}\label{cal RN (3)0}
{\bf R}_4^{(I)}  := \begin{pmatrix}
r_4^{(I)} (x, D) & 0 \\
0 & \overline {r_4^{(I)} (x, D)}
\end{pmatrix} \, , \quad 
r_4^{(I)}(x, D)  := 
\rho^{- 1}  {\mathcal P}^{- 1} 
r_3^{(I)}(x, D) {\mathcal P}  \,,
\end{equation}
\begin{equation}\label{cal QN (3)}
\begin{aligned}
& {\bf  R}_4^{(II)}  := \begin{pmatrix}
0 & r_4^{(II)}(x, D) \\
\overline {r_4^{(II)}  (x, D)} & 0
\end{pmatrix}\,,  \\
& r_4^{(II)} (x, D)  := \rho^{- 1} {\mathcal P}^{- 1} 
r_3^{(II)}(x, D) {\mathcal P}  
\end{aligned}
\end{equation}
and 
\begin{equation}\label{cal RN bot (3)}
{\bf R}^\bot_4 := \ii \rho^{- 1} \Pi_{K_n}^\bot m_3(\vphi) {\bf T}(D) \, . 
\end{equation}

\begin{lemma}
 The maps ${\mathcal P}$, ${\mathcal P}^{- 1}$ are 
 ${\mathcal D}^{k_0}$-$(k_0 + 1)$-tame with tame constants satisfying  the estimates
\begin{equation}\label{stima cal A}
{\mathfrak M}_{{\mathcal P}^{\pm 1}} (s) \leq_S (1  + \| \fracchi_0 \|_{s + \sigma}^{k_0, \gamma}) \, , \quad 
\forall s_0 \leq s \leq S  \, .
\end{equation}
The maps ${\mathcal P} - {\rm Id}$, ${\mathcal P}^{- 1} - {\rm Id}$ are 
 ${\mathcal D}^{k_0}$-$(k_0 + 2)$-tame and  
 \begin{equation}\label{stima cal P - identita}
 {\mathfrak M}_{{\mathcal P}^{\pm 1} - {\rm Id}} (s) \leq_S \e \gamma^{- 1}(1  + \| \fracchi_0 \|_{s + \sigma}^{k_0, \gamma})\, , 
 \quad \forall s_0 \leq s \leq S \,  .
 \end{equation}
The coefficient $\mathtt m_3 $ defined in \eqref{lambda3 formula} and the
functions $ a_{11}  $, $ a_{12} $, $ \rho^{-1} {\mathcal P}^{-1} (a_{10} ) $  in \eqref{bf B1}-\eqref{bf B0 (II)} satisfy 
\begin{align}
& |\mathtt m_3 - 1 |^{k_0, \gamma} \leq C \e\,, \quad |\partial_i \mathtt m_3 [\hat \imath]  | \leq C \e \| \hat \imath\|_{ \sigma}\,, \label{stima lambda 3 - 1 nuova} \\
&\| a_{11} \|_s^{k_0, \gamma}, \| a_{12} \|_s^{k_0, \gamma}, 
\| \rho^{-1} {\mathcal P}^{-1} (a_{10} ) \|_s^{k_0, \gamma} \leq_S \e \big(1 + \| \fracchi_0 \|_{s + \sigma}^{k_0, \gamma} \big)\,,
\ \forall s_0 \leq s \leq S \, ,  \label{stima a 15}  
\end{align}
 and 
\begin{align}\label{r 4 (I) (II)}
& \norma r_4^{(I)}(x, D) \norma_{0, s, \a}^{k_0, \gamma}, 
\norma r_4^{(II)}(x,D) \norma_{0, s, \a}^{k_0, \gamma} \leq_{S, \a} \e (1 +  \| \fracchi_0 \|_{s + \alpha + \sigma}^{k_0, \gamma} ) \\
& \label{stima derivata i cal A}
\| (\partial_i {\mathcal P}^{\pm 1}[\hat \imath]) h \|_{s_1} \leq_{S} \e \gamma^{- 1}\| \hat \imath \|_{s_1 + \sigma} \| h \|_{s_1 + \sigma} \\
& \label{stima derivata i a 15}  
\| \partial_i a_{11}[\hat \imath] \|_{s_1}, \| \partial_i a_{12} [\hat \imath] \|_{s_1}
\| \partial_i \big\{\rho^{-1} {\mathcal P}^{-1} (a_{10} ) \big\} [\hat \imath] \|_{s_1} \leq_{S}  \e \| \hat \imath \|_{s_1 + \sigma} \\ 
& \label{stima derivata i r 4 (I) (II)}
\norma \partial_i r_4^{(I)}(x, D) [\hat \imath]\norma_{0, s_1, \a}, 
\norma \partial_i r_4^{(II)}(x,D)[\hat \imath] \norma_{0, s_1, \a} \leq_{S, \alpha}  \e \| \hat \imath\|_{s_1 + \alpha + \sigma}\,.
\end{align}
\end{lemma}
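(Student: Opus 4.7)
The plan is to estimate first the function $p$ and the constant $\mathtt{m}_3$ defined in \eqref{lambda3 formula}, \eqref{definizione p riparametrizzazione}, and then to deduce all the remaining bounds by composition arguments. For $\mathtt{m}_3$, the identity $\mathtt{m}_3 - 1 = (2\pi)^{-\nu} \int \Pi_{K_n}(m_3 - 1)\,d\vphi + (2\pi)^{-\nu}\int (\Pi_{K_n} - \mathrm{Id})\,d\vphi$ (the last integral vanishes) together with \eqref{stima m3 - 1} immediately gives $|\mathtt{m}_3 - 1|^{k_0,\gamma} \leq C\e$; the derivative bound $|\partial_i \mathtt{m}_3[\hat\imath]| \leq C\e\|\hat\imath\|_\sigma$ follows in the same way. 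For $p$ we use that, since $\omega \in \mathtt{DC}_{K_n}^\gamma$, Lemma \ref{lemma splitting cal D omega} gives $\|p\|_s^{k_0,\gamma} \leq C\gamma^{-1}\|\mathtt{m}_3^{-1}\Pi_{K_n} m_3 - 1\|_{s+\tau_1}^{k_0,\gamma} \leq_s \e\gamma^{-1}(1 + \|\fracchi_0\|_{s+\sigma}^{k_0,\gamma})$, and similarly $\|\partial_i p[\hat\imath]\|_{s_1} \leq_{s_1} \e\gamma^{-1}\|\hat\imath\|_{s_1+\sigma}$.

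Next I would apply Lemma \ref{lemma:utile} to the diffeomorphism $\vphi \mapsto \vphi + \omega p(\vphi)$ of $\T^\nu$. The smallness assumption $\|\omega p\|_{2s_0+k_0+1}^{k_0,\gamma} \leq \delta$ is verified thanks to the ansatz \eqref{ansatz I delta} (which makes $\e\gamma^{-1}$ small). Then \eqref{tame-lipschitz-cambio-di-variabile} yields the tame constants \eqref{stima cal A} for $\mathcal{P}^{\pm 1}$. The sharper bound \eqref{stima cal P - identita} for $\mathcal{P}^{\pm 1} - \mathrm{Id}$ comes from the representation $(\mathcal{P} - \mathrm{Id})h = (\omega p)\cdot \int_0^1 \mathcal{P}_t(\nabla_\vphi h)\,dt$, so the factor $\|\omega p\|_s^{k_0,\gamma} \leq \e\gamma^{-1}(\ldots)$ is gained; the same argument, together with the formula for $\tilde p$ analogous to \eqref{identita p q}, handles $\mathcal{P}^{-1} - \mathrm{Id}$, and the bound \eqref{stima derivata i cal A} follows by differentiating the composition, using the estimates on $\partial_i p$ and standard interpolation.

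For the coefficients, by \eqref{transformed Pcal} and \eqref{bf B1}--\eqref{bf B0 (II)}, the functions $a_{11}, a_{12}, \rho^{-1}\mathcal{P}^{-1}(a_{10})$ are obtained from $a_1, a_9, a_{10}$ by multiplying by $\rho^{-1}$ and composing with $\mathcal{P}^{-1}$; since $\rho - 1 = O(\e\gamma^{-1})$ and the quantities $a_1, a_9, a_{10}$ are $O(\e)$ by Lemma \ref{lemma:BAPQ}, the bounds \eqref{stima a 15} follow from the algebra property \eqref{interpolazione C k0}, the tame estimates for $\mathcal{P}^{-1}$ (Lemma \ref{lemma operatore e funzioni dipendenti da parametro}), and Lemma \ref{Moser norme pesate} applied to $\rho^{-1}$; the same strategy gives \eqref{stima derivata i a 15}, differentiating the identity $\partial_i(\rho^{-1}\mathcal{P}^{-1} f) = -\rho^{-2}\partial_i\rho[\hat\imath]\mathcal{P}^{-1} f + \rho^{-1}\partial_i(\mathcal{P}^{-1} f)[\hat\imath]$ and using \eqref{stima derivata i primo step}, \eqref{derivate i coefficienti secondo step}, \eqref{stima derivata i cal A}. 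For the pseudo-differential remainders \eqref{cal RN (3)0}, \eqref{cal QN (3)}, observe that $\mathcal{P}$ acts only on the time variable $\vphi$, so for any symbol $r(\vphi, x, \xi)$, the conjugation $\mathcal{P}^{-1}\mathrm{Op}(r)\mathcal{P} = \mathrm{Op}(r(\vphi + \omega\tilde p(\vphi), x, \xi))$ simply reparametrizes the $\vphi$-dependence; the estimate \eqref{r 4 (I) (II)} then follows from \eqref{stima r 3 (I) (II)}, the bound on $\rho^{-1}$, and the change-of-variable estimate \eqref{stima cambio di variabile dentro la dim} applied to each $\partial_\xi^\beta r_3^{(I,II)}(\cdot,\cdot,\xi)$ in the definition \eqref{norm1} of $\norma \cdot \norma_{0,s,\alpha}^{k_0,\gamma}$, and similarly for \eqref{stima derivate i r 4 (I) (II)} invoking \eqref{stima derivate i r 3 (I) (II)}.

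The main technical point to watch is the verification that the $\Psi DO$ norm \eqref{norm1 parameter} of Definition \ref{def:pseudo-norm} is preserved, up to tame constants in $\fracchi_0$, under the reparametrization $\vphi \mapsto \vphi + \omega\tilde p(\vphi)$: since the $\xi$-derivatives commute with the action on $\vphi$, it reduces to applying \eqref{stima cambio di variabile dentro la dim}--\eqref{stima tame cambio di variabile pietro} componentwise in $\xi$, keeping $\alpha$ fixed. All the other steps are routine applications of interpolation and the composition lemma, and the bounds stated in the lemma then follow by collecting the constants (which may change from line to line but remain of the form $\leq_{S,\alpha}$).
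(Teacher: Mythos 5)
Your proposal is correct and follows essentially the same route as the paper: you estimate $p$ by inverting $\om\cdot\pa_\vphi$ on the truncated, zero-average function $\mathtt m_3^{-1}\Pi_{K_n}m_3-1$ (giving the $\e\gamma^{-1}$ bound), apply the change-of-variable Lemma \ref{lemma:utile} for \eqref{stima cal A}, use the fundamental-theorem representation $({\cal P}-{\rm Id})h = p\int_0^1 {\cal P}_\tau[\om\cdot\pa_\vphi h]\,d\tau$ for \eqref{stima cal P - identita}, and observe that $ {\cal P}^{-1}{\rm Op}(r){\cal P}={\rm Op}(r(\vartheta+\om\tilde p(\vartheta),x,\xi))$ so that \eqref{r 4 (I) (II)}, \eqref{stima derivata i r 4 (I) (II)} reduce to the composition estimates applied to $\pa_\xi^\beta r_3^{(I),(II)}$ — exactly as in the paper. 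The only cosmetic slip is the wording of the identity for $\mathtt m_3-1$ (the second integral is $\int(\Pi_{K_n}1-1)\,d\vphi=0$), which does not affect the argument.
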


\begin{proof}
The estimates \eqref{stima cal A}, \eqref{stima a 15} follow
by Lemmata \ref{lemma:utile}, \ref{lemma operatore e funzioni dipendenti da parametro} 
and  \ref{lemma:BAPQ}. The bound \eqref{stima cal P - identita} follows since 
$$
({\mathcal P} - {\rm Id}) h = p\, \int_0^1 {\mathcal P}_\tau [\omega \cdot \partial_\vphi h]\, d \tau \,, 
\quad {\mathcal P}_\tau [h] (\vphi, x) := h(\vphi + \tau \omega p(\vphi), x)\,,
$$
and since by Lemma \ref{lemma:BAPQ}, using  \eqref{definizione p riparametrizzazione} and 
\eqref{stima m3(vphi)}, \eqref{stima a 15}, we have 
\begin{equation}\label{stima p nel lemma riparametrizzazione}
 \| p \|_s^{k_0, \gamma} \leq_s \e \gamma^{- 1} (1 + \| \fracchi_0 \|_{s + \sigma}^{k_0, \gamma})\,.
 \end{equation}
  The estimate for ${\mathcal P}^{- 1} - {\rm Id}$ follows similarly. 
Let us  prove \eqref{r 4 (I) (II)}. The conjugated operator 
\begin{equation}\label{simbolo r3 tilde}
{\mathcal P}^{- 1} r_3^{(I)}(x, D) {\mathcal P} = {\rm Op}(\tilde r_3)
\quad {\rm where} \quad  \tilde r_3(\vartheta, x, \xi) :=  r_3^{(I)}(\vartheta + \omega \tilde p(\vartheta), x, \xi) \, .
\end{equation}
Hence for all $\alpha \geq 0$, for all $|k| \leq k_0$, for all $\xi \in \R $ and for all $\omega$ we have by Lemma \ref{lemma:utile} 
\begin{align*}
\|\partial_\xi^\alpha \tilde r(\omega, \cdot, \xi)  \|_s^{k_0, \gamma} \leq_S \| \partial_\xi^\alpha r_3^{(I)}(\cdot, \xi) \|_{s + k_0}^{k_0, \gamma} + \| p \|_{s + \sigma}^{k_0, \gamma} \|  \partial_\xi^\alpha r_3^{(I)}(\cdot, \xi) \|_{s_0 + k_0}^{k_0, \gamma}\,,
\end{align*}
 thus using the estimate \eqref{stima p nel lemma riparametrizzazione} we get 
\begin{align*}
\norma {\mathcal P}^{- 1} r_3^{(I)}(x, D) {\mathcal P}  \norma_{0, s, \alpha}^{k_0, \gamma} 
& 
\leq_S \norma r_3^{(I)}(x, D) \norma_{0, s, \alpha}^{k_0, \gamma} + \| \fracchi \|_{s + \sigma}^{k_0, \gamma} 
 \norma r_3^{(I)}(x, D) \norma_{0, s_0, \alpha}^{k_0, \gamma} \\
& \stackrel{\eqref{stima r 3 (I) (II)}}{\leq_{S, \alpha}} \e 
 \big( 1 + \| \fracchi_0  \|_{s + \alpha + \sigma}^{k_0, \gamma}\big)\,,
\end{align*}
and the estimate \eqref{r 4 (I) (II)} for $r_4^{(I)}$ follows. The estimate for $r_4^{(II)}$ is analogous. 
The proof of  \eqref{stima derivata i cal A} is similar to the proof of the estimate for $\partial_i {\mathcal B}^{\pm 1}$ in Lemma \ref{lemma:BAPQ}. The estimate \eqref{stima derivata i a 15} follows 
by differentiating the explicit expressions in \eqref{lambda3 formula}, \eqref{bf B1}-\eqref{bf B0 (II)}, 
using \eqref{stima cal A}, \eqref{stima derivata i cal A}, the estimates of Lemma \ref{lemma:BAPQ} and
\eqref{interpolazione C k0}. The estimate \eqref{stima derivata i r 4 (I) (II)} follows since by \eqref{simbolo r3 tilde} 
$ \partial_i {\rm Op}(\tilde r_3) [\hat \imath] =  \partial_i \tilde p [\hat \imath] {\rm Op}\big( \partial_\vphi r_3^{(I)}(\vartheta + \omega \tilde p(\vartheta), x, \xi) \big) $. 
\end{proof}

In the next sections we reduce the real, even and reversible operator $ {\mathcal L}_4 $ neglecting   the term $ {\bf R}^\bot_4 $
(for simplicity of notation we denote it in the same way). 
Note that the term $ {\bf R}^\bot_4  $  is  in $ OPS^{3/2} $. However 
it is supported on the high Fourier frequencies and it will contribute to remainders in 
\eqref{stima R omega bot corsivo bassa}-\eqref{stima R omega bot corsivo alta}. 
In other words, these terms do not need to be treated in the KAM reducibility scheme of section \ref{sec: reducibility}
and the estimates \eqref{stima R omega bot corsivo bassa}-\eqref{stima R omega bot corsivo alta}
are yet sufficient for the convergence of Nash-Moser scheme of section \ref{sec:NM}. 

\section{Block-decoupling up to smoothing remainders}\label{sec:decoupling}

The goal of this section is to conjugate the operator $ {\mathcal L}_4 $ in \eqref{cal L 4} (without  $ {\bf R}^\bot_4 $) to the 
operator $ {\mathcal L}_M $ in \eqref{cal LN decoupling} which  is block-diagonal  up to the smoothing remainder
  $ {\bf R}_M^{(II)}  \in OPS^{\frac12 - M} $. This is achieved by applying iteratively $ M $-times  
a conjugation map which transforms the off-diagonal block operators into $ 1 $-smoother ones.  

We describe the generic inductive step. We have a real, even and reversible operator 
\begin{equation}\label{cal Ln disaccoppiamento}
{\mathcal L}_n := \Dom {\mathbb I}_2 + \ii \mathtt m_3 {\bf T}(D) + 
{\bf B}_1 \partial_x + \ii {\bf B}_0^{(I)} \mH |D|^{\frac12}  + {\bf R}_n^{(I)} + {\bf R}_n^{(II)} 
\end{equation}
with block-diagonal terms 
\be\label{cal Rn I}
{\bf R}_n^{(I)} := \begin{pmatrix}
r_n^{(I)}(x, D) & 0 \\
0 & \overline {r_n^{(I)}(x, D)}
\end{pmatrix}\,, \quad r_n^{(I)}(x, D) \in OPS^0 \, , 
\ee
and smoothing off-diagonal remainders 
\begin{equation}\label{cal Rn II}
{\bf R}_n^{(II)} := \begin{pmatrix}
0 & r_n^{(II)}(x, D) \\
 \overline {r_n^{(II)}(x, D)} & 0
\end{pmatrix}\,, \quad r_n^{(II)}(x, D) \in OPS^{\frac12 - n}\,,
\end{equation}
which satisfy 
\begin{align}\label{stima induttiva disaccoppiamento}
& \norma {\bf R}_n^{(I)} \norma_{0, s, \alpha}^{k_0, \gamma} + \norma {\bf R}_n^{(II)} 
\norma_{-n + \frac12, s, \alpha}^{k_0, \gamma} \leq_{n, S, \a} \e (1 + \| \fracchi_0 \|_{s + \sigma + 
\aleph_n(\alpha)  }^{k_0, \gamma})\,, \quad \forall s_0 \leq s \leq S\,,\\
& \label{stima induttiva disaccoppiamento derivate in i}
\norma \partial_i {\bf R}_n^{(I)} [\hat \imath] \norma_{0, s_1, \alpha} + \norma \partial_i {\bf R}_n^{(II)}  [\hat \imath]
\norma_{-n + \frac12, s_1 , \alpha} \leq_{n, S, \alpha}  \e \| \hat \imath \|_{s_1 + \sigma + \aleph_n(\alpha)}\,,
\end{align}
where the increasing constants $ \aleph_n(\alpha)$  are defined inductively by 
\begin{equation}\label{definizione kn(alpha)}
\aleph_0(\alpha) := \alpha\,, \qquad  \aleph_{n + 1}(\alpha) :=  \aleph_n(\alpha + 1) +  n + 2 \alpha + 4\,.
\end{equation}
{\bf Initialization.} The real, even and reversible operator $ {\mathcal L}_4 $ in \eqref{cal L 4} 
satisfies the assumptions
\eqref{cal Ln disaccoppiamento}-\eqref{stima induttiva disaccoppiamento derivate in i}
where the off diagonal remainder is $  \ii  {\bf B}_0^{(II)}(\vphi, x) \mH |D|^{\frac12} + {\bf R}_{4}^{(II)} \in OPS^{1/2 } $
(recall that we have neglected $ {\bf R}_4^\bot $). 
\\[1mm]
{\bf Inductive step.}  We conjugate $ {\mathcal L}_n $ in \eqref{cal Ln disaccoppiamento} by  a
real  operator of the form 
\be\label{defPhi-n}
\begin{aligned}
& \Phi_n := {\mathbb I}_2 + \Psi_n\,, \quad \Psi_n := \begin{pmatrix}
0 & \psi_n(x, D) \\
\overline {\psi_n(x, D)} & 0
\end{pmatrix}\,, \\
&   \qquad \qquad  \psi_n(x, D) \in OPS^{- n - 1}\,.
\end{aligned}
\ee
We compute 
\begin{equation}\label{coniugato-n-decoupling}
\begin{aligned}
{\mathcal L}_n \Phi_n & =  \Phi_n \big( \Dom {\mathbb I}_2 + \ii \mathtt m_3 {\bf T}(D) + {\bf B}_1 \partial_x + \ii {\bf B}_0^{(I)} \mH |D|^{\frac12} +  {\bf R}_n^{(I)} \big) \\
& \quad + [\ii  \mathtt m_3 {\bf T}(D) +  {\bf B}_1 \partial_x + 
\ii  {\bf B}_0^{(I)} \mH |D|^{\frac12} +  {\bf R}_n^{(I)}, \Psi_n]  \\
& \quad + \Dom \Psi_n 
+ {\bf R}_n^{(II)} + {\bf R}_n^{(II)} \Psi_n\,. 
\end{aligned}
\end{equation}
By  \eqref{definizione T3 A1} and \eqref{defPhi-n} the vector valued commutator
\be\label{VVcommutator}
\begin{aligned}
& \ii [\mathtt m_3 {\bf T}(D), \Psi_n ] = \\
&  \ii \mathtt m_3 \begin{pmatrix}
0 & \!\!\!\! T(D) \psi_n(x, D) + \psi_n(x, D) T(D) \\
- \big( T(D) \overline{\psi_n (x, D)} + \overline{\psi_n (x, D)} T(D) \big) & \!\! 0 
\end{pmatrix}
\end{aligned}
\ee
is  block off-diagonal.

We define a cut off function $ \chi_0 \in {\mathcal C}^\infty(\R, \R)$, even, $ 0 \leq  \chi_0  \leq 1 $, such that 
\begin{equation}\label{cut off simboli 2}
\chi_0(\xi) = \begin{cases}
0 & \quad \text{if } \quad |\xi| \leq \frac12 \\
1 & \quad \text{if} \quad \ |\xi| \geq \frac34\,.
\end{cases}
\end{equation}
\begin{lemma}\label{lemma-bassan-dec}
Let 
\be\label{def:psin} 
\psi_n(x, \xi) := \begin{cases} - \dfrac{\chi_0(\xi) r_n^{(II)}(x, \xi) }{2 \ii \mathtt m_3 T(\xi)} \ \,   
\quad \rm{if} \quad |\xi| > \frac13 \, , \\
0 \quad \qquad \qquad\qquad \, \quad \ \rm{if}  \quad   |\xi| \leq \frac13\,,
\end{cases} \qquad \psi_n \in S^{- n - 1}\,.
\ee 
Then the operator $ \Psi_n $ in \eqref{defPhi-n} solves 
\be\label{ourgoaln}
\ii [\mathtt m_3 {\bf T}(D), \Psi_n] + {\bf R}_n^{(II)}  =   {\bf R}_{T, \psi_n} 
\ee
where
\begin{equation}\label{R T psi n definizione}
 {\bf R}_{T, \psi_n} :=  \ii \begin{pmatrix}
0 & r_{T, \psi_n}(x, D) \\
- \overline{r_{T, \psi_n}(x, D)} 
\end{pmatrix},\quad  r_{T, \psi_n} \in S^{- n - \frac12}  \, , 
\end{equation}
satisfies for all $s_0 \leq s \leq S$
\begin{equation}\label{R T psi n}
\norma r_{T, \psi_n}(x, D)  \norma_{ - n - \frac12, s, \alpha}^{k_0, \gamma} 
\leq_{S, \alpha}   \e \big( 1 + \| \fracchi_0 \|_{s + \sigma + \aleph_n(\alpha) + \alpha + 4}^{k_0, \gamma} \big)\,.
\end{equation}
The map $ \Psi_n $ is real, even, reversibility preserving and
\begin{align}\label{stima psi n}
& \norma \psi_n(x, D) 
\norma_{ - n -1 , s, \alpha}^{k_0, \gamma}  
\leq_{n, S, \alpha} \e (1 + \| \fracchi_0 \|_{s + \sigma + \aleph_n(\alpha)}^{k_0, \gamma})\,, \quad \forall s_0 \leq s \leq S\,,\\
& \label{derivate i psi n}  \norma \partial_i \psi_n(x, D) [\hat \imath] \norma_{- n - 1, s_1, \alpha} \leq_{n, S, \alpha} 
\e \| \hat \imath\|_{s_1 + \sigma + \aleph_n(\alpha)}\,, \\
& \label{derivate i R T psi n}
 \norma \partial_i r_{T, \psi_n}(x, D) [\hat \imath] \norma_{- n - \frac12 , s_1, \alpha} \leq_{n, S, \alpha} \e \| \hat \imath \|_{s_1 + \sigma + \aleph_n(\alpha) + \alpha + 4 }\,.
\end{align}
\end{lemma}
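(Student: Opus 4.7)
The plan is to verify the homological-equation identity \eqref{ourgoaln} by a direct symbolic computation exploiting that $T(D)$ is a Fourier multiplier, and then to read off the quantitative bounds from the pseudo-differential calculus of section \ref{sec:pseudo}; the algebraic properties of $\Psi_n$ will be checked separately from the symmetries of $r_n^{(II)}$.

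First I would check that $\psi_n$ in \eqref{def:psin} is a well-defined $C^\infty$ symbol in $S^{-n-1}$. The two branches $|\xi|\le 1/3$ and $|\xi|>1/3$ agree on $1/3<|\xi|\le 1/2$ since $\chi_0\equiv 0$ there, hence $\psi_n$ is smooth in $\xi$. For $|\xi|\ge 2/3$ one has $T(\xi)=|\xi|^{1/2}(1+\kappa\xi^2)^{1/2}$, so $\chi_0(\xi)/T(\xi)\in S^{-3/2}$, and since $r_n^{(II)}\in S^{1/2-n}$ and $m_3\ne 0$, standard algebra of symbols gives $\psi_n\in S^{-n-1}$.

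Next I would derive \eqref{ourgoaln}. By the explicit block computation \eqref{VVcommutator}, the identity reduces to checking that, at the symbol level,
\begin{equation*}
i\mathtt m_3\bigl(T(\xi)\sharp\psi_n(x,\xi)+\psi_n(x,\xi)\sharp T(\xi)\bigr)+r_n^{(II)}(x,\xi)\;\in\; S^{-n-1/2},
\end{equation*}
where $\sharp$ denotes the composition of symbols. Since $T=T(\xi)$ is a Fourier multiplier, the composition formula \eqref{composition pseudo} collapses: the symbol of $\psi_n\circ T(D)$ is exactly $\psi_n(x,\xi)T(\xi)$, while the symbol of $T(D)\circ\psi_n$ equals
\begin{equation*}
T(\xi)\psi_n(x,\xi)+\sum_{\beta\ge 1}\frac{1}{i^\beta\beta!}(\partial_\xi^\beta T)(\xi)(\partial_x^\beta\psi_n)(x,\xi).
\end{equation*}
By the defining relation $2i\mathtt m_3 T(\xi)\psi_n(x,\xi)=-\chi_0(\xi)r_n^{(II)}(x,\xi)$ valid for $|\xi|>1/3$ (and trivially otherwise), the principal parts cancel and there remains
\begin{equation*}
r_{T,\psi_n}(x,\xi)\;=\;(1-\chi_0(\xi))\,r_n^{(II)}(x,\xi)+i\mathtt m_3\sum_{\beta\ge 1}\frac{1}{i^\beta\beta!}(\partial_\xi^\beta T)(\partial_x^\beta\psi_n).
\end{equation*}
The first piece is compactly supported in $\xi$, hence in $OPS^{-\infty}$; the $\beta=1$ term, with $\partial_\xi T\in S^{1/2}$ and $\partial_x\psi_n\in S^{-n-1}$, sits in $S^{-n-1/2}$, and higher-$\beta$ contributions are strictly smoother. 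This yields the claimed class in \eqref{R T psi n definizione}, and the bound \eqref{R T psi n} follows by applying the pseudo-differential norm estimates from Lemma \ref{lemma stime Ck parametri} together with the inductive hypothesis \eqref{stima induttiva disaccoppiamento} for $r_n^{(II)}$ and \eqref{stima lambda 3 - 1 nuova} for $\mathtt m_3$; keeping track of the loss of $\xi$-derivatives explains the jump from $\aleph_n(\alpha)$ to $\aleph_n(\alpha)+\alpha+4$ (with the `$+4$' accommodating the order mismatch and the derivatives $\partial_\xi^\beta T$, $\partial_x^\beta\psi_n$ for $\beta\le\alpha+1$). The estimates \eqref{stima psi n} and \eqref{derivate i psi n} are obtained similarly by directly estimating $\psi_n=-\chi_0 r_n^{(II)}/(2i\mathtt m_3 T)$ with the algebra/quotient rules for the $\norma\cdot\norma_{m,s,\alpha}^{k_0,\gamma}$ norm and applying \eqref{stima induttiva disaccoppiamento}, \eqref{stima induttiva disaccoppiamento derivate in i}. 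The bound \eqref{derivate i R T psi n} is obtained by differentiating the identity for $r_{T,\psi_n}$ in $i$ and repeating the same estimates.

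Finally I would verify the reality, evenness and reversibility-preserving properties of $\Psi_n$. Reality is built in by the matrix form in \eqref{defPhi-n}. For evenness, Lemma \ref{even:pseudo} requires $\psi_n(-x,-\xi)=\psi_n(x,\xi)$; this holds because $r_n^{(II)}$ is even (block of the even operator $\mathbf R_n^{(II)}$), while $T(\xi)$ and $\chi_0(\xi)$ are even in $\xi$ and $\mathtt m_3$ is independent of $x$. For reversibility preserving, by Lemma \ref{R-op:rev} we need $\psi_n(-\varphi)=\overline{\psi_n(\varphi)}$; this is a consequence of the reversibility of $\mathcal L_n$, which gives $r_n^{(II)}(-\varphi)=-\overline{r_n^{(II)}(\varphi)}$, combined with the facts that $\mathtt m_3$ is real and $\varphi$-independent and that the factor $1/(2i\mathtt m_3 T)$ changes sign under conjugation, producing exactly the needed identity.

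The main technical obstacle is not the algebraic step (which is a transparent one-line cancellation at the symbol level), but the careful bookkeeping of the $\|\cdot\|^{k_0,\gamma}_{m,s,\alpha}$-norms in order to propagate the tame estimates \eqref{stima induttiva disaccoppiamento}-\eqref{stima induttiva disaccoppiamento derivate in i} with the precise constants $\aleph_n(\alpha)$ in \eqref{definizione kn(alpha)}: one must track how many $\partial_\xi$-derivatives are spent when passing from $r_n^{(II)}$ and $T$ to $\psi_n$ and to the asymptotic expansion of $r_{T,\psi_n}$, and how these losses combine with the $\partial_\lambda$-derivatives weighted by $\gamma^{-|k|}$. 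This is what forces the increment $\aleph_n(\alpha+1)+n+2\alpha+4$ rather than a smaller one, and is the delicate quantitative point of the lemma.
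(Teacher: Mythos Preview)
Your proof is correct and follows essentially the same route as the paper: reduce to the scalar level via \eqref{VVcommutator}, exploit that $T(D)$ is a Fourier multiplier so that the composition expansion \eqref{expansion symbol} collapses to $2T(\xi)\psi_n+\mathfrak r_{T,\psi_n}$ with $\mathfrak r_{T,\psi_n}\in S^{-n-1/2}$, choose $\psi_n$ to kill the principal part, and estimate the remainder via \eqref{stima RN derivate xi parametri} (with $N=1$, $A=T(D)$, $B=\psi_n$). One small misattribution: the increment $\aleph_n(\alpha+1)+n+2\alpha+4$ you mention in the last paragraph is the definition of $\aleph_{n+1}(\alpha)$ and is what drives the \emph{next} lemma (Lemma \ref{stima-Rn+1-dec}); in the present lemma the relevant loss is only $\aleph_n(\alpha)+\alpha+4$, coming from applying \eqref{stima RN derivate xi parametri} at regularity $s+2+|m|+\alpha=s+\alpha+7/2$.
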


\begin{proof}
 By \eqref{VVcommutator} and  \eqref{cal Rn II}, in order to solve \eqref{ourgoaln}
 with a  remainder $ {\bf R}_{T, \psi_n} \in OPS^{- n - \frac12} $  as in \eqref{R T psi n definizione},  
we have to solve the equation
\be\label{sol-voluta}
\begin{aligned}
& \ii \mathtt m_3 \Big( T(D) \psi_n(x, D) + \psi_n(x, D) T(D) \Big) \\
& \quad + r_n^{(II)}(x, D) = r_{T, \psi_n}(x, D) \in OPS^{- n - \frac{1}{2}} \, . 
\end{aligned}
\ee
By \eqref{expansion symbol}, \eqref{rNTaylor} (applied with $N = 1$),  we have  
\be\label{primo-svi1}
\begin{aligned}
& T(D) \psi_n (x, D) + \psi_n (x, D) T(D) =   {\rm Op}(2 T(\xi) \psi_n (x, \xi)) + {\rm Op}({\mathfrak r}_{T, \psi_n}(x, \xi))  \\
& \quad {\rm where} \quad \mathfrak r_{T, \psi_n} \in S^{- n - \frac12}
\end{aligned}
\ee
because $ T(\xi ) \in S^{3/2} $ and $  \psi_n (x, \xi) \in S^{-n-1} $. The symbol  $ \psi_n ( x, \xi )  $ in \eqref{def:psin} is the solution of  
\be\label{equazione-per-decrescere}
2\ii \mathtt m_3 T(\xi) \psi_n(x, \xi) + \chi_0(\xi) r_n^{(II)}(x, \xi) = 0 
\ee
where the cut-off $ \chi_0 $ is defined in \eqref{cut off simboli 2}. 
Note that $ T( \xi ) = 0 $ for all $ |\xi| \leq 1/3 $ 
(see \eqref{def T}, \eqref{cut off simboli 1}) and that is why we 
do not include in \eqref{equazione-per-decrescere} the symbol $(1- \chi_0 (\xi) ) r_n^{(II)}(x, \xi) \in S^{-\infty} $.
Note also that $ |T(\xi)| \geq c > 0 $ 
for all $ | \xi | \geq 1 / 2 $.  
By \eqref{def:psin} and Lemma \ref{lemma composizione multiplier} and \eqref{stima induttiva disaccoppiamento}, we have, for all $s_0 \leq s \leq S$, 
$$
\norma \psi_n(x, D)
\norma_{-n-1, s, \alpha}^{k_0, \gamma} \leq_{ n, \alpha}  
\norma {\bf R}_n^{(II)} 
\norma_{-n + \frac12, s, \alpha}^{k_0, \gamma} {\leq_{n, S, \alpha}} 
\e (1 + \| \fracchi_0 \|_{s + \sigma + \aleph_n(\alpha)}) 
$$
proving \eqref{stima psi n}.
By  \eqref{primo-svi1} and \eqref{equazione-per-decrescere} 
the remainder $ r_{T, \psi_n}(x, \xi) $ in \eqref{sol-voluta} is 
\begin{equation}\label{puffo 0}
r_{T, \psi_n}(x, \xi)  = \ii \mathtt m_3  \mathfrak r_{T, \psi_n}(x, \xi) + (1 - \chi_0(\xi)) r_n^{(II)}(x, \xi) \in S^{-n-\frac12}\,.
\end{equation}
By \eqref{stima RN derivate xi parametri} (applied  
with $A = T(D)$, $B = \psi_n(x, D)$, $N = 1$, $m = 3/2$, $m' = - n - 1$) we have 
\begin{equation}\label{puffo 2}
\begin{aligned}
\norma \mathfrak r_{T, \psi_n}(x, D) \norma_{- n - \frac12, s, \alpha}^{k_0, \gamma} 
& \leq_{n, s, \alpha} \norma {\bf R}_n^{(II)} 
\norma_{- n + \frac12 , s + 2 + \frac32 + \alpha, \alpha }^{k_0, \gamma} \\
& \stackrel{\eqref{stima induttiva disaccoppiamento}}{\leq_{n, S , \alpha}} \e \big( 1 + \| \fracchi_0 \|_{s + \sigma + \aleph_n(\alpha) + \alpha + 4} \big)
\end{aligned}
\end{equation} 
and the estimate \eqref{R T psi n} for $r_{T, \psi_n}(x, D)$  follows by \eqref{puffo 0} 
 using also \eqref{stima lambda 3 - 1 nuova}, \eqref{stima induttiva disaccoppiamento}. 
The bound \eqref{derivate i psi n} is obtained differentiating the symbol \eqref{def:psin} 
and using \eqref{stima lambda 3 - 1 nuova},  \eqref{stima induttiva disaccoppiamento}, 
\eqref{stima induttiva disaccoppiamento derivate in i}. 
Let us prove the estimate \eqref{derivate i R T psi n}. By differentiating \eqref{puffo 0} with respect to $i$ we get 
\begin{equation}\label{puffo 1}
\begin{aligned}
\partial_i r_{T, \psi_n}(x, \xi) [\hat \imath] & : = \ii \partial_i \mathtt m_3 [\hat \imath]  \mathfrak r_{T, \psi_n}(x, \xi) + \ii  \mathtt m_3   \partial_i \mathfrak r_{T, \psi_n}(x, \xi) [\hat \imath]   \\
& \quad +  (1 - \chi_0(\xi)) \partial_i r_n^{(II)}(x, \xi)[\hat \imath]\,.
\end{aligned}
\end{equation}
Note that, since $T(\xi)$ does not depend on $i$, 
by formulae \eqref{expansion symbol}, \eqref{rNTaylor} (with $A = T(D)$, $B = \psi_n(x, D)$, $N = 1$), we get  
$ \partial_i \mathfrak r_{T, \psi_n}(x, D)[\hat \imath] = \mathfrak r_{T, \partial_i \psi_n [\hat \imath]}(x, D) $
and hence by \eqref{stima RN derivate xi parametri} 
(for $A = T(D) $, $B = \partial_i \psi_n(x, D) [\hat \imath]$, $N = 1$, $m = 3/2$, $m' = - n - 1$) we get 
$$ 
\begin{aligned}
\norma \partial_i \mathfrak r_{T, \psi_n}(x, D)[\hat \imath]  \norma_{- n - \frac12 ,s_1, \alpha } & 
\leq_{n, S, \alpha} 
 \norma \partial_i \psi_n(x, D)[\hat \imath] \norma_{- n - 1, s_1 + 2 + \frac32 + \alpha, \alpha  }  \\ 
& \stackrel{\eqref{derivate i psi n}}{\leq_{n, S, \alpha}}  \e \| \hat \imath \|_{s_1 + \sigma + \aleph_n(\alpha) + \alpha + 4} \,.
 \end{aligned}
 $$ 
The estimate \eqref{derivate i R T psi n} for $\partial_i r_{T, \psi_n}(x, D)[\hat \imath]$ then follows by recalling \eqref{puffo 1} and 
\eqref{stima lambda 3 - 1 nuova}, \eqref{stima induttiva disaccoppiamento derivate in i}, \eqref{puffo 2}. 

Finally, using Lemma
\ref{R-op:rev} and Lemma \ref{even:pseudo}
we see that the map $ \Psi_n $ defined by the symbol \eqref{def:psin}  is even and reversibility preserving 
because $ r_n $ is even and reversible.
\end{proof}

By \eqref{coniugato-n-decoupling} and \eqref{R T psi n definizione} the conjugated operator is 
\be\label{defin:Ln+1}
\begin{aligned}
{\mathcal L}_{n + 1} 
& := \Phi_n^{- 1} {\mathcal L}_n \Phi_n \\ 
& = \Dom {\mathbb I}_2 + \ii \mathtt m_3 {\bf T}(D) + {\bf B}_1 \partial_x + \ii {\bf B}_0^{(I)} \mH |D|^{\frac12}  + {\bf R}_n^{(I)} + {\bf R}_{n + 1} 
\end{aligned}
\ee
where $ {\bf R}_{n + 1} := \Phi_n^{- 1} {\bf R}^*_{n + 1} $ and 
\be \label{bf R n + 1-star}
\begin{aligned}
{\bf R}^*_{n + 1} & := 
{\bf R}_{T, \psi_n} + [{\bf B}_1 \partial_x , \Psi_n] + \ii [{\bf B}_0^{(I)} \mH |D|^{\frac12}, \Psi_n]   \\
& \quad + [{\bf R}_n^{(I)}, \Psi_n] + \Dom \Psi_n  + {\bf R}_n^{(II)} \Psi_n \, . 
\end{aligned}
\ee
Note that $ {\bf R}_{n + 1} $ is the only operator in \eqref{defin:Ln+1} containing off-diagonal terms. 

\begin{lemma}\label{stima-Rn+1-dec}
The operator ${\bf R}_{n + 1} \in OPS^{- n - \frac12}$ satisfies  
\begin{align}\label{stima cal R n+1}
& \norma {\bf R}_{n + 1} \norma_{ - n - \frac12, s, \alpha}^{k_0, \gamma} \leq_{n, S, \alpha} \e ( 1 + \| \fracchi_0 \|_{s + \sigma + \aleph_{n + 1}(\alpha)}^{k_0, \gamma}) \,, \quad \forall s_0 \leq s \leq S\,,\\
& \label{stima derivata i cal R n+1}
\norma \partial_i {\bf R}_{n + 1} [\hat \imath] \norma_{ - n - \frac12, s_1 , \alpha} \leq_{n, S, \alpha} \e \| \hat \imath\|_{s_1 + \sigma + \aleph_{n + 1}(\alpha)}
\end{align}
where the  constant $ \aleph_{n+1}(\alpha)$  is defined  in \eqref{definizione kn(alpha)}. 
\end{lemma}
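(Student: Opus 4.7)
\medskip

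\textbf{Plan of the proof.} The strategy is to bound each of the six summands in $\mathbf{R}^*_{n+1}$ individually in the weighted pseudo-differential norm of order $-n-\tfrac12$, then handle the outer conjugation $\Phi_n^{-1}$ by a Neumann argument. For $\mathbf{R}_{T,\psi_n}$ the bound is already \eqref{R T psi n}. For the three commutators I would apply Lemma \ref{lemma tame norma commutatore}: since $\Psi_n\in OPS^{-n-1}$, the commutators with $\mathbf{B}_1\partial_x \in OPS^1$, $\mathbf{B}_0^{(I)}\mathcal{H}|D|^{1/2}\in OPS^{1/2}$ and $\mathbf{R}_n^{(I)}\in OPS^0$ lie in $OPS^{-n-1}$, $OPS^{-n-3/2}$, $OPS^{-n-2}$ respectively, all included in $OPS^{-n-1/2}$ by monotonicity \eqref{crescente-m-neg}. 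Combining \eqref{stima commutator parte astratta} with \eqref{stima a 15}, \eqref{stima induttiva disaccoppiamento} and the pseudo-differential bound \eqref{stima psi n} on $\Psi_n$, each commutator is controlled by $\varepsilon(1+\|\mathfrak{I}_0\|_{s+\sigma+\aleph_n(\alpha+1)+c_n(\alpha)}^{k_0,\gamma})$ for an explicit loss $c_n(\alpha)\leq n+2\alpha+4$; this is exactly what is encoded in the definition \eqref{definizione kn(alpha)} of $\aleph_{n+1}(\alpha)$. The term $D_\omega\Psi_n={\rm Op}(\omega\cdot\partial_\varphi\psi_n)$ is estimated via \eqref{stima psi n} at Sobolev index $s+1$, and the product $\mathbf{R}_n^{(II)}\Psi_n\in OPS^{-2n-1/2}\subset OPS^{-n-1/2}$ via the composition estimate \eqref{estimate composition parameters} together with \eqref{stima induttiva disaccoppiamento} and \eqref{stima psi n}.

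\medskip

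Once $\mathbf{R}^*_{n+1}$ is estimated, I would handle the conjugation $\mathbf{R}_{n+1}=\Phi_n^{-1}\mathbf{R}^*_{n+1}$ as follows. Since $\Psi_n\in OPS^{-n-1}\subset OPS^0$ and, by \eqref{stima psi n} and the ansatz \eqref{ansatz I delta}, the smallness hypothesis \eqref{condizione di piccolezza Neumann norma pseudo} is satisfied (for $\varepsilon$ small enough), Lemma \ref{Neumann pseudo diff} shows that $\Phi_n$ is invertible with $\Phi_n^{-1}-\mathbb{I}_2\in OPS^0$ and
\[
\norma\Phi_n^{-1}-\mathbb{I}_2\norma_{0,s,\alpha}^{k_0,\gamma}\leq_{s,\alpha,k_0}\norma\Psi_n\norma_{0,s+\alpha,\alpha}^{k_0,\gamma}\leq_{n,S,\alpha}\varepsilon(1+\|\mathfrak{I}_0\|_{s+\sigma+\aleph_n(\alpha)+\alpha}^{k_0,\gamma}).
\]
Writing $\mathbf{R}_{n+1}=\mathbf{R}^*_{n+1}+(\Phi_n^{-1}-\mathbb{I}_2)\mathbf{R}^*_{n+1}$ and applying the composition estimate \eqref{estimate composition parameters} (with $m=0$, $m'=-n-\tfrac12$), the second piece is dominated by the first up to losses that again fit under $\aleph_{n+1}(\alpha)$. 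Summing all the contributions and absorbing the products of low-norm factors via the ansatz \eqref{ansatz I delta} yields \eqref{stima cal R n+1}.

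\medskip

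The derivative bound \eqref{stima derivata i cal R n+1} is obtained by differentiating each term in $\mathbf{R}^*_{n+1}$ and in the Neumann series $\Phi_n^{-1}=\mathbb{I}_2+\sum_{k\geq 1}(-\Psi_n)^k$, then applying the already-established Lipschitz-in-$i$ estimates \eqref{derivate i R T psi n}, \eqref{derivate i psi n}, \eqref{stima derivate i primo step}, \eqref{derivate i coefficienti secondo step}, \eqref{stima derivata i a 15} and the inductive hypothesis \eqref{stima induttiva disaccoppiamento derivate in i}, together with the identity $\partial_i\Phi_n^{-1}[\hat\imath]=-\Phi_n^{-1}(\partial_i\Psi_n[\hat\imath])\Phi_n^{-1}$; the interpolation/composition lemmas applied at the single Sobolev scale $s_1$ (permitted by \eqref{estimate:low norm for der}) give the desired bound with the same constant $\aleph_{n+1}(\alpha)$. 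The main obstacle, and really the only subtle point, is the careful bookkeeping of the cumulative loss of derivatives: the commutator estimate \eqref{stima commutator parte astratta} forces $\alpha\mapsto\alpha+1$ in the pseudo norm of $\Psi_n$ (which through \eqref{stima psi n} translates into $\aleph_n(\alpha+1)$), while the ``$|m|+\alpha$'' shift in \eqref{estimate composition parameters}--\eqref{stima RN derivate xi parametri} adds the $n+2\alpha+4$ increment; verifying that these losses exactly add up to the recursion \eqref{definizione kn(alpha)} uniformly in the term being estimated is the key computation.
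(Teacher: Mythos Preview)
Your plan is essentially the paper's own proof, and the bookkeeping of the derivative losses leading to $\aleph_{n+1}(\alpha)=\aleph_n(\alpha+1)+n+2\alpha+4$ is exactly right: the dominant loss comes from the commutator $[\mathbf{B}_1\partial_x,\Psi_n]$ (which forces $\alpha\mapsto\alpha+1$ in \eqref{stima psi n} via Lemma~\ref{lemma tame norma commutatore}), and the final composition with $\Phi_n^{-1}$ via \eqref{estimate composition parameters} contributes the remaining $+\alpha$.

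One small correction is needed. Of the three matrix ``commutators'', only $[\mathbf{B}_1\partial_x,\Psi_n]$ reduces to genuine scalar commutators on the entries (because $\mathbf{B}_1=a_{11}\mathbb{I}_2$ is scalar), so only there does Lemma~\ref{lemma tame norma commutatore} apply to gain one order. For $[\mathbf{B}_0^{(I)}\mathcal{H}|D|^{1/2},\Psi_n]$ and $[\mathbf{R}_n^{(I)},\Psi_n]$, the sign/conjugate structure of the diagonal blocks (${\bf B}_0^{(I)}=\mathrm{diag}(a_{12},-a_{12})$, ${\bf R}_n^{(I)}=\mathrm{diag}(r_n^{(I)},\overline{r_n^{(I)}})$) against the off-diagonal $\Psi_n$ produces entries of the form $a_{12}\mathcal{H}|D|^{1/2}\psi_n+\psi_n a_{12}\mathcal{H}|D|^{1/2}$ and $r_n^{(I)}\psi_n-\psi_n\overline{r_n^{(I)}}$, which are \emph{not} scalar commutators. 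The paper accordingly uses the plain composition bound \eqref{estimate composition parameters} for these two terms, obtaining operators in $OPS^{-n-\frac12}$ and $OPS^{-n-1}$ rather than the improved orders $OPS^{-n-\frac32}$, $OPS^{-n-2}$ you claimed. This does not affect the final estimate---everything still lands in $OPS^{-n-\frac12}$, and the loss from composition is already covered by $\aleph_{n+1}(\alpha)$---but you should adjust the tool invoked for those two terms.
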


\begin{proof}
{\sc Proof of \eqref{stima cal R n+1}}. We first estimate separately all the terms of $ {\bf R}^*_{n + 1} $ in \eqref{bf R n + 1-star}.
The operator  $ {\bf R}_{T, \psi_n} \in OPS^{- n - \frac12} $ in \eqref{R T psi n definizione}  satisfies  \eqref{R T psi n}. 
By \eqref{bf B1} and since $  \psi_n(x, D ) \in OPS^{-n-1} $, see \eqref{def:psin}, we have  
$$
[{\bf B}_1 \partial_x, \Psi_n] = \begin{pmatrix}
0 & [a_{11} \partial_x, \psi_n(x, D)] \\
[a_{11} \partial_x, \overline{\psi_n(x, D)}] & 0
\end{pmatrix} \in  OPS^{- n - 1} \subset OPS^{- n - \frac12} \, . 
$$
Moreover Lemma \ref{lemma tame norma commutatore} (with $m = 1$, $m' = - n - 1$) implies 
\begin{align*}
\norma [a_{11} \partial_x, \psi_n(x, D)]  
\norma_{-n- \frac12, s, \alpha}^{k_0, \gamma} & \leq \norma [a_{11} \partial_x, \psi_n(x, D)]  
\norma_{-n-1, s, \alpha}^{k_0, \gamma} \\
&  \leq_{n, s, \alpha} \| a_{11} \|_{s + n + 3 + \alpha}^{k_0, \gamma} 
\norma \psi_n(x, D)  \norma_{-n-1, s_0 + 3 + \alpha, \alpha + 1}^{k_0, \gamma} \\
& \quad + \| a_{11} \|_{s_0 + n + 3 + \alpha}^{k_0, \gamma} \norma \psi_n(x, D) 
\norma_{-n-1, s  + 3 + \alpha, \alpha + 1}^{k_0, \gamma} \\
& \stackrel{\eqref{stima a 15}, \eqref{stima psi n}, \eqref{ansatz I delta}}{\leq_{n, S, \alpha}} \e \big( 1 + 
\| \fracchi_0 \|_{s + \sigma + \aleph_n(\alpha + 1) + n + \alpha + 3 }^{k_0, \gamma} \big)\,.
\end{align*}
We also claim that 
$[{\bf B}_0^{(I)} \mH |D|^{\frac12}, \Psi_n] \in OPS^{- n- \frac12}$. Indeed by \eqref{bf B0 (I)} we have 
$$
\begin{aligned}
& [{\bf B}_0^{(I)} \mH |D|^{\frac12}, \Psi_n] = \\
&  \begin{pmatrix}
0 & \!\!\!\! \!\!\!\!\!\!\!  a_{12} \mH |D|^{\frac12} \psi_n(x, D) + \psi_n(x, D) a_{12} \mH |D|^{\frac12} \\
- a_{12} \mH |D|^{\frac12} \overline {\psi_n(x, D)} - \overline {\psi_n(x, D)} a_{12} \mH |D|^{\frac12}  & 
\!\!\!\! 0
\end{pmatrix}
\end{aligned}
$$
and  \eqref{estimate composition parameters}, \eqref{stima a 15}, \eqref{stima psi n} imply 
$$ \norma [{\bf B}_0^{(I)} \mH |D|^{\frac12}, \Psi_n] 
\norma_{- n - \frac12 , s, \alpha}^{k_0, \gamma} \leq_{n, S, \alpha} 
\e (1 + \|\fracchi_0 \|_{s + \sigma + \aleph_n(\alpha ) + n + \alpha + 1  }^{k_0, \gamma}).$$ 
In addition the operator $[{\bf R}_n^{(I)},  \Psi_n] \in OPS^{- n - 1} \subset OPS^{- n - \frac12}$ because  (see \eqref{cal Rn I}, \eqref{defPhi-n})
$$
\begin{aligned}
& [{\bf R}_n^{(I)},  \Psi_n] = \\
& \begin{pmatrix}
 0 & \!\!\!\!\!\!\!\!  r_n^{(I)}(x, D) \psi_n(x, D) - \psi_n(x, D) \overline {r_n^{(I)}(x, D)} \\
 \overline {r_n^{(I)}(x, D)} \overline {\psi_n(x, D)} - \overline {\psi_n(x, D)} r_n^{(I)}(x, D) & \!\!\!\!\!\! 0
\end{pmatrix}
\end{aligned}
$$
and \eqref{estimate composition parameters}, \eqref{stima induttiva disaccoppiamento}, \eqref{stima psi n} imply 
$$
\norma [{\bf R}_n^{(I)}, \Psi_n] 
\norma_{-n-\frac12,s, \alpha}^{k_0, \gamma} \leq \norma [{\bf R}_n^{(I)}, \Psi_n] 
\norma_{-n-1,s, \alpha}^{k_0, \gamma} \leq_{n, S, \alpha} \e 
\big(1 + \| \fracchi_0  \|_{s +\sigma + \aleph_n(\alpha)+ n  + \alpha + 1}^{k_0, \gamma} \big) \, .
$$ 
Moreover
$ \Dom \Psi_n \in OPS^{- n - 1} \subset OPS^{- n - \frac12} $ satisfies 
$$
\begin{aligned}
\norma \Dom \Psi_n 
 \norma_{-n- \frac12,s, \alpha}^{k_0, \gamma}  \leq \norma \Dom \Psi_n 
 \norma_{-n-1,s, \alpha}^{k_0, \gamma} & \lessdot \norma  \Psi_n 
 \norma_{-n-1, s + 1, \alpha}^{k_0, \gamma} \\
&  \leq_{n, S, \alpha} \e (1 + \| \fracchi_0 \|_{s + \sigma + 
\aleph_n(\alpha) + 1}^{k_0, \gamma})
\end{aligned}
$$
by \eqref{stima psi n}. Finally 
 $ {\bf R}_n^{(II)} \Psi_n \in OPS^{- 2 n - \frac12} \subset OPS^{- n - \frac12} $ and by \eqref{estimate composition parameters} 
 (applied with $m = \frac12 - n $, $m' = - n - 1$), \eqref{stima induttiva disaccoppiamento}, \eqref{stima psi n} we have 
$$
\norma {\bf R}_n^{(II)} \Psi_n  \norma_{-n- \frac12, s, \alpha}^{k_0, \gamma} \leq  
\norma {\bf R}_n^{(II)} \Psi_n  \norma_{-2 n- \frac12, s, \alpha}^{k_0, \gamma}
\leq_{n, S, \alpha} \e (1 + \| \fracchi_0 \|_{s + \sigma + \aleph_n(\alpha) + n + \alpha + \frac12 }^{k_0, \gamma})\,.
$$
Collecting all the previous estimates we deduce that  ${\bf R}_{n + 1}^* $ defined in  \eqref{bf R n + 1-star} 
is in $ OPS^{- n - \frac12} $   and
\be\label{finale-Rn+1-star}
\norma {\bf R}_{n + 1}^* \norma_{- n - \frac12, s, \alpha}^{k_0, \gamma} 
\leq_{n, S, \alpha} \e (1 + \| \fracchi_0 \|_{s + \sigma +  \aleph_n(\alpha + 1) + n + \alpha + 4 }^{k_0, \gamma}) \, . 
\ee
Now \eqref{estimate composition parameters} (applied with $m = 0$, $m' = - n - \frac12 $), 
Lemma \ref{Neumann pseudo diff},  \eqref{stima psi n},  \eqref{finale-Rn+1-star} imply 
\begin{align*}
\norma {\bf R}_{n + 1} \norma_{-n - \frac12 , s, \alpha}^{k_0, \gamma} & =  \norma \Phi_n^{- 1} {\bf R}_{n + 1}^* 
 \norma_{-n - \frac12 , s, \alpha}^{k_0, \gamma} \\
& \leq_{n, s, \alpha} \norma \Phi_n^{- 1} \norma_{0, s, \alpha}^{k_0, \gamma} 
\norma {\bf R}_{n + 1}^* \norma_{-n - \frac12 , s_0 + \alpha , \alpha}^{k_0, \gamma} + 
\norma \Phi_n^{- 1} \norma_{0, s_0, \alpha}^{k_0, \gamma} 
\norma {\bf R}_{n + 1}^* \norma_{-n - \frac12, s + \alpha , \alpha}^{k_0, \gamma} \\
& \leq_{n, S, \alpha} \e \big(1 + \| \fracchi_0 \|_{s + \sigma +  \aleph_n(\alpha + 1) +  n + 2 \alpha + 4 }^{k_0, \gamma} \big)\,
\end{align*}
 which is \eqref{stima cal R n+1}, recalling \eqref{definizione kn(alpha)}. 
 \\[1mm]
{\sc  Proof of  \eqref{stima derivata i cal R n+1}}.  
First we estimate $\partial_i {\bf R}_{n + 1}^*$ in \eqref{bf R n + 1-star}. 
The operator  $ \pa_i {\bf R}_{T, \psi_n} $  satisfies  \eqref{derivate i R T psi n}. 
Then we have 
 $$
 \partial_i [{\bf B}_1 \partial_x, \Psi_n][\hat \imath] = [\partial_i {\bf B}_1[\hat \imath] \partial_x, \Psi_n] + [{\bf B}_1 \partial_x , \partial_i \Psi_n [\hat \imath]]\, .
 $$
 Hence  Lemma \ref{lemma tame norma commutatore} (with $m = 1$, $m' = - n - 1$), the estimates 
 of  $ a_{11} $ in 
\eqref{stima a 15}, \eqref{stima derivata i a 15},  \eqref{stima psi n}, \eqref{derivate i psi n}, imply 
 $$
 \begin{aligned}
 \norma \partial_i [{\bf B}_1 \partial_x, \Psi_n] [\hat \imath] \norma_{- n - \frac12, s_1, \alpha} 
 & 
 \leq \norma \partial_i [{\bf B}_1 \partial_x, \Psi_n] [\hat \imath] \norma_{- n - 1, s_1, \alpha} \\
&  \leq_{n, S, \alpha} \e \| \hat \imath \|_{s_1 + \sigma + 
\aleph_n(\alpha + 1) + n + \alpha + 3}\,.
\end{aligned}
 $$
 The terms $\partial_i [{\bf B}_0^{(I)} \mH |D|^{\frac12}, \Psi_n]$, $\partial_i [{\bf R}_n^{(I)}, \Psi_n]$ may be estimated similarly. In addition 
 \begin{align}
 \norma \partial_i \big(\Dom \Psi_n \big) [\hat \imath]\norma_{- n - \frac12, s_1, \alpha} & \leq \norma \partial_i \big(\Dom \Psi_n \big) [\hat \imath]\norma_{- n - 1, s_1, \alpha} \lessdot  \norma \partial_i  \Psi_n [\hat \imath]\norma_{- n - 1, s_1 + 1 , \alpha} \nonumber\\
 & \stackrel{\eqref{derivate i psi n}}{\leq_{n, S, \alpha}} \e \| \hat \imath\|_{s_1 + \sigma + \aleph_n(\alpha) + 1} \, . \nonumber 
 \end{align}
Finally $\norma \partial_i ({\bf R}_n^{(II)} \Psi_n) [\hat \imath] \in OPS^{- 2 n - \frac12} \subset OPS^{- n - \frac12}$.
Hence applying \eqref{estimate composition parameters}  with $m =  - n + \frac12 $, $m' =  - n - 1$,  and using 
\eqref{stima induttiva disaccoppiamento}, \eqref{stima induttiva disaccoppiamento derivate in i}, \eqref{stima psi n},\eqref{derivate i psi n} we get 
$$
\begin{aligned} 
\norma \partial_i ({\bf R}_n^{(II)} \Psi_n) [\hat \imath] \norma_{- n - \frac12, s_1, \alpha}  
& \leq \norma \partial_i ({\bf R}_n^{(II)} \Psi_n) [\hat \imath] \norma_{- 2 n - \frac12, s_1, \alpha} \\
& \leq_{n, S, \alpha} \e \|\hat \imath \|_{s_1 + \sigma + \aleph_n(\alpha) +n+ \alpha + \frac12} \, .
\end{aligned}
$$
Collecting the previous bounds 
we conclude that 
$$
\norma  \partial_i {\bf R}_{n + 1}^* [\hat \imath] \norma_{- n - \frac12, s_1, \alpha} 
\leq_{n, S, \alpha} \e \| \hat \imath \|_{s_1 + \sigma +  \aleph_n(\alpha + 1) + n + \alpha + 4} 
$$ 
and  
the estimate \eqref{stima derivata i cal R n+1} follows by 
 $$
 \begin{aligned}
&  \partial_i {\bf R}_{n + 1}[\hat \imath] = \partial_i \big(\Phi_n^{- 1} {\bf R}_{n + 1}^* \big) [\hat \imath]  
 = 
 \partial_i \Phi_n^{- 1} [\hat \imath] {\bf R}_{n + 1}^* + \Phi_n^{- 1} \partial_i {\bf R}_{n + 1}^* [\hat \imath] 
 \quad \text{and} \\
 &  \partial_i \Phi_n^{- 1}[\hat \imath] = - \Phi_n^{- 1} \partial_i \Phi_n [\hat \imath] \Phi_n
 \end{aligned}
 $$
applying \eqref{estimate composition parameters} (with $m = 0$, $m' = - n - \frac12$), Lemma \ref{Neumann pseudo diff} and the estimates \eqref{stima psi n}, \eqref{derivate i psi n}.
\end{proof}

By \eqref{defin:Ln+1} and \eqref{stima cal R n+1}-\eqref{stima derivata i cal R n+1}   
the operator $ {\mathcal L}_{n+1} $ has the same form \eqref{cal Ln disaccoppiamento}-\eqref{cal Rn II} 
with $ {\bf R}_{n+1}^{(I)} $, $ {\bf R}_{n+1}^{(II)} $ 
that satisfy the estimates \eqref{stima induttiva disaccoppiamento}-\eqref{stima induttiva disaccoppiamento derivate in i}
at the step $ n+  1$. Hence we can repeat iteratively the procedure of 
Lemmata \ref{lemma-bassan-dec} and \ref{stima-Rn+1-dec}. Applying it $ M $-times ($ M $ will be fixed in
\eqref{relazione mathtt b N}) we derive the following proposition.

\begin{proposition}\label{Lemma finale decoupling}
The real invertible map $ {\bf \Phi}_M := \Phi_4 \circ \ldots \circ \Phi_{M+4} $  satisfies  the estimate 
\begin{equation}\label{definizione bf PhiN}
\begin{aligned}
& \norma {\bf \Phi}_M^{\pm 1} - {\mathbb I}_2 \norma_{0,s,0}^{k_0, \gamma}\,,\, \norma ({\bf \Phi}_M^{\pm 1} - {\mathbb I}_2)^* \norma_{0,s,0}^{k_0, \gamma}  \leq_{S, M } 
\e (1 + \|\fracchi_0 \|_{s + \sigma + \aleph_M (0)}^{k_0, \gamma}) , \\
&  \qquad \qquad \forall s_0 \leq s \leq S 
\end{aligned} 
\end{equation}
and conjugate $ {\mathcal L}_4 $ to the real, even and reversible operator 
\begin{equation}\label{cal LN decoupling}
\begin{aligned}
{\mathcal L}_M & := {\bf \Phi}_M^{- 1} {\mathcal L}_4 {\bf \Phi}_M =  \Dom {\mathbb I}_2 + \ii \mathtt m_3
 {\bf T}(D) + {\bf B}_1(\vphi, x) \partial_x  \\
 & + \ii {\bf B}_0^{(I)}(\vphi, x) \mH |D|^{\frac12}  +  {\bf R}_M^{(I)} + {\bf R}_M^{(II)}
  \end{aligned}
\end{equation}
where the remainders 
\be\label{resti prima Egorov}
\begin{aligned}
& {\bf R}_M^{(I)} := \begin{pmatrix}
r_M^{(I)}(\vphi, x, D) & 0 \\
0 & \overline {r_M^{(I)}(\vphi, x, D)}
\end{pmatrix} \in OPS^0  \,, \\
&  
{\bf R}_M^{(II)} := \begin{pmatrix}
0 & {\mathcal R}_M^{(II)} \\
 \overline {\mathcal R}_M^{(II)} & 0
\end{pmatrix}  \in OPS^{\frac12 - M} 
\end{aligned}
\ee
satisfy the estimates 
\begin{equation}\label{stima resti prima Egorov}
\norma {\bf R}_M^{(I)} \norma_{0, s, \alpha}^{k_0, \gamma} + \norma {\bf R}_M^{(II)} 
 \norma_{ - M + \frac12 , s, \alpha}^{k_0, \gamma} \leq_{S,\a}  
\e \big(1 + \| \fracchi_0 \|_{s + \sigma + \aleph_M(\alpha)}^{k_0, \gamma} \big)\,, \ \  \forall s_0 \leq s \leq S\,,
\end{equation}
and the constant $ \aleph_M (\alpha)$ is defined recursively by \eqref{definizione kn(alpha)}. 
Moreover 
\begin{align}\label{stima derivate resti prima Egorov}
& \norma \partial_i {\bf R}_M^{(I)} [\hat \imath] \norma_{0, s_1, \alpha} + \norma  \partial_i {\bf R}_M^{(II)} [\hat \imath]
 \norma_{ - M + \frac12 , s_1 , \alpha} \leq_{M, S,\a}  
\e \| \hat \imath \|_{s_1 + \sigma + \aleph_M(\alpha)}  \\
& \label{bf PhiN derivate in i}
\norma \partial_i {\bf \Phi}_M^{\pm 1} [\hat \imath] \norma_{0, s_1, 0} \,,\,\norma \partial_i ({\bf \Phi}_M^{\pm 1})^* [\hat \imath] \norma_{0, s_1, 0}  \leq_{M, S}  
\e \| \hat \imath \|_{s_1 + \sigma + \aleph_M(0)}\,. 
\end{align}
\end{proposition}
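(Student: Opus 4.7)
The strategy is to verify that the operator $\mathcal{L}_4$ produced in \eqref{cal L 4} (after dropping $\mathbf{R}^\bot_4$) fits the inductive framework \eqref{cal Ln disaccoppiamento}--\eqref{stima induttiva disaccoppiamento derivate in i} with index $n = 0$, and then to iterate Lemmata \ref{lemma-bassan-dec}--\ref{stima-Rn+1-dec} exactly $M$ times. The initialization is immediate: the diagonal block term $\mathbf{R}^{(I)}_0 := \mathbf{R}_4^{(I)}$ is of order $0$, while the off-diagonal part $\ii \mathbf{B}_0^{(II)}(\vphi,x)\mathcal{H}|D|^{1/2} + \mathbf{R}_4^{(II)}$ sits in $OPS^{1/2}$ and, by the estimates \eqref{r 4 (I) (II)}--\eqref{stima derivate i r 4 (I) (II)} and \eqref{stima a 15}--\eqref{stima derivata i a 15}, satisfies \eqref{stima induttiva disaccoppiamento}--\eqref{stima induttiva disaccoppiamento derivate in i} with $n = 0$.

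Given $\mathcal{L}_n$ in the form \eqref{cal Ln disaccoppiamento}, Lemma \ref{lemma-bassan-dec} constructs $\Psi_n \in OPS^{-n-1}$ solving the homological equation \eqref{ourgoaln}, so that the conjugate \eqref{defin:Ln+1} takes the desired form with a new total remainder $\mathbf{R}_{n+1}$ controlled by Lemma \ref{stima-Rn+1-dec}. I would then decompose $\mathbf{R}_{n+1} = \mathbf{R}_{n+1}^{\mathrm{diag}} + \mathbf{R}_{n+1}^{\mathrm{off}}$ into its block-diagonal and block-off-diagonal parts, and define
\begin{equation*}
\mathbf{R}^{(I)}_{n+1} := \mathbf{R}^{(I)}_n + \mathbf{R}_{n+1}^{\mathrm{diag}}, \qquad \mathbf{R}^{(II)}_{n+1} := \mathbf{R}_{n+1}^{\mathrm{off}},
\end{equation*}
noting that both pieces inherit the $OPS^{-n-1/2}$ bound \eqref{stima cal R n+1}--\eqref{stima derivata i cal R n+1}, which in particular gives the required $OPS^0$ bound on $\mathbf{R}^{(I)}_{n+1}$ (via \eqref{crescente-m-neg}) and the $OPS^{1/2-(n+1)}$ bound on $\mathbf{R}^{(II)}_{n+1}$. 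The inductive constant $\aleph_{n+1}(\alpha)$ defined in \eqref{definizione kn(alpha)} is precisely what Lemma \ref{stima-Rn+1-dec} produces, so the hypothesis \eqref{stima induttiva disaccoppiamento}--\eqref{stima induttiva disaccoppiamento derivate in i} is propagated. Applying this step $M$ times yields the operator $\mathcal{L}_M$ in \eqref{cal LN decoupling} with remainders \eqref{resti prima Egorov} satisfying \eqref{stima resti prima Egorov}--\eqref{stima derivate resti prima Egorov}. Reality, evenness and reversibility are preserved at each step because each $\Phi_n$ is real, even and reversibility preserving (Lemma \ref{lemma-bassan-dec}).

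To obtain the bounds \eqref{definizione bf PhiN} and \eqref{bf PhiN derivate in i} for the composition $\mathbf{\Phi}_M = \Phi_4 \circ \cdots \circ \Phi_{M+4}$, I would argue by induction on the number of factors: writing $\mathbf{\Phi}_M = (I+\Psi_4)\cdots(I+\Psi_{M+4})$, expanding and using that each $\Psi_n \in OPS^{-n-1} \subset OPS^{-1}$ satisfies the smallness bound \eqref{stima psi n}, the composition estimate \eqref{estimate composition parameters} (at order $m = m' = 0$) controls each product. The inverse $\mathbf{\Phi}_M^{-1}$ is handled by applying Lemma \ref{Neumann pseudo diff} to each factor $\Phi_n = I+\Psi_n$ (whose smallness \eqref{condizione di piccolezza Neumann norma pseudo} follows from \eqref{ansatz I delta} with $\e$ small), and the estimates on adjoints follow from Lemma \ref{stima pseudo diff aggiunto}. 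The $\partial_i$-bounds \eqref{bf PhiN derivate in i} follow in the same way from the Leibniz rule, invoking \eqref{derivate i psi n} in place of \eqref{stima psi n}.

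The main obstacle is bookkeeping the growth of the loss-of-derivative index $\aleph_n(\alpha)$ across the $M$ iterations and ensuring consistency of the various constants $\sigma$. Since $M$ will ultimately be chosen finite and independent of the Sobolev index $s$ (fixed by the reducibility requirement \eqref{relazione mathtt b N}), the cumulative loss $\aleph_M(\alpha)$ is a fixed constant depending only on $M$ and $\alpha$, so the tame bounds remain structurally of the declared form. One must also verify that the smallness condition \eqref{condizione di piccolezza Neumann norma pseudo} required to invert each $\Phi_n$ holds uniformly in $n \leq M+4$, which is ensured because $\e\gamma^{-(1+k_1)}$ is taken small and each $\norma \Psi_n\norma_{0,s_0+\alpha,\alpha}^{k_0,\gamma}$ is bounded by $\e$ times a constant depending on $M$, $S$ and $\alpha$ via \eqref{stima psi n} and the ansatz \eqref{ansatz I delta}.
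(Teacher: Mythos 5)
Your proposal is correct and follows essentially the same route as the paper: you verify that $\mathcal{L}_4$ initializes the inductive scheme \eqref{cal Ln disaccoppiamento}--\eqref{stima induttiva disaccoppiamento derivate in i}, iterate Lemmata \ref{lemma-bassan-dec} and \ref{stima-Rn+1-dec} exactly $M$ times (absorbing the block-diagonal part of each new remainder into ${\bf R}^{(I)}$, the off-diagonal part becoming the new ${\bf R}^{(II)}$), and obtain \eqref{definizione bf PhiN}, \eqref{bf PhiN derivate in i} by composing the bounds \eqref{stima psi n}, \eqref{derivate i psi n} on each $\Psi_n$ with the composition estimate, Neumann-series inversion of each factor and Lemma \ref{stima pseudo diff aggiunto} for the adjoints, which is exactly the paper's argument.
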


\begin{proof}
Let us prove \eqref{definizione bf PhiN}. 
For all $4 \leq n \leq M + 4$,  $s_0 \leq s \leq S$, we have 
$$
\begin{aligned}
\norma \Phi_n - {\mathbb I}_2 \norma_{0,s,0}^{k_0, \gamma} 
\stackrel{\eqref{defPhi-n}} = 
\norma \Psi_n \norma_{0,s,0}^{k_0, \gamma} & \stackrel{\eqref{stima psi n}}{\leq_S} \e \big(1 + \| \fracchi_0 \|_{s + \sigma + \aleph_n(0)}^{k_0, \gamma} \big) \\
& \leq_S \e \big(1 + \| \fracchi_0 \|_{s + \sigma + \aleph_M (0)}^{k_0, \gamma} \big) 
\end{aligned}
$$
and  \eqref{definizione bf PhiN} follows as in the proof of Corollary 4.1 in \cite{BBM-Airy}. The estimate on the adjoint operator 
$ ( {\bf \Phi}_M^{\pm 1} - {\mathbb I}_2)^* $ follows as well since  Lemma \ref{stima pseudo diff aggiunto} implies 
$\norma (\Phi_n^{\pm 1} - {\mathbb I}_2)^* \norma_{0, s, 0}^{k_0, \gamma} \leq_M 
\norma \Phi_n^{\pm 1} - {\mathbb I}_2 \norma_{0, s + s_0, 0}^{k_0, \gamma} $.
Also \eqref{bf PhiN derivate in i} is proved analogously. 
\end{proof}

The operator $ {\mathcal L}_M $ in \eqref{cal LN decoupling} is block-diagonal  up to the smoothing remainder
  $ {\bf R}_M^{(II)}  \in OPS^{\frac12 - M} $. The prize which has been paid is that $  {\bf R}_M^{(II)} $ 
  depends on  $ \aleph_M (\alpha) $-derivatives of the approximate  solution $ \fracchi  $, i.e.   on 
  $ \| \fracchi \|_{s + \sigma + \aleph_M (\alpha)}^{k_0, \gamma} $ in \eqref{stima resti prima Egorov}.
In any case, the number of regularizing steps $ M $ is fixed (independently on $ s $, see \eqref{relazione mathtt b N}, \eqref{alpha beta}), 
 determined by the KAM reducibility scheme in section \ref{sec: reducibility}. 

\section{Elimination of order $ \pa_x $: Egorov method}\label{egorov}

The goal of this section is 
to remove  ${\bf B}_1(\vphi, x) \partial_x$ from the operator 
$\mL_M $ defined in \eqref{cal LN decoupling}. We rewrite 
\begin{equation}\label{cal L4 pseudo}
\mL_M = \Dom {\mathbb I}_2 + {\bf P}_0 (\vphi, x, D) + {\bf R}_M^{(II)} 
\end{equation}
where we denote the whole block-diagonal part by 
\begin{equation}\label{defP0}
\begin{aligned}
 {\bf P}_0(\vphi, x, D) & :=  \ii \mathtt m_3 {\bf T} (D) + {\bf B}_1(\vphi, x) \pa_x 
+ \ii {\bf B}_0^{(I)} (\vphi, x) \mH | D|^{\frac12} + {\bf R}_M^{(I)}  \\
&  =
\begin{pmatrix}
{\rm Op}(p_0) & 0 \\
0 & \overline {{\rm Op}( p_0)}
\end{pmatrix} 
\end{aligned}
\end{equation}
and, by \eqref{definizione T3 A1}, \eqref{def T}, \eqref{bf B1}, \eqref{bf B0 (I)}, \eqref{resti prima Egorov}, the 
associated symbol is
\be \label{p0 Egorov iniziale}
\begin{aligned}
\quad p_0(\vphi, x, \xi)&  :=  \ii \big( \mathtt m_3 T( \xi)  + a_{11}(\vphi, x) \xi  \big)  \\
& \quad + a_{12}(\vphi, x) \chi(\xi) {\rm sign}(\xi) |\xi|^{\frac12}   +  r_M^{(I)}(\vphi, x, \xi) \in S^{3/2} 
\end{aligned}
\ee
where $ T(\xi) =  \chi(\xi)|\xi |^{1/2} (1+\kappa \xi^2)^{1/2} $.
\\[1mm]
{\bf Egorov approach.}
We transform  ${\mathcal L}_M $ in \eqref{cal L4 pseudo}  by the  {\it flow} of the system of pseudo-PDEs 
\begin{equation}\label{flusso Phi}
\begin{aligned}
& \partial_t \begin{pmatrix}
u\\
\overline u 
\end{pmatrix}
 = \ii {\bf a}(\vphi, x) | D |^{\frac12} \begin{pmatrix}
 u \\
 \overline u
 \end{pmatrix} \qquad {\rm where}  \\
 & \qquad {\bf a}(\vphi, x) := \begin{pmatrix}
a(\vphi, x) & 0 \\ 
0 & - a(\vphi, x)
\end{pmatrix}
\end{aligned}
\end{equation}
and $ a(\vphi, x ) $ is a {\it real} valued function to be determined, see \eqref{definizione a}. 
The flow ${\bf \Phi}(\vphi, t) $ of \eqref{flusso Phi} has the block-diagonal form 
\be\label{flusso-diagonale}
{\bf \Phi}(\vphi, t) := \begin{pmatrix}
\Phi(\vphi, t) & 0 \\
0 & \overline \Phi(\vphi, t)
\end{pmatrix} 
\ee
where $ \Phi(\vphi, t) $ is the flow of the scalar linear pseudo-PDE 
\be\label{pseudo-PDE} 
\partial_t u = \ii a(\vphi, x) |D|^{\frac12} u \, . 
\ee
In the Appendix we prove that its flow $ \Phi (\vphi, t) : H^s \mapsto H^s  $  
is well defined in the Sobolev spaces $ H^s $,  see Propositions \ref{Prop0-flow}, \ref{Prop1-flow}. 
The flow $ \Phi (\vphi, t)   $ solves 
\be \label{flow-propagator}
\begin{aligned}
& \begin{cases}
\pa_t \Phi (\vphi, t ) = \ii A (\vphi) \Phi (\vphi, t ) \\ 
\Phi (\vphi, 0 ) = {\rm Id} \, , 
\end{cases} \\
&    A(\vphi)  :=   {\mathfrak a}(\vphi, x, D) \, ,   \ \   {\mathfrak a} (\vphi, x, \xi ) := a(\vphi, x) \chi(\xi) | \xi |^{\frac12} \, , 
\end{aligned}
\ee
and, since \eqref{pseudo-PDE} is autonomous, it  satisfies the group property 
\be\label{group-flow}
\Phi (\vphi, t_1 + t_2 ) =  \Phi (\vphi, t_1  ) \circ  \Phi (\vphi, t_2 ) \, ,  \qquad 
\Phi (\vphi, t )^{-1} = \Phi (\vphi, - t ) \, . 
\ee
Moreover, assuming that $ a  (\omega, \kappa, \cdot) $ is $ {k_0} $-times differentiable smooth with respect to the parameters 
$ \om  $ and $\kappa$, the flow $ \Phi (\vphi, t, \om, \kappa )  $ is also $ k_0 $-times differentiable with 
respect to $ \om $ and $\kappa$ see Proposition \ref{lemma:tame derivate flusso}. 
If $ a(\vphi, x)   $ is odd$ (\vphi)$-\even{$(x)$} 
then the flow $ {\bf \Phi} (\vphi, t ) $ is even and reversibility preserving. 

\smallskip
We denote for simplicity $\Phi:= \Phi(\vphi) := \Phi(\vphi, 1)$ the time-$ 1 $ flow map of \eqref{pseudo-PDE}
 and ${\bf \Phi} := {\bf \Phi}(\vphi) := {\bf \Phi}(\vphi, 1)$ the time-$ 1 $ flow map of the system \eqref{flusso Phi}. 
The transformed operator is
\begin{equation}\label{coniugio flusso di una PDE iperbolica}
\begin{aligned}
{\mathcal L}_M^{(1)}  & :={\bf \Phi} {\mathcal L}_M {\bf \Phi}^{- 1}  = 
\Dom {\mathbb I}_2 + {\bf \Phi}(\vphi) {\bf P}_0(\vphi, x, D) {\bf \Phi}(\vphi)^{- 1}   \\
& \quad +  
{\bf \Phi}(\vphi) \Dom \{{\bf \Phi}(\vphi)^{- 1}\}   +  {\bf \Phi} {\bf R}_M^{(II)} {\bf \Phi}^{- 1}\,. 
\end{aligned}
\end{equation}
The  terms $ {\bf \Phi}(\vphi) {\bf P}_0(\vphi, x, D) {\bf \Phi}(\vphi)^{- 1} $ and 
$ {\bf \Phi}(\vphi) \, \Dom \{{\bf \Phi}(\vphi)^{- 1}\} $ are  block-diagonal. 
They are classical pseudo-differential operators and 
shall be analyzed by an Egorov type argument.
On the other hand the off-diagonal 
term $ {\bf \Phi} {\bf R}_M^{(II)} {\bf \Phi}^{- 1} $ 
is very regularizing  and satisfy tame estimates.  The contents of this section are summarized in 
Proposition \ref{Prop:Egorov}. 
\\[1mm]
{ \bf Analysis of $ {\bf \Phi}(\vphi) {\bf P}_0(\vphi, x, D) {\bf \Phi}(\vphi)^{- 1} $ in \eqref{coniugio flusso di una PDE iperbolica}.} 
\\[1mm]
We first consider  ${\bf P} (\vphi, t) := {\bf \Phi}(\vphi, t) {\bf P}_0 {\bf \Phi}(\vphi, t)^{- 1} $. By
\eqref{defP0} and  \eqref{flusso-diagonale} it reads
  \be\label{defPt}
  \begin{aligned}
  & {\bf P} (\vphi, t) := \begin{pmatrix}
   P(\vphi, t) & 0 \\
   0 & \overline P(\vphi, t)
   \end{pmatrix}\,,  \\
   & P(\vphi, t) := \Phi(\vphi, t) p_0(\vphi, x, D) \Phi^{- 1}(\vphi, t) \, .
   \end{aligned}
    \ee 
The operator $ {\bf P} (\vphi, t) $  solves the vector valued  Heisenberg equation
$$
\begin{cases}
\partial_t {\bf P} (\vphi, t) = \ii [ {\bf a}(\vphi, x) |D|^{\frac12}, {\bf P} (\vphi, t)] \cr 
{\bf P} (\vphi, 0) = {\bf P}_0(\vphi) \, , 
\end{cases}
$$
namely  the operator $P(\vphi, t)$ solves the usual Heisenberg equation 
\begin{equation}\label{equazione Egorov}
\begin{aligned}
& \begin{cases}
\partial_t P (\vphi, t) = \ii [ A(\vphi) , P (\vphi, t)]    \cr 
P (\vphi, 0) = P_0 := p_0(\vphi, x, D)  
\end{cases}  \\
&{\rm where} \qquad  A (\vphi ) := {\mathfrak a}(\vphi, x, D) = a(\vphi, x) |D|^{\frac12} \, . 
\end{aligned}
\end{equation}
We use the notation $ |D|^{\frac12} := {\rm Op}(\chi(\xi) |\xi|^{\frac12})$ as in \eqref{definizione |D| m}.

We look for an approximate solution $ Q(\vphi, t) := q(t, \vphi, x, D)  $ of  \eqref{equazione Egorov}  with a symbol of the form (expanded in decreasing symbols)
\begin{equation}\label{espansione simbolo P(t)}
\begin{aligned}
& q(t, \vphi, x, \xi) = \sum_{n = 0}^M  q_n (t, \vphi, x, \xi) \, , \\
& q_n (t,\vphi, x, \xi) \in S^{\frac12 (3 - n)} \, , \quad \forall n  = 0, \ldots, M \, . 
\end{aligned}
\end{equation}
The order of the commutator $ [ A(\vphi), Q(\vphi) ] $ is  strictly less than the order of $ Q(\vphi) $. 
Let $   {\mathfrak a} \star q $ denote the symbol of 
the commutator, i.e.   
$  [ A(\vphi), Q(\vphi )] := {\rm Op} ( {\mathfrak a} \star q) $, see \eqref{symbol commutator}.

\begin{lemma}\label{proprieta sigma(q,A)}
{\bf (Commutator symbol)} 
If $ q \in S^{ m}$, $ m \in \R $, then $    {\mathfrak a} \star q \in S^{ m - \frac{1}{2}}$ and  
\begin{align*}
\norma [ A, Q ]  \norma_{m- \frac12, s, \alpha}^{k_0, \gamma} = 
\norma {\rm Op}(  {\mathfrak a} \star q )  \norma_{m- \frac12, s, \alpha}^{k_0, \gamma} & 
\leq_{m, s, \alpha} \norma {\rm Op}(q)  \norma_{m, s  + \alpha + 3, \alpha + 1}^{k_0, \gamma} 
\| a\|_{s_0 + |m| + \alpha  + 2 }^{k_0, \gamma}  \\
& \qquad \ + \norma {\rm Op}(q) \norma_{m, s_0  + \alpha + 3, \alpha + 1}^{k_0, \gamma} 
\| a\|_{s +  |m| + \alpha  + 2 }^{k_0, \gamma} \, . 
\end{align*}
\end{lemma}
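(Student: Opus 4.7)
The plan is to deduce both conclusions directly from the general commutator bound in Lemma \ref{lemma tame norma commutatore} together with a dedicated estimate for the pseudo-differential norm of the particular operator $A$. Observe first that the symbol factorises as
$$ \mathfrak a(\vphi,x,\xi)=a(\vphi,x)\,g(\xi),\qquad g(\xi):=\chi(\xi)|\xi|^{\frac12}\in S^{\frac12}, $$
so that $A\in OPS^{\frac12}$ and $Q\in OPS^{m}$ produce a commutator $[A,Q]\in OPS^{m+\frac12-1}=OPS^{m-\frac12}$, which gives the first assertion of the statement. The norm estimate then reduces to combining two inputs which are already available.

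First I would apply \eqref{stima commutator parte astratta} (with $m'=\tfrac12$, $|m'|=\tfrac12$) to obtain
\begin{align*}
\norma [A,Q]\norma_{m-\frac12,s,\alpha}^{k_0,\gamma}
&\leq_{m,\alpha,k_0} C(s)\,\norma A\norma_{\frac12,\,s+2+|m|+\alpha,\,\alpha+1}^{k_0,\gamma}\,\norma Q\norma_{m,\,s_0+\frac{5}{2}+\alpha,\,\alpha+1}^{k_0,\gamma}\\
&\qquad\qquad +C(s_0)\,\norma A\norma_{\frac12,\,s_0+2+|m|+\alpha,\,\alpha+1}^{k_0,\gamma}\,\norma Q\norma_{m,\,s+\frac{5}{2}+\alpha,\,\alpha+1}^{k_0,\gamma}.
\end{align*}
Using the monotonicity \eqref{norm-increa} I round $s_0+\tfrac52+\alpha$ up to $s_0+3+\alpha$ (and similarly in $s$), which is the index appearing in the conclusion.

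Second, I would control the pseudo-differential norm of $A$ by the Sobolev norm of its coefficient. Since $\partial_\xi^\beta\mathfrak a(\vphi,x,\xi)=a(\vphi,x)\,\partial_\xi^\beta g(\xi)$ with $|\partial_\xi^\beta g(\xi)|\leq C(\beta)\langle\xi\rangle^{\frac12-\beta}$ (use \eqref{Norm Fourier multiplier} for $g$), one has, for every $0\leq\beta\leq\alpha+1$ and every $\xi\in\R$,
$$ \|\partial_\xi^\beta \mathfrak a(\cdot,\cdot,\xi)\|_s^{k_0,\gamma}\langle\xi\rangle^{\beta-\frac12}\leq C(\beta)\,\|a\|_s^{k_0,\gamma}. $$
By the very definition \eqref{norm1 parameter} this yields $\norma A\norma_{\frac12,\,s,\,\alpha+1}^{k_0,\gamma}\leq C(\alpha)\,\|a\|_s^{k_0,\gamma}$ for every $s\geq 0$. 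Plugging this bound into the previous display with $s$ replaced by $s+2+|m|+\alpha$ (and by $s_0+2+|m|+\alpha$) produces exactly the claimed inequality.

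The calculation is essentially a bookkeeping exercise: the real content is the compatibility between the Sobolev loss $+2+|m|+\alpha$ on $a$ coming from \eqref{stima commutator parte astratta} and the loss $+3+\alpha$ on $Q$; there is no genuine obstacle, since the factorisation $\mathfrak a=a\cdot g$ trivialises the $\xi$-derivatives of the symbol of $A$ and decouples its Sobolev regularity from that of $Q$. The only point requiring attention is to track that \eqref{stima commutator parte astratta} indeed demands $\alpha+1$ derivatives in $\xi$ (not $\alpha$), which is why the statement of the lemma has $\alpha+1$ in the $Q$-norm.
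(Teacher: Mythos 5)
Your proof is correct and follows essentially the same route as the paper, whose entire argument is to invoke the commutator estimate \eqref{stima commutator parte astratta} with $m'=\tfrac12$; the extra step you spell out, namely $\norma a|D|^{\frac12}\norma_{\frac12,s,\alpha+1}^{k_0,\gamma}\lessdot\|a\|_s^{k_0,\gamma}$ via the factorisation $\mathfrak a=a\,g(\xi)$, is exactly what the paper uses implicitly (it also follows from Lemma \ref{lemma composizione multiplier} and \eqref{norma a moltiplicazione}). Swapping the roles of the two operators in \eqref{stima commutator parte astratta} is harmless, since both pairings of high/low norms appear in that bound, and your rounding $s_0+\tfrac52+\alpha\leq s_0+3+\alpha$ via \eqref{norm-increa} is fine.
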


\begin{proof}
By Lemma \ref{lemma tame norma commutatore} with  $m' = 1/2 $.
\end{proof}

We solve approximately the equation \eqref{equazione Egorov} in decreasing orders. 
We define  $ q_0 $ as the solution of 
\begin{equation}\label{equazione per q0}
\begin{cases}
\partial_t q_0(t, \vphi, x, \xi) = 0 \\
q_0(0, \vphi, x, \xi) = p_0(\vphi, x, \xi)\, , 
\end{cases}
\end{equation}
namely 
\begin{equation}\label{definizione q0}
q_0(t, \vphi,  x, \xi) =  p_0(\vphi, x, \xi) \in S^{\frac32}\,, \qquad \forall t \in [0, 1]\,.
\end{equation}
Then we define inductively the symbols $q_n (t, \vphi, x, \xi)$, $ n \geq 1 $,  as the solutions of
\begin{equation}\label{equazione per qk}
\begin{cases}
\partial_t q_n = \ii  {\mathfrak a} \star q_{n - 1}  \\
q_n (0, \vphi, x, \xi) = 0\, , 
\end{cases}
\end{equation}
namely
\begin{equation}\label{definizione qk}
q_n (t, \vphi, x, \xi) = \ii \int_0^t (  {\mathfrak a} \star q_{n - 1})(\tau, \vphi, x, \xi) \, d \tau \, . 
\end{equation} 
Each symbol $ q_n \in S^{\frac12(3 - n)} $, $ \forall n = 0, \ldots, M $. Actually
$  q_0 \in S^{3/2} $ by \eqref{definizione q0}. 
Then, by induction, if $ q_{n - 1} \in S^{\frac12 (3 - (n - 1))}$ we deduce that 
$   {\mathfrak a} \star q_{n - 1} \in S^{\frac12(3 - n)}  $ by Lemma \ref{proprieta sigma(q,A)}. The 
quantitative estimate is given in \eqref{stima qk Egorov}.

\smallskip

We now expand the symbol $ q $  in \eqref{espansione simbolo P(t)}
writing explicitly the terms of order greater than $ 0 $. They come from $q_0 \in S^{\frac32}$, $q_1 \in S^{1}$ and $q_2 \in S^{\frac12}$
(all the symbols $ q_n $, $ n \geq 2 $, are yet in $ S^0 $). 
For that we further expand as in \eqref{Expansion Moyal bracket} 
the symbol of the commutator  as 
\be\label{decomposizione compound}
 ( {\mathfrak a} \star q)(t, \vphi, x, \xi)  = - \ii \{  {\mathfrak a}, q \}(t, \vphi, x,\xi) + {\mathtt r}_{\mathtt 2} ( {\mathfrak a}, q)(t, \vphi, x,\xi) 
\ee
where 
$ \{  {\mathfrak a}, q \} = (\partial_x q)  (\partial_\xi   {\mathfrak a}) - (\partial_\xi q) (\partial_x   {\mathfrak a}) $ is the Poisson bracket 
and $ {\mathtt r}_{\mathtt 2} ( {\mathfrak a}, q)  $ is a lower order symbol. 

\begin{lemma}\label{proprieta r2(q,A)}
{\bf (Lower order commutator symbol)} 
If $ q \in S^{m}$, $ m \in \R $, then $ {\mathtt r}_{\mathtt 2} ( {\mathfrak a}, q) \in S^{m - \frac{3}{2}} $ and  
\begin{align*}
\norma {\rm Op}\big( {\mathtt r}_{\mathtt 2} ( {\mathfrak a}, q) \big) \norma_{m - \frac32, s, \alpha}^{k_0, \gamma} & 
\leq_{m, s, \alpha}  \norma {\rm Op}(q) \norma_{m, s + \alpha + 5, \alpha + 2}^{k_0, \gamma}  \|a\|_{s_0 + |m| + \alpha + 4}^{k_0, \gamma}  \\
& \qquad \ + \norma {\rm Op}(q)
 \norma_{m, s_0 + \alpha + 5, \alpha + 2}^{k_0, \gamma}  \| a\|_{s + |m| + \alpha + 4}^{k_0, \gamma}\,.
\end{align*}
\end{lemma}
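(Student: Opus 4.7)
\medskip

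\noindent\textbf{Proof plan.} The statement is a direct quantitative refinement of the qualitative fact, already recalled in \eqref{Expansion Moyal bracket}, that the Moyal symbol $\mathfrak a \star q$ admits the asymptotic expansion $-\ii\{\mathfrak a,q\} + \mathtt r_{\mathtt 2}(\mathfrak a,q)$ with remainder two orders lower than the leading commutator term. Writing $A := \mathrm{Op}(\mathfrak a)$ and $Q := \mathrm{Op}(q)$, the plan is simply to unfold the definition
\[
\mathtt r_{\mathtt 2}(\mathfrak a, q) \;=\; r_{2,AQ} \;-\; r_{2,QA},
\]
where $r_{2,AB}$ is the remainder of order $m+m'-N$ with $N=2$ appearing in the symbol expansion \eqref{expansion symbol}, and then to apply the already proved tame bound \eqref{stima RN derivate xi parametri} to each piece separately. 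Since $\mathfrak a \in S^{1/2}$ and $q \in S^m$, both remainders belong to $S^{1/2+m-2} = S^{m-3/2}$, which is exactly the target order.

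The first computation applies \eqref{stima RN derivate xi parametri} with $N=2$, $A = \mathrm{Op}(\mathfrak a)$ of order $1/2$, and $B = Q$ of order $m$, yielding a bound of the shape
\[
\norma r_{2,AQ}\norma_{m-3/2,s,\alpha}^{k_0,\gamma} \;\leq_{m,\alpha}\; C(s)\,\norma \mathrm{Op}(\mathfrak a)\norma_{1/2,s,\alpha+2}^{k_0,\gamma}\,\norma Q\norma_{m,s_0+\alpha+9/2,\alpha}^{k_0,\gamma} + (s \leftrightarrow s_0).
\]
Symmetrically, applying \eqref{stima RN derivate xi parametri} with the roles of $A$ and $B$ swapped produces the analogous estimate for $r_{2,QA}$, where now the extra Sobolev loss $2N+|m|+\alpha = 4+|m|+\alpha$ falls on $\mathrm{Op}(\mathfrak a)$. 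Taking the difference and using \eqref{norm-increa} to raise the $\xi$-derivative index to $\alpha+2$ and the Sobolev index uniformly to $\alpha+5$ matches the right-hand side displayed in the statement.

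The final step is to replace the pseudo-differential norm of $\mathrm{Op}(\mathfrak a)$ by a plain Sobolev norm of $a$. Since $\mathfrak a(\vphi,x,\xi) = a(\vphi,x)\,\chi(\xi)|\xi|^{1/2}$ factors as $\mathrm{Op}(a)\circ |D|^{1/2}$ with $|D|^{1/2}$ a pure Fourier multiplier, Lemma \ref{lemma composizione multiplier} together with \eqref{norma a moltiplicazione} yields, for every $s,\alpha$,
\[
\norma \mathrm{Op}(\mathfrak a)\norma_{1/2,s,\alpha}^{k_0,\gamma} \;\leq_{\alpha}\; \norma \mathrm{Op}(a)\norma_{0,s,\alpha}^{k_0,\gamma} \;=\; \|a\|_s^{k_0,\gamma}.
\]
Substituting this into the two preceding bounds and collecting terms produces precisely the asymmetric tame inequality claimed, with Sobolev shift $\alpha+5$ on one factor and $|m|+\alpha+4$ on the other (the latter absorbed into $|m|+\alpha+4$ in the statement).

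There is really no obstacle here: the entire argument is bookkeeping on top of Lemma \ref{lemma stime Ck parametri}, and the only point requiring any care is checking that the loss of derivatives produced by the two applications of \eqref{stima RN derivate xi parametri}, after symmetrisation, is no worse than the one written in the statement; this is transparent because $\mathfrak a$ lives at order $1/2$, so the $|m|$-term in the bound for $r_{2,AQ}$ equals $1/2$ and is dominated by the $|m|+4$ appearing through $r_{2,QA}$.
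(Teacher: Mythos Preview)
Your proposal is correct and follows precisely the same route as the paper: decompose $\mathtt r_{\mathtt 2}(\mathfrak a,q)=r_{2,AQ}-r_{2,QA}$, apply the remainder estimate \eqref{stima RN derivate xi parametri} with $N=2$ to each composition (once with $A=\mathrm{Op}(\mathfrak a)$ of order $1/2$, once with the roles swapped), bound $\norma\mathrm{Op}(\mathfrak a)\norma_{1/2,s,\alpha}^{k_0,\gamma}$ by $\|a\|_s^{k_0,\gamma}$ via Lemma~\ref{lemma composizione multiplier} and \eqref{norma a moltiplicazione}, and finally use the monotonicity \eqref{norm-increa} to merge the two bounds into the displayed form. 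The paper's proof is the one-line version of exactly this computation.
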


\begin{proof}
Apply  \eqref{stima RN derivate xi parametri} to ${\rm Op}(q) \circ {\rm Op}( {\mathfrak a})$ 
and to ${\rm Op}( {\mathfrak a}) \circ {\rm Op}(q)$ with $ N = 2 $ and $ m' = 1/2 $ (and use \eqref{norm-increa}). 
\end{proof}

We now get the expansion of the symbol 
$ q_{\leq 2} (\vphi, x, \xi) := q_{\leq 2}(1, \vphi, x, \xi)  = (q_0 + q_1 + q_2) (1, \vphi, x, \xi) $.

\begin{lemma}\label{ordini maggiori di 0 Egorov} 
{\bf (Expansion of approximate solution)}
The symbol $ q_{\leq 2}  = q_0 + q_1 + q_2 $ has the 
 expansion
\be\label{simboli<2}
q_{\leq 2} = \ii \mathtt m_3 T(\xi) + \ii \big(a_{11} - 
\frac32 \mathtt m_3 \sqrt{\kappa} \, a_x \big) \xi +
\big( \ii  a_{13} + a_{12} \, {\rm sign}(\xi) \big) \chi(\xi)  |\xi|^{\frac12} + r_{q_{\leq 2}}
\ee
where the symbol
\be\label{defrleq2}
r_{q_{\leq 2}}  := r_{q_{\leq 2}}(\vphi,x, \xi)  = 
r_M^{(I)} + r_{{\mathfrak a} p_0 }^{(0)}  +  r_{{\mathfrak a} p_0 }^{(1)}  + r_{{\mathfrak a} p_0 }^{(2)}  \in S^0 
\ee 
is defined  in  
\eqref{rq1q2}, \eqref{r p0 A (1)}, \eqref{r p0 A (3)}, and $ r_M^{(I)} $ in Proposition 
\ref{Lemma finale decoupling},  and  the function 
\begin{equation}\label{definizione a 10}
a_{13} := a_{13}(\vphi, x) :=  \frac12 
(a_{11})_x a -  a_{11} 
a_x - \frac38 \mathtt m_3 \sqrt{\kappa}  a_{xx} a + \frac{3 }{4} \mathtt m_3 \sqrt{\kappa} a_x^2  \,.
\end{equation}
\end{lemma}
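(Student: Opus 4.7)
The idea is to expand $q_0,q_1,q_2$ explicitly in decreasing orders of $\xi$, extracting the principal symbols of order $T(\xi)\sim|\xi|^{3/2}$, $\xi$, and $|\xi|^{1/2}$, and absorbing everything of order $\leq 0$ into the remainder $r_{q_{\leq 2}}$. The analysis rests entirely on the expansion \eqref{Expansion Moyal bracket} of the Moyal star-product ${\mathfrak a}\star q=-\ii\{{\mathfrak a},q\}+{\mathtt r}_{\mathtt 2}({\mathfrak a},q)$ together with the standard order count for Poisson brackets.

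First, $q_0\equiv p_0$ by \eqref{definizione q0}, and \eqref{p0 Egorov iniziale} already produces the three explicit terms $\ii\mathtt m_3 T(\xi)$, $\ii a_{11}\xi$, and $a_{12}\chi(\xi)\,\text{sign}(\xi)|\xi|^{1/2}$ of \eqref{simboli<2}, with $r_M^{(I)}$ going into $r_{q_{\leq 2}}$. Next, from \eqref{definizione qk} and the fact that $q_0$ is $t$-independent, one has $q_1(1,\vphi,x,\xi)=\ii\,{\mathfrak a}\star p_0=\{{\mathfrak a},p_0\}+\ii\,{\mathtt r}_{\mathtt 2}({\mathfrak a},p_0)$, where Lemma~\ref{proprieta r2(q,A)} gives ${\mathtt r}_{\mathtt 2}({\mathfrak a},p_0)\in S^{0}$ which is absorbed into $r_{q_{\leq 2}}$. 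The Poisson bracket $\{{\mathfrak a},p_0\}=(\partial_\xi{\mathfrak a})(\partial_x p_0)-(\partial_x{\mathfrak a})(\partial_\xi p_0)$ is computed directly using ${\mathfrak a}=a\,\chi(\xi)|\xi|^{1/2}$, the derivative $T'(\xi)=\tfrac{3}{2}\sqrt{\kappa}\,\text{sign}(\xi)|\xi|^{1/2}+O(|\xi|^{-1/2})$ for $|\xi|\geq 1$, and the asymptotics $\text{sign}(\xi)|\xi|^{-1/2}\cdot\xi=|\xi|^{1/2}$, $|\xi|^{1/2}\cdot\text{sign}(\xi)|\xi|^{1/2}=\xi$. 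The contribution at order $\xi$ is precisely $-\tfrac{3}{2}\ii\mathtt m_3\sqrt{\kappa}\,a_x\,\xi$, supplying the correction in \eqref{simboli<2}, while the contribution at order $|\xi|^{1/2}$ is $\ii\bigl(\tfrac{1}{2}a(a_{11})_x-a_x a_{11}\bigr)|\xi|^{1/2}$, which will account for the first two summands in $a_{13}$ (cf.\ \eqref{definizione a 10}).

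Then $q_2(1)=\ii\int_0^1 \tau\,\ii\,{\mathfrak a}\star({\mathfrak a}\star p_0)\,d\tau=-\tfrac{1}{2}\,{\mathfrak a}\star({\mathfrak a}\star p_0)$. Applying the same expansion twice,
\[
{\mathfrak a}\star({\mathfrak a}\star p_0)=-\{{\mathfrak a},\{{\mathfrak a},p_0\}\}-\ii\,{\mathtt r}_{\mathtt 2}({\mathfrak a},\{{\mathfrak a},p_0\})+{\mathfrak a}\star{\mathtt r}_{\mathtt 2}({\mathfrak a},p_0),
\]
so that $q_2=\tfrac12\{{\mathfrak a},\{{\mathfrak a},p_0\}\}+S^{-1/2}$ and the last two summands belong to $r_{q_{\leq 2}}$. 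Using that the principal part of $\{{\mathfrak a},p_0\}$ is $-\tfrac{3}{2}\ii\mathtt m_3\sqrt{\kappa}\,a_x\,\xi$ (everything else is in $S^{1/2}$, contributing only to $r_{q_{\leq 2}}$ after a further bracket), a direct computation gives
\[
\tfrac12\{{\mathfrak a},\{{\mathfrak a},p_0\}\}=\ii\mathtt m_3\sqrt{\kappa}\bigl(-\tfrac{3}{8}a\,a_{xx}+\tfrac{3}{4}a_x^{2}\bigr)|\xi|^{1/2}+S^{-1/2},
\]
which supplies the remaining two summands in $a_{13}$. Summing $q_0+q_1+q_2$ and collecting the imaginary coefficients of $|\xi|^{1/2}$ yields exactly \eqref{definizione a 10}.

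The bookkeeping is routine but delicate: one must carefully separate the low-frequency region $|\xi|\lesssim 1$ (where the cut-off $\chi$ is active and $T$, $\text{sign}(\xi)|\xi|^{1/2}$ are not their clean homogeneous counterparts) from the high-frequency region, and verify that all the difference terms arising from $\chi$, together with the $(1-\chi)$-projected parts of $T(\xi)$ and of $\partial_\xi T(\xi)-\tfrac{3}{2}\sqrt{\kappa}\,\text{sign}(\xi)|\xi|^{1/2}$, lie in $S^{-\infty}\subset S^0$ and can be absorbed into $r_{q_{\leq 2}}$. This is the only point that requires some care; the actual algebra is fixed by the structure of ${\mathfrak a}$. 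Once the decomposition \eqref{simboli<2}--\eqref{defrleq2} is established, the symbol $r_{q_{\leq 2}}$ is a finite sum of explicit terms (coming from $r_M^{(I)}$, from ${\mathtt r}_{\mathtt 2}({\mathfrak a},p_0)$, and from the two $S^{-1/2}$ terms produced by the double star-product expansion), each of which will be estimated separately via Lemmata~\ref{proprieta sigma(q,A)}--\ref{proprieta r2(q,A)} when one turns to the quantitative bounds on $q_{\leq 2}$ in the next step of the Egorov argument.
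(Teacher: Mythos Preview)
Your proposal is correct and follows essentially the same approach as the paper: expand $q_0=p_0$, $q_1(1)=\{{\mathfrak a},p_0\}+\ii\,{\mathtt r}_{\mathtt 2}({\mathfrak a},p_0)$, and $q_2(1)=\tfrac12\{{\mathfrak a},\{{\mathfrak a},p_0\}\}+(\text{lower order})$, compute the Poisson brackets explicitly using $\partial_\xi T(\xi)=\tfrac32\sqrt{\kappa}\,\mathrm{sign}(\xi)|\xi|^{1/2}+O(|\xi|^{-3/2})$, and collect the $S^0$ remainders. One small slip: the remainder in your displayed formula for $\tfrac12\{{\mathfrak a},\{{\mathfrak a},p_0\}\}$ should be $S^0$, not $S^{-1/2}$, because the bracket of ${\mathfrak a}\in S^{1/2}$ with the $S^{1/2}$-part of $\{{\mathfrak a},p_0\}$ lands in $S^0$ (as you yourself note parenthetically); this is harmless since $r_{q_{\leq 2}}\in S^0$ is all that is claimed.
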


\begin{proof}
By \eqref{definizione qk},  \eqref{definizione q0}, \eqref{decomposizione compound} we have 
\begin{align}
q_1 (t, \vphi, x, \xi) & = \ii  \int_0^t (  {\mathfrak a} 
\star q_0)(\tau, \vphi, x , \xi)\, d \tau = \ii \,t\, ( {\mathfrak a} \star p_0)(\vphi, x, \xi) \nonumber\\
& =  t\, \{   {\mathfrak a}, p_0 \}(\vphi, x, \xi) + \ii \,t\, {\mathtt r}_{\mathtt 2}( {\mathfrak a}, p_0)( \vphi, x, \xi) \in S^1 \label{espansione q1} 
 \end{align}
and note that $ \mathtt r_{\mathtt 2} ( {\mathfrak a}, p_0) \in S^0 $.  Similarly, using also \eqref{espansione q1}, the  symbol 
\be\label{prima espressione q2}
\begin{aligned}
q_2(1, \vphi,x , \xi) & = \ii \int_0^1 (   {\mathfrak a} \star q_1)(\tau, \vphi,  x, \xi)  d \tau \\
&  = \int_0^1  \{  {\mathfrak a}, q_1 \}(\tau, \vphi, x, \xi) d \tau 
+ \ii  \int_0^1 \mathtt r_{\mathtt 2} (	 {\mathfrak a}, q_1)(\tau, \vphi, x, \xi) d \tau   \\
&  = \frac{1}{2}\big( \{  {\mathfrak a}, \{ {\mathfrak a}, p_0 \} \} + \ii \{ {\mathfrak a}, \mathtt r_{\mathtt 2}( {\mathfrak a}, p_0)\} \big) \\
& \quad + \ii  \int_0^1 \mathtt r_{\mathtt 2}( {\mathfrak a}, q_1)(\tau, \vphi, x, \xi) d \tau \in S^{1/2}
\end{aligned}
\ee
where $ \{ {\mathfrak a}, \mathtt r_{\mathtt 2}({\mathfrak a}, p_0) \} $ and $  \mathtt r_{\mathtt 2} ( {\mathfrak a}, q_1) \in S^{-1/2} $. 
By \eqref{definizione q0}, \eqref{espansione q1} at $ t =1 $, and \eqref{prima espressione q2} we get 
\begin{equation}\label{espansione q leq 2}
 q_{\leq 2}  = q_0 + q_1 + q_2  =   p_0 + 
 \{ {\mathfrak a}, p_0 \} + \frac{1}{2} \{  {\mathfrak a}, \{ {\mathfrak a}, p_0\} \} + r_{{\mathfrak a} p_0 }^{(0)}
\ee
where 
\be\label{rq1q2}
r_{{\mathfrak a} p_0 }^{(0)} 
:= \ii  \mathtt r_{\mathtt 2}( {\mathfrak a}, p_0) +  
\frac{\ii}{2} \{  {\mathfrak a}, \mathtt r_{\mathtt 2}( {\mathfrak a}, p_0) \}  + 
 \ii \int_0^1 \mathtt r_{\mathtt 2} ( {\mathfrak a}, q_1)(\tau, \vphi, x, \xi)\, d \tau \in S^0 \, .
\ee 
By \eqref{p0 Egorov iniziale} and
 $  \partial_\xi T (\xi) = \frac32  \sqrt \kappa \, {\rm sign}(\xi) \chi(\xi) |\xi|^{\frac12} + O(|\xi|^{- \frac32}) $,   we get 
\begin{equation}\label{albertone 0}
\begin{aligned}
 \{ {\mathfrak a}, p_0\} & = \ii \{ a \chi(\xi)|\xi|^{\frac12},  \mathtt m_3 T(\xi) 
+ a_{11} \xi \} + {\tilde r}_{{\mathfrak a}p_0}  \\
&  = - \ii  \mathtt m_3 \partial_\xi T(\xi)  a_x  \chi(\xi) |\xi|^{\frac12} +
 \ii \Big( \frac12 (a_{11})_x a -  a_{11} 
a_x \Big) \chi(\xi) |\xi|^{\frac12}  \\
 & + \ii  (a_{11})_x a (\partial_\xi \chi (\xi)) |\xi|^{\frac12} \xi+  {\tilde r}_{{\mathfrak a}p_0}  \\
& =  -  \ii \frac{3}{2} \mathtt m_3 \sqrt{\kappa} \, a_x \xi 
+  \ii \Big( \frac12 (a_{11})_x a -  a_{11} a_x \Big) \chi(\xi) |\xi|^{\frac12} + r_{{\mathfrak a} p_0  }^{(1)}   
\end{aligned}
\end{equation}
where  
$ {\tilde r}_{{\mathfrak a}p_0} :=
\{  a \chi(\xi) |\xi|^{\frac12},  a_{12} {\rm sign}(\xi) \chi(\xi) |\xi|^{\frac12}   +  r_M^{(I)} \} \in S^0 $ and 
\begin{align} \label{r p0 A (1)}
r_{{\mathfrak a} p_0 }^{(1)} & := {\tilde r}_{{\mathfrak a}p_0} - 
\ii \mathtt m_3 \Big( \partial_\xi T (\xi) - \frac32  \sqrt \kappa \, {\rm sign}(\xi) \chi(\xi) |\xi|^{\frac12}\Big)a_x \chi |\xi|^{1/2} \\
& \quad + \ii \frac32  \mathtt m_3 \sqrt{\kappa} a_x (1 - \chi^2(\xi)) \xi +  \ii  (a_{11})_x \, a \, (\partial_\xi \chi(\xi)) |\xi|^{\frac12} \xi \in S^0 \, . \nonumber
 \end{align}
Furthermore, using  \eqref{albertone 0}, we compute
\begin{align}
\frac12 \{  {\mathfrak a}, \{ {\mathfrak a},  p_0 \} \}  
 & =  -  \ii \frac{ 3}{4} \mathtt m_3 \sqrt{\kappa} \Big( \frac12  a_{xx} a - a_x^2  \Big) \chi(\xi) |\xi|^{\frac12} + r_{{\mathfrak a} p_0 }^{(2)}
\label{albertone 1}
\end{align} 
where 
\begin{equation} \label{r p0 A (3)}
\begin{aligned}
 r_{ {\mathfrak a}  p_0 }^{(2)}  &  := \Big\{  a \chi(\xi)|\xi |^{1/2},
 \ii \Big( \frac12 (a_{11})_x a -  a_{11} a_x \Big) \chi(\xi) |\xi|^{1/2}+ r_{{\mathfrak a} p_0  }^{(1)} \Big\}  \\
 & \quad  - \ii \frac{ 3}{4} \sqrt{\kappa} \mathtt m_3 a_{xx} a (\partial_\xi \chi(\xi)) |\xi|^{\frac12} \xi   \in S^0 \, . 
 \end{aligned}
\end{equation}
 Finally \eqref{espansione q leq 2}, \eqref{p0 Egorov iniziale}, \eqref{albertone 0}, \eqref{albertone 1} imply  
\eqref{simboli<2}-\eqref{defrleq2}. 
\end{proof}

\noindent
{\bf Choice of the function $ a(\vphi, x) $.}
We now choose the function $ a(\vphi, x) $ so that the first order term in \eqref{simboli<2} vanishes, namely such that 
$ a_{11}(\vphi, x) - \frac32 \mathtt m_3 \sqrt{\kappa} a_x(\vphi, x) = 0 $. 
Since the function $a_{11}(\vphi, x) $ is odd in $ x $ (see \eqref{bf B1} and remark \ref{parities a1 - a6}) 
such equation may be solved. 
Its solution is 
\begin{equation}\label{definizione a}
a(\vphi, x) := \tilde a(\vphi, x ) + a_0 (\vphi )  \quad {\rm where} \quad 
\tilde a(\vphi, x ) := \frac{2}{3 \mathtt m_3 \sqrt{\kappa}} \partial_x^{- 1} a_{11}(\vphi, x)  
\end{equation}
and the function $ a_0 (\vphi ) $ will be determined later, see \eqref{choicea0}. 
In this way (by \eqref{simboli<2})
\begin{equation}\label{q leq 2 espansione finale}
q_{\leq 2} = \ii \mathtt m_3 T ( \xi) + 
\big( \ii  a_{13} + a_{12} \, {\rm sign}(\xi) \big) \chi(\xi) |\xi|^{\frac12} + r_{q_{\leq 2}}  
\end{equation} 
where $ r_{q_{\leq 2}}  \in S^0 $.  
The next lemma proves that we have found  an approximate solution of \eqref{equazione Egorov}. 

\begin{lemma}\label{lemma algebrico soluzione approssimata Q}
{\bf (Approximate solution of \eqref{equazione Egorov})}
The operator $Q(\vphi, t) = q(t, \vphi, x, D) $ where $ q = \sum_{n=0}^M q_n $  
with $ q_0 $ defined in \eqref{definizione q0} and $ q_n $, $ n = 1, \ldots, M $ 
in \eqref{definizione qk}, solves the  approximate Heisenberg equation
\begin{equation}\label{equazione approssimata}
\begin{cases}
\partial_t Q(\vphi, t) = \ii [A(\vphi), Q(\vphi , t)] + R_M (\vphi, t) \cr 
Q(0) = P_0
\end{cases}
\end{equation}
where $R_M (\vphi, t) := - \ii {\rm Op} ( {\mathfrak a} \star q_M  ) \in OPS^{1 - \frac{M}{2}}$. 
The quantitative estimate is given in \eqref{stima RN diagonale Egorov}. 
\end{lemma}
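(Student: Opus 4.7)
The plan is purely algebraic: this lemma amounts to checking that the finite sum $Q = \sum_{n=0}^M q_n(t,\vphi,x,D)$ satisfies the Heisenberg equation up to a controllable tail, and this follows directly from how the symbols $q_n$ were constructed.

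First I would differentiate in $t$: using \eqref{equazione per q0} and \eqref{equazione per qk},
\[
\partial_t Q(\vphi,t) \;=\; \sum_{n=0}^M \partial_t q_n(t,\vphi,x,D) \;=\; \ii \sum_{n=1}^M \operatorname{Op}\!\big(\mathfrak{a}\star q_{n-1}(t,\vphi,x,\xi)\big) \;=\; \ii \sum_{n=0}^{M-1} \operatorname{Op}\!\big(\mathfrak{a}\star q_n(t,\vphi,x,\xi)\big).
\]
On the other hand, by the symbol-of-commutator formula \eqref{symbol commutator} and the linearity of $q\mapsto \mathfrak{a}\star q$,
\[
\ii\,[A(\vphi),Q(\vphi,t)] \;=\; \ii\,\operatorname{Op}\!\big(\mathfrak{a}\star q(t,\vphi,x,\xi)\big) \;=\; \ii \sum_{n=0}^{M} \operatorname{Op}\!\big(\mathfrak{a}\star q_n(t,\vphi,x,\xi)\big).
\]
Subtracting the two displays, all the intermediate terms cancel telescopically, leaving exactly one tail term:
\[
\partial_t Q(\vphi,t) - \ii\,[A(\vphi),Q(\vphi,t)] \;=\; -\,\ii\,\operatorname{Op}\!\big(\mathfrak{a}\star q_M(t,\vphi,x,\xi)\big) \;=:\; R_M(\vphi,t),
\]
which is the desired identity. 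The initial condition $Q(0)=P_0$ is immediate from \eqref{equazione per q0} and $q_n(0,\cdot)=0$ for $n\geq 1$ in \eqref{equazione per qk}.

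It remains to verify the pseudo-differential order. One shows by induction on $n$ that $q_n(t,\cdot)\in S^{(3-n)/2}$: the base case $n=0$ is \eqref{definizione q0} together with \eqref{p0 Egorov iniziale}, and the inductive step is exactly Lemma \ref{proprieta sigma(q,A)}, which guarantees that the symbol of the commutator with $A(\vphi)$ drops the order by $1/2$. Hence $q_M\in S^{(3-M)/2}$ and, applying Lemma \ref{proprieta sigma(q,A)} one more time, $\mathfrak{a}\star q_M \in S^{(3-M)/2 - 1/2} = S^{1 - M/2}$, so $R_M\in OPS^{1-M/2}$ as claimed.

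There is essentially no obstacle here: the content of the lemma is the clean telescoping produced by the recursive definition of the $q_n$'s, and the order bookkeeping is the same induction used informally in \eqref{espansione simbolo P(t)}. The real work is deferred to the quantitative tame bound \eqref{stima RN diagonale Egorov}, where one must propagate the $\norma\cdot\norma_{m,s,\a}^{k_0,\g}$ estimates of Lemma \ref{proprieta sigma(q,A)} (and, at the next step, of Lemma \ref{proprieta r2(q,A)}) through the $M$-fold recursion while keeping track of how the Sobolev loss $\alpha$ and the regularity in $(\vphi,x)$ of $a$ enter at each iteration; but that estimate is separate from the algebraic identity \eqref{equazione approssimata} established above.
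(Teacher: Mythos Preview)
Your proof is correct and follows essentially the same approach as the paper: both verify the initial condition from $q_n(0)=0$ for $n\geq 1$, then compute $\partial_t q = \ii\sum_{n=0}^{M-1}\mathfrak a\star q_n = \ii\,\mathfrak a\star q - \ii\,\mathfrak a\star q_M$ by telescoping, and read off $R_M = -\ii\,\mathrm{Op}(\mathfrak a\star q_M)\in OPS^{1-M/2}$ from the inductive order count $q_M\in S^{(3-M)/2}$ and Lemma~\ref{proprieta sigma(q,A)}. Your write-up is slightly more explicit about the induction on the orders, but there is no substantive difference.
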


\begin{proof}
By \eqref{equazione per q0} and  \eqref{equazione per qk} the initial symbol  
$ q(0, \vphi, x, \xi) = q_0(0, \vphi, x, \xi) + \sum_{n = 1}^M q_n (0, \vphi, x, \xi) = p_0(\vphi, x , \xi) $. 
Hence $Q(0) = P_0$. Moreover \eqref{equazione per q0} and  \eqref{equazione per qk} imply 
$$
\begin{aligned}
\partial_t q = \sum_{n = 0}^M \partial_t q_n 
= \ii \sum_{n = 1}^{M}  {\mathfrak a} \star q_{n-1}   
& = \ii \sum_{n = 0}^{M - 1}  {\mathfrak a} \star q_n   \\
& = \ii \sum_{n = 0}^M  {\mathfrak a} \star q_n - \ii  {\mathfrak a} \star q_M 
 = \ii  {\mathfrak a} \star q - \ii  {\mathfrak a} \star q_M  
 \end{aligned}
$$
because $  {\mathfrak a} \star q  $ is linear in $ q $. Since 
$  [A(\vphi), Q ] = {\rm Op}( {\mathfrak a} \star q ) $ 
we get \eqref{equazione approssimata} with 
$ R_M (\vphi, t) := - \ii {\rm Op}( {\mathfrak a} \star q_M) $. The operator $ R_M \in OPS^{1 - \frac{M}{2} } 
$ since $q_M \in S^{\frac12 (3- M)}$, see after \eqref{equazione per qk}-\eqref{definizione qk}.
\end{proof}

The next lemma expresses  
the difference between $P(\vphi, t) $ and 
 the approximate solution $ Q(\vphi, t) $ 
 of \eqref{equazione Egorov} in terms of the remainder $ R_M $ in  \eqref{equazione approssimata} 
and  the flow $ \Phi (\vphi, t) $ of \eqref{pseudo-PDE}. 

\begin{lemma}\label{soluzione vera Egorov algebrico} We have 
\begin{equation}\label{forma finale P - Q}
W(\vphi, t) := Q(\vphi, t) - P(\vphi, t) = 
\int_0^t \Phi(\vphi, t - \tau) R_M (\vphi, \tau) \Phi(\vphi, \tau - t)\, d \tau \, . 
\end{equation}
\end{lemma}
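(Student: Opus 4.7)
The plan is to use a standard Duhamel principle at the level of operator-valued ODEs. First I would observe that by \eqref{defPt} and \eqref{flow-propagator}, the exact operator $P(\vphi,t) = \Phi(\vphi,t)\, P_0\, \Phi(\vphi,t)^{-1}$ is a genuine solution of the Heisenberg equation \eqref{equazione Egorov}, i.e.
\begin{equation*}
\partial_t P(\vphi,t) = \ii[A(\vphi), P(\vphi,t)], \qquad P(\vphi,0) = P_0.
\end{equation*}
Combining this with \eqref{equazione approssimata}, the difference $W(\vphi,t) := Q(\vphi,t) - P(\vphi,t)$ satisfies the inhomogeneous Heisenberg equation
\begin{equation*}
\partial_t W(\vphi,t) = \ii[A(\vphi), W(\vphi,t)] + R_M(\vphi,t), \qquad W(\vphi,0) = 0.
\end{equation*}

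Next I would introduce the right-hand side of \eqref{forma finale P - Q} as a candidate
\begin{equation*}
V(\vphi,t) := \int_0^t \Phi(\vphi, t-\tau)\, R_M(\vphi,\tau)\, \Phi(\vphi, \tau-t)\, d\tau
\end{equation*}
and verify that the integrand is a well-defined operator-valued function. This uses the group property \eqref{group-flow}, which gives $\Phi(\vphi,\tau-t) = \Phi(\vphi,t-\tau)^{-1}$, together with the tame estimates on $\Phi(\vphi,\cdot)^{\pm 1}$ established in the Appendix (Propositions \ref{Prop0-flow}, \ref{Prop1-flow}) and the fact that $R_M(\vphi,\tau) = -\ii \, \mathrm{Op}( {\mathfrak a}\star q_M)\in OPS^{1-M/2}$.

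Then I would differentiate $V$ in $t$ by Leibniz, using \eqref{flow-propagator} and the identity $\partial_t \Phi(\vphi,-t) = -\ii\, \Phi(\vphi,-t)\, A(\vphi)$ (obtained by differentiating $\Phi(\vphi,t)\Phi(\vphi,-t) = \mathrm{Id}$). The boundary term at $\tau = t$ produces exactly $R_M(\vphi,t)$, while the two interior contributions combine into the commutator, yielding
\begin{equation*}
\partial_t V(\vphi,t) = R_M(\vphi,t) + \ii[A(\vphi), V(\vphi,t)], \qquad V(\vphi,0) = 0.
\end{equation*}

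Thus $V$ and $W$ satisfy the same linear inhomogeneous equation with the same vanishing initial datum. Uniqueness, which is the only mildly delicate point, follows by conjugating: if $Z$ solves the homogeneous problem $\partial_t Z = \ii[A,Z]$, $Z(0)=0$, then $\Phi(\vphi,-t)\, Z(\vphi,t)\, \Phi(\vphi,t)$ has zero $t$-derivative and vanishes at $t=0$, hence $Z\equiv 0$; applied to $V-W$ this gives $V = W$, which is \eqref{forma finale P - Q}. The main (and only) subtlety is the justification of the interchange of $\partial_t$ with the integral in the definition of $V$, which is handled by the tame bounds on $\partial_t\Phi(\vphi,t)^{\pm 1}$ proved in Appendix \ref{AppendiceA}.
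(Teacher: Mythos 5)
Your proof is correct and rests on the same Duhamel/variation-of-constants idea as the paper's. The only organizational difference is that the paper applies Duhamel to the one-sided equation $\partial_t V = \ii A(\vphi) V + R_M(\vphi,t)\Phi(\vphi,t)$ satisfied by $V := Q\Phi - \Phi P_0$ and then conjugates back with the group property \eqref{group-flow}, whereas you verify the candidate integral directly at the level of the Heisenberg equation \eqref{equazione Egorov} and add the short uniqueness-by-conjugation step, which the paper's reduction makes unnecessary.
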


\begin{proof}
Recalling  \eqref{defPt} we write 
$$ W(\vphi, t)  = 
 \big( Q(\vphi, t) \Phi(\vphi, t) - \Phi(\vphi, t) P_0 \big) \Phi(\vphi, t)^{- 1} \, . 
 $$
By \eqref{flow-propagator} and \eqref{equazione approssimata} we deduce that 
$ V(\vphi, t) := Q(\vphi, t) \Phi(\vphi, t) - \Phi(\vphi, t) P_0$ solves  the non-homogeneous equation 
$$
\partial_t V (\vphi,t) = 
\ii A(\vphi) V(\vphi, t)   + R_M (\vphi, t) \Phi(\vphi, t) \, , \quad 
V(\vphi, t) (\vphi, 0) = 0 \, . 
$$ 
By Duhamel principle (variation of constants method)
and  \eqref{group-flow} we get 
$$ 
V (\vphi, t) :=  \int_0^t  \Phi(\vphi, t - \tau) R_M (\vphi, \tau) \Phi(\vphi, \tau)\, d \tau 
$$ 
and thus  \eqref{forma finale P - Q} using again \eqref{group-flow}. 
\end{proof}

\noindent
{\bf Analysis of $ {\bf \Phi}(\vphi) \Dom \{ {\bf \Phi}(\vphi)^{- 1} \} $ in \eqref{coniugio flusso di una PDE iperbolica}.}
\\[1mm]
Set  for brevity (recall \eqref{flusso-diagonale})
$$
{\bf \Psi}(\vphi, t) := 
{\bf \Phi}(\vphi, t) \Dom \{ {\bf \Phi}(\vphi, t)^{- 1} \}  
 = \begin{pmatrix}
\Psi(\vphi, t) & 0 \\
0 & \overline \Psi(\vphi, t)
\end{pmatrix} 
$$
where
$$
 \Psi(\vphi, t) := \Phi(\vphi, t ) \Dom \{ \Phi(\vphi, t )^{- 1} \}\,.
$$
The term $ \Psi(\vphi, t) $ can be computed in terms of the flow $ \Phi $ of  \eqref{pseudo-PDE}  and 
$  A(\vphi) = a(\vphi, x ) | D |^{\frac12} $. 

\begin{lemma}\label{lemma:pezzo dal tempo} The operator 
$$ 
\Psi(\vphi, t) = - \ii \int_0^t S_\omega(\vphi, \tau)\, d \tau  \quad {\rm where} \quad 
S_\omega(\vphi, t) :=  \Phi(\vphi, t) (\Dom A(\vphi)) \Phi(\vphi,t)^{- 1} \, . 
$$
\end{lemma}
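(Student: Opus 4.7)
My plan is to derive an operator-valued ODE in $t$ satisfied by $\Psi(\vphi,t) := \Phi(\vphi,t)\,\Dom\{\Phi(\vphi,t)^{-1}\}$ and then to integrate it by the Duhamel (variation of constants) formula, exploiting the fact that $A(\vphi)=a(\vphi,x)|D|^{1/2}$ is $t$-independent, so that $\Phi(\vphi,\cdot)$ is a one-parameter group by \eqref{group-flow}.

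Differentiating $\Phi(\vphi,t)\Phi(\vphi,t)^{-1}={\rm Id}$ in $t$ together with \eqref{flow-propagator} first gives $\partial_t \Phi(\vphi,t)^{-1} = -\ii\,\Phi(\vphi,t)^{-1}A(\vphi)$. Since $\partial_t$ and $\Dom$ commute, and since $A(\vphi)$ does not depend on $t$, the Leibniz identity $\Dom(\Phi^{-1}A)=(\Dom\Phi^{-1})A+\Phi^{-1}(\Dom A)$ yields
$$
\partial_t \Psi = (\partial_t\Phi)\,\Dom\Phi^{-1} + \Phi\,\Dom(\partial_t\Phi^{-1}) = \ii A \Psi - \ii \Psi A - \ii\,\Dom A = \ii\,[A,\Psi] - \ii\,\Dom A,
$$
with the initial datum $\Psi(\vphi,0) = {\rm Id}\cdot\Dom\,{\rm Id} = 0$.

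For each fixed $\vphi$, the homogeneous Heisenberg equation $\partial_t X = \ii\,[A(\vphi),X]$ is autonomous, and its fundamental propagator is the conjugation $X\mapsto\Phi(\vphi,t)X\Phi(\vphi,t)^{-1}$; this follows from $\partial_t\Phi = \ii A\Phi$ together with the group property \eqref{group-flow}. Applying Duhamel's formula to the forced equation above, I obtain
$$
\Psi(\vphi,t) = -\ii\int_0^t \Phi(\vphi,t-\tau)\,(\Dom A(\vphi))\,\Phi(\vphi,t-\tau)^{-1}\,d\tau,
$$
and the change of variable $\sigma:=t-\tau$ together with the definition of $S_\omega$ immediately produces $\Psi(\vphi,t) = -\ii\int_0^t S_\omega(\vphi,\sigma)\,d\sigma$, as claimed.

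The argument is essentially algebraic, and I do not expect any genuine obstacle: the analytic ingredients needed (smoothness of $\Phi(\vphi,t)$ in $t$, invertibility, and the one-parameter group property) are all established in the Appendix and cited through \eqref{flow-propagator}--\eqref{group-flow}. The only detail deserving some care is the commutation of $\partial_t$ with $\Dom$, together with the Leibniz rule for $\Dom$ applied to operator products involving $A(\vphi)$; both are immediate since $\Dom$ is a first-order derivation in $\vphi$ and $A(\vphi)$ is $t$-independent.
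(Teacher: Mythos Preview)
Your argument is correct. However, the paper's proof is more direct: using the group property \eqref{group-flow} in the form $\partial_t\Phi(\vphi,t)^{-1}=-\ii A(\vphi)\Phi(\vphi,t)^{-1}$ (rather than $-\ii\Phi(\vphi,t)^{-1}A(\vphi)$), the computation of $\partial_t\Psi$ telescopes immediately to $-\ii\,\Phi(\Dom A)\Phi^{-1}=-\ii\,S_\omega(\vphi,t)$, so the lemma follows by a single integration with no Duhamel step or change of variable needed. Your route via the Heisenberg equation $\partial_t\Psi=\ii[A,\Psi]-\ii\Dom A$ is perfectly valid (and in fact reconfirms that $S_\omega$ solves \eqref{equazione egorov S omega}), but it introduces an unnecessary detour; the two expressions for $\partial_t\Phi^{-1}$ coincide precisely because $A$ commutes with its own flow, and choosing the form with $A$ on the left exploits this commutation up front rather than recovering it through Duhamel and the substitution $\sigma=t-\tau$.
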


\begin{proof}
By \eqref{group-flow} the flow $\Phi^{- 1}(t) = \Phi(- t)$ and  $\partial_t \Phi(t)^{- 1} = - \ii A \Phi(t)^{- 1}$. Thus 
$ \Psi(\vphi, t) $ solves
\begin{align}
\partial_t \Psi(\vphi, t) & = (\partial_t \Phi)\Dom \Phi^{- 1} + \Phi \Dom (\partial_t \Phi^{- 1}) \nonumber \\ 
& =  - \Phi (\partial_t \Phi^{-1}) \Phi \Dom \Phi^{- 1} - \ii \Phi \Dom ( A \Phi^{- 1}) \nonumber\\
& =  \ii \Phi A \Dom \Phi^{- 1} - \ii \Phi A \Dom \Phi^{- 1} - \ii \Phi (\Dom A) \Phi^{- 1} 
=  - \ii \Phi (\Dom A) \Phi^{- 1} \, . \nonumber
\end{align}
Moreover $\Psi(\vphi, 0) = 0 $  (as $ \Phi (\vphi, 0) = {\rm Id} $, $ \forall \vphi \in \T^\nu $, see \eqref{flow-propagator}).  
The lemma follows by integration.  
\end{proof}

The operator $ S_\omega(\vphi, t) $ has the same conjugation structure of $ P (\vphi, t )$ in
\eqref{defPt} and therefore it solves the Heisenberg equation
\begin{equation}\label{equazione egorov S omega}
\begin{cases}
\partial_t S_\omega (\vphi, t) = \ii [A(\vphi), S_\omega(\vphi, t)]   \\
 S_\omega(\vphi, 0) = 
 (\omega \cdot \partial_\vphi a) |D|^{\frac12}\,.
\end{cases}
\end{equation}
Following the same procedure used for $ P (\vphi, t ) $, 
we look for an approximate solution  of  \eqref{equazione egorov S omega} of the form 
(expansion in decreasing symbols) 
\begin{equation}\label{s1 sM}
S_{\omega, M}(\vphi, t) := s(t,\vphi, x, D) \, ,  \qquad  
s = {\mathop \sum}_{n=0}^M s_n \, , \qquad 
s_n \in S^{\frac12(1-n)} \, . 
\end{equation}
We define the principal  symbol $s_0$ to be the solution of 
\begin{equation}\label{s0 soluzione egorov}
\begin{aligned}
& \begin{cases}
\partial_t s_0(t, \vphi, x, \xi) = 0 \\
s_0(0, \vphi, x, \xi) = (\omega \cdot \partial_\vphi a) \chi(\xi) |\xi |^{\frac12} \, , 
\end{cases} \\
& \ i.e.  \
s_0(t, \vphi, x, \xi) =  (\omega \cdot \partial_\vphi a) \chi(\xi)|\xi|^{\frac12} \in S^{1/2}\,.
\end{aligned}
\end{equation}
Then we define inductively  the symbols $s_n $, $ n \geq 1 $, as the solutions of 
\begin{equation}\label{sk soluzione}
\begin{aligned}
& \begin{cases}
\partial_t s_n = \ii  {\mathfrak a} \star s_{n - 1}  \\
s_n (0, \vphi, x, \xi) = 0\,, 
\end{cases} \\
& i.e. \quad s_n (t, \vphi, x, \xi) = \ii \int_0^t ( {\mathfrak a} \star s_{n - 1})(\tau, \vphi, x, \xi) \,d \tau\,.
\end{aligned}
\end{equation}
It turns out that $ s_n \in S^{\frac12(1-n)} $,  in particular each $ s_n \in S^0 $, $ \forall  n \geq 1 $. 

\begin{lemma}\label{soluzione vera Egorov S omega algebrico}
{\bf (Approximate solution of \eqref{equazione egorov S omega})}
The pseudo-differential operator $ S_{\omega, M}(\vphi, t) =  s(\vphi, t, x, D)  $
in \eqref{s1 sM} with $ s_0 \in S^{\frac12} $ defined  in \eqref{s0 soluzione egorov} and 
$ s_n \in S^{\frac12(1-n)} $, $ n 
= 1, \ldots, M $ in  \eqref{sk soluzione},  solves the approximate Heisenberg equation
\begin{equation}\label{equazione approssimata S omega M}
\begin{cases}
\partial_t S_{\omega, M}(\vphi, t) = \ii [A(\vphi), S_{\omega, M}(\vphi , t)] + R_{\omega, M}(\vphi, t) \cr 
S_{\omega, M}(\vphi, 0) =  
(\omega \cdot \partial_\vphi a) |D|^{\frac12}
\end{cases}
\end{equation}
where $R_{\omega, M}(\vphi, t) := - \ii {\rm Op}( {\mathfrak a} \star s_M ) \in OPS^{ - \frac{M}{2}}$.  Moreover
\be\label{forma finale S omega - S M}
W_\omega (\vphi, t) := S_{\omega, M} (\vphi, t)  - S_\omega (\vphi, t) =
\int_0^t \Phi(\vphi, t - \tau) R_{\omega, M}(\vphi, \tau) \Phi(\vphi, \tau - t)\, d \tau 
\ee
where  $ \Phi (\vphi, t) $ denotes the flow of \eqref{pseudo-PDE}.
\end{lemma}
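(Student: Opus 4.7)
The proof proceeds in complete parallel to Lemmas~\ref{lemma algebrico soluzione approssimata Q} and~\ref{soluzione vera Egorov algebrico}, the only differences being the bookkeeping of orders (since each $s_n$ is one half less than the corresponding $q_n$) and the slightly different initial datum.

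The plan is first to verify algebraically that $S_{\omega, M}$ solves \eqref{equazione approssimata S omega M}. From the definitions \eqref{s0 soluzione egorov}, \eqref{sk soluzione} we have $s(0, \vphi, x, \xi) = s_0(0, \vphi, x, \xi) + \sum_{n \geq 1} s_n(0, \vphi, x, \xi) = (\omega \cdot \partial_\vphi a) \chi(\xi) |\xi|^{1/2}$, and since $|D|^{1/2} = \mathrm{Op}(\chi(\xi)|\xi|^{1/2})$ this gives the initial condition $S_{\omega,M}(\vphi, 0) = (\omega \cdot \partial_\vphi a) |D|^{1/2}$. For the evolution, I would take the time derivative term-by-term and then telescope:
\begin{equation*}
\partial_t s \;=\; \sum_{n=0}^M \partial_t s_n \;=\; \ii \sum_{n=1}^M {\mathfrak a} \star s_{n-1} \;=\; \ii \sum_{n=0}^{M-1} {\mathfrak a} \star s_n \;=\; \ii\, {\mathfrak a} \star s \;-\; \ii\, {\mathfrak a} \star s_M \, ,
\end{equation*}
using the linearity of $q \mapsto {\mathfrak a} \star q$. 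Since $[A(\vphi), S_{\omega, M}] = \mathrm{Op}({\mathfrak a} \star s)$, this immediately yields \eqref{equazione approssimata S omega M} with $R_{\omega, M} := -\ii\, \mathrm{Op}({\mathfrak a} \star s_M)$. The order of the remainder follows from Lemma~\ref{proprieta sigma(q,A)} applied to $s_M \in S^{\frac12(1-M)}$: the commutator lowers the order by $\frac12$, so $R_{\omega, M} \in OPS^{-M/2}$.

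The second part, the representation formula \eqref{forma finale S omega - S M} for $W_\omega := S_{\omega, M} - S_\omega$, is obtained by the same Duhamel argument as in Lemma~\ref{soluzione vera Egorov algebrico}. I would set $V_\omega(\vphi, t) := S_{\omega, M}(\vphi, t) \Phi(\vphi, t) - \Phi(\vphi, t) S_\omega(\vphi, 0)$, so that $W_\omega(\vphi, t) = V_\omega(\vphi, t) \Phi(\vphi, t)^{-1}$ (after noting that $\Phi(\vphi, t) S_\omega(\vphi, 0) \Phi(\vphi, t)^{-1} = S_\omega(\vphi, t)$ by the very definition of $S_\omega$ given in Lemma~\ref{lemma:pezzo dal tempo}). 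Using \eqref{flow-propagator} for $\Phi$ and \eqref{equazione approssimata S omega M} for $S_{\omega, M}$, a direct computation shows that $V_\omega$ satisfies the inhomogeneous linear equation $\partial_t V_\omega = \ii A(\vphi) V_\omega + R_{\omega, M}(\vphi, t) \Phi(\vphi, t)$ with $V_\omega(\vphi, 0) = 0$; the variation of constants formula together with the group property \eqref{group-flow} then gives \eqref{forma finale S omega - S M}.

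There is no genuine obstacle: the argument is essentially a verbatim transcription of the proofs of Lemmas~\ref{lemma algebrico soluzione approssimata Q} and~\ref{soluzione vera Egorov algebrico}, with $p_0$ replaced by $(\omega \cdot \partial_\vphi a) \chi(\xi)|\xi|^{1/2}$ and the $q_n$ replaced by the $s_n$. The only point worth double-checking is the identification $\Phi(\vphi, t) S_\omega(\vphi, 0) \Phi(\vphi, t)^{-1} = S_\omega(\vphi, t)$, which is immediate from $S_\omega(\vphi, 0) = \Dom A(\vphi) = (\omega \cdot \partial_\vphi a)|D|^{1/2}$ and the definition of $S_\omega$ in Lemma~\ref{lemma:pezzo dal tempo}.
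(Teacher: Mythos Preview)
Your proposal is correct and follows essentially the same approach as the paper, which simply refers back to Lemmas~\ref{lemma algebrico soluzione approssimata Q} and~\ref{soluzione vera Egorov algebrico}. In fact you have written out the details (the telescoping for $\partial_t s$, the identification $S_\omega(\vphi,0)=\Dom A(\vphi)$, and the Duhamel computation for $V_\omega$) more explicitly than the paper does.
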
 

\begin{proof}
The equation \eqref{equazione approssimata S omega M} follows as in Lemma \ref{lemma algebrico soluzione approssimata Q}.
Then \eqref{forma finale S omega - S M}  follows as in Lemma \ref{soluzione vera Egorov algebrico}. 
\end{proof}

\noindent
{\bf Sub-principal symbol of $ {\mathcal L}_M^{(1)} $.} 
By Lemma  \ref{ordini maggiori di 0 Egorov} 
and the choice of $a(\vphi, x) $ in \eqref{definizione a}, 
the principal and subprincipal symbols of  $ {\bf \Phi}(\vphi) {\bf P}_0(\vphi, x, D) {\bf \Phi}(\vphi)^{- 1} $ are given by 
 \eqref{q leq 2 espansione finale}. 
Also  $ {\bf \Phi}(\vphi) \Dom \{{\bf \Phi}(\vphi)^{- 1}\} $ contributes to the  subprincipal symbol   of $ {\mathcal L}_M^{(1)}  $, i.e 
to  $ OPS^{1/2}$.
By Lemmata \ref{lemma:pezzo dal tempo}, \ref{soluzione vera Egorov S omega algebrico} and the expression of 
$ s_0 =  (\omega \cdot \partial_\vphi a) \chi(\xi)|\xi|^{\frac12} $ in \eqref{s0 soluzione egorov} we find that 
the conjugated operator ${\mathcal L}_M^{(1)}$ in \eqref{coniugio flusso di una PDE iperbolica} has the expansion 
\begin{equation}\label{cal LN (1)0}
{\mathcal L}_M^{(1)} = \Dom {\mathbb I}_2 + \ii \mathtt m_3 {\bf T}(D) + 
\ii \big({\bf C}_1(\vphi, x) + {\bf C}_0(\vphi, x) \mH \big) |D|^{\frac12} + \ldots 
\end{equation}
where 
\begin{equation}\label{bf A1}
\begin{aligned}
& {\bf C}_1(\vphi, x) := \begin{pmatrix}
a_{14} & 0 \\
0 & - a_{14}
\end{pmatrix}, \ a_{14} := a_{13} -  \omega \cdot \partial_\vphi a \, , \\
& {\bf C}_0(\vphi, x) := \begin{pmatrix}
a_{12}  & 0 \\
0 & - a_{12} 
\end{pmatrix}\,,
\end{aligned}
\end{equation}
and the functions $a_{13} $, $ a_{12}$  are defined respectively in \eqref{definizione a 10}, \eqref{bf B0 (I)}. 

In the next sections we reduce the operator $ {\mathcal L}_M^{(1)} $ neglecting  the term 
 \begin{equation}\label{bf RN (1) bot}
{\bf R}_{M}^{(1), \bot} :=  \ii \Pi_{K_n}^\bot {\bf C}_1  |D|^{\frac12} := \ii \begin{pmatrix}
 \Pi_{K_n}^\bot a_{14}(\vphi, x) & 0 \\
 0 & - \Pi_{K_n}^\bot a_{14}(\vphi, x)
 \end{pmatrix}  |D|^{\frac12}
 \end{equation}
which  is supported on the high Fourier frequencies and which will contribute to the remainders in 
\eqref{stima R omega bot corsivo bassa}-\eqref{stima R omega bot corsivo alta} 
(as we did with the similar terms at the end of section \ref{sec: time-reduction highest order}).
For simplicity of notation we still denote it by $ {\mathcal L}_M^{(1)} $. 
\\[1mm]
{\bf Choice of the function $ a_0(\vphi ) $.}
In view of  the reduction of $ \ii \Pi_{K_n} {\bf C}_1  |D|^{\frac12} $
in section \ref{sec:lineare},  
we  choose the function $ a_0 (\vphi )$ in \eqref{definizione a}
in such a way that, for all $ \vphi \in \T^\nu $, the integral  
\begin{equation}\label{media a 14}
\frac{1}{2 \pi} \int_\T \Pi_{K_n} a_{14}(\vphi, x)\, d x = \mathtt m_{1, K_n}  \, , \quad \forall \vphi \in \T^\nu \, , 
\end{equation}
is a constant. Since $ a = \tilde a + a_0 $ (see \eqref{definizione a}) we write the function  $ a_{14} $ in \eqref{bf A1} as 
\begin{equation}\label{a tilde 14}
 a_{14} (\vphi, x) = \tilde a_{14} (\vphi, x) - \om \cdot \pa_\vphi a_0 (\vphi ) \qquad {\rm where} \qquad 
\tilde a_{14}   = a_{13}   -  \omega \cdot \partial_\vphi \tilde a    \, . 
\end{equation}
The function $ a_{13} (\vphi, x) $ in   \eqref{definizione a 10} depends on $ a $,  
and thus also on $a_0 (\vphi) $, but  the integral 
$  \int_{\T} a_{13} (\vphi, x) dx $, and thus $  \int_{\T} \tilde a_{14} (\vphi, x) dx $, does not depend on $a_0 (\vphi) $.
For solving \eqref{media a 14}
we look for $ a_0 (\vphi) = \Pi_{K_n} a_0 (\vphi) $ such that  
$ \frac{1}{2 \pi} \int_{\T} \Pi_{K_n} \tilde a_{14}(\vphi, x) \, dx -   (\om \cdot \pa_\vphi  a_0) (\vphi)  = \mathtt m_{1, K_n} $. 
For all $ \omega \in \DC_{K_n}^\g $ (see \eqref{omega diofanteo troncato}) such equation is solved by  
\begin{equation}\label{lambda1 w} 
\begin{aligned}
 {\mathtt m}_{1, K_n} & := (2 \pi)^{-(\nu + 1)} \int_{\T^{\nu +1}} \Pi_{K_n} \tilde a_{14}(\vphi, x) 
\,d \vphi \, dx  \\
& = (2 \pi)^{-(\nu + 1)} \int_{\T^{\nu+1}} \tilde a_{14} (\vphi, x) \,d \vphi\, d x \, , 
\end{aligned} 
\end{equation}
\begin{equation} \label{choicea0}
\begin{aligned}
& a_0 (\vphi) := - (\omega \cdot \partial_\vphi)^{- 1} 
\Big( {\mathtt m}_{1, K_n} -  \frac{1}{2 \pi}\int_{\T} \Pi_{K_n} \tilde a_{14} (\vphi, x) \, dx   \Big) \, . 
\end{aligned}
\end{equation}
Note that $ a_0 (\vphi )$ is odd in $ \vphi $. 
Since also  $ \tilde a(\vphi, x )  $ defined in \eqref{definizione a} is odd in $ \vphi $, and even in $ x $, 
the flow $ {\bf \Phi} (\vphi, t) $ of \eqref{flusso Phi} is even and reversibility preserving.

\begin{lemma} {\bf (Coefficient $\mathtt m_{1, K_n} $)} The coefficient   
\begin{equation} \label{lm1 formula}
\mathtt m_{1, K_n} \! = \! - 
\frac{ (2\p)^{-\nu - \frac52} }{2  \sqrt{\kappa}} 
\int_{\T^{\nu + 1}} \!
(1 + \b_x) [ \om \cdot \pa_\vphi \beta + V (1+\b_x) ]^2 
\Pi_{K_n}  \Big( \int_\T \sqrt{1 + \eta_y^2} \, dy \Big)^{3/2} \! \! d\vphi  \, d x
\end{equation}
where the function $V$ is defined in \eqref{def B V} and $ \b$  in \eqref{beta lambda3}.  
The coefficient $ {\mathtt m}_{1, K_n} $ satisfies  
\be\label{stima R7lambda1} 
| \mathtt m_{1, K_n}  |^{k_0, \gamma} \leq C \e  \, , \qquad | \partial_i \mathtt m_{1, K_n}[\hat \imath] | \leq C \e \| \hat \imath \|_\sigma\,.
\ee
\end{lemma}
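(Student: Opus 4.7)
The starting point is the definition \eqref{lambda1 w} together with the identity $\tilde a_{14} = a_{13} - \omega\cdot\partial_\vphi\tilde a$ from \eqref{a tilde 14}. Since $\omega\cdot\partial_\vphi\tilde a$ is a total $\vphi$-derivative of a $2\pi$-periodic function, its integral over $\T^{\nu+1}$ vanishes, so
\begin{equation*}
\mathtt m_{1,K_n} \;=\; (2\pi)^{-(\nu+1)}\int_{\T^{\nu+1}} a_{13}(\vphi,x)\,d\vphi\,dx\,.
\end{equation*}
The plan is to reduce this to an integral of $a_{11}^2$, then to undo the two changes of variables $\mathcal P$ and $\mathcal B$, arriving at the formula \eqref{lm1 formula}.

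\emph{Step 1: reduction of $\int a_{13}\,dx$.} Plugging in the explicit expression \eqref{definizione a 10} for $a_{13}$ and integrating by parts in $x$ (using $2\pi$-periodicity) I would show
\begin{equation*}
\int_\T a_{13}(\vphi,x)\,dx
\;=\; -\tfrac{3}{2}\!\int_\T a_{11}a_x\,dx \;+\; \tfrac{9}{8}\mathtt m_3\sqrt\kappa\!\int_\T a_x^2\,dx\,.
\end{equation*}
Since $a$ was chosen in \eqref{definizione a} precisely so that $a_{11} = \tfrac{3}{2}\mathtt m_3\sqrt\kappa\,a_x$, substituting this relation collapses the above to
\begin{equation*}
\int_\T a_{13}\,dx \;=\; -\,\frac{1}{2\,\mathtt m_3\sqrt\kappa}\int_\T a_{11}^2\,dx\,,
\end{equation*}
and thus $\mathtt m_{1,K_n} = -\,\frac{1}{2\mathtt m_3\sqrt\kappa}(2\pi)^{-(\nu+1)}\int_{\T^{\nu+1}} a_{11}^2\,d\vphi\,dx$.

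\emph{Step 2: undoing the transformations $\mathcal P$ and $\mathcal B$.} Recalling $a_{11}=\rho^{-1}\mathcal P^{-1}(a_1)$ from \eqref{bf B1} and using the quasi-periodic change of variable $\vartheta\mapsto\vphi = \vartheta+\omega\tilde p(\vartheta)$ (whose Jacobian is $\rho$ and which satisfies $1+\omega\cdot\partial_\vphi p = \mathtt m_3^{-1}\Pi_{K_n} m_3$ by \eqref{solution p m3}), one verifies
\begin{equation*}
\int_{\T^{\nu+1}} a_{11}^2\,d\vartheta\,dx
\;=\; \mathtt m_3\!\int_{\T^{\nu+1}} \bigl(\Pi_{K_n} m_3(\vphi)\bigr)^{-1}\, a_1(\vphi,x)^2\,d\vphi\,dx\,.
\end{equation*}
Next, a change of variable $y = x + \tilde\beta(\vphi,x)$ in $x$ (Jacobian $1+\beta_x$) together with the expression $a_1 = \mathcal B^{-1}[\omega\cdot\partial_\vphi\beta + V(1+\beta_x)]$ from \eqref{a1 a2 a3} gives
\begin{equation*}
\int_\T a_1(\vphi,x)^2\,dx \;=\; \int_\T \bigl(1+\beta_x(\vphi,y)\bigr)\bigl[\,\omega\cdot\partial_\vphi\beta + V(1+\beta_x)\bigr]^2(\vphi,y)\,dy\,.
\end{equation*}
Combining with $m_3^{-1}=(2\pi)^{-3/2}\bigl(\int_\T\sqrt{1+\eta_y^2}\,dy\bigr)^{3/2}$ from \eqref{formula m3} (and identifying the $\Pi_{K_n}$-weight as dictated by the construction of $p$), the formula \eqref{lm1 formula} follows.

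\emph{Step 3: the quantitative estimates \eqref{stima R7lambda1}.} These are deduced directly from the integral representation just obtained. Since $\eta,\psi = T_\delta(\vphi) = O(\e)$ in any Sobolev norm by \eqref{tame Tdelta}, and $\beta$, $V$, $B$ are controlled by \eqref{stima V B a c} and \eqref{beta lambda3}, the integrand $(1+\beta_x)[\omega\cdot\partial_\vphi\beta + V(1+\beta_x)]^2$ is quadratic in quantities of size $O(\e)$, while the weight $\Pi_{K_n}\bigl(\int\sqrt{1+\eta_y^2}\,dy\bigr)^{3/2}$ is bounded. One applies the tame product bound \eqref{interpolazione C k0}, the estimates on derivatives with respect to $(\omega,\kappa)$, and the estimates \eqref{stima V B a c}, \eqref{stima derivate i primo step} to conclude $|\mathtt m_{1,K_n}|^{k_0,\gamma}\le C\e$. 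The derivative $\partial_i\mathtt m_{1,K_n}[\hat\imath]$ is handled by differentiating the integrand through the chain rule, using $\partial_i\eta[\hat\imath],\partial_i\beta[\hat\imath],\partial_i V[\hat\imath] = O(\|\hat\imath\|_\sigma)$ from \eqref{derivata i T delta}, \eqref{stima derivate i primo step} and \eqref{stima derivate i beta}, again together with \eqref{interpolazione senza C k0}.

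\emph{Main difficulty.} The calculations in Step 1 are straightforward integration by parts; Step 3 is essentially bookkeeping of tame estimates. The real subtlety is Step 2: one must keep careful track of how the reparametrization $\mathcal P$ (whose Jacobian is the non-constant function $\rho$) interacts with the $\Pi_{K_n}$-projected equation \eqref{solution p m3}, and how the space diffeomorphism $\mathcal B$ transforms integrals of squares. The bookkeeping of the constants $(2\pi)^{-\nu-5/2}$ and the factor $\mathtt m_3$ appearing and cancelling in the change of variables is the most error-prone part.
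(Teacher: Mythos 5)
Your proposal is correct and follows essentially the same route as the paper: reduce the average of $\tilde a_{14}$ to that of $a_{13}$, integrate by parts in $x$ and use the choice $a_{11}=\tfrac32\mathtt m_3\sqrt{\kappa}\,a_x$ to obtain $-\tfrac{1}{2\mathtt m_3\sqrt\kappa}(2\pi)^{-\nu-1}\int a_{11}^2$, then undo the reparametrization $\mathcal P$ via \eqref{solution p m3} and the diffeomorphism $\mathcal B$ via \eqref{a1 a2 a3}, and conclude with \eqref{formula m3}; the estimates \eqref{stima R7lambda1} are routine tame bounds exactly as you indicate. The only cosmetic slip is the direction of the $x$-change of variable in Step 2 (the Jacobian $1+\beta_x$ comes from $y=x+\beta(\vphi,x)$, the inverse of $x\mapsto x+\tilde\beta$), which does not affect the result.
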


\begin{proof}
By \eqref{lambda1 w}, \eqref{a tilde 14}, \eqref{definizione a 10}, \eqref{definizione a} the coefficient
\begin{align}
\mathtt m_{1, K_n} & = 
\frac{1}{(2 \pi)^{\nu + 1}} \int_{\T^{\nu+1}}  \tilde a_{14} (\vphi, x) \,d \vphi d x = 
\frac{1}{(2 \pi)^{\nu + 1}} \int_{\T^{\nu+1}}  a_{13} (\vphi, x) \,d \vphi d x \nonumber \\ 
& = \frac{1}{(2 \pi)^{\nu + 1}} \int_{\T^{\nu+1}}  \frac12  (a_{11})_x \tilde a -  a_{11} 
\tilde a_x - \frac{3 }{8}  \mathtt m_3 \sqrt{\kappa}  \tilde a_{xx}  \tilde a + 
\frac{3 }{4} \mathtt m_3 \sqrt{\kappa}  \tilde a_x^2 \,d \vphi d x \nonumber \\
& = - \frac{(2 \pi)^{-\nu - 1}}{2 \mathtt m_3 \sqrt{\kappa} }  \int_{\T^{\nu+1}} a_{11}^2 (\vphi, x) \, d \vphi dx \, . \label{svil-lambda1}
\end{align}
By \eqref{bf B1}, \eqref{transformed Pcal}, $ d \vartheta = (1+ \om \cdot \pa_\vphi p ) d \vphi $ (by \eqref{QP-repa}),
\eqref{solution p m3}, \eqref{a1 a2 a3}, \eqref{diffeo-torus}  we have 
\be\label{ulter-svilu}
\begin{aligned}
 \int_{\T^{\nu+1}} a_{11}^2 (\vphi, x) \, d \vphi dx & =  \int_{\T^{\nu+1}} 
 \frac{a_1^2(\vphi, x)}{1+ \om \cdot \pa_\vphi p }  \, d \vphi dx \\ 
 & =
 \mathtt m_3 \int_{\T^{\nu+1}} \frac{ (\Dom \b + V(1 + \b_x))^2 }{\Pi_{K_n} m_3 (\vphi)} (1+ \b_x ) \, d \vphi dx \, . 
 \end{aligned}
\ee
By  \eqref{svil-lambda1}, \eqref{ulter-svilu},  \eqref{formula m3} 
 we deduce 
\eqref{lm1 formula}. 
\end{proof}

Lemmata \ref{ordini maggiori di 0 Egorov}, \ref{soluzione vera Egorov algebrico}, 
\ref{lemma:pezzo dal tempo}, \ref{soluzione vera Egorov S omega algebrico},   imply that 
$$
{\mathcal L}_M^{(1)} = \Dom {\mathbb I}_2 + \ii \mathtt m_3 {\bf T}(D) + 
\ii \big({\bf C}_1(\vphi, x) + {\bf C}_0(\vphi, x) \mH \big) |D|^{\frac12} + {\bf R}_M^{(1)} + {\bf Q}_M^{(1)}
$$
with remainders 
\begin{align} \nonumber
& \qquad \qquad \qquad \qquad {\bf R}_M^{(1)} := \begin{pmatrix}
{\mathcal R}_M^{(1)} & 0 \\
0 & \overline {\mathcal R}_M^{(1)}
\end{pmatrix}\,, \quad 
{\bf Q}_M^{(1)} := \begin{pmatrix}
  0 & {\mathcal Q}_M^{(1)} \\
\overline {\mathcal Q}_M^{(1)} & 0  
\end{pmatrix} \\
& {\mathcal R}_M^{(1)}  := {\rm Op}(r_M^{(1)}) - W(\vphi, 1) + 
\int_0^1 W_\omega (\vphi, \tau)d \tau  \, , 
\qquad {\mathcal Q}_M^{(1)} := \Phi {\mathcal R}_M^{(II)} \overline \Phi^{- 1} \, , \label{cal QN (1)}   \\
& r_M^{(1)}(\vphi, x, \xi) := r_{q_{\leq 2}}(\vphi,x, \xi) + 
{\mathop \sum}_{n = 3}^M  
q_n (1, \vphi, x, \xi)  \nonumber\\ 
& \qquad \qquad \qquad + \ii {\mathop \sum}_{n = 1}^M \int_0^1 
s_n (\tau, \vphi, x, \xi)d \tau \in S^0 \nonumber 
\end{align}
where $ r_{q_{\leq 2}} $ is defined in \eqref{defrleq2},   
$ q_n $  in \eqref{definizione qk}, $ s_n $ in  \eqref{sk soluzione},  
the operator $W$ is defined in \eqref{forma finale P - Q}, $W_\omega$ 
in \eqref{forma finale S omega - S M} and $ {\mathcal R}_M^{(II)} $  in Proposition \ref{Lemma finale decoupling}. 

In the final part of this section we prove that  $ {\bf R}_M^{(1)} $ and $ {\bf Q}_M^{(1)} $ 
are tame operators and  \eqref{tame resto diagonale Egorov} holds.

\begin{lemma}
For all $s_0 \leq s \leq S$, we have 
\begin{equation}\label{stima coefficienti a egorov}
\begin{aligned}
& \| a_{12}\|_s^{k_0, \gamma}, \| a_{13}\|_s^{k_0, \gamma}, 
\| a_{14}\|_s^{k_0, \gamma}, \| \tilde a\|_s^{k_0, \gamma} \leq_S \e (1 + \| \fracchi_0 \|_{s + \sigma}^{k_0, \gamma})\,,  \\
&  \| a_0 \|_s^{k_0, \gamma} \leq_S \e \gamma^{- 1} (1 + \| \fracchi_0 \|_{s + \sigma}^{k_0, \gamma})\,, 
\end{aligned}
\end{equation}
\begin{equation}\label{stime derivate coefficienti a Egorov}
\begin{aligned}
& \| \partial_i a_{12}[\hat \imath]\|_{s_1}, \| \partial_i a_{13} [\hat \imath] \|_{s_1}, 
\| \partial_i a_{14}[\hat \imath ]\|_{s_1}, \|  \partial_i \tilde a[\hat \imath] \|_{s_1} \leq_{S} \e \| \hat \imath \|_{s_1 + \sigma}, \\
&  \| \partial_i a_0[\hat \imath] \|_{s_1} \leq_{S} \e \gamma^{- 1} \| \hat \imath \|_{s_1 + \sigma}\,.
\end{aligned}
\end{equation}
\end{lemma}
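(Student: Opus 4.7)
My plan is to prove the six estimates in the order in which the corresponding coefficients are actually defined, so that each one reduces to tame composition/product bounds together with the quantitative estimates already established earlier in the section. The only genuinely non-routine point will be to track the factor $\gamma^{-1}$ coming from the small-divisor equation that defines $a_0$, and to check that it is absorbed by the smallness $\e \gamma^{-1}\le 1$ built into the ansatz \eqref{ansatz I delta}.

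First I would bound $\tilde a$. By \eqref{definizione a}, $\tilde a = \tfrac{2}{3\mathtt m_3\sqrt{\kappa}}\partial_x^{-1} a_{11}$. The function $a_{11}$ inherits from Remark \ref{parities a1 - a6} (through \eqref{bf B1} and \eqref{transformed Pcal}) the parity \emph{odd in $x$}, so it has zero $x$-average and $\partial_x^{-1} a_{11}$ is well defined with the same Sobolev size. The Moser estimate (Lemma \ref{Moser norme pesate}) applied to $1/m_3$, together with \eqref{stima m3 - 1}, controls the factor $\mathtt m_3^{-1}$, and \eqref{stima a 15} controls $a_{11}$; multiplying via \eqref{interpolazione C k0} gives the first claim. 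The bound for $a_{12}$ is obtained next from \eqref{bf B0 (I)}--\eqref{defA0}: $a_{12}=\rho^{-1}\mathcal P^{-1}(a_9)$ with $a_9=-\tfrac12(\sqrt\kappa\, a_7+a_8/\sqrt\kappa)$; the estimates on $a_7,a_8$ from \eqref{stima m3(vphi)} and the tame bounds \eqref{stima cal A} for the time reparametrization ${\cal P}^{\pm 1}$ (together with Lemma \ref{lemma:utile}) then yield \eqref{stima coefficienti a egorov} for $a_{12}$.

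The central step is the estimate of $a_0$, and through it, of $a_{13}$ and $a_{14}$. The key algebraic observation, already used in the paragraph following \eqref{a tilde 14}, is that $\int_\T a_{13}(\vphi,x)\,dx$ is \emph{independent of} $a_0$: indeed $a_x=\tilde a_x$, $a_{xx}=\tilde a_{xx}$ (as $a_0$ depends only on $\vphi$), and an integration by parts eliminates the term $(a_{11})_x a_0$. Hence $\mathtt m_{1,K_n}$ in \eqref{lambda1 w} is unambiguously defined through $\tilde a$ and $a_{11}$ alone, and satisfies \eqref{stima R7lambda1}. The argument of $(\om\!\cdot\!\pa_\vphi)^{-1}$ in \eqref{choicea0} has zero $\vphi$-average by construction and is Fourier-supported in $\{|\ell|\le K_n\}$; thus, for $\om\in\DC_{K_n}^\gamma$, Lemma \ref{lemma splitting cal D omega} gives $\|a_0\|_s^{k_0,\gamma}\le_s \gamma^{-1}\|\tilde a_{14}-\text{mean}\|_{s+\tau_1}^{k_0,\gamma}$, which combined with the $\tilde a$ and $a_{11}$ bounds above produces the announced $\e\gamma^{-1}(1+\|\fracchi_0\|_{s+\sigma}^{k_0,\gamma})$ estimate for $a_0$.

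At this point the bounds for $a_{13}$ and $a_{14}$ reduce to expanding \eqref{definizione a 10} with $a=\tilde a+a_0$ and applying the product estimate \eqref{interpolazione C k0}. Here lies the one subtle point I want to flag: the terms of $a_{13}$ that involve $a_0$ (namely $\tfrac12(a_{11})_x a_0$ and $-\tfrac38\mathtt m_3\sqrt\kappa\,\tilde a_{xx} a_0$) carry the factor $\gamma^{-1}$ from $a_0$, so a priori they give an $\e^2\gamma^{-1}$ size rather than $\e$. The point is that $a_{11}=O(\e)$ and $\tilde a_{xx}=O(\e)$ in low norm, so each such product is of the form $\e\cdot\e\gamma^{-1}=\e\cdot(\e\gamma^{-1})$, which is majorized by $\e$ under the standing assumption $\e\gamma^{-1}\le 1$ (cf. \eqref{ansatz 0}). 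All other terms of $a_{13}$ are $O(\e^2)$ without any $\gamma^{-1}$, and $\om\!\cdot\!\pa_\vphi a = \om\!\cdot\!\pa_\vphi\tilde a + \om\!\cdot\!\pa_\vphi a_0$ is handled identically, using $\|\om\!\cdot\!\pa_\vphi a_0\|_s\le C\|a_0\|_{s+1}^{k_0,\gamma}$ to pass to $a_{14}=a_{13}-\om\!\cdot\!\pa_\vphi a$. The derivative bounds \eqref{stime derivate coefficienti a Egorov} follow from exactly the same scheme, this time differentiating each formula with respect to $i$ and invoking the previously-established $i$-derivative estimates (\eqref{stima m3 - 1}, \eqref{stima derivata i a 15}, and their analogues from Lemma \ref{lemma:BAPQ}) in place of the $(\om,\kappa)$-weighted ones. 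The main bookkeeping difficulty is just to keep track of how the loss $\sigma=\sigma(\tau,\nu)$ compounds at each step; since only finitely many compositions occur, a single redefinition of $\sigma$ at the end of the proof is enough.
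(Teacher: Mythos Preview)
The paper states this lemma without proof, so your strategy---unfolding the explicit formulas \eqref{definizione a}, \eqref{definizione a 10}, \eqref{a tilde 14}, \eqref{choicea0} and applying tame product estimates---is the intended one, and most of your argument is correct. There is, however, one genuine gap in your treatment of $a_{14}$.

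You propose to bound $\om\cdot\pa_\vphi a_0$ via $\|\om\cdot\pa_\vphi a_0\|_s \le C\|a_0\|_{s+1}^{k_0,\gamma}$. Since you have just established $\|a_0\|_s^{k_0,\gamma} \leq_S \e\gamma^{-1}(1+\|\fracchi_0\|_{s+\sigma}^{k_0,\gamma})$, this route gives only $\om\cdot\pa_\vphi a_0 = O(\e\gamma^{-1})$, and hence $a_{14} = a_{13} - \om\cdot\pa_\vphi\tilde a - \om\cdot\pa_\vphi a_0 = O(\e\gamma^{-1})$, not the claimed $O(\e)$. Your remark that this term is ``handled identically'' to the quadratic terms in $a_{13}$ does not apply: those terms were genuine products of an $O(\e)$ factor with an $O(\e\gamma^{-1})$ factor, giving $O(\e^2\gamma^{-1})\le\e$, whereas $\om\cdot\pa_\vphi a_0$ stands alone with no extra $\e$ to absorb the $\gamma^{-1}$.

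The fix is to bypass the naive bound and return to the defining equation. From \eqref{choicea0} one has directly
\[
\om\cdot\pa_\vphi a_0(\vphi) \;=\; -\,\mathtt m_{1,K_n} \;+\; \frac{1}{2\pi}\int_\T \Pi_{K_n}\tilde a_{14}(\vphi,x)\,dx\,,
\]
and, as you already argued, the right-hand side depends only on $\tilde a$ and $a_{11}$ (through $\int_\T a_{13}\,dx$, which is independent of $a_0$) and is therefore $O(\e)$ in $\|\cdot\|_s^{k_0,\gamma}$. This recovers the correct bound for $a_{14}$. The same correction is needed for $\partial_i(\om\cdot\pa_\vphi a_0)[\hat\imath]$ in the estimate of $\partial_i a_{14}$: differentiate the identity above in $i$ rather than bounding through $\partial_i a_0$.
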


\begin{lemma}\label{ordini maggiori di 0 Egorov1}
The remainder $ r_{q_{\leq 2}} \in S^0 $ in  \eqref{q leq 2 espansione finale} (see \eqref{defrleq2}) satisfies,
for some  $ \sigma := \sigma(\tau, \nu) > 0 $, 
\begin{equation}\label{stima resto q leq 2}
\norma r_{q_{\leq 2}}(x, D)  \norma_{0, s, \alpha}^{k_0, \gamma} \leq_{S, \alpha} \e 
\big(1 + \| \fracchi_0  \|_{s + \sigma + \aleph_M(\alpha + 4) + 2 \alpha
}^{k_0, \gamma} \big)\, , \quad \forall s_0 \leq s \leq S\,.
\end{equation}
Moreover, if the constant $\mu$ in \eqref{ansatz I delta} satisfies 
\begin{equation}\label{ansatz 1 indici egorov}
s_1 + \sigma + \aleph_M(\alpha + 4) + 2 \alpha \leq s_0 + \mu\,,
\end{equation}
then 
\begin{equation}\label{stima derivate i resto q leq 2}
\norma \partial_i r_{q_{\leq 2}}(x, D)[\hat \imath] \norma_{0, s_1, \alpha} \leq_{S, \alpha} \e \| \hat \imath \|_{s_1 + \sigma + \aleph_M(\alpha + 4) + 2 \alpha}\,.
\end{equation}
\end{lemma}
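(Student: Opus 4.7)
The plan is to estimate separately each of the four contributions in the decomposition
\eqref{defrleq2}, namely
$ r_{q_{\leq 2}} = r_M^{(I)} + r_{{\mathfrak a} p_0}^{(0)} + r_{{\mathfrak a} p_0}^{(1)} + r_{{\mathfrak a} p_0}^{(2)} $,
and prove that each of them satisfies \eqref{stima resto q leq 2}. The term $ r_M^{(I)} $ is already estimated:
by Proposition \ref{Lemma finale decoupling}, \eqref{stima resti prima Egorov}, one has
$ \norma r_M^{(I)}(x, D) \norma_{0,s,\alpha}^{k_0,\gamma} \leq_{S,\alpha} \e ( 1 + \| \fracchi_0 \|_{s+\sigma + \aleph_M(\alpha)}^{k_0,\gamma} ) $,
which is absorbed into the final bound since $ \aleph_M(\alpha) \leq \aleph_M(\alpha+4) + 2\alpha $. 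Before turning to the other pieces, I record
the key preliminary bound on $ p_0 $: by the explicit formula \eqref{p0 Egorov iniziale} and the composition Lemma
\ref{lemma stime Ck parametri}, together with \eqref{stima lambda 3 - 1 nuova}, \eqref{stima a 15}, \eqref{stima coefficienti a egorov} and
\eqref{stima resti prima Egorov}, the symbol $ p_0 \in S^{3/2} $ satisfies
$ \norma {\rm Op}(p_0) \norma_{3/2, s, \alpha}^{k_0,\gamma} \leq_{S, \alpha} 1 + \e \| \fracchi_0 \|_{s + \sigma + \aleph_M(\alpha)}^{k_0, \gamma} $.
Similarly, by \eqref{definizione a}, \eqref{choicea0} and \eqref{stima coefficienti a egorov}, the function $ a(\vphi, x) $ satisfies
$ \| a \|_s^{k_0,\gamma} \leq_S \e \gamma^{-1} ( 1 + \| \fracchi_0 \|_{s + \sigma}^{k_0, \gamma} ) $, so that $ a $ contributes only to the $ \sigma $-loss.

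For $ r_{{\mathfrak a} p_0}^{(0)} $ in \eqref{rq1q2}, I apply Lemma \ref{proprieta r2(q,A)} with $ m = 3/2 $ and $ q = p_0 $ to bound
$ \mathtt r_{\mathtt 2}({\mathfrak a}, p_0) \in S^0 $, which needs $ \norma {\rm Op}(p_0) \norma_{3/2, s+\alpha+5, \alpha+2}^{k_0,\gamma} $ and costs
at most $ \aleph_M(\alpha + 2) $ derivatives of $ \fracchi_0 $. The Poisson bracket $ \{{\mathfrak a}, \mathtt r_{\mathtt 2}({\mathfrak a}, p_0)\} \in S^{-1/2} \subset S^0 $ is estimated via \eqref{stima Poisson} in Lemma \ref{lemma tame norma commutatore}, losing one more derivative in $ \alpha $ and in $ s $.
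The third term requires first bounding $ q_1 $ via Lemma \ref{proprieta sigma(q,A)} with $ m = 3/2 $ applied to $ p_0 $, and then
applying Lemma \ref{proprieta r2(q,A)} with $ m = 1 $ to $ q_1 \in S^1 $. Composing the indices produced by these two lemmas yields exactly the loss $ \aleph_M(\alpha+4) + 2\alpha $, which drives the final estimate.

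For $ r_{{\mathfrak a} p_0}^{(1)} $ and $ r_{{\mathfrak a} p_0}^{(2)} $ in \eqref{r p0 A (1)}, \eqref{r p0 A (3)}, several contributions are
actually in $ OPS^{-\infty} $ because of the support properties of the cut-off $ \chi $ defined in \eqref{cut off simboli 1}:
the symbol $ \partial_\xi \chi $ is compactly supported in $ \{1/3 < |\xi| < 2/3\} $, so terms like
$ (a_{11})_x a \, (\partial_\xi \chi) |\xi|^{1/2} \xi $ and $ \frac{3}{2} \mathtt m_3 \sqrt{\kappa} a_x (1 - \chi^2) \xi $
are smoothing of infinite order and their norms are controlled by $ \e(1 + \|\fracchi_0\|_{s+\sigma+\alpha}) $. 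Likewise,
$ \partial_\xi T(\xi) - \tfrac32 \sqrt{\kappa}\, \mathrm{sign}(\xi) \chi(\xi) |\xi|^{1/2} \in S^{-3/2} $ by the definition
\eqref{def T} of $ T $, so multiplication by $ \chi |\xi|^{1/2} $ produces a contribution in $ S^{-1} $. The remaining Poisson brackets
$ \{ a \chi(\xi) |\xi|^{1/2}, \cdot \} $ involving $ a_{12} \chi \, \mathrm{sign}(\xi) |\xi|^{1/2} + r_M^{(I)} $, and those
inside $ r_{{\mathfrak a} p_0}^{(2)} $, are estimated directly by \eqref{stima Poisson}, using \eqref{stima coefficienti a egorov} for $ a_{12} $ and \eqref{stima resti prima Egorov} for $ r_M^{(I)} $, producing losses bounded by $ \aleph_M(\alpha+1) + \alpha + O(1) $, which is again absorbed into $ \aleph_M(\alpha+4) + 2\alpha $.

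The derivative estimate \eqref{stima derivate i resto q leq 2} follows by the same strategy applied to $ \partial_i r_{q_{\leq 2}}[\hat \imath] $, since all the coefficients $ a_{11}, a_{12}, a_{13}, a_{14}, \tilde a, a_0, \mathtt m_3 $ and the remainder $ r_M^{(I)} $ have corresponding
$ \partial_i $-estimates given by \eqref{stima derivata i a 15}, \eqref{stime derivate coefficienti a Egorov}, \eqref{stima lambda 3 - 1 nuova}
and \eqref{stima derivate resti prima Egorov}; in particular Lemmas \ref{proprieta sigma(q,A)} and \ref{proprieta r2(q,A)} apply verbatim
to $ \partial_i p_0[\hat \imath] $ and $ \partial_i q_1[\hat \imath] $ in place of $ p_0 $ and $ q_1 $, provided the ansatz \eqref{ansatz 1 indici egorov} is in force so that each Sobolev norm at index $ s_1 + \sigma + \aleph_M(\alpha+4) + 2\alpha $ is controlled by \eqref{ansatz I delta} and \eqref{estimate:low norm for der}. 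The main obstacle in the write-up is the bookkeeping: each application of
Lemma \ref{proprieta sigma(q,A)} or \ref{proprieta r2(q,A)} costs a fixed number of derivatives in $ s $ and $ \alpha $, and these must be accumulated correctly through the nested definitions of $ q_1 $, $ \mathtt r_{\mathtt 2}({\mathfrak a}, q_1) $ and
$ \{{\mathfrak a}, \mathtt r_{\mathtt 2}({\mathfrak a}, p_0)\} $ to arrive at precisely the index $ \aleph_M(\alpha+4) + 2\alpha $ appearing on the right-hand side.
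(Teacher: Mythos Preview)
Your proposal is correct and follows essentially the same route as the paper: decompose $r_{q_{\leq 2}}$ into its four pieces, estimate $r_M^{(I)}$ via Proposition \ref{Lemma finale decoupling}, estimate $\mathtt r_{\mathtt 2}({\mathfrak a}, p_0)$ and then $\mathtt r_{\mathtt 2}({\mathfrak a}, q_1)$ via Lemma \ref{proprieta r2(q,A)} (the latter producing the dominant loss $\aleph_M(\alpha+4)+2\alpha$), handle the Poisson brackets with \eqref{stima Poisson}, and treat the cut-off supported terms in $r_{{\mathfrak a} p_0}^{(1)}, r_{{\mathfrak a} p_0}^{(2)}$ as lower order. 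One small caveat: your remark that ``$a$ contributes only to the $\sigma$-loss'' glosses over the fact that $\|a\|_s^{k_0,\gamma}$ carries a factor $\e\gamma^{-1}$ through $a_0(\vphi)$; the reason the final bound is $O(\e)$ rather than $O(\e\gamma^{-1})$ is that in each remainder either only $\partial_x a = \partial_x\tilde a = O(\e)$ appears, or the factor of $a$ is paired with $\partial_x^2 p_0$ (which is $O(\e)$ since the leading term $\ii\mathtt m_3 T(\xi)$ is $x$-independent), so the product is $O(\e^2\gamma^{-1})\leq O(\e)$.
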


\begin{proof}
We rely on the Lemmata \ref{proprieta sigma(q,A)} and \ref{proprieta r2(q,A)}. 
 We prove that each term  of 
$ r_{q_{\leq 2}} = r_M^{(I)} + r_{{\mathfrak a} p_0 }^{(0)} +  r_{{\mathfrak a} p_0}^{(1)}  + r_{{\mathfrak a} p_0 }^{(2)} $ defined in \eqref{rq1q2}, 
\eqref{r p0 A (1)}, \eqref{r p0 A (3)} satisfies \eqref{stima resto q leq 2}. The term $ {\rm Op} ( r_M^{(I)}) $
satisfies \eqref{stima resto q leq 2}, \eqref{stima derivate i resto q leq 2} by Proposition  \ref{Lemma finale decoupling}. 
Then we consider $ r_{{\mathfrak a} p_0 }^{(0)} $ in \eqref{rq1q2}. 
Lemma \ref{proprieta r2(q,A)} (with $m = 3/2$),  
the definition of $ p_0 $ in \eqref{p0 Egorov iniziale},
the estimates of Proposition \ref{Lemma finale decoupling}, and  \eqref{stima coefficienti a egorov},
imply
\begin{equation}\label{stima r q1}
\norma \mathtt r_{\mathtt 2} ( {\mathfrak a}, p_0)(x, D)  \norma_{0, s, \alpha}^{k_0, \gamma} \leq_{S, \alpha}
\e \big( 1 + \| \fracchi_0 \|_{s + \sigma + \aleph_M( \alpha + 2) + \alpha}^{k_0, \gamma} \big) \, . 
\end{equation}
 In the same way, using  
$ \partial_i \mathtt r_{\mathtt 2} ( {\mathfrak a}, p_0)[\hat \imath] = $ $ 
\mathtt r_2 (\partial_i {\mathfrak a} [\hat \imath], p_0) + \mathtt r_2({\mathfrak a}, \partial_i p_0 [\hat \imath]) $
and \eqref{ansatz I delta}, \eqref{ansatz 1 indici egorov}, we deduce that 
\begin{equation}\label{stima derivata r q1}
\norma \partial_i \mathtt r_2({\mathfrak a}, p_0)(x, D)[\hat \imath] \norma_{0, s_1, \alpha} \leq_{S, \alpha} \e \| \hat \imath \|_{s_1 + \sigma + \aleph_M(\alpha + 2) + \alpha } \, . 
\end{equation} 
Lemma \ref{stima Poisson}, \eqref{stima r q1} and \eqref{stima coefficienti a egorov} imply 
\begin{align}
\norma \{  {\mathfrak a}, \mathtt r_{\mathtt 2} ( {\mathfrak a}, p_0) \}(x, D) \norma_{0, s, \alpha}^{k_0, \gamma} & 
\leq_{s, \alpha} \norma \mathtt r_{\mathtt 2} ( {\mathfrak a}, p_0)(x, D) \norma_{0, s + 1, \alpha + 1}^{k_0, \gamma} 
\| a \|_{s_0 + 1}^{k_0, \gamma}  \nonumber\\
& \qquad + \norma \mathtt r_{\mathtt 2}( {\mathfrak a}, p_0)(x, D) \norma_{0, s_0 + 1, \alpha + 1}^{k_0, \gamma} 
\| a \|_{s + 1}^{k_0, \gamma} \nonumber\\
& \leq_{S, \alpha}  
\e \big( 1 + \| \fracchi_0 \|_{s + \sigma + \aleph_M( \alpha + 2) + \alpha}^{k_0, \gamma} \big)  \label{r2(p0,A) poisson A}
\end{align}
for some $\sigma := \sigma(\tau, \nu) > 0$. Moreover 
$ \partial_i  \{  {\mathfrak a}, \mathtt r_{\mathtt 2} ( {\mathfrak a}, p_0) \}[\hat \imath]  = $
$\{ \partial_i {\mathfrak a}[\hat \imath], \mathtt r_{\mathtt 2} ( {\mathfrak a}, p_0) \} +$ $
 \{  {\mathfrak a}, \partial_i \mathtt r_{\mathtt 2} ( {\mathfrak a}, p_0)[\hat \imath] \} $.
Hence \eqref{stima Poisson}, \eqref{stima coefficienti a egorov}, \eqref{stime derivate coefficienti a Egorov}, \eqref{stima r q1}, \eqref{stima derivata r q1}, \eqref{ansatz I delta}, \eqref{ansatz 1 indici egorov} imply that 
\begin{equation}\label{derivata r2(p0,A) poisson A}
\norma \partial_i \{  {\mathfrak a}, \mathtt r_{\mathtt 2} ( {\mathfrak a}, p_0) \}(x, D) [\hat \imath] \norma_{0, s_1, \alpha} \leq_{S, \alpha} \e \| \hat \imath \|_{s_1 + \sigma + \aleph_M(\alpha + 2) + \alpha} \,.
\end{equation}
Moreover by \eqref{espansione q1}, \eqref{stima r q1}, \eqref{stima derivata r q1}, 
\eqref{stima Poisson} and Proposition \ref{Lemma finale decoupling} (and \eqref{ansatz I delta}, \eqref{ansatz 1 indici egorov}) we get
\begin{align}
& \norma q_1(x, D) \norma_{1, s, \alpha}^{k_0, \gamma} \leq_{S, \alpha} 
\e \big( 1 + \| \fracchi_0 \|_{s + \sigma + \aleph_M ( \alpha + 2) + \alpha}^{k_0, \gamma} \big)\,, \label{stima simbolo q1 Egorov} \\
& \norma \partial_i q_1(x, D) [\hat \imath] \norma_{1, s_1, \alpha} \leq_{S, \alpha} 
\e \| \hat \imath \|_{s_1 + \sigma + \aleph_M(\alpha + 2) + \alpha}\,, \label{stima derivata i simbolo q1 Egorov}
\end{align}  
and using Lemma \ref{proprieta r2(q,A)} (with $m = 1$), by the same 
arguments used to deduce \eqref{stima r q1}, \eqref{stima derivata r q1}, we get 
\begin{align}
& \norma \mathtt r_{\mathtt 2}( {\mathfrak a}, q_1)(x, D) \norma_{0, s, \alpha}^{k_0, \gamma} \leq_{S, \alpha} 
\e \big( 1 +  \| \fracchi_0 \|_{s + \sigma + \aleph_M (\alpha + 4) + 2 \alpha }^{k_0, \gamma} \big)\,, \label{r2(q1,A)} \\
& \norma \partial_i \mathtt r_{\mathtt 2}( {\mathfrak a}, q_1)(x, D)[\hat \imath] \norma_{0, s_1 , \alpha} \leq_{S , \alpha} 
\e  \| \hat \imath \|_{s + \sigma + \aleph_M (\alpha + 4) + 2 \alpha }  \label{derivata i r2(q1,A)} 
\end{align}
for some $\sigma := \sigma(\tau, \nu) > 0$. 
The estimates \eqref{stima r q1}, \eqref{stima derivata r q1}, \eqref{r2(p0,A) poisson A}, \eqref{derivata r2(p0,A) poisson A}, \eqref{r2(q1,A)}, \eqref{derivata i r2(q1,A)} imply 
\begin{align*}
& \norma r_{{\mathfrak a} p_0 }^{(0)}(x, D) \norma_{0, s, \alpha}^{k_0, \gamma} 
\leq_{S, \alpha} \e \big( 1 +  \| \fracchi_0 \|_{s + \sigma + \aleph_M(\alpha + 4) + 2 \alpha }^{k_0, \gamma} \big) \\
& \norma \partial_i r_{{\mathfrak a} p_0 }^{(0)}(x, D)[\hat \imath] \norma_{0, s_1 , \alpha} 
\leq_{S, \alpha} \e \| \hat \imath \|_{s_1 + \sigma + \aleph_M(\alpha + 4) + 2 \alpha } 
\end{align*}
for some $\sigma := \sigma(\tau, \nu) > 0$. 
The symbol $  {\tilde r}_{{\mathfrak a}p_0} $ defined in \eqref{r p0 A (1)} satisfies 
\begin{align}\label{stima r p0 A (1)}
& \norma  {\tilde r}_{{\mathfrak a}p_0} (x, D) \norma_{0, s, \alpha}^{k_0, \gamma} \leq_{S, \alpha} 
\e \big( 1 +  \| \fracchi_0 \|_{s + \sigma + \aleph_M(\alpha + 1)}^{k_0, \gamma} \big)\,, \\
& \label{stima derivata r p0 A (1)}
\norma  \partial_i {\tilde r}_{{\mathfrak a}p_0} (x, D)[\hat \imath] \norma_{0, s_1, \alpha} \leq_{S, \alpha} 
\e   \| \hat \imath \|_{s_1 + \sigma + \aleph_M(\alpha + 1)} \,,
\end{align}
by \eqref{stima resti prima Egorov}, \eqref{stima derivate resti prima Egorov}, Lemma \ref{lemma:BAPQ} and \eqref{defA0}.
Also the symbols $ r^{(1)}_{{\mathfrak a} p_0  }$ in \eqref{r p0 A (1)} and $ r_{{\mathfrak a} p_0 }^{(2)} $ in \eqref{r p0 A (3)} 
satisfy \eqref{stima r p0 A (1)}, \eqref{stima derivata r p0 A (1)}.
\end{proof}


\begin{lemma} \label{soluzione approssimata Egorov}
For all $ n \in \{ 1, \ldots, M \} $ the symbols  $ q_n \in S^{\frac12 (3 - n)} $ defined 
in \eqref{definizione qk} satisfy  
\begin{equation}\label{stima qk Egorov}
\norma {\rm Op}(q_n )   \norma_{\frac12 (3 - n) ,s, \alpha}^{k_0, \gamma} \leq_{n, S, \a} \e 
\big(1 + \| \fracchi_0 \|_{s + \sigma + \beth_n (M, \alpha)}^{k_0, \gamma} \big)\,, \quad \forall s_0 \leq s \leq S\,,
\end{equation}
where the constants $ \beth_n (M, \alpha)$, $ n \in \{3, \ldots , M \}$ are defined inductively by
\begin{equation}\label{c k + 1 c k}
\beth_1(M, \alpha) := \aleph_M(\alpha + 2) + \alpha \,, \quad \beth_{n + 1}(M, \alpha) := \alpha + \frac{n}{2} + \frac32 + \beth_n(M, \alpha + 1)\,.
\end{equation}
The operator $R_M (\vphi, t) := - \ii {\rm Op} ( {\mathfrak a} \star q_M ) \in OPS^{1 - \frac{M}{2}}$ 
 satisfies 
 \begin{equation}\label{stima RN diagonale Egorov}
\norma R_M (\vphi, t)  \norma_{ 1 - \frac{M}{2}, s, \alpha}^{k_0, \gamma} 
\leq_{M, S, \a} \e \big( 1 + \| \fracchi_0 \|_{s + \sigma + \beth_{M + 1}(M, \alpha) }^{k_0, \gamma} \big)\,, \quad \forall s_0 \leq s \leq S\,.
\end{equation}
Moreover if the constant $\mu$ in \eqref{ansatz I delta} satisfies 
\begin{equation}\label{ansatz indici Egorov 2}
s_1 + \sigma + \beth_{M + 1}(M, \alpha) \leq s_0 + \mu\,,
\end{equation}
then for all $n \in \{3, \ldots, M \}$
\begin{align} \label{stima derivata i qk Egorov}
& \norma \partial_i {\rm Op}(q_n )[\hat \imath]   \norma_{\frac12 (3 - n) ,s_1 , \alpha} \leq_{n, S , \a} \e \| \hat \imath \|_{s_1 + \sigma + \beth_{n}(M, \alpha)}\,, \\
& \label{stima derivata i RN diagonale Egorov}
\norma \partial_i R_M (\vphi, t)[\hat \imath] \norma_{1 - \frac{M}{2}, s_1, \alpha} \leq_{M, S, \alpha} \e \| \hat \imath \|_{s_1 + \sigma + \beth_{M + 1}(M, \alpha)}\,.
\end{align}
\end{lemma}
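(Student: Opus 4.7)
The estimate \eqref{stima qk Egorov} is proved by induction on $n \in \{1,\ldots,M\}$. For the base case $n=1$ one has $q_1(1,\vphi,x,\xi) = \ii({\mathfrak a}\star p_0)(\vphi,x,\xi)$ by \eqref{espansione q1}. Since $p_0 \in S^{3/2}$ with
\[
\norma {\rm Op}(p_0)\norma_{3/2,s,\alpha}^{k_0,\gamma} \leq_{S,\alpha} \e\bigl(1 + \|\fracchi_0\|_{s+\sigma+\aleph_M(\alpha)}^{k_0,\gamma}\bigr),
\]
which follows from the explicit form \eqref{p0 Egorov iniziale}, the bound \eqref{stima coefficienti a egorov} on $a_{11}, a_{12}$, and the estimate \eqref{stima resti prima Egorov} on $r_M^{(I)}$, an application of Lemma \ref{proprieta sigma(q,A)} with $m = 3/2$, combined with the bound $\|a\|_s^{k_0,\gamma} \leq_S \e\gamma^{-1}(1+\|\fracchi_0\|_{s+\sigma}^{k_0,\gamma})$ from \eqref{stima coefficienti a egorov} (absorbing the $\gamma^{-1}$ into the smallness constant via \eqref{ansatz I delta}), yields
\[
\norma{\rm Op}(q_1)\norma_{1,s,\alpha}^{k_0,\gamma} \leq_{S,\alpha} \e\bigl(1+\|\fracchi_0\|_{s+\sigma+\aleph_M(\alpha+2)+\alpha}^{k_0,\gamma}\bigr) = \e\bigl(1+\|\fracchi_0\|_{s+\sigma+\beth_1(M,\alpha)}^{k_0,\gamma}\bigr),
\]
which is \eqref{stima qk Egorov} for $n=1$, after recalling \eqref{c k + 1 c k}.

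For the inductive step, assuming the estimate at level $n$, one observes that $q_{n+1}(1,\vphi,x,\xi) = \ii \int_0^1 ({\mathfrak a} \star q_n)(\tau,\vphi,x,\xi)\,d\tau$ by \eqref{definizione qk}, and applies Lemma \ref{proprieta sigma(q,A)} with $m = \frac12(3-n)$. The shift in Sobolev and $\xi$-derivative indices produced by the commutator estimate, namely $s \rightsquigarrow s+\alpha+3$ and $\alpha \rightsquigarrow \alpha+1$ in the first factor, together with the loss $s+|m|+\alpha+2$ in the second factor (which is dominated by $s + n/2 + \alpha + 2$ once $n \geq 3$), is precisely compensated by the recursive definition $\beth_{n+1}(M,\alpha) = \alpha + n/2 + 3/2 + \beth_n(M,\alpha+1)$. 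The bound on $\|a\|_{s+\cdots}^{k_0,\gamma}$ remains of the form $\e\gamma^{-1}(1+\|\fracchi_0\|_{s+\sigma+\cdots}^{k_0,\gamma})$ and is absorbed as before. This closes the induction. The estimate \eqref{stima RN diagonale Egorov} on $R_M = -\ii {\rm Op}({\mathfrak a} \star q_M)$ then follows by a final application of Lemma \ref{proprieta sigma(q,A)} with $m = \frac12(3-M)$, producing one additional step in the recursion and hence the index $\beth_{M+1}(M,\alpha)$.

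The derivative estimates \eqref{stima derivata i qk Egorov}--\eqref{stima derivata i RN diagonale Egorov} follow from the same induction by differentiating the recursive formula \eqref{definizione qk} with respect to $i$: using the Leibniz rule,
\[
\partial_i q_{n+1}[\hat\imath] = \ii\int_0^1 \bigl( \partial_i {\mathfrak a}[\hat\imath] \star q_n + {\mathfrak a} \star \partial_i q_n[\hat\imath] \bigr)\,d\tau,
\]
one applies Lemma \ref{proprieta sigma(q,A)} to both summands, using the estimates \eqref{stime derivate coefficienti a Egorov} for $\partial_i a[\hat\imath]$ and the inductive hypothesis for $\partial_i q_n[\hat\imath]$. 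The assumption \eqref{ansatz indici Egorov 2} on $\mu$ guarantees via \eqref{ansatz I delta} that the undifferentiated factors $q_n$ and $a$ are controlled in the required low norm $\|\cdot\|_{s_0+\mu}$, so that each iteration of the derivative bound preserves the linear dependence on $\|\hat\imath\|_{s_1+\sigma+\beth_n(M,\alpha)}$.

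The principal technical point --- and the only real obstacle --- is verifying the bookkeeping of the loss constants $\beth_n(M,\alpha)$ at each step of the iteration, since the commutator estimate simultaneously increases the Sobolev index by $\alpha+3$, the $\xi$-derivative count by $1$, and the $|m|$-factor by the current order $\frac12(n-3)$ once $n \geq 3$. All three contributions must be absorbed by the single increment $\alpha + n/2 + 3/2$ in the recursion \eqref{c k + 1 c k}, and the choice of this recursion is dictated precisely by taking the maximum of these three losses; the remaining estimates on $\mathtt m_3$, $a$, and $\gamma^{-1}$-factors are purely routine.
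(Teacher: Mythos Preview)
Your overall strategy --- induction on $n$ via the commutator estimate of Lemma~\ref{proprieta sigma(q,A)}, followed by Leibniz for the $\partial_i$-estimates --- is exactly the paper's approach, and the inductive step and the derivative part are handled correctly.

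There is, however, a genuine error in your treatment of the base case. You assert
\[
\norma {\rm Op}(p_0)\norma_{3/2,s,\alpha}^{k_0,\gamma} \leq_{S,\alpha} \e\bigl(1 + \|\fracchi_0\|_{s+\sigma+\aleph_M(\alpha)}^{k_0,\gamma}\bigr),
\]
claiming this follows from the smallness of $a_{11},a_{12},r_M^{(I)}$. But the explicit form \eqref{p0 Egorov iniziale} contains the leading term $\ii\mathtt m_3 T(\xi)$ with $\mathtt m_3 = 1 + O(\e)$, so $\norma{\rm Op}(p_0)\norma_{3/2,s,\alpha}^{k_0,\gamma}$ is of order $1$, not $\e$. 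The smallness of $q_1 = \ii(\mathfrak a \star p_0)$ does not come from $p_0$; it comes from the factor $a$ inside $\mathfrak a$. Applying Lemma~\ref{proprieta sigma(q,A)} with the correct bound $\norma{\rm Op}(p_0)\norma = O(1)$ and $\|a\|_s^{k_0,\gamma} = O(\e\gamma^{-1})$ gives $q_1 = O(\e\gamma^{-1})$ directly. (If one wants the sharper $O(\e)$ as stated, one must exploit that the $O(1)$ part of $p_0$ is a pure Fourier multiplier, so that the commutator with $\mathfrak a$ only sees $x$-derivatives of $a$, and $a_x = \tilde a_x$ is $O(\e)$ since $a_0(\vphi)$ is independent of $x$.) The paper sidesteps this entirely by simply citing the already-established estimate \eqref{stima simbolo q1 Egorov} from Lemma~\ref{ordini maggiori di 0 Egorov1} for the base case, which is the cleanest fix.

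Note also that your inductive step is unaffected by this issue: once the base case gives $q_1 = O(\e)$ (or even $O(\e\gamma^{-1})$), each subsequent commutator with $\mathfrak a$ multiplies by an additional $\|a\| = O(\e\gamma^{-1}) \ll 1$, so the smallness propagates as claimed.
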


\begin{proof}
For $n = 1$ 
the estimates \eqref{stima qk Egorov}, \eqref{stima derivata i qk Egorov} for ${\rm Op}(q_1)$
have been proved in \eqref{stima simbolo q1 Egorov}, \eqref{stima derivata i simbolo q1 Egorov} 
 in Lemma \ref{ordini maggiori di 0 Egorov1}. 
Then we argue by induction supposing that $ q_n \in S^{\frac12(3 - n)} $ satisfies  \eqref{stima qk Egorov}, 
\eqref{stima derivata i qk Egorov}. 
Then, recalling \eqref{definizione qk}, Lemma \ref{proprieta sigma(q,A)} and 
\eqref{stima coefficienti a egorov} imply 
\begin{align*}
\norma {\rm Op}(q_{n + 1}) \norma_{\frac12 (3 -  (n+1)), s, \alpha}^{k_0, \gamma} & 
\leq_{n, S, \alpha}
\e \big(1 + \| \fracchi_0\|_{s + \sigma + \beth_{n + 1}(M, \alpha)}^{k_0, \gamma} \big) 
\end{align*}
where $\beth_{n + 1}(M, \alpha) $ is defined in \eqref{c k + 1 c k}. By \eqref{definizione qk} 
$$
\begin{aligned}
\partial_i {\rm Op}(q_{n + 1}) [\hat \imath] & = \ii {\rm Op}\Big(\int_0^t (  \partial_i {\mathfrak a}[\hat \imath] \star q_{n - 1})(\tau, \vphi, x, \xi) \, d \tau \Big)  \\& \quad + \ii {\rm Op}\Big(\int_0^t (  {\mathfrak a}\star  \partial_i q_{n - 1})(\tau, \vphi, x, \xi) [\hat \imath]  \, d \tau \Big)\, .
\end{aligned}
$$
Then \eqref{stima coefficienti a egorov}, \eqref{stime derivate coefficienti a Egorov}, \eqref{stima qk Egorov}, \eqref{stima derivata i qk Egorov}, \eqref{ansatz I delta}, \eqref{ansatz indici Egorov 2} imply 
$$
\norma \partial_i {\rm Op}(q_{n + 1})[\hat \imath] \norma_{\frac12 (3 -  (n+1)), s_1, \alpha} \leq_{n, S, \alpha} \e \| \hat \imath \|_{s_1 + \sigma + \beth_{n + 1}(M, \alpha) }\,.
$$
In the same way 
\eqref{stima RN diagonale Egorov}, \eqref{stima derivata i RN diagonale Egorov} follow. 
\end{proof}

\begin{remark}
We need \eqref{stima RN diagonale Egorov} only for $\alpha = 0$.
\end{remark}

We now estimate the difference $ W (\vphi, t) $ in \eqref{forma finale P - Q} 
between the approximate solution $ Q(\vphi, t) $  and the exact 
solution $  P(\vphi, t) $  of the equation \eqref{equazione Egorov}. 

\begin{lemma}\label{soluzione vera Egorov}
For all $\beta \in \N $ with  $ \beta  + k_0 + 4 \leq M $,  
 the operators $ \partial_{\vphi_j}^\beta  W(\vphi, t)  $,
$ \partial_{\vphi_j}^\beta [ W(\vphi, t), \partial_x] $,  $j = 1, \ldots, \nu$, are $ {\mathcal D}^{k_0} $-tame 
with tame constants 
\begin{equation}\label{stima W}
\begin{aligned}
& {\mathfrak M}_{ \partial_{\vphi_j}^\beta  W(\vphi, t) } (s), 
{\mathfrak M}_{ \partial_{\vphi_j}^\beta [ W(\vphi, t), \partial_x]  } (s) \leq_{S, M} \e 
(1 +  \| \fracchi_0 \|_{s + \sigma + \frac32 M + \daleth (M) 
+ \beta }^{k_0, \gamma})\,,  \\
& \qquad \forall s_0 \leq s \leq S\,,
\end{aligned}
\ee
for some $\sigma := \sigma(\tau, \nu, k_0) > 0 $ and 
(the constants $ \beth_n (M, \alpha)$  are defined in Lemma \ref{soluzione approssimata Egorov})
\begin{equation}\label{perdita pseudo-diff RN}
\daleth (M) := \beth_{M + 1}(M, 0) \, .
\end{equation}
Moreover if the constant $\mu$ in \eqref{ansatz I delta} satisfies 
\begin{equation}\label{ansatz 3 indici Egorov}
s_1 +  \sigma + \frac32 M + \daleth (M) 
+ \beta \leq s_0 + \mu\,,
\end{equation}
then 
\begin{equation}\label{stima derivata i W}
\begin{aligned}
& \|  \partial_{\vphi_j}^\b [ \partial_i W(\vphi, t)[\hat \imath], \partial_x]  \|_{{\mathcal L}(H^{s_1})} , \|  \partial_{\vphi_j}^\b \partial_i W(\vphi, t)[\hat \imath]  \|_{{\mathcal L}(H^{s_1})}  \\
& \leq_{M, S} \e   \|\hat \imath\|_{s_1 + \sigma + \frac32 M + \daleth (M) 
+ \beta}\,.
\end{aligned}
\end{equation}
\end{lemma}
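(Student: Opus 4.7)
The plan is to estimate $\partial_{\vphi_j}^\beta W(\vphi, t)$ and its commutator with $\partial_x$ by exploiting the cancellation between the smoothing order $1-M/2$ of the remainder $R_M(\vphi,\tau)$ and the $|D|^{1/2}$--loss that each derivative in $\vphi$ (or in $\lambda=(\om,\kappa)$) produces on the flows $\Phi(\vphi, t-\tau)$, $\Phi(\vphi,\tau-t)$. Recall the representation
\begin{equation*}
W(\vphi,t)=\int_0^t \Phi(\vphi,t-\tau)\,R_M(\vphi,\tau)\,\Phi(\vphi,\tau-t)\,d\tau
\end{equation*}
from \eqref{forma finale P - Q}, together with the bound \eqref{stima RN diagonale Egorov} on $R_M\in OPS^{1-M/2}$ and the tame flow estimates of Propositions~\ref{Prop0-flow}, \ref{Prop1-flow}, \ref{lemma:tame derivate flusso} in Appendix~\ref{AppendiceA}, which assert that $\partial_\lambda^k\partial_\vphi^\alpha \Phi^{\pm 1}(\vphi,t)$ is a $\mathcal{D}^{k_0}$-tame operator with loss $|D|^{(|\alpha|+|k|)/2}$ and tame constant controlled by $1+\|\fracchi_0\|_{s+\sigma+(|\alpha|+|k|)/2+\cdots}^{k_0,\gamma}$.

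First, I would apply the Leibniz rule, obtaining a finite sum of integrals over $\tau\in[0,t]$ whose integrands have the form
\begin{equation*}
C_{\beta_1,\beta_2,\beta_3}\, (\partial_{\vphi_j}^{\beta_1}\Phi(\vphi,t-\tau))\circ(\partial_{\vphi_j}^{\beta_2}R_M(\vphi,\tau))\circ(\partial_{\vphi_j}^{\beta_3}\Phi(\vphi,\tau-t)),
\end{equation*}
with $\beta_1+\beta_2+\beta_3=\beta$. Each factor $\partial_{\vphi_j}^{\beta_i}\Phi^{\pm 1}$ is $\mathcal{D}^{k_0}$-tame with loss $\beta_i/2$ by the Appendix, while $\partial_{\vphi_j}^{\beta_2}R_M\in OPS^{1-M/2}$ is pseudo-differential with norm $\norma\cdot\norma_{1-M/2,s+\beta_2,0}^{k_0,\gamma}$, hence it is $\mathcal{D}^{k_0}$-tame by Lemma~\ref{lemma: action Sobolev} (after $\pa_\xi$-interpolation, since its order is $\leq 0$ once $M\geq 2$). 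Composing with Lemma~\ref{composizione operatori tame AB} and the composition Lemma~\ref{composizione operatori tame AB} then gives a $\mathcal{D}^{k_0}$-tame operator with total loss $(\beta_1+\beta_3)/2+(M/2-1)_{-}\leq \beta/2$, which is absorbed by the smoothing of $R_M$ as soon as $(\beta+k_0)/2+1\leq M/2$, i.e.\ $M\geq \beta+k_0+2$. The assumption $\beta+k_0+4\leq M$ gives some slack (the $+2$ buffer is used because each $\partial_\lambda$ of $\Phi$ also loses an extra $|D|^{1/2}$, so $k_0$ parametric derivatives introduce an additional loss $k_0/2$). Collecting the tame constants and using the estimates \eqref{stima RN diagonale Egorov} and the Appendix flow estimates, the tame constant of $\partial_{\vphi_j}^{\beta}W(\vphi,t)$ is controlled by $\e(1+\|\fracchi_0\|_{s+\sigma+\frac{3M}{2}+\daleth(M)+\beta}^{k_0,\gamma})$; here the term $\frac{3M}{2}$ is the aggregate loss coming from $\pa_\xi^\alpha$-interpolation inside $\norma\cdot\norma_{1-M/2,\cdot,\alpha}^{k_0,\gamma}$ needed to pass from the pseudo-differential norm to the operator tame constant, while $\daleth(M)=\beth_{M+1}(M,0)$ is the Sobolev loss already built into \eqref{stima RN diagonale Egorov}.

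For the commutator $\partial_{\vphi_j}^{\beta}[W(\vphi,t),\partial_x]$, I would expand $[\Phi\,R_M\,\Phi^{-1},\partial_x]$ by distributing $\partial_x$ through the three factors, using that $[R_M,\partial_x]\in OPS^{1-M/2}$ has the same order as $R_M$ (the commutator with a derivative does not raise the order of a pseudo-differential operator, see \eqref{symbol commutator}), and that $[\Phi,\partial_x]$ can be rewritten through the flow equation \eqref{flow-propagator} as a $\mathcal{D}^{k_0}$-tame operator with a half-derivative loss, quantified again in the Appendix. This produces the same type of integrands treated above, and repeating the argument yields \eqref{stima W} for $\partial_{\vphi_j}^{\beta}[W(\vphi,t),\partial_x]$.

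Finally, the estimate \eqref{stima derivata i W} follows by differentiating the integral representation of $W$ (and of $[W,\partial_x]$) with respect to $i$, noting that each $\partial_i$ falls either on $R_M$ (controlled by \eqref{stima derivata i RN diagonale Egorov}) or on $\Phi^{\pm 1}$, whose $\partial_i$--derivatives satisfy analogous tame bounds from the Appendix in the lower Sobolev norm $\|\cdot\|_{s_1}$, under the smallness \eqref{ansatz 3 indici Egorov} that allows to trade the needed regularity of $\fracchi_0$ via \eqref{ansatz I delta}. The main (essentially only) obstacle is the careful bookkeeping of the cumulative regularity loss: each $\partial_\vphi$, $\partial_\lambda$ on $\Phi^{\pm 1}$ eats $|D|^{1/2}$ and calls for one more derivative of the profile $a(\vphi,x)$ (hence of $\fracchi_0$), and one has to verify that $M$ regularizing steps together with the $\frac{3M}{2}+\daleth(M)+\beta$ many Sobolev indices suffice; once this accounting is settled, the tame estimates follow mechanically from Lemmata~\ref{composizione operatori tame AB}, \ref{lemma operatore e funzioni dipendenti da parametro}, \ref{lemma: action Sobolev} and the Appendix.
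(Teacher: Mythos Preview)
Your plan is essentially the paper's proof: Leibniz on $\partial_\lambda^k\partial_{\vphi_j}^\beta$ applied to the Duhamel formula \eqref{forma finale P - Q}, with the smoothing order $1-M/2$ of $R_M$ absorbing the $|D|^{1/2}$-loss from each flow derivative. The paper carries this out by explicitly inserting compensating factors $\langle D\rangle^{\pm(\beta_i+|k_i|)/2}$ between the three pieces and invoking the specific Appendix Propositions~\ref{Teorema totale partial vphi beta k D beta k Phi} and~\ref{lemma:tame derivate flusso}; for the commutator it does \emph{not} distribute $\partial_x$ through the factors as you propose, but simply bounds $\partial_{\vphi_j}^\beta W\,\partial_x$ and $\partial_x\,\partial_{\vphi_j}^\beta W$ separately, absorbing the extra $\partial_x$ into the middle pseudo-differential piece via one additional $\langle D\rangle^{-1}$ --- this shortcut spares you from needing a separate estimate on $[\Phi,\partial_x]$, which is not directly provided in the Appendix.
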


\begin{proof}
To simplify  $\partial_\vphi := \partial_{\vphi_j}$, $j = 1, \ldots, \nu$.
We prove that  $ \partial_{\vphi}^\beta [W(\vphi, t), \partial_x] =  
\partial_{\vphi}^\beta W(\vphi, t) \partial_x - \partial_x \partial_{\vphi}^\beta W(\vphi, t) $
is $ {\mathcal D}^{k_0} $-tame. 
We first consider $ \partial_\vphi^\beta W(\vphi, t) \partial_x $. 
Recalling \eqref{forma finale P - Q} it is sufficient to estimate $ \forall t, \t \in [0,1] $ 
\begin{align*}
& \partial_\vphi^\beta \partial_\lambda^k \Big( \Phi(t - \tau ) R_M(\tau) \Phi( \tau - t) \Big)  \\
& = 
\!\! \!\! \! \sum_{\begin{subarray}{c}
\beta_1 + \beta_2 + \beta_3 = \beta \\
k_1 + k_2 + k_3 = k
\end{subarray}} \!\! \!\! \! C(\beta_1, \ldots, k_3) \partial_\vphi^{\beta_1} \partial_\lambda^{k_1}\Phi(t - \tau) \partial_\vphi^{\beta_2}\partial_\lambda^{k_2} R_M(\tau) \partial_\vphi^{\beta_3} \partial_\lambda^{k_3}\Phi(\tau - t)\, 
\end{align*}
where $\beta_1, \beta_2, \beta_3 \in \N$ and $k_1, k_2, k_3 \in \N^{\nu + 1}$.
We write each term as
\begin{align}
& \nonumber \partial_\vphi^{\beta_1} \partial_\lambda^{k_1} \Phi(t - \tau) \partial_\vphi^{\beta_2} \partial_\lambda^{k_2} R_M(\tau) \partial_\vphi^{\beta_3} \partial_\lambda^{k_3} \Phi(\tau - t) \partial_x   = \\	
& 
\partial_\vphi^{\beta_1} \partial_\lambda^{k_1} \Phi(t - \tau) \langle D \rangle^{- \frac{\beta_1 + |k_1|}{2}} 
\label{dec1} \\ 
&   \langle D \rangle^{\frac{\beta_1 + |k_1|}{2}} \partial_\vphi^{\beta_2} \partial_\lambda^{k_2} 
R_M(\tau) \langle D \rangle^{\frac{\beta_3 + |k_3|}{2} + 1} \label{dec2}  \\
&   \langle D \rangle^{- \frac{\beta_3 + |k_3|}{2} - 1}  \partial_\vphi^{\beta_3} \partial_\lambda^{k_3} \Phi(\tau - t) \partial_x \,. \label{dec3}
\end{align}
Propositions \ref{Teorema totale partial vphi beta k D beta k Phi}
and \ref{lemma:tame derivate flusso} and \eqref{stima coefficienti a egorov}
provide the estimates for \eqref{dec1} and \eqref{dec3}: 
for some $\sigma := \sigma(\tau, \nu , k_0) > 0 $, 
\begin{align}
& \| \partial_\vphi^{\beta_1} \partial_\lambda^{k_1} \Phi(t - \tau) \langle D \rangle^{- \frac{\beta_1 + |k_1|}{2}} h \|_s 
\leq_s  \gamma^{- |k_1|} \big(\| h \|_s + \| \fracchi_0 \|_{s + \beta_1 + \sigma}^{k_0, \gamma} \| h \|_{s_0} \big)\,,
 \label{D alpha 1 Phi} \\
& \| \langle D \rangle^{- \frac{\beta_3  + |k_3|}{2} - 1}  \partial_\vphi^{\beta_3} \partial_\lambda^{k_3}  \Phi(\tau - t) \partial_x  h  \|_s \leq_s \gamma^{- |k_3|} \big( \| h \|_s + \| \fracchi_0 \|_{s + \frac32 \beta_3 + \sigma}^{k_0, \gamma} \| h \|_{s_0} \big) \, . 
\label{D alpha 3 Phi |D|}
\end{align}
We now estimate the norm of the pseudo-differential operator in \eqref{dec2}  
where  $ R_M  \in OPS^{1 - \frac{M}{2}} $, see \eqref{stima RN diagonale Egorov}.  
By \eqref{norm-increa}, $ \b_0+ k_0 + 4 \leq M  $,
Lemmata \ref{lemma composizione multiplier}  and \ref{lemma stime Ck parametri},  
\eqref{Norm Fourier multiplier},  we get
\begin{align}
 \norma \langle D \rangle^{\frac{\beta_1 + |k_1|}{2}} \partial_\vphi^{\beta_2} \partial_\lambda^{k_2} R_M(\tau) 
\langle D \rangle^{\frac{\beta_3 + |k_3|}{2} + 1} \norma_{0, s, 0}  &  \nonumber\\
 \leq_s  \norma \langle D \rangle^{\frac{\beta_1 + |k_1|}{2}} \partial_\vphi^{\beta_2} \partial_\lambda^{k_2} R_M(\tau) 
\langle D \rangle^{\frac{\beta_3 + |k_3|}{2} + 1} \norma_{\frac{\b_1+|k_1|}{2} + 1- \frac{M}{2} +
\frac{\b_3 + |k_3|}{2} +1, s, 0} & \nonumber\\  
 \leq_s
\norma \langle D \rangle^{\frac{ \beta_1  + |k_1|}{2}} \partial_\vphi^{\beta_2} \partial_\lambda^{k_2} R_M(\tau) 
 \norma_{\frac{\b_1+|k_1|}{2} + 1- \frac{M}{2} , s, 0} & \nonumber \\ 
 \leq_s \norma  \partial_\vphi^{\beta_2} \partial_\lambda^{k_2} R_M(\tau) 
 \norma_{1- \frac{M}{2} , s + \frac{\b_1+|k_1|}{2}, 0}  &\nonumber\\ 
 \leq_{s,M} \gamma^{-|k_2|} 
\norma R_M(\tau) \norma_{1- \frac{M}{2}, s + \frac32  \beta  + \frac{k_0}{2}, 0}^{k_0,\gamma} & \nonumber\\ 
 \stackrel{ \eqref{stima RN diagonale Egorov} }{\leq_{S,M}}
\e \gamma^{- |k_2|} 
\big( 1 + \| \fracchi_0 \|_{s + \sigma + \daleth(M) + \frac32  \b + \frac{k_0}{2}}^{k_0, \gamma} \big) & \label{D alpha1 RM D alpha3}
\end{align}

where $\daleth(M) := \beth_{M + 1}(M, 0) $, see  \eqref{perdita pseudo-diff RN}.
Then \eqref{D alpha 1 Phi}, \eqref{D alpha 3 Phi |D|}, \eqref{D alpha1 RM D alpha3} and 
Lemma \ref{lemma: action Sobolev} imply that 
$ \pa_\vphi^\beta W (\vphi, t) \pa_x $ is $ {\mathcal D}^{k_0} $-tame with tame constant
$ \leq C(S) \e 
(1 +  \| \fracchi_0 \|_{s + \sigma + \frac32 M + \daleth (M) 
+ \beta }^{k_0, \gamma}) $. The operator $ \partial_x \partial_{\vphi}^\beta W(\vphi, t) $ satisfies a similar estimate and so
  \eqref{stima W} is proved.

 The estimate \eqref{stima derivata i W} follows by differentiating the operator $W(\vphi, t)$ with respect to the torus $i$, using the same strategy as above, applying 
\eqref{ansatz I delta}, \eqref{ansatz 3 indici Egorov}, the estimate  \eqref{stima derivata i RN diagonale Egorov} for $\partial_i R_M(\tau)[\hat \imath]$, Proposition \ref{lemma:tame derivate flusso} and 
the estimates for $ \pa_i \Phi $ in Propositions \ref{derivata del flusso rispetto al toro}-\ref{derivate i vphi omega flusso D destra}. 
\end{proof}

The following lemma can be proved as Lemmata \ref{soluzione approssimata Egorov} and \ref{soluzione vera Egorov}. 

\begin{lemma}\label{stime sk} 
For all $ n  \in \{1, \ldots, M \} $ the symbols 
$ s_n \in S^{\frac12(1 - n)}$ defined in \eqref{sk soluzione} satisfy  
\begin{equation}\label{stima sk Egorov}
\norma {\rm Op}(s_n)   \norma_{\frac12 (1 - n), s, \alpha}^{k_0, \gamma} 
\leq_{n, S, \a} \e \big(1 + \| \fracchi_0 \|_{s + \sigma + \beth_{n + 2}(M, \alpha)}^{k_0, \gamma} \big)\,, \quad \forall s_0 \leq s \leq S
\end{equation}
where the constants $\beth_n (M, \alpha)$ are defined in \eqref{c k + 1 c k}. 
The operator 
$$
R_{\omega, M}(\vphi, t) := - \ii {\rm Op}({\mathfrak a} \star  s_M) \in OPS^{ - \frac{M}{2}}
$$ 
 satisfies 
$$
\norma R_{\omega, M}(\vphi, t) 
\norma_{- \frac{M}{2}, s, \alpha}^{k_0, \gamma} \leq_{M, S, \a} 
\e \big( 1 + \| \fracchi_0 \|_{s + \sigma + \beth_{M + 3}(M, \alpha)}^{k_0, \gamma} \big)\,, \quad \forall s_0 \leq s \leq S\,.
$$
For all $\beta \in \N$, $\beta + k_0 + 4\leq M $, the operators 
$  \partial_{\vphi_j}^\beta W_\omega(\vphi, t)$, $  \partial_{\vphi_j}^\beta [W_\omega(\vphi, t), \partial_x]$, $j = 1, \ldots , \nu$  (recall  \eqref{forma finale S omega - S M}) are ${\mathcal D}^{k_0}$-tame where the tame constant satisfies 
\begin{equation}\label{stima W omega}
\begin{aligned}
& {\mathfrak M}_{\partial_{\vphi_j}^\beta [W_\omega(\vphi, t), \partial_x]}(s)\,, {\mathfrak M}_{ \partial_{\vphi_j}^\beta W_\omega(\vphi, t)}(s) \, \\
&  \leq_{M, S} \e ( 1 +  \| \fracchi_0 \|_{s + \sigma + \frac32 M + \daleth(M + 2)  + \beta}^{k_0, \gamma}) \,,   \quad \forall s_0 \leq s \leq S\,.
\end{aligned}
\end{equation}
Moreover if the constant $\mu$ in \eqref{ansatz I delta} satisfies 
$
s_1 + \sigma + \frac32 M + \daleth(M + 2)  + \beta \leq s_0 + \mu
$
then 
\begin{align}\label{stima derivata i sk Egorov}
& \norma \partial_i {\rm Op}(s_n)[\hat \imath]   \norma_{\frac12 (1 - n), s_1, \alpha}
\leq_{n, S, \a} \e  \| \hat \imath \|_{s_1 + \sigma + \beth_{n + 2}(M, \alpha)}\,, \\
& \norma \partial_i R_{\omega, M}(\vphi, t) [\hat \imath]
\norma_{- \frac{M}{2}, s_1 , \alpha} \leq_{M, S, \a} 
\e  \| \hat \imath \|_{s_1 + \sigma + \beth_{M + 3}(M, \alpha)}\,, 
\end{align}
and
\be
\begin{aligned}
& \|  \partial_{\vphi_j}^\b [ \partial_i W_\omega (\vphi, t)[\hat \imath], \partial_x ]   \|_{{\mathcal L}(H^{s_1})}\,,\, \|  \partial_{\vphi_j}^\b  \partial_i W_\omega (\vphi, t)[\hat \imath]  \|_{{\mathcal L}(H^{s_1})} \\
& \leq_{M, S} \e  \| \hat \imath \|_{s_1 + \sigma + \frac32 M + \daleth(M + 2)  + \beta} \,. \label{stima derivata i W omega}
\end{aligned}
\ee
\end{lemma}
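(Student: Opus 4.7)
\medskip

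\noindent\textbf{Plan of proof.} The entire statement is the exact counterpart, for the operator $S_\omega(\vphi,t)$ governed by \eqref{equazione egorov S omega}, of Lemmata \ref{soluzione approssimata Egorov} and \ref{soluzione vera Egorov} (which dealt with $P(\vphi,t)$ solving \eqref{equazione Egorov}). My plan is therefore to re-run the Egorov scheme verbatim, simply replacing the order-$3/2$ initial datum $P_0$ by the order-$1/2$ datum $(\omega\cdot\partial_\vphi a)|D|^{1/2}$. This two-order drop in the initial condition is precisely what is recorded in the statement by the replacement of $\beth_n$ with $\beth_{n+2}$ (see the recursion \eqref{c k + 1 c k}), and it propagates unchanged through the iteration $s_{n+1}=i\int_0^t \mathfrak{a}\star s_n\,d\tau$ of \eqref{sk soluzione}.

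\medskip

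First, I would prove \eqref{stima sk Egorov} by induction on $n$. For $n=0$, the symbol $s_0=(\omega\cdot\partial_\vphi a)\chi(\xi)|\xi|^{1/2}$ is handled by \eqref{norma a moltiplicazione} together with the bounds on $a=\tilde a+a_0$ in \eqref{stima coefficienti a egorov} (the $\gamma^{-1}$ in the bound of $a_0$ is killed by the time derivative $\omega\cdot\partial_\vphi$ falling on \eqref{choicea0}, while the parameter derivatives $\partial_\lambda^k$ of course retain the correct weights) and \eqref{ansatz I delta}. For the inductive step one differentiates \eqref{sk soluzione} and invokes Lemma \ref{proprieta sigma(q,A)} with $m=\frac12(1-n)$, interpolating between $\|a\|_s^{k_0,\gamma}$ and $\|\text{Op}(s_n)\|$ via the recursion \eqref{c k + 1 c k}; the structure of $\beth_{n+1}$ is tailored precisely for this bookkeeping to close. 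One further application of Lemma \ref{proprieta sigma(q,A)} to $R_{\omega,M}=-i\,\text{Op}(\mathfrak{a}\star s_M)$ gives the stated $OPS^{-M/2}$ bound with loss index $\beth_{M+3}(M,\alpha)$.

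\medskip

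Next, for the tame estimates \eqref{stima W omega} on $\partial_{\vphi_j}^\beta W_\omega(\vphi,t)$ and $\partial_{\vphi_j}^\beta[W_\omega(\vphi,t),\partial_x]$, I would use the Duhamel representation \eqref{forma finale S omega - S M} and repeat the splitting \eqref{dec1}--\eqref{dec3} of Lemma \ref{soluzione vera Egorov}: distribute $\partial_\vphi^\beta\partial_\lambda^k$ over the three factors $\Phi(t-\tau)$, $R_{\omega,M}(\tau)$, $\Phi(\tau-t)$, then insert smoothing factors $\langle D\rangle^{\pm(\beta_j+|k_j|)/2}$ so that the flow contributions are controlled by Propositions \ref{Teorema totale partial vphi beta k D beta k Phi} and \ref{lemma:tame derivate flusso} exactly as in \eqref{D alpha 1 Phi}--\eqref{D alpha 3 Phi |D|}, while the central pseudo-differential factor is estimated by Lemmata \ref{lemma composizione multiplier}--\ref{lemma stime Ck parametri} together with the just-established bound on $\partial_\vphi^{\beta_2}\partial_\lambda^{k_2}R_{\omega,M}$. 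The arithmetic $\beta+k_0+4\leq M$ is what guarantees that the lost $OPS^{3/2+|k|/2+\beta/2}$ derivatives from the two flow factors are compensated by the $OPS^{-M/2}$ smoothing of $R_{\omega,M}$. The commutator $[W_\omega,\partial_x]=W_\omega\partial_x-\partial_x W_\omega$ is treated identically, producing the same loss index $\frac32 M+\daleth(M+2)+\beta$.

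\medskip

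Finally, the derivative estimates \eqref{stima derivata i sk Egorov}--\eqref{stima derivata i W omega} are obtained in the same inductive manner: one differentiates \eqref{sk soluzione} and \eqref{forma finale S omega - S M} with respect to $i$, uses $\partial_i s_0[\hat\imath]=(\omega\cdot\partial_\vphi\partial_i a[\hat\imath])\chi(\xi)|\xi|^{1/2}$ with \eqref{stime derivate coefficienti a Egorov}, and distributes $\partial_i$ over the three factors in \eqref{dec1}--\eqref{dec3}, invoking Propositions \ref{derivata del flusso rispetto al toro}--\ref{derivate i vphi omega flusso D destra} for the tame dependence of the flow on the torus, together with the assumption \eqref{ansatz I delta} with $\mu$ large enough to absorb the $\| \fracchi_0\|_{s_0+\sigma+\cdots}^{k_0,\gamma}$ factors produced by the product and commutator estimates. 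The analytically new content here is nil; the only genuine obstacle is the purely bookkeeping verification that the iteration indices $\beth_n(M,\alpha)$ defined by \eqref{c k + 1 c k} close the induction at the shifted level $n+2$ relative to the $q_n$-scheme, and that the shift \emph{by exactly two} is the one imposed by the drop from $S^{3/2}$ to $S^{1/2}$ in the initial datum of \eqref{equazione egorov S omega}. Once this is checked, every individual estimate is a direct transcription of the corresponding estimate for $q_n$, $R_M$, $W$ proved above.
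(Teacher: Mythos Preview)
Your proposal is correct and follows exactly the approach the paper takes: the paper's own proof consists of the single sentence ``can be proved as Lemmata \ref{soluzione approssimata Egorov} and \ref{soluzione vera Egorov}'', and you have simply unpacked what that transcription entails, including the correct observation that the shift $\beth_n \to \beth_{n+2}$ reflects the drop in order of the initial datum from $S^{3/2}$ (for $q_0=p_0$) to $S^{1/2}$ (for $s_0$), and the useful remark that $\omega\cdot\partial_\vphi$ acting on $a_0$ in \eqref{choicea0} cancels the $\gamma^{-1}$ loss.
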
 

We summarize the whole section in the next proposition:
 
\begin{proposition}\label{Prop:Egorov} 
Let $ a ( \vphi , x ) $ be as in \eqref{definizione a} and $ a_0 (\vphi) $ in \eqref{choicea0}. Then 
the conjugated operator ${\mathcal L}_M^{(1)}$ in \eqref{coniugio flusso di una PDE iperbolica} is real, even, 
reversible and has the form  
\begin{equation}\label{cal LN (1)}
{\mathcal L}_M^{(1)} = \Dom {\mathbb I}_2 + \ii \mathtt m_3 {\bf T}(D) + 
\ii \big({\bf C}_1(\vphi, x) + {\bf C}_0(\vphi, x) \mH \big) |D|^{\frac12}  + {\bf R}_M^{(1)} + {\bf Q}_M^{(1)}
\end{equation}
where $ {\bf C}_1 (\vphi, x) $, ${\bf C}_0 (\vphi, x) $ are defined in \eqref{bf A1}, the function 
$ a_{14} $ satisfies \eqref{media a 14},  
and 
$$
 {\bf R}_M^{(1)} := \begin{pmatrix}
{\mathcal R}_M^{(1)} & 0 \\
0 & \overline {\mathcal R}_M^{(1)}
\end{pmatrix}\,, \quad 
{\bf Q}_M^{(1)} := \begin{pmatrix}
  0 & {\mathcal Q}_M^{(1)} \\
\overline {\mathcal Q}_M^{(1)} & 0  
\end{pmatrix} \, . 
$$
For all $\beta \in \N$, $ \beta  + k_0 + 4 \leq M $, 
the operators $ \pa_{\vphi_j}^\beta  {\mathcal R}_M^{(1)} $, $ \pa_{\vphi_j}^\beta  [ {\mathcal R}_M^{(1)}, \partial_x] $, $ \pa_{\vphi_j}^\beta  {\mathcal Q}_M^{(1)} $,
$  \pa_{\vphi_j}^\b  [ {\mathcal Q}_M^{(1)}, \pa_x] $, $j = 1, \ldots, \nu$ are $ {\mathcal D}^{k_0}$-tame with 
 tame constants satisfying for all $s_0 \leq s \leq S$
 \begin{equation}\label{tame resto diagonale Egorov}
\begin{aligned}
& {\mathfrak M}_{ \pa_{\vphi_j}^\beta  [ {\mathcal R}, \partial_x]} (s)\,,\,{\mathfrak M}_{ \pa_{\vphi_j}^\beta  {\mathcal R}} (s)\\
& \leq_{M, S} \e (1 +  \| \fracchi_0 \|_{s + \sigma + \frac32 M + \daleth(M + 2) + \beta}^{k_0, \gamma})\,,  \quad {\mathcal R} \in \{ {\mathcal R}_M^{(1)}, {\mathcal Q}_M^{(1)} \}
\end{aligned}
\end{equation}
where  the constant $ \daleth (M+2) $ is defined by \eqref{perdita pseudo-diff RN}. Moreover if the constant $\mu$ in \eqref{ansatz I delta} satisfies 
\begin{equation}\label{ansatz finale indici derivate Egorov}
s_1 + \sigma + \chi M + \daleth(M + 2)  + \b \leq s_0 + \mu\,,
\end{equation}
then each  ${\mathcal R} \in \{ {\mathcal R}_M^{(1)}, {\mathcal Q}_M^{(1)} \}$ satisfies 
\begin{equation}\label{tame resto diagonale Egorov derivata i}
\begin{aligned}
& \|   \pa_{\vphi_j}^\beta  [\partial_i {\mathcal R} [\hat \imath], \partial_x]   \|_{{\mathcal L}(H^{s_1})}\,, \|   \pa_{\vphi_j}^\beta   \partial_i {\mathcal R} [\hat \imath] \|_{{\mathcal L}(H^{s_1})}
\\
& \leq_{M, S} \e     \| \hat \imath \|_{s_1 + \sigma + \frac32 M + \daleth(M + 2) + \beta}\,.\quad 
\end{aligned}
\end{equation}
\end{proposition}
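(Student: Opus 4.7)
\medskip

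\noindent\textbf{Proof plan for Proposition \ref{Prop:Egorov}.}

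The plan is to simply assemble the various results already established in the section, keeping track of where each piece of $\mathcal{L}_M^{(1)}$ comes from. Starting from \eqref{coniugio flusso di una PDE iperbolica}, I would decompose the conjugation in three blocks: the diagonal part $\mathbf{\Phi}\mathbf{P}_0\mathbf{\Phi}^{-1}$ (analyzed via Egorov), the ``time--transport'' piece $\mathbf{\Phi}(\om\cdot\partial_\vphi\mathbf{\Phi}^{-1})$ (analyzed in Lemma \ref{lemma:pezzo dal tempo} and the subsequent algebraic Lemmata), and the off--diagonal smoothing piece $\mathbf{\Phi}\mathbf{R}_M^{(II)}\mathbf{\Phi}^{-1}$. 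Exploiting the block--diagonal form \eqref{flusso-diagonale} of $\mathbf{\Phi}$ and of $\mathbf{P}_0$, all three contributions split into scalar conjugations of $\Phi(\vphi)$ with the respective scalar operators (and their complex conjugates), so the block structure of ${\bf R}_M^{(1)}, {\bf Q}_M^{(1)}$ announced in the statement is automatic. The principal symbol of $\mathbf{\Phi}\mathbf{P}_0\mathbf{\Phi}^{-1}$ up to $OPS^{1/2}$ is provided by Lemma \ref{ordini maggiori di 0 Egorov}, combined with the definition \eqref{definizione a} of $a$ (which kills the first order term $\pa_x$) and the definition \eqref{choicea0} of $a_0$ (which makes $\Pi_{K_n}a_{14}$ space--independent modulo the high--frequency part $\mathbf{R}_M^{(1),\bot}$ that we drop). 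The contribution of $\mathbf{\Phi}(\om\cdot\pa_\vphi\mathbf{\Phi}^{-1})$ up to $OPS^0$ is $-\ii\, (\om\cdot\pa_\vphi a)\chi(\xi)|\xi|^{1/2}\mathbb{I}_2$, which is precisely the contribution subtracted in $a_{14}=a_{13}-\om\cdot\pa_\vphi a$ in \eqref{bf A1}.

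The remainder $\mathcal{R}_M^{(1)}$ is then obtained by collecting the three $OPS^0$ error terms identified along the way, namely the symbol $r_M^{(1)}$ of subprincipal order (coming from $r_{q_{\leq 2}}$ plus the $q_n, s_n$ with $n\ge 3$, resp.\ $n\ge 1$), and the operator errors $-W(\vphi,1)$ and $\int_0^1 W_\om(\vphi,\tau)d\tau$ of Lemmata \ref{soluzione vera Egorov algebrico} and \ref{soluzione vera Egorov S omega algebrico}, respectively. The off--diagonal remainder $\mathcal{Q}_M^{(1)}=\Phi\,\mathcal{R}_M^{(II)}\,\overline\Phi^{-1}$ is just the flow--conjugate of the smoothing off--diagonal piece produced in Proposition \ref{Lemma finale decoupling}. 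Reality, evenness and reversibility of $\mathcal{L}_M^{(1)}$ are inherited from those of $\mathcal{L}_M$ together with the fact that the function $a$ is $\odd(\vphi),\even(x)$ (by Remark \ref{parities a1 - a6} for $a_{11}$, the definition of $\tilde a$ via $\pa_x^{-1}$, and the fact that $a_0$ is odd in $\vphi$ by \eqref{choicea0}), so that the flow $\mathbf{\Phi}$ is even and reversibility preserving by the discussion in \ref{egorov}.

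For the quantitative estimates \eqref{tame resto diagonale Egorov}--\eqref{tame resto diagonale Egorov derivata i}, I would bound each of the three pieces of $\mathcal{R}_M^{(1)}$ separately. The pseudo--differential symbol $r_M^{(1)}\in S^0$ is handled by Lemmata \ref{ordini maggiori di 0 Egorov1}, \ref{soluzione approssimata Egorov}, \ref{stime sk}, converting the pseudo--differential norm into a tame estimate via Lemma \ref{lemma: action Sobolev} and controlling $\partial_{\vphi_j}^\beta$ by bounding $\|\cdot\|^{k_0,\g}_{0,s+\beta,0}$; the commutator with $\pa_x$ stays in $OPS^0$ with the same losses by Lemma \ref{lemma tame norma commutatore}. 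The operators $W(\vphi,1)$ and $W_\om(\vphi,\tau)$ are already estimated together with their $\pa_{\vphi_j}^\beta$ derivatives and their commutators with $\pa_x$ in \eqref{stima W} of Lemma \ref{soluzione vera Egorov} and \eqref{stima W omega} of Lemma \ref{stime sk}: the dominant loss is $\frac{3}{2}M+\daleth(M)+\beta$ from $W$ and $\frac{3}{2}M+\daleth(M+2)+\beta$ from $W_\om$, so the final loss $\frac{3}{2}M+\daleth(M+2)+\beta$ is attained by $W_\om$. The off--diagonal remainder $\mathcal{Q}_M^{(1)}=\Phi\mathcal{R}_M^{(II)}\overline\Phi^{-1}$ is estimated analogously to $W$, factoring as in \eqref{dec1}--\eqref{dec3}: the negative order $\tfrac12-M$ of $\mathcal{R}_M^{(II)}$ (Proposition \ref{Lemma finale decoupling}) absorbs the loss $\tfrac{\beta_1+|k_1|}{2}+\tfrac{\beta_3+|k_3|}{2}+1$ produced by the differentiated flows via Propositions \ref{Teorema totale partial vphi beta k D beta k Phi} and \ref{lemma:tame derivate flusso} (provided $\beta+k_0+4\le M$, which is exactly the hypothesis). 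The $\pa_i$--estimates \eqref{tame resto diagonale Egorov derivata i} are obtained by differentiating in $i$ each piece and using the corresponding $\pa_i$--bounds from the same lemmata together with the $\pa_i$--bounds for the flow in Propositions \ref{derivata del flusso rispetto al toro}--\ref{derivate i vphi omega flusso D destra}, the ansatz \eqref{estimate:low norm for der} ensuring \eqref{ansatz finale indici derivate Egorov} is compatible with \eqref{vincolo s1 derivate i}.

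The main obstacle I expect is purely bookkeeping: correctly tracking how the losses in $\alpha$ and $\beta$ propagate through the recursive definitions \eqref{definizione kn(alpha)}, \eqref{c k + 1 c k} and how they combine when we compose the flow (which loses $\tfrac{|\beta|+|k|}{2}$ derivatives per differentiation in $\vphi$ or $\lambda$) with a pseudo--differential operator of negative order $\tfrac12-M$ or $1-M/2$. Once the book--keeping is set up exactly as in the three--factor splitting \eqref{dec1}--\eqref{dec3}, the proof reduces to quoting the previous lemmata.
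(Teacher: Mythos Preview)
Your proposal is correct and follows essentially the same route as the paper: the form \eqref{cal LN (1)} has already been derived in the text preceding the Proposition (via Lemmata \ref{ordini maggiori di 0 Egorov}, \ref{soluzione vera Egorov algebrico}, \ref{lemma:pezzo dal tempo}, \ref{soluzione vera Egorov S omega algebrico} and the expression \eqref{cal QN (1)}), so the proof reduces exactly to the tame estimates \eqref{tame resto diagonale Egorov}--\eqref{tame resto diagonale Egorov derivata i}, which you bound term by term as in the paper. One small simplification the paper uses for the pseudo-differential piece ${\rm Op}(r_M^{(1)})$: rather than invoking Lemma \ref{lemma tame norma commutatore} for the commutator with $\partial_x$, it observes directly that $\partial_\vphi^\beta[\partial_\lambda^k{\rm Op}(r_M^{(1)}),\partial_x] = \partial_\lambda^k{\rm Op}(\partial_\vphi^\beta\partial_x r_M^{(1)})$, so the commutator just shifts the symbol norm by one $x$-derivative without the additional losses of the general commutator lemma.
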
 
 
\begin{proof} 
It remains only to prove \eqref{tame resto diagonale Egorov} and  \eqref{tame resto diagonale Egorov derivata i}.

\noindent
{\sc Proof of \eqref{tame resto diagonale Egorov}.}  
We estimate each term in  \eqref{cal QN (1)}. Let $\partial_\vphi := \partial_{\vphi_j}$, $j = 1, \ldots, \nu$. 
The estimates \eqref{stima resto q leq 2}, \eqref{stima qk Egorov}, \eqref{stima sk Egorov} imply 
$$ 
\norma r_M^{(1)}(x, D) \norma_{0, s, \alpha}^{k_0, \gamma} 
\leq_{S, \alpha} \e ( 1 + \| \fracchi_0 \|_{s + \sigma + \beth_{M + 2}(M, \alpha)}^{k_0, \gamma}) \, . 
$$ 
Now since  $ \partial_\vphi^\b [\partial_\lambda^k {\rm Op}(r_M^{(1)}), \partial_x ] = \partial_\lambda^k {\rm Op}(\partial_\vphi^\b \partial_x r_M^{(1)}) $, 
we get 
\begin{align*}
\norma \partial_\vphi^\b [\partial_\lambda^k {\rm Op}(r_M^{(1)}), \partial_x ]  \norma_{0, s, 0} & \lessdot \gamma^{- |k|} \norma{\rm Op}(\partial_\vphi^\b (\partial_x r_M^{(1)})) \norma_{0, s, 0}^{k_0, \gamma} \lessdot \gamma^{- |k|} \norma {\rm Op}(r_M^{(1)}) \norma_{0, s + \beta + 1, 0}^{k_0, \gamma}   \nonumber\\
& \leq_S \e (1 + \| \fracchi_0 \|_{s + \sigma + \daleth(M + 2) + \beta}^{k_0, \gamma}) \, .
\end{align*}
Hence the operator $r_M^{(1)}(\vphi, x, D)$ satisfies the estimate \eqref{tame resto diagonale Egorov}.

 The lemma follows by the estimates \eqref{stima W}, \eqref{stima W omega}.  The proof of \eqref{tame resto diagonale Egorov} for ${\mathcal Q}_M^{(1)}$ is similar.  It follows by
\eqref{stima resti prima Egorov} (for $\alpha = 0$) and  Lemma \ref{lemma:tame derivate flusso} using the same strategy for proving  \eqref{stima W} in Lemma \ref{soluzione vera Egorov}. 

\smallskip

\noindent
{\sc Proof of \eqref{tame resto diagonale Egorov derivata i}.} 
It follows by differentiating with respect to $i$ the expression of ${\mathcal R}_M^{(1)}$ in \eqref{cal QN (1)} and by applying the estimates \eqref{stima derivate i resto q leq 2}, \eqref{stima derivata i qk Egorov}, \eqref{stima derivata i W}, \eqref{stima derivata i sk Egorov}, \eqref{stima derivata i W omega}. 
\end{proof} 

\section{Space reduction of the order $|D|^{\frac12}$} \label{sec:lineare}
  
The aim of this section is to eliminate the $ x $-dependence of the coefficient in front of 
$ |D|^{\frac12} $ in the operator 
$ {\mathcal L}_M^{(1)} $ in \eqref{cal LN (1)} (where we have neglected the term \eqref{bf RN (1) bot})  and 
$ \Pi_{K_n} {\bf C}_1 := \begin{pmatrix}
\Pi_{K_n}a_{14} & 0 \\
0 & - \Pi_{K_n} a_{14}
\end{pmatrix} $. 

We conjugate $ {\mathcal L}_M^{(1)} $  by means of a real operator of the form 
 \begin{equation}\label{definition cal M}
 {\bf V} := \begin{pmatrix}
 {\mathcal V} & 0 \\
 0 & \overline{\mathcal V}
 \end{pmatrix}\,,\qquad 
 {\mathcal V} := {\rm Op}(v)\,,\qquad v := v(\vphi,x, \xi) \in S^0\,.
 \end{equation} 
Setting 
  $
  {\bf \Sigma} := \begin{pmatrix}
  1 & 0 \\
  0 & - 1
  \end{pmatrix}
  $
  and recalling that  $ \mathtt m_{1, K_n} $ is defined by \eqref{media a 14}, 
 we compute 
 \begin{equation}\label{svil-D12}
 \begin{aligned}  
& {\mathcal L}_M^{(1)} {\bf V} - {\bf V} \big(\Dom {\mathbb I}_2 + \ii \mathtt m_3 {\bf T}(D) + 
  \ii {\mathtt m}_{1, K_n}  {\bf \Sigma} |D|^{\frac12} \big)  \\
  &  =    \ii \mathtt m_3[{\bf T}(D), {\bf V}]   
  +  \ii   (\Pi_{K_n}{\bf C}_1 + {\bf C}_0 \mH) |D|^{\frac12} {\bf V}    \\ 
& \   \   - \ii {\mathtt m}_{1, K_n} {\bf V}   {\bf \Sigma} |D|^{\frac12} +  (\Dom {\bf V}) + \big( {\bf R}_M^{(1)} + {\bf Q}_M^{(1)} \big) {\bf V} \, . 
 \end{aligned} 
 \end{equation}
By \eqref{definizione T3 A1}, \eqref{def T} and \eqref{composition pseudo},  the commutator  has the expansion
 \begin{align*}
& \ii \mathtt m_3  [{\bf T}(D), {\bf V}] =   \begin{pmatrix}
   \ii \mathtt m_3 [T(D), {\mathcal  V}] & 0 \\
  0 & -  \ii \mathtt m_3 [T(D), \overline{\mathcal  V}]
  \end{pmatrix}, \\  
& \qquad  \qquad  \qquad \quad  \ \ii \mathtt m_3 [T(D), {\mathcal V}] = \mathtt m_3{\rm Op}\big( \partial_\xi T(\xi) v_x \big) 
 + r_{T, {\mathcal V}}(x, D)
 \end{align*}
with $  r_{T, {\mathcal V}}(x, D) \in OPS^{- \frac12} $. Similarly (recall \eqref{bf A1}) the operator 
$$
\begin{aligned}
&  \ii \big( \Pi_{K_n }{\bf C}_1+ {\bf C}_0 \mH \big) |D|^{\frac12} {\bf V}  \\
& = \begin{pmatrix}
 \ii  (\Pi_{K_n}a_{14} + a_{12} \mH) |D|^{\frac12} {\mathcal V} & 0 \\
  0 & - \ii  (\Pi_{K_n} a_{14} + a_{12} \mH) |D|^{\frac12} \overline{\mathcal V}
  \end{pmatrix}
  \end{aligned}
  $$
  has the expansion 
  \begin{equation}\label{pezzo-1}
  \begin{aligned}
 & \ii  (\Pi_{K_n} a_{14} + a_{12} \mH) |D|^{\frac12} {\mathcal V} \\
 & = {\rm Op}\big( 
  \big( \ii \Pi_{K_n} a_{14} + a_{12}  {\rm sign}(\xi) \big) |\xi|^{\frac12} \chi(\xi) v \big) +  
 \mathfrak r_{\mathcal V}(x, D) 
   \end{aligned} 
   \end{equation}
 with $ \mathfrak r_{\mathcal V}(x, D)\in OPS^{- \frac12} $.  
In addition 
  \begin{align}\label{pezzo-2}
   \ii{\mathtt m}_{1, K_n} {\bf V} {\bf \Sigma} |D|^{\frac12}  = \begin{pmatrix}
{\rm Op} \big(\ii  {\mathtt m}_{1, K_n} v \, \chi(\xi) |\xi|^{\frac12} \big) & 0 \\
  0 & \ov{ {\rm Op} \big(\ii  {\mathtt m}_{1, K_n} v \,  \chi(\xi)|\xi|^{\frac12}} \big) 
  \end{pmatrix}  \, . 
  \end{align}
By \eqref{pezzo-1}, \eqref{pezzo-2} and 
decomposing the cut-off function $ \chi(\xi) = \chi_0(\xi) + $ $ (\chi(\xi ) - \chi_0(\xi)) $  
where  $ \chi_0 $ is the cut-off function defined in \eqref{cut off simboli 2}, we get  
$$
\begin{aligned}
& \ii  \big( (\Pi_{K_n} a_{14} + a_{12} \mH) |D|^{\frac12} {\mathcal V} -  {\mathtt m}_{1, K_n} {\mathcal V} |D|^{\frac12}\big) 
= \\ 
& {\rm Op}\big( 
  \big( \ii (\Pi_{K_n} a_{14} -  {\mathtt m}_{1, K_n}) + a_{12}  {\rm sign}(\xi)  \big) |\xi|^{\frac12} \chi_0(\xi) v \big) +  
  r_{\mathcal V}(x, D)  	
  \end{aligned}
$$
where 
$$
\begin{aligned}
&r_{\mathcal V}(x, D) := \\
&  \mathfrak r_{\mathcal V}(x, D) + {\rm Op}\big( 
  \big( \ii \Pi_{K_n} a_{14} + a_{12}  {\rm sign}(\xi) - \ii {\mathtt m}_{1, K_n} \big) |\xi|^{\frac12} (\chi(\xi) - \chi_0(\xi)) v \big) \in OPS^{- \frac12}
  \end{aligned}
$$
 noting that $ \big( \ii \Pi_{K_n} a_{14} + a_{12}  {\rm sign}(\xi) - \ii {\mathtt m}_{1, K_n} \big) |\xi|^{\frac12} (\chi(\xi) - \chi_0(\xi)) v \in S^{- \infty}$ because $\chi(\xi) - \chi_0(\xi) = 0$ for $|\xi| \geq 3/4$.
 Therefore we  have to solve the equation 
  \begin{equation}\label{equazione omologica ordine 1/2}
  \mathtt m_3 \partial_\xi T (\xi)  v_x 
  + \big( \ii ( \Pi_{K_n} a_{14}  - {\mathtt m}_{1, K_n}) + a_{12}   {\rm sign}(\xi)  \big) \chi_0(\xi) |\xi|^{\frac12} 
  v  = 0\,.
  \end{equation}
  We look for a solution of \eqref{equazione omologica ordine 1/2} of the form 
  \begin{equation}\label{ansatz b p}
v :=  v(\vphi, x, \xi) := {\rm exp}(p(\vphi, x, \xi))\,, \qquad p := p(\vphi, x, \xi) \in S^0 \, . 
  \end{equation}
Thus,  from \eqref{equazione omologica ordine 1/2}, the symbol $ p $ has to solve 
\begin{equation}\label{equazione per p}
\mathtt m_3 \partial_\xi T(\xi) p_x (\vphi, x, \xi) =  
-  \big( \ii ( \Pi_{K_n} a_{14} (\vphi, x) - {\mathtt m}_{1, K_n}) + a_{12}(\vphi, x) {\rm sign}(\xi) \big) \chi_0(\xi) |\xi|^{\frac12} \,.
\end{equation}
The right hand side in  \eqref{equazione per p}  has zero average in $ x $  by \eqref{media a 14} and  
because $ a_{12} $ is odd in $ x $, by  \eqref{bf B0 (I)}, \eqref{defA0} and remark \ref{parities q a7 a8 a9 a10}.
By \eqref{def T}  the derivative 
$$ 
\pa_\xi T (\xi) = \begin{cases} 
\dfrac{\chi(\xi)\sign (\xi) ( 1 + 3 \kappa \xi^2 ) }{ 2 |\xi|^{1/2} (1+ \kappa \xi^2)^{1/2}} + \partial_\xi \chi(\xi) |\xi|^{\frac12} (1 + \kappa |\xi|^2)^{\frac12} \in S^{1/2}\, 
&  \text{if} \   |\xi| > \frac13  \\
0 &  \text{if}  \  |\xi| \leq \frac13  \, .
\end{cases}
$$
Since the symbol $ T(\xi) $ is even in $ \xi $, the derivative 
$ \pa_\xi T (\xi)  $  is odd. Moreover, by \eqref{cut off simboli 1},  
  $\partial_\xi \chi(\xi) > 0$ for all $1/3 < \xi< 2/3$, and so 
 $|\partial_\xi T(\xi)| > 0$ for all $|\xi| > 1/3$ and  $|\partial_\xi T(\xi)| > c > 0$ for all $|\xi| \geq 1/2$.
Therefore 
 \eqref{equazione per p} admits the solution  
  \begin{equation}\label{definizione p}
  \begin{aligned}
  & p (\vphi, x, \xi) \\
  & := \begin{cases}
  -  \frac{|\xi|^{\frac12}\chi_0(\xi) \partial_x^{- 1} \big( \ii ( \Pi_{K_n} a_{14} (\vphi, x) - {\mathtt m}_{1, K_n})  + 
  a_{12}(\vphi, x) {\rm sign}(\xi) \big)}{\mathtt m_3 
  \partial_\xi T(\xi)} & \ \text{if} \  |\xi| > \frac12 \\
  0 & \ \text{if} \ |\xi| \leq \frac12 \, .
  \end{cases} 
  \end{aligned}
  \end{equation}
   Since $ p (-\vphi, x, -\xi ) = \overline{p(\vphi,x, \xi)} $ 
and $  p (\vphi, - x, -\xi )  = p (\vphi, x, \xi ) $, then
   $ {\bf V} $ is reversibility preserving and  $ {\bf V} $ is even, by Lemma \ref{even:pseudo}. 
As a consequence  \eqref{svil-D12}-\eqref{equazione omologica ordine 1/2} imply that
\be\label{coniugazione67}
  {\bf V}^{- 1} {\mathcal L}_M^{(1)} {\bf V} = \Dom {\mathbb I}_2 + 
  \ii \mathtt m_3 {\bf T}(D) + \ii {\mathtt m}_{1, K_n}  {\bf \Sigma} |D|^{\frac12}  + {\bf R}_M^{(2)} + {\bf Q}_M^{(2)}
\ee
with  block-diagonal terms 
  \begin{align}\nonumber
& \qquad \qquad \qquad  {\bf R}_M^{(2)} := \begin{pmatrix}
  {\mathcal R}_M^{(2)} & 0 \\
  0 & \overline {\mathcal R}_M^{(2)}
  \end{pmatrix}\,, \quad {\bf Q}_M^{(2)} := \begin{pmatrix}
 0&  {\mathcal Q}_M^{(2)}  \\
   \overline{\mathcal Q}_M^{(2)} & 0
  \end{pmatrix} 
  \end{align}
  \begin{equation}\label{cal RN (3)}
  \begin{aligned}
& {\mathcal R}_M^{(2)} := {\mathcal V}^{- 1} 
 \big( r_{T, {\mathcal V}}(x, D) + r_{\mathcal V}(x, D) + \Dom {\mathcal V} + {\mathcal R}_M^{(1)} {\mathcal V}  \big) \, , \\ 
& {\mathcal Q}_M^{(2)} := {\mathcal V}^{- 1} {\mathcal Q}_M^{(1)} \overline{\mathcal V}   \, .
\end{aligned}
\end{equation}
Finally we define the real, even and reversible operator
\be
\label{cal LN (3)}
{\mathcal L}_M^{(2)} := \Dom {\mathbb I}_2 + 
  \ii \mathtt m_3 {\bf T}(D) + \ii  \mathtt m_{1}  {\bf \Sigma} |D|^{\frac12}  + {\bf R}_M^{(2)} + {\bf Q}_M^{(2)}
\ee
where the coefficient
\begin{equation}\label{lambda 1 senza proiettore}
{\mathtt m}_1 :=  - 
\frac{ (2\p)^{-\nu - \frac52} }{2  \sqrt{\kappa}} 
\int_{\T^{\nu + 1}} 
(1 + \b_x) [ \om \cdot \pa_\vphi \beta + V (1+\b_x) ]^2 
 \Big( \int_\T \sqrt{1 + \eta_y^2} \, dy \Big)^{3/2}d\vphi \, dx
\end{equation}
substitutes $ {\mathtt m}_{1, K_n}$ in \eqref{coniugazione67}, i.e. 
\begin{align}\label{resto modi alti lambda 1}
& {\bf V}^{- 1} {\mathcal L}_M^{(1)} {\bf V} =  {\mathcal L}_M^{(2)} + {\bf R}_{{\mathtt m}_1}^\bot\,, \quad {\bf R}_{{\mathtt m}_1}^\bot := \ii ({\mathtt m}_{1, K_n} - {\mathtt m}_1){\bf \Sigma} |D|^{\frac12}\, .
\end{align}
The term ${\bf R}_{{\mathtt m}_1}^\bot$ will contribute to the remainder ${\bf R}_\omega^\bot$ in the estimates \eqref{stima R omega bot corsivo bassa}-\eqref{stima R omega bot corsivo alta}. 
\begin{lemma}\label{lambda1 - lambda Kn}
$ |{\mathtt m}_1 - {\mathtt m}_{1, K_n}|^{k_0, \gamma} \leq C \e K_n^{- b} $, $ \forall b > 0 $. 
\end{lemma}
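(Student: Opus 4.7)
The plan is to subtract the two formulas \eqref{lm1 formula} and \eqref{lambda 1 senza proiettore} directly and exploit the fact that the only difference is the presence of the projector $\Pi_{K_n}$ acting on a function that depends on $\vphi$ alone. More precisely, I would write
\[
\mathtt m_1 - \mathtt m_{1, K_n} = - \frac{(2\pi)^{-\nu-\frac52}}{2\sqrt{\kappa}} \int_{\T^{\nu+1}} F(\vphi, x) \, \big( \Pi_{K_n}^\bot G \big)(\vphi) \, d\vphi \, dx,
\]
where $F(\vphi, x) := (1 + \beta_x)\big[\omega \cdot \partial_\vphi \beta + V(1+\beta_x)\big]^2$ is the $(\vphi, x)$-dependent factor and $G(\vphi) := \big(\int_\T \sqrt{1 + \eta_y^2}\, dy\big)^{3/2}$ depends only on $\vphi$. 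Since $G$ does not depend on $x$, one can integrate $F$ in $x$ first and reduce to an integral on $\T^\nu$ involving $\Pi_{K_n}^\bot G$.

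Next, since $\Pi_{K_n}^\bot$ is an $L^2$-orthogonal projector on the $\vphi$-variables, Cauchy--Schwarz together with the smoothing estimate \eqref{smoothing-u1} yields, for any $b > 0$,
\[
\big| \mathtt m_1 - \mathtt m_{1, K_n} \big| \lessdot \frac{1}{\sqrt{\kappa_1}} \Big\| \int_\T F(\cdot, x)\, dx \Big\|_{L^2_\vphi} \, K_n^{-b} \| G \|_{H^b_\vphi}.
\]
The first factor is bounded by $C \e$ using the estimates $\| \beta \|_s^{k_0, \gamma} \leq_s \e(1 + \|\fracchi_0\|_{s+\sigma}^{k_0,\gamma})$ from Lemma \ref{lemma:BAPQ} and $\| V \|_s^{k_0, \gamma} \leq_s \e(1 + \|\fracchi_0\|_{s+\sigma}^{k_0,\gamma})$ from Lemma \ref{lemma:remainder mR0}, together with the algebra/interpolation bound \eqref{interpolazione C k0} and the ansatz \eqref{ansatz I delta}. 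The second factor $\|G\|_{H^b_\vphi}$ is controlled via Moser's composition Lemma \ref{Moser norme pesate} and the tame estimate \eqref{tame Tdelta} on $\eta$; it is finite for all $b > 0$ because the approximate torus is smooth in $(\vphi, x)$ (with a constant depending on $b$).

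To upgrade the bound to the weighted norm $|\cdot|^{k_0, \gamma}$, I would apply $\partial_\lambda^k$, $|k| \leq k_0$, to both sides. Since the projector $\Pi_{K_n}^\bot$ is independent of $\lambda = (\omega, \kappa)$, it commutes with $\partial_\lambda^k$, and the Leibniz rule distributes the derivatives over $F$, $G$ and the factor $\kappa^{-1/2}$. Each resulting term is estimated by the same Cauchy--Schwarz/smoothing argument, using the $\|\cdot\|_s^{k_0, \gamma}$-estimates for $\beta$, $V$, $\eta$ quoted above. This gives $|\mathtt m_1 - \mathtt m_{1, K_n}|^{k_0, \gamma} \leq C(b) \e K_n^{-b}$ for every $b > 0$, as claimed.

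The main (minor) technical point is simply bookkeeping: checking that all the Sobolev norms of the various factors appearing after Leibniz are controlled by $\e$ (or an absolute constant) under the ansatz \eqref{ansatz I delta} for a suitably large $\mu$, so that the resulting constant $C(b)$ depends on $b$ but not on the Nash--Moser step. There is no substantive obstacle, since the only place the index $K_n$ enters is through the high-mode projector, and a single smoothing estimate on the scalar function $G(\vphi)$ suffices.
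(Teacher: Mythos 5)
Your proposal is correct and follows essentially the same route as the paper: subtract the two formulas so that the difference reduces to the same integral with $\Pi_{K_n}^\bot$ acting on the $\vphi$-dependent factor $\big(\int_\T \sqrt{1+\eta_y^2}\,dy\big)^{3/2}$, then conclude with the smoothing property \eqref{smoothing-u1} and the estimates on $\beta$, $V$, $\eta$ (\eqref{stima V B a c}, \eqref{beta lambda3}, \eqref{stima m3(vphi)}, \eqref{tame Tdelta}, \eqref{ansatz I delta}), the $|\cdot|^{k_0,\gamma}$ bound being handled as you indicate since the projector commutes with $\partial_\lambda^k$. No gap.
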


\begin{proof}
By \eqref{lm1 formula}, \eqref{lambda 1 senza proiettore} one has 
$$
\begin{aligned}
& {\mathtt m}_1 - {\mathtt m}_{1, K_n} =  \\ 
& \frac{ (2\p)^{-\nu -\frac52} }{2  \sqrt{\kappa}} 
\int_{\T^{\nu}} 
(1 + \b_x) [ \om \cdot \pa_\vphi \beta + V (1+\b_x) ]^2 
 \Pi_{K_n}^\bot\Big( \int_\T \sqrt{1 + \eta_y^2} \, dy \Big)^{3/2}d\vphi d x\,.
 \end{aligned}
$$
Then the lemma follows by \eqref{stima V B a c}, \eqref{beta lambda3}, \eqref{stima m3(vphi)}, \eqref{tame Tdelta}, \eqref{ansatz I delta}, using the smoothing property \eqref{smoothing-u1}. 
\end{proof}

\begin{lemma}\label{lemma:6.27}
The coefficient ${\mathtt m}_1$ defined in \eqref{lambda 1 senza proiettore} satisfies, for some $\sigma := \sigma(\tau, \nu, k_0) > 0$,
 the estimates 
\begin{equation}\label{stime lambda 1 senza proiettore}
|{\mathtt m}_1|^{k_0, \gamma} \leq C \e \,, \quad |\partial_i {\mathtt m}_1[\hat \imath]| \leq C \e \| \hat \imath \|_\sigma\, .
\end{equation}
The operator ${\bf V }$ defined in \eqref{definition cal M} is real, even, reversibility preserving and 
$ {\mathcal V} = {\rm Op}(v (\vphi, x, \xi )) \in OPS^0 $ with symbol $v(\vphi, x, \xi) \in S^0 $ 
defined in  \eqref{ansatz b p} and  \eqref{definizione p}, satisfies, for all $ s_0 \leq s \leq S $, 
\begin{equation}\label{estimate cal M} 
\norma {\mathcal V}^{\pm 1} - {\rm Id} \norma_{0,s,0}^{k_0, \gamma}\,, \, \norma ({\mathcal V}^{\pm 1} - {\rm Id})^* \norma_{0,s,0}^{k_0, \gamma} \leq_S \e (1 + \| \fracchi_0 \|_{s + \sigma}^{k_0, \gamma}) \, . 
\end{equation} 
For all $\beta \in \N$, $ \beta  + k_0 + 4 \leq M $, the operators $ \partial_{\vphi_j}^\beta {\mathcal R}_M^{(2)}$, $ \partial_{\vphi_j}^\beta [{\mathcal R}_M^{(2)}, \partial_x]$, $ \partial_{\vphi_j}^\beta {\mathcal Q}_M^{(2)}$, $ \partial_{\vphi_j}^\beta [{\mathcal Q}_M^{(2)}, \partial_x ]$  are 
${\mathcal D}^{k_0}$-tame and the tame constants 
${\mathfrak M}_{ \pa_{\vphi_j}^\beta  [ {\mathcal R}, \partial_x]} (s)\,,\,{\mathfrak M}_{ \pa_{\vphi_j}^\beta  {\mathcal R}} (s)$, 
${\mathcal R} \in \{ {\mathcal R}_M^{(2)} , {\mathcal Q}_M^{(2)} \}$   satisfy \eqref{tame resto diagonale Egorov} (with a
  possibly larger $\sigma := \sigma (\tau, \nu, k_0) > 0$). 
  
   \noindent
  Moreover if the  constant $\mu$ in \eqref{ansatz I delta} satisfies \eqref{ansatz finale indici derivate Egorov} (with a possibly larger $\sigma := \sigma(\tau, \nu, k_0) > 0$) then  
  \begin{equation}\label{stima derivate i cal V}
  \norma \partial_i {\mathcal V}^{\pm 1} [\hat \imath] \norma_{0, s_1, 0}\,,\, \norma \partial_i ({\mathcal V}^{\pm 1})^* [\hat \imath] \norma_{0, s_1, 0} \leq_{S} \e \| \hat \imath \|_{s_1 + \sigma}\,,
  \end{equation}
  and the remainders ${\mathcal R}_M^{(2)}$, ${\mathcal Q}_M^{(2)}$ satisfy the estimates \eqref{tame resto diagonale Egorov derivata i}.
The operators ${\mathcal R}_M^{(2)}$, ${\mathcal Q}_M^{(2)}$ are reversible.
\end{lemma}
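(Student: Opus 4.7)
I will establish the four claims of the lemma in the order they are stated, relying on the formulas \eqref{lambda 1 senza proiettore}, \eqref{definition cal M}, \eqref{ansatz b p}, \eqref{definizione p}, \eqref{cal RN (3)} and the estimates assembled earlier in the section. First I would prove \eqref{stime lambda 1 senza proiettore}. The function $\beta$ defined in \eqref{beta lambda3} satisfies $\|\beta\|_s^{k_0,\gamma}\lesssim_s \e(1+\|\fracchi_0\|_{s+\sigma}^{k_0,\gamma})$ by Moser's Lemma \ref{Moser norme pesate} applied to the formula in terms of $c$ (itself controlled by $\|\eta\|_s^{k_0,\gamma}\leq C(s)\e(1+\|\fracchi_0\|_s^{k_0,\gamma})$ via \eqref{tame Tdelta}), while $V$ obeys \eqref{stima V B a c} and $\sqrt{1+\eta_y^2}$ is again handled by Moser. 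Inserting these into the integral \eqref{lambda 1 senza proiettore} and using the algebra estimate \eqref{interpolazione C k0} together with the ansatz \eqref{ansatz I delta} yields $|{\mathtt m}_1|^{k_0,\gamma}\leq C\e$; the derivative bound in $i$ follows by differentiating under the integral sign and invoking \eqref{stima derivate i primo step} and \eqref{derivata i T delta}.

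Next, for the algebraic properties of $\mathbf{V}$, I would inspect the symbol \eqref{definizione p}. Since $\Pi_{K_n}a_{14}$ is $\mathrm{even}(\vphi)\,\mathrm{even}(x)$, $a_{12}$ is $\mathrm{even}(\vphi)\,\mathrm{odd}(x)$ (by remark \ref{parities q a7 a8 a9 a10} and \eqref{defA0}), $T(\xi)$ is even in $\xi$ and $\chi_0$ is even, a direct check gives $p(\vphi,-x,-\xi)=p(\vphi,x,\xi)$ and $p(-\vphi,x,-\xi)=\overline{p(\vphi,x,\xi)}$. By Lemma \ref{even:pseudo} these identities translate into $\mathcal{V}$ being even and real, and together with the block structure of $\mathbf{V}$ they give reversibility preservation. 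For \eqref{estimate cal M}, I would first estimate $\norma p\norma_{0,s,0}^{k_0,\gamma}\lesssim_S \e(1+\|\fracchi_0\|_{s+\sigma}^{k_0,\gamma})$ from \eqref{definizione p}, using \eqref{stima coefficienti a egorov}, \eqref{stima R7lambda1}, Lemma \ref{lambda1 - lambda Kn}, and the smoothing property \eqref{smoothing-u1} for $\Pi_{K_n}$. Writing $v-1=p\int_0^1 e^{tp}\,dt$ and iterating the composition bound \eqref{estimate composition parameters} gives $\norma\mathcal{V}-\mathrm{Id}\norma_{0,s,0}^{k_0,\gamma}\lesssim_S \e(1+\|\fracchi_0\|_{s+\sigma}^{k_0,\gamma})$. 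Invertibility and the bound on $\mathcal{V}^{-1}-\mathrm{Id}$ follow from Lemma \ref{Neumann pseudo diff} (whose smallness hypothesis is guaranteed by \eqref{ansatz I delta} for $\e$ small), while the adjoint estimates are produced by Lemma \ref{stima pseudo diff aggiunto}. Differentiating $v=e^p$ in $i$ and repeating the argument gives \eqref{stima derivate i cal V}.

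For the tame estimates of $\mathcal{R}_M^{(2)}$ and $\mathcal{Q}_M^{(2)}$, I would examine each summand in \eqref{cal RN (3)}. The symbols $r_{T,\mathcal{V}}$ and $r_\mathcal{V}$ are pseudo-differential of order $-1/2$, so $\mathcal{V}^{-1}r_{T,\mathcal{V}}(x,D)$ and $\mathcal{V}^{-1}r_\mathcal{V}(x,D)$ lie in $OPS^{-1/2}\subset OPS^0$; the bounds \eqref{stima RN derivate xi parametri} (with $N=1$, $A=T(D)$ or $|D|^{1/2}$, $B=\mathcal{V}$) together with \eqref{estimate composition parameters} and the estimate on $\mathcal{V}^{\pm 1}$ give the required tame control. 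The term $\mathcal{V}^{-1}\Dom\mathcal{V}$ is handled directly from the $S^0$-estimate on $v$. The main term is $\mathcal{V}^{-1}\mathcal{R}_M^{(1)}\mathcal{V}$: here Lemma \ref{composizione operatori tame AB} and Proposition \ref{Prop:Egorov} yield the tame estimate in $s$; to control $\pa_{\vphi_j}^\beta$-derivatives and commutators with $\pa_x$ I would distribute via Leibniz, use that $[\pa_x,\mathrm{Op}(a)]=\mathrm{Op}(a_x)\in OPS^0$ when $\mathrm{Op}(a)\in OPS^0$, and invoke the corresponding bound \eqref{tame resto diagonale Egorov} for $\mathcal{R}_M^{(1)}$ with shifted index $\beta$. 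The same scheme applies to $\mathcal{Q}_M^{(2)}=\mathcal{V}^{-1}\mathcal{Q}_M^{(1)}\overline{\mathcal{V}}$. The derivative bounds \eqref{tame resto diagonale Egorov derivata i} are obtained by differentiating \eqref{cal RN (3)} and using \eqref{stima derivate i cal V} together with the analogous bounds on $\pa_i\mathcal{R}_M^{(1)}$, $\pa_i\mathcal{Q}_M^{(1)}$, under the ansatz \eqref{ansatz finale indici derivate Egorov}. Reversibility of $\mathcal{R}_M^{(2)}$ and $\mathcal{Q}_M^{(2)}$ is then immediate: $\mathcal{L}_M^{(1)}$ is reversible by Proposition \ref{Prop:Egorov}, $\mathbf{V}$ is reversibility preserving, and the algebraic rules after Definition 2.22 ensure that the conjugated operator is still reversible, so the block remainders inherit this property.

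The main obstacle is the careful bookkeeping of the loss of regularity in the $\pa_{\vphi_j}^\beta$-derivatives of $\mathcal{V}^{-1}\mathcal{R}_M^{(1)}\mathcal{V}$ and $\mathcal{V}^{-1}\mathcal{Q}_M^{(1)}\overline{\mathcal{V}}$: each $\pa_{\vphi_j}$ either hits $\mathcal{V}^{\pm 1}$ (producing a factor which, together with its commutator with $\pa_x$, is controlled by a further $\vphi$-derivative of the symbol $p$ and hence of the coefficient $\fracchi_0$), or hits $\mathcal{R}_M^{(1)}$/$\mathcal{Q}_M^{(1)}$, which costs as in \eqref{tame resto diagonale Egorov}. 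The combinatorics have to close within the budget $\beta+k_0+4\leq M$ fixed in Proposition \ref{Prop:Egorov}; the final loss of derivatives $\sigma+\frac32 M+\daleth(M+2)+\beta$ in \eqref{tame resto diagonale Egorov} is absorbed by the constant $\sigma(\tau,\nu,k_0)$ being allowed to grow, consistently with the ansatz \eqref{ansatz finale indici derivate Egorov}.
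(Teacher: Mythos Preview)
Your proposal is correct and follows essentially the same approach as the paper's (very terse) proof: bound $\mathtt m_1$ from its explicit integral formula, estimate the symbol $p$ and hence $v=e^p$ directly, invoke Lemma~\ref{Neumann pseudo diff} and Lemma~\ref{stima pseudo diff aggiunto} for $\mathcal V^{\pm1}-\mathrm{Id}$ and their adjoints, and handle $\mathcal V^{-1}\mathcal R_M^{(1)}\mathcal V$ by Leibniz plus the tame bounds of Proposition~\ref{Prop:Egorov} (this is what the paper means by ``the same strategy of Lemma~\ref{soluzione vera Egorov}''). One small slip: the identity $p(-\vphi,x,-\xi)=\overline{p(\vphi,x,\xi)}$ gives \emph{reversibility preservation} of $\mathbf V$ via Lemma~\ref{R-op:rev}, not realness of $\mathcal V$; the realness of $\mathbf V$ is automatic from its block structure $\begin{pmatrix}\mathcal V&0\\0&\overline{\mathcal V}\end{pmatrix}$ in \eqref{definition cal M}.
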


\begin{proof}
The estimate \eqref{stime lambda 1 senza proiettore} follows by \eqref{lambda 1 senza proiettore}, \eqref{stima V B a c}, \eqref{stima derivate i primo step}, \eqref{beta lambda3}, \eqref{stima m3(vphi)}, \eqref{derivate i coefficienti secondo step}, \eqref{tame Tdelta}, \eqref{ansatz I delta}. 
The estimates \eqref{estimate cal M}, \eqref{stima derivate i cal V} for ${\mathcal V}^{\pm 1}$ follows by \eqref{definition cal M}, \eqref{ansatz b p}, \eqref{definizione p} and Lemma \ref{Neumann pseudo diff}. The estimates
for  $({\mathcal V}^{\pm 1} - {\rm Id})^*$ and $\partial_i ({\mathcal V}^{\pm 1})^*$ follow by 
Lemma \ref{stima pseudo diff aggiunto}.
Using Lemma \ref{lemma stime Ck parametri} we get 
$$ 
\norma r_{T, {\mathcal V}}(x, D) \norma_{0,s,0}^{k_0, \gamma} , 
\norma  r_{\mathcal V}(x, D)  \norma_{0, s,0}^{k_0, \gamma} \leq_S 
\e (1 + \|  \fracchi_0 \|_{s + \sigma}^{k_0, \gamma}), 
$$ 
and 
$$
\norma \partial_i r_{T, {\mathcal V}}(x, D)  [\hat \imath] \norma_{0, s_1, 0}, \norma \partial_i r_{ {\mathcal V}}(x, D)  [\hat \imath] \norma_{0, s_1, 0} \leq_{S} \e \| \hat \imath\|_{s_1 + \sigma}
$$
for some $\sigma : = \sigma(\tau, \nu, k_0) > 0$. The term ${\mathcal V}^{- 1} {\mathcal R}_M^{(1)}{ \mathcal V }$ 
in \eqref{cal RN (3)} is estimated following the same strategy of Lemma \ref{soluzione vera Egorov}. 
\end{proof}

\section{Conclusion: partial reduction of $ {\mathcal L}_\om $}\label{coniugio cal L omega}

By sections \ref{sec:linearized operator}-\ref{sec:lineare}
the linear operator ${\mathcal L}$ in \eqref{linearized vero} is semi conjugated 
to the real, even and reversible operator ${\mathcal L}_M^{(2)}$ defined in \eqref{cal LN (3)}, 
up to operators which are supported on high 
Fourier frequencies, namely 
\begin{align}\label{bf RN (3) bot}
& \qquad \qquad {\mathcal L}_M^{(2)} = {\mathcal W}_2^{- 1} {\mathcal L} {\mathcal W}_1 + {\bf R}_M^{(2), \bot} + {\bf R}_{\pi_0} \\
&  {\bf R}_M^{(2), \bot} :=  
- {\bf V}^{- 1} {\bf \Phi} {\bf \Phi}_M^{- 1} {\bf R}_4^\bot {\bf \Phi}_M {\bf \Phi}^{- 1} {\bf V} - 
{\bf V}^{- 1} {\bf R}_M^{(1), \bot} {\bf V} - {\bf R}_{{\mathtt m}_1}^\bot\,, \label{bf RN (3) bot-new}\\
& \label{pezzo con proiettore}
\  {\bf R}_{\pi_0} := - {\bf V}^{- 1} {\bf \Phi} {\bf \Phi}_M^{- 1} \rho^{- 1} ({\mathcal P}^{- 1} {\mathbb I}_2)(\ii m_3 (\vphi) \Pi_0) ({\mathcal P}{\mathbb I}_2) {\bf \Phi}_M {\bf \Phi}^{- 1} {\bf V}
\end{align}
where 
\begin{equation}\label{semiconiugio cal L8}
 {\mathcal W}_1 :=  \mZ \mB {\mathcal Q}  \mS ({\mathcal P} {\mathbb I}_2) 
 {\bf \Phi}_M {\bf \Phi}^{- 1} {\bf V} \,, \ \ 
 {\mathcal W}_2 := \mZ \mB {\mathcal Q}  \mS  ({\mathcal P}{\mathbb I}_2) \rho \, {\bf \Phi}_M {\bf \Phi}^{- 1} {\bf V} \, , 
\end{equation}
and 
${\bf R}_4^\bot, {\bf R}_M^{(1), \bot}, {\bf R}_{{\mathtt m}_1}^\bot$ are defined respectively 
in \eqref{cal RN bot (3)}, \eqref{bf RN (1) bot}, \eqref{resto modi alti lambda 1}
(they will contribute to the remainders in 
\eqref{stima R omega bot corsivo bassa}-\eqref{stima R omega bot corsivo alta}) and the operator $\Pi_0$ is defined in \eqref{definizione Pi 0}. 
The maps $ {\mathcal W}_1 $, ${\mathcal W}_2 $ are real, even and reversibility preserving. 

\noindent
Let ${\mathbb S} = {\mathbb S}^+ \cup (-{\mathbb S}^+)$ and ${\mathbb S}_0 := {\mathbb S} \cup \{ 0 \}$. We denote by $\Pi_{{\mathbb S}_0}$ the corresponding $L^2$-orthogonal projection and $\Pi_{{\mathbb S}_0}^\bot : = {\rm Id} - \Pi_{{\mathbb S}_0}$. We also denote by $H_{{\mathbb S}_0}^\bot$, the subspace of the even functions supported on the set $\mathbb S_0^c := \Z \setminus \mathbb S_0$, i.e.
\begin{equation}\label{definizione H S0 bot}
H_{{\mathbb S}_0}^\bot := \Big\{ u(x) = {\mathop \sum}_{j \in \mathbb S_0^c} u_j e^{\ii j x} : u_j=  u_{- j} \Big\}\,. 
\end{equation}

\begin{lemma}\label{Lemma:trasformazioni finali W}
Assume \eqref{ansatz I delta}. For $\e \gamma^{- 1}$ small enough, the operators 
\begin{equation}\label{Phi 1 Phi 2 proiettate}
{\mathcal W}_1^\bot := \Pi_{{\mathbb S}_0}^\bot {\mathcal W}_1 \Pi_{{\mathbb S}_0}^\bot\,,\qquad {\mathcal W}_2^\bot := \Pi_{{\mathbb S}_0}^\bot {\mathcal W}_2 \Pi_{{\mathbb S}_0}^\bot\, ,  
\end{equation}
are invertible and for all $s_0 \leq s \leq S$ they
satisfy the tame estimates
\begin{equation}\label{stima Phi 1 Phi 2 proiettate}
\| {\mathcal W}_n^\bot h \|_s^{k_0, \gamma} +
\| ({\mathcal W}_n^\bot)^{-1} h \|_s^{k_0, \gamma} \leq_{M, S} \| h \|_{s + \sigma}^{k_0, \gamma} + \| \fracchi_0 \|_{s + \sigma + \aleph_M(0)}^{k_0, \gamma} \| h \|_{s_0 + \sigma}^{k_0, \gamma}\,, \  n = 1,2\, , 
\end{equation}
for some $\sigma := \sigma(\tau, \nu) > 0$. 

\noindent
Moreover if the constant $\mu$ in \eqref{ansatz I delta} satisfies 
$ s_1 + \sigma + \aleph_M(0) \leq s_0 + \mu $ 
for some $\sigma := \sigma(\tau, \nu, k_0) > 0$, then 
\begin{equation}\label{stima derivate cal Wi}
\| \partial_i {\mathcal W}_n^{\pm 1} [\hat \imath] h \|_{s_1}\,,\, \| \partial_i ({\mathcal W}_n^\bot)^{\pm 1} [\hat \imath] h \|_{s_1} \leq_{M, S} \| \hat \imath\|_{s_1 + \sigma + \aleph_M(0)} \| h \|_{s_1 + \sigma}\,.
\end{equation}
\end{lemma}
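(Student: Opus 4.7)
The plan is to read $\mathcal{W}_1,\mathcal{W}_2$ in \eqref{semiconiugio cal L8} as a product of eight building blocks each of which has been carefully estimated in this section, namely $\mathcal{Z}^{\pm 1}$ (Lemma \ref{lemma:remainder mR0}), $\mathcal{B}^{\pm 1}$ and $\mathcal{Q}^{\pm 1}$ (Lemma \ref{lemma:BAPQ}), $\mathcal{S}^{\pm 1}$ (Fourier multipliers of order $\pm 1/2$, hence $\mathcal{D}^{k_0}$-$(1/2)$-tame with $O(1)$ constants by Lemma \ref{lemma: action Sobolev}), $\mathcal{P}^{\pm 1}$ and the scalar factor $\rho^{\pm 1}$ (Lemmas \ref{lemma:utile} and the estimates around \eqref{stima cal A}), $\boldsymbol{\Phi}_M^{\pm 1}$ (Proposition \ref{Lemma finale decoupling}), $\boldsymbol{\Phi}^{\pm 1}$ (the flow estimates of Appendix A, in particular Propositions \ref{Teorema totale partial vphi beta k D beta k Phi}--\ref{lemma:tame derivate flusso}) and $\mathbf{V}^{\pm 1}$ (Lemma \ref{lemma:6.27}, estimates \eqref{estimate cal M}). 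I will iterate the composition estimate of Lemma \ref{composizione operatori tame AB} on this finite chain; the total loss of derivatives is the sum of the individual ones, which is dominated by $\aleph_M(0)$ coming from $\boldsymbol{\Phi}_M$, and the $k_0$-differentiable structure is preserved at every step. This gives the bound \eqref{stima Phi 1 Phi 2 proiettate} for $\|\mathcal{W}_n^\bot h\|_s^{k_0,\gamma}$ (the outer projectors $\Pi_{\mathbb{S}_0}^\bot$ being $L^2$-orthogonal and commuting with the norm only add bounded factors).

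The invertibility of $\mathcal{W}_n^\bot$ I will obtain by isolating the only block that is not close to the identity, namely $\mathcal{S}$. Writing $\mathcal{W}_n=\mathcal{U}_n^{(1)}\,\mathcal{S}\,\mathcal{U}_n^{(2)}$ with $\mathcal{U}_n^{(1)}:=\mathcal{Z}\mathcal{B}\mathcal{Q}$ and $\mathcal{U}_n^{(2)}:=(\mathcal{P}\mathbb{I}_2)\rho^{\epsilon_n}\boldsymbol{\Phi}_M\boldsymbol{\Phi}^{-1}\mathbf{V}$ (with $\epsilon_n=0$ or $1$), the cited lemmas show that $\mathcal{U}_n^{(1)}-\mathrm{Id}$ and $\mathcal{U}_n^{(2)}-\mathrm{Id}$ are $O(\varepsilon\gamma^{-1})$ in the low-norm $\|\cdot\|_{s_0+\sigma}^{k_0,\gamma}$. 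Since $\mathcal{S}$ is a Fourier multiplier it commutes with $\Pi_{\mathbb{S}_0}^\bot$ and with the scalar projectors, and by \eqref{basic decomp} its restriction $\mathcal{S}_\bot:=\Pi_{\mathbb{S}_0}^\bot\mathcal{S}\Pi_{\mathbb{S}_0}^\bot$ has explicit inverse of order $-1/2$ on $H_{\mathbb{S}_0}^\bot$ (the zero mode having been excluded). Conjugating, I write
\[
\mathcal{W}_n^\bot \;=\; \mathcal{S}_\bot\,\bigl(\Pi_{\mathbb{S}_0}^\bot+\mathcal{T}_n\bigr)\Pi_{\mathbb{S}_0}^\bot,\qquad \mathcal{T}_n:=\mathcal{S}_\bot^{-1}\bigl(\mathcal{U}_n^{(1)}\mathcal{S}\mathcal{U}_n^{(2)}-\mathcal{S}\bigr)\Pi_{\mathbb{S}_0}^\bot,
\]
and $\mathcal{T}_n$ is $O(\varepsilon\gamma^{-1})$-tame in low norm. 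By Lemma \ref{serie di neumann per maggioranti} (or its non-majorant analogue), for $\varepsilon\gamma^{-1}$ small the operator $\mathrm{Id}+\mathcal{T}_n$ is invertible on $H_{\mathbb{S}_0}^\bot$ and its inverse satisfies tame estimates with the same structural losses. Multiplying on the left by $\mathcal{S}_\bot^{-1}$ gives $(\mathcal{W}_n^\bot)^{-1}$ and the announced bound, the only extra $1/2$-loss being absorbed in the constant $\sigma$.

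The derivative estimate \eqref{stima derivate cal Wi} is obtained by differentiating the composition $\partial_i\mathcal{W}_n[\hat\imath]$ into a finite sum of products where exactly one factor is replaced by its $\partial_i$-derivative, each of which is already controlled in $\|\cdot\|_{s_1}$: \eqref{derivate in i cal Z}, \eqref{stime derivate i cal B e Q}, \eqref{stima derivata i cal A}, \eqref{bf PhiN derivate in i}, the Appendix A bounds for $\partial_i\boldsymbol{\Phi}$, and \eqref{stima derivate i cal V}; the hypothesis $s_1+\sigma+\aleph_M(0)\le s_0+\mu$ exactly matches the worst indices required by these references. For $\partial_i(\mathcal{W}_n^\bot)^{-1}[\hat\imath]=-(\mathcal{W}_n^\bot)^{-1}\bigl(\partial_i\mathcal{W}_n^\bot[\hat\imath]\bigr)(\mathcal{W}_n^\bot)^{-1}$ one then concatenates with the already-proven low-norm tame bound on $(\mathcal{W}_n^\bot)^{-1}$.

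The only genuine difficulty is bookkeeping: one must verify that the loss of derivatives accumulated along the chain equals, up to absolute constants, the quantity $\sigma+\aleph_M(0)$ appearing in the statement, and that the $\mathcal{C}^{k_0}$-dependence on $\lambda=(\omega,\kappa)$ survives every composition (this is automatic from Lemma \ref{composizione operatori tame AB}). No new analytic ingredient is needed beyond the factor-by-factor estimates already proved in sections \ref{sec:linearized operator}--\ref{sec:lineare} and Appendix A.
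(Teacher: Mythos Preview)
Your proof is correct. For the tame bound \eqref{stima Phi 1 Phi 2 proiettate} and the $\partial_i$-estimate \eqref{stima derivate cal Wi}, your composition-of-factors argument via Lemma \ref{composizione operatori tame AB} (and Lemma \ref{lemma operatore e funzioni dipendenti da parametro}) is exactly what the paper does.

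For the invertibility of $\mathcal{W}_n^\bot$ the routes differ. You isolate $\mathcal{S}$ as the only factor not close to the identity, write $\mathcal{W}_n^\bot = \mathcal{S}_\bot(\mathrm{Id}_{H_{\mathbb{S}_0}^\bot}+\mathcal{T}_n)$ with $\mathcal{T}_n$ an $O(\varepsilon\gamma^{-1})$-tame operator on $H_{\mathbb{S}_0}^\bot$, and invert by a Neumann series; this works because $\mathcal{S}$ is a Fourier multiplier commuting with $\Pi_{\mathbb{S}_0}^\bot$ and invertible on $H_{\mathbb{S}_0}^\bot$. The paper instead first observes that the \emph{full} $\mathcal{W}_n$ is invertible (each factor in \eqref{semiconiugio cal L8} is) and then invokes a block-matrix argument from \cite{BBM-auto}: since $\Pi_{\mathbb{S}_0}$ has finite rank, invertibility of the finite-dimensional block $\Pi_{\mathbb{S}_0}\mathcal{W}_n\Pi_{\mathbb{S}_0}$ follows by a perturbative argument, and this together with invertibility of $\mathcal{W}_n$ forces invertibility of the complementary block $\mathcal{W}_n^\bot$ via a Schur-complement step. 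Your approach is more self-contained (no external reference), while the paper's highlights the role of the finite dimensionality of $H_{\mathbb{S}_0}$, a theme that recurs in Lemma \ref{lemma forma buona resto}. Both yield the same tame bound with the same loss $\sigma+\aleph_M(0)$.
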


\begin{proof}
By Lemmata \ref{composizione operatori tame AB}, \ref{lemma operatore e funzioni dipendenti da parametro} and by 
the estimates of sections \ref{sec:linearized operator}-\ref{sec:lineare}, 
the operators ${\mathcal W}_1, {\mathcal W}_2 $ are invertible and  satisfy tame estimates 
$ \|{\mathcal W}_1^{\pm 1} h \|_s^{k_0, \gamma} \leq_S
\| h \|_{s + \sigma}^{k_0, \gamma} + 
\| \fracchi_0 \|_{s + \sigma + \aleph_M (0)}^{k_0, \gamma} \| h \|_{s_0 + \sigma}^{k_0, \gamma} $ 
where $ \aleph_M(0)$ is given in Proposition \ref{Lemma finale decoupling}. 
In order to prove that ${\mathcal W}_1^\bot$ is invertible, it is sufficient to prove that $\Pi_{{\mathbb S}_0} {\mathcal W}_1 \Pi_{{\mathbb S}_0}$ is invertible. 
This follows by a perturbative argument, for $\e \gamma^{- 1}$ small, as 
in \cite{BBM-auto} using that $ \Pi_{{\mathbb S}_0} $ is a finite dimensional projector.
\end{proof}

Finally, the operator ${\mathcal L}_\omega $ defined in \eqref{Lomega def} (i.e. \eqref{representation Lom})
 is semi-conjugated to 
$$
({\mathcal W}_2^\bot)^{- 1} {\mathcal L}_\omega {\mathcal W}_1^\bot = \Pi_{{\mathbb S}_0}^\bot {\mathcal L}_M^{(2)} 
\Pi_{{\mathbb S}_0}^\bot - \Pi_{{\mathbb S}_0}^\bot  {\bf R}_M^{(2), \bot} \Pi_{{\mathbb S}_0}^\bot  + R_M 
$$
where $ \Pi_{{\mathbb S}_0}^\bot  {\bf R}_M^{(2), \bot} \Pi_{{\mathbb S}_0}^\bot$ is supported 
on the high Fourier modes and 
\begin{equation}\label{R8}
\begin{aligned}
R_M & := 
({\mathcal W}_2^\bot)^{- 1} \Pi_{{\mathbb S}_0}^\bot  
\big( {\mathcal W}_2 \Pi_{{\mathbb S}_0} {\mathcal L}_M^{(2)} \Pi_{{\mathbb S}_0}^\bot - 
 {\mathcal W}_2 \Pi_{{\mathbb S}_0}{\bf R}_M^{(2), \bot} \Pi_{{\mathbb S}_0}^\bot \\ 
 & \qquad \qquad \qquad - {\mathcal L} \Pi_{{\mathbb S}_0}{\mathcal W}_1 \Pi_{{\mathbb S}_0}^\bot - {\mathcal W}_2 {\bf R}_{\pi_0} \Pi_{{\mathbb S}_0}^\bot  +  
\e  R {\mathcal W}_1^\bot\big) 
\end{aligned}
\end{equation}
is a finite dimensional operator. 

\begin{lemma}\label{lemma forma buona resto}
The operator $R_M$  has the finite dimensional form \eqref{forma buona resto}-\eqref{stime gj chij}. 
\end{lemma}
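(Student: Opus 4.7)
The plan is to show that each of the five summands defining $R_M$ in \eqref{R8} is a finite-rank operator of the form \eqref{forma buona resto} and that the functions $g_j,\chi_j$ produced inherit the tame estimates \eqref{stime gj chij} from the bounds already established along sections \ref{sec:linearized operator}--\ref{coniugio cal L omega}.

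The key structural observation is that each summand in \eqref{R8} is sandwiched by a finite-rank operator: the first three terms contain the projector $\Pi_{\mathbb S_0}$ (which has rank $|\mathbb S_0|=2\nu+1$), the fourth term contains $\Pi_0$ hidden inside ${\bf R}_{\pi_0}$ via \eqref{pezzo con proiettore} and the definition \eqref{definizione Pi 0}, and the last term $\e R\,{\cal W}_1^\bot$ already carries the finite-dimensional structure by Lemma \ref{thm:Lin+FBR}. Fix an orthonormal basis $\{e_j\}_{j\in\mathbb S_0}$ of $\Pi_{\mathbb S_0} L^2(\T_x)$ (concretely the normalized $\cos(jx)$), so that $\Pi_{\mathbb S_0}u=\sum_{j\in\mathbb S_0}(u,e_j)_{L^2_x}e_j$. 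For any operators $A,B$ for which the compositions make sense, and any $h\in H_{\mathbb S^+}^\bot$,
\[
A\,\Pi_{\mathbb S_0}\,B\,h \;=\; \sum_{j\in\mathbb S_0}\bigl(h,\,B^*e_j\bigr)_{L^2_x}\,A e_j
\;=\; \sum_{j\in\mathbb S_0}\bigl(h,\,\Pi_{\mathbb S^+}^\bot B^*e_j\bigr)_{L^2_x}\,\Pi_{\mathbb S^+}^\bot A e_j ,
\]
where in the last equality we used $\Pi_{\mathbb S^+}^\bot h = h$, so that the representatives $g_j:=\Pi_{\mathbb S^+}^\bot B^*e_j$ and $\chi_j:=\Pi_{\mathbb S^+}^\bot A e_j$ indeed lie in $H_{\mathbb S^+}^\bot$. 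This puts each summand in the required form \eqref{forma buona resto}, and after relabelling indices collapses to a finite sum of the same shape.

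For the tame bounds \eqref{stime gj chij}, I would apply this representation to each piece and track the tame constants: for ${\cal W}_2\Pi_{\mathbb S_0}{\cal L}_M^{(2)}\Pi_{\mathbb S_0}^\bot$, take $A={\cal W}_2$, $B=\Pi_{\mathbb S_0}^\bot({\cal L}_M^{(2)})^*\Pi_{\mathbb S_0}$ and use Lemma \ref{Lemma:trasformazioni finali W} together with Lemma \ref{lemma:6.27} and the pseudo-differential norms controlling ${\cal L}_M^{(2)}$; for $\e R\,{\cal W}_1^\bot$ use directly Lemma \ref{thm:Lin+FBR} composed with \eqref{stima Phi 1 Phi 2 proiettate}; for ${\cal W}_2{\bf R}_{\pi_0}\Pi_{\mathbb S_0}^\bot$ use \eqref{pezzo con proiettore} (which factors through the rank-one projector $\pi_0$) plus the tame estimates on the transformations. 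Since each $e_j$ has all Sobolev norms bounded by an absolute constant, applying the tame estimate $\|A u\|_s\leq {\mathfrak M}_A(s_0)\|u\|_s+{\mathfrak M}_A(s)\|u\|_{s_0}$ with $u=e_j$ yields $\|\chi_j\|_s^{k_0,\gamma}\leq C(1+\|\fracchi_0\|_{s+\sigma}^{k_0,\gamma})$ (after possibly enlarging $\sigma$), and the adjoint bounds on $B^*=({\cal W}_2^{-1})^*,\ldots$ available from the estimates on $(\mathcal B^{\pm 1})^*,(\mathcal Q^{\pm 1})^*,(\mathcal V^{\pm 1})^*,({\bf \Phi}_M^{\pm1})^*$ (see \eqref{est Z-Id}, \eqref{est BAPQ (1)}, \eqref{stima cal P - identita}, \eqref{definizione bf PhiN}, \eqref{estimate cal M}) give the analogous bound on $g_j$. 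Finally the estimate on $\partial_i g_j[\widehat\imath]$ and $\partial_i\chi_j[\widehat\imath]$ follows by differentiating and invoking \eqref{stima derivate cal Wi}, \eqref{stime gj chij} (for the $R$ factor), \eqref{derivate in i cal Z}, \eqref{stime derivate i cal B e Q}, \eqref{stima derivata i cal V}, \eqref{bf PhiN derivate in i} and Lemmas \ref{soluzione vera Egorov}--\ref{lemma:6.27}, exactly as in the proof of \eqref{tame resto diagonale Egorov derivata i}.

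The only real work, and the main obstacle, is the bookkeeping for the term ${\cal W}_2\Pi_{\mathbb S_0}\,{\bf R}_M^{(2),\bot}\Pi_{\mathbb S_0}^\bot$: the operator ${\bf R}_M^{(2),\bot}$ defined in \eqref{bf RN (3) bot-new} is \emph{not} of negative order (it contains pieces like $\ii({\mathtt m}_{1,K_n}-{\mathtt m}_1){\bf\Sigma}|D|^{1/2}$ and the Fourier-cut--off remainder ${\bf R}_4^\bot\in OPS^{3/2}$), so one cannot simply take its $L^2$-operator norm. However, since it is multiplied on both sides by a fixed, smooth, finite-rank projection $\Pi_{\mathbb S_0}$, only the \emph{matrix elements} of ${\bf R}_M^{(2),\bot}$ on the finitely many functions $e_j$ enter, and these can be estimated by testing against $e_j$: concretely ${\bf R}_M^{(2),\bot}e_j$ is a fixed trigonometric polynomial in $x$ whose Sobolev norm is controlled by the $|\cdot|^{k_0,\gamma}$ bound of ${\mathtt m}_1-{\mathtt m}_{1,K_n}$ in Lemma \ref{lambda1 - lambda Kn} (which gives arbitrary $K_n^{-b}$ decay), the smoothing estimate on $\Pi_{K_n}^\bot m_3(\vphi)$ via \eqref{smoothing-u1}, and \eqref{stima m3 - 1}, \eqref{stime lambda 1 senza proiettore}. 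Summing up, all five contributions fit into the single form \eqref{forma buona resto} with $g_j,\chi_j$ obeying \eqref{stime gj chij}, which completes the proof.
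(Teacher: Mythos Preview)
Your decomposition formula
\[
A\,\Pi_{\mathbb S_0}\,B\,h \;=\; \sum_{j\in\mathbb S_0}\bigl(h,\,B^*e_j\bigr)_{L^2_x}\,A e_j
\]
silently assumes two things: that $(Bh,e_j)_{L^2_x}=(h,B^*e_j)_{L^2_x}$ pointwise in $\vphi$ (so that $B^*$ is the $L^2_x$-adjoint of a $\vphi$-parametrized operator), and that $A[c(\vphi)e_j]=c(\vphi)\,A[e_j]$ (so that $A$ commutes with multiplication by functions of $\vphi$). Neither holds here in general: the maps ${\cal W}_i^\bot$ and $({\cal W}_i^\bot)^{-1}$ contain the time reparametrization ${\cal P}$ from section~\ref{sec: time-reduction highest order}, which genuinely acts on the $\vphi$ variable. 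Hence your identity does not apply to the terms of $R_M$ as written, and in particular you cannot conclude that the resulting operator has the pointwise-in-$\vphi$ finite-rank structure required by \eqref{forma buona resto}.

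The paper's proof fills exactly this gap by first establishing the \emph{transparency property}
\[
({\cal W}_i^\bot)^{\pm 1}\bigl[a(\vphi)\,\chi\bigr]=({\cal P}^{\pm 1}a)(\vphi)\,({\cal W}_i^\bot)^{\pm 1}[\chi],
\]
which says scalar functions of $\vphi$ do pass through, but only after reparametrization by ${\cal P}^{\pm 1}$. This comes from the decomposition ${\cal W}_i^\bot=\Pi_{\mathbb S_0}^\bot{\bf \Gamma}_1\,{\cal P}{\mathbb I}_2\,(\rho)\,{\bf \Gamma}_2\,\Pi_{\mathbb S_0}^\bot$ with ${\bf \Gamma}_1,{\bf \Gamma}_2$ genuinely $\vphi$-parametrized, together with the multiplicativity ${\cal P}[a(\vphi)u(\vphi,x)]=({\cal P}a)(\vphi)\,({\cal P}u)(\vphi,x)$. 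Once this is in place, the ${\cal P}^{-1}$ coming from $({\cal W}_2^\bot)^{-1}$ converts the coefficient $({\cal W}_1^\bot h,g)_{L^2_x}$ into $(h,g_*)_{L^2_x}$ with $g_*:=\Pi_{\mathbb S_0}^\bot{\bf \Gamma}_2^*\,{\cal P}^{-1}{\bf \Gamma}_1^*\,\Pi_{\mathbb S_0}^\bot g$, where now the $L^2_x$-adjoints of ${\bf \Gamma}_1,{\bf \Gamma}_2$ are legitimate because these factors \emph{are} $\vphi$-parametrized. Your argument can be repaired along these lines, but the transparency step is not optional.
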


\begin{proof}
We analyze the term 
$({\mathcal W}_2^\bot)^{- 1} R {\mathcal W}_1^\bot$ in \eqref{R8}. The others are similar. 
Since 
$R$ has the form \eqref{forma buona resto}, 
it is sufficient to prove that, given $ {\mathcal R} : h \to (h, g)_{L^2_x} \chi $, 
the operator $({\mathcal W}_2^\bot)^{- 1} {\mathcal R} {\mathcal W}_1^\bot$ 
has the form \eqref{forma buona resto} as well. We use the following  property: given 
a scalar function $ a : \T^\nu \to \C $ and $  \chi := \chi(\vphi, \cdot) \in H_{{\mathbb S}_0}^\bot $, we have 
\begin{equation}\label{proprieta trasparenza vphi}
({\mathcal W}_i^\bot)^{\pm 1}[a(\vphi) \chi ] = ({\mathcal P}^{\pm 1} a)(\vphi)  ({\mathcal W}_i^\bot)^{\pm 1} [\chi ] \, . 
\end{equation}
Let us prove \eqref{proprieta trasparenza vphi}  for $ {\mathcal W}_2^\bot $. We write (recall \eqref{Phi 1 Phi 2 proiettate} and
\eqref{semiconiugio cal L8}) 
$$
{\mathcal W}_2^\bot = \Pi_{{\mathbb S}_0}^\bot \big( {\bf \Gamma}_1 {\mathcal P}{\mathbb I}_2 \rho  {\bf \Gamma}_2 \big) \Pi_{{\mathbb S}_0}^\bot \qquad {\rm where} \qquad 
{\bf \Gamma}_1 := \mZ \mB {\mathcal Q}  \mS\,, \quad {\bf \Gamma}_2 :=   {\bf \Phi}_M {\bf \Phi}^{- 1} {\bf V}  \, , 
$$
are,  for any $\vphi \in \T^\nu $, linear operators
$ {\bf \Gamma}_i(\vphi) : H_{{\mathbb S}_0}^\bot \to H_{{\mathbb S}_0}^\bot $ of the phase space. Then 
\be\label{proprieta trasparenza vphi W1} 
\begin{aligned}
{\mathcal W}_2^\bot [a(\vphi) \chi ]  
& =  \Pi_{{\mathbb S}_0}^\bot \big( {\bf \Gamma}_1  {\mathcal P}{\mathbb I}_2 \rho {\bf \Gamma}_2 \big) 
\Pi_{{\mathbb S}_0}^\bot[a(\vphi) \chi ] \\
& = \Pi_{{\mathbb S}_0}^\bot {\bf \Gamma}_1  
{\mathcal P}{\mathbb I}_2 [a (\vphi)  \rho {\bf \Gamma}_2 \Pi_{{\mathbb S}_0}^\bot[ \chi ] ] \\
& = \Pi_{{\mathbb S}_0}^\bot {\bf \Gamma}_1 [({\mathcal P} a) (\vphi)  ({\mathcal P}{\mathbb I}_2 \rho {\bf \Gamma}_2 \Pi_{{\mathbb S}_0}^\bot[\chi] )]  \\
& = ({\mathcal P} a )(\vphi)  \Pi_{{\mathbb S}_0}^\bot {\bf \Gamma}_1 {\mathcal P}{\mathbb I}_2 
\rho {\bf \Gamma}_2 \Pi_{{\mathbb S}_0}^\bot [\chi ]  = ({\mathcal P} a) (\vphi) {\mathcal W}_2^\bot [\chi]\,. 
\end{aligned}
\ee
Then  \eqref{proprieta trasparenza vphi} follows also 
for $({\mathcal W}_2^\bot)^{- 1}$. 
Denoting $ \tilde a := {\mathcal P}^{- 1} a $ and $ \tilde \chi := ({\mathcal W}_2^\bot)^{- 1}[\chi ] $, we have  
$$
\begin{aligned}
({\mathcal W}_2^\bot)^{- 1}[a(\vphi)  \chi ] 
& = ({\mathcal W}_2^\bot)^{- 1} [({\mathcal P} \tilde a)(\vphi)
 ({\mathcal W}_2^\bot \tilde \chi)] \\
&  \stackrel{\eqref{proprieta trasparenza vphi W1}}{=} ({\mathcal W}_2^\bot)^{- 1} {\mathcal W}_2^\bot [\tilde a
 (\vphi)  \tilde \chi] = ({\mathcal P}^{- 1}a)(\vphi)  ({\mathcal W}_2^\bot)^{- 1}[\chi] \, .  
 \end{aligned}
$$ 
Now for any $h(\vphi, \cdot) \in H_{{\mathbb S}_0}^\bot $ one has 
\be\label{FBR-1}
({\mathcal W}_2^\bot)^{- 1} {\mathcal R} {\mathcal W}_1^\bot [h ]  = ({\mathcal W}_2^\bot)^{- 1} \big[ ( {\mathcal W}_1^\bot [h ], g)_{L^2_x} \chi  \big] 
\stackrel{\eqref{proprieta trasparenza vphi}} = \big({\mathcal P}^{-1} ( {\mathcal W}_1^\bot [h ], g)_{L^2_x} \big)   \chi_*
\ee
with $  \chi_* := ({\mathcal W}_2^\bot)^{- 1}[ \chi ] $ and
\begin{align}
 {\mathcal P}^{-1} ( {\mathcal W}_1^\bot [h ], g)_{L^2_x} & = 
{\mathcal P}^{-1} 
( \Pi_{{\mathbb S}_0}^\bot {\bf \Gamma}_1 {\mathcal P}{\mathbb I}_2  {\bf \Gamma}_2 \Pi_{{\mathbb S}_0}^\bot [h ], g)_{L^2_x} \nonumber \\
& =
{\mathcal P}^{-1} 
(  {\mathcal P}{\mathbb I}_2  {\bf \Gamma}_2 \Pi_{{\mathbb S}_0}^\bot [h ],  {\bf \Gamma}_1^* \Pi_{{\mathbb S}_0}^\bot g)_{L^2_x} \nonumber
 \\
& = 
(   {\bf \Gamma}_2 \Pi_{{\mathbb S}_0}^\bot [h ],  {\mathcal P}^{-1}  {\bf \Gamma}_1^* \Pi_{{\mathbb S}_0}^\bot g)_{L^2_x} 
\nonumber \\
& =
(   h ,   \Pi_{{\mathbb S}_0}^\bot {\bf \Gamma}_2^* {\mathcal P}^{-1}  {\bf \Gamma}_1^* \Pi_{{\mathbb S}_0}^\bot g)_{L^2_x} 
=(h,  g_* )_{L^2_x} \label{coefficiente in phi 2} 
\end{align}
with  
$ g_* := \Pi_{{\mathbb S}_0}^\bot {\bf \Gamma}_2^* {\mathcal P}^{-1}  {\bf \Gamma}_1^* \Pi_{{\mathbb S}_0}^\bot g $. By  
\eqref{FBR-1} and \eqref{coefficiente in phi 2} the lemma follows. 
\end{proof}

In conclusion we write 
\begin{equation}\label{final conjugation prima del KAM}
\begin{aligned}
& \qquad \quad   {\mathcal L}_\omega ={\mathcal W}_2^\bot  
{\mathcal L}_M^{(3)} ({\mathcal W}_1^\bot)^{-1} + {\bf R}_M^{(3), \bot} \, , \\ 
& 
 {\mathcal L}_M^{(3)} :=
 {\mathcal L}_M^{(2)}  + R_M \,, \quad {\bf R}_M^{(3), \bot} := - {\mathcal W}_2^\bot {\bf R}_M^{(2), \bot} ({\mathcal W}_1^{\bot})^{- 1}
 \end{aligned}
\end{equation}
where ${\mathcal L}_M^{(2)}$ is defined in \eqref{cal LN (3)}, $ {\bf R}_M^{(2), \bot}$ is defined in \eqref{bf RN (3) bot-new} and $R_M$ in \eqref{R8}. The remainder ${\bf R}^{(3), \bot}_M $ 
satisfies  tame estimates:  there is $\sigma := \sigma(\tau, \nu, k_0) > 0$ such that
\begin{align}
\| {\bf R}_M^{(3), \bot} h \|_{s_0}^{k_0, \gamma} & \leq_S \e K_n^{- b} 
\big( \| h \|_{s_0 + \sigma + b}^{k_0, \gamma}  + \| \fracchi_0 \|_{s_0 + \sigma + \aleph_M(0) 
+ b}^{k_0, \gamma} \| h \|_{s_0 + \sigma}^{k_0, \gamma} \big)\,, \  \forall b > 0\,, \label{stima bf R N (3) bot bassa} \\
\| {\bf R}_M^{(3), \bot} h\|_s^{k_0, \gamma} & \leq_S \e \big( \| h \|_{s + \sigma}^{k_0, \gamma} + \| \fracchi_0 \|_{s + \sigma + \aleph_M(0)}^{k_0, \gamma} \| h \|_{s_0 + \sigma}^{k_0, \gamma} \big)\, , \quad \forall s_0 \leq s \leq S\,. \label{stima bf R N (3) bot alta}
\end{align}
The estimates \eqref{stima bf R N (3) bot bassa}, \eqref{stima bf R N (3) bot alta} follow by \eqref{final conjugation prima del KAM}, \eqref{bf RN (3) bot}, \eqref{cal RN bot (3)}, \eqref{bf RN (1) bot}, \eqref{resto modi alti lambda 1}, using the estimates \eqref{stima m3(vphi)}, \eqref{stima coefficienti a egorov}, \eqref{stima Phi 1 Phi 2 proiettate}, \eqref{estimate cal M}, \eqref{definizione bf PhiN},
\eqref{smoothing-u1}, Lemma \ref{lambda1 - lambda Kn} and Proposition \ref{flussoCk0}. 

\begin{proposition}\label{prop: sintesi linearized}
Assume \eqref{ansatz I delta}. 
For all $ (\om, \kappa) \in \DC_{K_n}^\g \times [\kappa_1, \kappa_2] $ (see \eqref{omega diofanteo troncato})  the operator
 $ {\mathcal L}_\omega $ defined in \eqref{Lomega def} (i.e. \eqref{representation Lom}) is semiconjugated
 to the real, even and reversible operator $ {\mathcal L}_M^{(3)} $ in \eqref{final conjugation prima del KAM}  
 up to the remainder $ {\bf R}_M^{(3), \bot} $ which satisfies 
 \eqref{stima bf R N (3) bot bassa}-\eqref{stima bf R N (3) bot alta}.  
The operator  
\begin{equation}\label{forma esplicita cal LN (4)}
{\mathcal L}_M^{(3)} = \Pi_{{\mathbb S}_0}^\bot \big( \Dom {\mathbb I}_2 + \ii \mathtt m_3 {\bf T}(D) + \ii {\mathtt m}_1 
{\bf \Sigma} |D|^{\frac12} + {\bf R}_M^{(3)} + {\bf Q}_M^{(3)} \big) \Pi_{{\mathbb S}_0}^\bot 
\end{equation}
where the constant coefficients  $ \mathtt m_3 := \mathtt m_3 (\om, \kappa) \in \R $, $  {\mathtt m}_1 := {\mathtt m}_1 (\om, \kappa) \in \R  $, 
are defined in \eqref{lambda3 formula}, \eqref{lambda 1 senza proiettore} for all $ (\om, \kappa) \in \R^\nu \times [\kappa_1, \kappa_2]$, and 
satisfy \eqref{stima lambda 3 - 1 nuova}, \eqref{stime lambda 1 senza proiettore}. 
The operator $ {\bf T}(D) $ is defined in \eqref{definizione T3 A1}, \eqref{def T} and the matrix  $ 
 {\bf \Sigma} := \begin{pmatrix}
  1 & 0 \\
  0 & - 1
  \end{pmatrix}
  $. The remainders 
\be\label{RM3QM3}
{\bf R}_M^{(3)} := \begin{pmatrix}
{\mathcal R}_M^{(3)} & 0 \\
0 & \overline {\mathcal R}_M^{(3)}
\end{pmatrix}\,, \quad {\bf Q}_M^{(3)} := \begin{pmatrix}
0  & {\mathcal Q}_M^{(3)} \\
\overline{\mathcal Q}_M^{(3)} & 0
\end{pmatrix} 
\ee
satisfy the following tame properties:  for all $ \b \in \N $, $ \beta + k_0 + 4  \leq M  $,  the operators 
 $ \partial_{\vphi_j}^\beta {\mathcal R}_M^{(3)} $,
 $ \partial_{\vphi_j}^\beta [{\mathcal R}_M^{(3)}, \partial_x ] $,
 $ \partial_{\vphi_j}^\beta {\mathcal Q}_M^{(3)} $,
 $ \partial_{\vphi_j}^\beta [{\mathcal Q}_M^{(3)}, \partial_x ] $, $ j = 1, \ldots, \nu $, 
 are ${\mathcal D}^{k_0}$-tame and their tame constants satisfy, for all $s_0 \leq s \leq S $,  
 \begin{equation}\label{stima finale resti prima del KAM}
 \begin{aligned}
& \max_{{\mathcal R} \in \{ {\mathcal R}_M^{(3)}, {\mathcal Q}_M^{(3)}  \}}
\big\{  {\mathfrak M}_{\partial_{\vphi_j}^\beta {\mathcal R} } (s)\,,
{\mathfrak M}_{\partial_{\vphi_j}^\beta [{\mathcal R}, \partial_x ] } (s) \big\} \\ 
& \leq_{M, S} \e \gamma^{- 1}  
 \big( 1 + \| \fracchi_0 \|_{s + \sigma + \frac32 M + \daleth(M + 2) + \aleph_M(0)
+ \beta}^{k_0, \gamma} \big) 
\end{aligned}
\end{equation}
for some $\sigma := \sigma(\tau, \nu, k_0) > 0$ where the constant $ \aleph_M(0)$, $\daleth(M)$ 
are defined in \eqref{definizione kn(alpha)}, \eqref{perdita pseudo-diff RN}. 

\noindent
Moreover if the constant $ \mu $ in \eqref{ansatz I delta} satisfies 
\be\label{mu-grande}
s_1 + \sigma  +  \chi M + \daleth(M + 2) + \aleph_M(0)
+M - k_0 - 4 \leq s_0 + \mu \,,
\ee
then each ${\mathcal R} \in \{ {\mathcal R}_M^{(3)}, {\mathcal Q}_M^{(3)}  \}$ satisfies,  for all $ \b \in \N $, $ \beta + k_0 + 4  \leq M  $, 
\begin{equation}\label{derivate i resti prima della riducibilita}
\begin{aligned}
& \|  \partial_{\vphi_j}^\beta [ \partial_i {\mathcal R} [\hat \imath], \partial_x]  \|_{{\mathcal L}(H^{s_1})}, 
\|  \partial_{\vphi_j}^\beta  \partial_i {\mathcal R} [\hat \imath]   \|_{{\mathcal L}(H^{s_1})}   \\
& \leq_{M, S} \e \gamma^{- 1}
 \| \hat \imath\|_{s_1 + \sigma  +  \frac32 M + \daleth(M + 2) + \aleph_M(0) + \beta }\,.
\end{aligned}
\end{equation}
\end{proposition}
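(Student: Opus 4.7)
The plan is to prove Proposition \ref{prop: sintesi linearized} by assembling the conjugations carried out in sections \ref{sec:linearized operator}--\ref{coniugio cal L omega}; no new analytic input is required. The starting point is the representation \eqref{representation Lom}, which writes $\mathcal{L}_\omega = \Pi_{\mathbb{S}^+}^\bot (\mathcal{L} + \epsilon R)_{|H_{\mathbb{S}^+}^\bot}$ with $\mathcal{L}$ as in \eqref{linearized vero} and $R$ the finite-rank remainder of Lemma \ref{thm:Lin+FBR}. I would postpone the finite-rank contribution (it is preserved under conjugation by Lemma \ref{lemma forma buona resto}) and apply to $\mathcal{L}$ the chain built in sections \ref{sec:linearized operator}--\ref{sec:lineare}, in order: the linearized good unknown $\mathcal{Z}$; the diffeomorphism $\mathcal{B}$ in \eqref{change-of-variable} with $\beta$ given by \eqref{beta lambda3}; the multiplier $\mathcal{Q}$ with $q$ defined in \eqref{scelta q cambio di variabile}; the order-$1/2$ symmetrizer $\mathcal{S}$ in \eqref{definition Lambda}; the passage to complex coordinates of section \ref{complex-coordinates}; the quasi-periodic time reparametrization $\mathcal{P}\,\mathbb{I}_2$ of section \ref{sec: time-reduction highest order}, which requires $\omega\in\DC_{K_n}^\gamma$; the $M$ block-decoupling maps $\mathbf{\Phi}_M$ of Proposition \ref{Lemma finale decoupling}; the Egorov flow $\mathbf{\Phi}$ of \eqref{flusso Phi} with $a$ chosen as in \eqref{definizione a}, \eqref{choicea0}; and finally the pseudo-differential conjugation $\mathbf{V}$ of \eqref{definition cal M}--\eqref{definizione p}. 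By construction, the resulting operator is $\mathcal{L}_M^{(2)}$ of \eqref{cal LN (3)} modulo the high-frequency errors $\mathbf{R}_4^\bot$, $\mathbf{R}_M^{(1),\bot}$, $\mathbf{R}_{\mathtt m_1}^\bot$ (collected in $\mathbf{R}_M^{(2),\bot}$ via \eqref{bf RN (3) bot-new}) and the term $\mathbf{R}_{\pi_0}$ in \eqref{pezzo con proiettore}, exactly as in \eqref{bf RN (3) bot}.

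To arrive at \eqref{final conjugation prima del KAM} I would then project onto $H_{\mathbb{S}_0}^\bot$: the finite-rank part $\epsilon R$ together with the mismatch between the full conjugations $\mathcal{W}_1,\mathcal{W}_2$ in \eqref{semiconiugio cal L8} and their projections $\mathcal{W}_1^\bot,\mathcal{W}_2^\bot$ in \eqref{Phi 1 Phi 2 proiettate} get collected into $R_M$ in \eqref{R8}, which retains the finite-rank form $\sum(h,g_j)_{L^2_x}\chi_j$ by Lemma \ref{lemma forma buona resto} and is therefore trivially tame (contributing to $\mathbf{R}_M^{(3)}$). The invertibility of $\mathcal{W}_1^\bot,\mathcal{W}_2^\bot$ on $H_{\mathbb{S}_0}^\bot$ with the tame bounds \eqref{stima Phi 1 Phi 2 proiettate} from Lemma \ref{Lemma:trasformazioni finali W} allows one to invert the conjugation on the invariant subspace. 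The high-frequency remainder $\mathbf{R}_M^{(3),\bot}=-\mathcal{W}_2^\bot \mathbf{R}_M^{(2),\bot}(\mathcal{W}_1^\bot)^{-1}$ then obeys \eqref{stima bf R N (3) bot bassa}--\eqref{stima bf R N (3) bot alta} by combining the $\Pi_{K_n}^\bot$-tail in \eqref{cal RN bot (3)}, \eqref{bf RN (1) bot} and Lemma \ref{lambda1 - lambda Kn} with the tame bounds on $\mathcal{W}_i^\bot$ and the composition/action Lemmata \ref{composizione operatori tame AB}, \ref{lemma operatore e funzioni dipendenti da parametro}.

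The estimate \eqref{stima finale resti prima del KAM} is obtained by propagating the bound \eqref{tame resto diagonale Egorov} for $\mathcal{R}_M^{(1)},\mathcal{Q}_M^{(1)}$ in Proposition \ref{Prop:Egorov} through the last $\mathbf{V}$-step (Lemma \ref{lemma:6.27}) and combining with the tame bound for $R_M$ deduced from Lemma \ref{lemma forma buona resto}. The factor $\gamma^{-1}$ comes from the homological equation \eqref{choicea0} that defines $a_0(\varphi)$, solved via $(\omega\cdot\partial_\varphi)^{-1}$ on $\omega\in\DC_{K_n}^\gamma$ (see \eqref{omega diofanteo troncato}); the time reparametrization \eqref{solution p m3} produces an analogous contribution but, being already estimated in Lemma \ref{lemma:6.27}, is absorbed in the $\sigma$-loss. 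The additive loss $\sigma+\tfrac{3}{2}M+\daleth(M+2)+\aleph_M(0)+\beta$ is exactly the sum of losses recorded in each step: $\aleph_M(0)$ from the $M$ decoupling iterations (Proposition \ref{Lemma finale decoupling}), $\tfrac{3}{2}M+\daleth(M+2)$ from the Egorov analysis (Lemma \ref{soluzione vera Egorov} and \eqref{perdita pseudo-diff RN}), the universal $\sigma$ from the remaining conjugations, and $\beta$ derivatives in $\varphi$.

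The $\partial_i$-estimate \eqref{derivate i resti prima della riducibilita} follows by differentiating each conjugation and applying the corresponding derivative bounds recorded along the way (\eqref{derivate in i cal Z}, \eqref{stime derivate i cal B e Q}, \eqref{stima derivata i cal A}, \eqref{bf PhiN derivate in i}, \eqref{tame resto diagonale Egorov derivata i}, \eqref{stima derivate i cal V}, \eqref{stima derivate cal Wi}), provided the assumption \eqref{mu-grande} is strong enough that every intermediate $\|\mathfrak{I}_0\|_{s_1+\cdots}$ can be absorbed in $\|\mathfrak{I}_0\|_{s_0+\mu}^{k_0,\gamma}\le 1$. I expect the main technical obstacle to be precisely this bookkeeping: one must check that \eqref{mu-grande} implies all the intermediate ansatz conditions scattered through section \ref{linearizzato siti normali} (in particular \eqref{ansatz 1 indici egorov}, \eqref{ansatz indici Egorov 2}, \eqref{ansatz 3 indici Egorov}, \eqref{ansatz finale indici derivate Egorov}) uniformly in $\beta$ with $\beta+k_0+4\le M$, so that the derivative estimates at the lower Sobolev index $s_1$ can actually be invoked throughout. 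Once this accounting is done, the proposition follows by a direct concatenation of the bounds already proved.
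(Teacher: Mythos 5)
Your route is the paper's own: the semiconjugation \eqref{final conjugation prima del KAM} together with Lemma \ref{lemma:6.27} (which already carries the bounds of Proposition \ref{Prop:Egorov} through the $\mathbf{V}$-step) reduces the proposition to proving \eqref{stima finale resti prima del KAM} and \eqref{derivate i resti prima della riducibilita} for the finite-dimensional operator $R_M$ of \eqref{R8}, the high-frequency remainder $\mathbf{R}_M^{(3),\bot}$ being handled exactly as you indicate. The only substantive difference is how you dispose of $R_M$, and there your argument has a genuine soft spot.

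The claim that $R_M$ is ``therefore trivially tame'' by Lemma \ref{lemma forma buona resto} does not give the stated estimate: the finite-rank form \eqref{forma buona resto} with bounds of the type \eqref{stime gj chij} only yields tame constants of size $1+\| \fracchi_0\|_{s+\sigma}^{k_0,\gamma}$, whereas \eqref{stima finale resti prima del KAM} requires the prefactor $\e\gamma^{-1}$ and the precise $M$-dependent loss $\sigma+\tfrac32 M+\daleth(M+2)+\aleph_M(0)+\beta$. The paper's proof is devoted precisely to this quantification: for the representative term containing $\mathbf{R}_{\pi_0}$ it writes the operator as $h\mapsto \chi(\vphi,x)\,\big(h(\vphi,\cdot),g(\vphi,\cdot)\big)_{L^2_x}$ with $\chi=\ii\,\mathbf{\Gamma}_3[\,{\cal P}^{-1}m_3\,]$ and $g=\Pi_{{\mathbb S}_0}^\bot(\mathbf{\Gamma}_2-{\rm Id})^*[1]$, where $\mathbf{\Gamma}_2:=\mathbf{\Phi}_M\mathbf{\Phi}^{-1}\mathbf{V}$; the smallness $\e\gamma^{-1}$ comes from the ``$-{\rm Id}$'' (and, in the other terms of \eqref{R8}, from the off-block projections $\Pi_{{\mathbb S}_0}\,\cdot\,\Pi_{{\mathbb S}_0}^\bot$ acting on operators which are identity plus $O(\e\gamma^{-1})$), not from the rank-one structure itself. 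Moreover, since $g$ involves the adjoints $\mathbf{\Gamma}_1^*,\mathbf{\Gamma}_2^*$ — in particular the adjoint of the Egorov flow $\mathbf{\Phi}$ — the weighted bounds on $g$, $\chi$ and on $\partial_\lambda^k\partial_{\vphi_j}^\beta g$, $\partial_i g[\widehat\imath]$, etc., require the adjoint tame estimates of Propositions \ref{teorema stime aggiunto flusso} and \ref{teorema derivata aggiunto flusso}, which your proposal never invokes. Only after these kernel estimates (with the losses $\aleph_M(0)+\sigma$ and the factor $\e\gamma^{-1}$) are in place does the rank-one structure make the bounds for $\partial_{\vphi_j}^\beta\partial_\lambda^k{\cal R}$ and $\partial_{\vphi_j}^\beta[{\cal R},\partial_x]$ immediate, since all derivatives fall on the functions $\chi$, $g$. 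With this step supplied, the rest of your plan — including the bookkeeping that \eqref{mu-grande} dominates the intermediate ansatz conditions \eqref{ansatz 1 indici egorov}, \eqref{ansatz indici Egorov 2}, \eqref{ansatz 3 indici Egorov}, \eqref{ansatz finale indici derivate Egorov} — closes the proof as in the paper.
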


\begin{proof}
Note that the coefficients $ \mathtt m_3 $, $ {\mathtt m}_1 $ in  \eqref{lambda3 formula}, \eqref{lambda 1 senza proiettore} 
are actually defined for all the parameters $ (\om, \kappa) \in \R^\nu \times [\kappa_1, \kappa_2] $ 
since the approximate solution $ ( \eta, \psi) $ is defined for all $ (\om, \kappa) \in \R^\nu \times [\kappa_1, \kappa_2]$ at each step of 
the Nash-Moser iteration in section \ref{sec:NM},  
see the extension Lemma \ref{lemma:extension torus}. 

By \eqref{final conjugation prima del KAM}, \eqref{cal LN (3)} and Lemma \ref{lemma:6.27}, it is enough to prove 
the estimates \eqref{stima finale resti prima del KAM}, \eqref{derivate i resti prima della riducibilita}  for the 
operator $R_M$ defined in \eqref{R8}.
We estimate the term $ ({\mathcal W}_2^\bot)^{- 1} \Pi_{{\mathbb S}_0}^\bot {\mathcal W}_2 {\bf R}_{\pi_0} \Pi_{{\mathbb S}_0}^\bot$,
the others are  analogous. 
By \eqref{pezzo con proiettore}, setting 
$$
{\bf \Gamma}_2 :=  {\bf \Phi}_M {\bf \Phi}^{- 1} {\bf V}\, , \quad 
{\bf \Gamma}_3 := ({\mathcal W}_2^\bot)^{- 1} \Pi_{{\mathbb S}_0}^\bot {\mathcal W}_2 {\bf V}^{- 1} {\bf \Phi} {\bf \Phi}_M^{- 1} \rho^{- 1}\,, 
$$
and recalling \eqref{QP-repa} we write
$$
\begin{aligned}
 ({\mathcal W}_2^\bot)^{- 1} \Pi_{{\mathbb S}_0}^\bot {\mathcal W}_2 {\bf R}_{\pi_0} \Pi_{{\mathbb S}_0}^\bot & = 
{\bf \Gamma}_3  (\ii {\mathtt m}_3 \Pi_0) {\bf \Gamma}_2 \Pi_{{\mathbb S}_0}^\bot \quad {\rm where} \quad \\
& \qquad \quad {\mathtt m}_3 (\vartheta) := {\mathcal P}^{-1} m_3 (\vartheta) = m_3 ( \vartheta + \om \tilde p ( \vartheta)) \, . 
\end{aligned}
$$
Writing 
$ {\bf \Gamma}_m = \begin{pmatrix}
\Gamma_m^{(1)} & \Gamma_m^{(2)} \\
\overline{\Gamma}_m^{(2)} & \overline{\Gamma}_m^{(1)}
\end{pmatrix},  m = 2, 3 $, 
and recalling the definition \eqref{definizione Pi 0} of $\Pi_0$ and using that $\Pi_0 \Pi_{{\mathbb S}_0}^\bot = 0$,   we get 
\begin{align*}
{\bf R} :=  &   {\bf \Gamma}_3 (\ii {\mathtt m}_3 \Pi_0) {\bf \Gamma}_2  \Pi_{{\mathbb S}_0}^\bot = {\bf \Gamma}_3 (\ii {\mathtt m}_3 \Pi_0) \big({\bf \Gamma}_2 - {\rm Id}  \big)\Pi_{{\mathbb S}_0}^\bot\,
\end{align*}
and then for all $h \in H_{{\mathbb S}_0 }^\bot$ we get 
$$
\begin{aligned}
& {\bf R} h = \chi(\vphi, x) \big( h(\vphi, \cdot)\,,\, g(\vphi, \cdot) \big)_{L^2_x}\,, \\
&  \chi := \ii{\bf \Gamma}_3 [ {\mathtt m}_3 ] \in  H_{{\mathbb S}_0 }^\bot\,, \quad g := \Pi_{{\mathbb S}_0}^\bot({\bf \Gamma}_2 - {\rm Id})^*[1] \in  H_{{\mathbb S}_0 }^\bot\,.
\end{aligned}
$$
Lemma \ref{Lemma:trasformazioni finali W}, the estimates of sections 
\ref{sec:linearized operator}-\ref{sec:lineare} and of Propositions \ref{teorema stime aggiunto flusso}, \ref{teorema derivata aggiunto flusso} imply that for some $\sigma := \sigma(k_0, \tau, \nu) > 0 $, for all $ s \in [s_0, S ] $,  
\begin{align*}
& \| g \|_s^{k_0, \gamma} \leq_{S, M} \e \gamma^{- 1} (1 + \| \fracchi_0\|_{s + \aleph_M(0) + \sigma }^{k_0, \gamma} )\,, \quad \| \chi\|_s^{k_0, \gamma} \leq_{S, M} 1 + \| \fracchi_0\|_{s + \aleph_M(0) + \sigma }^{k_0, \gamma}\,, \\
& 
\| \partial_i g[\widehat \imath] \|_{s_1} \leq_{S, M} \e \gamma^{- 1} \| \widehat \imath\|_{s_1 + \aleph_M(0) + \sigma }\,, \quad \| \partial_i \chi[\widehat \imath]\|_{s_1} \leq_{S, M} \| \widehat \imath\|_{s_1 + \aleph_M(0) + \sigma },
\end{align*}
provided \eqref{mu-grande} is satisfied. Then the estimates \eqref{stima finale resti prima del KAM}, \eqref{derivate i resti prima della riducibilita} for the operator 
${\mathcal R} $ follow since for all $ j = 1, \ldots, \nu $,  $ \beta \in \N $, $ k \in \N^{\nu + 1} $, 
$$
\partial_{\vphi_j}^\beta \partial_\lambda^k [{\bf R}, \partial_x] h  =  - \!\! \!\!  \!\!  \!\! \!\! \sum_{
\beta_1 + \beta_2 = \beta, k_1 + k_2 = k}   \!\! \!\! \!\! \!\!
( \partial_\lambda^{k_1} \partial_{\vphi_j}^{\beta_1} \chi \big( h, \partial_{\lambda}^{k_2} \partial_{\vphi_j}^{\beta_2} g_x)_{L^2_x} + \partial_\lambda^{k_1} \partial_{\vphi_j}^{\beta_1} \chi_x ( h, \partial_{\lambda}^{k_2} \partial_{\vphi_j}^{\beta_2} g)_{L^2_x}\big)
$$
and the operators $\partial_{\vphi_j}^\beta \partial_\lambda^k {\bf R}$, $\partial_{\vphi_j}^\beta [\partial_i {\bf R}[\widehat \imath], \partial_x]$, $\partial_{\vphi_j}^\beta \partial_i {\bf R}[\widehat \imath]$ have similar expressions. 
\end{proof}

In the next section we diagonalize the operator $ {\mathcal L}_M^{(3) } $.
We neglect the term ${\bf R}_M^{(3), \bot}$ in \eqref{final conjugation prima del KAM}, which will contribute to the remainders in 
\eqref{stima R omega bot corsivo bassa}-\eqref{stima R omega bot corsivo alta}.

\chapter{Almost  diagonalization and invertibility of $ {\mathcal L}_\om $ } \label{sec: reducibility}

We have a  linear real operator acting on $ H_{{\mathbb S}_0}^\bot $, 
\be\label{defL0-red}
{\bf L}_0 := {\bf L}_0 (i) := \Dom  {\mathbb I}_2^\bot  + \ii {\bf D}_0 + {\bf R}_0 + {\bf Q}_0 \, , \qquad  
{\mathbb I}_2^\bot :=  {\mathbb I}_2 \Pi_{{\mathbb S}_0}^\bot \, , 
\ee
defined for all $ (\om, \kappa) \in \DC_{K_n}^\g \times [\kappa_1, \kappa_2] $(see \eqref{omega diofanteo troncato}), with 
diagonal part (with respect to the exponential basis)
\be\label{op-diago0}
\begin{aligned}
& \qquad \qquad \qquad {\bf D}_0 := \begin{pmatrix}
{\mathcal D}_0 & 0 \\
0 & - {\mathcal D}_0
\end{pmatrix}\,, \\ 
&  {\mathcal D}_0  := {\rm diag}_{j \in {\mathbb S}_0^c} \mu_j^{(0)} \,, \quad 
\mu_j^{(0)} :=\mathtt m_3 |j|^{\frac12} (1 + \kappa |j|^2)^{\frac12} + {\mathtt m}_1 |j|^{\frac12}\,, 
\end{aligned}
\ee
where ${\mathbb S}_0^c := \Z \setminus {\mathbb S}_0 $ (see \eqref{def:S0}), 
$ \mathtt m_3 := \mathtt m_3 (\om, \kappa) \in \R $, $ {\mathtt m}_1 := {\mathtt m}_1 (\om, \kappa) \in \R $ 
are defined for all $ (\om, \kappa) \in \R^\nu \times [\kappa_1, \kappa_2] $,  and
\begin{align}   \label{defRQ0}
{\bf R}_0, {\bf Q}_0 : H_{{\mathbb S}_0}^\bot \to H_{{\mathbb S}_0}^\bot\,, \qquad
{\bf R}_0 := \begin{pmatrix}
{\mathcal R}_0 & 0 \\
0 & \overline{\mathcal R}_0
\end{pmatrix}\,, \quad 
{\bf Q}_0 := \begin{pmatrix}
0 & {\mathcal Q}_0 \\
\overline{\mathcal Q}_0 & 0
\end{pmatrix}  
\end{align}
are real, even and reversible. The operators  $ {\bf R}_0 $, $ {\bf Q}_0  $ satisfy also the following tame estimates:
\begin{itemize}
\item 
{\bf (Smallness assumption on $  {\bf R}_0 $ and  $ {\bf Q}_0 $).} 
{\it The operators 
$$ 
\begin{aligned}
& {\mathcal R}_0 \, ,  \ [ {\mathcal R}_0, \pa_x ] \, , \ \partial_{\vphi_m}^{s_0} {\mathcal R}_0 \, , \
\partial_{\vphi_m}^{s_0}  [ {\mathcal R}_0, \pa_x ] \, , \\
& {\mathcal Q}_0, \ [ {\mathcal Q}_0, \pa_x ] \, , \  \partial_{\vphi_m}^{s_0} {\mathcal Q}_0 \, ,  
\ \partial_{\vphi_m}^{s_0}  [ {\mathcal Q}_0, \pa_x ] \, , \
\forall m =1, \ldots, \CS \, , 
\end{aligned}
$$ 
are $ {\mathcal D}^{k_0} $-tame with tame constants, 
defined for all  $ s_0 \leq s \leq S $, 
\be\label{tame cal R0 cal Q0}
\begin{aligned}
{\mathbb M}_0 (s) 
& := \max \Big\{   
{\mathfrak M}_{{\mathcal R} }(s),
{\mathfrak M}_{[ {\mathcal R}, \pa_x ]  }(s),
{\mathfrak M}_{ \partial_{\vphi_m}^{s_0} {\mathcal R} }(s),
{\mathfrak M}_{ \partial_{\vphi_m}^{s_0}  [ {\mathcal R}, \pa_x ]  }(s) \\
& \qquad \qquad m=1, \ldots, \CS, {\mathcal R} \in \{ {\mathcal R}_0, {\mathcal Q}_0 \} \Big\} \, . 
 \end{aligned}
\ee
In addition the operators 
$$ 
\partial_{\vphi_m}^{s_0 +  {\mathtt b}} {\mathcal R}_0, 
\partial_{\vphi_m}^{s_0 +  {\mathtt b}}  [ {\mathcal R}_0, \pa_x ], 
 \partial_{\vphi_m}^{s_0 +  {\mathtt b}} {\mathcal Q}_0, 
 \partial_{\vphi_m}^{s_0 +  {\mathtt b}}  [ {\mathcal Q}_0, \pa_x ],  m =1, \ldots, \CS \, , 
 $$ 
are $ {\mathcal D}^{k_0} $-tame with tame constants, defined for all $ s_0 \leq s \leq S $,    
\be\label{tame norma alta cal R0 cal Q0}
{\mathbb M}_0(s, {\mathtt b})  := \max_{m=1, \ldots, \CS, {\mathcal R} \in \{ {\mathcal R}_0, {\mathcal Q}_0 \}} \big\{ 
{\mathfrak M}_{ \partial_{\vphi_m}^{s_0 +  {\mathtt b}} {\mathcal R} }(s),
{\mathfrak M}_{ \partial_{\vphi_m}^{s_0 +  {\mathtt b}}  [ {\mathcal R}, \pa_x ]  }(s)  \big\} 
\ee}
{\it where  ${\mathtt b} \in \N  $ satisfies}
\begin{equation}\label{alpha beta}
{\mathtt b} := [{\mathtt a}] + 2 \in \N \,,\ \ {\mathtt a} := 3 \tau_1 \, , 
\ \  \chi = 3 / 2 \, ,  \ \  \tau_1 := \tau + (\tau + 1) k_0 \, .
\end{equation}
{\it We assume that the tame constants satisfy}
\be\label{def:costanti iniziali tame}
{\mathfrak M}_0 (s_0, {\mathtt b}) := {\rm max}\{ {\mathbb M}_0 (s_0), {\mathbb M}_0(s_0, {\mathtt b}) \} \leq C(S) \e \gamma^{- 1}  \, 
\ee 
{\it and moreover, there is $ \sigma(\mathtt b) > 0 $ 
(we take $ \sigma (\mathtt b) = \mu (\mathtt b) + \sigma $ in Lemma \ref{lem:tame iniziale}), 
such that, for all $ m = 1, \ldots, \CS $, $ \b \in \N $, $ \b    \leq 
{\mathtt b}  + s_0  $, 
\be\label{derivate i resti prima della riducibilita-s0}
\begin{aligned}
\max_{{\mathcal R} \in \{ {\mathcal R}_0, {\mathcal Q}_0 \} }
\big\{ \|  \partial_{\vphi_m}^\beta  \partial_i {\mathcal R} [\hat \imath]   \|_{{\mathcal L}(H^{s_0})} , \, & 
\|  \partial_{\vphi_m}^\beta [ \partial_i {\mathcal R} [\hat \imath], \partial_x]  \|_{{\mathcal L}(H^{s_0})} \big\} \\
& \leq C(S) \e \gamma^{- 1} 
 \| \hat \imath\|_{s_0 + \sigma(\mathtt b)} \, .  
 \end{aligned}
\ee}
\end{itemize}

In this section we use  $ \CS $ to denote the cardinality of the set of tangential sites
$ {\mathbb S}^+ $
(and thus the number of  components of the frequency vector $ \omega $) 
that elsewhere is denoted simply by $ \nu = \CS $. 

\begin{remark}\label{remark:ab}
The conditions $ {\mathtt b} >  {\mathtt a} + \chi^{-1} $ and 
$ {\mathtt a} > 3 \tau_1 = \tau_1 \chi  \slash (2 - \chi)  $ arise for the convergence of the 
iterative scheme \eqref{schema quadratico tame}-\eqref{M+Ms}, see  Lemma \ref{stima M nu + 1 K nu + 1}.  
We take an integer $ {\mathtt b} := [{\mathtt a}] + 2 \in \N $ so that 
$ \pa_{\vphi_m}^{s_0+ {\mathtt b}} $ are differential operators
(recall also that $ s_0 \in \N $ by \eqref{def:s0}). 
Note also that 
$ {\mathtt a} > \chi k_0  (\tau + 2 ) +1 $ (as $ \tau \geq 1 $) which is used in the extension procedure in 
${\bf(S2)_{\nu}}$, see e.g. \eqref{vicinanza autovalori estesi}. Moreover  
$ {\mathtt a} > \chi (\tau + k_0  (\tau + 2 )) $ which is used in Lemma \ref{lemma inclusione cantor riccardo 2}. 
\end{remark}

Proposition \ref{prop: sintesi linearized} implies that the operators 
 $  {\bf R}_M^{(3)} $, $ {\bf Q}_M^{(3)} $ in \eqref{RM3QM3}
satisfy the above tame estimates 
by fixing the constant  $ M $ in section  \ref{sec:decoupling} large enough 
(this means to perform sufficiently many regularizing steps  in Proposition  \ref{Lemma finale decoupling}), namely
\begin{equation}\label{relazione mathtt b N}
M := {\mathtt b}  + s_0 + k_0 + 4 \, . 
\end{equation}
Set  (recall \eqref{perdita pseudo-diff RN}, \eqref{definizione kn(alpha)}) 
 \begin{equation}\label{definizione bf c (beta)}
 \begin{aligned}
& {\bf c}({\mathtt b}) := \chi ({\mathtt b} +s_0 + k_0 + 4) + \daleth ( {\mathtt b} + s_0  + k_0 + 6) + 
\aleph_{{\mathtt b} + s_0 + k_0 + 4}(0)\, , \\
& \mu (\mathtt b) := s_0 + {\bf c}(\mathtt b) + \mathtt b \, . 
\end{aligned}
\end{equation}

\begin{lemma} {\bf (Tame estimates of $ {\bf R}_M^{(3)} $, $ {\bf Q}_M^{(3)} $)} \label{lem:tame iniziale}
Assume \eqref{ansatz I delta} with $ \mu \geq \mu (\mathtt b) + \sigma $.  
Then the operators $  {\bf R}_0 := {\bf R}_M^{(3)} $, $ {\bf Q}_0 := {\bf Q}_M^{(3)} $ in \eqref{RM3QM3}
satisfy, for all $ s_0 \leq s \leq S $, the tame estimates 
\eqref{tame cal R0 cal Q0}-\eqref{tame norma alta cal R0 cal Q0} with 
\begin{equation}\label{costanti resto iniziale}
\begin{aligned}
& {\mathbb M}_0(s) \leq_S \e \gamma^{- 1} \big( 1 + \| \fracchi_0 \|_{s + s_0 + \sigma +  {\bf c}({\mathtt b})  }^{k_0, \gamma} \big) \,,  \\ 
&  {\mathbb M}_0(s, \mathtt b) \leq_S  \e \gamma^{- 1}
\big( 1 + \| \fracchi_0 \|_{s +  \mu (\mathtt b)  + \sigma}^{k_0, \gamma} \big) 
\end{aligned}
\end{equation} 
and \eqref{def:costanti iniziali tame} holds. Moreover, for all $ m = 1, \ldots, \CS $, $ \b \in \N $, $ \b    \leq 
{\mathtt b}  + s_0  $, the operators $\partial_{\vphi_m}^\beta  \partial_i {\mathcal R} [\hat \imath]$, $\partial_{\vphi_m}^\beta [ \partial_i {\mathcal R} [\hat \imath], \partial_x]$, ${\mathcal R} \in \{{\mathcal R}_0, {\mathcal Q}_0 \}$ satisfy the bounds \eqref{derivate i resti prima della riducibilita-s0} with $\sigma (\mathtt b) = \mu (\mathtt b) + \sigma$. 
\end{lemma}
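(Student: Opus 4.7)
The lemma is essentially a bookkeeping translation of the estimates already contained in Proposition \ref{prop: sintesi linearized} into the $\mathcal D^{k_0}$-tame format of Definition \ref{def:Ck0} demanded by the KAM reducibility scheme of the next section. The key algebraic fact is the choice $M = \mathtt b + s_0 + k_0 + 4$ from \eqref{relazione mathtt b N}, which is dictated precisely by the desire that the admissibility condition $\beta + k_0 + 4 \leq M$ appearing in \eqref{stima finale resti prima del KAM} covers the full range $\beta \in \{0, s_0, s_0 + \mathtt b\}$ of $\vphi$-derivatives that enter the definitions \eqref{tame cal R0 cal Q0} and \eqref{tame norma alta cal R0 cal Q0}. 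So the plan is to apply Proposition \ref{prop: sintesi linearized} with three chosen values of $\beta$ and simply match the resulting Sobolev losses against the loss constants $\mathbf c(\mathtt b)$ and $\mu(\mathtt b)$ defined in \eqref{definizione bf c (beta)}.

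First I would apply \eqref{stima finale resti prima del KAM} with $\beta \in \{0, s_0\}$ to bound the four classes of operators that enter the low-regularity tame constant ${\mathbb M}_0(s)$. Their common Sobolev loss is $s + \sigma + \tfrac32 M + \daleth(M+2) + \aleph_M(0) + s_0$, which by \eqref{definizione bf c (beta)} equals $s + s_0 + \sigma + \mathbf c(\mathtt b)$, yielding the first bound in \eqref{costanti resto iniziale}. Next I would apply \eqref{stima finale resti prima del KAM} with $\beta = s_0 + \mathtt b$ (admissible since $\beta + k_0 + 4 = M$) to control ${\mathbb M}_0(s, \mathtt b)$; the loss then becomes $s + \sigma + \mathbf c(\mathtt b) + s_0 + \mathtt b = s + \sigma + \mu(\mathtt b)$, which is the second bound in \eqref{costanti resto iniziale}. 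The $\mathcal D^{k_0}$-tameness of the operators themselves (rather than of bounds on their Sobolev norms) is automatic because \eqref{stima finale resti prima del KAM} already produces tame constants in the sense of Definition \ref{def:Ck0}.

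The smallness condition \eqref{def:costanti iniziali tame} is then immediate: evaluating the two bounds of \eqref{costanti resto iniziale} at $s = s_0$ gives $\mathfrak M_0(s_0, \mathtt b) \leq_S \e \gamma^{-1}\bigl(1 + \|\fracchi_0\|_{s_0 + \mu(\mathtt b) + \sigma}^{k_0,\gamma}\bigr)$, and the ansatz \eqref{ansatz I delta} with $\mu \geq \mu(\mathtt b) + \sigma$ bounds the weighted norm by $1$. For the parameter-differentiated estimate \eqref{derivate i resti prima della riducibilita-s0}, I would first check that the hypothesis \eqref{mu-grande} of Proposition \ref{prop: sintesi linearized} is satisfied at the index $s_1 = s_0$: using $M = \mathtt b + s_0 + k_0 + 4$ and \eqref{definizione bf c (beta)}, \eqref{mu-grande} reduces to $s_0 + \sigma + \mathbf c(\mathtt b) + \mathtt b \leq s_0 + \mu$, i.e.\ exactly $\mu \geq \mu(\mathtt b) + \sigma$. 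Then \eqref{derivate i resti prima della riducibilita} applied with $\beta \leq s_0 + \mathtt b$ and $s_1 = s_0$ gives directly \eqref{derivate i resti prima della riducibilita-s0} with $\sigma(\mathtt b) := \mu(\mathtt b) + \sigma$.

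I do not expect a genuine obstacle in executing this plan: the content of the lemma is precisely that the constants $\mathtt b, M, \mathbf c(\mathtt b), \mu(\mathtt b)$ introduced earlier have been laid down consistently. The conceptual work has already been absorbed into the design of the preceding sections — the choice $\mathtt b := [3\tau_1] + 2$ in \eqref{alpha beta}, forced by the KAM iterative scheme through Remark \ref{remark:ab}, dictates the number $M$ of block-decoupling steps performed in section \ref{sec:decoupling}, which in turn fixes the loss budget $\mu(\mathtt b)$ in the ansatz \eqref{ansatz I delta}. The only point that requires care is verifying that the same value $\mu(\mathtt b) + \sigma$ of the loss works simultaneously for the three regimes $\beta = 0$, $\beta = s_0$, $\beta = s_0 + \mathtt b$ and for both the direct tame estimates and the $\partial_i$-estimates; this is guaranteed by taking the maximum in the definition of $\mathbf c(\mathtt b)$ and by the compatibility $\mu(\mathtt b) = s_0 + \mathbf c(\mathtt b) + \mathtt b$.
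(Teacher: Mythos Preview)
Your proposal is correct and follows exactly the same approach as the paper's own proof, which is just a terse citation of \eqref{stima finale resti prima del KAM} together with the definitions \eqref{relazione mathtt b N}--\eqref{definizione bf c (beta)} and the verification that \eqref{mu-grande} holds at $s_1 = s_0$. There is a minor arithmetic slip in your reduction of \eqref{mu-grande} (the left-hand side actually carries an extra $s_0$ coming from $M - k_0 - 4 = \mathtt b + s_0$, so the inequality reads $2s_0 + \sigma + \mathbf c(\mathtt b) + \mathtt b \leq s_0 + \mu$), but this still yields $\mu \geq s_0 + \mathbf c(\mathtt b) + \mathtt b + \sigma = \mu(\mathtt b) + \sigma$, so your final conclusion is unaffected.
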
 

\begin{proof}
The estimates \eqref{costanti resto iniziale} follow by \eqref{stima finale resti prima del KAM} and by the definitions \eqref{relazione mathtt b N}, \eqref{definizione bf c (beta)}. 
Moreover with the choice of $ \mu := \mu ({\mathtt b}) + \sigma $ in \eqref{definizione bf c (beta)} (see also \eqref{relazione mathtt b N}) 
the condition \eqref{mu-grande} holds with $ s_1 = s_0 $ and so \eqref{derivate i resti prima della riducibilita-s0}
holds by \eqref{derivate i resti prima della riducibilita}, with $\sigma(\mathtt b) = \mu(\mathtt b) + \sigma$.  
\end{proof}

By \eqref{costanti resto iniziale}, \eqref{definizione bf c (beta)}, 
we have verified that, for all $s_0 \leq s \leq S $,  
\begin{equation}\label{stima cal K0 I delta}
{\mathfrak M}_0(s, {\mathtt b}) := {\rm max}\{ {\mathbb M}_0 (s), {\mathbb M}_0(s, {\mathtt b}) \}
 \leq_{S} \e \gamma^{- 1} ( 1 + \| \fracchi_0 \|^{k_0, \gamma}_{s + \mu ({\mathtt b}) + \sigma}) \,.
\end{equation}
We perform the almost reducibility of $ {\bf L}_0 $ along the scale 
\be\label{def Nn}
N_{- 1} := 1\,,\quad N_\nu := N_0^{\chi^\nu}\,,\quad \forall \nu \geq 0\,,\quad \chi := 3/2\, ,
\ee
requiring inductively at each step the second order Melnikov non-resonance conditions in
\eqref{Omega nu + 1 gamma}. 

\begin{theorem}\label{ITERAZIONERIDUCIBILITA}
{\bf (Almost reducibility)} 
There exists $ \tau_0 := \tau_0 (\t, \CS ) > 0 $ such that, for all $ S > s_0 $, 
there is $ N_0 := N_0 (S, {\mathtt b})  \in \N$ such that, if 
\begin{equation}\label{KAM smallness condition1}
N_0^{\tau_0}  {\mathfrak M}_0(s_0, {\mathtt b}) \gamma^{- 1} \leq 1\,, 
\end{equation}
(see \eqref{def:costanti iniziali tame}), then, for all $ n \in \N $, $\nu = 0, 1 , \ldots, n$: 
\begin{itemize}
\item[${\bf(S1)_{\nu}}$] There exists a real, even and reversible operator 
\begin{equation}\label{cal L nu}
\begin{aligned}
& {\bf L}_\nu := \Dom {\mathbb I}_2^\bot + \ii {\bf D}_\nu + {\bf R}_\nu + {\bf Q}_\nu\,, \\
& {\bf D}_\nu := \begin{pmatrix}
{\mathcal D}_\nu & 0 \\
0 & - {\mathcal D}_\nu
\end{pmatrix}\,,\quad {\mathcal D}_\nu := {\rm diag}_{j \in {\mathbb S}_0^c} \mu_j^\nu\,,
\end{aligned}
\end{equation}
-which acts on the space of functions even in $ x $- 
defined for  $(\omega, \kappa) \in \DC_{K_n}^\gamma \times [\kappa_1, \kappa_2] $ for $ \nu = 0 $, and for all $ (\om, \kappa) $ in 
\be	\label{inclusione-insiemi-gamma2}
{\mathcal N}(\tLm_\nu^\gamma, \g N_{\nu-1}^{- \tau - 2}) \subset \tLm_\nu^{\gamma/2} \, , \quad {\rm for} \ \nu \geq 1 \, , 
\ee 
(recall the definition \eqref{definizione incicciottamento insieme})
where  $ \mu_j^\nu $ 
are $ k_0 $-times differentiable functions of the form 
\begin{equation}\label{mu j nu}
\mu_j^\nu(\omega, \kappa) := \mu_j^0(\omega, \kappa) + r_j^\nu(\omega, \kappa)\,,\ \  \,\mu_j^0 := 
\mathtt m_3 |j|^{\frac12} (1 + \kappa j^2)^{\frac12} + {\mathtt m}_1 |j|^{\frac12}\,, 
\end{equation}
satisfying 
\begin{equation}\label{stima rj nu}
\mu_j^\nu = \mu_{- j}^\nu\,, \quad i.e. \ r_j^\nu = r_{- j}^\nu\,, \quad |r_j^\nu|^{k_0, \gamma} \leq C(S) \e \gamma^{- 1}\,,\ \  \forall j \in {\mathbb S}_0^c\, .
\end{equation}
The sets $ \tLm_\nu^\gamma  $ are defined by
$ \tLm_0^\gamma := \tOm \times [\kappa_1, \kappa_2]$, and, for all $\nu \geq 1$, 
\be
\begin{aligned}
\tLm_\nu^\gamma  :=  \tLm_\nu^\gamma (i) & :=  
\Big\{ \lambda = (\omega, \kappa) \in \tLm_{\nu - 1}^\gamma \cap \big( [\DC_{K_n}^\gamma \cap \DC_{N_{\nu-1}}^\gamma] \times [\kappa_1, \kappa_2] \big) \, : \label{Omega nu + 1 gamma} \\ 
& \qquad |\omega \cdot \ell  + \mu_j^{\nu - 1} - \varsigma \mu_{j'}^{\nu - 1}| \geq  
\gamma |j^{\frac32} - \varsigma j'^{\frac32}| \langle \ell \rangle^{-\tau},  \\
& \qquad  \forall |\ell  | \leq N_{\nu - 1}, j, j' \in \N \setminus {\mathbb S}^+, \varsigma \in \{+, -  \} \Big\} 
\end{aligned}
\ee
(recall  
\eqref{omega diofanteo troncato}
and  that the tangential sites 
$ {\mathbb S}  = {\mathbb S}^+ \cup (- {\mathbb S}^+) \subset \Z $ with $ {\mathbb S}^+ \subset \N $). 
The remainders
\begin{equation}\label{forma cal R nu}
{\bf R}_\nu := \begin{pmatrix}
{\mathcal R}_{\nu} & 0 \\
0 & \overline {\mathcal R}_{\nu}
\end{pmatrix}\,,\qquad {\bf Q}_\nu := \begin{pmatrix}
0 & {\mathcal Q}_\nu \\
\overline{\mathcal Q}_\nu & 0
\end{pmatrix}
\end{equation}
are $ {\mathcal D}^{k_0} $-modulo-tame: 
more precisely the operators $ {\mathcal R}_\nu,  {\mathcal Q}_\nu $, respectively  
$ \langle\partial_\vphi \rangle^{\mathtt b }  {\mathcal R}_\nu, 
\langle\partial_\vphi \rangle^{\mathtt b }  {\mathcal Q}_\nu $, 
are $ {\mathcal D}^{k_0} $-modulo-tame  with modulo-tame constants respectively
\be\label{def:msharp}
\begin{aligned}
& {\mathfrak M}_\nu^\sharp (s) := \max \{ {\mathfrak M}_{{\mathcal R}_\nu}^\sharp (s), 
{\mathfrak M}_{{\mathcal Q}_\nu}^\sharp (s) \} \, , \\
& {\mathfrak M}_\nu^\sharp (s, {\mathtt b}) := 
\max\{ {\mathfrak M}_{ \langle \pa_\vphi \rangle^{\mathtt b} {\mathcal R}_\nu}^\sharp (s), 
{\mathfrak M}_{ \langle \pa_\vphi \rangle^{\mathtt b} {\mathcal Q}_\nu}^\sharp (s)  \}.
\end{aligned}
\ee 
There exists a contant $C_* (s_0, \mathtt b)$ such that  for all $s \in [s_0, S] $, 
\begin{equation}\label{stima cal R nu}
{\mathfrak M}_\nu^\sharp (s) \leq C_* (s_0, \mathtt b) {\mathfrak M}_0 (s, {\mathtt b}) N_{\nu - 1}^{- {\mathtt a}},\,
 {\mathfrak M}_\nu^\sharp ( s, \mathtt b) \leq  C_* (s_0, \mathtt b) {\mathfrak M}_0 (s, {\mathtt b}) N_{\nu - 1}\,.
\end{equation}
Moreover, for $ \nu \geq 1 $, there exists a real,  even and reversibility preserving map
\begin{equation}\label{Psi nu - 1}
 {\bf \Phi}_{\nu - 1} := {\mathbb I}_2^\bot + {\bf \Psi}_{\nu - 1}\,, \quad 
{\bf \Psi}_{\nu - 1} := \begin{pmatrix}
\Psi_{\nu - 1, 1} & \Psi_{\nu - 1, 2} \\
\overline \Psi_{\nu - 1, 2} & \overline \Psi_{\nu - 1, 1}
\end{pmatrix}\,, 
\end{equation}
such that 
\be\label{coniugionu+1}
{\bf L}_\nu := {\bf \Phi}_{\nu - 1}^{-1} {\bf L}_{\nu - 1} {\bf \Phi}_{\nu - 1}\, . 
\ee
The operators $ \Psi_{\nu - 1, m} $ and $ \langle \partial_\vphi \rangle^{\mathtt b}  \Psi_{\nu - 1, m} $,
$ m  = 1, 2 $, are $ {\mathcal D}^{k_0} $-modulo-tame with modulo-tame constants satisfying, for all $s \in [s_0, S] $,
($ \tau_1, \mathtt a $ are defined in \eqref{alpha beta})
\be\label{tame Psi nu - 1}
\begin{aligned}
& {\mathfrak M}_{\Psi_{\nu - 1, m}}^\sharp \! (s)   \leq \frac{C(k_0, s_0, \mathtt b)}{\g} 
N_{\nu - 1}^{\tau_1} N_{\nu - 2}^{- \mathtt a} {\mathfrak M}_0 (s, {\mathtt b}) \, , \\
& {\mathfrak M}_{\langle \partial_\vphi \rangle^{\mathtt b}  \Psi_{\nu - 1, m}}^\sharp \! (s) 
 \leq \frac{C(k_0, s_0, \mathtt b)}{\g} 
N_{\nu - 1}^{\tau_1} N_{\nu - 2}  {\mathfrak M}_0 (s, {\mathtt b}) \, . 
\end{aligned}
\ee
\item[${\bf(S2)_{\nu}}$] 
For all $ j \in {\mathbb S}_0^c $ there exists a ${k_0}$-times differentiable 
extension $ \tilde \mu_j^\nu : \tOm \times [\kappa_1, \kappa_2] \mapsto \R $  such that $  \tilde \mu_j^\nu =   \mu_j^\nu $   on  $ \tLm_\nu^\gamma $, 
and  
\begin{equation}\label{autovalori finali}
\begin{aligned}
& \tilde \mu_j^\nu (\omega, \kappa) 
:=   \mu_j^0(\omega, \kappa ) + \tilde r_j^\nu (\omega, \kappa) \in \R\,, \\
& \tilde r_j^\nu = \tilde r_{- j}^\nu\,, \,\,    |\tilde r^\nu_j|^{k_0, \gamma} \leq C(S)
\e \gamma^{- 1} N_0^{k_0(\tau+2)} \, , \,\, \forall j \in {\mathbb S}_0^c\,, 
\end{aligned}
\end{equation}
and  for all $\nu \geq 1$
\be\label{vicinanza autovalori estesi}
\begin{aligned}
 |\tilde \mu_j^\nu - \tilde \mu_j^{\nu - 1}|^{k_0, \gamma} & \leq  C(k_0) N_{\nu - 1}^{ k_0 (\tau + 2)} 
{\mathfrak M}_{\nu - 1}^\sharp (s_0) \\
& \leq  C(k_0, S) \e \gamma^{- 1} N_{\nu - 1}^{ k_0 (\tau + 2)} N_{\nu - 2}^{- {\mathtt a}}    \, . 
\end{aligned}
\ee
\item[${\bf(S3)_{\nu}}$] Let $ i_1(\omega, \kappa) $, $ i_2(\omega, \kappa) $ such that 
$ {\bf R}_0(i_1)$, ${\bf Q}_0(i_1)$,  ${\bf R}_0(i_2 )$, ${\bf Q}_0(i_2)$ satisfy \eqref{def:costanti iniziali tame}. Assume also \eqref{derivate i resti prima della riducibilita-s0}.
Then for all $\nu = 0, \ldots n$, for all $(\omega, \kappa) \in \tLm_\nu^{\gamma_1}(i_1) \cap \tLm_\nu^{\gamma_2}(i_2)$
with $\gamma_1, \gamma_2 \in [\gamma/2, 2 \gamma]$, there exists $\sigma := \sigma(\tau, \CS, k_0) > 0$ such that 
\be\label{stima R nu i1 i2}
\begin{aligned}
  \| |{\mathcal R}_\nu(i_1) - {\mathcal R}_\nu(i_2)| \|_{{\mathcal L}(H^{s_0})}, & 
\| |{\mathcal Q}_\nu(i_1) - {\mathcal Q}_\nu(i_2)| \|_{{\mathcal L}(H^{s_0})} \\ 
&  \leq_{S, \mathtt b}  \e \gamma^{- 1 } N_{\nu - 1}^{- \mathtt a}
 \| i_1 - i_2\|_{s_0 +  \mu(\mathtt b) + \sigma}, 
 \end{aligned}
\ee 
\be
\begin{aligned}
 \label{stima R nu i1 i2 norma alta}
 \|  |\langle \partial_\vphi \rangle^{\mathtt b}({\mathcal R}_\nu(i_1) - {\mathcal R}_\nu(i_2)) | \|_{{\mathcal L}(H^{s_0})}, & 
\|  | \langle \partial_\vphi \rangle^{\mathtt b}({\mathcal Q}_\nu(i_1) - {\mathcal Q}_\nu(i_2))| \|_{{\mathcal L}(H^{s_0})}\\
& 
 \leq_{S, \mathtt b} \frac{\e}{\gamma} N_{\nu - 1} \| i_1 - i_2\|_{ s_0 +  \mu(\mathtt b) + \sigma}\,.
 \end{aligned}
\ee
Moreover for all $\nu = 1, \ldots , n$, for all $j \in {\mathbb S}^c_0$, 
\begin{align}\label{r nu - 1 r nu i1 i2}
& \big|(r_j^\nu(i_1) - r_j^\nu(i_2)) - (r_j^{\nu - 1}(i_1) - r_j^{\nu - 1}(i_2))  \big| \leq C 
\| |{\mathcal R}_\nu(i_1) - {\mathcal R}_\nu(i_2)| \|_{{\mathcal L}(H^{s_0})} \,, \\
&  |r_j^{\nu}(i_1) - r_j^{\nu}(i_2)| \leq C(S) \e \gamma^{- 1} \| i_1 - i_2  \|_{ s_0  + \mu(\mathtt b) + \sigma}\,. \label{r nu i1 - r nu i2}
\end{align}
\item[${\bf(S4)_{\nu}}$]  Let $i_1$, $i_2$ be like in ${\bf(S3)_{\nu}}$ and $0 < \rho < \gamma/2$. Then 
$$
\e \gamma^{- 1} C(S) N_{\nu - 1}^\tau \|i_1 - i_2 \|_{ s_0 + \mu(\mathtt b) + \sigma} \leq \rho \ \ 
 \Longrightarrow \ \ 
\tLm_\nu^\gamma(i_1) \subseteq \tLm_\nu^{\gamma - \rho}(i_2) \, . 
$$
\end{itemize}
\end{theorem}

\begin{remark}\label{remark:dipendenza da omega non richiesta}
Note that \eqref{r nu - 1 r nu i1 i2}-\eqref{r nu i1 - r nu i2} are sufficient to prove $({\bf S4})_\nu$
about the inclusion of the sets $ \tLm_\nu^\gamma(i_1) $, $ \tLm_\nu^{\gamma-\rho}(i_2) $ 
corresponding to two nearby approximate solutions: a smallness condition in $ | \ |^{k_0, \gamma}$ is not required. 
This is sufficient to prove Lemma \ref{lemma inclusione cantor riccardo 1}, and thus Lemma \ref{lemma inclusione cantor riccardo 2}. 
The bounds \eqref{r nu - 1 r nu i1 i2}-\eqref{r nu i1 - r nu i2} are implied just by the estimate \eqref{stima R nu i1 i2}, which  is  in $ s_0 $ norm 
and  there is no control of the derivatives  with respect to  ($ \om , \kappa $).
This is why we do not need to estimate the  derivatives  with respect to  ($ \om , \kappa $) of the 
operators $ \pa_i {\mathcal R} $ in \eqref{derivate i resti prima della riducibilita-s0}.
\end{remark}

An important point of Theorem \ref{ITERAZIONERIDUCIBILITA} is to require only the bound \eqref{KAM smallness condition1} 
for $ {\mathfrak M}_0 (s_0, {\mathtt b}) $ in low norm, which is verified in Lemma \ref{lem:tame iniziale}, 
as well as the estimate \eqref{derivate i resti prima della riducibilita-s0} (which is still in low norm). 
On the other hand  Theorem \ref{ITERAZIONERIDUCIBILITA}  provides the smallness \eqref{stima cal R nu}  
of the tame constants $ {\mathfrak M}_\nu^\sharp (s) $ and proves that 
$ {\mathfrak M}_\nu^\sharp  (s, {\mathtt b})  $, $ \nu \geq 0 $, do not diverge too much. 
Theorem \ref{ITERAZIONERIDUCIBILITA} implies that the invertible operator
\be\label{defUn}
{\bf U}_n := {\bf \Phi}_0 \circ \ldots \circ {\bf \Phi}_n 
\ee
has almost diagonalized  $ {\bf L}_0 $, i.e. \eqref{cal L infinito} below holds. 
We have the following corollary:

\begin{theorem}\label{Teorema di riducibilita} {\bf (KAM almost-reducibility)}
Assume 
\eqref{ansatz I delta}  with $ \mu \geq   \mu (\mathtt b) + \sigma$. For all $S > s_0$ there exists $N_0 := N_0(S, \mathtt b) > 0$, $ \d_0 := \delta_0(S)  >  0 $ such that, if the smallness condition 
\begin{equation}\label{ansatz riducibilita}
N_0^{\tau_0} \e \gamma^{- 2}  \leq \d_0  
\end{equation}
holds, where the constant $ \tau_0 := \tau_0 (\tau, \CS)  $ is defined in Theorem \ref{ITERAZIONERIDUCIBILITA},
then, for all $ n \in \N$, for all $ \lambda = (\omega, \kappa) $ in 
\be\label{Cantor set}
{ \tLm}_{n + 1}^{ \gamma} := {\tLm}_{n + 1}^{ \gamma} (i)  =  \bigcap_{\nu = 0}^{n + 1 } \tLm_\nu^\gamma
\ee
where the sets $ \tLm_\nu^\gamma$ are defined in \eqref{Omega nu + 1 gamma},
the operator $ {\bf U}_n  $ in \eqref{defUn} is well defined  and 
\begin{equation}\label{cal L infinito}
{\bf  L}_n := {\mathbf  U}_n^{- 1} {\bf L}_0 {\mathbf  U}_n = 
\Dom {\mathbb I}_2^\bot + \ii {\bf D}_n + {\bf R}_n + {\bf Q}_n 
\end{equation}
where $ {\bf D}_n $ is defined in \eqref{cal L nu} and $  {\bf R}_n $, ${\bf Q}_n$
 in \eqref{forma cal R nu} (with $ \nu = n $).
The operators $ {\mathcal R}_n $, $ {\mathcal Q}_n $ are $ {\mathcal D}^{k_0} $-modulo-tame with modulo-tame constants
\be\label{stima resto operatore quasi diagonalizzato}
{\mathfrak M}^\sharp_{{\mathcal R}_n}(s) , {\mathfrak M}^\sharp_{{\mathcal Q}_n}(s)  \leq_S  \e \gamma^{- 1} N_{n - 1}^{- {\mathtt a}} (1 + 
\| \fracchi_0 \|_{s   + \mu (\mathtt b) + \sigma}^{k_0, \gamma}) \, , \qquad \forall s_0 \leq s \leq S\,.
\ee
Moreover the operators 
$ {\bf U}_{n}^{\pm 1} -  {\mathbb I}_2^\bot  $ are $ {\mathcal D}^{k_0}$-modulo-tame with modulo-tame constants 
\begin{equation}\label{stima Phi infinito}
{\mathfrak M}^\sharp_{{\bf U}_{n}^{\pm 1} -  {\mathbb I}_2^\bot} (s) \leq_S \e \g^{-2} N_0^{\tau_1} (1 + 
\| \fracchi_0 \|_{s  + \mu (\mathtt b) + \sigma}^{k_0, \gamma})\,, \quad \forall s_0 \leq s \leq S \, , 
\end{equation} 
where  $\tau_1$ is defined in \eqref{alpha beta}.  
The operators $ {\mathbf  U}_n$, $ {\mathbf  U}_n^{- 1} $ are real, even and reversibility preserving. 
$ {\bf L}_n $ is real, even and reversible. 
\end{theorem}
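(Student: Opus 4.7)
\smallskip

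\textbf{Proof plan for Theorem \ref{Teorema di riducibilita}.}
The strategy is to package the iterative Theorem \ref{iterazione riducibilita} into a closed-form statement: verify that \eqref{ansatz riducibilita} implies the hypothesis \eqref{KAM smallness condition1} of Theorem \ref{iterazione riducibilita}; deduce that the composition $\mathbf{U}_n=\mathbf{\Phi}_0\circ\cdots\circ\mathbf{\Phi}_n$ is well defined on $\tLm_{n+1}^\gamma$; read off the tame bound \eqref{stima resto operatore quasi diagonalizzato} from the inductive bound \eqref{stima cal R nu}; and, finally, obtain the tame estimate \eqref{stima Phi infinito} on $\mathbf{U}_n^{\pm 1}-{\mathbb I}_2^\bot$ by a telescoping/Neumann argument in the class of $\mathcal{D}^{k_0}$-modulo-tame operators.

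\smallskip

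First I would check the smallness. By the ansatz \eqref{ansatz I delta} with $\mu\geq\mu(\mathtt b)+\sigma$, Lemma \ref{lem:tame iniziale} and \eqref{stima cal K0 I delta} give $\mathfrak{M}_0(s_0,\mathtt b)\leq C(S)\,\e\gamma^{-1}$; hence
\[
N_0^{\tau_0}\,\mathfrak{M}_0(s_0,\mathtt b)\,\gamma^{-1}\leq C(S)\,N_0^{\tau_0}\,\e\gamma^{-2}\leq C(S)\,\delta_0\leq 1
\]
provided $\delta_0=\delta_0(S)$ is small enough in \eqref{ansatz riducibilita}, which is exactly \eqref{KAM smallness condition1}. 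Theorem \ref{iterazione riducibilita} then applies and produces, for every $\nu\leq n$, the reversibility-preserving maps $\mathbf{\Phi}_\nu=\mathbb{I}_2^\bot+\mathbf{\Psi}_\nu$ defined on the neighbourhood \eqref{inclusione-insiemi-gamma2} of $\tLm_{\nu+1}^\gamma$. Since by construction $\tLm_{n+1}^\gamma=\bigcap_{\nu=0}^{n+1}\tLm_\nu^\gamma\subset\bigcap_{\nu=0}^{n}\mathcal{N}(\tLm_{\nu+1}^\gamma,\gamma N_\nu^{-\tau-2})$, the composition $\mathbf{U}_n$ is well defined on $\tLm_{n+1}^\gamma$, and the relation \eqref{coniugionu+1} iterated gives \eqref{cal L infinito}. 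The bound \eqref{stima resto operatore quasi diagonalizzato} is then immediate from \eqref{stima cal R nu} and \eqref{stima cal K0 I delta}.

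\smallskip

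The main technical step is the estimate \eqref{stima Phi infinito} on $\mathbf{U}_n^{\pm 1}-\mathbb{I}_2^\bot$. Expanding the telescoping product
\[
\mathbf{U}_n-\mathbb{I}_2^\bot=\sum_{k=1}^{n+1}\ \sum_{0\leq \nu_1<\cdots<\nu_k\leq n}\mathbf{\Psi}_{\nu_1}\circ\mathbf{\Psi}_{\nu_2}\circ\cdots\circ\mathbf{\Psi}_{\nu_k},
\]
and iterating the composition estimate \eqref{modulo tame constant for composition} from Lemma \ref{interpolazione moduli parametri}, the modulo-tame constant of each summand is controlled by a product of factors $\mathfrak{M}^\sharp_{\mathbf{\Psi}_{\nu_j}}(s_0)$ (at low norm) with one factor $\mathfrak{M}^\sharp_{\mathbf{\Psi}_{\nu_j}}(s)$ (at high norm). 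Using \eqref{tame Psi nu - 1} and \eqref{stima cal K0 I delta},
\[
\mathfrak{M}^\sharp_{\mathbf{\Psi}_{\nu}}(s_0)\leq \tfrac{C}{\gamma}N_\nu^{\tau_1}N_{\nu-1}^{-\mathtt a}\,\e\gamma^{-1},\qquad \mathfrak{M}^\sharp_{\mathbf{\Psi}_{\nu}}(s)\leq \tfrac{C}{\gamma}N_\nu^{\tau_1}N_{\nu-1}^{-\mathtt a}\,\e\gamma^{-1}(1+\|\fracchi_0\|_{s+\mu(\mathtt b)+\sigma}^{k_0,\gamma}).
\]
The choice $\mathtt a=3\tau_1$, $\chi=3/2$ in \eqref{alpha beta} ensures $\mathtt a>\tau_1\chi$, so the series $\sum_{\nu\geq 0}N_\nu^{\tau_1}N_{\nu-1}^{-\mathtt a}$ converges super-geometrically and is dominated by its first term $\lessdot N_0^{\tau_1}$. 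Combined with the smallness $\e\gamma^{-2}N_0^{\tau_0}\leq\delta_0$ (taking $\tau_0$ larger than $\tau_1$ if needed), the geometric sum of the multi-indexed terms is controlled, yielding \eqref{stima Phi infinito} for $\mathbf{U}_n-\mathbb{I}_2^\bot$. The bound for $\mathbf{U}_n^{-1}-\mathbb{I}_2^\bot$ follows from Lemma \ref{serie di neumann per maggioranti} (Neumann series for modulo-tame maps), whose hypothesis $4C(\mathtt b)C(k_0)\mathfrak{M}^\sharp_{\mathbf{U}_n-\mathbb{I}_2^\bot}(s_0)\leq 1/2$ is guaranteed by \eqref{ansatz riducibilita} with $\delta_0$ small. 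Reality, parity and reversibility-preservation of $\mathbf{U}_n^{\pm 1}$ are inherited from those of each $\mathbf{\Phi}_\nu$, and reality/evenness/reversibility of $\mathbf{L}_n$ follow from $({\bf S1})_n$.

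\smallskip

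The expected main obstacle is not any single estimate in isolation but the bookkeeping for the telescoping sum: one has to distribute the single ``high-norm'' factor $\mathfrak{M}^\sharp_{\mathbf{\Psi}_{\nu_j}}(s)$ among all orderings of $k$ factors while keeping the low-norm factors summable, and to verify that the cumulative constants from Lemma \ref{interpolazione moduli parametri} do not destroy the geometric decay $N_\nu^{\tau_1}N_{\nu-1}^{-\mathtt a}$. This is exactly where the inequality $\mathtt a>\tau_1\chi$ built into \eqref{alpha beta} is used, and where the $N_0^{\tau_1}$ appearing in \eqref{stima Phi infinito} has its origin.
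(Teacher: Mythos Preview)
Your proposal is correct and follows essentially the same approach as the paper. The only organizational difference is in the proof of \eqref{stima Phi infinito}: the paper first establishes, by a two-line recursion ${\mathfrak M}_{{\bf U}_{\nu + 1}}^\sharp(s) \leq {\mathfrak M}_{{\bf U}_\nu}^\sharp(s)(1 + C\e_\nu(s_0)) + C{\mathfrak M}_{{\bf U}_\nu}^\sharp(s_0)\e_\nu(s)$ with $\e_\nu(s):={\mathfrak M}_0(s,\mathtt b)\gamma^{-1}N_{\nu+1}^{\tau_1}N_\nu^{-\mathtt a}$, uniform bounds on ${\mathfrak M}_{{\bf U}_\nu}^\sharp(s_0)$ and ${\mathfrak M}_{{\bf U}_\nu}^\sharp(s)$, and then uses the simpler telescoping ${\bf U}_n-{\mathbb I}_2^\bot=\sum_{\nu=0}^{n-1}{\bf U}_\nu{\bf \Psi}_{\nu+1}+{\bf \Psi}_0$; your full product expansion is equivalent but trades the short induction for the combinatorial bookkeeping you correctly anticipate.
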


\begin{proof}
The assumption \eqref{KAM smallness condition1} of Theorem \ref{ITERAZIONERIDUCIBILITA} 
holds by \eqref{stima cal K0 I delta}, \eqref{ansatz I delta} with $ \mu \geq    \mu (\mathtt b) + \sigma $,  
and \eqref{ansatz riducibilita}.
The estimate \eqref{stima resto operatore quasi diagonalizzato} follows by 
 \eqref{stima cal R nu} (for $\nu = n$) and \eqref{stima cal K0 I delta}. 
It remains to prove \eqref{stima Phi infinito}.   
By Lemma \ref{interpolazione moduli parametri}  the composition of $ {\mathcal D}^{k_0} $-modulo-tame operators is 
$ {\mathcal D}^{k_0} $-modulo-tame.
To estimate the modulo-tame constant $ {\mathfrak M}_{{\bf U}_{\nu+1}}^\sharp(s)$ 
of 
$ {\bf U}_{\nu + 1} = {\bf U}_\nu \circ {\bf \Phi}_{\nu + 1} = $ $  {\bf U}_\nu \circ ({\mathbb I}_2^\bot + {\bf \Psi}_{\nu + 1}) $, we use the
following inductive inequalities, which are deduced by 
Lemma \ref{interpolazione moduli parametri} and   \eqref{tame Psi nu - 1}, 
\begin{align}\label{epsilon nu}
& {\mathfrak M}_{{\bf U}_{\nu + 1}}^\sharp(s_0) 
\leq  {\mathfrak M}_{{\bf U}_\nu}^\sharp(s_0) \big( 1+  C(k_0) \e_\nu (s_0) \big) \, , \\
& {\mathfrak M}_{{\bf U}_{\nu + 1}}^\sharp(s) \leq 
{\mathfrak M}_{{\bf U}_\nu}^\sharp(s) (1 + C(k_0) \e_\nu (s_0) ) + C(k_0) {\mathfrak M}_{{\bf U}_\nu}^\sharp(s_0)  \e_\nu(s) 
\label{epsilon nu-2}
\end{align}
where
$ \e_{\nu}(s) := {\mathfrak M}_0(s, {\mathtt b}) \gamma^{- 1} N_{\nu + 1}^{\tau_1} N_\nu^{- \mathtt a} $.

Iterating \eqref{epsilon nu}, setting $ \e_\nu :=  C(k_0) \e_\nu (s_0) $, 
and using \eqref{def:costanti iniziali tame}, \eqref{tame Psi nu - 1}, \eqref{ansatz riducibilita} we get
\begin{equation}\label{M U nu + 1}
\begin{aligned}
{\mathfrak M}_{{\bf U}_{\nu + 1}}^\sharp(s_0) & 
\leq {\mathfrak M}_{{\bf U}_0}^\sharp(s_0) \prod_{\nu \geq 0 } (1 + \e_\nu ) \\
& \leq 
{\mathfrak M}_{{\bf U}_0}^\sharp(s_0) {\rm exp}(C(S) \e \gamma^{- 2}) \leq 2 \, , \quad \forall \nu \geq 0 \, .
\end{aligned}
\end{equation}
Iterating  \eqref{epsilon nu-2}, using \eqref{M U nu + 1} and  $\prod_{\nu \geq 0} (1 + \e_\nu) \leq 2 $, we get 
\begin{equation}\label{M U nu + 1 (s)}
\begin{aligned}
{\mathfrak M}_{{\bf U}_{\nu + 1}}^\sharp(s) 
& \leq_{k_0}  {\mathop \sum}_{\nu \geq 0} \e_\nu(s) + 
{\mathfrak M}_{{\bf U}_0}^\sharp(s) \\ 
& \leq C(k_0) \big( 1 + N_0^{\tau_1}{\mathfrak M}_0(s, \mathtt b) \gamma^{- 1} \big)\,, 
\quad \forall \nu \geq 0 \, ,
\end{aligned}
\end{equation}
since ${\bf U}_0 = {\bf \Phi}_0 = {\mathbb I}_2^\bot + {\bf \Psi}_0$ and 
$ {\mathfrak M}_{{\bf U}_0}^\sharp(s) \leq 1 + 
C(k_0) N_0^{\tau_1} {\mathfrak M}_0(s, \mathtt b) \gamma^{- 1} $ by \eqref{tame Psi nu - 1}. 
Finally
$$
\begin{aligned}
{\bf U}_n - {\mathbb I}_2^\bot & = ({\bf U}_n - {\bf \Phi}_0 )+ ({\bf \Phi}_0 - {\mathbb I}_2^\bot) \\
& = \sum_{\nu = 0}^{n - 1} ({\bf U}_{\nu + 1} - {\bf U}_\nu ) + {\bf \Psi}_0 = \sum_{\nu = 0}^{n - 1} {\bf U}_\nu {\bf \Psi}_{\nu + 1} + {\bf \Psi}_0\, .
\end{aligned}
$$
Hence Lemma \ref{interpolazione moduli parametri}, 
\eqref{M U nu + 1}, \eqref{M U nu + 1 (s)}, 
\eqref{stima cal K0 I delta}, 
\eqref{ansatz I delta}, \eqref{tame Psi nu - 1},  \eqref{ansatz riducibilita},  
imply 
\eqref{stima Phi infinito} for ${\bf U}_n - {\mathbb I}_2^\bot $. 
The estimate for ${\bf U}_n^{- 1} - {\mathbb I}_2^\bot$ follows by Lemma \ref{serie di neumann per maggioranti}.  
\end{proof}

\section{Proof of Theorem \ref{ITERAZIONERIDUCIBILITA}}

{\sc Proof of ${\bf(S1)}_{0}$}. Properties \eqref{cal L nu}-\eqref{forma cal R nu} for $ \nu = 0 $
 follow by the assumptions  \eqref{defL0-red}-\eqref{defRQ0}
with  $r_j^0(\omega, \kappa) = 0$. 
We now prove that also \eqref{stima cal R nu} for $ \nu = 0 $ holds:

\begin{lemma}\label{lem: Initialization} 
$ {\mathfrak M}_0^\sharp (s) $,  $ {\mathfrak M}_0^\sharp ( s, \mathtt b) \leq_{s_0, \mathtt b}  {\mathfrak M}_0 (s, {\mathtt b})  $. 
\end{lemma}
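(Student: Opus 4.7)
The plan is to derive the modulo-tame bounds from the four tame bounds encoded in $\mathbb{M}_0(s)$ (resp.\ in $\mathbb{M}_0(s,\mathtt{b})$) by exploiting the fact that, in the matrix representation \eqref{matrice operatori Toplitz}, differentiation $\partial_{\varphi_m}^{s_0}$ multiplies the entry $(\mathcal{R}_0)_j^{j'}(\ell-\ell')$ by $(i(\ell_m-\ell_m'))^{s_0}$ while the commutator $[\mathcal{R}_0,\partial_x]$ multiplies it by $-i(j-j')$. Hence the assumption that the four operators $\mathcal{R}_0,\ [\mathcal{R}_0,\partial_x],\ \partial_{\varphi_m}^{s_0}\mathcal{R}_0,\ \partial_{\varphi_m}^{s_0}[\mathcal{R}_0,\partial_x]$ are all $\mathcal{D}^{k_0}$--tame gives, via \eqref{tame-coeff} applied to each of them together with the elementary bound $\langle\ell-\ell'\rangle^{2s_0}\lesssim_{\nu,s_0}\sum_{m=1}^\nu(1+|\ell_m-\ell_m'|^{2s_0})$, the weighted coefficient estimate
\begin{equation}\label{pesato-R0}
\gamma^{2|k|}\sum_{\ell,j}\langle\ell,j\rangle^{2s}\langle\ell-\ell'\rangle^{2s_0}\langle j-j'\rangle^{2}\big|\partial_\lambda^k(\mathcal{R}_0)_j^{j'}(\ell-\ell')\big|^2
\lesssim_{\nu,s_0}\mathbb{M}_0(s_0)^2\langle\ell',j'\rangle^{2s}+\mathbb{M}_0(s)^2\langle\ell',j'\rangle^{2s_0}\,,
\end{equation}
valid for all $|k|\leq k_0$ and all $(\ell',j')$, and analogously for $\mathcal{Q}_0$.

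With \eqref{pesato-R0} in hand, the modulo-tame bound follows by a single application of Cauchy--Schwarz. Writing $\||\partial_\lambda^k\mathcal{R}_0|u\|_s^2=\sum_{\ell,j}\langle\ell,j\rangle^{2s}\big(\sum_{\ell',j'}|\partial_\lambda^k(\mathcal{R}_0)_j^{j'}(\ell-\ell')|\,|u_{\ell',j'}|\big)^2$ and applying Cauchy--Schwarz with weight $\langle\ell-\ell'\rangle^{s_0}\langle j-j'\rangle$, I get the factor $\sum_{\bar\ell,\bar j}\langle\bar\ell\rangle^{-2s_0}\langle\bar j\rangle^{-2}<\infty$ (convergent because $2s_0>\nu$ by \eqref{def:s0} and because $2>1$), and hence, exchanging the order of summation and inserting \eqref{pesato-R0},
\begin{equation*}
\gamma^{2|k|}\||\partial_\lambda^k\mathcal{R}_0|u\|_s^2\lesssim_{\nu,s_0}\mathbb{M}_0(s_0)^2\|u\|_s^2+\mathbb{M}_0(s)^2\|u\|_{s_0}^2\,.
\end{equation*}
This is exactly \eqref{CK0-tame}, so $\mathfrak{M}_{\mathcal{R}_0}^\sharp(s)\lesssim_{\nu,s_0}\mathbb{M}_0(s)$, and the same argument for $\mathcal{Q}_0$ yields $\mathfrak{M}_0^\sharp(s)\lesssim_{\nu,s_0}\mathbb{M}_0(s)\leq\mathfrak{M}_0(s,\mathtt{b})$.

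For the higher norm $\mathfrak{M}_0^\sharp(s,\mathtt{b})$ the same scheme is repeated with the extra weight $\langle\ell-\ell'\rangle^{\mathtt{b}}$, which is precisely the matrix weight attached to the operator $\langle\partial_\varphi\rangle^{\mathtt{b}}\mathcal{R}_0$. Using $\langle\ell-\ell'\rangle^{2(s_0+\mathtt{b})}\lesssim_{\nu,s_0,\mathtt{b}}\sum_m\big(1+|\ell_m-\ell_m'|^{2(s_0+\mathtt{b})}\big)$ together with the tame bounds of $\partial_{\varphi_m}^{s_0+\mathtt{b}}\mathcal{R}_0$ and $\partial_{\varphi_m}^{s_0+\mathtt{b}}[\mathcal{R}_0,\partial_x]$ (controlled by $\mathbb{M}_0(s,\mathtt{b})$) and of $\mathcal{R}_0$, $[\mathcal{R}_0,\partial_x]$ (controlled by $\mathbb{M}_0(s)$, accounting for the ``$1+\cdots$'' constant term), the analog of \eqref{pesato-R0} with weight $\langle\ell-\ell'\rangle^{2(s_0+\mathtt{b})}\langle j-j'\rangle^{2}$ holds with right-hand side $\lesssim_{\nu,s_0,\mathtt{b}}\mathfrak{M}_0(s_0,\mathtt{b})^2\langle\ell',j'\rangle^{2s}+\mathfrak{M}_0(s,\mathtt{b})^2\langle\ell',j'\rangle^{2s_0}$, and the same Cauchy--Schwarz step yields $\mathfrak{M}_{\langle\partial_\varphi\rangle^{\mathtt{b}}\mathcal{R}_0}^\sharp(s)\lesssim_{s_0,\mathtt{b}}\mathfrak{M}_0(s,\mathtt{b})$, and analogously for $\mathcal{Q}_0$.

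The only real point to verify is the combinatorial inequality $\langle\ell-\ell'\rangle^{2(s_0+\mathtt{b})}\lesssim\sum_m(1+|\ell_m-\ell_m'|^{2(s_0+\mathtt{b})})$, which is immediate since $\mathtt{b},s_0\in\mathbb{N}$ (so $s_0+\mathtt{b}$ is an integer and $\partial_{\varphi_m}^{s_0+\mathtt{b}}$ is a classical differential operator) and the torus $\T^\nu$ is finite-dimensional; no cross-derivative $\partial_{\varphi_{m_1}}\partial_{\varphi_{m_2}}$ is required, which is what makes the hypothesis involving only single-variable derivatives $\partial_{\varphi_m}^{s_0}$ and $\partial_{\varphi_m}^{s_0+\mathtt{b}}$ in \eqref{tame cal R0 cal Q0}--\eqref{tame norma alta cal R0 cal Q0} sufficient.
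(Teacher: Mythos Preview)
Your proof is correct and follows essentially the same approach as the paper's own proof: both arguments pass to the matrix representation, use the tame bounds on $\mathcal{R}_0$, $[\mathcal{R}_0,\partial_x]$, $\partial_{\varphi_m}^{s_0}\mathcal{R}_0$, $\partial_{\varphi_m}^{s_0}[\mathcal{R}_0,\partial_x]$ via \eqref{tame-coeff} together with the combinatorial inequality $\langle\ell-\ell'\rangle^{2s_1}\langle j-j'\rangle^2 \lesssim (1+\max_m|\ell_m-\ell'_m|^{2s_1})(1+|j-j'|^2)$ to obtain the weighted coefficient bound \eqref{pesato-R0} (the paper's \eqref{carletto0}--\eqref{carletto}), and then conclude by the same Cauchy--Schwarz step with weight $\langle\ell-\ell'\rangle^{s_0}\langle j-j'\rangle$. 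Your presentation is somewhat more streamlined, but the substance is identical.
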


\begin{proof}
Let $ {\mathcal R} \in \{ {\mathcal R}_0, {\mathcal Q}_0 \}$ and  set $\lambda := (\omega, \kappa)$. 
The matrix elements of the commutator  $ [ {\mathcal R}, \pa_x ]$ are $ \ii (j'-j) ({\mathcal R})^{j'}_j (\ell- \ell' ) $, of $\partial_{\vphi_m}^b {\mathcal R}$, 
$ m  = 1, \ldots , \CS $, are $\ii^b (\ell_m - \ell'_m)^b {\mathcal R}_j^{j'}(\ell - \ell')$, and
of   $  \pa_{\vphi_m}^b [ {\mathcal R}, \pa_x ] $ are $ \ii^{b +1} (\ell_m - \ell_m' )^b (j'-j) ({\mathcal R}_0)^{j'}_j (\ell- \ell' ) $. 
Then, recalling \eqref{tame-coeff} with $ \s = 0  $, the assumptions
\eqref{tame cal R0 cal Q0}-\eqref{tame norma alta cal R0 cal Q0} imply that
 $ \forall |k | \leq k_0 $, $ s_0 \leq s \leq S $, $\ell' \in \Z^{\CS},  j' \in {\mathbb S}_0^c$, 
\be
 \gamma^{2 |k|} {\mathop \sum}_{\ell , j} \langle \ell, j \rangle^{2 s} |\partial_\lambda^k {\mathcal R}_j^{j'}(\ell - \ell')|^2 \leq
 2 {\mathbb M}_0^2 (s_0) \langle \ell', j' \rangle^{2 s} 
 + 2  {\mathbb M}_0^2(s) \langle \ell', j' \rangle^{2 s_0} \label{monfalcone 1} 
 \ee
 \be
\begin{aligned}
 &       \gamma^{2 |k|} {\mathop \sum}_{\ell, j} \langle\ell, j \rangle^{2 s} |j - j'|^2 |\partial_\lambda^k {\mathcal R}_j^{j'}(\ell - \ell')|^2 \leq \\
& \qquad \qquad 2  {\mathbb M}_0^2(s_0) \langle \ell', j' \rangle^{2 s} + 2  {\mathbb M}_0^2(s) \langle \ell', j' \rangle^{2 s_0} \label{monfalcone 2} 
\end{aligned} 
\ee 
\be
\begin{aligned}
&      \gamma^{2 |k|} {\mathop \sum}_{\ell, j} \langle \ell, j \rangle^{2 s} |\ell_m - \ell'_m|^{2 s_0} 
|\partial_\lambda^k {\mathcal R}_j^{j'}(\ell - \ell')|^2 \leq \\
&  \qquad \qquad 2  {\mathbb M}_0^2(s_0) \langle \ell', j' \rangle^{2 s} 
+ 2  {\mathbb M}_0^2(s) \langle \ell', j' \rangle^{2 s_0} \label{monfalcone 3} 
\end{aligned} 
 \ee 
\be
\begin{aligned}
&   \gamma^{2 |k|} {\mathop \sum}_{\ell, j} \langle \ell, j \rangle^{2 s} |\ell_m - \ell'_m|^{2 s_0} |j - j'|^2  |\partial_\lambda^k {\mathcal R}_j^{j'}(\ell - \ell')|^2 \leq \\ 
& \qquad \qquad 2  {\mathbb M}_0^2(s_0) \langle \ell', j' \rangle^{2 s} + 2  {\mathbb M}_0^2(s) \langle \ell', j' \rangle^{2 s_0} \label{monfalcone 4} 
\end{aligned}
\ee
\be
\begin{aligned}
&    \gamma^{2 |k|} {\mathop \sum}_{\ell, j} \langle \ell, j \rangle^{2 s} |\ell_m - \ell'_m|^{2 (s_0 + \mathtt b) } |\partial_\lambda^k {\mathcal R}_j^{j'}(\ell - \ell')|^2 \leq \\ 
& \qquad \qquad 2  {\mathbb M}_0^2 (s_0, \mathtt b ) \langle \ell', j' \rangle^{2 s} 
+ 2  {\mathbb M}_0^2(s, \mathtt b) \langle \ell', j' \rangle^{2 s_0} \label{monfalcone 5} 
\end{aligned}
\ee
\be
\begin{aligned}
&     \gamma^{2 |k|} {\mathop \sum}_{\ell, j} \langle \ell, j \rangle^{2 s} |\ell_m - \ell'_m|^{2 (s_0 + \mathtt b)} |j - j'|^2 |\partial_\lambda^k {\mathcal R}_j^{j'}(\ell - \ell')|^2 \leq  \\ 
& \qquad \qquad 2 {\mathbb M}_0^2(s_0, \mathtt b) \langle \ell', j' \rangle^{2 s}  + 
2  {\mathbb M}_0^2(s, \mathtt b) \langle \ell', j' \rangle^{2 s_0}. \label{monfalcone 6} 
\end{aligned}
\ee
Using the inequality 
\be\label{inequality scema langle rangle}
\begin{aligned}
\langle \ell - \ell' \rangle^{2 s_1} \langle j - j' \rangle^2  \leq_{s_1} 1 + |j - j'|^2 & +
 \max_{m = 1, \ldots, \CS}|\ell_m - \ell'_m|^{2 s_1} \\ 
 & + |j - j'|^2\max_{m = 1, \ldots, \CS}|\ell_m - \ell'_m|^{2 s_1}
 \end{aligned}
\ee
for $ s_1 = s_0 $, $ s = s_0+{\mathtt b} $,    the estimates \eqref{monfalcone 1}-\eqref{monfalcone 6} 
imply, recalling also \eqref{def:costanti iniziali tame}, 
\be
\begin{aligned}
&   \gamma^{2|k|} {\mathop \sum}_{\ell, j} \langle \ell, j \rangle^{2 s} \langle \ell - \ell' \rangle^{2 s_0}
 \langle j - j' \rangle^{2} |\partial_\lambda^k {\mathcal R}_j^{j'}(\ell - \ell')|^2  
 \leq_{\mathtt b} \\ 
 & \qquad \qquad {\mathfrak M}_0^2(s_0, {\mathtt b}) \langle \ell', j'\rangle^{2 s} + 
{\mathfrak M}_0^2(s, {\mathtt b}) \langle \ell', j' \rangle^{2 s_0}  \label{carletto0} \\
\end{aligned}
\ee
\be
\begin{aligned}
& \gamma^{2|k|}  {\mathop \sum}_{\ell, j} \langle \ell, j \rangle^{2 s} 
 \langle \ell - \ell' \rangle^{2 ({s_0+{\mathtt b}})}   \langle j - j' \rangle^{2} 
  | \partial_\lambda^k {\mathcal R}_j^{j'}(\ell - \ell')|^2  
\leq_{\mathtt b} \\ 
& \qquad  \qquad {\mathfrak M}_0^2(s_0, {\mathtt b}) \langle \ell', j'\rangle^{2 s} + 
{\mathfrak M}_0^2(s, {\mathtt b}) \langle \ell', j' \rangle^{2 s_0}  \, .  \label{carletto}
\end{aligned}
\ee
We can now prove that $ \langle \partial_\vphi  \rangle^{\mathtt b} {\mathcal R} $ is $ {\mathcal D}^{k_0}$-modulo-tame. 
 $\forall |k | \leq k_0 $, by Cauchy-Schwartz inequality, we get
\begin{align}
\| |\langle \partial_\vphi  \rangle^{\mathtt b}\partial_\lambda^k {\mathcal R}| h \|_s^2 & \leq 
{\mathop \sum}_{\ell, j} \langle \ell, j \rangle^{2 s} 
\Big( {\mathop \sum}_{\ell', j'} | \langle \ell - \ell' \rangle^{\mathtt b}\partial_\lambda^k  {\mathcal R}_j^{j'}(\ell - \ell')| |h_{\ell', j'}| \Big)^2 
\nonumber \\
& = \sum_{\ell, j} \langle \ell, j \rangle^{2 s} 
\Big(  \sum_{\ell', j'} \langle \ell - \ell' \rangle^{s_0 + \mathtt b} \langle j' - j \rangle 
|\partial_\lambda^k {\mathcal R}_j^{j'}(\ell - \ell')| \times \nonumber \\
& \qquad \qquad \qquad \qquad \qquad \qquad
 \times  |h_{\ell', j'}| \frac{1}{\langle \ell - \ell' \rangle^{s_0} \langle j' - j \rangle } \Big)^2 \nonumber \\
& \leq_{s_0} {\mathop \sum}_{\ell, j} \langle \ell, j \rangle^{2 s} 
\sum_{\ell', j'} \langle \ell - \ell' \rangle^{2 (s_0 + \mathtt b)} 
\langle j' - j \rangle^{2 } |\partial_\lambda^k {\mathcal R}_j^{j'}(\ell - \ell')|^2 |h_{\ell', j'}|^2 \nonumber \\
& =  \sum_{\ell' , j'} |h_{\ell', j'}|^2 
 \sum_{\ell, j} \langle \ell, j \rangle^{2 s} \langle \ell - \ell' \rangle^{2 (s_0 + \mathtt b)} \langle j' - j \rangle^{2 } |\partial_\lambda^k {\mathcal R}_j^{j'}(\ell - \ell')|^2 \nonumber \\
& \stackrel{ \eqref{carletto}}{{\leq}_{s_0, \mathtt b}}  \gamma^{- 2 |k|}
 \sum_{\ell', j'} |h_{\ell', j'}|^2 
\big( {\mathfrak M}_0^2(s_0, \mathtt b) \langle \ell', j' \rangle^{2 s} + {\mathfrak M}_0^2(s, \mathtt b) \langle \ell', j' \rangle^{2 s_0}    \big)
\nonumber   \\
 & \leq_{s_0, \mathtt b}  \gamma^{- 2|k|} \big( {\mathfrak M}_0^2(s_0, \mathtt b) \| h \|_{s}^2 + {\mathfrak M}_0^2(s, \mathtt b) \| h \|_{s_0}^2 \big) \, .  \label{tame-inizio}
\end{align}
Therefore (recall 
 \eqref{CK0-tame}) the modulo-tame constant   
 $ {\mathfrak M}_{ \langle \pa_\vphi \rangle^{\mathtt b} {\mathcal R}}^\sharp (s) \leq_{s_0, \mathtt b} {\mathfrak M}_0(s, \mathtt b) $. Since
 $ {\mathcal R} $ is both $ \{ {\mathcal R}_0, {\mathcal Q}_0 \} $ we have proved that (see \eqref{def:msharp})
  $$ 
  {\mathfrak M}_0^\sharp ( s, \mathtt b) := \max\{ {\mathfrak M}_{ \langle \pa_\vphi \rangle^{\mathtt b} {\mathcal R}_0}^\sharp (s), 
{\mathfrak M}_{ \langle \pa_\vphi \rangle^{\mathtt b} {\mathcal Q}_0}^\sharp (s)  \} 
 \leq_{s_0, \mathtt b}  {\mathfrak M}_0 (s, {\mathtt b})  \, . 
  $$ 
  The inequality   $ {\mathfrak M}_0^\sharp ( s ) \leq_{s_0}  {\mathfrak M}_0 (s, {\mathtt b})  $ follows similarly by \eqref{carletto0}. 
\end{proof}
\noindent
{\sc Proof of ${\bf(S2)}_0$}.
It follows since the functions $ \mathtt m_3(\omega, \kappa)$ and ${\mathtt m}_1(\omega, \kappa)$ are $ k_0$-times differentiable
on all $ \tOm \times [\kappa_1, \kappa_2] $ (they depend on the torus $ i_\delta(\omega, \kappa) $ which is $ k_0 $-times differentiable  
with respect to $ (\om, \kappa) $ on all $ \tOm \times [\kappa_1, \kappa_2] $).

\noindent
{\sc Proof of ${\bf(S3)}_0$}. We prove \eqref{stima R nu i1 i2 norma alta} at $ \nu = 0 $, namely that, 
 for ${\mathcal R} \in \{ {\mathcal R}_0, {\mathcal Q}_0 \} $,  
\be\label{7.28-0}
\|   | \langle \partial_\vphi\rangle^{\mathtt b} \Delta_{1 2} {\mathcal R}| h \|_{s_0}^2  
\leq C(S, {\mathtt b}) \e^2 \gamma^{- 2} \| i_1 - i_2 \|_{s_0  + \mu(\mathtt b) + \sigma }^2 \| h \|_{s_0}^2 \, , \quad \forall h \in H^{s_0} \, , 
\ee 
where we denote $ \Delta_{12} {\mathcal R} := {\mathcal R}(i_1) - {\mathcal R}(i_2) $. 
By \eqref{derivate i resti prima della riducibilita-s0}  and the mean value theorem we get 
\begin{align*}
& \| \Delta_{12} {\mathcal R} \|_{{\mathcal L}(H^{s_0})}, \| [\Delta_{12} {\mathcal R}, \partial_x ]\|_{{\mathcal L}(H^{s_0})},
\| \partial_{\vphi_m}^{s_0 + \mathtt b} \Delta_{12} {\mathcal R} \|_{{\mathcal L}(H^{s_0})},
\| \partial_{\vphi_m}^{s_0 + \mathtt b} [ \Delta_{12} {\mathcal R}, \partial_x] \|_{{\mathcal L}(H^{s_0})} \\
&  \leq_{S, \mathtt b} 
\e \gamma^{- 1} \|i_1 - i_2\|_{ s_0  + \mu(\mathtt b) + \sigma}
\end{align*}
for all $ m = 1, \ldots, \CS $. 
We deduce as in \eqref{monfalcone 1}-\eqref{monfalcone 6} (with $ k = 0 $) 
and \eqref{inequality scema langle rangle} that,  for all $\ell' \in \Z^{\CS}, j' \in {\mathbb S}^c_0 $,   
$$
\begin{aligned}
& {\mathop \sum}_{\ell, j} \langle \ell, j \rangle^{2 s_0} \langle j - j' \rangle^2 \langle \ell - \ell' \rangle^{2(s_0 + \mathtt b)} 
| (\Delta_{12}{\mathcal R})_j^{j'}(\ell - \ell')|^2 
\leq \\
& \qquad \qquad \qquad \qquad \qquad \qquad 
C(S, \mathtt b) \e^2 \gamma^{- 2} \| i_1 - i_2 \|_{ s_0 + \mu(\mathtt b) + \sigma }^2 \langle \ell', j' \rangle^{2 s_0}  
\end{aligned}
$$
which, arguing as in \eqref{tame-inizio}, proves \eqref{7.28-0}.
The proof of  \eqref{stima R nu i1 i2} at $ \nu = 0 $ is analogous.

\noindent 
{\sc Proof of ${\bf(S4)}_{0}$}. It is trivial because by definition 
$ \tOm_0^\gamma(i_1) = \tOm = \tOm_0^{\gamma - \rho}(i_2)$. 
\subsection{The reducibility step}

In this section we describe the generic inductive step, showing how to define ${\bf L}_{\nu + 1}$ (and ${\bf \Phi}_\nu$, ${ \bf \Psi}_\nu$ etc). To simplify notation we drop the index $\nu$ and we write $+$ instead of $\nu + 1$, so that we write
${\bf  L} := {\bf  L}_{\nu } $, $ {\bf D} := {\bf D}_\nu $, $ {\bf R} := {\bf R}_\nu $, 
$ {\mathcal R} := {\mathcal R}_\nu $,
$ {\bf Q} := {\bf Q}_\nu $, $ {\mathcal Q} := {\mathcal Q}_\nu $, $ {\mathcal D} := {\mathcal D}_\nu $, $ \mu_j = \mu_j^\nu $, etc \ldots

We conjugate $ {\bf L} $ by a  transformation of the form (see \eqref{Psi nu - 1})
\be\label{defPsi}
{\bf \Phi} := {\mathbb I}_2^\bot + {\bf \Psi}\,, \qquad {\bf \Psi} := \begin{pmatrix}
\Psi_{1} & \Psi_2 \\
\overline \Psi_2 & \overline \Psi_1
\end{pmatrix}\,.
\ee
We have 
\be\label{Ltra1}
\begin{aligned}
{\bf L} {\bf \Phi}  = {\bf \Phi} \big( \Dom {\mathbb I}_2^\bot + \ii {\bf D} \big) & +
\big( \Dom{\bf \Psi} + \ii [{\bf D}, {\bf \Psi}]  
+ \Pi_{N} {\bf R}+ \Pi_{N} {\bf Q} \big) \\
& + \Pi_{N}^\bot {\bf R} + \Pi_{N}^\bot {\bf Q} + {\bf R} {\bf \Psi} + {\bf Q} {\bf \Psi}
\end{aligned}
\ee
where the projector $ \Pi_N $ is defined in \eqref{proiettore-oper} and $ \Pi_N^\bot := {\mathbb I}_2 - \Pi_N$.
We want to solve the homological equation 
\begin{equation}\label{equazione omologica}
\Dom{\bf \Psi} + \ii [{\bf D}, {\bf \Psi}] + \Pi_{N} {\bf R} + \Pi_{N} {\bf Q} = [{\bf R}] 
\ee
where
\be\label{def[R]1}
[{\bf R}] := \begin{pmatrix}
[{\mathcal R}] & 0 \\
0 & [\overline{\mathcal R}] 
\end{pmatrix}\, 
\ee
and the operator $[{\mathcal R}]$ is defined by 
\begin{equation}\label{def[R]}
\begin{aligned}
& [{\mathcal R}] u(x) = {\mathop \sum}_{j \in {\mathbb S}_0^c} 
 \big( {\mathcal R}_j^{- j} (0) u_{- j} + {\mathcal R}_j^j (0) u_j \big) e^{\ii j x}\,, \\
 & \qquad  \text{for any function} \quad  u (x) = \sum_{j \in {\mathbb S}_0^c} u_j e^{\ii j x}\,.
 \end{aligned} 
\end{equation}
By \eqref{cal L nu}, \eqref{forma cal R nu}, \eqref{defPsi} the 
equation \eqref{equazione omologica} is equivalent to the two scalar  homological equations 
\be\label{homo1r}
\begin{aligned}
& \Dom \Psi_1 + \ii [{\mathcal D}, \Psi_1] + \Pi_{N} {\mathcal R} = [{\mathcal R}]  \, , \\
& \Dom \Psi_2 + \ii \big( {\mathcal D} \Psi_2 + \Psi_2 {\mathcal D} \big) + \Pi_{N} {\mathcal Q} = 0 \, .
\end{aligned}
\ee
The solutions of \eqref{homo1r} are 
\begin{align}\label{shomo1}
& (\Psi_1)_j^{j'}(\ell  ) := \begin{cases}
- \dfrac{({\mathcal R})_j^{j'}(\ell  )}{\ii (\omega \cdot \ell  + \mu_j - \mu_{j'})} \quad \forall (\ell ,  j,  j') \neq (0, \pm j,  \pm j) \, , 
| \ell  | \leq N \, ,  \\
0 \qquad \text{otherwise}
\end{cases} \\
& \label{shomo2}
(\Psi_2)_j^{j'}(\ell  ) := 
- \dfrac{({\mathcal Q})_j^{j'}(\ell )}{\ii (\omega \cdot \ell  +\mu_j + \mu_{j'})}\, , \, \quad  
\forall (\ell  , j,  j') \in \Z^{\CS} \times {\mathbb S}_0^c \times {\mathbb S}_0^c \, , | \ell  | \leq N \, .
\end{align}
Note that, since $ \mu_j = \mu_{- j} $, $ \forall j \in {\mathbb S}_0^c $ (see \eqref{stima rj nu})
the denominators in \eqref{shomo1}, \eqref{shomo2} are different from zero 
for $ (\om, \kappa) \in \tLm_{\nu+1}^\gamma $ (see \eqref{Omega nu + 1 gamma}
with  $ \nu \rightsquigarrow \nu+1 $) and 
the maps  $ \Psi_1 $, $ \Psi_2 $ are well defined.

\begin{lemma} {\bf (Homological equations)}\label{Homological equations tame}
For all $(\omega, \kappa) \in \tLm_{\nu+1}^{\gamma /2}$ the solutions 
$ \Psi_1 $, $ \Psi_2 $  in \eqref{shomo1}, \eqref{shomo2} of the homological equations 
 \eqref{homo1r}  are $ {\mathcal D}^{k_0} $-modulo-tame operators with modulo-tame constants satisfying
\be\label{stima tame Psi}
\begin{aligned}
 {\mathfrak M}_{\Psi_1}^\sharp (s), {\mathfrak M}_{\Psi_2}^\sharp (s) & 
 \leq_{k_0} N^{\tau_1} \g^{-1} {\mathfrak M}^\sharp (s), \\
{\mathfrak M}_{\langle \partial_\vphi \rangle^{\mathtt b} \Psi_1 }^\sharp (s),
{\mathfrak M}_{\langle \partial_\vphi \rangle^{\mathtt b} \Psi_2}^\sharp (s)
&  \leq_{k_0} N^{\tau_1} \g^{-1} {\mathfrak M}^\sharp (s, {\mathtt b})
 \end{aligned}
\ee
where $ \tau_1 := \tau (k_0 + 1) + k_0$. 

Given $i_1 $, $i_2 $  denote $ \Delta_{12} \Psi_1 := \Psi_1 (i_2) - \Psi_1 (i_1) $. 
If $\gamma/2 \leq \gamma_1, \gamma_2 \leq 2 \gamma$  
then, for all $(\omega, \kappa) \in \tLm_{\nu + 1}^{\gamma_1}(i_1) \cap \tLm_{\nu + 1}^{\gamma_2}(i_2)$,
\be\label{stime delta 12 Psi bassa}
\begin{aligned}
&   \| |\Delta_{12} \Psi_1 | \|_{{\mathcal L}(H^{s_0})}\leq \\ 
& C 
N^{2 \tau} \gamma^{- 1} \big( \||{\mathcal R}(i_2)| \|_{{\mathcal L}(H^{s_0})} \| i_1 - i_2 \|_{2 s_0 + \sigma + \mu(\mathtt b)} + \| | \Delta_{1 2} {\mathcal R} | \|_{{\mathcal L}(H^{s_0})} \big) \, , 
\end{aligned}
\ee
\be\label{stime delta 12 Psi alta}
\begin{aligned}
& \| |\langle \partial_\vphi \rangle^{\mathtt b}\Delta_{12} \Psi_1 | \|_{{\mathcal L}(H^{s_0})} \leq_{\mathtt b} \\
& N^{2 \tau} \gamma^{- 1} \big(  \| |\langle \partial_\vphi \rangle^{\mathtt b}{\mathcal R}(i_2) | \|_{{\mathcal L}(H^{s_0})} \| i_1 - i_2 \|_{2 s_0  + \sigma + \mu(\mathtt b)} 
 +  \| | \langle \partial_\vphi \rangle^{\mathtt b}\Delta_{12} {\mathcal R}  | \|_{{\mathcal L}(H^{s_0})}\big)
\end{aligned}
\ee
and a similar estimate holds for $\Psi_2$, replacing ${\mathcal R}$ by ${\mathcal Q}$. 
Moreover  ${\bf \Psi}$ 
is real, even and reversibility preserving. 
\end{lemma}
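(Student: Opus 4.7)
The plan is to verify that \eqref{shomo1}--\eqref{shomo2} solve the homological equations \eqref{homo1r} by Fourier-series comparison, then establish the modulo-tame bounds \eqref{stima tame Psi} using the second Melnikov conditions built into \eqref{Omega nu + 1 gamma}, and finally obtain the Lipschitz estimates \eqref{stime delta 12 Psi bassa}--\eqref{stime delta 12 Psi alta} via a resolvent identity together with the inductive hypothesis $({\bf S3})_\nu$.

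Expanding both sides of \eqref{homo1r} in the basis $e^{\ii(\ell\cdot\vphi+jx)}$, the equations reduce to the scalar identities $\ii(\omega\cdot\ell+\mu_j-\mu_{j'})(\Psi_1)_j^{j'}(\ell)=[{\cal R}]_j^{j'}(\ell)-(\Pi_N{\cal R})_j^{j'}(\ell)$ and $\ii(\omega\cdot\ell+\mu_j+\mu_{j'})(\Psi_2)_j^{j'}(\ell)=-(\Pi_N{\cal Q})_j^{j'}(\ell)$, which are indeed solved by \eqref{shomo1}--\eqref{shomo2}; the degenerate indices $(\ell,j,j')=(0,\pm j,\pm j)$ for $\Psi_1$ correspond (via $\mu_{-j}=\mu_j$) precisely to the diagonal piece extracted by $[{\cal R}]$ in \eqref{def[R]}. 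For $|\ell|>N$ both sides vanish by the smoothing $\Pi_N$. All other denominators are nonzero on $\tLm_{\nu+1}^{\gamma/2}$ by \eqref{Omega nu + 1 gamma}.

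For the modulo-tame bounds, the standard computation using the Leibniz rule, $k_0$-differentiability of $\mu_j$ from \eqref{mu j nu}--\eqref{stima rj nu}, and the second Melnikov condition gives, for all $|k|\leq k_0$ and $|\ell|\leq N$,
\begin{equation*}
\gamma^{|k|}\bigl|\partial_\lambda^k(\omega\cdot\ell+\mu_j\mp\mu_{j'})^{-1}\bigr|\leq_{k_0}\gamma^{-1}\langle\ell\rangle^{(|k|+1)\tau+|k|}\leq\gamma^{-1}N^{\tau_1}.
\end{equation*}
Another application of Leibniz in \eqref{shomo1}--\eqref{shomo2} yields the entry-wise domination
$\gamma^{|k|}|\partial_\lambda^k(\Psi_n)_j^{j'}(\ell)|\leq_{k_0}\gamma^{-1}N^{\tau_1}\sum_{|k_2|\leq|k|}\gamma^{|k_2|}|\partial_\lambda^{k_2}({\cal R}\text{ or }{\cal Q})_j^{j'}(\ell)|$, so that the majorant operator $|\partial_\lambda^k\Psi_n|$ is pointwise bounded by $\gamma^{-1}N^{\tau_1}$ times the corresponding majorant of $\partial_\lambda^{k_2}{\cal R}$ (respectively $\partial_\lambda^{k_2}{\cal Q}$). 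Definition \ref{def:op-tame} then immediately gives the first half of \eqref{stima tame Psi}. Since $\langle\partial_\vphi\rangle^{\mathtt b}$ commutes with the scalar reciprocal divisor (both are diagonal with respect to the Fourier index $\ell$), the same argument applied after multiplying the matrix entry by $\langle\ell\rangle^{\mathtt b}$ reduces the estimate to the modulo-tame constant $\mathfrak M^\sharp(s,{\mathtt b})$ of $\langle\partial_\vphi\rangle^{\mathtt b}{\cal R}$ and $\langle\partial_\vphi\rangle^{\mathtt b}{\cal Q}$, yielding the second half.

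For the Lipschitz estimates I would use the resolvent identity $\Delta_{12}\Psi_1=-\delta(i_2)^{-1}\Delta_{12}{\cal R}-\delta(i_1)^{-1}\delta(i_2)^{-1}{\cal R}(i_1)\Delta_{12}\delta$, where $\delta:=\ii(\omega\cdot\ell+\mu_j^\nu-\mu_{j'}^\nu)$. On $\tLm_{\nu+1}^{\gamma_1}(i_1)\cap\tLm_{\nu+1}^{\gamma_2}(i_2)$ each factor $\delta(i_m)^{-1}$ is bounded on $|\ell|\leq N$ by $C\gamma^{-1}N^\tau$ (using $\gamma_m\geq\gamma/2$), while $|\Delta_{12}\delta|\leq|\Delta_{12}\mu_j^\nu|+|\Delta_{12}\mu_{j'}^\nu|\leq_S\e\gamma^{-1}\|i_1-i_2\|_{s_0+\mu({\mathtt b})+\sigma}$ by the inductive estimate \eqref{r nu i1 - r nu i2} from $({\bf S3})_\nu$. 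The argument that led to \eqref{stima tame Psi}, applied now entry-wise to each term of the resolvent identity, gives \eqref{stime delta 12 Psi bassa}; commutativity of $\langle\partial_\vphi\rangle^{\mathtt b}$ with $\delta^{-1}$ yields \eqref{stime delta 12 Psi alta} by the same computation. Finally, the algebraic properties of ${\bf\Psi}$ follow from Lemma \ref{R-op:rev}, \eqref{even operators Fourier}, and the symmetries of $(\omega\cdot\ell+\mu_j\mp\mu_{j'})$ under $(\ell,j,j')\mapsto(-\ell,-j,-j')$ combined with $\mu_{-j}=\mu_j\in\R$ and the real/even/reversibility of ${\cal R}, {\cal Q}$. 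The main obstacle is the bookkeeping in the Lipschitz step: achieving the sharp loss $s_0+\mu({\mathtt b})+\sigma$ in \eqref{stime delta 12 Psi bassa} while relying only on the low-norm inductive estimate \eqref{r nu i1 - r nu i2} for $\Delta_{12}\mu_j^\nu$, without incurring $\partial_i$ derivatives of ${\cal R}$ or ${\cal Q}$.
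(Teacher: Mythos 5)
Your verification of the solution formulas and your proof of \eqref{stima tame Psi} follow the paper's own route (entry-wise bound on $\gamma^{|k|}|\partial_\lambda^k(\omega\cdot\ell+\mu_j\mp\mu_{j'})^{-1}|$ by $\gamma^{-1}N^{\tau_1}$, Leibniz, then the majorant/weighted-sum computation, with $\langle\partial_\vphi\rangle^{\mathtt b}$ commuting with the scalar divisor), and that part is fine, as are the symmetry claims at the end.

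The Lipschitz step, however, has a genuine gap. You bound $|\Delta_{12}\delta_{\ell jj'}|\leq|\Delta_{12}\mu_j^\nu|+|\Delta_{12}\mu_{j'}^\nu|\leq_S\e\gamma^{-1}\|i_1-i_2\|_{s_0+\mu(\mathtt b)+\sigma}$ ``by \eqref{r nu i1 - r nu i2}''. This is false: by \eqref{mu j nu} the variation $\Delta_{12}\mu_j^\nu$ contains $\Delta_{12}\mathtt m_3\cdot|j|^{\frac12}(1+\kappa j^2)^{\frac12}+\Delta_{12}\mathtt m_1\,|j|^{\frac12}$, and since $|\Delta_{12}\mathtt m_3|,|\Delta_{12}\mathtt m_1|\leq C\e\|i_1-i_2\|_\sigma$ only (see \eqref{stima lambda 3 - 1 nuova}, \eqref{stime lambda 1 senza proiettore}), each single term grows like $\e\, j^{3/2}\|i_1-i_2\|$, so no bound uniform in $j$ holds; \eqref{r nu i1 - r nu i2} controls only the $r_j^\nu$ piece. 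The triangle-inequality split destroys exactly the cancellation you need: one must estimate the \emph{difference} $\Delta_{12}(\mu_j^\nu-\mu_{j'}^\nu)$ as a whole, so that the coefficient variations multiply $\Omega_j-\Omega_{j'}$ and $|j|^{\frac12}-|j'|^{\frac12}$ and one obtains $|\Delta_{12}\delta_{\ell jj'}|\leq C\e\gamma^{-1}|j^{\frac32}-j'^{\frac32}|\,\|i_1-i_2\|_{2s_0+\sigma+\mu(\mathtt b)}$ (the $\Delta_{12}r^\nu$ term is absorbed because $|j^{\frac32}-j'^{\frac32}|\geq c>0$ for $j\neq j'$, while for $j=j'$ the term vanishes). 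Only with this factor does the product of the two Melnikov lower bounds $|\delta(i_m)|\geq\frac{\gamma}{2}|j^{\frac32}-j'^{\frac32}|\langle\ell\rangle^{-\tau}$ cancel the growth in $(j,j')$, leaving $\e\gamma^{-3}N^{2\tau}\|i_1-i_2\|$, which the smallness $\e\gamma^{-2}\ll1$ converts into the stated $N^{2\tau}\gamma^{-1}$ bound; the analogous observation with $\varsigma=-1$ is needed for $\Psi_2$. So the resolvent-identity structure you propose is the right one, but the divisor-variation bound must be proved via the difference and the $\partial_i$ estimates of $\mathtt m_3,\mathtt m_1$, not via the triangle inequality and \eqref{r nu i1 - r nu i2} alone.
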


In the sequel, for a quantity $ g(i) $ (an operator, a map, a scalar function) depending on the torus $ i $, given 
 $i_1 $, $i_2 $  we denote the difference 
 $$ 
 \Delta_{12} g := g (i_2) - g (i_1) \, .
$$ 
\begin{proof}
We make the proof for $\Psi := \Psi_1$, for $\Psi_2$ is analogous. 

\noindent
{\sc Proof of \eqref{stima tame Psi}}. 
Let $ (\om, \kappa) \in \tLm_{\nu+1}^{\gamma/2} $. 
By \eqref{Omega nu + 1 gamma} with  $ \nu \rightsquigarrow \nu+1 $,  
and the definition of $ \Psi_1 $ in \eqref{shomo1}, we have, for all $ (\ell, j, j')  \in \Z^{\CS} \times {\mathbb S}_0^c \times {\mathbb S}_0^c $, 
with $ |\ell | \leq N $, $ (\ell, j, j') \neq (0, \pm j,  \pm j) $, 
$ |\Psi_j^{j'}(\ell )|  
\leq C N^\tau \gamma^{-1} |{\mathcal R}_j^{j'}(\ell )| $. 
Moreover,  differentiating \eqref{shomo1} with respect to $\lambda = (\omega, \kappa)$, we get 
$$
\pa_\lambda^k \Psi_j^{j'}(\ell ) = {\mathop \sum}_{k_1+ k_2 = k} C(k_1, k_2) 
\big[ \pa_\lambda^{k_1} (\omega \cdot \ell  + \mu_j - \mu_{j'})^{-1} 
\big]
\pa_\lambda^{k_2} {\mathcal R}_j^{j'}(\ell  )  \, , 
$$
and since, by \eqref{mu j nu}, \eqref{stima rj nu}, \eqref{Omega nu + 1 gamma},  \eqref{stima lambda 3 - 1 nuova}, \eqref{stime lambda 1 senza proiettore},
$$
\sup_{|k_1| \leq k_0}  | \pa_\lambda^{k_1} (\omega \cdot \ell  + \mu_j - \mu_{j'})^{-1} | \leq
C(k_0) \langle \ell  \rangle^{\tau(k_0 + 1) + k_0} \gamma^{- 1 - |k_1|} \, , 
$$
we deduce that,  for all $0 < |k| \leq  k_0 $, 
\be\label{stima-Psi-R}
|\partial_\lambda^k \Psi_j^{j'}(\ell )| \leq C(k_0) \langle \ell  \rangle^{\tau(k_0 + 1) + k_0} \gamma^{- 1 - |k|} 
{\mathop \sum}_{|k_2| \leq |k|} \gamma^{|k_2|} |\partial_\lambda^{k_2} {\mathcal R}_j^{j'}(\ell )|\, . 
\ee
Therefore for all $0 \leq |k| \leq k_0$ we get 
\begin{align}
\| |\langle \partial_\vphi \rangle^{\mathtt b}\partial_\lambda^k \Psi| h  \|_s^2 & \leq \sum_{\ell, j} \langle \ell, j \rangle^{2 s} \Big(\sum_{ |\ell' - \ell| \leq N, j'} 
| \langle \ell - \ell' \rangle^{\mathtt b} \partial_\lambda^k \Psi_j^{j'}(\ell - \ell')| |h_{\ell', j'}| \Big)^2 \nonumber\\
&  \stackrel{\eqref{stima-Psi-R}} {\leq_{k_0}}  N^{2 \tau_1} \gamma^{- 2(1 + |k|)} \sum_{|k_2| \leq |k|}  \gamma^{2 |k_2|} \sum_{\ell, j} \langle \ell, j \rangle^{2 s} \times \nonumber \\
& 
\qquad \qquad \qquad \qquad 
\times \Big(\sum_{\ell', j'} | \langle \ell - \ell' \rangle^{\mathtt b} \partial_\lambda^{k_2}{\mathcal R}_j^{j'}(\ell - \ell')| |h_{\ell', j'}| \Big)^2 \nonumber\\
& = N^{2 \tau_1} \gamma^{- 2(1 + |k|)} {\mathop \sum}_{|k_2| \leq |k|} \gamma^{2 |k_2|} \| 
| \langle \partial_\vphi \rangle^{\mathtt b} \partial_\lambda^{k_2}{\mathcal R}|[\norma h \norma] \|_s^2 \nonumber\\
& \stackrel{\eqref{def:msharp}, \eqref{CK0-tame}} {\leq_{k_0}} N^{2 \tau_1} \gamma^{- 2(1 + |k|)} 
\big( {\mathfrak M}^\sharp (s, \mathtt b)^2 \| \norma h \norma\|_{s_0}^2 + {\mathfrak M}^\sharp (s_0, \mathtt b)^2 \| \norma h \norma\|_{s}^2  \big)  \nonumber\\
& \stackrel{ \eqref{Soboequals}} = C(k_0) N^{2 \tau_1} \gamma^{- 2(1 + |k|)} \big( {\mathfrak M}^\sharp (s, \mathtt b)^2 \|  h \|_{s_0}^2 + 
{\mathfrak M}^\sharp (s_0, \mathtt b)^2 \|  h \|_{s}^2  \big) \label{caserta}
\end{align}
and, recalling 
Definition \ref{def:op-tame}, the second inequality in \eqref{stima tame Psi} follows. The proof of the first inequality is analogous. 

\smallskip

\noindent
{\sc Proof of \eqref{stime delta 12 Psi bassa}-\eqref{stime delta 12 Psi alta}}. 
By \eqref{shomo1}, for all $(\omega, \kappa) \in \tLm_{\nu + 1}^{\gamma_1}(i_1) \cap 
\tLm_{\nu + 1}^{\gamma_2}(i_2)$, one has 
$$
\Delta_{1 2} \Psi_j^{j'}(\ell) = \dfrac{\Delta_{12} {\mathcal R}_j^{j'}(\ell)}{\delta_{\ell j j'}(i_1)} - {\mathcal R}_j^{j'}(\ell)(i_2) \dfrac{\Delta_{1 2} \delta_{\ell j j'}}{\delta_{\ell j j'}(i_1) \delta_{\ell j j'}(i_2)}\,,\ 
\delta_{\ell j j'} := \ii (\omega \cdot \ell + \mu_j - \mu_{j'})\,.
$$
By \eqref{mu j nu},  
\eqref{stima lambda 3 - 1 nuova}, \eqref{stime lambda 1 senza proiettore}, \eqref{r nu i1 - r nu i2} we get 
$$
|\Delta_{12} \delta_{\ell j j'}|  = |\Delta_{1 2} (\mu_j - \mu_{j'})|\leq 
 C \e \gamma^{- 1} ||j|^{\frac32} - |j'|^{\frac32}| \| i_1 - i_2 \|_{2 s_0 + \sigma + \mu(\mathtt b)}\,,
 $$
whence $\gamma_1^{- 1}, \gamma_2^{- 1} \leq \gamma^{- 1}$, $\e \gamma^{- 2}$ small enough, imply 
$$
| \Delta_{12} \Psi_j^{j'}(\ell)| \leq C N^{2 \tau} \gamma^{- 1} \big( |{\mathcal R}_j^{j'}(\ell)(i_2)| \| i_1 - i_2 \|_{2 s_0 + \sigma + \mu(\mathtt b)} +  |\Delta_{1 2} {\mathcal R}_j^{j'}(\ell)|\big)
$$
and \eqref{stime delta 12 Psi bassa}, \eqref{stime delta 12 Psi alta} follow arguing as in \eqref{caserta}.

Finally, since ${\bf R}, {\bf Q}$ are even and reversible, 
\eqref{shomo1}, \eqref{shomo2} imply that ${\bf \Psi}$ is even and reversibility preserving.  
\end{proof}

By \eqref{Ltra1}, \eqref{equazione omologica} we have 
$$
{\bf L}_+ = {\bf \Phi}^{- 1} {\bf L} {\bf \Phi} = \Dom {\mathbb I}_2^\bot + \ii {\bf D}_+ + {\bf R}_+ + {\bf Q}_+ 
$$
which  proves \eqref{coniugionu+1} and \eqref{cal L nu} at the step $ \nu + 1 $,  with 
\be\label{new-diag-new-rem} 
\ii {\bf D}_+ := \ii {\bf D} + [{\bf R}] \, , \quad 
{\bf R}_+ + {\bf Q}_+ = 
{\bf \Phi}^{- 1} \big( \Pi_{N}^\bot {\bf R} + \Pi_{N}^\bot {\bf Q} + {\bf R} {\bf \Psi} - {\bf \Psi} [{\bf R}] + {\bf Q} {\bf \Psi} \big) \, . 
\ee
The new operator $ {\bf L}_+  $ has the same form of $ \bf L $ with
$ {\bf R}_+ + {\bf Q}_+ $  which is the sum of a quadratic function of 
$ { \bf \Psi} $ and  $ ({ \bf R}, {\bf Q}) $ and a remainder supported on high frequencies. 
The new normal form $ {\bf D}_+  $ is diagonal:  

\begin{lemma}\label{nuovadiagonale} 
{\bf (New diagonal part).}  
The new normal form 
 is 
\be\label{new-NF}
\begin{aligned}
& \ii {\bf D}_+ = \ii {\bf D} + [{\bf R}] = \ii \begin{pmatrix}
 {\mathcal D}_+ & 0 \\
0 & - {\mathcal D}_+
\end{pmatrix}\,, \\
&  {\mathcal D}_+ := {\rm diag}_{j \in {\mathbb S}^c_0} \mu_j^+\,,\quad \mu_j^+ := 
\mu_j + {\mathtt r}_j\in \R\, , 
\end{aligned}
\ee
with $  {\mathtt r}_j = {\mathtt r}_{- j} $, $  \mu_j^+ = \mu_{- j}^+ $, $  \forall j \in {\mathbb S}_0^c $, and 
$ |\mu_j^+ - \mu_j|^{k_0, \gamma} \lessdot {\mathfrak M}^\sharp (s_0)$.

Moreover, given tori $ i_1(\omega, \kappa) $, $ i_2(\omega, \kappa)$ then,
 for all $(\omega, \kappa) \in \tLm_{\nu}^{\gamma_1}(i_1) \cap \tLm_\nu^{\gamma_2}(i_2)$, the  difference 
\be\label{diff:r1r2}
| {\mathtt r}_j (i_1) -{\mathtt r}_j (i_2)   |  
\leq C \| | \Delta_{1 2} {\mathcal R} | \|_{{\mathcal L}(H^{s_0})}\,.
\ee
\end{lemma}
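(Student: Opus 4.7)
The plan is to unpack the definitions \eqref{new-diag-new-rem} and \eqref{def[R]}, extract the diagonal coefficients of $\mathcal{R}$ explicitly, and check that the symmetries (reversibility and evenness) force them to have the claimed structure, after which the quantitative bounds follow immediately from the modulo-tame hypothesis.

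First I would write out ${\bf D}_+$ block by block. From \eqref{forma cal R nu} and \eqref{def[R]},
\[
\ii {\bf D} + [{\bf R}] = \begin{pmatrix} \ii \mathcal{D} + [\mathcal{R}] & 0 \\ 0 & -\ii \mathcal{D} + [\overline{\mathcal{R}}] \end{pmatrix},
\]
so setting $\mu_j^+ := \mu_j - \ii (\mathcal{R})_j^j(0)$ in the upper block automatically yields $-\mu_j^+ = -\mu_j + \ii \overline{(\mathcal{R})_j^j(0)}$ in the lower block \emph{provided} $(\mathcal{R})_j^j(0) \in \ii\R$. To see this, recall that by induction ${\bf R}_\nu$ is reversible and even; by Lemma~\ref{R-op:rev} this forces $\mathcal{R}(-\vphi) = -\overline{\mathcal{R}}(\vphi)$, which at the level of matrix entries reads $(\mathcal{R})_j^{j'}(-\ell) = -\overline{(\mathcal{R})_j^{j'}(\ell)}$. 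Evaluating at $\ell = 0$ gives $(\mathcal{R})_j^j(0) = -\overline{(\mathcal{R})_j^j(0)}$, so $\mathtt{r}_j := -\ii (\mathcal{R})_j^j(0) \in \R$ and $\mu_j^+ = \mu_j + \mathtt{r}_j \in \R$. The identity $\mu_j^+ = \mu_{-j}^+$ follows from evenness of $\mathcal{R}$: by \eqref{even operators Fourier} one has $(\mathcal{R})_j^{j'}(\ell) = (\mathcal{R})_{-j}^{-j'}(\ell)$, whence $\mathtt{r}_j = \mathtt{r}_{-j}$, and since $\mu_j$ inherits $\mu_j = \mu_{-j}$ from the inductive hypothesis \eqref{stima rj nu}, so does $\mu_j^+$.

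Next I would establish the quantitative bound $|\mu_j^+ - \mu_j|^{k_0,\gamma} = |\mathtt{r}_j|^{k_0,\gamma} \lessdot \mathfrak{M}^\sharp(s_0)$. The strategy is to isolate $(\mathcal{R})_j^j(0)$ by testing against the pure exponential $e^{\ii j x}$ (independent of $\vphi$). Applying $\partial_\lambda^k \mathcal{R}$ to $e^{\ii j x}$ produces the Fourier series $\sum_{\ell, j_1} \partial_\lambda^k (\mathcal{R})_{j_1}^{j}(\ell)\, e^{\ii(\ell\cdot\vphi + j_1 x)}$, so keeping only the $(\ell,j_1)=(0,j)$ term in the $\|\cdot\|_{s_0}$ norm yields
\[
\langle j \rangle^{s_0} |\partial_\lambda^k (\mathcal{R})_j^j(0)| \;\leq\; \|\partial_\lambda^k \mathcal{R}\, [e^{\ii j x}]\|_{s_0} \;\leq\; \| |\partial_\lambda^k \mathcal{R}|\, [e^{\ii j x}] \|_{s_0}.
\]
Now the modulo-tame bound \eqref{CK0-tame} with $u = e^{\ii j x}$, together with $\|e^{\ii j x}\|_{s_0} \lessdot \langle j \rangle^{s_0}$ and $\|e^{\ii j x}\|_s \lessdot \langle j \rangle^s$ for $s = s_0$, gives $\gamma^{|k|} \| |\partial_\lambda^k \mathcal{R}| e^{\ii j x}\|_{s_0} \leq C \mathfrak{M}_{\mathcal{R}}^\sharp(s_0) \langle j \rangle^{s_0}$. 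Dividing by $\langle j \rangle^{s_0}$ and summing over $|k|\leq k_0$ yields $|\mathtt{r}_j|^{k_0,\gamma} \lessdot \mathfrak{M}^\sharp_{\mathcal{R}}(s_0) \leq \mathfrak{M}^\sharp(s_0)$ by the definition \eqref{def:msharp}.

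Finally, for the Lipschitz-in-$i$ estimate \eqref{diff:r1r2}, I apply exactly the same extraction argument to $\Delta_{12}\mathcal{R}$: we have
\[
|\Delta_{12} \mathtt{r}_j| = |\Delta_{12}(\mathcal{R})_j^j(0)| \;\leq\; \langle j\rangle^{-s_0} \|(\Delta_{12}\mathcal{R})[e^{\ii j x}]\|_{s_0} \;\leq\; \| |\Delta_{12}\mathcal{R}|\|_{\mathcal{L}(H^{s_0})},
\]
where in the last step I use Lemma~\ref{A versus |A|}-type bound $\|\Delta_{12}\mathcal{R}\|_{\mathcal{L}(H^{s_0})} \leq \||\Delta_{12}\mathcal{R}|\|_{\mathcal{L}(H^{s_0})}$ together with $\|e^{\ii j x}\|_{s_0} \lessdot \langle j \rangle^{s_0}$. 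This gives \eqref{diff:r1r2}. I do not expect a real obstacle here: the symmetry arguments are routine once one recalls Lemma~\ref{R-op:rev} and \eqref{even operators Fourier}, and the quantitative bounds reduce to testing the modulo-tame operator on a single exponential, which is the cleanest possible input.
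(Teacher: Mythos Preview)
Your quantitative arguments (reality from reversibility, the $j\leftrightarrow -j$ symmetry from evenness, and the extraction of a single diagonal entry via testing against $e^{\ii jx}$) are all correct and match the paper's approach in spirit. However, you have missed one structural point that the paper's proof makes explicit.

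You take $\mathtt{r}_j := -\ii(\mathcal{R})_j^j(0)$, following the literal definition \eqref{def[R]}. But look again at \eqref{shomo1}: the transformation $\Psi_1$ is set to zero not only at $(\ell,j,j')=(0,j,j)$ but also at $(0,j,-j)$, because $\mu_j=\mu_{-j}$ makes the divisor vanish there as well. Hence the ``residual diagonal'' that actually goes into ${\bf D}_+$ carries both the $(\mathcal{R})_j^j(0)$ and the $(\mathcal{R})_j^{-j}(0)$ entries. Since $\mathbf{L}_\nu$ acts on the subspace of functions \emph{even} in $x$ (see $({\bf S1})_\nu$), where $u_{-j}=u_j$, this two--entry operator acts diagonally on $\cos(jx)$ with eigenvalue $(\mathcal{R})_j^j(0)+(\mathcal{R})_j^{-j}(0)$, and the paper accordingly sets $\ii\mathtt{r}_j := (\mathcal{R})_j^j(0)+(\mathcal{R})_j^{-j}(0)$. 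Your bounds extend verbatim to this corrected $\mathtt{r}_j$ (just add the analogous term for $(\mathcal{R})_j^{-j}(0)$), so nothing in your estimates breaks; but without this correction your $\mathbf{D}_+$ is not the operator that appears in the next iterate, and the conjugation identity \eqref{coniugionu+1} would fail by exactly those anti--diagonal entries.

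A minor remark: your translation of $\mathcal{R}(-\vphi)=-\overline{\mathcal{R}}(\vphi)$ into matrix entries should read $(\mathcal{R})_j^{j'}(\ell)=-\overline{(\mathcal{R})_{-j}^{-j'}(\ell)}$ (the bar operation \eqref{def:barA} flips the spatial indices); only after invoking evenness \eqref{even operators Fourier} does this collapse to your stated relation at $\ell=0$, $j'=j$. The conclusion $(\mathcal{R})_j^j(0)\in\ii\R$ is unaffected.
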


\begin{proof}
The operator $ [ {\mathcal R}] $ in \eqref{def[R]} satisfies 
\begin{align*}
[{\mathcal R}] u  & = {\mathop \sum}_{j \in {\mathbb S}_0^c} 
 \big( {\mathcal R}_j^{- j} (0) u_{- j} + {\mathcal R}_j^j (0) u_j \big) e^{\ii j x} = 
 {\mathop \sum}_{j \in {\mathbb S}_0^c} 
 \big( {\mathcal R}_j^{- j} (0)  + {\mathcal R}_j^j (0) \big) u_j e^{\ii j x}  
\end{align*} 
since  $ [{\mathcal R}] $ acts on the space $H_{\mathbb S_0}^\bot$ of  functions even in $ x $, i.e. $ u_j = u_{- j} $ (see \eqref{definizione H S0 bot}).
Thus \eqref{new-NF} holds with 
$ {\mathcal R}_j^{- j} (0)  + {\mathcal R}_j^j (0) =: \ii {\mathtt r}_j  $. 
Since  $ {\mathcal R}$ is even, by \eqref{even operators Fourier}  we deduce 
$ {\mathtt r}_{-j} = {\mathtt r}_j  $.
In addition, since  $ {\mathcal R}  = A + \ii B $ is reversible we have $ {\mathcal R}(-\vphi) = - \overline{{\mathcal R}}(\vphi) $, and
so  
the maps $ \vphi \mapsto A_j^{j'} (\vphi) $ are odd and  
so the average $ A_j^j(0) := \int_{\T^{\CS}} A_j^j(\vphi)\, d \vphi = 0 $ as well as $A_j^{- j}(0) = 0 $.
Hence 
$  {\mathcal R}_j^j(0) + {\mathcal R}_j^{- j}(0) = \ii (B_j^j(0) + B_j^{- j}(0) ) \in \ii \R $ and  each $ {\mathtt r}_j  \in \R $. 

Recalling the definition of $ {\mathfrak M}^\sharp (s_0) $ in \eqref{def:msharp} (with $s = s_0 $) and 
Defintion \ref{def:op-tame},  we have, for $\lambda = (\omega, \kappa)$,
for all $0 \leq |k| \leq k_0$, 
$ \| |\partial_\lambda^k {\mathcal R}| h \|_{s_0} \leq 2 \gamma^{- |k|} {\mathfrak M}^\sharp (s_0) \| h \|_{s_0} $, 
which implies that (see \eqref{tame-coeff})
$$  
|\partial_\lambda^k {\mathcal R}_j^j(0)| + |\partial_\lambda^k {\mathcal R}_j^{- j}(0)|  \leq C \gamma^{- |k|}  {\mathfrak M}^\sharp (s_0) \, . 
$$
Hence  
$$ |\mu_j^+ - \mu_j|^{k_0, \gamma} 
\leq |{\mathcal R}_j^j(0)|^{k_0, \gamma} +  |{\mathcal R}_j^{- j}(0)|^{k_0, \gamma} \leq C {\mathfrak M}^\sharp (s_0)\,.$$ 
The estimate \eqref{diff:r1r2} 
follows analogously by 
$$
|\Delta_{1 2} ({\mathcal R}_j^j(0) + {\mathcal R}_j^{- j}(0)) | \leq C \| | \Delta_{1 2} {\mathcal R} | \|_{{\mathcal L}(H^{s_0})} \, .
$$ 
This completes the proof of the lemma. 
\end{proof}

\subsection{The iteration}

Let $ \nu \geq 0 $ and suppose that the statements $({\bf S1})_{\nu} $-$({\bf S4})_{\nu} $ are true. 
We prove $({\bf S1})_{\nu + 1}$-$({\bf S4})_{\nu + 1}$.
\\[1mm]
{\sc Proof of $({\bf S1})_{\nu + 1}$}. Since the eigenvalues $\mu_j^\nu$ are defined on 
${\mathcal N}(\tLm_\nu^\gamma, \g N_{\nu-1}^{- \tau - 2})$,
the set  
$ \tLm_{\nu + 1}^\gamma $ 
is  well-defined. Moreover
   $\mu_j^\nu$ are well defined also 
  on the set
 $$
 {\mathcal N}(\tLm_{\nu + 1}^\g, \gamma N_\nu^{- \tau - 2}) 
 \subseteq {\mathcal N}(\tLm_\nu^\gamma, \gamma N_{\nu - 1}^{- \tau - 2})
 $$
because $\tLm_{\nu + 1}^\gamma \subseteq \tLm_\nu^\gamma$. 
  Let us prove \eqref{inclusione-insiemi-gamma2} at the step $ \nu + 1 $, namely that 
$$ 
{\mathcal N}(\tLm_{\nu + 1}^\gamma, \g N_\nu^{- \tau - 2}) \subset \tLm_{\nu + 1}^{\gamma / 2 } \, .
$$ 
Indeed, 
let  $\lambda_0 = (\omega_0, \kappa_0) \in \tLm_{\nu+1}^\gamma$ and $\lambda = (\omega, \kappa)$ with $|\lambda - \lambda_0| \leq \gamma N_\nu^{- \tau - 2}$. 
Then, for all $| \ell | \leq N_\nu $, $ j \neq j' $  (consider the case $ \varsigma = 1 $),  
$$
\begin{aligned}
|\omega \cdot \ell  + \mu_j^\nu (\lambda)- \mu_{j'}^\nu(\lambda)| & \geq 
 |\omega_0 \cdot \ell  + \mu_j^\nu(\lambda_0) - \mu_{j'}^\nu(\lambda_0)|  \\
 &   - |\omega - \omega_0| |\ell | - |(\mu_j^\nu - \mu_{j'}^\nu)(\lambda) - (\mu_j^\nu - \mu_{j'}^\nu)(\lambda_0)| \\
& \stackrel{\eqref{stima a 15}, \eqref{stime lambda 1 senza proiettore}, \eqref{stima rj nu} , \e \gamma^{- 2} \leq 1}{
\geq}  |\omega_0 \cdot \ell  + \mu_j^\nu(\omega_0) - \mu_{j'}^\nu(\omega_0)| \\
& - \big( |\ell  | 
+ C(S)  |j^{\frac32} - j'^{\frac32}| \big) |\lambda - \lambda_0| \\
& \geq  \gamma |j^{\frac32} - j'^{\frac32}| \langle \ell  \rangle^{- \tau} - \gamma N_\nu^{- \tau - 1} - 
C(S) \gamma |j^{\frac32} - j'^{\frac32}| N_\nu^{- \tau - 2}   \\
& \geq 
\frac{\gamma}{2} |j^{\frac32} - j'^{\frac32}|  \langle \ell \rangle^{-\tau}  
\end{aligned}
$$
for $N_0  > 4 C(S)$ large enough. Thus $ \lambda = (\om, \kappa) \in \tLm_{\nu + 1}^{\gamma / 2 } $ 
(defined in \eqref{Omega nu + 1 gamma} with $ \nu \rightsquigarrow \nu + 1 $ and $ \g \rightsquigarrow \g / 2 $).

By \eqref{inclusione-insiemi-gamma2} at the step $ \nu + 1 $ 
 and Lemma \ref{Homological equations tame}, for all $ (\om, \kappa) \in 
{\mathcal N}(\tLm_{\nu + 1}^\gamma, \g N_\nu^{- \tau - 2}) $ the solutions $\Psi_{\nu, m}$, $ m =1,2 $, 
of the homological equations 
 \eqref{homo1r}, defined
 in \eqref{shomo1}, \eqref{shomo2},    are well defined and, 
 by  \eqref{stima tame Psi}, \eqref{stima cal R nu},  
 satisfy for all $0 \leq |k| \leq k_0$, the estimates \eqref{tame Psi nu - 1} at $ \nu + 1 $.
In particular \eqref{tame Psi nu - 1} at $ \nu + 1 $  with $ k  = 0 $, $ s = s_0 $ imply 
\begin{equation}\label{costante tame Psi nu i (s0)}
{\mathfrak M}^\sharp_{\Psi_{\nu, m}}(s_0 ) \leq_{k_0, \mathtt b} 
 N_{\nu}^{\tau_1} N_{\nu - 1}^{- \mathtt a} \gamma^{- 1} {\mathfrak M}_0(s_0, {\mathtt b})  \, , \ \ m = 1, 2 \, .
\end{equation}
Therefore, by  \eqref{alpha beta}, \eqref{KAM smallness condition1}, the smallness condition \eqref{piccolezza neumann tame} of Lemma \ref{serie di neumann per maggioranti} is verified for $ N_0 := N_0 (S, {\mathtt b}) $ large enough and 
the map ${\bf \Phi}_\nu = {\mathbb I}_2^\bot + {\bf \Psi}_\nu $ is invertible.
Its inverse has the form 
\begin{equation}\label{inverso Phi nu}
{\bf \Phi}_\nu^{- 1} = {\mathbb I}_2^\bot + \check{\bf \Psi}_\nu\,, \qquad  \check{\bf \Psi}_\nu :=  \begin{pmatrix}
\check{\Psi}_{\nu, 1} & \check{\Psi}_{\nu, 2} \\
\overline{\check{\Psi}}_{\nu, 2} & \overline{\check{\Psi}}_{\nu, 1}
\end{pmatrix} 
\end{equation}
and, by Lemma \ref{serie di neumann per maggioranti}, the $ {\check \Psi}_{\nu, m} $ 
$ m = 1, 2 $, are $ {\mathcal D}^{k_0} $-modulo-tame with the same modulo-tame constants
 of $ \Psi_{\nu,m} $  (see \eqref{tame Psi nu - 1} for $ \nu +1$),  i.e. 
\be\label{stima tame Psi tilde}
\begin{aligned}
 {\mathfrak M}_{{\check \Psi}_{\nu, m}}^\sharp (s)  & \leq_{k_0, \mathtt b} \g^{-1} 
N_{\nu }^{\tau_1} N_{\nu - 1}^{- \mathtt a} {\mathfrak M}_0(s, \mathtt b) \, , \\
 {\mathfrak M}_{\langle \partial_\vphi \rangle^{\mathtt b}  {\check \Psi}_{\nu , m}}^\sharp (s) &  \leq_{k_0, \mathtt b} \g^{-1} 
N_{\nu }^{\tau_1} N_{\nu - 1}  {\mathfrak M}_0(s, \mathtt b) \, . 
\end{aligned}
\ee
Since ${\bf \Psi}_\nu$ is even and reversibility preserving, also $ \check{\bf \Psi}_\nu$ is even and reversibility preserving. 

By Lemma \ref{nuovadiagonale} the operator $ {\bf D}_{\nu+1} $ is diagonal and its  eigenvalues  
$$ 
\mu_j^{\nu+1} : {\mathcal N}(\tLm_{\nu+1}^\gamma, \gamma N_{\nu}^{-\tau - 2}) \to \R 
$$ 
satisfy \eqref{stima rj nu} at $ \nu + 1 $. 

Now we estimate the remainder (see \eqref{new-diag-new-rem})
$$
\begin{aligned}
& \qquad \qquad {\bf R}_{\nu + 1} + {\bf Q}_{\nu + 1} := {\bf \Phi}_\nu^{- 1} {\bf H}_\nu\,, \\
&  {\bf H}_\nu := \Pi_{N_\nu}^\bot {\bf R}_\nu + \Pi_{N_\nu}^\bot {\bf Q}_\nu + {\bf R}_\nu {\bf \Psi}_\nu - {\bf \Psi}_\nu [{\bf R}_\nu] + {\bf Q}_\nu {\bf \Psi}_\nu\,.
\end{aligned}
$$
By \eqref{inverso Phi nu}, \eqref{forma cal R nu}, \eqref{defPsi} we get  
\begin{equation}\label{bf R nu + 1 bf Q nu + 1}
{\bf R}_{\nu + 1} = \begin{pmatrix}
{\mathcal R}_{\nu + 1} & 0 \\
0 & \overline{\mathcal R}_{\nu + 1}
\end{pmatrix}\,, \qquad {\bf Q}_{\nu + 1} := \begin{pmatrix}
0 & {\mathcal Q}_{\nu + 1} \\
\overline{\mathcal Q}_{\nu + 1} & 0
\end{pmatrix} 
\end{equation}
where 
\begin{align}
{\mathcal R}_{\nu + 1} & := ({\rm Id} + \check \Psi_{\nu , 1}) (\Pi_{N_\nu}^\bot {\mathcal R}_\nu + {\mathcal R}_\nu \Psi_{\nu, 1} - \Psi_{\nu, 1} [{\mathcal R}_\nu]  + {\mathcal Q}_\nu \overline{\Psi}_{\nu, 2} ) \nonumber\\
& \quad  + \check{\Psi}_{\nu, 2} (\Pi_{N_\nu}^\bot {\mathcal Q}_\nu + {\mathcal R}_\nu \Psi_{\nu, 2} - \Psi_{\nu, 2} [\overline{\mathcal R}_\nu] +  {\mathcal Q}_\nu \overline{\Psi}_{\nu, 1})\,, \label{cal R nu + 1 tame} \\
{\mathcal Q}_{\nu + 1} & := ({\rm Id} + \check{\Psi}_{\nu, 1})(\Pi_{N}^\bot {\mathcal Q}_\nu + {\mathcal R}_\nu \Psi_{\nu, 2} - \Psi_{\nu, 2} [\overline{\mathcal R}_{\nu}] +  {\mathcal Q}_\nu \overline{\Psi}_{\nu, 1}) \nonumber\\
& \qquad + \Pi_{N}^\bot \overline{\mathcal R}_\nu + \overline{\mathcal R}_\nu \overline{\Psi}_{\nu, 1} - \overline{\Psi}_{\nu, 1} [\overline{\mathcal R}_\nu] +  \overline{\mathcal Q}_\nu {\Psi}_{\nu, 2} \, . \label{cal Q nu + 1 tame}
\end{align}

\begin{lemma}\label{estimate in low norm} {\bf (Nash-Moser iterative scheme)}
The operators $  {\mathcal R}_{\nu + 1} $, $  {\mathcal Q}_{\nu + 1} $  are $ {\mathcal D}^{k_0} $-modulo-tame with 
modulo-tame constants satisfying
\begin{equation}\label{schema quadratico tame}
{\mathfrak M}_{\nu + 1}^\sharp (s ) \leq_{k_0} N_\nu^{- {\mathtt b}} {\mathfrak M}_\nu^\sharp (s, {\mathtt b}) 
+  N_\nu^{\tau_1} \gamma^{- 1} {\mathfrak M}_\nu^\sharp (s) {\mathfrak M}_\nu^\sharp (s_0)\,.
\end{equation}
The operators $ \langle \partial_\vphi \rangle^{ {\mathtt b}} {\mathcal R}_{\nu + 1}, 
\langle \partial_\vphi \rangle^{ {\mathtt b}} {\mathcal Q}_{\nu + 1} $ are $ {\mathcal D}^{k_0} $-modulo-tame with 
modulo-tame constants satisfying 
\be\label{M+Ms}
\begin{aligned}
{\mathfrak M}_{\nu + 1}^\sharp (s, {\mathtt b})  
\leq_{k_0, \mathtt b} 
 {\mathfrak M}_\nu^\sharp (s, \mathtt b) & + 
 N_\nu^{\tau_1} \gamma^{- 1} {\mathfrak M}_\nu^\sharp (s, \mathtt b) 
{\mathfrak M}_\nu(s_0) \\ 
& + N_\nu^{\tau_1} \gamma^{- 1} {\mathfrak M}_\nu^\sharp (s_0, \mathtt b) {\mathfrak M}_\nu^\sharp (s)  \,.
\end{aligned}
\ee
\end{lemma}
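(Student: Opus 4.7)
\textbf{Proof plan for Lemma \ref{estimate in low norm}.}
The starting point is the explicit decomposition \eqref{cal R nu + 1 tame}--\eqref{cal Q nu + 1 tame} which expresses $\mathcal{R}_{\nu+1}$ and $\mathcal{Q}_{\nu+1}$ as finite sums of compositions of operators of three basic types: (a) the maps ${\rm Id}+\check\Psi_{\nu,1}$, $\check\Psi_{\nu,2}$, which are $\mathcal{D}^{k_0}$-modulo-tame with constants controlled by $\eqref{stima tame Psi tilde}$; (b) the smoothed remainders $\Pi_{N_\nu}^\bot \mathcal{R}_\nu$ and $\Pi_{N_\nu}^\bot \mathcal{Q}_\nu$; and (c) the bilinear pieces $\mathcal{R}_\nu\Psi_{\nu,m}$, $\Psi_{\nu,m}[\mathcal{R}_\nu]$, $\mathcal{Q}_\nu\bar\Psi_{\nu,m}$ for $m=1,2$. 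The plan is to bound the modulo-tame constant of every individual summand using only the composition inequalities of Lemma \ref{interpolazione moduli parametri}, the smoothing estimate of Lemma \ref{lemma:smoothing-tame}, and the homological bounds \eqref{tame Psi nu - 1}, \eqref{stima tame Psi tilde}, then to sum up.

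\emph{Proof of \eqref{schema quadratico tame}.} For the smoothed terms of type (b), the first inequality in \eqref{proprieta tame proiettori moduli} yields $\mathfrak{M}_{\Pi_{N_\nu}^\bot \mathcal{R}_\nu}^\sharp(s) \leq N_\nu^{-\mathtt b}\,\mathfrak{M}_\nu^\sharp(s,\mathtt b)$ and analogously for $\mathcal{Q}_\nu$. Composing on the left with ${\rm Id}+\check\Psi_{\nu,1}$ or with $\check\Psi_{\nu,2}$ via \eqref{modulo tame constant for composition}, and using that $\mathfrak{M}_{\check\Psi_{\nu,m}}^\sharp(s_0)\lesssim 1$ by \eqref{stima tame Psi tilde}, \eqref{KAM smallness condition1}, \eqref{costante tame Psi nu i (s0)}, produces the contribution $\lesssim N_\nu^{-\mathtt b}\,\mathfrak{M}_\nu^\sharp(s,\mathtt b)$. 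For the bilinear terms of type (c), a direct application of \eqref{modulo tame constant for composition} together with the first inequality in \eqref{stima tame Psi} bounds, e.g., $\mathfrak{M}_{\mathcal{R}_\nu\Psi_{\nu,m}}^\sharp(s)\lesssim \mathfrak{M}_\nu^\sharp(s)\,\mathfrak{M}_{\Psi_{\nu,m}}^\sharp(s_0)+\mathfrak{M}_\nu^\sharp(s_0)\mathfrak{M}_{\Psi_{\nu,m}}^\sharp(s)\lesssim N_\nu^{\tau_1}\gamma^{-1}\mathfrak{M}_\nu^\sharp(s)\,\mathfrak{M}_\nu^\sharp(s_0)$, where the two summands are exchanged by symmetry. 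The terms $\Psi_{\nu,m}[\mathcal{R}_\nu]$ and $\mathcal{Q}_\nu\bar\Psi_{\nu,m}$ are handled identically, using that $[\mathcal{R}_\nu]$ is diagonal and hence $\mathfrak{M}_{[\mathcal{R}_\nu]}^\sharp(s)\lesssim \mathfrak{M}_\nu^\sharp(s)$. Pre-multiplying by ${\rm Id}+\check\Psi_{\nu,1}$ or $\check\Psi_{\nu,2}$ preserves the shape of the bound by \eqref{modulo tame constant for composition} and the smallness of $\mathfrak{M}_{\check\Psi_{\nu,m}}^\sharp(s_0)$. Summing the two contributions yields \eqref{schema quadratico tame}.

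\emph{Proof of \eqref{M+Ms}.} The plan is identical, but one now estimates $\langle\partial_\vphi\rangle^{\mathtt b}$ applied to each summand. For a type (b) term, $\langle\partial_\vphi\rangle^{\mathtt b}\Pi_{N_\nu}^\bot\mathcal{R}_\nu=\Pi_{N_\nu}^\bot\langle\partial_\vphi\rangle^{\mathtt b}\mathcal{R}_\nu$ so the second inequality in \eqref{proprieta tame proiettori moduli} gives a bound by $\mathfrak{M}_\nu^\sharp(s,\mathtt b)$ without the $N_\nu^{-\mathtt b}$ gain; when composed with the $O(1)$ prefactor from (a), this produces the first term on the right of \eqref{M+Ms}. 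For the bilinear terms of type (c), the key tool is now the Leibniz-type estimate \eqref{K cal A cal B}, which distributes the weight $\langle\partial_\vphi\rangle^{\mathtt b}$ across the composition; combining it with the second inequality in \eqref{stima tame Psi} (which controls $\mathfrak{M}_{\langle\partial_\vphi\rangle^{\mathtt b}\Psi_{\nu,m}}^\sharp(s)$ by $N_\nu^{\tau_1}\gamma^{-1}\mathfrak{M}_\nu^\sharp(s,\mathtt b)$) produces the two symmetric cross-terms $N_\nu^{\tau_1}\gamma^{-1}\mathfrak{M}_\nu^\sharp(s,\mathtt b)\mathfrak{M}_\nu^\sharp(s_0)+N_\nu^{\tau_1}\gamma^{-1}\mathfrak{M}_\nu^\sharp(s_0,\mathtt b)\mathfrak{M}_\nu^\sharp(s)$. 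Absorbing ${\rm Id}+\check\Psi_{\nu,1}$ and $\check\Psi_{\nu,2}$ on the left by a further application of \eqref{K cal A cal B}, using the analogous second inequality of \eqref{stima tame Psi tilde}, preserves the shape.

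\emph{Main obstacle.} There is no single analytical step that is hard; the challenge is entirely bookkeeping. One must check that every one of the roughly twenty summands produced by expanding \eqref{cal R nu + 1 tame}--\eqref{cal Q nu + 1 tame} falls, after applying the appropriate variant of \eqref{modulo tame constant for composition} or \eqref{K cal A cal B}, into one of the three allowed shapes (for \eqref{schema quadratico tame}) or the three allowed shapes (for \eqref{M+Ms}). The subtle point is to consistently exploit $\mathfrak{M}_{\check\Psi_{\nu,m}}^\sharp(s_0)\ll 1$ so that the prefactor ${\rm Id}+\check\Psi_{\nu,m}$ never produces an extra smallness-breaking factor in the high-norm estimate, and to use the identity $\langle\partial_\vphi\rangle^{\mathtt b}\Pi_{N_\nu}^\bot=\Pi_{N_\nu}^\bot\langle\partial_\vphi\rangle^{\mathtt b}$ to correctly track the only term that contributes at the leading order $\mathfrak{M}_\nu^\sharp(s,\mathtt b)$ on the right-hand side of \eqref{M+Ms}.
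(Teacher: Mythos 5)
Your proposal is correct and follows essentially the same route as the paper's own (very terse) proof: one estimates term by term the decomposition \eqref{cal R nu + 1 tame}--\eqref{cal Q nu + 1 tame}, using the smoothing Lemma \ref{lemma:smoothing-tame} for the $\Pi_{N_\nu}^\bot$-pieces, the composition bounds \eqref{modulo tame constant for composition}, \eqref{K cal A cal B} of Lemma \ref{interpolazione moduli parametri} for the bilinear pieces, and the homological-equation estimates \eqref{stima tame Psi}, \eqref{stima tame Psi tilde} (together with \eqref{stima cal R nu} for the high-norm bound \eqref{M+Ms}). Your plan in fact spells out the bookkeeping the paper leaves implicit, and no gap is present.
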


\begin{proof}
We estimate each term in \eqref{cal R nu + 1 tame}-\eqref{cal Q nu + 1 tame}.
The proof of \eqref{schema quadratico tame} follows by Lemmata \ref{lemma:smoothing-tame},
\ref{interpolazione moduli parametri},
\eqref{stima tame Psi}, \eqref{stima tame Psi tilde}. 
The proof of \eqref{M+Ms} follows by Lemma \ref{interpolazione moduli parametri}  
\eqref{stima tame Psi}, \eqref{stima tame Psi tilde},
\eqref{stima cal R nu} and Lemma \ref{lemma:smoothing-tame}. 
\end{proof}

The estimates \eqref{schema quadratico tame}, \eqref{M+Ms},  and  \eqref{alpha beta},  
allow to prove  that also \eqref{stima cal R nu} holds at the step $ \nu + 1 $. 

\begin{lemma}\label{stima M nu + 1 K nu + 1}
$$ 
\begin{aligned}
& {\mathfrak M}_{\nu + 1}^	\sharp (s) \leq C_*(s_0, \mathtt b) N_\nu^{- \mathtt a} {\mathfrak M}_0(s, \mathtt b) \, ,  \\
& {\mathfrak M}_{\nu + 1}^\sharp (s,\mathtt b) \leq C_*(s_0, \mathtt b) N_\nu {\mathfrak M}_0(s, \mathtt b)  \, .
\end{aligned}
$$ 
\end{lemma}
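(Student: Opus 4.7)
I will argue by induction on $\nu\geq 0$.

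\emph{Base case} $\nu=0$. Since $N_{-1}=1$, the claim reduces to $\mathfrak{M}_0^\sharp(s)\leq \mathfrak{M}_0(s,\mathtt b)$ and $\mathfrak{M}_0^\sharp(s,\mathtt b)\leq \mathfrak{M}_0(s,\mathtt b)$, which are exactly the content of Lemma \ref{lem: Initialization} (modulo absorbing the dimensional constant $C(s_0,\mathtt b)$ into the choice of $N_0$, or equivalently redefining $\mathfrak{M}_0(s,\mathtt b)$ by this factor).

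\emph{Inductive step.} Assume $\mathfrak{M}_\nu^\sharp(s)\leq N_{\nu-1}^{-\mathtt a}\mathfrak{M}_0(s,\mathtt b)$ and $\mathfrak{M}_\nu^\sharp(s,\mathtt b)\leq N_{\nu-1}\mathfrak{M}_0(s,\mathtt b)$. Plugging these two bounds (at $s$ and at $s_0$) into the iterative estimate \eqref{schema quadratico tame} yields
\begin{equation*}
\mathfrak{M}_{\nu+1}^\sharp(s)\leq_{k_0}\; N_\nu^{-\mathtt b}N_{\nu-1}\mathfrak{M}_0(s,\mathtt b)\;+\;N_\nu^{\tau_1}\gamma^{-1}N_{\nu-1}^{-2\mathtt a}\mathfrak{M}_0(s_0,\mathtt b)\mathfrak{M}_0(s,\mathtt b).
\end{equation*}
To close the induction one has to check that the two summands are each bounded by $\tfrac12 N_\nu^{-\mathtt a}\mathfrak{M}_0(s,\mathtt b)$. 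Writing $N_\nu = N_{\nu-1}^\chi$ with $\chi=3/2$, the first term requires $(\mathtt b-\mathtt a)\chi\geq 1$, which is guaranteed by $\mathtt b=[\mathtt a]+2$, together with $N_0$ large enough to absorb the constant $C(k_0)$. The second term requires the exponent $2\mathtt a-\chi(\tau_1+\mathtt a) =(2-\chi)\mathtt a-\chi\tau_1 = \tfrac12\mathtt a-\tfrac32\tau_1$ of $N_{\nu-1}$ to be $\geq 0$, which is exactly the choice $\mathtt a = 3\tau_1$ in \eqref{alpha beta} (giving equality), and then the residual factor is controlled by the smallness hypothesis \eqref{KAM smallness condition1}, provided $\tau_0$ is fixed large enough (e.g.\ $\tau_0\geq \tau_1+\mathtt a$ to handle also the case $\nu=0$ where $N_{-1}=1$).

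The same strategy applies to the high--norm bound. Inserting the inductive bounds into \eqref{M+Ms} gives
\begin{equation*}
\mathfrak{M}_{\nu+1}^\sharp(s,\mathtt b)\leq_{k_0,\mathtt b}\; N_{\nu-1}\mathfrak{M}_0(s,\mathtt b)\;+\;N_\nu^{\tau_1}\gamma^{-1}N_{\nu-1}^{1-\mathtt a}\mathfrak{M}_0(s_0,\mathtt b)\mathfrak{M}_0(s,\mathtt b),
\end{equation*}
where I have grouped the two quadratic summands (they give the same bound). The linear piece requires $N_{\nu-1}\leq\tfrac12 N_\nu = \tfrac12 N_{\nu-1}^{\chi}$, true for $N_0$ large since $\chi>1$. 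The quadratic piece demands $N_\nu^{\tau_1}N_{\nu-1}^{1-\mathtt a}\leq \tfrac12 N_\nu$, i.e.\ $N_{\nu-1}^{\chi(\tau_1-1)+\mathtt a-1}\leq (2C\gamma^{-1}\mathfrak{M}_0(s_0,\mathtt b))^{-1}$; with $\mathtt a=3\tau_1$ the exponent equals $\tfrac12+\tfrac32\tau_1>0$, so the left hand side is $\geq 1$ and the inequality again follows from \eqref{KAM smallness condition1} (with $\tau_0$ also $\geq \chi(\tau_1-1)+\mathtt a-1$ to cover $\nu=0$).

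\emph{Main obstacle.} No single step is truly hard; the content is the delicate bookkeeping of the exponents of $N_{\nu-1}$. The critical point is that the choice $\mathtt a=3\tau_1$ in \eqref{alpha beta} makes the exponent $2\mathtt a-\chi(\tau_1+\mathtt a)$ \emph{exactly zero}, so one cannot gain smallness from a negative power of $N_{\nu-1}$ at that step; instead one must consume the smallness of $\gamma^{-1}\mathfrak{M}_0(s_0,\mathtt b)$ furnished by \eqref{KAM smallness condition1}. This is also the place where it becomes important that $\tau_0$ (the universal exponent in \eqref{KAM smallness condition1}) is chosen large enough to simultaneously absorb all the $N_0$--powers arising in the inductive inequalities at $\nu=0$, where $N_{-1}=1$ gives no help.
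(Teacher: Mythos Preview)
Your approach is exactly the paper's: plug the inductive hypothesis \eqref{stima cal R nu} at step $\nu$ into the recursive bounds \eqref{schema quadratico tame} and \eqref{M+Ms} and check that the exponents close via \eqref{alpha beta} and \eqref{KAM smallness condition1}. The paper presents only the inductive step (the lemma sits inside the proof of $({\bf S1})_{\nu+1}$, where \eqref{stima cal R nu} is already assumed), while you spell out the full induction with base case from Lemma~\ref{lem: Initialization}; both are fine. Your observation that $2\mathtt a-\chi(\tau_1+\mathtt a)=0$ exactly, so that the smallness \eqref{KAM smallness condition1} is genuinely consumed there, and that $\tau_0$ must absorb the $\nu=0$ case, is correct.

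There is, however, a sign slip in your high-norm check that, as written, would break the argument. The quadratic piece requires
\[
C\gamma^{-1}\mathfrak{M}_0(s_0,\mathtt b)\,N_\nu^{\tau_1-1}N_{\nu-1}^{\,1-\mathtt a}\ \leq\ \tfrac12 ,
\]
which for $\nu\geq 1$ reads $N_{\nu-1}^{\,\chi(\tau_1-1)+1-\mathtt a}$, and with $\mathtt a=3\tau_1$, $\chi=3/2$ the exponent is $-\tfrac32\tau_1-\tfrac12<0$, not $+\tfrac12+\tfrac32\tau_1$ as you wrote (you swapped $1-\mathtt a$ for $\mathtt a-1$). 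With the correct \emph{negative} exponent, the factor is $\leq 1$ uniformly in $\nu$ and the bound follows from \eqref{KAM smallness condition1}. With your stated positive exponent the left side would grow without bound in $\nu$ and no fixed smallness condition could compensate; so although the slip is purely algebraic, it is worth correcting since the conclusion you draw from the wrong sign (``the left hand side is $\geq 1$ and the inequality again follows'') is not valid for all $\nu$.
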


\begin{proof}
By \eqref{schema quadratico tame} and \eqref{stima cal R nu} we get 
\begin{align*}
{\mathfrak M}_{\nu + 1}^\sharp (s) & \leq_{k_0} 
C_*(s_0, \mathtt b)N_{\nu}^{- \mathtt b} N_{\nu - 1} {\mathfrak M}_0(s, \mathtt b) \\ 
& \quad + C_*(s_0, \mathtt b)^2N_\nu^{\tau_1} \gamma^{- 1} 
{\mathfrak M}_0(s, \mathtt b) {\mathfrak M}_0(s_0, \mathtt b) N_{\nu - 1}^{- 2 \mathtt a}  \\
 & \leq C_*(s_0, \mathtt b) N_\nu^{- \mathtt a} {\mathfrak M}_0(s, \mathtt b)
\end{align*}
by \eqref{alpha beta}, \eqref{KAM smallness condition1} and taking  $N_0 := N_0(S, \mathtt b) > 0$ large enough.  
Then by \eqref{M+Ms}, \eqref{stima cal R nu}  we get that 
\begin{align}
{\mathfrak M}_{\nu + 1}^\sharp (s, \mathtt b) 
& \leq_{k_0, \mathtt b}N_{\nu - 1} {\mathfrak M}_0(s, \mathtt b) + N_\nu^{\tau_1} N_{\nu - 1}^{1 - \mathtt a} \gamma^{- 1} {\mathfrak M}_0(s, \mathtt b) {\mathfrak M}_0(s_0, \mathtt b) \nonumber \\
& \leq C_*(s_0, \mathtt b) N_\nu {\mathfrak M}_0(s, \mathtt b) \nonumber
\end{align}
by \eqref{alpha beta}, \eqref{KAM smallness condition1} and taking  $N_0 := N_0(S, \mathtt b) > 0$ large enough.  
\end{proof}

The proof of $({\bf S1})_{\nu + 1} $ is concluded by noting that 
the operators ${\bf R}_{\nu + 1}$, ${\bf Q}_{\nu + 1}$ 
are even and reversible because $ {\bf  \Phi}_\nu $ is even and reversibility preserving (Lemma
\ref{Homological equations tame}).

\smallskip

\noindent {\sc Proof of $({\bf S2})_{\nu + 1}$.}
We now construct the smooth extension $\tilde \mu_j^{\nu + 1}$ on all the parameter space $ \mathtt \Omega \times [\kappa_1, \kappa_2] $. 
By the inductive hyphothesis there exists a ${k_0}$-times differentiable function 
 $\tilde \mu_j^\nu: \mathtt \Omega \times [\kappa_1, \kappa_2] \mapsto \R $  such that 
$ \mu_j^\nu = \tilde \mu_j^\nu $ on $  \tLm_\nu^\gamma $ and  
$ \tilde \mu_j^\nu = 0 $ outside $  {\mathcal N}(\tLm_\nu^\gamma, \g N_{\nu-1}^{- \tau - 2}) $.
Note that all the sets $ \tLm_\nu^\gamma  $  in \eqref{Omega nu + 1 gamma} 
are defined by only {\it finitely} many non-resonance conditions, namely (for brevity we omit to write the 
sets $ \DC_{K_n}^\gamma \cap  \DC_{N_{\nu-1}}^\gamma $)
$$
\begin{aligned}
\tLm_\nu^\gamma  = \!\! \!\! \bigcap_{| \ell  | \leq N_{\nu - 1}, |j|, |j'| \leq C N_{\nu-1}^2} \!\!\!\!\!\!
 \Big\{ (\omega, \kappa) \in \tLm_{\nu - 1}^\gamma & : |\omega \cdot \ell  + \mu_j^{\nu - 1} - \varsigma 
\mu_{j'}^{\nu - 1}| \geq
 \frac{\gamma |j^{\frac32} - \varsigma j'^{\frac32}|}{\langle \ell \rangle^\tau} , \\
& \quad  j, j' \in {\mathbb S}_0^c\,, \varsigma  \in \{+, -  \} 
\Big\} \,. 
\end{aligned}
$$
Actually, provided  $j^{\frac12} + j'^{\frac12} \geq C N_{\nu -1}$, $ j \neq j' $, 
for all $(\omega, \kappa) \in \tLm_{\nu - 1}^\gamma$ the functions  
$$
\begin{aligned}
|\omega \cdot \ell  + \mu_j^{\nu - 1} - \mu_{j'}^{\nu - 1}| 
& \geq  |\mu_j^{\nu - 1} - \mu_{j'}^{\nu - 1}| - |\omega | |\ell  | \\
& \geq \frac12 |j^{\frac32} - j'^{\frac32}| - C | \ell  |  \geq  C (j^{\frac12} + j'^{\frac12}) - C N_{\nu-1} \geq \frac12\,. 
 \end{aligned}
$$
Since $\mu_j^{\nu + 1} = \mu_j^\nu + {\mathtt r}_j^\nu$ (defined on 
${\mathcal N}(\tLm_{\nu + 1}^\gamma, \g N_\nu^{- \tau - 2})$) we need only to extend the function $ {\mathtt r}_j^\nu$. 

Let  $\psi_\nu \in {\mathcal C}^\infty : \R^{\CS + 1}  \to \R $ be 
a cut-off function satisfying:  $0 \leq \psi_\nu \leq 1$, 
$$
\begin{aligned}
& \psi_\nu(\lambda) = 1\,, \, \forall \lambda \in  \tLm_{\nu + 1}^\gamma \,, \, 
{\rm supp}(\psi_\nu) \subseteq {\mathcal N}(\tLm_{\nu + 1}^\gamma, \g N_\nu^{- \tau - 2}), \\
& |\partial^k_{\lambda} \psi_\nu(\lambda)| \leq C(k) \big( N_\nu^{\tau + 2} \gamma^{- 1} \big)^{|k|}, \,  
\forall k \in \N^\nu \,,
\end{aligned}
$$
and thus 
$ |\psi_\nu|^{k_0, \gamma} \leq C(k_0) N_\nu^{(\tau + 2) k_0} $.  Hence, defining 
$ \tilde {\mathtt r}_j^\nu := \psi_\nu  {\mathtt r}_j^\nu $ and $ \tilde \mu_j^{\nu + 1} := \tilde \mu_j^\nu + \tilde {\mathtt r}_j^\nu $,  
we get the estimate 
$$ 
\begin{aligned}
|\tilde \mu_j^{\nu + 1} - \tilde \mu_j^\nu|^{k_0, \gamma} 
& \leq |\psi_\nu|^{k_0, \gamma} |{\mathtt r}_j^\nu|^{k_0, \gamma} \\
& \leq C(k_0) N_\nu^{(\tau + 2) k_0} {\mathfrak M}_\nu^\sharp (s_0) \leq \e \gamma^{- 1} C(k_0, S, {\mathtt b}) 
 N_\nu^{(\tau + 2) k_0} N_{\nu-1}^{-{\mathtt a}}
 \end{aligned}
$$ 
by Lemma \ref{nuovadiagonale}, \eqref{stima cal R nu} and \eqref{stima cal K0 I delta}.
This is \eqref{vicinanza autovalori estesi} at $ \nu + 1 $.
Summing we also get \eqref{autovalori finali} at the step $ \nu + 1 $. 

\smallskip

\noindent {\sc Proof of $({\bf S3})_{\nu + 1}$.} At the $\nu$-th step we have already constructed the operators 
$$
{\mathcal R}_\nu(i_m)\,, {\mathcal Q}_\nu(i_m)\,, \Psi_{\nu - 1, 1}(i_m)\,, \Psi_{\nu - 1, 2}(i_m)\,,\quad m = 1, 2\,,
$$
which are defined on $\tLm_\nu^{\gamma_1}(i_1) \cap \tLm_\nu^{\gamma_2}(i_2)$ and they satisfies \eqref{stima cal R nu}, \eqref{tame Psi nu - 1}.  
 We now estimate the operator $\Delta_{1 2} {\mathcal R}_{\nu + 1}$. The estimate for $\Delta_{1 2} {\mathcal Q}_{\nu + 1}$ is analogous. By Lemma \ref{Homological equations tame} we may construct
 the operators $\Psi_{\nu, 1}(i_1)$, $\Psi_{\nu, 2}(i_1)$, $\Psi_{\nu, 1}(i_2)$, $\Psi_{\nu, 2}(i_2) $, 
defined for all $\omega \in \tLm_{\nu + 1}^{\gamma_1}(i_1) \cap \tLm_{\nu + 1}^{\gamma_2}(i_2)$ and  
\begin{align}
\| |\langle \partial_\vphi \rangle^{\mathtt b}\Delta_{1 2} \Psi_{\nu, 1}| \|_{{\mathcal L}(H^{s_0})} 
& \stackrel{\eqref{stime delta 12 Psi alta}}{\leq_{\mathtt b}} C N_\nu^{2 \tau} \gamma^{- 1} \big( \| | \langle \partial_\vphi \rangle^{\mathtt b}{\mathcal R}_\nu(i_2)| \|_{{\mathcal L}(H^{s_0})} \| i_1 - i_2 \|_{s_0  + \mu(\mathtt b) + \sigma} \nonumber
\\ 
& + \| | \langle \partial_\vphi \rangle^{\mathtt b}\Delta_{1 2} {\mathcal R}_\nu| \|_{{\mathcal L}(H^{s_0})} \big) \nonumber \\
& \stackrel{\eqref{norma operatoriale costante tame}, \eqref{stima cal R nu}, \eqref{stima cal K0 I delta}}{\leq_{S, \mathtt b} 
} N_\nu^{2 \tau} N_{\nu - 1} \e \gamma^{- 2} \| i_1 - i_2\|_{s_0  + \mu(\mathtt b) + \sigma} \nonumber \\ 
& + N_\nu^{2 \tau} \gamma^{- 1} \| | \langle \partial_\vphi \rangle^{\mathtt b}\Delta_{1 2} {\mathcal R}_\nu| \|_{{\mathcal L}(H^{s_0})} \nonumber \\
& \stackrel{\eqref{stima R nu i1 i2 norma alta}}{\leq_{S, \mathtt b}} N_\nu^{2 \tau} N_{\nu - 1} \e \gamma^{- 2} \| i_1 - i_2\|_{ s_0  + \mu(\mathtt b) + \sigma} \label{Delta 12 Psi nu alta}
\end{align}
and by \eqref{stime delta 12 Psi bassa}, \eqref{norma operatoriale costante tame}, \eqref{stima cal R nu}, \eqref{stima R nu i1 i2} we get 
\begin{align}
\| |\Delta_{12} \Psi_{\nu, 1}| \|_{{\mathcal L}(H^{s_0})} & \leq_{S, \mathtt b} N_\nu^{2 \tau} N_{\nu - 1}^{- \mathtt a} \e \gamma^{- 2} \| i_1 - i_2\|_{s_0  + \mu(\mathtt b) + \sigma}\,. \label{Delta 12 Psi nu bassa}
\end{align}
Similarly one can prove that $ \Delta_{12} \Psi_{\nu, 2}$ satisfies \eqref{Delta 12 Psi nu alta}, \eqref{Delta 12 Psi nu bassa}. By \eqref{costante tame Psi nu i (s0)}, for $\e \gamma^{- 2}$ small enough, the smallness condition \eqref{piccolezza Psi i1 i2} is verified.
Therefore by \eqref{Delta 12 Psi nu alta}, \eqref{Delta 12 Psi nu bassa}, Lemma \ref{Delta 12 Phi inverso} and \eqref{stima tame Psi tilde}, \eqref{norma operatoriale costante tame} we get
\be
\begin{aligned}
& \| |\Delta_{1 2} \check \Psi_{\nu, 1}| \|_{{\mathcal L}(H^{s_0})},  \| |\Delta_{1 2} \check \Psi_{\nu, 2}| \|_{{\mathcal L}(H^{s_0})}
\\ 
 & \qquad  \leq_{S, \mathtt b} N_\nu^{2 \tau} N_{\nu - 1}^{- \mathtt a} \e \gamma^{- 2} \| i_1 - i_2\|_{s_0  + \mu(\mathtt b) + \sigma}\,, \label{Delta 12 Psi tilde nu} 
 \end{aligned}
 \ee
 \be \begin{aligned}
& 
\| | \langle \partial_\vphi \rangle^{\mathtt b}\Delta_{1 2} \check \Psi_{\nu, 1}| \|_{{\mathcal L}(H^{s_0})},
\| | \langle \partial_\vphi \rangle^{\mathtt b}\Delta_{1 2} \check \Psi_{\nu, 2}| \|_{{\mathcal L}(H^{s_0})} \\ 
& \qquad \leq_{S, \mathtt b} N_\nu^{2 \tau} N_{\nu - 1} \e \gamma^{- 2} \| i_1 - i_2\|_{s_0  + \mu(\mathtt b) + \sigma}\,.\label{Delta 12 Psi tilde nu alta}
\end{aligned}
\ee
We now
 estimate $\Delta_{12}{\mathcal R}_{\nu + 1}$
 where ${\mathcal R}_{\nu + 1}$ is defined in \eqref{cal R nu + 1 tame}. We consider the term 
$ {\mathcal R}_{\nu + 1}^{\star} := $ $  ({\rm Id} + \check \Psi_{\nu , 1}) (\Pi_{N_\nu}^\bot {\mathcal R}_\nu + {\mathcal R}_\nu \Psi_{\nu, 1}) $. 
The other terms in \eqref{cal R nu + 1 tame} satisfy the same estimate. One has 
\begin{align}
\Delta_{12} {\mathcal R}_{\nu + 1}^\star & = \Delta_{12} \check \Psi_{\nu, 1} \big( \Pi_{N_\nu}^\bot {\mathcal R}_\nu(i_1) + {\mathcal R}_\nu(i_1) \Psi_{\nu, 1}(i_1) \big) \nonumber\\
& \quad + ({\rm I d} + \check \Psi_{\nu, 1}(i_2)) \big( \Pi_{N_\nu}^\bot \Delta_{1 2} {\mathcal R}_\nu + \Delta_{1 2} {\mathcal R}_\nu \Psi_{\nu, 1}(i_1 ) + {\mathcal R}_\nu(i_2) \Delta_{1 2} \Psi_{\nu, 1} \big)\, .
\end{align}
Hence by Lemma \ref{proprieta norme operatoriali}, \eqref{Delta 12 Psi tilde nu}, \eqref{stima tame Psi tilde}, \eqref{stime delta 12 Psi bassa}, \eqref{norma operatoriale costante tame}, \eqref{stima tame Psi}, taking $\e \gamma^{- 2}$ small enough, we get 
\begin{align}
\| |\Delta_{1 2 } {\mathcal R}_{\nu + 1}^\star| \|_{{\mathcal L}(H^{s_0})} & 
{\leq_{\mathtt b}} \big( N_\nu^{- \mathtt b} {\mathfrak M}_\nu^\sharp(s_0, \mathtt b) + N_\nu^{\tau_1} \gamma^{- 1} {\mathfrak M}_\nu^\sharp(s_0)^2   \big) \| i_1 - i_2\|_{s_0  + \mu(\mathtt b) + \sigma}  + \nonumber\\
&  \!\!\!\! + N_\nu^{- \mathtt b} \| |\langle \partial_\vphi \rangle^{\mathtt b}\Delta_{12} {\mathcal R}_\nu| \|_{{\mathcal L}(H^{s_0})} + N_\nu^{\tau_1} \gamma^{- 1} {\mathfrak M}_\nu^{\sharp}(s_0)\||\Delta_{1 2} {\mathcal R}_\nu| \|_{{\mathcal L}(H^{s_0})}\,. \label{Delta 12 cal R nu + 1 bassa}  
\end{align}
Moreover, using also \eqref{Delta 12 Psi tilde nu alta}, \eqref{stime delta 12 Psi alta} and since  
\eqref{stima cal R nu}, \eqref{KAM smallness condition1} imply   
$  N_\nu^{\tau_1} \gamma^{- 1} {\mathfrak M}_\nu^\sharp(s_0) \leq 1 $, 
we get 
\begin{align}
\||\langle \partial_\vphi \rangle^{\mathtt b} \Delta_{1 2} {\mathcal R}_{\nu + 1}^\star | \|_{{\mathcal L}(H^{s_0})} & 
\leq_{S, \mathtt b} \big( \e \gamma^{- 1} N_{\nu - 1}  + {\mathfrak M}_\nu^\sharp (s_0, \mathtt b) \big) \| i_1 - i_2 \|_{s_0  + \mu(\mathtt b) + \sigma} 
\nonumber\\
& \!\!\!\!\!\!\!\!\!\!\!\!\!\! + \||\langle \partial_\vphi \rangle^{\mathtt b} \Delta_{1 2} {\mathcal R}_{\nu}| \|_{{\mathcal L}(H^{s_0})} + 
N_\nu^{\tau_1} \gamma^{- 1} \| |\Delta_{1 2} {\mathcal R}_\nu| \|_{{\mathcal L}(H^{s_0})} {\mathfrak M}_\nu^\sharp (s_0, \mathtt b)\,. 
\label{Delta 12 cal R nu + 1 alta}
\end{align}
The other terms in \eqref{cal R nu + 1 tame} may be estimated in the same way, whence $\Delta_{12 } {\mathcal R}_{\nu + 1}$ satisfies \eqref{Delta 12 cal R nu + 1 bassa}, \eqref{Delta 12 cal R nu + 1 alta}.  

We now prove \eqref{stima R nu i1 i2}, \eqref{stima R nu i1 i2 norma alta} at the step $\nu + 1$.  
By \eqref{Delta 12 cal R nu + 1 bassa}, \eqref{stima cal R nu}, 
\eqref{def:costanti iniziali tame}, \eqref{stima R nu i1 i2}, \eqref{stima R nu i1 i2 norma alta} we get
\begin{align*}
\| |\Delta_{1 2} {\mathcal R}_{\nu + 1}| \|_{{\mathcal L}(H^{s_0})} 
& {\leq_{S, \mathtt b}} \big( \e \gamma^{- 1} N_{\nu - 1}N_\nu^{- \mathtt b} + N_\nu^{\tau_1}  \e^2 \gamma^{- 3} N_{\nu - 1}^{- 2 \mathtt a}  \big) \| i_1 - i_2 \|_{s_0  + \mu(\mathtt b) + \sigma} \\
& \stackrel{\eqref{alpha beta}} {\leq_{S, \mathtt b}}  \e \gamma^{- 1} N_\nu^{-\mathtt a} \| i_1 - i_2 \|_{s_0  + \mu(\mathtt b) + \sigma}\,.
\end{align*}
for  $\e \gamma^{- 2}  \leq 1 $ and $ N_0(S, {\mathtt b}) > 0 $ large.
 Hence  \eqref{stima R nu i1 i2} at the step $\nu + 1$ is proved. 
Similarly, by \eqref{Delta 12 cal R nu + 1 alta}, 
\eqref{stima cal R nu}, \eqref{def:costanti iniziali tame}, \eqref{stima R nu i1 i2}, \eqref{stima R nu i1 i2 norma alta},  we get 
$$
\begin{aligned}
\| |\langle \partial_\vphi \rangle^{\mathtt b} \Delta_{1 2} {\mathcal R}_{\nu + 1}| \|_{{\mathcal L}(H^{s_0})} 
& 
\leq_{S, \mathtt b}  \e \gamma^{- 1} N_{\nu - 1} \big( 1 + \e \gamma^{- 2} N_\nu^{\tau_1} 
N_{\nu - 1}^{- \mathtt a}  \big) \| i_1 - i_2 \|_{s_0  + \mu(\mathtt b) + \sigma}  \\
& \leq_{S, \mathtt b}\e \gamma^{- 1} N_{\nu } \| i_1 - i_2 \|_{s_0  + \mu(\mathtt b) + \sigma}
 \end{aligned}
$$
by \eqref{alpha beta},  $\e \gamma^{- 2} \leq 1 $ and taking $N_0 := N_0(S, \mathtt b) > 0$ large. Thus   
  \eqref{stima R nu i1 i2 norma alta} at the step $\nu + 1$ is proved. 

The proof of \eqref{r nu - 1 r nu i1 i2} at the step $\nu + 1$ follows by Lemma \ref{nuovadiagonale}. The estimate \eqref{r nu i1 - r nu i2} follows by a telescopic argument using \eqref{r nu - 1 r nu i1 i2} and \eqref{stima R nu i1 i2}. 

\smallskip

\noindent {\sc Proof of $({\bf S4})_{\nu + 1}$.} 
The proof is the same as that of $({\bf S4})_{\nu + 1}$ of Theorem 4.2 in \cite{BBM-Airy}. 
It uses $ ({\bf S3})_\nu $. 
 \qed

\section{Almost-invertibility  of  $ {\mathcal L}_\om $}\label{quasi invertibilita}

By \eqref{final conjugation prima del KAM} and  Theorem \ref{Teorema di riducibilita} (applied to
$ {\bf L}_0 = {\mathcal L}_M^{(3)} $) we obtain
\begin{equation}\label{final semi conjugation}
{\mathcal L}_\omega =  {\bf W}_{2, n}  {\bf L}_n  {\bf W}_{1, n}^{- 1} + {\bf R}_M^{(3), \bot}\,, 
\quad {\bf W}_{1, n} := {\mathcal W}_{1}^\bot {\bf U}_n\,, \quad {\bf W}_{2, n} := {\mathcal W}_{2}^\bot {\bf U}_n\,, 
\end{equation}
where 
 the operator ${\bf L}_n$ is defined in \eqref{cal L infinito} and ${\bf R}_M^{(3), \bot}$ 
 (defined in \eqref{final conjugation prima del KAM}) 
 satisfies the estimates \eqref{stima bf R N (3) bot bassa}, \eqref{stima bf R N (3) bot alta}. 
 Then \eqref{stima Phi 1 Phi 2 proiettate}, \eqref{stima Phi infinito}, 
 \eqref{relazione mathtt b N}, \eqref{definizione bf c (beta)},  
  imply that for all $s_0 \leq s \leq S$
\begin{equation}\label{stime W1 W2}
\| {\bf W}_1^{\pm 1} h \|_s^{k_0, \gamma}, \, 
\| {\bf W}_2^{\pm 1} h \|_s^{k_0, \gamma} \leq_S \| h \|_{s + \sigma}^{k_0, \gamma}  + 
\| \fracchi \|_{s + \mu(\mathtt b) + \sigma}^{k_0, \gamma} \| h \|_{s_0 + \sigma}^{k_0, \gamma} 
\end{equation}
for some $\sigma := \sigma(\tau, \nu, k_0) > 0 $ where $ \nu = \CS $ as used in the whole paper.

In order to verify the inversion assumption \eqref{inversion assumption}-\eqref{tame inverse} required to 
construct an approximate inverse (and thus define the successive approximate solution of the Nash-Moser non-linear iteration) 
we decompose the operator $ {\bf L}_n $ in \eqref{cal L infinito} as 
\begin{equation}\label{decomposizione bf Ln}
{\bf L}_n = {\bf D}_n^{<}  + {\bf R}_n^\bot + {\bf R}_n  + {\bf Q}_n
\end{equation}
where 
\be\label{Rn-bot}
\begin{aligned}
& {\bf D}_n^{<}  := \Pi_{K_n} \big( \Dom {\mathbb I}_2^\bot + \ii {\bf D}_n \big) \Pi_{K_n} 
+ \Pi_{K_n}^\bot \, , \\
&  {\bf R}_n^\bot := 
\Pi_{K_n}^\bot \big( \Dom {\mathbb I}_2^\bot + \ii {\bf D}_n \big) \Pi_{K_n}^\bot - \Pi_{K_n}^\bot \, , 
\end{aligned}
\end{equation}
the diagonal operator $ {\bf D}_n $ are defined in \eqref{cal L nu} (with $ \nu = n $), and 
the constant $ K_n $  in \eqref{definizione Kn}.

\begin{lemma} {\bf (First order Melnikov non-resonance conditions)}
For all $ \lambda = (\om, \kappa)  $ in 
\begin{equation}\label{prime di melnikov}
\begin{aligned}
& {\mathtt \Lambda}_{n + 1}^{\gamma, I}  :=  {\mathtt \Lambda}_{n + 1}^{\gamma, I} ( i ) := \\
& 
\big\{ \lambda \in  \tLm_{n + 1}^\gamma : |\omega \cdot \ell  
+  \mu_j^n| \geq  2\gamma j^{\frac32} \langle \ell  \rangle^{- \tau}\,,
\  \forall | \ell  | \leq K_n\,,\,\, j \in \N \setminus {\mathbb S}^+ \big\}
\end{aligned}
\end{equation}
(recall \eqref{Cantor set}), the operator 
$ {\bf D}_n^< $ in \eqref{Rn-bot} 
is invertible and 
\begin{equation}\label{stima tilde cal Dn}
\| ({\bf D}_n^<)^{- 1} g \|_s^{k_0, \gamma} \leq_{k_0} \gamma^{- 1} \| g \|_{s + \t_1}^{k_0, \gamma}\,, 
\quad \tau_1 := \tau + k_0 (\tau + 1) \, .
\end{equation}
\end{lemma}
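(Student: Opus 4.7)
The operator ${\bf D}_n^<$ defined in \eqref{Rn-bot} is block diagonal with respect to the splitting $(u,\bar u)$ induced by ${\bf D}_n$ in \eqref{cal L nu}, and with respect to the exponential Fourier basis $\{e^{\ii(\ell\cdot\vphi + jx)}\}$ it acts as the multiplication operator by the eigenvalues
$$
d_{\ell,j}^{+} := \ii\big(\om\cdot\ell + \mu_j^n\big) \quad \text{and} \quad d_{\ell,j}^{-} := \ii\big(\om\cdot\ell - \mu_j^n\big), \qquad |\ell|\leq K_n,\ j\in\mathbb S_0^c,
$$
and as the identity on $\Pi_{K_n}^\bot$. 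My first step is thus to check that for $\lambda\in\Lambda_{n+1}^{\gamma, I}$ both families are bounded from below. For $d_{\ell,j}^{+}$ we note that the set $\Lambda_{n+1}^{\gamma,I}$ imposes the bound $|\om\cdot\ell+\mu_j^n|\geq 2\gamma j^{3/2}\langle\ell\rangle^{-\tau}$ for $j\in\N\setminus\mathbb S^+$; for negative $j\in\mathbb S_0^c$ the identity $\mu_j^n=\mu_{-j}^n$ from \eqref{stima rj nu} reduces the estimate to the case $|j|\in\N\setminus\mathbb S^+$. For $d_{\ell,j}^{-}$ I use again $\mu_j^n=\mu_{-j}^n$ together with the replacement $\ell\rightsquigarrow-\ell$ (which preserves both $|\ell|\leq K_n$ and $\langle\ell\rangle$) to reduce to the same bound. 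Altogether, for every $\lambda\in\Lambda_{n+1}^{\gamma,I}$,
$$
|d_{\ell,j}^{\pm}(\lambda)|\,\geq\, 2\gamma\,|j|^{3/2}\langle\ell\rangle^{-\tau},\qquad |\ell|\leq K_n,\ j\in\mathbb S_0^c.
$$

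Next I would quantify the $\lambda$-derivatives of the reciprocals $1/d_{\ell,j}^{\pm}$. Writing $\mu_j^n=\mu_j^0+r_j^n$ with $\mu_j^0=\mathtt m_3|j|^{1/2}(1+\kappa j^2)^{1/2}+\mathtt m_1|j|^{1/2}$, the estimates \eqref{stima lambda 3 - 1 nuova}, \eqref{stime lambda 1 senza proiettore}, \eqref{stima rj nu} yield $|\pa_\lambda^{k'} \mu_j^n|\leq C(k_0)\gamma^{-|k'|}|j|^{3/2}$ for all $|k'|\leq k_0$ (the extra $\gamma^{-|k'|}$ coming from the fact that the estimates are in weighted norm $|\cdot|^{k_0,\gamma}$), while $|\pa_\lambda^{k'}(\om\cdot\ell)|\leq|\ell|$ for $|k'|=1$ and vanishes for $|k'|\geq 2$. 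A standard Faà di Bruno expansion for $1/d$ combined with these bounds produces, for all $|k|\leq k_0$,
$$
\gamma^{|k|}\,\big|\pa_\lambda^{k}\big(1/d_{\ell,j}^{\pm}(\lambda)\big)\big|\,\leq\,C(k_0)\,\gamma^{-1}\,\langle\ell\rangle^{\tau(|k|+1)+|k|}\,|j|^{-3/2}\,\leq\,C(k_0)\,\gamma^{-1}\,\langle\ell\rangle^{\tau_1},
$$
with $\tau_1=\tau+k_0(\tau+1)$ as defined in the statement, where the gain $|j|^{-3/2}\leq 1$ is simply discarded.

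Finally, the tame Sobolev estimate follows from Leibniz and the multiplier bound just obtained: for every $g$ with $g_{\ell,j}=0$ whenever $j\in\mathbb S_0$, and every $|k|\leq k_0$,
$$
\pa_\lambda^{k}\big(({\bf D}_n^<)^{-1}g\big)\,=\sum_{k_1+k_2=k}\,\pa_\lambda^{k_1}\big(1/d_{\ell,j}^{\pm}\big)\,\pa_\lambda^{k_2}g_{\ell,j},
$$
so that, using the above pointwise control on $\pa_\lambda^{k_1}(1/d)$ and taking the sup in $(\ell,j)$,
$$
\gamma^{|k|}\,\|\pa_\lambda^{k}(({\bf D}_n^<)^{-1}g)\|_s\,\leq\,C(k_0)\,\gamma^{-1}\sum_{k_2\leq k}\,\gamma^{|k_2|}\,\|\pa_\lambda^{k_2}g\|_{s+\tau_1}.
$$
Summing over $|k|\leq k_0$ and recalling the definition \eqref{norma pesata derivate funzioni} of $\|\cdot\|_s^{k_0,\gamma}$ gives \eqref{stima tilde cal Dn}. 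There is no serious obstacle; the only point where one must be careful is ensuring that the loss of one power of $\gamma^{-1}$ per derivative of $1/d$ is absorbed by the weighting $\gamma^{|k|}$ in the target norm, which is exactly why the bound is stated with the single prefactor $\gamma^{-1}$ (and not $\gamma^{-1-k_0}$) on the right-hand side.
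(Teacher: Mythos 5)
Your overall route is the same as the paper's: ${\bf D}_n^<$ acts as a Fourier multiplier with eigenvalues $\ii(\om\cdot\ell\pm\mu_j^n)$ on the modes selected by $\Pi_{K_n}$ and as the identity on the complement; the first Melnikov conditions in \eqref{prime di melnikov}, combined with $\mu_j^n=\mu_{-j}^n$ and the substitution $\ell\rightsquigarrow-\ell$, give the lower bound $|\om\cdot\ell\pm\mu_j^n|\geq 2\gamma|j|^{3/2}\langle\ell\rangle^{-\tau}$ for all $|\ell|\leq K_n$, $j\in{\mathbb S}_0^c$; and one concludes by bounding $\pa_\lambda^k$ of the reciprocals and applying Leibniz. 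This reduction and the final Leibniz/summation step are fine and match the paper, whose proof is exactly the one-line bound $|\pa_\lambda^k(\om\cdot\ell+\mu_j^n)^{-1}|\leq C(k)\langle\ell\rangle^{\tau(|k|+1)+|k|}\gamma^{-(|k|+1)}$.

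The gap is in the $\gamma$-bookkeeping of precisely that bound. From your intermediate estimate $|\pa_\lambda^{k'}\mu_j^n|\leq C\gamma^{-|k'|}|j|^{3/2}$, Fa\`a di Bruno does \emph{not} give $\gamma^{|k|}\,|\pa_\lambda^k(1/d_{\ell,j}^{\pm})|\leq C\gamma^{-1}\langle\ell\rangle^{\tau(|k|+1)+|k|}|j|^{-3/2}$: a term with $m$ factors is bounded by $|d|^{-(1+m)}\prod_i|\pa^{k_i}d|\lesssim\gamma^{-(1+m)}\langle\ell\rangle^{\tau(1+m)}|j|^{-\frac32(1+m)}\cdot\gamma^{-|k|}\big(\langle\ell\rangle+|j|^{3/2}\big)^m$, so after multiplying by the weight $\gamma^{|k|}$ you are still left with $\gamma^{-(1+m)}$, which in the worst case $m=|k|$ is $\gamma^{-(1+k_0)}$, not $\gamma^{-1}$ (take for instance $k$ purely in the $\kappa$-direction and the term $d^{-(1+|k|)}(\pa_\kappa d)^{|k|}$, with your bound $|\pa_\kappa d|\lesssim\gamma^{-1}|j|^{3/2}$). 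As written, your argument therefore only proves $\|({\bf D}_n^<)^{-1}g\|_s^{k_0,\gamma}\leq_{k_0}\gamma^{-(1+k_0)}\|g\|_{s+\tau_1}^{k_0,\gamma}$, which is weaker than \eqref{stima tilde cal Dn} and would degrade the smallness thresholds where the lemma is used (the approximate inverse of Proposition \ref{prop: ai} and Theorem \ref{thm:stima inverso approssimato}).

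The fix is to use the $\gamma$-loss-free bound $|\pa_\lambda^{k'}\mu_j^n|\leq C(k_0)\,|j|^{3/2}$ for $|k'|\leq k_0$: the $\kappa$-derivatives of $|j|^{1/2}(1+\kappa j^2)^{1/2}$ are $O(|j|^{3/2})$ uniformly for $\kappa\in[\kappa_1,\kappa_2]$, while the corrections carry a factor $\e$, namely $|\pa_\lambda^{k'}(\mathtt m_3-1)|,\,|\pa_\lambda^{k'}\mathtt m_1|\leq C\e\gamma^{-|k'|}$ and $|\pa_\lambda^{k'}r_j^n|\leq C\e\gamma^{-1-|k'|}$ by \eqref{stima lambda 3 - 1 nuova}, \eqref{stime lambda 1 senza proiettore}, \eqref{stima rj nu}, and $\e\gamma^{-(1+k_0)}\leq 1$ under the paper's choice $\gamma=\e^{a}$ with $a(1+k_0)<1$ (cf. \eqref{relazione tau k0}, \eqref{nash moser smallness condition}). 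With $|\pa^{k_i}d|\lesssim\langle\ell\rangle+|j|^{3/2}$ the same Fa\`a di Bruno computation yields exactly the paper's bound $|\pa_\lambda^k(\om\cdot\ell+\mu_j^n)^{-1}|\leq C(k)\langle\ell\rangle^{\tau(|k|+1)+|k|}\gamma^{-(|k|+1)}$, i.e. a single $\gamma^{-1}$ after weighting by $\gamma^{|k|}$, and then the rest of your proof coincides with the paper's.
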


\begin{proof}
The estimate \eqref{stima tilde cal Dn} follows by 
$$ 
\big|\partial_\lambda^k ( \omega \cdot \ell  + \mu_j^n(\lambda) )^{-1}  \big| \leq  C(k) 
\langle \ell  \rangle^{\tau (|k| + 1) + |k|} \gamma^{- (|k| + 1)}, 
$$ 
$ \forall |k| \leq k_0 $. 
\end{proof}

Standard smoothing properties imply that 
the operator ${\bf R}_n^\bot $ defined in \eqref{Rn-bot} satisfies, for all $ b  > 0$,   
\begin{equation}\label{stima tilde cal Rn}
\| {\bf R}_n^\bot h \|_{s_0}^{k_0, \gamma} \lessdot K_n^{- b} \| h \|_{s_0 + b + \frac32}^{k_0, \gamma}\,,\quad \| {\bf R}_n^\bot h\|_s^{k_0, \gamma} \lessdot \| h \|_{s + \frac32}^{k_0, \gamma} \, .
\end{equation}
By the decompositions \eqref{final semi conjugation}, \eqref{decomposizione bf Ln}, 
Theorem \ref{Teorema di riducibilita}, Proposition \ref{prop: sintesi linearized}, the estimates \eqref{stima tilde cal Dn}, \eqref{stima tilde cal Rn}, \eqref{stime W1 W2} we  deduce the following theorem:

\begin{theorem}\label{inversione parziale cal L omega}
{\bf (Almost invertibility of $ {\mathcal L}_\om $)}
Assume  \eqref{ansatz 0} and that, for all $S > s_0 $,  the smallness condition \eqref{ansatz riducibilita} holds.
Let $ {\mathtt a}, {\mathtt b} $ as in \eqref{alpha beta}. 
Then for all 
\begin{equation}\label{Melnikov-invert}
(\omega, \kappa) \in  {\bf \Lambda}_{n + 1}^{\g}  := {\bf \Lambda}_{n + 1}^{\g} (i) 
 :=  \tLm_{n + 1}^\gamma  \cap  {\mathtt \Lambda}_{n + 1}^{\gamma, I}
\end{equation}
(see \eqref{Cantor set}, \eqref{prime di melnikov}) 
the operator $ {\mathcal L}_\omega$ defined in \eqref{Lomega def} (see also \eqref{representation Lom}) 
can be decomposed as 
\be
\begin{aligned} \label{splitting cal L omega}
& \, \qquad {\mathcal L}_\omega  = {\mathbf L}_\omega + {\mathbf R}_\omega +  {\mathbf R}_\omega^\bot \,, \qquad 
{\mathbf L}_\omega := 
{\bf W}_{2, n} {\bf D}_n^< {\bf W}_{1, n}^{- 1}\,, \\
&  {\mathbf R}_\omega := {\bf W}_{2, n} ({\bf R}_n + {\bf Q}_n ) {\bf W}_{1, n}^{- 1}\,,\quad {\mathbf R}_\omega^\bot := {\bf W}_{2, n} {\bf R}_n^\bot {\bf W}_{1, n}^{- 1} + {\bf R}_M^{(3), \bot}\, , 
\end{aligned}
\ee
where $ {\mathbf L}_\omega $ is invertible and, for some $ \s := \s(\nu, \tau, k_0) >  0 $, for all $ s_0 \leq s \leq S $,  $ g \in H^{s+\sigma} $,   
\begin{equation}\label{stima L omega corsivo}
\| {\mathbf L}_\omega^{- 1} g \|_s^{k_0, \gamma} \leq_S  
\gamma^{- 1} \big( \| g \|_{s + \sigma}^{k_0, \gamma} + \| \fracchi_0 \|_ {s + \sigma + \mu (\mathtt b)}^{k_0, \gamma} \| g \|_{s_0 + \sigma}^{k_0, \gamma}\big) 
\end{equation}
(with $ \mu (\mathtt b) $  defined in \eqref{definizione bf c (beta)})  and 
\begin{align}  \label{stima R omega corsivo}
\|{\mathbf R}_\omega h \|_s^{k_0, \gamma} & 
\leq_S  \e \gamma^{- 1} N_{n - 1}^{- {\mathtt a}}\big( \|  h \|_{s + \sigma}^{k_0, \gamma} +  \| \fracchi_0 \|_ {s + \sigma + \mu (\mathtt b)}^{k_0, \gamma} \| h \|_{s_0 + \sigma}^{k_0, \gamma} \big)\, , \\
\label{stima R omega bot corsivo bassa}
\| {\mathbf R}_\omega^\bot h \|_{s_0}^{k_0, \gamma} & \leq_S
K_n^{- b} \big( \| h \|_{s_0 + b 
+ \sigma}^{k_0, \gamma} + 
\| \fracchi_0 \|_ {s + \sigma + \mu (\mathtt b) + b}^{k_0, \gamma}  \|  h \|_{s_0 + \sigma}^{k_0, \gamma}\big)\,, 
\qquad \forall b > 0\,, \\
\label{stima R omega bot corsivo alta}
\| {\mathbf R}_\omega^\bot h \|_s^{k_0, \gamma} & \leq_S 
 \|  h \|_{s + \sigma}^{k_0, \gamma} + \| \fracchi_0 \|_ {s + \sigma + \mu (\mathtt b)}^{k_0, \gamma} \| h \|_{s_0 + \sigma}^{k_0, \gamma} \,.
\end{align}
 \end{theorem}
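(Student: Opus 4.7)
The plan is to simply assemble the decomposition \eqref{splitting cal L omega} by combining the semi-conjugation identity \eqref{final semi conjugation} with the splitting \eqref{decomposizione bf Ln}. Substituting one into the other gives
\[
\mathcal{L}_\omega = \mathbf{W}_{2,n}\mathbf{D}_n^{<}\mathbf{W}_{1,n}^{-1} + \mathbf{W}_{2,n}(\mathbf{R}_n+\mathbf{Q}_n)\mathbf{W}_{1,n}^{-1} + \mathbf{W}_{2,n}\mathbf{R}_n^{\bot}\mathbf{W}_{1,n}^{-1} + \mathbf{R}_M^{(3),\bot},
\]
which is precisely the desired identification of $\mathbf{L}_\omega$, $\mathbf{R}_\omega$, and $\mathbf{R}_\omega^{\bot}$. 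Hence the whole statement reduces to (i) inverting $\mathbf{L}_\omega$ for $\lambda\in\mathbf{\Lambda}_{n+1}^{\gamma}$ and (ii) propagating the tame estimates of each factor through the composition.

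For (i), on the set $\mathtt{\Lambda}_{n+1}^{\gamma,I}\supseteq \mathbf{\Lambda}_{n+1}^{\gamma}$ the diagonal operator $\mathbf{D}_n^{<}$ in \eqref{Rn-bot} is invertible by the first-order Melnikov lemma, with \eqref{stima tilde cal Dn}; the maps $\mathbf{W}_{1,n}^{\pm 1}$, $\mathbf{W}_{2,n}^{\pm 1}$ are invertible by Lemma \ref{Lemma:trasformazioni finali W} combined with Theorem \ref{Teorema di riducibilita}, so
\[
\mathbf{L}_\omega^{-1} = \mathbf{W}_{1,n}\,(\mathbf{D}_n^{<})^{-1}\,\mathbf{W}_{2,n}^{-1}.
\]
Bound \eqref{stima L omega corsivo} then follows by applying the tame estimate \eqref{stime W1 W2} for $\mathbf{W}_{2,n}^{-1}$, then \eqref{stima tilde cal Dn} for $(\mathbf{D}_n^{<})^{-1}$, and finally \eqref{stime W1 W2} for $\mathbf{W}_{1,n}$, using the standard tame product estimate (of the form $\| ABu\|_s\leq C(\|A\|_{s_0}\|Bu\|_s+\|A\|_s\|Bu\|_{s_0})$) to convert the three sequential applications into a single bound with the $\|\mathfrak{I}_0\|_{s+\sigma+\mu(\mathtt b)}^{k_0,\gamma}\|g\|_{s_0+\sigma}^{k_0,\gamma}$ term. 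The loss in derivatives is absorbed into a new (larger) $\sigma=\sigma(\nu,\tau,k_0)$, and the weighted $|\cdot|^{k_0,\gamma}$ norm handles the derivatives in $(\omega,\kappa)$ automatically.

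For (ii), the bound \eqref{stima R omega corsivo} on $\mathbf{R}_\omega=\mathbf{W}_{2,n}(\mathbf{R}_n+\mathbf{Q}_n)\mathbf{W}_{1,n}^{-1}$ follows the same composition scheme, inserting the key estimate \eqref{stima resto operatore quasi diagonalizzato} for $\mathbf{R}_n,\mathbf{Q}_n$, which supplies the decisive factor $\varepsilon\gamma^{-1}N_{n-1}^{-\mathtt a}$. The bounds \eqref{stima R omega bot corsivo bassa}-\eqref{stima R omega bot corsivo alta} on $\mathbf{R}_\omega^{\bot}$ likewise split into two contributions: the piece $\mathbf{W}_{2,n}\mathbf{R}_n^{\bot}\mathbf{W}_{1,n}^{-1}$ is handled via \eqref{stima tilde cal Rn}, whose first inequality delivers an arbitrary negative power $K_n^{-b}$ at the Sobolev index $s_0$ at the cost of $b+3/2$ derivatives (absorbed into the $K_n^{-b}$ factor by the choice of $b$), while the second supplies the uniform-in-$n$ bound \eqref{stima R omega bot corsivo alta}; the piece $\mathbf{R}_M^{(3),\bot}$ already satisfies \eqref{stima bf R N (3) bot bassa}-\eqref{stima bf R N (3) bot alta} from Proposition \ref{prop: sintesi linearized}, and these are of the same form.

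I do not foresee any real obstacle: the statement is a repackaging result, and every ingredient is in place. The only care required is bookkeeping the loss of derivatives across the three-fold composition and verifying that the loss parameter $\sigma(\nu,\tau,k_0)$ introduced here is consistent with the ansatz \eqref{ansatz 0} (in particular that $\mu\geq \mu(\mathtt b)+\sigma$ suffices to apply all the preceding results — which it does by the definition \eqref{definizione bf c (beta)} of $\mu(\mathtt b)$ and the hypothesis \eqref{ansatz riducibilita}). The mildly delicate point is to ensure that, when estimating $\mathbf{L}_\omega^{-1}g$, the $\mathfrak{I}_0$-dependent tame terms produced at the three levels (from $\mathbf{W}_{2,n}^{-1}$, from $(\mathbf{D}_n^{<})^{-1}$, which is $\mathfrak{I}_0$-independent, and from $\mathbf{W}_{1,n}$) combine into the single expression on the right-hand side of \eqref{stima L omega corsivo}; this is achieved by interpolating all high-norm contributions into $s+\sigma+\mu(\mathtt b)$ and all low-norm ones into $s_0+\sigma$, a routine manoeuvre at this level.
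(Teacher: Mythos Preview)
Your proposal is correct and follows precisely the same route as the paper: the theorem is stated there as a direct consequence of the semi-conjugation identity \eqref{final semi conjugation}, the splitting \eqref{decomposizione bf Ln}, Theorem~\ref{Teorema di riducibilita}, Proposition~\ref{prop: sintesi linearized}, and the estimates \eqref{stima tilde cal Dn}, \eqref{stima tilde cal Rn}, \eqref{stime W1 W2}, with no further argument given. Your write-up is in fact more detailed than the paper's own treatment, which simply lists these ingredients and declares the theorem.
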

We point out that the above Theorem proves the almost-invertibility assumption \eqref{inversion assumption}-\eqref{tame inverse} that we stated in section \ref{sezione almost approximate inverse} and from which we deduce in Theorem \ref{thm:stima inverso approssimato}  the existence of an almost-approximate inverse of the linearized operator $d_{i, \alpha} {\mathcal F}(i_0)$. 
 
\medskip
 
\noindent
We finally remark that the operators
\be\label{defW1W2}
{\bf W}_{1, \infty} := {\mathcal W}_1^\bot {\bf U}_\infty \, , 
\qquad 
{\bf W}_{2, \infty} := {\mathcal W}_2^\bot {\bf U}_\infty \quad  {\rm where} \quad  
{\bf U}_\infty := \lim_{n \to + \infty} {\bf U}_n 
\ee
see \eqref{defUn}, and  $ {\mathcal W}_1^\bot $, ${\mathcal W}_2^\bot $ are defined in 
\eqref{Phi 1 Phi 2 proiettate}, \eqref{semiconiugio cal L8} completely diagonalize the linearized operator
$ {\mathcal L}_\om $ defined in \eqref{Lomega def}. 
 We deduce that
  $ {\bf W}_{1, \infty}(\vphi ) $, $  {\bf W}_{2, \infty}(\vphi ) $ satisfy the tame estimates \eqref{W12-ben-poste}-\eqref{W12-ben-poste-inverse} by small modifications of the arguments of sections \ref{linearizzato siti normali}-\ref{sec: reducibility}. 
  
\chapter{The Nash-Moser iteration}\label{sec:NM}
 
In this section we prove Theorem \ref{MAINTHEOREM}. It will be a consequence of Theorem \ref{iterazione-non-lineare} below
where we construct iteratively a sequence of better and better approximate solutions of the operator
$ {\mathcal F} ( i, \alpha) $ defined in \eqref{operatorF}. 
We consider the finite-dimensional subspaces 
$$
E_n := \Big\{ \fracchi (\vphi ) = (\Theta , I , z) (\vphi) , \ \  
\Theta = \Pi_n \Theta, \ I = \Pi_n I, \ z = \Pi_n z \Big\}
$$
where $ \Pi_n  $ is the projector
\be\label{truncation NM}
\begin{aligned}
& \Pi_n := \Pi_{K_n} : \\ 
& z(\ph,x) = \sum_{\ell  \in \Z^\nu,  j \in {\mathbb S}_0^c} z_{\ell, j} e^{\ii (\ell  \cdot \ph + jx)} 
\ \mapsto \Pi_n z(\ph,x) 
:= \sum_{|(\ell ,j)| \leq K_n} 
z_{\ell,  j} e^{\ii (\ell  \cdot \ph + jx)}  
\end{aligned}
\ee
with $ K_n = K_0^{\chi^n} $ (see \eqref{definizione Kn} and \eqref{NnKn}) and we 
 denote with the same symbol also 
$ \Pi_n p(\ph) :=  \sum_{|\ell | \leq K_n}  p_\ell e^{\ii \ell  \cdot \ph} $. 

We also define $ \Pi_n^\bot := {\rm Id} - \Pi_n $.  
The projectors $ \Pi_n $, $ \Pi_n^\bot$ satisfy the smoothing properties \eqref{smoothing-u1} for the weighted Sobolev norm defined in \eqref{norma pesata derivate funzioni}.

In view of the Nash-Moser Theorem \ref{iterazione-non-lineare} we introduce the constants 
\be
\begin{aligned}\label{costanti nash moser}
&  {\mathtt a}_1 :=  {\rm max}\{6  \sigma_1 + 13, \chi (p k_0 (\tau + 2) + p \tau + \mu({\mathtt b}) + 2 \sigma_1 ) +1 \},  \\
&  \mathtt a_2 := \chi^{- 1} \mathtt a_1 - p k_0 (\tau + 2) - \mu(\mathtt b) - 2 \sigma_1 
\end{aligned}  
\ee
and
\begin{align}  
& \!\! \!  {\mathtt b}_1 := {\mathtt a}_1 + \mu({\mathtt b}) + 3 \sigma_1 + 3 +  \chi^{-1} \mu_1 \,,  
\quad  \mu_1 := 3( \mu({\mathtt b}) + 2\sigma_1  ) + 1 \, ,  \quad \chi = 3/ 2 \, , 
\label{costanti nash moser 1} \\
& \!\! \!  \label{costanti nash moser 2}
 \sigma_1 := \max \{ \bar \sigma\,, \sigma\,, s_0 + 2 k_0 + 5 \}\,,
\end{align}
where 
 $\bar \sigma := \bar \sigma(\tau, \nu, k_0) > 0$ is defined in Theorem \ref{thm:stima inverso approssimato}, $\sigma = \sigma(\tau, \nu, k_0) > 0$ is the constant which appears in Theorem \ref{ITERAZIONERIDUCIBILITA}-${\bf(S3)_{\nu}}$-${\bf(S4)_{\nu}}$, $ s_0 + 2 k_0 + 5 $ is the largest loss of regularity in the estimates of the Hamiltonian vector field $X_P$ in Lemma \ref{lemma quantitativo forma normale}, 
 $\mu(\mathtt b)$  in \eqref{definizione bf c (beta)},  the constant
 $ {\mathtt b} := [{\mathtt a}] + 2 \in \N $ where $ {\mathtt a} $ 
is defined in \eqref{alpha beta}, and the exponent 
$ p $ in \eqref{NnKn} satisfies  
\be\label{cond-su-p}
p {\mathtt a} > (\chi - 1 ) {\mathtt a}_1 + \chi \sigma_1 = \frac12 {\mathtt a}_1 + \frac32 \sigma_1 \, . 
\ee
By remark \ref{remark:ab} the constant $ {\mathtt a} \geq \chi k_0 ( \tau + 2 ) + 1 $. Hence, by the definition 
of $ {\mathtt a}_1 $ in \eqref{costanti nash moser}, there exists 
$ p := p(\tau, \nu, k_0) $ such that  \eqref{cond-su-p} holds. For example we fix
\be\label{choice:p}
p := \max \Big\{  \frac{5 \s_1+ 7}{ \chi k_0 (\tau+2) + 1 }, \frac{ \chi ( \mu ({\mathtt b}) + 2 \s_1)+1}{ \chi k_0  + 1 } \Big\} \, . 
\ee
\begin{remark}\label{choice-a-b}
The constant $ {\mathtt a}_1 $ is the exponent in \eqref{P2n}. 
The constant $ {\mathtt a}_2 $ is the exponent in \eqref{Hn}. 
The constant $ \mu_1 $ is the exponent in $({\mathcal P}3)_{n}$. 
The conditions 
$  {\mathtt a}_1 >   (2 \s_1 + 4)\chi \slash (2- \chi) = 6 \sigma_1 + 12 $,
$ {\mathtt b}_1 > {\mathtt a}_1 + \mu({\mathtt b}) + 3 \sigma_1 + 2 +  \chi^{-1} \mu_1 $,  as well 
as $ p {\mathtt a} > (\chi - 1 ) {\mathtt a}_1 + \chi \sigma_1 $,
$ \mu_1 >  ( \mu ({\mathtt b}) + 2 \s_1 ) \chi \slash (\chi - 1 ) = 3 ( \mu ({\mathtt b}) + 2 \s_1) $
arise for the convergence of the iterative scheme \eqref{F(U n+1) norma bassa}-\eqref{U n+1 alta}, 
see Lemma \ref{lemma:quadra}. 
In addition we require 
$\mathtt a_1 \geq \chi (p k_0 (\tau + 2) + \mu({\mathtt b}) + 2 \sigma_1) + \chi p \tau + 1 $ so that 
$\mathtt a_2 > p \tau $, more precisely  
$\mathtt a_2 \geq p \tau + \chi^{-1} $. 
This condition is used in the proof of Lemma \ref{lemma inclusione cantor riccardo 1}.
\end{remark}

In this section, given a function  
$$
W = ( \fracchi, \beta ): 
{\mathtt \Lambda}_0 \to \big(H^{s}_\vphi  \times H^{s}_\vphi \times H^{s}\big)  \times \R^\nu \, , 
\quad \lambda \mapsto W (\lambda) = ( \fracchi (\lambda), \beta (\lambda)) ,  
$$
where
$   \fracchi (\lambda) \in H^{s}_\vphi  \times H^{s}_\vphi \times H^{s} $ is defined as in \eqref{componente periodica},
we denote 
$$ 
\|  W \|_{s}^{k_0, \gamma} = \|  \fracchi \|_{s}^{k_0, \gamma} + |  \beta |^{k_0, \gamma} \, .
$$ 

\begin{theorem}\label{iterazione-non-lineare} 
{\bf (Nash-Moser)} 
There exist $ \d_0$, $ C_* > 0 $, such that, if
\begin{equation}\label{nash moser smallness condition}  
\begin{aligned}
& K_0^{\tau_2} \e \g^{-2} < \d_0 , \quad \tau_2 := {\rm max}\{ p \tau_0, 2 \sigma_1 + {\mathtt a}_1 + 4 \} \, , \\
&  K_0 := \gamma^{- 1}, \quad \gamma:= \e^a\,,\quad 0 < a < \frac{1}{2 + \tau_2}\,,
\end{aligned}
\end{equation}
where $ \tau_0 := \tau_0(\tau, \nu)$ is  defined in Theorem \ref{ITERAZIONERIDUCIBILITA}, 
 then, for all $ n \geq 0 $: 
\begin{itemize}
\item[$({\mathcal P}1)_{n}$] 
there exists a   ${k_0}$-times differentiable function $\tilde W_n : \R^\nu  \times [\kappa_1, \kappa_2] 
\to E_{n -1} \times \R^\nu $, $ \lambda = (\om, \kappa) \mapsto \tilde W_n (\lambda) 
:=  (\tilde \fracchi_n, \tilde \alpha_n - \om ) $, for  $ n \geq 1$,  
and  $\tilde W_0 := 0 $,  satisfying 
\begin{equation}\label{ansatz induttivi nell'iterazione}
\| \tilde W_n \|_{s_0 + \mu({\mathtt b}) + \sigma_1}^{k_0, \gamma} \leq C_*  K_0^{p k_0 (\tau+2)} \e  \gamma^{-1}\,. 
\end{equation}
Let $\tilde U_n := U_0 + \tilde W_n$ where $ U_0 := (\vphi,0,0, \om )$.
The difference $\tilde H_n := \tilde U_{n} - \tilde U_{n-1}$, $ n \geq 1 $,  satisfies
\begin{equation}  \label{Hn}
\begin{aligned}
& \|\tilde H_1 \|_{s_0 + \mu({\mathtt b}) + \sigma_1}^{k_0, \gamma} \leq	 
C_* \e \gamma^{- 1} K_0^{p k_0 (\tau+2)}\,, \\
&  \| \tilde H_{n} \|_{ s_0 + \mu({\mathtt b}) + \sigma_1}^{k_0, \gamma} \leq C_* \e \gamma^{-1} K_{n - 1}^{- \mathtt a_2} \,,\quad \forall n > 1\, . 
\end{aligned}
\end{equation}
\item[$({\mathcal P}2)_{n}$]   Setting  $ {\tilde \imath}_n := (\vphi, 0, 0) + \tilde \fracchi_n $ we define 
\be\label{def:cal-Gn}
{\mathcal G}_{0} := \tOm \times [\kappa_1, \kappa_2]\,,\quad {\mathcal G}_{n+1}  :=  {\mathcal G}_n \bigcap {\bf \Lambda}_{n + 1 }^{ \gamma}({\tilde \imath}_n)
\,, \quad n \geq 0 \, , 
\ee
where $  {\bf \Lambda}_{n + 1}^{ \gamma}({\tilde \imath}_n) $ is defined in \eqref{Melnikov-invert}. 

Then, for all $\lambda = (\omega, \kappa)$ in 
${\mathcal N}({\mathcal G}_{n}, \gamma  K_{n-1}^{-p(\tau+2)} ) $,  setting  $ \g_{-1} = \gamma $ and  $ K_{-1} := 1 $, we have 
\be\label{P2n}
\| {\mathcal F}(\tilde U_n) \|_{ s_{0}}^{k_0, \gamma}  \leq C_* \e K_{n - 1}^{- {\mathtt a}_1} \, .
\ee
\item[$({\mathcal P}3)_{n}$] \emph{(High norms).} 
$$ \| \tilde W_n \|_{ s_{0}+ {\mathtt b}_1}^{k_0, \gamma} 
\leq C_* \e \gamma^{-1}  K_{n - 1}^{\mu_1}
$$ 
for all $\lambda = (\omega, \kappa) \in $ 
$ {\mathcal N}({\mathcal G}_{n},  \gamma  K_{n-1}^{-p(\tau+2)})$.
\end{itemize}
\end{theorem}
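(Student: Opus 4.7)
I would prove this by a standard Nash--Moser induction, with the main work concentrated in (a) extending iterates so that the parameter-dependent approximate inverse of Theorem \ref{thm:stima inverso approssimato} can be applied, and (b) closing the bootstrap of the constants in \eqref{costanti nash moser}--\eqref{cond-su-p}. The base case $n=0$ is direct: since $\tilde W_0=0$, Lemma \ref{lemma quantitativo forma normale} together with the definition \eqref{operatorF} of $\mathcal{F}$ gives $\|\mathcal{F}(U_0)\|_{s_0}^{k_0,\gamma}\lesssim \e$, which yields $({\cal P}2)_0$ with room to spare (recall $K_{-1}=1$), while $({\cal P}1)_0$ and $({\cal P}3)_0$ are trivial.

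For the inductive step, suppose $({\cal P}1)_n$--$({\cal P}3)_n$ hold. The plan is first to check that on the shrunken set ${\cal G}_{n+1}\cap {\bf \Lambda}_{n+1}^\gamma(\tilde\imath_n)$ the ansatz \eqref{ansatz 0} is satisfied by $\tilde\imath_n$ (this follows from $({\cal P}1)_n$ and the choice $\gamma=\e^a$, after noting that $\mu(\mathtt b)+\sigma_1\geq \mu$ by \eqref{costanti nash moser 2}); then the almost-invertibility Theorem \ref{inversione parziale cal L omega} and Theorem \ref{thm:stima inverso approssimato} produce an operator $\mathbf{T}_n:=\mathbf{T}_0(\tilde\imath_n)$ satisfying \eqref{stima inverso approssimato 1}--\eqref{stima cal G omega bot alta}. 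Define the Newton step
\[
\tilde H_{n+1}:=-\widehat{\Pi}_n\, \mathbf{T}_n\, \Pi_n\, \mathcal{F}(\tilde U_n),\qquad \tilde U_{n+1}:=\tilde U_n+\tilde H_{n+1},
\]
where $\widehat\Pi_n$ projects onto $E_n\times\mathbb{R}^\nu$. On the open set $\mathcal{N}({\cal G}_{n+1},\gamma K_n^{-p(\tau+2)})$ everything is well defined, and outside this set one extends $\tilde W_{n+1}$ by a $k_0$-smooth cut-off as in the proof of $({\bf S2})_\nu$ in Theorem \ref{iterazione riducibilita}, giving a $k_0$-times differentiable function on $\mathbb{R}^\nu\times[\kappa_1,\kappa_2]$.

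The heart of the argument is the Taylor expansion
\begin{equation*}
\mathcal{F}(\tilde U_{n+1})=\mathcal{F}(\tilde U_n)+d_{i,\alpha}\mathcal{F}(\tilde U_n)[\tilde H_{n+1}]+\mathcal{Q}_n,
\end{equation*}
where $\mathcal{Q}_n$ is quadratic in $\tilde H_{n+1}$, controlled by \eqref{stima derivata seconda XP}. Using the decomposition \eqref{splitting per approximate inverse} one rewrites
\begin{equation*}
\mathcal{F}(\tilde U_{n+1})=\Pi_n^\bot \mathcal{F}(\tilde U_n)+(\mathcal{P}+\mathcal{P}_\omega+\mathcal{P}_\omega^\bot)\Pi_n\mathcal{F}(\tilde U_n)+d\mathcal{F}(\tilde U_n)[\widehat\Pi_n^\bot \mathbf{T}_n\Pi_n\mathcal{F}(\tilde U_n)]+\mathcal{Q}_n.
\end{equation*}
Then in the low norm $\|\cdot\|_{s_0}^{k_0,\gamma}$ I would use: (i) the smoothing \eqref{smoothing-u1} on $\Pi_n^\bot\mathcal{F}(\tilde U_n)$ together with the high-norm bound $({\cal P}3)_n$; (ii) the quadratic bound \eqref{stima inverso approssimato 2} on $\mathcal{P}$, which gains a factor $\|\mathcal{F}(\tilde U_n)\|_{s_0}^{k_0,\gamma}$ and thus contributes the super-exponentially small piece $K_{n-1}^{-2\mathtt{a}_1}$; (iii) the gain $N_{n-1}^{-\mathtt{a}}=K_{n-1}^{-p\mathtt{a}}$ on $\mathcal{P}_\omega$ (which, by \eqref{cond-su-p}, dominates the other losses); (iv) the estimate \eqref{stima cal G omega bot bassa} on $\mathcal{P}_\omega^\bot$ with $b$ chosen large, which converts the ultraviolet cutoff into a $K_n^{-b}$ factor. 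The condition $\mathtt a_1 > (2\sigma_1+4)\chi/(2-\chi)$ from remark \ref{choice-a-b} is exactly what is needed so that
\[
\|\mathcal{F}(\tilde U_{n+1})\|_{s_0}^{k_0,\gamma}\leq C_*\e K_n^{-\mathtt a_1},
\]
proving $({\cal P}2)_{n+1}$. Propagation of $({\cal P}3)_{n+1}$ comes from the tame bound \eqref{stima inverso approssimato 1} applied at the high index $s_0+\mathtt b_1$, together with $({\cal P}3)_n$ and the inequality $\mathtt b_1 > \mathtt a_1+\mu(\mathtt b)+3\sigma_1+2+\chi^{-1}\mu_1$; and $({\cal P}1)_{n+1}$, $({\cal P}3)_{n+1}$-compatible tame bound $\|\tilde H_{n+1}\|\lesssim \e\gamma^{-1}K_{n-1}^{-\mathtt a_2}$ follows by interpolating between $s_0$ and $s_0+\mathtt b_1$, using the definition of $\mathtt a_2$ in \eqref{costanti nash moser}.

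The delicate point---and the main obstacle---is that ${\cal G}_{n+1}$ is defined through the second Melnikov conditions of the \emph{approximate} torus $\tilde\imath_n$, which is itself changing. One must check that on the enlargement $\mathcal{N}({\cal G}_{n+1},\gamma K_n^{-p(\tau+2)})$ the almost-invertibility still applies. This is where property $({\bf S4})_\nu$ of Theorem \ref{iterazione riducibilita} enters: combined with the low-norm Lipschitz estimate $\|\tilde\imath_{n}-\tilde\imath_{n-1}\|\lesssim \e\gamma^{-1}K_{n-1}^{-\mathtt a_2}$ from $({\cal P}1)_n$ and the definition of $\mathtt a_2$ (which is precisely tuned so that $\e\gamma^{-1}C(S)K_{n-1}^\tau K_{n-1}^{-\mathtt a_2}\ll \gamma K_{n-1}^{-p(\tau+2)}$), one obtains the inclusion ${\bf \Lambda}_{n+1}^\gamma(\tilde\imath_n)\subseteq {\bf \Lambda}_{n+1}^{\gamma/2}(\tilde\imath_{n-1})$ required to apply Theorem \ref{inversione parziale cal L omega} uniformly along the iteration. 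Once this is in place, the closing of the constants is routine; the statement then yields Theorem \ref{main theorem} by passing to the limit $n\to\infty$ on $\bigcap_n {\cal G}_n$, which by standard arguments equals ${\cal C}_\infty^\gamma$ up to a set of the claimed size.
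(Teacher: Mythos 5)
Your overall scheme is the one the paper follows: the base case from $\|\mathcal{F}(U_0)\|_{s}=O(\e)$; the Newton step $\tilde H_{n+1}=-{\bf \Pi}_n {\bf T}_n \Pi_n \mathcal{F}(\tilde U_n)$ with ${\bf T}_n$ furnished by Theorem \ref{thm:stima inverso approssimato} (after verifying the almost-invertibility assumption through Theorem \ref{inversione parziale cal L omega} at $i=\tilde\imath_n$, with $S=s_0+\mathtt b_1$); the expansion of $\mathcal{F}(\tilde U_{n+1})$ into $\Pi_n^\bot\mathcal{F}(\tilde U_n)$, the errors $\mathcal{P},\mathcal{P}_\omega,\mathcal{P}_\omega^\bot$ of the almost-approximate inverse, the term produced by the Galerkin truncation (which the paper organizes as the commutator $R_n=(L_n{\bf\Pi}_n^\bot-\Pi_n^\bot L_n){\bf T}_n\Pi_n\mathcal{F}(\tilde U_n)$, equivalent to your $d\mathcal{F}(\tilde U_n)[\widehat\Pi_n^\bot{\bf T}_n\Pi_n\mathcal{F}]$ up to the decomposition), and the quadratic remainder; then the low-norm bootstrap using \eqref{stima inverso approssimato 2}, the $N_{n-1}^{-\mathtt a}$ gain of \eqref{stima cal G omega}, the $K_n^{-b}$ gain of \eqref{stima cal G omega bot bassa}, the high-norm propagation from \eqref{stima inverso approssimato 1}, the intermediate-norm bound for $\tilde H_{n+1}$ giving $\mathtt a_2$, and the $k_0$-smooth cut-off extension as in $({\bf S2})_\nu$. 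All of this matches the paper's proof and the constants in \eqref{costanti nash moser}--\eqref{cond-su-p}.

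There is, however, one genuine gap in your "delicate point" paragraph: you invoke the wrong mechanism, and the right one is left unproved. Since $\mathcal{G}_{n+1}\subseteq{\bf \Lambda}_{n+1}^{\gamma}(\tilde\imath_n)$ by its very definition \eqref{def:cal-Gn}, no comparison between the Melnikov conditions of $\tilde\imath_n$ and those of $\tilde\imath_{n-1}$ is needed inside this theorem, and the inclusion you propose, ${\bf \Lambda}_{n+1}^{\gamma}(\tilde\imath_n)\subseteq{\bf \Lambda}_{n+1}^{\gamma/2}(\tilde\imath_{n-1})$ via $({\bf S4})_\nu$, does not address the actual issue. What has to be checked is stability under enlargement in the \emph{parameter} $\lambda=(\omega,\kappa)$ at the \emph{fixed} torus $\tilde\imath_n$: the approximate inverse must be defined on the open neighborhood $\mathcal{N}(\mathcal{G}_{n+1},2\gamma K_n^{-p(\tau+2)})$, so that the derivatives $\partial_\lambda^k$ of $\tilde H_{n+1}$ make sense and the Whitney-type extension can be performed. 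The needed ingredient is the same-torus inclusion $\mathcal{N}({\bf \Lambda}_{n+1}^{\gamma}(\tilde\imath_n),2\gamma K_n^{-p(\tau+2)})\subseteq{\bf \Lambda}_{n+1}^{\gamma/2}(\tilde\imath_n)$, which follows from \eqref{inclusione-insiemi-gamma2} (proved inside $({\bf S1})_\nu$ of Theorem \ref{iterazione riducibilita} using the Lipschitz dependence in $\lambda$ of $\omega\cdot\ell+\mu_j^\nu-\varsigma\mu_{j'}^\nu$ on the truncated range of $\ell$) together with the analogous, elementary inclusion for the first-Melnikov set $\mathtt \Lambda_{n+1}^{\gamma, I}(\tilde\imath_n)$. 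The property $({\bf S4})_\nu$, combined with the low-norm bound on $\|\tilde\imath_n-\tilde\imath_{n-1}\|$, compares Cantor sets attached to two nearby tori at the same $\lambda$; in this paper it is used only afterwards, in Lemma \ref{lemma inclusione cantor riccardo 1}, to prove $\mathcal{C}_\infty^{\gamma}\subseteq\bigcap_n\mathcal{G}_n$ when Theorem \ref{main theorem} is deduced. As written, your argument leaves the enlargement-in-$\lambda$ step unjustified; once you replace $({\bf S4})_\nu$ by \eqref{inclusione-insiemi-gamma2} and its first-Melnikov analogue, the rest of your plan closes exactly as in the paper.
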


\begin{proof}
To simplify notation, in this proof we denote $\| \, \|^{k_0, \gamma}$ by $\| \, \|$. 

\smallskip

{\sc Step 1:} \emph{Proof of} $({\mathcal P}1, 2, 3)_0$.
They follow by  $\|{\mathcal F}(U_0) \|_s = O(\e)$ and taking $C_* $ large enough.

\smallskip

{\sc Step 2:} \emph{Assume that $({\mathcal P}1,2,3)_n$ hold for some $n \geq 0$, and prove $({\mathcal P}1,2,3)_{n+1}$.}
We are going to define the successive approximation $ \tilde U_{n+1} $ by a modified Nash-Moser scheme.
For that we prove the almost-approximate invertibility of  the linearized operator 
$$
L_n := L_n(\lambda) := d_{i,\a} {\mathcal F}({\tilde \imath}_n(\lambda)) 
$$ 
 applying Theorem \ref{thm:stima inverso approssimato} to ${ L}_n(\lambda) $.
The verification of the inversion assumption \eqref{inversion assumption}-\eqref{tame inverse} is the purpose  
of  Theorem  \ref{inversione parziale cal L omega} that we apply 
with 
$ i = \tilde \imath_n $.   
By \eqref{nash moser smallness condition}   the smallness condition 
\eqref{ansatz riducibilita} 
holds for $ \e $ small enough. 
Therefore Theorem  \ref{inversione parziale cal L omega}  
applies, and we deduce  that the inversion assumption \eqref{inversion assumption}-\eqref{tame inverse} holds for all 
$ \lambda \in {\bf \Lambda}_{n + 1}^{\gamma / 2}(\tilde \imath_n)$, see \eqref{Melnikov-invert}.
Actually  the inversion assumption holds
for all $\lambda \in $ $ {\mathcal N}({\bf \Lambda}_{n + 1}^{\gamma}(\tilde \imath_n), 2 \gamma K_n^{- p(\tau + 2)})$ because
$$
 {\mathcal N}\big( {\bf \Lambda}_{n+1}^{ \gamma }({\tilde \imath}_{n }), 2 \gamma K_n^{- p (\tau +2)}\big)
 \subseteq {\bf \Lambda}_{n +1}^{\gamma / 2}(\tilde \imath_{n }) \, ,  \quad \forall n \geq 0  \, ,
$$
which is a consequence of \eqref{inclusione-insiemi-gamma2} and 
the similar inclusion
$$  
{\mathcal N} ({\mathtt \Lambda}_{n+1}^{ \gamma, I }({\tilde \imath}_{n }), 2 \gamma K_n^{- p (\tau +2)}\big)
 \subseteq {\mathtt \Lambda}_{n +1}^{\gamma / 2 , I}(\tilde \imath_{n }) \, .
 $$ 
Now we apply Theorem \ref{thm:stima inverso approssimato} to the linearized operator 
$ L_n(\lambda) $  with 
$$ 
\tLm_o = 
{\mathcal N}({\bf \Lambda}_{n + 1}^{\gamma}(\tilde \imath_n), 2 \gamma K_n^{- p(\tau + 2)})
$$
 and 
\begin{equation}\label{valore finalissimo S}
S := s_0 + \mathtt b_1 \quad \text{where} \  \mathtt b_1 \  \text{is  defined in  \eqref{costanti nash moser 1}} \, . 
\end{equation}
It implies the existence of
an almost-approximate inverse ${\bf T}_n  := { \bf T}_n (\lambda, {\tilde \imath}_n(\lambda))$ which 
satisfies
\begin{align}
& \| {\bf T}_n g  \|_s 
 \leq_{s_0 + \mathtt b_1} \gamma^{-1} \big( \| g \|_{s + \sigma_1} 
+ \| \tilde \fracchi_n \|_{s + \sigma_1 + \mu({\mathtt b})} \| g \|_{s_0+ \sigma_1}\big)\,, \,  \forall s_0 < s \leq s_0 + \mathtt b_1  \label{stima Tn} \\
& \| {\bf T}_n  g \|_{s_0}  \leq_{s_0 + \mathtt b_1} \gamma^{-1} \| g \|_{s_0 + \sigma_1}\,.   \label{stima Tn norma bassa}
\end{align}
For all 
\be\label{inclusioni:Cantor}
\lambda \in {\mathcal N}({\mathcal G}_{n+1}, 2 \gamma K_n^{- p(\tau + 2)}) \subset 
 {\mathcal N}({\mathcal G}_{n}, \gamma K_{n-1}^{- p(\tau + 2)} ) 
\, , \ n \geq 0 \, , 
\ee 
we define the successive approximation 
\begin{equation}\label{soluzioni approssimate}
\begin{aligned}
&  U_{n + 1} := \tilde U_n + H_{n + 1} \, , \\
& H_{n + 1} :=
( \widehat \fracchi_{n+1}, \widehat \alpha_{n+1}) :=  - {\bf \Pi}_{n } {\bf T}_n \Pi_{n } {\mathcal F}(\tilde U_n) 
\in E_n \times \R^\nu  
\end{aligned}
\end{equation}
where  $ {\bf \Pi}_n $ is defined by (see \eqref{truncation NM})
\be\label{proiettore modificato}
 {\bf \Pi}_n ({\fracchi}, \alpha) := (\Pi_n \fracchi, \alpha)\,,
 \quad {\bf \Pi}_n^\bot (\fracchi, \alpha) := (\Pi_n^\bot \fracchi, 0)\,,\quad \forall (\fracchi, \alpha)\,.
\ee
We now show that the iterative scheme in \eqref{soluzioni approssimate} is rapidly converging. 
We write  
$$ 
{\mathcal F}(U_{n + 1}) =  {\mathcal F}(\tilde U_n) + L_n H_{n + 1} + Q_n 
$$ 
where $ L_n := d_{i,\alpha} {\mathcal F}({\tilde \imath}_n) $ and  
\begin{equation}\label{def:Qn}
\begin{aligned}
& Q_n := Q(\tilde U_n, H_{n + 1}) \, , \\ 
& Q (\tilde U_n, H)  :=  {\mathcal F}(\tilde U_n + H ) - {\mathcal F}(\tilde U_n) - L_n H \,, \quad 
H \in E_n \times \R^\nu \,  . 
\end{aligned}
\end{equation}
Then, by the definition of $ H_{n+1} $ in \eqref{soluzioni approssimate}, 
we have (recall also \eqref{proiettore modificato})
\begin{align}
{\mathcal F}(U_{n + 1}) & = 
 {\mathcal F}(\tilde U_n) - L_n {\bf \Pi}_{n } {\bf T}_n \Pi_{n } {\mathcal F}(\tilde U_n) + Q_n \nonumber \\
 & = 
 {\mathcal F}(\tilde U_n) - L_n  {\bf T}_n \Pi_{n } {\mathcal F}(\tilde U_n) + L_n  {\bf \Pi}_n^\bot  {\bf T}_n \Pi_{n } {\mathcal F}(\tilde U_n)
 + Q_n \nonumber\\ 
& =  {\mathcal F}(\tilde U_n)  - \Pi_{n } L_n {\bf T}_n \Pi_{n }{\mathcal F}(\tilde U_n) 
+ ( L_n  {\bf \Pi}_n^\bot -  \Pi_n^\bot L_n ) {\bf T}_n \Pi_{n }{\mathcal F}(\tilde U_n) + Q_n \nonumber\\
 & = \Pi_{n }^\bot {\mathcal F}(\tilde U_n) + R_n + Q_n + P_n  
\label{relazione algebrica induttiva}
\end{align}
where 
\begin{equation}\label{Rn Q tilde n}
\begin{aligned}
& R_n := (L_n  {\bf \Pi}_n^\bot -  \Pi_n^\bot L_n) {\bf T}_n \Pi_{n }{\mathcal F}( \tilde U_n) \,, \\
& P_n := - \Pi_{n } ( L_n {\bf T}_n - {\rm Id}) \Pi_{n } {\mathcal F}( \tilde U_n)\,.
\end{aligned}
\end{equation}
We first note that,  for all $ \lambda \in \tOm \times [\kappa_1, \kappa_2] $,  $s \geq s_0 $, 
\begin{equation}\label{F tilde Un W tilde n}
\begin{aligned}
\| {\mathcal F}(\tilde U_n)\|_s & 
\leq_s \|{\mathcal F}(U_0)\|_s + \| {\mathcal F}(\tilde U_n) - {\mathcal F}(U_0)\|_s \\
& \stackrel{\eqref{operatorF}, \eqref{stima derivata XP}, \eqref{costanti nash moser 2}, \eqref{ansatz induttivi nell'iterazione}}{\leq_s}  \e + \| \tilde W_n\|_{s + \sigma_1}  
\end{aligned}
\end{equation}
and, by \eqref{ansatz induttivi nell'iterazione}, \eqref{nash moser smallness condition}, 
\begin{equation}\label{gamma - 1 F tilde Un}
\gamma^{- 1} \| {\mathcal F}(\tilde U_n)\|_{s_0} \leq 1\, . 
\end{equation}

\begin{lemma}
For all  $ \lambda \in {\mathcal N}({\mathcal G}_{n + 1}, 2 \gamma  K_{n}^{-p(\tau+2)}) $ we have, 
setting $ \mu_2 := 
\mu({\mathtt b}) +  3 \sigma_1 + 2 $,  
\be
\begin{aligned}
  \| {\mathcal F}(U_{n + 1})\|_{s_0}  \leq_{s_0 + {\mathtt b}_1}    \frac{1}{\g}K_{n }^{\mu_2 - {\mathtt b}_1} 
 (  \e + \| \tilde W_n \|_{s_0 + \mathtt b_1}) 
& + \frac{K_n^{2 \sigma_1 + 4}}{\gamma} \| {\mathcal F}(\tilde U_n)\|_{s_0}^2 \\
& + K_{n - 1}^{- p {\mathtt a} } K_n^{\sigma_1} \frac{\e}{\gamma^{ 2}} \| {\mathcal F}(\tilde U_n) \|_{s_0} \label{F(U n+1) norma bassa} 
\end{aligned}
\ee
\be
\begin{aligned}
& \label{U n+1 alta}
\| W_1 \|_{s_0+ {\mathtt b}_1} 
\leq_{s_0+ {\mathtt b}_1}  \e \gamma^{- 1} \, , \\
& \| W_{n + 1}\|_{s_0 + {\mathtt b}_1} \leq_{s_0 + {\mathtt b}_1} 
K_n^{\mu({\mathtt b}) + 2\sigma_1 } \g^{-1} (\e  +  \| \tilde W_n\|_{s_0 + {\mathtt b}_1}  )\, , \ n \geq 1 \, . 
\end{aligned}
\ee
\end{lemma}

\begin{proof} 
We first estimate $ H_{n +1} $ defined in  \eqref{soluzioni approssimate}.

\noindent
{\bf Estimates of $ H_{n+1} $.}
 By \eqref{soluzioni approssimate} and \eqref{smoothing-u1}, 
\eqref{stima Tn}, \eqref{stima Tn norma bassa}, \eqref{ansatz induttivi nell'iterazione},   we get 
\begin{align}
\|  H_{n + 1} \|_{s_0 + {\mathtt b}_1} 
& \leq_{s_0 + {\mathtt b}_1}  \gamma^{- 1} 
\big( K_n^{\sigma_1} \|{\mathcal F}(\tilde U_n) \|_{s_0 + {\mathtt b}_1} + 
K_n^{\mu({\mathtt b}) + 2 \sigma_1} \|\tilde \fracchi_n \|_{s_0 + {\mathtt b}_1}\| {\mathcal F}(\tilde U_n)\|_{s_0 }  \big)
\nonumber \\
& \stackrel{\eqref{F tilde Un W tilde n}, \eqref{gamma - 1 F tilde Un}}{\leq_{s_0 + {\mathtt b}_1}} 
K_n^{\mu(\mathtt b) + 2 \sigma_1  } \g^{-1} \big( \e  +  \| \tilde W_n \|_{s_0 + \mathtt b_1} \big)\, ,  \label{H n+1 alta} 
\\
\label{H n+1 bassa}
& \|  H_{n + 1}\|_{s_0} 
 \leq_{s_0 + \mathtt b_1} \gamma^{-1}K_{n}^{\sigma_1} \| {\mathcal F}(\tilde U_n)\|_{s_0} \, .
\end{align}
Now we  estimate the terms $ Q_n $ in \eqref{def:Qn} and $ P_n , R_n $ in \eqref{Rn Q tilde n} in $ \| \ \|_{s_0} $ norm. 
\\[1mm]
{\bf Estimate of $ Q_n $.}
By  \eqref{def:Qn}, \eqref{operatorF}, \eqref{stima derivata seconda XP} and \eqref{ansatz induttivi nell'iterazione}, \eqref{smoothing-u1}, 
we have the quadratic estimate
\be\label{stima parte quadratica norma bassa}
\| Q( \tilde U_n, H) \|_{s_0}  \leq_{s_0}\e K_n^4 \|  \widehat \fracchi \|_{s_0}^2   \, ,  \ \forall  \widehat \fracchi \in E_n \, . 
\ee
Then the term $ Q_n $ in \eqref{def:Qn} satisfies, 
by \eqref{stima parte quadratica norma bassa},  
 \eqref{H n+1 bassa},  $\e \gamma^{- 1} \leq 1$, 
\begin{align} 
\| Q_n \|_{s_0} 
&  \leq_{s_0 + \mathtt b_1}  K_n^{2 \sigma_1 + 4 }  \gamma^{-1} \| {\mathcal F}(\tilde U_n) \|_{s_0}^2\, . \label{Qn norma bassa}
\end{align}
{\bf Estimate of $ P_n $.} According to \eqref{splitting per approximate inverse}, 
we write the term $ P_n $ in \eqref{Rn Q tilde n} as
\begin{align*}
& 
P_n = - \Pi_n (L_n {\bf T}_n - {\rm Id}) \Pi_n {\mathcal F}(\tilde U_n) = - P_n^{(1)} - P_{n , \omega} - P_{n, \omega}^\bot \\
& P_n^{(1)} := \Pi_n {\mathcal P}({\tilde \imath}_n ) \Pi_n {\mathcal F}(\tilde U_n)\,, \,   \\
& P_{n, \omega} := \Pi_n {\mathcal P}_\omega( {\tilde \imath}_n ) \Pi_n {\mathcal F}(\tilde U_n)\,, \\
&   P_{n, \omega}^\bot := \Pi_n {\mathcal P}_\omega^\bot({\tilde \imath}_n ) \Pi_n {\mathcal F}(\tilde U_n)\,.
\end{align*}
By \eqref{ansatz induttivi nell'iterazione}, \eqref{nash moser smallness condition}, \eqref{gamma - 1 F tilde Un}, 
using that, by \eqref{smoothing-u1}, 
$$
\begin{aligned}
\| {\mathcal F}(\tilde U_n) \|_{s_0 + \sigma_1} & 
\leq \| \Pi_n {\mathcal F}(\tilde U_n)  \|_{s_0  + \sigma_1} + \|\Pi_n^\bot {\mathcal F}(\tilde U_n)  \|_{s_0  + \sigma_1} \\
& 
\leq K_n^{  \sigma_1} \big( \| {\mathcal F}(\tilde U_n)\|_{s_0} + K_n^{- \mathtt b_1} \|{\mathcal F}(\tilde U_n) \|_{s_0 + \mathtt b_1} \big)
\end{aligned}
$$
the bounds \eqref{stima inverso approssimato 2}-\eqref{stima cal G omega bot alta} imply the following estimates: 

\begin{align}
  \| P_n^{(1)} \|_{s_0}  & \leq_{s_0 + \mathtt b_1}
 \g^{-1} K_n^{2 \sigma_1 } \| {\mathcal F}(\tilde U_n)\|_{s_0}^2 \nonumber \\
 & \qquad + K_n^{2 \sigma_1 - {\mathtt b}_1} \| {\mathcal F}(\tilde U_n) \|_{s_0 + {\mathtt b}_1} \| {\mathcal F}(\tilde U_n)\|_{s_0}\,,\nonumber\\
 & \qquad \quad  \stackrel{\eqref{F tilde Un W tilde n}, \eqref{smoothing-u1}}{\leq_{s_0 + \mathtt b_1}}\g^{-1} K_n^{2 \sigma_1}  \| {\mathcal F}(\tilde U_n)\|_{s_0}^2 \nonumber \\
 & \ \ \quad \qquad \qquad \quad +\g^{- 1} K_n^{3 \sigma_1 - {\mathtt b}_1}(\e + \| \tilde W_n\|_{s_0 + \mathtt b_1} )\big) \| {\mathcal F}(\tilde U_n)\|_{s_0}\,, \label{Q n 1 bassa} 
 \end{align}

 \begin{align}
& \| P_{n, \omega} \|_{s_0}   \leq_{s_0 + \mathtt b_1} \e \gamma^{- 2} 
N_{n - 1}^{- {\mathtt a}} K_n^{\sigma_1}  \| {\mathcal F}(\tilde U_n) \|_{s_0}\,, \label{Q n omega bassa} \\
& \| P_{n, \omega}^\bot\|_{s_0}  \leq_{s_0 + \mathtt b_1} K_{n}^{\mu({\mathtt b}) + 2 \sigma_1 - {\mathtt b}_1} \gamma^{- 1} 
( \| {\mathcal F}(\tilde U_n) \|_{s_0 + {\mathtt b}_1}+ \e  \| \tilde \fracchi_n \|_{s_0 + {\mathtt b}_1})  \nonumber\\ 
& \qquad \qquad \stackrel{\eqref{F tilde Un W tilde n}, \eqref{smoothing-u1}}{\leq_{s_0 +\mathtt b_1}} K_{n}^{\mu({\mathtt b}) + 3 \sigma_1 - {\mathtt b}_1}
\gamma^{- 1} ( \e + \| \tilde W_n \|_{s_0+ \mathtt b_1 }) \, . \label{Q n omega bot bassa} 
\end{align}
{\bf Estimate of $ R_n $.} For  $ H := (\widehat \fracchi, \widehat \a ) $ we have 
$ (L_n  {\bf \Pi}_n^\bot -  \Pi_n^\bot L_n) H = $ $ \e [ d_i X_{P}( \tilde \imath_n), \Pi_n^\bot ] \widehat \fracchi = $
$ [ \Pi_n , d_i X_{P}( \tilde \imath_n)] 
\widehat \fracchi $ where $ X_{P} $ is the Hamiltonian vector field of the perturbation 
$ P $ in \eqref{definizione cal N P}, see \eqref{operatorF}. 
Thus, applying the estimate \eqref{stima derivata XP}, using \eqref{smoothing-u1} and recalling \eqref{costanti nash moser 2}, the following estimate holds:
\begin{align}
\| (L_n  {\bf \Pi}_n^\bot -  \Pi_n^\bot L_n) H \|_{s_0} & \leq_{s_0+ {\mathtt b}_1} 
\e K_{n }^{- {\mathtt b}_1  + \sigma_1 + 2} \big(\|  \widehat \fracchi \|_{s_0 + {\mathtt b}_1}  \nonumber\\
& \qquad + 
\| \tilde \fracchi_n \|_{s_0 + {\mathtt b}_1 } \|  \widehat \fracchi \|_{s_0 + 2}\big)\,. \label{stima commutatore modi alti norma bassa}
\end{align}
Hence, applying 
\eqref{stima Tn}, 
\eqref{stima commutatore modi alti norma bassa},  
\eqref{nash moser smallness condition}, 
\eqref{ansatz induttivi nell'iterazione}, 
\eqref{smoothing-u1}, \eqref{gamma - 1 F tilde Un}
the term $R_n$ defined in \eqref{Rn Q tilde n} satisfies
\begin{align} 
\| R_n\|_{s_0} 
& \leq_{s_0 + {\mathtt b}_1}  K_n^{ \mu({\mathtt b}) + 2 \sigma_1 + 2 - {\mathtt b}_1} ( \e \gamma^{-1} \| {\mathcal F}(\tilde U_n)\|_{s_0 + {\mathtt b}_1} + \e \| \tilde \fracchi_n  \|_{s_0 + {\mathtt b}_1} ) \nonumber\\
& \stackrel{\eqref{F tilde Un W tilde n}}{\leq_{s_0 + \mathtt b_1}} K_n^{ \mu({\mathtt b}) + 3 \sigma_1 + 2 - {\mathtt b}_1} ( \e + 
\| \tilde W_n\|_{s_0 + \mathtt b_1} )\, . \label{stima Rn norma bassa}
\end{align}
We can finally  estimate $ {\mathcal F}(U_{n + 1}) $ in $ \| \ \|_{s_0} $. 
By \eqref{relazione algebrica induttiva} and  
\eqref{Qn norma bassa}, \eqref{Q n 1 bassa}-\eqref{Q n omega bot bassa}, 
\eqref{stima Rn norma bassa},  \eqref{nash moser smallness condition}, 
\eqref{ansatz induttivi nell'iterazione}, 
we get \eqref{F(U n+1) norma bassa}. 
Moreover by \eqref{soluzioni approssimate} and \eqref{stima Tn} we have the bound  \eqref{U n+1 alta} for  
$$ 
\| W_1 \|_{s_0+ {\mathtt b}_1} = \| H_1 \|_{s_0+ {\mathtt b}_1}  \leq_{s_0+ {\mathtt b}_1} \g^{-1} \| {\mathcal F}(U_0)\|_{s_0+ {\mathtt b}_1 + \sigma_1} 
\leq_{s_0+ {\mathtt b}_1}  \e \gamma^{- 1} \, .
$$ 
The estimate  \eqref{U n+1 alta}  for  $ W_{n+1} := \tilde W_n + H_{n+1} $, $ n \geq 1 $,  
 follows by 
 \eqref{H n+1 alta}. 
\end{proof}

As a corollary we get

\begin{lemma}\label{lemma:quadra}
For all  $ \lambda \in {\mathcal N}({\mathcal G}_{n + 1}, 2 \gamma  K_{n}^{-p(\tau+2)}) $ we have
\begin{equation}\label{stima F u n + 1 induttiva}
\begin{aligned}
&  \| {\mathcal F}(U_{n + 1}) \|_{s_0}^{k_0, \gamma}  \leq C_* \e K_n^{- \mathtt a_1}  \, , \\
& \| W_{n + 1} \|_{s_0 + \mathtt b_1}^{k_0, \gamma} 
\leq C_* \e \gamma^{- 1} K_n^{\mu_1} \, ,  \\ 
\end{aligned}
\end{equation}
\begin{equation}\label{stima H n+1 lemma}
\begin{aligned}
& 
\| H_1 \|_{s_0 + \mu(\mathtt b) + \sigma_1}^{k_0, \gamma} \leq C \e \g^{-1} \, , \\
& \| H_{n + 1}\|_{s_0 + \mu(\mathtt b) + \sigma_1}^{k_0, \gamma} \leq_{s_0}   \e \gamma^{- 1} K_n^{\mu(\mathtt b) + 2 \sigma_1} K_{n - 1}^{- \mathtt a_1} \, ,
\   n \geq 1 \,.
\end{aligned}
\end{equation}
\end{lemma}

\begin{proof}
First note that, by  \eqref{inclusioni:Cantor},  if  
$ \lambda \in  {\mathcal N}({\mathcal G}_{n + 1}, 2 \gamma  K_{n}^{-p(\tau+2)}) $ then 
$ \lambda \in  {\mathcal N}({\mathcal G}_{n}, \gamma K_{n-1}^{- p(\tau + 2)} )  $ and so  \eqref{P2n} and $({\mathcal P}3)_{n}$ hold. 
Then the first inequality in \eqref{stima F u n + 1 induttiva} follows by 
\eqref{F(U n+1) norma bassa},
$ ({\mathcal P}2)_{n}$, $({\mathcal P}3)_{n}$,  $\gamma^{- 1} = K_0 \leq K_n $, $ \e \g^{-2} \leq c $ small, 
and by \eqref{costanti nash moser}, \eqref{costanti nash moser 1}, \eqref{cond-su-p}-\eqref{choice:p}
(see also remark \ref{choice-a-b}). 
For $ n = 0 $ we use also  \eqref{nash moser smallness condition}. 
The second inequality in \eqref{stima F u n + 1 induttiva} follows similarly 
by  \eqref{U n+1 alta}, $({\mathcal P}3)_{n} $, the choice of $ \mu_1 $ in \eqref{costanti nash moser 1} and 
$ K_0 $ large enough. 
Since  $ H_1  = W_1 $
the first inequality in \eqref{stima H n+1 lemma} follows by the first inequality in \eqref{U n+1 alta}.
For $ n \geq 1 $, the estimate \eqref{stima H n+1 lemma} follows 
by \eqref{smoothing-u1}, \eqref{H n+1 bassa}
and \eqref{P2n}.
\end{proof}

We now define a $ k_0 $-times differentiable extension of 
$ (H_{n + 1})_{|{\mathcal N}({\mathcal G}_{n+1}, \gamma K_n^{- p (\tau +2)})} $ to the whole $ \R^\nu \times [\kappa_1, \kappa_2] $.

\begin{lemma} {\bf (Extension)}\label{lemma:extension torus}
There is a $ k_0 $-times differentiable  function $ {\tilde H}_{n + 1} $  defined on the whole $ \R^\nu \times [\kappa_1, \kappa_2] $  such that
\be\label{Hn+1= tilde Hn+1}
\tilde H_{n + 1} = H_{n + 1} \, , \quad \forall \lambda \in {\mathcal N}({\mathcal G}_{n+1}, \gamma K_n^{- p (\tau +2)}) \, , 
\ee 
and \eqref{Hn} holds also at the step $n + 1$. 
\end{lemma}

\begin{proof}
The function $ H_{n +1}(\lambda)$ is defined for all $\lambda \in {\mathcal N}\big( {\mathcal G}_{n+1}, 2 \gamma K_n^{- p (\tau +2)}\big) $.
Then we define 
$$ 
\tilde H_{n + 1} (\lambda)  := 
\begin{cases}
\psi_{n + 1} (\lambda) H_{n + 1} (\lambda) \qquad \forall \lambda \in {\mathcal N}\big( {\mathcal G}_{n+1}, 2 \gamma K_n^{- p (\tau +2)}\big) \cr
0 \qquad\qquad\qquad\qquad \quad \forall \lambda \notin {\mathcal N} \big( {\mathcal G}_{n+1}, 2 \gamma K_n^{- p (\tau +2)}\big)
\end{cases}
$$ 
where $ \psi_{n + 1} $ is a $ {\mathcal C}^\infty $ cut-off function satisfying $ 0 \leq \psi_{n + 1} \leq 1 $, 
\begin{align*}
& 
\psi_{n + 1}(\lambda) = 1 \, , \ \forall \lambda \in  {\mathcal N}({\mathcal G}_{n+1}, 
\gamma K_n^{- p (\tau +2)}), \ \  {\rm supp}(\psi_{n + 1}) \subseteq
{\mathcal N} \big( {\mathcal G}_{n+1}, 2 \gamma K_n^{- p (\tau +2)}\big)\, , 
\\
& \qquad \qquad \qquad  |\partial^k_{\lambda} \psi_{n + 1}(\lambda)| \leq C(k) \big( K_n^{p(\tau + 2)} \gamma^{- 1} \big)^{|k|}, \,  \  
\forall k \in \N^{\nu + 1} \,. 
\end{align*}
Then 
\eqref{Hn+1= tilde Hn+1} holds
and we have the estimate 
$$ 
\| \tilde H_{n + 1}\|_{s_0 + \mu({\mathtt b}) + \sigma_1 }^{k_0, \gamma} \leq  K_n^{p (\tau + 2) k_0 }
\| H_{n + 1}\|_{s_0+ \mu({\mathtt b}) + \sigma_1}^{k_0, \gamma}.
$$ 
For $ n = 0 $ and \eqref{stima H n+1 lemma} we get the first inequality in  \eqref{Hn}. For $ n \geq 1 $ we deduce using   
\eqref{stima H n+1 lemma}  and  
the definition of $ \mathtt a_2 $ in \eqref{costanti nash moser}, the estimate  \eqref{Hn} also  at the step $n + 1$. 
\end{proof}

We now define 
$$
\tilde W_{n+1} = \tilde W_{n} + \tilde H_{n + 1} \, , \quad 
\tilde U_{n + 1} := \tilde U_n + \tilde H_{n + 1} = U_0 + \tilde W_n + \tilde H_{n + 1} = U_0 + \tilde W_{n + 1}\,,
$$
which are defined for all $\lambda \in \R^\nu \times [\kappa_1, \kappa_2] $ and satisfy 
$$
\tilde W_{n + 1} = W_{n + 1} \, , \  
\tilde U_{n + 1} = U_{n + 1} \, , \ \ \forall \lambda \in {\mathcal N}({\mathcal G}_{n + 1}, \gamma K_n^{- p (\tau +2)}) \, . 
$$ 
Therefore  $({\mathcal P}2)_{n + 1}$, $({\mathcal P}3)_{n + 1}$ are proved by Lemma \ref{lemma:quadra}.
Moreover by \eqref{Hn}, which has been proved up to the step $n + 1 $ in Lemma \ref{lemma:extension torus}, we have 
$$
\| \tilde W_{n + 1} \|_{s_0 + \mu({\mathtt b}) + \sigma_1}^{k_0, \gamma} 
\leq {\mathop \sum}_{k = 1}^{n + 1} \| \tilde H_k \|_{s_0 + \mu({\mathtt b}) + \sigma_1}^{k_0, \gamma} 
\leq C_*  K_0^{p k_0 (\tau+2)} \e  \gamma^{-1}
$$
and thus \eqref{ansatz induttivi nell'iterazione} holds also 
at the step $n + 1$. This completes the proof of Theorem \ref{iterazione-non-lineare}.
\end{proof}

\section{Proof of Theorem \ref{MAINTHEOREM}}\label{proof theorem 4.1}
 
Let $ \gamma = \e^a  $ with $ a \in (0, a_0) $ and $ a_0 := 1 / (2+ \tau_2 ) $.
Then the smallness condition \eqref{nash moser smallness condition} holds
for $ 0 < \e < \e_0 $ small enough and Theorem \ref{iterazione-non-lineare} holds.   
By \eqref{Hn} the  sequence of functions 
$$ 
\tilde W_n = {\tilde U}_n - (\vphi, 0, 0, \omega)  = 
\big(\tilde \fracchi_n , \tilde \a_n - \omega \big)  = \big(\tilde \imath_n - (\vphi, 0, 0), \tilde \a_n - \omega \big) 
$$ 
is a Cauchy sequence in $ \|  \ \|_{s_0}^{k_0, \gamma} $ and then it converges to a function 
$$ 
W_\infty := \lim_{n \to + \infty} {\tilde W}_n\,, \quad \text{with} \quad W_\infty : \tOm \times [\kappa_1, \kappa_2] \to H^{s_0}_\vphi  \times H^{s_0}_\vphi 
\times H^{s_0}
\times \R^\nu \, .
$$
We define 
$$
U_\infty := (i_\infty, \a_\infty) = (\vphi,0,0, \om) + 
W_\infty \,.  
$$
By \eqref{ansatz induttivi nell'iterazione} and \eqref{Hn} we also deduce 
\begin{equation}\label{U infty - U n}
\begin{aligned}
& \|  U_\infty -  U_0 \|_{s_0 + \mu(\mathtt b) + \sigma_1}^{k_0, \gamma} \leq C_* \e \gamma^{- 1} K_0^{p k_0 (\tau + 2)}\,, \\
& \| U_\infty - {\tilde U}_n \|_{s_0  + \mu({\mathtt b}) + \sigma_1}^{k_0, \gamma} \leq C \e \gamma^{-1} K_{n }^{- \mathtt a_2} \, , \ \  \forall n \geq 1\,.
\end{aligned}
\end{equation}
Moreover by Theorem \ref{iterazione-non-lineare}-$({\mathcal P}2)_n$, we deduce that 
$ {\mathcal F}(\lambda, U_\infty(\lambda)) = 0 $ for all $ \lambda  $ belonging to 
\be\label{defGinfty}
\begin{aligned}
\bigcap_{n \geq 0} {\mathcal G}_n & = 
\tLm \cap  \bigcap_{n \geq 1} 
 {\bf \Lambda}_{n}^{\gamma}(\tilde \imath_{n-1}) \\
 & \stackrel{\eqref{Melnikov-invert}, \eqref{Cantor set}, \eqref{prime di melnikov}}{=}
 \tLm \cap  \Big[ \bigcap_{n \geq 1}  \tLm_{n}^{\gamma}(\tilde \imath_{n-1}) \Big] \bigcap 
 \Big[ \bigcap_{n \geq 1}   \mathtt \Lambda_{n}^{\gamma, I}(\tilde \imath_{n-1}) \Big]\, \,,
 \end{aligned}
\ee
where $\mathtt \Lambda := \mathtt \Omega \times [\kappa_1, \kappa_2]$.
By \eqref{U infty - U n} for $ n = 0 $
and since $ K_0 = \g^{-1} $ (see \eqref{nash moser smallness condition})  we deduce
the estimates \eqref{mappa aep} and \eqref{stima toro finale} 
with $ k_1 :=  p k_0 ( \tau + 2 )$.  

In order to conclude the proof of Theorem \ref{MAINTHEOREM} we have to provide the characterization of
$ {\mathcal C}_\infty^{\gamma} $ in \eqref{Cantor set infinito riccardo}. 
We first consider the set 
\begin{equation}\label{cantor finale 1 riccardo}
{\mathcal  G}_\infty 
:= \tLm \cap \Big[ \bigcap_{n \geq 1} \tLm_n^{2 \gamma}( i_\infty) \Big] \bigcap \Big[ \bigcap_{n \geq 1} \mathtt \Lambda_n^{2 \gamma, I}(i_\infty)  \Big]\,.
\end{equation}

\begin{lemma}\label{lemma inclusione cantor riccardo 1}
$ {\mathcal G}_\infty  \subseteq  \bigcap_{n \geq 0 } {\mathcal G}_n $, where $ {\mathcal G}_n $ are  defined in \eqref{def:cal-Gn}.
\end{lemma}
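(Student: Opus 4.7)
The plan is to prove, for each $n \geq 0$, the inclusion $\mathcal{G}_\infty \subseteq \mathcal{G}_n$ by induction on $n$. The base case $n=0$ is trivial since $\mathcal{G}_0 = \tilde\Lambda$ and the first factor in the intersection defining $\mathcal{G}_\infty$ in \eqref{cantor finale 1 riccardo} is exactly $\tilde\Lambda$. For the inductive step, assuming $\mathcal{G}_\infty \subseteq \mathcal{G}_{n-1}$, I need to show $\mathcal{G}_\infty \subseteq {\bf\Lambda}_n^\gamma(\tilde\imath_{n-1}) = \tilde\Lambda_n^\gamma(\tilde\imath_{n-1}) \cap \mathtt\Lambda_n^{\gamma, I}(\tilde\imath_{n-1})$, using the Cantor set inclusion $({\bf S4})_n$ of Theorem \ref{iterazione riducibilita} together with the sharp quantitative bound \eqref{U infty - U n} on $\|i_\infty - \tilde\imath_{n-1}\|$.

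For the second order Melnikov factor I will apply $({\bf S4})_n$ with $i_1 = i_\infty$, $i_2 = \tilde\imath_{n-1}$, $\gamma_1 = 2\gamma$, and $\rho = \gamma$ to deduce $\tilde\Lambda_n^{2\gamma}(i_\infty) \subseteq \tilde\Lambda_n^{\gamma}(\tilde\imath_{n-1})$. The required smallness $\varepsilon \gamma^{-1} C(S) N_{n-1}^\tau \|i_\infty - \tilde\imath_{n-1}\|_{s_0 + \mu(\mathtt b) + \sigma} \leq \gamma$ follows from \eqref{U infty - U n}: for $n \geq 2$ one has $\|i_\infty - \tilde\imath_{n-1}\|_{s_0 + \mu(\mathtt b) + \sigma_1} \leq C \varepsilon \gamma^{-1} K_{n-1}^{-\mathtt a_2}$, and since $N_{n-1} = K_{n-1}^p$ and Remark \ref{choice-a-b} guarantees $\mathtt a_2 \geq p\tau + \chi^{-1}$, the left-hand side is controlled by $C \varepsilon^2 \gamma^{-2} K_{n-1}^{p\tau - \mathtt a_2}$, which is $\leq \gamma$ under the overall smallness $\varepsilon \gamma^{-3} \ll 1$ from \eqref{nash moser smallness condition} and for $K_0$ large enough. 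The case $n = 1$ is analogous, using the first bound in \eqref{U infty - U n} together with $K_0 = \gamma^{-1}$ and the exponent condition $\tau_2 \geq p \tau_0$ in \eqref{nash moser smallness condition}.

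For the first order Melnikov factor I need to show $\mathtt\Lambda_n^{2\gamma, I}(i_\infty) \subseteq \mathtt\Lambda_n^{\gamma, I}(\tilde\imath_{n-1})$, for which I will bound $|\mu_j^{n-1}(i_\infty) - \mu_j^{n-1}(\tilde\imath_{n-1})|$ directly. Decomposing $\mu_j^{n-1} = \mathtt m_3 \sqrt{j(1+\kappa j^2)} + \mathtt m_1 \sqrt{|j|} + r_j^{n-1}$ as in \eqref{mu j nu}, the Lipschitz bounds \eqref{stima lambda 3 - 1 nuova} on $\mathtt m_3$, the analogous bound on $\mathtt m_1$ from Lemma \ref{lemma:6.27}, and the difference estimate \eqref{r nu i1 - r nu i2} for $r_j^{n-1}$ yield
\[
|\mu_j^{n-1}(i_\infty) - \mu_j^{n-1}(\tilde\imath_{n-1})| \leq C(S) \varepsilon\, j^{3/2} \|i_\infty - \tilde\imath_{n-1}\|_{\sigma} + C(S) \varepsilon \gamma^{-1} \|i_\infty - \tilde\imath_{n-1}\|_{s_0 + \mu(\mathtt b) + \sigma}.
\]
Using again $\|i_\infty - \tilde\imath_{n-1}\|_{s_0 + \mu(\mathtt b) + \sigma_1} \leq C \varepsilon \gamma^{-1} K_{n-1}^{-\mathtt a_2}$ and the constraint $|\ell| \leq K_{n-1}$, this difference is $\leq \gamma\, j^{3/2} \langle \ell \rangle^{-\tau}$ provided $\mathtt a_2 > \tau$ (which holds by Remark \ref{choice-a-b}) and $\varepsilon \gamma^{-3}$ is sufficiently small. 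A triangle inequality applied to the Melnikov condition at $i_\infty$ then gives the desired bound at $\tilde\imath_{n-1}$ with constant $\gamma$ instead of $2\gamma$.

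The main technical point is the first order Melnikov estimate, where the $j^{3/2}$-growing contribution to $|\mu_j^{n-1}(i_\infty) - \mu_j^{n-1}(\tilde\imath_{n-1})|$ (coming from the variation of $\mathtt m_3$) must be dominated by the threshold $\gamma\, j^{3/2} \langle\ell\rangle^{-\tau}$ uniformly in $j$; fortunately the $j^{3/2}$ cancels on both sides, reducing the condition to $\varepsilon^2 \gamma^{-2} K_{n-1}^{\tau - \mathtt a_2} \ll \gamma$, which is automatic from the chosen values of $\mathtt a_2$ and $K_0$.
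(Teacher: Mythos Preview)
Your proof is correct and follows the same strategy as the paper: verify the smallness hypothesis of $({\bf S4})_n$ via \eqref{U infty - U n} to obtain $\tilde\Lambda_n^{2\gamma}(i_\infty) \subseteq \tilde\Lambda_n^\gamma(\tilde\imath_{n-1})$, and then argue analogously (and more explicitly than the paper, which just writes ``by similar arguments'') for the first-Melnikov sets $\mathtt\Lambda_n^{2\gamma,I}(i_\infty) \subseteq \mathtt\Lambda_n^{\gamma,I}(\tilde\imath_{n-1})$. One minor imprecision: for $n=1$ the relevant lower bound on $\tau_2$ from \eqref{nash moser smallness condition} is $\tau_2 \geq 2\sigma_1 + \mathtt a_1 + 4 > \tfrac{3}{2}\bigl(pk_0(\tau+2)+p\tau\bigr)$ (as the paper notes), not $\tau_2 \geq p\tau_0$.
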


\begin{proof}
By \eqref{U infty - U n}, \eqref{nash moser smallness condition},  we have
\begin{align*}
 \e \gamma^{- 1} C(S) N_{0}^\tau \| i_\infty - i_0 \|_{s_0+ {\mu}(\mathtt  b) + \s_1} & \! \leq  \!
\e \gamma^{- 1} C(S) K_{0}^{p\tau}C_* \e \gamma^{- 1} K_0^{p k_0 (\tau + 2)} \leq 
 \gamma  \\
  \e \gamma^{- 1} C(S) N_{n-1}^\tau \| i_\infty - {\tilde \imath}_{n-1} \|_{s_0+ {\mu}(\mathtt  b) + \s_1} & \! \leq \!
 \e \gamma^{- 1} C(S) K_{n-1}^{p\tau} C \e \gamma^{-1} K_{n }^{- \mathtt a_2} \leq
  \gamma   , \, \forall n \geq 2 \,  , 
\end{align*}
noting that the exponent $ \tau_2 $ in \eqref{nash moser smallness condition} satisfies 
$ \t_2 > {\mathtt a}_1 > 3 ( p k_0 (\tau + 2) + p \tau)/ 2 $ by \eqref{costanti nash moser} and 
that  $\mathtt a_2 \geq p \tau + \chi^{-1} $  (see \eqref{costanti nash moser} and remark \ref{choice-a-b}). Recall also that $S$ has been fixed in \eqref{valore finalissimo S} and that $\sigma_1 \geq \sigma$, see \eqref{costanti nash moser 2}.  
Therefore  Theorem \ref{ITERAZIONERIDUCIBILITA}-$({\bf S4})_\nu$ implies 
$$
\tLm_n^{2 \gamma}( i_\infty) \subset  \tLm_n^{\gamma}( \tilde \imath_{n-1} ) \, , \quad \forall n \geq 1 \, .
$$
By similar arguments we  deduce that 
$ \mathtt \Lambda_n^{2 \gamma, I}( i_\infty) \subset  \mathtt\Lambda_n^{\gamma, I}( \tilde \imath_{n-1} ) $ and the lemma is proved. 
\end{proof}

Then we define the ``final eigenvalues" 
\begin{equation}\label{autovalori finali riccardo}
\mu_j^\infty := \mathtt m_3^{\infty}  j^{\frac12} (1 + \kappa j^2)^{\frac12}  + {\mathtt m}_1^\infty j^{\frac12} + r_j^\infty\,, \quad j \in \N^+ \setminus {\mathbb S}^+ \, , 
\end{equation}
where
\begin{equation}\label{resti autovalori finali riccardo}
\mathtt m_3^\infty := \mathtt m_3(i_\infty)\,, \quad {\mathtt m}_1^\infty := {\mathtt m}_1(i_\infty)\,, \quad   r_j^\infty := \lim_{n \to + \infty} \tilde r_j^n(i_\infty)\,, \ \  j \in \N^+ \setminus {\mathbb S}^+\,,
\end{equation}
where $\mathtt m_3$, ${\mathtt m}_1$ are defined in \eqref{lambda3 formula}, \eqref{lambda 1 senza proiettore}
and $\tilde r_j^n$ are given in Theorem \ref{ITERAZIONERIDUCIBILITA}-$({\bf S2})_\nu$.
Note that the sequence $(\tilde r_j^n(i_\infty))_{n \in \N}$ is a Cauchy sequence 
in $ | \ |^{k_0, \gamma}$ by  \eqref{vicinanza autovalori estesi}. As  a consequence its limit function 
 $ r_j^\infty (\om, \kappa)  $ is  well defined, it is $ k_0 $-times differentiable  and satisfies 
\be\label{distanza-rnrinfty}
| r_j^\infty - \tilde r_j^n(i_\infty)|^{k_0, \gamma} \leq 
C \e \gamma^{- 1} N_{n }^{ k_0 (\tau + 2)} N_{n - 1}^{- {\mathtt a}} \, ,  \ n \geq 0 \, .
\ee 
In particular, since  $ \tilde r_j^0 (i_\infty) = 0 $ and $ K_0 = \g^{-1} $ we get  
$ | r_j^\infty |^{k_0, \gamma} \leq C \e \gamma^{- 1}  K_{0}^{ p k_0 (\tau + 2) + 1} $ and
 \eqref{stime autovalori infiniti} holds with $ k_1 = p k_0 (\tau + 2) + 1 $ (recall that the constant $C := C(S, k_0)$ with $S$ fixed in \eqref{valore finalissimo S}).  
 
Finally, we consider  the set $ {\mathcal C}_\infty^{\gamma} $ in \eqref{Cantor set infinito riccardo}.

\begin{lemma} \label{lemma inclusione cantor riccardo 2}
$ {\mathcal C}_\infty^\gamma \subseteq {\mathcal  G}_\infty  $ defined in \eqref{cantor finale 1 riccardo}. 
\end{lemma}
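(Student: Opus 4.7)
The plan is to fix $\lambda=(\omega,\kappa)\in {\cal C}_\infty^\gamma$ and verify by induction on $n\geq 0$ that $\lambda \in \tLm_n^{2\gamma}(i_\infty)\cap \mathtt\Lambda_n^{2\gamma,I}(i_\infty)$; this establishes the inclusion in view of \eqref{cantor finale 1 riccardo}. The base case $n=0$ is immediate from $\tLm_0^{2\gamma}=\tOm\times[\kappa_1,\kappa_2]$. In the inductive step, the truncated Diophantine parts of $\tLm_n^{2\gamma}$ follow directly from the full Diophantine condition $|\omega\cdot\ell|\geq \gamma\langle\ell\rangle^{-\tau}$ of \eqref{Cantor set infinito riccardo}, so only the first- and second-order Melnikov bounds at the perturbed eigenvalues $\mu_j^{n-1}(i_\infty)$ need to be derived from the corresponding bounds at the final eigenvalues $\mu_j^\infty$.

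For the second-order condition, for $|\ell|\leq N_{n-1}$ and a non-trivial triple $(\ell,j,j',\varsigma)$, the triangle inequality yields
\begin{equation*}
|\omega\cdot\ell+\mu_j^{n-1}(i_\infty)-\varsigma\mu_{j'}^{n-1}(i_\infty)|\geq |\omega\cdot\ell+\mu_j^\infty-\varsigma\mu_{j'}^\infty|-|\mu_j^{n-1}(i_\infty)-\mu_j^\infty|-|\mu_{j'}^{n-1}(i_\infty)-\mu_{j'}^\infty|.
\end{equation*}
The first term is bounded below by $4\gamma|j^{3/2}-\varsigma j'^{3/2}|\langle\ell\rangle^{-\tau}$ by \eqref{Cantor set infinito riccardo}, while the two corrections are controlled, via \eqref{distanza-rnrinfty} applied at level $n-1$, by $C\e\gamma^{-1}N_{n-1}^{k_0(\tau+2)}N_{n-2}^{-\mathtt a}$ each. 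Using the absolute lower bound $|j^{3/2}-\varsigma j'^{3/2}|\geq c_0>0$ valid in the non-trivial cases, together with $\langle\ell\rangle\leq N_{n-1}$, the desired bound $\geq 2\gamma|j^{3/2}-\varsigma j'^{3/2}|\langle\ell\rangle^{-\tau}$ reduces to
\begin{equation*}
C\e\gamma^{-2}N_{n-1}^{\tau+k_0(\tau+2)}N_{n-2}^{-\mathtt a}\leq c_0.
\end{equation*}
Since $N_{n-1}=N_{n-2}^\chi$, this is implied by the exponent condition $\mathtt a>\chi(\tau+k_0(\tau+2))$ recorded in remark \ref{remark:ab} together with the smallness $\e\gamma^{-2}\ll 1$ from \eqref{nash moser smallness condition}. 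The first-order Melnikov condition defining $\mathtt\Lambda_n^{2\gamma,I}(i_\infty)$ is handled by an identical triangle-inequality argument, this time exploiting $|\omega\cdot\ell+\mu_j^\infty|\geq 4\gamma j^{3/2}\langle\ell\rangle^{-\tau}$ from \eqref{Cantor set infinito riccardo}.

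The main obstacle is the quantitative bookkeeping: one has to check that the slack between the constants $4\gamma$ appearing in \eqref{Cantor set infinito riccardo} and the constants $2\gamma$ appearing in the target sets $\tLm_n^{2\gamma}(i_\infty)$, $\mathtt\Lambda_n^{2\gamma,I}(i_\infty)$ is wide enough to swallow the eigenvalue corrections \eqref{distanza-rnrinfty} uniformly in $n$. This is precisely what the relation $\mathtt a>\chi(\tau+k_0(\tau+2))$ is designed to ensure, and it is the single place in the argument where the careful choice of exponents $\mathtt a,\mathtt b,\tau,k_0$ made in section \ref{sec: reducibility} is really used.
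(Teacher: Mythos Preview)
Your proof is correct and follows exactly the same route as the paper's: induction on $n$, a triangle inequality together with the eigenvalue closeness \eqref{distanza-rnrinfty} to pass from the Melnikov bounds at $\mu_j^\infty$ to those at $\mu_j^{n-1}(i_\infty)$ (the paper works one index higher, going from $n$ to $n+1$), and then the exponent relation $\mathtt a>\chi(\tau+k_0(\tau+2))$ of remark \ref{remark:ab} together with \eqref{nash moser smallness condition} to absorb the correction. The one point the paper spells out and you leave implicit is why the inductive hypothesis $\lambda\in\tLm_{n-1}^{2\gamma}(i_\infty)$ is actually used: via $({\bf S2})_\nu$ it guarantees $\mu_j^{n-1}(i_\infty)=\tilde\mu_j^{n-1}(i_\infty)$ at $\lambda$, so that the bound \eqref{distanza-rnrinfty} (which concerns the extended $\tilde r_j^{n-1}$) controls the genuine eigenvalue difference $\mu_j^{n-1}(i_\infty)-\mu_j^\infty$.
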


\begin{proof}
By \eqref{cantor finale 1 riccardo}, we have to prove that 
$ {\mathcal C}_\infty^\gamma \subseteq \tLm_n^{2 \gamma}(i_\infty) $, $ \forall n \in \N $. 
We argue by induction. For $n = 0$ the inclusion is trivial, since $ \tLm_0^{2 \gamma}(i_\infty) = \tOm \times [\kappa_1, \kappa_2] = \tLm$. 
Now  assume that ${\mathcal C}_\infty^\gamma \subseteq \tLm_n^{2 \gamma}(i_\infty)$. 
Theorem \ref{ITERAZIONERIDUCIBILITA}-$({\bf S2})_\nu$ implies
$ \tilde \mu_j^n(i_\infty)(\lambda) = \mu_j^n(i_\infty)(\lambda) $, $  \forall \lambda \in \tLm_n^{2 \gamma}(i_\infty) $. 
Hence $ \forall \lambda \in {\mathcal C}_\infty^\gamma \subseteq  \tLm_n^{2 \gamma}(i_\infty)$, 
by \eqref{mu j nu}, \eqref{autovalori finali riccardo}, \eqref{distanza-rnrinfty},  we get 
$$
|(\mu_j^n- \mu_{j'}^n)(i_\infty) - (\mu_j^\infty - \mu_{j'}^\infty) | \leq  C \e \gamma^{- 1} N_{n}^{k_0(\tau + 2)} N_{n - 1}^{- \mathtt a} \, , 
$$
and therefore (consider in \eqref{Cantor set infinito riccardo} the case $ \varsigma = 1 $ and $ j \neq  j' $) 
\begin{align*}
|\omega \cdot \ell + \mu_j^n(i_\infty) - \mu_{j'}^n(i_\infty)| & \geq |\omega \cdot \ell + \mu_j^\infty - \mu_{j'}^\infty| - C \e \gamma^{- 1} N_{n}^{k_0(\tau + 2)} N_{n - 1}^{- \mathtt a} \\
& \geq 4 \gamma |j^{\frac32} - j'^{\frac32}| \langle \ell \rangle^{- \tau} - C \e \gamma^{- 1} |j^{\frac32} - j'^{\frac32}|N_{n}^{k_0(\tau + 2)} N_{n - 1}^{- \mathtt a} \\
& \geq 2 \gamma |j^{\frac32} - j'^{\frac32}| \langle \ell \rangle^{- \tau} \, , \quad \forall |\ell| \leq N_n \, , 
\end{align*}
provided
$ \e \gamma^{- 2} \leq C N_{n - 1}^{\mathtt a} N_n^{- k_0(\tau + 2) - \tau} $, $ \forall n \geq 0 $,
which holds true by \eqref{alpha beta}, \eqref{nash moser smallness condition}, see also remark \ref{remark:ab}. 
We have proved that $ {\mathcal C}_\infty^\gamma \subseteq \tLm_{n + 1}^{2 \gamma}(i_\infty)$.
Similarly we prove that $ {\mathcal C}_\infty^{ \gamma} \subseteq  \mathtt \Lambda_{n}^{2 \gamma, I}(i_\infty)$, 
$ \forall n \in \N $. 
\end{proof}
Lemmata \ref{lemma inclusione cantor riccardo 1}, \ref{lemma inclusione cantor riccardo 2} imply that 
\begin{corollary}
$ {\mathcal C}_\infty^\gamma \subseteq \bigcap_{n\geq 0} {\mathcal G}_n $  defined in \eqref{def:cal-Gn}.
\end{corollary}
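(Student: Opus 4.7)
The plan is straightforward: the corollary is simply the transitive composition of the two immediately preceding lemmas. First I would invoke Lemma \ref{lemma inclusione cantor riccardo 2}, which gives the inclusion $ {\cal C}_\infty^\gamma \subseteq {\cal G}_\infty $, where ${\cal G}_\infty$ is the auxiliary set defined in \eqref{cantor finale 1 riccardo} in terms of the second Melnikov conditions evaluated at the \emph{limit} torus $i_\infty$ with the enlarged constant $2\gamma$. Then I would chain this with Lemma \ref{lemma inclusione cantor riccardo 1}, which gives $ {\cal G}_\infty \subseteq \bigcap_{n \geq 0} {\cal G}_n $. Composing the two inclusions yields
\begin{equation*}
{\cal C}_\infty^\gamma \subseteq {\cal G}_\infty \subseteq \bigcap_{n \geq 0} {\cal G}_n,
\end{equation*}
which is the corollary.

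There is no genuine obstacle at this level: all the substantive work has been carried out in the two lemmas. The role of the corollary is conceptual rather than technical, namely to connect the ``intrinsic'' Cantor set ${\cal C}_\infty^\gamma$ — defined purely through the limit eigenvalues $\mu_j^\infty$ in \eqref{autovalori finali riccardo} and the diophantine conditions on $\omega$ — with the ``iterative'' Cantor set $\bigcap_n {\cal G}_n$ on which the Nash-Moser scheme of Theorem \ref{iterazione-non-lineare} actually produces a solution. Once this is done, on ${\cal C}_\infty^\gamma$ one has both ${\cal F}(\lambda, U_\infty(\lambda)) = 0$ (because of the iterative side) and a clean description of the set in terms of final frequencies (because of the intrinsic side), which is exactly what is needed for the measure estimate in Theorem \ref{Teorema stima in misura}.

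If I were to expand the two lemmas into the proof of the corollary for completeness, the only points to underline are the two quantitative bounds that make each inclusion work: the super-exponentially small bound $|r_j^\infty - \tilde r_j^n(i_\infty)|^{k_0,\gamma} \leq C \e \gamma^{-1} N_n^{k_0(\tau+2)} N_{n-1}^{-\mathtt a}$ coming from \eqref{vicinanza autovalori estesi} and \eqref{distanza-rnrinfty}, used in Lemma \ref{lemma inclusione cantor riccardo 2} to absorb the gap between $\mu_j^n(i_\infty)$ and $\mu_j^\infty$ in the second Melnikov bound; and the Cauchy estimate $\|U_\infty - \tilde U_n\|_{s_0 + \mu(\mathtt b) + \sigma_1}^{k_0,\gamma} \leq C\e\gamma^{-1} K_n^{-\mathtt a_2}$ from \eqref{U infty - U n}, used in Lemma \ref{lemma inclusione cantor riccardo 1} via the stability statement ${\bf (S4)}_\nu$ of Theorem \ref{iterazione riducibilita}. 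Both smallness conditions are guaranteed by the final Nash-Moser condition \eqref{nash moser smallness condition} together with the definitions \eqref{costanti nash moser}--\eqref{costanti nash moser 2} of the exponents $\mathtt a_1, \mathtt a_2, \mathtt b_1$, so nothing new needs to be verified at this stage.
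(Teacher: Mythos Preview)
Your proposal is correct and follows exactly the paper's approach: the corollary is stated immediately after Lemmata \ref{lemma inclusione cantor riccardo 1} and \ref{lemma inclusione cantor riccardo 2}, and the paper's proof consists of nothing more than the sentence ``Lemmata \ref{lemma inclusione cantor riccardo 1}, \ref{lemma inclusione cantor riccardo 2} imply that''. Your expanded commentary on the role of the corollary and the quantitative ingredients behind the two lemmas is accurate but goes beyond what the paper records at this point.
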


\appendix
\chapter{Tame estimates for the flow of pseudo-PDEs}\label{AppendiceA}

In this Appendix we prove  tame estimates  for the flow $ \Phi^t $ of the pseudo-PDE
\begin{equation}\label{pseudo PDE}
\begin{cases}
\partial_t u = \ii a(\vphi, x) |D|^{\frac12}  u  \\
u(0, x) = u_0(x) \, , 
\end{cases}\qquad \vphi \in \T^\nu\,, \quad  x \in \T\,,
\end{equation}
where $a(\vphi, x) = a(\lambda, \vphi, x) $ is a real valued function which is $ {\mathcal C}^\infty$ 
with respect to the variables $(\vphi, x)$ and $ k_0 $-times differentiable with respect to the parameters 
$\lambda =(\omega, \kappa) $.
The function $ a := a(i) $  may depend also on the ``approximate" torus $ i (\vphi )$.  
We look for the solution of \eqref{pseudo PDE} 
by a Galerkin approximation, as  limit of the solutions of the truncated equations 
\begin{equation}\label{pseudo PDE N}
\begin{cases}
\partial_t u = \ii \Pi_N \big( a(\vphi, x) |D|^{\frac12}  \Pi_N u  \big) \\
u(0, x) = \Pi_N u_0(x) \, , 
\end{cases}\qquad \vphi \in \T^\nu\,, \quad  x \in \T\, , 
\end{equation}
where, for any $N \in \N$,  we denote by 
$\Pi_N$  the $ L^2 $-orthogonal projector on the finite dimensional subspace
$$
E_N := \big\{ u \in L^2(\T) : u(x) = {\mathop \sum}_{|j| \leq N} u_j e^{\ii j x} \big\} \, .
$$
We denote by 
$\Phi_N(t) = \Phi_N(\lambda, t, \vphi) : E_N \to E_N $ the flow of \eqref{pseudo PDE N}. It solves
\begin{equation}\label{flow-propagator N}
\begin{cases}
\partial_t \Phi_N(t) = \ii \Pi_N  a(\vphi, x) |D|^{\frac12}  \Phi_N(t) \\
\Phi_N(0) = \Pi_N  \, , 
\end{cases}\qquad \vphi \in \T^\nu\,.
\end{equation}
We  introduce the  ``paraproduct" decomposition 
for the  product of two functions $ a, u : \T \to \C $,  
\be\label{paraproduct}
 a u = T_a u + R_u a 
\ee 
where
\be
\begin{aligned} \label{Ta Ru}
&  T_a u  := \sum_{k, \xi \in \Z\,,|k - \xi| \leq |\xi|} \widehat a(k - \xi) \widehat u(\xi) e^{\ii k x}\,, \\
&  R_u a  := \sum_{k, \xi \in \Z\,,|k - \xi| <  |\xi|} \widehat u(k - \xi) \widehat a(\xi) e^{\ii k x}\, .
\end{aligned}
\ee
Note that  
\be\label{simbolo-a0}
T_a = {\rm Op} ( a_0 (x, \xi) ) \quad {\rm with  } \quad
a_0 (x, \xi) := {\mathop \sum}_{|k| \leq |\xi |} \widehat a( k) e^{\ii k x } \, . 
\ee
For all $ s \geq 0   $, we have the following estimates 
\begin{equation}\label{stima Ta Ru}
\| T_a u  \|_{H^s_x} \leq C(s) \| a \|_{H^1_x} \| u \|_{H^s_x}\,, \quad 
\| R_u (a) \|_{H_x^s} \leq C(s) \| a \|_{H^{s+  (1/2)}_x} \| u \|_{H^{1/2}_x}  
\end{equation}
(the operator $ u \mapsto R_u (a) $  is  smoothing) which  follow 
arguing as in Lemma \ref{lemma: action Sobolev}. 

\begin{lemma}\label{super man}
$\big\| |D|^{\frac12} (T_a)^* - T_a |D|^{\frac12} \big\|_{{\mathcal L}(L^2_x)} \leq C \| a \|_{H^2_x} $ and 

\noindent
$ \big\| [\langle D \rangle^s, T_a |D|^{\frac12}] u \big\|_{L^2_x} 
\leq_s \| a \|_{H^2_x}  \|  u \|_{H^s_x} $, $\forall s \geq 0$. 
\end{lemma}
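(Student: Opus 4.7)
The plan is to work in the Fourier basis and estimate both operators via their matrix elements and Schur's lemma, exploiting the paraproduct cutoff that forces the indices $j,k$ to satisfy $|k-j| \leq \max(|j|,|k|)$. From \eqref{Ta Ru} the matrix element of $T_a$ is
$(T_a)_k^j = \mathbf{1}_{|k-j| \leq |j|} \widehat a(k-j)$, so using that $a$ is real, $(T_a^*)_k^j = \mathbf{1}_{|k-j| \leq |k|} \widehat a(k-j)$. The matrix elements of $A := T_a |D|^{1/2}$ and $A^* = |D|^{1/2} T_a^*$ are therefore $A_k^j = \mathbf{1}_{|k-j| \leq |j|}\, \chi(j)|j|^{1/2}\, \widehat a(k-j)$ and $(A^*)_k^j = \mathbf{1}_{|k-j| \leq |k|}\, \chi(k)|k|^{1/2}\, \widehat a(k-j)$.

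For the first estimate I would show elementwise that $|(A^* - A)_k^j| \leq C |k-j|^{1/2} |\widehat a(k-j)|$. When both indicator functions equal $1$, the constraint $|k-j| \leq |j|$ forces $|k| \leq 2|j|$, so $||k|^{1/2} - |j|^{1/2}| \leq |k-j|/(|k|^{1/2}+|j|^{1/2}) \leq |k-j|^{1/2}$; when only one indicator is $1$, say $|k-j| \leq |j|$ but $|k-j| > |k|$, then $|j| \leq |k|+|k-j| < 2|k-j|$, so $|j|^{1/2} \leq \sqrt 2 |k-j|^{1/2}$; contributions from the finite cutoff region $\{|\xi| < 2/3\}$ are trivially bounded. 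By Schur's test,
\begin{equation*}
\|A^* - A\|_{{\cal L}(L^2_x)} \leq C \sum_{m \in \Z} |m|^{1/2} |\widehat a(m)| \leq C \|a\|_{H^2_x},
\end{equation*}
the last inequality by Cauchy--Schwarz since $\sum_m \langle m\rangle^{-4}|m|$ converges.

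For the second estimate I would exploit that the paraproduct support $|k-j|\leq|j|$ gives $\langle k\rangle \leq C \langle j\rangle$, so the mean value theorem yields $|\langle k\rangle^s - \langle j\rangle^s| \leq C_s \langle j\rangle^{s-1} |k-j|$. Therefore the matrix of the commutator satisfies
\begin{equation*}
\bigl|[\langle D\rangle^s, T_a |D|^{1/2}]_k^j\bigr| \leq C_s \langle j\rangle^{s-1/2} |k-j|\,|\widehat a(k-j)|,
\end{equation*}
so writing $\hat v(j) := \langle j\rangle^{s-1/2}|\hat u(j)|$ (which satisfies $\|v\|_{\ell^2} \leq \|u\|_{H^s_x}$) and applying Schur's lemma to the matrix $B_k^j := |k-j|\,|\widehat a(k-j)|$ whose symmetric row and column sums are $\sum_m |m|\,|\widehat a(m)| \leq C\|a\|_{H^2_x}$ (again by Cauchy--Schwarz, using $\sum \langle m\rangle^{-2}<\infty$), one obtains the stated inequality. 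The main technical point, rather than any real obstacle, is the careful case analysis to verify that both indicator functions combine to give the elementwise bound $|k-j|^{1/2}|\widehat a(k-j)|$ in the first estimate; once that is in hand, both inequalities reduce to a standard Schur bound together with the elementary computation $\sum_m |m|^\alpha |\widehat a(m)| \lesssim \|a\|_{H^2_x}$ for $\alpha \in \{1/2,1\}$.
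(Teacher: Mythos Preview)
Your proof is correct and follows essentially the same approach as the paper: both work entirely in Fourier and reduce to the same case analysis on the paraproduct support (the regions where one or both of the indicators $\mathbf 1_{|k-j|\le|j|}$, $\mathbf 1_{|k-j|\le|k|}$ are active), together with the summability of $\sum_m |m|^{\alpha}|\widehat a(m)|$ for $\alpha\in\{1/2,1\}$. The only organizational difference is that the paper writes out the decomposition $|D|^{1/2}(T_a)^*u = R_1+R_2$ and $R_1 - T_a|D|^{1/2}u = T_1-T_2$ explicitly and closes each piece by a direct Cauchy--Schwarz computation, whereas you bound the matrix entries once and then invoke Schur's test; this is slightly more compact but equivalent. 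One small caveat: for $0\le s<1$ the naive mean value theorem gives $|\langle k\rangle^s-\langle j\rangle^s|\le C_s\min(\langle j\rangle,\langle k\rangle)^{s-1}|k-j|$, not $\langle j\rangle^{s-1}|k-j|$, so you need a tiny extra case split (if $|k-j|\ge |j|/2$ bound $|\langle k\rangle^s-\langle j\rangle^s|\le \langle j\rangle^s\lesssim \langle j\rangle^{s-1}|k-j|$ trivially; if $|k-j|<|j|/2$ then $|k|\sim|j|$ and the mean value estimate applies). The paper sidesteps this by using the cruder bound $|\psi(\xi,j)|=|(\langle j\rangle^s-\langle\xi\rangle^s)|\xi|^{1/2}|\le_s \langle\xi\rangle^s|j-\xi|$, which suffices equally well.
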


\begin{proof}
By \eqref{simbolo aggiunto senza tempo} 
the adjoint of $ T_a =  {\rm Op} ( a_0  ) $ 
is the pseudo-differential operator $  (T_a)^* = {\rm Op}(a_0^*) $ with symbol 
$$ 
\begin{aligned}
a_0^*(x, \xi) = 
\overline{{\mathop\sum}_{k \in \Z} \widehat a_0(k, \xi - k) e^{\ii k x}} & \stackrel{\eqref{simbolo-a0}}= 
\overline{{\mathop\sum}_{|k| \leq |\xi - k|} \widehat a(k) e^{\ii k x}} \\ 
& = {\mathop\sum}_{|k| \leq |\xi + k|} \widehat a(k) e^{\ii k x }
\end{aligned}
$$
since $ \overline{\widehat a(k)} = \widehat a(-k) $ because $ a(x) $ is real valued. 
Thus
\be  \label{priP}
|D|^{\frac12} (T_a)^* u  = {\mathop \sum}_{\xi} 
{\mathop \sum}_{|k| \leq |\xi + k|} |\xi + k|^{\frac12} \widehat a(k) \widehat u(\xi) 
e^{\ii (k + \xi) x } = R_1 + R_2 
\ee
where, writing 
\begin{equation}\label{razionalizzazione utile}
\vartheta(\xi, k) := | \xi + k |^{\frac12} - |\xi|^{\frac12} =   \frac{|\xi + k| - |\xi|}{| \xi + k |^{\frac12} + |\xi|^{\frac12}}\,, 
\quad \text{for } \quad (\xi, k) \neq (0, 0)\,, 
\end{equation} 
we split 
\be\label{defR1R2}
\begin{aligned}
& R_1  := \sum_{\xi} \sum_{|k| \leq |\xi + k|} |\xi|^{\frac12} \widehat a(k) 
\widehat u(\xi) e^{\ii (k + \xi) x }\,, \\
&  R_2 := \sum_{\xi} \sum_{|k| \leq |\xi + k|} \vartheta(\xi, k)  \widehat a(k) \widehat u(\xi) e^{\ii (k + \xi) x }\,.
\end{aligned}
\ee
In addition, by \eqref{Ta Ru}, 
\begin{align}
& 
T_a |D|^{\frac12} u(x)  = 
{\mathop \sum}_{\xi} 
{\mathop \sum}_{|k| \leq |\xi|} |\xi|^{\frac12} \widehat a(k) \widehat u(\xi) e^{\ii (k + \xi) x }\,. \label{secP}
\end{align}
We estimate
\begin{equation}\label{R1 R2 paraprodotto}
\big( |D|^{\frac12} (T_a)^* - T_a |D|^{\frac12} \big) u  = ( R_1 - T_a |D|^{\frac12} u )  + R_2  \, . 
\ee
{\sc Estimate of $ R_2 $.} By \eqref{razionalizzazione utile} the triangular inequality implies $ | \vartheta(\xi, k) | \leq |k| $,
for any $ k, \xi \in \Z $. Then by the Cauchy-Schwartz inequality we get 
\begin{align}
\| R_2 \|_{L^2_x}^2 & \leq	   \sum_{j} \Big( \sum_{|j - \xi| \leq |j|} |\vartheta(\xi, j - \xi)| |\widehat a(j - \xi)|  |\widehat u(\xi)| \Big)^2 \nonumber\\
& \leq \sum_{j}  \Big(\sum_{|j - \xi| \leq |j|} |j - \xi|  |\widehat a(j - \xi)| |\widehat u(\xi)| \frac{\langle j - \xi \rangle}{\langle j - \xi \rangle} \Big)^2 \nonumber\\
& \leq C \sum_{j} \sum_{|j - \xi| \leq |j|} \langle j - \xi \rangle^4 |\widehat a(j - \xi)|^2 |\widehat u(\xi)|^2 \nonumber\\
& \leq C  \sum_\xi |\widehat u(\xi)|^2 \sum_j \langle j - \xi \rangle^4 |\widehat a(j - \xi)|^2 \leq C \| a \|_{H^2_x}^2 \| u \|_{L^2_x}^2\,. \label{stima R2 paraprodotto}
\end{align}
{\sc Estimate of $ R_1 - T_a |D|^{\frac12} u $}. 
By \eqref{defR1R2} and \eqref{secP}  we write
\begin{equation}\label{T1 T2 paraprodotto} 
\begin{aligned}
&  R_1  - T_a |D|^{\frac12} u  = T_1  - T_2 \\
&  T_1 := \sum_{\xi} \sum_{|\xi| < |k| \leq |\xi + k|} |\xi|^{\frac12} 
 \widehat a(k) \widehat u(\xi) e^{\ii (\xi + k) x},  \\
 & T_2  
 := \sum_{\xi}  \sum_{|\xi + k| < |k| \leq |\xi|}  |\xi|^{\frac12}  \widehat a(k) \widehat u(\xi) e^{\ii (\xi + k)x}\,. 
\end{aligned}
\end{equation}
We estimate the $L^2_x $ norm of $T_2$. The estimate for $T_1$ is analogous. We have
$$
\| T_2 \|_{L^2_x}^2  \leq \sum_j \Big( \sum_{|j| 
\leq |j - \xi| \leq |\xi|} |\xi|^{\frac12} |\widehat a(j - \xi)| |\widehat u(\xi)| \Big)^2
$$
and, since in the sum
$ |\xi| \leq |j|+ | \xi - j | \leq 2 | j - \xi | $, the Cauchy-Schwartz inequality implies 
\begin{align}
\| T_2\|_{L^2_x}^2 & \leq 4 \sum_j \Big( \sum_{| j  | \leq | j - \xi| \leq |\xi|} | j - \xi|^{\frac12} 
|\widehat a(j - \xi)| |\widehat u(\xi)| \frac{\langle j - \xi \rangle}{\langle j - \xi\rangle} \Big)^2 \nonumber\\
& \leq C \sum_j \sum_{|j | \leq | j - \xi| \leq |\xi|} \langle j - \xi \rangle^{3}
|\widehat a(j - \xi)|^2 |\widehat u(\xi)|^2 \nonumber\\
& \leq C \sum_\xi |\widehat u(\xi)|^2 \sum_j \langle j - \xi \rangle^{3}|
\widehat a(j - \xi)|^2 \leq C \| a\|^2_{H^{\frac32}_x} \| u \|_{L^2_x}^2\, . 
\label{stima T2 paraprodotto}
\end{align}
The first estimate of Lemma \ref{super man} follows 
by \eqref{R1 R2 paraprodotto}, \eqref{stima R2 paraprodotto},  \eqref{T1 T2 paraprodotto}, 
\eqref{stima T2 paraprodotto} (and the similar bound for $ T_1 $).

Let us prove the second estimate of Lemma \ref{super man}. 
By \eqref{secP}  the commutator
$$
[\langle D\rangle^s, T_a |D|^{\frac12}] u  = {\mathop \sum}_\xi 
{\mathop \sum}_{|j - \xi| \leq |\xi|} \psi(\xi, j) 
\widehat a(j - \xi) \widehat u(\xi)  e^{\ii  j x } 
$$
where 
$ \psi(\xi, j) := (\langle j \rangle^s  - \langle\xi \rangle^s ) |\xi|^{\frac12} $. 
Since $|j - \xi| \leq |\xi|$ we have  
$ |\psi(\xi, j)| \leq_s \langle \xi \rangle^s |j - \xi| $. 
Hence using as before the Cauchy-Schwartz inequality we get
\begin{align*}
\| [\langle D \rangle^s, T_a |D|^{\frac12}] u\|_{L^2_x}^2 & \leq_s
\sum_j \Big( \sum_{|j - \xi| \leq |\xi|} |\psi(\xi, j)| |\widehat a(j - \xi)| |\widehat u(\xi)| \Big)^2 \\
& \leq _s \Big( \sum_{|j - \xi| \leq |\xi|} \langle \xi \rangle^s |j - \xi| |\widehat a(j - \xi)| 
|\widehat u(\xi)| \frac{\langle j - \xi \rangle}{\langle j - \xi \rangle} \Big)^2  \\
& \leq_s \sum_\xi \langle \xi \rangle^{2 s} |\widehat u(\xi)|^2 \sum_j \langle j - 
\xi \rangle^4 |\widehat a(j - \xi)|^2 \leq_s
 \| a \|_{H^2_x}^2 \| u \|_{H^s_x}^2 	\, . 
\end{align*}
The lemma is proved. 
\end{proof}

\begin{proposition}\label{Prop0-flow}
Assume  $ \| a \|_{s_0 + \frac52} \leq 1   $. Then, $\forall \vphi \in \T^\nu $, for all $s \geq 0$   
the flow $ \Phi^t_N(\vphi) $ of \eqref{pseudo PDE N} satisfies
\begin{align}\label{stima flusso PDE s 0 1 N}
& {\rm sup}_{t \in [0, 1] } \| \Phi^t_N(\vphi) (u_0) \|_{H^s_x} \leq C \| u_0\|_{H^s_x}\,, 
\qquad \qquad  \forall 0 \leq s \leq 1 \\   
& \label{stima tame Phi t N}
{\rm sup}_{t \in [0, 1] } \| \Phi^t_N(\vphi) (u_0) \|_{H^s_x} \leq  C(s)  \big( \| u_0 \|_{H^s_x}  
+ \| a \|_{H^{s + \frac12}_x}  \| u_0\|_{H^1_x  }\big) \, , \ \  \forall s \geq 1 \, ,
\end{align}
uniformly for all $ N \in \N $.
The flow of  \eqref{pseudo PDE} is a linear bounded  operator  $ \Phi^t(\vphi) : H^s_x (\T) \to H^s_x (\T)$ satisfying 
\begin{align}\label{stima flusso PDE s 0 1}
& {\rm sup}_{t \in [0, 1] } \| \Phi^t(\vphi) (u_0) \|_{H^s_x} \leq C \| u_0\|_{H^s_x}\,, 
\qquad \qquad  \forall 0 \leq s \leq 1 \\   
& \label{stima tame Phi t}
{\rm sup}_{t \in [0, 1] } \| \Phi^t(\vphi) (u_0) \|_{H^s_x} \leq  C(s)  \big( \| u_0 \|_{H^s_x}  
+ \| a \|_{H^{s + \frac12}_x}  \| u_0\|_{H^1_x  }\big) \, , \quad \forall s \geq 1 \,.
\end{align}
\end{proposition}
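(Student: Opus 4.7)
The plan is to obtain the estimates for the truncated flow $\Phi_N^t$ by an $L^2$-based energy method, uniformly in $N$, and then recover the statements for $\Phi^t$ by a standard compactness/Cauchy argument in a weaker norm. Since $\Pi_N$ is self-adjoint on $L^2$ and $\Phi_N^t u_0 \in E_N$, any inner product $2\,\mathrm{Re}(\ii \Pi_N(a|D|^{1/2} u), u)_{L^2}$ collapses to $2\,\mathrm{Re}(\ii a |D|^{1/2} u, u)_{L^2}$, so throughout the energy computation I may drop the projector and work with the unprojected operator $\ii a |D|^{1/2}$, the only cost being that all constants have to be $N$-independent.

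For the low-norm estimate \eqref{stima flusso PDE s 0 1 N} with $s=0$, I would write
\[
\tfrac{d}{dt} \|u\|_{L^2}^2 = 2\,\mathrm{Re}(\ii a|D|^{1/2} u, u)_{L^2} = \ii ([a,|D|^{1/2}] u, u)_{L^2},
\]
using that $(a|D|^{1/2})^* = |D|^{1/2} a$ and that $a$ is real-valued. The commutator $[a,|D|^{1/2}]$ is a pseudo-differential operator of order $-1/2$, hence $L^2$-bounded with norm controlled by a finite number of Sobolev norms of $a$, in particular by $\|a\|_{s_0+5/2}\leq 1$. Gronwall then yields the $s=0$ case. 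The $s=1$ case follows by the same scheme after applying $\langle D\rangle$; the intermediate $0<s<1$ follows by complex interpolation between $L^2$ and $H^1$.

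For the tame estimate \eqref{stima tame Phi t N} at $s\geq 1$, the crucial point is to separate the skew-adjoint principal part from everything that can be treated as lower order or as a forcing term. I would use the paraproduct decomposition $a|D|^{1/2} u = T_a|D|^{1/2} u + R_{|D|^{1/2} u}(a)$. Commuting $\langle D\rangle^s$ past $T_a|D|^{1/2}$ using the second bound of Lemma~\ref{super man} produces an error controlled by $\|a\|_{H^2}\|u\|_{H^s}$; the leading term then satisfies, by the first bound of Lemma~\ref{super man},
\[
2\,\mathrm{Re}(\ii T_a|D|^{1/2}\langle D\rangle^s u, \langle D\rangle^s u)_{L^2} = \ii((T_a|D|^{1/2} - |D|^{1/2} T_a^*)\langle D\rangle^s u, \langle D\rangle^s u)_{L^2},
\]
bounded by $C\|a\|_{H^2}\|u\|_{H^s}^2$. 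The smoothing remainder is estimated via \eqref{stima Ta Ru} by $\|R_{|D|^{1/2}u}(a)\|_{H^s}\leq C(s)\|a\|_{H^{s+1/2}}\|u\|_{H^1}$. Combining,
\[
\tfrac{d}{dt}\|u\|_{H^s}^2 \leq C(s)\|a\|_{H^2}\|u\|_{H^s}^2 + C(s)\|a\|_{H^{s+1/2}}\|u\|_{H^1}\|u\|_{H^s}.
\]
For $s=1$ this closes by Gronwall, giving $\|u(t)\|_{H^1}\leq C\|u_0\|_{H^1}$; for $s>1$ one inserts this $H^1$-bound into the inequality and applies Gronwall once more to obtain \eqref{stima tame Phi t N}. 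Finally, the difference $u_N - u_M$ satisfies the same equation with a forcing term supported on high frequencies and thus is Cauchy in $C^0([0,1];L^2)$; the uniform $H^s$-bound upgrades the convergence to $C^0([0,1];H^s)$ by interpolation, and the limit defines $\Phi^t$ and inherits \eqref{stima flusso PDE s 0 1}--\eqref{stima tame Phi t}.

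The main obstacle is the uniform-in-$N$ control at the high-$s$ level: a naive commutator expansion $[\langle D\rangle^s, a|D|^{1/2}]$ would give a term of order $s-1/2$, losing regularity and preventing closure of the Gronwall inequality. This is exactly where Lemma~\ref{super man} is indispensable, isolating the non-skew-adjoint defect of $T_a|D|^{1/2}$ as a purely $L^2$-bounded remainder depending only on $\|a\|_{H^2}$, independently of $s$, so that only the low-order remainder $R_{|D|^{1/2}u}(a)$ couples to the high Sobolev norm $\|a\|_{H^{s+1/2}}$, and only multiplied by the low norm $\|u\|_{H^1}$. This is the structural reason the estimates are tame with the asymmetric loss $\|a\|_{H^{s+1/2}}\|u_0\|_{H^1}$ rather than $\|a\|_{H^{s+1/2}}\|u_0\|_{H^s}$.
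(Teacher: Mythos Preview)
Your proposal is correct and follows essentially the same approach as the paper: the $s=0$ energy estimate via the commutator $[a,|D|^{1/2}]$, the paraproduct decomposition together with Lemma~\ref{super man} for the tame $s\geq 1$ estimate, interpolation for $0<s<1$, and passage to the limit via a Cauchy argument in $C^0_t L^2_x$ plus interpolation. One small refinement: interpolation alone only yields $u\in C^0_t H^{\bar s}_x$ for $\bar s<s$; the paper recovers continuity at the top index $s$ by showing $t\mapsto\|u(t)\|_{H^s_x}^2$ is Lipschitz and combining with weak convergence $u(t_n)\rightharpoonup u(t)$ in $H^s_x$, though this is not needed for the norm bounds \eqref{stima flusso PDE s 0 1}--\eqref{stima tame Phi t} themselves.
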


\begin{proof}

\noindent
{\sc Proof of \eqref{stima flusso PDE s 0 1 N}, \eqref{stima tame Phi t N}}.
\\[1mm]
{\sc Step 1. $ s = 0 $.} 
For any $N \in \N$, the equation \eqref{pseudo PDE N} is an ODE on the finite dimensional space $E_N$
which admits a unique solution $u_N(t) = $ $ u_N(\lambda, t, \vphi, \cdot ) = $ $ \Phi_N^t(u_0) \in E_N $.
The $ L^2_x $-norm of the solution $ u_N(t) $  satisfies 
 (using that $\Pi_N$ is $L^2$ self-adjoint)  
\begin{align}
\partial_t \| u_N(t)\|_{L^2_x}^2 & = 
(\ii \Pi_N a |D|^{\frac12} u_N , u_N)_{L^2_x} + ( u_N , \ii \Pi_N a |D|^{\frac12} u_N  )_{L^2_x}  \nonumber\\
& = (\ii  a |D|^{\frac12} u_N , u_N)_{L^2_x} + ( u_N , \ii  a |D|^{\frac12} u_N  )_{L^2_x} 
= (\ii[a , |D|^{\frac12}] u_N,u_N )_{L^2_x} 
\end{align}
because $ a $ is real. 
Lemma \ref{lemma tame norma commutatore}, 
\eqref{lemma: action Sobolev}, \eqref{norma a moltiplicazione}, \eqref{Norm Fourier multiplier}, 
and  $ \| a \|_{s_0 + \frac52} \leq 1  $, imply the commutator estimate
$ \| [a, |D|^{\frac12}] \|_{{\mathcal L}(L^2_x)}  \leq C $. Hence 
$ \partial_t \| u_N (t)\|_{L^2_x}^2 \leq C \| u_N(t)\|_{L^2_x}^2 $ and 
Gronwall inequality implies \eqref{stima flusso PDE s 0 1 N} for $s = 0$. 
\\[1mm]
{\sc Step 2. $ s \geq 1 $.}
The Sobolev norm $ \| u_N \|_{H^s_x}^2 = \| \langle D \rangle^s u_N \|_{L^2_x}^2 $ 
satisfies 
\begin{align}
\partial_t \| \langle D \rangle^s u_N \|_{L^2_x}^2 & = 
 \big( \langle D \rangle^s \Pi_N \ii a |D|^{\frac12} u_N , \langle D \rangle^s u_N \big)_{L^2_x} + 
 \big(\langle D \rangle^s  u_N ,\langle D \rangle^s \Pi_N  \ii a |D|^{\frac12} u_N \big)_{L^2_x} \nonumber  \\ 
 & =  \big( \langle D \rangle^s\ii a |D|^{\frac12} u_N , \langle D \rangle^s u_N \big)_{L^2_x} + 
 \big(\langle D \rangle^s  u_N ,\langle D \rangle^s   \ii a |D|^{\frac12} u_N \big)_{L^2_x} \nonumber\\
& = 
 \big(\langle D \rangle^s \ii T_a  (|D|^{\frac12} u_N), \langle D \rangle^s u_N \big)_{L^2_x} + 
 \big(\langle D \rangle^s  u_N, \langle D \rangle^s \ii T_a (|D|^{\frac12} u_N) \big)_{L^2_x} \label{termine-prin} \\
& \quad \, +  \big(\langle D \rangle^s \ii  R_{|D|^{\frac12} u_N} a, \langle D \rangle^s u_N \big)_{L^2_x} + 
\big(\langle D \rangle^s  u_N, \langle D \rangle^s \ii  R_{|D|^{\frac12} u_N} a \big)_{L^2_x} \label{stima di energia} 
\end{align}
by  the paraproduct decomposition \eqref{paraproduct} of  $ a |D|^{\frac12} u_N  = T_a |D|^{\frac12} u_N + R_{|D|^{\frac12} u_N} a $.
\\[1mm]
{\sc Estimate of \eqref{termine-prin}.} We write
\begin{align}
\eqref{termine-prin} & = 
\big(\ii T_a |D|^{\frac12} \langle D \rangle^s u_N, \langle D \rangle^s u_N \big)_{L^2_x} + \big(\ii [\langle D \rangle^s, T_a |D|^{\frac12}]  u_N, \langle D \rangle^s u_N \big)_{L^2_x} \nonumber \\
& \, + \big(\langle D \rangle^s  u_N ,  \ii T_a |D|^{\frac12} \langle D \rangle^s u_N \big)_{L^2_x} + \big(\langle D \rangle^s  u_N, \ii [\langle D \rangle^s , T_a |D|^{\frac12}] u_N \big)_{L^2_x} \nonumber \\
& =  \big(\ii [\langle D \rangle^s, T_a |D|^{\frac12}]  u_N, \langle D \rangle^s u_N \big)_{L^2_x} + 
\big(\langle D \rangle^s  u_N, \ii [\langle D \rangle^s , T_a |D|^{\frac12}] u_N \big)_{L^2_x} \nonumber \\
&  \, + 
\big( \ii (T_a |D|^{\frac12} - |D|^{\frac12} (T_a)^* ) \langle D \rangle^s u_N, \langle D \rangle^s u_N \big)_{L^2_x} \, . 
\label{secondo pezzo stima di energia}
\end{align}
Thus \eqref{secondo pezzo stima di energia} and Lemma \ref{super man} imply
that the term in \eqref{termine-prin} satisfies 
\begin{equation}\label{super man 2}
\big| \big(\langle D \rangle^s \ii T_a  |D|^{\frac12} u_N, \langle D \rangle^s u_N \big)_{L^2_x} + \big(\langle D \rangle^s  u_N, \langle D \rangle^s \ii T_a |D|^{\frac12} u_N \big)_{L^2_x}  
\big| \leq_s \| a \|_{H^2_x} \| u_N \|_{H^s_x}^2 \, .
\end{equation}
{\sc Estimate of \eqref{stima di energia}.} Cauchy-Schwartz inequality and \eqref{stima Ta Ru} imply
\begin{equation}\label{super man 0}
\begin{aligned}
& \big| \big(\langle D \rangle^s \ii  R_{|D|^{\frac12} u_N} a, \langle D \rangle^s u_N \big)_{L^2_x}  +  
\big(\langle D \rangle^s  u_N, \langle D \rangle^s \ii  R_{|D|^{\frac12} u_N} a \big)_{L^2_x} \big| \\
& \leq_s
\| \langle D \rangle^s u_N \|_{L^2_x} \| a \|_{H^{s + \frac12}_x}  \| u_N \|_{H^1_x}\,.
\end{aligned}
\end{equation}
By \eqref{termine-prin}-\eqref{stima di energia},  \eqref{super man 2}, \eqref{super man 0},
$  \| a \|_{H^2_x}  \leq 1  $,  we deduce the differential inequality: $ \forall s \geq 1 $
\be
\begin{aligned}
\partial_t \| u_N \|_{H^s_x}^2 
& \leq_s  \| a \|_{H^{s + (1/2)}_x } 
\| u_N \|_{H^s_x} \| u_N \|_{H^{1}_x } + \| a \|_{H^2_x} \|  u_N \|_{H^s_x}^2 \\
& \leq_s \| a\|_{H^{s + (1/2)}_x}^2 \| u_N \|_{H^{1}_x }^2 + 
\|  u_N \|_{H^s_x}^2  \,  .\label{diff-ine}
\end{aligned}
\ee
For $ s = 1 $ and since $  \| a \|_{H^2_x}  \leq 1 $, \eqref{diff-ine} reduces to 
$ \partial_t \| u_N \|_{H^1_x}^2  \leq C \|  u_N \|_{H^1_x}^2 $, which implies  
$ \| \Phi^t_N (u_0) \|_{H^1_x  } \leq C' \| u_0 \|_{H^1_x }$, $  \forall t \in [0, 1] $.  
For $ s > 1 $,  \eqref{diff-ine} reduces to 
$ \partial_t \| u_N \|_{H^s_x}^2 \leq 
C(s) \big( \| a\|_{H^{s + (1/2)}_x}^2  \| u_0 \|_{H^1_x }^2 + \|  u_N \|_{H^s_x}^2 \big) $ and 
the estimate \eqref{stima tame Phi t N} follows by the  Gronwall inequality in differential form.

Since $\Phi^t_N : H^0_x(\T) \to H^0_x(\T)$ and $\Phi^t_N : H^1_x(\T) \to H^1_x(\T) $ are linear bounded
operators,  
a classical  interpolation result implies  that 
$\Phi^t_N : H^s_x(\T) \to H^s_x(\T)$ is also bounded  $ \forall s \in [0, 1]$ and 
 \eqref{stima flusso PDE s 0 1 N} holds. 
 
 \smallskip
 
 \noindent
 {\sc Proof of \eqref{stima flusso PDE s 0 1}, \eqref{stima tame Phi t}.} 
 Now we  pass to the limit $ N \to + \infty $. 
By \eqref{stima flusso PDE s 0 1 N} the sequence of functions $u_N( t, \cdot )$ is bounded in $L^\infty_{ t} H^s_x$ and,  
 up to subsequences,  
 \begin{equation}\label{disuguaglianza liminf u uN}
 u_N \stackrel{w^*}{\rightharpoonup} u \ \  \text{in } \  L^\infty_{ t} H^s_x \, , 
\qquad  \| u \|_{L^\infty_{ t} H^s_x} \leq \liminf_{N \to + \infty} \| u_N \|_{L^\infty_{ t} H^s_x}\,.
 \end{equation} 
  {\sc Claim}: {\it the sequence $ u_N \to u $ in ${\mathcal C}^0_{ t} H^{s}_x  \cap {\mathcal C}^1_{ t} H^{ s - \frac12}_x  $, 
 and $ u (t, x)$ solves the equation \eqref{pseudo PDE}}. 
  \\[1mm]
We first prove that $u_N$ is a Cauchy sequence in ${\mathcal C}^0_{ t}L^2_x$. Indeed, by \eqref{pseudo PDE N},  
    the difference $h_N := u_{N + 1} - u_N$ solves 
    $$
    \partial_t h_N = \ii \Pi_{N+ 1} ( a |D|^{\frac12}h_N ) + \ii (\Pi_{N + 1} - \Pi_N) a |D|^{\frac12} u_N\,, 
    \ \  h_N(0) = (\Pi_{N + 1} - \Pi_N) u_0 \, , 
    $$ 
    and therefore
    \begin{align}
    \partial_t \| h_N(t)\|_{L^2_x}^2 & = (\partial_t h_N\,,\,h_N )_{L^2_x} + ( h_N\,,\,\partial_t h_N )_{L^2_x}  \nonumber\\
    & = (\ii \Pi_{N+ 1} ( a |D|^{\frac12}h_N ), h_N )_{L^2_x} +  ( h_N, \ii \Pi_{N+ 1} ( a |D|^{\frac12}h_N ) )_{L^2_x}  \nonumber\\
    & \quad + (\ii (\Pi_{N + 1} - \Pi_N) a |D|^{\frac12} u_N, h_N )_{L^2_x} \nonumber\\
    & \quad + 
    ( h_N, \ii (\Pi_{N + 1} - \Pi_N) a |D|^{\frac12} u_N )_{L^2_x} \,. \label{coccodrillo 0}
    \end{align}
Since  $\Pi_{N + 1}$ is self-adjoint with respect to the $L^2$ scalar product  
  \begin{align}
 (\ii \Pi_{N+ 1} ( a |D|^{\frac12}h_N ), h_N )_{L^2_x} + 
 ( h_N, \ii \Pi_{N+ 1} ( a |D|^{\frac12}h_N ) )_{L^2_x}  
 & =  (\ii a |D|^{\frac12}h_N ), h_N  )_{L^2_x} \nonumber\\
 & \quad + ( h_N, \ii a |D|^{\frac12}h_N )_{L^2_x} \nonumber\\
 & = (\ii [a, |D|^{\frac12}]h_N )\,,\,h_N )_{L^2_x}\nonumber\\
 &  \leq C \| h_N(t)\|_{L^2_x}^2 \,. \label{coccodrillo1}
  \end{align}
  Moreover 
  \begin{align}
  &(\ii (\Pi_{N + 1} - \Pi_N) a |D|^{\frac12} u_N, h_N )_{L^2_x} + ( h_N ,\,\ii (\Pi_{N + 1} - \Pi_N) a |D|^{\frac12} u_N )_{L^2_x} \nonumber\\
  & \leq 2 \| (\Pi_{N + 1} - \Pi_N) a |D|^{\frac12} u_N  \|_{L^2_x} \| h_N\|_{L^2_x}  \nonumber \\
&  \leq 
  \| h_N\|_{L^2_x}^2 + \| (\Pi_{N + 1} - \Pi_N) a |D|^{\frac12} u_N  \|_{L^2_x}^2  \nonumber\\
  & \lessdot  \| h_N\|_{L^2_x}^2 + \big( N^{-2} \|  a |D|^{\frac12} u_N  \|_{H^2_x} \big)^2  \nonumber\\
&  \lessdot  \| h_N\|_{L^2_x}^2  + \big( N^{-2} \| u_0  \|_{H^{5/2}_x} \big)^2  \label{coccodrillo 2}
  \end{align}
using that $  \| a \|_{H^2_x} \leq 1 $.  Hence \eqref{coccodrillo 0}-\eqref{coccodrillo 2} imply that 
  $$  
  \partial_t \| h_N(t)\|_{L^2_x}^2 \lessdot \| h_N(t)\|_{L^2_x}^2 + N^{-4}\| u_0\|_{H^{5/2}_x}^2 
  $$ 
 and, since $ \| h_N(0)\|_{L^2_x} \leq N^{- 2} \| u_0\|_{H^2_x}$,  by Gronwall lemma we deduce that
$$
\| u_{N + 1} - u_N\|_{{\mathcal C}^0_{ t} L^2_x} = \sup_{t\in [0, 1]} \| u_{N + 1}( t, \cdot) - u_{N}(t,\cdot)\|_{L^2_x} 
\lessdot N^{-2} \| u_0\|_{H^{5/2}_x}\,.
 $$
 The above inequality implies that $ u_N $ is a Cauchy sequence in 
 $ {\mathcal C}^0_{ t} L^2_x $. 
Hence $ u_N \to \tilde u \in {\mathcal C}^0_{ t} L^2_x $. By \eqref{disuguaglianza liminf u uN} 
we have $ u = \tilde u  \in {\mathcal C}^0_{ t} L^2_x \cap L^\infty_t H^s_x $. 
 Next, for any $ \bar s \in [0, s) $ we use the interpolation inequality 
  $$
  \| u_N - u \|_{L^\infty_{t} H^{\bar s}_x } \leq 
  \| u_N - u \|_{L^\infty_{ t} L^2_x }^{1 - \lambda} \| u_N - u \|_{L^\infty_{ t} H^{\bar s}_x }^\lambda \, ,
$$
and, since  $ u_N $ is bounded in $ L^\infty_t H^s_x $ (see \eqref{stima flusso PDE s 0 1 N}, \eqref{stima tame Phi t N}),
$ u  \in L^\infty_t H^s_x $, 
and $ u_N \to u \in {\mathcal C}^0_{ t} L^2_x $, we deduce that  $u_N \to u$ in each $ L^\infty_t H^{\bar s}_x$.
Since $u_N \in {\mathcal C}^0_t H^{\bar s}_x$  are continuous  in $ t$, the limit function 
$u \in {\mathcal C}^0_t H^{\bar s}_x $ is continuous as well. 
  Moreover  we also deduce that 
$$
  \partial_t u_N = \ii \Pi_N ( a |D|^{\frac12} u_N ) \to \ii a |D|^{\frac12} u \quad  \text{in}\ \  {\mathcal C}^0_{ t} H^{\bar s - 1/2}_x  
  \, , \quad \forall \bar s \in [0, s) \, . 
$$
 As a consequence $ u \in 
 {\mathcal C}^1_t H^{\bar s - \frac12}_x $ 
 and $ \pa_t u = \ii a |D|^{\frac12} u $  
  solves \eqref{pseudo PDE}. 
 
Finally,  
arguing as in \cite{Tay}, Proposition 5.1.D, 
it follows that the function $t \to \| u(t)\|_{H^s_x}^2$ is Lipschitz.  Furthermore,  if $ t_n \to t $ then 
$ u(t_n ) \rightharpoonup u(t) $ weakly in $  H^s_x $, because 
 $u(t_n) \to u(t)$ in $H^{\bar s}_x$ for any $\bar s \in [0, s)$.
 As a consequence the sequence $u(t_n) \to u(t) $ strongly in $H^s_x$. 
 This proves that  $u \in {\mathcal C}^0_t H^s_x$ and therefore $\partial_t u = \ii a |D|^{\frac12} u \in {\mathcal C}^0_t H^{s - \frac12}_x$. 
 \\[1mm]
  {\sc Uniqueness.} If $u_1, u_2 \in {\mathcal C}^0_t H^{s}_x \cap {\mathcal C}^1_t H^{s - \frac12}_x $, $s \geq 1/2 $,  
  are solutions of \eqref{pseudo PDE}, then $h: = u_1 - u_2$ solves 
  $$
  \partial_t h = \ii a |D|^{\frac12} h\,, \qquad h(0) = 0\,.
  $$
  Arguing as in the proof of \eqref{diff-ine} we deduce the energy inequality 
  $  \partial_t \| h (t)\|_{L^2_x}^2 \leq C \| h(t)\|_{L^2_x}^2 $. Since $ h(0)= 0 $, 
Gronwall lemma implies that $\| h(t)\|_{L^2_x}^2 = 0$, for any $t \in [0, 1]$, i.e.  $h(t) = 0$. Therefore the problem \eqref{pseudo PDE} has a unique solution $u(t)$ that we denote by  $\Phi^t(u_0)$. 
  The estimate \eqref{stima flusso PDE s 0 1}, \eqref{stima tame Phi t} then follows by
  \eqref{disuguaglianza liminf u uN}, \eqref{stima flusso PDE s 0 1 N}, \eqref{stima tame Phi t N}, since $u_N(t) = \Phi_N^t(u_0)$. 
\end{proof}

In the next lemma we prove the smooth dependence of the flow with respect to parameters.

\begin{lemma}\label{dipendenza liscia dai parametri}
Let $a(z, \cdot ) \in {\mathcal C}^\infty(\T)$ and 
$ p_0 $-times differentiable, resp. ${\mathcal C}^{p_0} $,  with respect to $z \in {\mathcal B}_X $, where ${\mathcal B}_X$ is 
an open subset of a  Banach space $ X $. 
Then, for any $ p \leq p_0 $, the flow map $ \Phi(z, t)$, $ t \in [0, 1] $,  
  is smooth in $ z $, more precisely,  the map 
$$
{\mathcal B}_X \ni z \mapsto \Phi(z, t) 
\in {\mathcal L}(H^{s}_x, H^{s - \frac{p}{2} - \frac12}_x) \, , \quad \forall s  \geq (p/2) + (1/2)  \, ,
$$
is $ p $-times differentiable, resp. $ {\mathcal C}^p $.
 Moreover, for any $z \in {\mathcal B}_X $,  the derivative $\partial_z^p \Phi(z, t)$ is a 
 multilinear form from $X^p$ in ${\mathcal L}(H_x^s, H^{s - \frac{p}{2}}_x)$.
\end{lemma}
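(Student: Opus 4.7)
The plan is to argue by induction on $p \leq p_0$. The formal derivative $\partial_z^p \Phi(z,t)$ is obtained by differentiating the equation $\partial_t u = \ii a(z,\cdot) |D|^{1/2} u$ in $z$ and applying Duhamel: each differentiation produces an inhomogeneous forcing term containing one extra factor $|D|^{1/2}$, hence costs $1/2$ derivative in $x$. The key analytic inputs are the tame flow estimates \eqref{stima flusso PDE s 0 1}-\eqref{stima tame Phi t}, the boundedness of $|D|^{1/2}:H^\sigma_x\to H^{\sigma-1/2}_x$, Moser tame estimates for multiplication by $a$ and its $z$-derivatives (which belong to $C^\infty(\T_x)$ since $a(z,\cdot)\in C^\infty(\T_x)$), and the linearity of the equation in $u$.

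\textbf{Base case $p=0$.} Continuity of $z\mapsto \Phi(z,t)\in{\cal L}(H^s_x, H^{s-1/2}_x)$ follows from the identity
\[
\bigl(\Phi(z,t)-\Phi(z',t)\bigr)u_0 = \int_0^t \Phi(z,t-\tau)\bigl\{\ii\bigl(a(z,\cdot)-a(z',\cdot)\bigr)|D|^{1/2}\Phi(z',\tau)u_0\bigr\}\,d\tau,
\]
obtained by Duhamel after subtracting the two flow equations. Applying \eqref{stima flusso PDE s 0 1}-\eqref{stima tame Phi t} to the integrand, using that $|D|^{1/2}:H^s_x\to H^{s-1/2}_x$ and that $a(z,\cdot)-a(z',\cdot)\to 0$ in every $H^\sigma_x$ as $z'\to z$, yields the required continuity with a loss of one half derivative.

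\textbf{Inductive step.} For the first derivative, one expects
\[
\partial_z\Phi(z,t)[w]u_0 = \int_0^t \Phi(z,t-\tau)\bigl\{\ii(\partial_z a(z,\cdot)[w])|D|^{1/2}\Phi(z,\tau)u_0\bigr\}\,d\tau,
\]
which defines a bounded linear map $X\to{\cal L}(H^s_x, H^{s-1/2}_x)$, and a bounded linear map $X\to{\cal L}(H^s_x,H^{s-1}_x)$ depending continuously on $z$ by the base-case argument applied to the integrand. To identify this formal derivative with the actual one, one writes the difference quotient $\|h\|_X^{-1}\bigl\{[\Phi(z+h,t)-\Phi(z,t)]u_0-\partial_z\Phi(z,t)[h]u_0\bigr\}$ via Duhamel, producing two remainders: one containing the Taylor remainder $a(z+h,\cdot)-a(z,\cdot)-\partial_z a(z,\cdot)[h]=o(\|h\|_X)$ in $C^\infty(\T_x)$, and one containing $\Phi(z+h,\cdot)-\Phi(z,\cdot)$, which is $o(1)$ in operator norm by the base case. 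Both give $o(\|h\|_X)$ in $H^{s-1}_x$. Iterating this scheme $p$ times, the $p$-th Gateaux differential is a sum of iterated Duhamel integrals with $p$ factors of $|D|^{1/2}$, defining a $p$-linear map $X^p\to{\cal L}(H^s_x,H^{s-p/2}_x)$; passing to the Fréchet derivative and ensuring continuity in $z$ of this $p$-th derivative costs an extra $1/2$ derivative, giving the target space ${\cal L}(H^s_x, H^{s-p/2-1/2}_x)$ claimed in the statement. The $C^p$ regularity in $z$ follows analogously, replacing differentiability by continuous differentiability of $a$.

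\textbf{Main obstacle.} The technical heart of the argument is the bookkeeping of the iterated Duhamel remainders at the induction step: one must check that each remainder can be estimated in the correct Sobolev operator norm, with the precise loss of $p/2+1/2$ derivatives, using only the tame bounds \eqref{stima flusso PDE s 0 1}-\eqref{stima tame Phi t}, the smoothness of $a(z,\cdot)$ in $x$, and the inductive hypotheses on derivatives of order $<p$. This is mechanical but requires care in tracking which Sobolev index is consumed by which factor $|D|^{1/2}$ and which by multiplication by derivatives of $a$.
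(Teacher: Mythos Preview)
Your proposal is correct and follows essentially the same approach as the paper: induction on $p$, the base case via the Duhamel representation of $\Phi(z+h,t)-\Phi(z,t)$, and the inductive step by differentiating the flow equation, writing the candidate $(p+1)$-th derivative as a Duhamel integral with forcing $F_{p+1}$ built from lower-order derivatives of $\Phi$ and of $a$, and then controlling the remainder $\Delta_z\partial_z^p\Phi(z,t)-{\cal A}_p(z,t)[z_{p+1}]$. The paper organizes the bookkeeping by noting that $\partial_z^p\Phi(z,t)$ itself solves $\partial_t(\partial_z^p\Phi)=\ii a|D|^{1/2}\partial_z^p\Phi+F_p$ with $F_{q+1}=\partial_z F_q+\ii\partial_z a[\,\cdot\,]|D|^{1/2}\partial_z^q\Phi$, which makes the iterated Duhamel structure you describe explicit; your identification of the extra $1/2$-derivative loss as coming from the continuity-in-$z$ step matches the paper's formulation exactly.
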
 

\begin{proof}
We denote for simplicity $ \| \ \|_{{\mathcal L}(H^s_x)} := \| \ \|_{{\mathcal L}(H^s_x, H^s_x)} $. 
We argue by induction on $ p $. We first prove the statement for $ p = 0 $. Let $ s \geq 1/2 $. 
By \eqref{flow-propagator}, we have that  
$\Delta_z \Phi(z, t) := \Phi(z + z_1 , t) - \Phi(z , t ) $ solves 
$$
\partial_t \Delta_z \Phi(t) = \ii a(z + z_1, x) |D|^{\frac12} \Delta_z \Phi(t) + 
\ii \Delta_z a |D|^{\frac12} \Phi(z, t)\,, \quad \Delta_z \Phi( 0) = 0\,,
$$
where $\Delta_z a := a (z + z_1, x ) - a(z, x)$. By  Duhamel principle 
$$ 
\Delta_z \Phi(z, t) = \int_0^t \Phi(z + z_1 , t - \tau) \ii \Delta_z a |D|^{\frac12} \Phi(z, \tau) d \tau \, .
$$ 
Hence
\be\label{Lip-flow}
\begin{aligned}
& \sup_{t \in [0, 1]} \| \Delta_z \Phi(z, t) \|_{{\mathcal L}(H^s_x, H^{s - \frac12}_x)}   \\
& \leq \sup_{t \in [0, 1]} \|\Phi(z + z_1, t) \|_{{\mathcal L}(H^{s - \frac12}_x)} \| \Delta_z a \|_{{\mathcal C}^{s - \frac12}_x} \sup_{t \in [0, 1]} 
\|\Phi(z, t) \|_{{\mathcal L}(H^{s }_x)} \to 0  
\end{aligned}
\ee
as $ z_1 \to 0 $,  because $ \| a (z + z_1 ) - a(z )  \|_{{\mathcal C}^{s - \frac12}_x} \to 0 $ 
by continuity. 

Now we assume that for all $ 0 \leq q \leq p < p_0 $,  the flow  
$$ z \mapsto \Phi(z, t) \in {\mathcal L}(H^s_x, H^{s - \frac{q}{2} - \frac12}_x), \quad s \geq q/2 + 1/2, $$ 
is $ q $-times differentiable, with 
$ \partial_z^q \Phi : X^q \to {\mathcal L}(H^s_x, H^{s - \frac{q}{2} }_x)  $
and we prove that 
$ z \mapsto  \Phi(z,t ) \in {\mathcal L}(H^{s}_x, H^{s - \frac{p + 1}{2} - \frac12}_x) $, $s \geq (p + 1)/2 + 1/2 $, 
 is $ (p+1) $-times differentiable with 
 $ \partial_z^{p + 1} \Phi(z, t): X^{p+1} \to  {\mathcal L}(H^s_x, H^{s - \frac{p + 1}{2}}_x) $.
   
The derivate $ \partial_z^p \Phi(z, t) $ solves the equation, for any $z_1, \ldots, z_p \in X $, 
\be\label{equazione derivata in lambda p del flusso}
\begin{aligned}
 &  \partial_t \big( \partial_z^p \Phi(z, t)[z_1, \ldots, z_p] \big)  = \\ 
& \ii a(z, x) 
 |D|^{\frac12} \partial_\lambda^p \Phi(z, t)[z_1, \ldots, z_p]  
 +  F_p(z, t)[z_1, \ldots,z_p]\,, \,  
 \partial_z^p \Phi(z, 0) = 0 
 \end{aligned}
\ee
where $ F_0 := 0 $ and,  for any $ 1 \leq q \leq p + 1 $,    
\begin{align}
 F_q(z, t)[z_1, \ldots, z_q] := & \label{definizione Fq lemma parametri}\\
 \sum_{
0 \leq q_1 \leq q -1, \s \in {\mathcal P}_q} \!\!\! \! \!\!\! \!  \ii \partial_z^{q - q_1} a (z)[z_{\s(1)}, \ldots, z_{\s(q - q_1)}] |D|^{\frac12} \partial_z^{q_1} \Phi(z, t)[z_{\s(q - q_1 + 1)}, \ldots, z_{\s(q)}]  & \nonumber
\end{align}
denoting by $ {\mathcal P}_q $  the set of permutations of the indices $ \{ 1, \ldots , q \} $. 
For $ 0 \leq q \leq p$  we have 
\begin{equation}\label{formula utile derivata lambda flusso}
\quad F_{q + 1}(z, t) = \partial_z F_q(z, t) + \ii \partial_z a(z, x)[ \cdot ] |D|^{\frac12} \partial_z^q \Phi(z, t)\, .
\end{equation}
The candidate $ (p + 1)$-derivative of the operator $ \Phi(z, t)$ is the multilinear $ (p + 1)$-form 
\be \label{formula utile derivata lambda flusso-2}
{\mathcal A}_p(z, t)[z_1, \ldots, z_{p + 1}] := \int_0^t \Phi(z, t - \tau) F_{p + 1}(z, \tau)[z_1, \ldots, z_{p + 1}]\, d \tau 
\ee
obtained by differentiating formally the equation \eqref{equazione derivata in lambda p del flusso} and using the Duhamel principle. We now estimate 
$ \partial_z^p \Phi(z + z_{p + 1}, t) - \partial_z^p \Phi(z, t) 
- {\mathcal A}_p(z , t)[z_{p + 1}] $.
Note that, since ${\mathcal A}_p(z, t)$ is a multilinear $(p + 1)$-form, then ${\mathcal A}_p(z, t)[z_{p + 1}]$ is a 
multilinear $p$-form. 
Taking the difference of \eqref{equazione derivata in lambda p del flusso} evaluated at $ z+ z_{p+1} $ and 
$ z $, and using the Duhamel principle 
we get  that 
$$
\begin{aligned}
\Delta_z \partial_z^p \Phi(z, t) & := \partial_z^p \Phi(z + z_{p + 1}, t) - \partial_z^p \Phi(z, t) \\
& = \int_0^t \Phi( z + z_{p + 1}, t - \tau) 
\big(\ii  \Delta_z a 
|D|^{\frac12} \partial_z^p \Phi(z, t) +  \Delta_z F_p \big)  
d \tau
\end{aligned}
$$
where
$ \Delta_z a := a(z+ z_{p + 1}, x) - a (z, x) $ and  
$ \Delta_z F_p := F_p(z + z_{p + 1}, t) - F_p(z, t) $.
Hence, by 
\eqref{formula utile derivata lambda flusso-2} and 
 \eqref{formula utile derivata lambda flusso} with $ q = p  $,  we get
\begin{align}
& \Delta_z \partial_z^p \Phi(z, t) - {\mathcal A}_p(z, t)[z_{p + 1}] =  \int_0^t \Big( {\mathcal R}_\Phi^{(1)}(t, \tau, z) + {\mathcal R}_\Phi^{(2)}(t, \tau, z) \Big) \, d \tau \nonumber 
\end{align}
\begin{align}
 {\mathcal R}_\Phi^{(1)}(t, \tau, z) & := \int_0^t 
\Phi(z + z_{p + 1}, t - \tau) \ii   \Delta_z a |D|^{\frac12} \partial_z^p \Phi(z, \tau) d \tau \nonumber\\
& \quad - \int_0^t \Phi(z, t - \tau) \ii \partial_z a (z)[ z_{p + 1}] 
|D|^{\frac12} \partial_z^p \Phi(z, \tau) d \tau \label{rapporto incrementale p 1} 
\end{align}
\begin{align}
  {\mathcal R}_\Phi^{(2)}(t, \tau, z) & :=  \int_0^t \Phi(z + z_{p + 1}, t - \tau) \Delta_z F_p  d \tau \nonumber\\
  & \quad - \int_0^t \Phi(z, t - \tau) \partial_z F_p(z, \tau)[z_{p + 1}] d \tau \, .  \label{rapporto incrementale p 3} 
\end{align}
\noindent
{\sc Estimate of \eqref{rapporto incrementale p 1}.} Set  
$ \Delta_z \Phi (t) := \Phi( z + z_{p + 1}, t) - \Phi( z , t) $. For all $ 0 \leq \tau \leq t $, we have
\begin{align}
 \big\|   {\mathcal R}_\Phi^{(1)}(t, \tau, z)[z_1, \ldots, z_p]   \Big\|_{{\mathcal L}(H^s_x, H^{s - \frac{p + 1}{2} - \frac12}_x)} \nonumber& \\
 \leq \Big\|   \Phi(z + z_{p + 1}, t - \tau) \ii (\Delta_z a - \partial_z a[z_{p + 1}])   |D|^{\frac12} \partial_z^p \Phi(z, \tau)[z_1, \ldots, z_p]   \Big\|_{{\mathcal L}(H^s_x, H^{s - \frac{p + 1}{2} - \frac12}_x)} \nonumber & \\
 + \Big\|   \Delta_z \Phi (t - \tau ) \ii  \partial_z a (z)[z_{p + 1}] |D|^{\frac12} \partial_z^p \Phi(z, \tau)[ z_1, \ldots, z_p]   \Big\|_{{\mathcal L}(H^s_x, H^{s - \frac{p + 1}{2} - \frac12}_x)} \nonumber & \\ 
 \leq \sup_{t \in [0, 1]} 
\| \Phi( z + z_{p + 1}, t)\|_{{\mathcal L}(H^{s - \frac{p + 1}{2} - \frac12}_x)} \| \Delta_z a - \partial_z a[ z_{p + 1}] \|_{{\mathcal C}_x^{s - \frac{p + 1}{2} - \frac12}} \times \nonumber& \\
 \times\sup_{t \in [0, 1]} \| \partial_z^p \Phi(z, \tau)[z_1, \ldots, z_p] \|_{{\mathcal L}(H^s_x, H^{s - \frac{p}{2} - \frac12}_x)} \nonumber& \\
 + \sup_{t \in [0, 1]} \| \Delta_z \Phi (t) \|_{{\mathcal L}(H^{s - \frac{p + 1}{2} }_x,H^{s - \frac{p + 1}{2} - \frac12}_x )} \| \partial_z a (z)[z_{p + 1}]\|_{{\mathcal C}_x^{s - \frac{p + 1}{2}}} \times \nonumber & \\
 \times \sup_{t \in [0, 1]} 
\| \partial_z^p \Phi(z, \tau)[ z_1, \ldots, z_p] \|_{{\mathcal L}(H^s_x, H^{s - \frac{p}{2}}_x)} \nonumber& \\
 \leq_{s,p} \Big(\| \Delta_z a - \partial_z a[z_{p + 1}] \|_{{\mathcal C}_x^{s - \frac{p + 1}{2}}}  \nonumber & \\
   + \sup_{t \in [0, 1]} \| \Delta_z \Phi (t) \|_{{\mathcal L}(H^{s - \frac{p + 1}{2}}_x,H^{s - \frac{p + 1}{2} - \frac12}_x )} \| z_{p + 1}\|  \Big) \| z_1\|\ldots \| z_p\|  \label{st1-d} & 
\end{align}
using the inductive assumption on $ \partial_z^p \Phi(z, \tau) $.
\\[1mm]
{\sc Estimate of \eqref{rapporto incrementale p 3}.}
By the expression in  \eqref{definizione Fq lemma parametri} (with $ q = p $),  
the fact that $ z \mapsto a(z) $ is $ (p+1)$-times differentiable, 
the inductive differentiability properties of the flow, 
 the map
$ z \mapsto F_p(z, t)[z_1, \ldots, z_p] \in {\mathcal L}(H^s_x, H^{s - \frac{p}{2} - \frac12}_x) $ 
is differentiable. 
Arguing as above, we have, for all $ 0 \leq \tau \leq t $, 
\begin{align}
 \label{st2-d}  \Big\|  {\mathcal R}_\Phi^{(2)}(t, \tau, z)[z_1, \ldots,z_p]  \Big\|_{{\mathcal L}(H^s_x, H_x^{s - \frac{p + 1}{2} - \frac12})} 
  & \\
 \leq_{s,p}
 \sup_{t \in [0, 1]} \| \big( \Delta_z F_p(z, \tau)  -  \partial_z F_p( z , \tau)[z_{p + 1}] \big)[z_1, \ldots, z_p] \|_{{\mathcal L}(H^s_x, H^{s - \frac{p + 1}{2} - \frac12}_x)} \nonumber & \\
 +  \sup_{t \in [0, 1]}
\| \Delta_z  \Phi(z, t )\|_{{\mathcal L}(H_x^{s - \frac{p + 1}{2} }, H_x^{s - \frac{p + 1}{2} - \frac12})} 
\| \partial_z F_p (z)[z_{p+1}]
 [z_1, \ldots, z_p] \|_{{\mathcal L}(H_x^s , H_x^{s - \frac{p + 1}{2} })} 
 \, . \nonumber 
\end{align}
In conclusion, by \eqref{rapporto incrementale p 1}, \eqref{rapporto incrementale p 3}, \eqref{st1-d}, \eqref{st2-d},
the differentiability of $ a(z) $ and  \eqref{Lip-flow}, 
we deduce that 
$$
\sup_{t \in [0, 1]} \sup_{\|z_1\|, \ldots, \|z_p\| \leq 1} \!\!\!\!\!\!
\frac{\|  \big(\Delta_z \partial_z^p \Phi(z, t) -  {\mathcal A}_p(z, t)[z_{p + 1}] \big)[z_1, \ldots, z_p] \|_{{\mathcal L}(H^s_x, H^{s - \frac{p + 1}{2} - \frac12}_x)}}{\| z_{p + 1}\|} \to 0\,,
$$
for $ z_{p + 1} \to 0 $,  namely $ \partial_z^{p} \Phi(z, t) $ is differentiable and 
$ \partial_z^{p + 1} \Phi(z, t) = {\mathcal A}_p(z, t) $. 
 Moreover, by \eqref{formula utile derivata lambda flusso-2}, \eqref{definizione Fq lemma parametri} for
 $ q = p +1 $,  
 the continuity of $ z \mapsto \pa_z^p a (z) $ and 
 the inductive differentiability properties of the flow,   we have that 
  $ z \mapsto \partial_z^{p + 1} \Phi( z , t) $ is continuous and 
$ \partial_z^{p + 1} \Phi(z , t)[ z_1, \ldots, z_{p + 1}] \in  {\mathcal L}(H^s_x, H^{s - \frac{p + 1}{2}}_x) $. 
\end{proof}

We now want to prove tame estimates for the flow operator 
$ \Phi^t := \Phi ( t) := \Phi (\lambda, \vphi, t) $ acting  in the Sobolev spaces $ H^s $ of functions  $ u(\vphi, x) $. 
Recall that the Sobolev norm  $ \| \ \|_s $  in \eqref{unified norm} 
is equivalent  to $ \| \ \|_s \simeq \| \ \|_{H^s_\ph L^2_x} + \| \ \|_{L^2_\ph H^s_x} $, see \eqref{Sobolev norm}.
Note also the continuous embeddings
\be\label{continuous embed}
H^{s + s_0}(\T^{\nu + 1}) \hookrightarrow H^{s_0} (\T^\nu, H^s_x ) \hookrightarrow 
L^\infty(\T^\nu, H^s_x ) \, . 
\ee

\begin{lemma}\label{lemma finitezza norme stime flusso}
For any $|\beta| \leq \beta_0, |k| \leq k_0, t \in [0, 1], h \in {\mathcal C}^\infty(\T^{\nu + 1})$, 
the function $ \partial_\lambda^k \partial_\vphi^\beta \Phi^t (\vphi )  h $ is ${\mathcal C}^\infty(\T^{\nu + 1})$. 
\end{lemma}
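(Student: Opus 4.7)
\medskip

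The plan is to show that $\partial_\lambda^k \partial_\vphi^\beta \Phi^t(\vphi) h$ belongs to the Sobolev space $H^s(\T^{\nu+1})$ for every $s \geq 0$, so that the standard Sobolev embedding yields $C^\infty(\T^{\nu+1})$. The basic idea is that all the tools needed are already in place: Proposition \ref{Prop0-flow} gives boundedness of $\Phi^t(\vphi)$ on every $H^\sigma_x$, and Lemma \ref{dipendenza liscia dai parametri} gives differentiable dependence on parameters at the price of a loss of half a derivative in $x$ per derivative in the parameter. Since $h$ is $C^\infty$, this loss is harmless at any finite order.

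The first step is to apply Lemma \ref{dipendenza liscia dai parametri} with parameter $z = (\lambda, \vphi)$ (working locally in charts for $\vphi$), viewed as an element of a suitable Banach space. Because $a(\lambda, \vphi, x)$ is $C^\infty$ in $(\vphi, x)$ and $k_0$-times differentiable in $\lambda$, this yields that $\partial_\lambda^k \partial_\vphi^{\beta'} \Phi^t(\lambda, \vphi)$ is a bounded operator from $H^{\sigma}_x$ to $H^{\sigma - (|k| + |\beta'|)/2 - 1/2}_x$ for every $\beta' \in \N^\nu$ and every $k \in \N^{\nu+1}$ with $|k| \leq k_0$, depending continuously on $(\lambda, \vphi)$. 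The Leibniz rule then gives
\begin{equation*}
\partial_\lambda^k \partial_\vphi^\beta \big[ \Phi^t(\vphi) h(\vphi, \cdot ) \big] = \sum_{\beta_1 + \beta_2 = \beta} \binom{\beta}{\beta_1} \big( \partial_\lambda^k \partial_\vphi^{\beta_1} \Phi^t(\vphi) \big) \big[ \partial_\vphi^{\beta_2} h(\vphi, \cdot ) \big],
\end{equation*}
where each factor $\partial_\vphi^{\beta_2} h(\vphi, \cdot)$ lies in $C^\infty(\T_x)$, hence in $H^\sigma_x$ for every $\sigma \geq 0$, uniformly in $\vphi$.

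To upgrade this to joint smoothness in $(\vphi, x)$, I would take arbitrary further derivatives $\partial_\vphi^\alpha \partial_x^\gamma$ of $\partial_\lambda^k \partial_\vphi^\beta \Phi^t(\vphi) h$. Commuting $\partial_\vphi^\alpha$ through the Leibniz expansion yields a finite sum of terms of the shape
\begin{equation*}
\big( \partial_\lambda^k \partial_\vphi^{\beta_1 + \alpha_1} \Phi^t(\vphi) \big) \big[ \partial_x^\gamma \partial_\vphi^{\beta_2 + \alpha_2} h(\vphi, \cdot) \big],\qquad \alpha_1 + \alpha_2 = \alpha,\ \beta_1 + \beta_2 = \beta.
\end{equation*}
For each such term the inner factor lies in $H^\sigma_x$ for every $\sigma$ by the smoothness of $h$, and the outer operator factor maps $H^\sigma_x \to H^{\sigma - N}_x$ with $N = (|k| + |\beta_1 + \alpha_1|)/2 + 1/2$. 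Choosing $\sigma$ as large as desired, each term is bounded in any $H^{\sigma'}_x$, hence by Sobolev embedding in $L^\infty_x$, with an estimate that is continuous in $\vphi \in \T^\nu$. Consequently $\partial_\vphi^\alpha \partial_x^\gamma \partial_\lambda^k \partial_\vphi^\beta \Phi^t(\vphi) h \in L^\infty(\T^{\nu+1})$ for every $\alpha, \gamma$, which gives membership in every $H^s(\T^{\nu+1})$ and hence in $C^\infty(\T^{\nu+1})$.

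The main obstacle is essentially bookkeeping: each derivative in $\vphi$ or $\lambda$ costs half a derivative of $x$-regularity, and one must ensure that this loss is compensated by the a priori smoothness of $h$ and by invoking Lemma \ref{dipendenza liscia dai parametri} with progressively larger values of $p$. Since $a$ is $C^\infty$ in $\vphi$ and $h \in C^\infty(\T^{\nu+1})$, there is no upper limit to the $\vphi$-differentiability of the flow acting on $h$, so the argument goes through at any finite order without any smallness condition.
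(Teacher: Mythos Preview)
Your approach is essentially the same as the paper's: view $h$ as a smooth map $\T^\nu \to H^s_x$, invoke Lemma \ref{dipendenza liscia dai parametri} for the $(\lambda,\vphi)$-differentiability of the flow (losing half an $x$-derivative per parameter derivative), distribute $\partial_\vphi^\alpha$ via Leibniz, and absorb all losses using the $C^\infty$ regularity of $h$. The paper's proof is just a terser version of exactly this.

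One small slip: in your displayed formula you have moved $\partial_x^\gamma$ inside the bracket to act on $h$, but $\partial_x$ does not commute with $\Phi^t(\vphi)$ (the symbol $a(\vphi,x)$ depends on $x$). This is harmless for the argument, though, because you do not actually need to commute $\partial_x^\gamma$ through: once you know each Leibniz term $(\partial_\lambda^k \partial_\vphi^{\beta_1+\alpha_1}\Phi^t(\vphi))[\partial_\vphi^{\beta_2+\alpha_2}h(\vphi,\cdot)]$ lies in $H^\sigma_x$ for every $\sigma$ (which is what your operator bound plus smoothness of $h$ gives), applying $\partial_x^\gamma$ from the outside simply drops the index by $|\gamma|$, and you still land in every $H^{\sigma'}_x$. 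The conclusion then follows as you wrote.
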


\begin{proof}
Since $ h (\vphi, x ) \in {\mathcal C}^\infty (\T^\nu \times \T) $ then $ \T^\nu \ni \vphi \mapsto h (\vphi, \cdot ) \in H^s_x 
$ is a $ {\mathcal C}^\infty $
map for  any $ s >  0 $.  
By Lemma \ref{dipendenza liscia dai parametri}, the map
$ \T^\nu \ni \vphi \mapsto  \partial_\lambda^k \partial_\vphi^\beta \Phi^t (\vphi)  [ h(\vphi) ] \in H^s_x $ 
is $ {\mathcal C}^\infty $ and,  for any $ \alpha \in \N^\nu $, 
$ \partial_\vphi^\alpha \big\{ \partial_\lambda^k \partial_\vphi^\beta \Phi^t (\vphi)  h \big\} = 
{\mathop \sum}_{\alpha_1 + \alpha_2 = \alpha} C_{\alpha_1, \alpha_2} 
\partial_\lambda^k \partial_\vphi^{\beta + \alpha_1} \Phi^t (\vphi)  [\partial_\vphi^{\alpha_2} h] $.  
By Lemma \ref{dipendenza liscia dai parametri} each function $  \partial_\lambda^k \partial_\vphi^{\beta + \alpha_1} \Phi^t (\vphi)  [\partial_\vphi^{\alpha_2} h] \in {\mathcal C}^\infty_x $. 
\end{proof}

\begin{proposition} \label{Prop1-flow}
Assume that  
\begin{equation}\label{ansatz derivate vphi flusso}
 \quad \| a \|_{2 s_0 + \frac32} \leq 1 \, , \quad  \| a \|_{2 s_0 +  1 } \leq \delta (s)  
\end{equation}
for some $ \d(s) > 0 $ small. Then the following tame estimates hold: 
\begin{align}\label{stima flusso norma bassa}
& {\rm sup}_{t \in [0, 1]}\| \Phi(t) u_0 \|_s \leq C(s) \| u_0\|_s\,, \qquad \quad  \forall s \in [0, s_0 + 1 ] \, , \\
& \label{stima flusso norme unite}
{\rm sup}_{t \in [0, 1]} \|  \Phi( t) u_0 \|_{s} \leq C(s) \big( \| u_0 \|_{s} + \| a \|_{s + s_0 + \frac12 } \| u_0\|_{s_0} \big)\,, \quad \forall s \geq s_0  \,.
\end{align}
\end{proposition}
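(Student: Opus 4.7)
The proof uses the equivalent characterization \eqref{Sobolev norm}, namely $\|u\|_s \simeq \|u\|_{H^s_\vphi L^2_x} + \|u\|_{L^2_\vphi H^s_x}$, reducing the estimate to two independent pieces.

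The first piece, controlling $\|\Phi(t) u_0\|_{L^2_\vphi H^s_x}$, is straightforward. Apply Proposition \ref{Prop0-flow} pointwise in $\vphi \in \T^\nu$, raise to the square and integrate in $\vphi$. The factor $\|a(\vphi,\cdot)\|_{H^{s+1/2}_x}$ multiplying $\|u_0(\vphi,\cdot)\|_{H^1_x}$ can be pulled out in $L^\infty_\vphi$ and, by the Sobolev embedding $H^{s_0}_\vphi \hookrightarrow L^\infty_\vphi$ combined with \eqref{continuous embed}, it is controlled by $C\|a\|_{s+s_0+\frac12}$. The remaining factor $\|u_0\|_{L^2_\vphi H^1_x}$ is bounded by $\|u_0\|_{s_0}$ since $s_0\geq 1$. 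This delivers the stated tame bound for $\|\Phi(t) u_0\|_{L^2_\vphi H^s_x}$.

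The second piece, $\|\Phi(t)u_0\|_{H^s_\vphi L^2_x}$, is the delicate step and will be proved by induction on integer $s$ (with non-integer $s$ recovered by interpolation). Setting $u := \Phi(t)u_0$, $v_\b := \pa_\vphi^\b u$, the flow equation differentiated in $\vphi$ gives
\begin{equation*}
\pa_t v_\b = \ii a |D|^{1/2} v_\b + \ii \sum_{\substack{\b_1+\b_2=\b \\ |\b_1|\geq 1}} \binom{\b}{\b_1}(\pa_\vphi^{\b_1}a)|D|^{1/2} v_{\b_2}, \quad v_\b(0) = \pa_\vphi^\b u_0.
\end{equation*}
Perform an $L^2_{\vphi,x}$ energy estimate. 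The ``diagonal'' term $2\Re\langle v_\b,\ii a|D|^{1/2}v_\b\rangle = \ii\langle[a,|D|^{1/2}]v_\b,v_\b\rangle$ is controlled by $C\|a\|_{s_0+2}\|v_\b\|_{L^2_{\vphi,x}}^2$ exactly as in Step 1 of the proof of Proposition \ref{Prop0-flow}. For each off-diagonal term, split via the paraproduct \eqref{paraproduct}:
$(\pa_\vphi^{\b_1}a)|D|^{1/2}v_{\b_2} = T_{\pa_\vphi^{\b_1}a}|D|^{1/2}v_{\b_2} + R_{|D|^{1/2}v_{\b_2}}(\pa_\vphi^{\b_1}a)$. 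For the second summand apply \eqref{stima Ta Ru} to get the bound $\|\pa_\vphi^{\b_1}a\|_{H^{s_0}_x}\,\|v_{\b_2}\|_{H^1_x}$, with only one $\vphi$-derivative falling on $a$ — so after taking $L^\infty_\vphi$ and using $|\b_1|\leq s$, the Sobolev embedding yields the tame factor $\|a\|_{s+s_0+\frac12}$, while $\|v_{\b_2}\|_{L^2_\vphi H^1_x}$ is already controlled by Part~1 applied at Sobolev index $|\b_2|+1\leq s$ (closing the induction). For the first summand use Lemma \ref{super man}, which shows $T_b|D|^{1/2}$ is self-adjoint up to an $L^2_x$-bounded remainder of size $\|b\|_{H^2_x}$: writing
$2\Re\langle v_\b, \ii T_{\pa_\vphi^{\b_1}a}|D|^{1/2}v_{\b_2}\rangle = \ii\langle v_\b,(T_{\pa^{\b_1}_\vphi a}|D|^{1/2}-|D|^{1/2}(T_{\pa^{\b_1}_\vphi a})^\ast)v_{\b_2}\rangle + \ii\langle v_\b, T_{\pa^{\b_1}_\vphi a}|D|^{1/2}v_{\b_2}\rangle - \ii\langle |D|^{1/2}(T_{\pa^{\b_1}_\vphi a})^\ast v_\b, v_{\b_2}\rangle$, the first contribution is bounded by $\|\pa^{\b_1}_\vphi a\|_{H^2_x}\|v_\b\|\|v_{\b_2}\|$ and the remaining ``symmetric'' pair cancels after integration by parts in $\vphi_i$ (when $|\b_1|=1$ and $\b = \b' + e_i$), reducing the top-order contribution to a commutator that is $L^2_{\vphi,x}$-bounded. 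Gronwall then closes the induction and yields \eqref{stima flusso norme unite}.

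The main obstacle is the apparent $|D|^{1/2}$ derivative loss in the forcing terms $(\pa_\vphi^{\b_1}a)|D|^{1/2}v_{\b_2}$: a direct $L^2_{\vphi,x}$ bound would require $\|v_{\b_2}\|_{H^{1/2}_x}$, which is not available in a Gronwall loop. The remedy is exactly the symmetrization provided by Lemma \ref{super man}: the paraproduct $T_{\pa^{\b_1}_\vphi a}|D|^{1/2}$ is essentially self-adjoint on $L^2_x$, so that only the ``bad'' antisymmetric part remains in the energy identity and is controlled by a zeroth-order operator, while the smoothing part $R_{|D|^{1/2} v_{\b_2}}(\pa^{\b_1}_\vphi a)$ gains $1/2$ derivative on $v_{\b_2}$ thanks to \eqref{stima Ta Ru}. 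The estimate \eqref{stima flusso norma bassa} for $0\leq s\leq s_0+1$ follows from the same scheme where the tame loss is absorbed by the smallness hypothesis $\|a\|_{2s_0+1}\leq \d(s)$, and the non-integer range is obtained by standard interpolation between the integer endpoints together with the case $s=0$ given by Proposition \ref{Prop0-flow}. Uniqueness and the passage from the Galerkin truncation $\Phi_N$ to $\Phi$ are carried out as in the final step of the proof of Proposition \ref{Prop0-flow}, transferring the bounds via weak-$\ast$ limits in $L^\infty_t H^s_x$.
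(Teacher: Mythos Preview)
Your treatment of the $L^2_\vphi H^s_x$ piece is correct and matches the paper's: integrate the pointwise-in-$\vphi$ estimate of Proposition~\ref{Prop0-flow} and pull out $\|a(\vphi,\cdot)\|_{H^{s+1/2}_x}$ in $L^\infty_\vphi$.

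For the $H^s_\vphi L^2_x$ piece, however, your route diverges from the paper's and contains a genuine gap. Two issues. First, the claim that $\|v_{\b_2}\|_{L^2_\vphi H^1_x}$ is controlled by ``Part~1 applied at Sobolev index $|\b_2|+1$'' is wrong: Part~1 bounds $\|\Phi(t)u_0\|_{L^2_\vphi H^s_x}$, which carries no $\vphi$-derivatives, whereas $v_{\b_2}=\pa_\vphi^{\b_2}u$ is a mixed object. What you actually need is the full inductive hypothesis $\|u\|_{|\b_2|+1}$, not just Part~1. Second, and more seriously, the assertion that ``the remaining symmetric pair cancels after integration by parts in $\vphi_i$'' does not hold as stated: integrating $\ii\langle \pa_{\vphi_i}v_{\b_2},\,|D|^{1/2}(T_{\pa_{\vphi_i}a})^\ast v_{\b_2}\rangle$ by parts in $\vphi_i$ produces a term with $\pa_{\vphi_i}^2 a$ and a term that again pairs $v_{\b_2}$ with $|D|^{1/2}$ acting on $\pa_{\vphi_i}v_{\b_2}$; there is no clean cancellation, and you are left with the same $|D|^{1/2}$ loss you were trying to avoid.

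The paper's approach sidesteps all of this by \emph{not} attempting to cancel the half-derivative loss. Instead of energy estimates on $\pa_\vphi^\b u$, it uses the Duhamel representation
\[
\pa_{\vphi_m}\Phi(t)=\ii\int_0^t \Phi(t-\tau)\,(\pa_{\vphi_m}a)\,|D|^{1/2}\,\Phi(\tau)\,d\tau
\]
and simply estimates the integrand in $\|\cdot\|_s$, allowing the $|D|^{1/2}$ to cost half a derivative: by the inductive bound on $\Phi$ one gets $\|\Phi(t-\tau)(\pa_{\vphi_m}a)|D|^{1/2}\Phi(\tau)u_0\|_s \leq_s \|a\|_{2s_0+1}\sup_{t}\|\Phi(t)u_0\|_{s+1}$. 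This yields a closed inequality $\sup_t\|\Phi(t)u_0\|_{s+1}\leq C\|u_0\|_{s+1}+C\|a\|_{2s_0+1}\sup_t\|\Phi(t)u_0\|_{s+1}$, and the last term is absorbed via the smallness hypothesis $\|a\|_{2s_0+1}\leq\delta(s)$. No paraproducts, no Lemma~\ref{super man}, no cancellation argument are needed for this piece. The tame estimate \eqref{stima flusso norme unite} is then proved by the same Duhamel scheme combined with the interpolation Lemma~\ref{interpolazione fine}.
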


\begin{proof}
We take $u_0 \in {\mathcal C}^\infty(\T^{\nu + 1})$, so that $ \Phi u_0$ is ${\mathcal C}^\infty(\T^{\nu + 1})$. 

\noindent
{\sc Proof of \eqref{stima flusso norma bassa}.} 
\noindent
For $ s = 0 $, integrating  \eqref{stima flusso PDE s 0 1} in $ \vphi $, we have
\be
\begin{aligned}
\| \Phi(t) u_0 \|_0^2 & = \| \Phi(t) u_0 \|_{L^2_\vphi L^2_x}^2 \\
& = \int_{\T^\nu} \| \Phi(\vphi, t) u_0 \|_{L^2_x}^2\,d \vphi 
\leq C \int_{\T^\nu} \| u_0 \|_{L^2_x}^2\,d \vphi = C \| u_0 \|_{L^2_\vphi L^2_x}^2\,. \label{stima flusso L 2 vphi L 2 x}
\end{aligned}
\ee
Now we suppose that \eqref{stima flusso norma bassa} holds for
$ s \in \N $, $  s \leq s_0 $, and we prove it for  $ s + 1 $. By \eqref{Sobolev norm}
\be\label{split norma s + 1}
\| \Phi(t) u_0\|_{s + 1} \simeq \|\Phi(t) u_0 \|_{L^2_\vphi H^{s + 1}_x} + \|\Phi(t) u_0 \|_{H^{s + 1}_\vphi L^2_x} \, . 
\ee
The first term in \eqref{split norma s + 1} is estimated, using   \eqref{stima tame Phi t},  \eqref{continuous embed}, 
\eqref{ansatz derivate vphi flusso},  by 
\be
\begin{aligned}
\|\Phi(t) u_0 \|_{L^2_\vphi H^{s + 1}_x} & \leq_s \| u_0 \|_{L^2_\vphi H^{s + 1}_x} + \| a \|_{L^\infty_\vphi  
H^{s + \frac32}_x} \| u_0 \|_{L^2_\vphi H^1_x} \\
&  \leq_s \| u_0 \|_{s + 1} + \| a \|_{s +  s_0 + \frac32} \| u_0 \|_{1} \label{stima tame Phi t-1}     \leq_s \| u_0\|_{s + 1}  \, . 
\end{aligned}
\ee
The second term in \eqref{split norma s + 1} is estimated, using \eqref{stima flusso L 2 vphi L 2 x} and 
\eqref{stima flusso norma bassa},  by
\begin{align}
\| \Phi(t) u_0\|_{H^{s + 1}_\vphi L^2_x} & \simeq 
\| \Phi(t) u_0 \|_{L^2_\vphi L^2_x} + 
\sup_{m=1, \ldots, \nu} \| \partial_{\vphi_m} (\Phi(t) u_0) \|_{H^s_\vphi L^2_x} \nonumber\\
&  
\leq_s  \|u_0 \|_{L^2_\vphi L^2_x} + \sup_{m=1, \ldots, \nu} \big( \| \Phi(t) [ \partial_{\vphi_m} u_0] \|_{s} +  
\| \partial_{\vphi_m} \Phi(t) u_0 \|_{s}\big) \label{albertone1bis} 
\\
&  \leq_s  \| u_0\|_{ s+ 1} + \| \partial_{\vphi_m} \Phi(t) u_0 \|_{s}\,. \label{mario 2}
\end{align}
For estimating the last term in \eqref{mario 2} note that, differentiating \eqref{flow-propagator}, the operator $\partial_{\vphi_m} \Phi(t) $ solves 
$$
\partial_t (\partial_{\vphi_m} \Phi(t)) = \ii a |D|^{\frac12} (\partial_{\vphi_m} \Phi(t)) + 
\ii (\partial_{\vphi_m} a) |D|^{\frac12} \Phi(t)\,, \quad \partial_{\vphi_m} \Phi(0) = 0\,,
$$
and then, by Duhamel principle (variation of constants method),  it has the representation
\begin{equation}\label{Duhamel principle flusso}
\partial_{\vphi_m} \Phi(t) = \ii  \int_0^t \Phi(t - \tau) (\partial_{\vphi_m} a) |D|^{\frac12} \Phi(\tau)\, d \tau\, .
\end{equation}
By the inductive assumption \eqref{stima flusso norma bassa} up to $ s  \leq s_0 $, and \eqref{continuous embed}, we get
\begin{align}\label{mario 3} 
\| \Phi(t - \tau) (\partial_{\vphi_m} a) |D|^{\frac12} \Phi(\tau) [u_0] \|_s  & \leq_s \| (\partial_{\vphi_m} a) 
|D|^{\frac12} \Phi(\tau) [u_0] \|_s \\
& \leq \| a \|_{{\mathcal C}^{s + 1}} \| \Phi(\tau) u_0 \|_{s + \frac12} \nonumber \\
& \leq_s \| a \|_{2 s_0 + 1} {\rm sup}_{t \in [0, 1]} \| \Phi(t) u_0 \|_{s + 1}
\,. \nonumber 
\end{align}
Therefore  \eqref{split norma s + 1}-\eqref{mario 3} imply
$$ 
\| \Phi(t) u_0 \|_{s + 1} \leq C(s) \big( \| u_0 \|_{s + 1} + \| a \|_{2 s_0 + 1} 
{\rm sup}_{t \in [0, 1]} \| \Phi(t) u_0 \|_{s + 1} \big)
$$ 
and, for $ C(s) \| a \|_{2 s_0 + 1} \leq 1/2 $,  
we deduce \eqref{stima flusso norma bassa} for $s + 1$. After $ s_0 $-steps we prove 
\eqref{stima flusso norma bassa} at $ s_0 + 1 $. 
Then  a classical  interpolation result implies  that $ \Phi(t) $ 
satisfies  the estimate \eqref{stima flusso norma bassa} also for all $ s \in (0, s_0 + 1 ) $.  
\\[1mm]
{\sc Proof of \eqref{stima flusso norme unite}.}
We argue again by induction on $ s $. For $ s \in [s_0, s_0 + 1] $ 
the tame estimate \eqref{stima flusso norme unite} is trivially implied  by \eqref{stima flusso norma bassa}.  
Then we suppose that \eqref{stima flusso norme unite} 
holds up to $  s \geq s_0 $ and we prove it  at $ s + 1 $. 

We estimate $ \| \Phi(t) u_0\|_{s + 1} $ as in \eqref{split norma s + 1}-\eqref{albertone1bis}. Then
we estimate  the last terms in \eqref{albertone1bis} in a tame way. 
The inductive hyphothesis \eqref{stima flusso norme unite} and  Lemma \ref{interpolazione fine} (with 
$a_0 = 2 s_0  + \frac12 $, $ b_0 = s_0 $, $p =s - s_0$, $q = 1$) imply  
\begin{align}
\| \Phi(t) [\partial_{\vphi_m} u_0] \|_{s} 
  & \leq_s \| u_0 \|_{s + 1} + \| a \|_{s + s_0 + \frac12} \| u_0 \|_{s_0 + 1} \nonumber\\
 & \leq_s \| u_0 \|_{s + 1} + \| a\|_{s + s_0 + \frac32} \| u_0\|_{s_0} + \| a\|_{2 s_0 + \frac12} 
\| u_0\|_{s + 1} \nonumber\\
& \leq_s \| u_0 \|_{s + 1} + \| a \|_{s + s_0 + \frac32} \| u_0 \|_{s_0}  \label{albertone 2}
\end{align}
since  $ \| a\|_{2 s_0 + \frac12}  \leq 1 $. Then we estimate
 $ \| \partial_{\vphi_m} \Phi(t) u_0 \|_s $. 
By the inductive assumption \eqref{stima flusso norme unite}, the 
tame estimates for the product of functions, \eqref{ansatz derivate vphi flusso} and
\eqref{stima flusso norma bassa}, we get, for all $t, \tau \in [0, 1]$,
\begin{align}
\|\Phi(t - \tau) (\partial_{\vphi_m} a) |D|^{\frac12} \Phi(\tau) [u_0] \|_s 
& \leq_s \| (\partial_{\vphi_m} a) |D|^{\frac12} \Phi(\tau) [u_0] \|_s \nonumber\\
& \quad + \| a \|_{s + s_0 + \frac12} 
\| (\partial_{\vphi_m} a) |D|^{\frac12} \Phi(\tau) [u_0] \|_{s_0} \nonumber\\
& \leq_s  \| a \|_{s + s_0 + \frac12}  \| u_0 \|_{s_0 + \frac12} 
+ \| a \|_{s_0 + 1} \| \Phi(\tau) u_0 \|_{s + \frac12} \,.  \label{albertone 3} 
\end{align}
Then \eqref{split norma s + 1}, \eqref{stima tame Phi t-1}, \eqref{albertone1bis}, 
\eqref{Duhamel principle flusso}, \eqref{albertone 2}, \eqref{albertone 3} imply
\begin{align}\nonumber
{\rm sup}_{t \in [0, 1]} \| \Phi(t) u_0 \|_{s + 1} & 
\leq_s \| u_0 \|_{s + 1} + \| a \|_{s +  s_0 + \frac32}  \| u_0 \|_{s_0} 
\\ 
& \quad + \| a \|_{s_0 + 1} {\rm sup}_{\tau \in [0, 1]}\| \Phi(\tau) u_0 \|_{s +1}  +  \| a \|_{s + s_0 + \frac12}  \| u_0 \|_{s_0 + 1}  \, . \nonumber 
\end{align}
Then, using \eqref{ansatz derivate vphi flusso} and 
Lemma \ref{interpolazione fine}  (with $a_0 = 2 s_0  + \frac12 $, $b_0 = s_0$, $p =s - s_0$, $q = 1$), we get 
\begin{align} 
{\rm sup}_{t \in [0, 1]} \| \Phi(t) u_0 \|_{s + 1}  & \leq_s \| u_0 \|_{s + 1} + \| a \|_{s + s_0 
+ \frac32} \| u_0 \|_{s_0} 
+ \| a \|_{s + s_0 + \frac12}  \| u_0 \|_{s_0 + 1} \nonumber \\
& \leq_s  \| u_0 \|_{s + 1} + \| a \|_{s + s_0 + \frac32} \| u_0 \|_{s_0}  \nonumber 
\end{align}
which is  \eqref{stima flusso norme unite} for $s + 1$. 

We have proved  \eqref{stima flusso norma bassa}, \eqref{stima flusso norme unite}, for $u_0 \in {\mathcal C}^\infty(\T^{\nu + 1})$. The estimates for $u_0 \in H^s $ follow by density. 
\end{proof}
We also prove the following tame estimates. 

\begin{lemma}
For all $n \geq 1$, if 
$ \| a \|_{s_0 + \frac{n}{2} + 2} \leq \d(s) $ small, then the following tame estimates hold: $ \forall s \geq s_0 $
\begin{equation} \label{prima proprieta flusso}
\begin{aligned}
& \| \langle D \rangle^{- \frac{n}{ 2}} \Phi(t) \langle D \rangle^{\frac{n}{2}} h\|_s\,,
\| \langle D \rangle^{ \frac{n}{ 2}} \Phi(t) \langle D \rangle^{- \frac{n}{2}} h\|_s \,  \\
& \leq_s \| h \|_s + \| a \|_{s + s_0 +  \frac{n}{2} + 2}  
\| h \|_{s_0} \, . 
\end{aligned}
\end{equation}
\end{lemma}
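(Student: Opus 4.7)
The plan is to conjugate the equation \eqref{pseudo PDE} and obtain a new equation whose flow coincides with $v_\pm(t) := \langle D\rangle^{\mp n/2}\Phi(t)\langle D\rangle^{\pm n/2}h$, then run the same energy argument as in Propositions \ref{Prop0-flow}--\ref{Prop1-flow}. Differentiating in $t$ and using that $\Phi(t)\langle D\rangle^{\pm n/2}h$ solves \eqref{pseudo PDE}, one finds that $v_\pm(0) = h$ and
\[
\partial_t v_\pm = \bigl(\ii a(\vphi,x)|D|^{\frac12} + {\cal R}_\pm \bigr) v_\pm, \qquad {\cal R}_\pm := \langle D\rangle^{\mp n/2}\bigl[\ii a,\,\langle D\rangle^{\pm n/2}\bigr]|D|^{\frac12}.
\]
By the symbolic calculus \eqref{composition pseudo} the commutator $[a,\langle D\rangle^{\pm n/2}]$ lies in $OPS^{n/2-1}$, so ${\cal R}_\pm\in OPS^{-1/2}$. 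Thus the conjugated equation is a \emph{bounded} perturbation of the original one.

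First I would apply Lemmata \ref{lemma stime Ck parametri} and \ref{lemma tame norma commutatore} to estimate $\norma {\cal R}_\pm \norma_{-1/2,s,0} \leq_{s,n} \| a \|_{s+s_0+n/2+2}$, and the analogous bound at Sobolev index $s_0$ controlled by $\|a\|_{s_0+n/2+2}\leq\delta(s)$; Lemma \ref{lemma: action Sobolev} then translates this into a tame operator bound of ${\cal R}_\pm$ on $H^s_x$. The precise loss $n/2+2$ is obtained by expanding $[a,\langle D\rangle^{\pm n/2}]$ asymptotically just far enough that the remainder is already of order $-1/2$. Next I would reproduce verbatim the energy arguments of Propositions \ref{Prop0-flow}--\ref{Prop1-flow} at fixed $\vphi$: the principal term $(\ii[a,|D|^{1/2}]v_\pm, v_\pm)_{L^2_x}$ is handled exactly as in Step 1 of the proof of Proposition \ref{Prop0-flow}, while the new contribution $({\cal R}_\pm v_\pm,v_\pm)_{L^2_x}+\text{c.c.}$ is harmless since ${\cal R}_\pm$ is bounded. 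For the $H^s_x$ estimate the paraproduct/commutator argument of Step 2 carries over, and the additional term $([\langle D\rangle^s,{\cal R}_\pm]v_\pm,\langle D\rangle^s v_\pm)_{L^2_x}$ is tamely controlled by the symbolic calculus as in Lemma \ref{super man}. Gronwall yields the $H^s_x$ bound at fixed $\vphi$.

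The passage from fixed $\vphi$ to $H^s(\T^{\nu+1})$ follows the induction on $s$ of Proposition \ref{Prop1-flow} verbatim: the Duhamel formula \eqref{Duhamel principle flusso} applied to the conjugated equation expresses $\partial_{\vphi_m}v_\pm$ as an integral of the conjugated flow acting on $\bigl[\ii(\partial_{\vphi_m}a)|D|^{1/2}+(\partial_{\vphi_m}{\cal R}_\pm)\bigr]v_\pm$, each factor being tamely controlled by the bound just obtained and by Lemma \ref{interpolazione fine}. Combining the contributions and invoking the smallness $\|a\|_{2s_0+1}\leq\delta(s)$ closes the induction and gives \eqref{prima proprieta flusso}.

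The main obstacle is the bookkeeping in the first step: the asymptotic expansion of $[a,\langle D\rangle^{\pm n/2}]$ produces several terms carrying between one and roughly $n/2+1$ derivatives of $a$, and one has to verify that after left-multiplication by $\langle D\rangle^{\mp n/2}$ and right-multiplication by $|D|^{1/2}$ the dominant loss of derivatives on $a$ is exactly $s_0+n/2+2$ at low index and $s+s_0+n/2+2$ at high index. This is a routine but delicate application of \eqref{expansion symbol} and \eqref{stima RN derivate xi parametri}, sensitive to the exponent $n/2$ appearing in the conjugating Fourier multiplier.
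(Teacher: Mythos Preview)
Your setup is correct and in fact identical to the paper's: both derive the equation $\partial_t v_\pm = \ii a|D|^{1/2}v_\pm + {\cal R}_\pm v_\pm$ with ${\cal R}_\pm \in OPS^0$ (even $OPS^{-1/2}$) and $\norma {\cal R}_\pm\norma_{0,s,0}\leq_s\|a\|_{s+n/2+2}$. The divergence is in how this is exploited. You propose to rerun the entire energy/paraproduct machinery of Propositions~\ref{Prop0-flow}--\ref{Prop1-flow} for the perturbed equation. The paper instead applies Duhamel once more, writing
\[
\Phi_n(t)=\Phi(t)+\int_0^t\Phi(t-\tau)\,A_n\,\Phi_n(\tau)\,d\tau,\qquad A_n={\cal R}_\pm,
\]
and then feeds in the \emph{already proved} tame estimates \eqref{stima flusso norma bassa}--\eqref{stima flusso norme unite} for $\Phi$, closing by a bootstrap in $\|a\|_{s_0+n/2+2}\leq\delta(s)$. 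This reduces the whole lemma to a few lines, whereas your route reproves Propositions~\ref{Prop0-flow}--\ref{Prop1-flow} with an additional bounded term grafted on. Both are valid; the paper's is shorter precisely because it recycles the work rather than redoing it.

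One minor comment: what you call the ``main obstacle'' (the asymptotic expansion of $[a,\langle D\rangle^{\pm n/2}]$) is less delicate than you suggest. A single application of Lemmata~\ref{lemma stime Ck parametri}, \ref{lemma composizione multiplier}, \ref{lemma tame norma commutatore} gives $\norma A_n\norma_{0,s,0}\leq_s\|a\|_{s+n/2+2}$ directly; no term-by-term expansion is needed, and this is all either approach requires.
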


\begin{proof}
Let $ \Phi_n(t) := \langle D \rangle^{- \frac{n}{2}} \Phi(t) \langle D \rangle^{\frac{n}{2}} $. We consider $h \in {\mathcal C}^\infty$ so that $\Phi_n(t) h \in {\mathcal C}^\infty$.

\noindent
We have $ \Phi_n(0) = {\rm Id} $ and 
\begin{align*}
\partial_t \Phi_n(t)  = 
 \langle D \rangle^{- \frac{n}{2}} \ii a |D|^{\frac12} \Phi(t) \langle D \rangle^{\frac{n}{2}}  
  = \ii a |D|^{\frac12} \Phi_n(t) + \ii [\langle D \rangle^{- \frac{n}{2}}, a |D|^{\frac12}] \langle D \rangle^{\frac{n}{2}} \Phi_n(t)\, .
\end{align*}
Therefore by Duhamel principle we get 
\begin{equation}\label{Phi n Psi n}
\begin{aligned}
& \Phi_n(t) = \Phi(t) + \Psi_n(t)\,, \\ 
& \Psi_n(t) := \int_0^t \Phi(t - \tau) A_n \Phi_n(\tau) \, d \tau \ \
{\rm where} \ \
A_n := \ii [\langle D \rangle^{- \frac{n}{2}}, a |D|^{\frac12}] \langle D \rangle^{\frac{n}{2}} \,.
\end{aligned}
\end{equation}
By Lemmata \ref{lemma composizione multiplier}, 
\ref{lemma tame norma commutatore}, and  
\eqref{Norm Fourier multiplier}, \eqref{norma a moltiplicazione}, we get the estimate
\begin{equation}\label{stima An}
\norma A_n \norma_{0,s,0} \leq_s \| a \|_{s + \frac{n}{2} + 2}\, .
\end{equation}
Then by \eqref{Phi n Psi n},  using \eqref{stima flusso norma bassa} (for $s = s_0$) and 
Lemma \ref{lemma: action Sobolev}, we get
$$
{\rm sup}_{t \in [0, 1]}\|\Phi_n(t) h \|_{s_0} \leq 
C \| h \|_{s_0} + C \| a \|_{s_0 + \frac{n}{2} + 2} \, {\rm sup}_{t \in [0, 1]} \| \Phi_n(t) h\|_{s_0} \, .
$$
For $ C \| a \|_{s_0 + \frac{n}{2} + 2} \leq 1/2  $, we deduce
$ {\rm sup}_{t \in [0, 1]} \|\Phi_n(t) h \|_{s_0} \leq C \| h \|_{s_0} $. Then
\eqref{stima flusso norme unite}, 
\eqref{stima An}  and Lemma \ref{lemma: action Sobolev},  imply, for all $s > s_0 $, 
\begin{align}
\| \Psi_n(t) h \|_s & \leq_s  
{\rm sup}_{t \in [0, 1]} \big( \| A_n \Phi_n(t) h \|_s + \| a \|_{s + s_0 + \frac12} \| h \|_{s_0} \big) \nonumber \\
& \leq_s   \| a \|_{s + s_0 
+ \frac{n}{2} + 2} \| h \|_{s_0} +  \| a \|_{s_0 + \frac{n}{2} + 2}  \| h\|_s \nonumber\\
& \quad + 
\| a \|_{s_0 + \frac{n}{2} + 2} \, {\rm sup}_{t \in [0, 1]} \| \Psi_n(t) h\|_s  \,  . 
\label{stimaPsin}
\end{align}
Hence, for $ \| a \|_{s_0 + \frac{n}{2} + 2} \leq \delta (s) $ small, we deduce 
the estimate \eqref{prima proprieta flusso} 
by  \eqref{Phi n Psi n}, \eqref{stima flusso norme unite}, \eqref{stimaPsin}.  

If $h \in H^s$, the estimate \eqref{prima proprieta flusso} follows by density.
\end{proof}

Now  we prove similar tame estimates for  the operator  $ \partial_\lambda^k \pa_{\vphi}^{\beta} \Phi $ 
when the vector field $ \ii a(\lambda, \vphi, x) |D|^{1/2} $ depends also on  $ \lambda $.
The operator  $ \partial_\lambda^k \pa_{\vphi}^{\beta} \Phi $ 
loses $ | D_x |^{\frac{|\b| + |k|}{2}} $ 
derivatives which  are compensated  by  applying $ \langle D \rangle^{- \frac{|\beta| + |k|}{2}} $.

\begin{proposition}\label{Teorema totale partial vphi beta k D beta k Phi}
Assume that 
\begin{equation}\label{piccolezza a partial vphi beta k D beta k}
\| a\|_{2 s_0 + \beta_0 + 1} \leq \delta(s)\,, \quad \| a \|_{2 s_0 + \frac52 + \beta_0 + k_0 }^{k_0, \gamma} \leq 1
\end{equation}
with $\delta(s) > 0$ small enough.
Then, for all $|k| \leq k_0$, $|\beta| \leq \beta_0$, the following tame estimates hold:
\begin{align}\label{copenaghen A omega}
& \| \partial_\lambda^k \partial_\vphi^\beta \Phi \langle D \rangle^{- \frac{|\beta| + |k|}{2}} h \|_s \leq_s \gamma^{- |k|} \| h \|_s \,, \quad \forall s \in [0, s_0 + 1] \, , \\
& \label{copenaghen B omega}
\| \partial_\lambda^k \partial_\vphi^\beta \Phi \langle D \rangle^{- \frac{|\beta| + |k|}{2}} h \|_s \leq_s \gamma^{- |k|} 
\big(\| h \|_s + \| a \|_{s + s_0  + |\beta| + |k| + 1}^{k_0, \gamma} \| h \|_{s_0} \big)\,,  \forall s \geq s_0\, , 
\end{align}
and
\begin{equation}\label{copenaghen A omega con D}
\begin{aligned}
& \| \langle D \rangle \partial_\lambda^k \partial_\vphi^\beta \Phi \langle D \rangle^{- \frac{|\beta| + |k|}{2} - 1} h \|_s \leq_s \gamma^{- |k|} \| h \|_s \,, \qquad \forall s \in [0, s_0 + 1] \, ,  
\end{aligned}
\end{equation}
\begin{equation}\label{copenaghen B omega con D}
\begin{aligned}
& \| \langle D \rangle \partial_\lambda^k \partial_\vphi^\beta \Phi \langle D \rangle^{- \frac{|\beta| + |k|}{2} - 1} h \|_s  \\
& \leq_s \gamma^{- |k|} 
\big(\| h \|_s   +  \| a \|_{s + s_0  + |\beta| + |k| + 2}^{k_0, \gamma} \| h \|_{s_0} \big)\,, \  \forall s \geq s_0\, .
\end{aligned}
\end{equation}
\end{proposition}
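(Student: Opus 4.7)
\medskip

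\noindent\textbf{Proof plan.} The strategy is to argue by induction on $|k|+|\beta|$. The base case $|k|=|\beta|=0$ is already covered by \eqref{stima flusso norma bassa}--\eqref{stima flusso norme unite} together with the elementary fact that $\langle D\rangle^{-0}=\mathrm{Id}$, so \eqref{copenaghen A omega}--\eqref{copenaghen B omega} hold. The estimates \eqref{copenaghen A omega con D}--\eqref{copenaghen B omega con D} with one extra $\langle D\rangle$ follow from the analogous estimates for the conjugated flow $\langle D\rangle^{-1/2}\Phi\langle D\rangle^{1/2}$, which have exactly the form \eqref{prima proprieta flusso} with $n=1$ (and one argues inductively in the same manner as below).

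\medskip

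\noindent\textbf{Inductive step.} Suppose \eqref{copenaghen A omega}--\eqref{copenaghen B omega con D} hold for all multi-indices $(k',\beta')$ with $|k'|+|\beta'|<|k|+|\beta|$. Differentiating the evolution equation \eqref{flow-propagator} $\partial_\lambda^k\partial_\vphi^\beta$ times and using the Leibniz rule gives
\[
\partial_t\bigl(\partial_\lambda^k\partial_\vphi^\beta \Phi\bigr)
=\ii a(\lambda,\vphi,x)\,|D|^{1/2}\,\partial_\lambda^k\partial_\vphi^\beta\Phi
+ F_{k,\beta}(t),\qquad \partial_\lambda^k\partial_\vphi^\beta\Phi(0)=0,
\]
with source term
\[
F_{k,\beta}(t)=\ii\!\!\!\sum_{\substack{k_1+k_2=k,\ \beta_1+\beta_2=\beta\\ (k_1,\beta_1)\neq 0}}\!\!\! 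C_{k_1,k_2,\beta_1,\beta_2}\,\bigl(\partial_\lambda^{k_1}\partial_\vphi^{\beta_1}a\bigr)\,|D|^{1/2}\,\partial_\lambda^{k_2}\partial_\vphi^{\beta_2}\Phi(t).
\]
By Duhamel's formula,
\[
\partial_\lambda^k\partial_\vphi^\beta\Phi(t)\,\langle D\rangle^{-(|k|+|\beta|)/2}
=\int_0^t\Phi(t-\tau)\,F_{k,\beta}(\tau)\,\langle D\rangle^{-(|k|+|\beta|)/2}\,d\tau.
\]
In each term of $F_{k,\beta}$ we factor the tail as
\[
|D|^{1/2}\,\partial_\lambda^{k_2}\partial_\vphi^{\beta_2}\Phi(\tau)\,\langle D\rangle^{-(|k|+|\beta|)/2}
=|D|^{1/2}\langle D\rangle^{-(|k_1|+|\beta_1|)/2}\,\Bigl(\langle D\rangle^{(|k_1|+|\beta_1|-1)/2+1/2}\,\partial_\lambda^{k_2}\partial_\vphi^{\beta_2}\Phi(\tau)\,\langle D\rangle^{-(|k_2|+|\beta_2|)/2-\delta}\Bigr),
\]
where $\delta\in\{0,1\}$ is chosen so that the parenthesis matches either \eqref{copenaghen A omega}--\eqref{copenaghen B omega} or \eqref{copenaghen A omega con D}--\eqref{copenaghen B omega con D} at the lower level $|k_2|+|\beta_2|<|k|+|\beta|$. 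The prefactor $|D|^{1/2}\langle D\rangle^{-(|k_1|+|\beta_1|)/2}$ is a bounded Fourier multiplier because $(k_1,\beta_1)\neq 0$, hence $|k_1|+|\beta_1|\geq 1$. Thus the inductive hypothesis yields tame control of the tail.

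\medskip

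\noindent\textbf{Tame closing.} Combining this factorization with the tame estimate \eqref{stima flusso norma bassa}--\eqref{stima flusso norme unite} for the outer $\Phi(t-\tau)$, the multiplication estimate \eqref{interpolazione C k0} for the coefficient $\partial_\lambda^{k_1}\partial_\vphi^{\beta_1}a$, and Lemma~\ref{lemma: action Sobolev}, each summand of $\Phi(t-\tau)F_{k,\beta}(\tau)\langle D\rangle^{-(|k|+|\beta|)/2}$ is bounded in $H^s$ by
\[
C(s)\,\gamma^{-|k_1|}\bigl(\|\partial_\lambda^{k_1}\partial_\vphi^{\beta_1}a\|_{s_0}\,\|h\|_s+\|\partial_\lambda^{k_1}\partial_\vphi^{\beta_1}a\|_s\,\|h\|_{s_0}\bigr)\,\gamma^{-|k_2|},
\]
after absorbing the $\|a\|_{s+s_0+\cdots}^{k_0,\gamma}\|h\|_{s_0}$ terms supplied by \eqref{stima flusso norme unite} and the inductive bound \eqref{copenaghen B omega}. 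Summing over $(k_1,k_2,\beta_1,\beta_2)$, using $\gamma^{-|k_1|-|k_2|}=\gamma^{-|k|}$, and invoking the smallness \eqref{piccolezza a partial vphi beta k D beta k} to absorb the terms $\lesssim \|a\|_{2s_0+|\beta|+|k|+1}\,\sup_\tau\|\partial_\lambda^k\partial_\vphi^\beta\Phi(\tau)\langle D\rangle^{-(|k|+|\beta|)/2}h\|_s$ onto the left-hand side, closes the inductive step and proves \eqref{copenaghen A omega}--\eqref{copenaghen B omega}. The bounds \eqref{copenaghen A omega con D}--\eqref{copenaghen B omega con D} follow in the same way by placing the auxiliary $\langle D\rangle$ on the outer $\Phi(t-\tau)$ and using $\langle D\rangle\Phi(t-\tau)\langle D\rangle^{-1}$, which satisfies \eqref{prima proprieta flusso}.

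\medskip

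\noindent\textbf{Main obstacle.} The delicate point is the bookkeeping of the $|D|^{1/2}$ factors produced at each differentiation step: one has to verify that the total loss accumulated by the $|k|+|\beta|$ iterated sources is \emph{exactly} compensated by $\langle D\rangle^{-(|k|+|\beta|)/2}$, and that the leftover order-zero Fourier multiplier $|D|^{1/2}\langle D\rangle^{-(|k_1|+|\beta_1|)/2}$ (with $|k_1|+|\beta_1|\geq 1$) is genuinely bounded on all $H^s$, so that no additional derivatives spill over. This accounting, together with the inductive choice of the auxiliary weight $\delta\in\{0,1\}$ that decides whether one invokes \eqref{copenaghen B omega} or \eqref{copenaghen B omega con D} at the previous level, is the technical core of the proof; the remaining ingredients (Duhamel, Moser product estimates, Gronwall-type absorption via the smallness \eqref{piccolezza a partial vphi beta k D beta k}) are standard.
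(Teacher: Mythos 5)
There is a genuine gap in the key factorization step. In your Duhamel source each term is $(\partial_\lambda^{k_1}\partial_\vphi^{\beta_1}a)\,|D|^{1/2}\,\partial_\lambda^{k_2}\partial_\vphi^{\beta_2}\Phi(\tau)\,\langle D\rangle^{-\frac{|k|+|\beta|}{2}}$ with $(k_1,\beta_1)\neq 0$, and you claim to rewrite the tail as a bounded multiplier $|D|^{1/2}\langle D\rangle^{-\frac{|k_1|+|\beta_1|}{2}}$ times a bracket that "matches" the lower-level estimates \eqref{copenaghen A omega}--\eqref{copenaghen B omega} or \eqref{copenaghen A omega con D}--\eqref{copenaghen B omega con D} with $\delta\in\{0,1\}$. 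The exponent bookkeeping does not close: since all the inserted factors to the left of $\partial_\lambda^{k_2}\partial_\vphi^{\beta_2}\Phi$ are Fourier multipliers, the identity forces $\delta=\frac{|k_1|+|\beta_1|}{2}$, which is $\tfrac12$ in the generic (and unavoidable) case $|k_1|+|\beta_1|=1$ and exceeds $1$ when $|k_1|+|\beta_1|\geq 3$. Correspondingly, the bracket carries a weight $\langle D\rangle^{m}$ on the \emph{left} of the flow derivative with $m=\frac{|k_1|+|\beta_1|}{2}\notin\{0,1\}$, and no inductive statement controls such an object (only $m=0$ and $m=1$ are available, and $\langle D\rangle^{\pm}$ does not commute with the flow, so you cannot trade left weight for right weight). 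Relatedly, your closing step invokes an absorption of a same-level term $\sup_\tau\|\partial_\lambda^k\partial_\vphi^\beta\Phi(\tau)\langle D\rangle^{-\cdot}h\|_s$ by the smallness \eqref{piccolezza a partial vphi beta k D beta k}, but your source $F_{k,\beta}$ by construction contains only strictly lower-level terms, so no such term is produced by your scheme; this signals that the crucial bookkeeping has not actually been carried out.

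The repair is exactly the technical core of the paper's argument and is missing from your proposal. One correct option (the paper's route for the fixed-$\vphi$ estimates, see \eqref{intermedia-Dx} and \eqref{partial beta partial k D Phi norme alte}) is to keep the excess smoothing $\langle D\rangle^{-\frac{|k_1|+|\beta_1|}{2}}$ on the far right, acting directly on $h$, and to pay the half derivative coming from $|D|^{1/2}$ by invoking the lower-level estimate at the \emph{shifted} Sobolev index $s+\tfrac12$ applied to $\langle D\rangle^{-\frac{|k_1|+|\beta_1|}{2}}h$; this closes because the lower-level bounds hold for all $s$ and $|k_1|+|\beta_1|\geq 1$, but it must be stated and tracked (it slightly changes the indices of $\|a\|$ in the tame constants). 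For the full anisotropic norm $\|\cdot\|_s\simeq\|\cdot\|_{H^s_\vphi L^2_x}+\|\cdot\|_{L^2_\vphi H^s_x}$ the paper instead runs a nested induction in $k$, in $|\beta|$, and in the Sobolev index $s$: the $H^{s+1}_\vphi L^2_x$ component generates $\partial_\vphi^{\beta+\alpha}\Phi$, whose Duhamel expansion contains a \emph{same-level} term (all but one derivative on $\Phi$, one on $a$) at index $s+1$, and it is precisely there that the smallness $\|a\|_{2s_0+\beta_0+1}\leq\delta(s)$ is used to absorb it; your proposal never generates or confronts this mechanism. So while the Duhamel-plus-induction skeleton is the right one, the step where the inductive hypothesis is applied to the tail is incorrect as written, and without the shifted-index (or absorption) device the induction does not close.
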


\noindent
We prove Proposition \ref{Teorema totale partial vphi beta k D beta k Phi} by induction. We introduce the following notation 
\begin{itemize}
\item{\bf Notation} : Given $k_1, k \in \N^{\nu + 1}$, we say that $ k_1 \prec k $ if each component
$ k_{1,m} \leq k_m $,  $ \forall m = 1, \ldots, \nu + 1 $, and there exists $\bar m \in \{ 1, \ldots, \nu + 1 \}$ such that 
$ k_{1, \bar m} \neq k_{\bar m} $. Given $ (k_1, \b_1), (k, \b) \in \N^{\nu + 1} \times \N^\nu $
we say that $ (k_1, \b_1) \prec (k, \b) $ if each component
$ k_{1,m} \leq k_m $, $ \b_{1, n} \leq \b_n $,  $ \forall m = 1, \ldots, \nu + 1 $, $\forall n = 1, \ldots, \nu$ and $ (k_1, \b_1) \neq (k, \b) $. 
\end{itemize}
We first estimate $\| \partial_\lambda^k\partial_\vphi^\beta \Phi \langle D \rangle^{-\frac{ |\beta| + |k|}{2}} h \|_{L^2_\vphi H^s_x} $. 

\begin{lemma}
Assume \eqref{piccolezza a partial vphi beta k D beta k}. 
Then, for all $\vphi \in \T^\nu $, $ |k| \leq k_0$, $|\beta| \leq \beta_0$,  
\begin{equation}\label{partial beta partial k D Phi norme basse}
\begin{aligned}
& \|\partial_\lambda^k \partial_\vphi^\beta \Phi (\vphi) \langle D \rangle^{- \frac{|\beta| + |k|}{2}} h \|_{H^s_x} 
\leq_s \gamma^{- |k|} \| h \|_{H^s_x}\,, 
 \quad \forall s \in [0, 1] \, ,  
 \end{aligned}
 \end{equation}
 \begin{equation}\label{partial beta partial k D Phi norme alte}
 \begin{aligned}
& \| \partial_\lambda^k \partial_\vphi^\beta \Phi(\vphi) 
\langle D \rangle^{- \frac{|\beta|+ |k|}{2}} h \|_{H^s_x} \\
&  \leq_s \gamma^{- |k|} 
\big(\| h \|_{H^s_x} + \| a\|_{s + s_0 + |\beta| + \frac{|k|}{2} + \frac12 }^{k_0, \gamma} \| h \|_{H^1_x} \big)\,, \ \forall s \geq 1\, .  
\end{aligned}
\end{equation}
\end{lemma}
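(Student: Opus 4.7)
The plan is to argue by induction on the total order $n := |k| + |\beta|$ of the mixed $(\lambda, \vphi)$-derivative, the base case $n = 0$ being exactly Proposition \ref{Prop0-flow}: the bound \eqref{partial beta partial k D Phi norme basse} reduces to \eqref{stima flusso PDE s 0 1} (at fixed $\vphi$), while \eqref{partial beta partial k D Phi norme alte} reduces to \eqref{stima tame Phi t}, after using the Sobolev embedding $H^{s_0}_\vphi \hookrightarrow L^\infty_\vphi$ to estimate $\|a(\lambda,\vphi,\cdot)\|_{H^{s + \frac12}_x}$ uniformly in $\vphi$ in terms of $\|a\|^{k_0,\gamma}_{s + s_0 + \frac12}$.

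For the inductive step, I would differentiate the flow equation $\partial_t \Phi = \ii a |D|^{\frac12} \Phi$ in $\partial_\lambda^k \partial_\vphi^\beta$. Leibniz gives
\[
\partial_t (\partial_\lambda^k \partial_\vphi^\beta \Phi) = \ii a |D|^{\frac12} \partial_\lambda^k \partial_\vphi^\beta \Phi + F_{k,\beta}(t), \qquad \partial_\lambda^k \partial_\vphi^\beta \Phi(0) = 0,
\]
with forcing
\[
F_{k,\beta}(t) := \!\!\!\sum_{(k_1,\beta_1) \prec (k,\beta)} \!\!\! C_{k_1,\beta_1}\; \ii\, (\partial_\lambda^{k-k_1} \partial_\vphi^{\beta-\beta_1} a)\, |D|^{\frac12} \partial_\lambda^{k_1} \partial_\vphi^{\beta_1} \Phi(t)\, .
\]
Duhamel's principle then gives
\[
\partial_\lambda^k \partial_\vphi^\beta \Phi(t)\, \langle D \rangle^{-\frac{|k|+|\beta|}{2}} = \int_0^t \Phi(t-\tau)\, F_{k,\beta}(\tau)\, \langle D \rangle^{-\frac{|k|+|\beta|}{2}}\, d\tau.
\]
The algebraic core of the argument is that $(k_1,\beta_1) \prec (k,\beta)$ implies $|k|+|\beta| \geq |k_1|+|\beta_1|+1$, so that I may factor $\langle D \rangle^{-\frac{|k|+|\beta|}{2}} = \langle D \rangle^{-\frac{|k_1|+|\beta_1|}{2}} \cdot \langle D \rangle^{-r}$ with $r := \frac{|k|+|\beta| - |k_1| - |\beta_1|}{2} \geq \frac12$, and write
\[
|D|^{\frac12} \partial_\lambda^{k_1} \partial_\vphi^{\beta_1} \Phi(\tau)\, \langle D \rangle^{-\frac{|k|+|\beta|}{2}} = |D|^{\frac12} \bigl( \partial_\lambda^{k_1} \partial_\vphi^{\beta_1} \Phi(\tau) \langle D \rangle^{-\frac{|k_1|+|\beta_1|}{2}} \bigr)\, \langle D \rangle^{-r}.
\]
The bracketed operator is controlled by the inductive hypothesis, while the outer $|D|^{\frac12} \langle D \rangle^{-r}$ is a bounded Fourier multiplier of order $\leq 0$, so the $|D|^{\frac12}$ loss coming from each differentiation of the equation is exactly absorbed.

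Plugging in, multiplying by $\partial_\lambda^{k-k_1}\partial_\vphi^{\beta-\beta_1}a(\vphi,\cdot)$, and using the bilinear tame product estimate $\|fg\|_{H^s_x} \leq_s \|f\|_{H^s_x}\|g\|_{H^{s_0}_x} + \|f\|_{H^{s_0}_x}\|g\|_{H^s_x}$ yields, after invoking $\|\partial_\lambda^{k-k_1}\partial_\vphi^{\beta-\beta_1} a(\vphi,\cdot)\|_{H^\sigma_x} \leq \gamma^{-|k-k_1|} \|a\|^{k_0,\gamma}_{\sigma + s_0 + |\beta-\beta_1|}$ (from the $H^{s_0}_\vphi \hookrightarrow L^\infty_\vphi$ embedding and the definition of the weighted norm), a tame $H^s_x$-bound for $F_{k,\beta}(\tau)\langle D \rangle^{-(|k|+|\beta|)/2}$. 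Finally I apply the base-case bound of Proposition \ref{Prop0-flow} to the outer $\Phi(t-\tau)$ (valid thanks to the smallness hypothesis \eqref{piccolezza a partial vphi beta k D beta k}), integrate in $\tau$, and collect: the factors $\gamma^{-|k-k_1|}\gamma^{-|k_1|} = \gamma^{-|k|}$ give the stated prefactor, while the worst Sobolev index of $a$ produced is $s + s_0 + \max_{(k_1,\beta_1)\prec(k,\beta)}(|\beta_1| + |k_1|/2 + 1) \leq s + s_0 + |\beta| + |k|/2 + \frac12$, matching \eqref{partial beta partial k D Phi norme alte}.

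The main obstacle is the delicate bookkeeping of derivative losses. At every step I must split the tame product and the action of $\Phi(t-\tau)$ in the correct high/low configuration so that (i) only one factor sees the top Sobolev index, (ii) the Sobolev index of $a$ does not exceed the target $s + s_0 + |\beta| + |k|/2 + \frac12$, (iii) all intermediate norms of $a$ that appear in the low-slot are covered by the smallness assumption \eqref{piccolezza a partial vphi beta k D beta k}, and (iv) the inductive bound (in both its $s \in [0,1]$ and $s \geq 1$ versions) is invoked at the right Sobolev index $s + \frac12 - r \leq s$ so that the right-hand side of the induction closes without amplification. This case-splitting, rather than any single analytic difficulty, is what forces the precise exponents in \eqref{piccolezza a partial vphi beta k D beta k} and in the tame norm of $a$ appearing in \eqref{partial beta partial k D Phi norme alte}.
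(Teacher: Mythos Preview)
Your proposal is correct and follows essentially the same approach as the paper's proof: induction (the paper inducts on $(k,\beta)$ with the partial order $\prec$, you on the total order $|k|+|\beta|$, which is equivalent here since every $(k_1,\beta_1)\prec(k,\beta)$ has strictly smaller total order), Duhamel representation of $\partial_\lambda^k\partial_\vphi^\beta\Phi$, and the key observation that the spare factor $\langle D\rangle^{-m/2}$ with $m=|k|+|\beta|-|k_1|-|\beta_1|\geq 1$ absorbs the $|D|^{1/2}$ loss. Your phrasing ``the outer $|D|^{\frac12}\langle D\rangle^{-r}$ is a bounded Fourier multiplier'' is slightly loose since the two operators are not adjacent in the composition; what actually happens (and what the paper does) is that the inductive hypothesis is applied at index $s+\tfrac12$ and the $\langle D\rangle^{-r}$, $r\geq\tfrac12$, brings $\|\langle D\rangle^{-r}h\|_{H^{s+1/2}_x}\leq\|h\|_{H^s_x}$, but the net effect is exactly what you describe.
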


\begin{proof}
We take $h \in {\mathcal C}^\infty$, so that $ \partial_\lambda^k \partial_\vphi^\beta \Phi (\vphi) \langle D \rangle^{- \frac{|\beta| + |k|}{2}} h$ is ${\mathcal C}^\infty$. 

\noindent
We argue by induction on $ (k, \b ) $. For $  k = \b =  0 $ 
the estimates \eqref{partial beta partial k D Phi norme basse}-\eqref{partial beta partial k D Phi norme alte}  
are proved by \eqref{stima flusso PDE s 0 1}-\eqref{stima tame Phi t}.
Then supposing that 
\eqref{partial beta partial k D Phi norme basse}-\eqref{partial beta partial k D Phi norme alte} 
hold for all  $(k_1, \b_1) \prec (k, \b) $, $ |k| \leq k_0 $, $ |\beta| \leq \beta_0$,  
 we prove them for $ \partial_\lambda^{k} \partial_\vphi^\beta \Phi \langle D \rangle^{- \frac{|\beta| + |k|}{2}}$. 
 Differentiating  \eqref{flow-propagator} and using the Duhamel principle we get 
\begin{equation}\label{Duhamel prop 9.6}
\partial_\lambda^k \partial_\vphi^{\beta} \Phi(t) = \int_0^t \Phi(t - \tau) F_{\beta, k}(\tau)\, d \tau 
\end{equation}
where 
\begin{align}\label{F beta k prop 9.6}
& F_{\beta, k} (\tau) :=  \sum_{\begin{subarray}{c}
k_1 + k_2 = k \\ \b_1 + \b_2 = \b \\  
(k_1, \b_1) \prec (k, \b) \end{subarray}} 
C(k_1, k_2, \b_1, \b_2) (\partial_\lambda^{k_2} \partial_\vphi^{\b_2} a) |D|^{\frac12} \partial_\lambda^{k_1} \partial_\vphi^{\beta_1} \Phi(\tau) \, .
\end{align}
We now  prove  \eqref{partial beta partial k D Phi norme alte}.
For all $ (k_1, \b_1) \prec (k, \b) $,  
$ k_1 + k_2 = k $,  $ \b_1 + \b_2 = \b $, 
for all $t, \tau \in [0, 1] $, using \eqref{stima tame Phi t}, tame estimates for the product, 
\eqref{piccolezza a partial vphi beta k D beta k},  
we deduce
\begin{align}
 \| \Phi(t - \tau) (\partial_\lambda^{k_2} \pa_\vphi^{\b_2} a) |D|^{\frac12}  \partial_\lambda^{k_1} \partial_\vphi^{\beta_1}  \Phi(\tau) \langle D \rangle^{- \frac{|\beta| + |k|}{2}}  h \|_{H^s_x}  & \nonumber \\
 \leq_s \| (\partial_\lambda^{k_2} \partial_\vphi^{\beta_2} a) |D|^{\frac12}   \partial_\lambda^{k_1} \partial_\vphi^{\beta_1} \Phi(\tau) \langle D \rangle^{- \frac{|\beta| + |k| }{2}}  h \|_{H^s_x} & \nonumber \\
 + \| a \|_{s + s_0 + \frac12} \| (\partial_\lambda^{k_2} \partial_\vphi^{\beta_2} a) |D|^{\frac12} \partial_\lambda^{k_1}  \partial_\vphi^{\beta_1}  \Phi(\tau) \langle D \rangle^{- \frac{|\beta| + |k|}{2}}  h \|_{H^1_x} & \nonumber\\
 \leq_s \gamma^{- |k_2|} \| a \|_{s + s_0 + |\b | + 1}^{k_0, \gamma} \|  \partial_\lambda^{k_1}\partial_\vphi^{\beta_1} \Phi(\tau) \langle D \rangle^{- \frac{|\beta| + |k|}{2}}  h \|_{H^{\frac{3}{2}}_x}  & \nonumber\\
 + \gamma^{- |k_2|}  \|  \partial_\lambda^{k_1} \partial_\vphi^{\beta_1} 
\Phi(\tau) \langle D \rangle^{- \frac{|\beta| + |k| }{2}}  h \|_{H^{s + \frac12}_x} \, . &  \label{intermedia-Dx}
\end{align}
Now, since $ (k_1, \b_1) \prec (k, \b) $,   
$$
\begin{aligned}
 \partial_\lambda^{k_1} \partial_\vphi^{\beta_1} 
\Phi(\tau) \langle D \rangle^{- \frac{|\beta| + |k| }{2}} & = \partial_\lambda^{k_1} \partial_\vphi^{\beta_1} 
\Phi(\tau) \langle D \rangle^{- \frac{|\beta_1| + |k_1| }{2}} \langle D \rangle^{- \frac{m}{2}}  \, , \\
&  \qquad m := |\b| - |\beta_1| + |k| - |k_1| \geq 1 \, , 
\end{aligned}
$$
 and, applying  the inductive estimates \eqref{partial beta partial k D Phi norme alte}  for 
 $ \partial_\lambda^{k_1} \partial_\vphi^{\beta_1} 
\Phi(\tau) \langle D \rangle^{- \frac{|\beta_1| + |k_1| }{2}} $, \eqref{piccolezza a partial vphi beta k D beta k}, we get 
$$
\eqref{intermedia-Dx}  \leq_s \gamma^{- |k|} \big(  \| h \|_{H^s_x} + \| a \|_{s + s_0 + \frac12 + |\beta|  + \frac{|k|}{2}}^{k_0, \gamma} \| h\|_{H^1_x}\big) 
$$
which, by \eqref{Duhamel prop 9.6}, \eqref{F beta k prop 9.6},  proves  \eqref{partial beta partial k D Phi norme alte} for $h$ which is ${\mathcal C}^\infty$. The estimate \eqref{partial beta partial k D Phi norme alte} for $h \in H^s$ follows by density. 
The estimates \eqref{partial beta partial k D Phi norme basse}  follow in the same way using \eqref{stima flusso PDE s 0 1}. 
\end{proof}

Then, integrating in $ \vphi $ we get the following corollary

\begin{lemma}
Assume \eqref{piccolezza a partial vphi beta k D beta k}. 
Then, for all $ \vphi \in \T^\nu $, $|k| \leq k_0$, $|\beta| \leq \beta_0 $,  we have 
\begin{equation}\label{partial beta partial omega D omega D vphi Phi norme basse L2vphi}
 \|\partial_\lambda^k \partial_\vphi^\beta \Phi (\vphi) \langle D \rangle^{- \frac{|\beta| + |k|}{2}} h \|_{L^2_\vphi H^s_x} \leq_s \gamma^{- |k|}
\| h \|_{L^2_\vphi H^s_x}\,, 
 \quad \forall 
s \in [0, 1] \, ,  
\end{equation}
\begin{equation}\label{stima partial vphi partial omega L2vphi Hs x}
\begin{aligned}
& \| \partial_\lambda^k \partial_\vphi^\beta \Phi(\vphi) \langle D \rangle^{- \frac{|\beta| + |k|}{2}} h \|_{L^2_\vphi H^s_x}  \\
& \leq_s \gamma^{- |k|}
\big(\| h \|_{L^2_\vphi H^s_x} + \| a\|_{s + s_0 + \frac12  + |\beta| + \frac{|k|}{2}}^{k_0, \gamma} \| h \|_{L^2_\vphi H^1_x} \big)\,, \quad \forall s \geq 1\, ,   
\end{aligned}
\end{equation}
and 
\begin{equation}\label{partial beta partial k D Phi norme basse + D} 
 \| \langle D \rangle \partial_\lambda^k \partial_\vphi^\beta \Phi (\vphi) \langle D \rangle^{- \frac{|\beta| + |k| }{2} - 1} h \|_{L^2_\vphi H^s_x} \leq_s \gamma^{- |k|} \| h \|_{L^2_\vphi H^s_x}\,, 
 \quad \forall 
s \in [0, 1] \, , 
\end{equation}
\begin{equation}\label{partial beta partial k D Phi norme alte + D} 
\begin{aligned}
& \| \langle D \rangle \partial_\lambda^k \partial_\vphi^\beta \Phi(\vphi) \langle D \rangle^{- \frac{|\beta|+ |k| }{2} - 1} h \|_{L^2_\vphi H^s_x}  \\
& \leq_s \gamma^{- |k|} \big(\| h \|_{L^2_\vphi H^s_x} + \| a\|_{s + s_0 + |\beta| + \frac{|k|}{2} + \frac32 }^{k_0, \gamma} \| h \|_{L^2_\vphi H^1_x} \big)\,, \quad \forall s \geq 1\, . 
\end{aligned}
\end{equation}
\end{lemma}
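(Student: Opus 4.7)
All four inequalities follow from the pointwise-in-$\vphi$ bounds \eqref{partial beta partial k D Phi norme basse}--\eqref{partial beta partial k D Phi norme alte} of the preceding lemma by applying them to the slice $h(\vphi,\cdot) \in H^s_x$ for each fixed $\vphi \in \T^\nu$, squaring, and integrating in $\vphi$. The point is that the operator $\partial_\lambda^k \partial_\vphi^\beta \Phi(\vphi)$ acts only in the $x$-variable for fixed $\vphi$, so the $H^s_x$-norm of its image is a measurable function of $\vphi$ controlled pointwise by the preceding lemma.

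First I would prove \eqref{partial beta partial omega D omega D vphi Phi norme basse L2vphi} and \eqref{stima partial vphi partial omega L2vphi Hs x}. For $s \in [0,1]$, the low-norm bound \eqref{partial beta partial k D Phi norme basse} applied at each $\vphi$ gives
\[
\|\partial_\lambda^k \partial_\vphi^\beta \Phi(\vphi) \langle D \rangle^{-\frac{|\beta|+|k|}{2}} h(\vphi,\cdot)\|_{H^s_x}^2 \leq_s \gamma^{-2|k|} \|h(\vphi,\cdot)\|_{H^s_x}^2,
\]
and integrating in $\vphi$ yields \eqref{partial beta partial omega D omega D vphi Phi norme basse L2vphi}. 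For $s \geq 1$ I square the high-norm bound \eqref{partial beta partial k D Phi norme alte}, use $(A+B)^2 \leq 2(A^2+B^2)$ to split the two right-hand contributions, integrate in $\vphi$, and take square roots to obtain \eqref{stima partial vphi partial omega L2vphi Hs x}.

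Next I would handle the $\langle D \rangle$-decorated pair \eqref{partial beta partial k D Phi norme basse + D}--\eqref{partial beta partial k D Phi norme alte + D} via the substitution $\tilde h := \langle D \rangle^{-1} h$, which rewrites
\[
\langle D \rangle \partial_\lambda^k \partial_\vphi^\beta \Phi(\vphi) \langle D \rangle^{-\frac{|\beta|+|k|}{2}-1} h = \langle D \rangle \partial_\lambda^k \partial_\vphi^\beta \Phi(\vphi) \langle D \rangle^{-\frac{|\beta|+|k|}{2}} \tilde h.
\]
Using $\|\langle D \rangle f\|_{H^s_x} \simeq \|f\|_{H^{s+1}_x}$, I reduce to estimating $\|\partial_\lambda^k \partial_\vphi^\beta \Phi(\vphi)\langle D \rangle^{-\frac{|\beta|+|k|}{2}} \tilde h\|_{H^{s+1}_x}$, which is the object controlled by the pointwise high-norm bound \eqref{partial beta partial k D Phi norme alte} at Sobolev index $s+1$. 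The loss factor produced is $\|a\|_{(s+1)+s_0+|\beta|+\frac{|k|}{2}+\frac12}^{k_0,\gamma} = \|a\|_{s+s_0+|\beta|+\frac{|k|}{2}+\frac32}^{k_0,\gamma}$, which matches exactly the one in \eqref{partial beta partial k D Phi norme alte + D}. The Fourier identities $\|\tilde h\|_{H^{s+1}_x} = \|h\|_{H^s_x}$ and $\|\tilde h\|_{H^1_x} = \|h\|_{L^2_x}$ then put the right-hand side in the required form; squaring and integrating in $\vphi$, again with $(A+B)^2 \leq 2(A^2+B^2)$, gives \eqref{partial beta partial k D Phi norme alte + D}.

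The remaining case, the low-norm bound \eqref{partial beta partial k D Phi norme basse + D} for $s \in [0,1]$, requires the extra observation that under the smallness hypothesis \eqref{piccolezza a partial vphi beta k D beta k} the coefficient $\|a\|_{s+s_0+|\beta|+\frac{|k|}{2}+\frac32}^{k_0,\gamma}$ is bounded by $1$ for $s \leq 1$, $|\beta|\leq\beta_0$, $|k|\leq k_0$, while $\|\tilde h\|_{H^1_x} = \|h\|_{L^2_x} \leq \|h\|_{H^s_x}$. Thus the tame term collapses into the first, leaving the clean bound $\gamma^{-|k|}\|h\|_{H^s_x}$, which after $\vphi$-integration gives \eqref{partial beta partial k D Phi norme basse + D}. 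There is no substantive obstacle: the argument is simply bookkeeping of Sobolev indices under the shift induced by $\tilde h = \langle D \rangle^{-1} h$, together with a Fubini-type integration in the tangential variable $\vphi$.
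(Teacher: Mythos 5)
Your proposal is correct and coincides with the paper's own (implicit) argument: the paper obtains this statement simply by "integrating in $\vphi$" the pointwise-in-$\vphi$ bounds \eqref{partial beta partial k D Phi norme basse}--\eqref{partial beta partial k D Phi norme alte}, and the $\langle D\rangle$-decorated versions follow exactly by your substitution $\tilde h=\langle D\rangle^{-1}h$ together with the index shift $s\mapsto s+1$ and the smallness hypothesis \eqref{piccolezza a partial vphi beta k D beta k} to absorb the $a$-dependent term in the low-norm case. Your bookkeeping of the Sobolev indices (producing $s+s_0+|\beta|+\tfrac{|k|}{2}+\tfrac32$) matches the statement, so nothing further is needed.
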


\noindent
{\bf Proof of Proposition \ref{Teorema totale partial vphi beta k D beta k Phi}.}
Let $h \in {\mathcal C}^\infty$.
We argue by induction. For $ k= 0,  \b = 0 $ the estimates 
\eqref{copenaghen A omega}-\eqref{copenaghen B omega} follow by 
\eqref{stima flusso norma bassa}-\eqref{stima flusso norme unite}.  
We first argue by induction on $ k $ assuming 
that we have already proved \eqref{copenaghen A omega}-\eqref{copenaghen B omega} 
for all $ k_1 \prec k $,  $ |\b| \leq \beta_0$. Then we prove the tame estimates 
\eqref{copenaghen A omega}-\eqref{copenaghen B omega} 
for the operator $\partial_\lambda^k \partial_\vphi^\beta \Phi \langle D \rangle^{- \frac{|\beta| + |k|}{2}}$, for all $|\beta| \leq \beta_0$. To do this 
we argue by induction on $|\beta|$,
assuming 
\eqref{copenaghen A omega}-\eqref{copenaghen B omega} for all $|\beta| < n $  and we prove them for $|\beta| = n$
(also $ n = 0 $). To estimate $\| \partial_\lambda^k \partial_\vphi^\beta \Phi \langle D \rangle^{- \frac{|\beta| + |k|}{2}} h \|_s$ we argue by induction on $s$. 
\\[1mm]
{\sc Proof of  \eqref{copenaghen A omega} for $ |\b| = n $.} 
For $s = 0$, by \eqref{partial beta partial omega D omega D vphi Phi norme basse L2vphi}, we have 
 \begin{equation}\label{beta k Phi 0 sobolev}
 \begin{aligned}
 \| \partial_\lambda^k \partial_\vphi^\beta \Phi \langle D \rangle^{- \frac{|\beta| + |k|}{2}} h\|_0 
 & =  \| \partial_\lambda^k \partial_\vphi^\beta \Phi \langle D \rangle^{- \frac{|\beta| + |k|}{2}} h\|_{L^2_\vphi L^2_x} \\
 & \leq C \gamma^{- |k|} \| h \|_{L^2_\vphi L^2_x} = C \gamma^{- |k|} \| h \|_0\,.
 \end{aligned}
 \end{equation}
Now we suppose to have proved \eqref{copenaghen A omega} with
$ |\beta| = n $, up to the Sobolev index $ s < s_0 + 1$ and we prove it for $s + 1 \leq s_0 + 1$. We have 
 \begin{equation}\label{pallina A}
 \begin{aligned}
 \|  \partial_\lambda^k \partial_\vphi^\beta \Phi \langle D \rangle^{- \frac{|\beta| + |k|}{2}} h \|_{s + 1} & \simeq \|  \partial_\lambda^k 
 \partial_\vphi^\beta \Phi \langle D \rangle^{- \frac{|\beta| + |k|}{2}} h \|_{L^2_\vphi H^{s + 1}_x} \\ 
 & \quad + 
 \|   \partial_\lambda^k \partial_\vphi^\beta \Phi \langle D \rangle^{- \frac{|\beta| + |k|}{2}} h \|_{H^{s + 1}_\vphi 	L^2_x}\,.
 \end{aligned}
 \end{equation}
The first term in \eqref{pallina A} is estimated, using \eqref{stima partial vphi partial omega L2vphi Hs x},  
 $ s \leq s_0 $,  \eqref{piccolezza a partial vphi beta k D beta k}, by 
 $$
 \begin{aligned}
 \|  \partial_\lambda^k \partial_\vphi^\beta \Phi \langle D \rangle^{- \frac{|\beta| + |k|}{2}} h \|_{L^2_\vphi H^{s + 1}_x}  
 & \leq_s \gamma^{- |k|} \big(  \| h \|_{s + 1} + \| a \|^{k_0, \gamma}_{s + 1 + s_0 + \frac12 + |\beta| + \frac{|k|}{2}} \| h \|_1\big)
 \\
& \leq_s \gamma^{- |k|} \| h \|_{s + 1}\,.
 \end{aligned}
$$
 Now we estimate the second term in \eqref{pallina A}. By the inductive hyphothesis
 \begin{align}
  \|  \partial_\lambda^k \partial_\vphi^\beta \Phi \langle D \rangle^{- \frac{|\beta| + |k|}{2}} h \|_{H^{s + 1}_\vphi L^2_x} 
 & \simeq \|  \partial_\lambda^k\partial_\vphi^\beta  \Phi \langle D \rangle^{- \frac{|\beta| + |k|}{2}} h \|_{L^2_\vphi L^2_x}  \, + \nonumber \\
 & \quad +  \sup_{\alpha \in \N^\nu, |\alpha| = 1} \|  \partial_\lambda^k \partial_\vphi^{\beta} \Phi \langle D \rangle^{- \frac{|\beta| + |k|} {2}} [\partial_\vphi^\alpha h] \|_{H^s_\vphi L^2_x }  \nonumber\\
 & \quad + \sup_{\alpha \in \N^\nu, |\alpha| = 1} 
 \|  \partial_\lambda^k \partial_\vphi^{\beta + \alpha} \Phi \langle D \rangle^{- \frac{|\beta| + |k|}{2}} h \|_{H^s_\vphi L^2_x } \nonumber \\
 & \stackrel{\eqref{beta k Phi 0 sobolev}}{\lessdot} \gamma^{- |k|} \| h \|_0  \nonumber\\
 & \quad +\sup_{\alpha \in \N^\nu, |\alpha| = 1} 
 \|  \partial_\lambda^k \partial_\vphi^{\beta} \Phi \langle D \rangle^{- \frac{|\beta| + |k|}{2}} [\partial_\vphi^\alpha h] \|_{s} \nonumber\\ 
 & \quad +  \sup_{\alpha \in \N^\nu, |\alpha| = 1} \|  \partial_\lambda^k \partial_\vphi^{\beta + \alpha} \Phi \langle D \rangle^{- \frac{|\beta| + |k|}{2}} h \|_{s}  \label{penultima-Ap}\\
 & \leq_s \gamma^{- |k|} \| h \|_{s + 1} \nonumber\\
 & \quad + \sup_{\alpha \in \N^\nu, |\alpha| = 1} \|  \partial_\lambda^k \partial_\vphi^{\beta + \alpha} \Phi \langle D \rangle^{- \frac{|\beta| + |k|}{2}} h \|_{s}\,. \label{pallina B}
 \end{align}
Now, differentiating \eqref{flow-propagator} and using Duhamel principle,  we get 
 \begin{equation}\label{pallina Z1}
 \begin{aligned}
&  \partial_\lambda^k \partial_\vphi^{\beta + \alpha} \Phi(t) = \int_0^t \Phi(t - \tau) F_{\beta, k}(\tau)\, d \tau\,, \\
&  F_{\beta, k}(\tau) := F_{\beta, k}^{(1)}(\tau) + F_{\beta, k}^{(2)}(\tau) + F_{\beta, k}^{(3)}(\tau)\,,
\end{aligned}
\end{equation}
where 
\begin{align}
& F_{\beta, k}^{(1)}(\tau)  := 
\sum_{ \begin{subarray}{c}
\b_1+ \b_2 = \b + \a \\ k_1 + k _2 = k \\ k_1 \prec k
\end{subarray}} C(k_1, k_2, \beta_1, \beta_2) 
\pa_{\lambda}^{k_2} \pa_{\vphi}^{\b_2}  a  |D|^{1/2} \pa_{\lambda}^{k_1}  \pa_\vphi^{\b_1} \Phi(\tau) \nonumber \\
& F_{\beta, k}^{(2)}(\tau) := \sum_{\begin{subarray}{c}
\b_1+ \b_2 = \b + \a \\ |\b_1| \leq n-1
\end{subarray}} C(\beta_1, \beta_2) 
\pa_{\vphi}^{\b_2}  a  |D|^{1/2}  \pa_{\lambda}^{k} \pa_\vphi^{\b_1} \Phi(\tau) \nonumber \\
& F_{\beta, k}^{(3)}(\tau) :=  \sum_{\begin{subarray}{c}
\b_1+ \b_2 = \b + \a \\
 |\b_1|=n
\end{subarray}} C(\beta_1, \beta_2)
\pa_{\vphi}^{\b_2} a |D|^{1/2} \pa_{\lambda}^{k} \pa_\vphi^{\b_1}  \Phi(\tau) \,. \label{pallina E}
\end{align}
Note that if $ n = 0 $ the same formula applies, just without the second line.  Therefore
\be
\begin{aligned}
& \|  \pa_\lambda^k \partial_\vphi^{\beta + \alpha} \Phi \langle D \rangle^{- \frac{|\beta| + |k|}{2}} h \|_{s} \\
& \lessdot  \sup_{\begin{subarray}{c}
k_1 \prec k \\ k_1 + k_2 = k \\
\beta_1 + \beta_2 = \beta + \alpha \\
 \end{subarray}} \sup_{t, \tau \in [0, 1]} \|\Phi(t - \tau) ( \partial_\lambda^{k_2} \partial_\vphi^{\beta_2} a) |D|^{\frac12} 
 \partial_\lambda^{k_1} \partial_\vphi^{\beta_1} \Phi(\tau) \langle D \rangle^{- \frac{|\beta| + |k|}{2}} h \|_s  \\
&  + \sup_{\begin{subarray}{c}
\beta_1 + \beta_2 = \beta + \alpha \\
|\beta_1| \leq n - 1 
\end{subarray}} \sup_{t, \tau \in [0, 1]} \| \Phi(t - \tau) (\partial_\vphi^{\beta_2} a) |D|^{\frac12} 
\partial_\lambda^k \partial_\vphi^{\beta_1}  \Phi(\tau) \langle D \rangle^{- \frac{|\beta| + |k|}{2}} h  \|_s\, \\
& + \sup_{\begin{subarray}{c}
\beta_1 + \beta_2 = \beta + \alpha \\
|\beta_1| = n
\end{subarray}} \sup_{t, \tau \in [0, 1]} \| \Phi(t - \tau) (\partial_\vphi^{\beta_2} a) |D|^{\frac12}
\partial_\lambda^k  \partial_\vphi^{\beta_1}  \Phi(\tau) \langle D \rangle^{- \frac{|\beta| + |k|}{2}} h  \|_s  . \label{pallina G}
\end{aligned}
\ee
We estimate separately the three terms in the above inequality.
By the estimate \eqref{stima flusso norma bassa} for $\Phi$,
the inductive hyphothesis  for $ k_1 + k_2 = k $, $ k_1 \prec k $, $ \beta_1 + \beta_2 = \beta + \alpha $, 
$ t, \tau \in [0, 1] $,  and using \eqref{piccolezza a partial vphi beta k D beta k},  we get 
\begin{align}
\| \Phi(t - \tau) ( \partial_\lambda^{k_2} \partial_\vphi^{\beta_2} a) |D|^{\frac12} 
\partial_\lambda^{k_1}  \partial_\vphi^{\beta_1} \Phi(\tau) \langle D \rangle^{- \frac{|\beta| + |k|}{2}} h \|_s & \nonumber\\
\leq_s \| ( \partial_\lambda^{k_2} \partial_\vphi^{\beta_2} a) |D|^{\frac12}  \partial_\lambda^{k_1} \partial_\vphi^{\beta_1} 
\Phi(\tau) \langle D \rangle^{- \frac{|\beta| + |k|}{2}} h \|_s & \nonumber \\ 
 \leq_s  \gamma^{- |k_2| } \| a \|_{2 s_0 + |\beta| + 1}^{k_0, \gamma} \|  \partial_\lambda^{k_1} \partial_\vphi^{\beta_1} \Phi(\tau) \langle D \rangle^{- \frac{|\beta| + |k|}{2}} h \|_{s + 1} & \nonumber\\
 \leq_s \gamma^{- |k|} \| h \|_{s + 1}\,.& \label{pallina H}
\end{align}  
The  second term in \eqref{pallina G} is estimated as in \eqref{pallina H}. Then we 
consider  the last term in \eqref{pallina G}.
For $\beta_1 + \beta_2 = \beta + \alpha$, $|\beta_1| = n$, $ s \leq s_0 $, 
\begin{align}
\| \Phi(t - \tau) (\partial_\vphi^{\beta_2} a) |D|^{\frac12} \partial_\lambda^k \partial_\vphi^{\beta_1} \Phi(\tau) \langle D \rangle^{- \frac{|\beta| + |k|}{2}} h  \|_s  
 &  \nonumber\\
 \leq_s \| a \|_{2 s_0 + |\beta| + 1} \| \partial_\lambda^k \partial_\vphi^{\beta_1}  \Phi(\tau) \langle D \rangle^{- \frac{|\beta| + |k|}{2}} h \|_{s + 1}\,. &\label{pallina I}
\end{align}
By \eqref{pallina A}-\eqref{pallina I} we get  
\begin{align}
& \sup_{|\beta| = n} \sup_{t \in [0, 1]} \|  \pa_\lambda^k  \partial_\vphi^\beta \Phi(t) \langle D \rangle^{- \frac{|\beta|+|k|}{2}} h \|_{s + 1}  \nonumber\\
& \leq_s \gamma^{- |k|} \| h \|_{s + 1}  \nonumber\\
&  \quad + \| a \|_{2 s_0 + |\beta| + 1}  \sup_{|\beta| = n} \sup_{t \in [0, 1]} \|  \pa_\lambda^k 
\partial_\vphi^\beta \Phi(t) \langle D \rangle^{- \frac{|\beta|+|k|}{2}} h \|_{s + 1}   \nonumber
\end{align}
which implies  \eqref{copenaghen A omega} for $|\beta| = n$  at $ s+ 1 $, because 
$ \| a \|_{2 s_0 + |\beta| + 1} \leq \delta(s)$ is small enough (see \eqref{piccolezza a partial vphi beta k D beta k}).
\\[1mm]
\noindent
{\sc Proof of  \eqref{copenaghen B omega}  for $ |\b| = n $.}
The estimate \eqref{copenaghen B omega} 
for  $ s = s_0 $  follows  by \eqref{copenaghen A omega}. 
Then we assume to have proven \eqref{copenaghen B omega} with $ | \beta | = n $, 
up to the Sobolev index $ s $ and we prove it 
$ \|  \partial_\lambda^k \partial_\vphi^\beta \Phi \langle D \rangle^{- \frac{|\beta| + |k|}{2}} h \|_{s + 1} $.
The first term in \eqref{pallina A} is estimated, using
\eqref{stima partial vphi partial omega L2vphi Hs x}, by 
\begin{equation}\label{polizia - 1}
\|  \pa_\lambda^k \partial_\vphi^\beta \Phi \langle D \rangle^{- \frac{|\beta|+|k|}{2}} h  \|_{L^2_\vphi H^{s + 1}_x} \leq_s \gamma^{- |k|} 
\big( \| h \|_{s + 1} + \| a\|_{s + s_0 + 1  + |\beta| + |k| + 1 }^{k_0, \gamma} \| h \|_{1} \big)\, .
\end{equation}
Now we estimate the second term in \eqref{pallina A}. 
We have as in  \eqref{penultima-Ap} that
\begin{equation}\label{penultima-Ap1}
\begin{aligned}
  \|  \partial_\lambda^k \partial_\vphi^\beta \Phi \langle D \rangle^{- \frac{|\beta| + |k|}{2}} h \|_{H^{s + 1}_\vphi L^2_x} & \simeq  \gamma^{- |k|} \| h \|_0 \\
  & \quad +\sup_{\alpha \in \N^\nu, |\alpha| = 1} 
 \|  \partial_\lambda^k \partial_\vphi^{\beta} \Phi \langle D \rangle^{- \frac{|\beta| + |k|}{2}} [\partial_\vphi^\alpha h] \|_{s} \\
& \quad  +  \sup_{\alpha \in \N^\nu, |\alpha| = 1} \|  \partial_\lambda^k \partial_\vphi^{\beta + \alpha} \Phi \langle D \rangle^{- \frac{|\beta| + |k|}{2}} h \|_{s}   \, . 
 \end{aligned}
 \end{equation}
By the inductive hypothesis (on $ s $), we estimate the term in \eqref{penultima-Ap1} 
\begin{align}
\|  \pa_\lambda^k \partial_\vphi^\beta  \Phi \langle D \rangle^{- \frac{|\beta|+|k|}{2}} [\partial_\vphi^{\alpha} h] \|_s 
& \leq_s \gamma^{- |k|} 
\big( \| h \|_{s + 1} + \| a \|_{s + s_0 + 1 + |\beta| + |k| }^{k_0, \gamma} \| h \|_{s_0 + 1} \big) \nonumber \\
&  \leq_s \gamma^{- |k|} 
\big( \| h \|_{s + 1} + \| a \|_{s + s_0 + 1 + |\beta| + |k| + 1}^{k_0, \gamma} \| h \|_{s_0} \big) \label{copenaghen 3 omega}
\end{align}
using \eqref{piccolezza a partial vphi beta k D beta k} and  the  interpolation 
inequality \eqref{interpolation estremi fine Ck0}
with $a_0 = 2 s_0 + |\beta| + |k| +1 $, $ b_0 = s_0 $, $ p = s - s_0 $, $ q = 1$, $ \epsilon = 1 $.

Now we estimate the last term in \eqref{penultima-Ap1}. By \eqref{pallina Z1}-\eqref{pallina E} one has
\begin{align}
& \|  \pa_\lambda^k \partial_\vphi^{\beta + \alpha} \Phi \langle D \rangle^{- \frac{|\beta| + |k|}{2}} h \|_{s}  \nonumber\\
& \lessdot \sup_{\begin{subarray}{c}
k_1 \prec k, k_1 + k_2 = k,  \\
\beta_1 + \beta_2 = \beta + \alpha 
\end{subarray}} \sup_{t, \tau \in [0, 1]} \|\Phi(t - \tau) (\partial_\lambda^{k_2} \partial_\vphi^{\beta_2}  a)
 |D|^{\frac12} \partial_\lambda^{k_1}  \partial_\vphi^{\beta_1} \Phi(\tau) \langle D \rangle^{- \frac{|\beta| + |k|}{2}} h \|_s  \nonumber\\
&  + \sup_{\begin{subarray}{c}
\beta_1 + \beta_2 = \beta + \alpha \\
|\beta_1| \leq n -1 
\end{subarray}} \sup_{t, \tau \in [0, 1]} \| \Phi(t - \tau) (\partial_\vphi^{\beta_2} a) 
|D|^{\frac12}  \partial_\lambda^k \partial_\vphi^{\beta_1} \Phi(\tau) \langle D \rangle^{- \frac{|\beta| + |k|}{2}} h  \|_s\, \nonumber\\
&  + \sup_{\begin{subarray}{c}
\beta_1 + \beta_2 = \beta + \alpha \\
|\beta_1| = n
\end{subarray}} \sup_{t, \tau \in [0, 1]} \| \Phi(t - \tau) (\partial_\vphi^{\beta_2} a) |D|^{\frac12} 
 \partial_\lambda^k \partial_\vphi^{\beta_1} \Phi(\tau) \langle D \rangle^{- \frac{|\beta| + |k|}{2}} h  \|_s  . \label{polizia 0 omega}
\end{align}
Note that if $ n = 0 $ the same formula applies, just without the second line. 
We estimate separately the terms in \eqref{polizia 0 omega}.
By the estimate \eqref{stima flusso norme unite} on $ \Phi $, \eqref{copenaghen A omega},  
and the inductive hyphothesis  for $k_1 + k_2 = k$, $k_1 \prec k$, $\beta_1 + \beta_2 = \beta + \alpha$, $t, \tau \in [0, 1]$, 
we get  
\begin{align}
& \| \Phi(t - \tau) ( \partial_\lambda^{k_2} \partial_\vphi^{\beta_2} a) |D|^{\frac12}  \partial_\lambda^{k_1} \partial_\vphi^{\beta_1} \Phi(\tau) \langle D \rangle^{- \frac{|\beta| + |k|}{2}} h \|_s  \nonumber \\
& \leq_s \gamma^{- |k_2|} \| \partial_\lambda^{k_1} \partial_\vphi^{\beta_1}  \Phi(\tau) \langle D \rangle^{- \frac{|\beta| + |k|}{2}} h \|_{s + \frac12} 
\nonumber\\
& \quad + \gamma^{- |k_2|}  \| a \|_{s + s_0  + |\beta| + 1}^{k_0,\gamma} \|  \partial_\lambda^{k_1}\partial_\vphi^{\beta_1}  \Phi(\tau) \langle D \rangle^{- \frac{ |\beta| + |k| }{2}} h \|_{s_0 + \frac12} \nonumber\\
& \leq_s \gamma^{- |k|} \big( \| h \|_{s + 1} + \| a \|_{s + s_0 + 1 + |\beta| + |k|  + 1}^{k_0, \gamma} \| h \|_{s_0} \big)  \label{polizia 1 omega}
\end{align}
using \eqref{piccolezza a partial vphi beta k D beta k} and since  \eqref{interpolation estremi fine Ck0} 
with $ a_0 = 2 s_0 + |\beta| + 1 $, $ b_0 = s_0 $, $ p = s - s_0 $, $ q = 1 $, $\epsilon = 1 $,  implies 
\begin{equation}\label{municipio uffa}
\| a \|_{s + s_0 + |\beta| + 1}^{k_0, \gamma} \| h \|_{s_0 + 1} \leq  \| a \|_{2 s_0 + |\beta| + 1}^{k_0, \gamma} \| h \|_{s + 1} + \| a \|_{s + s_0 + |\beta| + 2}^{k_0, \gamma} \| h \|_{s_0} \, . 
\end{equation}
The second term in \eqref{polizia 0 omega} is estimated similarly by \eqref{polizia 1 omega}.
Then we consider the third term in \eqref{polizia 0 omega}.
For $\beta_1 + \beta_2 = \beta + \alpha$, $|\beta_1| = n$, by \eqref{stima flusso norme unite}, \eqref{copenaghen A omega}
\begin{align}
& \| \Phi(t - \tau) (\partial_\vphi^{\beta_2} a) |D|^{\frac12} \partial_\lambda^k  \partial_\vphi^{\beta_1} \Phi(\tau) \langle D \rangle^{- \frac{|\beta| + |k|}{2}} h  \|_s \nonumber \\
& 
\leq_s  \| a\|_{s + s_0 + |\beta| + 1} \| \partial_\lambda^k \partial_\vphi^{\beta_1}  \Phi(\tau) \langle D \rangle^{- \frac{|\beta| + |k|}{2}} h \|_{s_0 + 1}  \nonumber\\
& \quad + \| a \|_{2 s_0 + |\beta| + 1} \| \partial_\lambda^k \partial_\vphi^{\beta_1}  \Phi(\tau) \langle D \rangle^{- \frac{|\beta| + |k|}{2}} h \|_{s+ 1}  \nonumber\\
& \leq_s \gamma^{- |k|}\| a \|_{s + s_0 + |\beta| + 1} \| h \|_{s_0 + 1} \nonumber\\
& \quad + \| a \|_{2 s_0 + |\beta| + 1} 
\|  \partial_\lambda^k \partial_\vphi^{\beta_1} \Phi(\tau) \langle D \rangle^{- \frac{|\beta| + |k|}{2}} h \|_{s+ 1} \nonumber\\
& \stackrel{\eqref{municipio uffa}}{\leq_s} \gamma^{- |k|} (\| h \|_{s + 1} + \| a \|_{s + s_0 + |\beta| + 2}^{k_0, \gamma} \| h \|_{s_0}) \nonumber\\
& \quad +  \| a \|_{2 s_0 + |\beta| + 1} \| \partial_\lambda^k \partial_\vphi^{\beta_1}  \Phi(\tau) \langle D \rangle^{- \frac{|\beta| + |k|}{2}} h  \|_{s+ 1}\,. \label{polizia 2 omega}
\end{align}
By \eqref{polizia - 1}, \eqref{penultima-Ap1},  \eqref{copenaghen 3 omega}, \eqref{polizia 0 omega}, \eqref{polizia 1 omega}, \eqref{polizia 2 omega} we get  
\begin{align}
& \sup_{|\beta| = n} \sup_{t \in [0, 1]} \|  \pa_\lambda^k \partial_\vphi^\beta \Phi(t) \langle D \rangle^{- \frac{|\beta|+|k|}{2}} h \|_{s + 1} \nonumber\\
&  \leq_s \gamma^{- |k|} \big( \| h \|_{s + 1} + \| a\|_{s + s_0  + |\beta| + |k| + 2}^{k_0, \gamma} \| h \|_{s_0} \big) \nonumber\\
& \quad + \| a \|_{2 s_0 + |\beta| + 1} \sup_{|\beta| = n} \sup_{t \in [0, 1]} \|  \pa_\lambda^k \partial_\vphi^\beta
\Phi(t) \langle D \rangle^{- \frac{|\beta|+|k|}{2}} h \|_{s + 1} \nonumber
\end{align}
which implies  \eqref{copenaghen B omega} at $ s+ 1 $ 
for $|\beta| = n$, because 
 $\| a \|_{2 s_0 + |\beta| + 1} \leq \delta(s)$ is small enough (see \eqref{piccolezza a partial vphi beta k D beta k}). 

\smallskip

\noindent
{\sc Proof of  \eqref{copenaghen A omega con D}-\eqref{copenaghen B omega con D}.} 
We argue by induction on  $ s $.  
The estimate  \eqref{copenaghen A omega con D} for $ s = 0 $ is proved by 
\eqref{partial beta partial k D Phi norme basse + D} for $ s  = 0 $. 
Now let us suppose to have estimated the operator $ \langle D \rangle  \partial_\lambda^k \partial_\vphi^\beta \Phi \langle D \rangle^{- \frac{|\beta| + |k|}{2} - 1} $ up to the Sobolev index $s$ and let us prove it for $s + 1$. We have to estimate 
$$
\begin{aligned}
 \| \langle D \rangle \partial_\lambda^k \partial_\vphi^\beta \Phi \langle D \rangle^{- \frac{|\beta| + |k|}{2} - 1} h \|_{s + 1} 
 & \simeq \| \langle D \rangle  \partial_\lambda^k \partial_\vphi^\beta \Phi \langle D \rangle^{- \frac{|\beta| + |k|}{2} - 1} h \|_{L^2_\vphi H^{s + 1}_x} \\ 
 & \quad + \| \langle D \rangle  \partial_\lambda^k \partial_\vphi^\beta \Phi \langle D \rangle^{- \frac{|\beta| + |k|}{2} - 1} h \|_{H^{s + 1}_\vphi 	L^2_x}\,.
 \end{aligned}
$$
 The first term is estimated by   \eqref{partial beta partial k D Phi norme alte + D} as 
\be
 \begin{aligned}
 & \| \langle D \rangle  \partial_\lambda^k \partial_\vphi^\beta \Phi \langle D \rangle^{- \frac{|\beta| + |k|}{2} - 1} h \|_{L^2_\vphi H^{s + 1}_x} \\ 
 & \leq_s \gamma^{- |k|} 
 \big(  \| h \|_{s + 1} + \| a \|^{k_0, \gamma}_{s + 1 + s_0  + |\beta| + \frac{|k|}{2} + \frac32} \| h \|_1\big) \, ,  
 \end{aligned}
 \ee
and the second term,  
 using \eqref{partial beta partial k D Phi norme basse + D}, as
 \begin{align}
&  \| \langle D \rangle  \partial_\lambda^k \partial_\vphi^\beta \Phi \langle D \rangle^{- \frac{|\beta| + |k|}{2} - 1} h \|_{H^{s + 1}_\vphi L^2_x} \nonumber\\ & \simeq \| \langle D \rangle   \partial_\lambda^k \partial_\vphi^\beta \Phi \langle D \rangle^{- \frac{|\beta| + |k|}{2} - 1} h \|_{L^2_\vphi L^2_x}
  \nonumber\\ 
& \quad + \sup_{\alpha \in \N^\nu, |\alpha| = 1} \|  \langle D \rangle  \partial_\lambda^k \partial_\vphi^{\beta + \alpha} \Phi \langle D \rangle^{- \frac{|\beta| + |k|}{2} - 1} h \|_{H^s_\vphi L^2_x } \nonumber\\
& \quad + \sup_{\alpha \in \N^\nu, |\alpha| = 1} \| \langle D \rangle  \partial_\lambda^k \partial_\vphi^\beta \Phi \langle D \rangle^{- \frac{|\beta| + |k|}{2} - 1} \partial_\vphi^\alpha h \|_{s} \nonumber\\
 & \lessdot \gamma^{- |k|} \| h \|_{0} + \sup_{\alpha \in \N^\nu, |\alpha| = 1} \| \langle D \rangle 
 \partial_\lambda^k  \partial_\vphi^{\beta + \alpha}  \Phi \langle D \rangle^{- \frac{|\beta| + |k|}{2} - 1} h \|_{s} \nonumber\\
 & + \sup_{\alpha \in \N^\nu, |\alpha| = 1} \| \langle D \rangle  \partial_\lambda^k \partial_\vphi^\beta \Phi \langle D \rangle^{- \frac{|\beta| + |k|}{2} - 1} \partial_\vphi^\alpha h \|_{s}\, .  \label{pallina B (1)}
 \end{align}
 By the inductive hyphothesis, for all $ \alpha \in \N^\nu $, $ | \alpha | = 1 $,
 \be
 \| \langle D \rangle  \partial_\lambda^k 
 \partial_\vphi^\beta \Phi \langle D \rangle^{- \frac{|\beta| + |k|}{2} - 1} 
 \partial_\vphi^\alpha h \|_{s}  \leq_s \gamma^{- |k|} \| h \|_{s + 1}\,, \quad  \forall s \leq s_0  
\ee
and 
\be
\begin{aligned}
&  \| \langle D \rangle  \partial_\lambda^k \partial_\vphi^\beta \Phi \langle D \rangle^{- \frac{|\beta| + |k|}{2} - 1} \partial_\vphi^\alpha h \|_{s}  \\ 
& \leq_s   
\gamma^{- |k|} \big(  \| h \|_{s + 1} 
   +  \| a \|^{k_0, \gamma}_{s + s_0 + |\beta| + |k| + 2} \| h \|_{s_0 + 1}\big), \quad  \forall s > s_0
  \\
& \leq_s \gamma^{- |k|} \big( \| h \|_{s + 1} + \| a \|^{k_0, \gamma}_{s + 1 + s_0 + |\beta| + 1 + |k| + 1} \| h \|_{s_0} \big)\,,  
 \end{aligned}
 \ee
since \eqref{interpolation estremi fine Ck0} with $a_0 = 2 s_0 + |\beta| + |k| + 2$, $b_0 = s_0$, $p = s - s_0$, $q = 1$,
$ \epsilon = 1 $,  
and  \eqref{piccolezza a partial vphi beta k D beta k} 
imply 
$$
\| a \|_{s + s_0 + |\beta| + |k| + 2}^{k_0, \gamma} \| h \|_{s_0 + 1} \leq   \| h \|_{s + 1} + \| a \|_{s + 1 + s_0 + |\beta| + |k| + 2}^{k_0, \gamma} \| h \|_{s_0} \, .
$$
Finally 
$$
\begin{aligned}
 \|\langle D \rangle  \partial_\lambda^k \partial_\vphi^{\beta + \alpha} \Phi \langle D \rangle^{- \frac{|\beta| + |k|}{2} - 1} h \|_{s} 
&  \leq \| \partial_\lambda^k \partial_\vphi^{\beta + \alpha}  \Phi \langle D \rangle^{- \frac{|\beta| + |k|}{2} - 1} h \|_{s + 1} \\
&  = \|  \partial_\lambda^k \partial_\vphi^{\beta + \alpha} \Phi \langle D \rangle^{- \frac{|\beta| + |k| + 1}{2} }
  [ \langle D \rangle^{- \frac12} h] \|_{s + 1}
  \end{aligned}
$$
and   \eqref{copenaghen A omega}-\eqref{copenaghen B omega} imply
 \begin{align*}
 \|  \partial_\lambda^k \partial_\vphi^{\beta + \alpha} \Phi \langle D \rangle^{- \frac{|\beta| + |k| + 1}{2} }[ \langle D \rangle^{- \frac12} h] \|_{s + 1} & \leq_s \gamma^{- |k|} \| h \|_{s + 1}\,, \quad \forall s \leq s_0 \, , \\
 \|  \partial_\lambda^k \partial_\vphi^{\beta + \alpha} \Phi \langle D \rangle^{- \frac{|\beta| + |k| + 1}{2} }[ \langle D \rangle^{- \frac12} h] \|_{s + 1} & \leq_s \gamma^{- |k|} \big( \| h \|_{s + 1} \\
 & + \| a \|^{k_0, \gamma}_{s + 1 + s_0 + |\beta| + 1 + |k| + 1} \| h \|_{s_0} \big)\, , \  \forall s \geq s_0 \, .
 \end{align*}
 Collecting all the above estimates we have proved 
  \eqref{copenaghen A omega con D}-\eqref{copenaghen B omega con D} with Sobolev index $s + 1$. 
  
  We have then proved the estimates \eqref{copenaghen A omega}-\eqref{copenaghen B omega con D} for $h \in {\mathcal C}^\infty$. If $h \in H^s$ they follow by density.
  The proof of Proposition \ref{Teorema totale partial vphi beta k D beta k Phi} is completed.
\hfill $\Box$

\begin{proposition} \label{lemma:tame derivate flusso}
For  $ \beta_0 \in \N $ assume that
 \begin{equation}\label{ansatz a Lemma tecnico flusso secondo}
 \| a \|_{2 s_0 + \frac{\beta_0 +k_0}{2} + 3} \leq \delta (s) \,, \quad  \| a \|_{2s_0  + 3 + \frac32 \beta_0 + \frac{k_0}{2}}^{k_0, \gamma} \leq 1\,,
  \end{equation}
   for $ \d (s) > 0$ small. Then, for all 
   $ \b \in \N^\nu,  k \in \N^{\nu + 1} $ with $ | \b | \leq \b_0 $, $|k| \leq k_0$, $ s \geq s_0 $,   we have 
\begin{equation}\label{D - 1/2 D vphi flusso N}
\begin{aligned}
& 
{\rm sup}_{t \in [0, 1]} \| \langle D \rangle^{- \frac{|\beta| + |k|}{2}} 
\partial_\lambda^k \partial_\vphi^\beta  \Phi(\vphi, t) h  \|_s  \\
& \leq_s \gamma^{- |k|} \big( \| h\|_s + \| a \|_{s + s_0 + 2 +\frac32 |\beta| + \frac12 |k|  }^{k_0, \gamma} \| h\|_{s_0} \big)\, ,
\end{aligned}
\end{equation}
\begin{equation}\label{D - 1/2 D vphi flusso N + Dx}
\begin{aligned}
& 
{\rm sup}_{t \in [0, 1]} \| \langle D \rangle^{- \frac{|\beta| + |k|}{2}-1} 
\partial_\lambda^k \partial_\vphi^\beta  \Phi(\vphi, t) \langle D \rangle  h \|_s  \\
& \leq_s \gamma^{- |k|} \big( \| h\|_s + \| a \|_{s + s_0 + 3 +\frac32 |\beta| + \frac12 |k|  }^{k_0, \gamma} \| h\|_{s_0} \big)\, .
  \end{aligned}
  \end{equation}
\end{proposition}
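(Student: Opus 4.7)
The natural strategy is to convert Proposition \ref{Teorema totale partial vphi beta k D beta k Phi} (which controls $\partial_\lambda^k\partial_\vphi^\beta\Phi\,\langle D\rangle^{-m/2}$, with $m=|\beta|+|k|$) into the two estimates of the current proposition (which carry $\langle D\rangle^{-m/2}$ on the \emph{left}, and in the second case pair it with an extra $\langle D\rangle$ on the right). Since $\langle D\rangle^{-m/2}$ acts only in $x$, it commutes with $\partial_\lambda^k$ and $\partial_\vphi^\beta$, so I would write
$$
\langle D\rangle^{-m/2}\partial_\lambda^k\partial_\vphi^\beta\Phi(t)\;=\;\partial_\lambda^k\partial_\vphi^\beta\bigl(\Phi(t)\langle D\rangle^{-m/2}\bigr)+\partial_\lambda^k\partial_\vphi^\beta [\langle D\rangle^{-m/2},\Phi(t)].
$$
The first piece is estimated directly: for any Sobolev index $s$ and any $h$, the bound $\|\langle D\rangle^{-m/2}h\|_s\le \|h\|_s$ combined with \eqref{copenaghen A omega}--\eqref{copenaghen B omega} (applied to $\langle D\rangle^{-m/2}h$) already yields the desired tame estimate, with $a$-regularity index $s+s_0+|\beta|+|k|+1$, which is absorbed by the larger index $s+s_0+2+\tfrac32|\beta|+\tfrac12|k|$ claimed in \eqref{D - 1/2 D vphi flusso N}.

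The entire effort is therefore concentrated on the commutator $Q_m(t):=[\langle D\rangle^{-m/2},\Phi(t)]$. Differentiating the flow equation I would derive $\partial_t Q_m=\ii a|D|^{1/2}Q_m+R_m\Phi(t)$ with $Q_m(0)=0$, where $R_m:=\ii[\langle D\rangle^{-m/2},a]|D|^{1/2}\in OPS^{-m/2-1/2}$ by Lemma \ref{lemma tame norma commutatore} (the commutator of $\langle D\rangle^{-m/2}\in OPS^{-m/2}$ with the multiplication operator $a$ gains one degree). Duhamel's principle then gives the closed-form representation
$$
Q_m(t)=\int_0^t\Phi(t-\tau)\,R_m\,\Phi(\tau)\,d\tau.
$$
Applying $\partial_\lambda^k\partial_\vphi^\beta$, distributing via Leibniz, and inserting the identity $\mathrm{Id}=\langle D\rangle^{m_i/2}\langle D\rangle^{-m_i/2}$ at the two flow factors (with $m_i=|\beta_i|+|k_i|$), each summand takes the form
$$
\bigl(\partial_\lambda^{k_1}\partial_\vphi^{\beta_1}\Phi\,\langle D\rangle^{-m_1/2}\bigr)\cdot\bigl(\langle D\rangle^{m_1/2}(\partial_\lambda^{k_2}\partial_\vphi^{\beta_2}R_m)\langle D\rangle^{m_3/2}\bigr)\cdot\bigl(\langle D\rangle^{-m_3/2}\partial_\lambda^{k_3}\partial_\vphi^{\beta_3}\Phi\bigr),
$$
with $m_1+m_2+m_3\le m$ (where $m_2$ is the number of derivatives falling on $R_m$). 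The outer factors are tame-bounded by Proposition \ref{Teorema totale partial vphi beta k D beta k Phi} and the inductive hypothesis, while the middle pseudo-differential factor has order $-m_2/2-1/2\le 0$ so it is $L^2$-bounded (with tame constant controlled by $\|a\|$ in a sufficiently high norm, via Lemmata \ref{lemma stime Ck parametri}--\ref{lemma tame norma commutatore} and \ref{lemma: action Sobolev}). Combining everything gives \eqref{D - 1/2 D vphi flusso N}. The estimate \eqref{D - 1/2 D vphi flusso N + Dx} is obtained analogously: one writes $\langle D\rangle^{-m/2-1}\partial_\lambda^k\partial_\vphi^\beta\Phi\langle D\rangle=\langle D\rangle^{-m/2}\partial_\lambda^k\partial_\vphi^\beta\Phi+\langle D\rangle^{-m/2-1}\partial_\lambda^k\partial_\vphi^\beta[\Phi,\langle D\rangle]$, and treats the second commutator by a completely analogous Duhamel argument, this time with $[\Phi,\langle D\rangle]$ satisfying $\partial_t[\Phi,\langle D\rangle]=\ii a|D|^{1/2}[\Phi,\langle D\rangle]+[\ii a|D|^{1/2},\langle D\rangle]\Phi$ where $[\ii a|D|^{1/2},\langle D\rangle]\in OPS^{1/2}$.

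The principal obstacle is the bookkeeping of tame constants in the Leibniz expansion of the Duhamel integral: each derivative $\partial_\lambda$ or $\partial_\vphi$ falling on $R_m$ costs one more derivative of $a$ (and $\partial_\lambda$ an additional $\gamma^{-1}$), while each derivative on a flow factor triggers a loss matching Proposition \ref{Teorema totale partial vphi beta k D beta k Phi}. The final $a$-regularity loss must come out to exactly $s+s_0+2+\tfrac32|\beta|+\tfrac12|k|$ (respectively $+3$ in \eqref{D - 1/2 D vphi flusso N + Dx}), which is $\tfrac{|\beta|}{2}+1$ more than in Proposition \ref{Teorema totale partial vphi beta k D beta k Phi}; this extra loss is precisely what the commutator correction $R_m$ and the inner derivatives on it produce. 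Assumption \eqref{ansatz a Lemma tecnico flusso secondo} is tailored so that the smallness hypothesis of Proposition \ref{Teorema totale partial vphi beta k D beta k Phi} is available at every intermediate Sobolev index encountered in the induction.
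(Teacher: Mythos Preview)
Your decomposition $\langle D\rangle^{-m/2}\partial_\lambda^k\partial_\vphi^\beta\Phi=\partial_\lambda^k\partial_\vphi^\beta\Phi\,\langle D\rangle^{-m/2}+\partial_\lambda^k\partial_\vphi^\beta[\langle D\rangle^{-m/2},\Phi]$ is a natural route, but the claim that the first piece is ``absorbed'' by the target index is incorrect. From \eqref{copenaghen B omega} the first piece carries $a$-loss $s+s_0+|\beta|+|k|+1$, whereas the target \eqref{D - 1/2 D vphi flusso N} has $a$-loss $s+s_0+2+\tfrac32|\beta|+\tfrac12|k|$. The inequality $|\beta|+|k|+1\le 2+\tfrac32|\beta|+\tfrac12|k|$ is equivalent to $|k|\le|\beta|+2$, which fails whenever $|k|\ge|\beta|+3$ (e.g.\ $\beta=0$, $|k|=k_0$ with $k_0\ge 3$). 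Since the two pieces in your split are added, no cancellation can rescue this; your method would only yield $a$-loss $\max\bigl(|\beta|+|k|+1,\,2+\tfrac32|\beta|+\tfrac12|k|\bigr)$, which is strictly weaker than the stated proposition in part of the range. There is also a smaller gap: in the Leibniz expansion of $\partial_\lambda^k\partial_\vphi^\beta Q_m$, the term with $(k_3,\beta_3)=(k,\beta)$ is not covered by induction and needs a smallness--absorption argument (the factor $R_m$ carries $\|a\|$), which you do not mention.

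The paper avoids both issues by a different, more direct argument that does \emph{not} pass through Proposition~\ref{Teorema totale partial vphi beta k D beta k Phi} at all. The key tool is the sandwich estimate \eqref{prima proprieta flusso}, namely $\|\langle D\rangle^{\mp n/2}\Phi\,\langle D\rangle^{\pm n/2}h\|_s\le_s\|h\|_s+\|a\|_{s+s_0+\frac n2+2}\|h\|_{s_0}$. One argues by induction on $(k,\beta)$ using the Duhamel formula \eqref{Duhamel prop 9.6}--\eqref{F beta k prop 9.6} for $\partial_\lambda^k\partial_\vphi^\beta\Phi$ directly (not for a commutator), applies $\langle D\rangle^{-\frac{|\beta|+|k|}{2}-1}(\,\cdot\,)\langle D\rangle$ to each summand, and factorizes as
\[
\Bigl(\langle D\rangle^{-\frac{|\beta|+|k|}{2}-1}\Phi\,\langle D\rangle^{\frac{|\beta|+|k|}{2}+1}\Bigr)
\Bigl(\langle D\rangle^{-\frac{|\beta|+|k|}{2}-1}(\partial_\lambda^{k_2}\partial_\vphi^{\beta_2}a)\langle D\rangle^{\frac{|\beta|+|k|}{2}+1}\Bigr)
\Bigl(|D|^{\frac12}\langle D\rangle^{-\frac m2}\langle D\rangle^{-\frac{|\beta_1|+|k_1|}{2}-1}\partial_\lambda^{k_1}\partial_\vphi^{\beta_1}\Phi\,\langle D\rangle\Bigr),
\]
with $m=|\beta|-|\beta_1|+|k|-|k_1|\ge1$. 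The first factor is handled by \eqref{prima proprieta flusso} (with $n=|\beta|+|k|+2$), giving $a$-loss $\tfrac{|\beta|+|k|}{2}+3$, which is \emph{always} dominated by $3+\tfrac32|\beta|+\tfrac12|k|$; the middle factor is a $0$-order pseudo-differential operator with norm $\lesssim\gamma^{-|k_2|}\|a\|_{s+\frac32|\beta|+\frac12|k|}^{k_0,\gamma}$; and the last factor is the inductive hypothesis. This yields exactly the claimed index. The base case $(k,\beta)=(0,0)$ is precisely \eqref{prima proprieta flusso} with $n=2$.
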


\begin{proof}
We prove only \eqref{D - 1/2 D vphi flusso N + Dx}.  The proof of  \eqref{D - 1/2 D vphi flusso N} is the same (easier).
We take $h \in {\mathcal C}^\infty$ and we argue by induction on $ (k, \b ) $.
For $  k = 0\,, \b =  0 $ the estimate \eqref{D - 1/2 D vphi flusso N + Dx}  is  proved by \eqref{prima proprieta flusso} with $ n = 2 $.
Then supposing that \eqref{D - 1/2 D vphi flusso N + Dx}  
holds for all  $(k_1, \b_1) \prec (k, \b) $, $ |k| \leq k_0 $, $ |\beta| \leq \beta_0$,  
we prove it for $ \langle D \rangle^{- \frac{|\beta| + |k|}{2}-1} \partial_\lambda^{k} \partial_\vphi^\beta \Phi \langle D \rangle  $
for which we use  the integral representation  \eqref{Duhamel prop 9.6}-\eqref{F beta k prop 9.6}. 
For all $\beta_1 + \beta_2 = \beta $, $k_1 + k_2 = k$, $ (k_1, \b_1 ) \prec ( k, \b ) $, $t, \tau \in [0, 1]$, one has 
\begin{align}
& \langle D \rangle^{- \frac{|\beta|  + |k|}{2} -1 }\Phi(t - \tau) 
( \partial_\lambda^{k_2} \partial_\vphi^{\beta_2} a) |D|^{\frac12}  \partial_\lambda^{k_1} \partial_\vphi^{\beta_1}  \Phi(\tau) 
\langle D \rangle  \label{una-per-tutte-bis} \\
& = 
 \langle D \rangle^{- \frac{|\beta|  + |k|}{2} -1 }\Phi(t - \tau) \langle D \rangle^{\frac{|\beta|  + |k|}{2} + 1} \nonumber\\
& \quad \, \langle D \rangle^{- \frac{|\beta| + |k|}{2} -1}  (\partial_\lambda^{k_2} \partial_{\vphi}^{\beta_2} a ) 
\langle D \rangle^{\frac{|\beta|  + |k|}{2}+1} \nonumber \\
& \quad \, |D|^{\frac12} \langle D \rangle^{- \frac{m}{2}} \langle D \rangle^{- \frac{|\beta_1| + |k_1|}{2} -1 } 
\partial_\lambda^{k_1}  \partial_\vphi^{\beta_1}  \Phi(\tau)  \langle D \rangle  \nonumber
\end{align}
where $ m := |\b|-|\b_1|+|k|- |k_1| \geq 1 $. 
These three terms satisfy tame estimates. 
By \eqref{prima proprieta flusso} (which can be applied because of \eqref{ansatz a Lemma tecnico flusso secondo}) we have 
\begin{align}
\| \langle D \rangle^{- \frac{|\beta|  + |k|}{2}-1}\Phi(t - \tau) \langle D \rangle^{\frac{|\beta|  + |k|}{2}+1} h \|_s 
& \leq_s \| h \|_s  + \| a \|_{s+ s_0  + 2 + \frac{|\beta|  + |k|}{2}} \| h\|_{s_0}\, . \label{riccardone 2-bis}
\end{align}
Lemma \ref{lemma stime Ck parametri}, \ref{lemma composizione multiplier}, 
and  \eqref{norma a moltiplicazione}, \eqref{Norm Fourier multiplier}, imply
\begin{align}
 \norma \langle D \rangle^{- \frac{|\beta |  + |k|}{2}-1} 
\partial_\lambda^{k_2} \partial_\vphi^{\beta_2} a \langle D \rangle^{\frac{|\beta|  + |k|}{2} + 1} 
\norma_{0,s,0} & \leq_s \! \|  \partial_\lambda^{k_2} \partial_\vphi^{\beta_2} a \|_{s + \frac{|\beta |  + |k|}{2}} \nonumber\\
&  \leq_s \gamma^{- |k_2|} \| a \|^{k_0, \gamma}_{s + \frac{3}{2}|\beta| + \frac{|k|}{2}}\,. \label{riccardone 1 secondo-bis}
\end{align}
Since $ (k_1, \b_1) \prec (k, \b) $, using the inductive estimates \eqref{D - 1/2 D vphi flusso N} for 
$$ \langle D \rangle^{- \frac{|\beta_1| + |k_1|}{2}-1} \partial_\lambda^{k_1} \partial_\vphi^{\beta_1}  \Phi(\tau) \langle D \rangle \, ,  
$$ 
we get  
\begin{align}
& \|  
|D|^{\frac12} \langle D \rangle^{- \frac{m}{2}} \langle D \rangle^{- \frac{|\beta_1| + |k_1|}{2} -1} 
\partial_\lambda^{k_1}  \partial_\vphi^{\beta_1}  \Phi(\tau)  \langle D \rangle h \|_s \nonumber\\
& \leq_s  \| \langle D \rangle^{- \frac{|\beta_1| + |k_1|}{2}-1} 
\partial_\lambda^{k_1} \partial_\vphi^{\beta_1}  \Phi(\tau)  \langle D \rangle h \|_s   \nonumber \\
& \leq_s \gamma^{- |k_1|} 
\big( \| h \|_s \nonumber\\
& \quad + \| a\|^{k_0, \gamma}_{s + s_0 + 2 + \frac{3}{2}|\beta| + \frac{|k|}{2}} \| h \|_{s_0} \big)\,.   \label{riccardone 3 secondo-bis}
\end{align}
In conclusion, \eqref{una-per-tutte-bis}-\eqref{riccardone 3 secondo-bis} 
imply  \eqref{D - 1/2 D vphi flusso N + Dx}. If $h \in H^s$ the estimate \eqref{D - 1/2 D vphi flusso N + Dx} follows by density.
\end{proof}

As a corollary we get  
\begin{proposition}\label{flussoCk0}
Assume \eqref{piccolezza a partial vphi beta k D beta k}. 
Then the flow $\Phi(t, \lambda)$ of 
\eqref{pseudo PDE} is ${\mathcal D}^{k_0}$-{$\frac{ k_0}{2}$}-tame (Definition \ref{def:Ck0}), more precisely,  
for all $ k \in \N^{\nu + 1} $, $ |k| \leq k_0 $,  $ s \geq s_0 $, 
\begin{equation}\label{stima-rozzina}
\begin{aligned}
 & \sup_{t \in [0, 1]}\|  \partial_\lambda^{k} \Phi(\vphi, t) h \|_s  \\
 &  \leq_s  \gamma^{- |k|} 
\big( \| h \|_{s+ \frac{ |k|}{2}} + \| a \|_{s + s_0   +  |k| + 1}^{k_0, \gamma} 
\|  h \|_{s_0 + \frac{ |k|}{2} }  \big)  \, , \\
\end{aligned}
\end{equation}
\begin{equation}\label{stima flusso - identita}
\begin{aligned}
& \sup_{t \in [0, 1]} \| \partial_\lambda^k (\Phi(t )- {\rm I d}) h \|_s  \\
& \leq_s \gamma^{- |k|}\big( \| a \|_{s_0}^{k_0, \gamma} \|  h\|_{s + \frac{|k| + 1}{2}} + \| a \|_{s + s_0 + k_0 + \frac32}^{k_0, \gamma} \| h \|_{s_0 + \frac{|k| + 1}{2}}  \big) \, .  
\end{aligned}
\end{equation}
\end{proposition}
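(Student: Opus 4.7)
The plan is to deduce \eqref{stima-rozzina} as an immediate corollary of \eqref{copenaghen B omega} (Proposition \ref{Teorema totale partial vphi beta k D beta k Phi}) by specializing to $\beta=0$ and rewriting the ``lost'' $\langle D\rangle^{|k|/2}$-factor in terms of the Sobolev index of the argument. Specifically, replacing $h$ by $\langle D\rangle^{|k|/2}h$ in \eqref{copenaghen B omega} with $\beta=0$ gives
\[
\|\partial_\lambda^k\Phi(\vphi,t)\,h\|_s
=\bigl\|\partial_\lambda^k\Phi\,\langle D\rangle^{-|k|/2}\bigl[\langle D\rangle^{|k|/2}h\bigr]\bigr\|_s
\leq_s \gamma^{-|k|}\bigl(\|\langle D\rangle^{|k|/2}h\|_s+\|a\|^{k_0,\gamma}_{s+s_0+|k|+1}\|\langle D\rangle^{|k|/2}h\|_{s_0}\bigr),
\]
which is exactly \eqref{stima-rozzina}. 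The smallness assumption \eqref{piccolezza a partial vphi beta k D beta k} (specialized to $\beta_0=0$) is implied by \eqref{piccolezza a partial vphi beta k D beta k} as stated in Proposition \ref{flussoCk0}, so this step is immediate.

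For \eqref{stima flusso - identita} I would use the Duhamel representation coming from \eqref{flow-propagator}, namely
\[
\Phi(t)-{\rm Id}=\ii\int_0^t a(\vphi,x)\,|D|^{1/2}\,\Phi(\tau)\,d\tau,
\]
which is the only clean way to extract the small factor $\|a\|_{s_0}^{k_0,\gamma}$. Differentiating in $\lambda$ via Leibniz,
\[
\partial_\lambda^k(\Phi(t)-{\rm Id})h
=\ii\sum_{k_1+k_2=k}\binom{k}{k_1}\int_0^t\bigl(\partial_\lambda^{k_1}a\bigr)\,|D|^{1/2}\,\bigl(\partial_\lambda^{k_2}\Phi(\tau)\bigr)h\,d\tau,
\]
and it remains to estimate each integrand in $\|\cdot\|_s$.

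For each summand, apply the tame product estimate \eqref{interpolazione C k0} between $\partial_\lambda^{k_1}a$ and $|D|^{1/2}\partial_\lambda^{k_2}\Phi(\tau)h$, so that
\[
\bigl\|\partial_\lambda^{k_1}a\cdot|D|^{1/2}\partial_\lambda^{k_2}\Phi(\tau)h\bigr\|_s
\leq_s\|a\|_{s_0}^{k_0,\gamma}\gamma^{-|k_1|}\bigl\||D|^{1/2}\partial_\lambda^{k_2}\Phi(\tau)h\bigr\|_s
+\|a\|_s^{k_0,\gamma}\gamma^{-|k_1|}\bigl\||D|^{1/2}\partial_\lambda^{k_2}\Phi(\tau)h\bigr\|_{s_0}.
\]
Now use the already-proved estimate \eqref{stima-rozzina} applied to $\partial_\lambda^{k_2}\Phi(\tau)$ (noting $|D|^{1/2}$ raises the Sobolev index by $\tfrac12$) to bound
\[
\bigl\||D|^{1/2}\partial_\lambda^{k_2}\Phi(\tau)h\bigr\|_{\sigma}
\leq_\sigma\gamma^{-|k_2|}\bigl(\|h\|_{\sigma+(|k_2|+1)/2}+\|a\|^{k_0,\gamma}_{\sigma+s_0+|k_2|+1}\|h\|_{s_0+(|k_2|+1)/2}\bigr),
\qquad\sigma\in\{s,s_0\}.
\]
Substituting, using $|k_1|+|k_2|=|k|$ and $|k_2|\le|k|\le k_0$, and absorbing the harmless product $\|a\|_{s_0}^{k_0,\gamma}\|a\|^{k_0,\gamma}_{s+s_0+|k_2|+1}$ into $\|a\|^{k_0,\gamma}_{s+s_0+k_0+3/2}$ (using the smallness of $\|a\|_{s_0}^{k_0,\gamma}$ from \eqref{piccolezza a partial vphi beta k D beta k}), one obtains
\[
\bigl\|\partial_\lambda^{k_1}a\cdot|D|^{1/2}\partial_\lambda^{k_2}\Phi(\tau)h\bigr\|_s
\leq_s\gamma^{-|k|}\bigl(\|a\|_{s_0}^{k_0,\gamma}\|h\|_{s+(|k|+1)/2}+\|a\|^{k_0,\gamma}_{s+s_0+k_0+3/2}\|h\|_{s_0+(|k|+1)/2}\bigr).
\]
Integrating over $\tau\in[0,t]\subset[0,1]$ and summing over $k_1+k_2=k$ yields \eqref{stima flusso - identita}. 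Extension from $h\in{\cal C}^\infty$ to $h\in H^s$ is standard by density. The only delicate bookkeeping is the reshuffling of Sobolev indices in the final step, but once \eqref{stima-rozzina} is in hand there is no genuine obstacle.
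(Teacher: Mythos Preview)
Your proof is correct and follows essentially the same approach as the paper: \eqref{stima-rozzina} comes directly from \eqref{copenaghen B omega} with $\beta=0$ by inserting $\langle D\rangle^{-|k|/2}\langle D\rangle^{|k|/2}$, and \eqref{stima flusso - identita} from the Duhamel representation $\Phi(t)-{\rm Id}=\ii\int_0^t a|D|^{1/2}\Phi(\tau)\,d\tau$, Leibniz in $\lambda$, the tame product estimate, and \eqref{stima-rozzina}. The paper's proof of \eqref{stima flusso - identita} is terser (``by interpolation and using \eqref{stima-rozzina}''), and note that in your displayed bound for $\||D|^{1/2}\partial_\lambda^{k_2}\Phi(\tau)h\|_\sigma$ the index on $\|a\|^{k_0,\gamma}$ should be $\sigma+s_0+|k_2|+\tfrac32$ rather than $\sigma+s_0+|k_2|+1$ (since \eqref{stima-rozzina} is applied at $\sigma+\tfrac12$), but this is harmless because the final bound uses the larger index $s+s_0+k_0+\tfrac32$.
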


\begin{proof}
By \eqref{copenaghen B omega} (with $ \b = 0 $) we have 
\begin{align*}
\|  \partial_\lambda^{k} \Phi(\vphi, t) h \|_s 
 & =  
\|  \partial_\lambda^{k} \Phi(\vphi, t) \langle D \rangle^{- \frac{ |k|}{2}}  \langle D \rangle^{\frac{ |k|}{2}} h \|_s \\ 
&  \leq_s \gamma^{- |k|} 
\big( \| \langle D \rangle^{\frac{ |k|}{2}} h \|_s + \| a \|_{s + s_0 + |k| + 1 }^{k_0, \gamma} 
\| \langle D \rangle^{\frac{ |k|}{2}} h \|_{s_0 }  \big) \nonumber \\ 
& \leq_s  \gamma^{- |k|} 
\big( \| h \|_{s+ \frac{ |k|}{2}} + \| a \|_{s + s_0   +  |k| + 1}^{k_0, \gamma} 
\|  h \|_{s_0 + \frac{ |k|}{2} }  \big) 
\end{align*}
which proves \eqref{stima-rozzina}.  

\noindent
{\sc Proof of \eqref{stima flusso - identita}.}
By \eqref{pseudo PDE}, i.e. \eqref{flow-propagator}, we write 
$ \Phi(t) - {\rm Id} = \int_0^t \ii a |D|^{\frac12} \Phi(\tau )\, d \tau  $. Then 
 \eqref{stima flusso - identita} for $ k = 0 $  follows by \eqref{interpolazione C k0} and  \eqref{stima flusso norme unite}. 
For $|k| > 0$, \eqref{stima flusso - identita} follows by interpolation 
and  using \eqref{stima-rozzina}. 
\end{proof}

Finally we consider also the dependence of the flow $\Phi$  
with respect to the torus $ i := i(\vphi ) := (\vphi, 0, 0) + \fracchi (\vphi ) $ (recall the notation \eqref{componente periodica}). 
Assuming that  there exists $\sigma > 0$ such that for any $s\geq 0$, the map 
$$
\begin{aligned}
& {\fracchi}(\lambda) \in {\mathcal Y}^{s + \sigma}    \mapsto a(\lambda, i(\lambda)) \in H^s\,, \\
&  {\mathcal Y}^s := H^{s }(\T^\nu, \R^\nu) \times  H^{s }(\T^\nu, \R^\nu) \times \big(H^{s}(\T^{\nu + 1}, \R^2) \cap H_{{\mathbb S}^+}^\bot \big)
\end{aligned}
$$
is differentiable, then,  by Lemma \ref{dipendenza liscia dai parametri},  the flow $ \Phi ( t ) $ is differentiable with respect to $ i $. 
Note that in the lemma below 
we do not estimate the derivatives of $  \partial_i \Phi(t)$ with respect to 
$ \lambda $ 
since it is not required, see remark \ref{remark:dipendenza da omega non richiesta}.  We state an analogous version of Lemma 
\ref{lemma finitezza norme stime flusso} (the proof is similar) which takes into account the dependence with respect to the torus $i$. 
\begin{lemma}\label{lemma finitezza norme stime flusso derivate i}
For any $|\beta| \leq \beta_0$, $h$, $i$, $\widehat \imath$ which are ${\mathcal C}^\infty (\T^{\nu+1}) $, the function 
$  \partial_\vphi^\beta \partial_i \Phi^t(i)[\widehat \imath] h \in {\mathcal C}^\infty (\T^{\nu+1})$.
\end{lemma}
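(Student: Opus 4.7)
The strategy mirrors the one used for Lemma \ref{lemma finitezza norme stime flusso}. The key idea is to apply Lemma \ref{dipendenza liscia dai parametri} in a functional-analytic framework where the role of the parameter $z$ is played by the torus embedding $i$, and to then use the chain rule together with smoothness of $h$, $i$, $\widehat\imath$ to trade $\vphi$-derivatives for spatial regularity.

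First, I would fix $h, i, \widehat\imath \in {\cal C}^\infty(\T^{\nu+1})$ and consider the map $\vphi \mapsto a(i(\vphi))(\vphi,\cdot)$; since $a$ depends on $i$ through the composition recalled before the lemma, and since the hypothesis guarantees that the assignment $\fracchi \mapsto a(\lambda, i) \in H^s$ is differentiable from ${\cal Y}^{s+\sigma}$ to $H^s$ for every $s$, Lemma \ref{dipendenza liscia dai parametri} (applied with $X = {\cal Y}^{s+\sigma}$ and $p$ arbitrarily large) yields that $\partial_i \Phi^t(i)[\widehat\imath] \in {\cal L}(H^{s}_x, H^{s- (1/2) - (1/2)}_x)$ for any $s$. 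In particular, for each $\vphi \in \T^\nu$, $\partial_i \Phi^t(i(\vphi))[\widehat\imath(\vphi)] \cdot h(\vphi,\cdot)$ lies in $H^s_x$ for every $s$, hence is ${\cal C}^\infty$ in $x$.

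Next, by repeating the argument of Lemma \ref{lemma finitezza norme stime flusso}, smoothness in $\vphi$ is obtained by the chain rule: for any multi-index $\alpha \in \N^\nu$,
\[
\partial_\vphi^\alpha \bigl\{ \partial_\vphi^\beta \partial_i \Phi^t(i)[\widehat\imath]\, h \bigr\}
= \sum_{\alpha_1+\alpha_2+\alpha_3+\alpha_4 = \alpha} C_{\alpha_1,\ldots,\alpha_4}\, \partial_\vphi^{\beta+\alpha_1} \bigl( \partial_i \Phi^t(i)\bigr)[\partial_\vphi^{\alpha_2}\widehat\imath,\partial_\vphi^{\alpha_3}i]\, \partial_\vphi^{\alpha_4} h,
\]
where the dependence of $\partial_i\Phi^t(i)$ on $\vphi$ is both explicit through the coefficient $a(\vphi,x)$ and implicit through $i(\vphi)$. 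Each summand is handled by differentiating the Duhamel formula \eqref{Duhamel prop 9.6} applied to the derivative equation (obtained by differentiating \eqref{flow-propagator} with respect to $i$), and then invoking Lemma \ref{dipendenza liscia dai parametri} once more on each factor. Because $h, i, \widehat\imath$ are smooth, all $\partial_\vphi^{\alpha_j}$ applied to them remain smooth, while the flow and its $i$-derivatives all map $H^s_x$ into $H^{s-r}_x$ with $r$ finite and fixed, so the total sum is in $H^{s-r}_x$ for every $s$, hence in $\bigcap_s H^s_x = {\cal C}^\infty_x$.

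Combining spatial and temporal regularity: for every $\alpha \in \N^\nu$ the function $\partial_\vphi^\alpha(\partial_\vphi^\beta \partial_i \Phi^t(i)[\widehat\imath]\,h)$ belongs to ${\cal C}^0(\T^\nu, {\cal C}^\infty_x)$, which together with smoothness in $x$ yields membership in ${\cal C}^\infty(\T^{\nu+1})$. The one point to check carefully is that the differentiability of $\fracchi \mapsto a(\lambda,i)$ into $H^s$ can be upgraded to differentiability of $\vphi \mapsto a(\vphi,\cdot,i(\vphi))$ as a map into $H^s_x$, but this follows from the hypothesis that $a$ is ${\cal C}^\infty$ in $(\vphi,x)$ together with smoothness of $i$. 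I do not expect any substantial obstacle here; the main (mild) bookkeeping point is simply tracking the loss of $1/2$-derivative per differentiation of the flow, which is harmless when working within ${\cal C}^\infty$.
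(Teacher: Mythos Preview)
Your approach is correct and matches the paper's, which simply states that the proof is analogous to that of Lemma \ref{lemma finitezza norme stime flusso} (invoking Lemma \ref{dipendenza liscia dai parametri} with the torus $i$ playing the role of the parameter $z$). One minor simplification: since $i$ and $\widehat\imath$ are fixed elements of the function space rather than being evaluated pointwise in $\vphi$, the operator $\partial_i\Phi^t(i)[\widehat\imath]$ depends on $\vphi$ only through the coefficients $a(\vphi,x)$ and $\partial_i a[\widehat\imath](\vphi,x)$, so the Leibniz expansion needs only two indices,
\[
\partial_\vphi^\alpha\bigl\{\partial_\vphi^\beta\partial_i\Phi^t(i)[\widehat\imath]\,h\bigr\}
=\sum_{\alpha_1+\alpha_2=\alpha}C_{\alpha_1,\alpha_2}\,\partial_\vphi^{\beta+\alpha_1}\bigl(\partial_i\Phi^t(i)[\widehat\imath]\bigr)[\partial_\vphi^{\alpha_2}h],
\]
exactly as in that lemma; your four-index version with $\partial_\vphi^{\alpha_3}i$ introduces spurious terms but does not affect the conclusion.
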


\begin{proposition}\label{derivata del flusso rispetto al toro}
Let $s_1 > s_0$  and  assume  the condition 
\begin{equation}\label{piccolezza a derivate i D sinistra}
 \| a \|_{2 s_0 + \frac{\beta_0  + 1}{2} + 3} \leq \delta (s_1)  \,, 
 \qquad \| a \|_{s_1 + s_0  + 3 + \frac32 \beta_0 } \leq 1
\end{equation}
for $ \d (s_1) > 0$ small enough. Then, for all 
$ \beta  \in \N^\nu$ with $ | \b | \leq \b_0 $, for all $ s \in [s_0, s_1] $  
\begin{align}
\| \langle D \rangle^{- \frac{|\beta |  + 1}{2}}   \partial_\vphi^\beta \big( \partial_i \Phi(t)[\hat \imath] \big) h  \|_s & 
\leq_s  \| \partial_i a [\hat \imath] \|_{ s + \frac32 |\beta|  + \frac12} \| h \|_s  
\label{derivate i omega vphi flusso} \\
\| \langle D \rangle^{- \frac{|\beta |  + 1}{2} - 1}   \partial_\vphi^\beta \big( \partial_i \Phi(t)[\hat \imath] \big) \langle D \rangle h  \|_s & 
\leq_s  \| \partial_i a [\hat \imath] \|_{ s + \frac32 |\beta| + \frac32} \| h \|_s \, . 
\label{derivate i omega vphi flusso + D destra}
\end{align}
\end{proposition}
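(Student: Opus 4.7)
\medskip

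\noindent
\textbf{Proof plan.} The starting point is to differentiate the flow equation \eqref{flow-propagator} with respect to the torus $i$ in the direction $\widehat\imath$. Since the only $i$-dependence in \eqref{pseudo PDE} sits in the coefficient $a$, the operator $\partial_i\Phi(t)[\widehat\imath]$ solves the inhomogeneous linear equation
\begin{equation*}
\partial_t\bigl(\partial_i\Phi(t)[\widehat\imath]\bigr)=\ii\, a(\vphi,x)\,|D|^{1/2}\,\partial_i\Phi(t)[\widehat\imath]
+\ii\bigl(\partial_i a[\widehat\imath]\bigr)\,|D|^{1/2}\,\Phi(t),
\qquad \partial_i\Phi(0)[\widehat\imath]=0.
\end{equation*}
By Duhamel's principle one obtains the integral representation
\begin{equation*}
\partial_i\Phi(t)[\widehat\imath]=\ii\int_0^t \Phi(t-\tau)\,\bigl(\partial_i a[\widehat\imath]\bigr)\,|D|^{1/2}\,\Phi(\tau)\,d\tau,
\end{equation*}
which reduces the problem to estimating a time-integral of a triple product of operators: two flow operators (already controlled in Propositions \ref{Prop1-flow} and \ref{Teorema totale partial vphi beta k D beta k Phi}) separated by a zero-order multiplier and the Fourier multiplier $|D|^{1/2}$.

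\smallskip

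Next I would apply $\partial_\vphi^\beta$ to this formula and expand by the Leibniz rule, obtaining a finite sum indexed by triples $(\beta_1,\beta_2,\beta_3)\in(\mathbb N^\nu)^3$ with $\beta_1+\beta_2+\beta_3=\beta$, of terms of the form
\begin{equation*}
\int_0^t\partial_\vphi^{\beta_1}\Phi(t-\tau)\,\cdot\,\ii\bigl(\partial_\vphi^{\beta_2}\partial_i a[\widehat\imath]\bigr)\,|D|^{1/2}\,\cdot\,\partial_\vphi^{\beta_3}\Phi(\tau)\,d\tau.
\end{equation*}
To bring in the weight $\langle D\rangle^{-(|\beta|+1)/2}$ on the left, I insert, for each term, the identities $\langle D\rangle^{s_1}\langle D\rangle^{-s_1}$ and $\langle D\rangle^{s_2}\langle D\rangle^{-s_2}$ between the three factors, with the exponents chosen so that Propositions \ref{Prop1-flow} and \ref{Teorema totale partial vphi beta k D beta k Phi} apply directly: take $s_2=|\beta_3|/2$, which makes $\partial_\vphi^{\beta_3}\Phi(\tau)\,\langle D\rangle^{-|\beta_3|/2}$ tame by \eqref{copenaghen B omega}, and take $s_1=(|\beta_2|+|\beta_3|+1)/2$, so that $\langle D\rangle^{-(|\beta|+1)/2}\partial_\vphi^{\beta_1}\Phi(t-\tau)\langle D\rangle^{s_1}$ has the form $\langle D\rangle^{-|\beta_1|/2}\partial_\vphi^{\beta_1}\Phi(t-\tau)$ for which the tame estimate \eqref{D - 1/2 D vphi flusso N} is available. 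The remaining middle factor
\begin{equation*}
\langle D\rangle^{-(|\beta_2|+|\beta_3|+1)/2}\bigl(\partial_\vphi^{\beta_2}\partial_i a[\widehat\imath]\bigr)|D|^{1/2}\langle D\rangle^{|\beta_3|/2}
\end{equation*}
is a pseudo-differential operator of order $-|\beta_2|/2\leq 0$ whose symbol norms are controlled by Sobolev norms of $\partial_\vphi^{\beta_2}\partial_ia[\widehat\imath]$ via Lemma \ref{lemma composizione multiplier}.

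\smallskip

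The three resulting tame factors are then composed using Lemma \ref{lemma: action Sobolev} and the tame product estimate, yielding \eqref{derivate i omega vphi flusso} with loss $\|\partial_ia[\widehat\imath]\|_{s+(3/2)|\beta|+1/2}$ (and an additional contribution involving $\|a\|_{s+s_0+\cdots}$, which is absorbed by the smallness assumption \eqref{piccolezza a derivate i D sinistra}, exactly as in the proofs of the previous propositions of this appendix). The estimate \eqref{derivate i omega vphi flusso + D destra} is obtained by the same procedure, this time inserting an extra $\langle D\rangle^{-1}\langle D\rangle$ pair so that the right factor takes the form $\langle D\rangle^{-|\beta_3|/2-1}\partial_\vphi^{\beta_3}\Phi(\tau)\langle D\rangle$, which is controlled by \eqref{D - 1/2 D vphi flusso N + Dx}; the price is one extra derivative of $a$ in the corresponding Sobolev norm, matching the index $s+(3/2)|\beta|+3/2$ in the right-hand side.

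\smallskip

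The main obstacle I anticipate is bookkeeping rather than analysis: one must verify that the Sobolev indices at which $a$ and $\partial_ia[\widehat\imath]$ are measured never exceed the thresholds in the smallness condition \eqref{piccolezza a derivate i D sinistra}, and that the interpolation/splitting of $\langle D\rangle$-weights among the three factors is consistent for every multi-index $(\beta_1,\beta_2,\beta_3)$ with $\beta_1+\beta_2+\beta_3=\beta$ and every $|\beta|\leq\beta_0$. Since Lemma \ref{lemma finitezza norme stime flusso derivate i} guarantees that $\partial_\vphi^\beta\partial_i\Phi(t)[\widehat\imath]h$ is smooth for smooth data, all manipulations above are justified on a dense subspace, and the final estimates extend by density to $h\in H^s$.
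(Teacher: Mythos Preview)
Your Duhamel formula and the three–way Leibniz expansion are correct, but the weight bookkeeping for the first factor breaks down. With $s_1=(|\beta_2|+|\beta_3|+1)/2$ the operator
\[
\langle D\rangle^{-\frac{|\beta|+1}{2}}\,\partial_\vphi^{\beta_1}\Phi(t-\tau)\,\langle D\rangle^{s_1}
\]
is \emph{not} of the form $\langle D\rangle^{-|\beta_1|/2}\partial_\vphi^{\beta_1}\Phi(t-\tau)$: the right weight $\langle D\rangle^{s_1}$ does not commute through $\partial_\vphi^{\beta_1}\Phi$ to cancel against the extra negative powers on the left. What you actually need is a tame bound for $\langle D\rangle^{-|\beta_1|/2-s_1}\partial_\vphi^{\beta_1}\Phi\,\langle D\rangle^{s_1}$ with $s_1$ possibly larger than~$1$; estimate \eqref{D - 1/2 D vphi flusso N} only gives $s_1=0$ and \eqref{D - 1/2 D vphi flusso N + Dx} gives $s_1=1$, so for $|\beta_2|+|\beta_3|\geq 2$ (which occurs as soon as $|\beta|\geq 2$) your cited estimate does not apply. (There is also a left/right mismatch in your description of the third factor: your insertion places $\langle D\rangle^{-s_2}$ to the \emph{left} of $\partial_\vphi^{\beta_3}\Phi$, so the relevant bound is \eqref{D - 1/2 D vphi flusso N}, not \eqref{copenaghen B omega}.)

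The paper avoids this obstacle by organising the Duhamel principle differently: instead of differentiating the integral formula for $\partial_i\Phi[\widehat\imath]$, it applies Duhamel to the equation satisfied by $\partial_\vphi^{\beta}\partial_i\Phi[\widehat\imath]$ itself, obtaining
\[
\partial_\vphi^{\beta}\partial_i\Phi(t)[\widehat\imath]=\int_0^t\Phi(t-\tau)\,F_\beta(\tau)\,d\tau
\]
with the \emph{undifferentiated} flow $\Phi(t-\tau)$ on the left. Then the conjugation estimate \eqref{prima proprieta flusso} for $\langle D\rangle^{-n/2}\Phi\,\langle D\rangle^{n/2}$, valid for arbitrary $n$, controls the first factor directly. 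The forcing $F_\beta$ splits into a term $F_\beta^{(2)}$ containing $(\partial_\vphi^{\beta_2}\partial_i a)|D|^{1/2}\partial_\vphi^{\beta_1}\Phi$ (handled via \eqref{D - 1/2 D vphi flusso N + Dx}) and a term $F_\beta^{(1)}$ containing $(\partial_\vphi^{\beta_2}a)|D|^{1/2}\partial_\vphi^{\beta_1}\partial_i\Phi[\widehat\imath]$ with $|\beta_1|<|\beta|$, which forces the argument to proceed by induction on $|\beta|$.
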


\begin{proof}
We prove \eqref{derivate i omega vphi flusso + D destra}. The proof of \eqref{derivate i omega vphi flusso} is similar. We take $h, \widehat \imath$ in ${\mathcal C}^\infty$ with respect to $\vphi$ and $x$, so that $\langle D \rangle^{- \frac{|\beta |  + 1}{2} - 1}   \partial_\vphi^\beta \big( \partial_i \Phi(t)[\hat \imath] \big) \langle D \rangle h$ is ${\mathcal C}^\infty$.  
Differentiating  \eqref{flow-propagator} and using  Duhamel principle we get 
\begin{equation}\label{ancora una volta Duhamel derivate i A}
  \partial_\vphi^{\beta} \partial_i \Phi [\widehat \imath]  = \int_0^t \Phi(t - \tau) F_{\beta}(\tau)\, d \tau\,,
\quad F_{\beta} := F_{\beta}^{(1)} + F_{\beta}^{(2)}  
\end{equation}
where
\begin{align}
& F_{\beta}^{(1)}(\tau) :=  \sum_{
\beta_1 + \beta_2 = \beta,  |\b_1| < |\beta|} 
C(\beta_1, \beta_2) ( \partial_\vphi^{\beta_2} a) |D|^{\frac12} 
\partial_\vphi^{\beta_1} \partial_i \Phi [\widehat \imath](\tau) \label{ancora una volta 2 A}\\
& F_{\beta}^{(2)}(\tau) :=  \sum_{\beta_1 + \beta_2 = \beta} 
C(\beta_1, \beta_2) (  \partial_\vphi^{\beta_2}  \partial_i a [\widehat \imath]) |D|^{\frac12} 
 \partial_\vphi^{\beta_1}  \Phi(\tau)\,. \label{ancora una volta 3 A}
\end{align}
We argue by induction on $  \b $. The proof of \eqref{derivate i omega vphi flusso + D destra} 
for $  \b = 0 $ follows as a particular case of the  estimate below for the term in \eqref{ancora una volta 3 A}.
\\[1mm]
{\sc Estimate of \eqref{ancora una volta 2 A}.} For any $\beta_1 + \beta_2 = \beta $, $  |\b_1| < |\beta| $ we have 
\begin{align}
& \langle D \rangle^{- \frac{|\beta|   + 1}{2} - 1}\Phi(t - \tau)( \partial_\vphi^{\beta_2}  a) 
|D|^{\frac12}   \partial_\vphi^{\beta_1} \partial_i \Phi [\widehat \imath](\tau) \langle D \rangle \nonumber\\
& = \big( \langle D \rangle^{- \frac{|\beta| + 1}{2} - 1}\Phi(t - \tau) \langle D \rangle^{ \frac{|\beta|   + 1}{2} + 1} \big)
\big( \langle D \rangle^{- \frac{|\beta|   + 1}{2} - 1} (  \partial_\vphi^{\beta_2} a) 
\langle D \rangle^{ \frac{|\beta|   + 1}{2} + 1} \big) \nonumber\\
& \qquad |D|^{\frac12} \langle D \rangle^{- \frac12} \langle D \rangle^{- \frac{|\beta|    }{2} - 1}
 \partial_\vphi^{\beta_1} \partial_i \Phi [\widehat \imath](\tau) \langle D \rangle\,. \label{varese 14 A}
\end{align}
By \eqref{prima proprieta flusso}, $ s_0 \leq s \leq s_1 $, \eqref{piccolezza a derivate i D sinistra} one has 
\be
\begin{aligned}
\| \langle D \rangle^{- \frac{|\beta| + 1}{2} - 1} \Phi(t - \tau)  \langle D \rangle^{\frac{|\beta|   + 1}{2} + 1} h  \|_s 
&  \leq_s \|  h\|_s + \| a \|_{s + s_0 + 2 + \frac{|\beta|  + 1}{2} + 1} \| h \|_{s_0} \\
& \leq_s \|  h \|_s\,. \label{varese 1 A}
 \end{aligned}
\ee
Lemma \ref{lemma stime Ck parametri}, \ref{lemma composizione multiplier}, 
and  \eqref{norma a moltiplicazione}, \eqref{Norm Fourier multiplier}, imply
\be
\begin{aligned}
 \norma \langle D \rangle^{- \frac{|\beta|   + 1}{2} - 1} (  \partial_\vphi^{\beta_2} a)  
 \langle D \rangle^{ \frac{|\beta|   + 1}{2} + 1}  \norma_{0, s, 0} &
 \leq_s   \|  \partial_\vphi^{\beta_2} a\|_{s +  \frac{|\beta|  + 1}{2} + 1 }  \label{varese 15 A} \\
&  \leq_s \| a \|_{s + \frac32 |\beta|  + \frac32 } \stackrel{s \leq s_1\,,\,
 \eqref{piccolezza a derivate i D sinistra}}{\leq_s} 1 \,. 
 \end{aligned}
\ee
Since $ |\beta_1| < |\beta|$ the inductive hyphothesis implies 
\begin{align}
 \| |D|^{\frac12} \langle D \rangle^{- \frac12} \langle D \rangle^{- \frac{|\beta|   }{2} - 1}  
 \partial_\vphi^{\beta_1} \partial_i \Phi [\widehat \imath](\tau) \langle D \rangle h \|_s & \leq_s  \| \langle D \rangle^{- \frac{|\beta_1|   + 1 }{2} - 1}  \partial_\vphi^{\beta_1} \partial_i \Phi [\widehat \imath](\tau) \langle D \rangle h \|_s  \nonumber\\
 & \leq_s  \| \partial_i a [\widehat \imath]\|_{s + \frac32 |\beta |  + \frac32 } \| h \|_s\,.  \label{varese 16 A}
\end{align}
Then \eqref{ancora una volta 2 A},  \eqref{varese 14 A}, \eqref{varese 1 A}, \eqref{varese 15 A}, \eqref{varese 16 A} imply
\begin{equation}\label{varese 17 A}
\| \langle D \rangle^{- \frac{|\beta|   + 1}{2} - 1} \Phi(t - \tau) F_{\beta}^{(1)}(\tau) \langle D \rangle h \|_s \leq_s \| \partial_i a [\widehat \imath]\|_{s + \frac32 |\beta|  + \frac32}\| h \|_s\,.
\end{equation}
{\sc Estimate of \eqref{ancora una volta 3 A}.}
For any $\beta_1 + \beta_2 = \beta $, $t, \tau \in [0, 1]$, we have 
\begin{align}
& \langle D \rangle^{- \frac{|\beta|  + 1}{2} - 1} \Phi(t - \tau) (\partial_\vphi^{\beta_2} \partial_i a [\widehat \imath]) |D|^{\frac12}   \partial_\vphi^{\beta_1}  \Phi(\tau) \langle D \rangle =  \nonumber\\
&  \big( \langle D \rangle^{- \frac{|\beta| + 1}{2} - 1} \Phi(t - \tau)  \langle D \rangle^{\frac{|\beta|  + 1}{2} + 1} \big)
\big( \langle D \rangle^{- \frac{|\beta|   + 1}{2} - 1} (  \partial_\vphi^{\beta_2} \partial_i a [\widehat \imath]) \langle D \rangle^{ \frac{|\beta|  + 1}{2} + 1}\big)  \nonumber\\
&  |D|^{\frac12}\langle D \rangle^{- \frac12} \langle D \rangle^{- \frac{|\beta|   }{2} - 1} 
 \partial_\vphi^{\beta_1}  \Phi(\tau) \langle D \rangle \,. \label{varese 0 A}
\end{align}
 Lemma \ref{lemma stime Ck parametri}, \ref{lemma composizione multiplier}, 
and  \eqref{norma a moltiplicazione}, \eqref{Norm Fourier multiplier}, imply (as for \eqref{varese 15 A})
\begin{align}
\norma \langle D \rangle^{- \frac{|\beta|  + 1}{2} - 1} ( \partial_\vphi^{\beta_2}  \partial_i a [\widehat \imath]) \langle D \rangle^{ \frac{|\beta|   + 1}{2} + 1}  \norma_{0, s, 0} 
&  \leq_s  \| \partial_i a[\widehat \imath] \|_{s + \frac{3}{2}|\beta| + \frac32}\,. \label{varese 2 A}
\end{align}
By \eqref{D - 1/2 D vphi flusso N + Dx}, $ s_0 \leq s \leq s_1 $,  and  \eqref{piccolezza a derivate i D sinistra} we get  
\begin{align}
\| |D|^{\frac12}\langle D \rangle^{- \frac12} \langle D \rangle^{- \frac{|\beta| }{2} - 1}   
\partial_\vphi^{\beta_1}\Phi(\tau) \langle D \rangle h \|_s \leq_s  \| h \|_s\,. \label{varese 3 A}
\end{align}
Finally  \eqref{ancora una volta 3 A}, \eqref{varese 0 A}, \eqref{varese 1 A}, \eqref{varese 2 A}, \eqref{varese 3 A} imply 
\begin{equation}\label{varese 20 A}
\| \langle D \rangle^{- \frac{|\beta|   + 1}{2} - 1}\Phi(t - \tau ) F_{\beta}^{(2)}(\tau) \langle D \rangle h \|_s 
\leq_s  \| \partial_i a [\widehat \imath] \|_{s + \frac32 |\beta|  + \frac32} \| h \|_s\,. 
\end{equation}
In conclusion the estimate \eqref{derivate i omega vphi flusso + D destra}  follows by \eqref{varese 17 A}, \eqref{varese 20 A}. If $h \in H^s$, $\widehat \imath \in {\mathcal Y}^{s + \frac32 |\beta| + \frac32 + \sigma}$, then  \eqref{derivate i omega vphi flusso + D destra} follows by density. 
\end{proof}

\begin{proposition}\label{derivate i vphi omega flusso D destra}
Let $s_1 > s_0$ and  assume 
\begin{equation}\label{piccolezza a derivate i vphi omega}
 \| a \|_{ s_1 + s_0 + \frac52 + \beta_0 } \leq 1\, ,
\quad \| a\|_{ s_1 + s_0 + \beta_0 + 1} \leq \delta(s_1)\,, 
\end{equation}
for some $ \d (s_1) > 0 $ small. Then for all $|\beta| \leq \beta_0$, 
\begin{align}\label{stima norme unite derivate i Phi D destra}
 \|  \partial_\vphi^\beta  \partial_i \Phi[\widehat \imath] \langle D \rangle^{- \frac{|\beta|  + 1}{2}} h \|_s 
 & \leq_s  \| \partial_i a [\widehat \imath] \|_{s + s_0 + \frac12 + |\beta|} \| h \|_s\,, \quad \forall s \in [0, s_1]\,, \\
\label{stima norme unite derivate i Phi D destra + D}
\|\langle D \rangle  \partial_\vphi^\beta \partial_i \Phi[\widehat \imath] \langle D \rangle^{- \frac{|\beta|  + 1}{2} - 1} h \|_s 
& \leq_s  \| \partial_i a [\widehat \imath] \|_{s + s_0 + \frac32 + |\beta|} \| h \|_s\,, \quad \forall s \in [0, s_1 - 1]\, .
\end{align}
\end{proposition}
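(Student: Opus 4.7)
I would mirror the strategy of Proposition \ref{derivata del flusso rispetto al toro} but place the smoothing on the right of the operator. The two estimates \eqref{stima norme unite derivate i Phi D destra} and \eqref{stima norme unite derivate i Phi D destra + D} would be proved simultaneously by induction on $|\beta|$, and, inside each induction step, by splitting the norm as in \eqref{split norma s + 1} and then arguing by induction on the Sobolev index $s$ exactly as in the proof of Proposition \ref{Teorema totale partial vphi beta k D beta k Phi}; the base case $|\beta|=0$ comes from \eqref{stima flusso - identita} applied to $\partial_i a[\widehat\imath]$, or directly from the Duhamel formula written below.

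The core of the inductive step is to start from the Duhamel representation \eqref{ancora una volta Duhamel derivate i A}--\eqref{ancora una volta 3 A} and compose on the right with $\langle D\rangle^{-(|\beta|+1)/2}$, obtaining sums of operators of the two types
\[
\Phi(t-\tau)\,(\partial_\vphi^{\beta_2}a)\,|D|^{\frac12}\,\partial_\vphi^{\beta_1}\partial_i\Phi[\widehat\imath](\tau)\,\langle D\rangle^{-\frac{|\beta|+1}{2}},\qquad |\beta_1|<|\beta|,
\]
\[
\Phi(t-\tau)\,(\partial_\vphi^{\beta_2}\partial_i a[\widehat\imath])\,|D|^{\frac12}\,\partial_\vphi^{\beta_1}\Phi(\tau)\,\langle D\rangle^{-\frac{|\beta|+1}{2}}.
\]
For the first family I would insert $\langle D\rangle^{-(|\beta_1|+1)/2}\langle D\rangle^{(|\beta_1|+1)/2}$ between the two $\Phi$-factors and write
\[
\partial_\vphi^{\beta_1}\partial_i\Phi[\widehat\imath](\tau)\,\langle D\rangle^{-\frac{|\beta|+1}{2}}
= \Bigl(\partial_\vphi^{\beta_1}\partial_i\Phi[\widehat\imath](\tau)\,\langle D\rangle^{-\frac{|\beta_1|+1}{2}}\Bigr)\,\langle D\rangle^{-\frac{|\beta_2|}{2}},
\]
estimating the bracketed factor by the inductive hypothesis \eqref{stima norme unite derivate i Phi D destra} and absorbing $|D|^{\frac12}\langle D\rangle^{-|\beta_2|/2}$, of order $\tfrac12-|\beta_2|/2\le 0$ (since $|\beta_2|\ge 1$), as a Fourier multiplier. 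For the second family I would write instead
\[
\partial_\vphi^{\beta_1}\Phi(\tau)\,\langle D\rangle^{-\frac{|\beta|+1}{2}}
= \Bigl(\partial_\vphi^{\beta_1}\Phi(\tau)\,\langle D\rangle^{-\frac{|\beta_1|}{2}}\Bigr)\,\langle D\rangle^{-\frac{|\beta_2|+1}{2}},
\]
controlling the bracketed factor by \eqref{copenaghen A omega}--\eqref{copenaghen B omega} of Proposition \ref{Teorema totale partial vphi beta k D beta k Phi} and absorbing $|D|^{\frac12}\langle D\rangle^{-(|\beta_2|+1)/2}$, of order $-|\beta_2|/2\le 0$, as a Fourier multiplier. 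The outermost operator $\Phi(t-\tau)$ is handled by Proposition \ref{Prop1-flow}, and the multiplications by $\partial_\vphi^{\beta_2}a$ and $\partial_\vphi^{\beta_2}\partial_i a[\widehat\imath]$ by the tame product estimate \eqref{interpolazione senza C k0}, whose tails in $\|a\|_{...}$ are uniformly bounded thanks to \eqref{piccolezza a derivate i vphi omega}. Matching the Sobolev indices gives exactly the loss $s+s_0+\tfrac12+|\beta|$ on $\partial_i a[\widehat\imath]$ in \eqref{stima norme unite derivate i Phi D destra}.

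The estimate \eqref{stima norme unite derivate i Phi D destra + D} is obtained by the identical scheme, with an additional $\langle D\rangle$ on the left of the whole expression; at the key step Proposition \ref{Teorema totale partial vphi beta k D beta k Phi}'s bounds \eqref{copenaghen A omega con D}--\eqref{copenaghen B omega con D} and Proposition \ref{derivata del flusso rispetto al toro}'s \eqref{derivate i omega vphi flusso + D destra} replace \eqref{copenaghen A omega}--\eqref{copenaghen B omega} and \eqref{stima norme unite derivate i Phi D destra}, respectively. The bounds for $h\in H^s$ follow from those for $h\in{\cal C}^\infty$ by density, using Lemma \ref{lemma finitezza norme stime flusso derivate i}.

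The main obstacle is purely bookkeeping: one must carefully balance the powers of $\langle D\rangle$ so that (i) the Fourier multipliers left over after each insertion are of order $\le 0$, and (ii) the tame product estimates produce exactly the Sobolev indices $s+s_0+\frac12+|\beta|$ (resp.\ $s+s_0+\frac32+|\beta|$) claimed on the right-hand sides. As in the proof of Proposition \ref{derivata del flusso rispetto al toro}, the term in which the operator reappears on the right-hand side (coming from the $F_\beta^{(2)}$ piece with $\beta_2=0$) is absorbed by the smallness condition \eqref{piccolezza a derivate i vphi omega}; the thresholds $s_1+s_0+\frac52+\beta_0$ and $s_1+s_0+\beta_0+1$ have been chosen precisely so that the worst Sobolev norms of $a$ encountered during the argument (including those coming from the $\Phi$-estimates on the two sides and from the commutation of $|D|^{1/2}$) are dominated by these quantities.
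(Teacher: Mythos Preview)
Your high-level plan (double induction on $|\beta|$ and $s$, norm splitting, Duhamel) is the paper's, but the concrete execution has a gap.

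In both families you write the composite as $|D|^{\frac12}\,T\,\langle D\rangle^{-m/2}$ (with $T$ the bracketed operator) and claim the outer pieces combine into ``a Fourier multiplier of order $\le 0$''. They sit on opposite sides of $T$, which is not a Fourier multiplier, and cannot be merged; the honest bound is $\||D|^{1/2}T\langle D\rangle^{-m/2}h\|_s\le\|T\langle D\rangle^{-m/2}h\|_{s+1/2}$, which invokes the boundedness of $T$ at index $s+\tfrac12$. For the second family this is harmless (Proposition~\ref{Teorema totale partial vphi beta k D beta k Phi} holds for all $s$), but for the first family it forces the inductive hypothesis on $|\beta_1|<|\beta|$ at $s+\tfrac12$; since that hypothesis is only claimed on $[0,s_1]$, you lose half a derivative at the top of the range at every step of the $|\beta|$-induction, so the argument as written does not deliver the full interval $[0,s_1]$. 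Relatedly, the Duhamel formula \eqref{ancora una volta Duhamel derivate i A}--\eqref{ancora una volta 3 A} you cite is at order $\beta$: its first family has $|\beta_1|<|\beta|$ only and its second family involves $\partial_\vphi^{\beta_1}\Phi$, not $\partial_i\Phi$, so there is \emph{no} self-referencing term --- your ``$F_\beta^{(2)}$ piece with $\beta_2=0$'' is $(\partial_i a[\widehat\imath])\,|D|^{1/2}\partial_\vphi^{\beta}\Phi$ and needs no absorption.

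The paper avoids the index shift differently. It first records the sectional bound $\|\partial_\vphi^\beta\partial_i\Phi[\widehat\imath]\,\langle D\rangle^{-(|\beta|+1)/2}h\|_{L^2_\vphi H^s_x}$ (the auxiliary lemma just before the proof), which feeds the $L^2_\vphi H^{s+1}_x$ half of the splitting. Then, for the $H^{s+1}_\vphi L^2_x$ half, it differentiates once in $\vphi$ and is led to Duhamel for $\partial_\vphi^{\beta+\alpha}\partial_i\Phi$ with $|\alpha|=1$. \emph{That} formula does have a family with $|\beta_1|=|\beta|$ (the paper's $F_\beta^{(1)}$ in \eqref{bruxelles A}), and it is that family which reproduces the unknown $\sup_{|\beta|=n}\|\partial_\vphi^\beta\partial_i\Phi[\widehat\imath]\,\langle D\rangle^{-(|\beta|+1)/2}h\|_{s+1}$ on the right and is absorbed via the smallness $\|a\|_{s+s_0+|\beta|+1}\le\delta(s_1)$; the inductive hypothesis on $|\beta_1|<|\beta|$ is then used at the very index $s+1$ being established (see \eqref{bruxelles 4}), so no shift occurs and the range $[0,s_1]$ is preserved.
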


We first provide the estimate in $\| \cdot \|_{L^2_\vphi H^s_x}$ for all $s \in [0, s_1]$. 
\begin{lemma}
Assume \eqref{piccolezza a derivate i vphi omega}.
Then for all $\vphi \in \T^\nu$, the following estimate holds
\be\label{derivate in sezione H_x}
\|   \partial_\vphi^\beta \partial_i \Phi[\widehat \imath] \langle D \rangle^{- \frac{|\beta|  + 1}{2}} h \|_{H^s_x} \leq_s  \| \partial_i a [\widehat \imath]\|_{s + s_0 + \frac12 + |\beta| } \| h \|_{H^s_x}\,, \quad \forall s\in [0, s_1]\,.
\ee
\end{lemma}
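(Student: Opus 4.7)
The plan is to argue by induction on $|\beta|$ and essentially reuse the strategy of Proposition \ref{derivata del flusso rispetto al toro}, but working in the pure space Sobolev norm $\|\cdot\|_{H^s_x}$ at fixed $\vphi\in\T^\nu$ (and without any $\lambda$-weights). The starting point is the Duhamel representation obtained by differentiating \eqref{flow-propagator} in $i$:
\[
\partial_i \Phi(t)[\widehat\imath]\;=\;\int_0^t \Phi(t-\tau)\,\ii\,\partial_i a[\widehat\imath]\,|D|^{1/2}\,\Phi(\tau)\,d\tau,
\]
which mirrors \eqref{ancora una volta Duhamel derivate i A}. Differentiating $\beta$-times in $\vphi$ via Leibniz (and using the analogous Duhamel formula for $\partial_\vphi^{\beta_1}\Phi$) gives
\[
\partial_\vphi^\beta \partial_i\Phi(t)[\widehat\imath]\;=\;\int_0^t \Phi(t-\tau)\,\bigl(F_\beta^{(1)}(\tau)+F_\beta^{(2)}(\tau)\bigr)\,d\tau
\]
where $F_\beta^{(1)}$ collects the summands with $|\beta_1|<|\beta|$ of the form $(\partial_\vphi^{\beta_2}a)|D|^{1/2}\partial_\vphi^{\beta_1}\partial_i\Phi[\widehat\imath](\tau)$ (to which the inductive hypothesis applies), while $F_\beta^{(2)}$ collects the summands $(\partial_\vphi^{\beta_2}\partial_i a[\widehat\imath])|D|^{1/2}\partial_\vphi^{\beta_1}\Phi(\tau)$. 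The base case $\beta=0$ is itself a particular instance of the $F_\beta^{(2)}$-type estimate.

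For each summand I would insert the factor $\langle D\rangle^{-\frac{|\beta|+1}{2}}$ on the right and split the resulting operator as a product of three factors exactly as in \eqref{varese 14 A} and \eqref{varese 0 A}:
\[
\Bigl(\langle D\rangle^{-\frac{|\beta|+1}{2}}\Phi(t-\tau)\langle D\rangle^{\frac{|\beta|+1}{2}}\Bigr)\Bigl(\langle D\rangle^{-\frac{|\beta|+1}{2}}(\partial_\vphi^{\beta_2}b)\langle D\rangle^{\frac{|\beta|+1}{2}}\Bigr)\Bigl(|D|^{1/2}\langle D\rangle^{-\frac12}\langle D\rangle^{-\frac{|\beta_1|}{2}}\partial_\vphi^{\beta_1}\Phi_\star(\tau)\langle D\rangle^{-\frac{|\beta_2|}{2}}\Bigr),
\]
with $b\in\{a,\partial_i a[\widehat\imath]\}$ and $\Phi_\star\in\{\Phi,\partial_i\Phi[\widehat\imath]\}$. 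The three factors are then estimated separately in $H^s_x$: the outer conjugated flow is controlled by \eqref{prima proprieta flusso} (which requires only $\|a\|_{s_0+\frac{|\beta|+3}{2}}\leq\delta$, hence follows from \eqref{piccolezza a derivate i vphi omega}); the middle multiplication operator conjugated by $\langle D\rangle^{\pm\frac{|\beta|+1}{2}}$ is a pseudo-differential operator of order $0$ whose action on $H^s_x$ is bounded by $\|b\|_{\mathcal C^{s+\frac{|\beta|+1}{2}+\frac12}}\lesssim_s \|b\|_{s+s_0+\frac{|\beta|}{2}+1}$ via Lemmata \ref{lemma composizione multiplier}--\ref{lemma: action Sobolev}; the innermost factor is controlled, for $F_\beta^{(2)}$, by \eqref{D - 1/2 D vphi flusso N} of Proposition \ref{lemma:tame derivate flusso} applied to $\partial_\vphi^{\beta_1}\Phi(\tau)$, and, for $F_\beta^{(1)}$, by the inductive hypothesis for $\partial_\vphi^{\beta_1}\partial_i\Phi[\widehat\imath]$ with $|\beta_1|<|\beta|$. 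Collecting all factors yields, after using $\|a\|_{s+s_0+\cdots}\leq 1$ from \eqref{piccolezza a derivate i vphi omega} to absorb unwanted high norms of $a$, the bound
\[
\|\partial_\vphi^\beta\partial_i\Phi(t)[\widehat\imath]\langle D\rangle^{-\frac{|\beta|+1}{2}}h\|_{H^s_x}\;\leq_s\;\|\partial_i a[\widehat\imath]\|_{s+s_0+\frac12+|\beta|}\,\|h\|_{H^s_x}
\]
for $\mathcal C^\infty$-data, and \eqref{derivate in sezione H_x} follows by density.

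The main obstacle is bookkeeping of derivative losses so that only $\|\partial_i a[\widehat\imath]\|_{s+s_0+\frac12+|\beta|}$ appears on the right: the subtle point is that in the summands of $F_\beta^{(1)}$ one must trade the extra factor $|D|^{1/2}$ hitting $\Phi_\star(\tau)=\partial_\vphi^{\beta_1}\partial_i\Phi[\widehat\imath]$ against the regularizing factor $\langle D\rangle^{-\frac{|\beta|+1}{2}}$, which works precisely because $|\beta_1|+1<|\beta|+1$ so that the inductive weight $\langle D\rangle^{-\frac{|\beta_1|+1}{2}}$ leaves a spare $\langle D\rangle^{-1/2}$ that compensates $|D|^{1/2}$; once this is accounted for, all estimates close up cleanly without any extra smallness condition beyond \eqref{piccolezza a derivate i vphi omega}. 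The estimate \eqref{stima norme unite derivate i Phi D destra} then follows from \eqref{derivate in sezione H_x} by integration in $\vphi$ and an inductive argument on the Sobolev index $s$ identical to the one used in the proof of Proposition \ref{Prop1-flow}, while \eqref{stima norme unite derivate i Phi D destra + D} is obtained by the same strategy after replacing $\langle D\rangle^{-\frac{|\beta|+1}{2}}$ with $\langle D\rangle^{-\frac{|\beta|+1}{2}-1}$ and inserting the extra $\langle D\rangle$ factor on the left, mimicking the passage from \eqref{copenaghen A omega}--\eqref{copenaghen B omega} to \eqref{copenaghen A omega con D}--\eqref{copenaghen B omega con D}.
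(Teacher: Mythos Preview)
Your induction on $|\beta|$ via the Duhamel representation \eqref{ancora una volta Duhamel derivate i A}--\eqref{ancora una volta 3 A} is the right skeleton, but the three-factor splitting you write down does not reproduce the operator in the lemma. Here the smoothing $\langle D\rangle^{-\frac{|\beta|+1}{2}}$ sits on the \emph{right} of $\partial_\vphi^\beta\partial_i\Phi[\widehat\imath]$, whereas your first factor $\langle D\rangle^{-\frac{|\beta|+1}{2}}\Phi(t-\tau)\langle D\rangle^{\frac{|\beta|+1}{2}}$ places a smoothing on the \emph{left} of the outer flow; if you multiply your three factors you do not recover $\Phi(t-\tau)(\partial_\vphi^{\beta_2}b)\,|D|^{1/2}\,\partial_\vphi^{\beta_1}\Phi_\star(\tau)\,\langle D\rangle^{-\frac{|\beta|+1}{2}}$. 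You have imported the decomposition of Proposition~\ref{derivata del flusso rispetto al toro} (formulas \eqref{varese 14 A}, \eqref{varese 0 A}), where the smoothing genuinely is on the left, and correspondingly your appeals to \eqref{prima proprieta flusso} and \eqref{D - 1/2 D vphi flusso N} --- both left-smoothing bounds --- are the wrong tools for this lemma.

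The paper's argument avoids conjugating the outer flow entirely and is simpler. Since \eqref{piccolezza a derivate i vphi omega} together with \eqref{stima flusso PDE s 0 1}--\eqref{stima tame Phi t} gives $\|\Phi(t-\tau)g\|_{H^s_x}\leq_s\|g\|_{H^s_x}$ for all $s\in[0,s_1]$, each summand is bounded directly by $\|b\|_{{\cal C}^{s+|\beta|}}\,\|\partial_\vphi^{\beta_1}\Phi_\star(\tau)\langle D\rangle^{-\frac{|\beta|+1}{2}}h\|_{H^{s+1/2}_x}$. One then splits the right-smoothing as $\langle D\rangle^{-\frac{|\beta|+1}{2}}=\langle D\rangle^{-\frac{|\beta_1|+1}{2}}\langle D\rangle^{-\frac{|\beta|-|\beta_1|}{2}}$ (for the $F_\beta^{(1)}$ terms) or $\langle D\rangle^{-\frac{|\beta_1|}{2}}\langle D\rangle^{-\frac{|\beta_2|+1}{2}}$ (for $F_\beta^{(2)}$) and applies, respectively, the inductive hypothesis or the right-smoothing flow bounds \eqref{partial beta partial k D Phi norme basse}--\eqref{partial beta partial k D Phi norme alte}; in either case the spare half-derivative on the right absorbs the $H^{s+1/2}_x$ loss and the estimate closes.
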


\begin{proof}
Let us suppose that $\widehat \imath$ and $h$ are ${\mathcal C}^\infty$. We argue by induction on $  \b $, supposing that we have already proved \eqref{derivate in sezione H_x}
for $  |\b_1 | < |\b| $. We use the integral representation of 
$  \partial_\vphi^{\beta}  \partial_i \Phi [\widehat \imath] $ in \eqref{ancora una volta Duhamel derivate i A}. 
For all  $ \beta_1 + \beta_2 = \beta $, $|\beta_1| < |\beta| $, $ t, \tau \in [0, 1] $, 
by \eqref{stima flusso PDE s 0 1}, \eqref{stima tame Phi t}, \eqref{piccolezza a derivate i vphi omega}, 
and  the inductive hyphothesis, 
\begin{align}
& \| \Phi(t - \tau) ( \partial_\vphi^{\beta_2}  a) |D|^{\frac12}  \partial_\vphi^{\beta_1} \partial_i \Phi [\widehat \imath] \langle D \rangle^{- \frac{|\beta|  + 1}{2}} h \|_{H^s_x}  \label{ancora una volta 6}  \\
& \leq_s \|  
 a\|_{{\mathcal C}^{s + |\beta| }} \|   \partial_\vphi^{\beta_1} \partial_i \Phi [\widehat \imath] \langle D \rangle^{- \frac{|\beta| + 1}{2}} h \|_{H^{s + \frac12}_x}    
\leq_s   \| \partial_i a [\widehat \imath]\|_{s + s_0 + \frac12 + |\beta| + 1  }\| h \|_{H^s_x} \, . 
\nonumber
\end{align}
Similarly, for all $\beta_1 + \beta_2 = \beta $, 
by \eqref{stima flusso PDE s 0 1}, \eqref{stima tame Phi t}, \eqref{piccolezza a derivate i vphi omega}
\begin{align}
& \| \Phi(t - \tau) (\partial_\vphi^{\beta_2}  \partial_i a [\widehat \imath]) |D|^{\frac12} 
 \partial_\vphi^{\beta_1}  \Phi(\tau) \langle D \rangle^{- \frac{|\beta| + 1}{2}} h \|_{H^s_x} \label{ancora una volta 4} \\
& \leq_s \| \partial_i a [\widehat \imath] \|_{{\mathcal C}^{s + |\beta| }} \| \partial_\vphi^{\beta_1}  \Phi(\tau) \langle D \rangle^{- \frac{|\beta|  + 1}{2}} h \|_{H^{s + \frac12}_x} \nonumber \\
&  \stackrel{\eqref{partial beta partial k D Phi norme basse}, 
 \eqref{partial beta partial k D Phi norme alte}, \eqref{piccolezza a derivate i vphi omega}}{\leq_s}
 \| \partial_i a [\widehat \imath] \|_{s + s_0 + |\beta| }  \|h \|_{H^{s}_x} \,. \nonumber
\end{align}
By \eqref{ancora una volta Duhamel derivate i A}, \eqref{ancora una volta 6}, \eqref{ancora una volta 4} 
we deduce \eqref{derivate in sezione H_x}. If $h \in H^s_x$ and 

\noindent
$\widehat \imath \in {\mathcal Y}^{s + s_0 + \frac12 + |\beta| + \sigma}$ it follows by density. 
\end{proof}

Then, integrating in $ \vphi $, we get the following corollary

\begin{lemma}
Let $s_1 > s_0$ and assume \eqref{piccolezza a derivate i vphi omega}.
Then for all $|\beta| \leq \beta_0$
\begin{equation}\label{stima partial i Phi D destra L2 vphi Hs x}
\begin{aligned}
&\!  \|  \partial_\vphi^\beta \partial_i \Phi [\widehat \imath] \langle D \rangle^{- \frac{|\beta|  + 1}{2}} h \|_{L^2_\vphi H^s_x} \\
& \leq_s \| \partial_i a [\widehat \imath] \|_{s + s_0 + \frac12 + |\beta| }^{k_0, \gamma} \| h \|_{L^2_\vphi H^s_x}\,, \quad \forall s \in [0, s_1]\, , 
\end{aligned}
\end{equation}
\begin{equation}\label{stima partial i Phi D destra L2 vphi Hs x +D}
\begin{aligned}
& 
\| \langle D \rangle  \partial_\vphi^\beta \partial_i \Phi [\widehat \imath] \langle D \rangle^{- \frac{|\beta|  + 1}{2} - 1} h \|_{L^2_\vphi H^s_x} \\
& \leq_s  \| \partial_i a [\widehat \imath] \|_{s + s_0 + \frac32 + |\beta|  } \| h \|_{L^2_\vphi H^s_x}\,, \forall s \in [0, s_1 - 1]\,.
\end{aligned}
\end{equation}
\end{lemma}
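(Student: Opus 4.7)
\medskip

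\noindent
\textbf{Proof plan.} The strategy is to upgrade the two pointwise-in-$\vphi$ bounds on $\partial_\vphi^\beta\partial_i\Phi[\widehat\imath]$ (with and without an extra factor of $\langle D\rangle$ on the left) to the mixed $L^2_\vphi H^s_x$ norm by direct integration in $\vphi$, exploiting that the right-hand sides only involve the full Sobolev norm $\|\partial_i a[\widehat\imath]\|_{\cdot}$, which does not depend on $\vphi$. In other words, everything reduces to having the correct uniform-in-$\vphi$ estimate and a Minkowski/Fubini step.

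For \eqref{stima partial i Phi D destra L2 vphi Hs x} the pointwise bound is already available as \eqref{derivate in sezione H_x} from the preceding lemma (whose hypothesis \eqref{piccolezza a derivate i D sinistra} is implied by \eqref{piccolezza a derivate i vphi omega} since $s_1>s_0$). Squaring \eqref{derivate in sezione H_x}, integrating over $\vphi\in\T^\nu$, and pulling the constant $\|\partial_i a[\widehat\imath]\|_{s+s_0+\tfrac12+|\beta|}$ out of the integral yields
$$
\|\partial_\vphi^\beta\partial_i\Phi[\widehat\imath]\,\langle D\rangle^{-\frac{|\beta|+1}{2}}h\|_{L^2_\vphi H^s_x}^2
\leq_s\|\partial_i a[\widehat\imath]\|_{s+s_0+\frac12+|\beta|}^2\int_{\T^\nu}\|h(\vphi,\cdot)\|_{H^s_x}^2\,d\vphi,
$$
and taking square roots gives \eqref{stima partial i Phi D destra L2 vphi Hs x}. (The superscript $k_0,\gamma$ plays no role here since no $\lambda$-derivative of $\partial_i\Phi$ is being estimated.)

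For \eqref{stima partial i Phi D destra L2 vphi Hs x +D} the corresponding pointwise-in-$\vphi$ estimate
$$
\|\langle D\rangle\,\partial_\vphi^\beta\partial_i\Phi(\vphi)[\widehat\imath]\,\langle D\rangle^{-\frac{|\beta|+1}{2}-1}h\|_{H^s_x}
\leq_s \|\partial_i a[\widehat\imath]\|_{s+s_0+\frac32+|\beta|}\,\|h\|_{H^s_x},\quad s\in[0,s_1-1],
$$
has to be established first, by the same induction on $|\beta|$ used in \eqref{derivate in sezione H_x}. Starting from the Duhamel formula \eqref{ancora una volta Duhamel derivate i A}--\eqref{ancora una volta 3 A}, one inserts the $\langle D\rangle$ factor on the left through the decomposition
$$
\langle D\rangle\Phi(t-\tau)(\cdots)\langle D\rangle^{-\frac{|\beta|+1}{2}-1}
=\bigl[\langle D\rangle\Phi(t-\tau)\langle D\rangle^{-1}\bigr]\,\langle D\rangle(\cdots)\langle D\rangle^{-\frac{|\beta|+1}{2}-1},
$$
and controls the conjugated flow $\langle D\rangle\Phi(t-\tau)\langle D\rangle^{-1}$ by \eqref{prima proprieta flusso} with $n=2$ (which is applicable under \eqref{piccolezza a derivate i vphi omega}), so that it acts as a bounded tame operator on $H^s_x$ costing only $\|a\|_{s+s_0+3}$. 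The remaining $\langle D\rangle$ is absorbed into the multiplication factor $\partial_\vphi^{\beta_2}a$ or $\partial_\vphi^{\beta_2}\partial_i a[\widehat\imath]$, producing exactly one extra derivative of regularity on $a$ (and on $\partial_i a[\widehat\imath]$) relative to \eqref{derivate in sezione H_x}; this matches the loss $\tfrac12\to\tfrac32$ in the statement. Once this pointwise estimate is in place, integrating its square over $\vphi$ as above delivers \eqref{stima partial i Phi D destra L2 vphi Hs x +D}.

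The main bookkeeping issue, and the only non-routine step, is verifying that inserting the extra $\langle D\rangle$ in the inductive proof produces exactly the loss $|\beta|\mapsto|\beta|+1$ in the $s_0+\tfrac12$-type exponent (and no worse), so that the smallness assumption \eqref{piccolezza a derivate i vphi omega} (which upgrades \eqref{piccolezza a derivate i D sinistra} by one derivative) is precisely what is needed to close the induction; after that, the integration in $\vphi$ is automatic.
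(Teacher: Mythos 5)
Your treatment of \eqref{stima partial i Phi D destra L2 vphi Hs x} is exactly the paper's: square the pointwise bound \eqref{derivate in sezione H_x}, integrate in $\vphi$, and observe that the $k_0,\gamma$ superscript is harmless. One small slip: the lemma giving \eqref{derivate in sezione H_x} is stated directly under \eqref{piccolezza a derivate i vphi omega}, so no comparison with \eqref{piccolezza a derivate i D sinistra} is needed (and in fact \eqref{piccolezza a derivate i vphi omega} does not imply \eqref{piccolezza a derivate i D sinistra}, since the latter involves $\|a\|_{s_1+s_0+3+\frac32\beta_0}$).

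For \eqref{stima partial i Phi D destra L2 vphi Hs x +D} your plan to rerun the Duhamel induction with an extra $\langle D\rangle$ is both unnecessary and, as sketched, does not close. First, \eqref{prima proprieta flusso} is an estimate in the space-time norm $\|\cdot\|_s$ for $s\geq s_0$, while here you need a fiber-wise bound in $H^s_x$, uniform in $\vphi$, for all $s$ down to $0$; the fiber-wise analogue is available, but only via the multiplier-commuting observation below, which makes the detour pointless. Second, after you move $\langle D\rangle$ past the coefficient you are left with $\langle D\rangle\,|D|^{\frac12}$ to the left of the flow factor: for the terms of \eqref{ancora una volta 2 A} the spare smoothing $\langle D\rangle^{-m/2}$, $m\geq 1$, absorbs it, but for the terms of \eqref{ancora una volta 3 A} with $\beta_1=\beta$ there is no spare smoothing and you would need a flow estimate conjugated by $\langle D\rangle^{3/2}$, which is more than \eqref{copenaghen A omega con D}--\eqref{copenaghen B omega con D} provide; this is precisely the "bookkeeping issue" you flag, and your sketch does not resolve it. The intended derivation is a one-liner from the lemma you already have: since $\langle D\rangle^{-\frac{|\beta|+1}{2}-1}=\langle D\rangle^{-\frac{|\beta|+1}{2}}\langle D\rangle^{-1}$ and $\|\langle D\rangle u\|_{H^s_x}\simeq\|u\|_{H^{s+1}_x}$, for each fixed $\vphi$
\[
\|\langle D\rangle\,\partial_\vphi^\beta \partial_i \Phi(\vphi)[\widehat\imath]\,\langle D\rangle^{-\frac{|\beta|+1}{2}-1}h\|_{H^s_x}
\simeq\|\partial_\vphi^\beta \partial_i \Phi(\vphi)[\widehat\imath]\,\langle D\rangle^{-\frac{|\beta|+1}{2}}[\langle D\rangle^{-1}h]\|_{H^{s+1}_x}\,,
\]
and applying \eqref{derivate in sezione H_x} at the Sobolev index $s+1\leq s_1$ (this is exactly where the restriction $s\in[0,s_1-1]$ and the shift $\frac12\mapsto\frac32$ come from) gives the bound $\|\partial_i a[\widehat\imath]\|_{s+s_0+\frac32+|\beta|}\,\|h\|_{H^s_x}$; squaring and integrating in $\vphi$ yields \eqref{stima partial i Phi D destra L2 vphi Hs x +D}. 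The same device is used later in the paper, e.g. in \eqref{paoletto 3}.
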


\noindent
{\bf Proof of Proposition \ref{derivate i vphi omega flusso D destra}.} Let $h$ and $\widehat \imath$ be ${\mathcal C}^\infty$ with respect to the variables $\vphi$ and $x$. 
\\[1mm]
{\sc Proof of \eqref{stima norme unite derivate i Phi D destra}.}
We argue by induction $|\beta|$.  For $  \b = 0 $ the proof of \eqref{stima norme unite derivate i Phi D destra}
is a particular case of the estimate of \eqref{Duhamel deri}, \eqref{bruxelles D} (with $ k = 0, \b + \a = 0 $)
 in \eqref{bruxelles 6}.  Assume that we have proved \eqref{stima norme unite derivate i Phi D destra} for  
$  \partial_\vphi^\beta \partial_i \Phi [\widehat \imath] \langle D \rangle^{- \frac{|\beta|  + 1}{2}}$ for all $|\beta| < n$, and let us prove it for $|\beta| = n$. Then we 
estimate $\|   \partial_\vphi^\beta \partial_i \Phi [\widehat \imath] \langle D \rangle^{- \frac{|\beta|  + 1}{2}} h  \|_s$ 
for all $|\beta| = n$, for all $s \in [0, s_1]$. For $s = 0$ one has 
$$
\begin{aligned}
\|   \partial_\vphi^\beta \partial_i \Phi [\widehat \imath] \langle D \rangle^{- \frac{|\beta| + 1}{2}} h \|_{0} & 
= \| 
 \partial_\vphi^\beta \partial_i \Phi [\widehat \imath] \langle D \rangle^{- \frac{|\beta|  + 1}{2}} h \|_{L^2_\vphi L^2_x} \\ 
 &  \stackrel{\eqref{stima partial i Phi D destra L2 vphi Hs x}}{\lessdot}  \| \partial_i a [\widehat \imath]\|_{s_0 + \frac12 + |\beta|} \| h \|_0\,.
 \end{aligned}
$$
Then, assume that \eqref{stima norme unite derivate i Phi D destra} holds
up to the Sobolev index $ s  < s_1 $ and we  prove it for $s + 1 \leq s_1$. We have 
$$
\begin{aligned}
\|   \partial_\vphi^\beta \partial_i \Phi [\widehat \imath] \langle D \rangle^{- \frac{|\beta|  + 1}{2}} h \|_{s + 1} & 
\simeq \|   \partial_\vphi^\beta \partial_i \Phi [\widehat \imath] \langle D \rangle^{- \frac{|\beta| + 1}{2}} h \|_{L^2_\vphi H^{s + 1}_x} \\ 
& + \|   \partial_\vphi^\beta \partial_i \Phi [\widehat \imath] \langle D \rangle^{- \frac{|\beta|  + 1}{2}} h \|_{H^{s + 1}_\vphi L^2_x}\,.
\end{aligned}
$$
By \eqref{stima partial i Phi D destra L2 vphi Hs x} we have 
\begin{equation}\label{bruxelles 0}
\|  \partial_\vphi^\beta \partial_i \Phi [\widehat \imath] \langle D \rangle^{- \frac{|\beta|  + 1}{2}} h \|_{L^2_\vphi H^{s + 1}_x} \leq_s \| \partial_i a [\widehat \imath] \|_{s + 1 + s_0 + \frac12 + |\beta| }  \| h \|_{s + 1}\,.
\end{equation}
Then  
\begin{align}
& \|  \partial_\vphi^\beta \partial_i \Phi [\widehat \imath] \langle D \rangle^{- \frac{|\beta|  + 1}{2}} h \|_{H^{s + 1}_\vphi L^2_x}  \simeq \|   \partial_\vphi^\beta \partial_i \Phi [\widehat \imath] \langle D \rangle^{- \frac{|\beta|  + 1}{2}} h \|_0
  \nonumber\\
& + \sup_{\alpha \in \N^\nu, |\alpha| = 1} \|   \partial_\vphi^\beta \partial_i \Phi [\widehat \imath] \langle D \rangle^{- \frac{|\beta| + 1}{2}} \partial_\vphi^\alpha h\|_{H^s_\vphi L^2_x} \nonumber\\
& + \sup_{\alpha \in \N^\nu, |\alpha| = 1} \|   \partial_\vphi^{\beta + \alpha} \partial_i \Phi [\widehat \imath] \langle D \rangle^{- \frac{|\beta|  + 1}{2}} h\|_{H^s_\vphi L^2_x}\,. \label{bruxelles 1} 
\end{align}
The inductive hyphothesis implies 
\begin{equation}\label{bruxelles 2}
\|  \partial_\vphi^\beta \partial_i \Phi [\widehat \imath] \langle D \rangle^{- \frac{|\beta|  + 1}{2}} \partial_\vphi^\alpha h \|_{s} \leq_s  \| \partial_i a [\widehat \imath]\|^{k_0, \gamma}_{s + s_0 + \frac12 +|\beta| } \| h \|_{s + 1}\,.
\end{equation}
We estimate the last term in \eqref{bruxelles 1}.
Differentiating  \eqref{flow-propagator} and using the Duhamel principle we get
\be\label{Duhamel deri}
 \partial_\vphi^{\beta + \alpha} \partial_i \Phi [\widehat \imath] 
  = \int_0^t \Phi(t - \tau) F_{\beta}(\tau)\, d \tau\,, \quad 
F_{\beta} := F_{\beta}^{(1)} + F_{\beta}^{(2)} + F_{\beta}^{(3)} + F_{\beta}^{(4)} \, ,  
\ee
with 
\begin{align}
& F_{\beta}^{(1)}(\tau) :=  \sum_{\beta_1 + \beta_2 = \beta + \alpha,
|\beta_1| = |\beta|} C(\beta_1, \beta_2) \partial_\vphi^{\beta_2} a |D|^{\frac12}   \partial_\vphi^{\beta_1} \partial_i \Phi[\widehat \imath](\tau) \label{bruxelles A}\\
& F_{\beta}^{(2)}(\tau) := \sum_{\beta_1 + \beta_2 = \beta + \alpha, |\beta_1| < |\beta|} 
C(\beta_1, \beta_2) \partial_\vphi^{\beta_2} a |D|^{\frac12}  \partial_\vphi^{\beta_1}  \partial_i \Phi[\widehat \imath](\tau)  \label{bruxelles B} \\
& F_{\beta}^{(3)}(\tau) := \sum_{
\beta_1 + \beta_2 = \beta + \alpha} C(\beta_1, \beta_2) (  \partial_\vphi^{\beta_2} \partial_i a [\widehat \imath]) |D|^{\frac12}
   \partial_\vphi^{\beta_1} \Phi(\tau)\,. \label{bruxelles D}
\end{align}   
We estimate separately the terms $\Phi(t - \tau) F_{\beta}^{(m)}(\tau)$,  $m = 1,2,3$. 
We use that by \eqref{stima flusso norma bassa}, \eqref{stima flusso norme unite},  \eqref{piccolezza a derivate i vphi omega}
\begin{equation}\label{riccardo bruxelles 1}
\sup_{t \in [0, 1]} \| \Phi(t) h \|_s \leq_s \| h \|_s \qquad \forall s \in [0, s_1]\, . 
\end{equation}
For all $t, \tau \in [0, 1]$, $\beta_1 + \beta_2 = \beta + \alpha$, $|\beta_1| = |\beta|$, one has by \eqref{riccardo bruxelles 1}
\begin{align}
& \| \Phi(t - \tau) \partial_\vphi^{\beta_2} a |D|^{\frac12}   \partial_\vphi^{\beta_1} \partial_i \Phi[\widehat \imath](\tau) \langle D \rangle^{- \frac{|\beta|  + 1}{2}} h   \|_s  \nonumber\\
& \leq_s  \| a \|_{s + s_0 + |\beta| + 1} \|   \partial_\vphi^{\beta_1} \partial_i \Phi[\widehat \imath](\tau) \langle D \rangle^{- \frac{|\beta|  + 1}{2}} h  \|_{s + 1}\,.  \label{bruxelles 3}
\end{align}
For all $t, \tau \in [0, 1] $, $\beta_1 + \beta_2 = \beta + \alpha$, $|\beta_1| < |\beta|$, by \eqref{riccardo bruxelles 1}, the inductive hyphothesis, 
and \eqref{piccolezza a derivate i vphi omega} we get 
\begin{align}
& \| \Phi(t - \tau) \partial_\vphi^{\beta_2} a |D|^{\frac12} \partial_\vphi^{\beta_1} \partial_i \Phi[\widehat \imath](\tau) \langle D \rangle^{- \frac{|\beta| + 1}{2}} h   \|_s  \nonumber \\
& \leq_s 
\| a \|_{s + s_0 + |\beta| + 1} \|  \partial_\vphi^{\beta_1} \partial_i \Phi[\widehat \imath](\tau) 
\langle D \rangle^{- \frac{|\beta|  + 1}{2}} h  \|_{s + 1} \nonumber\\
& \leq_s \| \partial_i a [\widehat \imath]\|_{s + 1 + s_0 + \frac12 + |\beta| - 1  } \| h \|_{s + 1} \,.  \label{bruxelles 4}
\end{align}
 For all $t, \tau \in [0, 1]$, $\beta_1 + \beta_2 = \beta + \alpha$, we have, by \eqref{riccardo bruxelles 1}, 
\begin{align}
&  \|\Phi(t - \tau) (\partial_\vphi^{\beta_2}  \partial_i a [\widehat \imath]) |D|^{\frac12} 
\partial_\vphi^{\beta_1}  \Phi(\tau) \langle D \rangle^{- \frac{|\beta|  + 1 }{2}} h  \|_s  \nonumber\\
& \leq_s \|  \partial_i a [\widehat \imath] \|_{{\mathcal C}^{s + |\beta| + 1}} \| 
 \partial_\vphi^{\beta_1}  \Phi(\tau) \langle D \rangle^{- \frac{|\beta|  + 1 }{2}} h \|_{s + 1} \nonumber\\
& \leq_s  \|\partial_i a [\widehat \imath] \|_{s + s_0 + |\beta| + 1} \| h \|_{s + 1} . \label{bruxelles 6}  
\end{align}
using  \eqref{copenaghen A omega}, \eqref{copenaghen B omega}, \eqref{piccolezza a derivate i vphi omega}. 
Collecting \eqref{bruxelles 0}-\eqref{bruxelles 6} we get 
\begin{align} 
& \sup_{|\beta| = n} \sup_{t \in [0, 1]} \|   \partial_\vphi^\beta  \partial_i \Phi[\widehat \imath]  
\langle D \rangle^{- \frac{|\beta| +1}{2}} h \|_{s + 1} \leq_s \| \partial_i a [\widehat \imath] \|_{s + 1+ s_0 + \frac12 + |\beta| } \| h \|_{s + 1} \nonumber\\
& \qquad + \| a \|_{s + 1 + s_0 + |\beta|} \sup_{|\beta| = n} \sup_{t \in [0, 1]} \|  \partial_\vphi^\beta  \partial_i \Phi[\widehat \imath]   \langle D \rangle^{- \frac{|\beta|  +1}{2}} h \|_{s + 1} \nonumber
\end{align}
which, by  \eqref{piccolezza a derivate i vphi omega}, implies \eqref{stima norme unite derivate i Phi D destra} with Sobolev index $ s + 1 $.
  \hfill $\Box$

\smallskip

\noindent
{\sc Proof of \eqref{stima norme unite derivate i Phi D destra + D}.} 
We argue by induction on $s$. For $s = 0$ it follows by  \eqref{stima partial i Phi D destra L2 vphi Hs x +D}.
Then assuming that \eqref{stima norme unite derivate i Phi D destra + D} holds 
up to the Sobolev index $s < s_1 - 1$ and we prove it for $s + 1$. We have 
\begin{align}
\| \langle D \rangle \partial_\vphi^\beta  (\partial_i \Phi [\widehat \imath]) \langle D \rangle^{- \frac{|\beta|  + 1}{2} - 1} h \|_{s + 1} & \simeq \| \langle D \rangle   \partial_\vphi^\beta (\partial_i \Phi [\widehat \imath]) \langle D \rangle^{- \frac{|\beta|  + 1}{2} - 1} h\|_{L^2_\vphi H^{s + 1}_x} \nonumber\\
& \quad + \|  \langle D \rangle   \partial_\vphi^\beta (\partial_i \Phi [\widehat \imath]) \langle D \rangle^{- \frac{|\beta| + 1}{2} - 1} h \|_{H^{s + 1}_\vphi L^2_x}\,. \label{paoletto 0}
\end{align}
By \eqref{stima partial i Phi D destra L2 vphi Hs x +D} we have 
\begin{align}
 \|  \langle D \rangle   \partial_\vphi^\beta (\partial_i \Phi [\widehat \imath]) \langle D \rangle^{- \frac{|\beta|  + 1}{2} - 1} h \|_{L^2_\vphi H^{s + 1}_x} \leq_s  \| \partial_i a [\widehat \imath] \|_{s + 1+ s_0 + \frac32 + |\beta| } \|  h \|_{s + 1}\,. \label{paoletto 1}
\end{align}
We estimate the second term in \eqref{paoletto 0}. By the inductive hyphothesis  
and  \eqref{stima partial i Phi D destra L2 vphi Hs x +D} one has  
\begin{align}
& \| \langle D \rangle   \partial_\vphi^\beta (\partial_i \Phi [\widehat \imath]) \langle D \rangle^{- \frac{|\beta|  + 1}{2} - 1} h \|_{H^{s + 1}_\vphi L^2_x} \nonumber\\
& \simeq \| \langle D \rangle   \partial_\vphi^\beta (\partial_i \Phi [\widehat \imath]) \langle D \rangle^{- \frac{|\beta|  + 1}{2} - 1} h \|_{L^2_\vphi L^2_x}  \nonumber \\
& \quad + \sup_{\alpha \in \N^\nu,  |\alpha| = 1}\| \langle D \rangle  \partial_\vphi^\beta (\partial_i \Phi [\widehat \imath]) \langle D \rangle^{- \frac{|\beta|  + 1}{2} - 1} \partial_\vphi^\alpha h \|_{H^s_\vphi L^2_x} \nonumber\\
& \quad + \sup_{\alpha \in \N^\nu, |\alpha| = 1}\| \langle D \rangle   
\partial_\vphi^{\beta + \alpha} (\partial_i \Phi [\widehat \imath]) \langle D \rangle^{- \frac{|\beta|  + 1}{2} - 1} h \|_{H^s_\vphi L^2_x} \nonumber\\
& \leq_s  \| h \|_{s + 1}  + \sup_{\alpha \in \N^\nu, |\alpha| = 1}\| \langle D \rangle 
 \partial_\vphi^{\beta + \alpha} (\partial_i \Phi [\widehat \imath]) \langle D \rangle^{- \frac{|\beta| + 1}{2} - 1} h \|_{s}\,. \label{paoletto 2}
\end{align}
Finally, for all $ \alpha \in \N^\nu $, $ | \alpha | = 1 $, we have, by \eqref{stima norme unite derivate i Phi D destra}, 
\begin{align}
\| \langle D \rangle   \partial_\vphi^{\beta + \alpha} (\partial_i \Phi [\widehat \imath]) \langle D \rangle^{- \frac{|\beta| + 1}{2} - 1} h \|_{s} & \leq_s \|   \partial_\vphi^{\beta + \alpha}  (\partial_i \Phi [\widehat \imath]) \langle D \rangle^{- \frac{|\beta| + 2 }{2} } [\langle D \rangle^{- \frac12} h]\|_{s + 1} \nonumber\\
& 
\leq_s  \| \partial_i a [\widehat \imath] \|_{s + 1 + s_0 + \frac32 + |\beta| } \| h \|_{s + 1}\,. \label{paoletto 3}
\end{align}
Hence \eqref{paoletto 0}-\eqref{paoletto 3} imply the estimate \eqref{stima norme unite derivate i Phi D destra + D} with Sobolev index $s + 1$. 
If $h \in H^s$ and $\widehat \imath \in {\mathcal Y}^{s + s_0 + |\beta| + \frac12 + \sigma}$ (resp. $\widehat \imath \in {\mathcal Y}^{s + s_0 + |\beta| + \frac32 + \sigma}$ ), the estimate \eqref{stima norme unite derivate i Phi D destra} (resp. \eqref{stima norme unite derivate i Phi D destra + D}) follows by density. 
 \hfill $\Box$

\smallskip
 
We now estimate the adjoint $ \Phi^* $ of the time-$ 1 $ flow   $ \Phi = \Phi(\vphi, 1) $.  
As in \cite{BBM-auto} (Lemma 8.2) we 
represent the adjoint $ \Phi^* = \Psi = \Psi(\vphi, 0)$ with the 
backward flow   $\Psi (\vphi, t)$ of 
\begin{equation}\label{propagatore aggiunto}
\partial_t \Psi(\vphi, t) =  \ii |D|^{\frac12} a \Psi(\vphi, t)\,, \qquad \Psi(\vphi, 1) = {\rm Id}\,.
\end{equation}
Indeed, since $ \Phi (\vphi, t) $ solves \eqref{flow-propagator} and $ \Psi (\vphi, t) $ solves \eqref{propagatore aggiunto}, we have,
for all $ u_0, v_0 \in L^2_x (\T) $, that  
$$
\partial_t \big( \Phi(\vphi, t) [u_0] \,,\, \Psi(\vphi, t)[v_0]  \big)_{L^2_x} = 0\,, \quad \forall t \in [0, 1] \, .
$$
Therefore $ ( \Phi(\vphi, 1) [u_0] \,,\, v_0 )_{L^2_x} = ( u_0 \,,\, \Psi(\vphi, 0)[v_0] )_{L^2_x} $, namely 
\be\label{rappresentazione aggiunto}
\Psi(\vphi, 0) = \Phi(\vphi, 1)^* = \Phi(\vphi)^* \, .
\ee 
The adjoint operator, since it is the flow of   \eqref{propagatore aggiunto}, satisfies properties like those stated in Lemma
\ref{dipendenza liscia dai parametri}. 

\begin{proposition} {\bf (Adjoint)}\label{teorema stime aggiunto flusso}
Assume that 
\begin{equation}\label{piccolezza stime aggiunto flusso}
\| a \|_{2 s_0 + \frac52 + k_0}^{k_0, \gamma} \leq 1 \,, \qquad \| a \|_{2 s_0 + 1} \leq \delta (s) 
\end{equation}
for some $  \delta(s) > 0 $ small enough. Then for any 
$ k \in \N^{\nu + 1}$, $|k| \leq k_0$, for all $s \geq s_0 $, 
\begin{align}
 \| (\partial_\lambda^k\Phi^* )  h \|_s & \leq_s \gamma^{- |k|} \big( \| h \|_{s + \frac{|k|}{2}} + \| a \|_{s + s_0 + |k| + \frac32}^{k_0, \gamma} \| h \|_{s_0 + \frac{|k|}{2}}\big) \label{stima aggiunto flusso} 
 \end{align}
 \begin{equation} \label{aggiunto flusso - identita}
 \begin{aligned}
 & \| \partial_\lambda^k(\Phi^* - {\rm Id}) h \|_s  \\
 & \leq_s \gamma^{- |k|} \big(\| a \|_{s_0}^{k_0, \gamma} \| h \|_{s + \frac{|k| + 1}{2}} + \| a \|_{s + s_0 + |k| + 2}^{k_0, \gamma} \| h \|_{s_0 + \frac{|k| + 1}{2}} \big) \, .
\end{aligned}
\end{equation}
\end{proposition}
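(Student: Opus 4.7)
\smallskip

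\noindent\textbf{Plan.} The starting point is the representation \eqref{rappresentazione aggiunto}, $\Phi^* = \Psi(\vphi,0)$, where $\Psi$ solves the backward problem \eqref{propagatore aggiunto}. Setting $\tilde\Psi(\vphi, t) := \Psi(\vphi, 1-t)$ recasts the adjoint as the forward time-$1$ flow of the pseudo-PDE
\begin{equation*}
\partial_t \tilde\Psi = -\ii |D|^{\frac12} a(\vphi, x)\, \tilde\Psi, \qquad \tilde\Psi(\vphi,0) = {\rm Id}, \qquad \tilde\Psi(\vphi,1) = \Phi^*,
\end{equation*}
so that the whole analysis developed for the flow of \eqref{pseudo PDE} in Propositions \ref{Prop0-flow}, \ref{Prop1-flow}, \ref{Teorema totale partial vphi beta k D beta k Phi}, \ref{lemma:tame derivate flusso} and \ref{flussoCk0} can be replayed verbatim, once one checks that the vector field $-\ii |D|^{\frac12} a$ enjoys the same energy estimates as $\ii a |D|^{\frac12}$. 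The crucial structural observation is that the two vector fields differ by $-\ii[|D|^{\frac12}, a]$, which by Lemma \ref{lemma tame norma commutatore} lies in $OPS^{-\frac12}$ and is therefore bounded on every Sobolev space with norm $\lessdot \|a\|_{H^2_x}$; in particular, the $L^2_x$-energy identity produces only a lower-order perturbation.

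First I would redo the $L^2_x$ and $H^s_x$ a priori estimates of Proposition \ref{Prop0-flow}. The $L^2_x$ bound is immediate because $|D|^{\frac12} a - a |D|^{\frac12}$ is $L^2_x$-bounded and $a$ is real, so the principal symmetric part of $\ii (-|D|^{\frac12} a) + \ii (|D|^{\frac12} a)^*$ vanishes. For the higher-order estimates I would split $|D|^{\frac12} a u = |D|^{\frac12} T_a u + |D|^{\frac12} R_u a$ and write $|D|^{\frac12} T_a = T_a |D|^{\frac12} + [|D|^{\frac12}, T_a]$, where the commutator belongs to $OPS^{-\frac12}$ and is therefore bounded on $L^2_x$ with norm $\lessdot \|a\|_{H^2_x}$ by the analogue of Lemma \ref{super man}. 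From this point the paraproduct energy argument of Proposition \ref{Prop1-flow} gives the tame bounds of Proposition \ref{Prop1-flow} for $\tilde\Psi$ as well, upon replacing $\|a\|_{2s_0+\frac52+\beta_0+k_0}^{k_0,\gamma}\leq 1$ with the hypothesis \eqref{piccolezza stime aggiunto flusso}.

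Next, I would run the Duhamel/induction machinery of Proposition \ref{Teorema totale partial vphi beta k D beta k Phi} and Proposition \ref{lemma:tame derivate flusso} for $\tilde\Psi$. The integral representation $\partial_\lambda^k \partial_\vphi^\beta \tilde\Psi(t) = \int_0^t \tilde\Psi(t-\tau)\, F_{\beta,k}(\tau)\,d\tau$, with $F_{\beta,k}$ the exact analogue of \eqref{F beta k prop 9.6} but with $|D|^{\frac12}$ acting on the \emph{left} of $a$, allows the same two nested inductions (on $(k,\beta)$ and on the Sobolev index $s$) as before. All intermediate steps, including the tame bounds for $\langle D\rangle^{-\frac{|\beta|+|k|}{2}}\partial_\lambda^k\partial_\vphi^\beta \tilde\Psi \langle D\rangle^{\pm 1}$, go through unchanged because the $|D|^{\frac12}$ on the left can be reabsorbed via $|D|^{\frac12}\langle D\rangle^{-\frac12}\in OPS^0$.

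Finally, the two claimed estimates follow exactly as in Proposition \ref{flussoCk0}: \eqref{stima aggiunto flusso} is obtained by writing $\partial_\lambda^k \Phi^* h = \partial_\lambda^k \tilde\Psi(1) \langle D\rangle^{-\frac{|k|}{2}}\langle D\rangle^{\frac{|k|}{2}} h$ and applying the analogue of \eqref{copenaghen B omega} with $\beta=0$; \eqref{aggiunto flusso - identita} follows from the Duhamel identity $\Phi^* - {\rm Id} = -\int_0^1 \ii |D|^{\frac12} a\, \tilde\Psi(\tau)\,d\tau$, the tame product estimate \eqref{interpolazione C k0} and the bounds on $\tilde\Psi$ already established, by interpolation on $k$. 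The main technical obstacle, as mentioned, is really Step~1: verifying that the commutator adjustments $[|D|^{\frac12}, T_a]$ and $|D|^{\frac12}(T_a)^* - T_a^* |D|^{\frac12}$ are $L^2_x$-bounded with a constant linear in $\|a\|_{H^2_x}$, which is essentially a mirror image of Lemma \ref{super man} and can be proved by the same paraproduct splitting together with the elementary inequality $|\,|\xi+k|^{\frac12}-|\xi|^{\frac12}| \leq |k|^{\frac12}$.
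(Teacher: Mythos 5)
Your proposal is correct in substance, but it follows a genuinely different route from the paper. You time-reverse the backward problem \eqref{propagatore aggiunto}, view $\Phi^*$ as the time-$1$ flow of the vector field $-\ii |D|^{\frac12} a$, and then re-run the entire appendix machinery (the analogues of Propositions \ref{Prop0-flow}, \ref{Prop1-flow}, \ref{Teorema totale partial vphi beta k D beta k Phi}, \ref{flussoCk0}) for this new generator; the price is that every one of those proofs must be re-checked, starting with a mirror image of Lemma \ref{super man} for $[|D|^{\frac12},T_a]$ and $|D|^{\frac12}(T_a)^*-T_a^*|D|^{\frac12}$ (your sketch of that is sound, and the weaker bound $|\vartheta(\xi,k)|\leq|k|$ already suffices), and with attentive index bookkeeping since the smoothing paraproduct term now appears as $|D|^{\frac12}R_u a$ and the Duhamel terms carry $|D|^{\frac12}$ to the left of the coefficients, which is exactly why the final estimates cost $\| a \|_{s + s_0 + |k| + \frac32}$ rather than the $\| a \|_{s + s_0 + |k| + 1}$ of \eqref{stima-rozzina}. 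The paper instead avoids redoing any flow analysis: it rewrites \eqref{propagatore aggiunto} as $\partial_t\Psi=\ii a|D|^{\frac12}\Psi+\ii[|D|^{\frac12},a]\Psi$ and uses Duhamel to get the closed identity $\Psi(t)=\Phi(t)\Phi(1)^{-1}-\ii\int_t^1\Phi(t-\tau)[|D|^{\frac12},a]\Psi(\tau)\,d\tau$, so that all the heavy lifting is delegated to the already-proved estimates \eqref{stima-rozzina}, \eqref{stima flusso norme unite} for the forward flow together with the commutator bound \eqref{stima commutator parte astratta}; the term still containing $\partial_\lambda^k\Psi$ is then absorbed by the smallness hypothesis $\|a\|_{2s_0+1}\leq\delta(s)$, which is precisely where that hypothesis enters, and \eqref{aggiunto flusso - identita} follows, as in your last step, from $\Psi(t)-{\rm Id}=-\ii\int_t^1|D|^{\frac12}a\,\Psi(\tau)\,d\tau$. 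In short: your route is conceptually clean ("the adjoint is a flow of the same class") but demands re-proving a sizeable body of estimates whose validity you assert rather than verify, whereas the paper's Duhamel-plus-absorption argument is a short self-contained reduction to the forward-flow estimates; if you pursue your version, the "goes through verbatim" claims, and in particular the reversed-field analogues of \eqref{stima tame Phi t} and \eqref{copenaghen B omega}, should be written out, since they are the actual content of the proof.
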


\begin{proof}
First we take $h \in {\mathcal C}^\infty$. 

\noindent
{\sc Proof of \eqref{stima aggiunto flusso}.} 
The equation  \eqref{propagatore aggiunto} can be written as 
$$
\partial_t \Psi(\vphi, t) = \ii a |D|^{\frac12} \Psi(\vphi, t) + \ii [|D|^{\frac12}, a] \Psi(\vphi, t) \,, \qquad \Psi(\vphi, 1) = {\rm Id}\, ,
$$
and, by Duhamel principle,  one gets 
\begin{equation}\label{formula Duhamel Psi aggiunto}
\Psi( t)  = \Phi(t) \Phi(1)^{- 1}  - \ii \int_t^1 \Phi(t - \tau) [|D|^{\frac12}, a] \Psi(\tau)\, d \tau\,.
\end{equation}
By \eqref{rappresentazione aggiunto} 
the estimate \eqref{stima aggiunto flusso} follows by proving that,  for all $|k| \leq k_0 $, $ s \geq s_0 $,  
\begin{equation}\label{argomento induttivo Psi aggiunto}
\sup_{t \in [0, 1]}\| \partial_\lambda^k \Psi(t ) h \|_s \leq_s \gamma^{- |k|} \big( \| h \|_{s + \frac{|k|}{2}} +
\|a  \|_{s + s_0 + |k| + \frac32}^{k_0, \gamma} \| h \|_{s_0 + \frac{|k|}{2}} \big) \, . 
\end{equation}
For $ k = 0 $, the estimate  \eqref{argomento induttivo Psi aggiunto} follows by the same proof below
(using only \eqref{formula Duhamel Psi aggiunto}, \eqref{stima flusso norme unite}, and 
\eqref{stima-pseudo-diff-solita} with $ k_1 = k_2 = 0 $). 
Then we argue by induction.
 We assume  that  \eqref{argomento induttivo Psi aggiunto} 
holds for $k_1 \prec k$ with $|k| \leq k_0$ and we prove it for $ k $. 
Differentiating \eqref{formula Duhamel Psi aggiunto}  we get 
\be\label{derivata omegak Psi}
\partial_\lambda^k \Psi(t) = F_1^{(k)}(t) + F_2^{(k)}(t) 
\ee
where 
\begin{align}
F_1^{(k)}(t) & := \partial_\lambda^k (\Phi(t) \Phi(1)^{- 1}) \nonumber\\
& \quad  - \ii \sum_{
k_1 + k_2 + k_3 = k, k_3 \prec k}\int_t^1 \partial_\lambda^{k_1}\Phi(t - \tau) [|D|^{\frac12}, \partial_\lambda^{k_2} a]  \partial_\lambda^{k_3} \Psi(\tau)\, d \tau\,, \label{F k 1 aggiunto}\\
\label{F k 2 aggiunto}
F_2^{(k)}(t) & := - \ii \int_t^1 \Phi(t - \tau) [|D|^{\frac12},  a]  \partial_\lambda^{k} \Psi(\tau)\, d \tau\,.
\end{align}
{\sc Estimate of $F_1^{(k)}(t)$.} 
By 
\eqref{stima-rozzina}, \eqref{stima flusso norme unite} (for $ \Phi (1)^{-1} $), and \eqref{piccolezza stime aggiunto flusso}, 
we get 
\begin{equation}\label{stima rick aggiunto 0}
\| \partial_\lambda^k (\Phi(t) \Phi(1)^{- 1}) h \|_s \leq_s \gamma^{- |k|} \big( \| h \|_{s + \frac{|k|}{2}} + \| a \|_{s + s_0 + |k| + 1}^{k_0, \gamma} \| h \|_{s_0 + \frac{|k|}{2}}  \big) 
\end{equation}
and, for all $k_1 + k_2 + k_3 = k$, $k_3 \prec k $, 
\begin{align}
& \| \partial_\lambda^{k_1} \Phi(t - \tau) [|D|^{\frac12}, \partial_\lambda^{k_2} a] \partial_\lambda^{k_3} \Psi(\tau) h \|_s \nonumber\\
& {\leq_s}  \gamma^{- |k_1|} \| [|D|^{\frac12}, \partial_\lambda^{k_2} a] \partial_\lambda^{k_3} \Psi(\tau) h \|_{s + \frac{|k_1|}{2}} \nonumber\\
& \quad +\gamma^{- |k_1|}  \| a \|^{k_0, \gamma}_{s + s_0 + |k_1| + 1} \| [|D|^{\frac12}, \partial_\lambda^{k_2} a] \partial_\lambda^{k_3} \Psi(\tau) h \|_{s_0 + \frac{|k_1|}{2}}\,. \label{stima rick aggiunto 1}
\end{align}
By \eqref{stima commutator parte astratta} we have 
\be\label{stima-pseudo-diff-solita}
\norma [|D|^{\frac12}, \partial_\lambda^{k_2} a] \norma_{-\frac12, s + \frac{|k_1|}{2}, 0}  
\leq_s  \| \partial_\lambda^{k_2} a \|_{s + \frac{|k_1|}{2} + \frac52} \leq_s \gamma^{- |k_2|} \| a \|^{k_0, \gamma}_{s + \frac{|k_1|}{2} + \frac52}
\ee 
and, by  \eqref{piccolezza stime aggiunto flusso}, and the inductive hypothesis for $ k_3 \prec k $, we get
\begin{align}
\| [|D|^{\frac12}, \partial_\lambda^{k_2} a] \partial_\lambda^{k_3} \Psi(\tau) h \|_{s + \frac{|k_1|}{2}}  & \leq_s \gamma^{- (|k_2| + |k_3|)} \big( \| h \|_{s + \frac{|k_1| + |k_3|}{2}} \nonumber\\
& \quad + \| a \|_{s + s_0 + |k| + \frac32}^{k_0, \gamma} \| h \|_{s_0 + \frac{|k_1| + |k_3|}{2}} \big)\,. \label{stima rick aggiunto 2}
\end{align}
Hence \eqref{F k 1 aggiunto}, \eqref{stima rick aggiunto 0},  \eqref{stima rick aggiunto 1}, \eqref{stima rick aggiunto 2} imply 
\begin{equation}\label{stima rick aggiunto 3}
\| F_1^{(k)}(t) h \|_s \leq_s \gamma^{- |k|} \big( \| h \|_{s + \frac{|k|}{2}} + 
\| a \|_{s + s_0 + |k| + \frac32} \| h \|_{s_0 + \frac{|k|}{2}} \big)\,.
\end{equation}
{\sc Estimate of $F_2^{(k)}(t)$}. For all $t, \tau \in [0, 1]$, using  \eqref{stima flusso norme unite}, the bound
$$ 
\norma [|D|^{\frac12}, a] \norma_{- \frac12, s, 0} \leq_s \| a \|_{s + \frac52} 
$$
(see \eqref{stima-pseudo-diff-solita} with $ k_1 = k_2 = 0 $),  and \eqref{piccolezza stime aggiunto flusso}
we get 
\begin{align}
\| F_2^{(k)}(t) h\|_s \leq_s \| a \|_{s_0 + \frac52} \sup_{\tau \in [0, 1]}\| \partial_\lambda^k \Psi(\tau) h\|_s + \| a \|_{s + s_0 + 1} \sup_{\tau \in [0, 1]} \|\partial_\lambda^k \Psi(\tau) h \|_{s_0}\,. \label{stima F 2 k (t)}
\end{align}
{\sc Estimate of $ \partial_\lambda^k \Psi(t) $}.
By  \eqref{derivata omegak Psi},  
\eqref{stima rick aggiunto 3}, \eqref{stima F 2 k (t)} we get  
\begin{align}
\|\partial_\lambda^k \Psi(t) h \|_s & \leq_s \gamma^{- |k|} \big( \| h \|_{s + \frac{|k|}{2}} + \| a \|_{s + s_0 + |k| + \frac32} \| h \|_{s_0 + \frac{|k|}{2}} \big) \nonumber\\
& \quad + \| a \|_{s_0 + \frac52} \sup_{\tau \in [0, 1]}\| \partial_\lambda^k \Psi(\tau) h\|_s + \| a \|_{s + s_0 + 1} \sup_{\tau \in [0, 1]} \|\partial_\lambda^k \Psi(\tau) h \|_{s_0}\,. \label{pa-omega-da}
\end{align}
Then, for $ s = s_0 $, using that, by \eqref{piccolezza stime aggiunto flusso}, 
$ \| a \|_{2 s_0 + 1} \leq \delta(s)$ is small enough, we get 
$$
\begin{aligned}
\sup_{t \in [0, 1]} \|\partial_\lambda^k \Psi(t) h \|_{s_0} 
& \lessdot 
\gamma^{- |k|} \big( \| h \|_{s_0 + \frac{|k|}{2}} + \| a \|_{2s_0 + |k| + \frac32} \| h \|_{s_0 + \frac{|k|}{2}} \big) \\
& \stackrel{\eqref{piccolezza stime aggiunto flusso}} \lessdot
\gamma^{- |k|} \| h \|_{s_0 + \frac{|k|}{2}}\,,
\end{aligned}
$$
and therefore, by \eqref{pa-omega-da},  for all $ s \geq s_0 $, 
$$
\begin{aligned}
\sup_{t \in [0, 1]} \|\partial_\lambda^k \Psi(t) h \|_s   \leq_s \gamma^{- |k|} \big( \| h \|_{s + \frac{|k|}{2}} 
& + \| a \|_{s + s_0 + |k| + \frac32} \| h \|_{s_0 + \frac{|k|}{2}} \big)\\
& + \| a \|_{s_0 + \frac52} \sup_{t \in [0, 1]}\| \partial_\lambda^k \Psi(t) h\|_s
 \end{aligned} 
$$
which yields the estimate \eqref{argomento induttivo Psi aggiunto} for $\partial_\lambda^k \Psi(t)$  (using again \eqref{piccolezza stime aggiunto flusso} and $\delta(s)$ small enough).

\noindent
{\sc Proof of \eqref{aggiunto flusso - identita}.} By  \eqref{propagatore aggiunto} we have 
$ \Psi(\vphi, t) - {\rm Id} = - \ii \int_t^1  |D|^{\frac12} a \Psi(\vphi, \tau)\, d \tau $, 
then it is enough to apply \eqref{argomento induttivo Psi aggiunto}. 
If $h \in H^{s + \frac{|k|}{2}}$ (resp. $h \in H^{s + \frac{|k| + 1}{2}}$), the estimate \eqref{stima aggiunto flusso} (resp. \eqref{aggiunto flusso - identita}) follows by density. 
\end{proof}

Finally we  estimate the variation of the adjoint operator $ \Phi^* $ with respect to the torus 
$ i(\vphi) $. 
\begin{proposition}\label{teorema derivata aggiunto flusso}
Let $s_1  > s_0$ and assume the condition 
\begin{equation}\label{piccolezza stime derivata i aggiunto flusso}
\| a \|_{s_1 + s_0 + 3} \leq 1\,, \quad \| a \|_{s_1 + s_0 + 1} \leq \delta (s_1) \,, 
\end{equation}
for some $ \delta(s_1) > 0 $ small. Then, for all $s \in [s_0, s_1] $,  
\begin{equation}\label{derivata i aggiunto}
\| \partial_i \Phi^*[\widehat \imath] h \|_s \leq_s \| \partial_i a[\widehat \imath ] \|_{s + s_0 + \frac12} \| h \|_{s + \frac12}\,.
\end{equation} 
\end{proposition}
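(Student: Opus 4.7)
\medskip

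\noindent\textbf{Proof plan for Proposition \ref{teorema derivata aggiunto flusso}.} The strategy is to mirror the argument used in Proposition \ref{teorema stime aggiunto flusso}, using the representation $\Phi^* = \Psi(\vphi, 0)$ where $\Psi$ is the backward flow of \eqref{propagatore aggiunto}, and deriving a Duhamel-type integral representation for $\partial_i \Psi[\widehat\imath]$. Specifically, differentiating the equation
\begin{equation*}
\partial_t \Psi(t) = \ii |D|^{\frac12} a \, \Psi(t)\,, \qquad \Psi(1) = {\rm Id}\,,
\end{equation*}
with respect to the torus $i$ in the direction $\widehat\imath$, the operator $V(t) := \partial_i \Psi(t)[\widehat\imath]$ solves the inhomogeneous linear problem
\begin{equation*}
\partial_t V(t) = \ii |D|^{\frac12} a \, V(t) + \ii |D|^{\frac12} (\partial_i a[\widehat\imath]) \Psi(t)\,, \qquad V(1) = 0\,.
\end{equation*}
By the Duhamel principle, denoting by $\Psi(t,\tau) := \Psi(t) \Psi(\tau)^{-1}$ the two-parameter propagator of \eqref{propagatore aggiunto} from time $\tau$ to time $t$, one obtains
\begin{equation*}
\partial_i \Phi^*[\widehat\imath] \,=\, V(0) \,=\, -\ii \int_0^1 \Psi(0,\tau) \, |D|^{\frac12} (\partial_i a[\widehat\imath]) \, \Psi(\tau) \, d\tau\,.
\end{equation*}

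\medskip

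\noindent The next step is to estimate the integrand in $H^s_x$, uniformly in $\tau \in [0,1]$. First, under the smallness condition \eqref{piccolezza stime derivata i aggiunto flusso}, Proposition \ref{teorema stime aggiunto flusso} (applied with $k = 0$) gives the tame bound $\sup_{t \in [0,1]} \| \Psi(t) h \|_s \leq_s \| h \|_s$ for $s \in [s_0, s_1]$; the inverse propagator $\Psi(\tau)^{-1}$ satisfies the same kind of estimate, since it is the flow of $\partial_t u = -\ii |D|^{\frac12} a u$ backward from $\tau$ (equivalently, it can be related to $\Phi$, which is tame by Proposition \ref{Prop1-flow}). Thus the two-parameter propagator $\Psi(0,\tau)$ is tame in $H^s$, with the low-norm hypothesis \eqref{piccolezza stime derivata i aggiunto flusso} absorbing the factor $\|a\|_{s+\s}$ appearing in \eqref{stima flusso norme unite}. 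Second, the operator $|D|^{\frac12} (\partial_i a[\widehat\imath])$ loses $\frac12$ derivatives in $x$, and composing with the multiplication operator by $\partial_i a[\widehat\imath]$ gives, by the tame product inequality \eqref{interpolazione senza C k0} and the standard action of the Fourier multiplier $|D|^{\frac12}$, the bound
\begin{equation*}
\| |D|^{\frac12} (\partial_i a[\widehat\imath]) g \|_s \leq_s \| \partial_i a[\widehat\imath] \|_s \| g \|_{s_0 + \frac12} + \| \partial_i a[\widehat\imath] \|_{s_0} \| g \|_{s + \frac12}\,.
\end{equation*}

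\medskip

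\noindent Combining these ingredients with $g = \Psi(\tau) h$ and integrating over $\tau \in [0,1]$, one obtains
\begin{equation*}
\| \partial_i \Phi^*[\widehat\imath] h \|_s \leq_s \| \partial_i a[\widehat\imath] \|_s \| h \|_{s_0 + \frac12} + \| \partial_i a[\widehat\imath] \|_{s_0} \| h \|_{s + \frac12}\,,
\end{equation*}
which is already of the desired form. To reach the sharper estimate \eqref{derivata i aggiunto}, one uses the interpolation inequality \eqref{interpolation estremi fine} (with $a_0 = s_0, b_0 = s + \frac12, p = s - s_0$, $q$ adjusted) to trade the mixed term $\| \partial_i a[\widehat\imath] \|_s \| h \|_{s_0 + \frac12}$ for $\| \partial_i a[\widehat\imath] \|_{s + s_0 + \frac12} \| h \|_{s + \frac12}$ (up to the absorbable term containing the low norm of $\partial_i a[\widehat\imath]$, again controlled by the smallness \eqref{piccolezza stime derivata i aggiunto flusso}). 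The result extends to $h \in H^{s + \frac12}$ and $\widehat\imath$ in the appropriate Sobolev class by density.

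\medskip

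\noindent The main technical point, not an obstacle but requiring care, is the tame control of the inverse propagator $\Psi(\tau)^{-1}$ uniformly in $\tau$: one must check that the duality/inversion does not introduce losses beyond those already accounted for by the smallness condition \eqref{piccolezza stime derivata i aggiunto flusso}. Since the equations for $\Psi$ and $\Psi^{-1}$ differ only by the sign of the vector field $\ii |D|^{\frac12} a$, the same energy-type arguments of Propositions \ref{Prop0-flow}--\ref{Prop1-flow} apply, and the analogue of \eqref{stima flusso norme unite} holds uniformly in $\tau \in [0,1]$. No higher derivatives in the parameters $(\omega, \kappa)$ need be tracked, in accordance with Remark \ref{remark:dipendenza da omega non richiesta}.
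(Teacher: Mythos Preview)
Your approach is correct but takes a somewhat different route from the paper. The paper does not differentiate the backward ODE \eqref{propagatore aggiunto} directly; instead it differentiates the Duhamel identity \eqref{formula Duhamel Psi aggiunto}, which already expresses $\Psi(t)$ in terms of the \emph{forward} flow $\Phi$ and the commutator $[|D|^{1/2},a] \in OPS^{-1/2}$. This produces four terms, three of which are estimated via the known bounds on $\Phi$, $\partial_i\Phi$ (Proposition \ref{derivate i vphi omega flusso D destra}), $\Psi$, and the commutator, while the fourth contains $\partial_i\Psi$ itself with the small factor $\|a\|_{s+5/2}$ and is absorbed into the left-hand side.

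Your direct differentiation yields a cleaner single-integral Duhamel formula with no absorption step, at the price of having to control the two-parameter propagator $\Psi(0,\tau)$. Your concern about $\Psi(\tau)^{-1}$ is in fact easily resolved: since the generator $\ii|D|^{1/2}a$ does not depend on $t$, autonomy gives $\Psi(0,\tau) = \Psi(1-\tau)$, so the needed bound is exactly \eqref{argomento induttivo Psi aggiunto} with $k=0$, already proved in Proposition \ref{teorema stime aggiunto flusso}. Two minor remarks: in your product bound for $|D|^{1/2}(\partial_i a[\widehat\imath])g$ the index on $\partial_i a$ should be $s+\tfrac12$ rather than $s$; and the final interpolation you invoke is unnecessary, since for $s\geq s_0$ both terms of your tame estimate are trivially dominated by $\|\partial_i a[\widehat\imath]\|_{s+s_0+\frac12}\|h\|_{s+\frac12}$ by monotonicity of the Sobolev norms.
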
 

\begin{proof}

First, we prove that  the map $\Psi(t)$ defined in \eqref{formula Duhamel Psi aggiunto} satisfies 
\eqref{derivata i aggiunto} for $h$ and $\widehat \imath$ which are ${\mathcal C}^\infty$ with respect to $\vphi$ and $x$. By differentiating  \eqref{formula Duhamel Psi aggiunto} we get 
\begin{align}
\partial_i \Psi(t)[\widehat \imath] & = \partial_i (\Phi(t) \Phi(1)^{- 1})[\widehat \imath]  \nonumber\\
& \quad - \ii \int_t^1 \partial_i \Phi(t - \tau)[\widehat \imath] [|D|^{\frac12}, a] \Psi(\tau)\, d \tau \nonumber\\
& \quad - \ii \int_t^1 \Phi(t - \tau) [|D|^{\frac12}, \partial_i a[\widehat \imath]] \Psi(\tau)\, d \tau \nonumber\\
& \quad - \ii \int_t^1 \Phi(t - \tau) [|D|^{\frac12}, a] \partial_i \Psi(\tau)[\widehat \imath]\, d \tau \label{formula di Psi aggiunto flusso} \, . 
\end{align}
By \eqref{stima norme unite derivate i Phi D destra} applied with $ \beta  = 0 $ we get 
\begin{equation}\label{stima d i Phi Lemma aggiunto}
\| \partial_i \Phi(t)[\widehat \imath] h \|_s \leq_s \| \partial_i a[\widehat \imath] \|_{s + s_0 + \frac12}  \|  h\|_{s + \frac12}\, . 
\end{equation}
Moreover by \eqref{stima commutator parte astratta}
\begin{equation}\label{stima commutatore a D 12 lemma aggiunto}
\norma [|D|^{\frac12}, a] \norma_{- \frac12, s, 0} \stackrel{}{\leq_s} \| a \|_{s + \frac52}\,, \quad \norma [|D|^{\frac12}, \partial_i a[\widehat \imath]] \norma_{- \frac12, s, 0} \stackrel{}{\leq_s} \| \partial_i a[\widehat \imath] \|_{s + \frac52}\,.
\end{equation}
Then for all $ t \in [0, 1] $,  by \eqref{stima d i Phi Lemma aggiunto}, 
\eqref{stima flusso norme unite}, \eqref{piccolezza stime derivata i aggiunto flusso}, 
\begin{align}
\| \partial_i (\Phi(t) \Phi(1)^{- 1})[\widehat \imath] h \|_s \leq_s  \| \partial_i a[\widehat \imath] \|_{s + s_0 + \frac12}  \|  h\|_{s + \frac12}  \label{stima derivata i aggiunto 1}
\end{align}
and for all $ t, \tau \in [0, 1] $, by \eqref{argomento induttivo Psi aggiunto} (applied for $ k = 0 $), 
\eqref{stima d i Phi Lemma aggiunto}, \eqref{stima norme unite derivate i Phi D destra}, \eqref{stima commutatore a D 12 lemma aggiunto}, \eqref{stima flusso norme unite} and \eqref{piccolezza stime derivata i aggiunto flusso} we get, for any $ s \in [s_0, s_1 ] $, 
\begin{align}
& \| \partial_i \Phi(t - \tau)[\widehat \imath] [|D|^{\frac12}, a] \Psi(\tau) h \|_s\,,\, \| \Phi(t - \tau) [|D|^{\frac12}, \partial_i a[\widehat \imath]] \Psi(\tau) h \|_s \nonumber\\
&  \leq_s \| \partial_i a[\widehat \imath] \|_{s + s_0 + \frac12}  \|  h\|_{s + \frac12}\,, \label{stima derivata i aggiunto 2}\\
& \| \Phi(t - \tau) [|D|^{\frac12}, a] \partial_i \Psi(\tau)[\widehat \imath] h \|_s \nonumber\\
& \leq_s \|  a\|_{s + \frac52} \| \partial_i \Psi(\tau)[\widehat \imath] h \|_s \leq_s \delta (s_1)  \| \partial_i \Psi(\tau)[\widehat \imath] h \|_s\,. \label{stima derivata i aggiunto 3}
\end{align}
Therefore \eqref{formula di Psi aggiunto flusso}, \eqref{stima derivata i aggiunto 1}, \eqref{stima derivata i aggiunto 2}, \eqref{stima derivata i aggiunto 3} imply, for all $ s \in [s_0, s_1 ] $,  
$$
\sup_{t \in [0, 1]} \| \partial_i \Psi(t)[\widehat \imath] h \|_s \leq_s 
\| \partial_i a[\widehat \imath ] \|_{s + s_0 + \frac12} \| h \|_{s + \frac12} + 
\delta (s_1) \sup_{t \in [0, 1]}\| \partial_i \Psi(t)[\widehat \imath] h \|_s 
$$
and therefore, taking $ \delta (s_1) $ small, 
$ \sup_{t \in [0, 1]} \| \partial_i \Psi(t)[\widehat \imath] h \|_s \leq_s \| \partial_i a[\widehat \imath ] \|_{s + s_0 + \frac12} \| h \|_{s + \frac12} $, proving \eqref{derivata i aggiunto}. If $h \in H^{s + \frac12}$ and $\widehat \imath \in {\mathcal Y}^{s + s_0 +\frac12 + \sigma}$, then the estimate follows by density. 
\end{proof}

\backmatter

\bibliographystyle{amsalpha}
{\footnotesize

\def\cprime{$'$}

\end{document}